\def\Z{\mathbf{Z}}
\DeclareMathOperator{\CatP}{{\mathfrak P}}
\DeclareMathOperator{\exc}{exc}
\DeclareMathOperator{\Lin}{{\mathcal L}in}
\DeclareMathOperator{\Spectra}{Sp}
\DeclareMathOperator{\PreSpectra}{PSp}
\newcommand{\ICAT}{{\bf Cat}_{\infty}}
\newcommand{\dICAT}{{\bf Cat}^{\Delta}_{\infty}}
\DeclareMathOperator{\wellp}{wp}
\DeclareMathOperator{\ex}{ex}
\DeclareMathOperator{\sNerve}{N}
\DeclareMathOperator{\sCat}{\mathcal{C}at_{\Delta}}
\DeclareMathOperator{\sMap}{M}
\DeclareMathOperator{\tNerve}{N}
\newcommand{\LCone}[1]{C(#1)}
\newcommand{\LCones}[2]{C_{#1}(#2)}
\DeclareMathOperator{\scUn}{Un^{sc}}
\DeclareMathOperator{\mCat}{{{\mathcal C}at}_{\Delta}^{+}}
\DeclareMathOperator{\scSt}{St^{sc}}
\DeclareMathOperator{\Fr}{Fr}
\DeclareMathOperator{\scSet}{{\mathcal S}et_{\Delta}^{sc}} 
\newcommand{\PreSeg}[1]{Seg_{#1}}
\DeclareMathOperator{\scNerve}{N^{sc}} 
\DeclareMathOperator{\scCoNerve}{{\mathfrak C}^{sc}} 
\newcommand{\mset}[1]{{(\mathcal S}et^{+}_{\Delta})_{/#1}} 
\DeclareMathOperator{\sFun}{Fun^{sc}}
\DeclareMathOperator{\UnPre}{UnPre}
\newcommand{\CSS}[1]{{CSS_{#1}}}
\newcommand{\SegS}[1]{SS_{#1}}
\newcommand{\sdd}{sd}
\newcommand{\Andre}{Andr\'{e}}
\DeclareMathOperator{\mSet}{{\mathcal{S}et}_{\Delta}^{+}}
\newcommand{\Un}{Un}
\DeclareMathOperator{\calH}{\mathcal{H}}
\DeclareMathOperator{\LFun}{\Fun^{L}}
\newcommand{\toposref}[1]{T.\ref{HTT-#1}}
\newcommand{\stableref}[1]{S.\ref{STA-#1}}
\newcommand{\monoidref}[1]{M.\ref{MON-#1}}
\newcommand{\symmetricref}[1]{C.\ref{SYM-#1}}
\newcommand{\degree}{\text{o}}
\newcommand{\bfA}{{\mathbf A}}
\newcommand{\bfB}{{\mathbf B}}
\newcommand{\bfC}{{\mathbf C}}
\DeclareMathOperator{\Gp}{Gp} 
\DeclareMathOperator{\cDelta}{{\bf \Delta}}
\DeclareMathOperator{\inv}{\sim}
\DeclareMathOperator{\Grp}{\mathcal{G}pd}
\DeclareMathOperator{\Set}{\mathcal{S}et}
\DeclareMathOperator{\sSet}{\mathcal{S}et_{\Delta}}
\DeclareMathOperator{\sCoNerve}{\mathfrak{C}}
\DeclareMathOperator{\Nerve}{N}
\DeclareMathOperator{\Cat}{\mathcal{C}at}
\newcommand{\h}[1]{\rm{h} \! #1}
\newcommand{\Adjoint}[4]{\xymatrix@1{#2 \ar@<.4ex>[r]^-{#1} & #3 \ar@<.4ex>[l]^-{#4}}}
\newcommand{\et}{\'{e}t}
\newcommand{\bigdot}{\bullet}
\DeclareMathOperator{\Stab}{Stab}
\DeclareMathOperator{\bHom}{Map}
\newcommand{\Cech}{\v{C}ech\,}
\DeclareMathOperator{\Mod}{\mathcal{M}od}
\DeclareMathOperator{\calM}{\mathcal{M}}
 \DeclareMathOperator{\G}{\mathbf{G}}
\DeclareMathOperator{\calZ}{\mathcal{Z}}
\DeclareMathOperator{\DerRing}{\mathcal{SCR}}
\DeclareMathOperator{\colim}{colim}
\DeclareMathOperator{\calA}{\mathcal{A}}
\DeclareMathOperator{\bd}{\partial}
\DeclareMathOperator{\calE}{\mathcal{E}}
\DeclareMathOperator{\calO}{\mathcal{O}}
\DeclareMathOperator{\calK}{\mathcal{K}}
\DeclareMathOperator{\Spec}{{\bf Spec}}
\DeclareMathOperator{\calF}{\mathcal{F}}
\DeclareMathOperator{\calG}{\mathcal{G}}
\DeclareMathOperator{\Hom}{Hom} 
\DeclareMathOperator{\HH}{H} 
\DeclareMathOperator{\id}{id} \DeclareMathOperator{\Fun}{Fun}
\DeclareMathOperator{\calC}{\mathcal{C}}
\DeclareMathOperator{\calI}{\mathcal{I}}
\DeclareMathOperator{\calN}{\mathcal{N}}
\DeclareMathOperator{\calJ}{\mathcal{J}}
\DeclareMathOperator{\SSet}{\mathcal{S}}
\newcommand{\St}{St}
\newcommand{\lNerve}{{\mathfrak F}}
\DeclareMathOperator{\calX}{\mathcal{X}}
\DeclareMathOperator{\hocolim}{hocolim}
\DeclareMathOperator{\calY}{\mathcal{Y}}
\DeclareMathOperator{\calD}{\mathcal{D}}
\DeclareMathOperator{\calP}{\mathcal{P}} \topmargin=0in
\newtheorem{theorem}{Theorem}[subsection]
\newtheorem{lemma}[theorem]{Lemma}
\newtheorem{proposition}[theorem]{Proposition}
\newtheorem{corollary}[theorem]{Corollary}
\theoremstyle{definition}
\newtheorem{definition}[theorem]{Definition}
\newtheorem{construction}[theorem]{Construction}
\newtheorem{example}[theorem]{Example}
\newtheorem{notation}[theorem]{Notation}
\newtheorem{warning}[theorem]{Warning}
\newtheorem{remark}[theorem]{Remark}
\newtheorem{variant}[theorem]{Variant}
\begin{document}

\title{$(\infty,2)$-Categories and the Goodwillie Calculus I}

\maketitle
\tableofcontents

\section*{Introduction}

The theory of higher categories is notorious for having an excessive proliferation of definitions,
many of which are difficult to compare with one another. Nevertheless, a general consensus has emerged in some special cases. Let us use the term {\it $(\infty,n)$-category} to indicate a higher category in which all $k$-morphisms are assumed to be invertible for $k > n$. It has long been understood that the theory of $(\infty,0)$-categories (that is, higher categories in which {\em all} morphisms are required to be invertible) should be equivalent to classical homotopy theory.
Consequently, for practical purposes one can {\em define} an $(\infty,0)$-category to be a topological
space, or a simplicial set which satisfies the Kan extension condition.

The theory of $(\infty,1)$-categories is also quite well understood, though in this case there
is a variety of possible approaches. Arguably the simplest of these is the Boardman-Vogt theory {\it weak Kan complexes}: that is, simplicial sets which satisfy a weaker version
of the Kan extension condition (these are also known as {\it quasicategories} in the literature;
we will follow the terminology of \cite{topoi} and refer to them simply as $\infty$-categories).
However, there are a number of other possible approaches: for example, one could define an $(\infty,1)$-category to be a topological
category (that is, a category $\calC$ in which every mapping set $\Hom_{\calC}(X,Y)$ is endowed
with a topology, such that the composition of morphisms is continuous), a simplicial category,
a Segal category, or a complete Segal space. These notions are equivalent to one another.
More precisely, we have the following:

\begin{theorem}\label{onever}
There is a diagram of right Quillen equivalences
$$ \xymatrix{ \sSet & \sCat \ar[l]_{ \Nerve } \ar[d] \\
\Fun( \cDelta^{op} , \sSet) \ar[u]_{G_0} \ar[r] & \PreSeg{\sSet}, }$$
with the following features:

\begin{itemize}
\item[$(A1)$] In the upper left hand corner, we have the category $\sSet$ of simplicial sets,
endowed with the Joyal model structure (see \S \toposref{compp3}).
The fibrant objects of $\sSet$ are precisely the $\infty$-categories.

\item[$(A2)$] In the upper right corner, we have the category $\sCat$ of simplicial categories,
with the model structure constructed by Bergner in \cite{bergner}; see also
\S \toposref{compp4}.

\item[$(A3)$] In the lower right corner, we have the category $\PreSeg{\sSet}$ of
{\it preSegal categories}: that is, bisimplicial sets $X_{\bigdot \bigdot}$ with
the property that the $0$th column $X_{\bigdot 0}$ is a constant simplicial set.
These we endow with the projective model structure (see Theorem \ref{castle2}).

\item[$(A4)$] In the lower left corner, we have the category $\Fun( \cDelta^{op}, \sSet)$ of 
all bisimplicial sets, which we endow with the {\it complete Segal model structure}
introduced by Rezk (see \cite{completesegal} or Proposition \ref{camper}).

\item[$(B1)$] The upper horizontal arrow is given by the homotopy coherent nerve
functor $\Nerve: \sCat \rightarrow \sSet$ of Cordier and Porter. This 
is a right Quillen equivalence by virtue of Theorem \toposref{biggier}
(an alternative proof is given in \cite{joyalsimp}). We denote the left adjoint of this
functor by $\sCoNerve: \sSet \rightarrow \sCat$.

\item[$(B2)$] The left vertical arrow is given by the forgetful functor 
$G_0: \Fun( \cDelta^{op}, \sSet) \rightarrow \sSet$ which carries
a bisimplicial set $X_{\bigdot \bigdot}$ to the $0$th row
$X_{0 \bigdot}$. Joyal and Tierney have shown that this functor is a right Quillen equivalence
(see \cite{joyalt}); we will reprove this result as Corollary \ref{JT}.

\item[$(B3)$] The right vertical arrow associates to every simplicial
category $\calC_{\bigdot}$ the bisimplicial set $\Nerve(\calC_{\bigdot})_{\bigdot}$
obtained by applying the nerve construction degreewise. It determines
a fully faithful embedding from the category $\sCat$ of simplicial categories
to the category $\PreSeg{\sSet}$ of preSegal categories, and is a right Quillen equivalence
(a proof of this result is given in \cite{bergner4}; we will prove a generalization of this result as Theorem \ref{castle2}).

\item[$(B4)$] The lower horizontal arrow denotes a right adjoint to the inclusion
$\PreSeg{\sSet} \subseteq \Fun( \cDelta^{op}, \sSet)$. Bergner has shown
that this functor is a right Quillen equivalence (see \cite{bergner4};
we will prove a generalization of this result as Proposition \ref{curt}).
\end{itemize}
\end{theorem}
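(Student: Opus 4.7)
The plan is to recognize Theorem \ref{onever} as a synthesis of four independently established results. Each of the four arrows (B1)--(B4) is asserted to be a right Quillen equivalence by a separate theorem cited either from the literature or proved later in this paper; the theorem itself primarily records these results and assembles them into the displayed diagram.

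First, I would verify the existence of the four model structures (A1)--(A4). Three of these belong to the established literature and may be imported directly: the Joyal structure on $\sSet$ (recorded in \S \toposref{compp3}), the Bergner structure on $\sCat$ (from \cite{bergner}, with discussion in \S \toposref{compp4}), and the Rezk complete Segal structure on $\Fun(\cDelta^{op}, \sSet)$ (reproved as Proposition \ref{camper}). The only model structure requiring genuinely new construction is the projective model structure on $\PreSeg{\sSet}$, whose existence is deferred to Theorem \ref{castle2}.

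Next, I would check that the four functors are well-defined between the claimed categories. The main point is for the right vertical arrow: applying the ordinary nerve degreewise to a simplicial category $\calC_{\bullet}$ yields a bisimplicial set whose $0$th column records the (degreewise-constant) object set of $\calC_{\bullet}$ and is therefore a constant simplicial set, so the output indeed lies in $\PreSeg{\sSet}$. The homotopy coherent nerve and the $0$th-row functor $G_0$ are manifestly well-defined; for the lower horizontal arrow, the inclusion $\PreSeg{\sSet} \hookrightarrow \Fun(\cDelta^{op}, \sSet)$ preserves small colimits between presentable categories, so a right adjoint exists by the adjoint functor theorem.

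Finally, I would invoke the four equivalence results: (B1) is Theorem \toposref{biggier} of Cordier--Porter and its generalizations; (B2) is Corollary \ref{JT}, reproving a theorem of Joyal--Tierney; (B3) follows from Theorem \ref{castle2}; and (B4) is Proposition \ref{curt}. The main obstacles lie not in the synthesis but in establishing (A3), (B3), and (B4), which constitute the new content of this portion of the paper: the construction of the projective model structure on preSegal categories and the proofs that it is Quillen equivalent to both the Bergner structure on $\sCat$ and the Rezk complete Segal structure on bisimplicial sets.
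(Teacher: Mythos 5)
Your proposal is correct and matches what the paper intends: Theorem~\ref{onever} is a synthesis statement whose entire content resides in the four constituent results (A3), (B1)--(B4) that the paper cites explicitly inside the theorem statement itself, and no separate proof is given. One small point worth being aware of: since (A3) equips $\PreSeg{\sSet}$ with the \emph{projective} model structure, the claim that the inclusion into bisimplicial sets is a left Quillen equivalence actually requires combining Proposition~\ref{curt} (which handles the \emph{injective} model structure on $\PreSeg{\sSet}$) with Proposition~\ref{curt2} (which shows the identity is a left Quillen equivalence from the projective to the injective structure); the paper's citation of Proposition~\ref{curt} alone for (B4) is a mild compression, and your proposal inherits it.
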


\begin{remark}\label{spade}
There is a sense in which the diagram of Theorem \ref{onever} is commutative up to homotopy. Let $F_0: \sSet \rightarrow \Fun( \cDelta^{op}, \sSet)$ be a left
adjoint to $G_0$, and let $F_1: \sSet \rightarrow \Fun( \cDelta^{op}, \sSet)$ denote the composition
$$ \sSet \stackrel{ \sCoNerve}{\rightarrow} \sCat \subseteq \PreSeg{\sSet}
\subseteq \Fun( \cDelta^{op}, \sSet).$$
There exists another functor $F: \sSet \rightarrow \Fun( \cDelta^{op}, \sSet)$
and a pair of natural transformations $F_0 \leftarrow F \rightarrow F_1$ with the following
property: for every simplicial set $X$, the induced maps $F_0(X) \leftarrow F(X) \rightarrow F_1(X)$
are weak equivalences (with respect to the complete Segal model structure on
$\Fun( \cDelta^{op}, \sSet)$). 

To describe the functor $F: \sSet \rightarrow \Fun( \cDelta^{op}, \sSet)$, 
it is convenient to introduce yet another object:
the category $(\sSet)_{/ \Nerve( \cDelta^{op})}$ of simplicial sets $X$ equipped with
a map $X \rightarrow \Nerve( \cDelta^{op})$, where $\cDelta$ denotes the category
of simplices. This category is related to the category $\Fun( \cDelta^{op}, \sSet)$
by a pair of adjoint functors
$$ \Adjoint{ \lNerve_{\bigdot}( \cDelta^{op} ) }{ (\sSet)_{/ \Nerve(\cDelta)^{op} }}{\Fun( \cDelta^{op}, \sSet)}{\Nerve_{\bigdot}( \cDelta^{op} )}$$
where $\Nerve_{\bigdot}(\cDelta^{op})$ denotes the {\it relative nerve} functor
introduced in \S \toposref{altstr}. It follows from Proposition \toposref{kudd} that these adjoint
functors determine a Quillen equivalence between $(\sSet)_{/ \Nerve(\cDelta)^{op} }$
(endowed with the covariant model structure) and $\Fun( \cDelta)^{op}, \sSet)$ 
(endowed with the injective model structure). Passing to localizations, we
deduce the existence of a Quillen equivalence between the category
$\Fun( \cDelta^{op}, \sSet)$ (endowed with the complete Segal model structure)
and $(\sSet)_{/ \Nerve(\cDelta)^{op}}$ (endowed with a suitable localization
of the covariant model structure).

Let $X$ be a simplicial set, viewed as a covariant functor from
$\cDelta^{op}$ into the category of sets. By means of a Grothendieck construction,
we can think of $X$ in a different way: as a category {\it cofibered in sets} over
$\cDelta^{op}$. More precisely, let $\cDelta_{X}$ denote the {\it category of simplices}
of $X$: the objects of $\cDelta_{X}$ are given by maps of simplicial sets $\Delta^n \rightarrow X$
where $n \geq 0$, and morphisms by commutative diagrams
$$ \xymatrix{ \Delta^m \ar[dr] \ar[rr] & & \ar[dl] \Delta^n \\
& X. & }$$
We can then view the nerve $\Nerve( \cDelta_{X})^{op}$ as an object of
$(\sSet)_{/ \Nerve(\cDelta)^{op}}$. This construction determines a fully faithful embedding
of $\sSet$ into $(\sSet)_{/ \Nerve(\cDelta)^{op}}$, which we will denote by $\sdd$.
The functor $F$ is defined to be the composition
$$\sSet \stackrel{\sdd}{\rightarrow} 
(\sSet)_{/ \Nerve(\cDelta)^{op} } \stackrel{ \lNerve_{\bigdot}(\cDelta^{op})}{\rightarrow}
\Fun( \cDelta^{op}, \sSet).$$
(For a definition of the natural transformations $F_0 \leftarrow F \rightarrow F_1$,
and the verification that they have the asserted properties, we refer the reader to
\S \ref{subber}). 
\end{remark}

We can describe the situation informally as follows: there are a number of models
for the theory of $(\infty,1)$-categories which are known to be equivalent to one another,
via more-or-less explicit combinatorial constructions. The main goal of this paper is to establish
an analogous picture for the theory of $(\infty,2)$-categories; moreover, all of the essential
players are slightly more elaborate versions of their $(\infty,1)$-categorical counterparts.
There is one exception: we do not have an obvious analogue of the adjoint functors
$(F_0,G_0)$ in the $(\infty,2)$-categorical setting. However, we do have an analogue of the functor
$F$, which serves as an adequate replacement. Our main results can be summarized as follows:

\begin{theorem}\label{toothygrin}
There is a diagram of model categories and right Quillen equivalences
$$ \xymatrix{ \scSet & \Cat_{\mSet} \ar[l] \ar[d] \\
\mset{\Nerve(\cDelta)^{op} } \ar[u] & \PreSeg{\mSet} \\
\Fun( \cDelta^{op}, \mSet) \ar[r] \ar[u] & \PreSeg{ \mSet} \ar[u] , }$$
where:
\begin{itemize}
\item[$(A1)$] In the upper left hand corner, we have the category $\scSet$
of {\it scaled simplicial sets}: that is, pairs $(X,T)$ where $X$ is a simplicial set
and $T$ is a collection of $2$-simplices in $X$, which includes all degenerate $2$-simplices.

\item[$(A2)$] In the upper right hand corner, we have the category $\Cat_{\mSet}$
of {\it $\mSet$-enriched categories}, or {\it marked simplicial categories}. Here $\mSet$ denotes the category of {\it marked simplicial sets}: that is, pairs $(X,M)$ where $X$ is a simplicial set and
$M$ a collection of $1$-simplices of $X$, which includes all degenerate $1$-simplices.
(We can think of an object of $\Cat_{\mSet}$ as a simplicial category $\calC$, such that
for every pair of objects $x,y \in \calC$, the simplicial set $\bHom_{\calC}(x,y)$ comes
equipped with a distinguished class of edges, which are required to be stable under composition.)

\item[$(A3)$] In the lower right corner of the diagram, we have the category
$\PreSeg( \mSet)$ of $\mSet$-enriched preSegal categories, endowed with
the projective model structure of Theorem \ref{castle2}. This can
be viewed as the full subcategory of $\Fun( \cDelta^{op}, \mSet)$ spanned
by those marked bisimplicial sets $X_{\bigdot \bigdot}$ such that
each $X_{\bigdot 0}$ is a constant simplicial set.

\item[$(A4)$] In the lower left corner of the diagram, we have the category
$\Fun( \cDelta^{op}, \mSet)$ of simplicial objects of $\mSet$. We regard this category
as endowed with a localization of the injective model structure which we will refer to
as the {\it complete Segal model structure} (see Proposition \ref{camper}).

\item[$(A5)$] In the middle left side of the diagram, we have the category
$\mset{ \Nerve(\cDelta)^{op} }$ whose objects are simplicial sets $X$ equipped with
both a marking and a map $X \rightarrow \Nerve(\cDelta)^{op}$. We endow
this category with a suitable localization of the coCartesian model structure, which is described in  Proposition \ref{camperr}.

\item[$(A6)$] In the middle right side of the diagram, we have the category
$\PreSeg{\mSet}$ of $\mSet$-enriched preSegal categories, endowed with
the {\it injective} model structure of Proposition \ref{curt}.

\item[$(B1)$] The upper horizontal functor $\scNerve: \Cat_{\mSet} \rightarrow \scSet$
is a decorated version of the homotopy coherent nerve $\Nerve$: it carries a marked
simplicial category $\calC$ to the pair $( \Nerve(\calC), T)$, where $T$ is a collection
of $2$-simplices in $\calC$ which depends on the collection of marked edges in
the mapping spaces $\bHom_{\calC}(x,y)$ (for the complete definition, we
refer the reader to Definition \ref{ilmut}). The functor $\scNerve$ admits a left adjoint, which
we will denote by $\scCoNerve$.

\item[$(B2)$] The upper left vertical
arrow $\mset{ \Nerve(\cDelta)^{op} } \rightarrow \scSet$ is defined as the right
adjoint to a functor $\sdd^{+}: \scSet \rightarrow \mset{ \Nerve(\cDelta)^{op} }$,
which is a decorated version of the functor $\sdd: \sSet \rightarrow (\sSet)_{/ \Nerve(\cDelta)^{op}}$
described in Remark \ref{spade} (in other words, we have $\sdd^{+}(X,T)= (\sdd(X), M)$, where
$M$ is a collection of edges in $\sdd(X) = \Nerve( \cDelta_{X})^{op}$ which depends on the collection $T$ of $2$-simplices in $X$). 

\item[$(B3)$] The upper right vertical arrow is a fully faithful embedding
$\Cat_{\mSet} \hookrightarrow \PreSeg{\mSet}$, which is a decorated
version of the embedding $\Cat_{\sSet} \rightarrow \PreSeg{\sSet}$:
to a marked simplicial category $\calC_{\bigdot}$, it associates the bisimplicial
set $\Nerve( \calC_{\bigdot})_{\bigdot}$ obtained by applying the nerve construction degreewise
(endowed with a suitable marking).

\item[$(B4)$] The bottom horizontal arrow is a right adjoint to the inclusion
functor $\PreSeg{\mSet} \hookrightarrow \Fun( \cDelta^{op}, \mSet)$.

\item[$(B5)$] The lower left vertical arrow is given by the marked relative nerve
functor $\Nerve^{+}_{\bigdot}( \cDelta^{op} )$ described in \S \toposref{altstr}.
We denote the left adjoint to this functor by $\lNerve^{+}_{\bigdot}( \cDelta^{op} )$. 

\item[$(B6)$] The lower right vertical arrow is the identity functor, which is
a right Quillen equivalence by virtue of Proposition \ref{curt2}.
\end{itemize}

Moreover, this diagram is commutative in the following sense:
there exists a natural transformation of functors
$\alpha: F \rightarrow F'$ where $F$ denotes the composition
$$ \scSet \stackrel{\sdd^{+}}{\rightarrow} \mset{ \Nerve(\cDelta)^{op} }
\stackrel{ \lNerve^{+}_{\bigdot}(\cDelta^{op})}{\rightarrow}
\Fun( \cDelta^{op}, \mSet)$$
and $F'$ the composition
$$ \scSet \stackrel{\scCoNerve}{\rightarrow} \Cat_{\mSet}
\hookrightarrow \PreSeg{\mSet} \hookrightarrow \Fun( \cDelta^{op}, \mSet).$$
Furthermore, for every scaled simplicial set $\overline{X}$, the map
$\alpha( \overline{X}): F(\overline{X}) \rightarrow F'( \overline{X} )$ is a weak
equivalence (with respect to the complete Segal model structure on
$\Fun( \cDelta^{op}, \mSet)$).
\end{theorem}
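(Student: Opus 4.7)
The plan is to split the proof into two parts: first, verifying that each of the six arrows $(B1)$--$(B6)$ is a right Quillen equivalence, and second, constructing the natural transformation $\alpha : F \to F'$ and showing that it is a pointwise weak equivalence. Each of the Quillen equivalences is the content of a dedicated result established later in the paper, and I would assemble them in dependency order. I begin by constructing the relevant model structures (Propositions \ref{camper} and \ref{camperr}, Theorem \ref{castle2}, Proposition \ref{curt}). The equivalence $(B6)$ is then exactly Proposition \ref{curt2}; $(B4)$ is Proposition \ref{curt}; $(B3)$ is part of Theorem \ref{castle2}; $(B5)$ is a marked analog of Proposition \toposref{kudd}, combined with a matching of fibrant objects between Propositions \ref{camper} and \ref{camperr}; $(B1)$ is a decorated version of Theorem \toposref{biggier}, reducing to the ordinary homotopy coherent nerve on underlying unmarked data; and $(B2)$ follows from a direct comparison between the subdivision functor $\sdd^{+}$ and the marked relative nerve.

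For the commutativity, both $F$ and $F'$ are compositions of left Quillen functors between presentable model categories and so preserve colimits. Since every scaled simplicial set $\overline{X}$ is a canonical colimit of scaled simplices $(\Delta^n, T)$, I would construct $\alpha$ by first producing a natural transformation on the category of scaled simplices and then extending by colimits. On a scaled simplex both sides admit explicit descriptions: $F((\Delta^n, T))$ is built from the category of simplices of $\Delta^n$ via the identification $\sdd(\Delta^n) = \Nerve(\cDelta_{\Delta^n})^{op}$ together with a marking determined by $T$, while $F'((\Delta^n, T))$ is the degreewise nerve of the marked simplicial category $\scCoNerve(\Delta^n, T)$. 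A natural comparison map can then be produced by an explicit description of $\scCoNerve$ of a scaled simplex in terms of necklaces indexed by the simplices of $\Delta^n$, following the template of Remark \ref{spade}.

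The main obstacle is verifying that $\alpha(\overline{X})$ is a weak equivalence in the complete Segal model structure on $\Fun(\cDelta^{op}, \mSet)$. In the unmarked/unscaled case this is stated in Remark \ref{spade} and proved in \S \ref{subber}; the decorated case requires tracing carefully through how the scalings on $\overline{X}$ interact with the two constructions. Under $\sdd^{+}$ a scaled $2$-simplex of $\overline{X}$ becomes a marked $1$-simplex of $\Nerve(\cDelta_{\overline{X}})^{op}$ recording a coCartesian condition over $\Nerve(\cDelta)^{op}$, whereas under $\scCoNerve$ the same scaled $2$-simplex becomes a marked $1$-simplex in a mapping space of a marked simplicial category. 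Matching these two decorations coherently after unwinding the zigzag of Quillen equivalences is the essential combinatorial step, and is where I expect the main technical difficulty to lie; once this matching is established, the conclusion that $\alpha(\overline{X})$ is a weak equivalence follows from the colimit-preservation reduction together with the unmarked case from \S \ref{subber}.
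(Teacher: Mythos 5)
Your plan has a genuine structural gap in the treatment of~$(B2)$, and this gap is significant enough that it inverts the logical order of the paper's argument. You propose to establish the Quillen equivalence~$(B2)$ (that $\sdd^{+}$ is a left Quillen equivalence, i.e.\ Theorem~\ref{cuti}) \emph{first}, by a ``direct comparison between the subdivision functor $\sdd^{+}$ and the marked relative nerve,'' and only afterwards to handle the commutativity assertion and the natural transformation~$\alpha$. The paper does exactly the opposite: $\sdd^{+}$ is only shown to preserve weak equivalences (Proposition~\ref{slape}) and to be a Quillen equivalence (Theorem~\ref{cuti}) as a \emph{consequence} of the commutativity statement --- that is, after constructing $\alpha^{+}\colon F^{+} \to {F'}^{+}$ explicitly and proving it is a pointwise weak equivalence (Proposition~\ref{kium}). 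There is no independent proof of~$(B2)$ in the paper, and it is unclear what the ``direct comparison'' you invoke would actually be; fleshing it out would very likely mean rediscovering the proof of Proposition~\ref{kium}. A similar issue affects your claim about~$(B1)$: you suggest it ``reduces to the ordinary homotopy coherent nerve on underlying unmarked data,'' but Theorem~\ref{slai} is proved by a self-contained argument via scaled straightening/unstraightening (Theorem~\ref{csi}), not by reduction to Theorem~\toposref{biggier}.

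There is also a gap in your colimit-reduction for the weak-equivalence verification. You write that $F$ and $F'$ preserve colimits because they are compositions of left Quillen functors, and then argue it suffices to check the assertion on scaled simplices. But the complete Segal model structure is a localization, and a colimit-preserving functor need not carry levelwise-cofibrant (co)limit diagrams to homotopy (co)limit diagrams in the localized sense. The paper has to supply two specific inputs to make the skeletal induction work: Proposition~\ref{smashet}/Example~\ref{smashett} (weak equivalences in $\Fun(\cDelta^{op},\mSet)$ with the complete Segal structure are stable under filtered colimits, to pass from finite to general~$X$) and Lemma~\ref{slappus} (the functor ${F'}^{+}$ preserves \emph{weak equivalences} and homotopy colimit diagrams --- this uses that it factors as a chain of weak-equivalence-preserving Quillen equivalences, not merely left Quillen functors). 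Without Lemma~\ref{slappus}, there is no reason the pushout squares you use in your induction are homotopy pushouts on the ${F'}^{+}$ side, and the reduction to $\Delta^{n}_{\flat}$ and $\Delta^{2}_{\sharp}$ (Lemma~\ref{jinker}) does not go through.
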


\begin{remark}
In Theorem \ref{toothygrin}, the category $\mSet$ of marked simplicial sets
plays the role of a good model for the theory of $(\infty,1)$-categories. Some
of the assertions of Theorem \ref{toothygrin} continue to hold if we replace
$\mSet$ by other models. For example, the forgetful functor
$\mSet \rightarrow \sSet$ determines a right Quillen equivalence if we
endow $\sSet$ with the Joyal model structure (Theorem \toposref{bigdiag}).
This gives rise to a commutative diagram of right Quillen equivalences
$$ \xymatrix{ \Cat_{\mSet} \ar[r] \ar[d] & \Cat_{\sSet} \ar[d] \\
\PreSeg{\mSet} \ar[r] & \PreSeg{\sSet} \\
\Fun( \cDelta^{op}, \mSet) \ar[u] \ar[r] & \Fun( \cDelta^{op}, \sSet) \ar[u], }$$
which gives us another three models for the theory of $(\infty,2)$-categories.
(Here the left column consists of Quillen equivalences appearing
Theorem \ref{toothygrin}, and the right column is defined analogously.)
\end{remark}

The bulk of this paper will be devoted to constructing the diagram described
in Theorem \ref{toothygrin} and verifying that it has the desired properties.
We begin in \S \ref{bisecA} by reviewing Rezk's theory of complete Segal spaces.
We will present a variation on his definitions, which will allow us to define the model categories described in $(A3)$ and $(A5)$, and to establish the Quillen equivalence described in $(B5)$. 
In \S \ref{bisecB}, we will review the formalism of Segal categories, which we will use to 
define the model categories described in $(A4)$ and $(A6)$ and to establish the Quillen equivalences
of $(B3)$, $(B4)$, and $(B6)$.

The notions of Segal categories and complete Segal spaces have the virtue of generalizing
to higher dimensions: using the work of Simpson-Tamsamani or Barwick, one can give a definition of $(\infty,n)$-category for any nonnegative integer $n$, using induction on $n$. However, in
either case, the resulting theory describes an $(\infty,n)$-category as an $(n+1)$-uple simplicial set.
Consequently, these definitions increase in complexity with $n$, and become somewhat cumbersome
to work with directly. Our goal in \S \ref{bisecD} is to provide an alternative definition in the case
$n=2$ which does not share this defect. We will achieve this goal by introducing a theory of {\it scaled simplicial sets}. Our definition was inspired by Verity's work on {\it stratified simplicial sets} (see \cite{verity} and \cite{verity2}), though our goals are considerably less ambitious. We will use it to define the model category
described in $(A1)$, and to construct the Quillen equivalences of $(B1)$ and $(B2)$. In order
to carry out the details, we will need an analogue of straightening and
unstraightening constructions of \S \toposref{strsec} to the setting of locally coCartesian fibrations,
which we provide in \S \ref{bisecC}. These constructions will be presecan be regarded as providing a higher-categorical version of the Grothendieck construction for lax functors, and should be useful in a variety of other contexts.

Our original goal in developing the theory described in this paper is to have an adequate higher-categorical language for describing Goodwillie's calculus of functors. In \S \ref{bisecE}, we will review the rudiments of Goodwillie's theory (namely, the theory of first derivatives) from a higher-categorical point of view. The material of \S \ref{bisecE} is almost entirely independent of the remainder of the paper, except for the final section (\S \ref{finalx}) where we explain how to interpret
the theory of Goodwillie derivatives as giving rise to a functor between $(\infty,2)$-categories.

\begin{remark}
The material presented here is really only the first step in a much larger project,
whose aim is to understand the Goodwillie calculus in terms of higher category theory.
We plan to return to this subject in \cite{calculus}.
\end{remark}

\section{Complete Segal Spaces}\label{bisecA}

In this section, we will review Rezk's theory of complete Segal spaces, and the higher dimensional generalization thereof (due to Barwick). Let us begin by sketching the basic idea. Suppose
that $\calC$ is an $(\infty,n)$-category. We would like to describe $\calC$ in terms of
invariants of a less sophisticated nature: for example, $(\infty,k)$-categories for $k < n$.
We can begin by extracting an $(\infty,0)$-category $\calC_0$ from $\calC$, by 
discarding all of the noninvertible morphisms in $\calC$ at all levels.
The passage from $\calC$ to $\calC_0$ involves a loss of information:
$\calC_0$ knows everything about the objects of $\calC$, but nothing about
noninvertible morphisms between them. To retain this information, we first note that
for every pair of objects $X,Y \in \calC$, we expect to have an $(\infty,n-1)$-category
of morphisms $\bHom_{\calC}(X,Y)$. This $(\infty,n-1)$-category depends functorially
on the pair $X,Y \in \calC_0$. Consequently, we can organize the collection
of all of the $(\infty,n-1)$-categories $\{ \bHom_{\calC}(X,Y) \}_{X,Y \in \calC}$ into
a single $(\infty,n-1)$-category $\calC_1$, whose objects are given by triples
$(X \in \calC_0, Y \in \calC_0, f \in \bHom_{\calC}(X,Y) )$.
More generally, for each $k \geq 0$, we can consider an
$(\infty,n-1)$-category $\calC_k$ consisting of $(2k+1)$-tuples
$$ (X_0 \in \calC_0, X_1 \in \calC_0, \ldots, X_k \in \calC_0,
f_1 \in \bHom_{\calC}(X_0, X_1), \ldots, f_k \in \bHom_{\calC}( X_{k-1}, X_k)) \}$$
in other words, composable sequences of morphisms
$$ X_0 \stackrel{f_1}{\rightarrow} X_1 \stackrel{f_2}{\rightarrow} X_2 \stackrel{f_2}{\rightarrow} \ldots \stackrel{f_k}{\rightarrow} X_k.$$
The collection of $(\infty,n-1)$-categories $\{ \calC_{k} \}_{k \geq 0}$ forms a simplicial
$(\infty,n-1)$-category $\calC_{\bigdot}$ satisfying the following {\it Segal condition}:
\begin{itemize}
\item[$(A1)$] For each $k \geq 0$, the canonical map $$\calC_{k} \rightarrow
\calC_1 \times_{\calC_0} \calC_1 \times_{\calC_0} \ldots
\times_{\calC_0} \calC_1$$ is an equivalence of $(\infty,n-1)$-categories.
\end{itemize}
This simply encodes the idea that an object of $\calC_{k}$ consists of a sequence
of $k$ morphisms $\{ f_i \}_{1 \leq i \leq k}$ in $\calC$, constrained only by the requirement that the domain of each $f_{i+1}$ is the codomain of $f_i$.

Conversely, if we are given a simplicial $(\infty,n-1)$-category $\calC_{\bigdot}$ satisfying
the Segal condition $(A1)$, then we should be able to extract an $(\infty,n)$-category $\calC$
as follows:
\begin{itemize}
\item The objects of $\calC$ are the objects of $\calC_0$.
\item Given a pair of objects $X,Y \in \calC_0$, the $(\infty,n-1)$-category of maps
$\bHom_{\calC}(X,Y)$ is given by the fiber product
$\{X\} \times_{ \calC_0} \calC_1 \times_{\calC_0} \{Y\}$.
\item Given a sequence of objects $X_0, \ldots, X_k \in \calC_0$, the composition law
$$ \bHom_{\calC}(X_0, X_1) \times \ldots \bHom_{\calC}(X_{k-1}, X_k) \rightarrow \bHom_{\calC}(X_0, X_k)$$
is given by the composite map
\begin{eqnarray*} 
(\{X_0 \} \times_{\calC_0} \calC_1 \times_{\calC_0} \{X_1\})
\times \ldots \times ( \{X_{k-1}\} \times_{\calC_0} \calC_1 \times_{\calC_0} \{X_k\})
& \stackrel{\sim}{\leftarrow} & \calC_k \times_{ \calC_0 \times \ldots \times \calC_0 }
( \{X_0\} \times \ldots \times \{X_k\}) \\
& \rightarrow & \{X_0\} \times_{\calC_0} \calC_1 \times_{\calC_0} \{X_k\}. 
\end{eqnarray*}
Here the invertibility of the first map follows from assumption $(A1)$.
\end{itemize}
This construction determines a left inverse (up to equivalence) to the earlier
process which extracts a simplicial $(\infty,n-1)$-category from
an $(\infty,n)$-category. However, it is not generally a right inverse, because
generally the underlying $(\infty,0)$-category of $\calC$ does not agree with
the $\calC_0$. To rule out this phenomenon, we need to make two additional assumptions
on $\calC_{\bigdot}$: 
\begin{itemize}
\item[$(A2)$] The $(\infty,n-1)$-category $\calC_0$ is an $(\infty,0)$-category.
\item[$(A3)$] The simplicial $(\infty,n-1)$-category $\calC_{\bigdot}$ is {\it complete}
(see Definition \ref{compseg}). 
\end{itemize}

Our objective is to make the above ideas precise, working in a general
$\infty$-categorical context. We begin in \S \ref{bisec1.1} by introducing the
notion of a {\it category object} of an $\infty$-category $\calY$: that is, a
simplicial object of $\calY$ satisfying axiom $(A1)$ (the case
of interest is that in which $\calY$ is some version of the theory
of $(\infty,n-1)$-categories). In order to formulate axiom $(A2)$, we need
need to assume that $\calY$ is equipped with a suitable subcategory
$\calX \subseteq \calY$ of ``$\infty$-groupoids''. In \S \ref{bisec1.2}, we will
formulate analogues of $(A2)$ and $(A3)$ under the assumption
that the inclusion $\calX \subseteq \calY$ is a {\it distributor} (see Definition \ref{disty}).
By imposing $(A1)$, $(A2)$, and $(A3)$, we will obtain a full subcategory
$\CSS{\calX \subseteq \calY} \subseteq \Fun( \Nerve(\cDelta)^{op}, \calY)$ which
we will refer to as the {\it $\infty$-category of complete Segal space objects of $\calY$}.
In \S \ref{bisec1.3}, we will show that the diagonal embedding
$\calX \rightarrow \CSS{\calX \subseteq \calY}$ gives rise to another distributor,
so the construction $\calY \mapsto \CSS{\calX \subseteq \calY}$ can be iterated.
To obtain the theory of $(\infty,n)$-categories, we simply apply this construction
$n$ times, with initial data $\calX = \calY = \SSet$. In \S \ref{bisec1.4} we will
present an alternative construction: we can begin instead with the
distributor $\SSet \subseteq \Cat_{\infty}$ and apply the above construction $(n-1)$ times.
(The fact that these two constructions give the same result is not obvious: it depends on the 
equivalence between our theory of $\infty$-categories and the theory of complete
Segal spaces. This is a result of Joyal and Tierney which we will later reprove as
Corollary \ref{JT}.)

We will conclude this section with \S \ref{bisec1.5}, where we reformulate the theory
of complete Segal space objects in the language of model categories and use it to explain part
of Theorem \ref{toothygrin}.

\subsection{Category Objects and Groupoid Objects}\label{bisec1.1}

Let $\calE$ be an ordinary category. Then $\calE$ is determined (up to canonical isomorphism)
by the simplicial set $\Nerve(\calE)$. In other words, we can regard the ordinary category
$\Cat$ of small categories as a full subcategory of the category $\sSet$ of simplicial sets.
Moreover, we can give a simple explicit characterization of this subcategory:
a simplicial set $X_{\bigdot}$ is isomorphic to the nerve of a category if and only if,
for every $n \geq 0$, the canonical map
$X_{n} \rightarrow X_1 \times_{X_0} X_1 \times_{X_0} \ldots \times_{X_0} X_1$
is a bijection. Motivated by this observation, we introduce the following definition:

\begin{definition}\label{unwise}
Let $\calC$ be an $\infty$-category. A {\it category object} of $\calC$ is a simplicial object
$X \in \Fun( \Nerve(\cDelta)^{op}, \calC)$ with the following property:
for every integer $n \geq 0$, the functor $X$ exhibits $X([n])$ as a limit
of the diagram
$$ \xymatrix{ & X( \{0,1\}) \ar[dr] \ar[dl] & & \ldots \ar[dl] \ar[dr]  &   & X(\{n-1, n\}) \ar[dl] \ar[dr] & \\
X( \{0\})  & & \ldots & & \ldots & & X(\{n\}). }$$
$($Here abuse notation identifing a nonempty finite linearly ordered set $I$ with the corresponding
object of $\cDelta^{op}$.$)$ Let $\Cat(\calC)$ denote the full subcategory of $\Fun( \Nerve(\cDelta)^{op}, \calC)$ spanned by the category objects.
\end{definition}

\begin{example}
Let $\calC$ be (the nerve of) the category of sets. Then we can identify category objects
of $\calC$ with ordinary categories.
\end{example}

In other words, a simplicial object $X_{\bigdot}$ is a category object if, for each $n \geq 0$, the canonical map
$$ X_{n} \rightarrow X_1 \times_{X_0} X_1 \times_{X_0} \times \ldots \times_{X_0} X_1$$
is an equivalence in $\calC$; here the right hand side is well-defined so long as $\calC$ admits pullbacks.

\begin{example}\label{cuckoe}
Let $\calC$ be an $\infty$-category, and let $\Grp(\calC)$ denote the full subcategory of
$\Fun( \Nerve(\cDelta)^{op}, \calC)$ spanned by the groupoid objects of $\calC$
(see Definition \toposref{grpddef}). Then $\Grp(\calC) \subseteq \Cat(\calC)$. 
\end{example}

\begin{example}\label{defcon1}
Let $\calC$ be an $\infty$-category. For every object $C \in \calC$, the constant functor
$\Nerve(\cDelta)^{op} \rightarrow \{C\} \subseteq \calC$ is a groupoid object of $\calC$.
This construction determines a fully faithful embedding $\delta: \calC \rightarrow \Grp(\calC)$.
We will say that a groupoid object of $\calC$ is {\it constant} if it lies in the essential image
of $\delta$.

If $\calC$ admits small colimits, then the functor $\delta$ admits a left adjoint, given by the geometric realization construction
$$ \Grp(\calC) \subseteq \Fun( \Nerve(\cDelta)^{op}, \calC) \stackrel{ \colim}{\rightarrow} \calC.$$
It follows that if $\calC$ is presentable, then the full subcategory of constant groupoid objects of
$\calC$ is an accessible localization $\Grp(\calC)$ (see \S \toposref{invloc}).
\end{example}

\begin{remark}\label{supling}
Since the simplicial set $\Nerve(\cDelta)^{op}$ is weakly contractible, a simplicial object
$X_{\bigdot}$ of an $\infty$-category $\calC$ is constant if and only if, for every morphism
$[m] \rightarrow [n]$ in $\cDelta$, the induced map $X_n \rightarrow X_m$ is an equivalence.
\end{remark}

For our purposes, the most important special case of Definition \ref{unwise} is that
in which $\calC$ is the $\infty$-category $\SSet$ of spaces. A category object of
$\SSet$ is usually called a {\it Segal space}. We will later see that every
Segal space $X_{\bigdot}$ determines an $\infty$-category. In particular, we can
extract a {\it homotopy category} from $X_{\bigdot}$, which is enriched over the
homotopy category $\calH$ of spaces. Our next goal is to explain how to extract this
homotopy category directly from $X_{\bigdot}$:

\begin{definition}\label{cabler}
Let $X_{\bigdot}$ be a category object of $\SSet$, and let $\calH = \h{\SSet}$ denote the homotopy category of spaces. We define a $\calH$-enriched category, the {\it homotopy category $\h{X_{\bigdot}}$}, as follows:
\begin{itemize}
\item[$(1)$] The objects of $\h{ X_{\bigdot}}$ are the points of $X_{0}$.
\item[$(2)$] Given a pair of points $x,y \in X_0$, we define
$\bHom_{ \h{X_{\bigdot}}}( x,y)$ to be the homotopy fiber product
$$ \{x\} \times_{ X_0 } X_1 \times_{ X_0 } \{y\} \in \calH.$$
\item[$(3)$] Given a sequence of points $x_0, \ldots, x_n \in X_0$, the associated composition law is
given by the composition
\begin{eqnarray*}
\prod_{1 \leq i \leq n} \bHom_{\h{X_{\bigdot}}}(x_{i-1}, x_i) & \stackrel{\sim}{\rightarrow} & \{x_0\} \times_{ X_0} X_1 \times_{ X_0 } \{x_1\} \times_{ X_0} X_1 \times_{X_0} \ldots \times_{ X_{0}} \{ x_{n-1} \} \times_{X_0} X_1 \times_{X_0} \{x_n\} \\
& \rightarrow & \{x_0 \} \times_{X_0} X_1 \times_{X_0} X_1 \times_{X_0} \ldots \times_{X_0} X_1 \times_{X_0} \{x_{n} \} \\
& \stackrel{\sim}{\leftarrow} & \{x_0\} \times_{X_0} X_n \times_{ X_0} \{x_n\} \\
& \simeq & \bHom_{ \h{X_{\bigdot}}}( x_0, x_n).
\end{eqnarray*}
\end{itemize}
Note that every point $f \in X_{1}$ determines a morphism in the homotopy category
$\h{X_{\bigdot}}$, which we will denote by $[f]$.
\end{definition}

According to Example \ref{cuckoe}, every groupoid object of $\SSet$ is a Segal space.
Our next goal is to establish a partial converse: a Segal space $X_{\bigdot}$ is a groupoid object
of $\SSet$ if and only its homotopy category $\h{X_{\bigdot}}$ is a groupoid (Proposition \ref{cuckop}).
First, we need to introduce a bit of terminology. 

\begin{notation}
Let $X_{\bigdot}$ be a simplicial object of an $\infty$-category $\calC$. For every simplicial set
$K$, let $\cDelta_{/K}$ denote the category of simplices of $K$ (see \S \toposref{quasilimit1}). We
let $X(K)$ denote a limit of the composite diagram
$$ \Nerve( \cDelta_{/K})^{op} \rightarrow \Nerve( \cDelta)^{op} \stackrel{X}{\rightarrow} \calC,$$
if such a limit exists. This limit always exists, for example, if $K$ is finite and $\calC$ admits finite limits.
\end{notation}

\begin{proposition}\label{cuckop}
Let $X_{\bigdot}$ be a category object of $\SSet$. The following conditions are equivalent:
\begin{itemize}
\item[$(1)$] The category object $X_{\bigdot}$ is a groupoid object of $\SSet$.
\item[$(2)$] For every point $f \in X_1$, the morphism $[f]$ is invertible in the homotopy category $\h{X_{\bigdot}}$.
\end{itemize}
\end{proposition}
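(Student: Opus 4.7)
The implication $(1) \Rightarrow (2)$ is essentially formal. Assuming $X_{\bigdot}$ is a groupoid object, I apply the groupoid axiom to the partition $[2] = \{0,1\} \cup \{0,2\}$ sharing the vertex $\{0\}$: the resulting square exhibits $X_2$ as the fiber product $X_1 \times_{X_0} X_1$ with respect to $(d_2, d_1)$. In particular, given $f \in X_1$ I can choose $\sigma \in X_2$ with $d_2(\sigma) = f$ and $d_1(\sigma) = s_0 d_1(f)$ (the degenerate edge at $s(f)$); the edge $g := d_0(\sigma)$ then satisfies $[g] \circ [f] = \mathrm{id}_{s(f)}$ in $\h{X_{\bigdot}}$, so $[g]$ is a left inverse of $[f]$. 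The dual partition $\{0,2\} \cup \{1,2\}$ produces a right inverse by the symmetric argument, and hence $[f]$ is invertible.

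For $(2) \Rightarrow (1)$, my plan is to reduce being a groupoid object to a concrete pair of conditions at level $n = 2$. By the characterization of groupoid objects via outer horn inclusions, combined with a spine-style induction reducing the $n \geq 3$ partitions to the $n = 2$ case (using that the Segal conditions already hold), for a category object $X_{\bigdot}$ it is enough to show that the two maps
$$ \phi_0 = (d_2, d_1) : X_2 \to X_1 \times_{X_0} X_1, \qquad \phi_2 = (d_1, d_0) : X_2 \to X_1 \times_{X_0} X_1 $$
(corresponding to the outer horn inclusions $\Lambda^2_0 \hookrightarrow \Delta^2$ and $\Lambda^2_2 \hookrightarrow \Delta^2$) are equivalences in $\SSet$.

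Using the Segal identification $X_2 \simeq X_1 \times_{X_0} X_1$ via $(d_2, d_0)$, the map $\phi_0$ becomes $(f, h) \mapsto (f, h \circ f)$, which sits over $X_1$ via its first coordinate. Its fiber over $f \in X_1$ is the precomposition map $(- \circ f) : X_1 \times_{X_0, d_1} \{t(f)\} \to X_1 \times_{X_0, d_1} \{s(f)\}$, so it suffices to show this is an equivalence for every $f$. Decomposing both total spaces further by target gives, for each $c \in X_0$, the map $\bHom_{\h{X_{\bigdot}}}(t(f), c) \to \bHom_{\h{X_{\bigdot}}}(s(f), c)$ in $\calH$ induced by $[f]$. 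This is where hypothesis $(2)$ enters: given a two-sided inverse $[g]$ of $[f]$ in the $\calH$-enriched category $\h{X_{\bigdot}}$, the operator $(-) \circ g$ is a two-sided inverse to $(-) \circ f$ by the strict associativity and unit axioms of an honest $\calH$-enriched category. Hence $\phi_0$ is an equivalence fiberwise over $X_1$, and therefore an equivalence. The argument for $\phi_2$ is analogous, using postcomposition with $g$.

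The main obstacle I anticipate is the first step of the converse direction: distilling the full groupoid axiom into the pair of outer-horn conditions at $n = 2$. Once that reduction is in place, the remainder is a clean fiberwise argument that exploits the strictness of composition in the $\calH$-enriched homotopy category $\h{X_{\bigdot}}$ to promote invertibility of $[f]$ to the invertibility of pre- and post-composition with $[f]$ on mapping spaces.
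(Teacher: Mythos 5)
Your proposal is correct and takes essentially the same route as the paper: both reduce the groupoid condition to an outer-horn condition at level $2$ (this is precisely what Proposition \toposref{grpobjdef} supplies—the reduction you flag as the main obstacle) and then analyze that map fiberwise over $X_1 \times X_0$, identifying the fiber over a pair $(f,z)$ with composition by $[f]$ on the Hom-spaces of the $\calH$-enriched category $\h{X_{\bigdot}}$. The paper is slightly more economical—it handles both implications simultaneously via a single chain of equivalences and uses only one outer horn, whereas you treat the directions separately, construct one-sided inverses explicitly for $(1)\Rightarrow(2)$, and verify both $\phi_0$ and $\phi_2$ where one would do—but these differences are purely organizational.
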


\begin{proof}
We have a pullback diagram
$$ \xymatrix{ X( \Delta^2) \ar[r]^{p'} \ar[d]^{q'} & X( \Lambda^2_0 ) \ar[d]^{q} \\
X( \Lambda^2_1 ) \ar[r]^{p} & X( \Delta^{ \{0,1\} } \coprod \{2\} ). }$$
The proof of Proposition \toposref{grpobjdef} shows that $X_{\bigdot}$ is a groupoid object if and only if the
map $p'$ is a homotopy equivalence. Since $X_{\bigdot}$ is a category object, the map
$q'$ is an equivalence. The composition $p' \circ {q'}^{-1}$ determines a map from
$r: X( \Lambda^2_1 ) \rightarrow X( \Lambda^2_0)$ in the $\infty$-category
$\SSet_{/ X( \Delta^{ \{0,1\} }\coprod \{2\} )}$, and $(1)$ is equivalent to the assertion that
$r$ is a homotopy equivalence. This can be reformulated as follows:
\begin{itemize}
\item[$(1')$] For every point $\eta$ of $X( \Delta^{ \{0,1\} } \coprod \{2\} )$, the induced map
$$r_{\eta}: X( \Lambda^2_1) \times_{  X( \Delta^{ \{0,1\} }\coprod \{2\} ) } \{ \eta \}
\rightarrow X( \Lambda^2_0) \times_{  X( \Delta^{ \{0,1\} }\coprod \{2\} ) } \{ \eta \}$$
is an equivalence in $\SSet$.
\end{itemize}
In the situation of $(1')$, we can identify $\eta$ with a pair $(f, z)$, where
$f \in X_1$ and $z \in X_0$. Let $x$ and $y$ denote the images of $f$ in
$X_0$. Unwinding the definitions, we see that $r_{\eta}$ can be identified with
the map
$$ \bHom_{ \h{X_{\bigdot}}}(y,z) \rightarrow \bHom_{ \h{X_{\bigdot}}}(x,z)$$
given by composition with $[f]$. This map is a homotopy equivalence for
every point $z \in X_0$ if and only if $[f]$ is invertible in $\h{X_{\bigdot}}$. The requirement that this condition holds for {\em every} $f \in X_1$ is equivalent to $(2)$.
\end{proof}

\begin{notation}
Let $X_{\bigdot}$ be a category object in $\SSet$. Every point $f \in X_{n}$ determines a composable sequence of morphisms $$ x_0 \stackrel{[f_1]}{\rightarrow} x_1 \stackrel{[f_2]}{\rightarrow} x_2 \stackrel{[f_3]}{\rightarrow} \ldots \stackrel{[f_n]}{\rightarrow} x_n$$
in the homotopy category $\h{ X_{\bigdot} }$ which is well-defined up to isomorphism, and
(up to isomorphism) depends only on the connected component of $f$ in $X_n$.
We will say that $f$ is {\it invertible} if each $[f_i]$ is an invertible morphism of $\h{ X_{\bigdot} }$.
We let $X_{n}^{\inv}$ denote full simplicial subset of $X_{n}$ spanned by the invertible points,
so that $X_{n}^{\inv}$ is a union of connected components of $X_{n}$. The simplicial subsets
$\{ X_{n}^{\inv} \subseteq X_{n} \}_{n \geq 0}$ assemble to form a new simplicial object of
$\SSet$, which we will denote by $X_{\bigdot}^{\inv}$.
\end{notation}

It is straightforward to establish the following universal property of $X_{\bigdot}^{\inv}$:

\begin{proposition}\label{hutch}
Let $X_{\bigdot}$ be a category object of $\SSet$. Then:
\begin{itemize}
\item[$(1)$] The simplicial object $X_{\bigdot}^{\inv}$ is a groupoid object of $\SSet$.
\item[$(2)$] Let $Y_{\bigdot}$ be a groupoid object of $\SSet$. Then composition with
the canonical map $X_{\bigdot}^{\inv} \rightarrow X_{\bigdot}$ induces a homotopy
equivalence
$$ \bHom_{ \Fun( \Nerve(\cDelta)^{op}, \SSet)}( Y_{\bigdot}, X_{\bigdot}^{\inv})
\rightarrow \bHom_{ \Fun( \Nerve(\cDelta)^{op}, \SSet)}( Y_{\bigdot}, X_{\bigdot} ).$$
\end{itemize}
\end{proposition}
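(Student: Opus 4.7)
My plan is to handle parts (1) and (2) in turn. For part (1), I would first observe that $X_{\bigdot}^{\inv}$ is a well-defined simplicial subobject of $X_{\bigdot}$: the face and degeneracy maps preserve invertibility because inner faces compose adjacent morphisms (composites of invertibles are invertible), outer faces drop the initial or terminal morphism (subsequences of invertible sequences remain invertible), and degeneracies insert identity morphisms. Next, to verify the Segal condition, I note that under the Segal equivalence
$$X_n \simeq X_1 \times_{X_0} X_1 \times_{X_0} \cdots \times_{X_0} X_1,$$
a point of $X_n$ corresponds to a composable sequence $[f_1], \ldots, [f_n]$ in $\h{X_{\bigdot}}$, and by definition the point lies in $X_n^{\inv}$ precisely when each $[f_i]$ is invertible. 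Thus the equivalence restricts to $X_n^{\inv} \simeq X_1^{\inv} \times_{X_0} \cdots \times_{X_0} X_1^{\inv}$, so $X_{\bigdot}^{\inv}$ is a category object. To apply Proposition \ref{cuckop}, it remains to check that every $f \in X_1^{\inv}$ yields an invertible morphism in $\h{X_{\bigdot}^{\inv}}$; but by construction this $\calH$-enriched category is the wide subcategory of $\h{X_{\bigdot}}$ spanned by invertible morphisms, and that subcategory is closed under taking inverses.

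For part (2), the central observation is that at each level $n$, the map $X_n^{\inv} \hookrightarrow X_n$ is an inclusion of connected components, hence a $(-1)$-truncated map of spaces. Since truncatedness in a functor $\infty$-category is detected levelwise, $X_{\bigdot}^{\inv} \to X_{\bigdot}$ is itself $(-1)$-truncated in $\Fun(\Nerve(\cDelta)^{op}, \SSet)$, and therefore the induced map
$$\bHom(Y_{\bigdot}, X_{\bigdot}^{\inv}) \to \bHom(Y_{\bigdot}, X_{\bigdot})$$
is a monomorphism of spaces. To conclude that it is an equivalence, it is enough to show it is surjective on $\pi_0$, i.e., that every morphism $\phi: Y_{\bigdot} \to X_{\bigdot}$ factors through $X_{\bigdot}^{\inv}$. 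Fix $n$ and $y \in Y_n$; then $y$ corresponds to a composable sequence $[y_1], \ldots, [y_n]$ in $\h{Y_{\bigdot}}$, and since $Y_{\bigdot}$ is a groupoid object, Proposition \ref{cuckop} ensures each $[y_i]$ is invertible. The map $\phi$ induces a functor $\h{\phi}: \h{Y_{\bigdot}} \to \h{X_{\bigdot}}$ of $\calH$-enriched categories carrying invertibles to invertibles, so $\phi_n(y)$ corresponds to the sequence $\h{\phi}([y_1]), \ldots, \h{\phi}([y_n])$ of invertible morphisms in $\h{X_{\bigdot}}$, placing it in $X_n^{\inv}$ as required.

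The main point to justify carefully is not conceptual but structural: invertibility of a simplex in $X_n$ must be detected by its image under the Segal equivalence, and the construction $\h{-}$ must be functorial in Segal spaces (so that $\phi$ induces a well-defined functor on homotopy categories preserving invertibles). Both facts are essentially formal consequences of Definition \ref{cabler} and the Segal condition, so no new technical input is needed beyond the observations above.
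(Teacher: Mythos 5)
Your proof is correct. The paper itself offers no proof of Proposition \ref{hutch}, declaring it a ``straightforward'' consequence of the construction of $X_{\bigdot}^{\inv}$; your argument supplies exactly the routine verification the authors had in mind. In part (1) the key observations are precisely the ones you make: the invertibility of a point of $X_n$ is detected component by component through the Segal equivalence, so the Segal maps restrict to equivalences $X_n^{\inv} \to X_1^{\inv} \times_{X_0} \cdots \times_{X_0} X_1^{\inv}$; the homotopy category $\h{X_{\bigdot}^{\inv}}$ is identified (at the level of underlying ordinary categories) with the wide subcategory of invertibles in $\pi_0\h{X_{\bigdot}}$, which is closed under inverses; and Proposition \ref{cuckop} then applies. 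In part (2), the reduction to surjectivity on $\pi_0$ via levelwise $(-1)$-truncatedness is the right move, and the factorization argument via $\h{\phi}$ correctly uses Proposition \ref{cuckop} (in the other direction) for the groupoid object $Y_{\bigdot}$. No gaps.
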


\begin{corollary}\label{slam}
The inclusion $\Grp(\SSet) \subseteq \Cat(\SSet)$ admits a right adjoint, given by
$X_{\bigdot} \mapsto X_{\bigdot}^{\sim}$.
\end{corollary}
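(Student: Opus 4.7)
The plan is to deduce this immediately from Proposition \ref{hutch}, which has already done essentially all of the work; the only real task is to translate its mapping-space statement into the existence of an adjoint.

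First I would observe that the assignment $X_{\bigdot} \mapsto X_{\bigdot}^{\inv}$ is functorial on $\Cat(\SSet)$. The key point is that $X_n^{\inv} \subseteq X_n$ is a union of connected components, characterized by a condition formulated entirely in the homotopy category $\h{X_{\bigdot}}$. Any morphism $f: X_{\bigdot} \to Y_{\bigdot}$ of category objects induces a functor $\h{f}: \h{X_{\bigdot}} \to \h{Y_{\bigdot}}$ which preserves invertibility of morphisms, so $f$ restricts to a morphism $X_{\bigdot}^{\inv} \to Y_{\bigdot}^{\inv}$. At the $\infty$-categorical level, this promotes to a functor $\Cat(\SSet) \to \Grp(\SSet)$ (landing in $\Grp(\SSet)$ by Proposition \ref{hutch} part $(1)$), together with a natural transformation $\epsilon: X_{\bigdot}^{\inv} \to X_{\bigdot}$ given by the canonical inclusion.

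Next I would invoke the standard criterion for the existence of a right adjoint (for instance, Proposition \toposref{adjointfunctor}): the inclusion $\Grp(\SSet) \subseteq \Cat(\SSet)$ admits a right adjoint if, for every object $X_{\bigdot} \in \Cat(\SSet)$, there exists an object $R(X_{\bigdot}) \in \Grp(\SSet)$ together with a map $\epsilon_{X}: R(X_{\bigdot}) \to X_{\bigdot}$ inducing homotopy equivalences
$$\bHom_{\Grp(\SSet)}(Y_{\bigdot}, R(X_{\bigdot})) \rightarrow \bHom_{\Cat(\SSet)}(Y_{\bigdot}, X_{\bigdot})$$
for every $Y_{\bigdot} \in \Grp(\SSet)$. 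Taking $R(X_{\bigdot}) = X_{\bigdot}^{\inv}$ and $\epsilon_X$ to be the canonical inclusion, Proposition \ref{hutch} part $(1)$ supplies membership in $\Grp(\SSet)$, and part $(2)$ supplies exactly the required equivalence of mapping spaces (using that mapping spaces in the full subcategories $\Cat(\SSet), \Grp(\SSet) \subseteq \Fun(\Nerve(\cDelta)^{op}, \SSet)$ agree with those in the ambient functor $\infty$-category).

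There is no substantive obstacle: the entire content of the corollary is already contained in Proposition \ref{hutch}, and the remaining step is purely a formal application of the universal property of adjoints. The only minor technical point is the functoriality of $(-)^{\inv}$, which follows transparently from its homotopy-categorical characterization.
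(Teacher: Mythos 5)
Your proof is correct and matches the paper's intent: the paper gives no separate argument for Corollary~\ref{slam}, treating it as an immediate consequence of Proposition~\ref{hutch}, and your write-out of that deduction (representability of $Y_{\bigdot} \mapsto \bHom_{\Cat(\SSet)}(Y_{\bigdot}, X_{\bigdot})$ on $\Grp(\SSet)^{op}$, via the adjoint-functor criterion) is exactly the right formal step. The remark on functoriality of $(-)^{\inv}$ is a harmless extra: the adjoint-functor criterion only requires pointwise representability, with functoriality following automatically, but observing it directly from the homotopy-categorical characterization is a reasonable sanity check.
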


We can informally summarize Proposition \ref{hutch} as follows: $X_{\bigdot}^{\inv}$ is the
largest groupoid object contained in the category object $X_{\bigdot}$. Our goal for the remainder of this section is to obtain a similar construction when $X_{\bigdot}$ is a category object
of an arbitrary $\infty$-category $\calC$ which admits finite limits. Our first step
is characterize $X_{\bigdot}^{\inv}$ in a different way.

\begin{notation}\label{amplecrown}
Let $K$ denote the simplicial set $$\Delta^0 \coprod_{ \Delta^{ \{0,2\} }} \Delta^3 \coprod_{ \Delta^{ \{1,3\} }} \Delta^0$$ obtained from $\Delta^3$ by collapsing the edges $\Delta^{ \{0,2\} }$ and $\Delta^{ \{1,3\} }$.
We let $K^{0} \subseteq K$ denote the image of the edge $\Delta^{ \{1,2\} } \subseteq \Delta^3$ in $K$.
\end{notation}

\begin{proposition}\label{canman}
Let $K^0 \subseteq K$ be as in Notation \ref{amplecrown}.
\begin{itemize}
\item[$(1)$] Let $X_{\bigdot}$ be a category object of $\SSet$. Then the canonical map
$\phi: X^{\inv}(K) \rightarrow X(K)$ is a homotopy equivalence.
\item[$(2)$] Let $\calC$ be an $\infty$-category which admits finite limits, and let
$Y_{\bigdot}$ be a groupoid object of $\calC$. Then the canonical map
$Y(K) \rightarrow Y(K^0)$ is an equivalence in $\calC$.
\end{itemize}
\end{proposition}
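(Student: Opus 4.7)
\emph{Proof plan.} First, unwind the definitions. Viewing
\[
K = \Delta^0 \coprod_{\Delta^{\{0,2\}}} \Delta^3 \coprod_{\Delta^{\{1,3\}}} \Delta^0
\]
as an iterated pushout of simplicial sets, we identify $X(K) \simeq X_3 \times_{X_1 \times X_1} (X_0 \times X_0)$ as the space of $3$-simplices of $X$ whose $\{0,2\}$- and $\{1,3\}$-edges are degenerate (and similarly for $Y(K)$). The subspace $K^0 \subseteq K$ is just $\Delta^{\{1,2\}}$, so $Y(K^0) \simeq Y_1$ and the map $Y(K) \to Y(K^0)$ records the $\{1,2\}$-edge of the $3$-simplex.

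For part $(1)$: each $X^{\inv}_n \hookrightarrow X_n$ is the inclusion of a union of connected components, so the induced map on limits $X^{\inv}(K) \hookrightarrow X(K)$ is also an inclusion of components. It therefore suffices to verify that every $3$-simplex $\sigma$ contributing to $X(K)$ lies in $X^{\inv}_3$. The face $\sigma|_{\{0,1,2\}}$ has degenerate long edge, which in the homotopy category $\h{X_{\bigdot}}$ translates to $[\sigma_{12}] \circ [\sigma_{01}] = [\id]$; symmetrically, $\sigma|_{\{1,2,3\}}$ yields $[\sigma_{23}] \circ [\sigma_{12}] = [\id]$. Hence $[\sigma_{12}]$ has both a right and a left inverse in $\h{X_{\bigdot}}$ and is therefore invertible, and the remaining spine edges $[\sigma_{01}] = [\sigma_{23}] = [\sigma_{12}]^{-1}$ are invertible as well.

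For part $(2)$: apply the Segal decomposition $Y_3 \simeq Y_2 \times_{Y_1} Y_2$ along the shared edge $\{1,2\}$ of the faces $\{0,1,2\}$ and $\{1,2,3\}$ to rewrite
\[
Y(K) \simeq W \times_{Y_1} W,
\]
where $W = Y_2 \times_{Y_1, d_1, s_0} Y_0$ is the space of $2$-simplices with degenerate long edge. The two projections to $Y_1$ are via $d_0$ on the first factor and $d_2$ on the second, both picking out the $\{1,2\}$-edge of the ambient $3$-simplex; the map $Y(K) \to Y(K^0)$ is the projection onto this common $Y_1$. The statement reduces to showing that each map $W \to Y_1$ is an equivalence, which is where the groupoid hypothesis enters. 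For $W \to Y_1$ via $d_2$, invoke the groupoid-object strengthening of the Segal condition: the map $(d_2, d_1): Y_2 \to Y_1 \times_{Y_0} Y_1$ (pullback over the shared source vertex) is an equivalence, and forcing the second factor to be degenerate collapses $W$ onto the first factor, making $d_2$ the identity. For $W \to Y_1$ via $d_0$ one uses the symmetric identification $(d_1, d_0): Y_2 \simeq Y_1 \times_{Y_0} Y_1$ (pullback over the shared target vertex) and repeats the argument. The equivalence $Y(K) \simeq Y(K^0)$ then follows by base change.

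The main bookkeeping obstacle is correctly tracking which face maps of $Y_2$ correspond to which projections in the various Segal and groupoid-object identifications; once this is pinned down, the argument proceeds by a routine diagram chase.
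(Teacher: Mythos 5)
Your argument is correct. For part $(1)$ you follow the paper's own route: observe that $X^{\inv}(K) \hookrightarrow X(K)$ is an inclusion of connected components because each $X^{\inv}_n \hookrightarrow X_n$ is, and then check essential surjectivity by the diagram chase (the paper labels this chase ``simple'' and leaves it to the reader; you spell it out by exhibiting $[\sigma_{01}]$ and $[\sigma_{23}]$ as right and left inverses of $[\sigma_{12}]$ in $\h{X_{\bigdot}}$, which is exactly the point).

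For part $(2)$ you take a genuinely different route from the paper. The paper simply invokes Proposition T.\ref{HTT-grpobjdef} — the criterion that for a groupoid object $Y_{\bigdot}$ of $\calC$, the map $Y(K') \to Y(K)$ is an equivalence whenever $K \hookrightarrow K'$ is a weak homotopy equivalence of finite simplicial sets that is bijective on vertices — and notes that $K^0 \subseteq K$ meets that hypothesis. You instead do the calculation by hand: decompose $Y_3 \simeq Y_2 \times_{Y_1} Y_2$ along the shared $\{1,2\}$-edge to rewrite $Y(K) \simeq W \times_{Y_1} W$ with $W = Y_2 \times_{Y_1,d_1,s_0} Y_0$, and then use the two non-inner horn-filling conditions in the definition of a groupoid object — $(d_2,d_1)\colon Y_2 \xrightarrow{\,\sim\,} Y_1 \times_{Y_0} Y_1$ and $(d_1,d_0)\colon Y_2 \xrightarrow{\,\sim\,} Y_1 \times_{Y_0} Y_1$ — to collapse $W$ onto $Y_1$ via $d_2$ and via $d_0$ respectively, hence $Y(K) \simeq Y_1 \times_{Y_1} Y_1 \simeq Y_1$. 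Your version is more self-contained and makes explicit exactly which horn conditions are used; the paper's is shorter because it leans on a portable general statement rather than unwinding the specific shape of $K$. Both are valid, and yours is a perfectly reasonable substitute if one prefers not to chase the cited reference.
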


\begin{proof}
It follows immediately from the definitions that the map $\phi$ is a homotopy equivalence onto its essential image, which consists of all points of $X(K)$ such that the induced diagram
$$ \xymatrix{ & y \ar[r] \ar[drr]_{id} & x \ar[dr] & \\
x \ar[ur] \ar[urr]_{\id} \ar[rrr] & & & y }$$
in the homotopy category $\h{X_{\bigdot}}$ consists entirely of isomorphisms. The essential surjectivity
now follows by a simple diagram chase. This proves $(1)$. Assertion $(2)$ follows from Proposition \toposref{grpobjdef}, since the inclusion $K^0 \subseteq K$ is a homotopy equivalence which is bijective on vertices.
\end{proof}

\begin{proposition}\label{cavem}
Let $\calC$ be an $\infty$-category which admits finite limits.
\begin{itemize}
\item[$(1)$] The inclusion
$\Grp(\calC) \subseteq \Cat(\calC)$ admits a right adjoint, which we will denote by
$X_{\bigdot} \mapsto X_{\bigdot}^{\inv}$. 
\item[$(2)$] For every category object $X_{\bigdot}$ of $\calC$, the canonical maps
$$ X_1^{\inv} = X^{\inv}(K^0) \leftarrow X^{\inv}(K) \rightarrow X(K) \quad \quad  X_0^{\inv} \rightarrow X_0$$
are equivalences.
\end{itemize}
\end{proposition}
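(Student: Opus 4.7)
The plan is to reduce to the case $\calC = \SSet$ handled in Corollary \ref{slam} and Proposition \ref{canman}, using the Yoneda embedding $j: \calC \hookrightarrow \calP(\calC)$. Since $j$ is fully faithful and preserves finite limits, postcomposition with $j$ induces fully faithful embeddings $j_\ast: \Cat(\calC) \hookrightarrow \Cat(\calP(\calC))$ and $j_\ast: \Grp(\calC) \hookrightarrow \Grp(\calP(\calC))$, compatible with the inclusions of groupoid objects into category objects. Because $\calP(\calC) = \Fun(\calC^{op}, \SSet)$ has all limits computed pointwise, applying Corollary \ref{slam} objectwise produces a right adjoint $(-)^{\inv}: \Cat(\calP(\calC)) \to \Grp(\calP(\calC))$ to the inclusion.

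For a category object $X_\bullet$ of $\calC$, I form $X_\bullet^{\inv}$ in $\calP(\calC)$ and aim to show each $X_n^{\inv}$ is representable, so that $X_\bullet^{\inv}$ lies in the essential image of $j_\ast$. Since $X_\bullet^{\inv}$ satisfies the Segal condition, $X_n^{\inv}$ is an iterated fiber product of copies of $X_1^{\inv}$ over $X_0^{\inv}$, so it suffices to handle $n = 0, 1$. For $n = 0$, every $0$-simplex is vacuously invertible (it represents an empty composable sequence), so the pointwise definition gives $X_0^{\inv} \simeq X_0$ in $\calP(\calC)$, which is representable---this also proves the second equivalence in $(2)$. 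For $n = 1$, Proposition \ref{canman}(2) applied to the groupoid object $X_\bullet^{\inv}$ of $\calP(\calC)$ yields an equivalence $X^{\inv}(K) \stackrel{\sim}{\rightarrow} X^{\inv}(K^0) = X_1^{\inv}$, while Proposition \ref{canman}(1) applied pointwise gives an equivalence $X^{\inv}(K) \stackrel{\sim}{\rightarrow} X(K)$. Since $X(K)$ is a finite limit in $\calC$ of the objects $X_n$ for $n \leq 3$, it is representable; this proves the first chain of equivalences in $(2)$ together with the representability of $X_1^{\inv}$.

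Once $X_\bullet^{\inv}$ is known to come from a groupoid object of $\calC$, assertion $(1)$ follows formally: full faithfulness of $j_\ast$ on both sides implies that $\bHom_{\Cat(\calC)}(Y_\bullet, X_\bullet)$ agrees with $\bHom_{\Cat(\calP(\calC))}(j_\ast Y_\bullet, j_\ast X_\bullet)$, and this is equivalent via the pointwise adjunction to $\bHom_{\Grp(\calP(\calC))}(j_\ast Y_\bullet, j_\ast X_\bullet^{\inv}) \simeq \bHom_{\Grp(\calC)}(Y_\bullet, X_\bullet^{\inv})$. The principal obstacle is precisely the descent step: a priori $X_\bullet^{\inv}$ is only defined in $\calP(\calC)$, and one must verify that it factors through $j$. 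This is where Proposition \ref{canman}(1) plays the decisive role, as it replaces the abstract object $X^{\inv}(K)$---defined by picking out invertible simplices in $\SSet$---with the concrete finite limit $X(K)$ in $\calC$, which is manifestly representable.
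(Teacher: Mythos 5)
Your argument is correct and follows the same route as the paper's proof: reduce to $\calP(\calC)$ via Yoneda, use Corollary \ref{slam} and Proposition \ref{canman} pointwise, and then deduce representability of $X_{\bigdot}^{\inv}$ from the Segal condition together with the identifications $X_0^{\inv}\simeq X_0$ and $X_1^{\inv}\simeq X(K)$. The only cosmetic difference is that the paper first records assertions $(1)$ and $(2)$ in the special case of a presheaf $\infty$-category before running the descent step, whereas you interleave the two, but the substance is identical.
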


\begin{proof}
We first treat the case where $\calC = \calP(\calD)$ is the $\infty$-category of presheaves on
another $\infty$-category $\calD$. In this case, we have canonical isomorphisms
$$ \Grp(\calC) \simeq \Fun( \calD^{op}, \Grp(\SSet) ) \quad \quad \Cat(\calC) \simeq \Fun( \calD^{op}, \Cat(\SSet)),$$
so assertion $(1)$ follows from Corollary \ref{slam}. To verify $(2)$, it suffices to check that the resulting maps are equivalences after evaluation at every object $D \in \calD$. We may therefore reduce to the case where $\calC = \SSet$, where the desired result follows from Proposition \ref{canman}.

We now consider the general case. Without loss of generality we may suppose that $\calC$ is small.
Let $j: \calC \rightarrow \calP(\calC)$ denote the Yoneda embedding. Since $j$ preserves finite limits, it induces fully faithful embeddings
$$ \Grp(\calC) \subseteq \Grp( \calP(\calC) ) \quad \quad \Cat(\calC) \subseteq \Cat( \calP(\calC) ).$$
The first part of the proof shows that the inclusion $\Grp( \calP(\calC) ) \subseteq \Cat( \calP(\calC) )$ admits a right adjoint $X_{\bigdot} \mapsto X_{\bigdot}^{\inv}$. To complete the proof, it will suffice
to show that if $X_{\bigdot}$ is a category object of $\calP(\calC)$ such that each
$X_{n}$ is representable (that is, each $X_n$ lies in the essential image of the Yoneda embedding
$j: \calC \rightarrow \calP(\calC)$), then the simplicial object $X_{\bigdot}^{\inv}$ has the same property.
Since $X_{\bigdot}^{\inv}$ is a category object of $\calP(\calC)$ and the collection of representable functors is stable under finite limits (because $\calC$ admits finite limits and $j$ is left exact), 
it will suffice to show that $X_0^{\inv}$ and $X_1^{\inv}$ are representable. The first part of the proof shows that these objects are equivalent to $X_0$ and $X(K)$, respectively; the first is a representable functor by assumption, and the second is a finite limit of representable functors and therefore representable.
\end{proof}

\subsection{Segal Spaces and Complete Segal Spaces}\label{bisec1.2}

In \S \ref{bisec1.1}, we introduced the definition of a {\it category object} of an
arbitary $\infty$-category $\calY$, which gives a precise articulation of
axiom $(A1)$ appearing in the introduction to \S \ref{bisecA}. Our goal in this
section is to do the same for axioms $(A2)$ and $(A3)$. To obtain a sensible theory, we need to introduce some assumptions on $\calY$.

\begin{definition}\label{disty}
A {\it distributor} consists of an $\infty$-category $\calY$ together with a full subcategory $\calX$ satisfying the following conditions:
\begin{itemize}
\item[$(1)$] The $\infty$-categories $\calX$ and $\calY$ are presentable.

\item[$(2)$] The full subcategory $\calX \subseteq \calY$ is stable under small limits and colimits
in $\calY$.

\item[$(3)$] Let $Y \rightarrow X$ be a morphism in $\calY$ such that $X \in \calX$. Then the pullback functor $\calX_{/X} \rightarrow \calY_{/Y}$ preserves small colimits.

\item[$(4)$] 
Let $\calO$ denote the full subcategory of $\Fun( \Delta^1, \calY)$ spanned by those
morphisms $f: Y \rightarrow X$ such that $X \in \calX$, and let $\pi: \calO \rightarrow \calX$ be the functor given by evaluation at $\{1\} \subseteq \Delta^1$. Since $\calY$ admits pullbacks, the evaluation functor 
$\Fun( \Delta^1, \calY ) \rightarrow \Fun( \{1\}, \calY) \simeq \calY$ is a Cartesian fibration, so that
$\pi$ is likewise a Cartesian fibration. Let $\chi: \calX \rightarrow \widehat{\Cat}^{op}_{\infty}$ be a functor which classifies $\pi$. Then $\chi: \calX \rightarrow \widehat{\Cat}_{\infty}^{op}$ preserves small limits.
\end{itemize}
\end{definition}

\begin{remark}\label{cavem2}
Condition $(2)$ of Definition \ref{disty} is equivalent to the requirement that the inclusion
$i: \calX \subseteq \calY$ preserves small limits and colimits. In view of Corollary \toposref{adjointfunctor}, this is equivalent to the requirement that $i$ admits both left and right adjoints.
\end{remark}


\begin{example}\label{jina}
Suppose that $\calX = \calY$ in Definition \ref{disty}. Then condition $(3)$ is equivalent to the requirement that colimits in $\calX$ are universal, and condition $(4)$ is equivalent to the assertion that the collection of all morphisms in $\calX$ is local (in the sense of Definition \toposref{localitie}). It follows from Theorem \toposref{mainchar} and \toposref{charleschar} that the inclusion $\calX \subseteq \calY$ is a distributor if and only if $\calX$ is an $\infty$-topos.
\end{example}

Our first goal is to reformulate conditions $(3)$ and $(4)$ of Definition \ref{disty} in more concrete terms.

\begin{proposition}\label{maid}
Let $\calX \subseteq \calY$ be a fully faithful inclusion of $\infty$-categories satisfying conditions
$(1)$ and $(2)$ of Definition \ref{disty}, let $K$ be a small simplicial set, and let
$\overline{q}: K^{\triangleright} \rightarrow \calX$ be a colimit diagram. Consider the following conditions:

\begin{itemize}
\item[$(a)$] Let $X \in \calX$ denote the image under $\overline{q}$ of the cone point
of $K^{\triangleright}$, so that we may view $\overline{q}$ as defining a map
$\widetilde{q}: K \rightarrow \calX_{/X}$. Let $f: Y \rightarrow X$ be a morphism in $\calY$. Then
the morphism $K \rightarrow \calY_{/Y}$ obtained by pullback of $\widetilde{q}$ along the
map $f$ classifies a colimit diagram $K^{\triangleright} \rightarrow \calY$.

\item[$(b)$] The composition $\chi \circ \overline{q}: K^{\triangleright} \rightarrow \widehat{\Cat}_{\infty}^{op}$ is a colimit diagram. Here $\chi: \calX \rightarrow \widehat{\Cat}_{\infty}^{op}$ classifies the
Cartesian fibration $\pi: \calO \rightarrow \calX$ appearing in Definition \ref{disty}. 

\item[$(c)$] Let $\overline{\alpha}: \overline{p} \rightarrow \overline{q}$ be a natural transformation between diagrams $\overline{p}, \overline{q}: K^{\triangleright} \rightarrow \calY$, where
$\overline{q}$ is colimit diagram $\calX$ and $\alpha = \overline{\alpha} | K$ is a Cartesian transformation.
Then $\overline{\alpha}$ is a Cartesian transformation if and only if $\overline{p}$ is a colimit diagram.
\end{itemize}
Then $(a) \wedge (b) \Leftrightarrow (c)$.
\end{proposition}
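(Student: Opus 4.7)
My plan is to treat condition (b) as a statement about Cartesian sections of $\pi$ pulled back along $q$, and to use this reformulation to bridge the pointwise condition in (c) with the global descent assertions in (a) and (b). Write $X = \overline{q}(\infty) \in \calX$ for the cone point, and $\widetilde{q}: K \to \calX_{/X}$ for the associated diagram in the slice. Straightening identifies $\chi(X)$ with $\calY_{/X}$, and identifies $\chi$ on an edge $X' \to X$ with the pullback functor $\calY_{/X} \to \calY_{/X'}$. Consequently, (b) is equivalent to the assertion that the natural functor $\Theta: \calY_{/X} \to \lim_{v \in K^{\op}} \calY_{/q(v)}$, sending $f: Y \to X$ to the compatible family $(Y \times_X q(v) \to q(v))_{v \in K}$, is an equivalence of $\infty$-categories; the target is the $\infty$-category of Cartesian sections of the Cartesian fibration $\calO \times_{\calX} K \to K$.

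I would first verify $(c) \Rightarrow (a)$ and $(c) \Rightarrow (b)$. For (a): given $f: Y \to X$, form the diagram $\overline{p}: K^{\triangleright} \to \calY$ whose restriction to $K$ is the pullback family $(Y \times_X q(v))_v$ and whose value at the cone point is $Y$, together with the evident natural transformation $\overline{\alpha}: \overline{p} \to \overline{q}$ recording $f$ at the cone point. By construction $\overline{\alpha}$ is Cartesian (each square is a pullback), so (c) yields that $\overline{p}$ is a colimit diagram, which is (a). For (b), essential surjectivity of $\Theta$: given an arbitrary Cartesian section $\alpha: p \to \overline{q}|K$, extend $p$ to a colimit diagram $\overline{p}: K^{\triangleright} \to \calY$ (using presentability of $\calY$), extend $\alpha$ to $\overline{\alpha}$ via the universal property of the colimit (producing an induced map $f: \overline{p}(\infty) \to X$), and apply (c) to conclude $\overline{\alpha}$ is Cartesian; this identifies $\alpha$ with $\Theta(f)$. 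Fully faithfulness of $\Theta$ is analogous, applying (c) to diagrams valued in $\Fun(\Delta^1, \calY)$ to extend morphisms between Cartesian sections over $K$ uniquely to morphisms at the cone point.

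Next, I address $(a) \wedge (b) \Rightarrow (c)$. Suppose $\overline{\alpha}: \overline{p} \to \overline{q}$ is given with $\alpha = \overline{\alpha}|K$ Cartesian. The implication ``$\overline{\alpha}$ Cartesian $\Rightarrow$ $\overline{p}$ a colimit'' uses only (a): when $\overline{\alpha}$ is fully Cartesian then $\overline{p}$ coincides with the pullback family for $f = \overline{\alpha}(\infty)$, and (a) asserts such a family is a colimit. For the converse, suppose $\overline{p}$ is a colimit diagram; set $Y = \overline{p}(\infty)$ and let $f: Y \to X$ be the induced map. Via (b), the Cartesian section $\alpha$ corresponds under $\Theta^{-1}$ to a unique object $Y'' \to X$ of $\calY_{/X}$ together with coherent identifications $\overline{p}(v) \simeq Y'' \times_X q(v)$. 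Applying (a) to $Y'' \to X$ produces another colimit diagram $\overline{p}''$ extending the pullback family. Since $\overline{p}$ and $\overline{p}''$ are both colimit diagrams with equivalent restrictions to $K$, uniqueness of colimit cones supplies an equivalence $\overline{p} \simeq \overline{p}''$, whence $Y \simeq Y''$ and $f$ matches the structure map of $Y''$. Transporting $\overline{\alpha}$ along this equivalence exhibits it as the canonical Cartesian transformation arising from the pullback family, so $\overline{\alpha}$ is Cartesian.

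The principal obstacle is the coherence in the final step: one must upgrade the pointwise equivalences (on $K$ via the Cartesian hypothesis, at $\infty$ via comparison of colimits) to a genuine equivalence of natural transformations $\overline{\alpha} \simeq \overline{\alpha}''$ in $\Fun(K^{\triangleright}, \Fun(\Delta^1, \calY))$, rather than just an equivalence of the underlying data. The remedy is to reinterpret the entire argument inside the arrow category: a natural transformation $\overline{\alpha}$ whose restriction to $K$ is Cartesian is determined up to contractible choice by a colimit cone for $\overline{p}|K$ together with a compatible cone map to $X$. Condition (b) supplies the uniqueness of this cone map (through the equivalence $\Theta$), while (a) supplies the uniqueness of $Y$ as the colimit of the pullback family; together they pin down $\overline{\alpha}$ up to the required contractible space of choices, completing the argument.
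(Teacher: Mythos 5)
Your reformulation of $(b)$ as the assertion that $\Theta\colon \calY_{/X} \to \lim_{v} \calY_{/q(v)}$ is an equivalence is exactly the paper's use of Proposition T.\ref{HTT-charcatlimit}, and your final paragraph's resolution of the coherence problem — appealing to the uniqueness of colimit extensions together with the equivalence $\Theta$ to pin down $\overline{\alpha}$ up to contractible choice — reproduces the paper's trivial-fibration argument $\overline{\calC}^1 \to \calC^0$ in a more pointwise guise. This is the same proof as the paper's, packaged slightly less efficiently (the paper organizes everything at once as a comparison of the three full subcategories $\overline{\calC}^0$, $\overline{\calC}^1$, $\calC^0$, which dispenses with the coherence worry automatically), but the essential content and all the key steps coincide.
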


\begin{proof}
Let $\overline{\calC} = \Fun(K^{\triangleright}, \calY)^{/\overline{q}}$ and
$\calC = \Fun(K,\calY)^{/q}$. Let $\overline{\calC}^{0}$ denote the full subcategory of
$\overline{\calC}$ spanned by {\em Cartesian} natural tranformations $\overline{\alpha}: \overline{p} \rightarrow \overline{q}$, and let $\calC^0$ be defined similarly. Finally, let
$\overline{\calC}^{1}$ denote the full subcategory of $\overline{\calC}$ spanned by those natural transformations $\overline{\alpha}: \overline{p} \rightarrow \overline{q}$ such that $\overline{p}$ is a colimit diagram and $\alpha = \overline{\alpha} | K$ is a Cartesian transformation. 
Assertion $(a)$ is the requirement that $\widehat{\Cat}^{1} \subseteq \widehat{\calC}^0$, and
assertion $(c)$ is the requirement that $\widehat{\Cat}^{1} = \widehat{\calC}^0$. The implication
$(c) \Rightarrow (a)$ is obvious. Let us assume that $(a)$ is satisfied; we wish to show that
$(b)$ holds if and only if $\widehat{\Cat}^0 \subseteq \widehat{\calC}^1$.

Let $\calD$ denote the full subcategory of $\Fun(K^{\triangleright}, \calY)$ spanned by the colimit diagrams. Proposition \toposref{lklk} asserts that the restriction map $\calD \rightarrow \Fun(K,\calY)$ is a trivial fibration. It follows that the associated map $\calD^{/\overline{q}} \rightarrow \Fun(K,\calY)^{/q}$ is also a trivial fibration, and therefore restricts to a trivial fibration $\overline{\calC}^{1} \rightarrow \calC^{0}$.

According to Proposition \toposref{charcatlimit}, condition $(b)$ is equivalent to the assertion that
the projection $\overline{\calC}^{0} \rightarrow \calC^0$ is an equivalence of $\infty$-categories.
In view of the above argument, this is equivalent to the assertion that the fully faithful inclusion
$\overline{\calC}^{0} \subseteq \overline{\calC}^{1}$ is essentially surjective. Since
$\overline{\calC}^{0}$ is clearly stable under equivalence in $\overline{\calC}$, $(b)$
holds if and only if $\overline{\calC}^0 = \overline{\calC}^1$, as desired.
\end{proof}

\begin{corollary}\label{summa}
Let $\calX \subseteq \calY$ be a fully faithful inclusion of $\infty$-categories satisfying conditions
$(1)$ and $(2)$ of Definition \ref{disty}. Then $\calX \subseteq \calY$ is a distributor if and only if the following condition is satisfied: for every small simplicial set $K$ and every natural
transformation $\overline{\alpha}: \overline{p} \rightarrow \overline{q}$, if
$\overline{q}$ is a colimit diagram in $\calX$ and $\alpha = \overline{\alpha} | K$ is
Cartesian, then $\overline{\alpha}$ is Cartesian if and only if $\overline{p}$ is a colimit diagram.
\end{corollary}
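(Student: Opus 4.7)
My plan is to derive the corollary directly from Proposition \ref{maid}. Assuming conditions $(1)$ and $(2)$ of Definition \ref{disty}, I claim that the condition stated in the corollary --- for every small $K$ and every natural transformation $\overline{\alpha}: \overline{p} \to \overline{q}$ with $\overline{q}$ a colimit diagram in $\calX$ and $\alpha = \overline{\alpha} | K$ Cartesian, $\overline{\alpha}$ is Cartesian if and only if $\overline{p}$ is a colimit diagram --- is nothing other than condition $(c)$ of Proposition \ref{maid}, uniformly quantified over all small $K$ and all colimit diagrams $\overline{q}: K^{\triangleright} \to \calX$. By that proposition, this is equivalent to conditions $(a)$ and $(b)$ both holding for every such $\overline{q}$. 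It remains to verify that the universally-quantified versions of $(a)$ and $(b)$ recover exactly conditions $(3)$ and $(4)$ of Definition \ref{disty}.

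For $(a) \Leftrightarrow (3)$: given a morphism $f: Y \to X$ with $X \in \calX$, any colimit diagram in $\calX_{/X}$ can be identified, via the forgetful functor $\calX_{/X} \to \calX$, with a colimit diagram $\overline{q}: K^{\triangleright} \to \calX$ whose value at the cone point is $X$; pulling back along $f$ produces precisely the diagram $K^{\triangleright} \to \calY_{/Y}$ appearing in condition $(a)$. Hence $(a)$, quantified over all such $\overline{q}$ and all $f$, is precisely the statement that the pullback functor $f^{\ast}: \calX_{/X} \to \calY_{/Y}$ preserves small colimits for every $f$, which is condition $(3)$. For $(b) \Leftrightarrow (4)$: condition $(b)$, quantified over all colimit diagrams in $\calX$, is by definition the assertion that $\chi: \calX \to \widehat{\Cat}_{\infty}^{op}$ sends colimit diagrams in $\calX$ to colimit diagrams in $\widehat{\Cat}_{\infty}^{op}$; this coincides with condition $(4)$ upon identifying colimits in $\widehat{\Cat}_{\infty}^{op}$ with limits in $\widehat{\Cat}_{\infty}$ (equivalently, interpreting $\chi$ as a limit-preserving functor valued in $\widehat{\Cat}_{\infty}^{op}$).

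No serious obstacle arises: the substantive content is already packaged into Proposition \ref{maid}, and the corollary simply re-expresses conditions $(3)$ and $(4)$ of Definition \ref{disty} as a single, uniform criterion phrased entirely in terms of Cartesian natural transformations over colimit diagrams. The only care required is in the bookkeeping: identifying colimits in $\calX_{/X}$ with pointed colimits in $\calX$, and matching the direction of limit/colimit preservation for the classifying functor $\chi$ taking values in the opposite category.
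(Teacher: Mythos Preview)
Your approach is correct and is exactly what the paper intends: the corollary is stated without proof as an immediate consequence of Proposition \ref{maid}, and you have supplied precisely the unpacking one needs---recognizing the corollary's condition as condition $(c)$ quantified over all colimit diagrams, and matching the quantified versions of $(a)$ and $(b)$ with conditions $(3)$ and $(4)$ of Definition \ref{disty}. One small imprecision: in your identification of $(a)$ with $(3)$, a general colimit diagram in $\calX_{/X}$ has cone point some $(X' \to X)$, not $X$ itself; but since you are quantifying over all $X \in \calX$ and all $f$, this is harmless (the pullback along $f: Y \to X$ of such a diagram factors through the pullback along $Y \times_X X' \to X'$, reducing to condition $(a)$ for the diagram with cone point $X'$).
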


\begin{remark}
It follows from the characterization given in Corollary \ref{summa} that if $\calX \subseteq \calY$ is
a distributor, then $\calX \subseteq \calX$ is also a distributor; in particular, $\calX$ is an $\infty$-topos
(Example \ref{jina}).
\end{remark}

\begin{definition}
Let $\calX \subseteq \calY$ be a distributor. We will say that a simplicial object
$Y_{\bigdot} \in \Fun( \Nerve(\cDelta)^{op}, \calY)$ is a {\it Segal space object} if the following conditions are satisfied:
\begin{itemize}
\item[$(1)$] The simplicial object $Y_{\bigdot}$ is a category object of $\calY$.
\item[$(2)$] The object $Y_0$ belongs to $\calX$.
\end{itemize}
We let $\SegS{\calX \subseteq \calY}$ denote the full subcategory of $\Fun( \Nerve(\cDelta)^{op}, \calY)$ spanned by the Segal space objects.
\end{definition}

\begin{remark}
We have a homotopy pullback diagram of $\infty$-categories
$$ \xymatrix{ \SegS{ \calX \subseteq \calY} \ar[r] \ar[d] & \Cat(\calY) \ar[d] \\
\calX \ar[r] & \calY, }$$
where $\Cat(\calY) \subseteq \Fun( \Nerve(\cDelta)^{op}, \calY)$ is the $\infty$-category of category objects of $\calY$. It follows from Theorem \toposref{surbus} that the $\infty$-category
$\SegS{ \calX \subseteq \calY}$ of Segal space objects of $\calY$ is presentable, and that each
functor in this diagram admits a left adjoint.
\end{remark}

\begin{notation}\label{righton}
Let $\calX \subseteq \calY$ be a distributor. Then the inclusions
$$ \Grp(\calX) \subseteq \Cat(\calX) \subseteq \Cat(\calY)$$
admit right adjoints, by Proposition \ref{cavem} and Remark \ref{cavem2}. It follows that the inclusion
$\Grp(\calX) \subseteq \SegS{\calX \subseteq \calY}$ admits a right adjoint, which we will denote by
$\Gp$. Note that Proposition \ref{cavem} implies that for every Segal space object $Y_{\bigdot}
\in \SegS{ \calX \subseteq \calY}$, the colocalization map $(\Gp Y)_{\bigdot} \rightarrow Y_{\bigdot}$
induces an equivalence $(\Gp Y)_0 \rightarrow Y_0$.
\end{notation}

\begin{definition}\label{compseg}
Let $\calX \subseteq \calY$ be a distributor. We will say that a Segal space object
$Y_{\bigdot}$ of $\calY$ is {\it complete} if the groupoid object $(\Gp Y)_{\bigdot} \in \Grp(\calX)$ is constant (Example \ref{defcon1}). 
We let $\CSS{ \calX \subseteq \calY}$ denote the full subcategory of $\Fun( \Nerve(\cDelta)^{op}, \calY)$ spanned by the complete Segal space objects.
\end{definition}

\begin{remark}\label{sluther}
Let $\calX \subseteq \calY$ be a distributor. It follows from Lemma \toposref{stur2} that $\CSS{ \calX \subseteq \calY}$ is an accessible localization of
$\SegS{ \calX \subseteq \calY}$, and therefore an accessible localization of $\Fun( \Nerve(\cDelta)^{op}, \calY)$.
\end{remark}

Our goal for the remainder of this section is to describe the localization functor
$L: \SegS{ \calX \subseteq \calY} \rightarrow \CSS{ \calX \subseteq \calY}$ more
explicitly. For example, given a morphism $f: Y_{\bigdot} \rightarrow Y'_{\bigdot}$ between
Segal space objects of $\calY$, we would like a simple criterion for testing whether
or not $Lf$ is an equivalence. This criterion can be formulated as follows:

\begin{definition}\label{jinke}
Let $\calX \subseteq \calY$ be a distributor, and let $f_{\bigdot}: Y_{\bigdot} \rightarrow Y'_{\bigdot}$ be
a map between Segal space objects of $\calY$. We will say that $f_{\bigdot}$ is a {\it Segal equivalence} if the following conditions are satisfied:
\begin{itemize}
\item[$(a)$] The map $| \Gp Y_{\bigdot} | \rightarrow | \Gp Y'_{\bigdot} |$ is an equivalence in
the $\infty$-topos $\calX$.

\item[$(b)$] The induced diagram
$$ \xymatrix{ Y_1 \ar[r] \ar[d] & Y'_1 \ar[d] \\
Y_0 \times Y_0 \ar[r] & Y'_0 \times Y'_0 }$$
is a pullback square in $\calY$.
\end{itemize}
\end{definition}

We can now state the main result of this section.

\begin{theorem}\label{segmin}
Let $\calX \subseteq \calY$ be a distributor. Then:
\begin{itemize}
\item[$(1)$] The inclusion $\CSS{ \calX \subseteq \calY} \subseteq \SegS{ \calX \subseteq \calY}$
admits a left adjoint $L$.
\item[$(2)$] Let $f: Y_{\bigdot} \rightarrow Y'_{\bigdot}$ be a morphism between Segal space objects of $\calY$. Then $Lf$ is an equivalence if and only if $f$ is a Segal equivalence.
In particular, for each $Y_{\bigdot} \in \SegS{ \calX \subseteq \calY}$, the localization map
$Y_{\bigdot} \rightarrow LY_{\bigdot}$ is a Segal equivalence.
\end{itemize}
\end{theorem}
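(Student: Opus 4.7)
The proof of $(1)$ is immediate from Remark \ref{sluther}. For $(2)$, my plan is to identify the class $W$ of Segal equivalences with the class $W_L$ of morphisms inverted by $L$. Along the way I would verify that $W$ is closed under composition and that the pasting lemma for pullback squares gives the following 2-out-of-three property: if $g \in W$ and $g \circ f \in W$, then $f \in W$.

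The heart of the argument is a \emph{key lemma}: a Segal equivalence $f: Y_{\bigdot} \to Y'_{\bigdot}$ between complete Segal space objects is automatically an equivalence. Completeness of $Y_{\bigdot}$ means $\Gp Y_{\bigdot}$ is constant (Definition \ref{compseg}), so by Remark \ref{supling} the canonical map $(\Gp Y)_0 \to |\Gp Y|$ is an equivalence in $\calX$; since $(\Gp Y)_0 \simeq Y_0$ (Notation \ref{righton}) we deduce $Y_0 \simeq |\Gp Y|$, and similarly $Y'_0 \simeq |\Gp Y'|$. Condition $(a)$ of Definition \ref{jinke} then forces $f_0: Y_0 \to Y'_0$ to be an equivalence, and combining this with the pullback condition $(b)$ yields that $f_1: Y_1 \to Y'_1$ is an equivalence. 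The Segal condition expresses $Y_n$ and $Y'_n$ as iterated fiber products of $Y_1$'s (resp.\ $Y'_1$'s) over $Y_0$ (resp.\ $Y'_0$), so $f_n$ is an equivalence for every $n \geq 0$.

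The main obstacle is to produce an explicit construction of $LY_{\bigdot}$ and to verify that the unit $\eta: Y_{\bigdot} \to LY_{\bigdot}$ is itself a Segal equivalence. The plan is to exploit the fact (noted in the remark following Corollary \ref{summa}) that $\calX$ is an $\infty$-topos, in which every groupoid object is effective. In particular, $Y_0 \to |\Gp Y|$ is an effective epimorphism in $\calX$ whose \v{C}ech nerve is $\Gp Y_{\bigdot}$, and one can build $LY_{\bigdot}$ by descent from the data of $Y_{\bigdot}$: set $(LY)_0 := |\Gp Y|$ and obtain the higher terms by base change of $Y_{\bigdot}$ along this effective epimorphism. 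Condition $(3)$ of Definition \ref{disty} (universality of colimits indexed over objects of $\calX$) then ensures that $LY_{\bigdot}$ inherits the Segal property, and condition $(4)$ (that the classifying functor $\chi$ preserves small limits) supplies the descent mechanism needed to extend $\calY$-valued data from $Y_0$ to $|\Gp Y|$. Once $LY_{\bigdot}$ has been constructed, completeness follows because $\Gp LY_{\bigdot}$ is the \v{C}ech nerve of an equivalence, hence constant. Condition $(b)$ of Definition \ref{jinke} for $\eta$ is built into the construction, and condition $(a)$ follows from the identification $|\Gp LY| \simeq (LY)_0 = |\Gp Y|$.

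With both ingredients in hand, part $(2)$ follows formally. For any $f: Y_{\bigdot} \to Y'_{\bigdot}$, the commutative square with vertical sides $\eta_Y, \eta_{Y'}$ has both vertical maps in $W$; since $W$ is closed under composition and satisfies the 2-out-of-three property noted above, $f \in W$ if and only if $Lf \in W$, and the key lemma identifies $Lf \in W$ with $Lf$ being an equivalence, i.e., $f \in W_L$.
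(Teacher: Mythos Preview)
Your overall strategy matches the paper's (via Proposition~\ref{genprip}), and your key lemma is exactly Lemma~\ref{nowork}. But there is a genuine gap in the final formal argument. The partial two-out-of-three you cite (Remark~\ref{humma}: if $g\in W$ and $g\circ f\in W$ then $f\in W$) together with closure under composition only yields $Lf\in W\Rightarrow f\in W$. For the converse you would need that $\eta_Y\in W$ and $Lf\circ\eta_Y\in W$ imply $Lf\in W$, i.e.\ cancellation on the \emph{other} side, and this does not follow from the pasting lemma for pullbacks. The paper supplies the missing direction as Lemma~\ref{sowork}, which requires the additional hypothesis that $\eta_Y$ induce an effective epimorphism in degree~$0$ (it does: $Y_0\to|\Gp Y_{\bigdot}|$), and the argument genuinely uses Lemma~\ref{precooke}. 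Equivalently, one must verify separately that $L$ preserves Segal equivalences --- this is condition~(5) of Proposition~\ref{genprip}, established as Proposition~\ref{mainwork}(3).

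Your descent description of $L$ captures the right intuition --- indeed the paper's construction yields $(LY)_0\simeq|\Gp Y_{\bigdot}|$ together with pullback squares expressing $Y_n$ as $(LY)_n\times_{(LY)_0^{n+1}}Y_0^{n+1}$ --- but the paper realizes this via an explicit left Kan extension over the category $\calJ=\Nerve(\Fun([1],\cDelta))^{op}$ (Construction~\ref{tubal}) rather than an abstract descent mechanism. Showing that $LY_{\bigdot}$ is again a Segal space object uses Lemma~\ref{cooke}, and completeness (Proposition~\ref{mainwork}(5)) is a nontrivial diagram chase; your assertion that ``$\Gp LY_{\bigdot}$ is the \v{C}ech nerve of an equivalence'' is precisely the statement of completeness, not a proof of it.
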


The remainder of this section is devoted to the proof of Theorem \ref{segmin}.
We begin with a few easy observations about the collection of Segal equivalences
in $\SegS{\calX \subseteq \calY}$.

\begin{remark}\label{swinn}
Let $\calX \subseteq \calY$ be a distributor, and let $f: Y_{\bigdot} \rightarrow Y'_{\bigdot}$ 
be a Segal equivalence. Let $K$ be any simplicial set, and let $V$ be the set of vertices of
$K$. Then the diagram
$$ \xymatrix{ Y(K) \ar[r] \ar[d] & Y'(K) \ar[d] \\
\prod_{v \in V} Y_0 \ar[r] & \prod_{v \in V} Y'_0 }$$
is a pullback square.
\end{remark}

\begin{remark}
Let $\calX \subseteq \calY$ be a distributor, and let $G: \calY \rightarrow \calX$ be a right adjoint to the inclusion of $\calX$ into $\calY$. Then composition with $G$ carries Segal equivalences in
$\SegS{ \calX \subseteq \calY}$ to Segal equivalences in $\SegS{ \calX \subseteq \calX}$. Combining
this observation with Remark \ref{swinn} and Proposition \ref{canman}, we deduce that for every
Segal equivalence $f: Y_{\bigdot} \rightarrow Y'_{\bigdot}$ in $\calY$, the induced diagram
$$ \xymatrix{ (\Gp Y)_{1} \ar[r] \ar[d] & ( \Gp Y')_1 \ar[d] \\
Y_0 \times Y_0 \ar[r] & Y'_0 \times Y'_0 }$$
is a pullback square; in other words, the induced map $\Gp Y_{\bigdot} \rightarrow \Gp Y'_{\bigdot}$
is a Segal equivalence.
\end{remark}

\begin{remark}\label{humma}
Let $\calX \subseteq \calY$ be a distributor, and suppose given a commutative diagram
$$ \xymatrix{ & Y_{\bigdot} \ar[dr]^{g} & \\
X_{\bigdot} \ar[ur]^{f} \ar[rr]^{h} & & Z_{\bigdot} }$$
of Segal space objects of $\calY$. Suppose further that $g$ is a Segal equivalence. Then
$f$ is a Segal equivalence if and only if $h$ is a Segal equivalence; this follows immediately
from the definition. In fact, there is a converse to this statement: if $f$ and $h$ are Segal equivalences, then $g$ is a Segal equivalence. This does not follow immediately from the definition, but it is a consequence of Theorem \ref{segmin}, which we will prove below.
\end{remark}

\begin{remark}\label{swamm}
Let $\calX \subseteq \calY$ be a distributor, and let $f: Y_{\bigdot} \rightarrow Y'_{\bigdot}$ be a map between Segal space objects of $\calY$. Suppose that $f$ satisfies condition $(b)$ of Definition
\ref{jinke}. Then condition $(a)$ is equivalent to the following:
\begin{itemize}
\item[$(a')$] The map $Y_0 \rightarrow | \Gp Y'_\bigdot |$ is an effective epimorphism in the
$\infty$-topos $\calX$.
\end{itemize}
The implication $(a) \Rightarrow (a')$ is clear, since the map $Y_0 \rightarrow | \Gp Y_{\bigdot} |$ is an effective epimorphism. Conversely, suppose that $(a')$ is satisfied; we wish to show that the
canonical map $| \Gp Y_{\bigdot} | \rightarrow | \Gp Y'_{\bigdot} |$ is an equivalence. This map determines an augmented simplicial object $\overline{Y}_{\bigdot}: \Nerve(\cDelta_{+})^{op} \rightarrow \calY$ such that $\overline{Y}_{-1} = | \Gp Y'_{\bigdot} |$ and $\overline{Y}_{\bigdot} | \Nerve(\cDelta)^{op} = \Gp Y_{\bigdot}$. We wish to prove that $\overline{Y}_{\bigdot}$ is a colimit diagram. Since $\calX$ is an $\infty$-topos and $Y_{\bigdot}$ is a groupoid. Since $\calX$ is an $\infty$-topos and
the augmentation map $u: Y_0 \rightarrow | \Gp Y'_{\bigdot} |$ is an effective epimorphism, it will suffice to show that the augmented simplicial object $\overline{Y}_{\bigdot}$ exhibits $\Gp Y_{\bigdot}$
as a \Cech nerve of $u$. Since $\Gp Y_{\bigdot}$ is a groupoid object by assumption, we are reduced to proving that the map
$q: Y_1 \rightarrow Y_0 \times_{ | \Gp Y'_{\bigdot} | } Y_0$ is an equivalence.
This follows from $(b)$, since $q$ is a pullback of the equivalence 
$Y'_1 \rightarrow Y'_0 \times_{ | \Gp Y'_{\bigdot} | } Y'_0$.
\end{remark}

\begin{remark}\label{swammer}
Let $\calX \subseteq \calY$ be a distributor, and let $f: Y_{\bigdot} \rightarrow Y'_{\bigdot}$ be a map between Segal space objects of $\calY$. Suppose that $f$ satisfies condition $(b)$ of Definition
\ref{jinke}. Then condition $(a)$ is satisfied whenever $f_0: Y_0 \rightarrow Y'_0$ is an effective epimorphism. This follows immediately from Remark \ref{swamm}, since the canonical map
$Y'_0 \rightarrow | \Gp Y'_{\bigdot} |$ is an effective epimorphism.
\end{remark}

We will prove Theorem \ref{segmin} by applying the following general principle:

\begin{proposition}\label{genprip}
Let $\calC$ be an $\infty$-category. Suppose given a full subcategory $\calC^{0} \subseteq \calC$,
a collection $S$ of morphisms in $\calC$, and a functor $L: \calC \rightarrow \calC$ equipped with
a natural transformation $\alpha: \id \rightarrow L$, satisfying the following conditions:
\begin{itemize}
\item[$(1)$] The full subcategory $\calC^0$ and the collection of morphisms $S$ are stable under equivalence.
\item[$(2)$] Suppose given a commutative diagram
$$ \xymatrix{ & Y \ar[dr]^{g} & \\
X \ar[ur]^{f} \ar[rr]^{h} & & Z }$$
in $\calC$, where $g$ belongs to $S$. Then $f \in S$ if and only if $h \in S$.
\item[$(3)$] Let $f: X \rightarrow Y$ be a morphism in $\calC$. If $X,Y \in \calC^0$ and $f \in S$, then $f$ is an equivalence.
\item[$(4)$] For every object $X \in \calC$, the morphism $\alpha_{X}: X \rightarrow LX$ belongs to
$S$, and the object $LX$ belongs to $\calC^0$.
\item[$(5)$] The functor $L$ carries morphisms of $S$ to morphisms of $S$ (in view of assumption
$(3)$, this is equivalent to the requirement that $L$ carries morphisms of $S$ to equivalences).
\end{itemize}
Then:
\begin{itemize}
\item[$(a)$] The essential image of $L$ coincides with $\calC^0$.
\item[$(b)$] The functor $L: \calC \rightarrow \calC^0$ is left adjoint to the inclusion of
$\calC^0$ into $\calC$.
\item[$(c)$] A morphism $f$ of $\calC$ belongs to $S$ if and only if $Lf$ is an equivalence.
\end{itemize}
\end{proposition}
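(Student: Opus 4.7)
The plan is to deduce (a), (b), (c) in that order, with condition (3) playing the central role of converting an $S$-membership statement into an equivalence whenever the endpoints lie in $\calC^0$; the remaining hypotheses are arranged precisely to supply this situation.

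For (a), containment of the essential image of $L$ in $\calC^0$ is immediate from (4). Conversely, if $Y \in \calC^0$, then $\alpha_Y \colon Y \to LY$ belongs to $S$ by (4) and has both endpoints in $\calC^0$, so (3) forces $\alpha_Y$ to be an equivalence; combined with (1), this exhibits $Y$ as lying in the essential image of $L$.

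For (b), I would appeal to the standard recognition principle for localization functors (e.g.\ HTT Proposition 5.2.7.4): a functor $L$ equipped with $\alpha \colon \id \to L$ is left adjoint to the inclusion of its essential image as soon as $L(\alpha_X)$ and $\alpha_{LX}$ are equivalences for every $X \in \calC$. Both morphisms belong to $S$ --- the first by (5) applied to $\alpha_X \in S$, the second by (4) applied to the object $LX$ --- and both have source and target in $\calC^0$ by (4), so (3) supplies the required equivalences. Together with (a) this yields (b).

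For (c), the direction $f \in S \Rightarrow Lf$ an equivalence repeats the argument above: $Lf \in S$ by (5), and $Lf$ is a morphism between objects of $\calC^0$, so (3) applies. The converse is the main obstacle, since (2) gives only a restricted form of two-out-of-three, requiring the ``middle'' morphism to lie in $S$. I will first establish the intermediate claim that every equivalence of $\calC$ already lies in $S$: applying (2) to the triangle $Y \xrightarrow{\id_Y} Y \xrightarrow{\alpha_Y} LY$ (with $g = \alpha_Y \in S$ and composite $\alpha_Y = \alpha_Y \circ \id_Y$) shows $\id_Y \in S$, and then any equivalence $e \colon X \to Y$ is equivalent, as an object of $\Fun(\Delta^1,\calC)$, to $\id_Y$ via the commutative square whose vertical arrows are $e$ and $\id_Y$, so (1) delivers $e \in S$. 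Granting this, assume $Lf$ is an equivalence, so that $Lf \in S$; then (2) applied to $X \xrightarrow{\alpha_X} LX \xrightarrow{Lf} LY$ gives $Lf \circ \alpha_X \in S$, and since $Lf \circ \alpha_X = \alpha_Y \circ f$ by naturality, a second application of (2) to the triangle $X \xrightarrow{f} Y \xrightarrow{\alpha_Y} LY$ produces the desired conclusion $f \in S$.
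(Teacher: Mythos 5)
Your proof is correct and follows the same overall strategy as the paper: (a) is extracted directly from conditions (3) and (4); (b) is obtained by appealing to the recognition criterion for localization functors (HTT Proposition T.5.2.7.4) after checking that $L\alpha_X$ and $\alpha_{LX}$ are equivalences; and (c) is deduced from naturality of $\alpha$ together with two applications of the restricted two-out-of-three property (2).

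The only genuine point of divergence is in the converse direction of (c). The paper considers the naturality square for $f$, names its diagonal $g = \alpha_Y \circ f \simeq Lf \circ \alpha_X$, and deduces $g \in S$ tersely (implicitly: since $Lf$ is an equivalence, $g$ is equivalent to $\alpha_X \in S$ in $\Fun(\Delta^1,\calC)$, so $g \in S$ by (1)), then concludes $f \in S$ by one application of (2). You instead first prove the intermediate lemma that every equivalence lies in $S$ (via (2) applied to $\id_Y$ followed by (1)), which lets you feed $Lf \in S$ into (2) directly. This is a modest reorganization of the same idea — your version isolates a reusable fact (equivalences $\subseteq S$) and makes explicit a step the paper leaves implicit, at the cost of one extra application of (2). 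Both arguments rely on the same three ingredients ((1), (2), naturality of $\alpha$), so there is no substantive difference in content.
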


\begin{proof}
We first prove $(a)$. Note that condition $(4)$ guarantees that $L$ factors through
$\calC^0$. Conversely, suppose that $X \in \calC^0$. Condition $(4)$ guarantees that
$LX \in \calC^0$ and that $\alpha_X: X \rightarrow LX$ belongs to $S$, so that
$\alpha_X$ is an equivalence by $(3)$; it follows that $X$ belongs to the essential image of
$L$.

We now prove $(b)$. In view of Proposition \toposref{recloc}, it will suffice to prove that
for $X \in \calC$, the maps $\alpha_{LX}: LX \rightarrow LLX$ and $L \alpha_X: LX
\rightarrow LLX$ are equivalences. We note that condition $(3)$ implies that
$\alpha_{LX}$ and $\alpha_X$ belong to $S$. Applying $(5)$, we deduce that $L \alpha_X$
is an equivalence. To prove that $\alpha_{LX}$ is an equivalence, it will suffice (by virtue
of $(3)$) to show that $LX$ and $LLX$ belong to $\calC^0$, which follows from assumption $(4)$.

To prove $(c)$, we must show that if $f: X \rightarrow Y$ is a morphism such that $Lf$ is an equivalence, then $f \in S$. Consider the diagram
$$ \xymatrix{ X \ar[r]^{f} \ar[dr]^{g} \ar[d]^{\alpha_X} & Y \ar[d]^{\alpha_Y} \\
LX \ar[r]^{Lf} & LY. }$$
Assumption $(4)$ guarantees that the vertical morphisms belong to $S$. Applying
$(2)$, we deduce that $f \in S$ as desired.
\end{proof}

We now proceed to deduce Theorem \ref{segmin} by showing that the hypotheses of Proposition \ref{genprip} are satisfied, if we take $\calC = \SegS{ \calX \subseteq \calY}$, $\calC^{0} = \CSS{ \calX \subseteq \calY}$, and $S$ to be the class of Segal equivalences. To prove this, we will need to construct
a functor $L: \SegS{ \calX \subseteq \calY} \rightarrow \SegS{ \calX \subseteq \calY}$ and a natural
transformation $\alpha: \id \rightarrow L$ satisfying conditions $(4)$ and $(5)$.

\begin{construction}\label{tubal}
Let $\calX \subseteq \calY$ be a distributor. We let $\calJ$ denote the $\infty$-category $\Nerve( \Fun( [1], \cDelta) )^{op}$.
Let $i: \Nerve( \cDelta)^{op} \rightarrow \calJ$ denote the fully faithful inclusion which
carries an object $[n] \in \cDelta$ to the morphism $[n] \rightarrow [0]$, and let
$i_{\ast}: \Fun( \Nerve(\cDelta)^{op}, \calY ) \rightarrow \Fun( \calJ, \calY)$ denote the associated right Kan extension functor. For $0 \leq j \leq 1$, let $e_{i}: \calJ \rightarrow \Nerve( \cDelta)^{op}$ be the functor given by evaluation at the object $j \in [1]$, so that the composition $e_{j} \circ i$ is the identity on $\Nerve(\cDelta)^{op}$. Let $e_0^{\ast}: \Fun( \Nerve(\cDelta)^{op}, \calY) \rightarrow \Fun( \calJ, \calY)$ be given by composition
with $e_0$.

For every Segal space object $Y_{\bigdot}$ of $\calY$, the canonical identification
$(i \circ e_0)^{\ast} Y_{\bigdot} \simeq Y_{\bigdot}$ induces a map $e_0^{\ast} Y_{\bigdot} \rightarrow
i_{\ast} Y_{\bigdot}$. Form a pullback diagram
$$ \xymatrix{ DY_{\bigdot} \ar[r] \ar[d] & e_0^{\ast} Y_{\bigdot} \ar[d] \\
i_{\ast} (\Gp Y)_{\bigdot} \ar[r] & i_{\ast} Y_{\bigdot}. }$$
The construction $Y_{\bigdot} \mapsto DY_{\bigdot}$ determines a functor from
$\SegS{ \calX \subseteq \calY}$ to $\Fun( \calJ, \calY)$, which we will denote by $D$. 

We define full subcategories $\calJ_0 \subseteq \calJ_1 \subseteq \calJ$ as follows:
\begin{itemize}
\item An object $f: [n] \rightarrow [m]$ of $\calJ$ belongs to $\calJ_1$ if and only if $f$ is surjective.
\item An object $f: [n] \rightarrow [m]$ of $\calJ$ belongs to $\calJ_0$ if and only if $f$ is bijective.
\end{itemize}
Let $\pi^0$ and $\pi^1$ denote the restrictions of $e_1$ to $\calJ_0$ and $\calJ_1$, respectively. We note that $\pi^0$ is an isomorphism. Let
$L: \SegS{ \calX \subseteq \calY} \rightarrow \Fun( \Nerve(\cDelta)^{op}, \calY)$ denote 
the composition
$$ \SegS{ \calX \subseteq \calY} \stackrel{ D}{\rightarrow} \Fun( \calJ, \calY)
\stackrel{| \calJ_1}{\rightarrow} \Fun( \calJ_1, \calY) \stackrel{ \pi^1_! }{\rightarrow} \Fun(\Nerve(\cDelta)^{op}, \calY),$$ 
where $\pi^1_{!}$ is given by left Kan extension along $\pi^1$.

For any Segal space object $Y_{\bigdot}$ of $\calY$, the canonical map
$DY_{\bigdot} \rightarrow \pi^{\ast} Y_{\bigdot}$ induces an identification
$DY_{\bigdot} | \calJ_0 \simeq (\pi^0)^{\ast} Y_{\bigdot}$ (this follows from the observation that
$(\Gp Y)_{0} \simeq Y_0$; see Notation \ref{righton}). We therefore obtain a canonical map
$$ Y_{\bigdot} \simeq \pi^0_! (DY_{\bigdot} | \calJ_0) \rightarrow \pi^{1}_! DY_{\bigdot}.$$
This construction determines a natural transformation $\alpha: \id \rightarrow L$ of functors from
$\SegS{ \calX \subseteq \calY}$ to $\Fun( \Nerve(\cDelta)^{op}, \calY)$. 
\end{construction}

\begin{remark}
More informally, the functor $D$ may be described as follows. Let $Y_{\bigdot}$ be
a Segal space object of $\calY$, and let $f: [n] \rightarrow [m]$ be an object of
$\calJ^1$. Then $(DY_{\bigdot})(f)$ is given by the fiber product
$Y_{n} \times_{ \prod_{0 \leq i \leq m} Y( f^{-1} \{i\} ) } \prod_{0 \leq i \leq m} (\Gp Y)(f^{-1} \{i\} ).$
\end{remark}

\begin{lemma}\label{precooke}
Let $\calX \subseteq \calY$ be a distributor. Suppose given a commutative diagram
$$ \xymatrix{ Z \ar[r]^{g} \ar[d] & Y \ar[r] \ar[d] & X \ar[d]^{f} \\
Z' \ar[r]^{g'} & Y' \ar[r] & X'. }$$
Assume further that:
\begin{itemize}
\item[$(1)$] Every square in the above diagram is a pullback.
\item[$(2)$] The map $f$ is an effective epimorphism in the $\infty$-topos $\calX$.
\item[$(3)$] The map $g$ is an equivalence in $\calY$.
\end{itemize}
Then the map $g'$ is an equivalence in $\calY$.
\end{lemma}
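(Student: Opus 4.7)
The plan is to realize $Y'$ and $Z'$ as colimits of \Cech-type simplicial objects built from $f$, and then reduce the statement to the hypothesis that $g$ is an equivalence. Since $\calX$ is an $\infty$-topos (by the remark following Corollary \ref{summa}) and $f: X \to X'$ is an effective epimorphism there, the \Cech nerve of $f$ extends to a colimit diagram $\overline{X}_{\bigdot}: \Nerve(\cDelta_{+})^{op} \to \calX$ with $\overline{X}_{-1} = X'$ and $\overline{X}_0 = X$. I would then pull this diagram back along $Y' \to X'$ and along $Z' \to X'$ to obtain augmented simplicial objects $\overline{Y}_{\bigdot}, \overline{Z}_{\bigdot}: \Nerve(\cDelta_{+})^{op} \to \calY$ with $\overline{Y}_{-1} = Y'$, $\overline{Z}_{-1} = Z'$, and $\overline{Y}_{n} = Y' \times_{X'} \overline{X}_n$, $\overline{Z}_n = Z' \times_{X'} \overline{X}_n$ for $n \geq 0$.

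The key step is to apply Corollary \ref{summa}. By construction, the natural transformations $\overline{Y}_{\bigdot} \to \overline{X}_{\bigdot}$ and $\overline{Z}_{\bigdot} \to \overline{X}_{\bigdot}$ are Cartesian: each structure map of $\overline{Y}_{\bigdot}$ is the pullback of the corresponding structure map of $\overline{X}_{\bigdot}$ along $Y' \to X'$, and similarly for $\overline{Z}_{\bigdot}$. Since $\overline{X}_{\bigdot}$ is a colimit diagram in $\calX$ and the restrictions to $\Nerve(\cDelta)^{op}$ of these transformations are Cartesian, Corollary \ref{summa} then implies that $\overline{Y}_{\bigdot}$ and $\overline{Z}_{\bigdot}$ are themselves colimit diagrams in $\calY$.

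To finish the argument, I would use hypothesis $(1)$ together with pullback pasting, and the fact that the augmentation $X_n \to X'$ factors through $f: X \to X'$, to identify $Y_n \simeq Y \times_X X_n$ and $Z_n \simeq Z \times_X X_n$; under these identifications the induced map $Z_n \to Y_n$ becomes the pullback of $g: Z \to Y$ along $X_n \to X$. Hypothesis $(3)$ then guarantees that each $Z_n \to Y_n$ is an equivalence. Since $\overline{Z}_{\bigdot} \to \overline{Y}_{\bigdot}$ is a map of colimit diagrams which is an equivalence at every $n \geq 0$, evaluating at the augmentation $-1$ shows that $g': Z' \to Y'$ is an equivalence.

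The main obstacle is setting up the invocation of Corollary \ref{summa} correctly: one must verify the Cartesian hypothesis both for the restriction of the transformation to $\Nerve(\cDelta)^{op}$ and for the augmentation step, but the first is automatic from the definition of $\overline{Y}_{\bigdot}$ and $\overline{Z}_{\bigdot}$ as pullbacks, and the second follows from pasting of pullback squares. Beyond this, only formal manipulations of colimits are required; no additional input from the theory of effective epimorphisms beyond the colimit property of the \Cech nerve is needed.
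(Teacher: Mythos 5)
Your proof is correct and proceeds by essentially the same argument as the paper: form the \Cech nerve of $f$, pull back along $Y'\to X'$ and $Z'\to X'$, observe that the levelwise maps are equivalences, and use the distributor property to pass to the colimit and recover $g'$. The paper packages the distributor step via universality of colimits (computing $|Y_{\bigdot}| \simeq Y' \times_{X'} |X_{\bigdot}|$ directly, then using $|X_{\bigdot}|\simeq X'$), whereas you invoke Corollary \ref{summa} to show the augmented diagrams are colimit diagrams; these are interchangeable formulations of the same hypothesis, so the two arguments are the same in substance.
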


\begin{proof}
Let $X_{\bigdot}: \Nerve(\cDelta)^{op}$ be a \Cech nerve of $f$, so that
$X_0 \simeq X$.
Define simplicial objects $Y_{\bigdot}$ and $Z_{\bigdot}$ by the formulas
$$ Y_{n} = Y' \times_{X'} X_n \quad \quad Z_n = Z' \times_{X'} X_n,$$
so that we have a natural transformation of simplicial objects
$g_{\bigdot}: Y_{\bigdot} \rightarrow Z_{\bigdot}$. Assumption $(1)$ guarantees that
$g_0$ is equivalent to $g$, and therefore an equivalence by assumption $(3)$. Since
$g_{\bigdot}$ is a Cartesian transformation, we conclude that each $g_{n}$ is an equivalence. It follows
that $g_{\bigdot}$ induces an equivalence $|Z_{\bigdot} | \rightarrow |Y_{\bigdot} |$. 
Since $\calX \subseteq \calY$ is a distributor, this map can be identified with the pullback of
$g'$ along the monomorphism $j: | X_{\bigdot} | \rightarrow X'$. We complete the proof by observing that condition $(2)$ guarantees that $j$ is an equivalence.
\end{proof}

\begin{lemma}\label{cooke}
Let $\calX \subseteq \calY$ be a distributor, and let $f: Y_{\bigdot} \rightarrow Y'_{\bigdot}$ be
a map between simplicial objects of $\calY$. Suppose that:
\begin{itemize}
\item[$(1)$] The object $Y_{\bigdot}$ is a Segal space object of $\calY$.
\item[$(2)$] The induced map $Y_0 \rightarrow Y'_0$ is an effective epimorphism in $\calX$.
\item[$(3)$] For each $n \geq 0$, the diagram
$$ \xymatrix{ Y_{n} \ar[r] \ar[d] & Y'_{n} \ar[d] \\
\prod_{0 \leq i \leq n} Y_0 \ar[r] & \prod_{0 \leq i \leq n} Y'_0 }$$
is a pullback square in $\calY$.
\end{itemize}
Then $Y'_{\bigdot}$ is a Segal space object of $\calY$, and the map $f$ is a Segal equivalence.
\end{lemma}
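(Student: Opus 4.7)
The plan is to verify the two assertions separately. First I would show that $Y'_{\bigdot}$ is a Segal space object, and then deduce that $f$ is a Segal equivalence from the remarks already established. For the Segal space claim, the condition $Y'_0 \in \calX$ is implicit in hypothesis $(2)$, since the notion of an effective epimorphism in $\calX$ presupposes its source and target lie in $\calX$. What remains is the Segal condition: the spine map $Y'_n \rightarrow Y'_1 \times_{Y'_0} \cdots \times_{Y'_0} Y'_1$ must be an equivalence in $\calY$ for each $n \geq 2$.

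Fix such an $n$ and consider the diagram
$$ \xymatrix{ Y_n \ar[r]^-{g} \ar[d] & Y_1 \times_{Y_0} \cdots \times_{Y_0} Y_1 \ar[r] \ar[d] & Y_0^{n+1} \ar[d]^{u^{n+1}} \\
Y'_n \ar[r]^-{g'} & Y'_1 \times_{Y'_0} \cdots \times_{Y'_0} Y'_1 \ar[r] & (Y'_0)^{n+1}, }$$
where $u = f_0: Y_0 \rightarrow Y'_0$. I would apply Lemma \ref{precooke} to the outer rectangle. The rightmost vertical map $u^{n+1}$ is an effective epimorphism in the $\infty$-topos $\calX$, since effective epimorphisms there are stable under finite products. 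The rightmost square is a pullback: by iteratively applying hypothesis $(3)$ at level $1$ to each factor and rearranging the resulting iterated fiber products, one checks that the natural map $Y_1 \times_{Y_0} \cdots \times_{Y_0} Y_1 \rightarrow (Y'_1 \times_{Y'_0} \cdots \times_{Y'_0} Y'_1) \times_{(Y'_0)^{n+1}} Y_0^{n+1}$ is an equivalence. Combining this identity with hypothesis $(3)$ at level $n$ and composing pullbacks shows that the middle square is Cartesian as well. Finally, the top horizontal map $g$ is an equivalence by the Segal condition on $Y_{\bigdot}$ supplied by $(1)$. Lemma \ref{precooke} then yields that $g'$ is an equivalence, which is exactly the Segal condition at level $n$ for $Y'_{\bigdot}$.

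For the Segal equivalence assertion, condition $(b)$ of Definition \ref{jinke} applied to $f$ is exactly hypothesis $(3)$ at $n = 1$. Condition $(a)$ then follows from Remark \ref{swammer}, which, given $(b)$, reduces $(a)$ to the requirement that $f_0: Y_0 \rightarrow Y'_0$ be an effective epimorphism in $\calX$; this is provided by $(2)$.

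I expect the main technical obstacle to be the verification that both squares in the displayed diagram are pullbacks, i.e., the rearrangement of iterated fiber products. The cleanest route is an induction on $n$: the base $n = 1$ is hypothesis $(3)$ itself, and the inductive step uses only that pullbacks commute with pullbacks in $\calY$. Everything else in the argument is a direct invocation of Lemma \ref{precooke}, Remark \ref{swammer}, and the basic stability properties of effective epimorphisms in the $\infty$-topos $\calX$.
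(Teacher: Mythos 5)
Your proof is correct and follows the same route as the paper: both establish the Segal condition for $Y'_{\bigdot}$ by pulling the spine map back along the effective epimorphism $Y_0^{n+1}\to (Y'_0)^{n+1}$ and invoking Lemma~\ref{precooke}, then deduce the Segal equivalence from hypothesis $(3)$ at $n=1$ via Remark~\ref{swammer}. The only difference is that you spell out the two-square diagram and the iterated-fiber-product identification explicitly, while the paper compresses the same verification into a single sentence.
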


\begin{proof}
We first show that $Y'_{\bigdot}$ is a Segal space object of $\calY$.
Since $Y'_0 \in \calX$ by assumption, it will suffice to show that $Y'_{\bigdot}$ is a category
object of $\calY$. It follows from $(1)$ and $(3)$ that the canonical map
$$ \psi: Y'_{n} \rightarrow Y'_1 \times_{ Y'_0} \ldots \times_{ Y'_0} Y'_1$$
becomes an equivalence after pullback along the map $g: \prod_{ 0 \leq i \leq n} Y_0 \rightarrow
\prod_{0 \leq i \leq n} Y'_0$. Assumption $(2)$ implies that $g$ is an effective epimorphism in $\calX$, so that $\psi$ is an equivalence by Lemma \ref{precooke}.

We now claim that $f$ is a Segal equivalence. This follows immediately from assumption $(3)$
(in the case $n=1$) and Remark \ref{swammer}.
\end{proof}

\begin{lemma}\label{sowork}
Let $\calX \subseteq \calY$ be a distributor, and suppose given a diagram
$$ \xymatrix{ & Y_{\bigdot} \ar[dr]^{g}  & \\
X_{\bigdot} \ar[ur]^{f} \ar[rr]^{h} & & Z_{\bigdot} }$$
of Segal space objects of $\calY$. If $f$ and $h$ are Segal equivalences and the map
$f$ induces an effective epimorphism $X_0 \rightarrow Y_0$, then $g$ is a Segal equivalence.
\end{lemma}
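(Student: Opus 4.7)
The plan is to verify conditions $(a)$ and $(b)$ of Definition \ref{jinke} for the map $g$.

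For condition $(a)$, I would simply apply the two-out-of-three property for equivalences. The map $|\Gp h_{\bigdot}|: |\Gp X_{\bigdot}| \to |\Gp Z_{\bigdot}|$ factors through $|\Gp g_{\bigdot}| \circ |\Gp f_{\bigdot}|$. Since both $f$ and $h$ are Segal equivalences, both $|\Gp f_{\bigdot}|$ and $|\Gp h_{\bigdot}|$ are equivalences in $\calX$, hence so is $|\Gp g_{\bigdot}|$.

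For condition $(b)$, form the pullback $P = Z_1 \times_{Z_0 \times Z_0} (Y_0 \times Y_0)$ and consider the canonical comparison map $\phi: Y_1 \to P$ induced by $g_1$ and the face map $(d_0,d_1): Y_1 \to Y_0 \times Y_0$; the task is to show that $\phi$ is an equivalence. I would do this by applying Lemma \ref{precooke} to a three-by-two diagram whose rightmost vertical map is $X_0 \times X_0 \to Y_0 \times Y_0$, whose middle column consists of $Y_1 \to Y_0 \times Y_0$, and whose left column consists of $P \to Y_0 \times Y_0$, with both squares formed as pullbacks along $X_0 \times X_0 \to Y_0 \times Y_0$. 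The rightmost vertical is an effective epimorphism: since $X_0 \to Y_0$ is an effective epimorphism in the $\infty$-topos $\calX$ by hypothesis, and effective epimorphisms in an $\infty$-topos are stable under finite products (write $X_0 \times X_0 \to X_0 \times Y_0 \to Y_0 \times Y_0$ as a composition of pullbacks of $X_0 \to Y_0$). The pullback of $Y_1$ along $X_0 \times X_0 \to Y_0 \times Y_0$ is identified with $X_1$ via condition $(b)$ for $f$, and the pullback of $P$ along $X_0 \times X_0 \to Y_0 \times Y_0$ simplifies (by the pasting law) to $Z_1 \times_{Z_0 \times Z_0}(X_0 \times X_0)$, which is identified with $X_1$ via condition $(b)$ for $h$.

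The main obstacle is verifying that the pullback of $\phi$ along $X_0 \times X_0 \to Y_0 \times Y_0$ becomes an equivalence under these two identifications — the hypothesis of Lemma \ref{precooke}. I would unwind the two identifications: the first sends $x \in X_1$ to $(f_1(x), (d_0,d_1)(x))$, and the second sends $x \in X_1$ to $(h_1(x), (d_0,d_1)(x))$. Since $h_1 = g_1 \circ f_1$, a direct diagram chase shows that the induced map $X_1 \to X_1$ is the identity. Lemma \ref{precooke} then yields that $\phi$ is an equivalence, establishing condition $(b)$ for $g$ and completing the proof.
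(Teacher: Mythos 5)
Your proof is correct and takes essentially the same route as the paper: the paper likewise reduces to showing $Y_1 \to Y_0 \times_{Z_0} Z_1 \times_{Z_0} Y_0$ is an equivalence, pulls back along the effective epimorphism $X_0 \times X_0 \to Y_0 \times Y_0$ using the Segal-equivalence hypotheses on $f$ and $h$, and invokes Lemma \ref{precooke}. You merely supply the details the paper leaves implicit (condition $(a)$ via two-out-of-three, the stability of effective epimorphisms under products, and the diagram chase identifying the pulled-back comparison map with $\id_{X_1}$).
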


\begin{proof}
The only nontrivial point is to verify that the map
$$ Y_1 \rightarrow Y_0 \times_{ Z_0} Z_1 \times_{ Z_0} Y_0$$
is an equivalence in $\calY$. Since $f$ and $h$ are Segal equivalences, this map becomes
an equivalence after pullback along the effective epimorphism $X_0 \times X_0 \rightarrow Y_0 \times Y_0$. We conclude by applying Lemma \ref{precooke}.
\end{proof}

\begin{lemma}\label{slavish}
Let $\calX$ be an $\infty$-topos, and let $f: X_{\bigdot} \rightarrow Y_{\bigdot}$ be a Segal equivalence between category objects of $\calX$. Suppose that $X_{\bigdot}$ is a groupoid object and that the map 
$X_0 \rightarrow Y_0$ is an effective epimorphism. Then $Y_{\bigdot}$ is a groupoid object.
\end{lemma}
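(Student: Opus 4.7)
My plan is to reduce the problem to checking a single equivalence at simplicial degree $1$, and then propagate it from $X_{\bigdot}$ to $Y_{\bigdot}$ by descent along the effective epimorphism $X_0 \to Y_0$. Since $Y_{\bigdot}$ is already known to be a category object, it suffices to show that the counit $(\Gp Y)_{\bigdot} \to Y_{\bigdot}$ of the right adjoint furnished by Proposition \ref{cavem} is an equivalence. Both sides are category objects of $\calX$, so by the Segal condition the counit is a degreewise equivalence as soon as it is an equivalence at simplicial degrees $0$ and $1$. The $0$-simplex case is automatic by Proposition \ref{cavem}(2). For the $1$-simplex case, the zigzag $(\Gp Y)_1 = Y^{\inv}(K^0) \leftarrow Y^{\inv}(K) \to Y(K)$ from Proposition \ref{cavem}(2) identifies $(\Gp Y)_1 \to Y_1$ with the natural map $Y(K) \to Y(K^0) = Y_1$, where $K^0 \subseteq K$ are the simplicial sets of Notation \ref{amplecrown}. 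Since $X_{\bigdot}$ is a groupoid object, Proposition \ref{canman}(2) already guarantees that the analogous map $X(K) \to X_1$ is an equivalence, and the task reduces to transferring this across $f$.

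For the transfer, I would assemble the commutative diagram
$$ \xymatrix{ X(K) \ar[r] \ar[d] & X_1 \ar[r] \ar[d] & X_0 \times X_0 \ar[d] \\
Y(K) \ar[r] & Y_1 \ar[r] & Y_0 \times Y_0. }$$
The right square is a pullback by condition $(b)$ of Definition \ref{jinke}, and the outer rectangle is a pullback by Remark \ref{swinn} applied to $K$, whose vertex set has cardinality two. The pasting lemma for pullbacks then forces the left square to be a pullback as well. Since effective epimorphisms in the $\infty$-topos $\calX$ are stable under pullback and composition, the rightmost vertical map $X_0 \times X_0 \to Y_0 \times Y_0$ is an effective epimorphism. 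With $X(K) \to X_1$ already known to be an equivalence, Lemma \ref{precooke} then implies that $Y(K) \to Y_1$ is an equivalence, completing the argument.

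The main obstacle I anticipate is the bookkeeping at the start: one needs to verify carefully that, under the canonical identification $(\Gp Y)_1 \simeq Y(K)$ coming from Proposition \ref{cavem}(2), the counit map $(\Gp Y)_1 \to Y_1$ really does correspond to the natural restriction $Y(K) \to Y(K^0)$. Once this compatibility is pinned down, the rest of the argument is a clean application of $\infty$-topos descent as packaged in Lemma \ref{precooke}.
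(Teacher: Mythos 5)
Your proof is correct, and it takes a genuinely different route from the one in the paper. The paper verifies directly that $Y_{\bigdot}$ satisfies the criterion of Proposition T.grpobjdef: for every weak homotopy equivalence $K \to K'$ of finite simplicial sets that is bijective on a common vertex set $V$, the map $Y(K') \to Y(K)$ becomes an equivalence after pullback along $\prod_{v \in V} X_0 \to \prod_{v \in V} Y_0$ (identifying the pullback with $X(K') \to X(K)$ via Remark \ref{swinn}, and using that $X_{\bigdot}$ is a groupoid object); effective-epi descent then concludes. You instead invoke the colocalization $\Gp$ and Proposition \ref{cavem} to argue that $Y_{\bigdot}$ is a groupoid object iff the counit $(\Gp Y)_{\bigdot} \to Y_{\bigdot}$ is an equivalence, use the Segal condition to reduce to degrees $0$ and $1$, identify the degree-$1$ map with the single restriction $Y(K) \to Y(K^0)$, and then run the same descent argument packaged in Lemma \ref{precooke}. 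Both proofs rest on the same descent mechanism and the same use of Remark \ref{swinn}; the paper's is shorter because it quotes the groupoid criterion directly, while yours routes through more of the paper's own machinery but only needs to check the one inclusion $K^0 \subseteq K$ rather than a whole family of weak equivalences. The "main obstacle" you flag — matching $(\Gp Y)_1 \to Y_1$ with $Y(K) \to Y(K^0)$ under the zigzag from Proposition \ref{cavem}(2) — does go through by naturality of $Y^{\inv} \to Y$ in the simplicial-set variable, so your argument is complete.
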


\begin{proof}
In view of Proposition \toposref{grpobjdef}, it will suffice to show that if $K \rightarrow K'$ is a weak homotopy equivalence of finite simplicial sets which is bijective on vertices, then the induced map
$\phi: Y(K') \rightarrow Y(K)$ is an equivalence in $\calX$. Let $V$ denote the common vertex set of $K$ and $K'$. Since the map $X_0 \rightarrow Y_0$ is an effective epimorphism, it will suffice to check that
$\phi$ is an equivalence after pullback along the map $\prod_{v \in V} X_0 \rightarrow \prod_{v \in V} Y_0$. Invoking Remark \ref{swinn}, we can identify the pullback of $\phi$ with the map
$X(K') \rightarrow X(K)$, which is an equivalence in view of our assumption that $X_{\bigdot}$ is a groupoid object (Proposition \toposref{grpobjdef}).
\end{proof}

\begin{lemma}\label{colk}
Fix integers $m, n \geq 0$, and let $\calC$ denote the full subcategory of
$\cDelta_{ / [m]} \times_{ \cDelta} \cDelta_{ / [n] }$ spanned by those diagrams
$$ [m] \stackrel{f}{\leftarrow} [k] \stackrel{g}{\rightarrow} [n]$$
for which $f$ is surjective. Then $\Nerve(\calC)$ is weakly contractible.
\end{lemma}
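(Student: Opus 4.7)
The plan is to identify $\calC$ with a Grothendieck construction over $\cDelta^{m+1}$ and then compute its classifying space. Let $\cDelta^{\mathrm{s}}_{/[m]} \subseteq \cDelta_{/[m]}$ denote the full subcategory on surjections $f \colon [k] \twoheadrightarrow [m]$. The forgetful functor $q \colon \calC \to \cDelta^{\mathrm{s}}_{/[m]}$, $([k],f,g) \mapsto ([k],f)$, is a Cartesian fibration with discrete fibers: the Cartesian lift of a morphism $h \colon ([k'],f') \to ([k],f)$ ending at $([k],f,g)$ is simply $([k'], f', g\circ h) \to ([k],f,g)$. Moreover, the assignment $f \mapsto (f^{-1}(0), f^{-1}(1), \ldots, f^{-1}(m))$ yields an equivalence $\cDelta^{\mathrm{s}}_{/[m]} \simeq \cDelta^{m+1}$ with inverse given by ordinal sum. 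Under this equivalence, $q$ is the discrete Cartesian fibration classified by the presheaf $\Phi \colon (\cDelta^{m+1})^{\op} \to \Set$ with $\Phi(K_0, \ldots, K_m) = \cDelta(K_0 \star K_1 \star \cdots \star K_m, [n])$, where $\star$ denotes ordinal sum.

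By standard properties of discrete Cartesian fibrations, $\Nerve(\calC)$ is weakly equivalent to the realization of $\Phi$ regarded as an $(m+1)$-fold simplicial set, and the diagonal comparison theorem identifies that realization with the realization of the diagonal simplicial set $[k] \mapsto \cDelta([k]^{\star(m+1)}, [n])$. This last simplicial set is $d^{\ast}\Delta^{n}$ for the functor $d \colon \cDelta \to \cDelta$, $[k] \mapsto [k]^{\star(m+1)} = [(m+1)(k+1) - 1]$.

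It therefore suffices to prove $d^{\ast}\Delta^{n}$ is weakly contractible. The standard contracting simplicial homotopy $H \colon \Delta^{n} \times \Delta^{1} \to \Delta^{n}$ of $\Delta^{n}$ onto the vertex $n$ (given levelwise by $H_{k}(\sigma, t)(i) = \sigma(i)$ if $t_i = 0$ and $n$ otherwise) pulls back, since $d^{\ast}$ preserves finite products, to a simplicial map $d^{\ast}H \colon d^{\ast}\Delta^{n} \times d^{\ast}\Delta^{1} \to d^{\ast}\Delta^{n}$. A direct inspection shows that $d^{\ast}\Delta^{1}$ is a $1$-dimensional simplicial set whose realization is a path of $m+2$ vertices joining the constant-$0$ vertex $v_{0}$ to the constant-$1$ vertex $v_{1}$. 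Restricting $d^{\ast}H$ along the successive edges of this path produces a zigzag of simplicial homotopies between the identity of $d^{\ast}\Delta^{n}$ (obtained at $v_{0}$) and the constant map at the constant-$n$ vertex (obtained at $v_{1}$), so $d^{\ast}\Delta^{n}$ is weakly contractible. The main technical step is the diagonal identification in the second paragraph; the remainder reduces to a direct computation.
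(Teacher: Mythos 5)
Your argument is correct and takes a genuinely different route from the paper's. The paper first passes, via a left adjoint, to the full subcategory $\calC' \subseteq \calC$ of diagrams with $f \times g$ injective, identifies $\calC'$ with the poset of linearly ordered subsets of $[m]\times[n]$ that surject onto $[m]$, and then inducts on $n$ by an explicit filtration $\calC'_0 \subseteq \cdots \subseteq \calC'_{m+1} = \calC'$ in which each step is a weak equivalence because the successive inclusions admit one-sided adjoints. You instead recognize $\calC$ as the category of elements of an $(m+1)$-fold simplicial set over $\cDelta^{m+1}\cong\cDelta^{\mathrm{s}}_{/[m]}$, identify $\Nerve(\calC)$ up to weak equivalence with the diagonal $d^{\ast}\Delta^n$ for $d([k]) = [k]^{\star(m+1)}$, and contract $d^{\ast}\Delta^n$ by pulling back a simplicial contraction of $\Delta^n$ along $d^{\ast}$. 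What your approach buys is a conceptual identification of $\Nerve(\calC)$ as a ``join-power subdivision'' of $\Delta^n$; what the paper's approach buys is self-containedness, since it never leaves the elementary form of Quillen's Theorem A in which adjunctions induce weak homotopy equivalences of nerves. Two of your steps merit a word of justification. The identification of $\Nerve(\calC)$ with the diagonal $d^{\ast}\Delta^n$ is most cleanly seen by applying Quillen's Theorem A to the diagonal functor $\cDelta_{/d^{\ast}\Delta^n} \to \calC$, whose comma categories are opposite to categories of simplices of products $\Delta^{K_0}\times\cdots\times\Delta^{K_m}$ and hence weakly contractible, and then invoking the standard weak equivalence $\Nerve(\cDelta_{/X}) \to X$. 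And the claim that $d^{\ast}\Delta^1$ is $1$-dimensional holds because a monotone map $\phi\colon [k]^{\star(m+1)}\to[1]$ factors through the codegeneracy $d(s^i)$ precisely when $\phi$ is constant on each pair $\{j(k+1)+i,\, j(k+1)+i+1\}$ with $0 \leq j \leq m$; since $\phi$ jumps at most once, this condition can fail for at most one $i\in\{0,\ldots,k-1\}$, so for $k\geq 2$ every such $\phi$ is degenerate.
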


\begin{proof}
Let $\calC'$ denote the full subcategory of $\calC$ spanned by those objects for which
the map $f \times g: [k] \rightarrow [m] \times [n]$ is injective. The inclusion
$\calC' \rightarrow \calC$ admits a left adjoint, given by $[k] \mapsto [k]/ \sim$, where
$\sim$ is the equivalence relation defined by the requirement that $i \sim j$ if and only if
$f(i) = f(j)$ and $g(i) = g(j)$. Consequently, it will suffice to prove that $\Nerve(\calC')$ is weakly contractible. We observe that $\calC'$ can be identified with the partially ordered set of
linearly ordered subsets $S \subseteq [m] \times [n]$, such that the projection map
$S \rightarrow [m]$ is surjective.

We now proceed by induction on $n$. If $n = 0$, then $\calC'$ has a single object
(corresponding to the subset $S = [m] \times [0]$) and the result is obvious. We may therefore assume that $n > 0$. For $0 \leq i \leq m+1$, let $\calC'_{i}$ denote the full subcategory of
$\calC'$ spanned by those subsets $S \subseteq [m] \times [n]$ which do not contain
$(j, n)$ for $j \geq i$. We have a chain of inclusions
$$ \calC'_0 \subseteq \calC'_{1} \subseteq \ldots \subseteq \calC'_{m+1} = \calC'.$$
The inductive hypothesis guarantees that $\Nerve(\calC'_{i})$ is weakly contractible.
To complete the proof, it will suffice to show that each of the inclusions
$\Nerve(\calC'_{i}) \subseteq \Nerve( \calC'_{i+1})$ is a weak homotopy equivalence.

Let $\calD \subseteq \calC'_{i+1}$ be the full subcategory spanned by those
subsets $S \subseteq [m] \times [n]$ satisfying the following condition:
if $(i,n) \in S$, then $(i, n-1) \in S$. We will prove that the inclusions
$\Nerve( \calC'_{i}) \subseteq \Nerve(\calD) \subseteq \Nerve(\calC'_{i+1})$ are weakly contractible.
To prove this, it suffices to observe that the inclusion $\calC'_{i} \subseteq \calD$ has a right
adjoint (given by $S \mapsto \begin{cases} S & \text{ if } (i,n) \notin S \\
S - \{ (i,n) \} & \text{ if } (i,n) \in S. \end{cases}$)
and the inclusion $\calD \subseteq \Nerve( \calC'_{i+1})$ has a left adjoint
(given by $S \mapsto \begin{cases} S & \text{if } (i,n) \notin S \\
S \cup \{ (i,n-1) \} & \text{ if } (i,n) \in S. \end{cases}$). 
\end{proof}

\begin{proposition}\label{mainwork}
Let $\calX \subseteq \calY$ be a distributor, and let $L: \SegS{ \calX \subseteq \calY}
\rightarrow \Fun( \Nerve(\cDelta)^{op}, \calY)$ and $\alpha: \id \rightarrow L$ be as defined
in Construction \ref{tubal}. Then:
\begin{itemize}
\item[$(1)$] For every Segal space object $Y_{\bigdot}$ of $\calY$, the simplicial object
$LY_{\bigdot}$ is a Segal space object of $\calY$.
\item[$(2)$] For every Segal space object $Y_{\bigdot}$ of $\calY$, the natural transformation
$\alpha$ induces a Segal equivalence $Y_{\bigdot} \rightarrow LY_{\bigdot}$.
\item[$(3)$] If $f: Y_{\bigdot} \rightarrow Y'_{\bigdot}$ is a Segal equivalence of Segal
space objects of $\calY$, then $Lf$ is a Segal equivalence.
\item[$(4)$] If $Y_{\bigdot}$ is a groupoid object of $\calX$, then $L Y_{\bigdot}$ is again a groupoid object
of $\calX$.
\item[$(5)$] For every Segal space object $Y_{\bigdot}$ of $\calY$, the Segal space object
$LY_{\bigdot}$ is complete.
\end{itemize}
\end{proposition}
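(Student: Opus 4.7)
The plan is to first obtain a concrete description of $LY_\bullet$ and the natural map $\alpha$, then deduce the five assertions from this description together with the previously established lemmas.

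The central step is to compute the left Kan extension $\pi^1_!(DY|\calJ_1)$ at each $[m] \in \cDelta$. For $[m] = [0]$, the fiber of $\pi^1$ identifies with $\Nerve(\cDelta)^{op}$ and $DY$ restricted to this fiber is $(\Gp Y)_\bullet$, yielding $(LY)_0 \simeq W := |\Gp Y_\bullet|$; this lies in $\calX$ because the inclusion $\calX \subseteq \calY$ preserves colimits (Remark \ref{cavem2}). For general $m$, I would use Lemma \ref{colk} to collapse the comma category over $[m]$ to the opposite of the category of surjective maps $[k] \twoheadrightarrow [m]$, and then commute the resulting colimit past the iterated pullbacks defining $DY$ by means of the distributor axiom (Corollary \ref{summa}) and the Segal condition on $Y_\bullet$. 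This should yield an explicit formula for $(LY)_m$ expressing it as a pullback built from $Y_m$ and $W$, with the natural map $\alpha_m: Y_m \to (LY)_m$ admitting a simple description. In particular, the formula should directly exhibit an equivalence $Y_1 \simeq (LY)_1 \times_{W \times W} (Y_0 \times Y_0)$.

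Granting this computation, (1) follows because the Segal condition for $LY$ is inherited from that for $Y$ via pullback stability, and $(LY)_0 = W \in \calX$. For (2), condition (b) of Definition \ref{jinke} is immediate from the formula in the case $m = 1$, while condition (a) follows from Remark \ref{swammer} since $Y_0 \to W$ is an effective epimorphism (being the augmentation of a groupoid object in the $\infty$-topos $\calX$). For (4), if $Y_\bullet$ is already a groupoid object of $\calX$, the pullback formula exhibits $(LY)_m$ as a pullback of objects of $\calX$, hence belonging to $\calX$; Lemma \ref{slavish}, applied to the Segal equivalence $Y_\bullet \to LY_\bullet$ from (2) with the effective epimorphism $Y_0 \to (LY)_0$, then yields that $LY_\bullet$ is a groupoid object. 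For (5), the formula combined with the \Cech description $(\Gp Y)_1 \simeq Y_0 \times_W Y_0$ identifies $(\Gp LY)_n$ with $W$ for each $n$ with structure maps equivalences, so $\Gp LY$ is a constant groupoid object, i.e., $LY$ is complete.

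For (3), let $f: Y_\bullet \to Y'_\bullet$ be a Segal equivalence. Naturality of $\alpha$ gives a commutative triangle $Y \to LY \to LY'$ equal to $Y \to Y' \to LY'$; the latter is a Segal equivalence by (2) and Remark \ref{humma}. The triangle $Y \to LY \to LY'$ then has $Y \to LY$ a Segal equivalence with $Y_0 \to (LY)_0$ an effective epimorphism and the composite $Y \to LY'$ a Segal equivalence, so Lemma \ref{sowork} yields that $LY \to LY'$ is a Segal equivalence. The main obstacle throughout is the first paragraph's explicit computation of $(LY)_m$: Lemma \ref{colk} is the essential tool for contracting the indexing category, and the distributor axiom must be applied repeatedly to commute the colimit past the pullbacks in $DY$ and the Segal expansion of $Y_n$; everything after this computation is essentially routine given the lemmas already in place.
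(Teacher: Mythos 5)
The strategy for parts $(1)$--$(4)$ tracks the paper's proof fairly closely: establish the pullback square comparing $Y_m$ to $LY_m$ over $\prod Y_0 \to \prod W$ via the cofinality of $\calJ^2_{[m]}$ (Lemma~\ref{colk}) and the distributor axiom, then quote Lemma~\ref{cooke} for $(1)$ and $(2)$, Lemma~\ref{sowork} for $(3)$, and Lemma~\ref{slavish} for $(4)$. Two small imprecisions: you speak of ``an explicit formula for $(LY)_m$,'' but the pullback square the distributor axiom yields expresses $Y_m$ as a pullback over $(LY)_m$, not conversely -- there is no direct formula solving for $(LY)_m$; and your justification that $(LY)_m \in \calX$ in $(4)$ via ``the pullback formula'' does not quite make sense for the same reason. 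The cleaner route, taken in the paper, is to note that when $Y_\bullet$ is a groupoid object of $\calX$ the entire construction of Construction~\ref{tubal} stays inside $\calX$ (since $\calX$ is closed under limits and colimits in $\calY$), so one may simply assume $\calX = \calY$ before invoking $(2)$ and Lemma~\ref{slavish}.

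The real gap is in $(5)$. You assert that the pullback square together with the \v{C}ech identification $(\Gp Y)_1 \simeq Y_0 \times_W Y_0$ ``identifies $(\Gp LY)_n$ with $W$ for each $n$ with structure maps equivalences.'' This is not a routine consequence of those two facts. From $(2)$ and the Remark following \ref{swinn}, the map $\Gp Y_\bullet \to \Gp LY_\bullet$ is a Segal equivalence of groupoid objects, giving a pullback square $(\Gp Y)_1 \simeq (\Gp LY)_1 \times_{W \times W} (Y_0 \times Y_0)$. Comparing with the \v{C}ech description $(\Gp Y)_1 \simeq W \times_{W \times W} (Y_0 \times Y_0)$ shows that $(\Gp LY)_1$ and $W$ have the \emph{same pullback} to $Y_0 \times Y_0$, but that alone does not identify them as objects over $W \times W$: you must produce a \emph{map} $W \to (\Gp LY)_1$ over $W \times W$ whose pullback is an equivalence, so that Lemma~\ref{precooke} can be applied. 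The candidate is the degeneracy $s_0 \colon (\Gp LY)_0 \to (\Gp LY)_1$, but verifying that its pullback to $Y_0 \times_W Y_0 \to (\Gp Y)_1$ is the canonical equivalence is exactly the nontrivial compatibility that your sketch elides. The paper's proof sidesteps this by using $(4)$ in an essential way: since $L\Gp Y_\bullet$ is a groupoid object mapping to $LY_\bullet$, it factors through $\Gp LY_\bullet$, and the completeness of $LY_\bullet$ is then extracted by a diagram chase comparing several realizations of colimits (using once more the cofinality of $\calJ^1_{/[0]}$, the idempotence of $\Gp$, and the Segal equivalence from $(2)$). This factorization through $(4)$ is the missing ingredient in your argument, and should be made explicit before the proof of $(5)$ can be regarded as complete.
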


\begin{proof}
Throughout the proof, we will employ the notation of Construction \ref{tubal}.
For each object $[n] \in \Nerve(\cDelta)^{op}$, let $\calJ^1_{[n]}$ denote the fiber
product $\calJ^1 \times_{ \Nerve(\cDelta)^{op} } \{ [n] \}$ (where
$\calJ^{1}$ maps to $\Nerve(\cDelta)^{op}$ by the projection $\pi^{1}$). 
We first prove:
\begin{itemize}
\item[$(\ast)$] The inclusion $\calJ^{1}_{[n]} \subseteq \calJ^{1} \times_{ \Nerve(\cDelta)^{op}}  \Nerve(\cDelta)^{op})_{/[n]}$ is cofinal. 
\end{itemize}
In view of Theorem \toposref{hollowtt}, it will suffice to show that for each object
$J \in \calJ^{1} \times_{ \Nerve(\cDelta)^{op} } (\Nerve( \cDelta)^{op})_{/[n]}$, the
$\infty$-category $( \calJ^{1}_{[n]})_{J/}$ is weakly contractible. We can identify
$X$ with a commutative diagram 
$$ \xymatrix{ & [m'] \ar[d]^{\alpha} \\
[n] \ar[r]^{\beta} & [m] }$$
in $\cDelta$, where $\alpha$ is surjective. The $\infty$-category
$(\calJ^{1}_{[n]})_{J/}$ can then be identified with the opposite of the nerve
of the category $\calC( [n] \stackrel{\beta}{\rightarrow} [m] \stackrel{\alpha}{\leftarrow} [k])$ whose objects are commutative diagrams
$$ \xymatrix{ [n'] \ar[r] \ar[d]^{\alpha'} & [m'] \ar[d]^{\alpha} \\
[n] \ar[r]^{\beta} & [m] }$$
where $\alpha'$ is also surjective. We observe that
$$\calC( [n] \stackrel{\beta}{\rightarrow} [m] \stackrel{\alpha}{\leftarrow} [m']])$$
is equivalent to a product $$\prod_{ 0 \leq i \leq m} \calC( \beta^{-1} \{i\} \rightarrow \{i\}  \leftarrow \alpha^{-1} \{i\})$$ (where, by convention, the category $\calC( \beta^{-1} \{i\} \rightarrow \{i\}  \leftarrow \alpha^{-1} \{i\})$ consists of a single object if $\beta^{-1} \{i\}$ is empty).
It therefore suffices to treat the case where $m=0$, which follows from
Lemma \ref{colk}.

Let $\calJ^{2}_{[n]}$ denote the subcategory of $\calJ^{1}_{[n]}$ consisting of all the objects, together with those morphisms which correspond to diagrams of {\em surjective} maps
$$ \xymatrix{ [m] \ar[rr] \ar[dr] & & [m'] \ar[dl] \\
& [n]. & }$$
We observe that the inclusion $\calJ^{2}_{[n]} \subseteq \calJ^{1}_{[n]}$ is equivalent
to the $(n+1)$st power of the inclusion $\Nerve(\cDelta_{s})^{op} \subseteq \Nerve(\cDelta)^{op}$, and
therefore cofinal (Lemma \toposref{bball4}). Combining this observation with $(\ast)$, we conclude that for any functor $F: \calJ^1 \rightarrow \calY$ there is a canonical equivalence
$$(\pi^1_{!} F)([n]) \simeq \colim F | \calJ^2_{[n]}.$$

Let $Y_{\bigdot}$ be a Segal space object of $\calY$. We have a pullback diagram
$$ \xymatrix{ DY_{\bigdot} \ar[r] \ar[d] & e_0^{\ast} Y_{\bigdot}  \ar[d] \\
i_{\ast} \Gp Y_{\bigdot} | \calJ^2 \ar[r] & i_{\ast} Y_{\bigdot} }$$
of functors from $\calJ$ to $\calY$. Invoking the assumption that $Y_{\bigdot}$ is a category object of $\calY$, we deduce that the right vertical map induces a Cartesian transformation
$(e_0^{\ast} Y_{\bigdot})| \calJ^2_{[n]} \rightarrow ( i_{\ast} Y_{\bigdot} )| \calJ^2_{[n]}$ for
each $n \geq 0$.
It follows that the left vertical map also induces a Cartesian transformation
$$ DY_{\bigdot} | \calJ^2_{[n]} \rightarrow ( i_{\ast} \Gp Y_{\bigdot} ) | \calJ^2.$$
Since $\calX \subseteq \calY$ is a distributor, Proposition \ref{maid} guarantees that
the diagram
$$ \xymatrix{ DY_{\bigdot}(C) \ar[r] \ar[d] & LY_{\bigdot}( [n] ) \ar[d] \\
(i_{\ast} \Gp Y_{\bigdot})(C) \ar[r] & \colim (i_{\ast} \Gp Y_{\bigdot} )| \calJ^2_{[n]} }$$
is a pullback square in $\calY$, for each object $C \in \calJ^{2}_{[n]}$. In particular, taking
$C$ to be the initial object of $\calJ^{2}_{[n]}$ (and using the fact that colimits in $\calX$
commute with products), we see that the diagram
$$ \xymatrix{ Y_{n} \ar[r] \ar[d] & LY_n \ar[d] \\
\prod_{0 \leq i \leq n} Y_0 \ar[r] & \prod_{0 \leq i \leq n} Y'}$$
where 
$$Y' = | \Gp Y_{\bigdot} | = \colim DY_{\bigdot}| \calJ^{1}_{[0]} \simeq LY_0.$$
We observe that this is the diagram induced by the natural transformation $\alpha$.
Since the map $Y_0 \rightarrow Y'$ is an effective epimorphism, it follows from
Lemma \ref{cooke} that $LY_{\bigdot}$ is a Segal space object of $\calY$ and that
the map $\alpha: Y_{\bigdot} \rightarrow LY_{\bigdot}$ is a Segal equivalence.
This proves $(1)$ and $(2)$.

To prove $(3)$, let us suppose that $f: Y_{\bigdot} \rightarrow Y'_{\bigdot}$ is a Segal equivalence.
We wish to prove that the induced map $Lf: LY_{\bigdot} \rightarrow LY'_{\bigdot}$ is an equivalence of simplicial object of $\calY$. Consider the diagram
$$ \xymatrix{ Y_{\bigdot} \ar[dr]^{g} \ar[r]^{f} \ar[d]^{\alpha_{Y_{\bigdot}}} & Y'_{\bigdot} \ar[d]^{\alpha_{Y'_{\bigdot}}} \\
LY_{\bigdot} \ar[r]^{Lf} & LY'_{\bigdot}. }$$
Since $f$ and $\alpha_{Y'_{\bigdot}}$ are Segal equivalences, we deduce that $g$ is a Segal
equivalence (Remark \ref{humma}). Using the fact that $g$ and $\alpha_{Y_{\bigdot}}$ are Segal equivalences, together with the fact that the map $\alpha_{Y_{\bigdot}}$ induces an
effective epimorphism $Y_0 \rightarrow LY_0 \simeq | \Gp Y_{\bigdot} |$, we deduce that
$Lf$ is a Segal equivalence (Lemma \ref{sowork}).

To prove assertion $(4)$, we may assume without loss of generality that $\calX = \calY$; the desired result then follows from $(2)$ and Lemma \ref{slavish}. It remains to prove $(5)$. Assertion $(4)$ guarantees that
$L \Gp Y_{\bigdot}$ is a groupoid object of $\calX$ admitting a map to $L Y_{\bigdot}$. We therefore obtain a canonical factorization
$$ \xymatrix{ & \Gp LY_{\bigdot} \ar[dr] & \\
L \Gp Y_{\bigdot} \ar[ur] \ar[rr] & & L Y_{\bigdot}. }$$
Let $\calJ^{1}_{/[0]}$ denote the fiber product $\calJ^{1} \times_{ \Nerve(\cDelta)^{op} } ( \Nerve(\cDelta)^{op} )_{/ [0] }$. We have a commutative diagram
$$ \xymatrix{ \colim (D \Gp Y_{\bigdot}) | \calJ^0 \ar[r]^-{p_0} \ar[d]^{q_0} & | \Gp Y_{\bigdot} | \ar[r]^{s_0} & | L \Gp Y_{\bigdot} | \ar[dd]^{s_1} \\
\colim (D \Gp Y_{\bigdot}) | \calJ^{1}_{/[0]} \ar[r]^-{p_1} \ar[d] & (L \Gp Y)_0 \ar[d]^{q_1} & \\
\colim (DY_{\bigdot}) | \calJ^{1}_{/[0]} \ar[r]^-{p_2} & (LY)_0 \ar[r]^{r} & | (\Gp L Y)_{\bigdot}. }$$
We now argue as follows:
\begin{itemize}
\item The maps $p_0$, $p_1$, and $p_2$ are equivalences by construction. 
\item The map $q_1$ is an equivalence (since $L Y_{0}$ depends
only on $\Gp Y_{\bigdot}$, and the functor $\Gp Y_{\bigdot}$ is idempotent). 
\item The map $q_0$ is an equivalence. To prove this, we observe that
$(D \Gp Y)_{\bigdot}$ can be identified with the composition of $Y_{\bigdot}$ with the projection
$\pi: \calJ \rightarrow \Nerve(\cDelta)^{op}$. It follows that $\pi$ induces a map
$\colim (D \Gp Y)_{\bigdot}| \calJ^{1}_{/[0]} \rightarrow | \Gp Y_{\bigdot} |$ which is left
inverse to $q_0$. To prove that this left inverse is an equivalence, it suffices to observe that
the projection $\calJ^{1}_{/[0]} \rightarrow \Nerve(\cDelta)^{op}$ is cofinal (this follows from
$(\ast)$ and Proposition \toposref{cofbasic}). 

\item The composition $s_1 \circ s_0$ is an equivalence, since the map $Y_{\bigdot} \rightarrow LY_{\bigdot}$ is a Segal equivalence by $(2)$.
\end{itemize}
It now follows by a diagram chase that $r$ is an equivalence, so that $LY_{\bigdot}$ is a complete
Segal space object of $\calY$ as desired.
\end{proof}

\begin{lemma}\label{nowork}
Let $\calX \subseteq \calY$ be a distributor, and let $f_{\bigdot}: Y_{\bigdot} \rightarrow Y'_{\bigdot}$ be
a Segal equivalence between complete Segal space objects of $\calY$. Then $f$ is an equivalence.
\end{lemma}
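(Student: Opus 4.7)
The plan is to use completeness to upgrade condition $(a)$ of Definition \ref{jinke} into a statement about $Y_0$ and $Y'_0$, and then use condition $(b)$ together with the Segal condition to propagate this to all levels.

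First, I would observe that if $Z_{\bigdot}$ is a complete Segal space object, then the groupoid object $(\Gp Z)_{\bigdot}$ is by definition constant, so by Remark \ref{supling} every structure map $(\Gp Z)_n \to (\Gp Z)_m$ is an equivalence. Combining this with the fact (Notation \ref{righton}) that $(\Gp Z)_0 \to Z_0$ is an equivalence, and using that $\Nerve(\cDelta)^{op}$ is weakly contractible so the colimit of the constant diagram $(\Gp Z)_{\bigdot}$ is its common value, one obtains a canonical equivalence $Z_0 \simeq |\Gp Z_{\bigdot}|$. Applying this to both $Y_{\bigdot}$ and $Y'_{\bigdot}$, condition $(a)$ of Segal equivalence becomes the statement that $f_0: Y_0 \to Y'_0$ is an equivalence in $\calX \subseteq \calY$.

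Next, I would use condition $(b)$: the square
\[
\xymatrix{ Y_1 \ar[r] \ar[d] & Y'_1 \ar[d] \\ Y_0 \times Y_0 \ar[r] & Y'_0 \times Y'_0 }
\]
is a pullback. Since $f_0$ is an equivalence, so is $f_0 \times f_0$, and pulling back an equivalence along any map produces an equivalence; hence $f_1: Y_1 \to Y'_1$ is an equivalence.

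Finally, because $Y_{\bigdot}$ and $Y'_{\bigdot}$ are category objects of $\calY$ (Definition \ref{unwise}), for each $n \geq 0$ we have equivalences
\[
Y_n \simeq Y_1 \times_{Y_0} Y_1 \times_{Y_0} \cdots \times_{Y_0} Y_1, \qquad Y'_n \simeq Y'_1 \times_{Y'_0} \cdots \times_{Y'_0} Y'_1,
\]
and the map $f_n$ is compatible with these decompositions. Since fiber products of equivalences are equivalences, we conclude that $f_n$ is an equivalence for every $n$. A natural transformation of functors $\Nerve(\cDelta)^{op} \to \calY$ which is a pointwise equivalence is itself an equivalence, so $f_{\bigdot}$ is an equivalence.

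There is no substantial obstacle here; the only nontrivial input is the identification $Z_0 \simeq |\Gp Z_{\bigdot}|$ for a complete Segal space object, which is a direct unpacking of the definition of completeness together with Notation \ref{righton}.
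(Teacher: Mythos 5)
Your proof is correct and follows essentially the same route as the paper's: reduce to $n \leq 1$ via the Segal condition, deduce $f_0$ from completeness (using $(\Gp Z)_0 \simeq Z_0 \simeq |\Gp Z_\bullet|$ for complete objects) together with condition $(a)$ of a Segal equivalence, then deduce $f_1$ by pulling back the equivalence $f_0 \times f_0$ along the square of condition $(b)$.
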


\begin{proof}
We must show that $f_{n}: Y_n \rightarrow Y'_n$ is an equivalence for $n \geq 0$. Since
$Y_{\bigdot}$ and $Y'_{\bigdot}$ are category objects of $\calY$, it will suffice to treat the
case $n \leq 1$.

We first consider the case $n=0$. We have a commutative diagram
$$ \xymatrix{ | \Gp Y_{\bigdot} | \ar[r]^{f'} & | \Gp Y'_{\bigdot} | \\
(\Gp Y)_0 \ar[u] \ar[r] \ar[d] & ( \Gp Y')_0 \ar[u] \ar[d] \\
Y_0 \ar[r]^{f_0} & Y'_0 }$$
The map $f'$ is an equivalence because $f$ is assumed to be a Segal equivalence. The upper
vertical maps are equivalences since $Y_{\bigdot}$ and $Y'_{\bigdot}$ are complete, and the lower
vertical maps are always equivalences (see Notation \ref{righton}); it follows by a two-out-of-three argument that $f_0$ is an equivalence.

We now treat the case $n=1$. Since $f_{\bigdot}$ is a Segal equivalence, we have a pullback diagram
$$ \xymatrix{ Y_1 \ar[r]^{f_1} \ar[d] & Y'_1 \ar[d] \\
Y_0 \times Y_0 \ar[r] & Y'_0 \times Y'_0. }$$ 
The first part of the proof shows that the lower horizontal map is an equivalence, so that the upper horizontal map is also an equivalence as desired.
\end{proof}

\begin{proof}[Proof of Theorem \ref{segmin}]
Combine Propositions \ref{genprip} and \ref{mainwork}, Lemma \ref{nowork}, and
Remark \ref{humma}.
\end{proof}             

For later use, we record the following property of distributors:

\begin{proposition}\label{presmashet}
Let $\calX \subseteq \calY$ be a distributor. Suppose that:
\begin{itemize}
\item[$(a)$] Filtered colimits in $\calY$ are left exact (Definition \toposref{leftexactcolim}).
\item[$(b)$] Let $G: \calY \rightarrow \calX$ be a right adjoint to the inclusion. Then $G$ commutes with filtered colimits.
\end{itemize}
Then:
\begin{itemize}
\item[$(1)$] The full subcategory $\CSS{ \calX \subseteq \calY}$ is stable under small filtered colimits
in $\calY$.
\item[$(2)$] Let $L: \Fun( \Nerve(\cDelta)^{op}, \calY) \rightarrow \CSS{ \calX \subseteq \calY}$
be a left adjoint to the inclusion. Then $L$ preserves small filtered colimits, when regarded
as a functor from $\Fun( \Nerve(\cDelta)^{op}, \calY)$ to itself.
\item[$(3)$] Filtered colimits in $\CSS{ \calX \subseteq \calY}$ are left exact.
\item[$(4)$] Let $\calX' \subseteq \CSS{ \calX \subseteq \calY}$ be the essential image
of the diagonal map $\calX \rightarrow \Fun( \Nerve(\cDelta)^{op}, \calY)$, and let
$G'$ be a right adjoint to the inclusion $\calX' \subseteq \CSS{ \calX \subseteq \calY}$. Then
$G'$ preserves small filtered colimits.
\end{itemize}
\end{proposition}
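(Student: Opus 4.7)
The four assertions are tightly linked, and the plan is to establish (1) and then deduce (2), (3), and (4) essentially formally.

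For (1), start with the observation that the defining conditions on Segal space objects are preserved by filtered colimits: the category-object condition is cut out by equivalences $Y_n \simeq Y_1 \times_{Y_0} \cdots \times_{Y_0} Y_1$ between finite limits (preserved by hypothesis (a)), and the requirement $Y_0 \in \calX$ persists because $\calX$ is closed under colimits in $\calY$. The real work is completeness. Unpacking Notation \ref{righton}, describe $\Gp Y_{\bigdot}$ as $(G_{\ast} Y_{\bigdot})^{\inv}$ where $G_{\ast}$ applies $G$ pointwise and $(-)^{\inv}$ is the right adjoint from Proposition \ref{cavem}; using the formula $X_1^{\inv} \simeq X(K)$ of that proposition for the finite simplicial set $K$ of Notation \ref{amplecrown}, together with the fact that $G$ preserves limits, one gets $(\Gp Y)_0 \simeq Y_0$ and $(\Gp Y)_1 \simeq G(Y(K))$. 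Since a groupoid object is constant iff its degeneracy $s_0$ is an equivalence, completeness of $Y_{\bigdot}$ reduces to the single condition that $Y_0 \to G(Y(K))$ be an equivalence in $\calX$. For a filtered diagram $\{Y^{\alpha}_{\bigdot}\}$ of complete objects with pointwise colimit $Y^{\infty}_{\bigdot}$, finiteness of $K$ together with hypothesis (a) yield $Y^{\infty}(K) \simeq \colim_{\alpha} Y^{\alpha}(K)$, and then (b) yields $G(Y^{\infty}(K)) \simeq \colim_{\alpha} G(Y^{\alpha}(K))$; the map for $Y^{\infty}_{\bigdot}$ is thus a filtered colimit of equivalences.

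With (1) in hand, the other assertions are essentially formal. (2) follows because $L$ preserves colimits into $\CSS{\calX \subseteq \calY}$, and (1) says the inclusion back into $\Fun(\Nerve(\cDelta)^{op}, \calY)$ preserves filtered colimits. (3) follows because the reflective inclusion $\CSS{\calX \subseteq \calY} \hookrightarrow \Fun(\Nerve(\cDelta)^{op}, \calY)$ creates finite limits and, by (1), creates filtered colimits; both are then computed pointwise in $\calY$, where they commute by (a). For (4), identify $G'$ by hand: completeness of $Y_{\bigdot}$ gives $\Gp Y_{\bigdot} \simeq \delta Y_0$, and the universal property of $\Gp$ combined with the full faithfulness of $\delta : \calX \to \Grp(\calX)$ (a consequence of weak contractibility of $\Nerve(\cDelta)^{op}$; cf.\ Remark \ref{supling}) yields $G'(Y_{\bigdot}) \simeq Y_0$. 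Since filtered colimits in $\CSS{\calX \subseteq \calY}$ are pointwise by (1), $G'$ commutes with them.

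The main obstacle is the completeness step in (1): one must reduce the abstract condition ``$\Gp Y_{\bigdot}$ is constant'' to a concrete condition involving only a finite limit ($Y(K)$) and a single application of $G$. This is exactly the configuration controlled by hypotheses (a) and (b); once the reduction is in place, the remaining three assertions follow with no further work.
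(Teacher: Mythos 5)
Your proof is correct and follows essentially the same route as the paper: both arguments reduce the preservation of completeness under filtered colimits to the fact that $\Gp Y_{\bigdot}$ in low degrees is built from finite limits (via Proposition \ref{cavem}, using the finite simplicial set $K$) and one application of $G$, which commute with filtered colimits by hypotheses (a) and (b) respectively. The paper packages this slightly differently — it shows abstractly that $\Gp = U \circ G_{\ast}$ preserves filtered colimits and reuses that fact for both (1) and (4) — whereas you unpack the completeness criterion explicitly as the single condition $Y_0 \to G(Y(K))$ being an equivalence, but the underlying computation is the same.
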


\begin{remark}
Suppose that $\calY$ is an absolute distributor. In this case, we can identify
$\calX$ with $\SSet$ so that the functor $G: \calY \rightarrow \calX$ is corepresented by
the final object ${\bf 1} \in \calY$. Condition $(b)$ is equivalent to the requirement that
${\bf 1}$ is a compact object of $\calY$.
\end{remark}

\begin{proof}
We first prove $(1)$. Let $\calC$ be a small filtered $\infty$-category, $p: \calC \rightarrow \CSS{ \calX \subseteq \calY}$ a diagram, and $Y_{\bigdot}$ a colimit of $p$ in $\Fun( \Nerve(\cDelta)^{op}, \calY)$. We wish to prove that $Y_{\bigdot}$ is a complete Segal space object of $\calY$. We first invoke
$(a)$ to deduce that $Y_{\bigdot}$ is a category object of $\calY$. Since $Y_0$ is a filtered colimit of objects in $\calX$, we deduce that $Y_0 \in \calX$, so that $Y_{\bigdot} \in \SegS{ \calX \subseteq \calY}$.
To prove that $Y_{\bigdot}$ is complete, we must show that the groupoid $\Gp Y_{\bigdot}$ is constant. Since the collection of constant groupoid objects of $\calX$ is stable under filtered colimits, it will suffice to show that the functor
$$\Gp: \SegS{ \calX \subseteq \calY} \rightarrow \Grp(\calX)$$
preserves filtered colimit. We observe that $\Gp$ can be defined by first composing with
$G$ pointwise (which preserves filtered colimits by virtue of $(b)$) and then applying
a right adjoint $U$ to the inclusion $\Grp(\calX) \subseteq \Cat(\calX)$. It will therefore
suffice to show that $U$ preserves filtered colimits. Since $\Grp(\calX)$ is stable under
filtered colimits in $\calX$ (since filtered colimits in $\calX$ are left exact by Example \toposref{tucka}),
it will suffice to show that for each $n \geq 0$, the functor
$$ \Cat(\calX) \rightarrow \calX$$
$$ X_{\bigdot} \mapsto U(X)_{n}$$
preserves filtered colimits. Since $U(X)_{n} \simeq U(X)_1 \times_{U(X)_0} \ldots \times_{U(X)_0} U(X)_1$, we can reduce to the case where $n \leq 1$. The desired result now follows from Proposition \ref{cavem} (and that fact that filtered colimits in $\calX$ are left exact).

Assertion $(2)$ follows immediately from $(1)$. Assertion $(3)$ follows from $(a)$, since
$\CSS{ \calX \subseteq \calY}$ is stable under filtered colimits and finite limits in
$\Fun( \Nerve(\cDelta)^{op}, \calY)$. Finally, assertion $(4)$ follows from the observation
that $G'$ is the restriction of $\Gp$ to the full subcategory $\CSS{ \calX \subseteq \calY} \subseteq \SegS{\calX \subseteq \calY}$, and the functor $\Gp$ preserves filtered colimits by the above argument.
\end{proof}

\subsection{Higher-Dimensional Complete Segal Spaces}\label{bisec1.3}

In the last section, we saw that to every distributor $\calX \subseteq \calY$, one can
associate a new $\infty$-category $\CSS{\calX \subseteq \calY}$ of complete Segal objects of $\calY$. 
Our first goal in this section is to show that this construction can be iterated: according to 
Proposition \ref{ittaboy}, the inclusion $\calX' \subseteq \CSS{ \calX \subseteq \calY}$
is again a distributor, where $\calX'$ denotes the essential image of the
(fully faithful) diagonal embedding $\calX \rightarrow \Fun( \Nerve(\cDelta)^{op}, \calY)$.
First, we need to establish a preliminary result.

\begin{lemma}\label{stumple}
Let $\calX \subseteq \calY$ be a distributor. Suppose given an effective epimorphism
$\coprod_{v \in V} X_{v} \rightarrow X$ in the $\infty$-topos $\calX$. For each $v \in V$, let
$\phi_v: \calY^{/X} \rightarrow \calY^{/X_v}$ denote the associated pullback functor.
Let $\overline{Y}_{\bigdot}$ be a simplicial object of $\calY^{/X}$ such that, for each $v \in V$, the composite functor
$$ Y^{v}_{\bigdot}: \Nerve(\cDelta)^{op} \stackrel{\overline{Y}_{\bigdot}}{\rightarrow} \calY^{/X} \stackrel{ \phi_v}{\rightarrow}
\calY^{/X_v} \rightarrow \calY$$
is a complete Segal space object of $\calY$. Then the composite functor
$$ Y^{\bigdot}: \Nerve(\cDelta)^{op} \stackrel{ \overline{Y}^{\bigdot} }{\rightarrow} \calY$$
is a complete Segal space object of $\calY$.
\end{lemma}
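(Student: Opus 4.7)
The plan is to verify, in turn, the three defining conditions of a complete Segal space object of $\calY$: that $Y_\bullet$ is a category object, that $Y_0 \in \calX$, and that $\Gp Y_\bullet$ is a constant groupoid object of $\calX$. All three can be checked after pulling back along the effective epimorphism $\coprod_v X_v \to X$, the key tool being a conservativity principle supplied by the distributor axioms.

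I first establish this principle: a morphism $f: A \to B$ in $\calY^{/X}$ is an equivalence if and only if each $\phi_v(f) = f \times_X X_v$ is an equivalence in $\calY$. Let $p_\bullet$ denote the augmented \Cech nerve of $\coprod_v X_v \to X$ formed in $\calX$; this is a colimit diagram because the augmentation is an effective epimorphism in the $\infty$-topos $\calX$. By Proposition \ref{maid}(a), the pullbacks $A \times_X p_\bullet$ and $B \times_X p_\bullet$ are colimit diagrams in $\calY$ augmented by $A$ and $B$ respectively. At level $0$, the induced map equals $\coprod_v \phi_v(f)$ (using condition $(3)$ of Definition \ref{disty} to distribute pullback over the coproduct $\coprod_v X_v$), which is an equivalence by hypothesis; at level $n$, each summand $f \times_X (X_{v_0} \times_X \cdots \times_X X_{v_n})$ is obtained from $\phi_{v_0}(f)$ by further pullback, hence is still an equivalence. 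Taking colimits yields that $f$ itself is an equivalence.

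The Segal condition and the assertion $Y_0 \in \calX$ follow quickly from this. The Segal comparison map $Y_n \to Y_1 \times_{Y_0} \cdots \times_{Y_0} Y_1$ lives in $\calY^{/X}$, and since $\phi_v$ is left exact its image under $\phi_v$ is the Segal map for $Y^v_\bullet$, an equivalence by hypothesis; the conservativity principle then delivers the Segal condition for $Y_\bullet$. To see $Y_0 \in \calX$, Proposition \ref{maid}(a) writes $Y_0$ as the colimit in $\calY$ of $Y_0 \times_X p_\bullet$; each level is a coproduct of iterated pullbacks $Y_0 \times_X X_{v_0} \times_X \cdots \times_X X_{v_n}$, in which $Y_0 \times_X X_{v_0} = Y^{v_0}_0$ lies in $\calX$ by hypothesis, and each further pullback (formed in $\calY$ over $X \in \calX$ between objects of $\calX$) remains in $\calX$ by condition $(2)$ of Definition \ref{disty}. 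Closure of $\calX$ under small colimits in $\calY$ then yields $Y_0 \in \calX$.

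For completeness, let $R: \calY \to \calX$ be right adjoint to the inclusion (Remark \ref{cavem2}). The functor $\Gp$ of Notation \ref{righton} is computed on a Segal space object as $Y_\bullet \mapsto (R Y_\bullet)^{\inv}$, where $(\cdot)^{\inv}$ is the groupoidification of Proposition \ref{cavem} carried out in $\calX$. Both $R$ (a right adjoint, hence limit preserving) and $(\cdot)^{\inv}$ (a finite-limit construction) commute with the left exact functor $\phi_v$, so $\phi_v(\Gp Y_\bullet) \simeq \Gp Y^v_\bullet$. By Remark \ref{supling}, $\Gp Y_\bullet$ is constant iff every structural map $(\Gp Y)_n \to (\Gp Y)_m$ is an equivalence. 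Each such map lies in $\calX^{/X} \subseteq \calY^{/X}$, and its pullback under $\phi_v$ is the corresponding map for $\Gp Y^v_\bullet$, which is an equivalence because $Y^v_\bullet$ is complete; the conservativity principle finishes the argument. The main subtlety is bookkeeping: one must verify that $\phi_v$ commutes with every construction in play (finite limits, pullback of coproducts, the formation of $R$, and hence of $\Gp$), each of which reduces to left exactness together with the distributor axioms.
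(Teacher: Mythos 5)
Your proof is correct and follows essentially the same strategy as the paper: for each of the three defining conditions of a complete Segal space object (the Segal condition, $Y_0 \in \calX$, and constancy of $\Gp Y_\bullet$), one checks the statement after pulling back along the effective epimorphism $\coprod_v X_v \to X$ and invokes a conservativity principle to descend. The conservativity principle you re-derive from scratch is precisely the paper's Lemma~\ref{precooke}, which the paper simply cites; the paper also verifies $Y_0\in\calX$ a little differently, by showing that the counit $GY_0\to Y_0$ of the adjunction is an equivalence (again via Lemma~\ref{precooke}) rather than by exhibiting $Y_0$ as a colimit of objects of $\calX$, but these are cosmetic variations on the same argument. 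One minor citation point: you appeal to ``Proposition~\ref{maid}(a),'' but that proposition is phrased as an equivalence $(a)\wedge(b)\Leftrightarrow(c)$ rather than as an assertion that $(a)$ holds; the fact you actually need is condition $(3)$ of Definition~\ref{disty} (equivalently, Corollary~\ref{summa}).
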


\begin{proof}
We first show that $Y_{\bigdot}$ is a category object of $\calY$. Choose $n \geq 0$, and consider the canonical map $f: Y_n \rightarrow Y_1 \times_{Y_0} \ldots \times_{Y_0} Y_1$. We wish to show that
$f$ is an equivalence. By assumption, $f$ becomes an equivalence after pullback along each
of the maps $X_{v} \rightarrow X$. It follows that $f$ is an equivalence after pullback along the
effective epimorphism $\coprod_{v} X_v \rightarrow X$, so that $f$ is itself an equivalence by virtue of Lemma \ref{precooke}.

We next show that $Y_{\bigdot}$ is a Segal space object of $\calY$. Let $G: \calY \rightarrow \calX$
denote a right adjoint to the inclusion. For each $v \in V$ and each $n \geq 0$, we have a pullback diagram
$$ \xymatrix{ Y^{v}_n \ar[r] \ar[d] & Y_n \ar[d] \\
X_{v} \ar[r] & X. }$$
Since $G$ preserves pullback diagrams, we obtain a canonical identification
$GY^{v}_n \simeq GY_n \times_{ X} X_v$. Since each $Y^v$ is a Segal space object of
$\calY$, the maps $GY^v_0 \rightarrow Y^v_0$ are equivalences. It follows that the map
$GY_0 \rightarrow Y_0$ becomes an equivalence after pullback along each of the maps
$X_v \rightarrow X$. Arguing as above, we conclude that $GY_0 \rightarrow Y_0$ is an equivalence,
so that $Y_0 \in \calX$ as desired.

It remains to show that $Y_{\bigdot}$ is complete. For each $v \in V$, let $\overline{X}_v$ denote
the constant simplicial object of $\calX$ taking the value $X_v$, and define $\overline{X}$ similarly.
We have a pullback diagram of Segal space objects
$$ \xymatrix{ Y^{v}_{\bigdot} \ar[r] \ar[d] & Y_{\bigdot} \ar[d] \\
\overline{X}_v \ar[r] & \overline{X} }$$
for each $v \in V$. It follows that the induced diagram
$$ \xymatrix{ \Gp Y^{v}_{\bigdot} \ar[r] \ar[d] & \Gp Y_{\bigdot} \ar[d] \\
\Gp \overline{X}_v \ar[r] & \Gp \overline{X} }$$
is a pullback square of groupoid objects in $\calX$. Since each $Y^{v}_{\bigdot}$ is complete, we conclude that the groupoid object $\Gp Y_{\bigdot}$ becomes constant after pullback along each of the maps $X_v \rightarrow X$. It follows that $\Gp Y_{\bigdot}$ becomes constant after pullback along the effective epimorphism $\coprod_{v} X_v \rightarrow X$. The desired result now follows from Lemma \ref{cooke} and Remark \ref{supling}.
\end{proof}

\begin{proposition}\label{ittaboy}
Let $\calX \subseteq \calY$ be a distributor, and let $\calX'$ denote the essential image of the diagonal embedding $\calX \rightarrow \Fun( \Nerve(\cDelta)^{op}, \calX) \subseteq \Fun( \Nerve(\cDelta)^{op}, \calY)$. Then the inclusion $\calX' \subseteq \CSS{ \calX \subseteq \calY }$ is a distributor.
\end{proposition}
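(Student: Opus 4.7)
My plan is to verify the four conditions of Definition \ref{disty} for the inclusion $\calX' \subseteq \CSS{\calX \subseteq \calY}$. Condition $(1)$ is immediate: $\calX' \simeq \calX$ is presentable by hypothesis, and $\CSS{\calX \subseteq \calY}$ is presentable as an accessible localization of $\Fun(\Nerve(\cDelta)^{op}, \calY)$ (Remark \ref{sluther}). For condition $(2)$, I would show that the diagonal inclusion $\calX' \hookrightarrow \CSS{\calX \subseteq \calY}$ admits both adjoints: the right adjoint sends $Y_\bigdot$ to $Y_0$ (which lies in $\calX$ for any Segal space object, and coincides with $|\Gp Y_\bigdot|$ by completeness), and the left adjoint sends $Y_\bigdot$ to the $\calX$-reflection of its geometric realization $|Y_\bigdot| \in \calY$. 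Having both adjoints, the inclusion preserves all small limits and colimits, so $\calX'$ is stable under them in $\CSS{\calX \subseteq \calY}$.

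For conditions $(3)$ and $(4)$, I would invoke the equivalent formulation provided by Corollary \ref{summa}: the task is to show that for every small simplicial set $K$ and every natural transformation $\overline{\alpha}: \overline{p} \to \overline{q}$ of diagrams $K^\triangleright \to \CSS{\calX \subseteq \calY}$ with $\overline{q}$ a colimit diagram in $\calX'$ and $\alpha = \overline{\alpha}|K$ Cartesian, we have $\overline{\alpha}$ Cartesian if and only if $\overline{p}$ is a colimit diagram in $\CSS{\calX \subseteq \calY}$. Because $\CSS{\calX \subseteq \calY}$ is reflective in $\Fun(\Nerve(\cDelta)^{op}, \calY)$, pullbacks there are computed pointwise in $\calY$, and hence Cartesian-ness of $\overline{\alpha}$ reduces to Cartesian-ness at each level $[n]$. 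At level $[n]$, the diagram $\overline{q}_n: K^\triangleright \to \calX$ is again a colimit diagram (since the inclusion $\calX' \hookrightarrow \CSS{\calX \subseteq \calY}$ preserves colimits from part $(2)$, and colimits of constant simplicial objects are constant). Applying Corollary \ref{summa} to the distributor $\calX \subseteq \calY$ yields: $\overline{\alpha}_n$ is Cartesian in $\calY$ if and only if $\overline{p}_n$ is a colimit diagram in $\calY$. Assembling over all $n$, $\overline{\alpha}$ is Cartesian if and only if $\overline{p}$ is a pointwise colimit in $\Fun(\Nerve(\cDelta)^{op}, \calY)$.

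It therefore remains to show that, under the standing hypotheses, $\overline{p}$ is a pointwise colimit in $\Fun(\Nerve(\cDelta)^{op}, \calY)$ if and only if $\overline{p}$ is a colimit in $\CSS{\calX \subseteq \calY}$. Let $Z$ denote the pointwise colimit of $\overline{p}|K$ in $\Fun(\Nerve(\cDelta)^{op}, \calY)$; I would show that $Z$ already lies in $\CSS{\calX \subseteq \calY}$, so that $Z$ serves simultaneously as the pointwise colimit and as the $\CSS{}$-colimit, and the equivalence follows. Setting $X = \overline{q}_\infty$, the simplicial object $Z$ is equipped with a natural map to the constant simplicial object $X_c$ at $X$. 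Because $\calX$ is an $\infty$-topos and $\overline{q}|K$ is a colimit diagram, the map $\coprod_{v \in K_0} q_v \to X$ is an effective epimorphism in $\calX$. Using universality of colimits in the distributor $\calX \subseteq \calY$ (condition $(3)$) together with Cartesian-ness of $\alpha$ on $K$, each pullback $Z \times_{X_c} (q_v)_c$ can be identified with $\overline{p}_v$, a complete Segal space. Lemma \ref{stumple} then yields $Z \in \CSS{\calX \subseteq \calY}$, finishing the proof.

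The hard part will be executing the identification $Z \times_{X_c} (q_v)_c \simeq \overline{p}_v$ cleanly. Universality of colimits expresses the pullback as $\colim_{w \in K}(\overline{p}_w \times_{X_c} (q_v)_c)$, and Cartesian-ness of $\alpha$ on $K$ expresses each $\overline{p}_w$ as a pullback from its neighbors in $K$; the remaining combinatorial input is a cofinality argument (essentially that the under-category $K_{v/}$ is cofinal in a suitable comma category over $K$) which collapses this colimit to $\overline{p}_v$. This delicate interplay between pullback, colimit, Cartesian structure, and the effective-epimorphism cover $\coprod_v q_v \to X$ supplied by the $\infty$-topos $\calX$ is the technical heart of the argument; once it is in place, Lemma \ref{stumple} propagates the distributor axioms from $\calX \subseteq \calY$ to $\calX' \subseteq \CSS{\calX \subseteq \calY}$.
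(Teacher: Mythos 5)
Your architecture matches the paper's exactly: verify (1), argue stability directly for (2), reduce (3)--(4) to the criterion of Corollary \ref{summa}, observe that Cartesian-ness of $\overline{\alpha}$ is a levelwise statement so that $(a)$ is equivalent to $\overline{p}$ being a pointwise colimit $(b')$, note $(b')\Rightarrow(b)$ is trivial, and for $(b)\Rightarrow(b')$ exhibit the pointwise colimit $Z$ over the effective-epimorphism cover $\coprod_v q_v \to X$ and invoke Lemma \ref{stumple}. All of this is the paper's route.

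The one place where your execution diverges --- and where there is a real gap --- is the last step, the identification $Z \times_{X_c}(q_v)_c \simeq \overline{p}_v$. You propose to compute the pullback by ``universality of colimits'' as $\colim_{w\in K}(\overline{p}_w\times_{X_c}(q_v)_c)$ and then collapse via a cofinality argument. But condition $(3)$ of Definition \ref{disty} only asserts that the pullback functor $\calX_{/X}\to\calY_{/Y}$ preserves colimits; here the diagram $\{\overline{p}_w\}$ lands in $\calY_{/X}$, not $\calX_{/X}$, so the universality statement you invoke is not directly part of the distributor axioms, and the subsequent cofinality argument over a comma category is murky. The clean move --- which is what the paper does, and which entirely bypasses this step --- is to reapply Corollary \ref{summa} at each level $[n]$ to the natural transformation $\overline{\alpha}'\colon \overline{p}'\to\overline{q}$, where $\overline{p}'$ is the extension of $p$ by the pointwise colimit $Z$. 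Since $\overline{p}'_{[n]}$ is by construction a colimit diagram, Corollary \ref{summa} forces $\overline{\alpha}'_{[n]}$ to be Cartesian for every $[n]$; hence $\overline{\alpha}'$ is Cartesian, and taking the square at the cone point against the vertex $v$ gives $\overline{p}_v\simeq Z\times_{X_c}(q_v)_c$ immediately, with no colimit manipulation or cofinality needed. With that substitution your plan becomes a correct and complete proof matching the paper's.

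Minor point: for condition $(2)$ you argue via the existence of both adjoints, which is fine but slightly indirect --- one must still observe that, $\calX'$ having all small (co)limits, preservation of them by the inclusion together with uniqueness of (co)limits yields stability. The paper's version is shorter: $\calX'$ is stable under limits and colimits in $\Fun(\Nerve(\cDelta)^{op},\calY)$ because $\calX\subseteq\calY$ is, and since $\CSS{\calX\subseteq\calY}$ is reflective in that ambient category and contains $\calX'$, stability transfers directly.
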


\begin{proof}
Since the $\infty$-category $\Nerve(\cDelta)^{op}$ is weakly contractible, the diagonal functor
$\calX \rightarrow \calX'$ is an equivalence of $\infty$-categories. In particular, $\calX'$ is presentable.
Since $\calX \subseteq \calY$ is stable under small limits and colimits, the full subcategory
$\calX'$ is stable under limits and colimits in $\Fun( \Nerve(\cDelta)^{op}, \calY)$, and in particular
in $\CSS{ \calX \subseteq \calY}$. To complete the proof, we will show that 
the inclusion $\calX' \subseteq \CSS{ \calX \subseteq \calY}$ satisfies the conditions of Corollary \ref{summa}. Let $K$ be a small simplicial set and let $\alpha: \overline{p} \rightarrow \overline{q}$
be a natural transformation of diagrams $\overline{p}, \overline{q}: K^{\triangleright} \rightarrow
\CSS{ \calX \subseteq \calY }$ such that $\overline{q}$ is a colimit diagram in $\calX'$ and
the restriction $\alpha = \overline{\alpha} | K$ is a Cartesian transformation. We wish to prove that the following assertions are equivalent:
\begin{itemize}
\item[$(a)$] The natural transformation $\overline{\alpha}$ is Cartesian.
\item[$(b)$] The diagram $\overline{p}$ is a colimit diagram in $\CSS{ \calX \subseteq \calY}$.
\end{itemize}
For every object $[n] \in \Nerve(\cDelta)^{op}$, the induced map $\overline{q}_{[n]}: K^{\triangleright}
\rightarrow \calX$ is a colimit diagram, and the induced transformation $\alpha_{[n]}$ is Cartesian.
Applying Corollary \ref{summa}, we deduce that $(a)$ is equivalent to the following requirement:
\begin{itemize}
\item[$(b')$] For each $[n] \in \Nerve(\cDelta)^{op}$, the map $\overline{p}_{[n]}: K^{\triangleright} \rightarrow \calY$ is a colimit diagram. In other words, $\overline{p}$ is a colimit diagram
in the $\infty$-category $\Fun( \Nerve(\cDelta)^{op}, \calY)$. 
\end{itemize}

It is clear that $(b')$ implies $(b)$. To prove the converse, let $Y_{\bigdot}$ be a colimit of
$p = \overline{p} | K$ in the $\infty$-category $\Fun( \Nerve(\cDelta)^{op}, \calY)$; we must show that
$Y_{\bigdot}$ is a complete Segal space object of $\calY$.

Without loss of generality, we may assume that $\overline{q}$ factors as a composition
$$ K^{\triangleright} \stackrel{ \overline{q}'}{\rightarrow} \calX \stackrel{\overline{q}''}{\rightarrow} \calX',$$
where $\overline{q}''$ is the diagonal map. Let $X \in \calX$ denote the image of the cone point
under $\overline{q}'$. For every vertex $v$ of $K$, let $X_{v}$ denote the image of $v$ under
$\overline{q}'$, so that we have an effective epimorphism $\coprod_{v} X_v \rightarrow X$ in the $\infty$-topos $\calX$. The map $\overline{\alpha}$ determines a lifting of $Y_{\bigdot}$ to a simplicial object
of $\calY^{/X}$. Using Corollary \ref{summa} and the fact that $p$ factors through $\CSS{ \calX \subseteq \calY}$, we deduce that each of the induced diagrams
$$ \Nerve(\cDelta)^{op} \rightarrow \calY^{/X} \rightarrow \calY^{/X_v} \rightarrow \calY$$
is a complete Segal space object of $\calY$. It follows from Lemma \ref{stumple} that
$Y_{\bigdot}$ is a complete Segal space object of $\calY$, as desired.
\end{proof}

In practice, we are primarily interested in the case of distributors $\calX \subseteq \calY$
where $\calY$ is an $\infty$-category of $(\infty,n)$-categories, and $\calX$ is the full subcategory spanned by the $\infty$-groupoids. In this case, $\calX$ is equivalent to the $\infty$-category
of spaces and its inclusion into $\calY$ is uniquely determined. Consequently, Definition \ref{disty} can be rephrased entirely in terms of the ambient category $\calY$:

\begin{definition}\label{custo}
Let $\calY$ be a presentable $\infty$-category. It follows from Theorem \toposref{charpresheaf} that
there exists a functor $F: \SSet \rightarrow \calY$ which preserves small colimits and final objects; moreover, $F$ is uniquely determined up to equivalence. We will say that $\calY$ is an {\it absolute distributor} if the following conditions are satisfied:
\begin{itemize}
\item[$(1)$] The functor $F$ is fully faithful.
\item[$(2)$] The inclusion $\calX \subseteq \calY$ is a distributor, where $\calX$ denotes the essential image of $F$.
\end{itemize}
In this case, we let $\SegS{ \calY }$ and $\CSS{ \calY}$ denote the $\infty$-categories
$\SegS{ \calX \subseteq \calY}$ and $\CSS{ \calX \subseteq \calY}$ of Segal space objects and
complete Segal space objects associated to the distributor $\calX \subseteq \calY$.
\end{definition}


\begin{corollary}\label{cus1}
Let $\calY$ be an absolute distributor. Then the $\infty$-category $\CSS{ \calY}$ of complete Segal space objects of $\calY$ is again an absolute distributor.
\end{corollary}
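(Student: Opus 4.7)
The plan is to reduce this to Proposition \ref{ittaboy} by identifying the essential image $\calX'$ of the diagonal embedding $\calX \to \Fun(\Nerve(\cDelta)^{op}, \calY)$ with the essential image of the canonical functor $F': \SSet \to \CSS{\calY}$ appearing in Definition \ref{custo}. Here $\calX \subseteq \calY$ denotes the essential image of the canonical functor $F: \SSet \to \calY$, which is a distributor by the hypothesis that $\calY$ is absolute.

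First I would verify that $\calX' \subseteq \CSS{\calY}$. For $X \in \calX$, the constant simplicial object $\delta(X)$ has $\delta(X)_0 = X \in \calX$ and every structure map an equivalence, so it is a groupoid object of $\calX$, and in particular a Segal space object of $\calY$. Because $\delta(X)$ already lies in $\Grp(\calX)$, the colocalization $\Gp \delta(X)$ equals $\delta(X)$ itself, which is tautologically constant; thus $\delta(X)$ is complete in the sense of Definition \ref{compseg}.

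Next I would identify $\calX'$ with the essential image of $F'$. The composite
\[ G: \SSet \xrightarrow{F} \calX \xrightarrow{\delta} \calX' \hookrightarrow \CSS{\calY} \]
is fully faithful, since $F$ is fully faithful by hypothesis, $\delta$ is an equivalence because $\Nerve(\cDelta)^{op}$ is weakly contractible, and the last arrow is a full subcategory inclusion. It also preserves small colimits: $F$ does by hypothesis, $\delta$ does because colimits in $\Fun(\Nerve(\cDelta)^{op}, \calY)$ are computed pointwise, and $\calX' \subseteq \CSS{\calY}$ is stable under small colimits by the proof of Proposition \ref{ittaboy}. A parallel argument shows that $G$ preserves final objects (the final object of $\calY$ lies in $\calX$, and its constant simplicial object is final in $\CSS{\calY}$). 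By the uniqueness assertion of Theorem \toposref{charpresheaf}, $G$ is equivalent to $F'$, so $F'$ is fully faithful and its essential image is exactly $\calX'$.

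This establishes condition $(1)$ of Definition \ref{custo}. Condition $(2)$, that $\calX' \subseteq \CSS{\calY}$ is a distributor, is precisely the content of Proposition \ref{ittaboy}. The substantive work lives entirely in that proposition; I expect no further obstacle here beyond the bookkeeping above that matches the abstract construction of Proposition \ref{ittaboy} with the canonical functor prescribed by Definition \ref{custo}.
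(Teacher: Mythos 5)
Your proof is correct and is essentially the argument the paper leaves implicit: Corollary \ref{cus1} is stated without proof immediately after Proposition \ref{ittaboy}, and the intended reasoning is exactly your reduction — identify the essential image of the canonical colimit-and-final-object-preserving functor $\SSet \to \CSS{\calY}$ with the essential image $\calX'$ of the diagonal, then invoke Proposition \ref{ittaboy}. Your bookkeeping (showing $\calX' \subseteq \CSS{\calY}$, verifying fully faithfulness, colimit preservation, and preservation of final objects, and appealing to the uniqueness clause of Theorem \toposref{charpresheaf}) correctly fills in the details the authors took for granted.
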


\begin{example}\label{cus2}
The $\infty$-category $\SSet$ of spaces is an absolute distributor.
\end{example}

\begin{definition}
We define $\infty$-categories $\Cat_{(\infty, n)}$ by induction on $n$ as follows:
\begin{itemize}
\item If $n =0$, we let $\Cat_{(\infty, n)}$ denote the $\infty$-category $\SSet$ of spaces.
\item If $n > 0$, we let $\Cat_{(\infty, n)}$ denote the $\infty$-category
$\CSS{ \Cat_{ (\infty, n-1)}}$ of complete Segal space objects of $\Cat_{ (\infty, n-1)}$. 
\end{itemize}
\end{definition}

\begin{remark}
We will later show that the $\infty$-category $\Cat_{( \infty, 1)}$ is equivalent to the $\infty$-category
$\Cat_{\infty}$ of $\infty$-categories (Corollary \ref{presquare}).
\end{remark}

\begin{variant}
Let $\calX$ be an $\infty$-topos. Then we can define a sequence of distributors
$\calX_n \subseteq \calY_n$ by induction as follows:
\begin{itemize}
\item If $n=0$, we let $\calX_n = \calY_n = \calX$.
\item For $n \geq 0$, we let $\calY_{n+1} = \CSS{ \calX_{n} \subseteq \calY_n }$, and
$\calX_{n+1}$ denote the essential image of the diagonal embedding $\calX_{n} \rightarrow
\calY_{n+1}$.  
\end{itemize}
We can think of $\calY_{n}$ as the $\infty$-category of {\em stacks} of $(\infty,n)$-categories
on $\calX$.
\end{variant}

\subsection{$\Cat_{\infty}$ as an Absolute Distributor}\label{bisec1.4}

Our goal in this section is to prove the following result:

\begin{theorem}\label{cuppo}
The $\infty$-category $\Cat_{\infty}$ is an absolute distributor (see Definition \ref{custo}).
\end{theorem}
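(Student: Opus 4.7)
The plan is to verify the two clauses of Definition \ref{custo} for $\calY = \Cat_{\infty}$. First I identify the canonical functor $F: \SSet \to \Cat_{\infty}$: since $\SSet$ is freely generated under small colimits by the point and $F$ preserves final objects, one has $F(X) \simeq \colim_X \ast$, which is just $X$ regarded as an $\infty$-category via the inclusion of Kan complexes into quasicategories. This inclusion is classically fully faithful (the mapping spaces computed in the Joyal structure agree with those in $\SSet$), which gives clause (1) of Definition \ref{custo} and identifies $\calX \subseteq \Cat_{\infty}$ with the full subcategory of $\infty$-groupoids. Of the four axioms in Definition \ref{disty}, axiom (1) (presentability) is standard, and axiom (2) follows from Remark \ref{cavem2} since the inclusion $\SSet \subseteq \Cat_\infty$ admits both adjoints: the right adjoint is the core functor $\calC \mapsto \calC^{\simeq}$, and the left adjoint is formal inversion of all morphisms.

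For axiom (3), let $g: Y \to X$ be a morphism in $\Cat_\infty$ with $X \in \SSet$. Since left fibrations are stable under pullback, and every map of spaces is a left fibration, the pullback $Z \times_X Y \to Y$ of any $Z \to X$ in $\SSet_{/X}$ is again a left fibration. I would use the straightening--unstraightening equivalence to factor the pullback functor as
$$ \SSet_{/X} \simeq \Fun(X, \SSet) \xrightarrow{\, g^*\,} \Fun(Y, \SSet) \simeq \mathrm{LFib}(Y) \hookrightarrow (\Cat_\infty)_{/Y}, $$
where $\mathrm{LFib}(Y)$ denotes the full subcategory of left fibrations over $Y$. Precomposition with $g$ preserves colimits (computed pointwise), and the inclusion $\mathrm{LFib}(Y) \hookrightarrow (\Cat_\infty)_{/Y}$ preserves colimits because $\SSet \subseteq \Cat_\infty$ is closed under colimits (so the functor classifying a colimit of left fibrations still lands in $\SSet$).

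For axiom (4), the fiber of the Cartesian fibration $\pi: \calO \to \SSet$ over $X$ is $(\Cat_\infty)_{/X}$. By the naturality of the straightening--unstraightening equivalence $(\Cat_\infty)_{/X} \simeq \Fun(X, \Cat_\infty)$ (valid since $X$ is a Kan complex), the classifying functor $\chi: \SSet \to \widehat{\Cat}_{\infty}^{op}$ is equivalent to $X \mapsto \Fun(X, \Cat_\infty)$. Its preservation of small limits, as a functor into $\widehat{\Cat}_{\infty}^{op}$, reduces to the standard identification
$$ \Fun\bigl(\colim_\alpha X_\alpha,\, \Cat_\infty\bigr) \simeq \lim_\alpha \Fun(X_\alpha, \Cat_\infty) $$
in $\widehat{\Cat}_\infty$. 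The one nontrivial technical point in the whole argument is the careful verification that the Cartesian fibration $\pi$ is classified by $\Fun(-, \Cat_\infty)$; this amounts to assembling the pointwise straightening equivalences into a single natural one as $X$ varies over $\SSet$. Once that identification is in hand, axioms (3) and (4) reduce to formal properties of the functor $\infty$-category.
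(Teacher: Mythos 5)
The structure you outline is right, but you have substituted a different (and not entirely filled) decomposition for the one the paper actually uses. The paper does not verify clauses $(3)$ and $(4)$ of Definition \ref{disty} separately: after Lemma \ref{succ} handles clause $(2)$, it invokes Corollary \ref{summa} to repackage $(3)$ and $(4)$ into a single criterion about Cartesian transformations and colimit diagrams in $\Cat_{\infty}$ (Proposition \ref{custa0}), which is then deduced from the more general Proposition \ref{custa1} on essentially coCartesian fibrations, reformulated concretely as Proposition \ref{custa2}, and proved with marked--simplicial--set technology (Lemmas \ref{basty1}, \ref{quack}, \ref{collam}). Your direct verification of $(4)$ is reasonable given the naturality of straightening that you flag; that really is the bulk of the work there, and the reduction to $\Fun(-,\Cat_\infty)$ turning colimits into limits is sound.

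The real gap is in your argument for clause $(3)$. You assert that the inclusion $\mathrm{LFib}(Y) \hookrightarrow (\Cat_\infty)_{/Y}$ preserves small colimits ``because $\SSet \subseteq \Cat_\infty$ is closed under colimits (so the functor classifying a colimit of left fibrations still lands in $\SSet$).'' This justification does not hold up. Colimits in $(\Cat_\infty)_{/Y}$ are computed on underlying $\infty$-categories with the induced map to $Y$; there is no formal reason that the colimit of a diagram of left fibrations, formed in $(\Cat_\infty)_{/Y}$, is again a left fibration, nor that it coincides with the colimit formed in $\Fun(Y,\SSet) \simeq \mathrm{LFib}(Y)$. (The classifying functor of the \emph{latter} colimit trivially lands in $\SSet$, but that is not the object you need to compare with.) The statement is true, but it is precisely the descent property for $\Cat_\infty$ that Propositions \ref{custa1} and \ref{custa2} are proving; it is a consequence of the naturality-of-straightening package only together with a nontrivial analysis of how colimits of marked simplicial sets behave under Cartesian transformations. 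So your closing sentence that ``once that identification is in hand, axioms $(3)$ and $(4)$ reduce to formal properties of the functor $\infty$-category'' is too optimistic for $(3)$: you have identified a necessary input but omitted the actual descent argument that is the heart of the theorem.
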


It is possible to deduce Theorem \ref{cuppo} by combining
Example \ref{cus2}, Corollary \ref{cus1}, and Corollary \ref{presquare}. However, we will give a direct proof in this section, which will yield some additional dividends. 

We begin by observing that $\SSet$ can be identified with the full subcategory of $\Cat_{\infty}$ spanned by the Kan complexes.

\begin{lemma}\label{succ}
The inclusion $\SSet \subset \Cat_{\infty}$ admits left and right adjoints.
\end{lemma}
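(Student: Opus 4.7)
The plan is to produce the right adjoint by an explicit construction and to obtain the left adjoint as an application of the adjoint functor theorem for presentable $\infty$-categories.

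For the right adjoint, I would define $R : \Cat_{\infty} \to \SSet$ on objects by sending an $\infty$-category $\calC$ to its maximal Kan subcomplex $\calC^{\simeq}$: the union of those simplices all of whose edges are equivalences in $\calC$. To verify the adjunction $i \dashv R$, the key point is that if $X$ is a Kan complex and $\calC$ is any $\infty$-category, then every edge of $X$ is an equivalence in $X$, hence any functor $X \to \calC$ carries edges to equivalences and therefore factors uniquely through the inclusion $\calC^{\simeq} \hookrightarrow \calC$. Thus the restriction map $\Fun(X, \calC^{\simeq}) \to \Fun(X, \calC)$ is an isomorphism of simplicial sets, and since its source is a Kan complex this gives a natural equivalence $\bHom_{\Cat_{\infty}}(iX, \calC) \simeq \bHom_{\SSet}(X, R\calC)$.

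For the left adjoint, I would invoke the adjoint functor theorem (cf.\ \toposref{adjointfunctor}). Both $\SSet$ and $\Cat_{\infty}$ are presentable, and the inclusion $i$ is accessible, being the inclusion of a full subcategory closed under filtered colimits (as one can verify on the level of mapping spaces). What remains is to show that $i$ preserves small limits, which reduces to the assertion that a small limit of Kan complexes (taken in $\Cat_{\infty}$) is again a Kan complex. This follows from the fact that a morphism in a limit $\lim \calC_{\alpha}$ is an equivalence if and only if each of its projections to the factors $\calC_{\alpha}$ is an equivalence.

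The only step requiring genuine input is the preservation of limits by $i$, and even this is a routine consequence of the termwise detection of equivalences in a limit. As a conceptual aside, the left adjoint can be identified with the groupoid-completion functor $\calC \mapsto \calC[W^{-1}]$ obtained by formally inverting the collection $W$ of all morphisms of $\calC$, though this description is not needed for the present lemma.
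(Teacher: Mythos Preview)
Your proof is correct. For the right adjoint you give the same construction as the paper (the maximal Kan subcomplex $\calC^{\simeq}$), with essentially the same justification.

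For the left adjoint you take a different route. The paper argues directly at the level of universal arrows: for each $\infty$-category $\calC$, choose any weak homotopy equivalence $g:\calC \to Y$ with $Y$ a Kan complex; then for every Kan complex $Z$ the induced map $\bHom_{\Cat_{\infty}}(Y,Z) \to \bHom_{\Cat_{\infty}}(\calC,Z)$ is a homotopy equivalence (since $\Fun(-,Z)$ is already a Kan complex when $Z$ is, and weak homotopy equivalences induce equivalences on mapping spaces into Kan complexes). This gives the left adjoint explicitly as Kan fibrant replacement, which is exactly the groupoid completion you mention in your aside. Your approach via the adjoint functor theorem is perfectly valid and perhaps more systematic; the paper's approach is shorter and identifies the value of the left adjoint on the nose without appealing to presentability or the limit-preservation check.
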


\begin{proof}
It will suffice to show that for every $\infty$-category $\calC$, there exist Kan complexes
$X$ and $Y$ and maps
$$ X \stackrel{f}{\rightarrow} \calC \stackrel{g}{\rightarrow} Y$$
with the following properties:
\begin{itemize}
\item For every Kan complex $Z$, composition with $f$ and $g$ induces homotopy equivalences
$$ \bHom_{ \Cat_{\infty} }( Z, X) \rightarrow \bHom_{ \Cat_{\infty}}( Z, \calC)$$
$$ \bHom_{ \Cat_{\infty} }( Y, Z) \rightarrow \bHom_{ \Cat_{\infty} }(\calC, Z).$$
\end{itemize}
These conditions are satisfied if we choose $X$ to be the largest Kan complex contained in
$\calC$, and $g: \calC \rightarrow Y$ to be any weak homotopy equivalence such that $Y$ is a Kan complex.
\end{proof}

Theorem \ref{cuppo} follows immediately from Lemma \ref{succ}, Corollary \ref{summa}, and the following
result:

\begin{proposition}\label{custa0}
Let $\calE$ be a small $\infty$-category, and suppose given a natural transformation
$\overline{\alpha}: \overline{p} \rightarrow \overline{q}$ of diagrams
$\overline{p}, \overline{q}: \calE^{\triangleright} \rightarrow \Cat_{\infty}$. Assume that
$\overline{q}$ is a colimit diagram in $\SSet$, and that $\alpha = \overline{\alpha} | \calE$ is
Cartesian. Then $\overline{\alpha}$ is Cartesian if and only if $\overline{p}$ is a colimit diagram.
\end{proposition}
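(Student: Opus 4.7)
The plan is to translate the statement into a question about fibrations via unstraightening. Unstraighten $\overline{p}$ and $\overline{q}$ to coCartesian fibrations $\pi_{\calM}: \calM \to \calE^{\triangleright}$ and $\pi_{\calN}: \calN \to \calE^{\triangleright}$, and $\overline{\alpha}$ to a map $\phi: \calM \to \calN$ of coCartesian fibrations over $\calE^{\triangleright}$. Because $\overline{q}$ is valued in $\SSet$, the projection $\pi_{\calN}$ is a left fibration. Write $\calM_0$, $\calN_0$, $\phi_0$ for the pullbacks to $\calE$, and $X_0 = \overline{p}(\infty)$, $Y_0 = \overline{q}(\infty)$ for the fibers over the cone point.

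Under this dictionary the hypotheses acquire fibrational meanings. Since $\infty$ is terminal in $\calE^{\triangleright}$ and $\pi_{\calN}$ is a left fibration, the total space $\calN$ is weakly equivalent to $Y_0$, whereas $\calN_0$ is a model for $\colim q$; thus the hypothesis that $\overline{q}$ is a colimit diagram in $\SSet$ is equivalent to the inclusion $\calN_0 \hookrightarrow \calN$ being a weak homotopy equivalence. By the standard fibered characterization of Cartesian natural transformations of functors into $\Cat_{\infty}$, the hypothesis that $\alpha$ is Cartesian on $\calE$ translates to $\phi_0: \calM_0 \to \calN_0$ being a Cartesian fibration whose $\phi_0$-Cartesian edges over $\pi_{\calN}$-coCartesian edges coincide with $\pi_{\calM}$-coCartesian edges. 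The conclusion that $\overline{\alpha}$ is Cartesian demands the same for $\phi$ itself.

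From here I would argue the two directions. If $\overline{\alpha}$ is Cartesian, then $\phi: \calM \to \calN$ is a Cartesian fibration. Since $\calN \to \calE^{\triangleright}$ is a left fibration and $\infty$ is terminal, every object of $\calN$ is connected by a canonical coCartesian path to its image in $Y_0$, so $\phi$ is determined up to equivalence by its restriction $X_0 \to Y_0$ via pullback; in particular, $\calM_0 \simeq \calN_0 \times_{Y_0} X_0$. The coCartesian edges of $\calM_0 \to \calE$ then correspond, under this identification, to edges which are coCartesian in the $\calN_0$-factor and lie over equivalences in $X_0$; localizing at them is therefore the same as contracting the $\calN_0$-factor, which is a weak homotopy equivalence by the colimit hypothesis on $\overline{q}$. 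This exhibits $X_0$ as the colimit of $\overline{p}|\calE$ in $\Cat_{\infty}$. For the converse, assume $\overline{p}$ is a colimit diagram; to verify the missing pullback squares $X_e \simeq Y_e \times_{Y_0} X_0$ for $e \in \calE$, test against an arbitrary $Z \in \Cat_{\infty}$. Maps $Z \to X_e$ are governed via the Cartesian condition on $\calE$ and the colimit universal property for $\overline{p}$, while maps $Z \to Y_e \times_{Y_0} X_0$ are likewise computed using the colimit universal property for $\overline{q}$ in $\SSet$ together with the fact that mapping into a Kan complex reduces to $\SSet$. A direct comparison identifies the two.

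The main obstacle is the passage from the space-level equivalence $\calN_0 \to Y_0$ to the \emph{categorical} equivalence $\calM_0[W^{-1}] \simeq X_0$ in $\Cat_{\infty}$. The delicate point is to verify that a weak homotopy equivalence of base spaces suffices to identify the localizations of the total spaces of the Cartesian fibration $\phi$. This ultimately relies on the fact that a right fibration over a Kan complex is a Kan fibration, so that the pullback $\calM_0 = \calN_0 \times_{Y_0} X_0 \to X_0$ inherits good homotopical behavior; combined with the coCartesian structure over $\calE$, this allows the localization $\calM_0[W^{-1}]$ to be computed as $X_0$.
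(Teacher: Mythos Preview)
Your overall strategy—unstraighten and compare fibrations—is the same as the paper's. But there are genuine gaps at the two places you yourself flag.

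For the forward direction, your ``main obstacle'' is real and your proposed resolution does not close it. The map $\calM_0 \to X_0$ you need to analyze is a pullback of $\calN_0 \to Y_0$, and $\calN_0$ is the total space of a left fibration over $\calE$, hence an $\infty$-category that is generally not a Kan complex. There is no reason for $\calN_0 \to Y_0$ (or its pullback) to be a right fibration, so your appeal to ``right fibrations over Kan complexes are Kan fibrations'' does not apply. What is actually needed is a fiberwise criterion for homotopy pullback squares in the \emph{marked} model structure; the paper supplies exactly this as Lemma~\ref{quack}, together with a rectification result (Lemma~\ref{basty1}) that turns the marked data into a genuine coCartesian fibration over an $\infty$-category. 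Without these, the passage from the space-level equivalence $\calN_0 \simeq Y_0$ to the categorical equivalence $\calM_0[W^{-1}] \simeq X_0$ is unjustified.

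For the converse, your ``test against $Z$'' sketch does not work. The colimit hypothesis on $\overline{p}$ controls maps \emph{out of} $X_0$, whereas verifying $p(e) \simeq q(e) \times_{Y_0} X_0$ requires computing maps \emph{into} $p(e)$; the universal property points the wrong way. The paper instead shows, using Lemma~\ref{collam}, that under the hypotheses the total map $\phi_0: \calM_0 \to \calN_0$ is itself a coCartesian fibration, and then invokes the same marked-fibration lemmas to compare the two squares. It also proves the more general Proposition~\ref{custa1}, where $\overline{q}$ need not land in $\SSet$ but each $\alpha_E$ is only required to be essentially a coCartesian fibration; your statement is the special case where the targets are Kan complexes (Remark~\ref{stooper}).
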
 

We will deduce Proposition \ref{custa0} from a more general result, which does not require
the hypothesis that $\overline{q}$ factors through $\SSet$. To formulate this result, we need to introduce a definition:

\begin{definition}
Let $f: \calC \rightarrow \calD$ be a morphism in $\Cat_{\infty}$. We will say that $f$ is
{\it essentially a coCartesian fibration} if $f$ factors as a composition
$$ \calC \stackrel{f'}{\rightarrow} \calC' \stackrel{f''}{\rightarrow} \calD$$
where $f'$ is an equivalence of $\infty$-categories and $f''$ is a coCartesian fibration.
\end{definition}

\begin{remark}
Let $f: \calC \rightarrow \calD$ be a morphism in $\Cat_{\infty}$, and choose {\em any} factorization
$$ \calC \stackrel{f'}{\rightarrow} \calC' \stackrel{f''}{\rightarrow} \calD$$
where $f'$ is an equivalence of $\infty$-categories and $f''$ is a categorical fibration. Then
$f$ is essentially a coCartesian fibration if and only if $f''$ is a coCartesian fibration.
\end{remark}

\begin{remark}\label{stooper}
Let $f: \calC \rightarrow \calD$ be a morphism in $\Cat_{\infty}$, where $\calD$ is a Kan complex.
Then $f$ is essentially a coCartesian fibration. This follows immediately from Proposition \toposref{groob}.
\end{remark}

In view of Remark \ref{stooper}, Proposition \ref{custa0} is an immediate consequence of the following result:
\begin{proposition}\label{custa1}
Let $\calE$ be a small $\infty$-category, and suppose given a natural transformation
$\overline{\alpha}: \overline{p} \rightarrow \overline{q}$ of diagrams
$\overline{p}, \overline{q}: \calE^{\triangleright} \rightarrow \Cat_{\infty}$. Assume that:
\begin{itemize}
\item[$(a)$] For every object $E \in \calE$, the map $\overline{p}(E) \rightarrow \overline{q}(E)$
is essentially a coCartesian fibration. 
\item[$(b)$] The natural transformation $\alpha = \overline{\alpha} | \calE$ is Cartesian.
\item[$(c)$] The map $\overline{q}$ is a colimit diagram.
\end{itemize}
Then the following conditions are equivalent:
\begin{itemize}
\item[$(1)$] The map $\overline{p}$ is a colimit diagram.
\item[$(2)$] The natural transformation $\overline{\alpha}$ is Cartesian, and the map
$\overline{p}(v) \rightarrow \overline{q}(v)$ is essentially a coCartesian fibration, where
$v$ denotes the cone point of $\calE^{\triangleright}$.
\end{itemize}
\end{proposition}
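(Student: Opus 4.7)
The plan is to reduce the statement to a general principle about coCartesian fibrations via straightening/unstraightening. First, using the definition of ``essentially coCartesian'', I would replace $\overline{p}$ up to equivalence of diagrams $\calE^{\triangleright} \to \Fun(\Delta^1, \Cat_{\infty})$ so that every evaluated map $\overline{p}(E) \to \overline{q}(E)$ for $E \in \calE$ is an honest coCartesian fibration of $\infty$-categories; this uses $(a)$ and Joyal's model structure.

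Set $q = \overline{q}|\calE$ and $p = \overline{p}|\calE$. Under straightening/unstraightening, the data of $p$, $q$, and the Cartesian transformation $\alpha: p\to q$ in which each $p(E)\to q(E)$ is a coCartesian fibration is equivalent to a functor $F: \int q \to \Cat_{\infty}$, where $\int q \to \calE$ denotes the unstraightening of $q$. This is the key reinterpretation: a Cartesian transformation $\alpha$ with pointwise coCartesian target is precisely a ``coCartesian fibration over a coCartesian fibration'', which straightens to a single functor out of the total space.

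The next key input is the standard description of colimits in $\Cat_{\infty}$: writing $Q := \overline{q}(v)$, condition $(c)$ means that $Q$ is the localization of $\int q$ along the coCartesian edges over $\calE$. Thus functors $Q \to \Cat_{\infty}$ correspond to functors $F:\int q \to \Cat_{\infty}$ sending coCartesian edges of $\int q \to \calE$ to equivalences. Such a $Q$-indexed functor unstraightens to a coCartesian fibration $\calP \to Q$, and under the equivalence the pullback of $\calP$ along each $q(E) \to Q$ recovers $p(E) \to q(E)$. I would then check (this is the bookkeeping core of the argument) that both $(1)$ and $(2)$ are equivalent to the assertion that $F$ inverts coCartesian edges of $\int q \to \calE$, with the resulting factorization $Q \to \Cat_{\infty}$ classifying the map $\overline{p}(v) \to \overline{q}(v)$: condition $(2)$ gives this descent directly (the coCartesian fibration structure on $\overline{p}(v)\to\overline{q}(v)$ is exactly the unstraightening of the factored functor, and the Cartesian condition at $v$ is exactly the pullback compatibility), while condition $(1)$ is the assertion that $\overline{p}(v)$ computes the colimit of $p$, which by the compatibility of colimits with unstraightening (as in the theorem that expresses $\colim p$ as the total space of $\int p \to \calE$ with the relative coCartesian edges inverted) translates into the same descent.

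The main obstacle, and the only real work, is the last equivalence: verifying that the colimit $\colim p$ can be computed relatively over $\colim q = Q$ by inverting exactly the coCartesian edges of $\int q \to \calE$ (pulled back to $\int p|_{\calE}$), so that $\colim p \to Q$ inherits the structure of a coCartesian fibration. For this I would invoke the base-change compatibility of unstraightening together with the description of colimits in $\Cat_{\infty}$ as marked-simplicial-set localizations, reducing everything to a computation in the joint model structure on marked simplicial sets over $\calE^{\triangleright}$. Once this identification is in place, both $(1)\Rightarrow(2)$ and $(2)\Rightarrow(1)$ fall out directly, since each amounts to recognizing the fiber of the same coCartesian fibration $\calP \to Q$ at the cone point.
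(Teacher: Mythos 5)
Your proposal takes essentially the same route as the paper, with one extra layer of straightening: the paper unstraightens $p$ and $q$ to a diagram $\calC \to \calD \to \calE$ with $r\colon \calC \to \calD$ a coCartesian fibration (reformulated as Proposition~\ref{custa2}) and works directly with marked simplicial sets, whereas you further straighten $r$ to a functor $F\colon \int q \to \Cat_{\infty}$. Two places where your sketch elides nontrivial content. First, the assertion that a Cartesian transformation with pointwise coCartesian targets globalizes to a coCartesian fibration $r$ over $\int q$ is precisely Lemma~\ref{collam}; its condition~$(c)$ there (that the pushforwards $\calC_E \to \calC_{E'}$ preserve fiberwise coCartesian edges) has to be extracted from hypothesis~$(b)$ via Proposition~\toposref{basechangefunky}, so this step deserves more than a parenthetical remark. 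Second, your claim that $(1)$ and $(2)$ are both ``equivalent to the assertion that $F$ inverts coCartesian edges of $\int q \to \calE$'' is misstated: that inversion is \emph{automatic} from hypothesis~$(b)$ alone (the paper establishes exactly this in the first paragraph of its proof of Proposition~\ref{custa2}), so $F$ always factors through $Q$. The genuine content of the equivalence is that the factored functor $Q \to \Cat_{\infty}$, together with the cocone structure maps, classifies the \emph{given} $\overline{p}(v) \to \overline{q}(v)$; checking that the abstractly constructed total space over $Q$ and its comparison maps agree with the given cocone data of $\overline{p}$ is exactly the ``bookkeeping core'' you defer, and is what the paper manages via the homotopy-pullback criterion for marked coCartesian fibrations (Lemma~\ref{quack}) together with the fibrant-replacement device of Lemma~\ref{basty1}. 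The route is correct but the sketch stops short of the part where the substance lies.
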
 

We proceed to reduce Proposition \ref{custa1} to a more concrete statement. Let
$q = \overline{q} | \calE$. Then $q$ is classified by a coCartesian fibraton of simplicial sets
$\pi: \calD \rightarrow \calE$. Similarly, the diagram $p = \overline{p} | \calE$ is classified by a coCartesian fibration $\calC \rightarrow \calE$, and the natural transformation $\overline{\alpha}$ is encoded by a commutative diagram
$$ \xymatrix{ \calC \ar[rr]^{r} \ar[dr] & & \calD \ar[dl] \\
& \calE, & }$$
where the functor $r$ preserves coCartesian edges. 

Let $\calD^{\natural}$ denote the marked simplicial set $(\calD,S)$, where
$S$ is the collection of $\pi$-coCartesian edges of $\calD$, and let $\calC^{\natural}$ be defined likewise. In view of Proposition \toposref{charcatcolimit}, we can identify the colimit of $q$ with an $\infty$-category
$\calD'$ equipped with a weak equivalence $\calD^{\natural} \rightarrow {\calD'}^{\natural}$ of marked
simplicial sets; here ${\calD'}^{\natural}$ is the marked simplicial set $(\calD', S')$ where $S'$ denotes the collection of all equivalences in $\calD'$. Similarly, the diagram $\overline{p}$ is encoded by a map
of marked simplicial sets $\calC^{\natural} \rightarrow {\calC'}^{\natural}$ which is a weak equivalence if and only if $\overline{p}$ is a colimit diagram, and the natural transformation $\overline{\alpha}$ is encoded by a commutative diagram
$$ \xymatrix{ \calC' \ar[r] & \calD' \\
\calC \ar[r] \ar[u] & \calD. \ar[u] }$$
We may therefore reformulate Proposition \ref{custa1} as follows:

\begin{proposition}\label{custa2}
Suppose given a commutative diagram of $\infty$-categories:
$$ \xymatrix{ \calC' \ar[rr]^{r'} &  & \calD' \\
\calC \ar[rr]^{r} \ar[u]^{f} \ar[dr]^{p} & & \calD \ar[u]^{f'} \ar[dl]^{q} \\
& \calE & }$$
satisfying the following conditions:
\begin{itemize}
\item[$(a)$] The maps $p$ and $q$ are coCartesian fibrations.
\item[$(b)$] The map $f$ carries $p$-coCartesian edges of $\calC$ to equivalences in $\calC'$, 
and the map $f'$ carries $q$-coCartesian edges of $\calD$ to equivalences in $\calD'$.
\item[$(c)$] For every object $E \in \calE$, the induced map $\calC_{E} \rightarrow \calD_{E}$ is
essentially a coCartesian fibration.
\item[$(d)$] For every morphism $E \rightarrow E'$ in $\calE$, the induced homotopy coherent diagram
$$ \xymatrix{ \calC_{E} \ar[r] \ar[d] & \calD_{E} \ar[d] \\
\calC_{E'} \ar[r] & \calD_{E'} }$$
is a homotopy pullback diagram of $\infty$-categories.
\item[$(e)$] The map $\calD^{\natural} \rightarrow {\calD'}^{\natural}$ is a weak equivalence of marked simplicial sets; here we regard an edge of $\calD$ as marked in $\calD^{\natural}$ if it is $q$-coCartesian, while an edge of $\calD'$ is marked in ${\calD'}^{\natural}$ if it is an equivalence.
\end{itemize}
Then the following conditions are equivalent:
\begin{itemize}
\item[$(1)$] The induced map $\calC^{\natural} \rightarrow { \calC'}^{\natural}$ is a weak equivalence of marked simplicial sets, where $\calC^{\natural}$ and ${ \calC' }^{\natural}$ are defined as in $(e)$.

\item[$(2)$] The map $r'$ is essentially a coCartesian fibration, and for each $E \in \calE$ the diagram
$$ \xymatrix{ \calC_{E} \ar[r] \ar[d] & \calD_{E} \ar[d] \\
\calC' \ar[r] & \calD' }$$
is a homotopy pullback square of $\infty$-categories.
\end{itemize}
\end{proposition}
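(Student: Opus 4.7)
The plan is to use straightening/unstraightening over $\calE$ to reformulate the data, then exhibit a canonical essentially coCartesian fibration $\widetilde{r} : \widetilde{\calC} \to \calD'$ that is simultaneously characterized by both (1) and (2). First, I would straighten $p$ and $q$ to obtain functors $\chi_p, \chi_q : \calE \to \Cat_{\infty}$ with $\chi_p(E) = \calC_E$, $\chi_q(E) = \calD_E$, together with a natural transformation $\eta : \chi_p \to \chi_q$ classifying $r$. Under this dictionary, (c) says each component of $\eta$ is essentially a coCartesian fibration, (d) says $\eta$ is Cartesian, (e) says $\calD' \simeq \colim_{\calE} \chi_q$ in $\Cat_{\infty}$, and condition (1) is the analogous statement $\calC' \simeq \colim_{\calE} \chi_p$.

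Next, I would construct $\widetilde{r}$ as follows. After replacing each $\eta_E$ by a genuine coCartesian fibration, straighten it pointwise to obtain $\psi_E : \calD_E \to \Cat_{\infty}$. The Cartesian condition (d) guarantees that the family $\{\psi_E\}$ is compatible with base change along morphisms in $\calE$; via the universal property of $\calD' \simeq \colim_\calE \chi_q$ in $\Cat_{\infty}$, this family promotes to a single functor $\Psi : \calD' \to \Cat_{\infty}$. Unstraightening $\Psi$ yields $\widetilde{r} : \widetilde{\calC} \to \calD'$, whose pullback along $\calD_E \to \calD'$ is by construction equivalent to $\calC_E \to \calD_E$; in particular $\widetilde{r}$ automatically satisfies the pullback-square condition in (2).

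The main technical step is to identify $\widetilde{\calC} \simeq \colim_{\calE} \chi_p$ in $\Cat_{\infty}$. The pullback squares provide a natural cone $\chi_p \to \widetilde{\calC}$ (viewing $\widetilde{\calC}$ as a constant functor on $\calE^{\triangleright}$); I would show it is colimiting by testing against an arbitrary $\calZ \in \Cat_{\infty}$. Via straightening, the limit
\[
\lim_{\calE^{op}} \bHom_{\Cat_{\infty}}(\calC_E, \calZ)
\]
is re-expressible as a limit over $\calE^{op}$ of spaces of functors out of the $\calD_E$ that are compatibly parametrized by $\Psi$, and the hypothesis $\calD' \simeq \colim_{\calE} \chi_q$ combined with the compatibility encoded in $\Psi$ identifies this limit with $\bHom_{\Cat_{\infty}}(\widetilde{\calC}, \calZ)$, giving the desired equivalence.

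Given this identification, the proposition follows formally: condition (2) characterizes $r' : \calC' \to \calD'$ as an essentially coCartesian fibration classified by $\Psi$, whence $\calC' \simeq \widetilde{\calC}$, and combining with the main step this is exactly (1); conversely (1) combined with the main step yields $\calC' \simeq \widetilde{\calC}$, whence (2). The hardest part will be the colimit identification $\widetilde{\calC} \simeq \colim_{\calE} \chi_p$: unstraightening does not literally commute with arbitrary colimits in the base, so the argument must exploit the specific way $\calD'$ arises from $\calD$ by inverting $q$-coCartesian edges (hypothesis (e)) and the fact that $\Psi$ was itself obtained through this localization, so that maps out of $\widetilde{\calC}$ are coherently controlled by maps out of the $\calC_E$.
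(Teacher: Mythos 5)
Your plan is genuinely different in spirit from the paper's proof: the paper stays in the world of marked simplicial sets and establishes the equivalence via two technical lemmas (Lemma \ref{collam}, which shows that $r$ is itself a coCartesian fibration under hypotheses $(a)$--$(d)$, and Lemma \ref{quack}, which gives a fiberwise criterion for a square of marked simplicial sets to be a homotopy pullback), while you want to pass entirely through straightening and reason about colimits in $\Cat_\infty$. However, as stated the proposal has a gap at precisely the point you flag as ``the hardest part,'' and I do not think the test-against-$\calZ$ sketch closes it.

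The difficulty is that the step ``$\widetilde{\calC} \simeq \colim_\calE \chi_p$'' is not a reduction of the problem but essentially a restatement of it. By Proposition \toposref{charcatcolimit}, the colimit $\colim_\calE \chi_p$ is computed by a fibrant replacement of $\calC^\natural$ in the coCartesian model structure, so your main step amounts to the assertion that the canonical map $\calC^\natural \rightarrow \widetilde{\calC}^\natural$ is a weak equivalence of marked simplicial sets --- which is exactly condition $(1)$ of the proposition, applied to the particular object $\calC' = \widetilde{\calC}$. To verify it, one would have to establish that the map $\calC \rightarrow \calD \times_{\calD'} \widetilde{\calC}$ is an equivalence on fibers and then invoke a homotopy-pullback criterion for marked simplicial sets; this is precisely the content of Lemma \ref{quack} and the way the paper deploys it in the $(2) \Rightarrow (1)$ direction. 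Your $\bHom(-,\calZ)$ argument doesn't engage with this: $\widetilde{\calC}$ is built by unstraightening $\Psi$ over all of $\calD'$, so it has simplices that do not lift to any single $\calD_E$, and there is no formal reason why a functor $\widetilde{\calC} \rightarrow \calZ$ should be reconstructible from its restrictions to the $\calC_E$ without already knowing the colimit identification. There is also a secondary gap in your claim that condition $(2)$ ``characterizes $r'$ as classified by $\Psi$'': the pullback squares in $(2)$ only determine the restriction of the classifying functor to each $\calD_E$ individually, and you would still need an argument (again via the localization $\calD \to \calD'$ and the coherence of the given maps $\calC_E \to \calC'$) that this pins down the functor on all of $\calD'$. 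In short, the high-level plan is coherent, but the two places where you invoke ``the universal property of the colimit'' are exactly where the combinatorial work of the paper lives; a complete version of your argument would need to prove something equivalent to Lemma \ref{quack}, at which point you might as well follow the paper's route through marked simplicial sets directly.
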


The proof will require a few preliminary results. We first introduce a bit of terminology:

\begin{definition}
We will say that a map $p: (X, \calE) \rightarrow (Y, \calE')$ of marked simplicial sets
is a {\it marked coCartesian fibration} if the following conditions are satisfied:
\begin{itemize}
\item[$(1)$] The underlying map of simplicial sets $p: X \rightarrow Y$ is a coCartesian fibration.
\item[$(2)$] An edge $f$ of $X$ belongs to $\calE$ if and only if $f$ is $p$-coCartesian and
$p(f) \in \calE'$.
\item[$(3)$] For every marked edge $f: y \rightarrow y'$ in $Y$, the induced map
$X_{y} \rightarrow X_{y'}$ is an equivalence of $\infty$-categories.
\end{itemize}
\end{definition}

\begin{lemma}\label{basty1}
Let $f: (X, \calE) \rightarrow (Y, \calE')$ be a marked coCartesian fibration, and suppose
given an $\infty$-category $\calD$ and a map $h: Y \rightarrow \calD'$ which carries each edge in $\calE$ to an equivalence in $\calD'$. Then there exists a commutative diagram of marked simplicial sets
$$ \xymatrix{ (X, \calE) \ar[r]^{f} \ar[d]^{g'} & (Y, \calE') \ar[dr]^{h} \ar[d]^{g} & \\
\calC^{\natural} \ar[r]^{f'} & \calD^{\natural} \ar[r]^{j} & {\calD'}^{\natural} }$$
where:
\begin{itemize}
\item[$(a)$] The marked simplicial set $\calD^{\natural} = (\calD, \calE_{\calD})$, where $\calE_{\calD}$ denotes the collection of all equivalences in $\calD$ (and ${\calD'}^{\natural}$ is defined likewise).
\item[$(b)$] The marked simplicial set $\calC^{\natural} = (\calC, \calE_{\calC})$, where $\calC$
is an $\infty$-category and $\calE_{\calC}$ consists of the collection of all $f'$-coCartesian morphism in $\calC$. 
\item[$(c)$] The map $f'$ is a coCartesian fibration.
\item[$(d)$] The maps $g$ and $g'$ are weak equivalences of marked simplicial sets.
\item[$(e)$] For every vertex $y$ of $Y$, the induced map $X_y \rightarrow \calC_{g(y)}$ is an equivalence of $\infty$-categories.
\item[$(f)$] The map $j$ is a categorical fibration.
\end{itemize}

In particular, the morphism in $\Cat_{\infty}$ determined by $f$ is essentially a coCartesian fibration.
\end{lemma}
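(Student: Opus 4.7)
The plan is to produce the diagram via two consecutive applications of the small object argument: first in the Cartesian model structure on $\mSet$ to obtain $\calD^{\natural}$ and $j$, and then in the coCartesian model structure on $(\mSet)_{/\calD}$ to obtain $\calC^{\natural}$, $g'$, and $f'$. All of the listed conditions will then follow either directly from fibrancy in the relevant model structure or from a base-change argument on fibers.

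For the first factorization, I factor $h$ in the Cartesian model structure on $\mSet$ as a trivial cofibration $g\colon (Y, \calE') \to \calD^{\natural}$ followed by a fibration $j\colon \calD^{\natural} \to {\calD'}^{\natural}$. Since ${\calD'}^{\natural}$ is fibrant and $j$ is a fibration, so is $\calD^{\natural}$; fibrant objects in this model structure are precisely pairs $(\calD, \mathrm{equivalences})$ with $\calD$ an $\infty$-category, which gives (a) and the corresponding half of (d). Condition (f) follows from the standard fact that the underlying simplicial-set map of any fibration in the Cartesian model structure on $\mSet$ is a categorical fibration.

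For the second factorization, the composite $(X, \calE) \to (Y, \calE') \to \calD$ presents $(X, \calE)$ as an object of $(\mSet)_{/\calD}$. Applying the small object argument in the coCartesian model structure produces a trivial cofibration $g'\colon (X, \calE) \to \calC^{\natural}$ followed by a fibration $f'\colon \calC^{\natural} \to \calD$; by definition of fibrancy here, $\calC$ is an $\infty$-category, $f'$ is a coCartesian fibration, and $\calE_{\calC}$ is exactly the collection of $f'$-coCartesian edges, giving (b) and (c), and completing (d). To verify (e), I use that the base-change adjunction for the coCartesian model structure on $(\mSet)_{/\calD}$ is a Quillen adjunction with pullback as the left adjoint, and that every object in the source is cofibrant (since cofibrations are monomorphisms), so pullback preserves all weak equivalences. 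Pulling back $g'$ along the vertex inclusion $\{g(y)\} \hookrightarrow \calD$ therefore yields a weak equivalence in $\mSet^{\mathrm{Cart}}$. An edge of $X$ belonging to $\calE$ and lying in the fiber $X_{y}$ is a coCartesian edge over $\mathrm{id}_{y}$, hence an equivalence in the $\infty$-category $X_{y}$; so the pullback of $(X, \calE)$ equals $X_{y}^{\natural}$, and an analogous argument identifies the pullback of $\calC^{\natural}$ with $\calC_{g(y)}^{\natural}$. Since both pullbacks are fibrant in $\mSet^{\mathrm{Cart}}$, the weak equivalence between them is a categorical equivalence $X_{y} \to \calC_{g(y)}$, establishing (e). The concluding assertion on $f$ is then formal: the diagram exhibits $f$, after weak-equivalence replacement on source and target, as the coCartesian fibration $\calC \to \calD$.

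The principal obstacle is the fiber identification underlying (e): one needs simultaneously that pullback along a vertex is left Quillen for the coCartesian model structure, and that the marked edges of a marked coCartesian fibration restrict on any fiber $X_{y}$ to precisely the equivalences in the $\infty$-category $X_{y}$. Neither fact is deep, but aligning the model-categorical machinery with the combinatorial structure of marked coCartesian fibrations requires careful bookkeeping; once this is done, the rest of the proof reduces to routine consequences of fibrant replacement in two standard model structures.
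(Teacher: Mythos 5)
Your high-level strategy differs from the paper's. The paper does not take an abstract fibrant replacement of $(X,\calE)$ in the coCartesian model structure over $\calD$; instead, it first observes that the coCartesian fibration $X \rightarrow Y$ is classified by a map $\chi \colon Y \rightarrow \Cat_{\infty}$ carrying $\calE'$ to equivalences, extends $\chi$ to $\overline{\chi}\colon \calD \rightarrow \Cat_{\infty}$ along the (carefully built) trivial cofibration $Y \rightarrow \calD$, and then \emph{defines} $\calC \rightarrow \calD$ as the coCartesian fibration classified by $\overline{\chi}$. With that construction, $X$ may be replaced by $Y \times_{\calD} \calC$, so that the fiber comparison in $(e)$ holds by construction (the map $X_y \rightarrow \calC_{g(y)}$ is literally an isomorphism), and the work of the proof shifts entirely to verifying $(d)$. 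Your approach inverts that trade-off: $(d)$ is free from the factorization, but $(e)$ has to be earned, and this is exactly where the argument breaks down.

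The specific gap is your claim that pullback along a vertex inclusion $\{g(y)\} \hookrightarrow \calD$ is a left Quillen functor for the coCartesian model structure on $\mset{\calD^\sharp}$, hence preserves all weak equivalences because every object is cofibrant. This is false. The vertex inclusion $\{0\}^\sharp \hookrightarrow (\Delta^1)^\sharp$ is marked anodyne (it is a generating $\CatP$-anodyne map of type $(B_0)$ in the notation of Definition~\ref{postspunt}), hence a trivial cofibration between cofibrant objects of $\mset{(\Delta^1)^\sharp}$, yet its pullback along $\{1\} \hookrightarrow \Delta^1$ is $\emptyset \rightarrow (\Delta^0)^\sharp$, which is not a weak equivalence. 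In general the adjoint pair relating $\mset{\calD^\sharp}$ to $\mSet$ via a vertex $s$ has $i_! $ (composition with $\{s\}\hookrightarrow\calD$) as the \emph{left} Quillen functor and $i^{*}$ (pullback) as the \emph{right} adjoint; a right Quillen functor preserves weak equivalences only between fibrant objects, and $(X,\calE)$ is not fibrant in $\mset{\calD^\sharp}$ (its underlying map to $\calD$ factors through $Y$ and is not a coCartesian fibration over $\calD$). Thus the specific conclusion you need — that the fiber map $X_y \rightarrow \calC_{g(y)}$ is a categorical equivalence — does not follow from the abstract factorization, and something extra is required. Note also that the fiberwise comparison result proved in this paper (Lemma~\ref{quack}) is established \emph{after} and \emph{using} the present lemma, so one cannot appeal to it here without circularity. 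The essential extra ingredient, used by the paper, is the classification of coCartesian fibrations: by constructing $\calC$ as the unstraightening of an extension of $\chi$, one pins down the fibers in advance, whereas a generic fibrant replacement gives no direct control over them.
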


\begin{proof}
Let $K$ be a contractible Kan complex equipped with a monomorphism $\Delta^1 \rightarrow K$ which is bijective on vertices. The map $h$ admits a factorization
$$ Y \stackrel{h'}{\rightarrow} Y \coprod_{\calE \times \Delta^1} (\calE \times K)
\stackrel{h''}{\rightarrow} \calD \stackrel{h'''}{\rightarrow} \calD'$$
where $h''$ is a cofibration and a categorical equivalence, and $h'''$ is an categorical fibration (so that $\calD$ is an $\infty$-category).
The coCartesian fibration $f$ is classified by a map $\chi: Y \rightarrow \Cat_{\infty}$, which carries
every edge in $\calE'$ to an equivalence in $\Cat_{\infty}$. It follows that $\chi$ can be extended to a map
$\overline{\chi}: \calD \rightarrow \Cat_{\infty}$. This map classifies a coCartesian fibration
$f': \calC \rightarrow \calD$. Without loss of generality, we may replace $X \rightarrow Y$ by the equivalent coCartesian fibration $Y \times_{\calD} \calC \rightarrow Y$. We claim that this construction has the desired properties. The only nontrivial point is to verify that the map $\phi:(X, \calE) \rightarrow \calC^{\natural}$ is a weak equivalence of marked simplicial sets. We can factor $\phi$ as a composition
$$ (X, \calE) \stackrel{\phi'}{\rightarrow} \overline{Z} \stackrel{\phi''}{\rightarrow} \calC^{\natural},$$
where $\overline{Z} = (Z, \calE_{Z})$ denotes the marked simplicial set
$$ (Y^{\flat} \coprod_{ (\calE \times \Delta^1)^{\flat}} (\calE \times K)^{\sharp})
\times_{ \calD^{\natural} } \calC^{\natural}. $$
It will therefore suffice to show that $\phi'$ and $\phi''$ are weak equivalences of marked simplicial sets.

The map $\phi'$ is an iterated pushout of inclusions of the form
$$ ( \Delta^1)^{\sharp} \times_{ \calD^{\natural} } \calC^{\natural}
\subseteq K^{\sharp} \times_{ \calD^{\natural} } \calC^{\natural}.$$
Since $K$ is contractible, there exists a retraction $r: K \rightarrow \Delta^1$ and a homotopy
$H: K \times \Delta^1 \rightarrow K$ from the identity to $r$. Choosing a coCartesian lifting of this
homotopy, we obtain map
$$ \overline{H}: (K^{\sharp} \times_{ \calD^{\natural} } \calC^{\natural}) \times (\Delta^1)^{\sharp}
\rightarrow K^{\sharp} \times_{ \calD^{\natural} } \calC^{\natural},$$
which exhibits $( \Delta^1)^{\sharp} \times_{ \calD^{\natural} } \calC^{\natural}$ as a deformation
retract of $K^{\sharp} \times_{ \calD^{\natural} } \calC^{\natural}$ in the category of marked simplicial sets. It follows that $\phi'$ is a weak equivalence.

To prove that $\phi''$ is a weak equivalence, we consider the commutative diagram
$$ \xymatrix{ Z^{\flat} \ar[r]^{\phi'''} \ar[d] & \calC^{\flat} \ar[d] \\
\overline{Z} \ar[r]^{\phi''} & \calC^{\natural}. }$$
Both of the vertical arrows are weak equivalences (since they can be obtained as iterated
pushouts of inclusions of the form $S^{\flat} \subseteq S^{\sharp}$, where $S$ is a Kan complex).
Consequently, it will suffice to show that $Z^{\flat} \rightarrow \calC^{\flat}$ is an equivalence of marked simplicial sets: in other words, that the inclusion $Z \subseteq \calC$ is an equivalence of $\infty$-categories. This follows from Corollary \toposref{basety}, since $f'$ is a coCartesian fibration and
$h'''$ is a categorical equivalence.
\end{proof}

\begin{remark}
In the situation of Lemma \ref{basty1}, we can assume without loss of generality that the map
$X \rightarrow \calC$ is a monomorphism of simplicial sets: if necessary, we can replace
$\calC$ by $\calC \times K$, where $K$ is a contractible Kan complex equipped with a monomorphism $X \rightarrow K$.
\end{remark}

\begin{lemma}\label{quack}
Suppose given a commutative diagram of marked simplicial sets
$$ \xymatrix{ \overline{X} \ar[r]^{p} \ar[d] & \overline{Y} \ar[d]^{q} \\
\overline{X}' \ar[r]^{p'} & \overline{Y}' }$$
where $p$ and $p'$ are marked coCartesian fibrations. This diagram is a
homotopy pullback square of marked simplicial sets if and only if, for every vertex
$y$ of $\overline{Y}$, the induced map of fibers $X_{y} \rightarrow X'_{q(y)}$ is an
equivalence of $\infty$-categories. Here $X$ and $X'$ denote the underlying simplicial
sets of $\overline{X}$ and $\overline{X}'$, respectively.
\end{lemma}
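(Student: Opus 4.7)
The plan is to reduce the statement to the standard fact that a map between marked coCartesian fibrations over a common base is a weak equivalence of marked simplicial sets if and only if it is a fiberwise equivalence of $\infty$-categories over every vertex.

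First, I would observe that by definition, the diagram is a homotopy pullback square of marked simplicial sets exactly when, after replacing $p'$ by a fibration in the coCartesian model structure (in the sense of \S\toposref{twuf}), the induced map $\phi: \overline{X} \to \overline{X}' \times_{\overline{Y}'} \overline{Y}$ is a weak equivalence. Since $p'$ is already a marked coCartesian fibration, the pullback $\overline{X}' \times_{\overline{Y}'} \overline{Y} \to \overline{Y}$ is again a marked coCartesian fibration (the three defining conditions are preserved by base change). Moreover, the map $\phi$ commutes with the projections to $\overline{Y}$, and one checks from the definitions that $\phi$ carries marked edges to marked edges, since the marked edges of $\overline{X}$ are precisely the $p$-coCartesian edges lying over marked edges of $\overline{Y}$, and similarly for the pullback.

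Next, I would invoke the key technical principle: if $\overline{W} \to \overline{Z}$ and $\overline{W}' \to \overline{Z}$ are two marked coCartesian fibrations and $\psi: \overline{W} \to \overline{W}'$ is a map over $\overline{Z}$ which carries marked edges to marked edges, then $\psi$ is a weak equivalence of marked simplicial sets if and only if $\psi$ induces an equivalence on fibers over every vertex $z$ of $\overline{Z}$. This is essentially a reformulation of Proposition \toposref{superweakie} (combined with the straightening/unstraightening equivalence for coCartesian fibrations from \S\toposref{strsec}): both sides classify functors from $\overline{Z}$ (localized at its marked edges) into $\Cat_\infty$, and weak equivalences correspond to pointwise equivalences.

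Applying this with $\psi = \phi$ and $\overline{Z} = \overline{Y}$, the homotopy pullback condition becomes equivalent to the assertion that for each vertex $y$ of $\overline{Y}$, the induced map on fibers $X_y \to (\overline{X}' \times_{\overline{Y}'} \overline{Y})_y \simeq X'_{q(y)}$ is an equivalence of $\infty$-categories, which is precisely the stated condition. The main subtlety is verifying that the fibers of a marked coCartesian fibration are the expected $\infty$-categories (and that base-changing a marked coCartesian fibration along an arbitrary map of marked simplicial sets still yields something whose fibers coincide, up to equivalence, with fibers of the original, over corresponding vertices); this is immediate from property $(3)$ in the definition together with the compatibility of the straightening construction with pullback.
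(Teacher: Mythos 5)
There is a genuine gap in your argument, and it concerns both of your two main steps.

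The heart of the problem is that the objects $\overline{Y}$, $\overline{Y}'$, $\overline{X}$, $\overline{X}'$ are arbitrary marked simplicial sets: none of them is assumed to be fibrant in $\mSet$ (that is, an $\infty$-category with the natural marking), and a marked coCartesian fibration is \emph{not} in general a fibration in $\mSet$. Consequently your first step --- that the strict pullback $\overline{X}' \times_{\overline{Y}'} \overline{Y}$ computes the homotopy pullback because ``$p'$ is already a marked coCartesian fibration'' --- is not justified by any general model-categorical principle. (Even aside from that, the model structure on $\mSet$ is Quillen equivalent to the Joyal model structure and hence is not right proper, so one cannot appeal to pulling back along fibrations without first knowing that everything in sight is fibrant.) Your second step has an analogous issue: the fiberwise criterion you invoke for $\psi$ being a ``weak equivalence'' is, via straightening, a statement about weak equivalences in the model category $(\sSet^{+})_{/\overline{Z}}$, not about weak equivalences in $\mSet$ after forgetting the map to $\overline{Z}$. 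These are different model structures with a priori different weak equivalences, and the transfer requires an argument.

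Both of these gaps are precisely what Lemma~\ref{basty1} is designed to fill, and the paper's actual proof uses it twice: first to replace $p'\colon \overline{X}' \to \overline{Y}'$ by a coCartesian fibration of $\infty$-categories ${\calC'}^{\natural} \to {\calD'}^{\natural}$ together with weak equivalences $\overline{X}' \to {\calC'}^\natural$, $\overline{Y}' \to {\calD'}^\natural$, and then to replace $p\colon \overline{X} \to \overline{Y}$ (now regarded as living over ${\calD'}^{\natural}$) by a coCartesian fibration $\calC^\natural \to \calD^\natural$. A lifting argument then produces a map $\calC^\natural \to {\calC'}^\natural$ compatible with the data, after which the claim reduces to a statement about honest coCartesian fibrations between $\infty$-categories, where the homotopy pullback and fiberwise criteria are standard (Corollary~\toposref{basety} and Proposition~\toposref{apple1}). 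Your proof cannot get off the ground without this rectification: you should either invoke Lemma~\ref{basty1} to reduce to the fibrant situation as the paper does, or else supply an argument that a marked coCartesian fibration is a weak equivalence-preserving model for its fibrant replacement, which amounts to the same thing.
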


\begin{remark}
The hypothesis of Lemma \ref{quack} is satisfied, in particular, if the diagram
$$ \xymatrix{ \overline{X} \ar[r] \ar[d] & \overline{Y} \ar[d] \\
\overline{X}' \ar[r] & \overline{Y}' }$$
is a pullback square.
\end{remark}

\begin{proof}
Choose a commutative diagram
$$ \xymatrix{ \overline{X}' \ar[d] \ar[r] & \overline{Y}' \ar[d] \ar[dr] & \\
{\calC'}^{\natural} \ar[r] & {\calD'}^{\natural} \ar[r] & (\Delta^0)^{\sharp} }$$
satisfying the conditions of Lemma \ref{basty1}. It will now suffice to prove Lemma \ref{quack} after replacing $\overline{X}'$ by ${\calC'}^{\natural}$ and $\overline{Y}$ by ${ \calD'}^{\natural}$.
Now choose another commutative diagram
$$ \xymatrix{ \overline{X} \ar[d]^{f} \ar[r] & \overline{Y} \ar[d] \ar[dr] & \\
{\calC}^{\natural} \ar[r] & {\calD}^{\natural} \ar[r] & {\calD'}^{\natural} }$$
satisfying the conditions of Lemma \ref{basty1}, such that the map $f$ is a monomorphism.
Then $f$ is a trivial cofibration of marked simplicial sets, so that the mapping problem
depicted in the diagram
$$ \xymatrix{ \overline{X} \ar[d]^{f} \ar[r] & { \calC' }^{\natural} \ar[d] \\
\calC^{\natural} \ar[r] \ar@{-->}[ur] & { \calD' }^{\natural } }$$
admits a solution. Since every object of $\calD$ is equivalent to an object lying in the image of the map
$\overline{Y} \rightarrow \calC^{\natural}$, we can replace $\overline{X}$ by $\calC^{\natural}$ and
$\overline{Y}$ by $\calD^{\natural}$. We are now reduced to proving that the diagram of $\infty$-categories
$$ \xymatrix{ \calC \ar[r] \ar[d] & \calD \ar[d]^{g} \\
\calC' \ar[r] & \calD' }$$
(in which the horizontal maps are coCartesian fibrations) is a homotopy pullback square if and only
if the induced map $\calC_{D} \rightarrow \calC'_{g(D)}$ is an equivalence, for each object
$D \in \calD$. This follows from Corollary \toposref{basety} and Proposition \toposref{apple1}.
\end{proof}


\begin{lemma}\label{collam}
Suppose given a commutative diagram of $\infty$-categories
$$ \xymatrix{ \calC \ar[rr]^{r} \ar[dr]^{p} & & \calD \ar[dl]^{q} \\
& \calE & }$$
where $q$ is a coCartesian fibration. Then $r$ is a coCartesian fibration if and only if the following conditions are satisfied:
\begin{itemize}
\item[$(a)$] The map $p$ is a coCartesian fibration, and the map $r$ is an inner fibration.
\item[$(b)$] For each object $E \in \calE$, the induced map $r_{E}: \calC_{E} \rightarrow \calD_{E}$ is
a coCartesian fibration.
\item[$(c)$] For every morphism $E \rightarrow E'$ in $\calE$, the induced functor
$\calC_{E} \rightarrow \calC_{E'}$ carries $r_{E}$-coCartesian morphisms to $r_{E'}$-coCartesian morphisms.
\end{itemize}
\end{lemma}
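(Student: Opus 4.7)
The forward direction is the easier part. Assume $r$ is a coCartesian fibration. Then $r$ is automatically inner, and the composition $p = q \circ r$ of two coCartesian fibrations is itself coCartesian; moreover, an edge $\alpha$ of $\calC$ is $p$-coCartesian if and only if $\alpha$ is $r$-coCartesian and $r(\alpha)$ is $q$-coCartesian. This yields (a). For (b), the identification $\calC_E = r^{-1}(\calD_E)$ realizes $r_E$ as the pullback of $r$ along the inclusion $\calD_E \hookrightarrow \calD$, hence $r_E$ is a coCartesian fibration. For (c), the pushforward $\calC_E \to \calC_{E'}$ along a morphism $\bar{e}: E \to E'$ is implemented by choosing $p$-coCartesian lifts; by the characterization just recalled, these have $q$-coCartesian $r$-image, so they cover the pushforward $\calD_E \to \calD_{E'}$, and a routine diagram chase using the universal property of $r$-coCartesian edges shows that $r_E$-coCartesian morphisms are carried to $r_{E'}$-coCartesian morphisms.

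For the converse, assume (a), (b), and (c). The plan is to verify that $r$ is a coCartesian fibration by invoking the standard criterion that it suffices to show $r$ is locally coCartesian and that locally $r$-coCartesian edges are closed under composition. For local coCartesianity, given $C \in \calC$ and an edge $\bar{f}: r(C) \to D$ in $\calD$ with image $\bar{e}: E \to E'$ in $\calE$, I would factor $\bar{f}$ in $\calD$ as $\bar{f} \simeq \bar{f}_2 \circ \bar{f}_1$ where $\bar{f}_1: r(C) \to D_1$ is $q$-coCartesian and $\bar{f}_2: D_1 \to D$ lies in the fiber $\calD_{E'}$. Using (a), produce a $p$-coCartesian lift $\alpha: C \to C'$ of $\bar{e}$; its $r$-image $r(\alpha)$ need not equal $\bar{f}_1$, but factors as $r(\alpha) \simeq \bar{h} \circ \bar{f}_1$ for some $\bar{h}: D_1 \to r(C')$ in $\calD_{E'}$. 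Using (b) to supply an $r_{E'}$-coCartesian lift of $\bar{h}$ in the fiber $\calC_{E'}$ and modifying $\alpha$ accordingly, I expect to extract a lift $\alpha_1: C \to C_1$ of $\bar{f}_1$ that is locally $r$-coCartesian. Then (b) directly produces a locally $r$-coCartesian lift $\alpha_2: C_1 \to C_2$ of $\bar{f}_2$ in $\calC_{E'}$, and the composite $\alpha_2 \circ \alpha_1$ is the desired lift of $\bar{f}$.

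To establish closure of locally $r$-coCartesian edges under composition, I would consider a $2$-simplex $\sigma: \Delta^2 \to \calC$ whose two short edges are locally $r$-coCartesian and show the long edge is as well. Applying the factorization of the previous paragraph to each edge reduces to the elementary cases in which each component is either a $p$-coCartesian lift of an edge of $\calE$ (with $q$-coCartesian $r$-image) or a fiberwise $r_E$-coCartesian morphism. The essential new input is condition (c): when one edge of $\sigma$ is a $p$-coCartesian lift of $\bar{e}: E \to E'$ and the adjacent edge is $r_{E'}$-coCartesian in the fiber $\calC_{E'}$, condition (c) guarantees that the pushforward along $\bar{e}$ respects the fiberwise coCartesian structure, so that the composite remains locally $r$-coCartesian. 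The main obstacle I anticipate is the alignment step in the construction of locally coCartesian lifts: the $r$-image of an arbitrary $p$-coCartesian morphism is only a $q$-coCartesian edge \emph{up to} a fiber correction in $\calD_{E'}$, and this correction must be absorbed using the fiberwise coCartesian structure provided by (b). The care in this alignment is what makes the reverse direction more substantive than a mere assembly of the forward argument.
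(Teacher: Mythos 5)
Your forward direction is fine: the composite of two coCartesian fibrations is coCartesian with the $p$-coCartesian edges characterized as you say, and (b), (c) follow by routine arguments. Your plan for the converse also matches the paper's high-level strategy: show $r$ is a \emph{locally} coCartesian fibration and then show the locally $r$-coCartesian edges are closed under composition, invoking Proposition \toposref{gotta}. But there are three concrete problems in the execution.

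First, the paper does not re-derive local coCartesianity of $r$ (or the factorization of locally $r$-coCartesian edges into a $p$-coCartesian edge followed by a fiberwise $r_E$-coCartesian edge); both are obtained directly from Proposition \toposref{fibertest}, which applies once (a) and (b) are in hand. Your attempted re-derivation has a gap at the ``modifying $\alpha$ accordingly'' step: you factor $r(\alpha) \simeq \bar h \circ \bar f_1$ with $\bar h: D_1 \to r(C')$ in $\calD_{E'}$, and then propose to produce a lift $\alpha_1: C \to C_1$ of $\bar f_1$ using an $r_{E'}$-coCartesian lift of $\bar h$. But $\bar h$ points \emph{toward} $r(C')$, and an $r_{E'}$-coCartesian lift of it would start at some yet-unconstructed object over $D_1$; there is no given object of $\calC_{E'}$ over $D_1$ from which to take it, and $\bar h$ is not invertible, so nothing forces the desired $\alpha_1$ to exist. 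This step would need a genuinely different argument; citing \toposref{fibertest} sidesteps it.

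Second, the case in which (c) is actually needed is the composite of an $r_E$-coCartesian edge in $\calC_E$ followed by a $p$-coCartesian edge covering $E \to E'$: condition (c) then says its fiberwise part is pushed forward to an $r_{E'}$-coCartesian edge, giving a factorization of the long edge of the required form. You describe instead a $p$-coCartesian edge covering $E \to E'$ followed by an $r_{E'}$-coCartesian edge in $\calC_{E'}$ --- but that is precisely the factorization characterizing locally $r$-coCartesian edges, and is automatically locally $r$-coCartesian without invoking (c). The fibers $\calC_E$ and $\calC_{E'}$ have been swapped, which obscures why (c) is the essential hypothesis. Third, your parenthetical claim that the $p$-coCartesian edges appearing in the factorization ``have $q$-coCartesian $r$-image'' is not justified in the reverse direction; nothing in (a), (b), (c) forces the $r$-image of a $p$-coCartesian edge to be $q$-coCartesian, and the paper's argument does not use or need this.
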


\begin{proof}
It is clear that if $r$ is a coCartesian fibration, then conditions $(a)$ and $(b)$ are satisfied.
We may therefore assume that $(a)$ and $(b)$ hold. Invoking Proposition \toposref{fibertest}, we conclude that $r$ is a locally coCartesian fibration. Moreover, a morphism $f: X \rightarrow Y$ in $\calC$ is locally $r$-coCartesian if and only if it admits a factorization 
$$X \stackrel{f'}{\rightarrow} Z \stackrel{f''}{\rightarrow} Y$$
where $f'$ is a $p$-coCartesian morphism of $\calC$, and $f''$ is an $r_{E}$-coCartesian morphism
of $\calC_{E}$ for some $E \in \calE$. According to Proposition \toposref{gotta}, the map $r$ is a
coCartesian fibration if and only if the collection of locally $r$-coCartesian morphisms in $\calC$ is stable under composition. Since the collection of locally $r$-coCartesian morphisms is obviously stable under composition on the right by $p$-coCartesian morphisms and under composition on the left by
$r_{E}$-coCartesian morphisms for each $E \in \calE$, we see that $r$ is a coCartesian fibration
if and only if the following condition is satisfied:
\begin{itemize}
\item[$(\ast)$] Suppose given a pair of morphisms $X \stackrel{f}{\rightarrow} Y \stackrel{g}{\rightarrow} Z$
in $\calC$, where $f$ is an $r_E$-coCartesian morphism in $\calC_{E}$ for some $E \in \calE$, and 
the map $g$ is $p$-coCartesian. Then $g \circ f$ is locally $r$-coCartesian.
\end{itemize}
Let $g': E \rightarrow E'$ be the morphism in $\calE$ induced by $g$, and choose a
$p$-coCartesian morphism $X \rightarrow X'$ lifting $X$. We obtain a commutative diagram
$$ \xymatrix{ X \ar[d] \ar[r]^{f} & Y \ar[d]^{g} \\
X' \ar[r]^{f'} & Z. }$$
covering the diagram
$$ \xymatrix{ E \ar[r] \ar[d] & E \ar[d] \\
E' \ar[r] & E' }$$
in $\calE$. Note that $g \circ f$ is locally $r$-coCartesian if and only if $f'$ is 
an $r_{E'}$-coCartesian morphism in $\calC_{E'}$, and that $f'$ is the image of $f$ under
the associated functor $\calC_{E} \rightarrow \calC_{E'}$. The condition that this holds
for {\em every} morphism $f: X \rightarrow Y$ in $\calC_{E}$ and every map $p(Y) = E \rightarrow E'$
in $\calE$ (which then admits an essentially unique $p$-coCartesian lifting $g: Y \rightarrow Z$) is
manifestly equivalent to $(c)$.
\end{proof}

\begin{proof}[Proof of Proposition \ref{custa2}]
Without loss of generality, we may assume that $r$ is a categorical fibration. It follows that for every object $E \in \calE$, the induced map $r_E: \calC_{E} \rightarrow \calD_{E}$ is a categorical fibration which is essentially a coCartesian fibration, and therefore a coCartesian fibration. In view of Proposition \toposref{basechangefunky}, condition $(d)$ of Proposition \ref{custa2} is equivalent to the requirement that
every morphism $E \rightarrow E'$ in $\calE$ induces an equivalence of $\infty$-categories
$$ \phi: \calC_{E} \rightarrow \calD_{E} \times_{ \calD_{E'} } \calC_{E'}.$$
This implies in particular that the induced map $\calC_{E} \rightarrow \calC_{E'}$ carries
$r_{E}$-coCartesian morphisms to $r_{E'}$-coCartesian morphisms. Invoking Lemma \ref{collam}, we deduce that $r$ is a coCartesian fibration.

Since $\phi$ is a categorical equivalence of coCartesian fibrations over $\calD_{E}$, it induces
an equivalence after passing to the fibers over any vertex $D \in \calD_{E}$. Unwinding the
definitions, we conclude that for every $q$-coCartesian morphism $D \rightarrow D'$, the induced map $\calC_{D} \rightarrow \calC_{D'}$ is an equivalence of $\infty$-categories. Suppose first that
$(1)$ is satisfied; we will prove $(2)$. The assertion that $r'$ is essentially a coCartesian fibration follows immediately from Lemma \ref{basty1}. For the second assertion, we consider the commutative diagram
of marked simplicial sets
$$ \xymatrix{ \calC^{\natural}_{E} \ar[r] \ar[d] & \calD^{\natural}_{E} \ar[d] \\
\calC^{\natural} \ar[r] \ar[d] & \calD^{\natural} \ar[d] \\
{\calC'}^{\natural} \ar[r] & { \calD'}^{\natural}. }$$
The lower square is a homotopy pullback because the lower vertical maps are weak equivalences, and
the upper square is a homotopy pullback by Lemma \ref{quack}. It follows that the outer square
is a homotopy pullback diagram, so that
$$ \xymatrix{ \calC_{E} \ar[r] \ar[d] & \calD_{E} \ar[d] \\
\calC' \ar[r] & \calD' }$$
is a homotopy pullback diagram of $\infty$-categories.

Let us now prove that $(2) \Rightarrow (1)$. Without loss of generality that $r'$ is a categorical fibration. 
Condition $(2)$ then guarantees that $r'$ is a coCartesian fibration, and (by virtue of Lemma \ref{quack}) that for each $E \in \calE$, the induced map
$$ \calC_{E} \rightarrow \calD_{E} \times_{ \calD' } \calC'$$
induces an equivalence of $\infty$-categories after passing to the fiber over any vertex of $\calD_{E}$.
It follows that the map $\calC \rightarrow \calD \times_{ \calD' } \calC'$ induces an equivalence of
$\infty$-categories after passing to the fiber over any vertex of $\calD$, so that (by Lemma \ref{quack}) the diagram of marked simplicial sets
$$ \xymatrix{ \calC^{\natural} \ar[r] \ar[d] & \calD^{\natural} \ar[d] \\
{ \calC' }^{\natural} \ar[r] & { \calD' }^{\natural }}$$
is a homotopy pullback square. Since the right vertical map is a weak equivalence, we conclude that the left vertical map is also a weak equivalence, as desired.
\end{proof}

\begin{remark}\label{postsmashet}
The inclusion $\SSet \subseteq \Cat_{\infty}$ satisfies the hypotheses of Proposition \ref{presmashet}. That is:
\begin{itemize}
\item[$(a)$] Filtered colimits in $\Cat_{\infty}$ are left exact.
\item[$(b)$] Let $G: \Cat_{\infty} \rightarrow \SSet$ denote a right adjoint to the inclusion. Then
$G$ commutes with filtered colimits.
\end{itemize}
Assertion $(b)$ follows from the observation that the functor which assigns to each $\infty$-category $\calC$ the largest Kan complex contained in $\calC$ commutes with filtered colimits.
To prove $(a)$, it will suffice to show that the collection of pullback diagrams
in $\Cat_{\infty}$ is stable under small filtered colimits. In other words, we must show that
if $\calJ$ is a small filtered $\infty$-category and $F: \calJ^{\triangleright} \times (\Delta^1 \times \Delta^1) \rightarrow \Cat_{\infty}$ is a diagram with the property that 
$F | \calJ^{\triangleright} \times \{v\}$ is a colimit diagram for each vertex $v$ of $\Delta^1 \times \Delta^1$, and $F | \{J\} \times \Delta^1 \times \Delta^1$ is a pullback square for each $J \in \calJ$, then
$F | \{\infty\} \times \Delta^1 \times \Delta^1$ is again a pullback square, where $\infty$ denotes the cone point of $\calJ^{\triangleright}$. Without loss of generality, we may suppose that $\calJ$ is the
nerve of a filtered partially ordered set $A$ (Proposition \toposref{rot}), and that
$F$ is induced by an injectively fibrant diagram $(A \cup \{\infty\}) \times [1] \times [1] \rightarrow \mSet$
(Proposition \toposref{gumby444}). Let $G$ denote the diagram obtained by composing with
the forgetful functor $\mSet \rightarrow \sSet$, which is a right Quillen equivalence if we endow
$\sSet$ with the Joyal model structure. We will regard $G$ as giving a commutative square
$$ \xymatrix{ X \ar[r] \ar[d] & X' \ar[d] \\
Y \ar[r] & Y' }$$
of functors from $A \cup \{ \infty \}$ into the category of simplicial sets. Note that each
$Y'(a)$ is an $\infty$-category, and each of the maps $X'(a) \rightarrow Y'(a) \leftarrow Y(a)$ is
a categorical fibration. It follows that the map $X(a) \rightarrow X'(a) \times_{ Y'(a)} Y(a)$ is
a categorical equivalence for $a \in A$, so that the induced map
$$ \colim_{a \in A} X(a) \rightarrow \colim_{a \in A} (X'(a) \times_{ Y'(a)} Y(a) )
\simeq (\colim_{a \in A} X'(a)) \times_{ \colim_{a \in A} Y'(a) } ( \colim_{a \in A} Y(a) )$$
is again a weak equivalence. Using Corollary \toposref{gottaput}, we deduce that
$\colim_{a \in A} Y'(a)$ is an $\infty$-category and the maps
$$ \colim_{a \in A} X'(a) \rightarrow \colim_{a \in A} Y'(a) \leftarrow \colim_{a \in A} Y(a)$$
are categorical fibrations, so that the diagram
$$ \xymatrix{ \colim_{a \in A} X(a) \ar[r] \ar[d] & \colim_{a \in A} X'(a) \ar[d] \\
\colim_{a \in A} Y(a) \ar[r] & \colim_{a \in A} Y'(a) }$$
is a homotopy pullback square. It follows that the weakly equivalent diagram
$$ \xymatrix{ X(\infty) \ar[r] \ar[d] & X'(\infty) \ar[d] \\
Y(\infty) \ar[r] & Y'(\infty) }$$
is also a homotopy pullback square in $\SSet$, as desired.
\end{remark}

\subsection{Models for Complete Segal Spaces}\label{bisec1.5}

In \S \ref{bisec1.3}, we defined the $\infty$-category
$\CSS{\calY}$ of complete Segal objects of $\calY$, where $\calY$
is any $\infty$-category which is an absolute distributor. In this section, we will show that
if $\calY$ can be realized as the underlying $\infty$-category of a well-behaved model category, then
$\CSS{\calY}$ has the same property. Applying this in the case $\calY = \Cat_{\infty}$
(which is an absolute distributor by virtue of Theorem \ref{cuppo}), we will obtain some of
the models for the theory of $(\infty,2)$-categories which appear in the statement of
Theorem \ref{toothygrin}.

\begin{definition}\label{kayloo}
We will say that a simplicial model category $\bfA$ is an {\it absolute distributor} if the following
conditions are satisfied:
\begin{itemize}
\item[$(1)$] The model category $\bfA$ is combinatorial and left proper.
\item[$(2)$] The underlying $\infty$-category $\Nerve( \bfA^{\degree})$ is an
absolute distributor, in the sense of Definition \ref{custo}.
\end{itemize}
\end{definition}

\begin{example}
The category of simplicial sets is an absolute distributor when equipped with the Kan model structure.
\end{example}

\begin{example}
The category $\mSet$ of marked simplicial sets is an absolute distributor; this follows from
Theorem \ref{cuppo}.
\end{example}

\begin{proposition}\label{camper}
Let $\bfA$ be a simplicial model category which is an absolute distributor. Then category
$\Fun( \Delta^{op}, \bfA)$ admits a simplicial model structure which is characterized by the following properties:
\begin{itemize}
\item[$(C)$] A morphism $f: X_{\bigdot} \rightarrow Y_{\bigdot}$ of simplicial objects of
$\bfA$ is a cofibration if each of the maps $X_{n} \rightarrow Y_{n}$ is a cofibration; that is,
if and only if $f$ is an injective cofibration.

\item[$(W)$] Let $f: X_{\bigdot} \rightarrow Y_{\bigdot}$ be a morphism between simplicial objects of $\bfA$. If the induced map $X_{n} \rightarrow Y_{n}$ is a weak equivalence in $\bfA$ for each $n \geq 0$, then $f$ is a weak equivalence. 
\end{itemize}

(In other words, the model structure on $\Fun( \Delta^{op}, \bfA)$
is a localization of the injective model structure.)

\begin{itemize}
\item[$(F)$] A simplicial object $X_{\bigdot}$ of $\bfA$ is fibrant if and only if
it is injectively fibrant, and the induced diagram $\Nerve( \cDelta)^{op} \rightarrow \Nerve( \bfA^{\degree} )$ (which is well-defined up to equivalence) is a complete Segal space object
of $\Nerve(\bfA^{\degree})$. 
\end{itemize}

Moreover, this model structure on $\Fun( \Delta^{op}, \bfA)$ is again an absolute distributor.
\end{proposition}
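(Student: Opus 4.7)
My plan is to construct the asserted model structure as a left Bousfield localization of the injective model structure on $\Fun(\Delta^{op}, \bfA)$. Since $\bfA$ is combinatorial and left proper, the injective model structure on $\Fun(\Delta^{op}, \bfA)$ exists and is combinatorial and left proper, with cofibrations the levelwise monomorphisms and weak equivalences the levelwise weak equivalences; its underlying $\infty$-category is canonically identified with $\Fun(\Nerve(\cDelta)^{op}, \Nerve(\bfA^{\degree}))$. Write $\calY := \Nerve(\bfA^{\degree})$ and let $\calX \subseteq \calY$ denote the full subcategory exhibiting $\calY$ as an absolute distributor.

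Next I exhibit a small set $S$ of cofibrations in $\Fun(\Delta^{op}, \bfA)$ whose $S$-local fibrant objects are precisely those injectively fibrant simplicial objects whose image in the underlying $\infty$-category is a complete Segal space object of $\calY$. The set $S$ is the union of three subsets, corresponding to the three defining conditions (category object, $X_0 \in \calX$, completeness) of Definition \ref{compseg}. First, \emph{Segal maps} are obtained by tensoring the spine inclusions $\mathrm{Sp}^{n} \hookrightarrow \Delta^{n}$ (for $n \geq 2$) with a fixed set of generating cofibrations of $\bfA$ and evaluating as maps of simplicial objects; locality against these forces the category object condition. Second, \emph{distributor maps} are obtained by choosing a set of morphisms in $\bfA$ whose locality detects membership in the subcategory modeling $\calX$; such a set exists because, by Remark \ref{cavem2} and the presentability clause of Definition \ref{disty}, the inclusion $\calX \subseteq \calY$ is an accessible localization. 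Applied at simplicial level $0$ (e.g.\ by tensoring with the representable $\Delta^{0}$), these force $X_{0} \in \calX$. Third, \emph{completeness maps} encode the constancy of $\Gp X_{\bigdot}$; here I use Proposition \ref{canman} and Proposition \ref{cavem} to rewrite the abstract construction $X_{\bigdot} \mapsto X_{\bigdot}^{\inv}$ as an explicit cotensor involving the simplicial set $K$ of Notation \ref{amplecrown}, so that ``completeness'' becomes locality against tensor products of a single classifying map with generating cofibrations of $\bfA$.

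Given such a set $S$, the standard combinatorial left Bousfield localization theorem produces the desired model structure $\Fun(\Delta^{op}, \bfA)^{S}$, which is again combinatorial, left proper and simplicial. Property $(C)$ is immediate since Bousfield localization preserves cofibrations; property $(W)$ is immediate since every levelwise weak equivalence is an injective weak equivalence and hence an $S$-local equivalence; and property $(F)$ is, by construction, the characterization of $S$-local fibrant objects, once Theorem \ref{segmin} is invoked to repackage the three families as ``complete Segal space object''. The absolute distributor claim then reduces to Corollary \ref{cus1}: the underlying $\infty$-category of the localized model structure is, by construction, equivalent to $\CSS{\calX \subseteq \calY}$, and that corollary (via Proposition \ref{ittaboy}) asserts precisely that this $\infty$-category is again an absolute distributor.

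The main technical obstacle is assembling the three families into a single small set $S$ and verifying that, jointly, locality against $S$ is equivalent to being a complete Segal space object. The Segal family directly mimics Rezk's argument. The distributor family requires that the accessibility of $\calX \subseteq \calY$ be realized at the model-categorical level by an explicit small set of morphisms; this is where the hypothesis that $\bfA$ be a combinatorial absolute distributor is used most seriously. The completeness family is the most delicate: one must rephrase the abstract completeness criterion (the constancy of $\Gp X_{\bigdot}$) as locality against finitely many explicit maps, which is made possible by the representability of $X_{\bigdot}^{\inv}$ provided by Proposition \ref{canman}.
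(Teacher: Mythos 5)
Your top-level strategy coincides with the paper's: present the asserted model structure as a left Bousfield localization of the injective model structure on $\Fun(\cDelta^{op}, \bfA)$, identify its underlying $\infty$-category, and deduce the absolute-distributor claim from Corollary \ref{cus1}. Where you diverge is in how the localization step is carried out. The paper's proof is three sentences: it identifies the underlying $\infty$-category of the injective model structure with $\Fun(\Nerve(\cDelta)^{op}, \Nerve(\bfA^{\degree}))$ via Proposition T.\ref{HTT-gumby444}, observes (Remark \ref{sluther}) that $\CSS{\calX \subseteq \calY}$ is an accessible localization of this $\infty$-category, and then invokes a general-purpose result from \emph{Higher Topos Theory} (Proposition T.\ref{HTT-cabber}) which says precisely that such an accessible $\infty$-categorical localization is always realized by a left Bousfield localization of the combinatorial simplicial model category, with fibrant objects detected by the $\infty$-categorical condition. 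You instead undertake to construct a concrete localizing set $S$ by hand, assembling three families (Segal, distributor, completeness), which amounts to re-deriving the relevant instance of Proposition T.\ref{HTT-cabber} rather than citing it.

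The trade-off is clear: your approach is more elementary and self-contained, while the paper's is much shorter because the hard combinatorial work has been done once and for all at a higher level of generality. Your version will work, but two of the three families require more care than your sketch acknowledges. For the ``distributor'' family, ``tensoring with the representable $\Delta^{0}$'' does not pick out the level-$0$ object: a map from the constant simplicial object at $Y$ into $X_{\bullet}$ computes $\lim_{[n]} \Hom_{\bfA}(Y, X_{n})$, not $\Hom_{\bfA}(Y, X_{0})$; you want the left Kan extension $\mathrm{Lan}_{\{[0]\} \hookrightarrow \cDelta^{op}}$ applied to a generating set of $\calX$-local equivalences in $\bfA$. For the ``completeness'' family, after identifying the candidate set of maps via Propositions \ref{canman} and \ref{cavem}, you still must verify that locality of a fibrant injective diagram against those maps is equivalent, in the underlying $\infty$-category, to constancy of $\Gp X_{\bullet}$; this requires comparing model-categorical mapping spaces with $\infty$-categorical ones, which is exactly what the cited HTT proposition packages. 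None of this is a gap in principle, but it is where the bulk of the work would lie if you were to write the argument out in full.
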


\begin{proof}
Let $\bfB = \Fun( \cDelta^{op}, \bfA)$, endowed with the injective model structure. Using
Proposition \toposref{gumby444}, we deduce that the underlying $\infty$-category
$\Nerve( \bfB^{\degree} )$ is canonically equivalent to $\Fun( \Nerve(\cDelta)^{op}, \bfA)$.
The desired result now follows from Proposition \toposref{cabber} and Remark \ref{sluther},
and the final assertion follows from Corollary \ref{cus1}.
\end{proof}

We will refer to the model structure of Proposition \ref{camper} as the {\it complete Segal model structure}. 

\begin{remark}\label{sweetuma}
Let $\bfA$ be as in Proposition \ref{camper}, and let $X_{\bigdot}$ be a fibrant object of
$\Fun( \cDelta^{op}, \bfA)$. In particular, $X_{\bigdot}$ is an injectively fibrant diagram. It follows that:
\begin{itemize}
\item[$(a)$] The object $X_0 \in \bfA$ is fibrant.
\item[$(b)$] The map $X_1 \rightarrow X_0 \times X_0$ is a fibration.
\item[$(c)$] Since $X_{\bigdot}$ determines a category object in the underlying $\infty$-category
$\Nerve(\bfA^{\degree})$, the map $X_{n} \rightarrow X_1 \times_{X_0} \ldots \times_{X_0} X_1$
exhibits $X_{n}$ as a homotopy limit of the diagram
$$ \xymatrix{ & X_1 \ar[dl] \ar[dr] & & \ar[dl] \ldots \ar[dr]  & & X_1 \ar[dl] \ar[dr] & \\
X_0 & & X_0 & \ldots & X_0 & & X_0. }$$
In view of $(a)$ and $(b)$, this homotopy limit coincides with the usual limit, so the map
$f: X_{n} \rightarrow X_1 \times_{X_0} \ldots \times_{X_0} X_1$ is a weak equivalence.
\item[$(d)$] Since $X_{\bigdot}$ is injectively fibrant, the map $f$ is fibration. It is therefore
a trivial fibration in $\bfA$.
\end{itemize}
\end{remark}

\begin{remark}
We have defined the complete Segal model structure on $\Fun( \cDelta^{op}, \bfA)$ as a localization of the injective model structure. However, we could just as well have begun with the projective or Reedy model structures on $\Fun( \cDelta^{op}, \bfA)$. After an appropriate localization, we would obtain a model category which is Quillen equivalent to the one described in Proposition \ref{camper}.
\end{remark}

We can combine Proposition \ref{camper} with the Quillen equivalence of
Proposition \toposref{kudd} to obtain another model for the $\infty$-category
$\CSS{ \Cat_{\infty}}$:

\begin{proposition}\label{camperr}
The category $\mset{ \Nerve( \cDelta)^{op} }$ admits a simplicial model structure, which is
characterized by the following properties:
\begin{itemize}
\item[$(C)$] A morphism $\overline{X} \rightarrow \overline{Y}$ in $\mset{ \Nerve(\cDelta)^{op} }$
is a cofibration if and only if the underlying map of simplicial sets is a monomorphism.

\item[$(W)$] Every coCartesian equivalence in $\mset{ \Nerve(\cDelta)^{op} }$ is a
weak equivalence.
\end{itemize}
$($In other words, the model structure on $\mset{ \Nerve(\cDelta)^{op} }$ is a localization
of the coCartesian model structure.$)$
\begin{itemize}
\item[$(F)$] An object $(X, \calE)$ of $\mset{ \Nerve(\cDelta)^{op} }$ is
fibrant if and only if the map $p: X \rightarrow \Nerve(\cDelta)^{op}$ is a coCartesian fibration,
$\calE$ is the set of all $p$-coCartesian edges of $X$, and $p$ is classified by a map
$\Nerve(\cDelta)^{op} \rightarrow \Cat_{\infty}$ which is a complete Segal space object
of $\Cat_{\infty}$.
\end{itemize}
Moreover, the adjoint functors
$$ \Adjoint{ \lNerve^{+}_{\bigdot}( \cDelta^{op}) }{ \mset{ \Nerve(\cDelta)^{op} }}{
\Fun( \cDelta^{op}, \mSet)}{\Nerve_{\bigdot}^{+}( \cDelta^{op})}$$
determine a Quillen equivalence of the category $\mset{ \Nerve(\cDelta)^{op} }$ 
$($with the model structure described above$)$ with the category $\Fun( \cDelta^{op}, \mSet)$
$($with the complete Segal model structure of Proposition \ref{camper}$)$.
In particular, $\mset{ \Nerve(\cDelta)^{op} }$ is an absolute distributor.
\end{proposition}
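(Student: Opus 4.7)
The plan is to transfer the complete Segal model structure from $\Fun(\cDelta^{op}, \mSet)$ across the Quillen equivalence of Proposition \toposref{kudd}, which exhibits the adjunction $(\lNerve^{+}_{\bigdot}(\cDelta^{op}), \Nerve^{+}_{\bigdot}(\cDelta^{op}))$ as a Quillen equivalence between $\mset{\Nerve(\cDelta)^{op}}$ with the coCartesian model structure and $\Fun(\cDelta^{op}, \mSet)$ with the injective model structure. Both underlying $\infty$-categories are canonically identified with $\Fun(\Nerve(\cDelta)^{op}, \Cat_{\infty})$. Since $\mSet$ is an absolute distributor (using Theorem \ref{cuppo}), Proposition \ref{camper} equips $\Fun(\cDelta^{op}, \mSet)$ with its complete Segal model structure as a left Bousfield localization of the injective structure at a small set $S$ of maps whose local objects are the injectively fibrant diagrams representing complete Segal space objects of $\Cat_{\infty}$.

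The coCartesian model structure on $\mset{\Nerve(\cDelta)^{op}}$ is combinatorial and left proper, so standard Bousfield localization machinery applies. I would choose a set $S'$ of cofibrations in $\mset{\Nerve(\cDelta)^{op}}$ whose images under $\lNerve^{+}_{\bigdot}(\cDelta^{op})$ represent the elements of $S$ up to weak equivalence, and form the left Bousfield localization at $S'$. By general principles of left Bousfield localization, the resulting model structure is again combinatorial, left proper, and simplicial; its cofibrations are unchanged (giving (C)) and its weak equivalences contain all coCartesian equivalences (giving (W)). Combinatoriality of the Quillen pair then forces it to remain a Quillen equivalence after localization at $S$ and $S'$: the left adjoint sends $S'$ to $S$-local equivalences by construction, while the right adjoint sends $S$-local fibrant objects to $S'$-local fibrant objects because the underlying $\infty$-categorical localizations coincide.

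The identification of fibrant objects required for (F) proceeds as follows. An object of $\mset{\Nerve(\cDelta)^{op}}$ is fibrant in the localized structure iff it is fibrant in the coCartesian structure and $S'$-local, i.e., iff it has the form $(X, \calE)$ where $p\colon X \to \Nerve(\cDelta)^{op}$ is a coCartesian fibration with $\calE$ the collection of $p$-coCartesian edges, and the classifying functor $\Nerve(\cDelta)^{op} \to \Cat_{\infty}$ is $S$-local under the straightening equivalence. By Proposition \ref{camper} together with Remark \ref{sweetuma}, the latter condition is precisely that this functor be a complete Segal space object of $\Cat_{\infty}$. The Quillen equivalence of localized structures is then immediate, and the absolute distributor property is inherited: the underlying $\infty$-category is $\CSS{\Cat_{\infty}}$, which is an absolute distributor by Corollary \ref{cus1} applied to Theorem \ref{cuppo}.

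The main obstacle is the identification step for (F), namely verifying that $S'$-locality for a coCartesian-fibrant object $(X, \calE)$ corresponds exactly to the Segal and completeness conditions on its classifying diagram into $\Cat_{\infty}$. This is not automatic from the formalism: it requires seeing that straightening along $\lNerve^{+}_{\bigdot}(\cDelta^{op})$ faithfully transports $S'$-local equivalences to $S$-local equivalences and reflects them on local objects. Once one knows that the derived unit and counit of the Quillen equivalence $(\lNerve^{+}_{\bigdot}(\cDelta^{op}), \Nerve^{+}_{\bigdot}(\cDelta^{op}))$ are natural isomorphisms at the level of homotopy categories, this reduces to the observation that a map of diagrams $\Nerve(\cDelta)^{op} \to \Cat_{\infty}$ is inverted by the complete Segal localization iff its image under any model for unstraightening is inverted by the corresponding localization on $\mset{\Nerve(\cDelta)^{op}}$, which is exactly the content of the transfer of Bousfield localizations across a Quillen equivalence.
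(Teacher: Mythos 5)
Your proposal is correct and reaches the same conclusion by a closely related but not identical route. The paper does not build the model structure by hand-picking a set $S'$ and forming an explicit left Bousfield localization; rather, it first identifies the underlying $\infty$-category of $\mset{\Nerve(\cDelta)^{op}}$ (with the coCartesian model structure) with $\Fun(\Nerve(\cDelta)^{op}, \Cat_{\infty})$ using Proposition \toposref{kudd} together with the fact that $\Nerve^{+}_{\bigdot}(\cDelta^{op})$ is a \emph{simplicial} right Quillen equivalence (Corollary \toposref{urchug}, Proposition \toposref{gumby444}), and then invokes the packaged statement Proposition \toposref{cabber} (together with Remark \ref{sluther} and Theorem \ref{cuppo}) to produce the localized model structure directly from the accessible localization $\CSS{\Cat_{\infty}} \subseteq \Fun(\Nerve(\cDelta)^{op}, \Cat_{\infty})$. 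That abstract result delivers the characterization of fibrant objects in one stroke, so the step you flag as the ``main obstacle'' (matching $S'$-locality of a coCartesian fibration to the Segal-plus-completeness condition on its classifying diagram) is absorbed into the hypotheses of \toposref{cabber} rather than argued separately.

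For the Quillen equivalence the paper argues slightly differently as well: it checks the hypotheses of Lemma \ref{speed} to conclude that the pair remains a Quillen adjunction after localization, and then observes that the derived equivalence of homotopy categories of the unlocalized structures restricts to an equivalence between the localized full subcategories. Your argument — that localizing both sides of a Quillen equivalence at corresponding sets of maps preserves the Quillen equivalence — is a standard fact and is equally valid; the paper's route via Lemma \ref{speed} is chosen because that lemma is already available and is reused elsewhere. The only place your writeup is a bit loose is in attributing the preservation of the Quillen equivalence to ``combinatoriality'': combinatoriality ensures the localizations \emph{exist}, but the preservation of the equivalence comes from the compatibility of the two localizing sets, which you do state correctly in the rest of the sentence.
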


\begin{proof}
Let $\bfA$ denote the category $\mset{ \Nerve(\cDelta)^{op} }$ endowed with the
coCartesian model structure, and $\bfB$ the category $\Fun( \cDelta^{op}, \mSet)$ endowed
with the injective model structure. Proposition \toposref{kudd} implies that
the relative nerve functor $f \mapsto \Nerve_{f}^{+}( \cDelta^{op} )$ determines a right
Quillen equivalence from $\bfB$ to $\bfA$. Note that $\Nerve^{+}_{f}$ has the structure of a simplicial functor. It follows from Corollary \toposref{urchug} that the induced map
$\Nerve( \bfB^{\degree} ) \rightarrow \Nerve( \bfA^{\degree} )$ is an equivalence of
$\infty$-categories. In view of Proposition \toposref{gumby444}, we conclude that
$\Nerve( \bfB^{\degree} )$ is equivalent to $\Fun( \Nerve(\cDelta)^{op}, \Cat_{\infty} )$. 
The existence of the model structure in question now follows from
Proposition \toposref{cabber}, Remark \ref{sluther}, and Theorem \ref{cuppo}. 

It remains only to prove that the adjoint functors 
$\lNerve^{+}_{\bigdot}( \cDelta^{op})$ and $\Nerve^{+}_{\bigdot}( \cDelta^{op})$ determine
a Quillen equivalence. It is easy to verify that these adjoint functors satisfy the hypotheses of Lemma \ref{speed}, and therefore determine a Quillen adjunction. We conclude by observing that the right derived functor of $\Nerve^{+}_{\bigdot}( \cDelta^{op} )$ determines an equivalence from the homotopy
category $\h{\bfB}$ to the homotopy category $\h{\bfA}$, which restricts to an equivalence
of categories between $\h{ \Fun( \cDelta^{op}, \mSet)} \subseteq \h{\bfB}$ to
$\h{ \mset{ \Nerve(\cDelta)^{op} }} \subseteq \h{\bfA}$. 
\end{proof}

We can use the same reasoning to obtain the following simpler result:

\begin{proposition}\label{campe}
The category $\sSet_{/\Nerve( \cDelta)^{op}}$ admits a simplicial model structure, which is
characterized by the following properties:
\begin{itemize}
\item[$(C)$] A morphism $X \rightarrow Y$ in $\sSet_{/\Nerve( \cDelta)^{op}}$
is a cofibration if and only if the underlying map of simplicial sets is a monomorphism.

\item[$(W)$] Every covariant equivalence in $\sSet_{/\Nerve( \cDelta)^{op}}$ is a
weak equivalence.
\end{itemize}
$($In other words, the model structure on $\sSet_{/\Nerve( \cDelta)^{op}}$ is a localization
of the covariant model structure.$)$
\begin{itemize}
\item[$(F)$] An object $X$ of $\sSet_{/\Nerve( \cDelta)^{op}}$ is
fibrant if and only if the map $X \rightarrow \Nerve(\cDelta)^{op}$ is a left fibration,
classified by a complete Segal object $\chi: \Nerve(\cDelta)^{op} \rightarrow \SSet$
of the $\infty$-category $\SSet$ of spaces.
\end{itemize}
Moreover, the adjoint functors
$$ \Adjoint{ \lNerve_{\bigdot}( \cDelta^{op}) }{ (\sSet)_{/ \Nerve(\cDelta)^{op}} }{
\Fun( \cDelta^{op}, \sSet)}{\Nerve_{\bigdot}( \cDelta^{op})}$$
determine a Quillen equivalence of the category $\sSet_{/\Nerve( \cDelta)^{op}}$
$($with the model structure described above$)$ with the category $\Fun( \cDelta^{op}, \sSet)$
$($with the complete Segal model structure of Proposition \ref{camper}$)$.
In particular, $(\sSet)_{/ \Nerve(\cDelta)^{op} }$ is an absolute distributor.
\end{proposition}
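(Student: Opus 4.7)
The plan is to mirror the proof of Proposition \ref{camperr} verbatim, replacing $\mSet$ by $\sSet$, the coCartesian model structure by the covariant model structure, and $\Cat_\infty$ by $\SSet$. Let $\bfA$ denote $(\sSet)_{/\Nerve(\cDelta)^{op}}$ endowed with the covariant model structure, and let $\bfB$ denote $\Fun(\cDelta^{op}, \sSet)$ with the injective model structure. The unmarked version of Proposition \toposref{kudd} supplies a simplicially enriched right Quillen equivalence $\Nerve_{\bigdot}(\cDelta^{op}): \bfB \to \bfA$. Applying Corollary \toposref{urchug} shows that the induced functor $\Nerve(\bfB^{\degree}) \to \Nerve(\bfA^{\degree})$ of underlying $\infty$-categories is an equivalence, and combining this with Proposition \toposref{gumby444} identifies $\Nerve(\bfA^{\degree})$ with $\Fun(\Nerve(\cDelta)^{op}, \SSet)$.

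Since $\SSet$ is an absolute distributor (Example \ref{cus2}), Remark \ref{sluther} guarantees that the full subcategory $\CSS{\SSet} \subseteq \Fun(\Nerve(\cDelta)^{op}, \SSet)$ is an accessible localization. Proposition \toposref{cabber} therefore produces a left Bousfield localization of $\bfA$ whose underlying $\infty$-category is $\CSS{\SSet}$; in particular the cofibrations are still monomorphisms, every covariant equivalence is a weak equivalence, and the fibrant objects are the covariantly fibrant objects (i.e., left fibrations $X \to \Nerve(\cDelta)^{op}$ with its marking of all edges) whose classifying functor $\Nerve(\cDelta)^{op} \to \SSet$ lies in $\CSS{\SSet}$. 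This gives properties $(C)$, $(W)$, $(F)$. The absolute distributor assertion is then immediate from $\Nerve(\bfA^{\degree}) \simeq \CSS{\SSet}$ and Corollary \ref{cus1}.

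To promote the adjunction $(\lNerve_{\bigdot}(\cDelta^{op}), \Nerve_{\bigdot}(\cDelta^{op}))$ to a Quillen equivalence of the localized model categories, one applies the analogue of Lemma \ref{speed} used in the proof of Proposition \ref{camperr}: both localizations are defined by inverting morphisms corresponding to the same class of maps in the underlying $\infty$-categories (via the equivalence $\Nerve(\bfB^{\degree}) \simeq \Nerve(\bfA^{\degree})$), so the Quillen adjunction descends, and the right derived functor already induces an equivalence of homotopy categories before localization, hence also between the subcategories of local objects on each side. The only substantive point, which I expect to be the sole obstacle, is matching the abstract accessible localization coming from Remark \ref{sluther} with the concrete description of fibrant objects in $(F)$; but this is exactly the verification carried out in Proposition \ref{camperr}, and runs through here without change because an object of $\bfA$ is covariantly fibrant iff it is a left fibration over $\Nerve(\cDelta)^{op}$ with its maximal marking.
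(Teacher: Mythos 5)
Your proposal is correct and follows the paper's intended approach exactly: the paper states Proposition \ref{campe} immediately after Proposition \ref{camperr} with the remark ``We can use the same reasoning to obtain the following simpler result,'' and the argument you give is precisely that reasoning with $\mSet$ replaced by $\sSet$, the coCartesian model structure by the covariant one, and $\Cat_{\infty}$ by $\SSet$.
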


We will abuse terminology by referring to the model structures of Propositions \ref{camperr} and \ref{campe} as the {\it complete Segal model structures}.

\begin{remark}\label{cooperr}
Since the inclusion $\SSet \subseteq \Cat_{\infty}$ carries complete Segal space objects of
$\SSet$ to complete Segal space objects of $\Cat_{\infty}$, Proposition \toposref{stake} implies
that the forgetful functor $\mset{ \Nerve(\cDelta)^{op} } \rightarrow (\sSet)_{/ \Nerve(\cDelta)^{op} }$ is a left Quillen functor (with respect to the complete Segal model structures).
\end{remark}

We conclude this section with a technical result about the behavior of the complete
Segal model categories of Proposition \ref{camper}:

\begin{proposition}\label{smashet}
Let $\bfA$ be a simplicial model category satisfying the following conditions:
\begin{itemize}
\item[$(a)$] The model category $\bfA$ is an absolute distributor.
\item[$(b)$] The collection of weak equivalences in $\bfA$ is stable under filtered colimits.
\item[$(c)$] Filtered colimits are left exact in the underlying $\infty$-category $\Nerve( \bfA)^{\degree}$.
\item[$(d)$] The final object of $\Nerve( \bfA)^{\degree}$ is compact. 
\end{itemize}
Then the collection of weak equivalences in $\Fun( \cDelta^{op}, \bfA)$
$($with respect to the complete Segal model structure$)$ is stable under small filtered colimits.
\end{proposition}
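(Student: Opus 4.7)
The plan is to reduce to Proposition \ref{presmashet} by exploiting the relationship between the model-categorical fibrant replacement in the complete Segal model structure and the $\infty$-categorical localization $L$. Since the complete Segal model structure on $\Fun(\cDelta^{op}, \bfA)$ is by construction a left Bousfield localization of the injective model structure (whose underlying $\infty$-category is $\Fun(\Nerve(\cDelta)^{op}, \Nerve(\bfA^{\degree}))$), a morphism $f: X_{\bigdot} \to Y_{\bigdot}$ is a complete Segal weak equivalence if and only if its image is carried to an equivalence by the left adjoint $L: \Fun(\Nerve(\cDelta)^{op}, \Nerve(\bfA^{\degree})) \to \CSS{\Nerve(\bfA^{\degree})}$ to the inclusion of complete Segal space objects.

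First I would verify that the hypotheses of Proposition \ref{presmashet} are in force for the distributor $\SSet \subseteq \Nerve(\bfA^{\degree})$ (which exists by hypothesis (a)). Condition (a) of Proposition \ref{presmashet} is precisely our hypothesis (c). For condition (b), by the remark immediately following Proposition \ref{presmashet}, in an absolute distributor the right adjoint $G: \Nerve(\bfA^{\degree}) \to \SSet$ to the inclusion is corepresented by the final object, so it commutes with filtered colimits if and only if that final object is compact; this is exactly hypothesis (d). Applying Proposition \ref{presmashet}(2), I conclude that $L$ preserves small filtered colimits as an endofunctor of $\Fun(\Nerve(\cDelta)^{op}, \Nerve(\bfA^{\degree}))$.

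Next, hypothesis (b) on $\bfA$ ensures that ordinary (1-categorical) filtered colimits in $\bfA$ model filtered colimits in $\Nerve(\bfA^{\degree})$, and the same holds pointwise for $\Fun(\cDelta^{op}, \bfA)$ with the injective model structure. In particular, pointwise weak equivalences are stable under filtered colimits. Now let $\{f_\alpha: X_\alpha \to Y_\alpha\}_{\alpha \in J}$ be a filtered diagram of complete Segal weak equivalences, and set $f = \colim_\alpha f_\alpha$. Then $L f \simeq \colim_\alpha L f_\alpha$ since $L$ preserves filtered colimits. Each $L f_\alpha$ is an equivalence in $\Fun(\Nerve(\cDelta)^{op}, \Nerve(\bfA^{\degree}))$, and equivalences are stable under colimits in any $\infty$-category. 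Hence $Lf$ is an equivalence, so $f$ is a complete Segal weak equivalence, as required.

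The main obstacle is the bookkeeping needed to identify the model-categorical fibrant-replacement functor with the $\infty$-categorical $L$ of Proposition \ref{presmashet}, and to verify that filtered colimits computed inside the 1-category $\Fun(\cDelta^{op}, \bfA)$ faithfully represent filtered colimits in the underlying $\infty$-category. Both are standard once one notes that $\Fun(\cDelta^{op}, \bfA)$ is combinatorial, that the injective weak equivalences agree with those of the underlying $\infty$-category, and that fibrant replacement in a left Bousfield localization realizes the reflection onto the localized subcategory at the $\infty$-categorical level; so no substantive obstacle beyond hypothesis-checking is expected.
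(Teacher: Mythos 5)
Your proposal is correct and follows essentially the same route as the paper's proof: both reduce the statement to Proposition \ref{presmashet} (verifying its hypotheses from (c) and (d) exactly as you do), and both use hypothesis (b) to align the $1$-categorical filtered colimit with the $\infty$-categorical one before applying the $\infty$-categorical localization $L$. The ``bookkeeping'' you flag at the end is precisely the content of the paper's argument: it replaces the diagram $X, Y : \calJ \to \Fun(\cDelta^{op}, \bfA)$ by a weakly equivalent projectively fibrant-cofibrant diagram (possible since projective weak equivalences are stable under filtered colimits by (b)), and then invokes Theorem \toposref{colimcomparee} to identify the ordinary colimit with the colimit in $\Nerve(\bfB^{\degree})$; you should include that step explicitly rather than deferring it, but the argument as structured is sound.
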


\begin{proof}
Let $\bfB$ denote the category $\Fun( \cDelta^{op}, \bfA)$ endowed with the
injective model structure. Let $\calJ$ be a small filtered category, and let $\alpha: X \rightarrow Y$ be a natural transformation of functors $X,Y: \calJ \rightarrow \bfA$ such that, for each
$J \in \calJ$, the map $X(J) \rightarrow Y(J)$ is a weak equivalence with respect
to the complete Segal model structure. Choose a diagram
$$ \xymatrix{ X \ar[r] \ar[d] & Y \ar[d] \\
X' \ar[r] & Y' \\
X'' \ar[u] \ar[r] & Y'' \ar[u] }$$
where the vertical morphisms are weak equivalences with respect to the projective model
structure on $\Fun( \calJ, \bfB)$, and the objects $X'', Y'' \in \Fun( \calJ, \bfB)$ are
projectively fibrant and cofibrant. Assumption $(b)$ guarantees that the collection of
weak equivalences for the projective model structure is stable under filtered colimits. We may therefore replace $X$ by $X''$ and $Y$ by $Y''$, and thereby reduce to the case where
$X$ and $Y$ are projectively fibrant and cofibrant. In this case, the colimits of $X$ and $Y$ can be identified with their homotopy colimits, which are also (by virtue of Theorem \toposref{colimcomparee})
the colimits of the induced diagrams $\overline{X}, \overline{Y}: \Nerve(\calJ) \rightarrow 
\Nerve(\bfB^{\degree})$.

Let $L: \Nerve( \bfB^{\degree}) \rightarrow \CSS{ \Nerve(\bfA^{\degree})}$ denote
a composition of a right adjoint to the inclusion $\CSS{ \Nerve(\bfA^{\degree})} 
\subseteq \Fun( \Nerve(\cDelta)^{op}, \Nerve(\bfA^{\degree}))$ 
with the equivalence $\Fun( \Nerve(\cDelta)^{op}, \Nerve(\bfA^{\degree})) \simeq \Nerve( \bfB^{\degree} )$. By assumption, the map $\overline{X}(J) \rightarrow \overline{Y}(J)$ becomes an
equivalence in $\CSS{ \Nerve(\bfA^{\degree})}$ after applying the functor $L$.
Since $L$ commutes with filtered colimits (this follows from $(c)$ and $(d)$ by virtue of Proposition \ref{presmashet}), we deduce that the induced map $L \colim \overline{X} \rightarrow L \colim \overline{Y}$ is an equivalence, so that the map $\colim X \rightarrow \colim Y$ is a weak equivalence with respect to the complete Segal model structure as desired.
\end{proof}

\begin{example}\label{smashett}
The conditions of Proposition \ref{smashet} are satisfied if $\bfA$ is the category of simplicial sets
(endowed with the Kan model structure), or if $\bfA$ is the category of marked simplicial sets.
\end{example}

\section{Segal Categories}\label{bisecB}

In \S \ref{bisecA}, we introduced the notion of a {\it Segal space}. To every
Segal space $X_{\bigdot}$, one can associate an $(\infty,1)$-category
$\calC$ whose objects are the points of $X_0$. The resulting correspondence between
Segal spaces and $(\infty,1)$-categories is not one-to-one. In general, a given
$(\infty,1)$-category can be obtained in this way from many different Segal spaces.
One way to eliminate this ambiguity is to restrict attention to the {\em complete} Segal spaces:
in other words, to require that $X_0$ be as ``large'' as possible. In this section, we will
adopt another approach, where we require instead that $X_0$ be very ``small''. More precisely, we will consider Segal spaces $X_{\bigdot}$ with the property that the set $X_0$ is {\em discrete}. A Segal
space with this property is called a {\it Segal category}. 

The theory of Segal categories is another approach to the foundations of higher category theory.
It has a number of advantages and disadvantages when compared with the theory of
complete Segal spaces:
\begin{itemize}
\item[$(1)$] A Segal category $X_{\bigdot}$ has an underlying {\em set} of objects
$X_0$. This makes it possible to directly compare the notion of a Segal category with a more
naive approach to higher category theory, like the theory of simplicial or topological
categories.
\item[$(2)$] Let $X_{\bigdot}$ be a Segal category, and let $S = X_0$ be the set of objects of $X_{\bigdot}$. For each $n \geq 0$, we have a canonical map $p: X_{n} \rightarrow S^{n+1}$, which
determines a decomposition 
$$ X_{n} \simeq \coprod_{ s_0, \ldots, s_n \in S} X[s_0, \ldots, s_n]$$
where $X[s_0, \ldots, s_n]$ denotes the fiber of $p$ over $(s_0, \ldots, s_n)$.
In practice, it is usually easier to work directly with the summands
$X[ s_0, \ldots, s_n]$ of $X_{n}$ than it is to work with $X_{n}$ itself. This is somewhat
advantageous when we work with Segal categories {\em enriched} over a model category
category $\bfA$: we will not need to assume that $\bfA$ is an absolute distributor.

\item[$(3)$] Restricting our attention to Segal spaces $X_{\bigdot}$ such that $X_0$ is discrete
does not fully eliminate the problem that a given $(\infty,1)$-category can be represented
by many different Segal spaces. This is a phenomenon which is apparent even at the level
of ordinary categories: the construction which assigns to each category $\calC$ its underlying
{\em set} of objects is not invariant under equivalence. A practical consequence of this phenomenon is that it is not easy describe the weak equivalences between Segal categories directly.
\end{itemize}

Our goal in this section is to outline the theory of Segal categories and its relationship to other models
for higher category theory. We will begin in \S \ref{bisec2.1} by defining the notion of a
{\it $\bfA$-enriched preSegal category}, for any category $\bfA$. The collection of 
$\bfA$-enriched preSegal categories can be organized into a category $\PreSeg{\bfA}$.
The idea is that if $\bfA$ is a sufficiently nice model for the theory of $(\infty,n-1)$-categories, then 
$\PreSeg{\bfA}$ will be a model for the theory of $(\infty,n)$-categories. For example, in the
case $n=1$ we can take $\bfA$ to be the category of simplicial sets; in the case
$n=2$ (the primary case of interest to us in this paper) we can take $\bfA$ to be the category
of marked simplicial sets.

For any category $\bfA$, there is a fully faithful embedding $i$ from the category
$\Cat_{\bfA}$ of $\bfA$-enriched categories to the category $\PreSeg{\bfA}$ of $\bfA$-enriched
preSegal categories. In \S \ref{bisec2.2}, we endow $\PreSeg{\bfA}$ with a {\it projective} model structure and show that the functor $i$ is a right Quillen equivalence, provided that
$\bfA$ satisfies some mild hypotheses.

In \S \ref{bisec2.3}, we will compare the theory Segal categories with the theory of
Segal spaces. More specifically, we will describe a functor from the category
$\PreSeg{\bfA}$ of $\bfA$-enriched preSegal categories to the category of
$\Fun( \cDelta^{op}, \bfA)$ of simplicial objects of $\bfA$ (in many cases of interest, such
as the case where $\bfA = \sSet$ or $\bfA = \mSet$, this functor is a fully faithful embedding).
Under suitable hypotheses, we will endow $\PreSeg{\bfA}$ with an {\it injective} model structure and prove that the functor $\PreSeg{\bfA} \rightarrow \Fun( \cDelta^{op}, \bfA)$ is a left Quillen equivalence, where $\Fun( \cDelta^{op}, \bfA)$ is endowed with the complete Segal model structure
of Proposition \ref{camper}. We will also show that the projective and injective model
structures on $\PreSeg{\bfA}$ are Quillen equivalent, provided that both are defined.

\subsection{Basic Definitions}\label{bisec2.1}

In this section, we will introduce the notion of a $\bfA$-enriched preSegal category, where
$\bfA$ is another category. We begin by establishing some terminology.

\begin{definition}
Let $S$ be a set. We define a category $\cDelta_{S}$ as follows:
\begin{itemize}
\item An object of $\cDelta_{S}$ is an object $[n] \in \cDelta$ together with a map of sets
$$ c: [n] = \{ 0, \ldots, n \} \rightarrow S.$$ 
\item Given a pair of objects $([n], c)$ and $([n'], c')$ in $\cDelta_{S}$, a morphism from
$([n], c)$ to $([n'],c')$ is a map of linearly ordered sets $f: [n] \rightarrow [n']$ such that
$c = c' \circ f$.
\item Composition of morphisms is defined in the obvious way.
\end{itemize}
\end{definition}

\begin{notation}
Given an $(n+1)$-tuple of elements $( s_0, \ldots, s_n) \in S^{n+1}$, we let
$[ s_0, s_1, \ldots, s_n ]$ denote the object $( [n], c) \in \cDelta_{S}$, where
$c: [n] \rightarrow S$ is the map $i \mapsto s_i$.
\end{notation}

\begin{definition}
Let $\bfA$ be a category. A {\it $\bfA$-enriched preSegal category} consists of the following data:
\begin{itemize}
\item A set $S$, called the {\it set of objects}.
\item A functor $X: \cDelta_{S}^{op} \rightarrow \bfA$ such that, for every element
$s \in S$, the value $X( [s] )$ is a final object of $\bfA$.
\end{itemize}
Given preSegal categories $(S, X: \cDelta_{S}^{op} \rightarrow \bfA)$ and
$(S', X': \cDelta_{S}^{op} \rightarrow \bfA)$, a map of preSegal categories from
$(S,X)$ to $(S',X')$ is a pair $(f, \alpha)$, where $f: S \rightarrow S'$ is a map of sets and
$\alpha$ is a natural transformation from $X$ to the composite functor
$$ \cDelta^{op}_{S} \stackrel{f \circ }{\rightarrow} \cDelta_{S'} \stackrel{X'}{\rightarrow} \bfA.$$
We let $\PreSeg{\bfA}$ denote the category of $\bfA$-enriched preSegal categories.
\end{definition}

\begin{example}\label{sabb}
Let $\bfA$ be a category which admits finite products, and regard
$\bfA$ as endowed with the Cartesian monoidal structure. Then
every $\bfA$-enriched category $\calC$ determines a 
$\bfA$-enriched preSegal category $(S,X)$ as follows:
\begin{itemize}
\item We take $S$ to be the set of objects of $\calC$.
\item For every sequence of elements $s_0, \ldots, s_n \in S$, let
$X[ s_0, \ldots, s_n ] = \Hom_{\calC}(s_0, s_1) \times \ldots
\times \Hom_{\calC}(s_{n-1}, s_n)$.
\end{itemize}
This construction determines a fully faithful embedding from
the category $\Cat_{\bfA}$ of $\bfA$-enriched categories to the
category $\PreSeg{\bfA}$. The essential image of this embedding
consists of those $\bfA$-enriched preSegal categories
$(S,X)$ with the following additional property:
\begin{itemize}
\item[$(\ast)$] For
every sequence of objects $s_0, \ldots, s_n \in S$, the
map
$$ X[s_0, s_1, \ldots, s_n] \rightarrow X[s_0, s_1] \times \ldots \times X[s_{n-1},s_n]$$
is an isomorphism in $\bfA$.
\end{itemize}
\end{example}

In view of Example \ref{sabb}, we can regard the notion of a $\bfA$-enriched preSegal
category as a {\it generalization} of the notion of a $\bfA$-enriched category. To ensure that
this generalization is not too drastic, it is natural to impose a homotopy-theoretic analogue of condition $(\ast)$:

\begin{definition}\label{calhoun}
Let $\bfA$ be a model category. We say that a $\bfA$-enriched preSegal category
$(S,X)$ is a {\it $\bfA$-enriched Segal category} if, for every sequence of objects
$s_0, \ldots, s_n \in S$, the map
$$ \phi: X[s_0, s_1, \ldots, s_n] \rightarrow X[s_0, s_1] \times \ldots \times X[s_{n-1},s_n]$$
exhibits $X[ s_0, \ldots, s_n]$ as a homotopy product of the objects
$\{ X[s_{i-1}, s_i ] \}_{1 \leq i \leq n}$. 

We will say that $(S,X)$ is {\it locally fibrant} if $X[s_0, \ldots, s_n]$ is
a fibrant object of $\bfA$, for every sequence of objects $s_0, \ldots, s_n$.
\end{definition}

\begin{remark}
If $(S,X)$ is locally fibrant, or if the collection of weak equivalences in $\bfA$ is
stable under the formation of finite products, then $(S,X)$ is a $\bfA$-enriched
Segal category if and only if the map
$$X[s_0, \ldots, s_n] \rightarrow X[s_0, s_1] \times \ldots \times X[s_{n-1}, s_n]$$
is a weak equivalence for every sequence of elements $s_0, \ldots, s_n \in S$.
\end{remark}

\begin{example}\label{stepp1}
Let $\calC$ be a $\bfA$-enriched category, where $\bfA$ is a model category. Then the $\bfA$-enriched preSegal category associated to $\calC$ (see Example \ref{sabb}) is a $\bfA$-enriched Segal category.
\end{example}

In \S \ref{bisec2.2}, we will see that if $\bfA$ is a sufficiently nice model category, then
$\PreSeg{\bfA}$ inherits a model structure whose fibrant objects are precisely the
locally fibrant $\bfA$-enriched Segal categories. Our goal for the remainder of this section
is to lay some of the groundwork for the proof of this statement.

\begin{notation}
Let $n \geq 0$, and let $A \in \bfA$ be an object. We define a $\bfA$-enriched preSegal category
$\Fr^{n}(A)=(S, X)$ as follows:
\begin{itemize}
\item The underlying set $S$ is $[n] = \{0, \ldots, n \}$.
\item Suppose given an object $( [m], c: [m] \rightarrow S)$ of $\cDelta_{S}$. We define
$X( [m], c)$ to be an initial object of $\bfA$ if the map $c$ is not monotone, 
a final object of $\bfA$ if the map $c$ is constant, and the object $A$ otherwise.
\end{itemize}
We observe that $\Fr^{n}(A)$ can be described by the following universal property:
given any $\bfA$-enriched preSegal category $(S,X)$, giving a map
$\Fr^{n}(A) \rightarrow (S,X)$ is equivalent to giving a sequence of objects
$s_0, \ldots, s_n \in S$ and a map $A \rightarrow X( [s_0, \ldots, s_n ] )$.
\end{notation}

\begin{definition}\label{scanpl}
Let $\bfA$ be a model category.
We will say that a map $f: (S,X) \rightarrow (S',X')$ of $\bfA$-enriched preSegal categories
is a {\it generating projective cofibration} if one of the following conditions holds:
\begin{itemize}
\item[$(a)$] The set $S$ is empty, $S'$ consists of a single element, and the functor
$X': \cDelta_{S'}^{op} \rightarrow \bfA$ is a constant functor taking value equal to a final object
${\bf 1} \in \bfA$.
\item[$(b)$] There exists a cofibration $f_0: A \rightarrow B$ in $\bfA$ and an integer $n \geq 0$ such that $f$ is the induced map $\Fr^n(A) \rightarrow \Fr^{n}(B)$.
\end{itemize}
We will say that a morphism in $\PreSeg{\bfA}$ is a {\it projective cofibration} if it belongs to the smallest
weakly saturated class of morphisms in $\PreSeg{\bfA}$ containing all generating cofibrations.

We will say that a $\bfA$-enriched preSegal category $(S,X)$ is {\it cofibrant} if the map
$\emptyset \rightarrow (S,X)$ is a cofibration, where $\emptyset$ denotes the initial object
of $\PreSeg{\bfA}$.
\end{definition}

\begin{remark}\label{smuther}
In Definition \ref{scanpl}, we can replace the class of morphisms $(b)$ by the collection 
of all morphisms $\{ \Fr^n(f): \Fr^n(A) \rightarrow \Fr^n(B) \}$, where $f: A \rightarrow B$ ranges over
a collection of generating projective cofibrations of $\bfA$. It follows that if $\bfA$ is a combinatorial model category, then the collection of projective cofibrations in
$\PreSeg{\bfA}$ is of small generation (as a weakly saturated class), so that we can apply the small object argument.
\end{remark}

\begin{remark}\label{soccer}
Assume that $\bfA$ is a model category in which every object is cofibrant.
Then each of the generating projective cofibrations of Definition \ref{scanpl} is a morphism between {\em cofibrant} objects
of $\PreSeg{\bfA}$.
\end{remark}

\begin{lemma}\label{spottus}
Let $\bfA$ be a combinatorial model category, and let $f: (S,X) \rightarrow (S', X')$ be a map of $\bfA$-enriched preSegal categories. The following conditions are equivalent:
\begin{itemize}
\item[$(1)$] The map $f$ is a projective cofibration.
\item[$(2)$] The map $f$ admits a factorization
$$ (S,X) \stackrel{f'}{\rightarrow} (S'', X'') \stackrel{f''}{\rightarrow} (S', X')$$
where:
\begin{itemize}
\item The map $f'$ is an iterated pushout of morphisms of type $(a)$ appearing in Definition \ref{scanpl}
(in other words, $(S'', X'')$ is obtained from $(S,X)$ by freely adjoining new objects).
\item The map $f''$ is a retract of a transfinite composition of pushouts of morphisms of type
$(b)$ appearing in Definition \ref{scanpl}.
\end{itemize}
\end{itemize}
\end{lemma}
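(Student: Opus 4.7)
The implication $(2) \Rightarrow (1)$ is immediate: morphisms of types $(a)$ and $(b)$ are projective cofibrations by definition, and the class of projective cofibrations is closed under pushout, transfinite composition, and retract. My plan for the converse is to construct the factorization of $f\colon (S, X) \to (S', X')$ explicitly in two stages, exploiting the fact that pushouts of type $(b)$ morphisms preserve the object set while type $(a)$ pushouts merely adjoin objects without touching existing mapping objects.

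First I would check that $S \to S'$ is injective: being injective on object sets is stable under pushouts of either type of generator, transfinite composition, and retracts in $\Set$. Granting this, define $(S', X'')$ as the preSegal category obtained from $(S, X)$ by iteratively pushing out along the type $(a)$ generator $\emptyset \to \{*\}$, once for each element of $S' \setminus S$. Concretely, $X''[\vec{s}] = X[\vec{s}]$ on sequences in $S$, $X''[\vec{s}] = {\bf 1}$ on constant sequences at new objects, and $X''[\vec{s}]$ is the initial object of $\bfA$ on all remaining sequences. The canonical map $g_0\colon (S, X) \to (S', X'')$ is by construction an iterated pushout of type $(a)$ morphisms, and there is a unique $i\colon (S', X'') \to (S', X')$ with $f = i \circ g_0$.

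The second stage is to exhibit $i$ as a retract of a transfinite composition of pushouts of type $(b)$ morphisms. Invoking the small object argument on $i$ using only the set of type $(b)$ generating cofibrations (Remark \ref{smuther}), I obtain a factorization $i = p \circ g_1$ with $g_1\colon (S', X'') \to (S', \tilde{X})$ a transfinite composition of type $(b)$ pushouts and $p\colon (S', \tilde{X}) \to (S', X')$ having the right lifting property with respect to every type $(b)$ morphism. Since type $(b)$ pushouts do not alter the object set, $p$ is the identity on objects and hence trivially has the right lifting property against the type $(a)$ generator $\emptyset \to \{*\}$; so $p$ has the right lifting property against every projective cofibration. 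Lifting $f$ against $p$ (with upper edge $g_1 \circ g_0$) produces $h\colon (S', X') \to (S', \tilde{X})$ satisfying $h \circ f = g_1 \circ g_0$ and $p \circ h = \id$. The retract diagram presenting $i$ as a retract of $g_1$ then closes, provided one verifies the identity $h \circ i = g_1$.

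This last verification is the main obstacle. The equality is of natural transformations $X'' \to \tilde{X}$ on $\cDelta_{S'}^{op}$, and I would argue by cases. On sequences entirely in $S$, the identity follows from $h \circ f = g_1 \circ g_0$ together with the fact that $g_0$ restricts to the identity on the corresponding mapping objects. On non-constant sequences touching $S' \setminus S$, the equality is automatic because $X''[\vec{s}]$ is initial. The delicate case is that of constant sequences $[s', \ldots, s']$ at a new object $s'$, where $X''[\vec{s}] = {\bf 1}$ but $\tilde{X}[\vec{s}]$ may be nontrivial after the type $(b)$ pushouts. Here naturality with respect to the degeneracy morphism $([0], [s']) \to ([n], [s', \ldots, s'])$ in $\cDelta_{S'}$ forces any natural transformation out of $X''$ at $\vec{s}$ to coincide with the canonical degeneracy $\sigma\colon \tilde{X}[s'] = {\bf 1} \to \tilde{X}[\vec{s}]$; since both $h \circ i$ and $g_1$ induce the identity on $X''[s'] = {\bf 1}$, they agree on $X''[\vec{s}]$ as well. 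This completes the verification and yields the required factorization.
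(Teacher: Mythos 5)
Your argument is correct, and it takes a genuinely different route from the paper's. Lurie runs the small object argument once on $f$, exhibiting $(S',X')$ as a retract of the colimit $(\overline{S},\overline{X})$ of a transfinite composition of pushouts of generating cofibrations (in $(\PreSeg{\bfA})_{(S,X)/}$), then reorders the cells so that all type $(a)$ pushouts precede the type $(b)$ pushouts, and finally restricts the entire filtration to the preimage of $s(S')$ in $\overline{S}$; the factorization $f = f'' \circ f'$ is then read off from the cutoff ordinal $\alpha_0$, with the retract and type $(b)$ claims for $f''$ left as ``easy to see.''

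You instead build the first stage $(S',X'')$ by hand as the evident iterated coproduct with one-object preSegal categories, and then run a \emph{second} small object argument on the identity-on-objects map $i\colon (S',X'') \to (S',X')$ using only the type $(b)$ generators. The key observation that makes this work is that the resulting right factor $p$ is automatically identity on objects, hence trivially has the right lifting property against the type $(a)$ generator $\emptyset \to \{*\}$; so $p$ has the right lifting property against all projective cofibrations, and lifting $f$ against $p$ produces the retraction $h$. The delicate point in your version is the verification $h \circ i = g_1$, which you dispatch correctly: on sequences in $S$ it follows from $h\circ f = g_1\circ g_0$; on non-constant sequences touching $S' \setminus S$ the source is initial; and on constant sequences at new objects naturality for the degeneracy $([0],[s']) \to ([n],[s',\ldots,s'])$ pins down both maps to the same degeneracy $\mathbf{1} \to \tilde{X}[\vec{s}]$. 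Your two-stage construction trades the paper's trimming step (which does require some care: one has to check that restricting each type $(b)$ pushout to $\overline{S}''$ still yields a type $(b)$ pushout, and that the resulting retract square actually commutes) for the explicit lifting argument and the naturality check. Both approaches are valid; yours is the more fully spelled out of the two.
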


\begin{proof}
The implication $(2) \Rightarrow (1)$ is obvious. For the converse, suppose that
$f$ is a projective cofibration. Using the small object argument (see Remark \ref{smuther}), we deduce
that there exists a transfinite sequence of $\bfA$-enriched preSegal categories
$\{ (S_{\beta}, X_{\beta}) \}_{ \beta < \alpha}$ with the following properties:
\begin{itemize}
\item The pair $(S_0, X_0)$ coincides with $(S,X)$.
\item For $0 < \beta < \alpha$, the map 
$$\colim_{\beta' < \beta} (S_{\beta'}, X_{\beta'}) \rightarrow (S_{\beta}, X_{\beta} )$$
is a pushout of a generating projective cofibration $\phi_{\beta}$.
\item The $\bfA$-enriched preSegal category $(S',X')$ is a retract of
the colimit $\colim_{ \beta < \alpha} (S_{\beta}, X_{\beta})$ in the category
$(\PreSeg{\bfA})_{ (S,X)/}$. 
\end{itemize}
Reordering the generating projective cofibrations $\phi_{\beta}$ if necessary, we may suppose that
there exists an ordinal $\alpha_0 \leq \alpha$ such that $\phi_{\beta}$ is of type
$(a)$ appearing in Definition \ref{scanpl} for $\beta < \alpha_0$, and of type $(b)$
otherwise. Let $( \overline{S}, \overline{X} )$ denote the colimit of the sequence
$\{ (S_{\beta}, X_{\beta} ) \}_{ \beta < \alpha}$, so that we have a retraction diagram
$$ \xymatrix{ & (\overline{S}, \overline{X}) \ar[dr]^{r} & \\
(S', X') \ar[ur]^{s} \ar[rr]^{\id} & & (S', X'). }$$

Let $\overline{S}'' \subseteq \overline{S}$ denote the image of $S'$ in $\overline{S}$.
For each $\beta < \alpha$, let $S''_{\beta}$ denote the inverse image of $\overline{S}''$ in
$S_{\beta}$ (so that $S''_{\beta} = S_{\beta}$ if $\beta \geq \alpha$), and let
$X''_{\beta}$ denote the restriction of $X_{\beta}$ to $\cDelta^{op}_{ S''_{\beta} }$. 
We now define $(X'',S'')$ to be the pair $(X''_{\alpha_0}, S''_{\alpha_0})$, 
$f'$ the canonical map $(X,S) = (X_0,S_0) \rightarrow (X''_{\alpha_0}, S''_{\alpha_0})$, and
$f''$ the composition
$$ ( X''_{\alpha_0}, S''_{\alpha_0}) \rightarrow (X_{\alpha_0}, S_{\alpha_0})
\rightarrow ( \overline{X}, \overline{S} ) \stackrel{r}{\rightarrow} (S', X').$$
It is easy to see that $f'$ and $f''$ have the desired properties.
\end{proof}

\begin{example}\label{swaggius}
Let $f: A \rightarrow B$ be a cofibration in $\bfA$. We define a $\bfA$-enriched preSegal category
$\Fr^{n}(f)=([n], X)$ as follows:
\begin{itemize}
\item For every sequence of maps $i_0, \ldots, i_k \in [n]$, we let
$$X( [i_0, \ldots, i_k]) = \begin{cases} {\bf 1} & \text{if } i_0 = \ldots = i_k \\
B & \text{if } i_0 \leq i_1 \leq \ldots \leq i_k = i_0 + 1 \\
A & \text{if } i_0 \leq i_1 \leq \ldots \leq i_k > i_0 +1 \\
\emptyset & \text{otherwise.} \end{cases}$$
\end{itemize}
Here ${\bf 1}$ and $\emptyset$ denote final and initial objects of $\bfA$, respectively.
We have a canonical map $\psi: \Fr^{n}(f) \rightarrow \Fr^n(B)$. The domain and codomain of $\psi$ are cofibrant objects of $\PreSeg{\bfA}$, and the induced map $F(\psi): F( \Fr^{n}(f) ) \rightarrow F( \Fr^{n}(B))$ is an isomorphism of $\bfA$-enriched categories. It follows that $\psi$ is a weak equivalence.
Corollary \ref{curry} now implies that every pushout of $\psi$ is a weak equivalence in
$\PreSeg{\bfA}$.
\end{example}

\subsection{The Projective Model Structure on $\PreSeg{\bfA}$}\label{bisec2.2}

Throughout this section, we will assume that $\bfA$ is a combinatorial model category
satisfying the following conditions:
\begin{itemize}
\item[$(A1)$] Every object of $\bfA$ is cofibrant.
\item[$(A2)$] For every object $X \in \bfA$, the functor $Y \mapsto X \times Y$ preserves small colimits.
\item[$(A3)$] The Cartesian product on $\bfA$ endows $\bfA$ with the structure of a monoidal model category. In other words, given a pair of cofibrations $f: A \rightarrow A'$, $g: B \rightarrow B'$, the induced map
$$ f \wedge g: (A \times B') \coprod_{ A \times B} (A' \times B) \rightarrow A' \times B'$$
is again a cofibration, which is trivial if either $f$ or $g$ is trivial.
\item[$(A4)$] The collection of weak equivalences in $\bfA$ is stable under filtered colimits.
\end{itemize}

These conditions guarantee that the category $\Cat_{\bfA}$ of $\bfA$-enriched categories
admits a model structure; see \S \toposref{compp4}. Our goal in this section is exhibit
a model structure on $\PreSeg{\bfA}$ such that the fully faithful embedding
$G: \Cat_{\bfA} \hookrightarrow \PreSeg{\bfA}$ of Example \ref{sabb} is a right Quillen equivalence.
Our first step is to give an explicit construction of a left adjoint to this embedding.

\begin{remark}\label{makemr}
Suppose given a $\bfA$-enriched preSegal category $(S,X)$ and a $\bfA$-enriched category
$\calC$. A map of $\bfA$-enriched preSegal categories $(S,X) \rightarrow G(\calC)$ is
determined by the following data:
\begin{itemize}
\item For each element $s \in S$, an object $\alpha(s) \in \calC$.
\item For each sequence of elements $s_0, \ldots, s_n \in \calC$ and every pair of integers
$0 \leq i \leq j \leq n$, a map
$$ \beta^{i,j}_{s_0, \ldots, s_n}: X( [s_0, \ldots, s_n] ) \rightarrow \bHom_{\calC}( \alpha(s_i), \alpha(s_j) ).$$
\end{itemize}
Moreover, such data determines a map of $\bfA$-enriched preSegal categories provided that the following conditions are satisfied:
\begin{itemize}
\item If $0 \leq i = j \leq n$, then $\beta^{i,j}_{s_0, \ldots, s_n}$ is given by the composition
$$ X( [s_0, \ldots, s_n] ) \rightarrow {\bf 1} \stackrel{\id}{\rightarrow} \bHom_{\calC}( \alpha(s_i), \alpha(s_j) ),$$
where ${\bf 1}$ denotes the final object of $\bfA$.
\item If $0 \leq i \leq j \leq k \leq n$, then $\beta^{i,k}_{s_0, \ldots, s_n}$ is obtained by composing
$\beta^{i,j}_{s_0, \ldots, s_n}$ with $\beta^{j,k}_{s_0, \ldots, s_n}$ in the category $\calC$.
\item Given a map $f: [s_0, \ldots, s_n] \rightarrow [s'_0, \ldots, s'_{n'} ]$ in $\cDelta_{S}$ and
a pair of integers $0 \leq i \leq j \leq n$, the map $\beta^{ f(i), f(j)}_{ s'_0, \ldots, s'_{n'} }$ is given
by composing $\beta^{i,j}_{s_0, \ldots, s_n}$ with $X(f)$.
\end{itemize}
\end{remark}

\begin{notation}
Let $S$ be a set containing a pair of elements $x$ and $y$. We define a category
$\calJ_{x,y}(S)$ as follows:
\begin{itemize}
\item An object of $\calJ_{x,y}(S)$ consists of a sequence of elements
$(s_0, s_1, \ldots, s_n) \in S^{n+1}$ such that $s_0 = x$ and $s_n = y$, together with
a sequence of integers $\{ 0 = i_0 < i_1 < \ldots < i_k = n \}$.

\item A morphism from $( (s_0, \ldots, s_n), \{ 0 = i_0 < \ldots < i_k = n\} )$ to
$( ( s'_0, \ldots, s'_{n'}), \{ 0 = i'_0 < \ldots < i'_{k'} = n' \})$ in the category $\calJ_{x,y}(S)$
is a map of linearly ordered sets $f: [n'] \rightarrow [n]$ satisfying the following conditions:
\begin{itemize}
\item We have $f(0) = 0$ and $f(n') = n$.
\item For $0 \leq m \leq n'$, we have $s_{f(m)} = s'_{m}$.
\item For each $0 \leq j' < k'$, there exists $0 \leq j < k$ such that
$$i_{j} \leq f( i'_{j'} ) \leq f( i'_{j'+1}) \leq i_{j+1}.$$
(Note that $j$ is uniquely determined unless
$i_j = f(i'_{j'}) = f(i'_{j'+1})$ or $f(i'_{j'}) = f(i'_{j'+1}) = i_{j+1}$.)
\end{itemize}
\end{itemize}
Given a sequence of elements $x_0, \ldots, x_m$ in $S$, there is an evident concatenation
functor
$$ \calJ_{x_0,x_1}(S) \times \ldots \times \calJ_{ x_{m-1}, x_m}(S) \rightarrow \calJ_{x_0, x_m}(S).$$

Suppose now that we are given a $\bfA$-enriched preSegal category
$(S,X)$. Given a pair of elements $x,y \in S$, we define a functor
$H^{X}_{x,y}: \calJ_{x,y}(S) \rightarrow \bfA$ as follows:
\begin{itemize}
\item[$(\ast)$] Let $\sigma = ( (s_0, \ldots, s_n), \{ 0 = i_0 < \ldots < i_k = n\} )$ be an object of $\calJ_{x,y}$. We then define $H^{X}_{x,y}(\sigma)$ to be the product
$$X( [s_0, \ldots, s_{i_1}] ) \times X( [ s_{i_1}, \ldots, s_{i_2}] ) \times \ldots \times X( [s_{i_{k-1}}, \ldots, s_{n}]).$$
\end{itemize}
\end{notation}

\begin{remark}\label{kuli}
Given a sequence of elements $x_0, \ldots, x_m$ of $S$, the composition
$$ \calJ_{x_0, x_1}(S) \times \ldots \times \calJ_{x_{m-1}, x_m}(S)
\rightarrow \calJ_{x_0, x_m}(S) \stackrel{ H^X_{x_0, x_m}}{\rightarrow} \bfA$$
is canonically isomorphic with the external product of the functors $\{ H^{X}_{x_i, x_{i+1}} \}_{0 \leq i < m}.$
\end{remark}

\begin{proposition}\label{koon}
Let $(S,X)$ be a $\bfA$-enriched preSegal category. 
\begin{itemize}
\item[$(1)$] There exists a $\bfA$-enriched category
$F(S,X)$ which may be described as follows: 
\begin{itemize}
\item[$(a)$] The objects of $F(S,X)$ are the elements of $S$.
\item[$(b)$] Given a pair of objects $x,y \in S$, the mapping object
$\bHom_{F(S,X)}(x,y)$ is given by the colimit of the diagram
$$ H^{X}_{x,y}: \calJ_{x,y}(S) \rightarrow \bfA.$$
\item[$(c)$] Given a sequence of objects $x_0, \ldots, x_m \in S$, the composition law
$$ \bHom_{F(S,X)}(x_0, x_1) \times \ldots \times \bHom_{F(S,X)}(x_{m-1}, x_m)
\rightarrow \bHom_{ F(S,X)}( x_0, x_m)$$
is given by the chain of morphisms
\begin{eqnarray*}
\colim \prod_{1 \leq i \leq m} H^{X}_{x_{i-1}, x_i} & \stackrel{\phi}{\simeq} & 
\colim (\prod_{1 \leq i \leq m} H^{X}_{x_{i-1}, x_i}) | \prod_{1 \leq i \leq m} \calJ_{x_{i-1}, x_i}(S) \\
& \stackrel{\phi'}{\simeq} & \colim H^{X}_{x_0, x_m} | \prod_{ 1 \leq i \leq m} \calJ_{x_{i-1}, x_i}(S) \\
& \rightarrow & \colim H^{X}_{x_0, x_m}. \end{eqnarray*}
Here the isomorphism $\phi$ results from our assumption that the Cartesian product preserves colimits separately in each variable, and the isomorphism $\phi'$ from Remark \ref{kuli}.
\end{itemize}
\item[$(2)$] There exists a map of $\bfA$-enriched Segal categories
$$ u: (S,X) \rightarrow G( F(S,X) )$$ which is the identity on objects and satisfies the following universal property: for every $\bfA$-enriched category $\calC$,
the composite map
$$ \phi: \Hom_{ \Cat_{\bfA}}( F(S,X), \calC) \rightarrow \Hom_{ \PreSeg{\bfA}}( G( F(S,X) ), G(\calC) )
\stackrel{ \circ u}{\rightarrow} \Hom_{ \PreSeg{\bfA}}( (S,X), G(\calC) ).$$
\end{itemize}
\end{proposition}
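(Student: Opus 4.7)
My plan is to construct $F(S,X)$ by the formulas given in part (1), then check that it genuinely carries the structure of an $\bfA$-enriched category, and finally establish the universal property of the unit $u$ that appears in part (2).

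For part (1), once we declare $\bHom_{F(S,X)}(x,y) := \colim H^X_{x,y}$, the composition law described in (c) is essentially forced: hypothesis $(A2)$ supplies the isomorphism $\phi$ (Cartesian product commutes with colimits separately in each variable) and Remark \ref{kuli} supplies $\phi'$. The identity at $x \in S$ is provided by the object $((x),\{0\}) \in \calJ_{x,x}(S)$, whose image under $H^X_{x,x}$ is $X[x] \simeq \mathbf{1}$, composed with the structure map $\mathbf{1} \to \colim H^X_{x,x}$. Associativity and unitality reduce to the observation that concatenation of sequences is a strictly associative and unital operation on the categories $\calJ_{x,y}(S)$: iterated concatenation yields the same indexing category regardless of parenthesization, and concatenation with the constant-sequence object acts as the identity. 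The identifications $\phi$ and $\phi'$ are compatible with iterated concatenation, so both axioms reduce to straightforward diagram chases.

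To construct $u$, I invoke Remark \ref{makemr}: I must specify, for each sequence $s_0, \ldots, s_n \in S$ and each $0 \leq i \leq j \leq n$, a map $\beta^{i,j}_{s_0, \ldots, s_n}\colon X[s_0,\ldots,s_n] \to \bHom_{F(S,X)}(s_i,s_j)$. When $i=j$ the map is forced through $\mathbf{1}$; when $i < j$, I compose the projection $X[s_0,\ldots,s_n] \to X[s_i,\ldots,s_j]$ induced by $[s_i,\ldots,s_j] \hookrightarrow [s_0,\ldots,s_n]$ in $\cDelta_S$ with the canonical structure map $X[s_i,\ldots,s_j] \to \colim H^X_{s_i,s_j}$ coming from the object $((s_i,\ldots,s_j),\{0,j-i\}) \in \calJ_{s_i,s_j}(S)$. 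The three compatibility conditions of Remark \ref{makemr} then follow: naturality in maps of $\cDelta_S$ from functoriality of $X$, and the composition condition from a specific morphism in $\calJ_{s_i,s_k}(S)$ refining the partition $\{0, k-i\}$ to $\{0, j-i, k-i\}$.

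For the universal property, fix an $\bfA$-enriched category $\calC$ and a map $g\colon(S,X) \to G(\calC)$, encoded by data $(\alpha, \{\beta^{i,j}\})$ as in Remark \ref{makemr}. I produce $\widetilde{g}\colon F(S,X) \to \calC$ with $\widetilde{g}(s) = \alpha(s)$ on objects; on mapping objects, I use the universal property of the colimit, defining the component of $\widetilde{g}_{x,y}$ at $\sigma = ((s_0,\ldots,s_n),\{0 = i_0 < \cdots < i_k = n\}) \in \calJ_{x,y}(S)$ to be
$$\prod_{\ell=1}^{k} X[s_{i_{\ell-1}},\ldots,s_{i_\ell}] \xrightarrow{\prod \beta^{0,\, i_\ell - i_{\ell-1}}_{s_{i_{\ell-1}},\ldots,s_{i_\ell}}} \prod_{\ell=1}^{k} \bHom_\calC(\alpha(s_{i_{\ell-1}}),\alpha(s_{i_\ell})) \to \bHom_\calC(\alpha(x),\alpha(y)),$$
where the last arrow is the iterated composition in $\calC$. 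Compatibility with the refinement morphisms in $\calJ_{x,y}(S)$ follows from the two nontrivial conditions on $\beta$ from Remark \ref{makemr} together with associativity of composition in $\calC$, giving a cocone and hence a well-defined $\widetilde{g}_{x,y}$. One then unwinds the definitions to check that $\widetilde{g}$ respects composition and identities, and that the assignments $g \mapsto \widetilde{g}$ and $\widetilde{g} \mapsto \widetilde{g} \circ u$ are inverse to one another. I expect no deep technical obstacle; the main work is purely combinatorial, namely keeping track of the refinement morphisms in the $\calJ_{x,y}(S)$'s and ensuring that the colimit-of-products and product-of-colimits identifications are coherently tracked throughout.
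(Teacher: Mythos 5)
Your proposal is correct and takes essentially the same approach as the paper: the paper also defines $u$ by the composite through $X([s_i,\ldots,s_j]) = H^X_{s_i,s_j}((s_i,\ldots,s_j),\{0<j-i\})$ into the colimit, and constructs the inverse $\psi$ to $\phi$ by exactly the same cocone (product of the $\beta^{0,\cdot}$'s followed by iterated composition in $\calC$). The only cosmetic difference is that the paper declares part (1) "evident" where you spell out the associativity/unitality of concatenation.
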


\begin{proof}
Assertion $(1)$ is evident. To construct the map $u$ described in $(2)$, we invoke Remark \ref{makemr}: it will suffice to define, for every sequence of elements $s_0, \ldots, s_n \in S$ and every
pair of integers $0 \leq i \leq j \leq n$, a map
$$ \beta^{i,j}_{s_0, \ldots, s_n}: X( [s_0, \ldots, s_n] )
\rightarrow \bHom_{ F(S,X)}( s_i, s_j)$$
satisfying some evident compatibility conditions. We will take $\beta^{i,j}_{s_0, \ldots, s_n}$ to be
given by the composition
\begin{eqnarray*}
X( [s_0, \ldots, s_n] ) & \rightarrow & X( [ s_i, s_i+1, \ldots, s_j ]) \\
& = & H^{X}_{s_i, s_j}( (s_i, s_i + 1, \ldots, s_j), \{ 0 \leq j-i \} ) \\
& \rightarrow & \colim H^{X}_{s_i, s_j} \\
& = & \bHom_{ F(S,X)}( s_i, s_j). \end{eqnarray*}
It is not difficult to check that these maps satisfy the conditions of Remark \ref{makemr} and therefore determine a map $u$ of $\bfA$-enriched preSegal categories. 

To complete the proof, it will suffice to show that for every $\bfA$-enriched category $\calC$, the map $\phi$ is a bijection. The proof proceeds by explicitly constructing a map 
$\psi: \Hom_{ \PreSeg{\bfA}}( (S,X), G(\calC) ) \rightarrow \Hom_{\Cat_{\bfA}}( F(S,X), \calC)$ inverse
to $\phi$. Suppose given a map $f$ of $\bfA$-enriched preSegal categories $(S,X) \rightarrow G(\calC)$.
This data determines a map $\alpha$ from $S$ to the set of objects of $\calC$, and a collection of maps
$$ \beta^{i,j}_{s_0, \ldots, s_n}: X( [s_0, \ldots, s_n]) \rightarrow \bHom_{\calC}( \alpha(s_i), \alpha(s_j) )$$
as explained in Remark \ref{makemr}. We will define a $\bfA$-enriched functor $\psi(f): F(S,X) \rightarrow \calC$ as follows. On objects, $\psi(f)$ is given by the map $\alpha$. To define
$\psi(f)$ on morphisms, we must give for every pair of elements $x,y \in S$ a map
$$ \colim H^{X}_{x,y} \rightarrow \bHom_{\calC}( \alpha(x), \alpha(y) ).$$
This is equivalent to giving a compatible family of maps
$$ H^{X}_{x,y}(\sigma) \rightarrow \bHom_{\calC}( \alpha(x), \alpha(y) ),$$
where $\sigma = ( (s_0, \ldots, s_n ), \{ 0 = i_0 < i_1 < \ldots < i_k = n\} )$ ranges over
the objects of $\calJ_{x,y}(S)$. We take this to be the map given by the composition
\begin{eqnarray*}
H^{X}_{x,y}(\sigma) & = & X([s_0, \ldots, s_{i_1}]) \times \ldots \times X( [s_{i_{k-1}}, \ldots, s_n])
\\
& \stackrel{ \beta{0, i_1}_{ s_0, \ldots, s_{i_1}} \times \ldots \times \beta^{0, n- i_{k-1}}_{ s_{i_{k-1}}, \ldots, s_{n} }}{\rightarrow} & \bHom_{\calC}( \alpha(s_0), \alpha(s_{i_1})) \times \ldots
\times \bHom_{\calC}( \alpha(s_{k-1}), \alpha(s_n) ) \\
& \rightarrow & \bHom_{\calC}( \alpha(x), \alpha(y) ). \end{eqnarray*}
It is straightforward to verify that this collection of maps has the desired properties, and that this construction determines a map $\psi$ which is inverse to $\phi$.
\end{proof}

It follows from Proposition \ref{koon} that the construction $(S,X) \mapsto F(S,X)$ is functorial in
the $\bfA$-enriched preSegal category $(S,X)$, and determines a left adjoint to the functor of Example \ref{sabb}. Our next goal is to understand the homotopy types of the mapping objects in
$F(S,X)$. Since these mapping objects are given by colimits in $\bfA$, we would like a criterion
to guarantee that these colimits are also homotopy colimits (see Proposition \ref{hunk} below). First, we need a very formal preliminary result:

\begin{lemma}\label{quen}
Let $\bfA$, $\bfB$, and $\bfC$ be combinatorial model categories, and let
$F: \bfA \times \bfB \rightarrow \bfC$ be a left Quillen bifunctor.
Let $\calI$ and $\calJ$ be small categories. Then the induced functor
$$ \Fun(\calI, \bfA) \times \Fun( \calJ, \bfB) \rightarrow \Fun( \calI \times \calJ, \bfC)$$
is again a left Quillen bifunctor; here we regard $\Fun( \calI, \bfA)$, 
$\Fun(\calJ, \bfB)$, and $\Fun( \calI \times \calJ, \bfC)$ as endowed with the projective model structure.
\end{lemma}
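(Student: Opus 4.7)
The plan is to verify the two axioms defining a left Quillen bifunctor: preservation of colimits in each variable, and the pushout-product axiom. The first is automatic because colimits in the projective functor categories are computed pointwise, and $F$ itself preserves colimits in each variable. Hence the induced functor $\hat F: \Fun(\calI,\bfA)\times\Fun(\calJ,\bfB)\to\Fun(\calI\times\calJ,\bfC)$, defined by $\hat F(X,Y)(i,j)=F(X(i),Y(j))$, has right adjoints in each variable (obtained, for example, by combining the right adjoints to $F$ with Kan extensions along the projections $\calI\times\calJ\to\calI$ and $\calI\times\calJ\to\calJ$). Everything therefore reduces to checking the pushout-product axiom.

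The standard reduction via weakly saturated classes will do the heavy lifting. For any projective cofibration $\alpha$ in $\Fun(\calI,\bfA)$, the class of morphisms $\beta$ in $\Fun(\calJ,\bfB)$ for which the pushout-product $\hat F_\square(\alpha,\beta)$ is a projective cofibration in $\Fun(\calI\times\calJ,\bfC)$ is weakly saturated (closed under pushouts, transfinite composition, and retracts), and likewise for trivial cofibrations. Applying this twice, it suffices to prove that $\hat F_\square(\alpha,\beta)$ is a (trivial) cofibration when $\alpha$ and $\beta$ are generating (trivial) cofibrations of the projective model structures.

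The generating projective (trivial) cofibrations have the form $F_i^{\bfA}(f)$ and $F_j^{\bfB}(g)$, where for an object $i\in\calI$ and $A\in\bfA$ the free functor $F_i^{\bfA}(A):\calI\to\bfA$ is given by $k\mapsto \coprod_{\calI(i,k)}A$ (this is the left adjoint to evaluation at $i$), and $f:A\to A'$, $g:B\to B'$ are generating (trivial) cofibrations in $\bfA,\bfB$. The crucial computation is then the natural isomorphism
\[
\hat F\bigl(F_i^{\bfA}(A),\,F_j^{\bfB}(B)\bigr)\;\cong\;F_{(i,j)}^{\bfC}\bigl(F(A,B)\bigr),
\]
which holds because, evaluated at $(k,l)\in\calI\times\calJ$, the left side is $F(\coprod_{\calI(i,k)}A,\coprod_{\calJ(j,l)}B)$ and $F$ preserves coproducts separately in each variable. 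Passing to pushout-products, this yields
\[
\hat F_\square\bigl(F_i^{\bfA}(f),\,F_j^{\bfB}(g)\bigr)\;\cong\;F_{(i,j)}^{\bfC}\bigl(F_\square(f,g)\bigr),
\]
and since $F$ is a left Quillen bifunctor the morphism $F_\square(f,g)$ is a (trivial) cofibration in $\bfC$. The free functor $F_{(i,j)}^{\bfC}$ is a left Quillen functor from $\bfC$ to $\Fun(\calI\times\calJ,\bfC)$ by definition of the projective model structure, so it carries (trivial) cofibrations to (trivial) cofibrations. This completes the argument.

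The only step that requires any care is the compatibility isomorphism $\hat F_\square(F_i^{\bfA}(f),F_j^{\bfB}(g))\cong F_{(i,j)}^{\bfC}(F_\square(f,g))$; this is a routine but not entirely trivial verification that uses both the coproduct-preservation of $F$ and the fact that the free-functor constructions $F_i^{\bfA},F_j^{\bfB},F_{(i,j)}^{\bfC}$ are left adjoints (hence commute with the pushouts appearing in the definition of the pushout-product). Once this identification is in hand, the rest of the proof is formal.
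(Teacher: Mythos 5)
Your proof is correct and follows essentially the same strategy as the paper's: reduce via weak saturation to generating projective cofibrations of the form $\psi_!(\overline{g})$ coming from free functors, observe that the bifunctor interacts well with these left Kan extensions because $F$ preserves coproducts in each variable, and conclude using that the free functors are left Quillen. The paper packages the reduction one variable at a time (replacing $\calJ$ by $[0]$, then $\calI$ by $[0]$), while you reduce both variables simultaneously and exhibit the explicit identification $\hat F_\square(F_i^{\bfA}(f),F_j^{\bfB}(g))\cong F_{(i,j)}^{\bfC}(F_\square(f,g))$; this is a cosmetic difference only.
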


\begin{proof}
Let $f: A \rightarrow A'$ be a projective cofibration in $\Fun( \calI, \bfA)$ and let
$g: B \rightarrow B'$ be a projective cofibration in $\Fun( \calJ, \bfB)$. Define new functors
$$A' \otimes B', f \wedge g: \calI \times \calJ \rightarrow \calC$$
by the formulas
$$ (A' \otimes B')(I,J) = F( A'(I), B'(J) )$$
$$ (f \wedge g)(I,J) = F( A(I), B'(J) ) \coprod_{ F( A(I), B(J) )} F( A'(I), B(J) ).$$
We have an induced map $\phi_{f,g}: f \wedge g \rightarrow A' \otimes B'$. We wish to prove that
$\phi_{f,g}$ is a projective cofibration, which is trivial if either $f$ or $g$ is a trivial cofibration.
We will prove the first assertion; the second follows by the same argument.

Let us first regard $f$ as fixed, and consider the collection of all $g$ such that $\phi_{f,g}$ is a 
projective cofibration. It is not difficult to verify that this collection is weakly saturated. It will therefore
suffice to prove the assertion as $g$ ranges over a collection of generators for the weakly saturated class of all projective cofibrations in $\Fun( \calJ, \bfB)$. We may therefore assume that
$g = \psi_{!} \overline{g}$, where $\psi: [0] \rightarrow \calJ$ is a functor (corresponding to an object of $\calJ$), $\psi_{!}$ the corresponding left Kan extension functor, and $\overline{g}$ is a cofibration
in $\Fun( [0], \bfB) \simeq \bfB)$. Replacing $\calJ$ by $[0]$, we may reduce to the case where
the category $\calJ$ consists of a single object. We now regard $g$ as fixed and apply the same argument to reduce to the case where $\calI$ consists of a single object. We are therefore reduced to proving that the original functor
$$F: \bfA \times \bfB \rightarrow \bfC$$
is a left Quillen bifunctor, which is true by assumption.
\end{proof}

\begin{proposition}\label{hunk}
Let $f: (S, X) \rightarrow (S, X')$ be a projective cofibration between cofibrant $\bfA$-enriched preSegal categories
which is the identity on the object set $S$. For every pair of objects $x,y \in S$, the induced map
$$ H^{X}_{x,y} \rightarrow H^{X'}_{x,y}$$ 
is a projective cofibration in the category of functors from $\calJ_{x,y}(S)$ to $\bfA$.
\end{proposition}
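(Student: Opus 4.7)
The plan is to reduce, via Lemma \ref{spottus}, to a cell-by-cell analysis of the assignment $\Phi_{x,y}(X) := H^X_{x,y}$ and then invoke axioms (A2) and (A3). First I would factor $f$ using Lemma \ref{spottus} as $(S,X) \xrightarrow{f'} (S'', X'') \xrightarrow{f''} (S, X')$, with $f'$ freely adjoining new objects and $f''$ a retract of a transfinite composition of pushouts of morphisms of the form $\Fr^n(g): \Fr^n(A) \to \Fr^n(B)$, where $g: A \to B$ ranges over cofibrations in $\bfA$. Because pushouts of type (b) leave the object set fixed while $f$ acts as the identity on $S$, we must have $S'' = S$ and $f'$ trivial, so $f$ itself is a retract of such a transfinite composition.

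Next, view $\Phi_{x,y}$ as a functor from the category of $\bfA$-enriched preSegal categories with fixed object set $S$ (and identity-on-objects morphisms) to $\Fun(\calJ_{x,y}(S), \bfA)$. Since $H^X_{x,y}(\sigma)$ is by definition a finite Cartesian product of values of $X$, and axiom (A2) asserts that Cartesian product preserves colimits in each variable separately, $\Phi_{x,y}$ preserves all colimits. The class of projective cofibrations in $\Fun(\calJ_{x,y}(S), \bfA)$ is closed under pushouts, transfinite compositions, and retracts, so it suffices to show that $\Phi_{x,y}$ sends each pushout square $\Fr^n(A) \to (S, X_0)$, $(S, X_1) = (S, X_0) \sqcup_{\Fr^n(A)} \Fr^n(B)$ in $\PreSeg{\bfA}$ (with object-level attaching map $\alpha: [n] \to S$) to a projective cofibration.

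The heart of the argument is an explicit computation. Regarded in the slice category of preSegal categories with object set $S$, the above is a pushout along the morphism $\alpha_* \Fr^n(g)$, where $\alpha_*$ is the left adjoint to restriction of preSegal categories along $\alpha$. Unwinding the universal property, $(\alpha_* \Fr^n(A))([t_0, \ldots, t_p])$ is the coproduct of copies of $A$ indexed by non-constant monotone maps $\phi: [p] \to [n]$ satisfying $\alpha \circ \phi = (t_0, \ldots, t_p)$ (with singleton sequences receiving the terminal object). Substituting into the formula for $H^X_{x,y}$ and distributing products over coproducts using (A2), one identifies the induced map $\Phi_{x,y}(\alpha_* \Fr^n(A)) \to \Phi_{x,y}(\alpha_* \Fr^n(B))$ as a coproduct of morphisms of the form $C_!(g^{\wedge k})$, where $C$ is an appropriate object of $\calJ_{x,y}(S)$, $g^{\wedge k}$ denotes an iterated pushout-product of $g$ with itself, and $C_!$ is left Kan extension along $\{C\} \hookrightarrow \calJ_{x,y}(S)$. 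Iterated application of axiom (A3) shows that each $g^{\wedge k}$ is a cofibration in $\bfA$, so each $C_!(g^{\wedge k})$ is a generating projective cofibration in $\Fun(\calJ_{x,y}(S), \bfA)$; since coproducts of projective cofibrations remain projective cofibrations, the desired conclusion follows.

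The hard part will be the combinatorial bookkeeping in the third paragraph: tracking how monotone maps $[p] \to [n]$ interact with the partition data built into objects of $\calJ_{x,y}(S)$, and matching each summand in the coproduct decomposition with the correct representing object $C$. Both axioms play a genuine role here---(A2) is what lets products and coproducts be swapped to expose the coproduct structure, while (A3) is what converts the resulting products of generating cell inclusions into honest pushout-products, and hence, after Kan extension, into projective cofibrations rather than merely pointwise ones.
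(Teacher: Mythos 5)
Your argument has a serious gap in the second paragraph. You assert that the functor $\Phi_{x,y}(X) = H^{X}_{x,y}$ preserves all colimits because (by $(A2)$) the Cartesian product preserves colimits separately in each variable. But $H^{X}_{x,y}(\sigma)$ is a product of \emph{several} values of $X$ (one for each block of the partition datum in $\sigma$), and a multilinear functor does not preserve colimits as a functor of the single variable $X$: the assignment $X \mapsto X([s_0,\ldots,s_{i_1}]) \times X([s_{i_1},\ldots,s_{i_2}])$, say, fails to preserve pushouts and coproducts. What $(A2)$ gives you is preservation of \emph{filtered} colimits, which the paper does invoke at the limit-ordinal steps of its induction, but this is not enough to let you simply apply $\Phi_{x,y}$ to a pushout square and conclude.

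Because of this, the identification you propose in the third paragraph does not come out as stated, and the approach misses the essential phenomenon. When one attaches a cell $\Fr^n(A) \to \Fr^n(B)$ to $(S,X)$, the new diagram $H^{X'}_{x,y}$ receives contributions $\sigma$ whose product formula mixes ``new'' blocks (coming from $A$ or $B$) with ``old'' blocks (coming from existing values $X([t_0,\ldots,t_m])$). The cell structure of $H^X_{x,y} \to H^{X'}_{x,y}$ is therefore governed by pushout-products of $g\colon A\to B$ not just with itself, but also with the initial morphisms $\emptyset \to H^{X}_{x',y'}$. This is precisely why the paper's proof first establishes, via a bootstrap argument, the auxiliary hypothesis that each $H^{X}_{x',y'}$ is already projectively cofibrant: without that, the cell attachments one gets are not projective cofibrations. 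Your argument never addresses the cofibrancy of the ``old'' factors, and without it the conclusion fails. The paper constructs an explicit filtration of $H^{X'}_{x,y}$ by the number of replaced factors and identifies each step as a left Kan extension of a genuine pushout-product (involving both $g$ and the maps $\emptyset \to H^{X}_{x',y'}$), appealing to Lemma \ref{quen} for the Quillen-bifunctor compatibility.
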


\begin{proof}
We will prove Proposition \ref{hunk} under the following additional assumption:
\begin{itemize}
\item[$(\ast)$] For every pair of elements $x,y \in S$, the diagram
$H^{X}_{x,y}$ is projectively cofibrant.
\end{itemize}
Assume for the moment that this weaker version of the Proposition holds. Since $(S,X)$ is cofibrant, Lemma \ref{spottus} implies that we have a projective cofibration $f_0: (S, X_0) \rightarrow (S,X)$, where
$X_0: \cDelta^{op}_{S} \rightarrow \bfA$ is defined by the formula
$$ X_0( [s_0, \ldots, s_n]) = \begin{cases} {\bf 1} & \text{ if } s_0 = \ldots = s_n \\
\emptyset & \text{ otherwise.} \end{cases}$$
Here ${\bf 1}$ and $\emptyset$ denote final and initial objects of $\bfA$, respectively.
Since $(\ast)$ is satisfied by $X_0$, we deduce that the maps
$H^{X_0}_{x,y} \rightarrow H^{X}_{x,y}$ are projective cofibrations, so that
assumption $(\ast)$ is also satisfied by $X$: in other words, condition $(\ast)$ is automatic, so that Proposition \ref{hunk} holds in general.

Let us now assume that $X$ satisfies $(\ast)$. Using Lemma \ref{spottus}, we deduce the existence of
a transfinite sequence of $\bfA$-enriched preSegal categories $\{ (S, X_{\beta} \}_{ \beta \leq \alpha}$
with the following properties:
\begin{itemize}
\item We have $X_0 = X$.
\item If $\beta \leq \alpha$ is a nonzero limit ordinal, then $X_{\beta}$ is isomorphic to the colimit of the diagram $\{ X_{\beta'} \}_{ \beta' < \beta}$.
\item If $\beta < \alpha$, then the map $X_{\beta} \rightarrow X_{\beta'}$ is a pushout of a generating projective cofibration of type $(b)$ appearing in Definition \ref{scanpl}.
\item The morphism $X \rightarrow X'$ is a retract of $X_0 \rightarrow X_{\alpha}$.
\end{itemize}

Since the collection of projective cofibrations is stable under retracts, we may assume without loss of generality that $f$ is the map $X_0 \rightarrow X_{\alpha}$. We will prove the following:
\begin{itemize}
\item[$(a)$] For each $\gamma \leq \beta \leq \alpha$, the map $H^{X_{\gamma}}_{x,y} \rightarrow
H^{X_{\beta}}_{x,y}$ is a projective cofibration. In particular (taking $\gamma = 0$ and applying assumption $(\ast)$), the diagram $H^{X_{\beta}}_{x,y}$ is projectively cofibrant.
\end{itemize}
We will prove $(a)$ using induction on $\beta$, regarding the ordinal $\gamma$ as fixed.
If $\beta = \gamma$ there is nothing to prove, and if $\beta$ is a limit ordinal then the result follows from the inductive hypothesis, since the construction $X \mapsto H^{X}_{x,y}$ preserves filtered colimits.
We may therefore suppose that $\beta = \beta_0 + 1$ is a successor ordinal larger than $\gamma$. The inductive hypothesis guarantees that the map $H^{X_{\gamma}}_{x,y} \rightarrow H^{X_{\beta_0}}_{x,y}$ is a projective cofibration. It will therefore suffice to show that the map
$H^{X_{\beta_0}}_{x,y} \rightarrow H^{X_{\beta}}_{x,y}$ is a projective cofibration. 
We map now replace $X$ by $X_{\beta_0}$ (observe that
our inductive hypothesis guarantees that $(\ast)$ is still satisfied) and $X'$ by $X_{\beta}$ to
reduce to proving the original form of Proposition \ref{hunk} under the following additional assumption:

\begin{itemize}
\item[$(\ast')$] There exists a cofibration $u: A \rightarrow B$ in $\bfA$, an integer $n \geq 0$, and a pushout square
$$ \xymatrix{ \Fr^{n}(A) \ar[r]^{ \Fr^{n}(u)} \ar[d]^{\phi} & \Fr^{n}(B) \ar[d] \\
(S, X) \ar[r] & (S, X'). }$$
\end{itemize}
This diagram is classified by a sequence of elements $s_0, \ldots, s_n \in S$ and a commutative
diagram
$$ \xymatrix{ A \ar[r] \ar[d] & B \ar[d]^{v} \\
X( [s_0, \ldots, s_n]) \ar[r] & X'( [s_0, \ldots, s_n] ) }$$
in the category $\bfA$.

For each $k \geq 0$ and each subset $S \subseteq [k]$, let
$$\calF^{[k]}_{S}: \calJ_{x, s_0}(S) \times \calJ_{s_n, s_0}(S)^{k} \times \calJ_{s_n, y}(S)
\rightarrow \bfA$$
be the functor given by the formula
$$( \sigma_0, \sigma_1, \ldots, \sigma_{k+1}) \mapsto
H^{X}_{x,s_0}( \sigma_0) \times C_0 \times
H^{X}_{s_n,s_0}(\sigma_1) \times C_1 \times \ldots \times C_k \times H^{X}_{s_n, y}(\sigma_{k+1}),$$
where $C_i$ is equal to $A$ if $i \notin S$ and $B$ otherwise. Let $\calF^{[k]}_{+}$ denote the functor
$\calF^{[k]}_{[k]}$, and let $\calF^{[k]}_{-}$ denote the colimit of the functors
$\{ \calF^{[k]}_{S} \}_{S \subset [k] }$, so that we have a map $\phi^{k}: \calF^{[k]}_{-} \rightarrow
\calF^{[k]}_{+}$.

There is a natural concatenation functor
$$\psi^k: \calJ_{x, s_0}(S) \times \calJ_{s_n, s_0}(S)^{k} \times \calJ_{s_n, y}(S)
\rightarrow \calJ_{x,y}(S),$$
and the map $v$ determines a natural transformation from each
$\calF^{[k]}_{+}$ to $H^{X'}_{x,y} \circ \psi^k$. 
Let 
$$\psi^k_{!}: \Fun(  \calJ_{x, s_0}(S) \times \calJ_{s_n, s_0}(S)^{k} \times \calJ_{s_n, y}(S), \bfA)
\rightarrow \Fun( \calJ_{x,y}(S), \bfA)$$
denote the left Kan extension functor.

The diagram $H^{X'}_{x,y}$ can be realized as the direct limit of a sequence of diagrams
$$ H^{X}_{x,y} \simeq H^{(0)} \rightarrow H^{(1)} \rightarrow H^{(2)} \rightarrow \ldots$$
with the following property:
\begin{itemize}
\item For each $k > 0$, there is a pushout diagram (in the category $\Fun( \calJ_{x,y}(S), \bfA)$)
$$ \xymatrix{ \psi^{k}_{!} \calF^{[k]}_{-} \ar[r]^{\psi^{k}_{!}(\phi^k)} \ar[d] & \psi^{k}_{!} \calF^{[k]}_{+} \ar[d] \\
H^{(k-1)} \ar[r] & H^{(k)}. }$$
\end{itemize}
To complete the proof, it will suffice to show that each $\psi^{k}_{!}( \phi^k)$ is a projective
cofibration. Since $\psi^{k}_{!}$ is a left Quillen functor, we are reduced to proving that
$\phi^{k}$ is a projective cofibration. This follows from Lemma \ref{quen}, since
$\phi^{k}$ is an iterated external smash product of maps of the form
$\emptyset \rightarrow H^{X}_{x',y'}$ (here $\emptyset$ denotes the initial object
of $\Fun( \calJ_{x',y'}(S), \bfA)$; this map is projective cofibration by virtue of assumption $(\ast)$)
and $u: A \rightarrow B$ (which is a projective cofibration in the category $\Fun([0], \bfA)$).
\end{proof}

It follows from Proposition \ref{hunk} that the construction $(S,X) \mapsto F(S,X)$ is
homotopy invariant when restricted to cofibrant $\bfA$-enriched preSegal categories
(Proposition \ref{camble} below). To make this precise, we need to introduce some terminology.

\begin{definition}
Let $\phi: (S,X) \rightarrow (S',X')$ be a morphism of $\bfA$-enriched preSegal categories. We will say that $\phi$ is a {\it pointwise fully faithful} if, for every sequence of objects $s_0, \ldots, s_n \in S$, the map $X( [s_0, \ldots, s_n] ) \rightarrow X'( [\phi(s_0), \ldots, \phi(s_n) ] )$ is a weak equivalence in $\bfA$.
\end{definition}

\begin{lemma}\label{cuppl}
Let $\phi: \calI \rightarrow \calJ$ be a functor between small categories such that the induced map
$\Nerve( \calI ) \rightarrow \Nerve(\calJ)$ is cofinal, and let $\bfA$ be a combinatorial model category.
Then, for any functor $X: \calJ \rightarrow \bfA$, the canonical map
$$ \hocolim( X \circ \phi) \rightarrow \hocolim(X)$$
is an isomorphism in the homotopy category $\h{\bfA}$.
\end{lemma}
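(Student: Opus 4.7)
The plan is to deduce this from the $\infty$-categorical cofinality theorem, applied to the underlying $\infty$-category of $\bfA$. Since $\bfA$ is combinatorial, its underlying $\infty$-category $\Nerve(\bfA^{\degree})$ admits all small colimits. By Theorem \toposref{colimcomparee}, for any small category $\calK$ and any diagram $Y: \calK \to \bfA$, the homotopy colimit $\hocolim Y$ computed inside $\bfA$ represents the colimit of the induced diagram $\Nerve(\calK) \to \Nerve(\bfA^{\degree})$ in the underlying $\infty$-category. In particular, $\hocolim(X)$ and $\hocolim(X \circ \phi)$ are identified (up to canonical isomorphism in $\h{\bfA}$) with the $\infty$-categorical colimits of $\overline{X}: \Nerve(\calJ) \to \Nerve(\bfA^{\degree})$ and $\overline{X} \circ \Nerve(\phi)$ respectively.

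Having made this translation, the statement is precisely an instance of the general $\infty$-categorical cofinality principle: given a cofinal map of simplicial sets $p: K \to K'$ and any diagram $Y: K' \to \calC$ in an $\infty$-category $\calC$ admitting small colimits, the canonical map $\colim(Y \circ p) \to \colim(Y)$ is an equivalence in $\calC$ (see Proposition \toposref{cofbasic}). Applying this with $p = \Nerve(\phi)$, $\calC = \Nerve(\bfA^{\degree})$, and $Y = \overline{X}$ yields the desired isomorphism in $\h{\bfA}$.

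There is no significant obstacle here; the only subtlety is to invoke the correct comparison between the $1$-categorical/model-categorical homotopy colimit (constructed, say, via a projectively cofibrant replacement in $\Fun(\calK, \bfA)$ followed by taking the ordinary colimit) and the $\infty$-categorical colimit inside $\Nerve(\bfA^{\degree})$. This comparison is exactly the content of Theorem \toposref{colimcomparee}, so the lemma follows immediately once that translation is in place.
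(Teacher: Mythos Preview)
Your approach is essentially identical to the paper's: translate to the underlying $\infty$-category via Theorem \toposref{colimcomparee} and then invoke $\infty$-categorical cofinality (the paper cites Proposition \toposref{gute} rather than \toposref{cofbasic}, but the content is the same). The only detail you glossed over is that to form $\Nerve(\bfA^{\degree})$ and apply Theorem \toposref{colimcomparee} one should first replace $\bfA$ by a Quillen equivalent \emph{simplicial} combinatorial model category (via Dugger's theorem), and assume $X$ lands in $\bfA^{\degree}$; the paper makes this reduction explicit.
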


\begin{proof}
Replacing $\bfA$ by a Quillen equivalent combinatorial model category if necessary, we may assume that $\bfA$ is simplicial (see, for example, \cite{combmodel}). Without loss of generality, we may assume that $X$ takes values in the full subcategory $\bfA^{\degree}$ of fibrant-cofibrant objects of $\bfA$, and
therefore determines a functor $x: \Nerve(\calJ) \rightarrow \Nerve( \bfA^{\degree})$. 
Using Theorem \toposref{colimcomparee}, we are reduced to proving that the induced map
$\colim(x \circ \phi) \rightarrow \colim(x)$ is an equivalence in the $\infty$-category
$\Nerve(\bfA^{\degree})$, which follows from Proposition \toposref{gute}.
\end{proof}

\begin{proposition}\label{camble}
Let $\phi: (S,X) \rightarrow (S', X')$ be a morphism of $\bfA$-enriched preSegal categories. Assume that:
\begin{itemize}
\item[$(1)$] The $\bfA$-enriched preSegal categories $(S,X)$ and $(S, X')$ are cofibrant.
\item[$(2)$] The induced map $S \rightarrow S'$ on objects is surjective.
\item[$(3)$] The map $\phi$ is pointwise fully faithful.
\end{itemize}
Then the induced map $F(\phi): F(S,X) \rightarrow F(S',X')$ is an equivalence of $\bfA$-enriched categories.
\end{proposition}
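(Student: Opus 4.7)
The plan is to verify that $F(\phi)$ is an equivalence of $\bfA$-enriched categories by establishing essential surjectivity on homotopy categories and homotopy fully-faithfulness on mapping objects. The first of these is immediate: $F$ preserves the underlying set of objects, and hypothesis (2) asserts that the map $S \to S'$ is surjective, so $F(\phi)$ is literally surjective on objects.

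For homotopy fully-faithfulness I must show that for each $x, y \in S$ the induced map
\[
\alpha_{x,y}: \bHom_{F(S,X)}(x, y) \longrightarrow \bHom_{F(S',X')}(\phi(x), \phi(y))
\]
is a weak equivalence in $\bfA$. By Proposition \ref{koon}(1)(b), both sides are colimits of the functors $H^X_{x,y}$ on $\calJ_{x,y}(S)$ and $H^{X'}_{\phi(x), \phi(y)}$ on $\calJ_{\phi(x), \phi(y)}(S')$. Applying the method used in the proof of Proposition \ref{hunk} (with source the initial preSegal category) to the projective cofibrations $\emptyset \to (S, X)$ and $\emptyset \to (S', X')$, cofibrancy hypothesis (1) guarantees that $H^X_{x,y}$ and $H^{X'}_{\phi(x), \phi(y)}$ are projectively cofibrant, so the relevant ordinary colimits coincide with their homotopy colimits.

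I would then factor $\alpha_{x,y}$ as
\[
\hocolim_{\calJ_{x,y}(S)} H^X_{x,y} \longrightarrow \hocolim_{\calJ_{x,y}(S)} \bigl(H^{X'}_{\phi(x), \phi(y)} \circ \phi_*\bigr) \longrightarrow \hocolim_{\calJ_{\phi(x), \phi(y)}(S')} H^{X'}_{\phi(x), \phi(y)},
\]
where $\phi_*: \calJ_{x,y}(S) \to \calJ_{\phi(x), \phi(y)}(S')$ is induced by post-composition with $\phi_0$. The first arrow is a weak equivalence: the natural transformation $H^X_{x,y} \to H^{X'}_{\phi(x), \phi(y)} \circ \phi_*$ is a pointwise weak equivalence by hypothesis (3), and axioms (A1) and (A3) imply that finite Cartesian products of weak equivalences between cofibrant objects remain weak equivalences.

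The main obstacle is the second arrow, which is a change-of-indexing-category comparison for homotopy colimits. By Lemma \ref{cuppl} this would follow from showing that $\phi_*$ induces a cofinal map on nerves; concretely, I must verify that for each $\sigma' \in \calJ_{\phi(x), \phi(y)}(S')$ the comma category of lifts $\sigma' / \phi_*$ has weakly contractible nerve. Surjectivity of $\phi_0$ (hypothesis 2) guarantees existence of lifts, and I expect weak contractibility to follow from a combinatorial filtration argument in the spirit of Lemma \ref{colk}: one filters the comma category by controlling, in each step, a single intermediate vertex of the underlying sequence of $\sigma$, exploiting the fact that morphisms in $\calJ_{x,y}(S)$ allow both free refinement of the subdivision and replacement of intermediate elements by arbitrary elements of the fibres of $\phi_0$. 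This combinatorial verification is the primary technical content of the proof.
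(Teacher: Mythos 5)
Your structural outline matches the paper's proof precisely through the reduction to cofinality: surjectivity on objects from (2), the identification of mapping objects as homotopy colimits via Propositions \ref{koon} and \ref{hunk}, the two-step factorization through $\hocolim\bigl(H^{X'}_{\phi(x),\phi(y)}\circ\phi_*\bigr)$, the pointwise weak equivalence from (3) and the monoidal model axioms, and the reduction (via Lemma \ref{cuppl} and Theorem \toposref{hollowtt}) to showing that the comma categories $\calJ_{x,y}(S)_{\sigma/}$ have weakly contractible nerve.

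The gap is the last step, which you rightly flag as the primary technical content but then merely sketch as ``a filtration argument in the spirit of Lemma \ref{colk}.'' That expectation does not match what actually happens, and I do not believe a filtration of the type you describe would go through. The paper's argument is of a different character. After passing to opposites, one first uses two adjoint inclusions to replace $(\calJ_{x,y}(S)_{\sigma/})^{op}$ by a smaller full subcategory $\calC$ which then \emph{decomposes as a finite product} $\calC \simeq \calC_0 \times \cdots \times \calC_n$, one factor for each position in the sequence underlying $\sigma$. The boundary factors ($i=0$ or $i=n$) contract by a further adjoint inclusion to a category with an initial object. The interior factors ($0<i<n$) are the genuinely hard ones: here the fiber $\phi_0^{-1}\{s'_i\}$ is an arbitrary nonempty set, $\calC_i$ has no initial or terminal object, and contractibility is instead proved by a concatenation trick. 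One introduces functors $F, F_-, F_+$ built from concatenation of sequences and a zigzag of natural transformations $\pi_1 \to F_- \leftarrow F \to F_+ \leftarrow \pi_2$ from $\calC_i \times \calC_i$ to $\calC_i$, which shows the two projections induce homotopic maps on nerves; fixing one coordinate then exhibits the identity map of $\Nerve(\calC_i)$ as homotopic to a constant. This product decomposition and concatenation-homotopy argument are essential and cannot be replaced by a simplex-by-simplex filtration: Lemma \ref{colk} concerns a poset of linearly ordered subsets of a product $[m]\times[n]$, whereas the comma categories here are built from arbitrary fibers of $\phi_0$, and the freedom to ``replace intermediate elements by arbitrary elements of the fibres'' is exactly what the concatenation functors are designed to exploit in a controlled way.
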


\begin{proof}
Assumption $(2)$ guarantees that $F(\phi)$ is essentially surjective. It will therefore suffice to prove that, for every pair of objects $x,y \in S$, the induced map
$$ \bHom_{ F(S,X)}(x,y) \rightarrow \bHom_{ F(S', X') }( \phi(x), \phi(y) )$$
is a weak equivalence in $\bfA$. In other words, we must show that the map
$$ \colim ( H^{X}_{x,y}: \calJ_{x,y}(S) \rightarrow \bfA) \rightarrow
\colim (H^{X'}_{\phi(x),\phi(y)}: \calJ_{x',y'}(S') \rightarrow \bfA )$$
is a weak equivalence. Assumption $(1)$ and Proposition \ref{hunk} guarantee that
the diagrams $H^{X}_{x,y}$ and $H^{X'}_{\phi(x), \phi(y)}$ are projectively cofibrant, so it will suffice to show that the horizontal map in the diagram
$$ \xymatrix{ \hocolim (H^{X}_{x,y}: \calJ_{x,y}(S) \rightarrow \bfA) \ar[r] \ar[dr]^{\phi} & \hocolim (H^{X'}_{\phi(x), \phi(y) }: \calJ_{x,y}(S') \rightarrow \bfA) \\
& \hocolim ( \calJ_{x,y}(S) \rightarrow \calJ_{\phi(x), \phi(y)}(S')
\stackrel{ H^{X'}_{\phi(x), \phi(y) }}{\rightarrow} \bfA) \ar[u]^{\psi} }$$
is a weak equivalence. By the two-out-of-three property, it will suffice to show that $\phi$ and
$\psi$ are weak equivalences. The map $\phi$ is a weak equivalence because the transformation
$H^{X}_{x,y} \rightarrow H^{X'}_{ \phi(x), \phi(y)} | \calJ_{x,y}(S)$ is a pointwise weak equivalence
(in view of assumption $(3)$ together with the observation that every product in $\bfA$ is a homotopy product, since every object is cofibrant and the Cartesian product functor on $\bfA$ is a left Quillen bifunctor). To prove that $\psi$ is a weak equivalence, it will suffice (by virtue of Lemma \toposref{gute}) to show that the map $\Nerve(\calJ_{x,y}(S)) \rightarrow \Nerve(\calJ_{\phi(x), \phi(y)}(S'))$ is cofinal. 
In view of Theorem \toposref{hollowtt}, this is equivalent to the following assertion:
for every object $\sigma \in \calJ_{\phi(x), \phi(y)}(S')$, the fiber product category
$$ \calJ_{x,y}(S)_{\sigma / } = \calJ_{x,y}(S) \times_{ \calJ_{ \phi(x), \phi(y) }(S') } \calJ_{\phi(x), \phi(y)}(S')_{\sigma/}$$
has weakly contractible nerve. 

Let $\sigma = ( (s'_0=\phi(x), s'_1, \ldots, s'_{n-1}, s'_n=\phi(y)), \{ 0 = i_0 < i_1 < \ldots < i_k = n \})$.
We define full subcategories $\calC \subseteq \calD \subseteq (\calJ_{x,y}(S)_{\sigma/})^{op}$ as follows: an object of $(\calJ_{x,y}(S)_{\sigma/})^{op}$ belongs to $\calD$ if and only if
the corresponding map
$$\alpha: \sigma \rightarrow ( (\phi(x), \overline{s}'_1, \ldots, \overline{s}'_{\overline{n}-1}, \phi(y)), \{ 0 = \overline{i}_0 < \overline{i}_1 < \ldots < \overline{i}_{ \overline{k}} = \overline{n} \})$$
in $\calJ_{\phi(x), \phi(y)}(S')$ has the property that 
$\alpha( \overline{i}_{j} ) \in \{ 0 = i_0 < \ldots < i_k = n \}$ for $0 \leq j \leq \overline{k}$. Such
an object belongs to $\calC$ if and only if $\alpha( [ \overline{n}] ) \subseteq \{ 0 = i_0 < \ldots < i_k = n \}$. We note that the inclusion $\calD \subseteq (\calJ_{x,y}(S)_{\sigma/})^{op}$ admits a left
adjoint, and the inclusion $\calC \subseteq \calD$ admits a right adjoint. It follows that the inclusions
$$\Nerve(\calC) \subseteq \Nerve(\calD) \subseteq \Nerve( \calJ_{ \phi(x), \phi(y)}(S')_{ \sigma/ })$$
are weak homotopy equivalences. It will therefore suffice to show that $\Nerve(\calC)$ is weakly contractible.

The category $\calC$ is equivalent to a product $\calC_0 \times \calC_1 \times \ldots \times \calC_n$, where the categories $\calC_i$ may be described as follows:
\begin{itemize}
\item An object of $\calC_i$ is given by a nonempty sequence
$s_0, \ldots, s_m \in \phi^{-1} \{ s'_i \}$, together with a nonempty subset $K \subseteq [m]$. 
If $i=0$, we have the additional requirement that $s_0 = x$ and $0 \in K$, while if $i =n$
we have the additional requirement that $s_m = y$ and $m \in K$.

\item A morphism from $(s_0, \ldots, s_m; K)$ to $( \overline{s}_0, \ldots, \overline{s}_{ \overline{m}; \overline{K} }$
in $\calC_i$ is a monotone map $\alpha: [m] \rightarrow [ \overline{m} ]$ such that
$s_{j} = \overline{s}_{ \alpha(j) }$ for $0 \leq j \leq m$, and $\overline{K} \subseteq \alpha(K)$.
If $i=0$, we have the additional requirement that $\alpha(0) = 0$, and if $i=n$ we have the additional requirement that $\alpha( m) = \overline{m}$.
\end{itemize}

We will show that each $\calC_i$ has a weakly contractible nerve. There are several cases to
consider:
\begin{itemize}
\item[$(a)$] Suppose that $i= 0 = n$. Let $\calC'_i$ be the full subcategory of $\calC_i$ spanned
by those objects $( s_0, \ldots, s_m ; K)$ such that $K = \{0, m\}$. The inclusion
$\calC'_i \subseteq \calC_i$ admits a left adjoint, so it will suffice to show that
$\calC'_i$ has weakly contractible nerve. We now observe that $\calC'_i$ has an initial object,
given by the sequence $(x,y; \{0,1\})$.
\item[$(b)$] Suppose that $i = 0 < n$. Let $\calC'_i$ be the full subcategory of $\calC_i$ spanned by those objects $(s_0, \ldots, s_m; K)$ such that $K = \{0\}$. We again observe that
the inclusion $\calC'_i \subseteq \calC_i$ admits a left adjoint, and that $\calC'_i$ has
an initial object, this time given by $(x; \{0\})$. 
\item[$(c)$] Suppose that $i = n > 0$. We then argue as in case $(b)$.
\item[$(d)$] Suppose that $0 < i < n$. Consider the following functors from $\calC_i \times \calC_i$ to
$\calC_i$:
\begin{itemize}
\item Let $\pi_1, \pi_2: \calC_i \times \calC_i \rightarrow \calC_i$ denote projection onto the first and second factor, respectively.
\item Let $F: \calC_i \times \calC_i \rightarrow \calC_i$ be the concatenation functor, so that
$$F( (s_0, \ldots, s_m; K), ( \overline{s}_0, \ldots, \overline{s}_{\overline{m}}; K'))=
( ( s_0, \ldots, s_m, \overline{s}_0, \ldots, \overline{s}_{\overline{m}}), K'')$$
with $K'' = K \cup \{ j + m + 1: j \in K' \}$.  
\item Let $F_{-}: \calC_i \times \calC_i \rightarrow \calC_i$ be the variant on the
concatenation functor described by the formula
$$F_{-}( (s_0, \ldots, s_m; K), ( \overline{s}_0, \ldots, \overline{s}_{\overline{m}}; K'))=
( ( s_0, \ldots, s_m, \overline{s}_0, \ldots, \overline{s}_{\overline{m}}), K).$$
\item Let $F_{+}: \calC_i \times \calC_i \rightarrow \calC_i$ be the variant on the
concatenation functor described by the formula
$$F_{+}( (s_0, \ldots, s_m; K), ( \overline{s}_0, \ldots, \overline{s}_{\overline{m}}; K'))=
( ( s_0, \ldots, s_m, \overline{s}_0, \ldots, \overline{s}_{\overline{m}}), K''),$$
where $K'' = \{ j + m + 1 | j \in K' \}$.
\end{itemize}
We have natural transformations of functors
$$ \pi_1 \rightarrow F_{-} \leftarrow F \rightarrow F_{+} \leftarrow \pi_2.$$
It follows that $\pi_1$ and $\pi_2$ induce homotopic maps from
$\Nerve( \calC_i) \times \Nerve(\calC_i)$ to $\calC_i$.
Fix an object $C \in \calC_i$ (this is possible in view of assumption $(2)$). The identity map from $\Nerve(\calC_i)$ to itself,
which is given by the composition
$$ \Nerve(\calC_i) \simeq \Nerve(\calC_i) \times \{C\} \subseteq \Nerve(\calC_i) \times \Nerve(\calC_i) \stackrel{ \Nerve(\pi_1) }{\rightarrow} \Nerve(\calC_i),$$
is homotopic to the composition
$$ \Nerve(\calC_i) \simeq \Nerve(\calC_i) \times \{C\} \subseteq \Nerve(\calC_i) \times \Nerve(\calC_i) \stackrel{ \Nerve(\pi_2)}{\rightarrow} \Nerve(\calC_i),$$
which is a constant map taking the value $C$. It follows that $\Nerve( \calC_i)$ is weakly contractible as desired.
\end{itemize}
\end{proof}

\begin{definition}
We will say that a morphism $(S, X) \rightarrow (S', X')$ of $\bfA$-enriched preSegal categories
is a {\it cofibrant refinement} if it is pointwise fully faithful, the map $S \rightarrow S'$ is
surjective, and $(S,X)$ is cofibrant.
\end{definition}

\begin{remark}
For every $\bfA$-enriched Segal category $(S', X')$, there exists a cofibrant refinement
$\phi: (S,X) \rightarrow (S', X')$. This follows from the small object argument: we can choose
$\phi$ so that $(S,X)$ is cofibrant and $\phi$ has the right lifting property with respect to all projective cofibrations. Unwinding the definition, the latter condition amounts to the requirement that $S \rightarrow S'$ is surjective and the map $X( [s_0, \ldots, s_n] ) \rightarrow X'( [ \phi(s_0), \ldots, \phi(s_n) ] )$ is a trivial fibration in $\bfA$, for every sequence of elements $s_0, \ldots, s_n \in S$. In particular, this condition guarantees that $\phi$ is pointwise fully faithful.
\end{remark}

\begin{corollary}\label{stafff}
Let $\phi: (S, X) \rightarrow (S', X')$ be a morphism of $\bfA$-enriched preSegal categories.
The following conditions are equivalent:
\begin{itemize}
\item[$(1)$] Suppose given a commutative diagram
$$ \xymatrix{ ( \overline{S}, \overline{X}) \ar[r]^{\overline{\phi}} \ar[d] & ( \overline{S}', \overline{X}') \ar[d] \\
(S, X) \ar[r]^{\phi} & (S', X') }$$
such that the vertical maps are cofibrant refinements.
Then the induced map $F( \overline{S}, \overline{X}) \rightarrow F( \overline{S}', \overline{X}')$ is
an equivalence of $\bfA$-enriched categories.
\item[$(2)$] There exists a commutative diagram satisfying the conditions listed in $(1)$.
\end{itemize}
\end{corollary}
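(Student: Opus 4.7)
The implication $(1) \Rightarrow (2)$ reduces to the existence of at least one cofibrant refinement square above $\phi$. Such a square can be built in two steps: first choose a cofibrant refinement $(\widetilde S', \widetilde X') \to (S',X')$ of the codomain, which exists by the small object argument as indicated in the remark preceding the corollary. Then form the fibre product $(S,X) \times_{(S',X')} (\widetilde S', \widetilde X')$ in $\PreSeg{\bfA}$ and take a cofibrant refinement $(\widetilde S, \widetilde X)$ of this fibre product. Because trivial fibrations in $\bfA$ are stable under pullback, the composed projection $(\widetilde S, \widetilde X) \to (S,X)$ is pointwise a trivial fibration, hence a cofibrant refinement; the commuting square over $\phi$ is then automatic.

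The direction $(2) \Rightarrow (1)$ is the substantive part. Given two cofibrant refinement squares $(D_1)$ and $(D_2)$ above $\phi$, the plan is to construct a third square $(D_3)$ that receives maps from both, to which Proposition \ref{camble} can be applied. I would construct $(D_3)$ by the same procedure as above, but running the small object argument using only the generating projective cofibrations of type $(b)$ in Definition \ref{scanpl}. The key effect is that $\widetilde S = S$ and $\widetilde S' = S'$ on underlying object sets, and the maps $(S, \widetilde X) \to (S,X)$ and $(S', \widetilde X') \to (S',X')$ are pointwise trivial fibrations that moreover enjoy the right lifting property against \emph{all} projective cofibrations: the RLP against type-$(b)$ generators comes directly from the small object argument, while the RLP against type-$(a)$ generators is automatic because the maps are bijective on objects. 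Using these lifting properties together with the cofibrancy of $(\overline S_i, \overline X_i)$ and $(\overline S'_i, \overline X'_i)$, one produces compatible lifts $\gamma_i: (\overline S_i, \overline X_i) \to (S, \widetilde X)$ and $\gamma'_i: (\overline S'_i, \overline X'_i) \to (S', \widetilde X')$ making the maps of squares $(D_i) \to (D_3)$ commute in both directions.

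The maps $\gamma_i$ and $\gamma'_i$ are themselves cofibrant refinements: both source and target are cofibrant; surjectivity on objects follows from the identity-on-objects property of $(D_3)$ together with the surjectivity of the given refinements $\alpha_i, \alpha'_i$; and pointwise fully faithfulness follows by $2$-out-of-$3$ from the factorization $\alpha_i = ((\widetilde S, \widetilde X) \to (S,X)) \circ \gamma_i$ (and similarly for $\gamma'_i$), since the outer map is a pointwise weak equivalence and the first factor is a pointwise trivial fibration. Proposition \ref{camble} then applies to $\gamma_i$ and $\gamma'_i$, producing a commutative diagram
\[
\xymatrix{
F(\overline{S}_i, \overline{X}_i) \ar[r]^{F(\overline{\phi}_i)} \ar[d]_{F(\gamma_i)}^{\sim} & F(\overline{S}'_i, \overline{X}'_i) \ar[d]^{F(\gamma'_i)}_{\sim} \\
F(S, \widetilde{X}) \ar[r]^{F(\widetilde{\phi})} & F(S', \widetilde{X}')
}
\]
in $\Cat_{\bfA}$ with vertical equivalences. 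Thus $F(\overline{\phi}_i)$ is an equivalence if and only if $F(\widetilde{\phi})$ is, for both $i = 1$ and $i=2$, which gives $(2) \Rightarrow (1)$. The main obstacle is precisely the insistence that the common refinement $(D_3)$ be the identity on object sets; this is what upgrades the comparison maps $\gamma_i$ and $\gamma'_i$ from merely being pointwise fully faithful to being honest cofibrant refinements, so that Proposition \ref{camble} becomes applicable.
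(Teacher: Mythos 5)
Your plan — compare arbitrary cofibrant refinement squares to a canonical one $(D_3)$ whose verticals are the identity on objects and have the RLP against all projective cofibrations, then conclude via Proposition \ref{camble} — is essentially the paper's strategy, and both your treatment of $(1) \Rightarrow (2)$ (though the paper obtains its canonical square $\sigma_0$ more directly, without a fibre product) and your final appeal to Proposition \ref{camble} are sound granted the compatible lifts.

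The gap is in the assertion that the lifting properties of $(D_3)$ together with the cofibrancy of $(\overline{S}_i, \overline{X}_i)$ and $(\overline{S}'_i, \overline{X}'_i)$ yield \emph{compatible} lifts $\gamma_i$ and $\gamma'_i$. Those properties yield each lift separately but do not force the commutativity $\gamma'_i \circ \overline{\phi}_i = \widetilde{\phi} \circ \gamma_i$. If you choose $\gamma_i$ first, the remaining constraint on $\gamma'_i$ is a lifting problem for $\overline{\phi}_i: (\overline{S}_i,\overline{X}_i) \to (\overline{S}'_i,\overline{X}'_i)$ against $(S', \widetilde{X}') \to (S', X')$, which is soluble only if $\overline{\phi}_i$ is a projective cofibration — and that is not part of the hypotheses on a cofibrant refinement square. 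If you choose $\gamma'_i$ first, the remaining constraint on $\gamma_i$ is a lifting problem against the comparison map $(S, \widetilde{X}) \to (S, X) \times_{(S', X')} (S', \widetilde{X}')$, and nothing in your construction of $(D_3)$ gives this map the RLP (pointwise trivial fibration of each leg does not pass to the comparison with the pullback). The paper addresses exactly this point: it first records a stacking principle $(\ast)$ (a direct consequence of Proposition \ref{camble} for two vertically composed cofibrant refinement squares), and uses $(\ast)$ to replace the given square by an equivalent one whose top horizontal is a projective cofibration, via a factorization of $\overline{\phi}$ as a projective cofibration followed by a cofibrant refinement. Only after that reduction does the compatible lift into $\sigma_0$ become an honest lifting problem, with the cofibration on one side and the pointwise trivial fibration $(S', \overline{X}'_0) \to (S', X')$ on the other. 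You need to insert some version of this intermediate reduction before invoking Proposition \ref{camble}.
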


\begin{proof}
Let us say that a diagram $\sigma:$
$$ \xymatrix{ ( \overline{S}, \overline{X}) \ar[r]^{\overline{\phi}} \ar[d] & ( \overline{S}', \overline{X}') \ar[d] \\
(S, X) \ar[r]^{\phi} & (S', X') }$$
is {\it good} if the vertical maps are cofibrant refinements, and {\it excellent} if it is good and the map
$F( \overline{S}, \overline{X}) \rightarrow F( \overline{S}', \overline{X}')$ is an equivalence of
$\bfA$-enriched categories. Using the small object argument, we can construct a good
diagram $\sigma_0$:
$$ \xymatrix{ (S, \overline{X}_0) \ar[r] \ar[d] & ( S', \overline{X}'_0) \ar[d] \\
(S, X) \ar[r]^{\phi} & (S', X') }$$
with the following additional properties:
\begin{itemize}
\item The vertical maps are the identity on objects.
\item For every sequence of elements $s_0, s_1, \ldots, s_n \in S$, the map
$\overline{X}_0( [ s_0, \ldots, s_n] ) \rightarrow X( [s_0, \ldots, s_n] )$ is a trivial fibration in $\bfA$.
\item For every sequence of elements $s'_0, \ldots, s'_n \in S'$, the map
$ \overline{X}'_0( [ s'_0, \ldots, s'_n] ) \rightarrow X'( [s'_0, \ldots, s'_n])$ is a trivial fibration in $\bfA$.
\end{itemize}
If $(1)$ is satisfied, then $\sigma_0$ is excellent, which proves $(2)$.

Conversely, suppose that $(2)$ is satisfied, so that there exists an excellent diagram. We wish to prove $(1)$: that is, we wish to prove that every good diagram $\sigma$ is excellent. For this, it will suffice to show that a good diagram $\sigma$ is excellent if and only if $\sigma_0$ is excellent.

We first observe the following immediate consequence of Proposition \ref{camble}:
\begin{itemize}
\item[$(\ast)$] Suppose given a commutative diagram
$$ \xymatrix{ 
( \widetilde{S}, \widetilde{X}) \ar[r] \ar[d] & ( \widetilde{S}', \widetilde{X}') \ar[d] \\
( \overline{S}, \overline{X}) \ar[r]^{\overline{\phi}} \ar[d] & ( \overline{S}', \overline{X}') \ar[d] \\
(S, X) \ar[r]^{\phi} & (S', X') }$$
where the vertical morphisms are cofibrant refinements. Then the lower square is good if and only if the outer rectangle is good.
\end{itemize}

In particular, given any good diagram $\sigma$:
$$ \xymatrix{ ( \overline{S}, \overline{X}) \ar[r]^{\overline{\phi}} \ar[d] & ( \overline{S}', \overline{X}') \ar[d] \\
(S, X) \ar[r]^{\phi} & (S', X'), }$$
we can choose a factorization of $\overline{\phi}$ as a composition
$$ ( \overline{S}, \overline{X}) \stackrel{ \overline{\phi}'}{\rightarrow} ( \widetilde{S}', \widetilde{X}')
\stackrel{ \overline{\phi}''}{\rightarrow} ( \overline{S}', \overline{X}')$$
where $\overline{\phi}'$ is a projective cofibration and $\overline{\phi}''$ is a cofibrant refinement. Taking
$( \widetilde{S}, \widetilde{X}) = ( \overline{S}, \overline{X})$ and applying $(\ast)$, we see that
$\sigma$ is excellent if and only if the diagram $\sigma'$:
$$ \xymatrix{ ( \overline{S}, \overline{X}) \ar[r]^{\overline{\phi}} \ar[d] & ( \widetilde{S}', \widetilde{X}') \ar[d] \\
(S, X) \ar[r]^{\phi} & (S', X') }$$
is excellent. It will therefore suffice to prove that $\sigma'$ is excellent if and only if $\sigma_0$ is excellent, which follows from $(\ast)$ together with the existence of a commutative diagram
$$ \xymatrix{ ( \overline{S}, \overline{X}) \ar[r] \ar[d] & ( \widetilde{S}', \widetilde{X}') \ar[d] \\
(S, \overline{X}_0) \ar[r] \ar[d] & ( S', \overline{X}'_0) \ar[d] \\
(S, X) \ar[r]^{\phi} & (S', X'). }$$
\end{proof}

\begin{lemma}\label{stade}
Let $\phi: (S,X) \rightarrow (S', X')$ be a projective cofibration of $\bfA$-enriched preSegal categories.
Then the induced map $F(\phi): F(S,X) \rightarrow F(S', X')$ is a cofibration of
$\bfA$-enriched categories.
\end{lemma}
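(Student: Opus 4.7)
The plan is to use the fact that the functor $F: \PreSeg{\bfA} \to \Cat_{\bfA}$ is a left adjoint (Proposition \ref{koon}) and therefore preserves all small colimits. Since the class of cofibrations in $\Cat_{\bfA}$ is weakly saturated, it suffices (by Lemma \ref{spottus}) to verify that $F$ carries each of the two types of generating projective cofibrations listed in Definition \ref{scanpl} to a cofibration in $\Cat_{\bfA}$.

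For the type $(a)$ generators, the map $\emptyset \to (\{\ast\}, X)$ (with $X$ constant at a final object of $\bfA$) is easily seen to be sent by $F$ to the inclusion $\emptyset \hookrightarrow [0]$ of the initial $\bfA$-enriched category into the point, which is one of the generating cofibrations of $\Cat_{\bfA}$. For the type $(b)$ generators, we must show that $F(\Fr^{n}(f)): F(\Fr^{n}(A)) \to F(\Fr^{n}(B))$ is a cofibration for any generating cofibration $f: A \to B$ in $\bfA$. The key computation, which I expect to be the main obstacle, is to identify $F(\Fr^{n}(A))$ explicitly. The cleanest way is via the universal property: combining the definition of $\Fr^{n}(A)$ with the adjunction $F \dashv G$ and the formula of Example \ref{sabb}, one computes
$$\Hom_{\Cat_{\bfA}}(F(\Fr^{n}(A)), \calC) \;\cong\; \coprod_{c_0, \ldots, c_n \in \Obj(\calC)} \; \prod_{1 \leq i \leq n} \Hom_{\bfA}\!\bigl(A, \bHom_{\calC}(c_{i-1}, c_i)\bigr).$$
This identifies $F(\Fr^{n}(A))$ with the $\bfA$-enriched category presented by a chain $0 \to 1 \to \cdots \to n$ of $n$ morphisms each labelled by $A$: explicitly, the iterated amalgamated coproduct (along point objects) of $n$ copies of the $\bfA$-enriched category $[1]_{A}$, where $[1]_{A}$ denotes the category with two objects and single nontrivial mapping object $A$. (Alternatively, one can verify this by identifying a cofinal subcategory of the diagram $\calJ_{i,j}([n]) \to \bfA$ of Proposition \ref{koon} on which the value stabilizes to $A^{j-i}$; but the adjunction argument avoids this direct colimit manipulation.)

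With this identification in hand, the map $F(\Fr^{n}(f))$ factors as a finite composition of $n$ pushouts, where at the $k$-th stage one pushes out along the map $[1]_A \to [1]_B$ attached to the $k$-th edge in the chain. Since the map $[1]_{A} \to [1]_{B}$ induced by $f$ is a generating cofibration of $\Cat_{\bfA}$ (with respect to the model structure of \S \toposref{compp4}), each pushout is a cofibration, and so is their composite. The general case of a projective cofibration $\phi$ then follows from Lemma \ref{spottus}: factor $\phi$ as an iterated pushout of type $(a)$ maps followed by a retract of a transfinite composition of pushouts of type $(b)$ maps, apply $F$, and use that cofibrations in $\Cat_{\bfA}$ are closed under pushouts, transfinite composition, and retracts.
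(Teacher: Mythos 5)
Your proof is correct and follows essentially the same route as the paper's: reduce to the generating projective cofibrations via weak saturation (the paper observes directly that the class of $\phi$ with $F(\phi)$ a cofibration is weakly saturated, which is what your appeal to $F$ being a colimit-preserving left adjoint amounts to), handle the type $(a)$ generator as the inclusion of the empty category, and identify $F(\Fr^n(A))$ as the category freely generated by a chain of $n$ morphisms labelled by $A$, so that $F(\Fr^n(f))$ is a (composite of) pushout(s) of $n$ generating cofibrations $[1]_A \to [1]_B$. Your explicit computation of $\Hom_{\Cat_{\bfA}}(F(\Fr^n(A)), \calC)$ via the adjunction spells out what the paper asserts without proof, but the underlying argument is identical.
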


\begin{proof}
The collection of morphisms $\phi$ for which $F(\phi)$ is a cofibration is weakly saturated.
It will therefore suffice to prove that this collection contains each of the generating projective cofibrations
of Definition \ref{scanpl}. There are two cases to consider:
\begin{itemize}
\item[$(a)$] The set $S$ is empty, $S'$ consists of a single element, and the functor
$X': \cDelta_{S'}^{op} \rightarrow \bfA$ is a constant functor taking value equal to a final object
${\bf 1} \in \bfA$. In this case, $F(\phi)$ is the inclusion from the empty $\bfA$-enriched category
to the $\bfA$-enriched category with a single object (and endomorphisms given by
${\bf 1} \in \bfA$); this is a generator for the class of cofibrations in $\Cat_{\bfA}$.

\item[$(b)$] There exists a cofibration $\phi_0: A \rightarrow B$ in $\bfA$ and an integer $n \geq 0$ such that $\phi$ is the induced map $\Fr^n(A) \rightarrow \Fr^{n}(B)$. In this case, we can identify
$F(S,X)$ with the $\bfA$-enriched category freely generated by objects $x_0, x_1, \ldots, x_n$ and
maps $\{ \psi_i: A \rightarrow \bHom(x_i, x_{i+1}) \}_{0 \leq i < n}$. Similarly, $F(S',X')$ is freely generated by objects $x_0, \ldots, x_n$ and maps $\{ \overline{\psi}_i: B \rightarrow \bHom(x_i, x_{i+1}) \}_{0 \leq i < n}$. The functor $F(S,X) \rightarrow F(S',X')$ is easily seen to be a pushout of $n$ generating cofibrations in the category $\Cat_{\bfA}$.
\end{itemize}
\end{proof}

\begin{lemma}\label{speed}
Suppose given a pair of adjoint functors $\Adjoint{F}{\bfA}{\bfB}{G}$, where $\bfA$ and $\bfB$ are combinatorial model categories. Assume that:
\begin{itemize}
\item[$(1)$] The collection of cofibrations in $\bfA$ is generated (as a weakly saturated class of morphisms) by cofibrations between cofibrant objects.
\item[$(2)$] The functor $F$ preserves cofibrations and preserves weak equivalences between cofibrant objects.
\item[$(3)$] The model categories $\bfA$ and $\bfB$ are left proper.
\item[$(4)$] The collection of weak equivalences in $\bfA$ and $\bfB$ are stable under filtered colimits.
\end{itemize}
Then $F$ and $G$ determine a Quillen adjunction between $\bfA$ and $\bfB$.
\end{lemma}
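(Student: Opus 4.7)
The strategy is to reduce showing that $F$ preserves trivial cofibrations (noting that preservation of cofibrations is immediate from (2)) to the case of trivial cofibrations between cofibrant objects, where (2) applies directly. I would first observe that $F$ carries any trivial cofibration $j:A\to B$ between cofibrant objects in $\bfA$ to a trivial cofibration in $\bfB$: $F(j)$ is a cofibration by (2), and since $F$ preserves initial objects (being a left adjoint) and cofibrations, both $F(A)$ and $F(B)$ are cofibrant, so $F(j)$ is a weak equivalence by (2). Let $\calK$ denote the class of cofibrations $f$ in $\bfA$ for which $F(f)$ is a trivial cofibration in $\bfB$. Because $F$ preserves pushouts and transfinite colimits (as a left adjoint), and trivial cofibrations in $\bfB$ are stable under pushouts (automatic) and transfinite composition (using (4) applied to $\bfB$, combined with the corresponding stability for cofibrations), the class $\calK$ is closed under pushouts, transfinite composition, and retracts; that is, $\calK$ is weakly saturated within the class of cofibrations.

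To show that every trivial cofibration $f:X\to Y$ lies in $\calK$, use (1) and the small object argument to build a cofibrant replacement $\tilde X\to X$ as a cell complex in the generating cofibrations between cofibrant objects. Factor the composite $\tilde X\to X\to Y$ as $\tilde X\to\tilde Y\to Y$, a cofibration followed by a trivial fibration; then $\tilde Y$ is cofibrant, and by two-out-of-three $\tilde X\to\tilde Y$ is a trivial cofibration between cofibrant objects. Form the pushout $P:=X\cup_{\tilde X}\tilde Y$. The map $X\to P$ is a pushout of $\tilde X\to\tilde Y$, hence lies in $\calK$; by left properness (3) of $\bfA$, the map $\tilde Y\to P$ is a weak equivalence, and therefore so is $P\to Y$ by two-out-of-three. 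This factors $f$ as $X\to P\to Y$ with $X\to P\in\calK$ and $P\to Y$ a weak equivalence.

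The main obstacle is the last step: upgrading this factorization into the conclusion $f\in\calK$, since the residual map $P\to Y$ is only a weak equivalence and need not be a cofibration. The plan is to iterate the construction transfinitely---factoring each successive residual weak equivalence $P_\alpha\to Y$ as a trivial cofibration $P_\alpha\to P'_\alpha$ followed by a trivial fibration, applying the pushout construction to $P_\alpha\to P'_\alpha$ to produce $P_\alpha\to P_{\alpha+1}\in\calK$, and taking the transfinite colimit. Condition (4) applied in both $\bfA$ and $\bfB$ controls filtered colimits of weak equivalences, while left properness of $\bfB$ (3) and the combinatorial (hence small-object) structure of $\bfA$ and $\bfB$ ensure that the iteration stabilizes at a sufficiently large ordinal, yielding $f$ itself as a transfinite composition of maps in $\calK$. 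This closure step is the technical heart of the proof and draws essentially on all four hypotheses.
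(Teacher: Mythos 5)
Your initial observations are correct: (2) reduces the claim to trivial cofibrations, $F$ carries trivial cofibrations between cofibrant objects to trivial cofibrations, $\calK$ is weakly saturated (though note that stability of trivial cofibrations under transfinite composition is automatic and does not require (4)), and the pushout factorization $X \to P \to Y$ with $X \to P \in \calK$ and $P \to Y$ a weak equivalence is valid via left properness of $\bfA$. The gap, as you anticipated, lies in the closure step, and the proposed iteration does not repair it: each step merely replaces the residual weak equivalence $P_\alpha \to Y$ by another weak equivalence $P_{\alpha+1} \to Y$ of the same character, and at a limit ordinal (4) gives only that $\colim_\alpha P_\alpha \to Y$ is a weak equivalence, not an isomorphism. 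If the $P_\alpha$ eventually stabilize (say once they become cofibrant), you are left with a weak equivalence from a cofibrant object to the generally non-cofibrant $Y$, and (2) says nothing about what $F$ does to such a map. In effect, your plan would require that trivial cofibrations be generated by trivial cofibrations between cofibrant objects; hypothesis (1) asserts this only for cofibrations and does not imply it for trivial cofibrations.

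The paper's proof avoids this difficulty by building the cofibrant approximation \emph{together with} the cellular filtration of the target, rather than independently of it. Using (1) and the small object argument, $f: Y \to Y'$ is exhibited as a retract of a transfinite composition $Y = Y_0 \to Y_1 \to \cdots$ of pushouts along cofibrations (not necessarily trivial) between cofibrant objects; one then constructs, in parallel, trivial fibrations $g_\beta: X_\beta \to Y_\beta$ with $X_\beta$ cofibrant, maintaining the inductive invariant that $F$ applied to the square formed by $X \to X_\beta$ and $Y \to Y_\beta$ is a homotopy pushout in $\bfB$. Left properness of $\bfA$ carries the cofibrant approximations across cell attachments, left properness of $\bfB$ recognizes the resulting $F$-images as homotopy pushouts, and (4) handles limit stages and the final retract. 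This ``good square'' homotopy pushout invariant is the key structure your sketch is missing: it is flexible enough to absorb the non-cofibrancy that the cellular filtration from (1) forces on you, whereas tracking only membership in $\calK$ is too rigid to survive the passage to $Y'$.
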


\begin{proof}
In view of $(2)$, it will suffice to show that the functor $F$ preserves trivial cofibrations.
We first introduce a bit of terminology. We will say that a commutative square
$$ \xymatrix{ X \ar[r]^{f'} \ar[d]^{g} & X' \ar[d]^{g'} \\
Y \ar[r]^{f} & Y' }$$
in ${\bfA}$ is {\it good} if the maps $g$ and $g'$ are weak equivalences, the objects $X$ and $X'$ are cofibrant, and the induced diagram
$$ \xymatrix{ FX \ar[r] \ar[d] & FX' \ar[d] \\
FY \ar[r] & FY' }$$
is a homotopy pushout square in $\bfB$. We will say that a morphism $f: Y \rightarrow Y'$ in
$\bfA$ is {\it good} if, for every trivial fibration $g: X \rightarrow Y$ such that $X$ is cofibrant, there exists a good square
$$ \xymatrix{ X \ar[r]^{f'} \ar[d]^{g} & X' \ar[d]^{g'} \\
Y \ar[r]^{f} & Y'. }$$
We will prove the following:
\begin{itemize}
\item[$(\ast)$] Every cofibration in $\bfA$ is good.
\end{itemize}
Assuming $(\ast)$ for the moment, we can complete the proof as follows. Let $f: Y \rightarrow Y'$ be
a trivial cofibration; we wish to show that $Ff$ is a trivial cofibration. Assumption $(2)$ guarantees that $Ff$ is a cofibration, so it will suffice to show that $Ff$ is a weak equivalence.
Choose a trivial fibration $g: X \rightarrow Y$, where $X$ is cofibrant.
Invoking $(\ast)$, we deduce the existence of a good square
$$ \xymatrix{ X \ar[r]^{f'} \ar[d]^{g} & X' \ar[d]^{g'} \\
Y \ar[r]^{f} & Y'. }$$
Since $f$ is a weak equivalence, a two-out-of-three argument implies that $f'$ is a weak equivalence.
Consider the diagram
$$ \xymatrix{ FX \ar[r]^{Ff'} \ar[d] & FX' \ar[d] \\
FY \ar[r]^{Ff} & FY'. }$$
By assumption, this is a homotopy pushout square in $\bfB$. It will therefore suffice to show that
$Ff'$ is a weak equivalence, which follows from assumption $(2)$.

We now prove $(\ast)$. Let $f: Y \rightarrow Y'$ be a cofibration in $\bfA$. Using assumption $(1)$ and the small object argument, we deduce the existence of a transfinite sequence of objects
$\{ Y_{\beta} \}_{\beta \leq \alpha}$ with the following properties:
\begin{itemize}
\item The object $Y_0$ coincides with $Y$.
\item For every nonzero limit ordinal $\beta \leq \alpha$, we have $Y_{\beta} \simeq \colim_{\gamma < \beta} Y_{\gamma}$.
\item For every ordinal $\beta < \alpha$, we have a pushout diagram
$$ \xymatrix{ Z_{\beta} \ar[r]^{h_{\beta}} \ar[d] & Z'_{\beta} \ar[d] \\
Y_{\beta} \ar[r] & Y_{\beta + 1} }$$
where $h_{\beta}$ is a cofibration between cofibrant objects.
\item The object $Y'$ is a retract of $Y_{\alpha}$ in $\bfA_{Y/}$.
\end{itemize}

Choose a trivial fibration $g: X \rightarrow Y$, where $X$ is cofibrant. We first construct a transfinite sequence of trivial fibrations 
$\{ g_{\beta}: X_{\beta} \rightarrow Y_{\beta} \}_{\beta \leq \alpha}$ as follows:
\begin{itemize}
\item If $\beta = 0$, we set $g_{\beta} = f$.
\item Let $\beta \leq \alpha$ be a nonzero limit ordinal, and let 
$g_{< \beta}: \colim_{ \gamma < \beta} X_{\gamma} \rightarrow Y_{\beta}$ be the colimit of the maps
$g_{\gamma}$ for $\gamma < \beta$. Assumption $(4)$ guarantees that $g_{< \beta}$ is a weak equivalence. It follows that $g_{< \beta}$ admits a factorization
$$ \colim_{ \gamma < \beta} X_{\gamma} \stackrel{ g'_{< \beta}}{\rightarrow} X_{\beta}
\stackrel{g_{\beta}}{\rightarrow} Y_{\beta}$$
where $g'_{< \beta}$ is a trivial cofibration and $g_{\beta}$ is a trivial fibration.
\item Suppose that $\beta < \alpha$. Since $Z_{\beta}$ is cofibrant and the map $h_{\beta}$ is
a trivial fibration, the attaching map $Z_{\beta} \rightarrow Y_{\beta}$ factors through $X_{\beta}$.
Let $X'_{\beta}$ denote the pushout $X_{\beta} \coprod_{ Z_{\beta} } Z'_{\beta}$. Since $\bfA$
is left proper, the induced map $X'_{\beta} \rightarrow Y_{\beta+1}$ is a weak equivalence.
We may therefore choose a factorization
$$ X'_{\beta} \stackrel{p}{\rightarrow} X_{\beta+1} \stackrel{ g_{\beta+1}}{\rightarrow} Y_{\beta+1}$$
where $p$ is a trivial cofibration and $g_{\beta+1}$ is a trivial fibration. 
\end{itemize}
We now prove that for each $\beta \leq \alpha$, the square
$$ \xymatrix{ X \ar[r]^{f'_{\beta}} \ar[d]^{g} & X_{\beta} \ar[d]^{g_{\beta}} \\
Y \ar[r] & Y_{\beta} }$$
is good. It is clear from the construction that the upper horizontal map is a cofibration (which implies
that $X_{\beta}$ is cofibrant, since $X$ is cofibrant by assumption), and that the map $g_{\beta}$ is a trivial fibration. The only nontrivial point is to verify that the induced square
$$ \xymatrix{ FX \ar[r] \ar[d]^{Fg} & FX_{\beta} \ar[d]^{Fg_{\beta}} \\
FY \ar[r] & FY_{\beta} }$$
is a homotopy pushout square in $\bfB$. The proof proceeds by induction on $\beta$. If $\beta = 0$, there is nothing to prove. Suppose that $\beta$ is a nonzero limit ordinal. We have a commutative rectangle
$$ \xymatrix{ FX \ar[r] \ar[d] & F X_{< \beta} \ar[r] \ar[d] & F X_{\beta} \ar[d] \\
FY \ar[r] & F Y_{\beta} \ar[r]^{\id} & F Y_{\beta }.}$$
Using assumption $(4)$ and the inductive hypothesis, we deduce that the left square is a homotopy pushout. Assumption $(2)$ guarantees that the upper horizontal map in the right square is a weak equivalence, so that the right square is also a homotopy pushout. It follows that the outer square is again a homotopy pushout, as desired.

The case of successor ordinals is treated similarly: suppose that $\beta < \alpha$, and consider the diagram
$$ \xymatrix{ FX \ar[r] \ar[d] & F X_{\beta} \ar[r] \ar[d] & F( X_{\beta} \coprod_{Z_{\beta}} Z'_{\beta}) \ar[r] \ar[d] & F X_{\beta+1} \ar[d] \\
FY \ar[r] & FY_{\beta} \ar[r] & FY_{\beta+1} \ar[r]^{\id} & FY_{\beta+1}. }$$
The inductive hypothesis guarantees that the leftmost square is a homotopy pushout, and assumption
$(2)$ guarantees that that the rightmost square is a homotopy pushout. It will therefore suffice to show that
the middle square is a homotopy pushout. Since the functor $F$ preserves colimits, the middle square is a pushout. Moreover, the horizontal maps in the middle square are cofibrations, by virtue of assumption
$(2)$. Since $\bfB$ is left-proper (assumption $(3)$), the desired result follows.

Since $Y'$ is a retract of $Y_{\alpha}$ in $\bfA_{Y/}$, there exists a map $r: Y_{\alpha} \rightarrow Y_{\alpha}$ such that $r^2 = r$ and $r \circ f_{\alpha} = f_{\alpha}$, such that we can identify
$Y'$ with the colimit of the sequence 
$$Y_{\alpha} \stackrel{r}{\rightarrow} Y_{\alpha} \stackrel{r}{\rightarrow} Y_{\alpha} \stackrel{r}{\rightarrow} \ldots.$$ Consider the diagram
$$ \xymatrix{ X \ar[d]^{ f'_{\alpha}} \ar[r]^{f'_{\alpha}} & X_{\alpha} \ar[d]^{g_{\alpha}} \\
X_{\alpha} \ar@{-->}[ur]^{\overline{r}} \ar[r]^{r \circ g_{\alpha}} & Y_{\alpha}. }$$
Since $g_{\alpha}$ is a trivial fibration and $f'_{\alpha}$ is a cofibration, there exists a map
$\overline{r}: X_{\alpha} \rightarrow X_{\alpha}$ making the diagram commute.

We will construct a commutative ladder
$$ \xymatrix{ X \ar[d]^{\id} \ar[r]^{q^0} & X^0 \ar[r]^{q^1} \ar[d]^{s^0} & X^1 \ar[r]^{q^2} \ar[d]^{s^1} & \ldots \\
X \ar[r]^{ f'_{\alpha} } \ar[d] & X_{\alpha} \ar[d]^{g_{\alpha}} \ar[r]^{\overline{r}} & X_{\alpha} \ar[r]^{\overline{r}} \ar[d]^{g_{\alpha}} & \ldots \\
Y \ar[r]^{ f_{\alpha} } & Y_{\alpha} \ar[r]^{r} & Y_{\alpha} \ar[r]^{r} & \ldots }$$
as follows:
\begin{itemize}
\item Let $X^0 = X_{\alpha}$, $q^0 = f'_{\alpha}$, and $s^0 = \id$.
\item For $i \geq 0$, we factor the map $X^i \stackrel{s^i}{\rightarrow} X_{\alpha} \stackrel{ \overline{r}}{\rightarrow} X_{\alpha}$ as a composition
$$ X^i \stackrel{ q^{i+1}}{\rightarrow} X^{i+1} \stackrel{ s^{i+1}}{\rightarrow} X_{\alpha}$$
where $q^{i+1}$ is a cofibration and $s^{i+1}$ is a trivial fibration.
\end{itemize}
Let $X' = \colim_{i} X^i$, so that we have a commutative diagram
$$ \xymatrix{ X \ar[r] \ar[d]^{g} & X' \ar[d]^{g'} \\
Y \ar[r]^{f} & Y'. }$$
We claim that this square is good. By construction, the upper horizontal map is a cofibration,
so that $X'$ is cofibrant. The map $g'$ is a filtered colimit of the compositions $g_{\alpha} \circ s^i$, each of which is a trivial fibration; assumption $(4)$ guarantees that $g'$ is a weak equivalence. 
The only nontrivial point is to guarantee that the diagram
$$ \xymatrix{ FX \ar[r] \ar[d]^{g} & FX' \ar[d] \\
FY \ar[r] & FY' }$$
is a homotopy pushout square in $\bfB$. Assumption $(4)$ guarantees that the collection of homotopy pushout squares in $\bfB$ is stable under filtered colimits; it will therefore suffice to show that for each $i \geq 0$, the outer square in the diagram
$$ \xymatrix{ FX \ar[r] \ar[d]^{g} & FX^{i} \ar[d]^{Fs^i} \\
FX \ar[r] \ar[d] & FX_{\alpha} \ar[d] \\
FY \ar[r] & FY_{\alpha}. }$$
We conclude by observing that the upper square is a homotopy pushout (since $Fs^i$ is a weak equivalence by virtue of assumption $(2)$) and the lower square is a homotopy pushout thanks to our previous efforts.
\end{proof}

\begin{lemma}\label{stepp2}
Let $(S,X)$ be a projectively cofibrant $\bfA$-enriched Segal category, and let $x,y \in S$. Then the canonical map $u: X( [x,y]) \rightarrow \bHom_{ F(S,X)}( x,y)$ is a weak equivalence in $\bfA$.
\end{lemma}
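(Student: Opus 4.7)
By Proposition \ref{hunk}, the projective cofibrancy of $(S,X)$ implies that the diagram $H^X_{x,y}: \calJ_{x,y}(S) \to \bfA$ is projectively cofibrant. Hence $\bHom_{F(S,X)}(x,y) = \colim H^X_{x,y}$ coincides, in the homotopy category of $\bfA$, with the homotopy colimit, and it suffices to show that the canonical map $u: X([x,y]) = H^X_{x,y}(\sigma_0) \to \hocolim H^X_{x,y}$ is a weak equivalence, where $\sigma_0 = ((x,y), \{0,1\})$.

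The plan is to use the Segal condition to replace $H^X_{x,y}$ with a simpler, ``fully factored'' diagram whose homotopy colimit can be identified with $X([x,y])$ via a bar-construction argument. I would introduce an auxiliary functor $H'$ on $\calJ_{x,y}(S)$ defined by
$$H'((s_0,\dots,s_n), K) = X([s_0,s_1]) \times X([s_1,s_2]) \times \cdots \times X([s_{n-1},s_n]),$$
depending only on the underlying sequence. A natural transformation $H^X_{x,y} \to H'$ is obtained by applying, factor by factor, the Segal map
$$X([s_{i_j},\dots,s_{i_{j+1}}]) \to X([s_{i_j},s_{i_j+1}]) \times \cdots \times X([s_{i_{j+1}-1},s_{i_{j+1}}]).$$
The Segal hypothesis on $(S,X)$ makes this pointwise a weak equivalence, and since $H^X_{x,y}$ is projectively cofibrant this induces a weak equivalence $\hocolim H^X_{x,y} \simeq \hocolim H'$.

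Next, I would identify $\hocolim H'$ with the realization $|Y_\bullet|$ of a simplicial object $Y_\bullet$ of $\bfA$ with
$$Y_n = \coprod_{(s_1,\dots,s_{n-1}) \in S^{n-1}} X([x,s_1]) \times X([s_1,s_2]) \times \cdots \times X([s_{n-1},y]).$$
This identification proceeds via a cofinality argument: $H'$ is pulled back from a functor on the smaller indexing category of sequences with ``surjective-type'' connecting maps (those for which the product-of-edges functor is well defined), and I would verify that the relevant forgetful projection is homotopy cofinal by checking that its slice categories, which encode posets of markings, have weakly contractible nerves. The resulting $Y_\bullet$ satisfies a Segal-type condition inherited from $(S,X)$, and a standard computation (parallel to the simplicial identifications underlying Proposition \ref{hutch}) shows that the canonical map $X([x,y]) = Y_1 \to |Y_\bullet|$ is a weak equivalence. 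Naturality identifies this composite weak equivalence with $u$.

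The main obstacle is the combinatorial cofinality verification in the middle step: demonstrating that the slice categories involved in the passage from $\calJ_{x,y}(S)$ to the simpler bar-construction indexing category have weakly contractible nerves. This requires a careful analysis of the interaction between morphisms of $\calJ_{x,y}(S)$ and the marking $K$, of the same flavor as the explicit contractibility computations carried out in the proof of Proposition \ref{camble} (via exhibiting left or right adjoints to inclusions of various slice subcategories). Granted this cofinality, the remaining steps---the pointwise Segal equivalence at the start and the bar-construction identification at the end---are standard.
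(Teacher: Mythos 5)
Your proposal takes a genuinely different route from the paper's argument, and it has a gap at the first non-trivial step. You define an auxiliary ``fully factored'' functor $H'$ on $\calJ_{x,y}(S)$ by $H'((s_0,\dots,s_n), K) = \prod_{i} X([s_{i-1}, s_i])$, but this formula does \emph{not} extend to a functor on $\calJ_{x,y}(S)$. A morphism $\sigma \to \sigma'$ in $\calJ_{x,y}(S)$ is a monotone map $f: [n'] \to [n]$ which need not be surjective; whenever $f(i) - f(i-1) \geq 2$ for some $i$, functoriality of $H'$ would require a map
$$X([s_{f(i-1)}, s_{f(i-1)+1}]) \times \cdots \times X([s_{f(i)-1}, s_{f(i)}]) \to X([s_{f(i-1)}, s_{f(i)}]),$$
i.e.\ a composition law, which a preSegal category does not supply. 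The marking condition on morphisms of $\calJ_{x,y}(S)$ is what makes $H^X_{x,y}$ itself functorial (each marked interval of $\sigma'$ lands inside a marked interval of $\sigma$, so the structure map is a projection followed by a face map of $X$), but it does not prevent $f$ from jumping inside a single marked interval of $\sigma'$, so it does nothing for $H'$. Consequently the claimed natural transformation $H^X_{x,y} \to H'$ and the resulting equivalence $\hocolim H^X_{x,y} \simeq \hocolim H'$ are not well-posed as stated. The subsequent ``bar construction collapse'' step ($Y_1 \to |Y_\bullet|$) has the same root problem: the face maps of $Y_\bullet$ would have to be compositions, so $Y_\bullet$ is not actually a simplicial object of $\bfA$, and the extra-degeneracy argument is not directly available; Proposition \ref{hutch} is about the underlying groupoid of a Segal space and does not provide the contraction you need.

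The paper's proof avoids these difficulties by choosing the \emph{opposite} replacement: collapse to a single factor rather than factor into edges. Let $\calJ'_{x,y}(S) \subseteq \calJ_{x,y}(S)$ be the full subcategory of trivially marked objects, with $R$ the right adjoint to the inclusion, so that $(H \circ R)(\sigma) = X([s_0, \ldots, s_n])$. This is a functor on $\calJ_{x,y}(S)$ for free, because $X$ is already contravariant on $\cDelta_S$: any monotone $f: [n'] \to [n]$ yields $X(f): X([s_0, \ldots, s_n]) \to X([s'_0, \ldots, s'_{n'}])$ with no need for a composition. The Segal condition makes $H^X_{x,y}$ and $H \circ R$ pointwise weakly equivalent, cofinality of $R$ (Lemma \ref{cuppl}, since $R$ has a left adjoint) identifies $\hocolim (H \circ R)$ with $\hocolim H$, and since $\calJ'_{x,y}(S)$ has a \emph{final object} $((x,y), \{0 < 1\})$, that last colimit is simply $X([x,y])$. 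No bar construction is needed.
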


\begin{proof}
Let $\calJ'_{x,y}(S)$ denote the full subcategory of $\calJ_{x,y}(S)$ spanned by those objects of the form
$( (s_0 = x, s_1, \ldots, s_n = y), \{ 0 < n \} )$. Let $H: \calJ'_{x,y}(S) \rightarrow \bfA$ denote the restriction of $H^{X}_{x,y}$ to $\calJ'_{x,y}$. The category $\calJ'_{x,y}(S)$ has a final object, given by
$( (x,y), \{0 < 1\} )$. It follows that we have canonical identifications
$$ \colim H \simeq \hocolim H \simeq X( [x,y] ).$$
Moreover, we can identify the map $u$ with the map
$$\colim H \rightarrow \colim H^{X}_{x,y}$$
induced by the inclusion $\calJ'_{x,y} \subseteq \calJ_{x,y}$. 

We observe that the inclusion $\calJ'_{x,y}(S) \subseteq \calJ_{x,y}(S)$ admits a right adjoint $R$. We have a canonical natural transformation $\alpha: H^X_{x,y} \rightarrow H$ of functors
from $\calJ_{x,y}(S)$ to $\bfA$. The map $R$ induces a morphism
$v: \colim H^{X}_{x,y} \rightarrow \colim H$ which is left inverse to $u$. It will therefore suffice
to show that $v$ is a weak equivalence in $\bfA$. 

Since $(S,X)$ is cofibrant, Proposition \ref{hunk} implies that the diagram $H^{X}_{x,y}$ is projectively cofibrant diagram. It will therefore suffice to show that $R$ induces an
isomorphism $\hocolim H^{X}_{x,y} \rightarrow \hocolim H$ in the homotopy category $\h{\bfA}$.
This map factors as a composition
$$ \hocolim H^{X}_{x,y} \rightarrow \hocolim (H \circ R) \rightarrow \hocolim H.$$
The first map is an isomorphism in $\h{\bfA}$ since the natural tranformation $H^{X}_{x,y} \rightarrow H \circ R$ is a weak equivalence of diagrams $\calJ_{x,y}(S) \rightarrow \bfA$, by virtue of our assumption
that $(S,X)$ is $\bfA$-enriched Segal category. The second map is an isomorphism by Lemma \ref{cuppl}
(the functor $R$ admits a left adjoint, and therefore induces a cofinal map $\Nerve( \calJ_{x,y}(S) ) \rightarrow \Nerve( \calJ'_{x,y}(S) )$).
\end{proof}

\begin{theorem}\label{castle2}
There exists a left proper combinatorial model structure on $\PreSeg{\bfA}$ which may be described as follows:
\begin{itemize}
\item[$(C)$] A morphism $\phi: (S,X) \rightarrow (S',X')$ of $\bfA$-enriched preSegal categories is a {\it cofibration} if it is a projective cofibration in the sense of Definition \ref{scanpl}.
\item[$(W)$] A morphism $\phi: (S,X) \rightarrow (S',X')$ of $\bfA$-enriched preSegal categories is a {\it weak equivalence} if it satisfies the equivalent conditions of Corollary \ref{stafff}.
\item[$(F)$] A morphism $\phi: (S,X) \rightarrow (S',X')$ of $\bfA$-enriched preSegal categories is
a {\it fibration} if it has the right lifting property with respect to all morphisms satisfying
$(C)$ and $(W)$.
\end{itemize}
Moreover:
\begin{itemize}
\item[$(a)$] The collection of weak equivalences in $\PreSeg{\bfA}$ is stable under filtered colimits.
\item[$(b)$] The adjoint functors
$$ \Adjoint{F}{ \PreSeg{\bfA}}{ \Cat_{\bfA}}{G}$$
determine a Quillen equivalence between $\PreSeg{\bfA}$ and $\Cat_{\bfA}$.
\end{itemize}
\end{theorem}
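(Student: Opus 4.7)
The plan is to establish the model structure by the small object argument with an explicit generating set of trivial cofibrations, then deduce the Quillen equivalence (b) from the properties of $F$ via Lemma \ref{speed}.

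First I would verify that the class $W$ of weak equivalences described in Corollary \ref{stafff} satisfies the expected closure properties: retracts, two-of-three, and stability under filtered colimits (assertion (a)). Each of these reduces to the corresponding property of the Bergner model structure on $\Cat_{\bfA}$ by means of a functorial cofibrant refinement in $\PreSeg{\bfA}$ (produced by the small object argument for the generating projective cofibrations of Definition \ref{scanpl}; the refinements land between cofibrant objects by Remark \ref{soccer}), together with Lemma \ref{stade} and Proposition \ref{camble} to control the behavior of $F$.

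Next I would identify a generating set $J$ of trivial cofibrations as the union of (i) the maps $\Fr^n(f)$ with $f$ a generating trivial cofibration of $\bfA$, and (ii) the maps $\Fr^n(f) \to \Fr^n(B)$ of Example \ref{swaggius} with $f \colon A \to B$ a generating cofibration of $\bfA$. Each is a projective cofibration between cofibrant objects lying in $W$: class (i) since $F$ applied to such a map is a trivial cofibration of $\bfA$-enriched categories (Lemma \ref{stade} and Proposition \ref{camble}), and class (ii) by the direct calculation in Example \ref{swaggius} showing that $F$ sends such maps to isomorphisms. The small object argument applied to $J$, together with the generating cofibrations from Definition \ref{scanpl}, yields both factorization axioms; defining fibrations by the right lifting property against $J$ gives the lifting axioms. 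Combinatoriality is automatic from the generating sets, while left properness is transferred from $\bfA$ using Lemma \ref{stade} to control pushouts after $F$.

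For (b), the four hypotheses of Lemma \ref{speed} are exactly what has been verified above: (1) holds by Remark \ref{soccer}, (2) combines Lemma \ref{stade} and Proposition \ref{camble}, (3) is left properness, and (4) is assertion (a). Hence $(F,G)$ is a Quillen adjunction. To upgrade to a Quillen equivalence, given cofibrant $(S,X) \in \PreSeg{\bfA}$ and fibrant $\calC \in \Cat_{\bfA}$, I would show that a map $F(S,X) \to \calC$ is a weak equivalence if and only if its adjunct $(S,X) \to G(\calC)$ is. The main obstacle is the `if' direction, where one must compute the mapping objects of $G(\calC)$ and compare them with those of $(S,X)$. The key tool is Lemma \ref{stepp2}: after replacing $(S,X)$ by an equivalent cofibrant $\bfA$-enriched Segal category (factoring through a Segal-type fibrant replacement built from $J$), the lemma identifies $X([x,y])$ with $\bHom_{F(S,X)}(x,y)$ up to weak equivalence. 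Under this identification, the essential-surjectivity and full-faithfulness conditions defining a weak equivalence $F(S,X) \to \calC$ in $\Cat_{\bfA}$ translate directly to the pointwise conditions defining a weak equivalence $(S,X) \to G(\calC)$ in $\PreSeg{\bfA}$.
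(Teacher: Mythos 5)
Your proposal of building the model structure by Kan's recognition theorem with the explicit set $J$ has a genuine gap: the class $J$\nobreakdash-cof cannot coincide with the class $W \cap I$\nobreakdash-cof of trivial cofibrations. Every morphism you put in $J$ — whether of type $\Fr^n(f)$ for $f$ a trivial cofibration in $\bfA$, or of type $\Fr^{n}(f) \to \Fr^n(B)$ as in Example \ref{swaggius} — is bijective on the underlying object set, and this property is preserved under pushout, transfinite composition, and retract. Hence every $J$\nobreakdash-cell map is bijective on objects. But the weak equivalences of Corollary \ref{stafff} include morphisms whose essential surjectivity is only ``up to homotopy'': for instance, a cofibration $(\{s\},X) \hookrightarrow (\{s,s'\},X')$ where $s$ and $s'$ become equivalent in the homotopy category $\h{(\{s,s'\},X')}$ is a trivial cofibration that lies in no $J$\nobreakdash-cell class. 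Consequently $W \cap I\text{-cof} \not\subseteq J\text{-cof}$, and the factorization via the small object argument applied to $J$ will not produce a trivial cofibration followed by a fibration. Dually, a levelwise fibration between Segal-fibrant objects that is fully faithful and essentially surjective but not surjective on objects lies in $W \cap J\text{-inj}$ but not in $I\text{-inj}$, so the other half of the recognition criterion fails as well. This is the characteristic obstruction to cofibrantly generating the Segal category model structure by a simple explicit set, and it is precisely why the paper instead invokes the Smith-type criterion (Proposition T.A.2.6.15, cited as Proposition ``goot''), which requires only that $W$ be a perfect class satisfying the two-out-of-three property, be stable under cobase change by cofibrations, and contain all maps with the right lifting property against all cofibrations — no generating set of trivial cofibrations is exhibited or needed.

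The remainder of your proposal (the verification that $W$ is stable under filtered colimits via functorial cofibrant refinement, the use of Lemma \ref{speed} to show $F$ is a left Quillen functor, and the reduction of the Quillen equivalence to Lemma \ref{stepp2} and Example \ref{stepp1}) tracks the paper's argument and is essentially sound, modulo the fact that the model structure itself has not yet been established. You could repair the proof by replacing the recognition-theorem step with the Smith-criterion step, keeping your verifications of the closure properties of $W$, left properness, and the behavior of $F$; the set $J$ you define is still useful, but only to produce a \emph{fibrant replacement} (yielding Segal-fibrant objects) inside the already-established model category, not to generate all the trivial cofibrations.
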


\begin{remark}
We will refer to the model structure of Theorem \ref{castle2} as the {\it projective} model structure
on $\PreSeg{\bfA}$.
\end{remark}

\begin{proof}
To prove that $\PreSeg{\bfA}$ is a left proper combinatorial model category, it will suffice to show that
the hypotheses of Proposition \toposref{goot} are satisfied. We consider each in turn:
\begin{itemize}
\item[$(1)$] The collection of weak equivalences in $\PreSeg{\bfA}$ is perfect (in the sense of
Definition \toposref{perfequiv}). Using Remark \ref{smuther} and the small object argument, we deduce the
existence of an accessible functor $T: \PreSeg{\bfA} \rightarrow \PreSeg{\bfA}$ and a natural transformation
$\alpha: T \rightarrow \id$ with the following property: for every $\bfA$-enriched preSegal category $(S,X)$, the transformation $\alpha$ induces a map $T(S,X) \rightarrow (S,X)$ which is a cofibrant refinement.

It follows that a morphism $\phi$ in $\PreSeg{\bfA}$ is a weak equivalence if and only if the induced map
$(F \circ T)(\alpha)$ is an equivalence of $\bfA$-enriched categories. It follows immediately that
the collection of weak equivalences in $\PreSeg{\bfA}$ is an accessible subcategory of $\bfA^{[1]}$ which satisfies the two-out-of-three property.

It remains to show that the collection of weak equivalences in $\bfA$ is stable under filtered colimit.
Let $\calJ$ be a small filtered category, and suppose that $\alpha: \calF \rightarrow \calF'$ is a natural
transformation of functors $\calF, \calF': \calJ \rightarrow \PreSeg{\bfA}$ such that, for each
$J \in \calJ$, the induced map $\calF(J) \rightarrow \calF'(J)$ is a weak equivalence. We wish to
prove that the induced map $\colim \calF \rightarrow \colim \calF'$ is a weak equivalence.
Let us say that a morphism in $\Fun(\calJ, \PreSeg{\bfA})$ is {\it projective cofibration} if it belongs
to the weakly saturated class of morphisms generated by $\{ i^{J}_{!}(f) \}$, where $f$ ranges
over projective cofibrations in $\PreSeg{\bfA}$, $J$ over the collection of all objects in $\calJ$, and
$i^{J}_{!}$ denotes the functor of left Kan extension along the inclusion $\{J\} \subseteq \calJ$.
We will say that an object $\calG$ of $\Fun(\calJ, \PreSeg{\bfA})$ is {\it projectively cofibrant} if the map
$\emptyset \rightarrow \calG$ is a projective cofibration, where $\emptyset$ denotes the initial object of $\Fun( \calJ, \PreSeg{\bfA})$. Using the small object argument, we deduce the existence of a commutative diagram
$$ \xymatrix{ \overline{\calF} \ar[r] \ar[d] & \overline{\calF}' \ar[d] \\
\calF \ar[r] & \calF' }$$
in $\Fun( \calJ, \PreSeg{\bfA})$, where $\overline{\calF}$ and $\overline{\calF}'$ are projectively cofibrant, and the vertical maps are cofibrant refinements after evaluation at each $J \in \calJ$. 
We observe that $\colim \overline{\calF}$ and $\colim \overline{\calF}'$ are cofibrant objects
of $\PreSeg{\bfA}$. Since the class of weak equivalences in $\bfA$ is stable under filtered colimits, the vertical maps in the diagram
$$ \xymatrix{ \colim \overline{\calF} \ar[r] \ar[d] & \colim \overline{\calF}' \ar[d] \\
\colim \calF \ar[r] & \colim \calF' }$$
are cofibrant refinements. It will therefore suffice to show that the induced map
$F( \colim \overline{\calF} ) \rightarrow F( \colim \overline \calF' )$ is an equivalence of
$\bfA$-enriched categories. Since $F$ commutes with colimits and the collection of equivalences
in $\Cat_{\bfA}$ is stable under filtered colimits, it will suffice to show that $F( \overline{\calF}(J) ) \rightarrow
F( \overline{\calF}'(J) )$ is a weak equivalence for each $J \in \calJ$, which follows from our assumption that $\calF(J) \rightarrow \calF'(J)$ is a weak equivalence in $\PreSeg(\bfA)$.

\item[$(2)$] The collection of weak equivalences is stable under pushouts by generating projective cofibrations.
Let $f: (T, Y) \rightarrow (T', Y')$ be one of the generating projective cofibrations of Definition \ref{scanpl}, and let $\phi: (S,X) \rightarrow (S', X')$ be a weak equivalence. Suppose we are given
a map $\chi: (T, Y) \rightarrow (S,X)$; we wish to prove that the induced map
$$ (S,X) \coprod_{ (T, Y) } (T', Y') \rightarrow (S', X') \coprod_{ (T,Y)} (T', Y')$$
is a weak equivalence. Choose a commutative diagram
$$ \xymatrix{ (S, \overline{X}_0) \ar[r] \ar[d] & ( S', \overline{X}'_0) \ar[d] \\
(S, X) \ar[r]^{\phi} & (S', X') }$$
as in the proof of Corollary \ref{stafff}. We observe that $(T,Y)$ is cofibrant, so we can lift
$\chi$ to a map $\chi_0: (T,Y) \rightarrow (S, \overline{X}_0)$. Consider the induced diagram
$$ \xymatrix{ (S, \overline{X}_0) \coprod_{(T,Y)} (T',Y') \ar[r]^{\psi} \ar[d] & ( S', \overline{X}'_0)  \coprod_{(T,Y)} (T',Y') \ar[d] \\
(S, X)  \coprod_{(T,Y)} (T',Y') \ar[r] & (S', X')  \coprod_{(T,Y)} (T',Y').}$$
Using the left-properness of $\bfA$, we deduce that the vertical maps are cofibrant refinements.
It will therefore suffice to prove that $F(\psi)$ is an equivalence of $\bfA$-enriched categories.
Since $F$ preserves colimits, this is equivalent to the assertion that the map
$$ F(S, \overline{X}_0) \coprod_{ F( T,Y)} F(T',Y') \rightarrow F(S', \overline{X}'_0) \coprod_{ F(T,Y)} F(T',Y')$$
is an equivalence of $\bfA$-enriched categories. Our assumption that $\phi$ is a weak equivalence
guarantees that $F(S, \overline{X}_0) \rightarrow F(S', \overline{X}'_0)$ is an equivalence of $\bfA$-enriched categories. The desired result now follows from Lemma \ref{stade} and the left-properness of
$\Cat_{\bfA}$.

\item[$(3)$] Let $\phi: (S,X) \rightarrow (S', X')$ be a morphism which has the right lifting property with respect to every projective cofibration; we wish to prove that $\phi$ is a weak equivalence. We note that 
$\phi$ induces a surjection $S \rightarrow S'$ and a trivial fibration
$X( [s_0, \ldots, s_n]) \rightarrow X'( [ \phi(s_0), \ldots, \phi(s_n)])$ for every sequence of elements
$s_0, s_1, \ldots, s_n \in S$. In particular, $\phi$ is pointwise fully faithful.

Choose a commutative diagram 
$$ \xymatrix{ (S, \overline{X}_0) \ar[r] \ar[d] & ( S', \overline{X}'_0) \ar[d] \\
(S, X) \ar[r]^{\phi} & (S', X') }$$
as in the proof of Corollary \ref{stafff}. To prove that $\phi$ is a weak equivalence, it will suffice to prove that the induced map $F(S, \overline{X}_0) \rightarrow F(S', \overline{X}'_0)$ is an equivalence of $\bfA$-enriched categories, which follows immediately from Proposition \ref{camble}.
\end{itemize}

We now claim that $F$ is a left Quillen functor. This follows from Lemma \ref{stade}, Lemma \ref{speed}, Remark \ref{soccer}, and the following trivial observation:
\begin{itemize}
\item[$(\ast)$] A morphism $\phi$ between cofibrant objects of $\PreSeg{\bfA}$ is a weak equivalence if and only if $F(\phi)$ is an equivalence of $\Cat_{\bfA}$-enriched categories.
\end{itemize}
We now complete the proof by showing that $F$ induces a Quillen equivalence between
$\PreSeg{\bfA}$ and $\Cat_{\bfA}$. Assertion $(\ast)$ implies that the left derived functor $LF$
is conservative. It will therefore suffice to show that the right derived functor $RG$ is fully faithful.
In other words, we must show that if $\calC$ is a $\bfA$-enriched category, then the counit map
$$ (LF \circ RG): \calC \rightarrow \calC$$
is an isomorphism in the homotopy category $\h{\bfA}$. Without loss of generality, we may assume that $\calC$ is a fibrant $\bfA$-enriched category, so that we can identify $RG(\calC)$ with $G(\calC)$.
Choose a cofibrant refinement $\phi: (S,X) \rightarrow G\calC$; we wish to prove that the adjoint map
$\psi: F(S,X) \rightarrow \calC$ is an equivalence of $\bfA$-enriched categories. Since the functor
$\psi$ is surjective on objects, it will suffice to show that for every pair of objects $s,s' \in S$, the map
$$u: \bHom_{F(S,X)}( s, s' ) \rightarrow \bHom_{\calC}(s,s')$$
is a weak equivalence in $\bfA$. We have a commutative diagram
$$ \xymatrix{ & \bHom_{F(S,X)}(s,s') \ar[dr]^{u} & \\
X( [s,s']) \ar[ur]^{v} \ar[rr]^{w} & & \bHom_{\calC}(s,s'). }$$
The map $w$ is an isomorphism in $\bfA$, and the map $v$ is a weak equivalence by
virtue of Example \ref{stepp1} and Lemma \ref{stepp2}. It follows from the two-out-of-three property that
$u$ is a weak equivalence, as desired.
\end{proof}

\begin{remark}\label{hseg}
Let $(S,X)$ be a $\bfA$-enriched Segal category. We define a new category
$\h{(S,X)}$, the {\it homotopy category of $\h{(S,X)}$}, which is enriched over the homotopy
category $\h{\bfA}$ of $\bfA$:
\begin{itemize}
\item The objects of $\h{(S,X)}$ are the elements of $S$.
\item Given a pair of objects $x,y \in S$, we let $\bHom_{ \h{(S,X)} }(x,y)$ denote the
image of $X( [x,y] ) \in \bfA$ in the homotopy category $\h{\bfA}$.
\item Given a sequence of objects $s_0, \ldots, s_n \in S$, the composition law
$$ \bHom_{ \h{(S,X)}}( s_0, s_1) \times \ldots \times
\bHom_{ \h{(S,X)}}( s_{n-1}, s_n) \rightarrow \bHom_{ \h{(S,X)}}(s_0, s_n)$$
is given by composing the map $X( [s_0, \ldots, s_n]) \rightarrow X([s_0, s_n])$ with
the inverse of the weak equivalence $X( [s_0, \ldots, s_n] ) \rightarrow
X([ s_0, s_1]) \times \ldots \times X( [s_{n-1}, s_n]).$
\end{itemize} 

We observe that if $\calC$ is a $\bfA$-enriched category, then the homotopy category
of $G(\calC)$ is canonically isomorphic to the homotopy category $\h{\calC}$. 
It follows from Lemma \ref{stepp2} that if $(S,X)$ is a cofibrant $\bfA$-enriched Segal category,
then there is a canonical isomorphism $\h{(S,X)} \simeq \h{ F(S,X)}$. 
For a general (not necessarily cofibrant) $\bfA$-enriched Segal category $(S,X)$, we can
identify $\h{(S,X)}$ with the homotopy category $F(S, X')$, where $(S,X')$ is a cofibrant refinement of 
$(S,X)$.
\end{remark}

\begin{remark}\label{schwe}
Combining Remark \ref{hseg} with the definitions of collections of weak equivalences in
$\PreSeg{\bfA}$ and $\Cat_{\bfA}$, we deduce the following result:
\begin{itemize}
\item[$(\ast)$] Let $f: (S, X) \rightarrow (S', X')$ be a map of $\bfA$-enriched Segal categories.
Then $f$ is a weak equivalence in $\PreSeg{\bfA}$ if and only if the induced map
$\h{(S,X)} \rightarrow \h{(S',X')}$ is an equivalence of $\h{\bfA}$-enriched categories.
\end{itemize}
In particular, the fully faithful embedding $\Cat_{\bfA} \hookrightarrow \PreSeg{\bfA}$ preserves weak equivalences.
\end{remark}

\subsection{The Injective Model Structure on $\PreSeg{\bfA}$}\label{bisec2.3}

Our goal in this section is to describe a general context in which we can compare
the theory of Segal categories with the theory of complete Segal Spaces.
We begin by observing that for every model category $\bfA$, there
is a functor
$$ \UnPre: \PreSeg{\bfA} \rightarrow \Fun( \cDelta^{op}, \bfA),$$
which carries a $\bfA$-enriched preSegal category $(S,X)$ to the simplicial object
$\pi_{!} X$, where $\pi: \cDelta^{op}_{S} \rightarrow \cDelta^{op}$ is the forgetful functor
and $\pi_{!}$ is given by left Kan extension along $\pi$. More concretely, 
$\UnPre$ is given by the formula
$$ \UnPre(S,X)_{n} = \coprod_{ s_0, \ldots, s_n \in S} X( [s_0, \ldots, s_n] ).$$
We can now state the main result of this section as follows:

\begin{proposition}\label{curt}
Let $\bfA$ be a combinatorial simplicial model category satisfying the following conditions:
\begin{itemize}
\item[$(a)$] The simplicial model category $\bfA$ is an absolute distributor.
\item[$(b)$] The collection of weak equivalences in $\bfA$ is stable under filtered colimits.
\item[$(c)$] Filtered colimits are left exact in the underlying $\infty$-category $\Nerve( \bfA^{\degree})$.
\item[$(d)$] The final object of $\bfA$ is cofibrant, and determines a compact object
of $\Nerve( \bfA^{\degree})$. 
\item[$(e)$] For every finite collection of objects $\{ X_s \}_{s \in S} \in \bfA$, the
coproduct $\coprod_{s \in S} X_s$ is also a homotopy coproduct
(this is automatic if, for example, every object of $\bfA$ is cofibrant).
\end{itemize}

Then there exists a left proper, combinatorial model structure on the category $\PreSeg{\bfA}$ of $\bfA$-enriched preSegal categories, which may be described as follows:
\begin{itemize}
\item[$(C)$] A map $f: (S, X) \rightarrow (S', X')$ of $\bfA$-enriched preSegal
categories is an {\it injective cofibration} if the map $S \rightarrow S'$
is injective and, for every sequence of elements $s_0, \ldots, s_n \in S$, the induced map
$X( s_0, \ldots, s_n) \rightarrow X'( f(s_0), \ldots, f(s_n) )$ is a cofibration in $\bfA$.

\item[$(W)$] A map $f: (S,X) \rightarrow (S',X')$ of $\bfA$-enriched 
preSegal categories is a {\it weak equivalence} if and only if the induced map
$\UnPre(f)$ is a weak equivalence with respect to the complete Segal model structure
on $\Fun( \cDelta^{op}, \bfA)$.

\item[$(F)$] A map $f: (S,X) \rightarrow (S',X')$ of $\bfA$-enriched preSegal categories
is an {\it injective fibration} if and only if it has the right lifting property with respect to
all morphisms which satisfy $(C)$ and $(W)$.
\end{itemize}

Moreover, the functor $\UnPre: \PreSeg(\bfA) \rightarrow \Fun( \cDelta^{op}, \bfA)$
is a left Quillen equivalence, where $\Fun( \cDelta^{op}, \bfA)$ is endowed with
the complete Segal model structure.
\end{proposition}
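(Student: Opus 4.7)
The plan is to proceed in three main stages, paralleling the strategy used in the proof of Theorem \ref{castle2}.

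First, I would establish the existence of the model structure via Bousfield localization. Start with a preliminary ``pointwise injective'' model structure on $\PreSeg{\bfA}$ in which the cofibrations are the injective cofibrations of $(C)$ and a morphism $f \colon (S,X) \to (S',X')$ is a weak equivalence exactly when $f$ induces a bijection $S \to S'$ and each map $X([s_0,\ldots,s_n]) \to X'([f(s_0),\ldots,f(s_n)])$ is a weak equivalence in $\bfA$. The existence of this preliminary structure is a formal consequence of the combinatoriality and left properness of $\bfA$: objects lying over a fixed set $S$ form an injective diagram category over $\cDelta_S$, and one glues along changes of $S$, using hypothesis $(e)$ to ensure that the coproduct decompositions that appear in the $\UnPre$ formula are homotopically well-behaved. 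With this preliminary structure in hand, apply Proposition \toposref{goot} (i.e.\ the left Bousfield localization technology) at a small set of morphisms whose local objects are precisely the $(S,X)$ for which $\UnPre(S,X)$ is a fibrant complete Segal space object of $\Fun(\cDelta^{op},\bfA)$. The resulting model structure satisfies $(C)$, $(W)$, and $(F)$ by construction, and left properness of the localization follows from left properness of the preliminary structure.

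Second, I would verify that $\UnPre$ is a left Quillen functor. Preservation of cofibrations is immediate from the formula $\UnPre(S,X)_n = \coprod_{(s_0,\ldots,s_n) \in S^{n+1}} X([s_0,\ldots,s_n])$: an injective cofibration in $\PreSeg{\bfA}$ maps levelwise to a coproduct of cofibrations, hence to a cofibration in the injective model structure on $\Fun(\cDelta^{op},\bfA)$, which remains a cofibration after passing to the complete Segal localization. Preservation of weak equivalences holds by the very definition of the localized structure on $\PreSeg{\bfA}$.

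Third---and this is the main obstacle---I would prove that $(\UnPre, \UnPre_{*})$ is a Quillen equivalence. The right adjoint $\UnPre_{*}$ sends a simplicial object $Y_{\bigdot}$ to the preSegal category $(S,X)$ with $S = \Hom_{\bfA}({\bf 1}, Y_0)$ and $X([s_0,\ldots,s_n]) = Y_n \times_{Y_0^{n+1}} \{(s_0,\ldots,s_n)\}$. For a fibrant $Y_{\bigdot}$, Remark \ref{sweetuma} guarantees that the Segal maps are honest fibrations and $Y_0$ is fibrant, so these fiber products are genuine homotopy fibers. One must show that the derived counit $\UnPre(\UnPre_{*} Y_{\bigdot})^{\mathrm{cof}} \to Y_{\bigdot}$ is a complete-Segal weak equivalence; the crucial input is the completeness assumption on $Y_{\bigdot}$, under which the discrete set $S$ of honest points of $Y_0$ is ``large enough'' to recover $Y_0$ up to the weak equivalences already inverted, because completeness forces every self-equivalence in the underlying $(\infty,1)$-category to come from a path in $Y_0$. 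Hypothesis $(a)$ provides the absolute distributor structure needed for the base-change argument along $\coprod_S {\bf 1} \to Y_0$; hypotheses $(b)$--$(d)$ and Proposition \ref{smashet} ensure the compatibility with filtered colimits needed to reduce to a cell-by-cell argument. The hardest step will be constructing the cofibrant replacement and verifying the counit: concretely, one approximates $Y_0$ by a discrete cover and then fills in higher simplices through an iterated Segal-style completion, imitating Bergner's argument from \cite{bergner4} for $\bfA = \sSet$ while extracting from hypotheses $(a)$--$(e)$ the minimal axiomatic features of $\bfA$ that her argument requires.
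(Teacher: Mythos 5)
Your overall plan is in the right spirit, but the first stage takes a genuinely different route from the paper and has a gap that needs to be addressed, and the third stage is vaguer than it needs to be.

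On stage one: the paper does \emph{not} proceed via a preliminary model structure followed by a Bousfield localization. Instead it declares $(W)$ outright (a map $f$ is a weak equivalence iff $\UnPre(f)$ is a complete Segal weak equivalence) and then directly verifies the hypotheses of Proposition \toposref{goot}: (1) perfectness of the class $(W)$, which follows because $\UnPre$ preserves colimits and the complete Segal weak equivalences are perfect by Proposition \ref{smashet}; (2) stability of $(W)$ under pushouts along injective cofibrations, which follows from left properness of the complete Segal model structure; and (3) that every map with the right lifting property against all injective cofibrations lies in $(W)$, which is where the real content lives and is supplied by Lemma \ref{strongbad1} (if $S \to S'$ is surjective and every map $X[s_0,\ldots,s_n] \to X'[f(s_0),\ldots,f(s_n)]$ is a weak equivalence, then $\UnPre(f)$ is a complete Segal weak equivalence). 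Your approach instead builds a ``pointwise'' structure with weak equivalences that are bijective on objects and then left Bousfield localizes at a set of maps whose local objects are the fibrant complete Segal preSegal categories. This is plausible, but you have not addressed the central issue it raises: why should the weak equivalences of that localization coincide with the class $(W)$? In a left Bousfield localization the new weak equivalences are maps inducing homotopy equivalences on derived mapping spaces to local objects, and it is a separate (and non-obvious) check that this matches ``$\UnPre(f)$ is a complete Segal weak equivalence.'' Something playing the role of Lemma \ref{strongbad1} would still be needed to close this gap, and in that case you may as well follow the paper's direct route. Also note that the left properness you invoke for $\bfA$ is not among $(b)$--$(e)$; it is only available because $(a)$ (being an absolute distributor) includes it in Definition \ref{kayloo}, so you should say so.

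On stage three: your identification of the right adjoint is correct in substance (modulo notation --- the fiber product is over a map ${\bf 1} \to Y_0^{n+1}$ in $\bfA$, not a ``subset'' $\{(s_0,\ldots,s_n)\}$), and your heuristic about completeness and discreteness is exactly the right idea. But the concrete argument you sketch (``approximates $Y_0$ by a discrete cover and then fills in higher simplices through an iterated Segal-style completion, imitating Bergner's argument'') is both vaguer and more complicated than necessary. Since $\UnPre$ preserves weak equivalences, no cofibrant replacement is needed at all; one can take $L\UnPre = \UnPre$ and reduce the Quillen equivalence statement to showing that for $X_\bullet$ fibrant, the counit $\UnPre\, G(X_\bullet) \to X_\bullet$ is a complete Segal weak equivalence. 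The paper does this by passing to the underlying $\infty$-category $\Nerve(\bfA^\degree)$, checking that $\UnPre\, G(X_\bullet)$ is a Segal space object (using $(e)$ and the distributor axiom for the coproduct decomposition and the Segal condition), and then invoking the already-established criterion of Remark \ref{swammer}: the counit is a Segal equivalence once the map $Y'_0 \to Y_0$ is an effective epimorphism in $\calX$, which is exactly where $(d)$ (compactness of the final object) enters. This is the precise, short version of your ``large enough'' intuition; the iterated completion you propose is not needed and would require substantial extra work to set up correctly.
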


\begin{remark}
We will refer to the model structure of Proposition \ref{curt} as the
{\it injective model structure} on $\PreSeg{\bfA}$.
\end{remark}

We will give the proof of Proposition \ref{curt} after establishing a few preliminary results.

\begin{remark}
Let $\bfA$ be a model category, and let $f: \Set \rightarrow \bfA$ be the functor described by the formula
$f(S) = \coprod_{s \in S} {\bf 1}$. Suppose that $\bfA$ satisfies the following conditions:
\begin{itemize}
\item[$(a)$] The functor $f$ is fully faithful.
\item[$(b)$] For every set $S$, the coproduct functor
$$ \prod_{s \in S} \bfA \simeq \prod_{s \in S} \bfA_{/ {\bf 1} }
\stackrel{ \coprod}{\rightarrow} \bfA_{/f(S)}$$
is an equivalence of categories. 
\end{itemize}
Then the functor $\UnPre: \PreSeg{ \bfA} \rightarrow \Fun( \cDelta^{op}, \bfA)$ is fully faithful.
Its essential image consists of those simplicial objects $X_{\bigdot}$ of $\bfA$ such that
$X_0$ lies in the essential image of $f$. Conditions $(a)$ and $(b)$ are satisfied in
many cases of interest: for example, if $\bfA$ is the category of simplicial sets, or the category of marked simplicial sets.
\end{remark}

\begin{lemma}\label{swagg}
Let $\bfA$ be as in Proposition \ref{curt}, and let $f: A \rightarrow B$ be a cofibration in $\bfA$.
Let $\psi: \Fr^{n}(f) \rightarrow \Fr^{n}(B)$ be the morphism of $\bfA$-enriched preSegal categories
described in Example \ref{swaggius}. Then the induced map $\UnPre(f)$ is a trivial cofibration
in $\Fun( \cDelta^{op}, \bfA)$ (with respect to the complete Segal model structure).
\end{lemma}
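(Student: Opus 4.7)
The proof splits into two parts: showing $\UnPre(\psi)$ is an injective cofibration, and showing it is a weak equivalence in the complete Segal model structure.

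For the injective cofibration part, at level $m$ the map $\UnPre(\psi)_m$ decomposes as a coproduct indexed by objects $c:[m]\to[n]$ of $\cDelta_{[n]}$. The summand corresponding to $c$ is the identity on $\mathbf{1}$ (for constant $c$), the identity on $B$ (for non-constant monotone $c$ with $\max(c)-\min(c) = 1$), the cofibration $f:A\to B$ (for monotone $c$ with $\max(c)-\min(c) > 1$), or the identity on $\emptyset$ (for non-monotone $c$). Each summand is a cofibration in $\bfA$, and since cofibrations are stable under coproducts, $\UnPre(\psi)_m$ is a cofibration for every $m$; hence $\UnPre(\psi)$ is an injective cofibration.

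For the weak-equivalence part, the key input is Example \ref{swaggius}: the map $F(\psi): F(\Fr^n(f)) \to F(\Fr^n(B))$ is an isomorphism of $\bfA$-enriched categories. Letting $N = \UnPre \circ G$ denote the nerve functor on $\bfA$-enriched categories, the unit $\eta: \id \to GF$ of the adjunction yields a commutative square
$$\xymatrix{ \UnPre(\Fr^n(f)) \ar[r]^-{\UnPre(\eta)} \ar[d]_{\UnPre(\psi)} & N(F(\Fr^n(f))) \ar[d]^{N(F(\psi))} \\ \UnPre(\Fr^n(B)) \ar[r]^-{\UnPre(\eta)} & N(F(\Fr^n(B))), }$$
whose right vertical arrow is an isomorphism. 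By the two-out-of-three property, it suffices to show that each horizontal unit map $\UnPre(X) \to N(F(X))$ is a weak equivalence in the complete Segal model structure, for $X = \Fr^n(f)$ and $X = \Fr^n(B)$.

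The main obstacle will be this last claim. Here $N(F(X))$ is a category object of $\Nerve(\bfA^{\degree})$: the Segal condition for the nerve of an $\bfA$-enriched category holds strictly, since the mapping objects of $F(X)$ absorb the requisite iterated coproducts. On a summand of $\UnPre(X)_m$ indexed by $(i_0,\ldots,i_m)$, the unit $\UnPre(\eta)$ inserts $X([i_0,\ldots,i_m])$ into the spine product $\bHom_{F(X)}(i_0,i_1) \times \cdots \times \bHom_{F(X)}(i_{m-1},i_m)$ through the maps $\beta^{k,k+1}_{i_0,\ldots,i_m}$ of Proposition \ref{koon}. The plan is to identify $\UnPre(\eta)$ with the Segal completion of $\UnPre(X)$, by invoking the universal property of $F$ as left adjoint to $G$ to verify that $\UnPre(\eta)$ induces an equivalence on derived mapping spaces into any complete-Segal-fibrant object of $\Fun(\cDelta^{op}, \bfA)$. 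Together with the right vertical isomorphism, this completes the proof.
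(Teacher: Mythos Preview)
Your cofibration argument is fine and matches the paper's one-line observation. The problem is in the weak-equivalence half: you reduce to showing that the unit maps $\UnPre(\eta): \UnPre(X) \to N(F(X))$ are complete-Segal weak equivalences for $X = \Fr^n(f)$ and $X = \Fr^n(B)$, but then only describe a ``plan'' rather than an argument. This step is not a formality. The objects $\Fr^n(f)$ and $\Fr^n(B)$ are \emph{not} Segal categories (for instance $X([0,1,2]) = B$ while $X([0,1]) \times X([1,2]) = B \times B$), so you cannot invoke Lemma~\ref{stepp2} levelwise. The universal property of $F$ lives in the projective model structure on $\PreSeg{\bfA}$, not in the complete Segal model structure on $\Fun(\cDelta^{op},\bfA)$; translating between the two is precisely the content of Lemma~\ref{sacurt} and Proposition~\ref{curt2}, which are proved \emph{after} Lemma~\ref{swagg} and in fact use it. So your route, as written, is either incomplete or circular.

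The paper's proof avoids this detour entirely. It tests $\UnPre(\psi)$ directly against an arbitrary fibrant $X_\bullet$ and computes the induced map on mapping spaces. Because $\Fr^n(B)$ and $\Fr^n(f)$ corepresent very simple data (a map from $B$, respectively compatible maps from $A$ and $B$), the map
\[
\bHom^\sharp(\UnPre\Fr^n(B), X_\bullet) \longrightarrow \bHom^\sharp(\UnPre\Fr^n(f), X_\bullet)
\]
is identified with a pullback of
\[
\bHom_{\bfA}(B, X_n) \longrightarrow \bHom_{\bfA}(A, X_n) \times_{\bHom_{\bfA}(A,\, X_1 \times_{X_0} \cdots \times_{X_0} X_1)} \bHom_{\bfA}(B,\, X_1 \times_{X_0} \cdots \times_{X_0} X_1),
\]
which is a trivial fibration since $f: A \to B$ is a cofibration and the Segal map $X_n \to X_1 \times_{X_0} \cdots \times_{X_0} X_1$ is a trivial fibration for fibrant $X_\bullet$ (Remark~\ref{sweetuma}). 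That is the whole proof, and it uses nothing about $F$, $G$, or the projective model structure.
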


\begin{proof}
It is clear that $\UnPre(f)$ is a cofibration in $\Fun( \cDelta^{op}, \bfA)$. To prove that it is a trivial cofibration, it will suffice to show that for every fibrant object $X_{\bigdot}$ of
$\Fun( \cDelta^{op}, \bfA)$, the induced map
$$ \phi: \bHom_{\Fun( \cDelta^{op}, \bfA)}(\UnPre \Fr^{n}(B), X_{\bigdot} ) \rightarrow \bHom_{\Fun(\cDelta^{op}, \bfA)}(\UnPre \Fr^{n}(f), X_{\bigdot})$$
is a trivial fibration of simplicial sets. Unwinding the definitions, we see that $\phi$ is a pullback
of the map
$$ \phi': \bHom_{\bfA}(B, X_n) \rightarrow \bHom_{\bfA}(A, X_n)
\times_{ \bHom_{\bfA}(A, X_1 \times_{X_0} \times \ldots \times_{X_0} X_1)}
\bHom_{\bfA}(B, X_1 \times_{X_0} \times \ldots \times_{X_0} X_1).$$
This map is a trivial fibration, since $f$ is a cofibration by assumption and the map
$X_n \rightarrow X_1 \times_{X_0} \ldots \times_{X_0} X_1$ is a trivial fibration
(Remark \ref{sweetuma}).
\end{proof}

\begin{lemma}
Let $\bfA$ satisfy the hypotheses of Proposition \ref{curt}, and let
$f: (S,X) \rightarrow (S,Y)$ be a map of $\bfA$-enriched preSegal categories
which is the identity on objects, such that the induced map
$X[ s_0, \ldots, s_n] \rightarrow Y[s_0, \ldots, s_n]$ is a weak equivalence in
$\bfA$ for every sequence $s_0, \ldots, s_n \in S$. The the induced map
$\UnPre(f)$ is a weak equivalence in $\Fun( \cDelta^{op}, \bfA)$.
\end{lemma}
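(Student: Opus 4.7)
The plan is to verify that $\UnPre(f)$ is a levelwise weak equivalence in $\bfA$, which by property $(W)$ of the complete Segal model structure (Proposition \ref{camper}) is enough to conclude that it is a weak equivalence in $\Fun(\cDelta^{op}, \bfA)$ with respect to that structure.

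Unwinding the definition of $\UnPre$, at each level $n \geq 0$ the map $\UnPre(f)_n$ takes the form
$$\coprod_{(s_0, \ldots, s_n) \in S^{n+1}} X([s_0, \ldots, s_n]) \longrightarrow \coprod_{(s_0, \ldots, s_n) \in S^{n+1}} Y([s_0, \ldots, s_n]),$$
and since $f$ is the identity on objects this is simply the coproduct of the component maps $X([s_0,\ldots,s_n]) \to Y([s_0,\ldots,s_n])$, each of which is a weak equivalence by hypothesis. So the lemma reduces to the statement that an arbitrary (set-indexed) coproduct of weak equivalences in $\bfA$ is a weak equivalence.

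To see this, I would write $S^{n+1}$ as the filtered union of its finite subsets $T \subseteq S^{n+1}$; then the coproduct $\coprod_{S^{n+1}} X([s_0,\ldots,s_n])$ is the filtered colimit (over the poset of finite $T$'s) of the finite coproducts $\coprod_{T} X([s_0,\ldots,s_n])$, and similarly for $Y$. Hypothesis $(e)$ of Proposition \ref{curt} guarantees that each finite coproduct is also a homotopy coproduct, so the finite-coproduct maps $\coprod_{T} X([s_0,\ldots,s_n]) \to \coprod_{T} Y([s_0,\ldots,s_n])$ are weak equivalences. Hypothesis $(b)$ then ensures that the filtered colimit over $T$ of these weak equivalences is a weak equivalence, yielding the desired conclusion.

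There is essentially no obstacle to this argument; it is a routine bookkeeping reduction. The only point one has to be slightly careful about is that in the degenerate case where $\bfA$ has an initial object that appears as a coproduct summand, one should check that the ``homotopy coproduct'' interpretation of $(e)$ is being applied correctly — but this is automatic since $(e)$ refers to ordinary (not reduced) coproducts and since the hypothesis on $f$ gives a weak equivalence on \emph{every} sequence $[s_0, \ldots, s_n]$.
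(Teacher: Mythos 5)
Your argument is correct and is essentially the paper's, which disposes of the lemma in a single sentence by appealing to hypothesis $(e)$ to conclude that $\UnPre(f)$ is a levelwise weak equivalence. You helpfully make explicit the reduction from arbitrary set-indexed coproducts (which arise when $S$ is infinite) to finite ones via filtered colimits and hypothesis $(b)$ — a step the paper leaves implicit, since $(e)$ is only stated for finite collections.
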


\begin{proof}
It follows immediately from assumption $(e)$ of Proposition \ref{curt} that
$\UnPre(f)$ is a levelwise equivalence of simplicial objects of $\bfA$.
\end{proof}

\begin{example}\label{haley}
Let $\bfA$ be as in Proposition \ref{curt}, let
$i: A \rightarrow B$ be a trivial cofibration in $\bfA$. and let
$f: \Fr^{n}(A) \rightarrow \Fr^{n}(B)$ be the induced map of
$\bfA$-enriched preSegal categories. Then $\UnPre(f)$ is a trivial
cofibration in $\Fun( \cDelta^{op}, \bfA)$ (with respect to the injective model structure,
and therefore with respect to the complete Segal model structure).
\end{example}

\begin{remark}\label{sinker}
Using Lemma \ref{swagg}, Example \ref{haley} and the small object argument, we deduce that
for every $\bfA$-enriched preSegal category $(S,X)$, there exists a map
$f: (S,X) \rightarrow (S,Y)$ with the following properties:
\begin{itemize}
\item[$(i)$] The morphism $\UnPre(f)$ is a trivial cofibration with respect
to the complete Segal model structure on $\Fun( \cDelta^{op}, \bfA)$.
\item[$(ii)$] For every pair of elements $s,s' \in S$, the object
$Y[s,s'] \in \bfA$ is fibrant.
\item[$(iii)$] For every sequence of elements $s_0, \ldots, s_n \in S$, the map
$Y[s_0, \ldots, s_n] \rightarrow Y[s_0, s_1] \times \ldots \times Y[s_{n-1}, s_n]$
is a trivial fibration in $\bfA$.
\item[$(iv)$] For every sequence of objects $s_0, \ldots, s_n \in S$, the map
$X[s_0, \ldots, s_n] \rightarrow Y[s_0, \ldots, s_n]$ is a cofibration in $\bfA$.
\end{itemize}
\end{remark}

\begin{lemma}\label{strongbad1}
Let $\bfA$ be a simplicial model category satisfying the hypotheses of Proposition
\ref{curt}, and let $f: (S,X) \rightarrow (S',X')$ be a map of $\bfA$-enriched preSegal
categories satisfying the following conditions:
\begin{itemize}
\item[$(1)$] The underlying map of sets $S \rightarrow S'$ is surjective.
\item[$(2)$] For every sequence of elements $s_0, \ldots, s_n \in S$, the induced map
$X( s_0, \ldots, s_n) \rightarrow X'( f(s_0), \ldots, f(s_n) )$ is a weak equivalence in $\bfA$.
\end{itemize}
Then $f$ satisfies condition $(W)$ of Proposition \ref{curt}.
\end{lemma}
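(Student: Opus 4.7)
The plan is to factor $f$ through its ``pullback'' and reduce to Theorem~\ref{segmin}. Define $X'^{\bullet} : \cDelta_S^{op} \to \bfA$ to be the composite of $X' : \cDelta_{S'}^{op} \to \bfA$ with the functor $\cDelta_S^{op} \to \cDelta_{S'}^{op}$ induced by $f$; concretely, $X'^{\bullet}[s_0, \ldots, s_n] = X'[f(s_0), \ldots, f(s_n)]$. This gives a $\bfA$-enriched preSegal category $(S, X'^{\bullet})$ and a factorization $(S, X) \xrightarrow{f_1} (S, X'^{\bullet}) \xrightarrow{f_2} (S', X')$, where $f_1$ is the identity on objects with component maps the weak equivalences provided by assumption $(2)$, and $f_2$ sends $s \mapsto f(s)$ and is the identity on each mapping object. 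It suffices to show that $\UnPre(f_1)$ and $\UnPre(f_2)$ are both weak equivalences in the complete Segal model structure.

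The first factor is easy: at simplicial level $n$, the map $\UnPre(f_1)_n$ is the coproduct $\coprod_{\vec{s} \in S^{n+1}}$ of the weak equivalences $X[\vec{s}] \to X'[\vec{f(s)}]$. By assumption $(e)$ of Proposition~\ref{curt} finite coproducts preserve weak equivalences, and by $(b)$ arbitrary coproducts do as well (being filtered colimits of finite ones). Hence $\UnPre(f_1)$ is a levelwise weak equivalence, a weak equivalence in the injective model structure on $\Fun(\cDelta^{op}, \bfA)$, and thus also in the complete Segal model structure, which is a left Bousfield localization.

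For $f_2$, apply Remark~\ref{sinker} to $(S', X')$ to obtain $g' : (S', X') \to (S', Y')$ with $\UnPre(g')$ a trivial cofibration in the complete Segal model structure, and set $Y^{\bullet} := Y' \circ f^{op}$; this yields a commutative square
\[ \xymatrix@C=3em@R=2em{ (S, X'^{\bullet}) \ar[r]^{f_2} \ar[d]_{g^{\bullet}} & (S', X') \ar[d]^{g'} \\ (S, Y^{\bullet}) \ar[r]^{\tilde{f_2}} & (S', Y') }. \]
Both $\UnPre(S, Y^{\bullet})$ and $\UnPre(S', Y')$ are Segal space objects of $\bfA$: the $0$th levels are coproducts of the final object and hence lie in $\calX$ by the absolute distributor assumption, while the Segal condition follows by combining property (iii) of Remark~\ref{sinker} with assumption $(e)$. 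The map $\UnPre(\tilde{f_2})$ is a Segal equivalence in the sense of Definition~\ref{jinke}: condition $(b)$ holds because the identity $Y^{\bullet}[s_0, s_1] = Y'[f(s_0), f(s_1)]$ together with assumption $(e)$ realizes $\UnPre(S, Y^{\bullet})_1$ as the levelwise pullback of $\UnPre(S', Y')_1$ along the bottom arrow, while condition $(a)$ follows from $(b)$ via Remark~\ref{swammer}, since surjectivity of $S \to S'$ (assumption $(1)$) makes $\UnPre(S, Y^{\bullet})_0 \to \UnPre(S', Y')_0$ an effective epimorphism in $\calX$. Theorem~\ref{segmin} then gives that $\UnPre(\tilde{f_2})$ is a weak equivalence in the complete Segal model structure.

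To conclude that $\UnPre(f_2)$ itself is a weak equivalence, observe that the square above is a levelwise pullback in $\bfA$ (using assumption $(e)$ to distribute pullbacks over the coproduct decompositions), hence a pullback in the $\infty$-category $\Fun(\Nerve(\cDelta)^{op}, \calY)$ with $\calY = \Nerve(\bfA^{\degree})$. The reflective subcategory $\CSS{\calY}$ is closed under limits, and since both $\UnPre(S, Y^{\bullet})$ and $\UnPre(S', Y')$ are Segal space objects, one can check that the image of this square upon reflection into $\CSS{\calY}$ remains a pullback; equivalences in the $\infty$-category $\CSS{\calY}$ are stable under base change, so the fact that $\UnPre(g')$ and $\UnPre(\tilde{f_2})$ become equivalences after localization forces $\UnPre(f_2)$ to become an equivalence as well. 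The principal obstacle in the plan is exactly this last step---carefully certifying that the pullback structure descends from $\Fun(\Nerve(\cDelta)^{op}, \calY)$ to $\CSS{\calY}$ without being disturbed by the localization functor---which rests on the Segal space property of the targets together with the distributor structure of $\CSS{\calY}$.
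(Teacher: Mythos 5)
Your factorization of $f$ through $(S, X'^{\bullet})$ is reasonable, the argument that $\UnPre(f_1)$ is a levelwise (hence complete-Segal) weak equivalence is correct, and the identification of $\UnPre(\tilde{f_2})$ as a Segal equivalence (via Remark~\ref{swammer} and Theorem~\ref{segmin}) is also correct. The gap is in the final step, where you assert that the pullback square descends to a pullback in $\CSS{\calY}$. The localization $L: \Fun(\Nerve(\cDelta)^{op}, \calY) \to \CSS{\calY}$ is a left adjoint but, unlike a reflection onto an $\infty$-topos, it is \emph{not} left exact, so it has no reason to preserve pullbacks. Having Segal space objects on the bottom row does not help: Segal space objects are not $L$-local (only complete ones are), so $L$ genuinely modifies all four corners. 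In fact the assertion is tautologically equivalent to what you need to prove. Since $\UnPre(g')$ is a trivial cofibration, $L(\UnPre(g'))$ is an equivalence, so the pullback in $\CSS{\calY}$ of $L(\UnPre(\tilde{f_2}))$ along $L(\UnPre(g'))$ is canonically equivalent to $L(\UnPre(S, Y^{\bullet}))$, and the comparison map from $L(\UnPre(S, X'^{\bullet}))$ to this pullback identifies with $L(\UnPre(g^{\bullet}))$. Thus ``the square remains a pullback after $L$'' is precisely the statement ``$\UnPre(g^{\bullet})$ is a weak equivalence,'' which is the unproven fact on which the whole argument hinges; the pullback language has brought you no closer to it.

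To repair the argument along these lines you would need to show directly that $\UnPre(g^{\bullet})$ is a trivial cofibration, for instance by tracking the small object argument producing $g'$ and verifying that its pullback along $f$ on object sets is again a transfinite composite of pushouts of the generating trivial cofibrations of Lemma~\ref{swagg} and Example~\ref{haley}; this is plausible but requires real bookkeeping. The paper instead \emph{pushes out} rather than pulls back: it applies Remark~\ref{sinker} to $(S,X)$, forms the pushout $(S,Y)\coprod_{(S,X)}(S',X')$, and observes that since $\UnPre$ preserves pushouts, the cobase change of the trivial cofibration $\UnPre(S,X)\to\UnPre(S,Y)$ is automatically a trivial cofibration. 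After this replacement both source and target are $\bfA$-enriched Segal categories and Theorem~\ref{segmin} together with Remark~\ref{swammer} applies directly. The structural moral is that $\UnPre$, being a left Quillen functor, cooperates with colimits and cobase change on the nose, whereas the complete-Segal localization carries no compatibility with limits.
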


\begin{proof}
Choose a map $(S,X) \rightarrow (S,Y)$ as in Remark \ref{sinker}.
We have a pushout diagram
$$ \xymatrix{ (S,X) \ar[r] \ar[d] & (S,Y) \ar[d] \\
(S',X') \ar[r] & (S,Y) \coprod_{ (S,X) } (S',X'). }$$
Remark \ref{sinker} guarantees that the functor $\UnPre$ carries the upper horizontal map
to a trivial cofibration. It therefore carries the lower horizontal map to a trivial cofibration as well.
Since $\bfA$ is left proper, the right vertical map continues to satisfy assumptions
$(1)$ and $(2)$. We may therefore replace $(S,X)$ by $(S,Y)$, and thereby assume that the map
$X[s_0, \ldots, s_n] \rightarrow X[s_0, s_1] \times \ldots \times X[s_{n-1}, s_n]$ is
a trivial fibration between fibrant objects of $\bfA$. It follows from $(1)$ and $(2)$
that for every sequence of objects $s'_0, \ldots, s'_n \in S'$, the functor $X'$
exhibits $X'[ s'_0, \ldots, s'_n] \in \bfA$ as a homotopy product of the objects
$\{ X'[ s'_{i-1}, s'_i] \in \bfA \}_{1 \leq i \leq n}$. In other words, we may assume that
$(S,X)$ and $(S',X')$ are $\bfA$-enriched Segal categories.

Let $\calY = \Nerve( \bfA^{\degree})$ denote the underlying $\infty$-category of $\bfA$, and let
$Z_{\bigdot}$ and $Z'_{\bigdot}$ be the simplicial objects of $\calY$ determined by
$\UnPre(S,X)$ and $\UnPre(S',X')$. Using the assumption that $\calY$ is an absolute distributor
and that $(S,X)$ and $(S',X')$ are $\bfA$-enriched Segal categories, we deduce that 
$Z_{\bigdot}$ and $Z'_{\bigdot}$ are Segal space objects of $\calY$. To complete the
proof, it will suffice to show that $f$ induces a Segal equivalence $Z_{\bigdot} \rightarrow
Z'_{\bigdot}$ (Theorem \ref{segmin}). This follows immediately from the criterion of
Remark \ref{swammer}.
\end{proof}

\begin{proof}[Proof of Proposition \ref{curt}]
To deduce the existence of the desired model structure on $\PreSeg{\bfA}$, we will apply
the criterion of Proposition \toposref{goot}. There are only three nontrivial points to check:
\begin{itemize}
\item[$(1)$] The collection of weak equivalences in $\PreSeg{\bfA}$ is stable under filtered colimits.
This follows from Proposition \ref{smashet}, since the functor $\UnPre$ preserves all colimits.
\item[$(2)$] The collection of weak equivalences in $\PreSeg{\bfA}$ is stable under pushouts by injective cofibrations. This follows from the left properness of the complete Segal model structure on $\Fun( \cDelta^{op}, \bfA)$. 
\item[$(3)$] Let $f: (S,X) \rightarrow (S', X')$ be a map of $\bfA$-enriched preSegal categories which has the right lifting property with respect to all injective cofibrations. We must show that $f$ is a weak equivalence. We observe that each of the generating cofibrations of Definition 
\ref{scanpl} is an injective cofibration. Consequently, the map $S \rightarrow S'$ is surjective, and
each of the maps $X(s_0, \ldots, s_n) \rightarrow X'( f(s_0), \ldots, f(s_n) )$ is a trivial fibration
in $\bfA$. The desired result now follows from Lemma \ref{strongbad1}.
\end{itemize}

To complete the proof of Proposition \ref{curt}, it will suffice to show that
$\UnPre$ is a left Quillen equivalence. The functor $\UnPre$ obviously preserves cofibrations and weak equivalences. Let ${\bf 1}$ denote the final object of $\bfA$.
We observe that $\UnPre$ admits a right adjoint
$G: \Fun( \cDelta^{op}, \bfA) \rightarrow \PreSeg{\bfA}$, which may be described as follows:
for every simplicial object $Y_{\bigdot}$ of $\bfA$, we let
$G(Y_{\bigdot}) = (S, X)$, where $S = \Hom_{\bfA}( {\bf 1}, Y_0)$ and
$X( [ s_0, \ldots, s_n] ) = Y_{n} \times_{ Y_0^{n+1}} {\bf 1},$ where the map
from ${\bf 1}$ to $Y_0^{n+1}$ is determined by the sequence $s_0, \ldots, s_n \in 
S = \Hom_{\bfA}( {\bf 1}, Y_0)$. It follows that $\UnPre$ is a left Quillen functor.

By construction, the left derived functor $L \UnPre$ is conservative.
Consequently, to show that $\UnPre$ is a left Quillen equivalence, it will suffice
to show that the counit map $L \UnPre \circ RG \rightarrow \id$ is
an isomorphism of functors from the homotopy category $\h{ \Fun( \cDelta^{op}, \bfA) }$
to itself. Because $\UnPre$ preserves weak equivalences, it can be identified
with its own left derived functor. Consequently, it suffices to prove the following:

\begin{itemize}
\item[$(\ast)$] Let $X_{\bigdot}$ be a fibrant object of $\Fun( \cDelta^{op}, \bfA)$.
Then the induced map $\UnPre G(X_{\bigdot}) \rightarrow X_{\bigdot}$ is
a weak equivalence in $\Fun( \cDelta^{op}, \bfA)$. 
\end{itemize}

Let $\calY = \Nerve( \bfA^{\degree} )$ denote the underlying $\infty$-category of
$\bfA$. Let $Y_{\bigdot}$ denote the simplicial object of $\calY$ determined
by $X_{\bigdot}$, and $Y'_{\bigdot}$ the simplicial object determined by
$\UnPre G(X_{\bigdot})$. We first claim that $Y'_{\bigdot}$ is a Segal space object
of $\calY$. Let $\calX \subseteq \calY$ be the essential image of a functor
$\SSet \rightarrow \calY$ which preserves small colimits and final objects;
we first claim that $Y'_0 \in \calX$. By construction, $(\UnPre G(X_{\bigdot}))_0
= \coprod_{s \in S} {\bf 1} \in \bfA$, where ${\bf 1}$ denotes a final object
of $\bfA$ and $S = \Hom_{\bfA}( {\bf 1}, X_0)$. Assumption
$(e)$ guarantees that this coproduct is also a homotopy coproduct, so that
$Y'_0$ is a coproduct in $\calY$ of final objects. Since the inclusion
$\calX \subseteq \calY$ preserves final objects and small coproducts, we conclude
that $Y'_0 \in \calX$ as desired.

We next claim that $Y'_{\bigdot}$ is a category object of $\calY$. To prove this, we must show
that for each $n \geq 0$, the canonical map
$$ \phi: Y'_n \rightarrow Y'_1 \times_{Y'_0} Y'_1 \times_{Y'_0} \ldots \times_{Y'_0} Y'_1$$
is an equivalence. Using $(e)$ and the assumption that $\calX \subseteq \calY$
is a distributor, we deduce that $\phi$ is a coproduct of maps
$$ \phi_{s_0, \ldots, s_n}: Z(s_0, \ldots, s_n) \rightarrow Z(s_0, s_1) \times \ldots
\times Z(s_{n-1}, s_n),$$
where $Z(t_0, \ldots, t_k)$ denotes the object of $\calY$ associated
to the fiber product $X_{k} \times_{ X_0^{k+1}} {\bf 1} \in \bfA$,
where the map ${\bf 1} \rightarrow X_0^{k+1}$ is given by $(t_0, \ldots, t_k)$.
Since $X_{\bigdot}$ is injectively fibrant, the map $X_k \rightarrow X_0^{k+1}$
is a fibration between fibrant objects of $\bfA$, so that the fiber
product is also a homotopy fiber product (Proposition \toposref{leftpropsquare}).
It follows that $\phi_{s_0, \ldots, s_n}$ is a pullback of the map $Y_{n} \rightarrow Y_1 \times_{Y_0} \ldots \times_{Y_0} Y_1$, which is an equivalence because $Y_{\bigdot}$ is a Segal
space object of $\bfA$.

To complete the proof of $(\ast)$, it will suffice to show that the map $Y'_{\bigdot} \rightarrow Y_{\bigdot}$ is a Segal equivalence (Theorem \ref{segmin}). In view of Remark \ref{swammer}, this will follow
if we show that the map $Y'_0 \rightarrow Y_0$ is an effective epimorphism in $\calX$.
Since $\calX \simeq \SSet$, this is equivalent to the following assertion: every map 
in the homotopy category $\h{\calX}$ from the final object to $Y_0$ factors through $Y'_0$.
Using assumption $(d)$, we deduce that any such map is represented by a morphism
${\bf 1} \rightarrow X_0$ in $\bfA$. The corresponding element of $S$ determines the desired
factorization.
\end{proof}

We conclude this section by comparing the injective model structure of
Proposition \ref{curt} with the projective model structure introduced in
\S \ref{bisec2.2}. These model structures are Quillen equivalent to one another
provided that we are in a situation where both are well-defined:

\begin{proposition}\label{curt2}
Let $\bfA$ be a combinatorial simplicial model category satisfying the following conditions:
\begin{itemize}
\item[$(a)$] The collection of weak equivalences in $\bfA$ is stable under filtered colimits.
\item[$(b)$] The underlying $\infty$-category $\Nerve( \bfA^{\degree})$ is an absolute distributor.
\item[$(c)$] Every object of $\bfA$ is cofibrant.
\item[$(d)$] Filtered colimits are left exact in the underlying $\infty$-category
$\Nerve( \bfA^{\degree})$.
\item[$(e)$] The final object of $\Nerve( \bfA^{\degree})$ is compact.
\item[$(f)$] For every object $X \in \bfA$, the functor $Y \mapsto X \times Y$ preserves small colimits.
\item[$(g)$] The Cartesian product on $\bfA$ endows $\bfA$ with the structure of a monoidal model category. In other words, given a pair of cofibrations $f: A \rightarrow A'$, $g: B \rightarrow B'$, the induced map
$$ f \wedge g: (A \times B') \coprod_{ A \times B} (A' \times B) \rightarrow A' \times B'$$
is again a cofibration, which is trivial if either $f$ or $g$ is trivial.
\end{itemize}
Then the identity functor $\id$ determines a left Quillen equivalence
from $\PreSeg{ \bfA}$ (endowed with the projective model structure of Theorem \ref{castle2})
to $\PreSeg{\bfA}$ (endowed with the injective model structure of Proposition \ref{curt2}).
\end{proposition}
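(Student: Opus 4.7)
The plan is to show that the identity functor preserves cofibrations and weak equivalences in both directions, and that these two facts together imply the Quillen equivalence claim (since the identity is self-adjoint, the derived unit/counit is an equivalence as soon as the two classes of weak equivalences agree on cofibrant-fibrant pairs, which is weaker than full agreement).

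\textbf{Step 1 (Cofibrations).} I would first verify that every generating projective cofibration from Definition \ref{scanpl} is an injective cofibration. The type $(a)$ generator $\emptyset \to (\{\ast\}, {\bf 1})$ is injective on objects with levelwise cofibration maps (the unique map $\emptyset \to {\bf 1}$ in $\bfA$), and the type $(b)$ generators $\Fr^n(A) \to \Fr^n(B)$ are identities on objects and, by direct inspection of the defining formulas, send each $[s_0,\ldots,s_k]$ either to the identity or to the chosen cofibration $A \to B$. Since injective cofibrations form a weakly saturated class, every projective cofibration is an injective cofibration.

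\textbf{Step 2 (Weak equivalences agree, forward direction).} Suppose $f: (S,X) \to (S',X')$ is a projective weak equivalence. By Corollary \ref{stafff}, there is a commutative square with cofibrant refinements as vertical maps and top map $\bar f: (\overline S, \overline X) \to (\overline S', \overline X')$ with $F(\bar f)$ an equivalence of $\bfA$-enriched categories. A cofibrant refinement is pointwise fully faithful and surjective on objects, hence by Lemma \ref{strongbad1} is an injective weak equivalence. Two-out-of-three reduces us to proving $\bar f$ is an injective weak equivalence. Using Remark \ref{sinker} applied functorially, we may factor $\bar f$ through an injective trivial cofibration into a map $\tilde f$ between $\bfA$-enriched Segal categories, without changing the underlying object sets; a small diagram chase (using the pushout of injective trivial cofibrations again) shows $\bar f$ is an injective weak equivalence iff $\tilde f$ is. For cofibrant Segal categories, Lemma \ref{stepp2} shows the natural map from $(\overline S, Y)$ to $GF(\overline S, Y)$ is a levelwise weak equivalence, so $\UnPre$ carries it to a complete Segal equivalence. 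The problem is thereby reduced to showing that the degreewise nerve of an equivalence of $\bfA$-enriched categories is a complete Segal equivalence after applying $\UnPre$, which follows from the characterization of Segal equivalences in Remark \ref{swammer} applied to the map of Segal space objects of $\Nerve(\bfA^\degree)$ determined by $F(\tilde f)$: essential surjectivity of $F(\tilde f)$ gives condition $(a')$, and fully faithfulness gives condition $(b)$.

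\textbf{Step 3 (Reverse direction).} Conversely, suppose $f$ is an injective weak equivalence; I want to show it is a projective weak equivalence. Apply Remark \ref{sinker} on both sides to reduce to a map of $\bfA$-enriched Segal categories (this preserves both projective and injective weak-equivalence types). For Segal categories, the induced map on Segal space objects of $\Nerve(\bfA^\degree)$ is a Segal equivalence (Theorem \ref{segmin}), so conditions $(a)$--$(b)$ of Definition \ref{jinke} hold. Combining this with Remark \ref{hseg}, one checks that the induced map on $\h{\bfA}$-enriched homotopy categories is essentially surjective and fully faithful, hence an equivalence; by Remark \ref{schwe}, this gives a projective weak equivalence.

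\textbf{Conclusion.} Since the identity functor preserves cofibrations (Step 1) and the two classes of weak equivalences coincide (Steps 2--3), the identity is a left Quillen functor in both directions, and the derived unit and counit are identities at the level of homotopy categories, so we obtain a left Quillen equivalence. The main obstacle I anticipate is the careful bookkeeping in Step 2 when translating ``$F(\tilde f)$ is an equivalence of $\bfA$-enriched categories'' into a Segal equivalence of the associated Segal space objects of $\Nerve(\bfA^\degree)$; all the necessary tools are present in Sections \ref{bisecA} and \ref{bisec2.2}, but one must track the interaction between $F$, the degreewise nerve $G$, and $\UnPre$ with some care.
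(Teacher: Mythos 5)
Your proof takes a genuinely different route from the paper's. The paper establishes the Quillen equivalence indirectly: having already shown in Proposition \ref{curt} that $\UnPre : \PreSeg{\bfA}_{\mathrm{inj}} \to \Fun(\cDelta^{op},\bfA)$ is a left Quillen equivalence, it proves that the \emph{same} functor is a left Quillen equivalence from the projective structure, by analyzing the adjoint pair $(\UnPre, G)$ against fibrant objects of $\Fun(\cDelta^{op},\bfA)$ and invoking the $W, \overline W$ machinery and Remark \ref{schwe}; the result then follows by two-out-of-three. You instead aim to show directly that the projective and injective weak equivalences are the \emph{same class}, which is the cleanest statement one could hope for and does imply the Quillen equivalence (once the identity is known to be left Quillen from projective to injective, equality of weak equivalences makes the cofibrant-fibrant detection condition automatic). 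Your approach makes the content of the proposition more transparent, at the cost of having to run the reduction-to-Segal-categories argument separately in both directions rather than packaging it once inside the $\UnPre$ analysis.

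A few points deserve care. Step 2 is essentially a reproof of Lemma \ref{sacurt}, which you could simply cite. Step 3 is the genuinely new content; the reduction via Remark \ref{sinker} is valid because the morphisms it produces are simultaneously injective trivial cofibrations \emph{and} projective weak equivalences (the generators from Example \ref{swaggius} and Example \ref{haley} are projective weak equivalences, and the class is stable under pushout and transfinite composition by Corollary \ref{curry} and Remark \ref{coper}), but you should say this explicitly since it is the hinge of the reduction. After reducing to Segal categories, the translation from ``Segal equivalence of $\UnPre(S,X)_\bullet \to \UnPre(S',X')_\bullet$'' to ``equivalence of $\h{\bfA}$-enriched homotopy categories'' needs the observations that $\UnPre(S,X)_0$ is the discrete space $S$ and that the fiber of $\UnPre(S,X)_1 \to \UnPre(S,X)_0^2$ over $(s_0,s_1)$ is $X([s_0,s_1])$; these are the bridges from conditions $(a)$, $(b)$ of Definition \ref{jinke} to essential surjectivity and fully faithfulness, and are worth stating. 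Finally, your Conclusion asserts that ``the identity is a left Quillen functor in both directions''; this is false, since injective cofibrations are typically not projective cofibrations. The correct wrap-up is: the identity is left Quillen from projective to injective, and since the two classes of weak equivalences coincide, the cofibrant-fibrant detection criterion for a Quillen equivalence holds trivially.
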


\begin{remark}
The hypotheses of Proposition \ref{curt2} are satisfied if
$\bfA$ is the category $\sSet$ of simplicial sets (with the Kan model structure), or if
$\bfA$ is the category $\mSet$ of marked simplicial sets (with the model structure of
\S \toposref{markmodel}).
\end{remark}

We will give the proof of Proposition \ref{curt2} after establishing a few preliminary results.
We first note that the left properness of the projective model structure on $\PreSeg{\bfA}$
can be strengthened as follows:

\begin{proposition}
Let $\bfA$ be a combinatorial model category satisfying assumptions $(A1)$ through
$(A4)$ of \S \ref{bisec2.2}. Suppose given a diagram $\sigma$:
$$ \xymatrix{ (S,X) \ar[r] \ar[d] & (S', X') \ar[d] \\
(S,Y) \ar[r] & (S', Y') }$$
of $\bfA$-enriched preSegal categories satisfying the following conditions:
\begin{itemize}
\item[$(1)$] The vertical maps are the identity on objects.
\item[$(2)$] For every sequence of objects $s'_0, \ldots, s'_n \in S'$, 
the diagram
$$ \xymatrix{ \coprod_{ s_0, \ldots, s_n} X( [s_0, \ldots, s_n]) \ar[r] \ar[d] & X'( [s'_0, \ldots, s'_n]) \ar[d] \\
\coprod_{ s_0, \ldots, s_n} Y( [s_0, \ldots, s_n] ) \ar[r] & Y'( [s'_0, \ldots, s'_n] ) }$$
is a homotopy pushout square in $\bfA$. Here the coproducts are taken over all
$s_0, \ldots, s_n \in S$ lifting the sequence $s'_0, \ldots, s'_n \in S'$.
\end{itemize}
Then $\sigma$ is a homotopy pushout diagram with respect to the projective
model structure on $\PreSeg{\bfA}$.
\end{proposition}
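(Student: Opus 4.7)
The strategy is to use left properness of the projective model structure on $\PreSeg{\bfA}$ (Theorem~\ref{castle2}) to reduce the assertion to a pointwise comparison in $\bfA$, and then to invoke hypothesis~$(2)$ to identify the resulting pushout squares.

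First I factor the left vertical map $(S,X) \to (S,Y)$, using the small object argument, as a projective cofibration $(S,X) \hookrightarrow (S, Y^c)$ followed by a trivial fibration $(S, Y^c) \xrightarrow{\sim} (S,Y)$. Since the original map is the identity on $S$, only generating projective cofibrations of type~$(b)$ in Definition~\ref{scanpl} are needed, so this factorization can be taken to be the identity on objects. Form the ordinary pushout
\[ (S', W) := (S, Y^c) \coprod_{(S,X)} (S', X') \]
in $\PreSeg{\bfA}$. By left properness of the projective model structure, this is a homotopy pushout, so $\sigma$ is a homotopy pushout if and only if the induced comparison map $(S', W) \to (S', Y')$ is a weak equivalence in $\PreSeg{\bfA}$.

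Next I compute $W$ pointwise. The object set of $W$ is $S \coprod_{S} S' = S'$, and for every non-degenerate sequence $\bar s' = (s'_0, \dots, s'_n) \in (S')^{n+1}$ the universal property of the pushout (together with the naturality of the simplicial structure, which forces contributions from higher-dimensional lifts to factor through lifts of the correct length) yields
\[ W[\bar s'] \;\cong\; X'[\bar s'] \coprod_{\coprod_{\bar s \to \bar s'} X[\bar s]} \coprod_{\bar s \to \bar s'} Y^c[\bar s], \]
where the coproducts are indexed by lifts $\bar s = (s_0, \dots, s_n) \in S^{n+1}$ of $\bar s'$ along the map $S \to S'$. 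A direct unwinding of Lemma~\ref{spottus}, combined with an analysis of the pointwise effect of a single pushout along a type~$(b)$ generator $\Fr^n(A) \to \Fr^n(B)$ on each mapping object, shows that $(S,X) \hookrightarrow (S, Y^c)$ induces a cofibration $X[\bar s] \to Y^c[\bar s]$ in $\bfA$ for every sequence $\bar s$; consequently $\coprod_{\bar s \to \bar s'} X[\bar s] \to \coprod_{\bar s \to \bar s'} Y^c[\bar s]$ is also a cofibration in $\bfA$. Since $\bfA$ is left proper (by axiom $(A1)$), the ordinary pushout displayed above is therefore a homotopy pushout in $\bfA$.

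Hypothesis~$(2)$ identifies $Y'[\bar s']$ with the homotopy pushout of $X'[\bar s'] \leftarrow \coprod_{\bar s \to \bar s'} X[\bar s] \to \coprod_{\bar s \to \bar s'} Y[\bar s]$. Because $(S, Y^c) \xrightarrow{\sim} (S, Y)$ is a pointwise weak equivalence, the map $\coprod Y^c[\bar s] \to \coprod Y[\bar s]$ is a weak equivalence in $\bfA$, and hence the induced comparison $W[\bar s'] \to Y'[\bar s']$ between the two homotopy pushouts is a weak equivalence for every non-degenerate $\bar s'$ (the degenerate case being tautologically an isomorphism, both sides equaling the final object). Thus $(S', W) \to (S', Y')$ is a map of preSegal categories which is the identity on objects and pointwise a weak equivalence. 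Choosing a cofibrant refinement $(S', \hat W) \to (S', W)$ and factoring the composite $(S', \hat W) \to (S', Y')$ as a projective cofibration $(S', \hat W) \hookrightarrow (S', \hat Y')$ followed by a trivial fibration $(S', \hat Y') \xrightarrow{\sim} (S', Y')$, the induced map $(S', \hat W) \to (S', \hat Y')$ between cofibrant preSegal categories is identity on objects and pointwise a weak equivalence; Proposition~\ref{camble} therefore implies $F$ sends it to an equivalence of $\bfA$-enriched categories, and Corollary~\ref{stafff} yields that $(S', W) \to (S', Y')$ is a weak equivalence in $\PreSeg{\bfA}$, as desired. The principal obstacle is the explicit justification of the pointwise pushout formula for $W[\bar s']$; while conceptually routine, it demands careful treatment of the final-object condition defining $\PreSeg{\bfA}$ and of the naturality arguments needed to collapse contributions indexed by lifts of simplicial degree higher than $n$.
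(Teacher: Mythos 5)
Your proof is correct and follows essentially the same strategy as the paper's: factor the left vertical map into a projective cofibration followed by a pointwise trivial fibration (keeping the object set $S$ fixed, so only type~$(b)$ cells are involved), use left properness to reduce to showing that the comparison from the strict pushout $(S', W)$ to $(S', Y')$ is a weak equivalence, and deduce this levelwise from hypothesis~$(2)$. The paper replaces $X$ and $X'$ by cofibrant refinements up front, which makes the pushout cofibrant so that the comparison is literally a cofibrant refinement; you instead keep $X, X'$ as they are and compensate with one further cofibrant refinement at the end before invoking Proposition~\ref{camble} and Corollary~\ref{stafff}. Both routes are valid and differ only in bookkeeping.

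Two points in your writeup deserve attention. First, the pointwise pushout formula for $W[\bar s']$ that you flag as the ``principal obstacle'' holds for \emph{all} sequences $\bar s'$, degenerate or not, and is less delicate than you suggest: the functor $\cDelta_S^{op}\to\cDelta_{S'}^{op}$ is a discrete opfibration, so the left Kan extension along it is simply the fiberwise coproduct $\bar s'\mapsto\coprod_{\pi(\bar s)=\bar s'}X[\bar s]$, with the coproduct running over lifts of the same length; and the levelwise pushout of $X'\leftarrow\pi_! X\to\pi_! Y^c$ already satisfies the normalization constraint at singletons (the pushout of $\mathbf 1\leftarrow\coprod_{\pi(s)=s'}\mathbf 1\to\coprod_{\pi(s)=s'}\mathbf 1$, the first map the fold and the second the identity, is $\mathbf 1$), so no reflection is needed. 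Second, your parenthetical claim that for degenerate $\bar s'$ ``both sides equal the final object'' is false: this holds only for singletons, while a constant sequence of length $\geq 2$ such as $(s', s')$ will typically have a nontrivial mapping object. Fortunately, your argument for the non-degenerate case applies verbatim to all sequences, so the error is harmless, but the parenthetical should be deleted.
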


\begin{proof}
Using the small object argument, we can choose a map
$(S, \overline{X}) \rightarrow (S,X)$ with the following properties:
\begin{itemize}
\item[$(a)$] Let ${\bf 1}$ denote the final object in $\PreSeg{\bfA}$. Then
the canonical map $\coprod_{s \in S} {\bf 1} \rightarrow (S, \overline{X})$ is an
iterated pushouts of morphisms of type $(b)$ appearing in Definition \ref{scanpl}. 
\item[$(b)$] For every sequence of elements $s_0, \ldots, s_n \in S$, the induced map
$\overline{X}( [s_0, \ldots, s_n] ) \rightarrow X( [s_0, \ldots, s_n] )$ is a trivial fibration in $\bfA$.
\end{itemize}
In particular, $(S, \overline{X})$ is a cofibrant refinement of $(S, X)$. Choose a cofibrant
refinement $(S', \overline{X}') \rightarrow (S', X')$ similarly. Using the small object
argument again, we can factor the map $(S, \overline{X}) \rightarrow (S, Y)$ as a composition
$$ (S, \overline{X}) \rightarrow (S, \overline{Y}) \rightarrow (S, Y)$$
with the following properties:
\begin{itemize}
\item[$(c)$] The map $(S, \overline{X}) \rightarrow (S, \overline{Y})$ is an iterated
pushout of morphisms of type $(b)$ appearing in Definition \ref{scanpl}.
\item[$(d)$] For every sequence of objects $s_0, \ldots, s_n \in S$, the map
$\overline{Y}( [s_0, \ldots, s_n] ) \rightarrow Y( [s_0, \ldots, s_n] )$ is a trivial fibration in $\bfA$.
\end{itemize}
Let $(S', \overline{Y}')$ denote the pushout $(S, \overline{X}') \coprod_{ (S, \overline{X} )} (S, \overline{Y})$. Condition $(2)$ guarantees that the map $(S', \overline{Y}') \rightarrow (S', Y')$ is a cofibrant refinement. Consequently, it will suffice to show that the diagram
$$ \xymatrix{ (S, \overline{X}) \ar[r] \ar[d] & (S', \overline{X}') \ar[d] \\
(S,\overline{Y}) \ar[r] & (S', \overline{Y}') }$$
is a homotopy pushout square in $\PreSeg{\bfA}$. This follows from the left-properness of
$\PreSeg{\bfA}$, since the vertical maps are cofibrations by construction.
\end{proof}

\begin{corollary}\label{curry}
Let $\bfA$ be a combinatorial model category satisfying assumptions $(A1)$ through
$(A4)$ of \S \ref{bisec2.2}. 
Let $f: (S, X) \rightarrow (S', Y')$ be a map of $\bfA$-enriched preSegal categories with the following
properties:
\begin{itemize}
\item[$(a)$] The map $f$ is bijective on objects.
\item[$(b)$] The map $f$ is a weak equivalence.
\item[$(c)$] For every sequence of elements $s_0, \ldots, s_n \in S$, the induced map
$X( [s_0, \ldots, s_n] ) \rightarrow X([ f(s_0), \ldots, f(s_n)] )$ is a cofibration.
\end{itemize}
Then any pushout of $f$ is again a weak equivalence (with respect to the projective
model structure on $\PreSeg{\bfA}$).
\end{corollary}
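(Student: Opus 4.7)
The strategy is to apply the proposition immediately preceding Corollary \ref{curry} to show that any pushout of $f$ along a map $g: (S, X) \to (T, Z)$ is a homotopy pushout in $\PreSeg{\bfA}$ with respect to the projective model structure of Theorem \ref{castle2}, and then invoke left properness of $\PreSeg{\bfA}$ to conclude that the pushout map is a weak equivalence. First, using the bijectivity of $f$ on objects, I identify $S$ with $S'$ so that $f: (S, X) \to (S, Y')$ is the identity on objects. The pushout then has the form
$$\xymatrix{(S,X) \ar[r]^{g} \ar[d]^{f} & (T,Z) \ar[d]^{f'} \\ (S,Y') \ar[r]^{g'} & (T,W'),}$$
with $f'$ again the identity on objects, since the underlying-object-set functor $\PreSeg{\bfA} \to \Set$ is a left adjoint and hence preserves pushouts.

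To apply the preceding proposition to this square (now with $f, f'$ as the vertical maps), condition $(1)$ holds by construction, and condition $(2)$ requires that for each sequence $t_0, \ldots, t_n \in T$, the square in $\bfA$
$$\xymatrix{\coprod_{(s_i):\, g(s_i) = t_i} X([s_0,\ldots,s_n]) \ar[r] \ar[d] & Z([t_0,\ldots,t_n]) \ar[d] \\ \coprod_{(s_i):\, g(s_i) = t_i} Y'([s_0,\ldots,s_n]) \ar[r] & W'([t_0,\ldots,t_n])}$$
be a homotopy pushout. A direct computation unwinding the universal property of the pushout $(T,W')$ in $\PreSeg{\bfA}$ (viewing $\PreSeg{\bfA}$ as a Grothendieck-style construction over $\Set$, with pushforward of simplicial-level data along $g: S \to T$ given by left Kan extension along the induced functor $\cDelta_{S}^{op} \to \cDelta_{T}^{op}$) identifies $W'([t_0,\ldots,t_n])$ with the pushout in $\bfA$ of the other three corners: contributions from non-direct lifts of $[t_0,\ldots,t_n]$ factor through degeneracies and are already absorbed into the direct-lift coproducts. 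By hypothesis $(c)$ of Corollary \ref{curry}, each component $X([s_0,\ldots,s_n]) \to Y'([s_0,\ldots,s_n])$ is a cofibration in $\bfA$, and coproducts of cofibrations in a combinatorial model category are again cofibrations, so the left vertical map is a cofibration. Left properness of $\bfA$ (automatic from assumption $(A1)$ of \S \ref{bisec2.2}) then promotes this pushout square to a homotopy pushout.

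Applying the preceding proposition, the original pushout square is a homotopy pushout in $\PreSeg{\bfA}$. Since $f$ is a weak equivalence by hypothesis and homotopy pushouts preserve weak equivalences (by left properness of the projective model structure on $\PreSeg{\bfA}$, part of Theorem \ref{castle2}), we conclude that $f'$ is a weak equivalence, as required. The main obstacle is the pointwise identification of $W'([t_0,\ldots,t_n])$ as a pushout in $\bfA$; this is the only step requiring genuine care, since the set-level change of objects via $g$ interacts nontrivially with the simplicial structure, and one must check that the Kan-extension colimit at each $[t_0,\ldots,t_n]$ really does collapse to the coproduct over direct lifts. Once this formula is established, the rest of the argument is a routine application of left properness in $\bfA$ and the preceding proposition.
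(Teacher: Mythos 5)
Your proof is correct and takes the paper's intended approach: the corollary follows by applying the immediately preceding proposition to the pushout square, which is exactly what you do after identifying $W'$ pointwise as a strict pushout along a coproduct of cofibrations (a cofibration, hence a homotopy pushout since all objects of $\bfA$ are cofibrant by $(A1)$). The only small slip is the phrase ``factor through degeneracies'' in your justification for the collapse of the Kan extension to the coproduct over direct lifts — the maps $\phi:[n]\to[m]$ indexing the comma category are arbitrary monotone maps, and the collapse actually holds because the direct lifts form a reflective (hence cofinal) full subcategory of that comma category, with reflection $([s_0,\ldots,s_m],\phi)\mapsto([s_{\phi(0)},\ldots,s_{\phi(n)}],\id)$; also, the final step does not require left properness of $\PreSeg{\bfA}$, since in any model category a homotopy pushout of a weak equivalence is a weak equivalence.
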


\begin{remark}\label{coper}
The collection of morphisms $f$ which satisfy the hypotheses of Corollary \ref{curry} is evidently stable under retracts and transfinite composition (since the collection of weak equivalences in $\PreSeg{\bfA}$ is stable under filtered colimits). It follows from Corollary \ref{curry} that this collection is weakly saturated. (In the situation of Proposition \ref{curt2}, it is precisely the collection of trivial cofibrations
with respect to the injective model structure on $\PreSeg{\bfA}$.)
\end{remark}

\begin{lemma}\label{sacurt}
Let $\bfA$ be as in Proposition \ref{curt2}. Then the functor
$\UnPre: \PreSeg{\bfA} \rightarrow \Fun( \cDelta^{op}, \bfA)$ carries
projective weak equivalences in $\PreSeg{\bfA}$ to weak equivalences
in $\Fun( \cDelta^{op}, \bfA)$ (with respect to the complete Segal model structure).
\end{lemma}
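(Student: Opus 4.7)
The plan is to reduce, via two applications of two-out-of-three, to the case of a map between cofibrant $\bfA$-enriched Segal categories, where the notions of projective weak equivalence and complete Segal equivalence can be compared directly.

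First, given a projective weak equivalence $\phi: (S,X) \to (S',X')$, I would apply Corollary \ref{stafff} to choose a commutative square
$$\xymatrix{(\overline{S}, \overline{X}) \ar[r]^{\overline{\phi}} \ar[d]^{\alpha} & (\overline{S}', \overline{X}') \ar[d]^{\alpha'} \\ (S, X) \ar[r]^{\phi} & (S', X')}$$
whose verticals are cofibrant refinements and whose top arrow $\overline{\phi}$ is carried by $F$ to an equivalence of $\bfA$-enriched categories. Cofibrant refinements are surjective on objects and pointwise fully faithful (their components on mapping objects are trivial fibrations in $\bfA$), so Lemma \ref{strongbad1} sends them to complete Segal equivalences; two-out-of-three then reduces the problem to showing $\UnPre(\overline{\phi})$ is a complete Segal equivalence. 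Thus I may assume from the outset that both the source and target of $\phi$ are cofibrant and $F(\phi)$ is an equivalence of $\bfA$-enriched categories.

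Next, I would perform a Segal replacement in the style of Remark \ref{sinker}, compatibly on both sides of $\phi$, via the small object argument applied to the attaching morphisms of Example \ref{haley} and Lemma \ref{swagg}. The resulting replacement maps are complete Segal equivalences by construction, and I claim they are also projective weak equivalences: each attaching morphism is itself a projective weak equivalence (the maps $\Fr^n(i)$ for a trivial cofibration $i$ in $\bfA$ via Lemma \ref{stade} together with the explicit description of $F(\Fr^n(-))$, and the maps $\Fr^n(f) \to \Fr^n(B)$ by the observation in Example \ref{swaggius}), and these attachings are bijective on objects and pointwise cofibrations, so Corollary \ref{curry} and Remark \ref{coper} guarantee that this property is preserved by the pushouts and transfinite compositions of the small object argument. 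A second application of two-out-of-three then reduces the problem to showing that if $\psi: (S,Y) \to (S',Y')$ is a map between cofibrant $\bfA$-enriched Segal categories with $F(\psi)$ an equivalence of $\bfA$-enriched categories, then $\UnPre(\psi)$ is a complete Segal equivalence.

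In this reduced situation, both equivalence notions admit an explicit description and match. By Remarks \ref{schwe} and \ref{hseg} together with Lemma \ref{stepp2}, $F(\psi)$ is an equivalence of $\bfA$-enriched categories iff $\psi$ induces an essentially surjective equivalence on $\h{\bfA}$-enriched homotopy categories $\h{(S,Y)} \to \h{(S',Y')}$; concretely, this means that each map $Y([x,y]) \to Y'([\psi(x), \psi(y)])$ is a weak equivalence in $\bfA$ and that every object of $S'$ is equivalent to one in the image of $\psi$. On the other hand, by Theorem \ref{segmin}, $\UnPre(\psi)$ is a complete Segal equivalence iff it is a Segal equivalence (Definition \ref{jinke}); since $\UnPre(S,Y)_0$ is a discrete coproduct of copies of the final object of $\bfA$, condition (b) of Definition \ref{jinke} unwinds pointwise to the same weak-equivalence condition on mapping objects, and condition (a), via Remark \ref{swammer} and Definition \ref{cabler}, translates to essential surjectivity on the very same $\h{\bfA}$-enriched homotopy category. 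The two conditions coincide, finishing the argument.

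The hard part will be the Segal-replacement step: one must simultaneously arrange the replacements functorially in $\phi$ so as to obtain a commutative square, and verify that each class of attaching morphisms is a projective weak equivalence. The subtlety is that the second class ($\Fr^n(f) \to \Fr^n(B)$) is not itself a projective cofibration, so the closure argument must rely on Corollary \ref{curry} and Remark \ref{coper} rather than on the projective-cofibration calculus. Once the replacement step is in place, the final identification of equivalence notions for Segal categories is essentially an unwinding of Theorem \ref{segmin}.
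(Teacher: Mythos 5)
Your proposal follows the same essential strategy as the paper's proof — reduce to a map between $\bfA$-enriched Segal categories by a Segal replacement along $\UnPre$-trivial-cofibration attaching cells, then compare the projective weak equivalence criterion (via $\h{\bfA}$-enriched homotopy categories) with the complete Segal equivalence criterion (via Theorem \ref{segmin}, using the distributor structure to decompose the relevant pullback as a coproduct of mapping objects). The key observation that the Segal replacement maps are simultaneously complete Segal equivalences and projective weak equivalences, and the closure argument via Corollary \ref{curry} and Remark \ref{coper} rather than the projective-cofibration calculus, are all correct.

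There is, however, a genuine gap in the reduced situation you claim to reach. After your step 1b, the objects $(S,Y)$ and $(S',Y')$ need not be cofibrant: you correctly note that $\Fr^n(f) \to \Fr^n(B)$ is not a projective cofibration, but this means that pushing out along it (and taking transfinite compositions) does \emph{not} preserve projective cofibrancy, so the cofibrancy gained in step 1a is destroyed in step 1b. Your step 2 then illegitimately assumes a "map between cofibrant $\bfA$-enriched Segal categories with $F(\psi)$ an equivalence" and invokes Lemma \ref{stepp2}, both of which require cofibrancy. The fix is to delete the cofibrant-reduction step 1a entirely and, after the Segal replacement, invoke the criterion of Remark \ref{schwe}$(\ast)$ — which is formulated for arbitrary (not necessarily cofibrant) $\bfA$-enriched Segal categories, with the cofibrant refinement absorbed inside Remark \ref{hseg} — to pass from the projective weak equivalence hypothesis directly to the equivalence of $\h{\bfA}$-enriched homotopy categories. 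Once this substitution is made, the remainder of your step 2 (the decomposition of condition (b) of Definition \ref{jinke} into pointwise mapping-object equivalences, and the translation of condition (a) to essential surjectivity via Remark \ref{swamm}) goes through exactly as you describe and matches the paper's argument.
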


\begin{proof}
Let $W$ denote the collection of all morphisms in $\PreSeg{\bfA}$ of the type
$\psi: \Fr^{n}(f) \rightarrow \Fr^{n}(B)$ described in Example \ref{swaggius}, where
$f: A \rightarrow B$ ranges over a collection of generating cofibrations for $\bfA$.
Let $\overline{W}$ denote the weakly saturated class of morphisms generated by
$W$. We now observe:
\begin{itemize}
\item[$(a)$] The functor $\UnPre$ carries every morphism in $W$ to a trivial cofibration in
$\Fun( \cDelta^{op}, \bfA)$, and therefore carries every morphism in $\overline{W}$ to a trivial cofibration in $\Fun( \cDelta^{op}, \bfA)$.

\item[$(b)$] Every morphism in $\overline{W}$ is a weak equivalence in $\PreSeg{\bfA}$;
this follows from Example \ref{swaggius} and Remark \ref{coper}.
\end{itemize}

Let $f: (S,X) \rightarrow (S', X')$ be a weak equivalence in $\PreSeg{\bfA}$. 
We wish to prove that $\UnPre(f)$ is a weak equivalence. Using the small
object argument, we can choose a commutative diagram
$$ \xymatrix{ (S, X ) \ar[r]^{f} \ar[d] & (S', X') \ar[d] \\
(S, \overline{X}) \ar[r]^{\overline{f}} & (S', \overline{X}') }$$
where the vertical maps belong to $\overline{W}$ and the objects
$(S, \overline{X})$ and $(S', \overline{X}')$ have the extension property with respect
to each morphism in $W$. In view of $(a)$, it will suffice to prove that
$\UnPre( \overline{f} )$ is a weak equivalence. In view of $(b)$, the map
$f'$ is itself a weak equivalence. We may therefore replace
$(S,X)$ and $(S', X')$ by $(S, \overline{X})$ and $(S', \overline{X}')$, and thereby reduce to the case where $(S,X)$ and $(S', X')$ have the extension property with respect to every morphism in $W$.
Unwinding the definitions, we conclude that for every sequence of elements $s_0, \ldots, s_n \in S$,
the map $X( [s_0, \ldots, s_n]) \rightarrow X( [s_0, s_1]) \times \ldots \times X( [s_{n-1}, s_n])$ is
a trivial fibration in $\bfA$. In particular, the pair $(S,X)$ is a $\bfA$-enriched Segal category;
similarly, $(S', X')$ is a $\bfA$-enriched Segal category. 

Since the map $f$ is a weak equivalence in $\PreSeg{\bfA}$, Remark \ref{schwe} implies
that the induced map of homotopy categories $\h{(S,X)} \rightarrow \h{(S',X')}$ is an equivalence of
$\h{\bfA}$-enriched categories.

Let $\calY$ denote the underlying $\infty$-category $\Nerve( \bfA^{\degree} )$ of $\bfA$.
Since $\calY$ is an absolute distributor, there exists a fully faithful functor $t: \SSet \rightarrow \calY$ which preserves colimits and final objects. Let $\calX$ denote the essential image of $t$, so that
$\calX \subseteq \calY$ is a distributor. Let $Y_{\bigdot}$ and $Y'_{\bigdot}$ denote simplicial objects of $\calY$ determined by $\UnPre(S,X)$ and $\UnPre(S',X')$, respectively. 
For every pair of objects $x,y \in S$, let $H(x,y)$ denote the object of $\calY$ corresponding to
$X([x,y]) \in \bfA$, and define $H'(x',y') \in \calY$ for $x',y' \in S'$ similarly. By construction, we have
for each $n \geq 0$ canonical equivalences
$$ Y_n \simeq \coprod_{ s_0, \ldots, s_n \in S} H(s_0, s_1) \times \ldots \times H(s_{n-1}, s_n)$$
$$ Y'_n \simeq \coprod_{ s'_0, \ldots, s'_n \in S'} H'(s'_0, s'_1) \times \ldots \times H'( s'_{n-1}, s'_n).$$
In particular, the objects $Y_0$ and $Y'_0$ are coproducts of final objects of $\calY$, and therefore belong to $\calX$. Using the fact that $\calX \subseteq \calY$ is a distributor, we deduce
that $Y_{\bigdot}$ and $Y'_{\bigdot}$ are Segal space objects of $\calY$. 
To complete the proof, it will suffice to show that the map $Y_{\bigdot} \rightarrow Y'_{\bigdot}$ is a Segal equivalence. 

We first show that the map $\phi: Y_1 \rightarrow Y_0 \times_{Y'_0} Y'_1 \times_{Y'_0} Y_0$
is an equivalence in $\calY$. Using the fact that $\calX \subseteq \calY$ is a distributor, 
we deduce that the right hand side can be identified with the coproduct
$\coprod_{ x,y \in S} H'(f(x),f(y)).$ Under this identification, the map $\phi$ corresponds to the coproduct of the maps $H(x,y) \rightarrow H'( f(x), f(y) )$, which is an equivalence because 
$\h{(S,X)} \rightarrow \h{(S',X')}$ is a fully faithful functor between $\h{\bfA}$-enriched categories.

To complete the proof that $Y_{\bigdot} \rightarrow Y'_{\bigdot}$ is a Segal equivalence, it will suffice (by Remark \ref{swamm}) to show that the map $Y_0 \rightarrow | \Gp Y'_{\bigdot} |$ is an effective epimorphism in the $\infty$-topos $\calX$. Under the equivalence of $\calX$ with
$\SSet$, the object $Y_0$ corresponds to the discrete space $S$, while $| \Gp Y'_{\bigdot} |$ correponds to some Kan complex $K$. We wish to prove that the canonical map
$S \rightarrow \pi_0 K$ is surjective. 

Choose a connected component $\eta \in \pi_0 K$; we wish to prove that
$\eta$ lies in the image of the map $S \rightarrow \pi_0 K$.
The effective epimorphism $Y'_0 \rightarrow | \Gp Y'_{\bigdot} |$ determines a map
$S' \rightarrow K$ which is surjective on connected components, so there exists
$s' \in S'$ whose image in $\pi_0 K$ coincides with $\eta$. Since the functor $\h{ (S,X)} \rightarrow \h{(S',X')}$ is an equivalence of categories, there exists an element $s \in S$ whose image in $S'$ is
isomorphic to $s'$. It follows that the image of $s$ in $\pi_0 K$ also coincides with $\eta$, so that
$\eta$ lies in the image of $S$ as desired.
\end{proof}

\begin{proof}[Proof of Proposition \ref{curt2}]
It is easy to see that every projective cofibration is an injective cofibration.
Combining this observation with Lemma \ref{sacurt}, we deduce that the
identity functor is a left Quillen functor from the projective model structure
on $\PreSeg{\bfA}$ to the injective model structure on $\PreSeg{\bfA}$.
To complete the proof, we wish to show that the identity functor is
a left Quillen equivalence. In view of Proposition \ref{curt}, it will suffice
to show that the functor $\UnPre: \PreSeg{\bfA} \rightarrow \Fun( \cDelta^{op}, \bfA)$
is a left Quillen equivalence from the projective model structure on
$\PreSeg{\bfA}$ to the complete Segal model structure on $\Fun( \cDelta^{op}, \bfA)$.

Let $G$ denote a right adjoint to $\UnPre$. To prove that $(\UnPre, G)$ is a Quillen equivalence, we consider an arbitrary object $(S,X) \in \PreSeg{\bfA}$ and a fibrant object $Y_{\bigdot} \in \Fun( \cDelta^{op}, \bfA)$. We will to show that a map $\phi: (S,X) \rightarrow G Y_{\bigdot}$ is a weak equivalence
in $\PreSeg{\bfA}$ if and only if the adjoint map $\psi: \UnPre(S,X) \rightarrow Y_{\bigdot}$ is
a weak equivalence in $\Fun( \cDelta^{op}, \bfA)$. Let $W$ and $\overline{W}$ be the classes of morphisms in $\PreSeg{\bfA}$ defined in Lemma \ref{sacurt}. Using the small object argument, we can choose a morphism $u: (S, X) \rightarrow (S, X')$ belonging to $\overline{W}$ such that
$(S,X')$ has the extension property with respect to every morphism in $W$. Lemma \ref{swagg} implies that $\psi$ factors
through $(S, X')$ and that the map $\UnPre(u)$ is a weak equivalence in $\Fun( \cDelta^{op}, \bfA)$, while Example \ref{swaggius} and Remark \ref{coper} imply that $u$ is a weak equivalence
in $\PreSeg{\bfA}$. We may therefore replace $(S,X)$ by $(S,X')$ and thereby reduce to the case where
$(S,X)$ is a $\bfA$-enriched Segal category. 

In view of Remark \ref{schwe}, the map $\phi$ is a weak equivalence if and only if the
induced map of homotopy categories $\h{(S,X)} \rightarrow \h{(GY_{\bigdot})}$ is an equivalence of $\bfA$-enriched categories (note that $GY_{\bigdot}$ has the extension property with respect
to every morphism in $W$ by Lemma \ref{swagg}, and is therefore a $\bfA$-enriched Segal category). Unwinding the definitions, this amounts to the following pair of conditions:
\begin{itemize}
\item[$(i)$] For every pair of elements $x,y \in S$, the induced map
$$X([x,y]) \rightarrow {\bf 1} \times_{Y_0} Y_1 \times_{Y_0} \{ \bf 1 \}$$
is a weak equivalence in $\bfA$.
\item[$(ii)$] For every morphism $\eta: {\bf 1} \rightarrow Y_0$, there exists
an element $s \in S$ whose image in $GY_0$ is equivalent to $\eta$ in the homotopy
category $\h{(GY_{\bigdot})}$. 
\end{itemize}

As in the proof of Lemma \ref{sacurt}, we note that $\UnPre(S,X)$ determines a Segal space object $Z_{\bigdot}$ in the $\infty$-category $\calY = \Nerve( \bfA^{\degree})$. Similarly, $Y_{\bigdot}$ determines
a complete Segal space object $Z'_{\bigdot}$ of $\Nerve( \bfA^{\degree})$, and the map
$\psi$ determines a natural transformation $\overline{\psi}: Z_{\bigdot} \rightarrow Z'_{\bigdot}$. 
Let $\calX \subseteq \calY$ denote the full subcategory generated under colimits by the final object, so that $\calX$ is equivalent to the $\infty$-category of spaces and the inclusion $\calX \subseteq \calY$ is a distributor. We note that $\psi$ is a weak equivalence if and only if $\overline{\psi}$ is a Segal equivalence. In view of Remark \ref{swamm}, this is equivalent to the following pair of conditions:
\begin{itemize}
\item[$(i')$] The canonical map
$$ Z_1 \rightarrow Z_0 \times_{Z'_0} Z'_1 \times_{ Z'_0} Z_0$$
is an equivalence in $\Nerve( \bfA^{\degree} )$. 
\item[$(ii')$] The map $Z_0 \rightarrow | \Gp Z'_{\bigdot} |$ is an effective epimorphism
in the $\infty$-topos $\calX$.
\end{itemize}

Using the fact that $Z_0$ is equivalent to the coproduct $\coprod_{s \in S} {\bf 1} \in \calX$
and the fact that $\calX \subseteq \calY$ is a distributor, we conclude that $(i')$ is equivalent
to $(a)$. To prove $(ii')$, we note that under the equivalence $\calX \simeq \SSet$,
the map $Z_0 \rightarrow | \Gp Z'_{\bigdot} |$ corresponds to a map from
the discrete space $S$ to a Kan complex $K$; assertion $(ii')$ is equivalent to the requirement that
the map $S \rightarrow \pi_0 K$ is surjective. We conclude by observing that $\pi_0 K$
can be identified with the set of equivalence classes of objects in the homotopy category
$\h{(GY_{\bigdot})}$, so that the surjectivity is equivalent to $(ii)$.
\end{proof}

\section{Straightening for Locally coCartesian Fibrations}\label{bisecC}

Let $S$ be a simplicial set. It follows from Theorem \toposref{straightthm} that the
straightening and unstraightening functors $\St_{S}$ and $\Un_{S}$ of \S \toposref{strsec} determine an equivalence
between the following types of data:
\begin{itemize}
\item[$(1)$] Cartesian fibrations of simplicial sets $X \rightarrow S$.
\item[$(2)$] Simplicial functors from $\sCoNerve[S]^{op}$ into the category
$\mSet$ of marked simplicial sets.
\end{itemize}
In other words, Cartesian fibrations $X \rightarrow S$ are classified (up to equivalence)
by functors $S^{op} \rightarrow \Cat_{\infty}$. Our goal in this section is to provide a generalization of this classification scheme to the setting of {\em locally} Cartesian fibrations.

Our first step is to find an appropriate replacement for the base $S$ of the fibration. 
Recall that a locally Cartesian fibration $X \rightarrow S$ is a Cartesian fibration if and
only if every restriction $X \times_{S} \Delta^2 \rightarrow \Delta^2$ is a Cartesian fibration
(Proposition \toposref{gotta}). More generally, if we are given some collection $T$ of
$2$-simplices of $S$, then we could restrict our attention to locally Cartesian fibrations
$X \rightarrow S$ whose restriction to each $2$-simplex of $T$ is a Cartesian fibration.
There is no loss of generality in assuming that $T$ contains every degenerate $2$-simplex
of $S$ (since any locally Cartesian fibration $X \rightarrow \Delta^1$ is automatically
a Cartesian fibration). A pair $(S,T)$ consisting of a simplicial set $S$ and
a subset $T \subseteq \Hom( \Delta^2,S)$ containing every degenerate $2$-simplex is called a {\it scaled simplicial set}. In \S \ref{bisec3.1}, we will introduce the language of scaled simplicial sets,
and show that every scaled simplicial set $\overline{S} = (S,T)$ determines an
$\mSet$-enriched category $\scCoNerve[ \overline{S} ]$, whose underlying simplicial category
agrees with $\sCoNerve[S]$.

In \S \ref{bisec3.2}, we will introduce the relevant analogue of the Cartesian model structure
to the setting of scaled simplicial sets. More precisely, we will show that for every
scaled simplicial set $\overline{S} = (S,T)$, there is a combinatorial simplicial model structure
on the category $\mset{S}$ of marked simplicial sets over $S$, whose fibrant objects can
be identified with locally coCartesian fibrations $X \rightarrow S$ whose restriction to each
$2$-simplex of $T$ is a coCartesian fibration (here we work with locally coCartesian fibrations
rather than locally Cartesian fibrations for reasons of technical convenience).

Suppose now that we are given a scaled simplicial set $\overline{S} = (S,T)$.
In \S \ref{bisec3.3}, we will introduce a pair of adjoint functors
$$ \Adjoint{\scSt_{ \overline{S} }}{ \mset{S}}{ (\mSet)^{\scCoNerve[\overline{S}]},}{ \scUn_{ \overline{S} }}$$
which we will refer to as the {\it scaled straightening functor} and the {\it scaled unstraightening functor}.
The main result of this section is that these adjoint functors determine a Quillen equivalence
between $\mset{S}$ and $(\mSet)^{\scCoNerve[ \overline{S}]}$. The proof follows the same
basic pattern as that of the analogous result in \S \toposref{strsec}:

\begin{itemize}
\item[$(a)$] We first treat the case where $S$ consists of a single vertex (\S \ref{bisec3.4}). In this case,
we can identify $\scSt_{ \overline{S} }$ and $\scUn_{ \overline{S} }$ with functors from the category
$\mSet$ of marked simplicial sets to itself. The desired result can be deduced in this case by
comparing both functors to the identity.

\item[$(b)$] We next consider the case where $S$ is a simplex (\S \ref{bisec3.5}). This requires an analysis of the structure of a locally coCartesian fibration over $S$, generalizing the work
of \S \toposref{funkystructure}.

\item[$(c)$] Finally, in \S \ref{bisec3.6}, we will handle the case of a general simplicial set $S$
by writing $S$ as a colimit of its simplices.
\end{itemize}

\begin{warning}
Our notation in this section might be slightly misleading. The scaled straightening and unstraightening functors $\scSt_{ \overline{S} }$ and $\scUn_{ \overline{S} }$ are not merely decorated versions of the analogous functors $\St_{S}$ and $\Un_{S}$ defined in \S \toposref{strsec}. In fact, it is very difficult to compare the constructions directly. Nevertheless, we will see in \S \ref{bisec4.x} that they must be related by virtue of the universal properties enjoyed by each.
\end{warning}

\subsection{Scaled Simplicial Sets}\label{bisec3.1}

Our goals in this section are the following:

\begin{itemize}
\item[$(a)$] To introduce the category $\scSet$ of {\it scaled simplicial sets}
(Definition \ref{cabman}).

\item[$(b)$] To introduce the class of {\it scaled anodyne} morphisms in
$\scSet$, and to establish some of its basic properties.
In particular, we will show that the class of scaled anodyne maps is stable
under pushout products by arbitrary monomorphisms (Proposition \ref{notred}).

\item[$(c)$] To define the functor $\scCoNerve: \scSet \rightarrow \Cat_{\mSet}$ and
its right adjoint $\scNerve$, the {\it scaled nerve} functor. We will show later
$\scSet$ can be endowed with a model structure such that the adjoint functors
$(\scCoNerve, \scNerve)$ determine a Quillen equivalence of $\scSet$ with
$\Cat_{\mSet}$, as indicated in Theorem \ref{toothygrin} (so that the underlying
homotopy theory of $\scSet$ is the theory of $(\infty,2)$-categories). We will carry
out one crucial step of the proof in this section: the verification that
$\scCoNerve$ carries scaled anodyne morphisms in $\scSet$ to
trivial cofibrations in $\Cat_{\mSet}$ (Proposition \ref{presus}).
\end{itemize}

We begin with the basic definitions.

\begin{definition}\label{cabman}
A {\it scaled simplicial set} is a pair $(X, T)$, where
$X$ is a simplicial set, and $T$ is
a set of $2$-simplices of $X$ which contains every degenerate $2$-simplex.
We will refer to the elements of $T$ as {\it thin}.

Let $(X,T)$ and $(X',T')$ be scaled simplicial sets. A {\it morphism} from 
$(X,T)$ to $(X',T')$ is a map of simplicial sets $f: X \rightarrow X'$ which
carries $T$ into $T'$. The collection of scaled simplicial sets and their morphisms
forms a category, which we will denote by $\scSet$.
\end{definition}

\begin{notation}
Let $X$ be an arbitrary simplicial set. We let
$\deg(X)$ denote the collection of degenerate $2$-simplices of $X$.
We let $X_{\flat} = (X, \deg(X) )$ denote the scaled simplicial set whose underlying simplicial
set is $X$ in which degenerate $2$-simplices are flat, and $X_{\sharp} = (X, X_{2} )$ the scaled simplicial set whose underlying simplicial set $X$ in which all $2$-simplices are flat.
\end{notation}

\begin{definition}\label{slapper}
The collection of {\it scaled anodyne maps} is the weakly saturated collection of morphisms
of $\scSet$ generated by the following maps:
\begin{itemize}
\item[$(A)$] For each $0 < i < n$, the inclusion
$$ ( \Lambda^n_i, (\deg(\Delta^n) \cup \{ \sigma \}) \cap \Hom( \Delta^2, \Lambda^n_i) ) \subseteq ( \Delta^n, \deg( \Delta^n) \cup \{ \sigma \}),$$
where $\sigma$ denotes the $2$-simplex $\Delta^{ \{i-1, i, i+1\} } \subseteq \Delta^n$.

\item[$(B)$] The inclusion
$$ (\Delta^4, T ) \subseteq (\Delta^4, T
\cup \{ \Delta^{ \{0,3,4\} }, \Delta^{ \{1,3,4\} } \} \}$$
where $T$ is the collection of all degenerate $2$-simplices of $\Delta^4$ together with the simplices $\{ \Delta^{ \{0,2,4\} }$, $\Delta^{ \{1,2,3 \} }$, $\Delta^{ \{ 0, 1, 3 \} }$, $\Delta^{ \{1,3, 4 \} } \}$, and
$\Delta^{ \{0,1,2\} }$.

\item[$(C)$] The inclusion
$$ ( \Lambda^{n}_0 \coprod_{ \Delta^{ \{0,1\} }} \Delta^0, T)
\subseteq ( \Delta^n \coprod_{ \Delta^{ \{0,1\} } } \Delta^0, T),$$
where $n > 2$ and $T$ is the collection of all degenerate $2$-simplices of
$\Delta^n \coprod_{ \Delta^{ \{0,1\} }} \Delta^0$ together with the image
of the simplex $\Delta^{ \{0,1,n\} }$. 
\end{itemize}
\end{definition}

\begin{remark}\label{slapperB}
For $i \in \{1,2\}$, the inclusion of scaled simplicial sets
$f_i: ( \Delta^3, T) \subseteq ( \Delta^3, \Hom( \Delta^2, \Delta^3) ),$
is scaled anodyne, where $T$ is the collection of all $2$-simplices of $\Delta^3$ other than
$\Delta^{ \{0, i, 3 \} }$. To see this, it suffices to observe that each $f_i$ is a pushout of a morphism of
type $(B)$ appearing in Definition \ref{slapper}, where the pushouts are formed along the surjective map of simplicial sets $p_i: \Delta^4 \rightarrow \Delta^3$ characterized by the equations
$$p_1^{-1} \{2\} = \Delta^{ \{2,3\} } \quad \quad p_2^{-1} \{1\} = \Delta^{ \{1,2\} }.$$
\end{remark}

\begin{remark}\label{tweepus}
If $A \rightarrow B$ is an inner anodyne map of simplicial sets, then the induced map
$A_{\sharp} \rightarrow B_{\sharp}$ is scaled anodyne.
\end{remark}

\begin{remark}
Definition \ref{slapper} is not self-opposite: if $f: (X,T) \rightarrow (Y,T')$ is a scaled anodyne mapmap of scaled simplicial sets, then the induced map $(X^{op}, T) \rightarrow (Y^{op}, T')$ need not be scaled anodyne. 
\end{remark}

 \begin{definition}
We will say that a map $(X,T) \rightarrow (X',T')$ of scaled simplicial sets is
a {\it cofibration} if the underlying map of simplicial sets $X \rightarrow X'$ is a monomorphism.
\end{definition}

\begin{proposition}\label{notred}
Let $f: X \rightarrow X'$ be a cofibration of scaled simplicial sets, and let
$g: Y \rightarrow Y'$ be a scaled anodyne map. Then the pushout product
$$ f \wedge g: (X \times Y') \coprod_{ X \times Y} (X' \times Y) \rightarrow X' \times Y'$$
is a scaled anodyne map.
\end{proposition}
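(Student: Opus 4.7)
The strategy is the standard two–fold reduction to generators, followed by case analysis. First, fix a scaled anodyne map $g$ and consider the class $W_g$ of cofibrations $f \in \scSet$ such that $f \wedge g$ is scaled anodyne. Because the pushout–product construction sends colimits in the first variable to colimits, and the class of scaled anodyne maps is weakly saturated by definition, $W_g$ is weakly saturated. It therefore suffices to check that $W_g$ contains a generating set of cofibrations. Symmetrically, with $f$ fixed, the class of $g$ for which $f \wedge g$ is scaled anodyne is weakly saturated, so we may assume $g$ is one of the generators $(A)$, $(B)$, $(C)$ of Definition \ref{slapper}.

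A generating set for the cofibrations of $\scSet$ is given by the boundary inclusions $(\partial \Delta^n)_{\flat} \hookrightarrow \Delta^n_{\flat}$ for $n \geq 0$, together with the single thinning map $\tau: \Delta^2_{\flat} \hookrightarrow \Delta^2_{\sharp}$. Thus the problem reduces to analyzing the six families obtained by pairing these two cofibration generators with the three scaled anodyne generators. In each case one writes the pushout–product explicitly, observes that its underlying map of simplicial sets is an iterated composition of inner horn inclusions (or, in the $(C)$ case, of maps obtained by collapsing an edge), and then verifies that a suitable filtration by scalings factors it as a transfinite composition of scaled anodyne generators. For the boundary inclusions paired with $(A)$, this is essentially the classical fact (see Remark \ref{tweepus}) that pushout–products of boundary inclusions with inner horn inclusions are inner anodyne, decorated with the appropriate tracking of which $2$–simplex is declared thin at each stage; the filtration is indexed by the non–degenerate simplices of $\Delta^m \times \Delta^n$ that miss the subcomplex, ordered so that every attaching map is of type $(A)$ with the distinguished thin $2$–simplex landing correctly inside the $\Delta^{\{i-1,i,i+1\}}$ factor of a horn.

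The pushout–products involving the thinning generator $\tau$ are the most delicate: here the underlying simplicial map is an isomorphism, but on the scaling side one is declaring certain new $2$–simplices to be thin. One must show this is achieved by a finite sequence of scaled anodyne maps. For the pair $(\tau, (A))$ this amounts to checking that, whenever a $2$–simplex $\sigma \subseteq \Delta^n$ is scaled and one adds to the horn $\Lambda^n_i$ an extra thin $2$–simplex appearing as the image of $\Delta^2_\sharp$, the resulting inclusion is a pushout of another map of type $(A)$, obtained by enlarging the set of thin simplices declared in the original generator. For the pair $(\tau,(B))$ the situation is combinatorially richer and one must use Remark \ref{slapperB} to trade thinness of certain interior $2$–simplices of $\Delta^3 \subseteq \Delta^4$ by means of the derived scaled anodyne maps $f_1, f_2$. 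For $(\tau,(C))$ one checks directly that adding the thinness of $\Delta^{\{0,1,n\}}$ inside a larger ambient simplicial set is again an iterated pushout of maps of type $(C)$ followed by maps of type $(A)$.

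The main obstacle is bookkeeping: one must verify, for each non–degenerate simplex added in the filtration, that the $2$–simplices whose scaling is being prescribed match exactly those appearing in the relevant generator. The geometric content is in every case a straightforward check on simplices of products $\Delta^m \times \Delta^n$, but it must be done with care; once the filtrations are laid out the verification is routine and no new ideas are needed beyond Definition \ref{slapper} and Remark \ref{slapperB}.
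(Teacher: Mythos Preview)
Your overall strategy matches the paper exactly: reduce both $f$ and $g$ to generators by weak saturation, then do a finite case analysis pairing the two cofibration generators $(\partial\Delta^n)_\flat \hookrightarrow \Delta^n_\flat$ and $\tau:\Delta^2_\flat \hookrightarrow \Delta^2_\sharp$ against the three scaled anodyne generators $(A)$, $(B)$, $(C)$. The paper carries out precisely this seven-case analysis (splitting the $(\tau,(A))$ case according to whether $m=2$ or $m>2$).

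However, your predictions for the individual cases are off in several places, which suggests you have not actually worked through the combinatorics. For $(\tau,(C))$ the pushout product is an \emph{isomorphism} of scaled simplicial sets (the relevant thin $2$-simplex $\Delta^{\{0,1,n\}}$ lies in $\Lambda^n_0$ for $n>2$, so nothing new is being thinned), not an iterated pushout of types $(C)$ and $(A)$ as you claim. For $(\tau,(A))$ with $m>2$ the pushout product is again an isomorphism; only the case $m=2$ is nontrivial, and there the three new thin $2$-simplices in $\Delta^2\times\Delta^2$ are added via three pushouts of type $(B)$ generators, not type $(A)$ as you suggest. For $(\tau,(B))$ the argument is simpler than you indicate: it is an iterated pushout of copies of $g$ itself, using only that every map $\Delta^{\{0,1,4\}}\to\Delta^2$ or $\Delta^{\{0,3,4\}}\to\Delta^2$ extends over $\Delta^4$; Remark \ref{slapperB} is not invoked. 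Conversely, you omit the one place where Remark \ref{slapperB} \emph{is} needed: in case $(1A)$ with $n=1$, $m=2$, the filtration by prism simplices leaves a residual scaling discrepancy that is resolved by three pushouts of the maps in that Remark.

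The moral is that the plan is sound, but the bookkeeping you describe as ``routine'' hides genuine case distinctions, and your guesses about which generators handle which case are mostly wrong. You should actually write out the filtrations.
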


\begin{proof}
Without loss of generality, we may assume that $f$ is a generating cofibration of one of the following forms:
\begin{itemize}
\item[$(1)$] The inclusion $(\bd \Delta^n)_{\flat} \subseteq \Delta^n_{\flat}$ for some $n \geq 0$.
\item[$(2)$] The inclusion $\Delta^2_{\flat} \subseteq \Delta^2_{\sharp}$.
\end{itemize}
Similarly, we may assume that $g$ is one of the generators for the class of scaled anodyne maps
specified in Definition \ref{slapper}. There are seven cases to consider:
\begin{itemize}
\item[$(1A)$] The map $f$ is an inclusion $(\bd \Delta^n)_{\flat} \subseteq \Delta^n_{\flat}$ and
$g$ is an inclusion of the form 
$$ ( \Lambda^m_i, T \cap \Hom( \Delta^2, \Lambda^m_i) ) \subseteq ( \Delta^m, T ),$$
where $0 < i < m$ and $T = \deg( \Delta^m) \cup \{ \Delta^{ \{ i-1, i, i+1 \} } \}$.
Let $S$ denote the collection of all simplices $\sigma: \Delta^{k(\sigma)} \rightarrow \Delta^n \times \Delta^m$ with the following properties:
\begin{itemize}
\item The simplex $\sigma$ is nondegenerate, and induces surjections
$\Delta^{k(\sigma)} \rightarrow \Delta^n$ and $\Delta^{k(\sigma)} \rightarrow \Delta^m$. 
\item There exist integers $0 < p(\sigma) < k$ and $0 \leq j(\sigma) \leq n$ (automatically unique) such that
$\sigma( p(\sigma) ) = ( j(\sigma), i)$ and $\sigma( p(\sigma) - 1) = (j(\sigma), i-1)$.
\end{itemize}
Choose an ordering $S = \{ \sigma_1 < \ldots < \sigma_{q} \}$, such that
$a<b$ if $\dim( \sigma_a) < \dim(\sigma_b)$, or if
$\dim( \sigma_a) = \dim( \sigma_b)$ and $j(\sigma_{a}) < j(\sigma_{b})$.
For every index $1 \leq a \leq q$, let $T_{a}$ denote the collection of all 
$2$-simplices of $\Delta^{ k( \sigma_a) }$ which are either degenerate or have the form
$\Delta^{ \{ p(\sigma_a)-1, p(\sigma_a), p(\sigma_a) + 1 \} }$, and let
$T'_a$ denote the collection of all $2$-simplices of $T_a$ which belong to
$\Lambda^{ k(\sigma_a)}_{ p(\sigma_a)}$. 
We define a sequence of scaled simplicial subsets $\{ Z_a \subseteq X' \times Y' \}_{0 \leq a \leq q}$
as follows. Set 
$Z_0 = (X \times Y') \coprod_{ X \times Y} (X' \times Y)$, and for $a > 0$ define
$Z_{a}$ by the pushout diagram 
$$ \xymatrix{ ( \Lambda^{ k( \sigma_a) }_{p( \sigma_a)}, T'_a) \ar@{^{(}->}[d] \ar[r] & Z_{a-1} \ar[d] \\
( \Delta^{ k( \sigma_a) }, T_a) \ar[r] & Z_{a}, }$$
using the map $\sigma_{a}$ to extend the inclusion of $Z_{a-1}$ into $X' \times Y'$ to an
inclusion of $Z_{a}$ into $X' \times Y'$. 

By construction, the inclusion $Z_0 \subseteq Z_q$
is a scaled anodyne map. The inclusion $\phi: Z_{q} \subseteq X' \times Y'$ is an
isomorphism of the underlying simplicial sets. If $n \neq 1$ or $m \neq 2$, then
$\phi$ is even an isomorphism of scaled simplicial sets, and the proof is complete.
The special case where $n = 1$ and $m = 2$ requires a bit more care: in this case,
we observe that $\phi$ can be obtained as a as a pushout of three scaled anodyne maps 
appearing in Remark \ref{slapperB}.

\item[$(1B)$] The map $f$ is the inclusion $( \bd \Delta^n)_{\flat} \subseteq \Delta^n_{\flat}$, and
$g$ is an inclusion
$(\Delta^4, T ) \subseteq (\Delta^4, T
\cup \{ \Delta^{ \{0,1,4 \} }, \Delta^{ \{0,3,4\} } \} \}$,
where $T$ is defined as in part $(B)$ of Definition \ref{slapper}. 
If $n > 1$, then $f \wedge g$ is an isomorphism. If $n=0$, then $f \wedge g$ is isomorphic to
$g$. If $n =1$, then $f \wedge g$ is isomorphic to an iterated pushout of copies of the morphism $g$; this follows from the fact that every map from $\Delta^{ \{0,1,4\} }$ or $\Delta^{ \{0,3,4\} }$ into
$\Delta^1$ can be extended to a map from $\Delta^4$ into $\Delta^1$.

\item[$(1C)$] The map $f$ is the inclusion $(\bd \Delta^n)_{\flat} \subseteq \Delta^n_{\flat}$,
and $g$ is the inclusion
$$ ( \Lambda^{m}_0 \coprod_{ \Delta^{ \{0,1\} }} \Delta^0, T)
\subseteq ( \Delta^m \coprod_{ \Delta^{ \{0,1\} } } \Delta^0, T)$$
where $m > 2$ and $T$ is defined as in part $(C)$ of Definition \ref{slapper}.
The proof is similar to that of part $(1A)$. Let $S$ denote the collection of all simplices $\sigma: \Delta^{k(\sigma)} \rightarrow \Delta^n \times \Delta^m$ with the following properties:
\begin{itemize}
\item The simplex $\sigma$ is nondegenerate, and induces surjections
$\Delta^{k(\sigma)} \rightarrow \Delta^n$ and $\Delta^{k(\sigma)} \rightarrow \Delta^m$. 
\item There exist integers $0 \leq p(\sigma) < k$ and $0 \leq j(\sigma) \leq n$ (automatically unique) such that
$\sigma( p(\sigma) ) = ( j(\sigma), 0)$ and $\sigma( p(\sigma) + 1) = (j(\sigma), 1)$.
\end{itemize}
Choose an ordering $S = \{ \sigma_1 < \ldots < \sigma_{q} \}$, such that
$a<b$ if $\dim( \sigma_a) < \dim(\sigma_b)$, or if
$\dim( \sigma_a) = \dim( \sigma_b)$ and $j(\sigma_{a}) > j(\sigma_{b})$.
For every index $1 \leq a \leq q$, let $T_{a}$ denote the collection of all 
$2$-simplices of $\Delta^{ k( \sigma_a) }$ which are either degenerate, have the form
$\Delta^{ \{ p(\sigma_a)-1, p(\sigma_a), p(\sigma_a) + 1 \} }$ if $p(\sigma_a)>0$, or
have the form $\Delta^{ \{0, 1, k(\sigma_a) \} }$ if $p(\sigma_a)=0$.
Let $T'_a$ denote the collection of all $2$-simplices of $T_a$ which belong to
$\Lambda^{ k(\sigma_a)}_{ p(\sigma_a)}$. 
Set  $Z_0 = (X \times Y') \coprod_{ X \times Y} (X' \times Y)$, and for $a > 0$ define
$Z_{a}$ by the pushout diagram 
$$ \xymatrix{ ( \Lambda^{ k( \sigma_a) }_{ p( \sigma_a)}, T'_a) \ar@{^{(}->}[d] \ar[r] & Z_{a-1} \ar[d] \\
( \Delta^{ k( \sigma_a) }, T_a) \ar[r] & Z_{a}, }$$
using the map $\sigma_{a}$ to extend the inclusion of $Z_{a-1}$ into $X' \times Y'$ to an
inclusion of $Z_{a}$ into $X' \times Y'$. 
It follows by induction on $a$ that each inclusion $Z_0 \subseteq Z_a$ is a scaled anodyne map; taking $a=q$ we deduce that $f \wedge g$ is scaled anodyne as desired.

\item[$(2A)$] The map $f$ is the inclusion $\Delta^2_{\flat} \subseteq \Delta^2_{\sharp}$, and
$g$ is an inclusion of the form $( \Lambda^2_{1})_{\flat} \subseteq \Delta^2_{\sharp}$. In this
case, $f \wedge g$ is an isomorphism on the underlying simplicial sets, and can be obtained
by composing three scaled anodyne maps belonging to the type $(B)$ of Definition \ref{slapper}.

\item[$(2A')$] The map $f$ is the inclusion $\Delta^2_{\flat} \subseteq \Delta^2_{\sharp}$, and
$g$ is an inclusion of the form $$( \Lambda^m_i, \deg( \Lambda^n_i) \cup \{ \Delta^{ \{i-1, i, i+1\}} \}
) \subseteq ( \Delta^m, \deg( \Delta^n) \cup \{ \Delta^{ \{i-1, i, i+1\}} \}),$$ where $m > 2$ and
$0 < i < m$. In this case, $f \wedge g$ is an isomorphism.

\item[$(2B)$] The map $f$ is the inclusion $\Delta^2_{\flat} \subseteq \Delta^2_{\sharp}$, and
$g$ is the inclusion
$(\Delta^4, T ) \subseteq (\Delta^4, T
\cup \{ \Delta^{ \{0,1,4 \} }, \Delta^{ \{0,3,4\} } \} \}$ where $T$ is defined as in part
$(B)$ of Definition \ref{slapper}. Then $f \wedge g$ is an iterated pushout of morphisms isomorphic to $g$; this follows from the observation that every map from $\Delta^{ \{0,1,4\} }$ or $\Delta^{ \{0,3,4\} }$ to $\Delta^2$ can be extended from a map from $\Delta^4$ to $\Delta^2$.

\item[$(2C)$] The map $f$ is the inclusion $\Delta^2_{\flat} \subseteq \Delta^2_{\sharp}$
and $g$ is the inclusion
$$ ( \Lambda^{n}_0 \coprod_{ \Delta^{ \{0,1\} }} \Delta^0, T)
\subseteq ( \Delta^n \coprod_{ \Delta^{ \{0,1\} } } \Delta^0, T)$$
where $n > 2$ and $T$ is defined as in part $(C)$ of Definition \ref{slapper}. In this case,
$f \wedge g$ is an isomorphism of scaled simplicial sets.
\end{itemize}
\end{proof}

\begin{notation}
The category $\scSet$ of scaled simplicial sets is {\it Cartesian closed}. That is, for
every pair of objects $X,Y \in \scSet$, we can define a new scaled simplicial set
$\sFun( X, Y)$ and a map $e: \sFun(X,Y) \times X \rightarrow Y$ with the following universal property: for every scaled simplicial set $Z$, composition with $e$ induces a bijection
$$ \Hom_{ \scSet}( Z, \sFun(X,Y) ) \rightarrow \Hom_{ \scSet}( Z \times X, Y ).$$
\end{notation}

Let $\mSet$ denote the category of marked simplicial sets, as defined in \S \toposref{twuf}.
We regard $\mSet$ as endowed with the Cartesian model structure of \S \toposref{markmodel},
so that the forgetful functor $\mSet \rightarrow \sSet$ is a right Quillen functor which
determines a Quillen equivalence between $\mSet$ and $\sSet$ (where $\sSet$ is
endowed with the Joyal model structure). We let $\mCat$ denote the category
of $\mSet$-enriched categories, endowed with the model structure of \S \toposref{compp4}.

\begin{definition}\label{ilmut}
Let $\calC$ be a category enriched over marked simplicial sets. We
define a scaled simplicial set $\scNerve( \calC) = ( \Nerve(\calC), T)$
as follows:
\begin{itemize}
\item[$(1)$] The underlying simplicial set $\Nerve(\calC)$ is the simplicial
nerve of $\calC$, where we regard $\calC$ as a simplicial category via the forgetful
functor $\mSet \rightarrow \sSet$.
\item[$(2)$] Suppose given a $2$-simplex $\sigma$ of $\Nerve(\calC)$, corresponding to
a (noncommutative) diagram
$$ \xymatrix{ & Y \ar[dr]^{g} & \\
X \ar[ur]^{f} \ar[rr]^{h} & & Z }$$
in $\calC$ and an edge $\alpha: \Delta^1 \rightarrow \bHom_{\calC}( X, Z)$ joining
$h$ to $g \circ f$. Then $\sigma$ is thin in $\Nerve(\calC)$ if and only if $\alpha$
is a marked edge of $\bHom_{\calC}(X,Z)$. 
\end{itemize}
\end{definition}

The functor $\scNerve: \mCat \rightarrow \scSet$ admits a left adjoint, which we will denote
by $\scCoNerve: \scSet \rightarrow \mCat$. We can describe the functor
$\scCoNerve$ concretely as follows. For every scaled simplicial set $\overline{S} = (S,T)$, 
the underlying simplicial category of $\scCoNerve[\overline{S}]$ can be identified with
$\sCoNerve[S]$. Given a pair of vertices $x,y \in S$, an edge
$\alpha$ of  $\bHom_{\sCoNerve[S]}(x,y)$ is marked if and only if there
exists a sequence of vertices 
$$ x = x_0, x_1, \ldots, x_n = y$$
of $S$ and a sequence of thin $2$-simplices $\sigma_i:$
$$ \xymatrix{ & y_{i} \ar[dr]^{g_i} & \\
x_{i-1} \ar[ur]^{f_i} \ar[rr]^{h_i} & & x_{i} }$$
classifying edges $\alpha_{i}$ of $\bHom_{\sCoNerve[S]}(x_{i-1}, x_i)$
joining $h_{i}$ to $g_i \circ f_i$, such that 
$$ \alpha = \alpha_{n} \circ \ldots \circ \alpha_{1} \in \Hom_{\sSet}( \Delta^1, \bHom_{\sCoNerve[S]}(x,y)).$$

\begin{lemma}\label{preperc}
Fix $n > 0$. Let $Y'$ denote the marked simplicial set
$$ ( \Delta^n \times \Delta^1, M' ),$$
where $M'$ denotes the collection of all degenerate edges of $\Delta^n \times \Delta^1$ together
with the edge $\{n\} \times \Delta^1$, and let
$$ Y = ( (\Delta^n \times \{1\}) \coprod_{ ( \bd \Delta^n \times \{1\} ) } ( \bd \Delta^n \times \Delta^1), M
) \subseteq Y'$$
where $M$ is defined similarly. Then the inclusion $Y \subseteq Y'$ is a trivial cofibration of marked simplicial sets.
\end{lemma}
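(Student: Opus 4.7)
The plan is to exhibit $Y \subseteq Y'$ as a finite composition of pushouts of marked anodyne maps, by filtering $\Delta^n \times \Delta^1$ via its nondegenerate top-dimensional simplices in an order that relegates the unique outer horn to the final step. Label the nondegenerate $(n+1)$-simplices of $\Delta^n \times \Delta^1$ as $\sigma_0, \ldots, \sigma_n$, where $\sigma_k$ has vertices $(0,0), (1,0), \ldots, (k,0), (k,1), (k+1,1), \ldots, (n,1)$. For $0 \leq k \leq n+1$, let $\tau_k$ denote the $n$-simplex with vertices $(0,0), \ldots, (k-1,0), (k,1), \ldots, (n,1)$, so that $\tau_0 = \Delta^n \times \{1\}$ and $\tau_{n+1} = \Delta^n \times \{0\}$. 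A direct computation yields $d_k \sigma_k = \tau_k$ and $d_{k+1} \sigma_k = \tau_{k+1}$, while $d_i \sigma_k$ lies in $\bd \Delta^n \times \Delta^1 \subseteq Y$ for $i \notin \{k, k+1\}$.

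I would then construct a filtration $Y = Y^{(0)} \subsetneq Y^{(1)} \subsetneq \cdots \subsetneq Y^{(n+1)} = Y'$, where $Y^{(k+1)}$ is the union of $Y^{(k)}$ with $\sigma_k$ and its face $\tau_{k+1}$, with markings inherited from $Y'$. An induction shows $\tau_k \subseteq Y^{(k)}$ for each $k$: the case $k=0$ is $\tau_0 \subseteq Y$, and for $k \geq 1$ the simplex $\tau_k = d_k \sigma_{k-1}$ was adjoined at the previous step. Consequently each inclusion $Y^{(k)} \subseteq Y^{(k+1)}$ arises as a pushout of an inclusion of marked simplicial sets
\[
(\Lambda^{n+1}_{k+1}, M_k \cap (\Lambda^{n+1}_{k+1})_1) \subseteq (\Delta^{n+1}, M_k),
\]
attached via $\sigma_k$, where $M_k$ consists of the degenerate edges of $\Delta^{n+1}$ together with the edge $\Delta^{\{n, n+1\}}$ when $k = n$.

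The crucial observation is the choice of horn at position $k+1$. For $0 \leq k < n$ this is an inner horn; moreover, since $(n,0)$ is not a vertex of $\sigma_k$, no nondegenerate edge of $\sigma_k$ maps to the marked edge $\{n\} \times \Delta^1$ of $Y'$, so the attaching inclusion is the flat inner horn $(\Lambda^{n+1}_{k+1})^{\flat} \subseteq (\Delta^{n+1})^{\flat}$, a marked anodyne generator. For $k = n$ we are forced into the outer horn $\Lambda^{n+1}_{n+1}$, but the edge $\Delta^{\{n, n+1\}}$ of $\sigma_n$ is precisely $\{n\} \times \Delta^1$, which is marked in $Y'$; the resulting inclusion is then the marked anodyne generator of ``right outer horn with last edge marked'' type. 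Thus $Y \subseteq Y'$ is a finite composition of pushouts of marked anodyne maps, and in particular a trivial cofibration in the Cartesian model structure on $\mSet$.

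The main obstacle, and the rationale for this specific ordering, is the unavoidable appearance of the outer horn at the final step. Any other ordering either requires filling multiple absent faces simultaneously (as would happen were one to start with $\sigma_n$, whose faces $\tau_n$ and $\tau_{n+1}$ are both missing from $Y$), or produces outer horns at stages where no marked edge of $Y'$ is available to rescue them. The ordering $\sigma_0, \sigma_1, \ldots, \sigma_n$ arranges for the consumption of the unique nondegenerate marked edge of $Y'$ to coincide with the unique step at which an outer horn cannot be avoided.
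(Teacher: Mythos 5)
Your proof is correct and follows essentially the same route as the paper: the same filtration by the $(n+1)$-simplices $\sigma_0,\dots,\sigma_n$, with each step a pushout of a flat inner horn for $k<n$ and the marked right outer horn $(\Lambda^{n+1}_{n+1})^{\flat}\coprod_{(\Delta^{\{n,n+1\}})^{\flat}}(\Delta^{\{n,n+1\}})^{\sharp}\subseteq(\Delta^{n+1})^{\flat}\coprod_{(\Delta^{\{n,n+1\}})^{\flat}}(\Delta^{\{n,n+1\}})^{\sharp}$ at the final step. Your added bookkeeping about the faces $\tau_k$ and the observation that $(n,0)\notin\sigma_k$ for $k<n$ makes explicit what the paper leaves implicit, but the argument is the same.
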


\begin{proof}
For $0 \leq i \leq n$, let $\sigma_{i}$ denote the $(n+1)$-simplex of $\Delta^n \times \Delta^1$
described by the formula
$$ \sigma_{i}(j) = \begin{cases} (j,0) & \text{if } j \leq i \\
(j-1, 1) & \text{if } j > i. \end{cases}$$
Let $Y_{i}$ be the union $Y \cup \sigma_0^{\flat} \cup \ldots \cup \sigma_i^{\flat}$, regarded as a marked
simplicial subset of $Y'$. By convention, we will say that $Y_{-1} = Y$.
It will suffice to show that each of the inclusions
$$ Y \stackrel{f_0}{\rightarrow} Y_0 \stackrel{f_1}{\rightarrow} Y_1 \rightarrow \ldots
\stackrel{ f_{n} }{\rightarrow} Y_{n} = Y'$$
is a marked anodyne morphism.

If $i < n$, we have a pushout diagram
$$ \xymatrix{ (\Lambda^{n+1}_{i+1})^{\flat} \ar@{^{(}->}[d] \ar[r] & Y_{i-1} \ar[d]^{f_{i}} \\
( \Delta^{n+1})^{\flat} \ar[r] & Y_{i} }$$
of marked simplicial sets. If $i = n$, we
instead have a pushout diagram
$$ \xymatrix{ ( \Lambda^{n+1}_{n+1})^{\flat} \coprod_{ (\Delta^{ \{n, n+1\} })^{\flat} } 
( \Delta^{ \{n, n+1\} })^{\sharp} \ar[r] \ar@{^{(}->}[d] & Y_{n-1} \ar[d]^{f_n} \\
(\Delta^{n+1})^{\flat} \coprod_{ ( \Delta^{ \{n, n+1\} })^{\flat}} ( \Delta^{ \{n, n+1\} })^{\sharp}
\ar[r] & X_{n}. }$$
In either case, the diagram exhibits $f_{i}$ as a pushout of a marked anodyne morphism, which
is therefore marked anodyne.
\end{proof}

\begin{lemma}\label{perc}
Let $k > 0$, let $C$ denote the cube $( \Delta^1 )^{k}$, and let $v = (1, \ldots, 1)$ denote the final
vertex of $C$. Let $Y'$ denote the marked simplicial set
$(C \times \Delta^1, M')$, where $M$ is the collection of all degenerate edges of
$C \times \Delta^1$ together with the edge $\{v\} \times \Delta^1$, and let
$$Y = ( (\bd C \times \Delta^1) \coprod_{ \bd C \times \{1\} } ( C \times \{1\} ), M) \subseteq Y'$$
where $M$ is defined similarly. Then the inclusion $Y \subseteq Y'$ is a trivial cofibration of marked simplicial sets.
\end{lemma}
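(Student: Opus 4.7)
The plan is to proceed by induction on $k$. The base case $k=1$ is immediate from Lemma~\ref{preperc} applied with $n=1$, since $C = \Delta^1$ has $v = 1$ as its terminal vertex and the two statements coincide.

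For the inductive step with $k > 1$, I would construct $Y \hookrightarrow Y'$ as a transfinite composition of pushouts of generating trivial cofibrations of marked simplicial sets, in direct analogy with the proof of Lemma~\ref{preperc}. The non-degenerate $(k+1)$-simplices of $C \times \Delta^1 = (\Delta^1)^{k+1}$ are the shuffle simplices $\sigma_\pi$ indexed by $\pi \in S_{k+1}$; for each such $\pi$, let $m_\pi = \pi^{-1}(k+1)$ be the position at which $\sigma_\pi$ flips the $\Delta^1$-coordinate. A direct computation shows that the last edge of $\sigma_\pi$ coincides with the marked edge $\{v\} \times \Delta^1$ precisely when $m_\pi = k+1$, and that $\partial_0 \sigma_\pi \subseteq Y$ always while $\partial_{k+1} \sigma_\pi \subseteq Y$ iff $m_\pi < k+1$ (when $m_\pi = k+1$, that face lies instead in $C \times \{0\}$ and is unique to $\sigma_\pi$).

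I would attach the $\sigma_\pi$'s in order of increasing $m_\pi$, using a linear extension of the weak Bruhat order within each group of fixed $m_\pi$, so that the inner faces of $\sigma_\pi$ shared with $\sigma_{\pi \tau_j}$ for adjacent transpositions $\tau_j$ are already in place at the moment of attachment. Each $\sigma_\pi$ with $m_\pi < k+1$ is then attached via a pushout of an inner horn inclusion $(\Lambda^{k+1}_{j(\pi)})^\flat \hookrightarrow (\Delta^{k+1})^\flat$ for a suitable $j(\pi) \in \{1,\ldots,k\}$, while each $\sigma_\pi$ with $m_\pi = k+1$ is attached via the marked right-horn trivial cofibration $(\Lambda^{k+1}_{k+1})^\flat \cup_{(\Delta^{\{k,k+1\}})^\flat} (\Delta^{\{k,k+1\}})^\sharp \hookrightarrow (\Delta^{k+1})^\flat \cup_{(\Delta^{\{k,k+1\}})^\flat} (\Delta^{\{k,k+1\}})^\sharp$, which is precisely the generator used in the final step of Lemma~\ref{preperc}. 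Between top-dimensional attachments, certain bridging simplices (in particular the two $k$-simplices of $C \times \{0\}$ corresponding to the non-degenerate triangles of $C$ glued along their diagonal, together with the diagonal edges themselves) must be pre-attached via inner horn fillings $(\Lambda^{n}_{i})^\flat \hookrightarrow (\Delta^{n})^\flat$ with $0 < i < n$ to supply the missing boundary data for the subsequent $\sigma_\pi$ attachments.

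The hard part will be the combinatorial verification that at every step exactly one face of the attached simplex is missing from the previous stage, so that the attachment is genuinely a pushout along a horn. Explicit check of the case $k = 2$ (six $3$-simplices, preceded by two bridging $2$-simplices) confirms the pattern, but writing out the general bookkeeping is intricate. A potentially cleaner alternative would combine the inductive hypothesis applied to $C' = (\Delta^1)^{k-1}$ with Lemma~\ref{preperc} (for $n=1$) via a pushout-product with $(\bd C')^\flat \hookrightarrow (C')^\flat$; however, this produces a trivial cofibration whose target carries additional marked edges $\{(d, v')\} \times \Delta^1$ for all vertices $d$ of the isolated $\Delta^1$ factor and $v' \in C'$, and reconciling those surplus markings with the single desired marking $\{v\} \times \Delta^1$ of Lemma~\ref{perc} is itself a nontrivial separate argument.
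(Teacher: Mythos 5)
Your proposal is considerably more elaborate than the paper's proof, which consists of a single observation plus an immediate application of Lemma~\ref{preperc}. The observation you are missing is the following: every nondegenerate simplex of the cube $C = (\Delta^1)^k$ that does \emph{not} lie in $\bd C$ has $v = (1,\ldots,1)$ as its final vertex. (If the top vertex $x_n$ of a chain $x_0 < \cdots < x_n$ in $\{0,1\}^k$ differs from $v$, then $(x_n)_i = 0$ for some $i$, so the whole chain lies in the facet $\{ x_i = 0 \} \subseteq \bd C$.) Granting this, one filters the inclusion $Y \subseteq Y'$ by attaching, for each interior nondegenerate simplex $\sigma$ of $C$ in increasing order of dimension $n = \dim\sigma$, the marked simplicial set $\sigma \times \Delta^1$ along the inclusion of Lemma~\ref{preperc}. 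At each stage the requisite boundary data is present: faces of $\sigma$ lying in $\bd C$ contribute $\tau\times\Delta^1 \subseteq \bd C \times\Delta^1 \subseteq Y$, interior faces were attached earlier by dimension, and $\sigma\times\{1\}\subseteq C\times\{1\}\subseteq Y$. The marked edge $\{v\}\times\Delta^1$ corresponds exactly to $\{n\}\times\Delta^1$ in Lemma~\ref{preperc} because $v$ is the final vertex of $\sigma$. A transfinite composite of pushouts of these trivial cofibrations gives the result.

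Your primary route — attaching the $(k+1)$-dimensional shuffle simplices of $C \times \Delta^1$ one at a time via horn fillings, interleaved with ``bridging'' lower simplices — is not an incorrect idea, but as you acknowledge you have only verified it for $k=2$, and the claim that the attachment order makes each step a genuine horn pushout (i.e.\ with exactly one missing face) is precisely the kind of combinatorial bookkeeping the paper's observation is designed to avoid. The bridging step in particular needs the same careful accounting and is left unjustified. Your pushout-product alternative fails for exactly the reason you identify: it produces spurious markings on the edges $\{(d,v')\}\times\Delta^1$ with $d$ a vertex of the extra $\Delta^1$-factor, and pruning those markings back down to $\{v\}\times\Delta^1$ is a separate, nontrivial problem (and not obviously a weak equivalence to establish). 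In short, look for the structural observation about the cube before reaching for the shuffle decomposition.
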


\begin{proof}
We observe that every simplex of $C$ either contains the vertex $v$ as a final vertex
or belongs to the boundary $\bd C$. The desired result therefore follows from repeated application of Lemma \ref{preperc}.
\end{proof}

\begin{proposition}\label{presus}
Let $f: X \rightarrow Y$ be a map of scaled simplicial sets. Then:
\begin{itemize}
\item[$(1)$] If $f$ is a cofibration, then the induced map $\scCoNerve[X] \rightarrow
\scCoNerve[Y]$ is a cofibration of $\mSet$-enriched categories.
\item[$(2)$] If $f$ is a scaled anodyne map, then the induced map
$\scCoNerve[X] \rightarrow \scCoNerve[Y]$ is a trivial cofibration of
$\mSet$-enriched categories.
\end{itemize}
\end{proposition}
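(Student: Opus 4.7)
The plan is to handle part $(1)$ directly from the explicit description of $\scCoNerve$, and to deduce part $(2)$ by reducing to each of the three classes of generating scaled anodyne morphisms $(A)$, $(B)$, $(C)$ of Definition \ref{slapper}.

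For part $(1)$, observe that the underlying simplicial functor of $\scCoNerve[f]$ coincides with the map $\sCoNerve[X] \to \sCoNerve[Y]$, which is a cofibration of simplicial categories by the classical theory (since $\sCoNerve$ sends monomorphisms of simplicial sets to cofibrations in the Bergner model structure; see Theorem \toposref{biggier}). It remains to check compatibility with the marking: an edge of $\bHom_{\sCoNerve[X]}(x,y)$ is marked in $\scCoNerve[X]$ precisely when it is expressible as a composite of edges induced by thin $2$-simplices of $X$. Since the cofibration $f$ is injective and the thin $2$-simplices of $X$ are exactly the $2$-simplices sent to thin $2$-simplices of $Y$, the marked edges of $\scCoNerve[X]$ are exactly the preimages of the marked edges of $\scCoNerve[Y]$, so $\scCoNerve[f]$ is a cofibration in $\Cat_{\mSet}$.

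For part $(2)$, the class of morphisms whose image under $\scCoNerve$ is a trivial cofibration in $\Cat_{\mSet}$ is weakly saturated, so it suffices to treat each of the three classes of generators separately. In each case, the induced functor $\scCoNerve[X] \to \scCoNerve[Y]$ is bijective on objects, so the claim reduces to showing that the induced map on each mapping space is a trivial cofibration in $\mSet$. The key combinatorial input is the standard identification of $\bHom_{\sCoNerve[\Delta^n]}(i,j)$, for $i \le j$, with the nerve of the poset of subsets $S \subseteq \{i, i+1, \ldots, j\}$ with $i, j \in S$, which is canonically a cube $(\Delta^1)^{j-i-1}$. Case $(A)$ is handled by combining the classical fact that $\sCoNerve$ sends inner horn inclusions to trivial cofibrations in $\sCat$ with the observation that the distinguished thin $2$-simplex $\Delta^{\{i-1,i,i+1\}}$ appears in both the source and the target, so the induced markings agree and the resulting mapping-space inclusions are trivial cofibrations of marked simplicial sets. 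Case $(C)$ reduces, after identifying the quotient mapping spaces in $\sCoNerve[\Delta^n \coprod_{\Delta^{\{0,1\}}} \Delta^0]$, to verifying that the appropriate inclusions of sub-cubes equipped with the marking generated by $\Delta^{\{0,1,n\}}$ are marked anodyne; this follows from iterated application of Lemma \ref{perc}.

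The main obstacle will be case $(B)$, where the underlying simplicial set of $\Delta^4$ is unchanged and the content is entirely about declaring additional $2$-simplices to be thin. Here the induced functor on the underlying simplicial categories is the identity, and one must show that enlarging the marking on the mapping space $\bHom_{\sCoNerve[\Delta^4]}(0,4) \cong (\Delta^1)^3$ by the edges induced from the newly thin $2$-simplices is a trivial cofibration of marked simplicial sets. The approach will be to trace through explicitly which specific edges of this cube become marked, and to build a filtration of the larger marked mapping-cube whose successive steps are marked anodyne maps, to be verified directly via Lemma \ref{perc}. The particular shape of the generator $(B)$ — a $4$-simplex with precisely this collection of thin sub-$2$-simplices — is designed so that this filtration exists; the delicate part of the argument is to check that the composites of marked edges inside the cube match up correctly so that each filtration step is indeed a marked anodyne inclusion.
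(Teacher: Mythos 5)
Your high-level plan for part $(2)$ (reduce to the generators $(A)$, $(B)$, $(C)$) is the right one, but there are genuine errors in both parts.

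For part $(1)$, the claim that ``the thin $2$-simplices of $X$ are exactly the $2$-simplices sent to thin $2$-simplices of $Y$'' is false for a general cofibration in $\scSet$: a cofibration only requires an underlying monomorphism of simplicial sets, so one has $T \subseteq f^{-1}(T')$ but in general not equality. The simplest failure is the generating cofibration $\Delta^2_\flat \to \Delta^2_\sharp$, where the underlying map of simplicial sets is the identity (so your appeal to $\sCoNerve$ preserving cofibrations sees nothing), yet the marking on $\bHom(0,2)$ strictly grows from $(\Delta^1)^\flat$ to $(\Delta^1)^\sharp$. This map is a cofibration in $\Cat_{\mSet}$ but is not detected by your argument at all. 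The paper instead works directly with an explicit generating set of cofibrations in $\scSet$ and identifies $\scCoNerve$ applied to each with a pushout of a generating cofibration in $\Cat_{\mSet}$; for $\Delta^2_\flat \to \Delta^2_\sharp$ it is a pushout of $[1]_{(\Delta^1)^\flat} \to [1]_{(\Delta^1)^\sharp}$.

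For part $(2)$, there are two problems. In case $(A)$ with $n = 2$, the distinguished thin $2$-simplex $\Delta^{\{i-1,i,i+1\}} = \Delta^2$ does \emph{not} lie in $\Lambda^2_1$, so ``the markings agree'' is false there; the source is $(\Lambda^2_1)_\flat$ with trivial marking. This is exactly why the paper splits case $(A)$ into $n=2$ and $n > 2$. (Even for $n>2$, agreement of markings alone does not show the mapping-space inclusion is a trivial cofibration — that is the content of Lemma \ref{perc}, which you invoke only for $(C)$.) For case $(B)$, your proposal is to exhibit the marking enlargement on $(\Delta^1)^3$ as a filtration of marked anodyne maps via Lemma \ref{perc}, but the edges $q$ and $q'$ being added are not in the distinguished position of Lemma \ref{perc}, and there is no evident way to produce such a filtration; the paper instead verifies directly that $[1]_S \to [1]_{S'}$ is a weak equivalence by showing that any diagram $(\Delta^1)^3 \to \calC$ to an $\infty$-category that sends the $p_i$ to equivalences also sends $q$ and $q'$ to equivalences, by writing down explicit inverses. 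That argument produces a trivial cofibration without producing a marked-anodyne decomposition, and I would not expect your filtration sketch to close out.
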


\begin{proof}
We begin by recalling some notation from \S \toposref{compp4}. For every marked simplicial
set $S$, we let $[1]_{S}$ denote the $\mSet$-enriched category whose set of objects is
$[1] = \{0, 1\}$, with
$$ \bHom_{ [1]_{S} }(i, j) = \begin{cases} ( \Delta^0)^{\sharp} & \text{if } i = j \\
\emptyset & \text{if } i > j \\
S & \text{if } i < j. \end{cases}$$
Note that if $p: S \rightarrow S'$ is a cofibration of marked simplicial sets, then
the induced map $[1]_{S} \rightarrow [1]_{S'}$ is a cofibration of $\mSet$-enriched categories,
which is a weak equivalence provided that $p$ is a weak equivalence.

We now prove $(1)$. Since the collection of all morphisms $f$ for which $\scCoNerve[f]$ is 
a cofibration is weakly saturated, it will suffice to prove the result for a collection of morphisms
which generate the class of cofibrations in $\scSet$. There are three cases to consider:

\begin{itemize}
\item[$(a)$] The map $f$ has the form $\Delta^2_{\flat} \rightarrow \Delta^2_{\sharp}$. In
this case, $\scCoNerve[f]$ is a pushout of the cofibration
$[1]_{ (\Delta^1)^{\flat} } \rightarrow [1]_{ ( \Delta^1)^{\sharp} }.$
\item[$(b)$] The map $f$ has the form $\emptyset = \bd \Delta^0 \subseteq \Delta^0$. 
In this case, $\scCoNerve[f]$ can be identified with the inclusion of the initial
$\mSet$-enriched category into the final $\mSet$-enriched category, which is again a cofibration.
\item[$(c)$] The map $f$ is an inclusion of the form $\bd \Delta^n \subseteq \Delta^n$, for $n > 0$.
Let $\sigma$ denote the cube $( \Delta^1)^{n-1}$ and $\bd \sigma \subseteq \sigma$ its boundary.
The morphism $\scCoNerve[f]$ is then a pushout of the cofibration
$[1]_{ \bd \sigma^{\flat} } \rightarrow [1]_{ \sigma^{\flat} }$.
\end{itemize}

The proof of $(2)$ is a bit more involved. We may again assume without loss of generality that
$f$ is a generator for the class of scaled anodyne maps. There are four cases to consider.
\begin{itemize}
\item[$(A)$] The map $f$ has the form $(\Lambda^2_1)_{\flat} \subseteq \Delta^2_{\sharp}$. In
this case, $\scCoNerve[f]$ is a pushout of the trivial cofibration
$[1]_{ (\Delta^{\{1\}})^{\sharp} } \rightarrow [1]_{ (\Delta^1)^{\sharp} }$.

\item[$(A')$] There exist integers $n > 2$ and $0 < i < n$, such that
$f$ is of the form 
$$( \Lambda^n_i, \deg( \Lambda^n_{i}) \cup \{ \Delta^{ \{i-1, i, i+1 \}} \}
) \subseteq ( \Delta^n, \deg( \Delta^n) \cup \{ \Delta^{ \{i-1, i, i+1 \} } \}.$$
In this case, $\scCoNerve[f]$ is a pushout of a morphism of the form
$[1]_{Y} \rightarrow [1]_{Y'}$, where $Y \rightarrow Y'$ is the inclusion of
marked simplicial sets appearing in the statement of Lemma \ref{perc}.
Since $Y \rightarrow Y'$ is a trivial cofibration, we conclude that
$\scCoNerve[f]$ is a trivial cofibration as well.


\item[$(B)$] The map $f$ is an inclusion
$(\Delta^4, T ) \subseteq (\Delta^4, T
\cup \{ \Delta^{ \{0,1,4 \} }, \Delta^{ \{0,3,4\} } \} \})$ 
where $T$ is defined as in part $(B)$ of Definition \ref{slapper}.
In this case, we observe that the induced map $\scCoNerve[f]$ is a pushout of a cofibration
$[1]_{S} \subseteq [1]_{S'}$. Here $S$ is the marked simplicial set $( \Delta^1 \times \Delta^1 \times \Delta^1, M)$ depicted in the following diagram:
$$ \xymatrix{ \{0,4\} \ar[rr]^{q'} \ar[dd]^{q} \ar[dr]^{p_0} & & \{0,1,4\} \ar[dd]^{p_3} \ar[dr]^{r} &  \\
& \{0,2,4\} \ar[rr]^{p_4 \quad \quad \quad} \ar[dd] & & \{ 0,1,2,4\} \ar[dd] \\
\{0,3,4\} \ar[rr]^{\quad \quad p_2} \ar[dr] & & \{0,1,3,4\} \ar[dr]^{p_1} & \\
& \{ 0, 2, 3, 4 \} \ar[rr]^{p_5} & & \{0,1,2,3,4\}. }$$
Here the marked edges of $S$ are the degenerate edges, together with the edges
$\{ p_i \}_{0 \leq i \leq 5}$, and $S' = ( \Delta^1 \times \Delta^1 \times \Delta^1, M \cup \{q,q'\})$.
To prove that $\scCoNerve[f]$ is a trivial cofibration, it will suffice to show that for every $\infty$-category $\calC$ and every map $g: \Delta^1 \times \Delta^1 \times \Delta^1 \rightarrow \calC$, if
$g$ carries the edges $\{ p_i \}_{0 \leq i \leq 5}$ to equivalences in $\calC$, then $g$ carries
$q$ and $q'$ to equivalences in $\calC$. To prove this, we observe that $g(q)$ has an inverse in the homotopy category $\h{\calC}$, given by the composition
$g(p_0)^{-1} g(p_4)^{-1} g(r) g(p_3)^{-1} g(p_2)$, and that $g(q')$ has an inverse given by the composition $g(q)^{-1} g(p_2)^{-1} g(p_3)$.

\item[$(C)$] The map $f$ is an inclusion
$$ ( \Lambda^{n}_0 \coprod_{ \Delta^{ \{0,1\} }} \Delta^0, T)
\subseteq ( \Delta^n \coprod_{ \Delta^{ \{0,1\} } } \Delta^0, T),$$
where $n > 2$ and $T$ is defined as in part $(C)$ of Definition \ref{slapper}.
The desired result in this case is merely a translation of Lemma \ref{prewise}.
\end{itemize}
\end{proof}

\begin{proposition}\label{twop1}
Let $S$ be a simplicial set, let $\calC = \scCoNerve[ S_{\sharp} ]$. We
define a new $\mSet$-enriched category $\calC_{S}$ as follows:
\begin{itemize}
\item The objects of $\calC_{S}$ are the objects of $\calC$ (the vertices of $S$).
\item Given a pair of objects $x,y \in \calC_{S}$ such that
$\bHom_{\calC}(x,y) = (X,M)$, we set $\bHom_{\calC_S}(x,y) = X^{\sharp}$.
\end{itemize}
Then the evident functor $f_{S}: \calC \rightarrow \calC_{S}$ is a trivial cofibration of $\mSet$-enriched categories.
\end{proposition}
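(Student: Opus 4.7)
I will show that $f_{S}$ is a cofibration and a weak equivalence in the model category $\mCat$.

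The map $f_{S}$ is the identity on objects, and for each pair of objects $x,y$ acts by the inclusion $(X,M) \hookrightarrow X^{\sharp}$, where $(X,M) = \bHom_{\calC}(x,y)$ and $X = \bHom_{\sCoNerve[S]}(x,y)$. This inclusion is the identity on the underlying simplicial set and merely enlarges the set of marked edges; it is therefore a cofibration in $\mSet$. Consequently $f_{S}$ lies in the weakly saturated class generated by morphisms of the form $[1]_{A} \to [1]_{B}$ with $A \to B$ a cofibration in $\mSet$, and so is a cofibration in $\mCat$.

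For the weak-equivalence assertion it suffices, since $f_{S}$ is bijective on objects, to verify that $(X,M) \hookrightarrow X^{\sharp}$ is a weak equivalence in $\mSet$ for every pair $x,y$. Unwinding the Cartesian model structure on $\mSet$, this reduces to the statement that the $\infty$-categorical localisation $X[M^{-1}]$ is an $\infty$-groupoid; equivalently, every edge of $X$ becomes invertible after inverting $M$. The main tool is the $2$-out-of-$3$ principle: a $2$-simplex of $X$ with two of its three faces in $M$ forces its third face to become invertible in $X[M^{-1}]$.

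I would reduce to the case $S = \Delta^{n}$ via the colimit presentation $\sCoNerve[S] \simeq \colim_{\Delta^{n}\to S}\sCoNerve[\Delta^{n}]$ in $\sCat$, which (combined with the standard necklace description of mapping spaces in $\sCoNerve[S]$) implies that every edge of $\bHom_{\sCoNerve[S]}(x,y)$ arises, up to composition in the simplicial category, from edges in mapping spaces of $\sCoNerve[\Delta^{n}]$ for simplices $\Delta^{n} \to S$. Because the scaled morphisms $\Delta^{n}_{\sharp} \to S_{\sharp}$ send marked edges to marked edges and because $M$ is stable under composition in the simplicial category, it suffices to handle the case $S = \Delta^{n}$. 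In that case $\bHom_{\sCoNerve[\Delta^{n}]}(i,j)$ is the nerve of the poset $\mathcal{P}_{ij}$ of subsets of $[i,j]$ containing $i$ and $j$, and its $1$-simplices are inclusions $T \subseteq T'$. For a covering relation $T \subsetneq T \cup \{k\}$, let $p<k<q$ be the elements of $T$ bracketing $k$; the thin $2$-simplex $\Delta^{\{p,k,q\}}$ of $\Delta^{n}_{\sharp}$ yields a marked edge of $\bHom(p,q)$, and composition with the identity edges at the remaining factors of $T$ realises $T \to T \cup \{k\}$ as an element of $M$. An arbitrary inclusion $T \subsetneq T'$ fits into the $2$-simplex $T \subseteq T\cup\{k\} \subseteq T'$ of $\mathcal{P}_{ij}$ for any $k \in T' \setminus T$; induction on $|T' \setminus T|$ combined with $2$-out-of-$3$ then shows that $T \to T'$ becomes invertible in $X[M^{-1}]$. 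The main technical obstacle is the reduction step from general $S$ to the simplex case, since the mapping spaces of $\sCoNerve[S]$ are not themselves colimits of the mapping spaces of $\sCoNerve[\Delta^{n}]$; this will require either the necklace presentation of mapping spaces or a careful direct analysis of colimits in $\sCat$.
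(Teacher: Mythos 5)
Your argument takes a genuinely different route from the paper's. The paper proves the claim directly at the level of $\mSet$-enriched categories: it exhibits $f_{S}$ as a (transfinite) iterated pushout of the single trivial cofibration $[1]_{A} \subseteq [1]_{B}$, where $A \to B$ is the marked anodyne inclusion $(\Lambda^2_1)^{\sharp} \coprod_{(\Lambda^2_1)^{\flat}} (\Delta^2)^{\flat} \subseteq (\Delta^2)^{\sharp}$, as in the proof of Proposition \ref{presus}. Each pushout takes a $2$-simplex of some mapping object whose two long faces are already marked, and marks the remaining face. You instead pass to a hom-space-by-hom-space criterion (an $\mSet$-enriched functor bijective on objects is a Dwyer--Kan equivalence iff it is a weak equivalence on each mapping object), then show that each inclusion $(X,M)\hookrightarrow X^{\sharp}$ is a weak equivalence of marked simplicial sets, i.e.\ that $X[M^{-1}]$ is an $\infty$-groupoid. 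The combinatorics at the heart of both arguments is the same --- decompose an arbitrary inclusion $T\subseteq T'$ in the poset $P_{ij}$ into covering inclusions, each coming from a thin $2$-simplex --- but your approach makes this explicit where the paper leaves the iteration scheme to the reader, and the paper's approach avoids any appeal to the necklace structure of $\sCoNerve[S]$.

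The gap you flag yourself --- the reduction from general $S$ to $S=\Delta^{n}$ --- is real but not fatal. Via the necklace presentation of $\sCoNerve$, every edge of $\bHom_{\sCoNerve[S]}(x,y)$ is the image, under the composition maps of $\sCoNerve[S]$, of a tuple of edges living in hom-spaces of $\sCoNerve[\Delta^{n_i}]$ for simplices $\Delta^{n_i}\to S$. Since $\scCoNerve$ applied to $\Delta^{n_i}_{\sharp}\to S_{\sharp}$ carries marked edges to marked edges, and since composition in a fibrant replacement of $\calC$ is a map of marked simplicial sets (hence carries marked edges to equivalences), the invertibility you establish in the simplex case transports to general $S$. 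You should also be aware that your simplex-case analysis establishes only that $T\to T'$ becomes invertible in $X[M^{-1}]$, not that it already lies in $M$; this is the correct thing to prove (indeed, $M$ does not contain every edge of $\Nerve P_{ij}$ --- e.g.\ $\{0,3\}\to\{0,1,2,3\}$ is not a composite, in the simplicial-category sense, of $2$-simplex edges), and your phrasing in terms of the localization is the right one. Finally, your justification that $f_{S}$ is a cofibration is an assertion rather than an argument, but it is easily made precise: the map is bijective on objects, is the identity on underlying simplicial sets of mapping objects, and enlarges markings, so it is a transfinite composite of pushouts along $[1]_{(\Delta^1)^{\flat}}\to[1]_{(\Delta^1)^{\sharp}}$.
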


\begin{proof}
The map $f_{S}$ can be obtained as an iterated pushout of trivial cofibrations of the form
$$[1]_{ ( \Lambda^2_1)^{\sharp} \coprod_{ ( \Lambda^2_1)^{\flat} } (\Delta^2)^{\flat}}
\subseteq [1]_{ (\Delta^2)^{\sharp}},$$
(with notation as in the proof of Proposition \ref{presus}). 
\end{proof}

\subsection{Locally coCartesian Model Structures}\label{bisec3.2}

Let $S$ be a simplicial set. In \S \toposref{markmodel}, we saw that there is
a simplicial model structure on the category $(\mSet)_{/S^{\sharp}}$ of marked simplicial
sets over $S$, whose fibrant objects correspond precisely to coCartesian fibrations
$X \rightarrow S$. Our goal in this section is to introduce a generalization of this model structure.

\begin{definition}\label{catpat}
Let $S$ be a simplicial set. A {\it categorical pattern} on $S$ is a triple
$(M_S, T, \{ p_{\alpha}: K_{\alpha}^{\triangleleft} \rightarrow S' \}_{\alpha \in A})$, where
$M_S$ is a marking of $S$ (that is, a collection of edges of $S$ which contains all degenerate edges),
$T$ is a scaling of $S$ (that is, a collection of $2$-simplices of $S$ which contains all degenerate
$2$-simplices), and $\{ p_{\alpha}: K_{\alpha}^{\triangleleft} \rightarrow S \}_{\alpha \in A}$ is a collection of maps of simplicial sets which carry each edge of $K_{\alpha}^{\triangleleft}$ into $M_S$ and
each $2$-simplex of $K_{\alpha}^{\triangleleft}$ into $T$.

Suppose we are given a categorical pattern 
$\CatP = (M_S, T, \{ p_{\alpha}: K_{\alpha}^{\triangleleft} \rightarrow S \}_{\alpha \in A})$
on $S$. A {\it marked simplicial set} over $\CatP$ is a marked simplicial set
$\overline{X} = (X,M)$ equipped with a map $f: X \rightarrow S$ satisfying the following condition:
for every edge $e$ of $X$ which belongs to $M$, $f(e)$ belongs to $M_S$. We let
$\mset{ \CatP}$ denote the category of marked simplicial sets over $\CatP$.

We will say that an object $\overline{X} \in \mset{ \CatP}$ is {\it $\CatP$-fibered} if the following conditions are satisfied:
\begin{itemize}
\item[$(1)$] The underlying map of simplicial sets $f: X \rightarrow S$ is an inner fibration.
\item[$(2)$] For each edge $\Delta^1 \rightarrow S$ belonging to $M_S$, the induced map
$f': X \times_{S} \Delta^1 \rightarrow \Delta^1$ is a coCartesian fibration.
\item[$(3)$] An edge $e$ of $X$ belongs to $M$ if and only if $f(e)$ belongs to $M_S$ and
$e$ is an $f'$-coCartesian edge of $X \times_{S} \Delta^1$.
\item[$(4)$] Given a commutative diagram
$$ \xymatrix{ \Delta^{ \{0,1\} } \ar[d] \ar[r]^{e} & X \ar[d] \\
\Delta^2 \ar[r]^{\sigma} & S, }$$
if $e \in M$ and $\sigma \in T$, then $e$ determines an
$f'$-coCartesian edge of $X \times_{S} \Delta^2$, where
$f': X \times_{S} \Delta^2 \rightarrow \Delta^2$ denotes the projection map.

\item[$(5)$] For every index $\alpha \in A$, the induced coCartesian fibration
$f_{\alpha}: X \times_{S} K_{\alpha}^{\triangleleft} \rightarrow K_{\alpha}^{\triangleleft}$ is classified
by a limit diagram $K_{\alpha}^{\triangleleft} \rightarrow \Cat_{\infty}$.

\item[$(6)$] For every index $\alpha \in A$ and every coCartesian section
$s$ of the map $f_{\alpha}$, $s$ is an $f$-limit diagram in $X$.
\end{itemize}
\end{definition}

\begin{remark}\label{caroline}
Let $\CatP$ be a categorical pattern on a simplicial set $S$. We will sometimes abuse terminology by saying that a map of simplicial sets $X \rightarrow S$ is {\it $\CatP$-fibered} if there exists a collection
of edges $M$ in $X$ such that $\overline{X} = (X,M)$ is a $\CatP$-fibered object of $\mset{ \CatP}$.
In this case, the set $M$ is uniquely determined (requirement $(3)$ of Definition \ref{catpat}). 
\end{remark}

\begin{remark}\label{sichwell}
In the situation of Definition \ref{catpat}, conditions $(5)$ and $(6)$ are automatic
whenever the simplicial set $K_{\alpha}$ is weakly contractible and the diagram
$p_{\alpha}$ is constant.
\end{remark}

\begin{remark}
Let $\CatP$ be a categorical pattern on a simplicial set $S$. For every pair of objects $\overline{X}, \overline{Y} \in \mset{ \CatP}$, there exists a simplicial set $\bHom_{S}^{\sharp}( \overline{X}, \overline{Y})$ with the following universal property: for every
simplicial set $K$, we have a canonical bijection
$$ \Hom_{ \sSet}( K, \bHom_{S}^{\sharp}( \overline{X}, \overline{Y} ))
\simeq \Hom_{ \mset{ \CatP}}( K^{\sharp} \times \overline{X}, \overline{Y} ).$$
This definition of mapping spaces endows $\mset{ \CatP}$ with the structure of a
simplicial category.
\end{remark}

\begin{remark}\label{ratpat}
Let $\CatP = (M_S, T, \{ p_{\alpha}: K_{\alpha}^{\triangleleft} \rightarrow S' \}_{\alpha \in A})$ be a categorical pattern on a simplicial set $S$ and let $\overline{X} = (X,M)$ be an object of $\mSet$ satisfying conditions $(1)$ through $(4)$ of Definition \ref{catpat}. For each index $\alpha \in A$, let
$X_{\alpha} = X \times_{S} K_{\alpha}^{\triangleleft}$. Then the projection map
$q: X_{\alpha} \rightarrow K_{\alpha}^{\triangleleft}$ is a coCartesian fibration, classified by a functor
$\chi: K_{\alpha}^{\triangleleft} \rightarrow \Cat_{\infty}$. According to Proposition \toposref{charcatlimit}, the map $\chi$ is a limit diagram if and only if the restriction functor $r: Z \rightarrow Z_0$ is an equivalence of $\infty$-categories, where $Z$ denotes the $\infty$-category of coCartesian sections of $q$ and $Z_0$ the $\infty$-category of coCartesian sections of the restriction
$X \times_{S} K_{\alpha} \rightarrow K_{\alpha}$. 

Now suppose that $\overline{X}$ also satisfies condition $(6)$ of Definition \ref{catpat}. In this case,
every coCartesian section $s$ of $q$ is a $q$-limit diagram, so that the map
$\bHom_{Z}(s',s) \rightarrow \bHom_{Z_0}(s' |K ,s|K)$ is a homotopy equivalence for
any $s' \in Z$ (in fact, the analogous statement is true for {\em any} section of $q$). 
It follows that the functor $r$ is automatically fully faithful. Now $r$ is an equivalence of $\infty$-categories if and only if it is essentially surjective, which (since $r$ is evidently a categorical fibration) is equivalent to the requirement that $r$ be surjective on vertices. Consequently, in the definition of
a $\CatP$-fibered object of $\mset{ \CatP}$, we are free to replace assumption $(5)$ by the following apparently weaker condition:
\begin{itemize}
\item[$(5')$] For each $\alpha \in A$ and every coCartesian section $s_0$ of the projection
$X \times_{S} K_{\alpha} \rightarrow K_{\alpha}$, there exists a coCartesian section $s$ of
$X \times_{S} K_{\alpha}^{\triangleleft} \rightarrow K_{\alpha}^{\triangleleft}$ extending
$s_0$.
\end{itemize}
\end{remark}

Our main goal in this section is to prove the following result:

\begin{theorem}\label{theo}
Let $\CatP$ be a categorical pattern on a simplicial set $S$. Then there exists a left proper combinatorial simplicial model structure on $\mset{\CatP}$, which is uniquely characterized by the following properties:
\begin{itemize}
\item[$(C)$] A morphism $f: \overline{X} \rightarrow \overline{Y}$ in $\mset{ \CatP}$ is a cofibration
if and only if $f$ induces a monomorphism between the underlying simplicial sets.
\item[$(F)$] An object $\overline{X} \in \mset{ \CatP}$ is fibrant if and only if
$\overline{X}$ is $\CatP$-fibered.
\end{itemize}
\end{theorem}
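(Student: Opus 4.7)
The plan is to apply a standard recognition criterion for combinatorial model structures (the analogue of Proposition \toposref{goot}/Smith's theorem used elsewhere in the paper). We first declare the cofibrations to be the monomorphisms on underlying simplicial sets; these form a weakly saturated class of small generation, generated by the usual boundary inclusions $(\bd \Delta^n)^\flat \hookrightarrow (\Delta^n)^\flat$ together with the marking change $(\Delta^1)^\flat \hookrightarrow (\Delta^1)^\sharp$ (restricted in the obvious way to live over $S$). Next, we declare a morphism $f: \overline{X} \to \overline{Y}$ in $\mset{\CatP}$ to be a weak equivalence if, for every $\CatP$-fibered object $\overline{Z}$, the induced map $\bHom_S^\sharp(\overline{Y}, \overline{Z}) \to \bHom_S^\sharp(\overline{X}, \overline{Z})$ is a weak homotopy equivalence of simplicial sets. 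The two-out-of-three and closure under retracts are then obvious, and accessibility of the class of weak equivalences follows once we show that the collection of $\CatP$-fibered objects is accessible (which in turn is reduced to the statement that conditions $(1)$--$(6)$ of Definition \ref{catpat} are accessible conditions on objects of $\mset{\CatP}$).

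The next step is to introduce an appropriate set of \emph{$\CatP$-anodyne} morphisms, generalizing the marked anodyne morphisms of \S \toposref{twuf}. This list should contain: the inner horn inclusions $(\Lambda^n_i)^\flat \to (\Delta^n)^\flat$ for $0 < i < n$ (forcing $(1)$); pushout products of marked anodyne morphisms with the marked edges of $S$ (forcing $(2)$ and $(3)$); pushout products built from thin $2$-simplices of $S$ (forcing $(4)$); and for each index $\alpha \in A$, morphisms of the form
\[
(K_\alpha^\triangleleft)^\flat \coprod_{K_\alpha^\flat} (K_\alpha \times (\Delta^1)^\sharp)^\natural \to (K_\alpha^\triangleleft \times (\Delta^1)^\sharp)^\natural
\]
(and further auxiliary generators) which together force conditions $(5')$ and $(6)$. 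A direct verification, using Remark \ref{ratpat} and the properties of coCartesian fibrations, then shows that an object $\overline{Z} \in \mset{\CatP}$ has the right lifting property with respect to every $\CatP$-anodyne morphism if and only if $\overline{Z}$ is $\CatP$-fibered. In particular, every $\CatP$-anodyne morphism is a weak equivalence, and we obtain a functorial fibrant replacement via the small object argument. Using this fibrant replacement, one reduces the recognition criterion to showing: (a) any morphism with the right lifting property against every cofibration is a weak equivalence (easy: such a morphism is an anodyne trivial fibration at the level of marked simplicial sets, hence induces a trivial fibration on every $\bHom_S^\sharp$); and (b) weak equivalences are stable under pushout by cofibrations.

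The main obstacle is step $(b)$, the stability of weak equivalences under pushouts along cofibrations --- this is what yields left properness and, in combination with the accessibility and small-object arguments above, completes the verification of the hypotheses of Proposition \toposref{goot}. The standard way to prove this is to reduce to the case of a pushout along a $\CatP$-anodyne morphism, which in turn reduces (via the fibrant replacement functor) to checking that for every $\CatP$-fibered $\overline{Z}$, any $\CatP$-anodyne morphism $A \to B$, and any cofibration $C \to D$, the associated diagram of mapping spaces
\[
\xymatrix{
\bHom_S^\sharp(D, \overline{Z}) \ar[r] \ar[d] & \bHom_S^\sharp(C, \overline{Z}) \ar[d] \\
\bHom_S^\sharp(B, \overline{Z}) \ar[r] & \bHom_S^\sharp(A, \overline{Z})
}
\]
is a homotopy pullback of Kan complexes. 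This is the technical heart of the proof, and is an analogue of (but strictly generalizes) the pushout-product arguments used to establish the Cartesian model structure of \S \toposref{markmodel}; the work is essentially combinatorial, reducing to verifying the stability of the class of $\CatP$-anodyne morphisms under pushout products with arbitrary cofibrations.

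Finally, the characterizations $(C)$ and $(F)$ in the statement of the theorem determine the weak equivalences uniquely, so the uniqueness assertion follows formally. To check that the model structure is simplicial, one verifies the pushout-product axiom by the same type of combinatorial argument used above: the class of $\CatP$-anodyne morphisms is stable under smash product with cofibrations of simplicial sets (regarded via $K \mapsto K^\sharp$), and similarly for trivial cofibrations of simplicial sets. Left properness is automatic given (b) since every object of $\mset{\CatP}$ is cofibrant. The identification of fibrant objects with $\CatP$-fibered objects follows from the construction of the fibrant replacement functor via $\CatP$-anodyne morphisms, completing the proof.
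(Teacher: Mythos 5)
Your plan follows the same overall strategy as the paper: define $\CatP$-equivalences via mapping spaces into $\CatP$-fibered objects, introduce a class of $\CatP$-anodyne morphisms, show that the $\CatP$-fibered objects are exactly those with the right lifting property against them, prove pushout-product stability so that cofibrations induce Kan fibrations on $\bHom_S^\sharp(-,\overline{Z})$, and feed everything into the recognition criterion of Proposition \toposref{goot}. That is indeed the correct architecture, and the supporting lemmas you gesture at are the ones the paper proves (the analogues of Propositions \ref{poststaf}, \ref{postprod}, \ref{stilk} and Lemma \ref{caker}).

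There are, however, two points where your sketch glosses over genuine difficulties. First, Proposition \toposref{goot} does not merely require the class of weak equivalences to be accessible — it requires it to be \emph{perfect}, in particular stable under filtered colimits. Your claim that this follows once the $\CatP$-fibered objects form an accessible class is too quick: what the paper actually does is build, via the small object argument, a fibrant-replacement endofunctor $E$ that commutes with filtered colimits, and then detects $\CatP$-equivalences fiberwise after applying $E$; perfectness then follows from Corollary \toposref{perfpull}. This construction only works when each $K_\alpha$ has finitely many simplices, because otherwise $E$ need not commute with filtered colimits. The general case requires a second step that your plan omits entirely: one first establishes the model structure for the categorical pattern $\CatP' = (M_S, T, \emptyset)$ (which has no $K_\alpha$'s at all), and then defines the model structure for $\CatP$ as the Bousfield localization of $\mset{\CatP'}$ at the generating $\CatP$-anodyne morphisms, invoking Proposition \toposref{suritu} and then checking that the resulting fibrant objects are precisely the $\CatP$-fibered ones.

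Second, your description of the argument for stability of weak equivalences under pushout by cofibrations is garbled. One does not ``reduce to the case of a pushout along a $\CatP$-anodyne morphism'' — that reduction is not available, since an arbitrary cofibration does not factor usefully through a $\CatP$-anodyne map in the relevant sense. The correct argument is direct: for a pushout square with vertical cofibrations and top horizontal map a $\CatP$-equivalence, applying $\bHom_S^\sharp(-,\overline{Z})$ produces a pullback square of Kan complexes in which the vertical maps are Kan fibrations (this is where Proposition \ref{stilk}, hence the pushout-product stability of $\CatP$-anodyne maps, is used); such a pullback is automatically a homotopy pullback, so the bottom horizontal map is an equivalence. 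Your final displayed diagram is essentially this, but the preamble suggesting a reduction to anodyne pushouts would not compile into a correct proof.
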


\begin{example}\label{sich}
Let $S$ be a simplicial set. The {\it canonical categorical pattern} on $S$ is the categorical
pattern $\CatP = (M_S, T, \emptyset)$, where $M_S$ consists of all edges of $S$ and $T$ consists of all
$2$-simplices of $S$. Then $\mset{\CatP}$ admits a unique model structure satisfying the conditions
of Theorem \ref{theo}: the {\it coCartesian model structure} described in \S \toposref{markmodel}.
\end{example}

\begin{example}\label{sipper}
Let $S$ be a simplicial set, and suppose we are given a categorical pattern
$\CatP = (M_S, T, \{ p_{\alpha}: K_{\alpha}^{\triangleleft} \rightarrow S \}_{\alpha \in A})$, where $M_S$ consists of all edges of $S$, $T$ consists of all
$2$-simplices of $S$, each of the simplicial sets $K_{\alpha}$ is weakly contractible and
each of the maps $p_{\alpha}$ is constant. Then the model structure on $\mset{ \CatP}$ described
by Theorem \ref{theo} coincides with the coCartesian model structure of Example \ref{sich}: this follows immediately from Remark \ref{sichwell}. 
\end{example}

\begin{example}\label{user}
Let $\overline{S} = (S,T)$ be a scaled simplicial set. Then $\overline{S}$ determines a
categorical pattern $\CatP_{ \overline{S}} = (M_S, T, \emptyset)$ on $S$, where $M_S$ is the collection of all edges of $S$. In this situation, we will denote $\mset{ \CatP}$ by $\mset{ \overline{S} }$, and refer to the
model structure of Theorem \ref{theo} as the {\it locally coCartesian model structure} on $\mset{ \overline{S}}$ (note that the underlying category of $\mset{ \overline{S} }$ depends only on
the underlying simplicial set $S$ over $\overline{S}$; however, the model structure on
$\mset{ \overline{S} }$ depends on the collection of thin $2$-simplices of $S$). We will say that
an object $\overline{X} = (X,M) \in \mset{ \overline{S} }$ is {\it $\overline{S}$-fibered} if it is
$\CatP$-fibered: that is, if the underlying map $p: X \rightarrow S$ is a locally coCartesian fibration, 
$M$ is the set of locally $p$-coCartesian edges of $X$, and the restriction of $p$ to every
thin $2$-simplex of $S$ is a coCartesian fibration.
\end{example}

The main step in proving Theorem \ref{theo} is to show that there is a sufficiently large supply of
trivial cofibrations in $\mset{\CatP}$. To this end, we introduce the following definition:

\begin{definition}\label{postspunt}
Let $\CatP = (M_S, T, \{ p_{\alpha}: K_{\alpha}^{\triangleleft} \rightarrow S\}_{\alpha \in A})$
be a categorical pattern on a simplicial set $S$. The collection of {\it $\CatP$-anodyne} morphisms in $\mset{ \CatP}$ is the smallest weakly saturated class of morphisms which contain all morphisms of the following types:

\begin{itemize}

\item[$(A_0)$] The inclusion $( \Lambda^2_1)^{\sharp} \coprod_{ (\Lambda^2_{1})^{\flat}} (\Delta^2)^{\flat}  \subseteq (\Delta^2)^{\sharp},$ for every map $\Delta^2 \rightarrow S$ belonging to $T$ which carries every edge into $M_S$.

\item[$(A_1)$] The inclusion $Q^{\flat} \subseteq Q^{\sharp}$, where
$Q = \Delta^0 \coprod_{ \Delta^{ \{0,2\} }} \Delta^3 \coprod_{ \Delta^{ \{1,3\} }} \Delta^0$
and the map $Q \rightarrow S$ carries every edge of $Q$ into $M_S$ and every $2$-simplex
of $Q$ into $T$.


\item[$(B_0)$] The inclusion $\{0\}^{\sharp} \subseteq (\Delta^1)^{\sharp}$ lying over an
edge of $M_S$.

\item[$(B_1)$] For each $\alpha \in A$, the inclusion $K_{\alpha}^{\sharp} \subseteq
(K_{\alpha}^{\triangleleft})^{\sharp}$ (where $K_{\alpha}^{\triangleleft}$ maps to $S$ via
$p_{\alpha}$).

\item[$(C_0)$] The inclusion 
$$ (\Lambda^n_0)^{\flat} \coprod_{ (\Delta^{ \{0,1\} })^{\flat} } ( \Delta^{ \{0,1\} })^{\sharp}
\subseteq ( \Delta^n )^{\flat} \coprod_{ ( \Delta^{ \{0,1\} })^{\flat} } ( \Delta^{ \{0,1\} })^{\sharp},$$
for every $n > 1$ and every map $\Delta^n \rightarrow S$ whose restriction to
$\Delta^{ \{0, 1, n\} }$ belongs to $T$.

\item[$(C_1)$]  The inclusion $(\Lambda^{n}_i)^{\flat} \subseteq (\Delta^n)^{\flat}$, for every $0 < i < n$ and every map $\Delta^n \rightarrow S$. 

\item[$(C_2)$] For each $n \geq 1$, $\alpha \in A$, and map
$f: \Delta^n \star K_{\alpha} \rightarrow S$ extending $p_{\alpha}: \{n\} \star K_{\alpha} \rightarrow S$,
the inclusion
$$ ( \bd \Delta^n \star K_{\alpha})^{\flat} \coprod_{ ( \{n\} \star K_{\alpha})^{\flat} }
( \{n\} \star K_{\alpha})^{\sharp} \subseteq 
( \Delta^n \star K_{\alpha})^{\flat} \coprod_{ ( \{n\} \star K_{\alpha})^{\flat} }
( \{n\} \star K_{\alpha} )^{\sharp} ).$$
\end{itemize}
\end{definition}

\begin{example}\label{ina}
Let $\CatP$ be a categorical pattern on a simplicial set $S$, and suppose we are given maps of simplicial sets
$A \stackrel{i}{\rightarrow} B \rightarrow S$. If $i$ is inner anodyne, then the induced map
$A^{\flat} \rightarrow B^{\flat}$ is a $\CatP$-anodyne morphism in $\mset{ \CatP}$.
\end{example}

\begin{example}\label{singer}
Let $\CatP$ be a categorical pattern on a simplicial set $S$, and let $e: \Delta^1 \rightarrow S$
be a marked edge of $S$. For every simplicial set $A$, let $\overline{ A}^{\triangleleft}$
denote the marked simplicial set $( A^{\triangleleft}, M_A)$, where $M_A$ is the collection
of all edges of $A^{\triangleleft}$ which are either degenerate or contain the cone point.
We regard $\overline{A}^{\triangleleft}$ as an object of $\mset{ \CatP}$ via the map
$A^{\triangleleft} \rightarrow ( \Delta^0)^{\triangleleft} \simeq \Delta^1 \stackrel{e}{\rightarrow} S$. For any cofibration of simplicial sets $i: A \rightarrow B$, the induced map
$j: \overline{A}^{\triangleleft} \rightarrow \overline{B}^{\triangleleft}$ is $\CatP$-anodyne. 
To prove this, it suffices to treat the basic case where $B = \Delta^n$ and $A = \bd \Delta^n$, in which case the map $j$ is a generating $\CatP$-anodyne map which is either of type
$(B_0)$ (if $n=0$) or $(C_0)$ (if $n > 0$).
\end{example}

\begin{example}\label{urtime}
Let $\CatP = (M_S, T, \{ p_{\alpha}: K_{\alpha}^{\triangleleft} \rightarrow S\}_{\alpha \in A})$
be a categorical pattern on a simplicial set $S$. Let $B_0 \subseteq B$ be a simplicial sets containing
a vertex $b$, and let $f: B \star K_{\alpha} \rightarrow S$ be a map whose restriction to
$\{b\} \star K_{\alpha} \simeq K_{\alpha}^{\triangleleft}$ is given by $p_{\alpha}$.
Suppose that every simplex of $B$ either belongs to $B_0$ or contains $b$ as a final vertex.
Then the inclusion
$$ (B_0 \star K_{\alpha})^{\flat} \coprod_{ ( \{b\} \star K_{\alpha})^{\flat} }
( \{b\} \star K_{\alpha})^{\sharp} \subseteq 
( B \star K_{\alpha})^{\flat} \coprod_{ ( \{b\} \star K_{\alpha})^{\flat} }
( \{b\} \star K_{\alpha} )^{\sharp} )$$
is $\CatP$-anodyne, because it can be obtained as an iterated pushout of $\CatP$-anodyne inclusions of type $(C_2)$.
\end{example}

\begin{remark}\label{stouffer}
Let $\CatP = (M_S, T, \{ p_{\alpha}: K_{\alpha}^{\triangleleft} \rightarrow S\}_{\alpha \in A})$
be a categorical pattern on a simplicial set $S$, and let $\overline{X} = (X,M)$ be an object of $\mset{ \CatP}$. Let $T'$ denote the inverse image of $T$ in $\Hom_{ \sSet}( \Delta^2, X)$, and let
$B$ denote the set of pairs $\beta = (\alpha, \overline{p}_{\beta})$ where $\alpha \in A$ and
$\overline{p}_{\beta}: K_{\alpha}^{\triangleleft} \rightarrow X$ is a map lifting $p_{\alpha}$.
Then $\CatP_{ \overline{X} } = ( M, T', \{ \overline{p}_{\beta} \}_{ \beta \in B} )$ is a categorical
pattern on $X$. Unwinding the definitions, we deduce that a morphism in
$\mset{ \CatP_{ \overline{X} }}$ is $\CatP_{\overline{X}}$-anodyne if and only if it is
$\CatP$-anodyne.
\end{remark}

\begin{definition}\label{prodpat}
Let $S$ and $S'$ be simplicial sets, and let
$\CatP = (M_S, T, \{ p_{\alpha}: K_{\alpha}^{\triangleleft} \rightarrow S\}_{\alpha \in A})$
and $\CatP' = (M'_{S'}, T', \{ q_{\beta}: L_{\beta}^{\triangleleft} \rightarrow S'\}_{\beta \in B})$
be categorical patterns on $S$ and $S'$, respectively. We let
$\CatP \times \CatP'$ denote the categorical pattern
$$ (M_S \times M'_{S'}, T \times T', \{ K_{\alpha}^{\triangleleft}
\stackrel{p_{\alpha}}{\rightarrow} S \times \{s'\} \rightarrow S' \}_{\alpha \in A, s' \in S'}
\cup \{ L_{\beta}^{\triangleleft} \stackrel{q_{\beta}}{\rightarrow} \{s\} \times S'
\rightarrow S \times S' \}_{s \in S, \beta \in B} )$$
on $S \times S'$.
\end{definition}

We will need the following technical results about the theory of 
$\CatP$-anodyne maps:

\begin{proposition}\label{poststaf}
Let $\CatP$ be a categorical pattern on a simplicial set $S$, and let $\overline{X} \in \mset{ \CatP}$. 
The following conditions are equivalent:
\begin{itemize}
\item[$(1)$] The object $\overline{X}$ has the extension property with respect to every
$\CatP$-anodyne morphism in $\mset{ \CatP}$.
\item[$(2)$] The object $\overline{X}$ is $\CatP$-fibered.
\end{itemize}
\end{proposition}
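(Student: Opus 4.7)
The plan is to verify the equivalence by matching the generating $\CatP$-anodyne morphisms of Definition \ref{postspunt} one-to-one with the conditions $(1)$--$(6)$ of Definition \ref{catpat}, and arguing in both directions simultaneously wherever possible.

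First I would show $(2) \Rightarrow (1)$. Assume $\overline{X} = (X,M)$ is $\CatP$-fibered and let $f: X \rightarrow S$ denote the structural map. I dispatch each class of generators as follows. Maps of type $(C_1)$ are handled by condition $(1)$, since $f$ is an inner fibration. The extension property for type $(B_0)$ follows from condition $(2)$: the existence of a locally $f$-coCartesian lift of a marked edge. For type $(C_0)$, I would lift the edge $\Delta^{\{0,1\}}$ to a locally $f$-coCartesian edge in $X$ (hence marked by $(3)$) and then use the fact that, over a thin $2$-simplex (condition $(4)$), such an edge is $f'$-coCartesian in the pullback $X \times_S \Delta^n$, so that the remaining horn filler exists by the usual argument for coCartesian fibrations (Proposition \toposref{gotta} or its marked analogue). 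The extension property for type $(A_0)$ is precisely a reformulation of condition $(4)$ together with $(3)$: any marked edge over a thin $2$-simplex is $f'$-coCartesian, so it composes with any marked edge to a marked edge. For type $(A_1)$, I use $(3)$ and $(4)$ to observe that an edge whose image traces out the diagram encoded by $Q$ (with all $2$-simplices thin and all edges marked) is automatically $f'$-coCartesian over its image $\Delta^1 \subseteq S$ (by the argument of Proposition \ref{canman}, which identifies $Q$ with the simplicial set witnessing invertibility of category objects), hence marked. Finally, type $(B_1)$ corresponds to condition $(5)$ via Remark \ref{ratpat}, which reformulates $(5)$ as the surjectivity of the restriction functor on coCartesian sections; and type $(C_2)$ corresponds precisely to condition $(6)$, the $f$-limit property of coCartesian sections (using the formulation of relative limits as an extension property, cf.\ \S \toposref{quasilimit2}).

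Next I would show $(1) \Rightarrow (2)$, verifying the conditions of Definition \ref{catpat} in turn. Condition $(1)$ is immediate from the extension property for $(C_1)$. For $(2)$, given a marked edge $\Delta^1 \rightarrow S$ and a vertex $x$ over its source, the extension property for $(B_0)$ produces a marked edge $e$ of $X$ lifting it; the extension property for $(C_0)$ shows that any such $e$ is $f'$-coCartesian in $X \times_S \Delta^1$. This also establishes one direction of $(3)$; the converse direction of $(3)$ (that every $f'$-coCartesian edge over a marked edge is itself marked) uses the extension property for $(A_1)$: an $f'$-coCartesian edge determines, together with its chosen lift, a diagram of the shape $Q$ all of whose edges can be marked, and the extension property then forces the original edge to be marked. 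Condition $(4)$ follows from the extension property for $(A_0)$ via the standard characterization of coCartesian edges in terms of lifting against inner horns. Finally, for $(5)$ and $(6)$ I would use Remark \ref{ratpat} to replace $(5)$ by condition $(5')$ (existence of coCartesian extensions), which then falls out directly from the extension property for $(B_1)$; and condition $(6)$ is obtained from $(C_2)$ by recognizing that the extension property there is equivalent, via the description of relative limits through horn-filling, to the assertion that coCartesian sections of $X \times_S K_{\alpha}^{\triangleleft} \rightarrow K_{\alpha}^{\triangleleft}$ are $f$-limit diagrams.

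The main obstacle will be the direction of condition $(3)$ that asserts every $f'$-coCartesian edge over a marked edge is itself marked, since this is the only condition that cannot be read off pointwise from a single generator: it is precisely where the generator $(A_1)$ involving the simplicial set $Q$ comes into play, and verifying it requires identifying $Q$ with the shape from Notation \ref{amplecrown} and invoking an argument parallel to Proposition \ref{canman}. A secondary technical point is the equivalence between the extension property for $(C_2)$ and the $f$-limit condition $(6)$, which requires unwinding the definition of $f$-limits in terms of lifting against boundary inclusions of joined simplicial sets and then invoking Example \ref{urtime} to reduce to the generating case.
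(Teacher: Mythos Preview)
Your overall strategy matches the paper's, but you have scrambled two of the correspondences in the $(1)\Rightarrow(2)$ direction.

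Condition $(4)$ of Definition \ref{catpat} (a marked edge over $\Delta^{\{0,1\}}$ of a thin $2$-simplex is $f'$-coCartesian in $X\times_S\Delta^2$) follows from the extension property for $(C_0)$, not $(A_0)$. Being $f'$-coCartesian is a lifting condition against $\Lambda^n_0$ horns with $\Delta^{\{0,1,n\}}$ mapping to a thin simplex, which is exactly $(C_0)$; the generator $(A_0)$ says nothing about horn-filling. (The phrase ``inner horns'' in your sentence is also off: $\Lambda^n_0$ is not inner.)

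The place where $(A_0)$ \emph{is} needed is the converse direction of $(3)$, and your treatment there is incomplete. The paper's argument is two-staged. First, for an equivalence $e$ in a fiber $X_s$: extend $e$ along the weak homotopy equivalence $Q'\hookrightarrow Q$ to a map $Q\to X_s$ landing in the maximal Kan complex, and then apply $(A_1)$ to conclude $e\in M$. Second, for a general locally $f'$-coCartesian edge $e:x\to y$ over a nondegenerate marked edge: use $(B_0)$ to produce a marked lift $e':x\to y'$, fill a $2$-simplex $e=e''\circ e'$ where $e''$ lies in the fiber over the target and is an equivalence (since both $e,e'$ are locally coCartesian), hence $e''\in M$ by the first stage, and finally apply $(A_0)$ to conclude $e\in M$. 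Your one-step sketch using only $(A_1)$ cannot handle nondegenerate edges, since any map out of $Q$ into $S$ carrying all edges to marked edges and all $2$-simplices to thin ones factors (in the relevant cases) through a single vertex only when the underlying edge is degenerate.

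Two minor points: in the $(2)\Rightarrow(1)$ direction your handling of $(C_0)$ is essentially the content of Lemma \ref{kisker}, which you should cite; and the reference to Example \ref{urtime} in your final paragraph is misplaced, since $(C_2)$ is already stated simplex-by-simplex and no reduction is required.
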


\begin{proposition}\label{postprod}
Let $\CatP$ and $\CatP'$ be categorical patterns on simplicial sets $S$ and $S'$. Let
$f: \overline{X} \rightarrow \overline{Y}$ be a cofibration in $\mset{\CatP}$, and let
$f': \overline{X}' \rightarrow \overline{Y}'$ be a cofibration in $\mset{\CatP'}$.
If $f$ is $\CatP$-anodyne or $f'$ is $\CatP'$-anodyne, then the induced map
$$ f \wedge f': ( \overline{Y} \times \overline{X}' ) \coprod_{ \overline{X} \times \overline{X}' }
( \overline{X} \times \overline{Y}' ) \rightarrow \overline{Y} \times \overline{Y}'$$
is $\CatP \times \CatP'$-anodyne.
\end{proposition}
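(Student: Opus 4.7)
The proof proceeds by a standard pushout-product analysis, reducing the claim to a verification on generators. By symmetry (swapping the roles of $\CatP$ and $\CatP'$) I may assume that $f'$ is $\CatP'$-anodyne. Since pushout-products distribute over colimits and the class of $\CatP \times \CatP'$-anodyne morphisms is weakly saturated, the collection of cofibrations $f \in \mset{\CatP}$ for which $f \wedge f'$ is $\CatP \times \CatP'$-anodyne is itself weakly saturated; hence it suffices to treat the two families of generating cofibrations, namely $(\bd \Delta^n)^{\flat} \hookrightarrow (\Delta^n)^{\flat}$ (for any map $\Delta^n \to S$) and $(\Delta^1)^{\flat} \hookrightarrow (\Delta^1)^{\sharp}$ (for each $e \in M_S$). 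Fixing $f$ of one of these two forms, the same weak-saturation argument reduces me to checking the statement as $f'$ ranges over the seven families of generating $\CatP'$-anodyne maps listed in Definition \ref{postspunt}, yielding fourteen explicit cases.

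Each case is then handled by an explicit combinatorial presentation of $f \wedge f'$ as a (possibly transfinite) composition of pushouts of generating $\CatP \times \CatP'$-anodyne maps. The easy cases involve $f'$ of type $(C_1)$, which is settled by the classical simplicial-shuffle argument showing inner anodyne maps are closed under pushout-products with monomorphisms; type $(A_0)$, for which the pushout-product with $(\bd \Delta^n)^{\flat} \hookrightarrow (\Delta^n)^{\flat}$ becomes an isomorphism when $n>1$ and collapses to finitely many type-$(A_0)$ or type-$(C_0)$ generators in the remaining low-dimensional cases; and type $(B_0)$, where a standard shuffle decomposition of $\Delta^n \times \Delta^1$ exhibits $f \wedge f'$ as an iterated pushout of type-$(C_0)$ and type-$(B_0)$ generators (the hypothesis on marked edges ensuring that the newly created thin $2$-simplices do indeed lie over $T \times T'$).

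The technical cases, which I expect to provide the main obstacle, are those in which $f'$ involves the auxiliary diagrams $K_\alpha^{\triangleleft} \to S'$ attached to the pattern --- that is, $f'$ of type $(B_1)$, $(A_1)$, or $(C_2)$. My plan for the type-$(C_2)$ case, which is the hardest, is to enumerate the nondegenerate simplices of the product $\Delta^n \times (\Delta^m \star L_\beta)$ in an order compatible with the join filtration, so that the inclusion of each new simplex extends the existing filtration by a pushout of an inclusion to which Example \ref{urtime} applies (thereby producing a type-$(C_2)$ generator for $\CatP \times \CatP'$). The type-$(B_1)$ case will be handled by the same method, using Example \ref{singer} in place of Example \ref{urtime}; and the type-$(A_1)$ case reduces, after decomposing $Q \times \Delta^n$ (or $Q \times \Delta^1$) into copies of $Q$ glued along lower-dimensional boundary pieces, to a finite composition of pushouts of type-$(A_0)$ and type-$(A_1)$ generators. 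The combinatorial bookkeeping in the type-$(C_2)$ case --- in particular, ensuring that every newly-attached simplex meets the cone point in a cone-shaped subset so that Example \ref{urtime} is applicable --- is the step I expect to occupy most of the work.
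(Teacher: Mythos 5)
Your proposal follows essentially the same strategy as the paper's proof: reduce by weak saturation to the case where one factor is a generating cofibration and the other a generating anodyne map, then verify the fourteen resulting cases by explicit shuffle-style filtrations of products of simplices. The swap of roles (you put the anodyne map on the $\CatP'$ side; the paper puts it on the $\CatP$ side) is cosmetic since the statement is symmetric. You also correctly identify the genuinely hard cases — those involving the auxiliary cone diagrams $K_\alpha^{\triangleleft} \to S$ — and the general plan of filtering the product so that each new simplex either extends by an inner horn or meets the cone point in a cone-compatible subset, making Example \ref{urtime} applicable, is exactly what the paper does for the type $(C_2)$ and type $(B_1)$ cases (together with Lemma \toposref{precough} and Corollary \toposref{prodprod1} to see that the non-cone-point attachments are inner anodyne). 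One small misattribution: Example \ref{singer} concerns marked cones $\overline{A}^{\triangleleft} \to \overline{B}^{\triangleleft}$ over a single marked edge, not the $K_\alpha^{\triangleleft}$-diagrams, so it does not directly help with the $(B_1)$ case as you suggest; the paper instead uses the same Example \ref{urtime}-style filtration there (and, for the pairing of $(B_1)$ with $(\Delta^1)^{\flat} \subseteq (\Delta^1)^{\sharp}$, the argument of Lemma \ref{swindler}). There are also a few off-by-one slips in which generator types you expect to appear in the easy cases (e.g., $(C_1)$ rather than $(B_0)$ shows up in the shuffle decomposition for $(B_0) \wedge (\bd \Delta^m)^{\flat} \subseteq (\Delta^m)^{\flat}$ when $m>0$), but these are sketch-level imprecisions rather than errors in the underlying approach.
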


We will give the proofs of Proposition \ref{poststaf} and \ref{postprod} at the end of this section.
Our next goal is to use them to prove Theorem \ref{theo}. First, we need to establish a few preliminaries.

\begin{lemma}\label{catchrat}
Let $\CatP = (M_S, T, \{ p_{\alpha}: K_{\alpha}^{\triangleleft} \rightarrow S\}_{\alpha \in A})$
be a categorical pattern on a simplicial set $S$, and let $\Delta^2 \rightarrow S$ be a $2$-simplex which belongs to $T$. Then the inclusion
$i: ( \Lambda^2_0)^{\sharp} \coprod_{ (\Lambda^2_{0})^{\flat}} (\Delta^2)^{\flat}  \subseteq (\Delta^2)^{\sharp}$ is a $\CatP$-anodyne morphism in $\mset{ \CatP}$.
\end{lemma}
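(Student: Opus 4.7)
The plan is to realize $i$ as a retract of a finite composition of pushouts of $\CatP$-anodyne generators. The categorical content of the lemma is the left-cancellation property for marked (coCartesian) edges over a coCartesian fibration: given a thin $2$-simplex $\sigma: \Delta^2 \to S$ exhibiting a factorization $h = g \circ f$, if $f$ and $h$ are marked then so is $g$. This is the mirror of generator $(A_0)$, which provides only right closure. Since the class of coCartesian edges over a coCartesian fibration is closed under two-out-of-three, the corresponding anodyne statement must be deducible from the existing generators, but only by mimicking the categorical argument ``$g \simeq h \circ f^{-1}$'' at the level of marked simplicial sets.

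I would proceed as follows. First, using generator $(B_0)$ (existence of marked lifts over marked edges of $S$) combined with an inner horn filling of type $(C_1)$, attach to $(\Delta^2)^{\flat}$ an auxiliary $\Delta^3$ mapping to $S$ with the following features: (a) the auxiliary vertex sits over a vertex already appearing in $\sigma$, so that the two edges $\Delta^{\{0,2\}}$ and $\Delta^{\{1,3\}}$ of this $\Delta^3$ land on degenerate edges of $S$; (b) the remaining $2$-faces of the $\Delta^3$ are either degenerate or equal to $\sigma$, hence belong to $T$. The construction is arranged so that, after collapsing $\Delta^{\{0,2\}}$ and $\Delta^{\{1,3\}}$ to points, the resulting quotient of the $\Delta^3$ is precisely the simplicial set $Q$ of Notation \ref{amplecrown}, with the distinguished edge $K^0 \subset Q$ matched with our edge $g$.

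Second, apply generator $(A_0)$ to propagate the marking from $(0,1)$ and $(0,2)$ of $\sigma$ to the remaining edges of the auxiliary $\Delta^3$, using the thin triangles installed in the first step. At this point every edge of the embedded $Q$ lies in $M_S$ and every $2$-simplex of $Q$ lies in $T$, so generator $(A_1)$, applied to the map $Q \to S$, marks every edge of $Q$, and in particular marks $g$. Finally, observe that $(\Delta^2)^{\sharp}$ is a retract of the enlarged marked simplicial set via the evident projection forgetting the auxiliary vertex and edges; since $\CatP$-anodyne morphisms form a weakly saturated class and are therefore closed under retracts, the inclusion $i$ is $\CatP$-anodyne.

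The main obstacle lies in the bookkeeping of the first step: one must construct the auxiliary $\Delta^3 \to S$ so that all three conditions hold simultaneously, namely that both collapsed edges become degenerate in $S$, that all non-collapsed $2$-faces lie in $T$, and that the inclusion of $(\Lambda^2_0)^{\sharp} \cup_{(\Lambda^2_0)^{\flat}} (\Delta^2)^{\flat}$ into the enlargement genuinely decomposes as a finite iteration of pushouts of generators of types $(B_0)$, $(C_0)$, $(C_1)$, and $(A_0)$. Threading these constraints pins down the placement of the auxiliary vertex and the choice of degeneracies relating it to $\sigma$, and relies crucially on the fact that $T$ contains all degenerate $2$-simplices together with the given thin simplex $\sigma$.
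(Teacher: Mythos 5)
Your proposal takes a fundamentally different route from the paper, and it has a genuine gap that I do not see how to repair.

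The gap is in the first step. You propose to attach a $\Delta^3 \to S$ with the two collapsed edges $\Delta^{\{0,2\}}$ and $\Delta^{\{1,3\}}$ landing on degenerate edges of $S$, with the remaining $2$-faces in $T$, and arranged so that after collapsing you obtain a map $Q \to S$ with $K^0$ (the image of $\Delta^{\{1,2\}}$) hitting the edge $g$ from the vertex $s_1$ to the vertex $s_2$ of $\sigma$. But $Q$ has two vertices $a$ (image of $\{0,2\}$) and $b$ (image of $\{1,3\}$), and $K^0$ runs from $b$ to $a$; matching $K^0$ with $g$ therefore forces $b \mapsto s_1$ and $a \mapsto s_2$. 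The remaining nondegenerate edges of $Q$, namely the images of $\Delta^{\{0,1\}}$, $\Delta^{\{0,3\}}$, and $\Delta^{\{2,3\}}$, run from $a$ to $b$, hence must map to edges of $S$ from $s_2$ to $s_1$. In general no such edge exists in $S$, let alone in $M_S$; the simplex $\sigma$ supplies only $s_0 \to s_1$, $s_0 \to s_2$, $s_1 \to s_2$. Equivalently, any $\Delta^3 \to S$ satisfying your condition (a), with image contained in $\sigma(\Delta^2)$, is forced by monotonicity to collapse the whole of $\Delta^3$ to a single vertex, whereupon $K^0$ becomes degenerate rather than $g$. The generator $(A_1)$ built on $Q$ is really encoding the invertibility criterion of Proposition \ref{cuckop}, not the left-cancellation property you are after; the two require data going in opposite directions.

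The paper proves the lemma without attempting any explicit decomposition into pushouts of generators. It instead uses the lifting-retract characterization of the weakly saturated class: $i$ is $\CatP$-anodyne if and only if $i$ has the left lifting property with respect to every morphism $f: \overline{X} \to \overline{Y}$ in $\mset{\CatP}$ having the right lifting property with respect to all $\CatP$-anodyne maps. Replacing $\CatP$ by the base-changed pattern $\CatP_{\overline{Y}}$ on $Y$ (which has the same anodyne maps, by Remark \ref{stouffer}) turns $f$ into a morphism to a terminal object, so one only needs to check that $\overline{X}$ has the extension property with respect to $i$ whenever $\overline{X}$ has the extension property with respect to all $\CatP$-anodyne maps. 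Proposition \ref{poststaf} identifies the latter class with the $\CatP$-fibered objects, at which point the required extension is precisely the familiar left-cancellation statement for (locally) coCartesian edges over a coCartesian fibration, which is Proposition \toposref{protohermes}. Your plan to argue by a direct retract decomposition is the natural first instinct, but the generating set in Definition \ref{postspunt} is deliberately asymmetric: it builds in right-closure via $(A_0)$ and invertibility via $(A_1)$, and left-closure is extracted only indirectly by passing through $\CatP$-fibered objects.
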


\begin{proof}
We must show that $i$ has the left lifting property with respect to every morphism
morphism $f: \overline{X} \rightarrow \overline{Y}$ in $\mset{ \CatP}$, provided that $f$ has
the right lifting property with respect to every $\CatP$-anodyne morphism in $\mset{ \CatP}$.
Replacing $\CatP$ by $\CatP_{ \overline{Y} }$ (and invoking Remark \ref{stouffer}), we
are reduced to showing that $\overline{X}$ has the extension property with respect to $i$, provided that $\overline{X}$ has the extension property with respect to every $\CatP$-anodyne morphism.
In view of Proposition \ref{poststaf}, we may assume that $\overline{X}$ is $\CatP$-fibered.
The desired result is now an immediate consequence of Proposition \toposref{protohermes}.

\end{proof}

\begin{lemma}\label{swindler}
Let $\CatP = (M_S, T, \{ p_{\alpha}: K_{\alpha}^{\triangleleft} \rightarrow S\}_{\alpha \in A})$
be a categorical pattern on a simplicial set $S$. Fix $\alpha \in A$, let $M$ be the
collection of all edges of $\Delta^1 \star K_{\alpha}$ except for the initial edge
$\Delta^1 \subseteq \Delta^1 \star K_{\alpha}$. Let $f: \Delta^1 \star K_{\alpha} \rightarrow S$
be a map such which carries each edge into $M_S$, each $2$-simplex into $T$, and such that $f | ( \{1\} \star K_{\alpha} )$ agrees with $p_{\alpha}$.
Then the inclusion $i: ( \Delta^1 \star K_{\alpha}, M) \subseteq ( \Delta^1 \star K_{\alpha})^{\sharp}$ is
a $\CatP$-anodyne morphism in $\mset{ \CatP}$.
\end{lemma}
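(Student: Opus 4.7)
The plan is to argue via the extension-property criterion, reducing the lemma to a concrete assertion about locally coCartesian fibrations and then exploiting the limit conditions $(5),(6)$ of Definition~\ref{catpat}. By the small object argument, to show that $i$ is $\CatP$-anodyne it suffices to show that $i$ has the left lifting property with respect to every morphism $f:\overline{X}\to\overline{Y}$ in $\mset{\CatP}$ that has the right lifting property against all $\CatP$-anodyne maps. Just as in the proof of Lemma~\ref{catchrat}, replacing $\CatP$ by $\CatP_{\overline{Y}}$ and invoking Remark~\ref{stouffer} allows us to transport the hypotheses on $f\colon\Delta^{1}\star K_{\alpha}\to S$ to the corresponding map $\Delta^{1}\star K_{\alpha}\to Y$ (the image of every $2$-simplex remains thin, and the restriction to $\{1\}\star K_{\alpha}$ remains one of the $p_{\beta}$'s of $\CatP_{\overline{Y}}$). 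Invoking Proposition~\ref{poststaf}, the problem becomes: if $\overline{X}=(X,M_{X})$ is $\CatP$-fibered and $g\colon(\Delta^{1}\star K_{\alpha},M)\to\overline{X}$ is a morphism in $\mset{\CatP}$, then the image $g(\Delta^{\{0,1\}})$ automatically lies in $M_{X}$.

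Let $\pi\colon X\to S$ be the structural map of $\overline{X}$, set $x_{i}=g(i)$ for $i\in\{0,1\}$, and write $e=g(\Delta^{\{0,1\}})\colon x_{0}\to x_{1}$, lying over the edge $\bar{e}=f(\Delta^{\{0,1\}})\in M_{S}$. We must show that $e$ is a locally $\pi$-coCartesian edge of $X$, equivalently that the induced morphism $\bar{e}_{!}(x_{0})\to x_{1}$ in the fiber $X_{f(1)}$ is an equivalence (here $\bar{e}_{!}$ denotes coCartesian transport along $\bar{e}$, which exists because $\bar{e}\in M_{S}$). Pulling $\pi$ back along $p_{\alpha}$ yields a coCartesian fibration $\pi_{\alpha}\colon X\times_{S}K_{\alpha}^{\triangleleft}\to K_{\alpha}^{\triangleleft}$, and by condition $(3)$ of Definition~\ref{catpat} the restriction $s_{1}:=g|(\{1\}\star K_{\alpha})$ defines a coCartesian section of $\pi_{\alpha}$ with $s_{1}(\ast)=x_{1}$.

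Now choose a locally $\pi$-coCartesian lift $\tilde{e}\colon x_{0}\to x_{1}'$ of $\bar{e}$ (so $x_{1}'\simeq\bar{e}_{!}(x_{0})$), and extend $x_{1}'$ to a coCartesian section $s_{0}$ of $\pi_{\alpha}$ with $s_{0}(\ast)=x_{1}'$ using coCartesian lifts along the edges $\ast\to v$ of $K_{\alpha}^{\triangleleft}$. The key step is to identify $s_{0}|K_{\alpha}$ with $s_{1}|K_{\alpha}=g|K_{\alpha}$ as coCartesian sections of $\pi_{\alpha}|K_{\alpha}$: for each vertex $v\in K_{\alpha}$, the $2$-simplex $g(\Delta^{\{0,1,v\}})$ lies over a thin $2$-simplex of $S$, and condition $(4)$ of Definition~\ref{catpat} (applied to the marked edges $g(\Delta^{\{0,v\}})$ and $g(\Delta^{\{1,v\}})$) identifies the composition $\bar{f(1\to v)}_{!}\circ\bar{e}_{!}$ with $\bar{f(0\to v)}_{!}$; this furnishes an equivalence $s_{0}(v)\simeq g(v)$ compatibly as $v$ varies. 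By condition $(5)$ (reformulated as in Remark~\ref{ratpat}) the restriction functor from coCartesian sections of $\pi_{\alpha}$ to coCartesian sections of $\pi_{\alpha}|K_{\alpha}$ is an equivalence of $\infty$-categories, so $s_{0}\simeq s_{1}$. Evaluating at the cone point yields $\bar{e}_{!}(x_{0})=x_{1}'=s_{0}(\ast)\simeq s_{1}(\ast)=x_{1}$, and hence $e\in M_{X}$, as required.

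The main obstacle is the coherent construction of the section $s_{0}$ and its identification with $s_{1}$ over $K_{\alpha}$. Working pointwise one only obtains equivalences $s_{0}(v)\simeq g(v)$ for each $v$; promoting this to an equivalence of simplicial diagrams $K_{\alpha}\to X$ requires either assembling the data from the compatible coCartesian $2$-simplices $g(\Delta^{\{0,1,v\}})$ in a sufficiently functorial way, or (alternatively) setting up an auxiliary anodyne extension problem whose solution packages everything at once: for instance, pulling back $\pi$ along $f$ to a locally coCartesian fibration $\pi'\colon X'\to\Delta^{1}\star K_{\alpha}$, noting that the thinness hypothesis makes $\pi'$ a coCartesian fibration (via Proposition~\toposref{gotta}), and then invoking the universal property of coCartesian sections provided by condition $(6)$ applied at both endpoints of $\Delta^{1}$. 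Once this coherence is secured, condition $(5)$ supplies the final equivalence $s_{0}\simeq s_{1}$ and the lemma follows.
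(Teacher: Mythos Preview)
Your reduction is correct and matches the paper: via Remark~\ref{stouffer} and Proposition~\ref{poststaf}, the problem becomes showing that any $g\colon(\Delta^{1}\star K_{\alpha},M)\to\overline{X}$ into a $\CatP$-fibered object carries the initial edge to a marked edge. The gap is exactly where you place it. Your main argument builds the section $s_{0}$ from scratch and then tries to compare it with $s_{1}\mid K_{\alpha}$; the pointwise equivalences coming from condition~$(4)$ do not assemble into an equivalence of diagrams $K_{\alpha}\to X$, and without that, condition~$(5)$ gives you nothing. Your suggested alternative (pull back, use condition~$(6)$) points in the right direction but is not yet a proof.

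The paper closes the gap with a single maneuver that avoids the coherence problem entirely. Pull back to get the coCartesian fibration $q\colon X'\to\Delta^{1}\star K_{\alpha}$, write the base as $\{x\}\star\{z\}\star K_{\alpha}$, and regard $g$ as a section $s$ of $q$. Choose a $q$-coCartesian edge $e\colon s(x)\to\bar{y}$ over the initial edge; the coCartesian property of $e$ lets you extend $s$ together with $e$ to a map $s'\colon\{x\}\star\{y\}\star\{z\}\star K_{\alpha}\to X'$ (with $y$ and $z$ both lying over vertex~$1$), by filling $\Lambda^{n}_{0}$-horns. Now $s'\mid(\{y\}\star K_{\alpha})$ is a coCartesian section of $\pi_{\alpha}$ by $2$-out-of-$3$ for coCartesian edges, and $s'\mid(\{z\}\star K_{\alpha})=s_{1}$; both are cones on the \emph{same} diagram $s\mid K_{\alpha}$, and $s'\mid(\{y,z\}\star K_{\alpha})$ supplies a coherent map between them for free. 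Condition~$(6)$ makes both cones $q$-limit diagrams, so $s'(\{y,z\})$ is an equivalence in the fiber; hence $s(\{x,z\})$ is the composite of the coCartesian edge $e$ with an equivalence and is therefore coCartesian. The key difference from your approach: rather than constructing a new $s_{0}$ and then struggling to compare it with $s_{1}$, the paper extends the \emph{existing} section $s$ to a larger domain, so that the comparison morphism is built in automatically.
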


\begin{proof}
Let $g: \overline{X} \rightarrow \overline{Y}$ be a morphism in $\mset{ \CatP}$ which has
the right lifting property with respect to every $\CatP$-anodyne morphism; we will show that
$g$ has the right lifting property with respect to $i$. Replacing $\CatP$ by $\CatP_{ \overline{Y} }$
(and invoking Remark \ref{stouffer}), we may assume that $\overline{Y}$ is a final object of
$\mset{ \CatP}$. Proposition \ref{poststaf} now guarantees that $\overline{X}$ is $\CatP$-fibered.
Let $X'$ denote the fiber product $X \times_{S} ( \Delta^1 \star K_{\alpha})$, so that the projection
map $q: X' \rightarrow ( \Delta^1 \star K_{\alpha})$ is a coCartesian fibration. Unwinding the definitions, we must show the following:
\begin{itemize}
\item[$(\ast)$] Let $s$ be a section of $q$. If $s$ carries each edge of $M$ to a $q$-coCartesian edge
of $X'$, then $s$ carries every edge of $\Delta^1 \star K_{\alpha}$ to a $q$-coCartesian edge of $X'$.
\end{itemize}
To prove $(\ast)$, let us write rewrite the domain of $s$ as $\{x\} \star \{z\} \star K_{\alpha}$.
Choose a $q$-coCartesian edge $e: s(x) \rightarrow \overline{y}$ in $X'$ covering the initial edge
$\Delta^1 \subseteq \Delta^1 \star K_{\alpha}$. Since $e$ is $q$-coCartesian, we can extend
$s$ to a map $s': \{x\} \star \{y\} \star \{z\}  \star K_{\alpha} \rightarrow X'$ carrying
$\{x\} \star \{y\}$ to $e$. It follows from Proposition
\toposref{protohermes} that, for every vertex $k$ of $K_{\alpha}$, $s'$ carries the edge
$\{y\} \star \{k\}$ to a $q'$-coCartesian edge of $X'$. Using the fact that
$\overline{X}$ is $\CatP$-fibered, we deduce that $s' | \{y\} \star K_{\alpha}$ and
$s' | \{z\} \star K_{\alpha}$ are $q'$-limit diagrams, so that $s'$ carries $\{y\} \star \{z\}$ to
an equivalence in $X'_{y}$. It follows that $s$ carries the edge $\{x\} \star \{z\}$ into a 
composition of $q'$-coCartesian edges $s'( \{x\} \star \{y\})$ and $s'( \{y\} \star \{z\})$, 
which is again a $q'$-coCartesian edge (Proposition \toposref{protohermes}).
\end{proof}

\begin{lemma}\label{kalel}
Let $\calP_0$ denote the categorical pattern $( \Delta^0, \Hom_{\sSet}(\Delta^1, \Delta^0), \Hom_{\sSet}(\Delta^2, \Delta^0), \emptyset)$, so that $\mset{ \CatP_0}$ is equivalent to $\mSet$. For every left anodyne inclusion of simplicial sets $A \subseteq B$, the induced map $j: A^{\sharp} \subseteq B^{\sharp}$ is $\calP_0$-anodyne.
\end{lemma}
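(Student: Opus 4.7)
The functor $X \mapsto X^{\sharp}$ from $\sSet$ to $\mSet$ is a left adjoint and preserves monomorphisms, so the class $\calC$ of monomorphisms $A \hookrightarrow B$ in $\sSet$ such that $A^{\sharp} \to B^{\sharp}$ is $\calP_0$-anodyne is closed under pushouts, transfinite composition, and retracts. Since the left anodyne maps form the weakly saturated class generated by the horn inclusions $\Lambda^n_i \hookrightarrow \Delta^n$ for $0 \le i < n$, it suffices to verify that $(\Lambda^n_i)^{\sharp} \hookrightarrow (\Delta^n)^{\sharp}$ is $\calP_0$-anodyne for each such pair $(n,i)$. Note that every hypothesis in Definition \ref{postspunt} involving the markings $M_S$ or the scaling $T$ is vacuously satisfied for $\calP_0$, so the full list of generators is available without constraint.

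When $n = 1$ and $i = 0$, the inclusion $\{0\}^{\sharp} \hookrightarrow (\Delta^1)^{\sharp}$ is a generator of type $(B_0)$. When $n \geq 2$, the plan is to factor the inclusion as
$$(\Lambda^n_i)^{\sharp} \longrightarrow (\Lambda^n_i)^{\sharp} \coprod_{(\Lambda^n_i)^{\flat}} (\Delta^n)^{\flat} \longrightarrow (\Delta^n)^{\sharp}.$$
The first map is a pushout of the generator $(C_1)$ when $0 < i < n$ and of the generator $(C_0)$ when $i = 0$. When $n \geq 3$, every edge of $\Delta^n$ already lies in $\Lambda^n_i$, so the intermediate object coincides with $(\Delta^n)^{\sharp}$ and no second step is needed. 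When $n = 2$ and $i = 1$, the intermediate object is $(\Lambda^2_1)^{\sharp} \coprod_{(\Lambda^2_1)^{\flat}} (\Delta^2)^{\flat}$, and the second map is precisely the generator $(A_0)$.

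The main obstacle is the remaining case $n = 2$, $i = 0$: after the $(C_0)$ pushout we are left with $(\Lambda^2_0)^{\sharp} \coprod_{(\Lambda^2_0)^{\flat}} (\Delta^2)^{\flat}$, which now carries the interior $2$-simplex but still lacks a marking on the edge $\{1,2\}$. This object is precisely the source of the $\calP_0$-anodyne map supplied by Lemma \ref{catchrat}, and that lemma furnishes the required second step into $(\Delta^2)^{\sharp}$. Neither generator $(C_0)$ alone (which does not touch the edge $\{1,2\}$) nor generator $(A_0)$ alone (which would require $\{1,2\}$ already marked) can bridge this gap, so the essential point is to combine the $(C_0)$ pushout with Lemma \ref{catchrat} to handle the $2$-simplex interior and the missing edge marking in two separate anodyne steps.
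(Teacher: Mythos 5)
Your proof is correct and follows essentially the same route as the paper: reduce to horn inclusions, handle $n=1$ via $(B_0)$, handle $0 < i < n$ via $(C_1)$ and $i=0$ via $(C_0)$, observe that for $n \geq 3$ this already finishes the job since $\Lambda^n_i$ contains every edge, and for $n = 2$ apply the extra step via $(A_0)$ (when $i=1$) or Lemma \ref{catchrat} (when $i=0$). The only difference is presentational: you make the intermediate object explicit and spell out why the $n \geq 3$ cases close in one step, where the paper states the same fact more tersely.
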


\begin{proof}
Without loss of generality, we may assume that $B = \Delta^n$ and $A = \Lambda^n_i$, for some
$0 \leq i < n$, where $n >0$. Suppose first that $n > 2$. If $0 < i < n$, then $j$ is a pushout of the inclusion $j_0: ( \Lambda^n_i)^{\flat} \rightarrow (\Delta^n)^{\flat}$, and therefore $\calP_0$-anodyne
(case $(C_1)$ of Definition \ref{postspunt}). If $i = 0$, then $j$ is a pushout of the inclusion
 $$j_0: (\Lambda^n_0)^{\flat} \coprod_{ (\Delta^{ \{0,1\} })^{\flat} } ( \Delta^{ \{0,1\} })^{\sharp}
\rightarrow ( \Delta^n )^{\flat} \coprod_{ ( \Delta^{ \{0,1\} })^{\flat} } ( \Delta^{ \{0,1\} })^{\sharp}$$
which is $\calP_0$-anodyne (case $(C_0)$ of Definition \ref{postspunt}). 


Now suppose that $n=2$. We observe that $j$ can be obtained as a composite $j'' \circ j'$, where
$j'$ is a pushout of the morphism $j_0$ considered above, and $j''$ is either a generating
$\CatP$-anodyne morphism of type $(A_0)$ or the $\CatP$-anodyne morphism described in
Lemma \ref{catchrat}.

Finally, in the case $n=1$, $j$ is itself a morphism of type $(B_0)$ appearing in Definition \ref{postspunt}.
\end{proof}

\begin{proposition}\label{stilk}
Let $\CatP$ be a categorical pattern on a simplicial set $S$. Let 
$f: \overline{X} \rightarrow \overline{Y}$ be a cofibration in $\mset{ \CatP}$, and let
$\overline{Z}$ be a $\CatP$-fibered object of $\mset{ \CatP}$. Then the induced map
$$q: \bHom^{\sharp}_{S}( \overline{Y}, \overline{Z} ) \rightarrow \bHom^{\sharp}_{S}( \overline{X}, \overline{Z} )$$ is a Kan fibration between Kan complexes. If $f$ is $\calP_0$-anodyne, then $q$ is a trivial Kan fibration.
\end{proposition}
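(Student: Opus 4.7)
The plan is to reduce both claims to lifting properties of $\overline{Z}$ against suitable $\CatP$-anodyne maps, via the Cartesian-closed adjunction between $\bHom^{\sharp}_S(-, -)$ and the product functor $(-) \times \overline{X}$ (or $(-) \times \overline{Y}$). Under this adjunction, a lifting problem for $q$ against a cofibration $i: A \hookrightarrow B$ of simplicial sets corresponds to a lifting problem for $\overline{Z}$ against the pushout product $i^{\sharp} \wedge f$, where $i^{\sharp}: A^{\sharp} \hookrightarrow B^{\sharp}$ is a cofibration in $\mset{\calP_0}$. Since $\calP_0$ lives over $\Delta^0$, the product pattern $\CatP \times \calP_0$ is naturally identified with $\CatP$, so $\CatP \times \calP_0$-anodyne agrees with $\CatP$-anodyne. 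By Proposition \ref{poststaf}, it therefore suffices to show that the relevant pushout products are $\CatP$-anodyne.

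The second claim is then immediate: if $f$ is $\CatP$-anodyne and $i^{\sharp}$ is any cofibration in $\mset{\calP_0}$, then Proposition \ref{postprod} gives that $i^{\sharp} \wedge f$ is $\CatP$-anodyne, and Proposition \ref{poststaf} provides the required lift of $\overline{Z}$.

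For the first claim, I would first establish that $q$ is a left fibration by verifying the lifting property against left horn inclusions $i: \Lambda^n_k \hookrightarrow \Delta^n$ for $0 \leq k < n$. Lemma \ref{kalel} guarantees $i^{\sharp}$ is $\calP_0$-anodyne, so by Proposition \ref{postprod} the pushout product $i^{\sharp} \wedge f$ is $\CatP$-anodyne, and Proposition \ref{poststaf} yields the desired lift. Specializing this reasoning to the cofibrations $\emptyset \hookrightarrow \overline{X}$ and $\emptyset \hookrightarrow \overline{Y}$ produces left fibrations $\bHom^{\sharp}_S(\overline{X}, \overline{Z}) \to \Delta^0$ and $\bHom^{\sharp}_S(\overline{Y}, \overline{Z}) \to \Delta^0$; since the fibers of a left fibration are Kan complexes, both the source and target of $q$ are Kan complexes. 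The proof concludes by invoking the standard fact that a left fibration whose base is a Kan complex is automatically a Kan fibration: every edge of the base is an equivalence, hence induces an equivalence of (Kan complex) vertex-fibers, which is the criterion for the left fibration to simultaneously be a right fibration.

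The main technical content has already been absorbed into Lemma \ref{kalel}, Proposition \ref{postprod}, and Proposition \ref{poststaf}, so there is no substantial remaining obstacle; the chief subtlety is the passage from a left fibration to a Kan fibration, which is handled by classical results on left fibrations rather than any further input specific to the present setup.
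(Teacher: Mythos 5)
Your argument coincides with the one in the paper: both reduce the lifting problems for $q$ to lifting problems for $\overline{Z}$ against pushout products of the form $i^{\sharp}\wedge f$, invoke Lemma~\ref{kalel} (marked $\sharp$-inclusions of left anodyne maps, or arbitrary cofibrations in the anodyne case, are $\calP_0$-anodyne) together with Proposition~\ref{postprod} to make those pushout products $\CatP$-anodyne, appeal to Proposition~\ref{poststaf} for the lifts, and then observe that a left fibration between Kan complexes is a Kan fibration. The paper is terser and simply cites the relevant HTT lemma for the last step where you spell out the fiberwise criterion, but there is no difference in content or route.
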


\begin{proof}
We first show that $q$ is a left fibration by showing that $q$ has the right lifting property with respect to every left anodyne inclusion  of simplicial sets $A \subseteq B$ (or {\em every} inclusion of simplicial sets, in the case where
$f$ is $\CatP$-anodyne). Unwinding the definitions, this is equivalent to the assertion that $\overline{Z}$ has the extension property with respect to the induced inclusion
$$f': (B^{\sharp} \times \overline{X} ) \coprod_{ A^{\sharp} \times \overline{X} }( A^{\sharp} \times \overline{Y}) \rightarrow B^{\sharp} \times \overline{Y}.$$
It follows from Proposition \ref{postprod} and Lemma \ref{kalel} that $f'$ is $\CatP$-anodyne, so that the desired result  is a consequence of Proposition \ref{poststaf}.

Applying the above result to the inclusion $\emptyset \subseteq \overline{X}$, we deduce that
the projection map $\bHom^{\sharp}_{S}( \overline{X}, \overline{Z} ) \rightarrow \Delta^0$ is
a left fibration, so that $\bHom^{\sharp}_{S}( \overline{X}, \overline{Z} )$ is a Kan complex.
It follows that $q$ is a Kan fibration as desired (Lemma \toposref{toothie2}).
\end{proof}

Our next goal is to show how to use Propositions \ref{poststaf} and \ref{postprod} to prove Theorem \ref{theo}. We begin by describing the class of weak equivalences in $\mset{ \CatP}$.

\begin{definition}
Let $\CatP$ be a categorical pattern on a simplicial set $S$. We will say that a morphism $f: \overline{X} \rightarrow \overline{Y}$ in $\mset{ \CatP}$ is a {\it $\CatP$-equivalence} if, for every $\CatP$-fibered object $\overline{Z} \in \mset{ \CatP}$, the induced map
$$ \bHom^{\sharp}_{S}( \overline{Y}, \overline{Z}) \rightarrow \bHom^{\sharp}_{S}( \overline{X}, \overline{Z} )$$
is a homotopy equivalence of Kan complexes.
\end{definition}

\begin{example}\label{inker}
Any $\CatP$-anodyne morphism is a $\CatP$-equivalence; this follows immediately
from Proposition \ref{stilk}.
\end{example}

\begin{lemma}\label{caker}
Let $\CatP$ be a categorical pattern on a simplicial set $S$, and suppose we are given a pushout diagram $$ \xymatrix{ \overline{X} \ar[r]^{f} \ar[d] & \overline{Y} \ar[d] \\
\overline{X}' \ar[r]^{f'} & \overline{Y}' }$$
in $\mset{ \CatP}$. Assume that the vertical maps are cofibrations. If $f$ is a $\CatP$-equivalence,
then $f'$ is a $\CatP$-equivalence.
\end{lemma}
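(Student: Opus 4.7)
The plan is to apply the functor $\bHom_S^{\sharp}(-, \overline{Z})$ to the given pushout square, for an arbitrary $\CatP$-fibered object $\overline{Z}$, and exploit the right-properness of the Kan model structure on simplicial sets.

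First I would fix a $\CatP$-fibered object $\overline{Z} \in \mset{\CatP}$. Since $\bHom_S^{\sharp}(-, \overline{Z})$ converts colimits in $\mset{\CatP}$ into limits of simplicial sets, the pushout square from the statement is carried to a pullback square
$$ \xymatrix{ \bHom^{\sharp}_{S}( \overline{Y}', \overline{Z}) \ar[r]^{(f')^{\ast}} \ar[d] & \bHom^{\sharp}_{S}( \overline{X}', \overline{Z}) \ar[d] \\
\bHom^{\sharp}_{S}( \overline{Y}, \overline{Z}) \ar[r]^{f^{\ast}} & \bHom^{\sharp}_{S}( \overline{X}, \overline{Z}). }$$
By Proposition \ref{stilk}, since the vertical maps $\overline{X} \to \overline{X}'$ and $\overline{Y} \to \overline{Y}'$ in the original diagram are cofibrations, all four objects in this square are Kan complexes and both vertical arrows are Kan fibrations.

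Next I would invoke the hypothesis that $f$ is a $\CatP$-equivalence: this means that the bottom horizontal arrow $f^{\ast}$ is a homotopy equivalence of Kan complexes. Because the Kan model structure on $\sSet$ is right proper, the pullback of a weak equivalence along a Kan fibration is again a weak equivalence. Applied to our square (with the vertical Kan fibrations being pulled back along $f^{\ast}$), this yields that the top horizontal arrow $(f')^{\ast}$ is a homotopy equivalence.

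Since $\overline{Z}$ was arbitrary, this shows that $f'$ induces a homotopy equivalence on mapping spaces into every $\CatP$-fibered object, which by definition means that $f'$ is a $\CatP$-equivalence. No step here is a genuine obstacle; the whole argument is a formal consequence of Proposition \ref{stilk} together with right-properness of the Kan model structure, both of which are already in hand.
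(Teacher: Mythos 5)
Your proof is correct and follows essentially the same route as the paper's: apply $\bHom^{\sharp}_{S}(-,\overline{Z})$ to the pushout to obtain a pullback of Kan complexes whose vertical legs are Kan fibrations by Proposition~\ref{stilk}, then invoke right-properness to conclude that the pullback of the weak equivalence $f^{\ast}$ along one of those fibrations is again a weak equivalence. (One small slip in phrasing: it is $f^{\ast}$ that is pulled back along a vertical Kan fibration to give $(f')^{\ast}$, not the vertical fibrations that are pulled back along $f^{\ast}$, but the mathematical content is the same as the paper's homotopy-pullback argument.)
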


\begin{proof}
Let $\overline{Z} \in \mset{ \CatP}$ be $\CatP$-fibered. We have a pullback diagram of simplicial sets
$$ \xymatrix{ \bHom^{\sharp}_{S}( \overline{X}, \overline{Z} ) & \bHom^{\sharp}_{S}( \overline{Y}, \overline{Z} ) \ar[l] \\
\bHom^{\sharp}_{S}( \overline{X}', \overline{Z} ) \ar[u] & \bHom^{\sharp}_{S}( \overline{Y}', \overline{Z}). }$$
Proposition \ref{stilk} implies that the vertical maps are Kan fibrations, so that the diagram is also a homotopy pullback square. Since $f$ is a $\CatP$-equivalence, the upper horizontal maps is
a homotopy equivalence of Kan complexes. It follows that the lower horizontal map is also a homotopy equivalence of Kan complexes, as desired.
\end{proof}

\begin{lemma}\label{piner}
Let $\CatP = (M_S, T, \{ p_{\alpha}: K_{\alpha}^{\triangleleft} \rightarrow S \}_{\alpha \in A})$ be a categorical pattern on a simplicial set $S$, and let $f: \overline{X} \rightarrow \overline{Y}$ be a map between $\CatP$-fibered objects $\overline{X} = (X,M), \overline{Y} = (Y,M')$ of $\mset{ \CatP}$. The
following conditions are equivalent:
\begin{itemize}
\item[$(1)$] The map $f$ is a $\CatP$-equivalence.
\item[$(2)$] The map $f$ admits a homotopy inverse; that is, there exists a map
$g: \overline{Y} \rightarrow \overline{X}$ in $\mset{ \CatP}$ and homotopies
$$h: ( \Delta^1)^{\sharp} \times \overline{X} \rightarrow \overline{X}
\quad \quad h': ( \Delta^1)^{\sharp} \times \overline{Y} \rightarrow \overline{Y}$$
connecting $g \circ f$ and $f \circ g$ to $\id_{ \overline{X}}$ and $\id_{ \overline{Y} }$, respectively.
\item[$(3)$] For every edge $\Delta^1 \rightarrow S$, the induced map
$X \times_{ S} \Delta^1 \rightarrow Y \times_{S} \Delta^1$ is an equivalence of $\infty$-categories.
\end{itemize}
If every edge of $S$ belongs to $M_S$, then $(3)$ can be replaced by the following apparently
weaker condition:
\begin{itemize}
\item[$(3')$] For every vertex $s \in S$, the induced map $X_{s} \rightarrow Y_{s}$ is an equivalence
of $\infty$-categories.
\end{itemize}
\end{lemma}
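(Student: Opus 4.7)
The implication $(2) \Rightarrow (1)$ is formal: a homotopy inverse $g$ of $f$ together with the witnessing homotopies $h, h'$ induces, by post-composition, a two-sided homotopy inverse to the restriction map $\bHom^{\sharp}_{S}(\overline{Y}, \overline{Z}) \to \bHom^{\sharp}_{S}(\overline{X}, \overline{Z})$ for any $\CatP$-fibered $\overline{Z}$. For $(1) \Rightarrow (2)$, specialize the definition to $\overline{Z} = \overline{X}$: Proposition \ref{stilk} says the restriction map $q_X: \bHom^{\sharp}_{S}(\overline{Y}, \overline{X}) \to \bHom^{\sharp}_{S}(\overline{X}, \overline{X})$ is a Kan fibration, and by $(1)$ it is a weak equivalence, hence a trivial Kan fibration. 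Lifting the vertex $\id_{\overline{X}}$ produces $g: \overline{Y} \to \overline{X}$ with $g \circ f = \id_{\overline{X}}$ (so that the homotopy $h$ may be taken to be constant). Applying the definition to $\overline{Z} = \overline{Y}$ similarly makes $q_Y: \bHom^{\sharp}_{S}(\overline{Y}, \overline{Y}) \to \bHom^{\sharp}_{S}(\overline{X}, \overline{Y})$ a trivial Kan fibration; both $f \circ g$ and $\id_{\overline{Y}}$ have image equal to $f$ in the target, so a lift of a trivial $1$-simplex at $f$ yields the required homotopy $h'$ from $f \circ g$ to $\id_{\overline{Y}}$.

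For $(2) \Rightarrow (3)$, given an edge $e: \Delta^1 \to S$, pulling $f, g, h, h'$ back along $e$ produces a functor $X \times_S \Delta^1 \to Y \times_S \Delta^1$ between inner fibrations over $\Delta^1$ together with a homotopy inverse and natural equivalences, witnessed by maps $(\Delta^1)^{\sharp} \times (X \times_S \Delta^1) \to X \times_S \Delta^1$ (and its analogue for $Y$). This exhibits the restriction as an equivalence of $\infty$-categories.

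The main work is $(3) \Rightarrow (2)$, which we prove in the equivalent form $(3) \Rightarrow (1)$: for every $\CatP$-fibered $\overline{Z}$, the Kan fibration $q: \bHom^{\sharp}_{S}(\overline{Y}, \overline{Z}) \to \bHom^{\sharp}_{S}(\overline{X}, \overline{Z})$ from Proposition \ref{stilk} is a trivial Kan fibration. Unwinding the adjunction, this amounts to verifying that $\overline{Z}$ has the extension property against the pushout-product inclusion
\[
\iota_n: ((\bd \Delta^n)^{\sharp} \times \overline{Y}) \cup_{(\bd \Delta^n)^{\sharp} \times \overline{X}} ((\Delta^n)^{\sharp} \times \overline{X}) \hookrightarrow (\Delta^n)^{\sharp} \times \overline{Y}
\]
for every $n \geq 0$. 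Propositions \ref{poststaf} and \ref{postprod} guarantee that $\overline{Z}$ has this property against all pushout-products of $\CatP$-anodyne maps with cofibrations. The plan is to construct the required extension by induction on a cellular filtration of $\overline{Y}$ relative to $\overline{X}$: for each simplex $\sigma$ of $Y$ not in $X$, the obstruction to extending a partial map to $\overline{Z}$ is controlled by the restriction $X \times_S \sigma \to Y \times_S \sigma$, which is an equivalence of $\infty$-categories by hypothesis $(3)$ applied to the edges of $\sigma$ and the $\CatP$-fibered structure on $\overline{X}, \overline{Y}$ (in particular, the coherent transport along marked edges and the limit conditions $(5)$--$(6)$ of Definition \ref{catpat}). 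This is the most delicate step, and I expect its detailed execution to parallel the inductive arguments used in Proposition \ref{postprod} and Proposition \ref{custa2}.

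Finally, assume every edge of $S$ belongs to $M_{S}$. For any $e: \Delta^1 \to S$, the projections $X \times_S \Delta^1 \to \Delta^1$ and $Y \times_S \Delta^1 \to \Delta^1$ are coCartesian fibrations by Definition \ref{catpat}, and the induced map preserves coCartesian edges because $f$ preserves markings. A map of coCartesian fibrations over $\Delta^1$ preserving coCartesian edges is a categorical equivalence as soon as it restricts to an equivalence on both fibres, so $(3')$ implies $(3)$; the reverse implication is trivial.
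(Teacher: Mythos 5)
Your handling of $(1)\Leftrightarrow(2)$, $(2)\Rightarrow(3)$, and $(3')\Leftrightarrow(3)$ is correct and matches the paper in substance. The problem is with $(3)\Rightarrow(1)$, which is the crux of the lemma.

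You attempt $(3)\Rightarrow(1)$ by showing that, for each $\CatP$-fibered $\overline{Z}$, the Kan fibration $q\colon \bHom^{\sharp}_{S}(\overline{Y}, \overline{Z}) \to \bHom^{\sharp}_{S}(\overline{X}, \overline{Z})$ is a trivial Kan fibration, reducing to the lifting problem against the pushout-products $\iota_n$ and then performing ``a cellular filtration of $\overline{Y}$ relative to $\overline{X}$.'' There are two gaps here. First, $f\colon \overline{X}\to\overline{Y}$ need not be a monomorphism of simplicial sets (the lemma imposes no cofibrancy on $f$), so there is no relative skeletal filtration of $Y$ by attaching simplices not in $X$. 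Second, and more importantly, even supposing $f$ were a cofibration, you have only sketched the inductive step: it is not at all clear why the obstruction to extending a partial map into $\overline{Z}$ across a new simplex of $Y$ should be ``controlled by'' the fiberwise equivalences of hypothesis $(3)$, since $\overline{Z}$ is an arbitrary $\CatP$-fibered object and the extension must take place in $\overline{Z}$. Citing the form of the inductions in Propositions \ref{postprod} and \ref{custa2} does not supply the missing argument — those propositions establish anodynity and homotopy pullback facts about fixed classes of maps, whereas here one needs to use $(3)$ to produce a lift.

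The paper resolves this by dualizing the variance: rather than fixing $\overline{X},\overline{Y}$ and varying the target $\overline{Z}$, it fixes $f\colon \overline{X}\to\overline{Y}$ and varies the \emph{source} $\overline{W}$, declaring $\overline{W}$ ``good'' if composition with $f$ induces a homotopy equivalence $\bHom^{\sharp}_{S}(\overline{W},\overline{X}) \to \bHom^{\sharp}_{S}(\overline{W},\overline{Y})$. The cellular induction is then performed on $\overline{W}$, which is arbitrary and hence can be built up skeleton by skeleton without any cofibrancy assumption on $f$; moreover one first reduces to $W^{\flat}$ (by using the $\CatP$-fibered structure to control markings), then to simplices, and the base case $W = \Delta^n$ is attacked by shrinking $\Delta^n$ onto its spine via a $\CatP$-anodyne map, where hypothesis $(3)$ applies directly. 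The conclusion $(1)$ (really $(2)$) follows formally by taking $\overline{W} = \overline{X}$ and $\overline{W} = \overline{Y}$. You should redo $(3)\Rightarrow(1)$ along these lines; the target-varying approach as you have set it up does not go through.
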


\begin{proof}
The equivalence of $(1)$ and $(2)$ is formal, and the implications $(2) \Rightarrow (3) \Rightarrow
(3')$ are clear. If every edge of $S$ belongs to $M_S$, then the implication $(3') \Rightarrow (3)$ follows
from Corollary \toposref{usefir}. To complete the proof, let us suppose that $f$ satisfies
$(3)$. We will say that an object $\overline{W} = (W,M'') \in \mset{ \CatP}$ is {\it good} if composition
with $f$ induces a homotopy equivalence
$$ \bHom_{S}^{\sharp}( \overline{W}, \overline{X} ) \rightarrow \bHom_{S}^{\sharp}( \overline{W}, \overline{Y} ).$$
Our goal is to prove that every object $\overline{W} \in \mset{ \CatP}$ is good. The proof proceeds in several steps:

\begin{itemize}
\item[$(a)$] We have a commutative diagram
$$ \xymatrix{  \bHom_{S}^{\sharp}( \overline{W}, \overline{X} ) \ar[r] \ar[d] & \bHom_{S}^{\sharp}( \overline{W}, \overline{Y} ) \ar[d] \\
 \bHom_{S}^{\sharp}( W^{\flat}, \overline{X} ) \ar[r] &  \bHom_{S}^{\sharp}( W^{\flat}, \overline{Y} ). }$$
The left vertical map exhibits $\bHom_{S}^{\sharp} ( \overline{W}, \overline{X} )$ as the full simplicial
subset of $\bHom_{S}^{\sharp}( W^{\flat}, \overline{X} )$ spanned by those maps
$W \rightarrow X$ which carry every edge in $M''$ to a locally $p$-coCartesian edge of $X$, where
$p: X \rightarrow S$ denotes the projection, and the right vertical map admits a similar description
in terms of the projection $q: Y \rightarrow S$.
Assumption $(3)$ implies that an edge of $X$ is locally $p$-coCartesian if and only if its image in $Y$ is locally $q$-coCartesian. Consequently, to prove that $\overline{W}$ is good, it will suffice to show that
$W^{\flat}$ is good.

\item[$(b)$] Suppose given a pushout diagram
$$ \xymatrix{ V \ar[r] \ar[d] & V' \ar[d] \\
W \ar[r] & W' }$$
in the category of simplicial sets over $S$, where the vertical maps are cofibrations. We then obtain pullback diagram
$$ \xymatrix{ \bHom_{S}^{\sharp}( V^{\flat}, \overline{X} ) & \bHom_{S}^{\sharp}( {V'}^{\flat}, \overline{X} ) \ar[l] & \bHom_{S}^{\sharp}( V^{\flat}, \overline{Y} ) & \bHom_{S}^{\sharp}( {V'}^{\flat}, \overline{Y}) \ar[l] \\
\bHom_{S}^{\sharp}( W^{\flat}, \overline{X}) \ar[u] & \bHom_{S}^{\sharp}( {W'}^{\flat}, \overline{X}) \ar[u] \ar[l] & \bHom_{S}^{\sharp}( W^{\flat}, \overline{Y}) \ar[u] & \bHom_{S}^{\sharp}( W^{\flat}, \overline{Y}) \ar[u] \ar[l]. }$$
Proposition \ref{stilk} implies that the vertical maps are Kan fibrations, so both diagrams are homotopy pullback squares. It follows that if $V^{\flat}$, ${V'}^{\flat}$, and $W^{\flat}$ are good, then
${W'}^{\flat}$ is good.

\item[$(c)$] Let $\Delta^n \rightarrow S$ be a map; then $( \Delta^n)^{\flat}$ is good for $n \leq 1$; this follows immediately from $(3)$.

\item[$(d)$] For any map $\Delta^n \rightarrow S$, the object
$( \Delta^{ \{0,1\} } \coprod_{ \{0\} } \ldots \coprod_{ \{n-1\} } \Delta^{ \{n-1,n\} })^{\flat} \in \mset{ \CatP}$
is good; this follows from $(b)$ and $(c)$.

\item[$(e)$] Let $u: \overline{W} \rightarrow \overline{W}'$ be a $\CatP$-equivalence (for
example, any $\CatP$-anodyne map). Then $\overline{W}$ is good if and only if $\overline{W}'$ is good.

\item[$(f)$] For any map $\Delta^n \rightarrow S$, the resulting object
$( \Delta^n)^{\flat} \in \mset{ \CatP}$ is good. This follows from $(e)$ and $(d)$, since the
inclusion 
$( \Delta^{ \{0,1\} } \coprod_{ \{0\} } \ldots \coprod_{ \{n-1\} } \Delta^{ \{n-1,n\} })^{\flat} \subseteq (\Delta^n)^{\flat}$ is $\CatP$-anodyne (Example \ref{inker}).

\item[$(g)$] The collection of good objects in $\mset{\CatP}$ is closed under coproducts
(since a product of homotopy equivalences between Kan complexes is again a homotopy equivalence).

\item[$(h)$] If the simplicial set $W$ is finite-dimensional, then $W^{\flat} \in \mset{ \CatP}$ is good.
The proof goes by induction on the dimension $n \geq 0$ of $W$. If $W$ is empty, then the result is obvious. Otherwise, let $K$ denote the set of nondegenerate $n$-simplices of $W$, and let
$W'$ denote the $(n-1)$-skeleton of $W$. We have a pushout diagram
$$ \xymatrix{ K \times \bd \Delta^n \ar[r] \ar[d] & W' \ar[d] \\
K \times \Delta^n \ar[r] & W. }$$
The inductive hypothesis guarantees that $(K \times \bd \Delta^n)^{\flat}$ and ${W'}^{\flat}$ are good, and $(K \times \Delta^n)^{\flat}$ is good by virtue of $(g)$ and $(f)$. It follows from
$(b)$ that $W^{\flat}$ is good.

\item[$(i)$] Suppose that $W$ is obtained as the direct limit of a sequence of inclusions
$$ W(0) \rightarrow W(1) \rightarrow W(2) \rightarrow \ldots $$
Then $\bHom_{S}^{\sharp}( W^{\flat}, \overline{X} )$ can be obtained as the homotopy inverse limit of the tower $\{ \bHom_{S}^{\sharp}( W(n)^{\flat}, \overline{X} ) \}_{n \geq 0}$, and $
\bHom_{S}^{\sharp}( W^{\flat}, \overline{Y} )$ can be described similarly. It follows that
if each $W(n)^{\flat}$ is good, then $W^{\flat}$ is good.

\item[$(j)$] For every map of simplicial sets $W \rightarrow S$, the object
$W^{\flat} \in \mset{ \CatP}$ is good. This follows from $(h)$ and $(i)$,
if we take $W(n)$ to be the $n$-skeleton of $W$.
\end{itemize}
\end{proof}

We are now ready to prove Theorem \ref{theo}:

\begin{proof}
Let $\CatP = (M_S, T, \{ p_{\alpha}: K_{\alpha}^{\triangleleft} \rightarrow S \}_{\alpha \in A})$
be a categorical pattern on the simplicial set $S$. Assume for the moment that each of the simplicial sets $K_{\alpha}$ is finite. It follows from the small object argument that there exists a functor
$E: \mset{ \CatP} \rightarrow \mset{ \CatP}$ and a natural transformation
$\alpha: \id \rightarrow T$ with the following properties:
\begin{itemize}
\item[$(a)$] The functor $E$ commutes with filtered colimits.
\item[$(b)$] For every object $\overline{X} \in \mset{ \CatP}$, the object
$E \overline{X} \in \mset{ \CatP}$ has the extension property with respect to every
$\CatP$-anodyne map (and is therefore $\CatP$-fibered, by virtue of Proposition \ref{poststaf}.
\item[$(c)$] For every object $\overline{X} \in \mset{\CatP}$, the map 
$\overline{X} \rightarrow E \overline{X}$ is $\CatP$-anodyne.
\end{itemize}
Let $f: \overline{X} \rightarrow \overline{Y}$ be a morphism in $\mset{\CatP}$. 
It follows from $(c)$ and Example \ref{inker} that $f$ is a $\CatP$-equivalence if and only
if $E(f)$ is a $\CatP$-equivalence. Using $(b)$ and Lemma \ref{piner}, we deduce that 
$f$ is an equivalence if and only if for each edge $e: \Delta^1 \rightarrow S$, the map
$E(f)$ induces a categorical equivalence of simplicial sets after pulling back along $e$.
Using $(a)$ and Corollary \toposref{perfpull}, we deduce that the collection of $\CatP$-equivalences
in $\mset{ \CatP}$ is perfect, in the sense of Definition \toposref{perfequiv}.

We now wish to deduce the existence of a left proper, combinaorial model structure on
$\mset{ \CatP}$ such that the cofibrations are the monomorphisms and the weak equivalences
are given by the $\CatP$-equivalences. It will suffice to show that $\mset{ \CatP}$ satisfies
the hypotheses of Proposition \toposref{goot}:

\begin{itemize}
\item[$(1)$] The collection of $\CatP$-equivalences is perfect: this follows from the above arguments.
\item[$(2)$] The collection of $\CatP$-equivalences is stable under pushouts by cofibrations:
this follows from Lemma \ref{caker}.
\item[$(3)$] Let $f: \overline{X} \rightarrow \overline{Y}$ be a morphism in $\mset{\CatP}$ which has
the right lifting property with respect to every cofibration; we wish to show that $f$ is a 
$\CatP$-equivalence. To prove this, it suffices to observe that $f$ admits a section $s$
and that the composition $s \circ f: \overline{X} \rightarrow \overline{X}$ is homotopic to the identity
(that is, there exists a homotopy $h: \overline{X} \times (\Delta^1)^{\sharp} \rightarrow \overline{X}$
from $\id_{\overline{X}}$ to $s \circ f$ in the category $\mset{ \CatP}$).
\end{itemize}

We next claim that the simplicial structure on $\mset{ \CatP}$ is compatible with its model structure.
In view of Proposition \toposref{testsimpmodel}, it will suffice to prove that for every object
$\overline{X} \in \mset{ \CatP}$ and each $n \geq 0$, the projection map
$p: \overline{X} \times ( \Delta^n)^{\sharp} \rightarrow \overline{X}$ is a $\CatP$-equivalence.
The inclusion $i: \{0\}^{\sharp} \subseteq (\Delta^n)^{\sharp}$ determines a section $s$ of
$p$; it will therefore suffice to show that $s$ is a $\CatP$-equivalence. Lemma \ref{kalel} implies
that $i$ is $\CatP_0$-anodyne (where $\CatP_0$ is defined as in the statement of Lemma \ref{kalel}).
Using Proposition \ref{postprod}, we conclude that $s$ is $\CatP$-anodyne, so that 
$s$ is a $\CatP$-equivalence by Example \ref{inker}.

We now discuss the case of a general categorical pattern
$\CatP = (M_S, T, \{ p_{\alpha}: K_{\alpha}^{\triangleleft} \rightarrow S' \}_{\alpha \in A})$ on $S$.
Let $\CatP' = (M_S, T, \emptyset)$. We have already shown that $\mset{\CatP'}$ has the structure of a left proper combinatorial simplicial model category. We may therefore define a model structure on the category $\mset{ \CatP}$ to be the localization of $\mset{ \CatP}$ with respect to the generating $\CatP$-anodyne maps appearing in Definition \ref{postspunt}. It follows from Proposition \toposref{suritu} that
$\mset{ \CatP}$ is again a left proper combinatorial simplicial model category.

To complete the proof, it will suffice to show that an object $\overline{X} \in \mset{ \CatP}$ is 
fibrant if and only if it is $\CatP$-fibered. It follows from Proposition \toposref{suritu} that
$\overline{X}$ is fibrant if and only if the following conditions are satisfied:
\begin{itemize}
\item[$(i)$] The object $\overline{X}$ is fibrant in $\mset{ \CatP'}$: that is, 
$\overline{X}$ has the extension property with respect to every cofibration
$f: \overline{Y} \rightarrow \overline{Y}'$ which is a $\CatP'$-equivalence.
\item[$(ii)$] For every generating $\CatP$-anodyne map $f: \overline{Y} \rightarrow \overline{Y}'$,
the induced map $q: \bHom_{S}^{\sharp}( \overline{Y}', \overline{X} )
\rightarrow \bHom_{S}^{\sharp}( \overline{Y}, \overline{X} )$ is a homotopy equivalence of Kan complexes.
\end{itemize}
Suppose that $\overline{X}$ satisfies $(ii)$. Note that for every $\CatP$-anodyne map $f$,
the map $q$ is a Kan fibration (Proposition \ref{stilk}), and therefore a trivial Kan fibration.
It follows that $q$ is surjective on vertices, so that $\overline{X}$ has the extension property
with respect to every $\CatP$-anodyne map and is therefore $\CatP$-fibered by
virtue of Proposition \ref{poststaf}.

Conversely, suppose that $\overline{X}$ is $\CatP$-fibered; we wish to show that
$\overline{X}$ satisfies conditions $(i)$ and $(ii)$. To prove $(i)$, consider the map
$q: \bHom_{S}^{\sharp}( \overline{Y}', \overline{X} ) \rightarrow \bHom_{S}^{\sharp}( \overline{Y}, \overline{X} )$. This map is a Kan fibration (Proposition \ref{stilk}) and a homotopy equivalence
by virtue of our assumption that $f$ is a $\CatP'$-equivalence (since $\overline{X}$ is $\CatP'$-fibered).
It follows that $q$ is a trivial Kan fibration and therefore surjective on vertices, which proves $(i)$.
To prove $(ii)$, it will suffice to show that $q$ is a trivial Kan fibration whenever $f$
is $\CatP$-anodyne. To see that $q$ has the right lifting property with respect to the inclusion
$\bd \Delta^n \subseteq \Delta^n$, we need to show that $\overline{X}$ has the extension property with respect to the induced inclusion
$$f' = ( \overline{Y} \times (\Delta^n)^{\sharp} ) \coprod_{ \overline{Y} \times (\bd \Delta^n)^{\sharp}
} ( \overline{Y}' \times ( \bd \Delta^n)^{\sharp}) \subseteq \overline{Y}' \times (\Delta^n)^{\sharp}.$$
This follows from Proposition \ref{poststaf}, since $f'$ is $\CatP$-anodyne by virtue of
Proposition \ref{postprod}.
\end{proof}

\begin{remark}\label{slaver}
Let $\CatP$ and $\CatP'$ be categorical patterns, and let $\CatP \times \CatP'$ be defined as in
Definition \ref{prodpat}. The formation of Cartesian products induces a functor
$$ F: \mset{ \CatP} \times \mset{ \CatP'} \rightarrow \mset{ \CatP \times \CatP' }.$$
With respect to the model structures of Theorem \ref{theo}, the map $F$ is a left Quillen bifunctor.
To prove this, we must show that if $f: \overline{X} \rightarrow \overline{X}'$ is
a cofibration in $\mset{ \CatP}$ and $g: \overline{Y} \rightarrow \overline{Y}'$ is a cofibration
in $\mset{ \CatP'}$, then the induced map
$$ f \wedge g: ( \overline{X}' \times \overline{Y}) \coprod_{ \overline{X} \times \overline{Y}}
( \overline{X} \times \overline{Y}') \rightarrow \overline{X}' \times \overline{Y}'$$
is a cofibration, which is trivial if either $f$ or $g$ is trivial. The first claim is obvious, and the second is equivalent to the requirement that the diagram
$$ \xymatrix{ \overline{X} \times \overline{Y} \ar[d] \ar[r]^{i} & \overline{X}' \times \overline{Y} \ar[d] \\
\overline{X} \times \overline{Y}' \ar[r]^{j} & \overline{X}' \times \overline{Y}' }$$
is a homotopy pushout square. For this, it suffices to show that the horizontal maps are weak equivalences. We will prove that $i$ is a weak equivalence; the proof that $j$ is a weak equivalence is similar. Choose a $\CatP$-anodyne map $f': \overline{X}' \rightarrow \overline{X}''$, where
$\overline{X}''$ is $\CatP$-fibered. Proposition \ref{postprod} guarantees that 
the map $\overline{X}' \times \overline{Y} \rightarrow \overline{X}'' \rightarrow \overline{Y}$ is
$(\CatP \times \CatP')$-anodyne. It therefore suffices to show that the composite map
$\overline{X} \times \overline{Y} \rightarrow \overline{X}'' \times \overline{Y}$ is a 
$(\CatP \times \CatP')$-equivalence. We may therefore replace $\overline{X}'$ by
$\overline{X}''$ and thereby reduce to the case where $\overline{X}'$ is $\CatP$-fibered.
By a similar argument, we can assume that the map $\overline{X} \rightarrow \overline{X}'$
has the right lifting property with respect to all $\CatP$-anodyne morphisms, so that
$\overline{X}$ is $\CatP$-fibered as well. The $\CatP$-equivalence $f$ now admits a homotopy inverse, so that the induced map $\overline{X} \times \overline{Y} \rightarrow \overline{X}' \times \overline{Y}$ admits a homotopy inverse as well.
\end{remark}

\begin{remark}
Let $\CatP$ be a categorical pattern, and let $\mset{ \CatP}$ be endowed with the model structure of
Theorem \ref{theo}. Then the weak equivalences in $\mset{ \CatP}$ are precisely the
$\CatP$-equivalences.
\end{remark}

The remainder of this section is devoted to the proofs of Propositions \ref{poststaf} and
\ref{postprod}.

\begin{lemma}\label{kisker}
Let $n \geq 2$, and let $p: X \rightarrow \Delta^{n}$ be an inner fibration of simplicial sets. Consider a commutative diagram
$$ \xymatrix{ \Lambda^{n}_0 \ar[r]^{f_0} \ar@{^{(}->}[d] & X \ar[d]^{p} \\
\Delta^n \ar[r]^{\id} \ar@{-->}[ur]^{f} & \Delta^{n}, }$$
where $f_0$ carries $\Delta^{ \{0,1\} } \subseteq \Lambda^n_0$ to a locally $p'$-coCartesian edge
of $X \times_{ \Delta^n} \Delta^{ \{0,1,n\}}$, where $p'$ denotes the projection
$X \times_{ \Delta^n} \Delta^{ \{0,1,n\} } \rightarrow \Delta^{ \{0,1,n\} }$. 
Then there exists a map $f: \Delta^n \rightarrow X$ as indicated, rendering the diagram commutative.
\end{lemma}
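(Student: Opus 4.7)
The plan is to prove the lemma by induction on $n \geq 2$, with the base case $n=2$ carrying the main technical content.

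\textbf{Base case $n=2$.} Here $\Delta^{\{0,1,n\}} = \Delta^n$, so the pullback
$X \times_{\Delta^n} \Delta^{\{0,1,n\}}$ coincides with $X$ itself and $p' = p$. The
hypothesis becomes that $e := f_0|\Delta^{\{0,1\}}$ is a locally $p$-coCartesian edge of the
inner fibration $X \to \Delta^2$. I would produce the required $2$-simplex $f$ by unwinding
this universal property: $e$ is coCartesian in the pullback
$X \times_{\Delta^2} \Delta^{\{0,1\}} \to \Delta^{\{0,1\}}$, and combining this with the inner
fibration structure of $p$ on the adjacent edges $\Delta^{\{1,2\}}$ and $\Delta^{\{0,2\}}$
supplies the data (in particular, a suitable edge lifting $[1,2]$) from which the $2$-simplex $f$
over the identity $\id_{\Delta^2}$ can be assembled. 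This step is effectively a special case of
Proposition~\toposref{protohermes} applied to the fibration $X \to \Delta^2$.

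\textbf{Inductive step $n \geq 3$.} Observe that $\Delta^{\{0,1,n\}}$ is already contained in
$\Lambda^n_0$ (it contains vertex $0$), so $f_0|\Delta^{\{0,1,n\}}$ is part of the given data.
The plan is to factor the inclusion $\Lambda^n_0 \hookrightarrow \Delta^n$ through a sequence
of intermediate simplicial subsets $\Lambda^n_0 = K_0 \subset K_1 \subset \cdots \subset K_r = \Delta^n$,
where each inclusion $K_{i-1} \hookrightarrow K_i$ is either an inner horn extension (solvable by
the inner fibration property of $p$) or an outer horn extension that can be discharged by applying
the inductive hypothesis to the restriction of $p$ along a sub-simplex $\Delta^J \subset \Delta^n$
with $\{0,1,n\} \subseteq J$ and $|J| \leq n$. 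On any such $\Delta^J$, the restricted projection
$p|X \times_{\Delta^n} \Delta^J \to \Delta^J$ is again an inner fibration for which $e$ remains
locally $p'$-coCartesian over $\Delta^{\{0,1,n\}} \subset \Delta^J$, so the inductive hypothesis
supplies the needed partial lifts. Concretely, a convenient choice is $J = \{0,1,k,n\}$ for
intermediate $k$, reducing each outer-horn extension to the $n=3$ instance, which in turn reduces
to the base case via one more inductive step.

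\textbf{Main obstacle.} The chief subtlety lies in the base case $n = 2$: the hypothesis that $e$
is locally $p$-coCartesian governs a priori only lifts of base maps factoring through the edge
$\Delta^{\{0,1\}}$, whereas our lifting problem has identity base map $\id : \Delta^2 \to \Delta^2$,
which does \emph{not} factor through $\Delta^{\{0,1\}}$. Bridging this gap requires combining the
universal property of $e$ with the inner fibration property of $p$ restricted to the other edges
of $\Delta^2$; this assembly is the crux of the argument and is what makes the local coCartesian
hypothesis suffice. A secondary combinatorial challenge is choosing the filtration
$K_0 \subset \cdots \subset K_r$ in the inductive step so that every outer-horn extension occurs
on a sub-simplex containing $\{0,1,n\}$, while all remaining extensions are inner.
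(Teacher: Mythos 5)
Your proposal misidentifies where the content of the lemma sits, and neither the base case nor the inductive step is actually argued. For $n = 2$ one has $\Delta^{\{0,1,n\}} = \Delta^2$ and $p' = p$, and the horn $\Lambda^2_0 \hookrightarrow \Delta^2$ lying over $\id_{\Delta^2}$ admits a filler carrying $\Delta^{\{0,1\}}$ to $e$ \emph{if and only if} $e$ is $p$-coCartesian. If (as the application in the proof of Proposition~\ref{poststaf}, case $(C_0)$, requires) the operative hypothesis is that $e$ is $p'$-coCartesian, then the $n=2$ case is an immediate restatement of the hypothesis and contains no argument at all; if you instead read the hypothesis as ``coCartesian only in the pullback over the edge $\Delta^{\{0,1\}}$'', then the $n=2$ claim is \emph{false} for a general inner fibration $p: X \to \Delta^2$ --- the universal property of such an edge gives no control over test objects lying over the vertex $2$, and no amount of inner-horn filling along the other two edges can manufacture the missing homotopy equivalence $\bHom_X(x_1, z) \to \bHom_X(x_0, z)$ for $z$ over $2$. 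Either way, your base case does not establish anything, and the invocation of Proposition~\toposref{protohermes} (a two-out-of-three statement for coCartesian edges) is not the relevant tool here.

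The inductive step is likewise unworkable as stated. The inclusion $\Lambda^n_0 \hookrightarrow \Delta^n$ adds exactly two nondegenerate simplices: the face $\Delta^{\{1,\dots,n\}}$ and the top cell; neither can be attached along a horn contained in any proper face $\Delta^J \subsetneq \Delta^n$, so there is no filtration $\Lambda^n_0 \subset K_1 \subset \dots \subset \Delta^n$ of the sort you describe. (The horns $\Lambda^3_0 \to \Delta^J$ for $J = \{0,1,k,n\}$ are already entirely contained in $\Lambda^n_0$, so filling them produces nothing new.) The paper's proof does not proceed by induction or by simplicial combinatorics: it replaces $X$ by a topological nerve $\tNerve(\calC)$ and unwinds the data of $f_0$ into a map $g_0 : \bd [0,1]^{n-2} \to \bHom_{\calC}(C_1, C_n)$ together with a compatible homotopy valued in $\bHom_{\calC}(C_0, C_n)$; producing $f$ is then equivalent to extending $g_0$ across the cube $[0,1]^{n-2}$ together with its homotopy. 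This becomes an elementary lifting problem for Serre fibrations once one knows that precomposition with $\alpha = f_0(\Delta^{\{0,1\}})$ gives a weak homotopy equivalence $q : \bHom_{\calC}(C_1,C_n) \to \bHom_{\calC}(C_0,C_n)$, and that equivalence is exactly what the coCartesianness over $\Delta^{\{0,1,n\}}$ provides. This passage to mapping spaces, not horn-filling combinatorics, is the idea your proposal is missing.
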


\begin{proof}
To prove the assertion, it will suffice to show that $f_0$ extends to an $n$-simplex of $X$
(the compatibility with the projection $p$ is automatic, since $\Lambda^{n}_0$ contains every vertex of
$\Delta^n$). Choose a categorical equivalence $X \rightarrow \tNerve(\calC)$, where $\calC$
is a topological category (for example, we could take $\calC = | \sCoNerve[X] |$). Note that
the projection $p$ factors (uniquely) through some projection map $\tNerve(\calC) \rightarrow \Delta^n$.
Since $p$ is an inner fibration, the simplicial set $X$ is an $\infty$-category, and therefore fibrant with respect to the Joyal model structure. Consequently, it will suffice to prove the existence of the desired extension after replacing $X$ by $\tNerve(\calC)$. We may therefore assume that $X$ is the nerve of a topological category $\calC$.

The functor $f_0$ determines the following data in the topological category $\calC$:
\begin{itemize}
\item[$(1)$] A collection of objects $C_{i} = f_0( \{i \})$. 
\item[$(2)$] A morphism $\alpha: C_0 \rightarrow C_1$ in $\calC$, given by evaluating
$f_0$ on the edge $\Delta^{ \{0,1\} } \subseteq \Lambda^{n}_0$.
Let $q: \bHom_{\calC}(C_1, C_n) \rightarrow \bHom_{\calC}(C_0, C_n)$ be the map induced by composition with $\alpha$. Since $\alpha$ is locally $p$-coCartesian, it is coCartesian with
respect to the projection $X \times_{ \Delta^n} \Delta^{ \{0,1, n\}} \rightarrow
\Delta^{ \{0,1,n\} }$, so that $q$ is a weak homotopy equivalence.
\item[$(3)$] A continuous map $g_0: \bd [0,1]^{n-2} \rightarrow \bHom_{\calC}( C_1, C_n)$, given by
evaluating $f_0$ on $\bd \Delta^{ \{1, 2, \ldots, n} \}$. 
\item[$(4)$] Another continuous map 
$$H_0: ((\bd [0,1]^{n-2}) \times [0,1]) \coprod_{ \bd [0,1]^{n-2} \times \{0\} } ([0,1]^{n-2} \times \{0\})
\rightarrow \bHom_{\calC}(C_0, C_n)$$ such that the restriction
$H_0 | ( \bd [0,1]^{n-2} \times \{1\} )$ coincides with the composition
$$ \bd [0,1]^{n-2} \stackrel{g_0}{\rightarrow} \bHom_{\calC}(C_1, C_n)
\stackrel{q}{\rightarrow} \bHom_{\calC}(C_0, C_n).$$
\end{itemize}

Let $h_1 = H_0 | ( [0,1]^{n-2} \times \{0\} )$.
We can regard the restriction $H_0 | ( \bd [0,1]^{n-2} \times [0,1] )$ as a homotopy
from $q \circ g_0$ to $h_1 | \bd [0,1]^{n-2}$. Unwinding the definitions, we see that
producing the desired extension $f$ is equivalent to extending $H_0$ to a homotopy
from $q \circ g$ to $h_1$, for some continuous map $g: [0,1]^{n-2} \rightarrow \bHom_{\calC}(C_1, C_n)$. The existence of $H$ (and $g$) now follows easily from the fact that $q$ is a weak homotopy equivalence.
\end{proof}

\begin{proof}[Proof of Proposition \ref{poststaf}]
Let $\CatP = (M_S, T, \{ p_{\alpha}: K_{\alpha}^{\triangleleft} \rightarrow S \}_{\alpha \in A})$ be
a categorical pattern on the simplicial set $S$, and let $\overline{X}$ be an object of $\mset{ \CatP}$. We wish to show that $\overline{X}$ is $\CatP$-fibered if and only if it has the extension property with respect to every $\CatP$-anodyne morphism. We begin by proving the ``if'' direction. Let $\overline{X} = (X,M)$, and let $q: X \rightarrow S$ denote the underlying map of simplicial sets.  We will show that
$\overline{X}$ satisfies conditions $(1)$, $(2)$, $(3)$, $(4)$ and $(6)$ of Definition \ref{catpat},
together with condition $(5')$ of Remark \ref{ratpat}:

\begin{itemize}
\item[$(1)$] We must show that $q: X \rightarrow S$ is an inner fibration. This is equivalent
to our assumption that $\overline{X}$ has the unique extension property with respect to every
morphism of type $(C_1)$ appearing in Definition \ref{postspunt}.

\item[$(2)$] Let $\overline{e}: \Delta^1 \rightarrow S$ belong to $M_S$, let
$X' = X \times_{S} \Delta^1$, and let $q': X' \rightarrow \Delta^1$ denote the projection map.
We wish to prove that $q'$ is a coCartesian fibration. Let $M'$ denote the collection
of edges in $X'$ whose image in $X$ belongs to $M$.
Since $\overline{X}$ has the extension property with respect to morphisms of
the type $(C_0)$ appearing in Definition \ref{postspunt}, we deduce that every
edge of $M'$ is $q'$-coCartesian. The existence of a sufficient supply of such edges follows
from the assumption that $q$ has the extension property with respect to morphisms of type
$(B_0)$.

\item[$(3)$] Let $\overline{e}$, $X'$, and $q'$ be as in $(2)$. 
We claim that an edge $e: x \rightarrow y$ of $X'$ lifting $\overline{e}$ is $q'$-coCartesian if and only if
$e \in M'$. The ``if'' direction follows from the above arguments. To prove the converse, we
first treat the case where the edge $\overline{e}$ is degenerate, corresponding to a vertex
$s \in S$. Let $X_{s}$ denote the $\infty$-category $X \times_{S} \{s\}$, so that
$e$ is an equivalence in $X_{s}$ and therefore belongs to the largest Kan complex
$Y$ contained in $X_{s}$. Let $Q = \Delta^0 \coprod_{ \Delta^{ \{0,2\} }}
\Delta^3 \coprod_{ \Delta^{ \{1,3\} } } \Delta^0$, and let $Q'$ denote the image
of $\Delta^{ \{1,2\} } \subseteq \Delta^3$ in $K$. The inclusion $Q' \subseteq Q$
is a weak homotopy equivalence. Consequently, the map $Q' \rightarrow Y$ determined
by the edge $e$ extends to a map $Q \rightarrow Y$. Since $\overline{X}$
has the extension property with respect to morphisms of type $(A_1)$ appearing in Definition \ref{postspunt}, we deduce that the induced map $Q \rightarrow X$ carries each edge of $Q$ into
$M$, so that $e \in M$.

We now treat the general case where $\overline{e}$ is not assumed to be degenerate.
Using the extension property with respect to morphisms of type $(B_0)$, we can choose
an edge $e': x \rightarrow y'$ in $M'$ which lies over $\overline{e}$.
Since $e'$ is $q'$-coCartesian, we can choose a $2$-simplex
$$ \xymatrix{ & y' \ar[dr]^{e''} & \\
x \ar[ur]^{e'} \ar[rr]^{e} & & y }$$
lying over the edge $\overline{e}$ in $S'$, where $e''$ is an edge of the fiber $X'_{q'(y)}$. Since $e$ is also $q'$-coCartesian, we
deduce that $e''$ is an equivalence in $X'_{q'(y)}$, so that $e'' \in M$ by the above argument.
Invoking our assumption that $\overline{X}$ has the extension property with respect
to morphisms of the type $(A_0)$, we deduce that $e \in M'$, as desired.

\item[$(4)$] Let $\Delta^2 \rightarrow S$ be a $2$-simplex which belongs to $T$, let
$X' = X \times_{S} \Delta^2$, and let $e$ be an edge of $X'$ lying over
$\Delta^{ \{0,1\} }$ whose image in $X$ belongs to $M$. We wish to prove that
$e$ is $q'$-coCartesian, where $q'$ denotes the projection map
$X' \rightarrow \Delta^2$. This follows immediately from our assumption that
$\overline{X}$ has the extension property with respect to morphisms of the type
$(C_0)$.

\item[$(5')$] Fix an index $\alpha \in A$. Let $q_{\alpha}: X \times_{S} K_{\alpha}^{\triangleleft} \rightarrow K_{\alpha}^{\triangleleft}$ denote the projection map, and let
$q_{\alpha}^{0}: K \times_{S} K_{\alpha} \rightarrow K_{\alpha}$ its restriction.
We must show that every coCartesian section of $q_{\alpha}^{0}$ can be extended to
a coCartesian section of $q_{\alpha}$. In view of $(3)$, this is equivalent to the requirement
that $\overline{X}$ have the extension property with respect to morphisms of type $(B_1)$ in
Definition \ref{postspunt}.

\item[$(6)$] Let $\alpha$ and $q_{\alpha}$ be as in $(4')$; we must show that every
coCartesian section of $q_{\alpha}$ is a $q$-limit diagram. In view of $(3)$, this is equivalent
the requirement that $\overline{X}$ has the extension property with respect to all morphisms
of type $(C_2)$ appearing in Definition \ref{postspunt}.
\end{itemize}

We now prove the ``only if'' direction. Assume that $\overline{X}$ is $\CatP$-fibered.
We will show that $\overline{X}$ has the extension property with respect to every
$\CatP$-anodyne morphism $f: A \rightarrow B$ in $\mset{ \CatP}$. It will suffice to
treat the case where $f$ is one of the generating $\CatP$-anodyne morphisms
appearing in Definition \ref{postspunt}. For morphisms of the types
$(B_1)$, $(C_1)$, and $(C_2)$, the relevant assertion follows from the arguments given above
in cases $(5')$, $(1)$, and $(6)$, respectively. There are several more cases to consider:

\begin{itemize}
\item[$(A_0)$] The map $f$ is an inclusion $( \Lambda^2_1)^{\sharp} \coprod_{ (\Lambda^2_{1})^{\flat}} (\Delta^2)^{\flat}  \subseteq (\Delta^2)^{\sharp},$ for some $2$-simplex $\Delta^2 \rightarrow S$
belonging to $T$. Let $X' = X \times_{S} \Delta^2$, and let $q': X' \rightarrow \Delta^2$ denote the projection. To prove that $\overline{X}$ has the extension property with respect to $f$, we must show that if we are given a $2$-simplex 
$$\xymatrix{ & y \ar[dr]^{g''} & \\
x \ar[ur]^{g'} \ar[rr]^{g} & & z }$$
in $X'$ such that $g'$ and $g''$ are locally $q'$-coCartesian, then $g$ is locally $q'$-coCartesian.
We observe that $g''$ is automatically $q'$-coCartesian, and the hypothesis that
$\overline{X}$ is $\CatP$-fibered guarantees that $g'$ is $q'$-coCartesian. It follows from
Proposition \toposref{protohermes} that $g$ is $q'$-coCartesian.


\item[$(A_1)$] The map $f$ is an inclusion $Q^{\flat} \subseteq Q^{\sharp}$, where
$Q = \Delta^0 \coprod_{ \Delta^{ \{0,2\} }} \Delta^3 \coprod_{ \Delta^{ \{1,3\} }} \Delta^0$,
and the map $Q \rightarrow S$ carries each edge of $Q$ into $M_S$ and each $2$-simplex of
$Q$ into $T$. Let $X' = X \times_{S} Q$ and let $q': X' \rightarrow Q$ denote the projection map.
It follows from Corollary \toposref{gottaprime} that $q'$ is a coCartesian fibration, classified
by some functor $\chi: Q \rightarrow \Cat_{\infty}$. Since the projection $Q \rightarrow \Delta^0$ is a categorical equivalence, the functor $\chi$ is equivalent to a constant functor; it follows that
$X'$ is equivalent to a product $Q \times \calC$, for some $\infty$-category $\calC$.
To show that $\overline{X}$ has the extension property with respect to $f$, it suffices to show that
every section of $q'$ is coCartesian. Replacing $X'$ by $Q \times \calC$, we are reduced to proving
that every diagram $Q \rightarrow \calC$ carries each edge of $Q$ to an equivalence in $\calC$, which follows from a simple diagram chase.

\item[$(B_0)$] The map $f$ is an inclusion $\{0\}^{\sharp} \subseteq (\Delta^1)^{\sharp}$ lying over an
edge of $M_S$. Since the induced map $X \times_{S} \Delta^1 \rightarrow \Delta^1$ is a coCartesian fibration, the object $\overline{X}$ has the extension property with respect to $f$.

\item[$(C_0)$] The map $f$ is an inclusion 
$$ (\Lambda^n_0)^{\flat} \coprod_{ (\Delta^{ \{0,1\} })^{\flat} } ( \Delta^{ \{0,1\} })^{\sharp}
\subseteq ( \Delta^n )^{\flat} \coprod_{ ( \Delta^{ \{0,1\} })^{\flat} } ( \Delta^{ \{0,1\} })^{\sharp},$$
for every $n > 1$ and every map $\Delta^n \rightarrow S$ which carries 
$\Delta^{ \{0, 1, n\} }$ into $S'$. The desired result in this case is a reformulation of
Lemma \ref{kisker}.
\end{itemize}
\end{proof}

\begin{lemma}\label{carpal}
Let $\CatP = (M_S, T, \{ K_{\alpha}^{\triangleleft} \rightarrow S \}_{\alpha \in A} )$ be a categorical
pattern on a simplicial set $S$. Let $B_0 \subseteq B$ be an inclusion of simplicial sets, and
let $f: \Delta^1 \times B \rightarrow S$ be a map with the following properties:

\begin{itemize}
\item For every simplex $\sigma: \Delta^n \rightarrow B$ which does not belong to $B_0$,
let $\tau$ be the $2$-simplex of $\Delta^1 \times \Delta^n$ spanned by
$(0,0)$, $(1,0)$ and $(1,n)$. Then the induced map
$$ \Delta^2 \stackrel{\tau}{\rightarrow} \Delta^1 \times \Delta^n
\stackrel{\sigma}{\rightarrow} \Delta^1 \times B \stackrel{f}{\rightarrow} S$$
belongs to $T$. 

\item For every vertex $b$ of $B$, the map $f$ carries $\Delta^1 \times \{b\}$ into $M_S$.
\end{itemize}

Then the inclusion 
$$((\Delta^1)^{\sharp} \times B_0^{\flat}) \coprod_{ \{0\}^{\sharp} \times B_0^{\flat} }
(\{0\}^{\sharp} \times B^{\flat}) \subseteq (\Delta^1)^{\sharp} \times B^{\flat}$$
is $\CatP$-anodyne.
\end{lemma}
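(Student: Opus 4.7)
Plan: The proof proceeds by a double induction, first reducing to the case of attaching a single nondegenerate simplex of $B$ not in $B_0$, then filtering the resulting product via the standard triangulation. Filter $B_0 \subseteq B$ by attaching the nondegenerate simplices $\sigma: \Delta^n \to B \setminus B_0$ in order of increasing dimension; each resulting step is, via pushout, a copy of the inclusion
$$W_0 := ((\Delta^1)^\sharp \times (\bd \Delta^n)^\flat) \coprod_{\{0\}^\sharp \times (\bd \Delta^n)^\flat} (\{0\}^\sharp \times (\Delta^n)^\flat) \hookrightarrow (\Delta^1)^\sharp \times (\Delta^n)^\flat =: W,$$
considered over the map $\Delta^1 \times \Delta^n \xrightarrow{\id \times \sigma} \Delta^1 \times B \xrightarrow{f} S$. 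It therefore suffices to verify that each such $W_0 \hookrightarrow W$ is $\CatP$-anodyne. The case $n=0$ is immediate: the inclusion is $\{0\}^\sharp \hookrightarrow (\Delta^1)^\sharp$ lying over an edge which $f$ carries into $M_S$ by the second hypothesis, so it is a generator of type $(B_0)$.

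For $n \ge 1$, consider the standard triangulation of $\Delta^1 \times \Delta^n$: for $0 \le i \le n$, let $\sigma_i$ be the nondegenerate $(n+1)$-simplex with vertices $v_0, \ldots, v_{n+1}$, where $v_j = (0,j)$ for $j \le i$ and $v_j = (1,j-1)$ for $j > i$; let $\tau_i$ be the face of $\sigma_i$ opposite $v_i$ (so $\tau_0 = \{1\} \times \Delta^n$, while for $i \ge 1$ the face $\tau_i$ is also the face of $\sigma_{i-1}$ opposite $v_i$). A direct analysis of vertices shows that the nondegenerate simplices of $\Delta^1 \times \Delta^n$ not contained in $W_0$ are precisely the $2n+2$ simplices $\sigma_0, \ldots, \sigma_n, \tau_0, \ldots, \tau_n$, and that for each $i$ the faces of $\sigma_i$ opposite $v_j$ for $j \notin \{i, i+1\}$ lie in $\Delta^1 \times \bd \Delta^n \subseteq W_0$ (their second-coordinate projection is degenerate), while the face opposite $v_{i+1}$ equals $\tau_{i+1}$ for $i < n$ and $\{0\} \times \Delta^n \subseteq W_0$ for $i=n$.

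Attach the missing simplices in the order $\sigma_n, \sigma_{n-1}, \ldots, \sigma_1, \sigma_0$, where the $i$-th step adjoins $\sigma_i$ together with $\tau_i$. For $1 \le i \le n$, every face of $\sigma_i$ other than $\tau_i$ is already present (either in $W_0$ by the preceding analysis, or as $\tau_{i+1}$ adjoined together with $\sigma_{i+1}$ in an earlier step, or as $\{0\} \times \Delta^n$ when $i=n$), so this step is a pushout of the inner horn inclusion $(\Lambda^{n+1}_i)^\flat \hookrightarrow (\Delta^{n+1})^\flat$—a generator of type $(C_1)$. For the final step $i=0$: the unique nondegenerate edge of $\sigma_0$ which is marked in $(\Delta^1)^\sharp \times (\Delta^n)^\flat$ is $v_0 v_1 = (0,0)(1,0) = \Delta^{\{0,1\}}$, which already lies in $\Delta^1 \times \{0\} \subseteq W_0$ and is marked there; moreover the $2$-simplex $\Delta^{\{0,1,n+1\}}$ of $\sigma_0$, with vertices $(0,0), (1,0), (1,n)$, is precisely the triangle $\tau$ of the first hypothesis and hence maps into $T$. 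Since $n+1 > 1$, this step is a pushout of a generator of type $(C_0)$, and is again $\CatP$-anodyne. The main technical point is the enumeration of faces of $\sigma_i$ together with the decreasing-order attachment that lets each step fill a single horn; the role of the hypotheses on $f$ is precisely to ensure that the final step—an outer horn $\Lambda^{n+1}_0$—falls under generator $(C_0)$ rather than remaining an arbitrary left horn.
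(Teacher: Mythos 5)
Your proof is correct and follows essentially the same route as the paper's: reduce simplex-by-simplex to the case $(B_0, B) = (\bd\Delta^n, \Delta^n)$, then fill $\Delta^1 \times \Delta^n$ along the standard triangulation by the $(n+1)$-simplices, ordered so that all but one attachment is an inner horn (type $(C_1)$, or $(B_0)$ when $n=0$) and the final attachment, whose missing face is opposite the initial vertex, is an outer horn of type $(C_0)$ justified by the hypothesis on $T$. The paper uses the reversed index convention for the $\sigma_i$ and states the conclusion much more tersely; you have merely filled in the vertex-level bookkeeping that the paper leaves to the reader.
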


\begin{proof}
Working simplex-by-simplex, we can reduce to the case where
$B = \Delta^n$ and $B_0 = \bd \Delta^n$. The simplicial
set $\Delta^1 \times \Delta^n$ admits a filtration
$$ (\{0\} \times \Delta^n) \coprod_{ \{0\} \times \bd \Delta^n} ( \Delta^1 \times \bd \Delta^n)
= Z_0 \subset Z_1 \subset \ldots \subset Z_{n} \subseteq Z_{n+1} = \Delta^1 \times \Delta^n,$$
where each $Z_{i+1}$ is obtained from $Z_{i}$ by adjoining the $(n+1)$-simplex of
$\Delta^1 \times \Delta^n$ corresponding to the map
$$ \sigma_i: [n+1] \rightarrow [1] \times [n]$$
$$ \sigma_i(j) = \begin{cases} (0,j) & \text{if } j \leq n-i \\
(1, j-1) & \text{if } j > n-i. \end{cases}$$
Let $\overline{Z_i} = (Z_i, M_i)$ denote the marked simplicial set whose marked edges
are precisely those edges which are marked in $(\Delta^1)^{\sharp} \times (\Delta^1)^{\flat}$. We wish to show that the inclusion
$\overline{Z}_0 \subseteq \overline{Z}_{n+1}$ is $\CatP$-anodyne. For this, it will suffice to show that each of the inclusions $h_i: \overline{Z}_i \subseteq \overline{Z}_{i+1}$ is $\CatP$-anodyne. If 
$i=n=0$, then $h_i$ is a generating $\CatP$-anodyne morphism of type $(B_0)$.
If $0 \leq i < n$, then $h_i$ is a pushout of a generating $\CatP$-anodyne morphism of type $(C_1)$. If If $i=n>0$, then $h_i$ is a pushout of a generating $\CatP$-anodyne morphism of type $(C_0)$. 
\end{proof}

\begin{proof}[Proof of Proposition \ref{postprod}]
Let 
$\CatP = (M_S, T, \{ p_{\alpha}: K_{\alpha}^{\triangleleft} \rightarrow S\}_{\alpha \in A})$
and $\CatP' = (M'_{S'}, T', \{ q_{\beta}: L_{\beta}^{\triangleleft} \rightarrow S'\}_{\beta \in B})$
be categorical patterns on simplicial sets $S$ and $S'$, respectively.
Let $f: \overline{X} \rightarrow \overline{Y}$ be a $\CatP$-anodyne morphism
in $\mset{ \CatP}$, and let
$f': \overline{X}' \rightarrow \overline{Y}'$ be an arbitrary cofibration in $\mset{ \CatP'}$.
We wish to show that $f \wedge f'$ is $\CatP \times \CatP'$-anodyne. Without loss of generality,
we may assume that $f'$ is a generator for the class of cofibrations in $\mset{ \CatP'}$, having
either the form $(\Delta^1)^{\flat} \subseteq (\Delta^1)^{\sharp}$ or $(\bd \Delta^m)^{\flat} \subseteq (\Delta^m)^{\flat}$. Similarly, we may assume that $f$ is one of the generating
$\CatP$-anodyne morphisms described in Definition \ref{postspunt}. There
are fourteen cases to consider:
\begin{itemize}
\item[$(A_0)$] The map $f$ is an inclusion $( \Lambda^2_1)^{\sharp} \coprod_{ (\Lambda^2_{1})^{\flat}} (\Delta^2)^{\flat}  \subseteq (\Delta^2)^{\sharp}$ where $\Delta^2 \rightarrow S$ belongs to
$T$ and carries every edge into $M_S$, and $f'$ is an inclusion
$(\Delta^1)^{\flat} \subseteq (\Delta^1)^{\sharp}$. In this case, $f \wedge f'$ can be obtained as a composition of two morphisms, each of which is a pushout of a morphism having type $(A_0)$.

\item[$(A_1)$] The map $f$ is an inclusion $Q^{\flat} \subseteq Q^{\sharp}$, where
$Q = \Delta^0 \coprod_{ \Delta^{ \{0,2\} }} \Delta^3 \coprod_{ \Delta^{ \{1,3\} }} \Delta^0$
and the map $Q \rightarrow S$ carries every edge of $Q$ into $M_S$ and every $2$-simplex
of $Q$ into $T$, and $f'$ is an inclusion $(\Delta^1)^{\flat} \subseteq (\Delta^1)^{\sharp}$.
In this case, $f \wedge f'$ can be obtained as a successive pushout of two morphisms
of type $(A_0)$.

\item[$(B_0)$] The map $f$ is an inclusion $\{0\}^{\sharp} \subseteq (\Delta^1)^{\sharp}$, for some
edge $\Delta^1 \rightarrow S$ belonging to $M_S$, and $f'$ is an inclusion
$(\Delta^1)^{\flat} \subseteq (\Delta^1)^{\sharp}$. In this case, $f \wedge f'$ can be obtained
as a composition of two morphisms which are pushouts of maps of type $(A_0)$ and
the $\CatP$-anodyne morphism of Lemma \ref{catchrat}.

\item[$(B_1)$] For some $\alpha \in A$, the map $f$ is an inclusion
$K_{\alpha}^{\sharp} \subseteq
(K_{\alpha}^{\triangleleft})^{\sharp}$ (where $K_{\alpha}^{\triangleleft}$ maps to $S$ via
$p_{\alpha}$), and $f'$ is an inclusion
$(\Delta^1)^{\flat} \subseteq (\Delta^1)^{\sharp}$. We can factor the morphism
$f \wedge f'$ as a composition
$$ ( K_{\alpha}^{\triangleleft} \times \Delta^1, M) \stackrel{g}{\rightarrow}
(K_{\alpha}^{\triangleleft} \times \Delta^1, M') \stackrel{g'}{\rightarrow}
(K_{\alpha}^{\triangleleft} \times \Delta^1)^{\sharp},$$
where $M'$ is the collection of all edges of $K_{\alpha}^{\triangleleft} \times \Delta^1$
except for $\{v\} \times \Delta^1$, where $v$ is the cone point of $K_{\alpha}^{\triangleleft}$, and
$M \subseteq M'$ is the collection of all those edges which do not join $(v, 0)$ to
a vertex of $K_{\alpha}^{\triangleleft} \times \{1\}$. We begin by observing that $g$ is a pushout of a
coproduct of morphisms of type $(A_0)$, indexed by the collection of vertices of $K_{\alpha}$.
It will therefore suffice to show that $g'$ is $(\CatP \times \CatP')$-anodyne, which follows from
the observation that $g'$ is a pushout of a morphism of the type described in Lemma \ref{swindler}.

\item[$(C)$] The map $f$ is a generating $\CatP$-anodyne morphism of one of the types
$(C_0)$, $(C_1)$, or $(C_2)$ described in Definition \ref{postspunt}, and $f'$ is an inclusion
$(\Delta^1)^{\flat} \subseteq (\Delta^1)^{\sharp}$. In this case, $f \wedge f'$ is an isomorphism and there is nothing to prove.

\item[$(A'_0)$] The map $f$ is an inclusion $( \Lambda^2_1)^{\sharp} \coprod_{ (\Lambda^2_{1})^{\flat}} (\Delta^2)^{\flat}  \subseteq (\Delta^2)^{\sharp}$ where $\Delta^2 \rightarrow S$ belongs to
$T$ and carries every edge into $M_S$, and $f'$ is an inclusion
$(\bd \Delta^m)^{\flat} \subseteq (\Delta^m)^{\flat}$. If $m=0$, then $f \wedge f'$ is a
generating $(\CatP \times \CatP')$-anodyne morphism of type $(A_0)$. If $m > 0$, then
$f \wedge f'$ is an isomorphism.

\item[$(A'_1)$] The map $f$ is an inclusion $Q^{\flat} \subseteq Q^{\sharp}$, where
$Q = \Delta^0 \coprod_{ \Delta^{ \{0,2\} }} \Delta^3 \coprod_{ \Delta^{ \{1,3\} }} \Delta^0$
and the map $Q \rightarrow S$ carries every edge of $Q$ into $M_S$ and every $2$-simplex
of $S$ into $T$, and $f'$ is an inclusion
$(\bd \Delta^m)^{\flat} \subseteq (\Delta^m)^{\flat}$. If $m=0$ then $f \wedge f'$ is
a generating $(\CatP \times \CatP')$-anodyne morphism of type $(A_1)$, and if
$m > 0$ then $f \wedge f'$ is an isomorphism.

\item[$(B'_0)$] The map $f$ is an inclusion $\{0\}^{\sharp} \subseteq (\Delta^1)^{\sharp}$, for some
edge $\Delta^1 \rightarrow S$ belonging to $M_S$, and $f'$ is an inclusion
$(\bd \Delta^m)^{\flat} \subseteq (\Delta^m)^{\flat}$. If $m=0$, then $f \wedge f'$ is a 
generating $(\CatP \times \CatP')$-anodyne morphism of type $(B_0)$. Let us assume therefore
that $m > 0$. For $0 \leq k \leq m$, let $\sigma_k: \Delta^{m+1} \rightarrow \Delta^1 \times \Delta^m$
denote the simplex determined by the map of partially ordered sets $[m+1] \rightarrow [1] \times [m]$ given by the formula
$$ j \mapsto \begin{cases} (0,j) & \text{if } j \leq m-k \\
(1, j-1) & \text{otherwise.} \end{cases}$$
We have a sequence of simplicial sets
$$ Z_0 \subseteq Z_1 \subseteq \ldots \subseteq Z_{m+1} = \Delta^1 \times \Delta^m$$
where $Z_i$ is the simplicial subset of $\Delta^1 \times \Delta^m$ generated by
$\Delta^1 \times (\bd \Delta^m)$, $\{0\} \times \Delta^m$, and $\{ \sigma_j \}_{j < i}$.
Let $M$ denote the collection of edges of $\Delta^1 \times \Delta^m$ whose image in
$\Delta^m$ is degenerate, and let $\overline{Z}_i = (Z_i, M)$. To prove that
$f \wedge f'$ is $(\CatP \times \CatP')$-anodyne, it will suffice to show that each of the inclusions
$g_i: \overline{Z}_i \subseteq \overline{Z}_{i+1}$ is $\CatP$-anodyne. For $0 \leq i < m$, we observe that $g_i$ is a pushout of a generating $(\CatP \times \CatP')$-anodyne morphism of type
$(C_1)$. For $i = m$, we note that $g_i$ is a pushout of a generating $(\CatP \times \CatP')$-anodyme morphism of type $(C_0)$.

\item[$(B'_1)$] For some $\alpha \in A$, the map $f$ is an inclusion
$K_{\alpha}^{\sharp} \subseteq (K_{\alpha}^{\triangleleft})^{\sharp}$ (where $K_{\alpha}^{\triangleleft}$ maps to $S$ via $p_{\alpha}$), and $f'$ is an inclusion
$(\bd \Delta^m)^{\flat} \subseteq (\Delta^m)^{\flat}$. If $m=0$, then $f \wedge f'$ is a generating $\CatP \times \CatP'$-anodyne morphism of type $(B_1)$ and there is nothing to prove. Let us assume
therefore that $m > 0$. Let $v$ denote the cone point of $K_{\alpha}^{\triangleleft}$
We define a filtration 
$$ Z_0 \subseteq Z_1 \subseteq \ldots \subseteq Z_m \subseteq Z_{m+1} = K_{\alpha}^{\triangleleft} \times \Delta^m$$ as follows. For each $i \leq m$, let $Z_i$ denote the simplicial subset of
$K_{\alpha}^{\triangleleft} \times \Delta^m$ generated by those simplices $\sigma$ such that
either $\sigma \cap ( \{v\} \times \Delta^m) \subseteq \{v \} \times \Delta^{ \{0, \ldots, i-1 \} }$ or
the projection map $\sigma \rightarrow \Delta^m$ is not surjective. Let $\overline{Z}_i$ denote the
marked simplicial set $(Z_i, M_i)$, where $M_i$ is the collection of those edges of $Z_i$ whose
image in $\Delta^m$ is degenerate. The map $f \wedge f'$ can be identified with the inclusion
$\overline{Z}_0 \subseteq \overline{Z}_{m+1}$. It will therefore suffice to show that each of the inclusions $g_i: \overline{Z}_i \subseteq \overline{Z}_{i+1}$ is $(\CatP \times \CatP')$-anodyne. If $i < m$, then
$g_i$ is a pushout of the inclusion $B^{\flat} \subseteq ( \Delta^{i} \star (K_{\alpha} \times \Delta^{m-i} )^{\flat}$, where $B$ denotes the pushout 
$$(\bd \Delta^i \star (K_{\alpha} \times \Delta^{m-i })
\coprod_{ \bd \Delta^i \star (K_{\alpha} \times \Lambda^{m-i}_0)}
( \Delta^i \star (K_{\alpha} \times \Delta^{m-i} ) ).$$
In view of Example \ref{ina}, it will suffice to show that the inclusion of simplicial sets
$B \subseteq \Delta^i \star (K_{\alpha} \times \Delta^{m-i})$ is inner anodyne. This follows from
Lemma \toposref{precough}, since the inclusion $K_{\alpha} \times \Lambda^{m-i}_0
\subseteq K_{\alpha} \times \Delta^{m-i}$ is left anodyne (Corollary \toposref{prodprod1}).

In the case $i=m$, we observe that $g_i$ is a pushout of the inclusion
$$ ((\bd \Delta^m) \star K_{\alpha})^{\flat}
\coprod_{ (\{m\} \star K_{\alpha})^{\flat} }
( \{m\} \star K_{\alpha} )^{\sharp} 
\subseteq ( \Delta^m \star K_{\alpha})^{\flat} \coprod_{ (\{m\} \star K_{\alpha})^{\flat} }
( \{m\} \star K_{\alpha} )^{\sharp},$$
which is a $(\CatP \times \CatP')$-anodyne morphism of type $(C_2)$.

\item[$(C'_0)$] The map $f$ is an inclusion
$$ (\Lambda^n_0)^{\flat} \coprod_{ (\Delta^{ \{0,1\} })^{\flat} } ( \Delta^{ \{0,1\} })^{\sharp}
\subseteq ( \Delta^n )^{\flat} \coprod_{ ( \Delta^{ \{0,1\} })^{\flat} } ( \Delta^{ \{0,1\} })^{\sharp},$$
for some $n > 1$ such that the map $\Delta^n \rightarrow S$ carries 
$\Delta^{ \{0, 1, n\} }$ to a $2$-simplex belonging to $T$, and 
$f'$ is an inclusion $( \bd \Delta^m)^{\flat} \subseteq (\Delta^m)^{\flat}$.
If $m = 0$, then $f \wedge f'$ is a $(\CatP \times \CatP')$-anodyne morphism of
type $(C_0)$. We may therefore assume without loss of generality that $m > 0$.
We define maps
$$ \Delta^n \stackrel{s}{\rightarrow} \Delta^{1} \times \Delta^{n} \stackrel{r}{\rightarrow} \Delta^n$$
by the formulae
$$ s(i) = (1,i)$$
$$ r(i,j) = \begin{cases} 0 & \text{if } i=0, j=1 \\
j & \text{otherwise.} \end{cases}$$
These maps exhibit $f$ as a retract of the inclusion
$$g: ((\Delta^1)^{\sharp} \times (\Lambda^n_0)^{\flat}) \coprod_{ \{0\}^{\sharp} \times (\Lambda^n_0)^{\flat} } ( \{0\}^{\sharp} \times (\Delta^n)^{\flat} ) \subseteq (\Delta^1)^{\sharp} \times (\Delta^n)^{\flat}.$$
We regard $(\Delta^{1})^{\sharp} \times (\Delta^n)^{\flat}$ as an object of $\mset( \CatP )$
via the composition
$$ \Delta^1 \times \Delta^{n} \stackrel{r}{\rightarrow} \Delta^{n} \rightarrow S.$$
Since $f$ is a retract of $g$, it will suffice to show that $g \wedge f'$ is $(\CatP \times \CatP')$-anodyne, which follows
immediately from Lemma \ref{carpal}.

\item[$(C'_1)$] The map $f$ is an inclusion $(\Lambda^{n}_i)^{\flat} \subseteq (\Delta^n)^{\flat}$, for 
where $0 < i < n$, and $f'$ is an inclusion $(\bd \Delta^m)^{\flat} \subseteq (\Delta^m)^{\flat}$.
In this case, $f \wedge f'$ is a morphism of the form $B_0^{\flat} \subseteq B^{\flat}$, where
$B_0 \subseteq B$ is an inner anodyne inclusion of simplicial sets (Corollary \toposref{prodprod2}).
It follows from Example \ref{ina} that $f \wedge f'$ is $(\CatP \times \CatP')$-anodyne.

\item[$(C'_2)$]  The map $f$ has the form 
$$ ( \bd \Delta^n \star K_{\alpha})^{\flat} \coprod_{ ( \{n\} \star K_{\alpha})^{\flat} }
( \{n\} \star K_{\alpha})^{\sharp} \subseteq 
( \Delta^n \star K_{\alpha})^{\flat} \coprod_{ ( \{n\} \star K_{\alpha})^{\flat} }
( \{n\} \star K_{\alpha} )^{\sharp} )$$
for some $\alpha \in A$ and $n > 0$, where $\Delta^n \star K_{\alpha} \rightarrow S$
extends $p_{\alpha}$, and $f'$ is an inclusion of the form $(\bd \Delta^m)^{\flat}
\subseteq (\Delta^m)^{\flat}$. The treatment of this case is similar to that of $(B'_1)$.
If $m=0$, then $f \wedge f'$ is a generating $\CatP \times \CatP'$-anodyne morphism of type $(C_2)$ and there is nothing to prove. Let us assume
therefore that $m > 0$. We define a filtration 
$$ Z_0 \subseteq Z_1 \subseteq \ldots \subseteq Z_m \subseteq Z_{m+1} = ( \Delta^n \star K_{\alpha}) \times \Delta^m$$ as follows. For each $i \leq m$, let $Z_i$ denote the simplicial subset of
$( \Delta^n \star K_{\alpha}) \times \Delta^m$ generated by those simplices $\sigma$ such that
either $\sigma \cap ( \Delta^n \times \Delta^m) \subseteq \Delta^n \times \Delta^{ \{0, \ldots, i-1 \} }$ or
the projection map $\sigma \rightarrow \Delta^m$ is not surjective. Let $\overline{Z}_i$ denote the
marked simplicial set $(Z_i, M_i)$, where $M_i$ is the collection of those edges of $Z_i$ which are marked in $$( \Delta^n \star K_{\alpha})^{\flat} \coprod_{ ( \{n\} \star K_{\alpha})^{\flat} }
( \{n\} \star K_{\alpha} )^{\sharp} ) \times (\Delta^m)^{\flat}.$$

The map $f \wedge f'$ can be identified with the inclusion
$\overline{Z}_0 \subseteq \overline{Z}_{m+1}$. It will therefore suffice to show that each of the inclusions $g_i: \overline{Z}_i \subseteq \overline{Z}_{i+1}$ is $(\CatP \times \CatP')$-anodyne. If $i < m$, then
$g_i$ is a pushout of the inclusion $B^{\flat} \subseteq ( (\Delta^n \times \Delta^{i}) \star (K_{\alpha} \times \Delta^{m-i} )^{\flat}$, where $B$ denotes the pushout 
$$(\bd (\Delta^n \times \Delta^i) \star (K_{\alpha} \times \Delta^{m-i})
\coprod_{ \bd(\Delta^n \times \Delta^i) \star (K_{\alpha} \times \Lambda^{m-i}_0)}
( (\Delta^n \times \Delta^i) \star (K_{\alpha} \times \Delta^{m-i} ) ).$$
In view of Example \ref{ina}, it will suffice to show that the inclusion of simplicial sets
$B \subseteq (\Delta^n \times \Delta^i) \star (K_{\alpha} \times \Delta^{m-i})$ is inner anodyne. This follows from Lemma \toposref{precough}, since the inclusion $K_{\alpha} \times \Lambda^{m-i}_0
\subseteq K_{\alpha} \times \Delta^{m-i}$ is left anodyne (Corollary \toposref{prodprod1}).

In the case $i=m$, we observe that $g_i$ is a pushout of the inclusion
$$ (\bd( \Delta^n \times \Delta^m) \star K_{\alpha})^{\flat}
\coprod_{ (\{(n,m)\} \star K_{\alpha})^{\flat} }
( \{(n,m\} \star K_{\alpha} )^{\sharp} 
\subseteq ( (\Delta^n \times \Delta^m) \star K_{\alpha})^{\flat} \coprod_{ (\{(n,m)\} \star K_{\alpha})^{\flat} }
( \{(n,m)\} \star K_{\alpha} )^{\sharp},$$
which is $(\CatP \times \CatP')$-anodyne (Example \ref{urtime}).
\end{itemize}
\end{proof}

\subsection{Flat Inner Fibrations}\label{flatinn}

\begin{lemma}\label{gooby}
Let $\calC$ be a simplicial category equipped with a functor
$\calC \rightarrow [1]$, where $[1]$ denotes the (discrete) category
$\{ 0 < 1 \}$. Suppose that the inclusion $\calC_0 \hookrightarrow \calC$
is a cofibration of simplicial categories. Then, for every object $D \in \calC_{1}$, the
functor $C \mapsto \bHom_{ \calC}( C, D)$ is a projectively cofibrant object of
$F \in (\sSet)^{\calC_0^{op}}$. 
\end{lemma}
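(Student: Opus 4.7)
The plan is to argue by transfinite induction along a cellular presentation of the cofibration $\calC_0 \hookrightarrow \calC$. I will first invoke the small object argument in Bergner's model structure on $\sCat$ to factor $\calC_0 \hookrightarrow \calC$ as $\calC_0 \hookrightarrow \widetilde{\calC} \twoheadrightarrow \calC$ with the first map a transfinite composition of pushouts of generating cofibrations of $\sCat$ (of the forms $\emptyset \hookrightarrow [0]$ and $[1]_{\partial\Delta^n} \hookrightarrow [1]_{\Delta^n}$) and the second a trivial fibration. Lifting $\id_{\calC}$ through the trivial fibration will exhibit $\calC$ as a retract of $\widetilde{\calC}$ over $\calC_0$, so for a lift $\widetilde{D} \in \widetilde{\calC}$ of $D$ the presheaf $\bHom_{\calC}(-,D)|_{\calC_0^{op}}$ will be a retract of $\bHom_{\widetilde{\calC}}(-,\widetilde{D})|_{\calC_0^{op}}$. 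Since projective cofibrancy is stable under retracts, this reduces the problem to the case where $\calC$ is presented as a cellular filtration $\calC_0 = \calC^{(0)} \hookrightarrow \calC^{(1)} \hookrightarrow \cdots$ with colimit $\calC$.

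I would then prove the following strengthened claim by induction on $\alpha$: for every object $E \in \calC^{(\alpha)}$, the presheaf $F_E^{(\alpha)} := \bHom_{\calC^{(\alpha)}}(-,E)|_{\calC_0^{op}}$ is projectively cofibrant in $(\sSet)^{\calC_0^{op}}$. At stage $0$ one has $E \in \calC_0$ and $F_E^{(0)} = \bHom_{\calC_0}(-,E)$, a representable and hence projectively cofibrant. Limit ordinals are handled by transfinite composition of projective cofibrations. For a successor step attaching a new object, $F_E^{(\alpha+1)}$ is either equal to $F_E^{(\alpha)}$ or to the empty presheaf, both projectively cofibrant. The substantive case is attaching an $n$-cell $[1]_{\partial\Delta^n} \hookrightarrow [1]_{\Delta^n}$ along a map $\varphi\colon \partial\Delta^n \to \bHom_{\calC^{(\alpha)}}(X,Y)$.

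Here I would use the standard description of pushouts in $\sCat$ via the free enriched-category monad on $\sSet$-graphs: $\bHom_{\calC^{(\alpha+1)}}(C,E)$ is computed as a colimit of ``words'' built from morphisms in $\calC^{(\alpha)}$ interleaved with copies of $\Delta^n$ (one per application of the new cell $X \to Y$), quotiented by identifications on $\partial\Delta^n$ coming from $\varphi$ and composition in $\calC^{(\alpha)}$. Filtering $F_E^{(\alpha+1)}$ by the number $k$ of uses of the new cell yields a tower $F_E^{(\alpha)} = G^{(0)} \hookrightarrow G^{(1)} \hookrightarrow \cdots$ with $\colim_k G^{(k)} = F_E^{(\alpha+1)}$, and each transition $G^{(k-1)} \hookrightarrow G^{(k)}$ is realized as a pushout of a cofibration
\[
A_k \otimes \partial\bigl((\Delta^n)^k\bigr) \longrightarrow A_k \otimes (\Delta^n)^k,
\]
where
\[
A_k(C) := \bHom_{\calC^{(\alpha)}}(C,X) \otimes \bHom_{\calC^{(\alpha)}}(Y,X)^{\otimes (k-1)} \otimes \bHom_{\calC^{(\alpha)}}(Y,E).
\]
By the inductive hypothesis, $\bHom_{\calC^{(\alpha)}}(-,X)|_{\calC_0^{op}}$ is projectively cofibrant, and tensoring with the simplicial sets $\bHom_{\calC^{(\alpha)}}(Y,X)^{\otimes (k-1)}$ and $\bHom_{\calC^{(\alpha)}}(Y,E)$ preserves projective cofibrancy (since $H \otimes (-)\colon \sSet \to (\sSet)^{\calC_0^{op}}$ is left Quillen for projectively cofibrant $H$), so $A_k$ is projectively cofibrant. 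The inclusion $\partial((\Delta^n)^k) \hookrightarrow (\Delta^n)^k$ is a cofibration of simplicial sets, so each $G^{(k-1)} \hookrightarrow G^{(k)}$ is a projective cofibration; thus $F_E^{(\alpha)} \to F_E^{(\alpha+1)}$ is a transfinite composite of projective cofibrations, closing the induction.

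The main obstacle will be the careful verification of the bar-construction-style pushout formula for $\bHom_{\calC^{(\alpha+1)}}(C,E)$ and the identification of each filtration step as a pushout of the displayed cofibration: one needs to track how the attaching map $\varphi$ identifies boundary faces of the $(\Delta^n)^k$ factors with paths involving fewer uses of the new cell, via the $\sSet$-enriched composition in $\calC^{(\alpha)}$.
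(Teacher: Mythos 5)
Your approach is correct in outline, but it takes a genuinely different---and considerably more laborious---route from the paper's proof, which occupies a single paragraph. The paper verifies the left lifting property of $\emptyset \to F$ against an arbitrary trivial projective fibration $\alpha\colon G \to G'$ directly, without any cellular decomposition: it builds an auxiliary simplicial category $\calC[G]$ which agrees with $\calC$ except that $\bHom_{\calC[G]}(C,C') = \bHom_{\calC}(C,C') \times G(C)^{\bHom_{\calC}(C',D)}$ when $C \in \calC_0$ and $C' \in \calC_1$, observes that maps $F \to G$ in $(\sSet)^{\calC_0^{op}}$ correspond to functors $\calC \to \calC[G]$ over $\calC$ extending the inclusion of $\calC_0$, and then solves the resulting lifting problem because $\calC[G] \to \calC[G']$ is a trivial fibration of simplicial categories and $\calC_0 \hookrightarrow \calC$ is a cofibration. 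Note that the $[1]$-structure plays an essential role in making the composition law on $\calC[G]$ well-defined (one needs $\bHom_{\calC}(C,C') = \emptyset$ whenever $C \in \calC_1$ and $C' \in \calC_0$); your argument, by contrast, does not appear to lean on the sieve property at all except to make the cellular-presentation reduction cleaner, which is itself a mildly interesting observation.

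The individual pieces of your argument are sound. The retract reduction is correct: lift $\id_{\calC}$ through the trivial fibration $\widetilde{\calC} \twoheadrightarrow \calC$ using that $\calC_0 \hookrightarrow \calC$ is a cofibration, and the Hom presheaves restricted to $\calC_0^{op}$ inherit the retraction. The assertion that $H \times K$ (for $H$ projectively cofibrant in $(\sSet)^{\calC_0^{op}}$ and $K$ a constant simplicial set) remains projectively cofibrant is correct, since adjointly this reduces to the observation that $G^{K} \to G'^{K}$ is still an objectwise trivial Kan fibration for any simplicial set $K$.

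As you note, the substance of your argument is the bar-construction identification of the transition $G^{(k-1)} \hookrightarrow G^{(k)}$ as a pushout of $A_k \otimes \partial((\Delta^n)^k) \to A_k \otimes (\Delta^n)^k$. This is a real statement about pushouts of simplicial categories along the generating cofibration $[1]_{\partial\Delta^n} \to [1]_{\Delta^n}$: it requires tracking how faces of $(\Delta^n)^k$ are identified---via the attaching map $\varphi$ and composition in $\calC^{(\alpha)}$---with shorter words, and verifying that no further identifications occur (in particular that the failure of $\varphi$ to be injective, and degeneracies, are handled correctly by the colimit). The filtration argument for pushouts in $\sCat$ along free cells is close to, but not exactly, what is recorded in \S \toposref{compp4} and the Appendix A.3.2 of \cite{topoi}; you would need to either prove this identification or locate a suitable reference. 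Until that step is nailed down your write-up is a sketch; the paper's proof sidesteps all of this combinatorics. Both approaches are valid, but the paper's trade---one clever auxiliary construction for an avoidance of all cellular bookkeeping---is substantially more economical.
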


\begin{proof}
We must show that every trivial projective fibration $\alpha: G \rightarrow G'$ in
$(\sSet)^{\calC_0^{op}}$ has the right lifting property with respect to $F$. 
Define a new simplicial category $\calC[G]$ as follows:
\begin{itemize}
\item[$(i)$] The objects of $\calC[G]$ are the objects of $\calC$.
\item[$(ii)$] For $C, C' \in \calC$, we have
$$ \bHom_{\calC[G]}( C, C' ) = \begin{cases} \emptyset & \text{if } C \in \calC_1, C' \in \calC_0 \\
\bHom_{\calC}(C,C') \times G(C)^{ \bHom_{\calC}( C',D) } & \text{if } C \in \calC_0, C' \in \calC_1 \\
\bHom_{\calC}(C,C') & \text{otherwise} \end{cases}$$
\end{itemize}
Let $\calC[G']$ be defined similarly. Unwinding the definitions, we see that
$\alpha$ has the right lifting property with respect to $F$ if and only if the induced map
$\overline{\alpha}: \calC[G] \rightarrow \calC[G']$ has the right lifting property with
respect to the inclusion $i: \calC_0 \subseteq \calC$. Since $i$ is a cofibration, this follows from the observation that $\overline{\alpha}$ is a trivial fibration of simplicial categories.
\end{proof}

\begin{lemma}\label{cane}
Suppose we are given an inner fibration of simplicial sets $p: X \rightarrow \Lambda^2_1$.
Let $C$ be an initial object of $\calM = p^{-1} \Delta^{ \{0,1\} }$, let $E$ be a final object of
$\calN = p^{-1} \Delta^{ \{1,2\} }$, let $\calD = \calM \cap \calN = p^{-1} \{1\}$, 
and let $f: X \rightarrow \calM$ be a categorical equivalence
from $X$ to an $\infty$-category $\calM$. Then there is a canonical isomorphism
$\bHom_{\calM}( f(C), f(E)) \simeq [\calD]$ in the homotopy category $\calH$ of spaces.
\end{lemma}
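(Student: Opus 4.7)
The plan is to reduce the computation to an $\infty$-category $\bar X$ that extends $X$ over $\Delta^2$, then to exploit the initiality of $C$ and finality of $E$ to identify the mapping space with $[\calD]$. To avoid the notational clash, denote the target of $f$ by $\widetilde{\calM}$ (distinct from $\calM = p^{-1}(\Delta^{\{0,1\}})$ in the statement). Since $\Lambda^2_1 \hookrightarrow \Delta^2$ is inner anodyne, I choose an inner anodyne inclusion $X \hookrightarrow \bar X$ with $\bar X \to \Delta^2$ an inner fibration restricting to $p$ along $\Lambda^2_1$. As both $X \hookrightarrow \bar X$ and $f$ are categorical equivalences, $\bHom_{\widetilde{\calM}}(f(C), f(E))$ is identified with $\bHom_{\bar X}(C, E)$ in $\calH$.

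Next I would model $\bHom_{\bar X}(C, E)$ using restricted slice constructions that incorporate the structure of $\bar X$ over $\Delta^2$. Let $\calM_{C/}|_{\calD}$ denote the fiber of the slice $\calM_{C/} \to \calM$ over $\calD \subseteq \calM$, and $\calN_{/E}|_{\calD}$ the dual construction. Initiality of $C$ in $\calM$ gives that $\calM_{C/} \to \calM$ is a trivial Kan fibration, so its restriction $\calM_{C/}|_{\calD} \to \calD$ is also a trivial Kan fibration. Dually, finality of $E$ in $\calN$ gives that $\calN_{/E}|_{\calD} \to \calD$ is a trivial Kan fibration. The fiber product $W := \calM_{C/}|_{\calD} \times_{\calD} \calN_{/E}|_{\calD}$ is therefore weakly equivalent to $\calD$.

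The key step is then to compare $W$ to $\bHom_{\bar X}(C, E)$. Using the inner-horn filling property of $\bar p$ at $\Lambda^2_1 \hookrightarrow \Delta^2$, I would construct a natural composition map $W \to \bHom_{\bar X}(C, E)$ sending a pair $(e_1 \colon C \to D,\, e_2 \colon D \to E)$ to (the homotopy class of) the long face $d_1(\sigma)$ of a chosen filling $\sigma$ of the associated inner horn. By inner-horn filling, the space of such $\sigma$ is contractible once $(e_1, e_2)$ are specified, so this map is well-defined in $\calH$. I would then show it is a weak equivalence, yielding $\bHom_{\bar X}(C, E) \simeq \calD$ in $\calH$.

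The principal obstacle is proving that this composition map $W \to \bHom_{\bar X}(C, E)$ is a weak equivalence: this does not follow formally from inner-horn filling alone (which is ``one-sided''), and genuinely requires the initiality/finality hypotheses to rigidify factorizations of edges $C \to E$. I expect to handle this by passing to the simplicial category $\sCoNerve[\bar X]$ and applying Lemma \ref{gooby}: applied to $\sCoNerve[\calM]$ over $[1]$ and to $(\sCoNerve[\calN])^{\op}$ over $[1]$, it produces projectively cofibrant presheaves on $\sCoNerve[\calD]$ (respectively on $\sCoNerve[\calD]^{\op}$) modelling $D \mapsto \bHom_{\calM}(C, D)$ and $D \mapsto \bHom_{\calN}(D, E)$. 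The hypotheses render both pointwise contractible, and cofibrancy lets me realize $\bHom_{\bar X}(C, E)$ as a homotopy coend over $\sCoNerve[\calD]$ of these two presheaves, which then collapses to the classifying space $[\calD]$, providing the canonical isomorphism.
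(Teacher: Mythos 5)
Your final paragraph converges to the paper's argument: identify $\sCoNerve[X] \simeq \sCoNerve[\calM] \coprod_{\sCoNerve[\calD]} \sCoNerve[\calN]$, express $\bHom_{\sCoNerve[X]}(C,E)$ as the coend $\int_{D} F(D) \times G(D)$ over $\sCoNerve[\calD]$ with $F(D) = \bHom_{\sCoNerve[\calM]}(C,D)$ and $G(D) = \bHom_{\sCoNerve[\calN]}(D,E)$, invoke Lemma \ref{gooby} for cofibrancy, and collapse via initiality of $C$ and finality of $E$ to the classifying space $[\calD]$. Since that calculation identifies the mapping space with $[\calD]$ in $\calH$ outright, your first three paragraphs---the restricted slices, the fiber product $W$, and the attempted ``composition'' map into $\bHom(C,E)$---become unnecessary; the paper constructs no such explicit map, and neither should you. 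One small correction to the coend step: the paper applies Lemma \ref{gooby} only once, directly to $\sCoNerve[\calN]$ (with $\sCoNerve[\calD]$ as the fiber over $0$ and $E$ the fixed object over $1$) to obtain projective cofibrancy of $G$; this, together with the automatic injective cofibrancy of $F$, is all that the one-sided homotopy invariance of the coend requires, and if you do want both functors projectively cofibrant your variance assignments are transposed ($G$ comes from $\sCoNerve[\calN]$ directly, not its opposite, while $F$ would require the dual of the lemma applied to $\sCoNerve[\calM]^{\op}$).
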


\begin{proof}
We can identify $\bHom_{\calM}( f(C), f(E))$ with the simplicial set
$\bHom_{ \sCoNerve[X]}(C,E)$. Let $F: \sCoNerve[ \calD] \rightarrow \sSet$
be the functor given by the formula $F(D) = \bHom_{ \sCoNerve[\calM]}(C,D)$, and let
$G: \sCoNerve[\calD]^{op} \rightarrow \sSet$ be given by the formula
$G(D) = \bHom_{ \sCoNerve[\calN]}(D, E)$. Since $\sCoNerve[X]$ is isomorphic to the pushout
$\sCoNerve[ \calM] \coprod_{ \sCoNerve[\calD] } \sCoNerve[\calN]$, the simplicial set
$\bHom_{ \sCoNerve[X]}(C,E)$ can be computed as the coend
$$ \int_{ D \in \sCoNerve[\calD] } F(D) \otimes G(D).$$
Lemma \ref{gooby} guarantees that the functor $G$ is projectively cofibrant
projectively cofibrant, so the construction
$$ H \mapsto \int_{D \in \sCoNerve[\calD]} H(D) \times G(D)$$
carries weak equivalences between injectively cofibrant objects 
of $(\sSet)^{ \sCoNerve[\calD]}$ to weak homotopy equivalences of simplicial sets (Remark \toposref{cabler}). Since $C$ is an initial object of $\calM$, the canonical map
$F \rightarrow F_0$ is a weak equivalence, where $F_0: \sCoNerve[\calD] \rightarrow \sSet$ is
the constant functor taking the value $\Delta^0$. It follows that $\alpha$ induces a homotopy equivalence
$$ \bHom_{ \sCoNerve[X]}(C,E) \rightarrow \varinjlim G.$$
Since $E \in \calN$ is final, we also have a weak equivalence
$G \rightarrow G_0$, where $G_0: \sCoNerve[ \calD]^{op} \rightarrow \sSet$ denotes
the constant functor taking the value $0$. It follows that $G$ is a cofibrant
replacement for $G_0$ with respect to the projective model structure on
$(\sSet)^{ \sCoNerve[\calD]^{op} }$, so we can identify $\varinjlim G$ with
a homotopy colimit of the diagram $G_0$. Applying Theorem \toposref{colimcomparee}, we
can identify this homotopy colimit with a colimit of the constant diagram
$\calD^{op} \rightarrow \SSet$ taking the value $\Delta^0$. This colimit
is represented by the simplicial set $\calD$ in the homotopy category $\calH$
(Corollary \toposref{needka}).
\end{proof}

\begin{proposition}\label{seda}
Let $\calM$ be an $\infty$-category equipped with an inner fibration
$p: \calM \rightarrow \Delta^2$, and let $X = \calM \times_{ \Delta^2} \Lambda^2_1$.
Let $\calC = p^{-1} \{0\}$, let $\calD = p^{-1} \{1\}$, and let $\calE = p^{-1} \{2\}$. 
The following conditions are equivalent:
\begin{itemize}
\item[$(1)$] The inclusion $X \hookrightarrow \calM$ is a categorical equivalence.
\item[$(2)$] For every morphism $f: C \rightarrow E$ in $\calM$ from an
object $C \in \calC$ to an object $E \in \calE$, the $\infty$-category
$\calD_{C/ \, /E} = \calD \times_{ \calM} \calM_{ C/ \, /E}$ is weakly contractible.
\end{itemize}
\end{proposition}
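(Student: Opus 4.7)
My plan is to check that the inclusion $X \hookrightarrow \calM$ is fully faithful by analyzing the induced map on mapping spaces. Since $X \hookrightarrow \calM$ is a bijection on vertices, it is a categorical equivalence if and only if it induces weak equivalences on all mapping spaces, where mapping spaces in $X$ are computed in a fibrant replacement $\widetilde{X}$. The subsimplicial sets $\calM_{01} := p^{-1}(\Delta^{\{0,1\}})$ and $\calM_{12} := p^{-1}(\Delta^{\{1,2\}})$ are $\infty$-categories that embed fully faithfully in both $X$ and $\calM$ (each arising as a pullback along the inclusion of a subsimplicial set of $\Delta^2$). Consequently, the map $\bHom_{\widetilde{X}}(A,B) \to \bHom_\calM(A,B)$ is automatically an equivalence whenever $A$ and $B$ both lie in $\calM_{01}$ or both in $\calM_{12}$, and only the case $A = C \in \calC$, $B = E \in \calE$ remains. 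What I need to show, for each such $C, E$, is that the homotopy fiber of $\bHom_{\widetilde{X}}(C,E) \to \bHom_\calM(C,E)$ over a given morphism $f \colon C \to E$ is weakly equivalent to $\calD_{C/\,/E}$.

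To compute this fiber, I will introduce the factorization $\infty$-category $\calN := \calM_{C/\,/E}$, whose objects are $2$-simplices of $\calM$ whose $\Delta^{\{0,2\}}$-edge equals $f$. Evaluation at the middle vertex composed with $p$ gives a natural map $\calN \to \Delta^2$, and the fiber over $1$ is precisely $\calD_{C/\,/E}$. Using the alternative descriptions $\calN \simeq (\calM_{/E})_{f''/} \simeq (\calM_{C/})_{/f'}$, where $f'$ and $f''$ denote $f$ viewed as objects of the respective slices, I will exhibit a canonical initial object $\iota_C$ of $\calN$ (the trivial-left factorization $C \xrightarrow{\id} C \xrightarrow{f} E$, lying over $0 \in \Delta^2$) and a canonical final object $\iota_E$ (the trivial-right factorization $C \xrightarrow{f} E \xrightarrow{\id} E$, lying over $2 \in \Delta^2$). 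Setting $X_f := \calN \times_{\Delta^2} \Lambda^2_1$ and applying Lemma \ref{cane} to this inner fibration over $\Lambda^2_1$, I conclude that $\bHom_{\widetilde{X_f}}(\iota_C, \iota_E) \simeq \calD_{C/\,/E}$ in $\calH$.

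The final step, which I expect to be the main obstacle, is to package this identification into a homotopy pullback square
\[
\xymatrix{
\bHom_{\widetilde{X_f}}(\iota_C, \iota_E) \ar[r] \ar[d] & \bHom_{\widetilde{X}}(C, E) \ar[d] \\
\bHom_{\calN}(\iota_C, \iota_E) \ar[r] & \bHom_\calM(C, E),
}
\]
induced by the canonical projections $X_f \to X$ and $\calN \to \calM$. Because $\iota_C$ is initial and $\iota_E$ is final in $\calN$, the lower-left corner is contractible and its image in the lower-right corner is represented by $f$, so the upper-left corner will be the homotopy fiber of the right-hand vertical map over $f$. Establishing this pullback requires a relative version of the coend computation underlying Lemma \ref{cane}, applied to the map of inner fibrations $\calN \times_{\Delta^2} \Lambda^2_1 \to \calM \times_{\Delta^2} \Lambda^2_1$; the delicate point is to verify that fibering the coend identification of Lemma \ref{cane} over the edge $f \in \bHom_\calM(C,E)$ produces precisely the factorization category $\calD_{C/\,/E}$. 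Once the homotopy pullback is in place, the equivalence $(1) \Leftrightarrow (2)$ follows immediately: the map $\bHom_{\widetilde{X}}(C,E) \to \bHom_\calM(C,E)$ is a weak equivalence if and only if all of its homotopy fibers $\calD_{C/\,/E}$ are weakly contractible.
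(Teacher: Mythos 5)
Your strategy is essentially correct in outline and arrives at the same key reduction (Lemma \ref{cane}), but the argument has a genuine gap at precisely the step you flag as ``the main obstacle.'' The homotopy pullback square you need,
\[
\xymatrix{
\bHom_{\widetilde{X_f}}(\iota_C, \iota_E) \ar[r] \ar[d] & \bHom_{\widetilde{X}}(C, E) \ar[d] \\
\bHom_{\calN}(\iota_C, \iota_E) \ar[r] & \bHom_\calM(C, E),
}
\]
does not follow from general principles, because the projection $\calN = \calM_{C/\,/E} \to \calM$ (evaluation at the middle term) is neither a left nor a right fibration. The usual comparison results for mapping spaces under base change (the analogues of Proposition T.compspaces) apply only when the map along which you pull back is a left or right fibration, and that is exactly the hypothesis you do not have.

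The paper sidesteps this by making the reduction in two stages. First it uses the \emph{right} fibration $\calM_{/E} \to \calM$: Proposition T.basechangefunky ensures that the inclusion $X \hookrightarrow \calM'$ remains a categorical equivalence after base change along a right fibration, Proposition T.compspaces gives the homotopy pullback of mapping spaces, and Proposition T.sharpen2 shows that $\overline{\calM}_{\overline C/\,/\overline E} \to \calM_{C/\,/E}$ is a trivial Kan fibration, so both conditions are preserved. This reduces to the case where $E$ is a final object of $\calM$. Then the same argument with a \emph{left} fibration in place of a right one reduces further to the case where $C$ is initial. Only then does the paper invoke Lemma \ref{cane}. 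Your square is the composite of the two homotopy pullbacks that arise in this process—so it is true—but establishing it directly requires factoring $\calN \to \calM$ as a left fibration followed by a right fibration (for example through $\calM_{/E}$) and arguing one step at a time, which is exactly what you have not done. To repair your proof you should replace the single-step reduction with the two-stage reduction, or else carry out in full the ``relative coend'' calculation you mention; either way the missing work is nontrivial and is where the paper spends most of its effort.
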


\begin{proof}
Using the small object argument, we can factor the inclusion $X \hookrightarrow \calM$
as a composition
$$ X \stackrel{i}{\hookrightarrow} \calM' \stackrel{q}{\rightarrow} \calM$$
where $i$ is inner anodyne, the map $q$ is an inner fibration, and 
$i$ induces an isomorphism $X \rightarrow \calM' \times_{ \Delta^2} \Lambda^2_1$.
We will abuse notation by identifying $X$ (and therefore also the $\infty$-categories
$\calC, \calD, \calE \subseteq X$) with a simplicial subset of $\calM'$ via the map $i$.

Condition $(1)$ is equivalent to the assertion that $q$ is 
an equivalence of $\infty$-categories. Since $q$ is bijective on vertices, this is equivalent to the assertion that $q$ induces a homotopy equivalence $\theta: \bHom_{\calM'}(C,E) \rightarrow
\bHom_{\calM}(C,E)$ for every pair of objects $C,E \in \calM'$. This condition is obvious
unless $C \in \calC$ and $E \in \calE$. In the latter case, it is equivalent to the requirement that 
for every morphism $f: C \rightarrow E$ in $\calM$, the homotopy fiber of the map
$\theta$ (taken over the point $f \in \bHom_{\calM}(C,E)$) is contractible. It will therefore
suffice to prove the equivalence of the following conditions:
\begin{itemize}
\item[$(1')$] The homotopy fiber of $\theta$ over $\{f\}$ is contractible.
\item[$(2')$] The $\infty$-category $\calD_{C/ \, /E}$ is weakly contractible.
\end{itemize}

Suppose we are given a right fibration $\overline{\calM} \rightarrow \calM$,
and that we can lift $f$ to a morphism $\overline{f}: \overline{C} \rightarrow \overline{E}$ in $\overline{\calM}$. Let
$\overline{\calM}' = \calM' \times_{ \calM} \overline{\calM}$; it follows from Proposition
\toposref{basechangefunky} that the inclusion
$\overline{\calM} \times_{ \Delta^2} \Lambda^2_1 \hookrightarrow \overline{\calM}'$ remains a
categorical equivalence. Using Proposition
\toposref{compspaces}, we deduce the existence of a homotopy pullback diagram
$$ \xymatrix{ \bHom_{ \overline{\calM}'}( \overline{C}, \overline{E} ) \ar[r]^{ \overline{\theta}} \ar[d] & \bHom_{\overline{\calM}}( \overline{C}, \overline{E} ) \ar[d] \\
\bHom_{\calM'}(C,E) \ar[r]^{\theta} & \bHom_{\calM}(C,E). }$$
It follows that $(1')$ is satisfied by the morphism $f$ of $\calM$ if and only if
it is satisfied by the morphism $\overline{f}$ over $\overline{\calM}$. Proposition
\toposref{sharpen2} guarantees that the map $\overline{\calM}_{ \overline{C}/ \, /\overline{E}}
\rightarrow \calM_{ C/ \, /E}$ is a trivial Kan fibration, so that $(2')$ is satisfied by
$f$ if and only if it is satisfied by $\overline{f}$. It follows that we are free to replace
$\calM$ by $\overline{\calM} = \calM_{/E}$, and thereby reduce to the case where
$E$ is a final object of $\calM$. A similar argument shows that we can assume that
$C$ is an initial object of $\calM$. In this special case, the space
$\bHom_{ \calM}( C, E)$ is contractible, so we can reformulate $(1')$ as follows:
\begin{itemize}
\item[$(1'')$] The space $\bHom_{ \calM'}( C, E)$ is contractible.
\end{itemize}
If $C$ is an initial object of $\calM$, then $\calM_{C/} \rightarrow \calM$ is a trivial Kan fibration.
Moreover, if $E$ is a final object of $\calM$ then it is a final object of $\calM_{C/}$ (Proposition \toposref{needed17}), so the projection $\calM_{C/\,/E} \rightarrow \calM_{C/}$ is also a trivial Kan fibration. We therefore obtain the following reformulation of condition $(2')$: 
\begin{itemize}
\item[$(2'')$] The $\infty$-category $\calD$ is weakly contractible.
\end{itemize}
The equivalence of $(1'')$ and $(2'')$ now follows from Lemma \ref{cane}.
\end{proof}

\begin{definition}\label{kal}
We will say that an inner fibration $\calM \rightarrow \Delta^2$ of simplicial sets
is {\it flat} if it satisfies the equivalent conditions of Proposition \ref{seda}.
\end{definition}

\begin{example}\label{gabbe}
Let $p: \calM \rightarrow \Delta^2$ be an inner fibration of simplicial sets.
Let $\calC = p^{-1} \{0\}$, $\calD = p^{-1} \{1\}$, and $\calE = p^{-1} \{2\}$.
Suppose that for every object $C \in \calC$, there exists a $p$-coCartesian
morphism $f: C \rightarrow D$, where $D \in \calD$. Then $p$ is flat.

To prove this, consider an arbitrary morphism $g: C \rightarrow E$ in
$\calM$, where $C \in \calC$ and $E \in \calE$. Choose a $p$-coCartesian morphism
$f: C \rightarrow D$ in $\calM$ for $D \in \calD$. Using the assumption that
$f$ is $p$-coCartesian, we can find a commutative diagram
$$ \xymatrix{ & D \ar[dr]^{h} & \\
C \ar[ur]^{f} \ar[rr]^{g} & & E }$$
which we can identify with an object $\overline{D} \in \calD_{C/ \, /E}$ lifting
$D$. To show that $\calD_{C/ \, /E}$ is weakly contractible, it suffices to show that
$\overline{D}$ is an initial object of $\calD_{C/ \, /E}$. In view of Proposition
\toposref{needed17}, it will suffice to show that $\overline{D}$ is an initial object of
$\calD_{C/}$, which is equivalent to the assertion that $f$ is locally $p$-coCartesian.
\end{example}

\begin{example}\label{gabbe2}
Let $p: \calM \rightarrow \Delta^2$ be an inner fibration of simplicial sets.
Let $\calC = p^{-1} \{0\}$, $\calD = p^{-1} \{1\}$, and $\calE = p^{-1} \{2\}$.
Suppose that for every object $E \in \calE$, there exists a $p$-Cartesian
morphism $f: D \rightarrow E$, where $D \in \calD$. Then $p$ is flat. The proof
is identical to that of Example \ref{gabbe}.
\end{example}

\begin{definition}\label{kol}
Let $p: X \rightarrow S$ be an inner fibration of simplicial sets, and let
$\sigma$ be a $2$-simplex of $S$. We will say that $p$ is {\it flat over $\sigma$} if
the induced inner fibration $X \times_{S} \Delta^2 \rightarrow \Delta^2$ is flat, in the sense of Definition \ref{kal}. We will say that $p$ is {\it flat} if it is flat over every $2$-simplex of $S$.
\end{definition}

\begin{example}
Let $p: X \rightarrow S$ be an inner fibration of simplicial sets. Then $p$ is flat over
any degenerate $2$-simplex of $S$, since the induced functor $X \times_{ S} \Delta^2 \rightarrow \Delta^2$ satisfies the hypotheses of either Example \ref{gabbe} or Example \ref{gabbe2}.
It follows that an inner fibration $p: X \rightarrow \Delta^2$ is flat in the sense of
Definition \ref{kol} if and only if it is flat in the sense of Definition \ref{kal}.
\end{example}

\begin{example}\label{cartflat}
Let $p: X \rightarrow S$ be a coCartesian fibration of simplicial sets. Then
$p$ is a flat categorical fibration: this is an immediate consequence of Example \ref{gabbe}. Similarly, if
$p$ is a Cartesian fibration, then $p$ is flat.
\end{example}

\begin{remark}
Suppose given a pullback diagram of simplicial sets
$$ \xymatrix{ X' \ar[r]^{q} \ar[d]^{p'} & X \ar[d]^{p} \\
S' \ar[r] & S. }$$
If $p$ is a flat inner fibration, then so is $p'$. 
\end{remark}

Proposition \ref{seda} admits the following generalization:

\begin{proposition}\label{capper}
Let $p: X \rightarrow S$ be an inner fibration of simplicial sets. The following conditions
are equivalent:
\begin{itemize}
\item[$(1)$] For every inner anodyne map $A \hookrightarrow B$ of simplicial sets
and every map $B \rightarrow S$, the induced map $X \times_{S} A \rightarrow X \times_{S} B$
is a categorical equivalence.
\item[$(2)$] The inner fibration $p$ is flat.
\end{itemize}
\end{proposition}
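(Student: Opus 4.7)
The plan splits naturally along the two implications of the proposition. The direction $(1) \Rightarrow (2)$ is immediate: given any $2$-simplex $\sigma : \Delta^2 \to S$, the inclusion $\Lambda^2_1 \hookrightarrow \Delta^2$ is inner anodyne, so condition $(1)$ delivers that $X \times_S \Lambda^2_1 \to X \times_S \Delta^2$ is a categorical equivalence; this is precisely the characterization of flatness over $\sigma$ given by Proposition \ref{seda}.

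For $(2) \Rightarrow (1)$, I would first isolate the class $\calW$ of monomorphisms $f : A \hookrightarrow B$ in $(\sSet)_{/S}$ for which the pullback $X \times_S A \to X \times_S B$ is a categorical equivalence, and check that it is weakly saturated. The key inputs are: the pullback functor $(-) \times_S X : (\sSet)_{/S} \to \sSet$ preserves monomorphisms and all colimits (being a base-change functor in the locally Cartesian closed category $\sSet$, hence a left adjoint); the Joyal model structure is left proper, so categorical equivalences are stable under pushout along monomorphisms; and the Joyal model structure is combinatorial, so categorical equivalences are stable under transfinite composition and retracts. Once $\calW$ is weakly saturated, the task of proving $(1)$ reduces, by the definition of inner anodyne maps as the weakly saturated class generated by inner horn inclusions, to showing that every map $\Lambda^n_i \hookrightarrow \Delta^n$ (with $n \geq 2$ and $0 < i < n$, equipped with an arbitrary structure map $\Delta^n \to S$) belongs to $\calW$.

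The case $n = 2$ is the flatness hypothesis, via Proposition \ref{seda}. For $n \geq 3$, the plan is to argue by induction on $n$: assuming the statement holds for inner horn inclusions of dimension $<n$ over any base, one constructs a finite filtration
\[
\Lambda^n_i = Y_0 \subset Y_1 \subset \cdots \subset Y_k = \Delta^n
\]
of simplicial subsets over $S$, arranged so that each inclusion $Y_{j-1} \hookrightarrow Y_j$ is a pushout of an inner horn inclusion $\Lambda^m_\ell \hookrightarrow \Delta^m$ of dimension $m < n$ (and is therefore in $\calW$ by the inductive hypothesis together with weak saturation). The filtration is built by adjoining the nondegenerate simplices of $\Delta^n$ that are missing from $\Lambda^n_i$ in a carefully chosen order, so that at each stage the cells attached fit together to form the frontier of a lower-dimensional inner horn.

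The construction and verification of this combinatorial filtration is the main technical obstacle. It is not the case that $\Lambda^n_i \hookrightarrow \Delta^n$ can be literally expressed as an iterated pushout of lower-dimensional inner horn inclusions (it is itself a generating inner anodyne map), so the subtlety is that one must work relative to $X$: the filtration is valid precisely because, after pulling back along $p$, the flatness hypothesis on $X \to S$ ensures that the relevant pushouts in $\sSet_{/X}$ behave as homotopy pushouts in the Joyal model structure. I expect the combinatorial bookkeeping — making the order of attachment precise and identifying each attaching map with an inner horn of lower dimension — to be the most delicate part of writing out the argument.
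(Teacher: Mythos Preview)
Your $(1) \Rightarrow (2)$ direction is fine and matches the paper. The gap is in your $(2) \Rightarrow (1)$ strategy. You correctly note that $\Lambda^n_i \hookrightarrow \Delta^n$ cannot be expressed as an iterated pushout of lower-dimensional inner horn inclusions, but your proposed fix --- that ``working relative to $X$'' somehow makes such a filtration available --- is not a real mechanism. The pullback functor $X \times_S (-)$ preserves colimits and monomorphisms, so if the filtration does not exist over $S$ it does not exist over $X$ either; flatness of $p$ does not manufacture new cell decompositions. As written, the inductive step has no content.

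The paper sidesteps this entirely by using a different generating set for inner anodyne maps. By Proposition \toposref{usejoyal2}, the inner anodyne class is generated by pushout-products
\[
(A \times \Delta^2) \amalg_{A \times \Lambda^2_1} (B \times \Lambda^2_1) \hookrightarrow B \times \Delta^2
\]
for monomorphisms $A \hookrightarrow B$ of finite simplicial sets. Showing such a map is ``good'' reduces (by a homotopy pushout argument and induction on the skeleta of $B$) to the assertion $(\ast')$: for any $K = \Delta^n$ and any map $\Delta^n \times \Delta^2 \to S$, the composite $X \times_S (\Delta^n \times \Delta^2) \to \Delta^n \times \Delta^2 \to \Delta^2$ is flat. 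This is the real technical content, handled by Lemma \ref{spaz} (flat inner fibrations over $\Delta^n \times \Delta^m$ project to flat inner fibrations over $\Delta^m$), which in turn rests on Lemmas \ref{cast1}, \ref{custer}, and \ref{pustule} and ultimately on Proposition \ref{balder}. Your outline is missing both the correct reduction and this chain of lemmas.
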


Proposition \ref{capper} will require some preliminaries.

\begin{proposition}\label{balder}
Let $p: \calM \rightarrow \Delta^1$ be a correspondence from an $\infty$-category
$\calC = \calM \times_{ \Delta^1} \{0\}$ to $\calD = \calM \times_{ \Delta^1 } \{1\}$.
Let $\calX = \bHom_{ \Delta^1}( \Delta^1, \calM)$ be the $\infty$-category of sections
of the map $p$. Then the canonical map
$$ \calC \coprod_{ \calX \times \{0\} } (\calX \times \Delta^1) \coprod_{ \calX \times \{1\} } \calD \rightarrow \calM$$
is a categorical equivalence.
\end{proposition}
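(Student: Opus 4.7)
The plan is to reduce the statement to a mapping-space computation via the coherent nerve functor $\sCoNerve$, following the template of the proof of Lemma \ref{cane}. First, without loss of generality I would replace $p: \calM \to \Delta^1$ by a Joyal-equivalent categorical fibration; this does not alter the $\infty$-category of sections $\calX$, the pushout $P$ appearing in the statement, or the comparison map $\phi: P \to \calM$ up to categorical equivalence, since all three are homotopy invariants of $\calM$ over $\Delta^1$. The map $\phi$ is then visibly a bijection on vertices: every vertex of $P$ (and of $\calM$) is canonically identified with a vertex of $\calC \sqcup \calD$.

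Since $\sCoNerve$ reflects categorical equivalences that are bijective on objects, it suffices to prove that for each pair of vertices $x, y$ the induced map of mapping spaces $\bHom_{\sCoNerve[P]}(x, y) \to \bHom_{\sCoNerve[\calM]}(x, y)$ is a weak homotopy equivalence of simplicial sets. The cases $x, y \in \calC$ and $x, y \in \calD$ are trivial, since $\phi$ restricts to the identity on these subcategories; the case $x \in \calD$, $y \in \calC$ is also trivial because both mapping spaces are empty (the projection to $\Delta^1$ admits no edges from $1$ back to $0$, on either side of $\phi$).

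The essential case is $x = C \in \calC$, $y = D \in \calD$. Since $\sCoNerve$ is a left adjoint, it preserves the pushout defining $P$. Combined with a coend analysis of mapping spaces in pushouts of simplicial categories, using projective cofibrancy arguments analogous to Lemma \ref{gooby}, I would arrive at a description
$$\bHom_{\sCoNerve[P]}(C, D) \simeq \int^{s \in \sCoNerve[\calX]} \bHom_{\sCoNerve[\calC]}(C, \mathrm{ev}_0(s)) \times \bHom_{\sCoNerve[\calD]}(\mathrm{ev}_1(s), D),$$
where the factor $\bHom_{\sCoNerve[\Delta^1]}(0,1) \simeq \Delta^0$ has been absorbed.

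The main obstacle, and the delicate final step, is to identify this coend with $\bHom_{\sCoNerve[\calM]}(C,D)$. The key input is that, by construction of $\calX$ as sections of $p$, the evaluation map $\mathrm{ev}: \calX \to \calC \times \calD$ has homotopy fiber over $(C,D)$ equivalent to $\bHom_\calM(C,D)$; hence the coend above should be read as a canonical coherent resolution of $\bHom_\calM(C,D)$, and the canonical map to it induced by composition in $\calM$ is a weak equivalence. I would establish this via a cofinality argument, exhibiting $\sCoNerve[\calX]_{(C,D)} \to \bHom_\calM(C,D)$ as a weak equivalence and then evaluating the coend by standard manipulations in the style of Lemma \ref{cane}. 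The technical crux is verifying the projective cofibrancy conditions needed for the coend to compute the right homotopy type, which requires a mild adaptation of Lemma \ref{gooby} to the current pushout.
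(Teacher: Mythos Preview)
Your approach is quite different from the paper's. The paper never passes through $\sCoNerve$ or computes mapping spaces; instead it works entirely in marked simplicial sets, builds an auxiliary object $\calY \subseteq \Fun(\Delta^1,\calM)\times\Delta^3$ with a carefully chosen marking, and shows the desired square is a homotopy pushout by exhibiting explicit deformation retractions among the subobjects $\overline{\calY}_K$ (for various $K\subseteq\Delta^3$) and then verifying that a certain composite $\calY\to\Delta^3\to\Delta^2$ is a flat inner fibration.

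Your outline has a genuine gap at the final step. You write that the coend should be identified with $\bHom_\calM(C,D)$ ``via a cofinality argument, exhibiting $\sCoNerve[\calX]_{(C,D)}\to\bHom_\calM(C,D)$ as a weak equivalence and then evaluating the coend by standard manipulations in the style of Lemma~\ref{cane}.'' But the fiber $\calX_{(C,D)}$ already \emph{is} $\bHom_\calM(C,D)$ essentially by definition; the problem is that your coend ranges over all of $\calX$, not just this fiber. In Lemma~\ref{cane} the coend collapses because $C$ is initial and $E$ is final, so both functors being tensored are weakly constant; you have no such hypothesis here, and the functors $s\mapsto\bHom_\calC(C,\mathrm{ev}_0 s)$ and $s\mapsto\bHom_\calD(\mathrm{ev}_1 s,D)$ do not simplify. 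What one would actually need is to recognize the homotopy coend as the classifying space of $\calC_{C/}\times_\calC\calX\times_\calD\calD_{/D}$, observe that composition gives a functor from this to the space $\bHom_\calM(C,D)$, and then prove that each fiber (the $\infty$-category of factorizations $C\to C'\to D'\to D$ of a fixed morphism) is weakly contractible, so that Theorem~A applies. This is the entire content of the proposition and is absent from your sketch.

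A secondary issue is the coend formula itself. Lemma~\ref{gooby} and the analysis in Lemma~\ref{cane} treat a single pushout along a cofibration of simplicial categories; here you have an iterated pushout in which the legs $\mathrm{ev}_0:\sCoNerve[\calX]\to\sCoNerve[\calC]$ and $\mathrm{ev}_1:\sCoNerve[\calX]\to\sCoNerve[\calD]$ are generally not injective on objects, so many objects of $\calX\times\{0\}$ may be glued to the same $C$. The mapping space in such a pushout is a priori a more complicated ``words'' construction, and reducing it to the single coend you write requires an argument you have not supplied.
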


\begin{proof}
For every $\infty$-category $\calA$, we let $\calA^{\natural}$ denote the marked simplicial set
$(\calA, M_{\calA})$, where $\calA$ is the collection of all equivalences in $\calA$. Since
the category of marked simplicial sets is Quillen equivalent to the category of simplicial
sets (with the Joyal model structure), it will suffice to prove the following:
\begin{itemize}
\item[$(A)$] The diagram
$$ \xymatrix{ \calX^{\natural} \times (\bd \Delta^{ \{1,2\} })^{\flat} \ar[r] \ar[d] & \calC^{\natural} \times \calD^{\natural} \ar[d] \\
\calX^{\natural} \times ( \Delta^{ \{1,2\} })^{\flat} \ar[r] & \calM^{\natural} }$$
is a homotopy pushout square of marked simplicial sets.
\end{itemize}
To prove this, we let $\calY$ denote the full subcategory of $\Fun( \Delta^1, \calM) \times \Delta^3$
spanned by those pairs $(f: A \rightarrow A', i)$ satisfying one of the following conditions:
\begin{itemize}
\item We have $i=0$ and $f$ is an equivalence in $\calC$.
\item We have $i=1$ or $i=2$ and $f$ belongs to $\calX$.
\item We have $i=3$ and $f$ is an equivalence in $\calD$.
\end{itemize}
For each simplicial subset $K \subseteq \Delta^3$, we let $\calY_{K} = \calY \times_{ \Delta^3} K$, and let $\overline{\calY}_{K}$ denote the marked simplicial set $(\calY_K, M_K)$, where $M_K$ is
the collection of all edges $\alpha: (f,i) \rightarrow (f',i')$ in $\calY_K$ satisfying one of the following three conditions:
\begin{itemize}
\item The map $\alpha$ is an equivalence in $\calY$.
\item We have $i = 0$, $i' = 1$, and $\alpha$ corresponds to a commutative diagram
$$ \xymatrix{ C \ar[d]^{f} \ar[r]^{g} & C'' \ar[d]^{f'} \\
C' \ar[r] & D }$$
for which $g$ is an equivalence.
\item We have $i=2$, $i' = 3$, and $\alpha$ corresponds to a commutative diagram
$$ \xymatrix{ C \ar[d]^{f} \ar[r] & D \ar[d]^{f'} \\
D'' \ar[r]^{g} & D' }$$
for which $g$ is an equivalence.
\end{itemize}

We observe that there is a retraction $r$ of $\calY$ onto the full subcategory $\calY_{ \Delta^{ \{0,2,3\} } }$, which carries an object $f: C \rightarrow D$ of $\calY_{ \{1\} }$ to the object $\id_{C} \in \calY_{ \{0\} }$. 
This retraction is equipped with a natural transformation $r \rightarrow \id_{ \calY}$, which
determines a map of marked simplicial sets $\overline{\calY}_{\Delta^3} \times ( \Delta^1)^{\sharp}
\rightarrow \overline{\calY}_{\Delta^3}$. Using this deformation retraction, we deduce the following:
\begin{itemize}
\item[$(\ast)$] Let $S$ be a subset of $\{0,2,3\}$ containing $\{0\}$. Then the inclusion $\overline{\calY}_{ \Delta^{ S} } \subseteq \overline{\calY}_{ \Delta^{ S \cup \{1\} }}$ is a weak equivalence of marked simplicial sets.
\end{itemize}

A similar argument proves:
\begin{itemize}
\item[$(\ast')$] Let $S$ be a subset of $\{0,1,3\}$ containing $\{3\}$. Then the inclusion
$\overline{\calY}_{ \Delta^{S}} \subseteq \overline{\calY}_{ \Delta^{ S \cup \{2\} }}$
is a weak equivalence of marked simplicial sets.
\end{itemize}

Let $\phi: \Delta^3 \rightarrow \Delta^1$ be the map characterized by $\phi^{-1} \{0\} = \Delta^{ \{0,1\} } \subseteq \Delta^3$, and consider the map
$$ \theta: \calY \subseteq \Fun( \Delta^1, \calM) \times \Delta^3 \stackrel{\id \times \phi}{\rightarrow}
\Fun( \Delta^1, \calM) \times \Delta^1 \rightarrow \calM.$$
Consider the diagram
$$ \xymatrix{ \calC^{\natural} \ar[r] \ar[drr]^{\id} & \overline{\calY}_{\{0\}} \ar[r] & \overline{\calY}_{ \Delta^{ \{0,1\} }} \ar[d]^{\theta_0} \\
& & \calC^{\natural}. }$$
Using $(\ast)$ and the observation that the diagonal inclusion $\calC \rightarrow \calY_{ \{0\} }$ is
an equivalence of $\infty$-categories, we deduce that $\theta_0$ is a weak equivalence of marked simplicial sets. A similar argument gives an equivalence of marked simplicial sets
$\overline{\calY}_{ \Delta^{ \{2,3\} }} \rightarrow \calD^{\natural}$. Using this observation,
we can reformulate $(A)$ as follows:
\begin{itemize}
\item[$(B)$] The diagram
$$ \xymatrix{ \overline{\calY}_{ \{1\} }\coprod \overline{\calY}_{ \{2\}} \ar[r] \ar[d] & \overline{\calY}_{ \Delta^{\{0,1\}}}
 \coprod \overline{\calY}_{\Delta^{\{2,3\}}} \ar[d] \\
\overline{\calY}_{\Delta^{\{1,2\}}} \ar[r] & \calM^{\natural} }$$
is a homotopy pushout square of marked simplicial sets.
\end{itemize}
We have a commutative diagram of marked simplicial sets
$$ \xymatrix{ \overline{\calY}_{ \Delta^{ \{0,3\}}} \ar[d]^{\beta_1} \ar[dr]^{\beta_0} & \\
\overline{\calY}_{\Delta^3} \ar[r] & \calM^{\natural} \\
\overline{\calY}_{K} \ar[u]^{\beta_2} \ar[ur]^{\beta_3} & }$$
where $K = \Delta^{ \{0,1\} } \coprod_{ \{1\} } \Delta^{ \{1,2\} } \coprod_{ \{2\} } \Delta^{ \{2,3\} }
\subseteq \Delta^3$. We wish to prove that $\beta_3$ is a weak equivalence of marked
simplicial sets. Since $\beta_0$ is an isomorphism of marked simplicial sets, it suffices to show
that $\beta_{1}$ and $\beta_{2}$ are weak equivalences of marked simplicial sets. 

To prove that $\beta_1$ is a weak equivalence, we factor $\beta_1$ as a composition
$$ \overline{\calY}_{ \Delta^{ \{0,3\} }} \stackrel{\beta'_{1}}{\rightarrow}
\overline{\calY}_{ \Delta^{ \{0,1,3\} }} \stackrel{ \beta''_{1}}{\rightarrow} \overline{\calY}_{ \Delta^3}.$$
Assertion $(\ast)$ implies that $\beta'_{1}$ is a weak equivalence, and assertion
$(\ast')$ implies that $\beta''_{1}$ is a weak equivalence.

To prove that $\beta_2$ is a weak equivalence, we factor $\beta_{2}$ as a composition
$$ \overline{\calY}_{K} \stackrel{\beta'_{2}}{\rightarrow}
\overline{\calY}_{ \Delta^{ \{0,1,2\} } \coprod_{ \{2\} } \Delta^{ \{2,3\} }}
\stackrel{ \beta''_{2}}{\rightarrow} \overline{\calY}_{ \Delta^{ \{0,1,2\}}
\coprod_{ \Delta^{ \{1,2\} } } \Delta^{ \{1,2,3\} }} \stackrel{\beta'''_{2}}{\rightarrow} \overline{\calY}_{\Delta^3}.$$
The map $\beta'_{2}$ is a pushout of the inclusion
$$\calY^{\natural}_{ \Delta^{ \{0,1\} }} \coprod_{ \calY_{ \{1\}}^{\natural}}
\calY^{\natural}_{ \Delta^{ \{1,2\} }} \rightarrow \calY^{\natural}_{ \Delta^{ \{0,1,2\}}}.$$
Consequently, to prove $\beta'_{2}$ it suffices to show that the map
$\calY_{ \Delta^{ \{0,1,2\} }} \rightarrow \Delta^{ \{0,1,2\} }$ is a flat inner fibration, which
follows from Example \ref{gabbe2}. The same argument shows that $\beta''_{2}$ is a weak equivalence. 
The map $\beta'''_{2}$ is a pushout of the inclusion
$$\calY^{\natural}_{\Delta^{ \{0,1,2\}}
\coprod_{ \Delta^{ \{1,2\} } } \Delta^{ \{1,2,3\} }} \rightarrow \calY^{\natural}.$$
To complete the proof, it will suffice to show that this map is a weak equivalence of marked simplicial sets, which is equivalent to the requirement that the composite map
$$ \calY \rightarrow \Delta^3 \stackrel{\phi'}{\rightarrow} \Delta^2$$
is a flat inner fibration (here $\phi'$ is the map characterized by the property that ${\phi'}^{-1} \{1\} = \Delta^{ \{1,2\} } \subseteq \Delta^3$). 

In view of Proposition \ref{seda}, we must show that for every object $\overline{C}: C \rightarrow C'$ of
$\Fun( \Delta^1, \calC) \simeq \calY_{ \{0\} }$ and every object $\overline{D}: D \rightarrow D'$ of $\Fun( \Delta^1, \calD) \simeq \calY_{ \{3\} }$, the
simplicial set $\calY_{ \overline{C}/ \, / \overline{D}} \times_{ \Delta^3 } \Delta^{ \{1,2\} }$ is
weakly contractible. This simplicial set can be identified with the product
$\Delta^1 \times \calE$, where $\calE = \Fun( \Delta^1, \calM)_{ \overline{C}/ \, / \overline{D} } \times_{ \Fun( \Delta^1, \calM)} \calX$. To complete the proof, we will show that the $\infty$-category $\calE$ is weakly contractible.

We observe that an object of $\calE$ can be identified with a commutative diagram
$$ \xymatrix{ C \ar[d]^{ \overline{C} } \ar[r]^{\gamma} & C'' \ar[d] \ar[r] & D \ar[d]^{\overline{D}} \\
C' \ar[r] & D'' \ar[r]^{\gamma'} & D' }$$
in $\calM$, where $C'' \in \calC$ and $D'' \in \calD$. Let $\calE_0$ denote the full subcategory
of $\calE$ spanned by those objects for which $\gamma$ is an equivalence. The inclusion
$\calE_0 \subseteq \calE$ admits a right adjoint, and is therefore a weak homotopy equivalence.
It will therefore suffice to show that $\calE_0$ is weakly contractible. Let $\calE_1$ denote the
full subcategory of $\calE_0$ spanned by those diagrams for which $\gamma'$ is an equivalence.
The inclusion $\calE_1 \subseteq \calE_0$ admits a left adjoint, and is therefore a weak homotopy equivalence. It therefore suffices to show that $\calE_1$ is weakly contractible. We complete the proof by observing that $\calE_1$ is a contractible Kan complex.
\end{proof}

\begin{lemma}\label{sadly}
Let $q: \calC \rightarrow \calD$ be a coCartesian fibration of $\infty$-categories.
Suppose that $\calD$ is weakly contractible and that each fiber $\calC_{D}$ of $q$ is
weakly contractible. Then $\calC$ is weakly contractible.
\end{lemma}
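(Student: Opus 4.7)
My plan is to reduce the weak contractibility of $\calC$ to that of $\calD$ by computing the weak homotopy type of $\calC$ as a colimit indexed by $\calD$. The essential input is the following general fact: for any coCartesian fibration $q: \calC \rightarrow \calD$, classified by a functor $\chi: \calD \rightarrow \Cat_{\infty}$, the underlying Kan complex $|\calC|$ is equivalent to the colimit in $\SSet$ of the composite functor $|\chi|: \calD \rightarrow \Cat_{\infty} \rightarrow \SSet$, where the second map sends an $\infty$-category to its maximal Kan complex (or equivalently, to its localization at all morphisms, since both constructions agree after passing to $\SSet$). This is a consequence of the straightening/unstraightening equivalence of \S \toposref{strsec}, combined with the observation that the colimit of $\chi$ in $\Cat_{\infty}$ is obtained by formally inverting the $q$-coCartesian edges of $\calC$.

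Granting this, the proof proceeds in three short steps. First, the hypothesis that every fiber $\calC_D$ is weakly contractible means that $|\chi|: \calD \rightarrow \SSet$ factors, up to equivalence, through the final object of $\SSet$; that is, $|\chi|$ is equivalent to the constant functor $c_{\ast}: \calD \rightarrow \SSet$ taking value $\Delta^0$. Second, the colimit of $c_{\ast}$ in $\SSet$ is precisely $|\calD|$, since the colimit of the constant diagram at a final object is the classifying space of the indexing $\infty$-category (see Corollary \toposref{needka}). Third, the hypothesis that $\calD$ is weakly contractible yields $|\calD| \simeq \Delta^0$, and we conclude that $|\calC| \simeq \Delta^0$ as desired.

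The main obstacle is identifying the weak homotopy type of $\calC$ with the colimit of $|\chi|$; once this is in hand, the remainder of the argument is formal. An alternative approach which bypasses straightening is to verify directly that $q$ is a weak homotopy equivalence by checking, for each $D \in \calD$, that the natural inclusion $\calC_D \hookrightarrow \calC \times_{\calD} \calD_{/D}$ (or its left-oriented variant) is a weak homotopy equivalence; this uses the existence of coCartesian lifts to produce a deformation retraction onto the fiber. Since $\calD_{/D}$ is weakly contractible (it has a terminal object) and $\calC_D$ is weakly contractible by hypothesis, the homotopy fibers of $q$ are all contractible, and the contractibility of the base then forces $|\calC|$ to be contractible. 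Either approach gives the desired conclusion.
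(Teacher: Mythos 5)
Your primary argument is the same as the paper's: straighten $q$ to a functor $\chi: \calD \rightarrow \Cat_{\infty}$, compute the weak homotopy type of $\calC$ as the colimit of the fiberwise homotopy types, observe that this diagram is constant at a point, and apply Corollary \toposref{needka}. However, there is a genuine error in how you set this up. The functor $\Cat_{\infty} \rightarrow \SSet$ that makes the colimit formula work is the \emph{left} adjoint to the inclusion $\SSet \subseteq \Cat_{\infty}$ --- the $\infty$-groupoid completion, i.e.\ the localization at all morphisms --- and \emph{not} the maximal Kan complex functor, which is the \emph{right} adjoint. Your parenthetical claim that the two constructions ``agree after passing to $\SSet$'' is false: the maximal Kan complex of $\Delta^1$ is two disjoint points, while the localization of $\Delta^1$ at its unique nondegenerate edge is contractible. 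The distinction matters for two reasons here: only the left adjoint preserves colimits (this is what lets you identify the homotopy type of the total space with a $\calD$-indexed colimit of fibers, as in the paper's appeal to Corollary \toposref{tolbot}), and ``weakly contractible'' means that the localization at all morphisms is contractible, not that the maximal sub-Kan-complex is. Deleting the mention of the maximal Kan complex and keeping only the localization repairs the argument, after which it is essentially identical to the paper's.

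Your alternative route, bypassing straightening, is genuinely different and also works, though you hedge unnecessarily on which slice to use. For a coCartesian fibration the correct choice is $\calD_{/D}$: the inclusion $\calC_D \hookrightarrow \calC \times_{\calD} \calD_{/D}$ admits a left adjoint given by pushing forward along a coCartesian lift of $D' \rightarrow D$, hence is a weak homotopy equivalence, so each $\calC \times_{\calD} \calD_{/D}$ is weakly contractible. By the cofinality criterion (applied to $q^{op}$), $q$ is then a weak homotopy equivalence, and contractibility of $\calD$ gives the result. This trades the straightening/unstraightening machinery for a Quillen-Theorem-A-style argument; either input is substantial, so the two proofs are roughly comparable in weight.
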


\begin{proof}
The coCartesian fibration $q$ is classified by a functor $\chi: \calD \rightarrow
\Cat_{\infty}$. Let $F: \Cat_{\infty} \rightarrow \SSet$ denote a left adjoint
to the inclusion $\SSet \subseteq \Cat_{\infty}$. According to Corollary \toposref{tolbot},
the colimit of $\chi$ is represented (in the homotopy category $\h{\Cat_{\infty}}$)
by the marked simplicial set $(\calC, M)$, where $M$ is the collection of $q$-coCartesian morphisms in $\calD$. It follows that $\varinjlim F \circ \chi \simeq F( \varinjlim \chi)$ is represented
(in the homotopy category $\h{ \SSet}$) by the simplicial set $\calC$. Since each
fiber $\calC_{D}$ is weakly contractible, the composition $F \circ \chi$ is equivalent to the
constant functor $\calD \rightarrow \SSet$ taking the value $\Delta^0$. Using Corollary
\toposref{needka}, we deduce that the colimit $\varinjlim F \circ \chi$ can also be represented
by the simplicial set $\calD$. We therefore have an isomorphism between $\calC$ and $\calD$ in the homotopy category of spaces. Since $\calD$ is weakly contractible, we conclude that $\calC$ is weakly contractible.
\end{proof}

\begin{lemma}\label{cast1}
Let $p: \calM \rightarrow \Delta^3$ be a flat inner fibration. Let
$f: C \rightarrow D$ be a morphism in $\calM$, where $C \in \calM_0$ and
$D \in \calM_3$. Then the $\infty$-category
$\calN = \calM_{ C/ \, /D} \times_{ \Delta^3} \Delta^{ \{1,2\} }$ is weakly contractible.
\end{lemma}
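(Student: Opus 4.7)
The plan is to apply Proposition \ref{balder} to the correspondence structure on $\calN \to \Delta^{\{1,2\}}$, reducing the problem to weak contractibility of the $\infty$-category of sections, and then to exploit the remaining flatness hypotheses on the other $2$-faces of $\Delta^3$.

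First, observe that $q : \calN \to \Delta^{\{1,2\}}$ is an inner fibration (being a pullback of the inner fibration $\calM_{C/\,/D} \to \Delta^3$), so $\calN$ is a correspondence from $\calN_1 := \calN \times_{\Delta^1} \{1\}$ to $\calN_2 := \calN \times_{\Delta^1} \{2\}$. The fiber $\calN_1$ is canonically identified with the $\infty$-category $\calM_1 \cap \calM_{C/\,/D}$ of factorizations of $f$ through $\calM_1$. Since $\calM|_{\Delta^{\{0,1,3\}}} \to \Delta^{\{0,1,3\}}$ is flat by hypothesis, criterion $(2)$ of Proposition \ref{seda} applied to the morphism $f$ shows that $\calN_1$ is weakly contractible. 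The same argument with $\Delta^{\{0,2,3\}}$ in place of $\Delta^{\{0,1,3\}}$ gives that $\calN_2$ is weakly contractible.

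Next, let $\calS = \Fun_{\Delta^1}(\Delta^1, \calN)$ denote the $\infty$-category of sections of $q$. By Proposition \ref{balder}, the canonical map
$$ \calN_1 \coprod_{\calS \times \{0\}} (\calS \times \Delta^1) \coprod_{\calS \times \{1\}} \calN_2 \longrightarrow \calN $$
is a categorical equivalence, hence a weak homotopy equivalence on underlying simplicial sets. Since $\calN_1$ and $\calN_2$ are weakly contractible and the left-hand side is a homotopy pushout in the Joyal model structure, it will suffice to prove that $\calS$ is weakly contractible: under this hypothesis the pushout reduces to $* \coprod_{*} (\calS \times \Delta^1) \coprod_{*} * \simeq \Delta^1 \simeq *$.

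It remains to show $\calS$ is weakly contractible. Unwinding definitions, $\calS$ may be identified with the $\infty$-category of $3$-simplices $\tau : \Delta^3 \to \calM$ over $\id_{\Delta^3}$ with $\tau | \Delta^{\{0,3\}} = f$; that is, $3$-simplices of the form $C \to X_1 \to X_2 \to D$ with $X_i \in \calM_i$ and the long edge equal to $f$. My approach is to analyze the projection $\pi : \calS \to \calN_1$ sending $\tau \mapsto \tau | \Delta^{\{0,1,3\}}$, with the aim of showing that $\pi$ is a locally coCartesian fibration whose fibers are weakly contractible; Lemma \ref{sadly} together with weak contractibility of $\calN_1$ will then conclude that $\calS$ is weakly contractible. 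The fiber of $\pi$ over a factorization $\sigma = (C \to X_1 \to D) \in \calN_1$ parametrizes extensions of $\sigma$ along $\Delta^{\{0,1,3\}} \hookrightarrow \Delta^3$ with added vertex in $\calM_2$, and its weak contractibility should follow from the flatness of $\calM$ over the remaining two $2$-faces $\Delta^{\{0,1,2\}}$ and $\Delta^{\{1,2,3\}}$, applied respectively to the morphisms $C \to X_1$ and $X_1 \to D$ encoded by $\sigma$.

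The main obstacle is the last step: verifying both that $\pi$ has the requisite fibration property and that its fibers are weakly contractible. Concretely, one needs to filter the extension problem from $\Delta^{\{0,1,3\}}$ to $\Delta^3$ by successively attaching the faces $\Delta^{\{0,1,2\}}$ and $\Delta^{\{1,2,3\}}$ (together with the $3$-simplex filler), controlling each stage by the appropriate flatness condition; here the non-trivial point is managing the compatibility constraints between these two attachments and the $d_1$-face $\Delta^{\{0,2,3\}}$.
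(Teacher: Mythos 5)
Your reduction, up to and including the appeal to Proposition \ref{balder}, follows the paper's argument: the identifications of $\calN_1$ and $\calN_2$ and the appeal to flatness over $\Delta^{\{0,1,3\}}$ and $\Delta^{\{0,2,3\}}$ are correct. But the final step -- showing the section category $\calS$ is weakly contractible -- is the heart of the lemma, and you leave it unproven while also misidentifying what it requires. You propose to project $\calS$ to $\calN_1$ by evaluation at $\{1\}$ and assert that weak contractibility of the fibers ``should follow'' from flatness over \emph{both} $\Delta^{\{0,1,2\}}$ and $\Delta^{\{1,2,3\}}$, applied to $C \to X_1$ and $X_1 \to D$. In fact, once $\sigma = (C \to X_1 \to D)$ is fixed, the morphism $C \to X_1$ imposes no constraint on the space of extensions; the fiber of evaluation at $\{1\}$ over $\sigma$ can be identified, via the slice comparison of Proposition \toposref{certs} followed by a trivial Kan fibration, with $\{2\} \times_{\Delta^3} \calM_{X_1/\,/D}$, and this is weakly contractible by flatness over $\Delta^{\{1,2,3\}}$ alone. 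You would also need to note that evaluation at $\{1\}$ is a \emph{Cartesian} (not coCartesian) fibration, so Lemma \ref{sadly} applies only after passing to opposites -- harmless, but it has to be said.

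The paper takes the dual projection: it considers evaluation at $\{2\}$, which is a coCartesian fibration by Corollary \toposref{tweezegork}, and identifies the fiber over $E$ (with image $E_0 \in \calM_2$) with $\{1\} \times_{\Delta^3} \calM_{C/\,/E_0}$, weakly contractible by flatness over $\Delta^{\{0,1,2\}}$. Either direction works and each uses exactly one of the two remaining flatness conditions, but you have to actually carry one of them out: verify the fibration property, identify the strict fiber of the evaluation map with the appropriate slice of $\calM_{C/\,/D}$, and pass through the trivial Kan fibration that strips off the fixed datum $f$. As written, the proposal does none of this, and the ad hoc filtration-by-faces you sketch in the last paragraph sets up a more delicate lifting problem than the one you actually need to solve.
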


\begin{proof}
Let $\calX$ denote the $\infty$-category $\Fun_{ \Delta^{ \{1,2\} }}( \Delta^{ \{1,2\} }, \calN)$.
According to Proposition \ref{balder}, we have a categorical equivalence
$$ \calN_{1} \coprod_{ \calX \times \{1\} } ( \calX \times \Delta^1) \coprod_{ \calX \times \{2\} } \calN_{2} \rightarrow \calN.$$
Since $\calN_{1}$ and $\calN_{2}$ are weakly contractible (by virtue of the assumption that
$p$ is flat over $\Delta^{ \{0,1,3\} }$ and $\Delta^{ \{0,2,3\} }$), it will suffice to show that
$\calX$ is weakly contractible. Let $q: \calX \rightarrow \calN_2$ be the map given by
evaluation at $\{2\}$. Using Corollary \toposref{tweezegork}, we deduce that
$q$ is a coCartesian fibration. Since $\calN_2$ is weakly contractible, it will suffice
to show that the fiber $q^{-1} E$ is weakly contractible, for each $E \in \calN_2$
(Lemma \ref{sadly}). This fiber can be identified with the fiber product
$\{ 1\} \times_{ \Delta^3} ( \calM_{ C/\,/D})^{/E}$, which is categorically
equivalent to $\{1\} \times_{ \Delta^3} ( \calM_{ C/ \, /D})_{/E}$ 
(Proposition \toposref{certs}). Let $E_0$ denote the image of $E$ in $\calM$. We
have a trivial Kan fibration $( \calM_{C/ \, /D})_{/E} \rightarrow \calM_{C/ \, /E_0}$.
It therefore suffices to show that $\{1\} \times_{ \Delta^3} \calM_{ C/ \, /E_0}$ is weakly contractible, which follows from the assumption that $p$ is flat over the $2$-simplex $\Delta^{ \{0,1,2\} }$. 
\end{proof}

\begin{lemma}\label{custer}
Let $p: \calM \rightarrow \Delta^n$ be a flat inner fibration, and let
$q: \Delta^n \rightarrow \Delta^m$ be a map of simplices which is surjective on vertices. Then the
composite map $q \circ p$ is a flat inner fibration.
\end{lemma}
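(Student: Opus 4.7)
My plan is to reduce to the case where $q$ identifies only a single pair of adjacent vertices, and then verify flatness over each non-degenerate $2$-simplex of $\Delta^m$ by appealing to Lemma \ref{cast1}. Any surjective monotone $q \colon [n] \to [m]$ with $n > m$ factors as a composition of elementary surjections, each identifying one adjacent pair of vertices. Assuming the elementary case is established, flatness of the full composite $q \circ p$ follows by induction on $n - m$. So I reduce to the case $m = n-1$ with $q$ collapsing adjacent vertices $k, k+1 \in [n]$, i.e.\ $q(i) = i$ for $i \leq k$ and $q(i) = i - 1$ for $i \geq k+1$, so that $q^{-1}(j)$ is a singleton unless $j = k$, in which case $q^{-1}(k) = \{k, k+1\}$.

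I next verify the criterion of Proposition \ref{seda} for $q \circ p \colon \calM \to \Delta^{n-1}$ over each non-degenerate $2$-simplex $(j_0 < j_1 < j_2)$ of $\Delta^{n-1}$ (flatness over degenerate $2$-simplices is automatic). Given a morphism $f \colon C \to E$ in $\calM$ with $(q \circ p)(C) = j_0$ and $(q \circ p)(E) = j_2$, I must show that $\calM_{C/\,/E} \times_{\Delta^n} q^{-1}(j_1)$ is weakly contractible. The easy case is $j_1 \neq k$: then $q^{-1}(j_1)$ is a single vertex $\{i_1\}$, and setting $i_0 = p(C)$, $i_2 = p(E)$, monotonicity of $q$ combined with $j_0 < j_1 < j_2$ yields $i_0 < i_1 < i_2$. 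The desired weak contractibility of $\calM_{C/\,/E} \times_{\Delta^n} \{i_1\}$ is then precisely the flatness of $p$ over the non-degenerate $2$-simplex $(i_0, i_1, i_2)$ of $\Delta^n$.

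The remaining case is $j_1 = k$. Here $j_0 < k < j_2$ forces $q^{-1}(j_0) = \{j_0\}$ and $q^{-1}(j_2) = \{j_2 + 1\}$, hence $p(C) = j_0$ and $p(E) = j_2 + 1$. The inclusion $\Delta^{\{j_0,\,k,\,k+1,\,j_2+1\}} \hookrightarrow \Delta^n$ (four distinct vertices in order) induces a pullback $\calM' = \calM \times_{\Delta^n} \Delta^{\{j_0,\,k,\,k+1,\,j_2+1\}} \to \Delta^3$; since flatness is stable under base change, $\calM' \to \Delta^3$ is again flat, with $C$ and $E$ lying over the extreme vertices $0$ and $3$. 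Lemma \ref{cast1} then applies directly and yields that $\calM'_{C/\,/E} \times_{\Delta^3} \Delta^{\{1,2\}}$ is weakly contractible, and unwinding the base change identifies this with $\calM_{C/\,/E} \times_{\Delta^n} \Delta^{\{k, k+1\}}$, completing the verification. The main obstacle to this lemma was the argument for Lemma \ref{cast1} itself; the present reduction is straightforward bookkeeping once that is in hand.
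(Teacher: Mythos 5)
Your proposal is correct and follows essentially the same strategy as the paper's proof: reduce by induction on $n-m$ to an elementary surjection, then verify flatness over each non-degenerate $2$-simplex by splitting into the case where the middle vertex is the doubled one (handled by Lemma \ref{cast1} after pulling back over an appropriate $\Delta^3$) and the case where it is not (handled directly by flatness of $p$). The paper organizes the case analysis by first pulling back to $n=3$, $m=2$ and then distinguishing which of the three vertices is doubled, but the underlying argument and the invocation of Lemma \ref{cast1} for the middle-vertex case are the same.
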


\begin{proof}
If $n - m > 1$, then we can factor $q$ as a composition
$$ \Delta^{n} \stackrel{q'}{\rightarrow} \Delta^{n-1} \stackrel{q''}{\rightarrow} \Delta^m$$
where $q'$ and $q''$ are surjective on vertices. Using descending induction on $n-m$,
we can assume that $n-m \leq 1$. If $n=m$ there is nothing to prove, so we may suppose
that $n = m+1$. To prove that $q \circ p$ is flat, it suffices to show that it is flat over
every nondegenerate $2$-simplex of $\Delta^m$. Replacing
$\calM$ by the pullback $\calM \times_{ \Delta^m} \Delta^2$, we can reduce to the
case where $m=2$ and $n=3$.

Fix objects $C \in (q \circ p)^{-1} \{0\}$ and $D \in (q \circ p)^{-1} \{2\}$ and a morphism
$f: C \rightarrow D$ in $\calM$; we wish to prove
that the $\infty$-category $\calM_{C/ \, /D} \times_{ \Delta^2} \{1\}$ is weakly contractible.
Let $i \in [2]$ be the unique integer such that $q^{-1} \{i\}$ is a $1$-simplex of $\Delta^3$.
If $i = 0$, then the weak contractibility follows from the assumption that
$p$ is flat over $\Delta^{ \{0, 2, 3 \} } \subseteq \Delta^3$. If $i=2$,
then the weak contractibility follows from our assumption that $p$ is flat over
$\Delta^{ \{0,1, 3\} } \subseteq \Delta^3$. If $i = 1$, then the desired result follows from
Lemma \ref{cast1}.
\end{proof}

\begin{lemma}\label{pustule}
Let $p: \calM \rightarrow \Delta^n$ be a flat inner fibration. Let
$f: C \rightarrow D$ be a morphism in $\calM$, where $C \in \calM_0$ and
$D \in \calM_n$. Then the $\infty$-category
$\calN = \calM_{ C/ \, /D} \times_{ \Delta^n} \Delta^{ \{1,\ldots, n-1\} }$ is weakly contractible.
\end{lemma}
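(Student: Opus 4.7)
The plan is to argue by induction on $n$. The base cases $n \leq 3$ are already handled: for $n \leq 2$ the simplicial set $\Delta^{\{1,\ldots,n-1\}}$ is empty or a single point, and weak contractibility in the $n=2$ case is precisely Definition \ref{kal}; the case $n=3$ is Lemma \ref{cast1}. For the inductive step with $n \geq 4$, I will mimic the strategy of Lemma \ref{cast1} by applying Proposition \ref{balder} to $\calN$ viewed as a correspondence, so as to split off the vertex $\{1\}$ from the rest of $\{1,\ldots,n-1\}$.

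Concretely, let $\pi \colon \Delta^{\{1,\ldots,n-1\}} \to \Delta^1$ be the map collapsing $\{1\}$ to $0$ and $\{2,\ldots,n-1\}$ to $1$, and regard $\calN$ as an inner fibration over $\Delta^1$ by composition. Proposition \ref{balder} then gives a categorical equivalence
$$\calN_0 \coprod_{\calX \times \{0\}} (\calX \times \Delta^1) \coprod_{\calX \times \{1\}} \calN_1 \longrightarrow \calN,$$
where $\calN_a$ is the fiber over $a \in \Delta^1$ and $\calX = \Fun_{\Delta^1}(\Delta^1,\calN)$. The fiber $\calN_0 = \calM_{C/\,/D} \times_{\Delta^n} \{1\}$ is weakly contractible by flatness of $p$ over the $2$-simplex $\Delta^{\{0,1,n\}}$. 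For $\calN_1 = \calM_{C/\,/D} \times_{\Delta^n} \Delta^{\{2,\ldots,n-1\}}$, one restricts $p$ to $\Delta^{\{0,2,3,\ldots,n\}}$; by Lemma \ref{custer} this restriction is a flat inner fibration over $\Delta^{n-1}$, and $\calN_1$ is precisely the $\infty$-category appearing in the statement of this lemma for that restricted fibration. So the inductive hypothesis yields weak contractibility of $\calN_1$. Consequently the displayed pushout is weakly contractible if and only if $\calX$ is weakly contractible, and the problem reduces to the latter.

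To show $\calX$ is weakly contractible, let $q \colon \calX \to \calN_1$ be evaluation at $\{1\} \in \Delta^1$. Exactly as in the proof of Lemma \ref{cast1}, Corollary \toposref{tweezegork} implies that $q$ is a coCartesian fibration, so by Lemma \ref{sadly} (since $\calN_1$ is weakly contractible) it suffices to verify that each fiber of $q$ is weakly contractible. Fix $E \in \calN_1$ corresponding to a factorization $C \to E_0 \to D$ with $E_0$ lying over some $j \in \{2,\ldots,n-1\}$. Applying the same chain of identifications as in Lemma \ref{cast1} (Proposition \toposref{certs} together with the trivial fibration $(\calM_{C/\,/D})_{/E} \to \calM_{C/\,/E_0}$), the fiber $q^{-1}(E)$ is categorically equivalent to $\{1\} \times_{\Delta^n} \calM_{C/\,/E_0}$. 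Since every factorization of $C \to E_0$ through an object over $\{1\}$ lies in $\calM | \Delta^{\{0,1,j\}}$, this fiber coincides with $\{1\} \times_{\Delta^{\{0,1,j\}}} (\calM|\Delta^{\{0,1,j\}})_{C/\,/E_0}$, which is weakly contractible by the flatness of $p$ over the $2$-simplex $\Delta^{\{0,1,j\}}$.

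The substantive content is entirely contained in the arguments already used for $n=3$; the main point requiring care is the bookkeeping in the inductive step, namely the identification of $\calN_1$ with the analogous object attached to the flat fibration $\calM \times_{\Delta^n} \Delta^{\{0,2,\ldots,n\}} \to \Delta^{n-1}$ via Lemma \ref{custer}, and the verification that the fiber of $q$ over a point over vertex $j$ really collapses into the $2$-simplex $\Delta^{\{0,1,j\}}$ where flatness can be invoked directly. I do not anticipate any serious obstacle beyond this.
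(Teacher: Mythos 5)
Your argument is substantially correct and, after some checking, all the identifications hold up: the restriction of $\calM_{C/\,/D}$ to a face commutes with slicing, so $\calN_1$ really is the object from the lemma for the restricted fibration $\calM \times_{\Delta^n} \Delta^{\{0,2,\ldots,n\}} \to \Delta^{n-1}$, and the fiber of $q$ over a vertex $E$ lying over $j$ does collapse into the $2$-simplex $\Delta^{\{0,1,j\}}$. But you have taken a far more laborious route than the paper, which disposes of this in one line: apply Lemma~\ref{custer} to the surjection $q\colon \Delta^n \to \Delta^2$ with $q^{-1}\{1\} = \{1,\ldots,n-1\}$, concluding that $q\circ p$ is flat, and then read off weak contractibility of $\calN = \calM_{C/\,/D}\times_{\Delta^2}\{1\}$ directly from the defining property of flatness over $\Delta^2$ (Proposition~\ref{seda}). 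What you have essentially done is to re-derive the inductive content of Lemma~\ref{custer} inline: your splitting of $\Delta^{\{1,\ldots,n-1\}}$ over $\Delta^1$ via Proposition~\ref{balder}, the coCartesian fibration $q\colon\calX\to\calN_1$, the identification of its fibers via the trivial Kan fibration $(\calM_{C/\,/D})_{/E}\to\calM_{C/\,/E_0}$, and the appeal to Lemma~\ref{sadly} are precisely the moves made in the proof of Lemma~\ref{cast1}, which Lemma~\ref{custer} already packages for reuse. The paper's route is cleaner because it factors those arguments out once and then merely invokes them; yours buys nothing new and duplicates work. One small error worth flagging: you cite Lemma~\ref{custer} for the claim that the restriction of $p$ to the face $\Delta^{\{0,2,\ldots,n\}}$ is again flat, but Lemma~\ref{custer} is about postcomposition with a vertex-surjective map $\Delta^n\to\Delta^m$, not about restriction to a face. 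Restriction to a face is a pullback, and stability of flat inner fibrations under pullback is the unnumbered remark immediately following Example~\ref{cartflat}.
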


\begin{proof}
Apply Lemma \ref{custer} to the map $q: \Delta^n \rightarrow \Delta^2$ characterized
by the requirement that $q^{-1} \{1\} = \{ 1, \ldots, n-1 \}$. 
\end{proof}

\begin{lemma}\label{spaz}
Let $p: \calM \rightarrow \Delta^n \times \Delta^m$ be a flat inner fibration.
Then the induced map $p': \calM \rightarrow \Delta^m$ is a flat inner fibration.
\end{lemma}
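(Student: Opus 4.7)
The plan is to deduce this from Lemma \ref{custer} by restricting $p$ to an appropriately chosen sub-simplex of $\Delta^n \times \Delta^m$ that is large enough to contain every simplex relevant to verifying flatness of the projection.

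First I would reduce to the case $m = 2$. Flatness of a map over $\Delta^m$ is, by definition, a condition on each $2$-simplex $\sigma: \Delta^2 \to \Delta^m$; pulling $p$ back along $\Delta^n \times \Delta^2 \to \Delta^n \times \Delta^m$ preserves flatness and replaces $\calM \to \Delta^m$ (over $\sigma$) with the corresponding projection $\calM \times_{\Delta^m} \Delta^2 \to \Delta^2$, so I may assume $m = 2$. Using the criterion of Proposition \ref{seda}, I must show that for every $C \in p^{-1}(\Delta^n \times \{0\})$, every $E \in p^{-1}(\Delta^n \times \{2\})$, and every morphism $f: C \to E$ in $\calM$, the $\infty$-category $(\calM_1)_{C/\,/E}$ is weakly contractible, where $\calM_1 := p^{-1}(\Delta^n \times \{1\})$.

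Let $(i,0) = p(C)$ and $(j,2) = p(E)$ in the poset $[n] \times [2]$; necessarily $i \leq j$. The key construction is the chain
$$ (i,0) < (i,1) < (i+1,1) < \cdots < (j,1) < (j,2) $$
in $[n] \times [2]$, which consists of $j-i+3$ elements and therefore determines an injection $\phi: \Delta^{j-i+2} \to \Delta^n \times \Delta^2$. Set $\calM' := \calM \times_{\Delta^n \times \Delta^2} \Delta^{j-i+2}$; this pullback is an inner fibration over $\Delta^{j-i+2}$, and it is flat because each $2$-simplex of $\Delta^{j-i+2}$ maps under $\phi$ to some $2$-simplex of $\Delta^n \times \Delta^2$ over which $p$ is flat by hypothesis. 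The map $q: \Delta^{j-i+2} \to \Delta^2$ defined by $q(0) = 0$, $q(1) = \cdots = q(j-i+1) = 1$, $q(j-i+2) = 2$ is surjective on vertices, so Lemma \ref{custer} gives that $q \circ (\calM' \to \Delta^{j-i+2}) = (\calM' \to \Delta^2)$ is a flat inner fibration.

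It remains to observe that all of $C$, $E$, and $f$ lie in $\calM'$: $C$ covers $\phi(0) = (i,0)$, $E$ covers $\phi(j-i+2) = (j,2)$, and $f$ covers $\phi(\Delta^{\{0,j-i+2\}})$, the edge from $(i,0)$ to $(j,2)$ in $\Delta^n \times \Delta^2$. Applying Proposition \ref{seda} to the flat inner fibration $\calM' \to \Delta^2$ then yields that $(\calM'_1)_{C/\,/E}$ is weakly contractible, where $\calM'_1 := \calM' \times_{\Delta^2} \{1\} = p^{-1}(\Delta^{\{i,\ldots,j\}} \times \{1\})$. To conclude, I identify this $\infty$-category with $(\calM_1)_{C/\,/E}$. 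A $k$-simplex of $(\calM_1)_{C/\,/E}$ is a $(k+2)$-simplex $\sigma: \Delta^{k+2} \to \calM$ with $\sigma(0) = C$, $\sigma(k+2) = E$, and $\sigma(1),\ldots,\sigma(k+1) \in \calM_1$. The image of $\sigma$ in $\Delta^n \times \Delta^2$ has vertices $(i,0),(k_1,1),\ldots,(k_{k+1},1),(j,2)$ for some monotone sequence $i \leq k_1 \leq \cdots \leq k_{k+1} \leq j$, and therefore lies inside $\phi(\Delta^{j-i+2})$; this forces $\sigma$ to factor through $\calM'$ and its intermediate vertices to lie in $\calM'_1$. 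Hence $(\calM_1)_{C/\,/E} = (\calM'_1)_{C/\,/E}$, completing the proof.

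There is no substantive obstacle here; the only delicate step is the final bookkeeping verification that every simplex of $(\calM_1)_{C/\,/E}$ is contained in $\calM'$, which is an elementary consequence of the monotonicity of chains in the product poset $[n] \times [2]$.
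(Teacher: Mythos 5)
Your proposal is correct and follows essentially the same path as the paper's proof: reduce to $m=2$, construct the identical embedding $\phi: \Delta^{2+j-i} \hookrightarrow \Delta^n \times \Delta^2$ through the "staircase" $(i,0) < (i,1) < \cdots < (j,1) < (j,2)$, restrict $p$ to this sub-simplex, and collapse it down to $\Delta^2$ to get a flat inner fibration whose middle fiber captures $(\calM_1)_{C/\,/E}$. The only cosmetic difference is that you invoke Lemma~\ref{custer} plus Proposition~\ref{seda} directly and spell out the final bookkeeping, whereas the paper routes these two steps through Lemma~\ref{pustule} (itself an immediate application of Lemma~\ref{custer} to exactly the map $q$ you define).
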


\begin{proof}
It suffices to show that $p'$ is flat over every nondegenerate $2$-simplex of
$\Delta^m$. Replacing $\calM$ by $\calM \times_{ \Delta^m} \Delta^2$, we can
reduce to the case $m = 2$. Fix a morphism $f: C \rightarrow D$ in
$\calM$, where $C \in {p'}^{-1} \{0\}$ and $D \in {p'}^{-1} \{2\}$; we wish to show
that the $\infty$-category $\calM_{C/ \, /D} \times_{ \Delta^2} \{1\}$ is weakly contractible.
Let $i$ and $j$ denote the images of $C$ and $D$ in $\Delta^n$, and let
$\phi: \Delta^{ 2+j-i} \rightarrow \Delta^n \times \Delta^2$ be the map
given on vertices by the formula
$$ \phi(k) = \begin{cases} (i,0) & \text{if } k = 0 \\
(i+k-1, 1) & \text{if } 0 < k < 2 + j - i \\
(j, 2) & \text{if } k = 2+j - i. \end{cases}$$
The desired result now follows after applying Lemma \ref{pustule} to the
flat inner fibration $\calM \times_{ \Delta^n \times \Delta^2} \Delta^{ 2 + j - i } \rightarrow
\Delta^{ 2 + j - i}$. 
\end{proof}

\begin{proof}[Proof of Proposition \ref{capper}]
Fix an inner fibration of simplicial sets $p: X \rightarrow S$. By definition, the map
$p$ is flat if it induces a categorical equivalence $X \times_{S} \Lambda^2_1 \rightarrow
X \times_{S} \Delta^2$, for every $2$-simplex of $S$. This proves the implication
$(2) \Rightarrow (1)$. For the converse, let us say that a monomorphism of simplicial sets
$A \rightarrow B$ is {\it good} if it satisfies the following condition:
\begin{itemize}
\item[$(\ast)$] For every map of simplicial sets $B \rightarrow S$, the induced map
$X \times_{S} A \rightarrow X \times_{S} B$ is a categorical equivalence.
\end{itemize}
Since the collection of trivial cofibrations with respect to the Joyal model structure is
weakly saturated (in the sense of Definition \toposref{saturated}), we deduce
that the collection of good morphisms in $\sSet$ is also weakly saturated. We wish to
prove that every inner anodyne morphism is good. In view of Proposition \toposref{usejoyal2}, it will suffice to show that for every monomorphism of simplicial sets $A \rightarrow B$ having only finitely many nondegenerate simplices, the induced map
$$ (A \times \Delta^2) \coprod_{ A \times \Lambda^2_1} (B \times \Lambda^2_{1}) \rightarrow B \times \Delta^2$$ is good. In other words, we must show that for every map $B \times \Delta^2 \rightarrow S$, the induced diagram
$$ \xymatrix{ X \times_{S} (A \times \Lambda^2_{1}) \ar[r] \ar[d] & X \times_{S} (B \times \Lambda^2_1) \ar[d] \\
X \times_{S} (A \times \Delta^2) \ar[r] & X \times_{S} (B \times \Delta^2)}$$
is a homotopy pushout square (with respect to the Joyal model structure). To prove this, it
suffices to show that the vertical maps are categorical equivalences. In other words, we are reduced to proving that the following assertion holds, for every simplicial set $K$ having only finitely many nondegenerate simplices:
\begin{itemize}
\item[$(\ast')$] For every map $K \times \Delta^2 \rightarrow S$, the inclusion
$X \times_{S} (K \times \Lambda^2_1) \rightarrow X \times_{S} (K \times \Delta^2)$ is a categorical equivalence.
\end{itemize}
We now prove $(\ast')$ by induction on the dimension $n$ of $K$ and the number of nondegenerate $n$-simplices of $K$. If $K$ is empty, there is nothing to prove. Otherwise, we have a pushout diagram
$$ \xymatrix{ \bd \Delta^n \ar[r] \ar[d] & \Delta^n \ar[d] \\
K' \ar[r] & K. }$$
Using the left properness of the Joyal model structure, we see that $K$ will satisfy
$(\ast')$ provided that $K'$, $\bd \Delta^n$, and $\Delta^n$ satisfy $(\ast')$. In the first
two cases, this follows from the inductive hypothesis. We are therefore reduced to the
case $K = \Delta^n$. Fix a map $\Delta^n \times \Delta^2 \rightarrow S$, and consider the flat inner fibration $q: X \times_{S} ( \Delta^n \times \Delta^2) \rightarrow \Delta^n \times \Delta^2$.
To prove that $(\ast')$ is satisfied, we must show that the composition
$$ X \times_{S} ( \Delta^n \times \Delta^2) \stackrel{q}{\rightarrow} \Delta^n \times \Delta^2
\rightarrow \Delta^2$$
is flat, which follows from Lemma \ref{spaz}.
\end{proof}

\begin{corollary}\label{silman}
Let $p: \calC \rightarrow \calD$ be a flat categorical fibration between $\infty$-categories.
Then, for every categorical equivalence of simplicial sets $A \rightarrow B$ and
every diagram $B \rightarrow \calD$, the induced map
$\theta: A \times_{\calD} \calC \rightarrow B \times_{\calD} \calC$
is an equivalence of $\infty$-categories.
\end{corollary}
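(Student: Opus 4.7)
The plan is to reduce Corollary \ref{silman} to Proposition \ref{capper} by splitting the categorical equivalence $A \to B$ into two pieces: an inner anodyne part (handled by Proposition \ref{capper}) and a categorical equivalence between $\infty$-categories (handled by a model-categorical argument). First, I would use the small object argument to factor $A \to B$ as $A \xrightarrow{i} A'' \xrightarrow{q} B$, where $i$ is inner anodyne and $q$ is an inner fibration. Since $B$ is an $\infty$-category and $q$ is an inner fibration, $A''$ is also an $\infty$-category. Since $i$ is a categorical equivalence (as any inner anodyne map is) and $A \to B$ is a categorical equivalence, the two-out-of-three property implies that $q$ is also a categorical equivalence.

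Next, I would apply Proposition \ref{capper} to the flat inner fibration $p$ and the inner anodyne inclusion $i: A \hookrightarrow A''$ (composed with the map $A'' \to B \to \calD$) to conclude that the induced map
\[
\theta_1 : A \times_{\calD} \calC \longrightarrow A'' \times_{\calD} \calC
\]
is a categorical equivalence. This handles the ``anodyne part'' of the factorization.

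The remaining, and main, step is to show that the pullback
\[
\theta_2 : A'' \times_{\calD} \calC \longrightarrow B \times_{\calD} \calC
\]
of the categorical equivalence $q: A'' \to B$ along the flat categorical fibration $p$ is itself a categorical equivalence. Here the key point is that $p$ is a \emph{categorical fibration} (not merely an inner fibration), so the pullbacks $A'' \times_{\calD} \calC \to A''$ and $B \times_{\calD} \calC \to B$ are categorical fibrations between $\infty$-categories. I would then invoke the general fact that in any model category the pullback of a weak equivalence between fibrant objects along a fibration is again a weak equivalence between fibrant objects (even when the ambient model structure is not right proper). Applied to the Joyal model structure, this yields that $\theta_2$ is a categorical equivalence between $\infty$-categories.

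Finally, $\theta = \theta_2 \circ \theta_1$, and two-out-of-three gives that $\theta$ is a categorical equivalence. The main obstacle is justifying the last step: one must either cite a suitable version of right-properness-on-fibrant-objects from \cite{topoi}, or give a direct argument, for example by further factoring $q$ as a trivial cofibration followed by a trivial fibration, pulling back each piece along $p$, and observing that trivial fibrations are stable under pullback (and that the remaining trivial-cofibration-pulled-back is a weak equivalence between $\infty$-categories by a standard mapping space comparison).
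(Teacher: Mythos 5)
Your approach matches the paper's in spirit—both reduce to Proposition \ref{capper} for the inner anodyne part and to a right-properness-on-fibrant-objects statement in the Joyal model structure for the remaining piece—but there is a real gap at the first step. You factor $A\to B$ as an inner anodyne map $i:A\to A''$ followed by an inner fibration $q:A''\to B$ and then assert that ``since $B$ is an $\infty$-category and $q$ is an inner fibration, $A''$ is also an $\infty$-category.'' However, $B$ is not assumed to be an $\infty$-category: the corollary allows arbitrary simplicial sets $A$ and $B$ with $A\to B$ a categorical equivalence. If $B$ is not an $\infty$-category, then $A''$ need not be one either, and the model-categorical argument you invoke at the end (pullback of a weak equivalence \emph{between fibrant objects} along a fibration is again a weak equivalence) has no purchase, since the relevant objects fail to be fibrant. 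Note that right properness cannot rescue the argument in general, because the Joyal model structure is not right proper.

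The fix is exactly the extra reduction the paper performs first: factor the map $B\to\calD$ itself as $B\xrightarrow{f'}B'\xrightarrow{f''}\calD$ with $f'$ inner anodyne and $f''$ an inner fibration. Since $\calD$ is an $\infty$-category, $B'$ is an $\infty$-category. Proposition \ref{capper} gives that $B\times_{\calD}\calC\to B'\times_{\calD}\calC$ is a categorical equivalence, so by two-out-of-three you may replace $B$ by $B'$ and assume $B$ is an $\infty$-category. After that, your factorization of $A\to B$ produces an $\infty$-category $A''$ as you claim, and the rest of your argument goes through. With this added step your proof coincides with the paper's.
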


\begin{proof}
Every map $f: B \rightarrow \calD$ factors as a composition
$$ B \stackrel{f'}{\rightarrow} B' \stackrel{f''}{\rightarrow} \calD,$$
where $f'$ is inner anodyne and $f''$ is an inner fibration (so that $B'$ is an
$\infty$-category). We obtain a commutative diagram
$$ \xymatrix{ A \times_{\calD} \calC \ar[rr]^{\beta} \ar[dr]^{\theta} & & B' \times_{\calD} \calC \\
& B \times_{\calD} \calC \ar[ur]^{\alpha}. & }$$
Proposition \ref{capper} implies that $\alpha$ is a categorical equivalence. By the two-out-of-three property, it suffices to show that $\beta$ is a categorical equivalence. We may therefore replace $B$ by $B'$ and thereby reduce to the case where $B$ is an $\infty$-category.

The map $g: A \rightarrow B$ factors as a composition
$$ A \stackrel{g'}{\rightarrow} A' \stackrel{g''}{\rightarrow} B$$
where $g'$ is inner anodyne and $g''$ is an inner fibration (so that $A'$ is an $\infty$-category).
We obtain a commutative diagram 
$$ \xymatrix{ A \times_{\calD} \calC \ar[rr]^{\theta} \ar[dr]^{\gamma} & & B \times_{\calD} \calC \\
& A' \times_{\calD} \calC \ar[ur]^{\delta}. & }$$
Proposition \ref{capper} implies that $\gamma$ is a categorical equivalence. Using the two-out-of-three property, we are reduced to proving that $\delta$ is a categorical equivalence. We may therefore replace $A$ by $A'$ and thereby reduce to the case where $A$ is an $\infty$-category.

Consider the pullback diagram
$$ \xymatrix{ A \times_{\calD} \calC \ar[r]^{\theta} \ar[d] & B \times_{\calD} \calC \ar[d]  \\
A \ar[r]^{g} & B. }$$
Since the vertical maps in this diagram are categorical fibrations and the
simplicial sets $A$ and $B$ are $\infty$-categories,
Proposition \toposref{leftpropsquare} guarantees that this diagram is homotopy Cartesian
(with respect to the Joyal model structure). Since the $g$ is a categorical equivalence, it follows that $\theta$ is a categorical equivalence as well.
\end{proof}

\begin{corollary}\label{stiffum}
Let $f: \calC \rightarrow \calD$ and $g: \calD \rightarrow \calE$ be flat categorical
fibrations between $\infty$-categories. Then $g \circ f$ is a flat categorical fibration.
\end{corollary}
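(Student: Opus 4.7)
The plan is to reduce the statement to a combination of Proposition \ref{capper} (which characterizes flat inner fibrations by their behavior under pullback along inner anodyne morphisms) and Corollary \ref{silman} (which upgrades this to arbitrary categorical equivalences of simplicial sets mapping to an $\infty$-category). Since the composition of two categorical fibrations is a categorical fibration (and in particular an inner fibration), it will suffice to verify that $g \circ f$ is flat.

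By Proposition \ref{capper}, flatness of $g \circ f$ amounts to the following assertion: for every inner anodyne inclusion $A \hookrightarrow B$ and every map $B \to \calE$, the induced map
$$ \calC \times_{\calE} A \to \calC \times_{\calE} B $$
is a categorical equivalence. First I would form the intermediate pullback and observe that since $g: \calD \to \calE$ is flat, Proposition \ref{capper} gives that $\calD \times_{\calE} A \to \calD \times_{\calE} B$ is a categorical equivalence. Both sides come equipped with a canonical projection to $\calD$ (via the first factor), so we have produced a categorical equivalence of simplicial sets mapping to $\calD$.

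Next I would apply Corollary \ref{silman} to the flat categorical fibration $f: \calC \to \calD$ and to this categorical equivalence. The corollary yields that the map
$$ (\calD \times_{\calE} A) \times_{\calD} \calC \to (\calD \times_{\calE} B) \times_{\calD} \calC $$
is an equivalence of $\infty$-categories. Under the canonical isomorphisms $(\calD \times_{\calE} K) \times_{\calD} \calC \simeq \calC \times_{\calE} K$ (valid for $K = A$ and $K = B$), this is precisely the map whose categorical equivalence we needed to establish.

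There is no real obstacle here: once one has Proposition \ref{capper} and Corollary \ref{silman} in hand, the argument is simply the observation that flatness is preserved under composition because it is characterized by a pullback condition, and pullbacks compose. The only mild subtlety is keeping track of the two successive pullbacks (first along $g$, then along $f$) and verifying that Corollary \ref{silman} applies, which it does because $f$ is assumed flat and the map being pulled back lands in $\calD$.
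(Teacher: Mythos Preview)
Your proposal is correct and follows essentially the same approach as the paper: use flatness of $g$ to get a categorical equivalence after pulling back to $\calD$, then apply Corollary \ref{silman} to the flat categorical fibration $f$ to pull back further to $\calC$. The only cosmetic difference is that the paper works directly with the defining inclusion $\Lambda^2_1 \subseteq \Delta^2$ rather than invoking the equivalent characterization via arbitrary inner anodyne maps from Proposition \ref{capper}.
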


\begin{proof}
Since $g \circ f$ is evidently a categorical fibration, it will suffice to show that $g \circ f$
is flat. Choose a $2$-simplex $\sigma: \Delta^2 \rightarrow \calE$; we wish to show that
the inclusion $\calC \times_{ \calE} \Lambda^2_1 \subseteq \calC \times_{ \calE} \Delta^2$ is a categorical equivalence. Let $\calD' = \calD \times_{ \calE} \Delta^2$ and
$\calD'' = \calD \times_{ \calE} \Lambda^2_1$. Since $g$ is flat, the inclusion
$\calD'' \subseteq \calD'$ is a categorical equivalence. Since $f$ is flat, Corollary
\ref{silman} guarantees that the inclusion
$$ \calC \times_{ \calE} \Lambda^2_1 \simeq \calC \times_{ \calD} \calD''
\subseteq \calC \times_{ \calD} \calD' \simeq \calC \times_{ \calE} \Delta^2$$ is a categorical equivalence, as desired.
\end{proof}

\subsection{Functoriality}

In this section, we will study the behavior of the model structure described in
Theorem \ref{theo} as a functor of the categorical pattern $\CatP$. Our main result is the following:

\begin{theorem}\label{makera}
Suppose we are given categorical patterns
$\CatP = (M_{\calC}, T, \{ p_{\alpha}: K_{\alpha}^{\triangleleft} \rightarrow \calC\}_{\alpha \in A})$ and
$\CatP' = (M_{\calC'}, T', \{ p'_{\alpha}: {K'_{\alpha}}^{\triangleleft} \rightarrow \calC'\}_{\alpha \in A'})$ on $\infty$-categories $\calC$ and $\calC'$. Suppose we are given a diagram of marked simplicial sets
$$ (\calC, M_{\calC}) \stackrel{\pi}{\leftarrow} (\calK, M) \stackrel{\pi'}{\rightarrow} (\calC', M_{\calC'}).$$
Then the construction $\overline{X} \mapsto \overline{X} \times_{ (\calC, M_{\calC}) } (\calK,M)$
determines a left Quillen functor from $\mset{\CatP}$ to $\mset{\CatP'}$ provided that the following
conditions are satisfied:
\begin{itemize}
\item[$(1)$] The map $\pi: \calK \rightarrow \calC$ is a flat categorical fibration.

\item[$(2)$] The collections of morphisms $M_{S}$ and $M$ contain all equivalences
in $\calC$ and $\calK$, respectively, and are closed under composition.

\item[$(3)$] For every $2$-simplex $\sigma$ of $\calK$ such that $\pi(\sigma) \in T$, we have
$\pi'(\sigma) \in T'$. Moreover, $T$ contains all $2$-simplices $\Delta^2 \rightarrow \calC$
whose restriction to $\Delta^{ \{0,1\} }$ is an equivalence in $\calC$.

\item[$(4)$] For every edge $\Delta^1 \rightarrow \calC$ belonging to $M_{\calC}$, the
induced map $\calK \times_{ \calC} \Delta^1 \rightarrow \Delta^1$ is a Cartesian fibration.

\item[$(5)$] Each of the simplicial sets $K_{\alpha}$ is an $\infty$-category, and each of the
induced maps $\pi_{\alpha}: K_{\alpha}^{\triangleleft} \times_{\calC} \calK \rightarrow K_{\alpha}^{\triangleleft}$ is a coCartesian fibration.

\item[$(6)$] For $\alpha \in A$ and every coCartesian section $s$ of $\pi_{\alpha}$, the composite map
$$ K_{\alpha}^{\triangleleft} \rightarrow K_{\alpha}^{\triangleleft} \times_{\calC} \calK
\rightarrow \calK \stackrel{\pi'}{\rightarrow} \calC'$$
can be identified with $p'_{\beta}$, for some $\beta \in A'$.

\item[$(7)$] Suppose we are given a commutative diagram
$$\xymatrix{ & Y \ar[dr]^{g} & \\
X \ar[ur]^{f} \ar[rr]^{h} & & Z }$$
in $\calK$, where $g$ is locally $\pi$-Cartesian, $\pi(g) \in M_{\calC}$, and $\pi(f)$ is an equivalence.
Then $f \in M$ if and only if $h \in M$. (In particular, a locally $\pi$-Cartesian morphism
$g$ of $\calK$ belongs to $M$ if and only if $\pi(g) \in M_{\calC}$.

\item[$(8)$] Suppose we are given $\alpha \in A$ and a commutative diagram
$$ \xymatrix{ & Y \ar[dr]^{g} & \\
X \ar[ur]^{f} \ar[rr]^{h} & & Z }$$
in $K_{\alpha}^{\triangleleft} \times_{\calC} \calK$, where
$f$ is $\pi_{\alpha}$-coCartesian and $\pi_{\alpha}(g)$ is an equivalence. Then the image of $g$ in $\calK$ belongs to $M$
if and only if the image of $h$ in $\calK$ belongs to $M$.
\end{itemize}
\end{theorem}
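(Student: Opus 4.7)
Let $F\colon \mset{\CatP} \to \mset{\CatP'}$ be the functor defined by $F(\overline{X}) = \overline{X} \times_{(\calC, M_{\calC})} (\calK, M)$, viewed as an object over $(\calC', M_{\calC'})$ via $\pi'$. I will verify the three conditions needed for $F$ to be a left Quillen functor. First, $F$ preserves all small colimits (pullback in simplicial sets commutes with colimits, and markings are pulled back levelwise), so $F$ admits a right adjoint by the adjoint functor theorem, since both $\mset{\CatP}$ and $\mset{\CatP'}$ are presentable. Second, cofibrations in both model structures are precisely the monomorphisms of underlying simplicial sets (by Theorem~\ref{theo}), and these are preserved by pullback. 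The heart of the argument is the third condition: that $F$ preserves weak equivalences, for which it suffices to verify that $F$ carries each generating $\CatP$-anodyne morphism listed in Definition~\ref{postspunt} to a morphism that is $\CatP'$-anodyne (and hence, by Example~\ref{inker}, a $\CatP'$-equivalence).

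The verification proceeds by a case analysis on the seven types of generators $(A_0), (A_1), (B_0), (B_1), (C_0), (C_1), (C_2)$. The inner-horn inclusions of type $(C_1)$ are handled by hypothesis (1): since $\pi$ is flat, Proposition~\ref{capper} implies that the pullback of an inner anodyne map is a categorical equivalence, and a cell-by-cell filtration of $\Delta^n \times_{\calC} \calK$ realises the pullback as an iterated pushout of inner-anodyne maps of simplicial sets, which become $\CatP'$-anodyne after the flat decoration (Example~\ref{ina}). For type $(B_1)$ the inclusion $K_\alpha^\sharp \subseteq (K_\alpha^\triangleleft)^\sharp$ pulls back to $\pi_\alpha$, a coCartesian fibration by hypothesis (5); hypothesis (6) identifies its coCartesian sections with diagrams $p'_\beta$ for indices $\beta \in A'$, so the pullback is a generating $\CatP'$-anodyne morphism of type $(B_1)$. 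Type $(C_2)$ is handled similarly, combining (5), (6), and the right-anodyne extension structure of a coCartesian fibration.

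The remaining cases $(A_0), (A_1), (B_0), (C_0)$ are more delicate because they involve the interplay between markings and scalings. For type $(B_0)$, the inclusion $\{0\}^\sharp \subseteq (\Delta^1)^\sharp$ over an edge of $M_{\calC}$ pulls back, by hypothesis (4), to a Cartesian fibration over $\Delta^1$; a filtration in the style of Lemmas~\ref{preperc}--\ref{perc} realises the fibre inclusion as an iterated pushout of $\CatP'$-anodyne generators, where hypothesis (7) is invoked to identify the marked edges of $\calK$ lying above $M_{\calC}$-edges as precisely the locally $\pi$-Cartesian ones. For types $(A_0)$ and $(A_1)$, hypothesis (3) ensures that the thin triangles of $\calK$ pushed forward to $\calC'$ remain in $T'$, while (7) and (8) guarantee that the pulled-back markings are compatible with composition of locally Cartesian edges and equivalences, so the pullback decomposes as a composite of $\CatP'$-anodyne morphisms of types $(A_0), (A_1)$, and (in the $(A_0)$ case) a morphism of the form considered in Lemma~\ref{catchrat}.

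The hardest case is $(C_0)$: the inclusion $(\Lambda^n_0)^\flat \cup (\Delta^{\{0,1\}})^\sharp \subseteq (\Delta^n)^\flat \cup (\Delta^{\{0,1\}})^\sharp$ over a simplex whose face $\Delta^{\{0,1,n\}}$ is thin. The main obstacle is that neither pure flatness nor pure coCartesian arguments suffice; we must combine the flatness of $\pi$ (to control pullbacks of inner-anodyne portions), the Cartesian fibration structure of hypothesis (4) along the marked initial edge $\Delta^{\{0,1\}}$, and the marking-stability hypotheses (7), (8) to keep track of which pulled-back edges land in $M$. The strategy is to enumerate the nondegenerate simplices of $\Delta^n \times_{\calC} \calK$ by their interaction with the distinguished edge $\Delta^{\{0,1\}}$, filter in order of increasing dimension along the lines of the argument for case $(1A)$ in Proposition~\ref{notred}, and verify at each stage that the attaching map is $\CatP'$-anodyne by appealing to the appropriate generator or to an instance of Lemma~\ref{carpal}. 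Once this filtration is established, all seven cases are handled, and the theorem follows.
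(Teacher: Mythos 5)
Your proposal takes a genuinely different route from the paper, but it contains gaps that I think are serious enough to undermine it as stated.

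\textbf{What the paper actually does.} Rather than verifying preservation of anodyne morphisms case by case, the paper introduces an auxiliary categorical pattern $\CatP''$ on $\calK$ itself (built from $M$, $\pi^{-1}(T)$, and the coCartesian sections of $\pi_\alpha$ promised by hypothesis (5)), and factors the functor as $\mset{\CatP} \stackrel{\pi^*}{\to} \mset{\CatP''} \stackrel{\pi'_!}{\to} \mset{\CatP'}$. The second arrow is the easy Proposition~\ref{supper}; all the difficulty is isolated in Proposition~\ref{candied}, which handles $\pi^*$ by showing that its \emph{right adjoint} preserves fibrant objects. That criterion is much more tractable than chasing generating anodyne morphisms, because the right adjoint has a concrete description via $\pi_*$, and one can verify the six conditions of Definition~\ref{catpat} one at a time for $\overline{\pi}_*\overline{X}'$.

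\textbf{Gap 1: the reduction to generators is not valid as stated.} You claim that to show $F$ preserves weak equivalences, "it suffices to verify that $F$ carries each generating $\CatP$-anodyne morphism\ldots to a morphism that is $\CatP'$-anodyne." This is not an established reduction. The $\CatP$-equivalences form a strictly larger class than the weakly saturated class generated by the $\CatP$-anodyne maps, and there is no explicit generating set of trivial cofibrations for $\mset{\CatP}$ in hand. A reduction of this sort \emph{can} be extracted (as in Remark~\ref{slaver}: given a $\CatP$-equivalence $f$, pre- and post-compose with $\CatP$-anodyne maps to fibrant objects, pass to a simplicial homotopy equivalence between $\CatP$-fibered objects via Lemma~\ref{piner}, use that $F$ is a simplicial functor, and apply two-out-of-three), but you would need to write this out, and nothing in your proposal signals that this step was noticed.

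\textbf{Gap 2: the case $(C_1)$ claim is false.} You assert that "a cell-by-cell filtration of $\Delta^n \times_\calC \calK$ realises the pullback as an iterated pushout of inner-anodyne maps." That is not what flatness gives you. Proposition~\ref{capper} says only that the pullback of an inner anodyne map along a flat categorical fibration is a \emph{categorical equivalence}; it is not (in general) inner anodyne, nor a transfinite pushout of inner horns in the fibered simplicial set. The pullback $\Lambda^n_i \times_\calC \calK \subseteq \Delta^n \times_\calC \calK$ is a trivial cofibration of the Joyal model structure but need not lie in the class generated by inner anodyne maps, so Example~\ref{ina} does not apply directly. (Something weaker is true and would suffice: with matching markings, a monomorphism that is a Joyal equivalence on underlying simplicial sets is a $\CatP'$-equivalence — but this too requires an argument, not a filtration.) The same confusion recurs in your $(C_0)$ discussion, which leans on the same "filter and attach along horns" picture.

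In short, your outline and the paper's proof share the same inputs (flatness of $\pi$, hypotheses (4)--(8), the product formalism of Proposition~\ref{postprod}), but the factorization $\pi'_! \circ \pi^*$ and the "right adjoint preserves fibrants" criterion in Proposition~\ref{candied} sidestep exactly the combinatorial quagmire your filtration arguments run into. If you want to rescue the direct approach, you would need to (i) supply the reduction argument in the spirit of Remark~\ref{slaver}, and (ii) replace each "the pullback is $\CatP'$-anodyne" claim by a weaker but provable "the pullback is a $\CatP'$-equivalence," which in some cases (notably $(C_1)$ and $(C_0)$) means abandoning the filtration and instead verifying the mapping-space criterion against $\CatP'$-fibered objects directly.
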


\begin{proof}
Consider the categorical pattern
$\CatP'' = (M, \pi^{-1}(T), \{ p''_{\alpha,s}: K_{\alpha}^{\triangleleft} \rightarrow \calK\}_{(\alpha,s) \in A''})$
on $\calK$, where $A''$ consists of all pairs $(\alpha, s)$ such $\alpha \in A$ and
$s$ is a coCartesian section of $\pi_{\alpha}$, and $p''_{\alpha,s}$ is the composition
$$K_{\alpha}^{\triangleleft} \stackrel{s}{\rightarrow} \calK \times_{ \calC} K_{\alpha}^{\triangleleft} \rightarrow \calK.$$
The functor in question admits a factorization
$$ \mset{\CatP} \stackrel{ \pi^{\ast}}{\rightarrow} \mset{\CatP''} \stackrel{ \pi'_{!} }{\rightarrow} \mset{\CatP},$$
where $\pi^{\ast}$ and $\pi'_{!}$ are left Quillen functors by virtue of Propositions \ref{candied} and \ref{supper} proven below.
\end{proof}

\begin{definition}
Let $f: S \rightarrow S'$ be a map of simplicial sets. Suppose we are given categorical patterns
$\CatP = (M_S, T, \{ p_{\alpha}: K_{\alpha}^{\triangleleft} \rightarrow S\}_{\alpha \in A})$ and
$\CatP' = (M_S', T', \{ p'_{\alpha}: {K'_{\alpha}}^{\triangleleft} \rightarrow S'\}_{\alpha \in A'})$
on $S$ and $S'$, respectively. We will say that $f$ is {\it compatible} with $\CatP$ and $\CatP'$ if the following conditions are satisfied:

\begin{itemize}
\item The map $f$ carries $M_S$ into $M_{S'}$.
\item The map $f$ carries $T$ into $T'$.
\item For each $\alpha \in A$, the composition
$$K_{\alpha}^{\triangleleft} \stackrel{p_{\alpha}}{\rightarrow} S \stackrel{f}{\rightarrow} S'$$
belongs to $\{ p'_{\alpha}: {K'_{\alpha}}^{\triangleleft} \rightarrow S'\}_{\alpha \in A'})$.
\end{itemize}
\end{definition}

\begin{proposition}\label{supper}
Let $f: S \rightarrow S'$ be a map of simplicial sets, and suppose that $f$ is compatible with
categorical patterns $\CatP$ and $\CatP'$ on $S$ and $S'$, respectively. Then composition
with $f$ induces a left Quillen functor $f_{!}: \mset{ \CatP} \rightarrow \mset{ \CatP'}$.
\end{proposition}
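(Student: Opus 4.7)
The plan is to exhibit $f_!$ as the left adjoint in a Quillen pair by verifying that it preserves cofibrations and weak equivalences. First observe that $f_!$ has an evident right adjoint $f^* \colon \mset{\CatP'} \to \mset{\CatP}$ given by $\overline{Y} \mapsto \overline{Y} \times_{S'} S$, with an edge marked precisely when its image in $Y$ is marked and its image in $S$ lies in $M_S$. Since $f_!$ leaves the underlying simplicial set and its marking unchanged---it only postcomposes the structure map with $f$---it preserves monomorphisms and therefore cofibrations.

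The central step is to show that $f_!$ sends each of the seven types of generating $\CatP$-anodyne morphisms from Definition \ref{postspunt} to a $\CatP'$-anodyne morphism of the same type. This is a routine case-by-case inspection using the three compatibility hypotheses: for types $(A_0)$, $(A_1)$, $(C_0)$, and $(C_1)$, whose generators are parametrized by simplices of $S$ subject to scaling and marking conditions, one uses that $f(T) \subseteq T'$ and $f(M_S) \subseteq M_{S'}$; for type $(B_0)$, one uses $f(M_S) \subseteq M_{S'}$ directly; and for types $(B_1)$ and $(C_2)$, whose generators are parametrized by a diagram $p_\alpha \colon K_\alpha^{\triangleleft} \to S$ (possibly together with an extension $\Delta^n \star K_\alpha \to S$), one uses that $f \circ p_\alpha = p'_\beta$ for some $\beta \in A'$. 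By weak saturation, $f_!$ then carries every $\CatP$-anodyne morphism to a $\CatP'$-anodyne one.

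The Quillen property now follows formally. By adjunction, $f^*$ carries any object with the extension property with respect to all $\CatP'$-anodyne morphisms to an object with the extension property with respect to all $\CatP$-anodyne morphisms; invoking Proposition \ref{poststaf}, we conclude that $f^*$ preserves fibrant objects. Given a $\CatP$-equivalence $g \colon \overline{X} \to \overline{Y}$ and a $\CatP'$-fibered $\overline{W}$, the adjunction isomorphism $\bHom^\sharp_{S'}(f_!(-), \overline{W}) \simeq \bHom^\sharp_{S}(-, f^* \overline{W})$ converts the question of whether $f_! g$ induces a homotopy equivalence on mapping spaces into $\overline{W}$ into the question of whether $g$ induces a homotopy equivalence on mapping spaces into the $\CatP$-fibered object $f^* \overline{W}$; the latter holds because $g$ is a $\CatP$-equivalence. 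Hence $f_! g$ is a $\CatP'$-equivalence. Since every object of $\mset{\CatP}$ is cofibrant, preservation of cofibrations and weak equivalences suffices for $f_!$ to be a left Quillen functor.

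No substantial obstacle is expected: the entire argument reduces to a mechanical inspection of the generating $\CatP$-anodyne morphisms combined with the standard mapping-space criterion for weak equivalences. The most delicate of the seven inspections is the $(C_2)$ case, which involves both a join construction $\Delta^n \star K_\alpha$ and the auxiliary diagram $p_\alpha$; however, even this case is immediate once one notes that the data of a type $(C_2)$ generator for $\CatP$ pushes forward under $f$ to the analogous data for $\CatP'$, with $p_\alpha$ replaced by $p'_\beta = f \circ p_\alpha$.
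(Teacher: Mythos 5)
Your proof is correct and follows essentially the same strategy as the paper: exhibit the right adjoint $f^*$, observe that $f_!$ preserves cofibrations, and then verify that $f_!$ preserves $\CatP$-equivalences by passing through the adjunction on $\bHom^\sharp$ mapping spaces, the key point being that $f^*$ carries $\CatP'$-fibered objects to $\CatP$-fibered objects. The only difference is cosmetic: where the paper treats the preservation of fibered objects by $f^*$ as an immediate observation (a direct check of the conditions of Definition \ref{catpat}, using that pullbacks along $f$ identify $Z \times_{S'} S \times_S \Delta^1 \cong Z \times_{S'} \Delta^1$ and likewise for the $K_\alpha^{\triangleleft}$), you deduce the same fact indirectly by first verifying that $f_!$ carries each generating $\CatP$-anodyne morphism to a generating $\CatP'$-anodyne morphism of the same type and then invoking Proposition \ref{poststaf}; both routes are valid, yours is somewhat longer, but the extra fact about anodynes that it establishes is reusable.
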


\begin{proof}
It is clear that $f_{!}$ preserves cofibrations. It also admits a right adjoint, given by the pullback functor
$f^{\ast}$ described by the formula $f^{\ast} \overline{X} \simeq \overline{X} \times_{ (S', M_{S'}) } (S, M_S)$. To complete the proof, it will suffice to show that $f_{!}$ preserves $\CatP$-equivalences.
Let $\overline{X}, \overline{Y} \in \mset{\CatP}$, and let $\alpha: \overline{X} \rightarrow \overline{Y}$
be a $\CatP$-equivalence. We wish to show that $f_{!}(\alpha)$ is a $\CatP'$-equivalence.
For this, it suffices to show that for every $\CatP'$-fibered object $\overline{Z} \in \mset{ \CatP'}$, the induced map
$$ \bHom^{\sharp}_{S'}( f_{!} \overline{Y}, \overline{Z}) \rightarrow \bHom^{\sharp}_{S'}(f_{!} \overline{X}, \overline{Z} )$$
is a homotopy equivalence. The left hand side can be identified with $\bHom^{\sharp}_{S}( \overline{Y}, f^{\ast} \overline{Z})$, and the right hand side with $\bHom^{\sharp}_{S}( \overline{X}, f^{\ast} \overline{Z})$. The desired result now follows from the assumption that $\alpha$ is a
$\CatP$-equivalence, and the observation that $f^{\ast} \overline{Z}$ is $\CatP$-fibered.
\end{proof}

\begin{example}\label{megaparnum}
For any categorical pattern $\CatP = (M_{S}, T, \{ p_{\alpha}: K_{\alpha}^{\triangleleft} \rightarrow S\}_{\alpha \in A})$ on any simplicial set $S$, the forgetful functor
$\mset{ \CatP} \rightarrow \mSet$ is a left Quillen functor, where we endow
$\mSet$ with the model structure determined by Theorem \ref{theo} for the
categorical pattern $\CatP_{0} = (M_0, T_0, \{ K_{\alpha}^{\triangleleft} \rightarrow \Delta^0 \}_{\alpha \in A})$ on $\Delta^0$ (here $M_0$ and $T_0$ consist of all edges and $2$-simplices
of $\Delta^0$, respectively). If each of the simplicial sets $K_{\alpha}$ is contractible, then
this coincides with the usual model structure on $\mSet$ (Example \ref{sipper}).
\end{example}

Under a few mild hypotheses on a categorical pattern $\CatP$, we can characterize the fibrations
between fibrant objects in $\mset{ \CatP}$.

\begin{notation}
Let $\CatP = (M_S, T, \{ p_{\alpha}: K_{\alpha}^{\triangleleft} \rightarrow S \}_{\alpha \in A})$
be a categorical pattern on a simplicial set $S$, and suppose we are given a 
$\CatP$-fibered object $\overline{X} = (X,M) \in \mset{ \CatP}$. Let $\pi: X \rightarrow S$
denote the underlying map of simplicial sets. We define a categorical pattern
$\pi^{\ast} \CatP = (M, T', \{ q_{\beta}: K_{\beta}^{\triangleleft} \rightarrow X \}_{\beta \in B})$ 
on $X$ as follows:
\begin{itemize}
\item[$(1)$] The set $M$ is the collection of marked edges of $X$ (in other words, the collection
of locally $\pi$-coCartesian edges $e$ of $X$ such that $\pi(e) \in M_S$).
\item[$(2)$] The set $T' = \pi^{-1} T$ is the collection of all $2$-simplices of $X$ whose images in $S$ belong to $T$.
\item[$(3)$] We let $\{ q_{\beta}: K_{\beta}^{\triangleleft} \rightarrow X \}_{\beta \in B}$ be the collection
of those diagrams $q: K^{\triangleleft} \rightarrow X$ such that $q$ carries each edge of
$K^{\triangleleft}$ into $M$, and $\pi \circ q$ belongs to $\{ p_{\alpha}: K_{\alpha}^{\triangleleft} \rightarrow S \}_{\alpha \in A}$.
\end{itemize}
\end{notation}

\begin{lemma}\label{sock}
Let $\CatP = (M_S, T, \{ p_{\alpha}: K_{\alpha}^{\triangleleft} \rightarrow S \}_{\alpha \in A})$
be a categorical pattern on an $\infty$-category $S$, and let $\overline{X} = (X,M) \in \mset{ \CatP}$ be
$\CatP$-fibered. Assume that $M_{S}$ is the collection of all equivalences in $S$ and that $T$ contains
all $2$-simplices $\Delta^2 \rightarrow S$ whose restriction to $\Delta^{ \{0,1\} }$ in an equivalence in $S$. Then $M$ is the collection of all equivalences in $X$.
\end{lemma}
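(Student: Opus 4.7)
The plan is to reduce the claim to a statement about coCartesian fibrations over Kan complexes. Let $p: X \to S$ denote the underlying map of simplicial sets and let $S^{\simeq} \subseteq S$ be the largest Kan subcomplex. I will work with the pullback $Y = X \times_S S^{\simeq}$ equipped with its projection $q: Y \to S^{\simeq}$, which is an inner fibration because $p$ is.

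The first step is to show that $q$ is a coCartesian fibration whose $q$-coCartesian edges are precisely the elements of $M$ viewed as edges of $Y$. Since every edge of $S^{\simeq}$ is an equivalence in $S$, it lies in $M_S$, so condition $(2)$ of Definition \ref{catpat} guarantees that $q$ pulls back to a coCartesian fibration over every edge of $S^{\simeq}$, while condition $(3)$ identifies the resulting locally $q$-coCartesian edges with $M$. To upgrade ``locally coCartesian'' to ``coCartesian'' I need the pullback of $q$ along every $2$-simplex $\sigma: \Delta^2 \to S^{\simeq}$ to be a coCartesian fibration; but the first edge of such a $\sigma$ is an equivalence in $S$, so the hypothesis on $T$ yields $\sigma \in T$, and condition $(4)$ of Definition \ref{catpat} supplies exactly what we need.

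The second and main step is the general fact: if $q: Y \to Z$ is a coCartesian fibration and $Z$ is a Kan complex, then the $q$-coCartesian edges of $Y$ coincide with the equivalences in $Y$. One direction (equivalences are always coCartesian) is standard. For the other, let $e: y_0 \to y_1$ be $q$-coCartesian. Since $Z$ is a Kan complex, $q(e)$ admits an inverse $\bar f$ together with filling $2$-simplices $\sigma_{\ell}$ witnessing $\bar f \circ q(e) \simeq \id$ and $\sigma_{r}$ witnessing $q(e) \circ \bar f \simeq \id$. Using that $e$ is coCartesian, I lift $\sigma_{\ell}$ starting from $e$ to a $2$-simplex $\tau_{\ell}$ of $Y$ in which $g = d_0 \tau_{\ell}: y_1 \to y_0'$ is chosen $q$-coCartesian; stability of coCartesian edges under composition then forces $d_1 \tau_{\ell} = g \circ e$ to be $q$-coCartesian, and being a coCartesian lift of an identity it is an equivalence in $Y$. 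Running the analogous construction starting from $g$ and using $\sigma_{r}$ produces a further $q$-coCartesian edge $e'$ for which $e' \circ g$ is an equivalence. In the homotopy category $hY$, the edge $g$ therefore has both a left inverse (via $e' \circ g$) and a right inverse (via $g \circ e$), so $g$ is itself an equivalence in $Y$, and $e = g^{-1} \circ (g \circ e)$ is an equivalence as well.

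For the reverse inclusion, if $e$ is an equivalence in $X$ then $p(e)$ is an equivalence in $S$ and hence lies in $M_S$; moreover any equivalence is automatically coCartesian for the inner fibration $X \times_S \Delta^1 \to \Delta^1$, so condition $(3)$ of Definition \ref{catpat} places $e$ in $M$. The principal obstacle is the second step, whose delicate point is the symmetric usage of coCartesian lifts of both $q(e)$ and its inverse $\bar f$ to extract that $g$ (and not merely the composite $g \circ e$) is itself an equivalence; without this symmetric argument one only obtains a one-sided inverse for $e$.
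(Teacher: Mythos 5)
Your argument takes a genuinely different route from the one the paper uses. The paper works directly with $p\colon X \to S$: it invokes Lemma \ref{kisker} together with the hypothesis on $T$ to promote each edge of $M$ from locally $p$-coCartesian to $p$-coCartesian, and then appeals to the characterization of equivalences in $X$ as the $p$-coCartesian edges lying over equivalences in $S$ (Proposition \toposref{universalequiv}), which settles both inclusions at once. You instead restrict attention to the Kan complex $S^{\simeq}$ and its pullback $Y = X \times_S S^{\simeq}$, show that $q\colon Y \to S^{\simeq}$ is a coCartesian fibration with $q$-coCartesian edges exactly $M$, and then re-prove by hand the relevant special case of \toposref{universalequiv} for a coCartesian fibration over a Kan complex. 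This is more self-contained but replicates a known fact; it also leaves implicit the (easy) check that the inclusion $Y \hookrightarrow X$ identifies equivalences in $Y$ with those equivalences in $X$ lying over $S^{\simeq}$, which you need in order to pass from ``$M$ equals the equivalences in $Y$'' to the statement of the lemma.

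Your reverse inclusion needs repair. You write that ``any equivalence is automatically coCartesian for the inner fibration $X \times_S \Delta^1 \to \Delta^1$,'' but the relevant edge $(e, \id_{\Delta^1})$ of $X \times_S \Delta^1$ is \emph{not} an equivalence in $X \times_S \Delta^1$: its image in $\Delta^1$ is the nondegenerate edge, which is not invertible. So one cannot apply ``equivalences are coCartesian'' inside the pullback. The correct reasoning goes through the total space: since $e$ is an equivalence in $X$, it is $p$-coCartesian (again by \toposref{universalequiv}), and $p$-coCartesian edges pull back to coCartesian edges of any base change $X \times_S \Delta^1 \to \Delta^1$; after that, condition $(3)$ of Definition \ref{catpat} applies. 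You should also spell out the upgrade in your first step: condition $(4)$ only says that edges of $M$ over $\Delta^{\{0,1\}}$ are coCartesian in $X \times_S \Delta^2$, and to conclude that $X \times_S \Delta^2 \to \Delta^2$ is a coCartesian fibration you also need to observe that locally coCartesian edges over the cosieve $\Delta^{\{1,2\}}$ are coCartesian in $X \times_S \Delta^2$, so that composites of locally coCartesian edges are coCartesian and hence locally coCartesian.
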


\begin{proof}
Let $p: X \rightarrow S$ denote the underlying map of simplicial sets. The set $M$ consists
of all locally $p$-coCartesian morphisms $f$ in $X$ such that $p(f)$ is an equivalence in $S$.
In view of Proposition \toposref{universalequiv}, it will suffice to show that every such morphism
is $p$-coCartesian. This follows from Lemma \ref{kisker} together with our assumption on the set of $2$-simplices $T$.
\end{proof}

\begin{proposition}\label{fibraman}
Let $\CatP = (M_S, T, \{ p_{\alpha}: K_{\alpha}^{\triangleleft} \rightarrow S \}_{\alpha \in A})$
be a categorical pattern on an $\infty$-category $S$. Suppose that $M_{S}$ contains all equivalences in $S$ and that $T$ contains
all $2$-simplices $\Delta^2 \rightarrow S$ whose restriction to $\Delta^{ \{0,1\} }$ in an equivalence in $S$. Let $\overline{Y} = (Y,M_Y)$ be a $\CatP$-fibered object of $\mset{\CatP}$, and
let $\pi: Y \rightarrow S$ denote the underlying map of simplicial sets. Let
$\overline{X} = (X, M_X)$ be another object of $\mset{\CatP}$, and let
$f: \overline{X} \rightarrow \overline{Y}$ be a morphism. The following conditions are equivalent:
\begin{itemize}
\item[$(a)$] The map $p$ is a fibration in $\mset{\CatP}$.

\item[$(b)$] The object $\overline{X}$ is $\CatP$-fibered, and the underlying map of simplicial sets
$X \rightarrow Y$ is a categorical fibration.

\item[$(c)$] The map $p$ exhibits $\overline{X}$ as a $\pi^{\ast} \CatP$-fibered object of
$\mset{ \pi^{\ast} \CatP}$. 
\end{itemize}
\end{proposition}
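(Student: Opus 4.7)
The plan is to prove $(b) \Leftrightarrow (c)$ directly from the definitions, and then deduce $(a) \Leftrightarrow (b)$ using the characterization of fibrations in $\mset{\CatP}$ provided by Theorem \ref{theo} and Proposition \ref{poststaf}. Write $\pi: Y \to S$ for the structure map of $\overline{Y}$ and $p: X \to Y$ for the underlying map of simplicial sets of $f$. By Lemma \ref{sock} applied to $\overline{Y}$, the marked edges $M_Y$ coincide with the equivalences in $Y$; in particular $\pi$ is a categorical fibration, and every $2$-simplex of $Y$ whose restriction to $\Delta^{\{0,1\}}$ is an equivalence projects into $T$.

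For $(b) \Leftrightarrow (c)$, I would match the clauses of Definition \ref{catpat} for $\pi^{\ast}\CatP$ on $Y$ with those of $\CatP$ on $S$ conjoined with the requirement that $p$ be a categorical fibration. The inner fibration clause for $p$ combined with that for $\pi$ gives the inner fibration clause for $\pi \circ p$; the coCartesian conditions over thin $2$-simplices of $\pi^{\ast}\CatP$ translate into the corresponding conditions over $T$, invoking Proposition \ref{capper} applied to the flat fibration $\pi$; and the diagrams $q_{\beta}$ enumerated in $\pi^{\ast}\CatP$ are, by construction, precisely the lifts of the $p_{\alpha}$ that carry every edge to $M_Y$. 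The subtlest point is the identification of markings: an edge of $X$ is locally $p$-coCartesian with image in $M_Y$ if and only if it is locally $(\pi \circ p)$-coCartesian with image in $M_S$. The forward direction follows from the fact that a locally coCartesian edge lying over a locally coCartesian edge of the base is itself locally coCartesian for the composite (a standard consequence of Proposition \toposref{protohermes}); the converse uses that $\pi$ admits locally coCartesian lifts of $M_S$-edges, together with the uniqueness of such lifts up to fiberwise equivalence.

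For $(a) \Rightarrow (b)$, a fibration into a fibrant object has a fibrant domain, so $\overline{X}$ is $\CatP$-fibered. To see that $p$ is a categorical fibration, I would verify the right lifting property against inner anodyne inclusions and against $\{0\} \hookrightarrow E$, where $E$ is a contractible Kan complex containing $\Delta^1$. Each such lifting problem for $p$ translates to a lifting problem for $f$ against a trivial cofibration in $\mset{\CatP}$: the inner case uses that $A^{\flat} \hookrightarrow B^{\flat}$ is $\CatP$-anodyne by Example \ref{ina}, hence a trivial cofibration; the isofibration case uses that $\{0\}^{\sharp} \hookrightarrow E^{\sharp}$, pushed forward to any chosen vertex of $S$ by the left Quillen functor of Proposition \ref{supper}, is a trivial cofibration in $\mset{\CatP}$.

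For $(c) \Rightarrow (a)$, I must show that $f$ has the right lifting property against every trivial cofibration in $\mset{\CatP}$. By the simplicial enrichment of Theorem \ref{theo} and Proposition \ref{postprod}, it suffices to treat generators consisting of $\CatP$-anodyne morphisms together with their cotensors by cofibrations of simplicial sets. For a generating $\CatP$-anodyne inclusion $i: \overline{A} \hookrightarrow \overline{B}$ equipped with a map $\overline{B} \to \overline{Y}$, a case-by-case inspection of the list $(A_0), (A_1), (B_0), (B_1), (C_0), (C_1), (C_2)$ from Definition \ref{postspunt} shows that $i$ is $\pi^{\ast}\CatP$-anodyne when regarded as a morphism in $\mset{\pi^{\ast}\CatP}$: the required thin $2$-simplices and marked edges of $Y$ are supplied by the matching established in $(b) \Leftrightarrow (c)$ (using that every edge of $\overline{B}$ marked over $S$ must, by virtue of the map $\overline{B} \to \overline{Y}$, have image in $M_Y$). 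The lifting problem is then solved by Proposition \ref{poststaf} applied to the $\pi^{\ast}\CatP$-fibered object $(X, M_X)$. The principal obstacle lies in the bookkeeping of marked edges in $(b) \Leftrightarrow (c)$, where the interaction between locally coCartesian edges for $p$ and for $\pi \circ p$ must be analyzed carefully, leveraging the flatness of $\pi$ and the distinguished role of equivalences guaranteed by Lemma \ref{sock}.
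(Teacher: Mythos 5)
Your routing differs from the paper's: you propose to establish $(b)\Leftrightarrow(c)$ directly by matching clauses, then deduce $(a)\Leftrightarrow(b)$; the paper instead proves the chain $(a)\Rightarrow(b)$, $(b)\Rightarrow(a)$, $(a)\Rightarrow(c)$, $(c)\Rightarrow(b)$. The two non-formal implications are $(a)\Rightarrow(b)$ and $(b)\Rightarrow(a)$, and here your sketch has genuine gaps.

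For $(c)\Rightarrow(a)$ (equivalently, once you have $(b)\Leftrightarrow(c)$, for $(b)\Rightarrow(a)$), you claim it ``suffices to treat generators consisting of $\CatP$-anodyne morphisms together with their cotensors by cofibrations of simplicial sets.'' This reduction is not available. The model structure of Theorem \ref{theo} is produced by Jeff Smith's theorem, and nothing in the paper identifies its trivial cofibrations as generated by the $\CatP$-anodyne maps and their cotensors; Propositions \ref{poststaf} and \ref{postprod} go in the wrong direction (they establish that $\CatP$-anodyne maps are trivial cofibrations and that fibrant objects extend along them, not that these generate). The paper's $(b)\Rightarrow(a)$ handles an \emph{arbitrary} trivial cofibration $i:\overline{A}\to\overline{B}$ by a correction argument: first solve the lifting problem against the structure map $\overline{X}\to(S,M_S)$ (using only the $\CatP$-fibrancy of $\overline{X}$), obtaining a lift $f'$ whose composite with $p$ need not equal $g$; then exhibit a homotopy $G$ from $g$ to $p\circ f'$ over $(S,M_S)$ (again using fibrancy of $\overline{X}$ against the $\CatP$-anodyne cylinder inclusion of Proposition \ref{postprod}); finally use that $p$ induces a fibration $X^{\natural}\to Y^{\natural}$ of marked simplicial sets (the hypothesis that $X\to Y$ is a categorical fibration) to transport $f'$ along $G$ into an honest solution. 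Your case-by-case plan omits this homotopy correction and therefore cannot handle trivial cofibrations that are not $\CatP$-anodyne.

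Your sketch of $(a)\Rightarrow(b)$ has a separate problem. You reduce the categorical-fibration condition to inner anodyne maps plus $\{0\}\hookrightarrow E$, and for the latter you propose to use the pushforward of $\{0\}^{\sharp}\hookrightarrow E^{\sharp}$ ``to any chosen vertex of $S$'' via Proposition \ref{supper}. But the lifting problem at hand lies over the composite $E\to Y\to S$, which is not constant (its image is an equivalence in $S$, not a degenerate edge), so the pushforward from a vertex does not produce the cofibration you need to test against. The paper avoids this by showing, uniformly for any trivial Joyal cofibration $A\to B$, that $A^{\flat}\to B^{\flat}$ is a $\CatP$-equivalence; the argument replaces $\CatP$ by $\CatP'=(M'_S,T,\emptyset)$ with $M'_S$ the equivalences and then runs a homotopy fiber-sequence comparison of $\bHom^{\sharp}_{S}(-,\overline{Z})$ with $\Fun(-,Z)^{0}$ and $\Fun(-,S)^{0}$, invoking Lemma \ref{sock} to identify $M_Z$ with the equivalences in $Z$. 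You would need some version of this argument to close the gap.

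Finally, your direct $(b)\Leftrightarrow(c)$ requires exactly the detailed clause verification the paper carries out in $(c)\Rightarrow(b)$, and the key claim ``a locally coCartesian edge lying over a locally coCartesian edge of the base is itself locally coCartesian for the composite, a standard consequence of \toposref{protohermes}'' is too strong as stated: the paper needs Lemma \ref{kisker} (which uses the standing hypothesis on $T$) to upgrade local coCartesianness over the degenerate simplices to honest coCartesianness over the relevant $\Delta^2$'s before \toposref{stuch} can be applied. The flatness technology of Proposition \ref{capper} that you invoke is not the tool used here.
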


\begin{proof}
We first prove that $(a) \Rightarrow (b)$. If $\overline{Y}$ is $\CatP$-fibered and $p$
is a fibration in $\mset{ \CatP}$, then it is clear that $\overline{X}$ is $\CatP$-fibered. It will therefore suffice to show that
the underlying map of simplicial sets $X \rightarrow Y$ is a categorical fibration. In other words, we must show that every lifting problem of the form
$$ \xymatrix{ A^{\flat} \ar[r] \ar[d]^{i} & \overline{X} \ar[d] \\
B^{\flat} \ar[r] \ar@{-->}[ur] & \overline{Y} }$$
admits a solution, provided that the underlying map of simplicial sets $A \rightarrow B$ is
a trivial cofibration with respect to the Joyal model structure. To prove this, it will suffice to show
that the map $i$ is a $\CatP$-equivalence. By virtue of Proposition \ref{supper}, it will suffice to prove this after replacing $\CatP$ by the categorical pattern $\CatP' = ( M'_{S}, T, \emptyset)$, where
$M'_{S}$ is the collection of all equivalences in $S$. We must now show that for every
$\CatP$-fibered object $\overline{Z} = (Z,M) \in \mset{ \CatP}$, the induced map
$\theta: \bHom^{\sharp}_{S}( B^{\flat}, \overline{Z}) \rightarrow \bHom^{\sharp}_{S}( A^{\flat}, \overline{Z})$ is a
weak homotopy equivalence. We observe that $Z$ is an $\infty$-category and
$M_Z$ can be identified with the collection of all equivalences in $Z$ (Lemma \ref{sock}). For every
simplicial set $K$ and every $\infty$-category $\calC$, let $\Fun(K, \calC)^{0}$ denote the largest Kan complex contained in $\Fun(K, \calC)$. We have a commutative diagram
$$ \xymatrix{ \bHom^{\sharp}_{S}( B^{\flat}, \overline{Z}) \ar[r] \ar[d]^{\theta} & \Fun(B, Z)^{0} \ar[r] \ar[d]^{\theta'} & \Fun(B, S)^{0} \ar[d]^{\theta''} \\
\bHom^{\sharp}_{S}( A^{\flat}, \overline{Z} ) \ar[r] & \Fun( A, Z)^{0} \ar[r] & \Fun(A, S)^{0}. }$$
where the rows are homotopy fiber sequences. Consequently, to prove that $\theta$
is a homotopy equivalence, it suffices to show that $\theta'$ and $\theta''$ are homotopy equivalences.
This follows from the observation that the maps 
$$\Fun(B, Z) \rightarrow \Fun(A,Z) \quad \quad \Fun(B, S) \rightarrow \Fun(A,S)$$
are categorical equivalences (in fact, trivial Kan fibrations), since $A \rightarrow B$ is
a trivial cofibration and the simplicial sets $S$ and $Z$ are fibrant (with respect to the Joyal model structure).

We now show that $(b) \Rightarrow (a)$. We must prove that every lifting problem of the form
$$ \xymatrix{ \overline{A} \ar[r]^{f_0} \ar[d]^{i} & \overline{X} \ar[d]^{p} \\
\overline{B} \ar[r]^{g} \ar@{-->}[ur] & \overline{Y} }$$
admits a solution, provided that $i$ is a monomorphism and a $\CatP$-equivalence. 
Since $\overline{X}$ is $\CatP$-fibered, the lifting problem
$$ \xymatrix{ \overline{A} \ar[r]^{f_0} \ar[d]^{i} & \overline{X} \ar[d] \\
\overline{B} \ar[r] \ar@{-->}[ur]^{f'} & (S,M_S). }$$
admits a solution. The map $g' = p \circ f'$ does not necessarily coincide with $g$. However,
$g$ and $g'$ agree on $\overline{A}$ and therefore determine a map
$$ G_0: ( \overline{A} \times (\Delta^1)^{\sharp} ) \coprod_{ \overline{A} \times ( \bd \Delta^1)^{\sharp}}
( \overline{B} \times (\bd \Delta^1)^{\sharp}) \rightarrow \overline{Y}.$$ Consider the diagram
$$ \xymatrix{  ( \overline{A} \times (\Delta^1)^{\sharp} ) \coprod_{ \overline{A} \times ( \bd \Delta^1)^{\sharp}}
( \overline{B} \times (\bd \Delta^1)^{\sharp}) \ar[rr]^{G_0} \ar[d]^{j} & & \overline{Y} \ar[d] \\
\overline{B} \times ( \Delta^1)^{\sharp} \ar[r] \ar@{-->}[urr]^{G} & \overline{B} \ar[r]^{ \pi \circ g} & (S, M_S). }$$
Since the map $j$ is a $\CatP$-equivalence (Proposition \ref{postprod}) and $\overline{X}$ is
$\CatP$-fibered, there exists a map $G$ rendering this diagram commutative. We
regard $G$ as an equivalence from $g$ to $p \circ f'$ in $\Fun( B, Y)$. Since $p$ is a
categorical fibration, it induces a fibration $X^{\natural} \rightarrow Y^{\natural}$ in the category
of marked simplicial sets; here $X^{\natural} = (X, E_X)$ where $E_{X}$ is the collection of all equivalences in $X$ and $Y^{\natural}$ is defined similarly. It follows that the lifting problem
$$ \xymatrix{ (A^{\flat} \times (\Delta^1)^{\sharp}) \coprod_{ A^{\flat} \times \{1\}^{\sharp} }
( B^{\flat} \times \{1\}^{\sharp}) \ar[r] \ar[d] & A^{\flat} \ar[r]^{f'} & X^{\natural} \ar[d] \\
B^{\flat} \times (\Delta^1)^{\sharp} \ar[rr]^{G} \ar@{-->}[urr]^{F} & & Y^{\natural} }$$
admits a solution. We can regard $F$ as an equivalence from $f$ to $f'$ in
$\Fun(B, X)$, where $f$ is an extension of $f_0$ lifting $g$. Since $f$ is equivalent to
$f'$, it carries marked edges of $\overline{A}$ to marked edges of $\overline{X}$, and therefore constitutes a solution to the original lifting problem.

We next show that $(a) \Rightarrow (c)$. We must prove that every lifting problem of the form
$$ \xymatrix{ \overline{A} \ar[r]^{f_0} \ar[d]^{i} & \overline{X} \ar[d]^{p} \\
\overline{B} \ar[r]^{g} \ar@{-->}[ur] & \overline{Y} }$$
admits a solution, provided that $i$ is a trivial cofibration in $\mset{ \pi^{\ast} \CatP}$. 
Since $p$ is assumed to be a fibration in $\mset{ \CatP}$, it suffices to show that
$i$ is a trivial cofibration in $\mset{ \CatP}$, which follows from Proposition \ref{supper}.

Finally, we must show that $(c) \Rightarrow (b)$. Replacing $\CatP$ by $\pi^{\ast} \CatP$ and
invoking the implication $(a) \Rightarrow (b)$, we deduce that $X \rightarrow Y$ is a categorical fibration. It will therefore suffice to show that $\overline{X}$ is $\CatP$-fibered.
We will show that $\overline{X}$ satisfies conditions $(1)$, $(2)$, $(3)$, $(4)$, and
$(6)$ of Definition \ref{catpat}, together with condition $(5')$ of Remark \ref{ratpat}:
\begin{itemize}
\item[$(1)$] The underlying map of simplicial sets $q: X \rightarrow S$ is an inner fibration.
This is clear, since $q = \pi \circ p$, where both $\pi$ and $p$ are inner fibrations.

\item[$(2)$] For each edge $\Delta^1 \rightarrow S$ belonging to $M_S$, the induced map
$q_{\Delta^1}: X \times_{S} \Delta^1 \rightarrow \Delta^1$ is a coCartesian fibration.
In other words, we must show that for every object $x \in X$ and every edge
$e: q(x) \rightarrow s$ belonging to $M_{S}$, there exists a locally
$q$-coCartesian edge $\overline{e}: x \rightarrow \overline{s}$ with
$q( \overline{e}) = e$. Since $\overline{Y}$ is $\CatP$-fibered, we can choose
a locally $\pi$-coCartesian edge $\widetilde{e}: p(x) \rightarrow \widetilde{s}$
with $\pi( \widetilde{e} ) = e$. Moreover, the edge $\widetilde{e}$ belongs to
$M_{Y}$, so we can choose a locally $p$-coCartesian edge $\overline{e}$ with
$p( \overline{e} ) = \widetilde{e}$ (note that $\overline{e}$ belongs to $M_{X}$).
To complete the proof, it will suffice to show that $\overline{e}$ is locally
$q$-coCartesian: in other words, that it determines a $q_{\Delta^1}$-coCartesian edge
$\overline{e}'$ of $X \times_{S} \Delta^1$. We note that $q_{ \Delta^1}$ factors as a composition
$$ X \times_{S} \Delta^1 \stackrel{ q'_{ \Delta^1} }{\rightarrow} Y \times_{S} \Delta^1
\stackrel{ \pi_{\Delta^1}} \Delta^1,$$
and that $q'_{ \Delta^1}( \overline{e}')$ is $\pi_{ \Delta^1}$-coCartesian by construction.
In view of Proposition \toposref{stuch}, it suffices to show that 
$\overline{e}'$ is $q'_{ \Delta^1}$-coCartesian. This follows from Lemma \ref{kisker}, since
the image of every $2$-simplex $\sigma$ of $X \times_{S} \Delta^1$ in $Y$ is a thin
$2$-simplex with respect to $\pi^{\ast} \CatP$ (since the image of $\sigma$ in $S$ is degenerate).

\item[$(3)$] An edge $\overline{e}: x \rightarrow x'$ of $X$ belongs to $M_X$ if and only if $e = q(\overline{e})$ belongs to $M_S$ and $e$ locally $q$-coCartesian. The ``if'' direction follows from the proof of $(2)$. For the converse, we observe that if $e \in M_{S}$ then we can apply the construction of $(2)$ to
produce a locally $q$-coCartesian edge $\overline{e}': x \rightarrow x''$ of $X$ covering
$e$, where $\overline{e}' \in M_{X}$. If $\overline{e}$ is also locally $q$-coCartesian, then
$\overline{e}$ and $\overline{e}'$ are equivalent, so $\overline{e}$ also belongs to $M_{X}$.

\item[$(4)$] Given a commutative diagram
$$ \xymatrix{ \Delta^{ \{0,1\} } \ar[d] \ar[r]^{e} & X \ar[d] \\
\Delta^2 \ar[r]^{\sigma} & S, }$$
if $e \in M_{X}$ and $\sigma \in T$, then $e$ determines an
$q_{\Delta^2}$-coCartesian edge of $X \times_{S} \Delta^2$, where
$q_{\Delta^2}: X \times_{S} \Delta^2 \rightarrow \Delta^2$ denotes the projection map.
To prove this, we factor $q_{ \Delta^2}$ as a composition
$$ X \times_{S} \Delta^2 \stackrel{ p_{ \Delta^2} }{\rightarrow} Y \times_{S} \Delta^2
\stackrel{ \pi_{\Delta^2}} \Delta^2.$$
Since $\overline{Y}$ is $\CatP$-fibered and $p(e) \in M_Y$, we conclude that
the image of $e$ in $Y \times_{S} \Delta^2$ is $\pi_{ \Delta^2}$-coCartesian.
In view of Proposition \toposref{stuch}, it will suffice to show that
$e$ determines a $p_{ \Delta^2}$-coCartesian edge of $X \times_{S} \Delta^2$.
This follows from Lemma \ref{kisker}, since $e$ determines a locally
$p_{ \Delta^2}$-coCartesian edge of $X \times_{S} \Delta^2$ and the image of
every $2$-simplex of $X \times_{S} \Delta^2$ in $Y$ is thin with respect to $\pi^{\ast} \CatP$.

\item[$(5')$] For each $\alpha \in A$, every lifting problem of the form
$$ \xymatrix{ K_{\alpha}^{\sharp} \ar[r] \ar[d] & \overline{X} \ar[d] \\
(K_{\alpha}^{\triangleleft})^{\sharp} \ar[r]^{p_{\alpha}} \ar@{-->}[ur] & (S, M_S) }$$
admits a solution. We first invoke the fact that $\overline{Y}$ is $\CatP$-fibered
to solve the induced lifting problem
$$ \xymatrix{ K_{\alpha}^{\sharp} \ar[r] \ar[d] & \overline{Y} \ar[d] \\
(K_{\alpha}^{\triangleleft})^{\sharp} \ar[r]^{p_{\alpha}} \ar@{-->}[ur]^{f} & (S, M_S). }$$
We then invoke the assumption that $\overline{X}$ is $\pi^{\ast} \CatP$ fibered to
solve the lifting problem
$$ \xymatrix{ K_{\alpha}^{\sharp} \ar[r] \ar[d] & \overline{X} \ar[d] \\
(K_{\alpha}^{\triangleleft})^{\sharp} \ar[r]^{f} \ar@{-->}[ur] & \overline{Y}. }$$

\item[$(6)$] For every index $\alpha \in A$, every map
$\overline{p}_{\alpha}: (K_{\alpha}^{\triangleleft})^{\sharp} \rightarrow \overline{X}$
lifting $p_{\alpha}: (K_{\alpha}^{\triangleleft})^{\sharp} \rightarrow (X,S)$ is
a $q$-limit diagram. Invoking the assumption that $\overline{Y}$ is $\CatP$-fibered,
we deduce that $\widetilde{p}_{\alpha} = p \circ \overline{p}_{\alpha}$ is
a $\pi$-limit diagram in $Y$. Moreover, $\widetilde{p}_{\alpha}$ is one of the diagrams
defining the categorical pattern $\pi^{\ast} \CatP$, so our assumption that
$\overline{X}$ is $\CatP$-fibered ensures that $\overline{p}_{\alpha}$ is a $p$-limit diagram.
Since $q = \pi \circ p$, the desired result now follows from Proposition \toposref{basrel}.
\end{itemize} 
\end{proof}

Proposition \ref{supper} can be interpreted roughly as saying that the model structure
of Theorem \ref{theo} defines a covariant functor of the underlying categorical
pattern $\CatP$. The remainder of this section is devoted to studying the behavior of
this model structure as a {\em contravariant} functor of $\CatP$: in other words, we wish to study circumstances under which the pullback functor $f^{\ast}$ is a left Quillen functor.
First, let us introduce a bit of terminology.

\begin{notation}\label{cabbel}
Suppose we are given maps of simplicial sets $X \stackrel{\phi}{\rightarrow} Y \stackrel{\pi}{\rightarrow} Z$.
We let $\pi_{\ast}(X)$ denote a simplicial set equipped with a map $\pi_{\ast} X \rightarrow Z$
with the following universal property: for every map of simplicial sets $K \rightarrow Z$, we
have a canonical bijection
$$ \Hom_{ Z}( K, \pi_{\ast}(X) ) \simeq \Hom_{Y}( K \times_{Z} Y, X).$$
\end{notation}

In the situation of Notation \ref{cabbel}, suppose that $\pi$ is a Cartesian fibration and the map
$\phi$ is a coCartesian fibration. Corollary \toposref{presalad} implies that the
map $\pi_{\ast} X \rightarrow Z$ is a coCartesian fibration. We will need some refinements of this result.

\begin{proposition}\label{saeve}
Let $\pi: Y \rightarrow Z$ be a flat categorical fibration of simplicial sets.
Then the functor $\pi_{\ast}: ( \sSet)_{/Y} \rightarrow (\sSet)_{/Z}$ is a right
Quillen functor (with respect to the Joyal model structures). In particular,
if $X \rightarrow Y$ is a categorical fibration, then the induced map
$\pi_{\ast} X \rightarrow Z$ is a categorical fibration.
\end{proposition}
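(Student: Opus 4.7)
The plan is to show that the left adjoint $\pi^{\ast}\colon (\sSet)_{/Z} \to (\sSet)_{/Y}$, given by pullback along $\pi$, is a left Quillen functor for the Joyal model structures on the slice categories; this is equivalent to the desired conclusion. Since pullback preserves monomorphisms, $\pi^{\ast}$ preserves cofibrations, so it remains to show that $\pi^{\ast}$ preserves trivial cofibrations. Because every object of the Joyal model structure is cofibrant, Ken Brown's lemma reduces this further to showing that $\pi^{\ast}$ carries categorical equivalences to categorical equivalences.

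So let $f\colon A \to B$ be a categorical equivalence over $Z$. Following the pattern of Corollary \ref{silman}, I would first factor $B \to Z$ as $B \hookrightarrow B^{+} \to Z$ with $B \to B^{+}$ inner anodyne and $B^{+} \to Z$ an inner fibration. Proposition \ref{capper} ensures that $B \times_{Z} Y \to B^{+} \times_{Z} Y$ is a categorical equivalence, while the composite $A \to B \to B^{+}$ remains a categorical equivalence; hence we may replace $B$ by $B^{+}$. Performing the analogous step for $A$ reduces us to the case where both $A \to Z$ and $B \to Z$ are inner fibrations. Factoring $f$ in the Joyal slice model structure on $(\sSet)_{/Z}$ as $A \to \widetilde{A} \to B$ with $A \to \widetilde{A}$ a trivial cofibration over $Z$ and $\widetilde{A} \to B$ a categorical fibration over $Z$, the second map is simultaneously a categorical fibration and a categorical equivalence, hence a trivial Kan fibration; its pullback along $\pi$ is again a trivial Kan fibration and in particular a categorical equivalence. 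By two-out-of-three, the problem reduces to showing that $A \times_{Z} Y \to \widetilde{A} \times_{Z} Y$ is a categorical equivalence.

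The remaining task is to analyze the pullback square
\[ \xymatrix{ A \times_{Z} Y \ar[r] \ar[d] & \widetilde{A} \times_{Z} Y \ar[d] \\ A \ar[r] & \widetilde{A}. } \]
Its vertical maps are base changes of the categorical fibration $\pi$ and are therefore categorical fibrations themselves. When $\widetilde{A}$ is an $\infty$-category, Proposition \toposref{leftpropsquare} applied to the left-proper Joyal model structure exhibits the square as a homotopy pullback, and the categorical equivalence $A \to \widetilde{A}$ then pulls back to the categorical equivalence we seek.

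The main obstacle is precisely this fibrancy: the factorizations above yield only that $\widetilde{A} \to Z$ is an inner fibration, which does not force $\widetilde{A}$ to be an $\infty$-category when $Z$ is not one. I would overcome this by reducing to the case where $Z$ is itself an $\infty$-category: choose an inner anodyne $Z \hookrightarrow Z^{+}$ with $Z^{+}$ an $\infty$-category, factor $Y \to Z \to Z^{+}$ as $Y \to Y^{+} \to Z^{+}$ with $Y \to Y^{+}$ inner anodyne and $Y^{+} \to Z^{+}$ a categorical fibration between $\infty$-categories, and identify $A \times_{Z} Y$ with $A \times_{Z^{+}} Y^{+}$ (using that $Z \hookrightarrow Z^{+}$ is a monomorphism and that Proposition \ref{capper} handles the inner anodyne comparison $Y \to Y^{+}$). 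One then applies Corollary \ref{silman} directly to $\pi^{+}\colon Y^{+} \to Z^{+}$, after verifying that $\pi^{+}$ inherits flatness from $\pi$; this verification, which amounts to controlling the $2$-simplices of $Z^{+}$ that do not lie in $Z$, is the most delicate step in the argument.
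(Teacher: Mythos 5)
Your approach follows the paper's own argument (which cites Corollary \ref{silman}), and you are right that there is a genuine issue in doing so: Corollary \ref{silman} is stated for flat categorical fibrations between $\infty$-categories, while $Y$ and $Z$ here are arbitrary simplicial sets. The proposed fibrant-replacement fix does not, however, close this gap. First, verifying flatness of $\pi^{+}: Y^{+} \to Z^{+}$ over $2$-simplices of $Z^{+}$ that do not factor through $Z$ appears to be not merely delicate but circular: the only handle for comparing the pullback of $\pi^{+}$ over such a simplex with the original $\pi$ is to pull the trivial cofibration $Y \to Y^{+}$ (or $Z \hookrightarrow Z^{+}$) back along $\pi^{+}$ and argue that this is a categorical equivalence — which is precisely an instance of the statement under proof. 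Second, the claimed identification $A \times_{Z} Y \simeq A \times_{Z^{+}} Y^{+}$ is not a consequence of Proposition \ref{capper} as asserted. That proposition applies when one pulls an inner anodyne back along a flat \emph{inner fibration}; the relevant comparison $A \times_{Z^{+}} Y \to A \times_{Z^{+}} Y^{+}$ is the base change of $Y \to Y^{+}$ along $A \times_{Z^{+}} Y^{+} \to Y^{+}$, and this projection is a base change of the composite $A \to Z \hookrightarrow Z^{+}$, which is generally not an inner fibration because $Z \hookrightarrow Z^{+}$ is inner anodyne rather than an inner fibration — a horn $\Delta^{n} \to Z^{+}$ compatible with a given $\Lambda^{n}_{i} \to A$ need not land in $Z$.

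A different idea is therefore needed. One direction worth pursuing: the class of monomorphisms over $Z$ whose $\pi$-pullbacks are categorical equivalences is weakly saturated (as in the proof of Proposition \ref{capper}), contains the inner anodyne maps (by flatness and \ref{capper}), and contains every map of the form $\{0\} \hookrightarrow J$ over $Z$ (by a homotopy-pullback argument over the contractible Kan complex $J$, which uses only that $\pi$ is a categorical fibration). The remaining question — whether every trivial cofibration of the Joyal model structure lies in the weakly saturated class generated by these — is not obvious, and this is essentially the point that both the paper's terse proof and your proposed expansion leave unresolved. (A small aside: the appeal to Ken Brown's lemma in the opening reduction runs the wrong direction; showing that $\pi^{\ast}$ preserves all categorical equivalences already yields preservation of trivial cofibrations, since the latter are in particular weak equivalences, with no lemma required.)
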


\begin{proof}
The functor $\pi_{\ast}$ admits a left adjoint $\pi^{\ast}$, given by the formula
$\pi^{\ast} A = A \times_{Z} Y$. To prove that $\pi_{\ast}$ is a right Quillen functor, it suffices to show that $\pi^{\ast}$ preserves cofibrations and weak equivalences. The case of cofibrations is clear, and
the case of weak equivalences follows from Corollary \ref{silman}.
\end{proof}

\begin{example}\label{just}
Suppose we are given a diagram of simplicial sets $X \stackrel{\phi}{\rightarrow} Y \stackrel{\pi}{\rightarrow} Z$.
We observe that there is a canonical map $\theta: X \rightarrow \pi_{\ast} X$. If the map $\pi$ is
a trivial Kan fibration, then $\theta$ is a categorical equivalence. To prove this, we first choose
a section $s: Z \rightarrow Y$ of $\pi$. Composition with $s$ yields a map
$r: \pi_{\ast} X \rightarrow X$ such that $r \circ \theta = \id_{X}$. Moreover, since
$s \circ \pi$ is homotopic (over $Z$) to the map $\id_{Y}$. It follows that there exists a contractible
Kan complex $K$ containing a pair of distinct points $x$ and $y$ and a map
$h: K \times Y \rightarrow Y$ compatible with the projection map $\pi$ such that
$h|( \{x\} \times Y) = \id_{Y}$ and $h | (\{y\} \times Y) = s \circ \pi$. The map
$h$ induces a map $h': K \times \pi_{\ast} X \rightarrow \pi_{\ast} X$ such
that $h|( \{x\} \times \pi_{\ast} X) = \id_{ \pi_{\ast} X}$ and $h|( \{y\} \times \pi_{\ast} X) = \theta \circ r$.
It follows that $r$ is a right homotopy inverse to $\theta$ (as well as being a strict left inverse) with respect to the Joyal model structure, so that $\theta$ is a categorical equivalence as desired.
\end{example}

\begin{remark}\label{safi}
Suppose we are given a diagram of simplicial sets
$X \stackrel{\phi}{\rightarrow} Y \stackrel{\pi}{\rightarrow} Z$, where
$\phi$ is a categorical fibration and $\pi$ is a flat categorical fibration.
Let $\psi: Y' \rightarrow Y$ be a trivial Kan fibration, let $\pi' = \pi \circ \psi$, and
let $X' = X \times_{Y} Y'$. Then the canonical map $f: \pi_{\ast} X \rightarrow \pi'_{\ast} X'$
is a categorical equivalence. To prove this, we observe that $\pi'_{\ast} X'
\simeq \pi_{\ast} \psi_{\ast} X'$, and $f$ is induced by applying
$\pi_{\ast}$ to a map $g: X \rightarrow \psi_{\ast} X'$. 
Example \ref{just} shows that $g$ is a categorical equivalence. Since
$\pi_{\ast}$ is a right Quillen functor (Proposition \ref{saeve}), it preserves
categorical equivalences between fibrant objects of $(\sSet)_{/Y}$, so
$f$ is a categorical equivalence.
\end{remark}

\begin{lemma}\label{hodo2}
Let $q: \calC \rightarrow \Delta^n$ and $p: \calD \rightarrow \calE$ be categorical fibrations of
$\infty$-categories, where $n \geq 2$. Let $\calC^{0}$ be a full subcategory of $\calC$ with the following properties:
\begin{itemize}
\item[$(i)$] The subcategory $\calC^{0} \times_{ \Delta^{n} } \Delta^{ \{n-1,n\} }$ is a cosieve on $\calC
\times_{ \Delta^n} \Delta^{ \{n-1,n\}}$: that is, for every morphism
$f: x \rightarrow y$ in $\calC \times_{ \Delta^n} \Delta^{ \{n-1,n\} }$, if $x \in \calC^{0}$, then $y \in \calC^{0}$.
\item[$(ii)$] For every object $y \in \calC^{0} \times_{ \Delta^{n} } \{n-1\}$ and
each $i < n-1$, there exists an object $x \in \calC^{0} \times_{ \Delta^{n} } \{i\}$ and
a $q$-Cartesian morphism $x \rightarrow y$ in $\calC$.
\end{itemize}
Suppose we are given a lifting problem
$$ \xymatrix{ (\calC \times_{ \Delta^n} \Lambda^n_{n})
\coprod_{ \calC^{0} \times_{ \Delta^n} \Lambda^n_{n}} \calC^{0} \ar[r]^{f_0} \ar[d] & \calD \ar[d]^{p} \\
\calC \ar[r]^{g} \ar@{-->}[ur]^{f} & \calE. }$$
Let $X = (\calC \times_{\Delta^n} \{n\}) \coprod_{ \calC^{0} \times_{ \Delta^n} \{n\} }
( \calC^{0} \times_{ \Delta^n} \Delta^{ \{n-1, n\} })$; condition $(i)$ guarantees that
$X$ can be identified with a full subcategory of $\calC$. Assume further that
\begin{itemize}
\item[$(iii)$] The functor $f_0|(\calC \times_{ \Delta^n} \Delta^{ \{n-1, n\}})$ is a $p$-right Kan extension
of $f_0 |X$.
\end{itemize}
Then there exists a dotted arrow $f$ rendering the diagram commutative.
\end{lemma}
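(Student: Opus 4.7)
The plan is to construct the lift $f$ by a descending induction on the level $i \in \{0, 1, \ldots, n-1\}$, using the $q$-Cartesian morphisms furnished by hypothesis $(ii)$ to transport data from level $n-1$ back to lower levels. First observe that the simplices of $\calC$ not already in the domain of $f_0$ are precisely those whose image in $\Delta^n$ contains every vertex of $\{0,\ldots,n-1\}$ and which do not lie in $\calC^0$; the missing data is therefore concentrated on objects $c \in (\calC \setminus \calC^0) \times_{\Delta^n} \{i\}$ with $i < n-1$, together with the simplices emanating from them.

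Condition $(iii)$ serves as the base of the induction: the restriction $f_0|(\calC \times_{\Delta^n} \Delta^{\{n-1,n\}})$ is a $p$-right Kan extension of $f_0|X$, which pins down the values of $f_0$ on $(\calC \setminus \calC^0) \times_{\Delta^n} \{n-1\}$ in a manner that exhibits the relevant diagrams as $p$-limit cones. For the inductive step, fix $i < n-1$ and assume $f$ has been defined on all objects at levels strictly greater than $i$. For each $y \in \calC^0 \times_{\Delta^n} \{n-1\}$, hypothesis $(ii)$ provides a $q$-Cartesian morphism $\widetilde{c}_y \colon x_y \to y$ with $x_y \in \calC^0 \times_{\Delta^n} \{i\}$, and the cosieve condition $(i)$ guarantees that every $2$-simplex in $\calC$ completing the triangle $x_y \to y \to z$ (for $z$ at level $n$) lies entirely in the domain of $f_0$. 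For each object $c \in (\calC \setminus \calC^0) \times_{\Delta^n} \{i\}$, the required extension becomes a relative right Kan extension problem, solvable against $p$ because the transported diagrams remain $p$-limit cones by base change along $q$-Cartesian morphisms (Proposition \toposref{basrel}) and because $p$ is an inner fibration.

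The main obstacle is organizing the local lifts into a globally coherent functor $f \colon \calC \to \calD$ over $g$. This is carried out by a standard skeletal induction on $\calC \setminus ((\calC \times_{\Delta^n} \Lambda^n_n) \cup \calC^0)$: each new non-degenerate simplex contributes a lifting problem of the form $\Lambda^m_k \subseteq \Delta^m$ against $p$, and the inner-horn condition $0 < k < m$ is guaranteed by combining the cosieve hypothesis $(i)$ with the observation that the chosen $q$-Cartesian morphisms from $(ii)$ always produce a final vertex lying inside $\calC^0$. The $p$-limit property provided by $(iii)$ ensures that each such inner lifting problem against $p$ admits a solution, and patching these solutions yields the desired $f$. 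I expect the hardest technical point to be the verification that the right Kan extension property of $(iii)$ transfers, under base change along the Cartesian morphisms of $(ii)$, to a right Kan extension property on each slice $\calC \times_{\Delta^n} \Delta^{\{i,n-1,n\}}$ — a relative version of the familiar compatibility between Cartesian pullback and limits.
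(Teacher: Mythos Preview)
Your proposal has a genuine gap in the combinatorial organization of the extension. You write that ``each new non-degenerate simplex contributes a lifting problem of the form $\Lambda^m_k \subseteq \Delta^m$ against $p$, and the inner-horn condition $0 < k < m$ is guaranteed\ldots''. But a naive skeletal induction over the simplices of $\calC$ not already in the domain of $f_0$ produces \emph{boundary}-filling problems $\bd\Delta^m \subseteq \Delta^m$, not horn-filling problems: once all lower-dimensional simplices are handled, every face of a new $m$-simplex is already defined. Boundary-filling against a categorical fibration $p$ has no reason to succeed, and neither the cosieve condition $(i)$ nor the existence of $q$-Cartesian lifts in $(ii)$ converts these into inner horns. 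Your appeal to $(iii)$ at this point is also misdirected: the $p$-right Kan extension hypothesis supplies $p$-limit cones, which solve certain \emph{outer}-horn-type extension problems $\bd\Delta^{k'} \star Z \subseteq \Delta^{k'} \star Z$, not inner-horn problems.

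The paper organizes the extension quite differently, in two stages that separate the roles of $(ii)$ and $(iii)$. First one reduces to $\calC$ minimal and extends $f_0$ over an intermediate simplicial subset $\calC^1 \subseteq \calC$, defined so that the remaining simplices are those whose intersection with fiber $n-1$ lies in $\calC^0$. This extension is filtered by simplices $\sigma$ of $X$, and each step is a pushout along a join $(\calA_{\bd} \star \Delta^k) \cup (\calA \star \bd\Delta^k) \hookrightarrow \calA \star \Delta^k$; by Lemma \toposref{precough} this is inner anodyne provided $\calA_{\bd} \hookrightarrow \calA$ is right anodyne, and \emph{this} is where $(ii)$ enters: the $q$-Cartesian lift makes a chosen object \emph{final} in the relevant slice, forcing cofinality. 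Only in the second stage, extending from $\calC^1$ to $\calC$ by filtering over simplices of $Y = (\calC \setminus \calC^0)_{n-1}$, does $(iii)$ appear: each step is a pushout along $\bd\Delta^{k'} \star X_{\sigma'/} \hookrightarrow \Delta^{k'} \star X_{\sigma'/}$, and the $p$-right Kan extension hypothesis is exactly what guarantees the required $p$-limit property at the final vertex. Your ``descending induction on level $i$'' and the invocation of base-change for $p$-limits along $q$-Cartesian morphisms do not substitute for this filtration structure.
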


\begin{proof}
Using Proposition \toposref{minimod}, we can choose a minimal $\infty$-category
$\calC'$ and a categorical equivalence $\calC' \rightarrow \calC$. Let
${\calC'}^{0}$ denote the fiber product $\calC' \times_{\calC} \calC^{0}$.
We observe that if $K$ is a face of $\Delta^n$, the maps
$$\calC' \times_{ \Delta^n} K \rightarrow \calC \times_{ \Delta^n } K
\quad \quad {\calC'}^{0} \times_{ \Delta^n} K \rightarrow \calC^{0} \times_{ \Delta^n } K
$$ are categorical equivalences. Because the Joyal model structure is left proper,
we deduce that for every inclusion of simplicial subsets $K \subseteq K' \subseteq \Delta^n$,
the induced map
$$ (K \times_{ \Delta^n} \calC') \coprod_{ K \times_{ \Delta^n} {\calC'}^{0}}
(K' \times_{ \Delta^n} {\calC'}^{0}) \rightarrow
 (K \times_{ \Delta^n} \calC) \coprod_{ K \times_{ \Delta^n} {\calC}^{0}}
(K' \times_{ \Delta^n} {\calC}^{0})$$
is a categorical equivalnece. Invoking Proposition \toposref{princex}, we are reduced to solving
the associated lifting problem
$$ \xymatrix{ (\calC' \times_{ \Delta^n} \Lambda^n_{n})
\coprod_{ {\calC'}^{0} \times_{ \Delta^n} \Lambda^n_{n}} {\calC'}^{0} \ar[r] \ar[d] & \calD \ar[d]^{p} \\
\calC' \ar[r] \ar@{-->}[ur] & \calE. }$$
We may therefore replace $\calC$ by $\calC'$ and thereby reduce to the case where $\calC$ is minimal.

Let $\calC^{1}$ denote the simplicial subset of $\calC$ consisting of all those simplices
$\sigma$ satisfying one of the following conditions:
\begin{itemize}
\item The image of $\sigma$ in $\Delta^n$ does not contain
$\Delta^{ \{0, 1, \ldots, n-1 \} }$. 
\item The intersection of $\sigma$ with $\calC \times_{ \Delta^{n} } \{n-1\}$ is contained in
$\calC^{0}$.
\end{itemize}
We first extend $f_0$ to a map $f_1: \calC^{1} \rightarrow \calD$. For every
simplicial subset $S \subseteq X$, let $\calC^{1}_{S}$ denote the simplicial subset
of $\calC^{1}$ generated by $\calC^{0}$, $\calC \times_{ \Delta^n } \Lambda^n_n$, and
those simplices $\sigma$ whose intersection with $X$ belong to $S$. Let
$A$ denote the collection of pairs $(S, f_{S})$, where $S$ is a simplicial subset of
$X$ containing $\calC \times_{ \Delta^n} \{n\}$, and $f_{S}: \calC^{1}_{S} \rightarrow \calD$
is an extension of $f_0$ such that $p \circ f_{S} = g| \calC^{1}_{S}$.
We regard $A$ as a partially ordered set, with $(S, f_{S}) \leq (S', f_{S'})$ if $S \subseteq S'$
and $f_{S} = f_{S'} | S^{+}$. Using Zorn's lemma, we deduce that $A$ admits a maximal element
$(S, f_S)$. If $S = X$, then we set $f_1 = f_{S}$. Otherwise, choose a nondegenerate simplex
$\sigma$ of $X$ which does not belong to $S$ having minimal dimension $k$. Let $S'$ be the simplicial subset of $X$ obtained
by adjoining the simplex $\sigma$ to $S$, let $\calA = \calC_{/\sigma} \times_{ \Delta^n} \Delta^{ \{0, \ldots, n-2\} }$. For every simplicial subset $K \subseteq \Delta^{ \{ 0, \ldots, n-2\} }$, we let
$\calA_{K}$ denote the simplicial subset of $\calA$ generated by
$\calA \times_{ \calC} \calC^{0}$ together with $\calA \times_{ \Delta^{ \{0, \ldots, n-2\} }} K$,
and let $\calA_{\bd}$ denote $\calA_{ \bd \Delta^{ \{0, \ldots, n-2\} }}$. 
We have a diagram
$$ \xymatrix{ (\calA_{\bd} \star \Delta^k) \coprod_{ \calA_{\bd} \star \bd \Delta^k}
( \calA \star \bd \Delta^k) \ar[r] \ar[d] & \calC^{1}_{S} \ar[d] \\
\calA \star \Delta^k \ar[r] & \calC^{1}_{S'}. }$$
Using Proposition \symmetricref{appstrict}, we conclude that this diagram is a pushout square.
The maximality of the pair $(S, f_S)$ guarantees the existence of an insoluble lifting problem
$$ \xymatrix{ (\calA^{0}_{\bd} \star \Delta^k) \coprod_{ \calA^{0}_{\bd} \star \bd \Delta^k}
( \calA \star \bd \Delta^k) \ar[r] \ar[d] & \calD \ar[d]^{p} \\
\calA \star \Delta^k \ar@{-->}[ur] \ar[r] & \calE. }$$
Since $p$ is a categorical fibration, we conclude that the left vertical map is
not inner anodyne. We will obtain a contradiction from Lemma \toposref{precough} by showing
that the inclusion $i: \calA_{\bd} \rightarrow \calA$ is right anodyne.
In fact, we will prove the following more general claim:
\begin{itemize}
\item[$(\ast)$] For every simplex $\tau \subseteq \Delta^{ \{0, \ldots, n-2 \} }$ and
every simplicial subset $K \subseteq \tau$, the inclusion $\calA_{K} \subseteq
\calA_{ \tau}$ is right anodyne.
\end{itemize}
The proof proceeds by induction on the dimension of $\tau$, and on the number of
nondegenerate simplices of $K$. If $K = \tau$ there is nothing to prove. If
$K \neq \tau$ is nonempty, then we can write $K$ as a pushout
$K' \coprod_{ \bd \tau'} \tau'$, where $\tau'$ is a simplex of small dimension than $\tau$. 
The inductive hypothesis shows that the inclusion $\calA_{K'} \subseteq \calA_{\tau}$ is
right anodyne. By virtue of Proposition \toposref{cofbasic}, it will suffice to show that
the inclusion $\calA_{K'} \subseteq \calA_{K}$ is right anodyne. For this, we consider
the pushout diagram
$$ \xymatrix{ \calA_{ \bd \tau'} \ar[r] \ar[d] & \calA_{ \tau' } \ar[d] \\
\calA_{K'} \ar[r] & \calA_{K}. }$$
Since the upper horizontal map is right anodyne by the inductive hypothesis, the lower
horizontal map is right anodyne as well.

It remains to consider the case $K = \emptyset$. In this case, the inclusion $\calA_{K} \subseteq \calA_{\tau}$ is a pushout of the inclusion 
$$i': \calC^{0} \times_{\calC} \calC_{/ \sigma} \times_{ \Delta^n} \tau
\subseteq \calC_{/ \sigma} \times_{ \Delta^n} \tau.$$
It will therefore suffice to show that $i'$ is right anodyne, which is equivalent
(Proposition \toposref{cofbasic}) to the assertion that $i'$ is cofinal.
Using Proposition \toposref{cofbasic} again, we see that it suffices to produce
an object $x \in \calC^{0} \times_{\calC} \calC_{/ \sigma} \times_{ \Delta^n} \tau$
such that the corresponding maps
$$  \calC^{0} \times_{\calC} \calC_{/ \sigma} \times_{ \Delta^n} \tau \hookleftarrow \Delta^0 \hookrightarrow \calC_{/ \sigma} \times_{ \Delta^n} \tau$$
are both cofinal. In other words, it suffices to guarantee that the object
$x$ is final in $\calC_{/ \sigma} \times_{ \Delta^n} \tau$. The existence of such an object
follows from assumption $(ii)$ (applied to the object $y \in \calC \times_{ \Delta^n} \{n-1\}$ given by the initial vertex of $\sigma$). This completes the construction of the map $f_{1}$.

We now show that $f_{1}$ can be extended to the desired map $f: \calC \rightarrow \calD$.
Let $Y$ denote the full subcategory of $\calC$ spanned by those vertices which do not belong to $X$.
For every simplicial subset $T \subseteq Y$, let $\calC_{T}$ denote the simplicial subset
of $\calC$ consisting of those simplices whose intersection with $Y$ belongs to $T$.
Let $B$ denote the collection of pairs $(T, f_{T})$, where $T$ is a simplicial subset of
$Y$ containing $Y \cap ( \calC \times_{ \Delta^n} \bd \Delta^{ \{0, \ldots, n-1 \}})$, and
$f_{T}: \calC_{T} \rightarrow \calD$ is an extension of $f_{1}$ such that
$p \circ f_{T} = g| \calC_{T}$. We regard $B$ as a partially ordered set, with $(T, f_{T}) \leq (T', f_{T'})$ if $T \subseteq T'$
and $f_{T} = f_{T'} | \calC_{T'}$. Using Zorn's lemma, we deduce that $B$ admits a maximal element
$(T, f_T)$. If $T = Y$, then we set $f = f_{T}$ and the proof is complete. Otherwise, choose a nondegenerate simplex $\sigma'$ of $Y$ which does not belong to $T$ having minimal dimension $k'$.
Let $T'$ be the simplicial subset of $Y$ obtained
by adjoining the simplex $\sigma'$ to $T$, and let $X_{\sigma'/}$ denote the fiber product
$X \times_{ \calC} \calC_{\sigma'/}$. Using Proposition \toposref{minstrict}, we deduce the diagram
$$ \xymatrix{ \bd \Delta^{k'} \star X_{\sigma'/} \ar[r] \ar[d] & \calC_{T} \ar[d] \\
\Delta^{k'} \star X_{\sigma'/} \ar[r] & \calC_{T'}. }$$
Using the maximality of $(T, f_{T})$, we deduce that there is no solution to the lifting problem
$$ \xymatrix{ \bd \Delta^{k'} \star X_{\sigma'/} \ar[r]^{h} \ar[d] & \calD \ar[d]^{p} \\
\Delta^{k} \star X_{\sigma'/} \ar[r] & \calE, }$$
so that $h$ does not exhibit $h( \{k' \})$ as a $p$-limit of $h | X_{\sigma'/}$. 
Let $v$ denote the final vertex of $\sigma'$, so that $v \in \calC \times_{ \Delta^n} \{n-1\}$.
Since the projection map $X_{\sigma'/} \rightarrow X \times_{ \calC} \calC_{v/}$ is a trivial
Kan fibration, we conclude that $f_0$ does not exhibit $v$ as a $p$-limit of the diagram
$$X \times_{\calC} \calC_{v/} \rightarrow X \rightarrow \stackrel{f_0}{\rightarrow} \calD.$$
This contradicts our assumption that 
$f_0|(\calC \times_{ \Delta^n} \Delta^{ \{n-1, n\}})$ is a $p$-right Kan extension
of $f_0 |X$. 
\end{proof}

\begin{proposition}\label{critter}
Suppose we are given a diagram of $\infty$-categories $X \stackrel{\phi}{\rightarrow} Y \stackrel{\pi}{\rightarrow} Z$ where $\phi$ is a categorical fibration and $\pi$ is a flat categorical fibration.
Let $Y' \subseteq Y$ be a full subcategory. Let $X' = Y' \times_{Y} X$, let $\pi' = \pi | Y'$, and let
$\psi: \pi_{\ast} X \rightarrow \pi'_{\ast} X'$ denote the projection. Let $K$ be another $\infty$-category, let $\overline{p}: K^{\triangleleft} \rightarrow \pi_{\ast} X$ be a diagram, and suppose that the following conditions are satisfied:
\begin{itemize}
\item[$(i)$] The full subcategory $Y' \times_{Z} K^{\triangleleft} \subseteq Y \times_{Z} K^{\triangleleft}$ is a cosieve on $Y \times_{Z} K^{\triangleleft}$.

\item[$(ii)$] For every object $y \in Y'$ and every morphism $f: z \rightarrow \pi(y)$ in $Z$, 
there exists a $\pi$-Cartesian morphism $\overline{f}: \overline{z} \rightarrow y$ in $Y'$ with
$\pi( \overline{f}) = f$.

\item[$(iii)$] The map $\overline{F}: K^{\triangleleft} \times_{Z} Y \rightarrow X$ classified by
$\overline{p}$ is a $\phi$-right Kan extension of $$F = \overline{F} | ((K \times_{Z} Y) \coprod_{ K \times_{Z} Y'} (K^{\triangleleft} \times_{Z} Y')).$$
\end{itemize}
Then $\overline{p}$ is a $\psi$-limit diagram.
\end{proposition}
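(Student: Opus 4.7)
Plan. Unwinding the definition, $\overline{p}$ is a $\psi$-limit diagram if and only if, for every $m \geq 0$, every lifting problem
\[
\xymatrix@C=1em{
(\bd \Delta^m \ast K^{\triangleleft}) \coprod_{\bd \Delta^m \ast K} (\Delta^m \ast K) \ar[r] \ar[d] & \pi_* X \ar[d]^{\psi} \\
\Delta^m \ast K^{\triangleleft} \ar[r] \ar@{-->}[ur] & \pi'_* X'
}
\]
extending $\overline{p}$ admits a solution. Applying the adjunction of Notation \ref{cabbel}, this translates to a lifting problem against $\phi \colon X \to Y$ whose source is the full simplicial subset $\calC^0 \subseteq \calC := (\Delta^m \ast K^{\triangleleft}) \times_Z Y$ consisting of those simplices whose $\Delta^m \ast K^{\triangleleft}$-component lies in $\bd \Delta^m \ast K^{\triangleleft} \cup \Delta^m \ast K$, or whose $Y$-component lies in $Y'$.

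To solve this translated problem, I apply Lemma \ref{hodo2} with its parameter ``$n$'' set to $m+2$, using the composite projection $q \colon \calC \to \Delta^m \ast K^{\triangleleft} \to \Delta^{m+2}$ in which $\Delta^m$ maps identically to $\Delta^{\{0,\ldots,m\}}$, the cone point of $K^{\triangleleft}$ goes to vertex $m+1$, and all of $K$ collapses to vertex $m+2$. A direct check identifies the source of the translated lifting problem with $(\calC \times_{\Delta^{m+2}} \Lambda^{m+2}_{m+2}) \coprod_{\calC^0 \times_{\Delta^{m+2}} \Lambda^{m+2}_{m+2}} \calC^0$. Condition (i) of Lemma \ref{hodo2} — asserting that $\calC^0$ restricts to a cosieve on $\calC \times_{\Delta^{m+2}} \Delta^{\{m+1, m+2\}} = K^{\triangleleft} \times_Z Y$ — is precisely hypothesis (i) of the Proposition. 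Condition (ii), requiring $q$-Cartesian lifts over each edge from $\{i\}$ to $\{m+1\}$ for $i \leq m$, follows from the $\pi$-Cartesian lifts provided by hypothesis (ii), once one observes that such lifts in $Y$ induce $q$-Cartesian morphisms in $\calC$ through the factorization of $q$ (since the first map $\calC \to \Delta^m \ast K^{\triangleleft}$ is itself a pullback of $\pi$, and edges of $\Delta^m \ast \{v\}$ lie in the image of this factorization).

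The heart of the argument is the verification of condition (iii) of Lemma \ref{hodo2}: the restriction of $f_0$ to $\calC \times_{\Delta^{m+2}} \Delta^{\{m+1, m+2\}} = K^{\triangleleft} \times_Z Y$ must be a $\phi$-right Kan extension of its further restriction to
\[
X_{\mathrm{Lem}} := (\calC \times_{\Delta^{m+2}} \{m+2\}) \coprod_{\calC^0 \times_{\Delta^{m+2}} \{m+2\}} (\calC^0 \times_{\Delta^{m+2}} \Delta^{\{m+1,m+2\}}).
\]
Under the adjunction of Notation \ref{cabbel}, the restriction of $f_0$ to $K^{\triangleleft} \times_Z Y$ corresponds to the map $\overline{F}$ of hypothesis (iii), while $X_{\mathrm{Lem}}$ corresponds to $(K \times_Z Y) \coprod_{K \times_Z Y'} (K^{\triangleleft} \times_Z Y')$, on which $f_0$ restricts to the map $F$. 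Thus hypothesis (iii) of the Proposition supplies exactly the Kan extension condition required by Lemma \ref{hodo2}. Once the lemma is applied, the resulting extension produces, via the adjunction, a solution to the original lifting problem.

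The main obstacle will be the careful bookkeeping: verifying that, under the projection $q$, the horn $\Lambda^{m+2}_{m+2}$ and the various faces of $\Delta^{m+2}$ pull back precisely to the unions $\bd \Delta^m \ast K^{\triangleleft} \cup \Delta^m \ast K$ and their fiber products with $Y$ and $Y'$, and confirming the induction of $q$-Cartesian morphisms from $\pi$-Cartesian ones through the two-step factorization of $q$. No separate treatment of low $m$ is needed, since Lemma \ref{hodo2}'s requirement $n \geq 2$ corresponds to $m \geq 0$ under our chosen projection.
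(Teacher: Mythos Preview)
Your overall plan—translating the $\psi$-limit lifting problem via the adjunction defining $\pi_*$, projecting $(\Delta^m \star K^{\triangleleft}) \times_Z Y$ to $\Delta^{m+2}$, and invoking Lemma~\ref{hodo2}—is exactly the paper's approach (the paper writes $\Delta^{n-1} \star K$ with $n-1 = m+1$, absorbing the cone point into the simplex). But your choice of $\calC^0$ is wrong, and this breaks the verification of the lemma's hypotheses.

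You set $\calC^0$ equal to the \emph{entire source} of the translated lifting problem: simplices over $\bd\Delta^m \star K^{\triangleleft} \cup \Delta^m \star K$, together with simplices over $Y'$. With that choice, observe that $K^{\triangleleft}$ already lies inside $\bd\Delta^m \star K^{\triangleleft}$ (take the $\Delta^m$-component to be empty), so $\calC^0 \times_{\Delta^{m+2}} \Delta^{\{m+1,m+2\}}$ is \emph{all} of $K^{\triangleleft} \times_Z Y$, not $K^{\triangleleft} \times_Z Y'$. Hence condition~(i) of Lemma~\ref{hodo2} is vacuous and has nothing to do with hypothesis~(i) of the Proposition; likewise your $X_{\mathrm{Lem}}$ collapses to $K^{\triangleleft} \times_Z Y$, making condition~(iii) trivial. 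Meanwhile $\calC^0 \times_{\Delta^{m+2}} \{m+1\}$ becomes all of $\{v\} \times_Z Y$, so condition~(ii) now demands $q$-Cartesian lifts ending at \emph{every} $y \in Y$ over the cone point's image—but hypothesis~(ii) of the Proposition only supplies $\pi$-Cartesian lifts for $y \in Y'$. So with your $\calC^0$, Lemma~\ref{hodo2} cannot be applied.

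The correct choice is the smaller subcategory $\calC^0 = \calC \times_Y Y' = (\Delta^m \star K^{\triangleleft}) \times_Z Y'$. Then the source of the translated lifting problem is genuinely the pushout $(\calC \times_{\Delta^{m+2}} \Lambda^{m+2}_{m+2}) \coprod_{\calC^0 \times_{\Delta^{m+2}} \Lambda^{m+2}_{m+2}} \calC^0$ (since $\calC \times_{\Delta^{m+2}} \Lambda^{m+2}_{m+2} = (\bd\Delta^m \star K^{\triangleleft} \cup \Delta^m \star K) \times_Z Y$), and now conditions (i), (ii), (iii) of Lemma~\ref{hodo2} line up one-for-one with hypotheses (i), (ii), (iii) of the Proposition, exactly as you describe in the second half of your argument.
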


\begin{proof}
We must show that for $n \geq 2$, every lifting problem of the form
$$ \xymatrix{ \bd \Delta^{n-1} \star K \ar[r]^{f_0} \ar[d] & \pi_{\ast} X \ar[d]^{\psi} \\
\Delta^{n-1} \star K \ar[r] \ar@{-->}[ur] & \pi'_{\ast} X' }$$
admits a solution, provided that $f_0 | \{n-1\} \star K$ coincides with $\overline{p}$. Let
$\calC = ( \Delta^{n-1} \star K) \times_{Z} Y$, and observe that $\calC$ is equipped with a map
$\calC \rightarrow \Delta^n$. Let $\calC^{0} = \calC \times_{Y} Y'$. 
Unwinding the definitions, we see that it suffices to solve a lifting problem of the form
$$ \xymatrix{ (\calC \times_{ \Delta^n} \Lambda^n_n) \coprod_{ \calC^{0} \times_{ \Delta^n} \Lambda^n_n} \calC^{0} \ar[r] \ar[d] & Y \ar[d]^{\phi} \\
\calC \ar[r] \ar@{-->}[ur] & Z. }$$
The desired result now follows from Lemma \ref{hodo2} and our hypothesis on $\overline{F}$.
\end{proof}

We will typically apply Proposition \ref{critter} in the special case where $Y' = \emptyset$, so that
$\pi'_{\ast} X' \simeq Z$. In this cases, conditions $(i)$ and $(ii)$ are automatic.

\begin{proposition}\label{sabre1}
Suppose we are given a diagram of categorical fibrations $X \stackrel{\phi}{\rightarrow} Y \stackrel{\pi}{\rightarrow} Z$ where $\pi$ is a Cartesian fibration and $\phi$ is a categorical fibration. Suppose that the following condition is satisfied:
\begin{itemize}
\item[$(\ast)$] For every vertex $x$ of $X$ and every $\pi$-Cartesian edge $f: \phi(x) \rightarrow y$ in $Y$, there exists a $\phi$-coCartesian edge $\overline{f}: x \rightarrow \overline{y}$ such
that $f = \phi( \overline{f} )$. 
\end{itemize}
Then:
\begin{itemize}
\item[$(1)$] The map $\psi: \pi_{\ast} X \rightarrow Z$ is a coCartesian fibration.
\item[$(2)$] Let $\overline{e}$ be an edge of $\pi_{\ast} X$ lying over an edge $e: z \rightarrow z'$ in
$Z$, corresponding to a map $F: \Delta^1 \times_{Z} Y \rightarrow X$. Then $\overline{e}$ is $\psi$-coCartesian if and only if the following condition is satisfied:
\begin{itemize}
\item[$(a)$] For every $\pi$-Cartesian edge $\widetilde{e}$ of $Y$ lying over $e$, the image
$F(e)$ is a $\phi$-coCartesian edge of $X$.
\end{itemize}
\end{itemize}
\end{proposition}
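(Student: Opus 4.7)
The plan is to establish both parts by constructing $\psi$-coCartesian lifts that by construction satisfy characterization $(a)$ in $(2)$, and then inferring the rest from uniqueness. As a preliminary observation, Example \ref{cartflat} tells us that $\pi$ is a flat categorical fibration, so by Proposition \ref{saeve} the map $\psi: \pi_{\ast}X \to Z$ is itself a categorical fibration, in particular an inner fibration.

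I first prove $(1)$ together with the ``if'' direction of $(2)$. Fix a vertex $\tilde{z} \in \pi_{\ast}X$ over $z \in Z$, corresponding to a map $s: Y_z \to X$ over $Y_z \hookrightarrow Y$, and fix an edge $e: z \to z'$ of $Z$. An edge $\overline{e}: \tilde{z} \to \tilde{z}'$ of $\pi_{\ast}X$ lying over $e$ is, by adjunction, the same datum as a map $F: W := \Delta^1 \times_Z Y \to X$ over $Y$ with $F|W_0 = s$, where $W_0 = \{0\} \times_Z Y = Y_z$. The projection $q_W: W \to \Delta^1$ inherits from $\pi$ the structure of a Cartesian fibration. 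For each $y \in W_1 = Y_{z'}$ choose a $q_W$-Cartesian edge $\tilde{\epsilon}_y: y_0 \to y$ covering $0 \to 1$; hypothesis $(\ast)$ then yields a $\phi$-coCartesian edge $\overline{\tilde{\epsilon}_y}: s(y_0) \to \overline{y}$ of $X$ lifting $\tilde{\epsilon}_y$. These choices can be organized into a map $F_0$ defined on the simplicial subset of $W$ consisting of $W_0$, $W_1$, and the chosen Cartesian edges; Proposition \ref{balder} identifies $W$ up to categorical equivalence over $Y$ with the pushout $W_0 \coprod_{\calX \times \{0\}} (\calX \times \Delta^1) \coprod_{\calX \times \{1\}} W_1$, where $\calX$ is the $\infty$-category of sections of $q_W$. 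Since $\phi$ is a categorical fibration, $F_0$ extends to the desired $F: W \to X$, and by construction the resulting $\overline{e}$ satisfies $(a)$.

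Next I verify that any such $\overline{e}$ is $\psi$-coCartesian: given $n \geq 2$, every extension problem
$$\xymatrix{\Lambda^n_0 \ar[d] \ar[r]^-{G_0} & \pi_{\ast}X \ar[d]^{\psi} \\ \Delta^n \ar[r]^-{\tau} \ar@{-->}[ur] & Z}$$
with $G_0|\Delta^{\{0,1\}} = \overline{e}$ must admit a solution. Setting $\overline{W} := \Delta^n \times_Z Y$ and unwinding the adjunction, this amounts to extending a map $H_0: \overline{W} \times_{\Delta^n} \Lambda^n_0 \to X$ over $Y$ to a map $\overline{W} \to X$ over $Y$, under the hypothesis that $H_0$ sends every $q_{\overline{W}}$-Cartesian edge from $\overline{W}_0$ to $\overline{W}_1$ (where $q_{\overline{W}}: \overline{W} \to \Delta^n$ is the induced Cartesian fibration) to a $\phi$-coCartesian edge of $X$. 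The extension is built by induction over skeleta of $\overline{W}$, following the pattern of Lemma \ref{hodo2} but with the horn $\Lambda^n_n$ replaced by $\Lambda^n_0$ and the right-Kan-extension condition replaced by propagation along $\phi$-coCartesian edges: the subcategory $\calC^0 \subseteq \overline{W}$ is taken to be $\overline{W}_0$ together with the images of chosen $q_{\overline{W}}$-Cartesian sections of the restrictions $\Delta^{\{0,i\}} \times_Z Y \to \Delta^{\{0,i\}}$, and hypothesis $(\ast)$ supplies the coCartesian lifts needed at each inductive step.

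Finally, the ``only if'' half of $(2)$ follows from the uniqueness of coCartesian lifts: any $\psi$-coCartesian edge over $e$ with source $\tilde{z}$ is equivalent (over $\id_e$) to the $\overline{e}$ produced above, and property $(a)$ is invariant under such equivalences. The main technical obstacle is the skeletal extension of $H_0$ in the verification step: although formally analogous to Lemma \ref{hodo2}, this extension mixes the Cartesian fibration structure of $q_{\overline{W}}$ with the coCartesian propagation supplied by $(\ast)$, so it is not deducible from Lemma \ref{hodo2} by a naive op-duality and requires a careful bookkeeping of both structures simultaneously.
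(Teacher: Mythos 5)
Your proposal and the paper's proof share a common skeleton (establish the ``if'' direction of $(2)$ together with $(1)$ by producing a special lift, then deduce ``only if'' from uniqueness), but your implementation of the existence step has a genuine gap. You define $F_0$ on ``$W_0$, $W_1$, and the chosen Cartesian edges'' and then assert that ``since $\phi$ is a categorical fibration, $F_0$ extends to the desired $F: W \to X$.'' This does not follow: the inclusion of that simplicial subset into $W$ is not a trivial cofibration in the Joyal model structure, so the lifting property of categorical fibrations does not apply directly. Moreover, what you have actually constructed is not a map out of $W_1$ at all --- the choices $\overline{y}$ only determine $F_0$ on the \emph{vertices} of $W_1$; you have said nothing about morphisms and higher simplices of $W_1$, so $F_0$ is not defined on the simplicial subset you claim. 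Invoking Proposition \ref{balder} does not close this gap either, because that result identifies $W$ with the pushout $W_0 \coprod_{\calX \times \{0\}} (\calX \times \Delta^1) \coprod_{\calX \times \{1\}} W_1$ only up to categorical equivalence, and your $F_0$ is not defined on the full $\calX \times \Delta^1$ piece (the sections $\calX$ form an $\infty$-category, not just a set of chosen edges, and coherence of the $\phi$-coCartesian lifts across all of $\calX$ is precisely what must be arranged).

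The paper circumvents this entirely by working with the mapping cylinder rather than the pushout of Proposition \ref{balder}: it uses Proposition \toposref{simplexplay} to replace the Cartesian fibration $q_W: W \to \Delta^1$ by a quasi-equivalent mapping cylinder $M(\chi)$ for the induced functor $\chi: W_1 \to W_0$, applies Proposition \toposref{princex} to transfer the lifting problem, and then solves the resulting lifting problem directly by noting that $\Fun(W_1, X) \to \Fun(W_1, Y)$ is a coCartesian fibration (Proposition \toposref{doog}) and applying hypothesis $(\ast)$ levelwise. This mapping-cylinder reduction is what allows the $\phi$-coCartesian lifts to be chosen coherently. Your second paragraph, the verification that the special edge is $\psi$-coCartesian, also only gestures at the argument: you acknowledge yourself that the extension over skeleta ``requires a careful bookkeeping of both structures simultaneously'' and is ``not deducible from Lemma \ref{hodo2} by a naive op-duality,'' but you do not carry out that bookkeeping. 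The paper again handles this step by the same mapping-cylinder trick, reducing to the concrete lifting problem $(X_n \times \Lambda^n_i, X_n \times \Delta^n)$ against $\phi$, which has a solution by Proposition \toposref{doog}. Your ``only if'' argument via uniqueness of coCartesian lifts is fine and matches the paper.
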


\begin{proof}
We use the same strategy as in the proof of Proposition \toposref{presalad}.
Proposition \ref{saeve} guarantees that $\psi$ is a categorical fibration, and in particular
an inner fibration. Let us say that an edge of $\pi_{\ast} X$ is {\it special} if it satisfies condition $(a)$. 
We will prove:
\begin{itemize}
\item[$(i)$] For every vertex $A \in \pi_{\ast} X$ and every edge $e: \psi(A) \rightarrow z$ in
$Z$, there exists a special edge $\overline{e}: A \rightarrow \overline{z}$ in $\pi_{\ast} X$
such that $\psi( \overline{e} ) = e$.
\item[$(ii)$] Every special edge of $\pi_{\ast} X$ is $\psi$-coCartesian.
\end{itemize}
This will prove $(1)$ and the ``if'' direction of $(2)$. To prove the ``only if'' direction, we consider
an arbitrary $\psi$-coCartesian edge $\overline{e}: A \rightarrow B$ in $\pi_{\ast} X$ covering an edge $e: z \rightarrow z'$ in $Z$. Using $(i)$, we can choose a special edge $\overline{e}': A \rightarrow C$ in $\pi_{\ast} X$ covering $e$. Using the assumption that $\overline{e}$ is $\psi$-coCartesian, we can choose
a $2$-simplex
$$ \xymatrix{ & B \ar[dr]^{ \overline{e}'' } & \\
A \ar[ur]^{ \overline{e}'} \ar[rr]^{ \overline{e} } & & C}$$
whose image in $Z$ is degenerate. Since $\overline{e}'$ and $\overline{e}$ are both
$\psi$-coCartesian (by $(ii)$), we conclude that $\overline{e}''$ is an equivalence in the $\infty$-category
$(\pi_{\ast} X)_{z'}$. Since $\overline{e}'$ satisfies $(a)$, we deduce that $\overline{e}$ satisfies
$(a)$, as desired.

We now prove $(i)$. Without loss of generality, we may replace $X$ and $Y$ by their pullbacks along the edge $e: \Delta^1 \rightarrow Z$, and thereby reduce to the case $Z = \Delta^1$. We can identify
$A$ with a section of the projection map $X_0 \rightarrow Y_0$. To produce an edge
$\overline{e}: A \rightarrow \overline{z}$ as in $(i)$, we must solve the lifting problem depicted in the diagram
$$ \xymatrix{ Y_0 \ar[d] \ar[r]^{A} & X \ar[d]^{\phi} \\
Y \ar[r] \ar@{--}[ur]^{A'} & Y. }$$
Moreover, $\overline{e}$ is special if and only if the map $A'$ carries $\pi$-Cartesian morphisms
of $Y$ to $\phi$-coCartesian morphisms in $X$. 
Using Proposition \toposref{simplexplay}, we can choose a functor $\chi: X_1 \rightarrow X_0$
and a quasi-equivalence $M(\phi) \rightarrow X$.
Using Propositions \toposref{princex}, we may reduce to the problem of providing a dotted arrow in the diagram
$$ \xymatrix{ X_{0} \ar@{^{(}->}[d] \ar[r] & X \ar[d]_{\phi} \\
M(\phi) \ar@{-->}[ur] \ar[r] & Y }$$
which carries the marked edges of $M^{\natural}(\phi)$ to $\phi$-coCartesian edges of $X$.
This follows from the fact that $\phi^{X_1}: \Fun(X_1, X) \rightarrow \Fun(X_1, Y)$ is a coCartesian fibration and the description of the $\phi^{X_1}$-coCartesian morphisms (Proposition \toposref{doog}).

The proof of $(ii)$ is similar. We wish to prove that every lifting problem
$$ \xymatrix{ \Lambda^n_0 \ar[r] \ar@{^{(}->}[d] & \pi_{\ast} X \ar[d]^{\psi} \\
\Delta^n \ar[r] \ar@{-->}[ur] & Z }$$
has a solution provided that $n \geq 2$ and the upper horizontal map carries $\Delta^{ \{0,1\} } \subseteq \Lambda^n_0$ to a special edge of $\pi_{\ast} X$. Replacing $X$ and $Y$ by their pullbacks along
$\Delta^n \rightarrow Z$, we can assume that the lower horizontal map is an isomorphism. 
Unwinding the definitions, we are reduced to solving the lifting problem
$$ \xymatrix{ Y \times_{\Delta^n} \Lambda^n_0 \ar[r] \ar@{^{(}->}[d] & X \ar[d]^{\phi} \\
Y \ar[r] \ar@{-->}[ur] & Y. }$$
Using Proposition \toposref{simplexplay}, we can choose a composable sequence of morphisms
$$ \chi: Y_0 \leftarrow \cdots \leftarrow Y_{n} $$
and a quasi-equivalence $M(\chi) \rightarrow Y$. Invoking
Propositions \toposref{princex}, we may reduce to the associated mapping problem
$$ \xymatrix{ M(\psi) \times_{ \Delta^n} \Lambda^n_0 \ar[r] \ar[d] & X \ar[d]^{\phi} \\
M(\psi) \ar[r] \ar@{-->}[ur] & Y.}$$
This is equivalent to the mapping problem
$$ \xymatrix{ X_{n} \times \Lambda^n_i \ar[r] \ar@{^{(}->}[d] & X \ar[d]^{\phi} \\
X_{n} \times \Delta^n \ar[r] & Y. }$$
which admits a solution by virtue of Proposition \toposref{doog}.
\end{proof}

\begin{corollary}\label{falch}
Suppose we are given a diagram of categorical fibrations $X \stackrel{\phi}{\rightarrow} Y \stackrel{\pi}{\rightarrow} Z$. Let $M$ be a collection of edges of $Z$ containing all degenerate edges
and $T$ a collection of $2$-simplices of $Z$ containing all degenerate $2$-simplices.
Suppose that the following conditions are satisfied:
\begin{itemize}
\item[$(a)$] The categorical fibration $\pi$ is flat.
\item[$(b)$] For every vertex $y \in Y$ and every edge $f: z \rightarrow \pi(y)$ of
$Z$ which belongs to $M$, there exists a locally $\pi$-Cartesian edge
$\overline{f}: \overline{z} \rightarrow y$ such that $\pi( \overline{f} ) = f$.
\item[$(c)$] For every vertex $x$ of $X$ and every locally $\pi$-Cartesian edge $f: \phi(x) \rightarrow y$ in $Y$ such that $\pi(f) \in M$, there exists a locally $\phi$-coCartesian edge $\overline{f}: x \rightarrow \overline{y}$ such that $f = \phi( \overline{f} )$.
\item[$(d)$] Let $\overline{f}$ be a locally $\phi$-coCartesian edge of $X$ such that
$f = \phi( \overline{f} )$ is locally $\pi$-Cartesian and $\pi(f) \in M$, and suppose that
$f: \Delta^{ \{0,1\} } \rightarrow Y$ is extended to a $2$-simplex $\sigma: \Delta^2 \rightarrow Y$ 
such that $\pi(\sigma) \in T$. Then
$\overline{f}$ determines a $\phi'$-coCartesian morphism of $X \times_{Y} \Delta^2$, where
$\phi': X \times_{Y} \Delta^2 \rightarrow \Delta^2$ denotes the projection.
\end{itemize}
Then:
\begin{itemize}
\item[$(1)$] The map $\psi: \pi_{\ast} X \rightarrow Z$ is a categorical fibration.
\item[$(2)$] For every vertex $x \in \pi_{\ast} X$ and edge morphism $f: \psi(x) \rightarrow z$
of $Z$ which belongs to $M$, there exists a locally $\psi$-coCartesian edge
$\overline{f}: x \rightarrow \overline{z}$ of $\pi_{\ast} X$ with $\psi( \overline{f}) = f$.
\item[$(3)$] Let $\overline{f}$ be an edge of $\pi_{\ast} X$ lying
over an edge $\psi(\overline{f}) = f: z \rightarrow z'$ which belongs to $M$, corresponding to a map
$F: \Delta^1 \times_{Z} Y \rightarrow X$. Then $\overline{f}$ is locally $\psi$-coCartesian if and only if the following condition is satisfied:
\begin{itemize}
\item[$(\ast)$] For every locally $\pi$-Cartesian edge $\widetilde{f}$ of $Y$ lying over $f$, the image
$F( \widetilde{f} )$ is a locally $\phi$-coCartesian edge of $X$.
\end{itemize}
\item[$(4)$] Let $\sigma: \Delta^2 \rightarrow Z$ be $2$-simplex belonging to $T$ such
that the edge $f = \sigma| \Delta^{ \{0,1\} }$ belongs to $M$, and 
let $\overline{f}$ be a locally $\psi$-coCartesian edge of $\pi_{\ast} X$ lying over
$f$. Then $\overline{f}$ determines a
$\psi'$-coCartesian edge of $\pi_{\ast} X \times_{ Z} \Delta^2$, where
$\psi'$ denotes the projection $\pi_{\ast} \times_{Z} \Delta^2 \rightarrow \Delta^2$.
\end{itemize}
\end{corollary}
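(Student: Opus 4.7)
The plan is to deduce the four assertions in sequence, with (1)--(3) reducing cleanly to earlier results and (4) requiring a direct lifting argument in the spirit of Lemma \ref{hodo2}. Assertion (1) is immediate from Proposition \ref{saeve}: since $\pi$ is flat and $\phi$ is a categorical fibration, the functor $\psi = \pi_{\ast}(\phi): \pi_{\ast} X \to Z$ is again a categorical fibration.

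For (2) and the ``if'' direction of (3), I would fix an edge $f: z \to z'$ in $M$ and pull everything back along $f: \Delta^1 \to Z$, writing $\pi_f: Y_f \to \Delta^1$ and $\phi_f: X_f \to Y_f$ for the resulting maps. Condition (b) says $\pi_f$ is a Cartesian fibration, and condition (c) is precisely the hypothesis $(\ast)$ of Proposition \ref{sabre1} for $\phi_f$. Invoking Proposition \ref{sabre1}, $(\pi_f)_{\ast} X_f \to \Delta^1$ is then a coCartesian fibration, and part (2) of that proposition characterizes its coCartesian edges by exactly the condition $(\ast)$ appearing in (3). Since $(\pi_f)_{\ast} X_f \simeq \pi_{\ast} X \times_Z \Delta^1$, this immediately yields the existence of locally $\psi$-coCartesian lifts (assertion (2)) and shows that any edge over $f$ satisfying $(\ast)$ is locally $\psi$-coCartesian. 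The ``only if'' direction of (3) then follows from uniqueness of locally coCartesian lifts up to equivalence in the fiber $(\pi_{\ast} X)_{z'}$, together with the equivalence-invariance of condition $(\ast)$.

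The main obstacle is (4). Fix $\sigma: \Delta^2 \to Z$ in $T$ with $\sigma|\Delta^{\{0,1\}} = f$, and let $\overline{f}$ be a locally $\psi$-coCartesian edge over $f$. Pulling back along $\sigma$, we write $\phi_{\sigma}: X_{\sigma} \to Y_{\sigma}$ and $\pi_{\sigma}: Y_{\sigma} \to \Delta^2$, and $\overline{f}$ corresponds under adjunction to a map $F_1: Y_{\sigma} \times_{\Delta^2} \Delta^{\{0,1\}} \to X_{\sigma}$. Applying the already-established (3), $F_1$ carries every locally $\pi_{\sigma}$-Cartesian edge lying over $f$ to a locally $\phi_{\sigma}$-coCartesian edge of $X_{\sigma}$; condition (d) then upgrades each such edge to a genuinely $\phi_{\sigma}$-coCartesian edge with respect to the projection $X_{\sigma} \to \Delta^2$. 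We wish to show that $\overline{f}$ is $\psi_{\sigma}$-coCartesian, i.e.\ that every lifting problem
\[
\xymatrix{Y_{\sigma} \times_{\Delta^2} \Lambda^n_0 \ar[r]^-{F_0} \ar[d] & X_{\sigma} \ar[d]^{\phi_{\sigma}} \\
Y_{\sigma} \times_{\Delta^2} \Delta^n \ar[r] \ar@{-->}[ur] & Y_{\sigma}}
\]
admits a solution for $n \geq 2$, given that $F_0 | (Y_{\sigma} \times_{\Delta^2} \Delta^{\{0,1\}}) = F_1$.

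The plan for (4) is to run a dual of Lemma \ref{hodo2} adapted to $\Lambda^n_0$. Using condition (b), one chooses, for every vertex of $Y_{\sigma} \times_{\Delta^2} \Delta^n$ lying over an interior vertex of $\Delta^n$, a locally $\pi_{\sigma}$-Cartesian edge down to a vertex over $0$; using condition (c) one lifts these to locally $\phi_{\sigma}$-coCartesian edges in $X_{\sigma}$, which by the preceding paragraph are $\phi_{\sigma}$-coCartesian over $\sigma$. After adjoining these edges one is left with a residual lifting problem whose boundary inclusion, thanks to flatness of $\pi_{\sigma}$ (condition (a)) and an analysis of the resulting filtration on $Y_{\sigma} \times_{\Delta^2} \Delta^n$, can be shown to be inner anodyne, so soluble against the inner fibration $\phi_{\sigma}$. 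The hard part I expect to be the combinatorial verification --- a filtration by full subcategories of $Y_{\sigma} \times_{\Delta^2} \Delta^n$ whose successive differences are analyzed via cofinality, parallel to the inductive argument that proves $(\ast)$ inside Lemma \ref{hodo2} --- and the careful bookkeeping needed to ensure that the $\phi_{\sigma}$-coCartesian edges produced using (c) and (d) assemble coherently into a map extending $F_0$.
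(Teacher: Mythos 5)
Your treatment of assertions (1), (2), and (3) matches the paper's proof exactly: (1) is immediate from Proposition \ref{saeve}, and (2)--(3) follow by pulling back along each edge $f \in M$ and applying Proposition \ref{sabre1} to the diagram $X \times_{Z} \Delta^1 \rightarrow Y \times_{Z} \Delta^1 \rightarrow \Delta^1$.

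For (4), however, you have a genuine gap, and you have missed a substantial shortcut. The paper does not redo a Lemma \ref{hodo2}-style filtration from scratch. Instead it invokes the \emph{dual of Proposition \ref{critter}} (with $Y' = \emptyset$ and $K = \Delta^0$), which reduces $\psi$-coCartesianness of $\overline{f}$ to the statement that the classifying map $\overline{F}: \Delta^1 \times_{Z} Y \rightarrow X$ is a $\phi$-left Kan extension of its restriction to $\{0\} \times_Z Y$. That Kan-extension hypothesis is almost trivial to verify: by condition (b) the projection $Y \times_{\Delta^2} \Delta^{\{0,1\}} \rightarrow \Delta^{\{0,1\}}$ is a Cartesian fibration, so for each $y \in Y_1$ the index category $Y_0 \times_Y Y_{/y}$ has a final object (a $\pi$-Cartesian lift $f: y' \rightarrow y$), reducing the colimit condition to the single requirement that $F(f)$ be $\phi$-coCartesian in $X$ --- which follows from the already-established (3) (giving local $\phi$-coCartesianness) combined with (d) and Lemma \ref{kisker} (upgrading to honest $\phi$-coCartesianness). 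This is the entire content of (4). Your plan, by contrast, sketches a direct $\Lambda^n_0$-filling via a filtration and defers it as ``a hard combinatorial verification.'' That is exactly the combinatorics Lemma \ref{hodo2} already packages, and you should not reprove it.

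More seriously, the filtration you outline would not run as stated. You propose to choose, for each vertex lying over an interior vertex $\{i\}$ of $\Delta^n$, a locally $\pi_{\sigma}$-Cartesian edge from a vertex over $\{0\}$. But condition (b) only supplies Cartesian lifts over edges of $Z$ that belong to $M$. After pulling back along an arbitrary map $\Delta^n \rightarrow \Delta^2$, the edge $\{0\} \to \{i\}$ can project to $\{0\} \to \{2\}$ in $\Delta^2$, and there is no hypothesis guaranteeing that this edge lies in $M$. Proposition \ref{critter} sidesteps this entirely: once the $\phi$-left Kan extension condition is stated abstractly, the filtration in Lemma \ref{hodo2} (applied with $\calC^0 = \emptyset$) never needs to manufacture any additional Cartesian lifts. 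So reformulating (4) via the dual of Proposition \ref{critter} is not just shorter --- it removes an obstruction that your direct approach would have to work hard to circumvent.
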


\begin{proof}
Assertion $(1)$ follows from Proposition \ref{saeve}, and assertions $(2)$ and $(3)$
follow by applying Proposition \ref{sabre1} to the diagram
$X \times_{Z} \Delta^1 \rightarrow Y \times_{Z} \Delta^1 \rightarrow \Delta^1$.
To prove $(4)$, we are free to replace $Z$ by $\Delta^2$ and thereby reduce to the
case where $\sigma$ is an isomorphism. Let $\overline{f}$ be a locally
$\psi$-coCartesian edge of $\pi_{\ast} X$ lying over $\Delta^{ \{0,1\} }$, which
we can identify with a functor $F: Y \times_{ \Delta^2} \Delta^{ \{0,1\} }
\rightarrow X \times_{ \Delta^2} \Delta^{ \{0,1\} }$. We wish to show that $f$ is $\psi$-coCartesian.
By virtue of (the dual of) Proposition \ref{critter}, it will suffice to show that
the functor $F$ is a $\psi$-left Kan extension of $F|Y_0$, where
$Y_0 = Y \times_{ \Delta^2} \{0\}$. Unwinding the definitions, we must show that for
each object $y \in Y \times_{ \Delta^2} \{1\}$, the map $F$ induces a $\phi$-colimit
diagram
$$ \theta: (Y_0 \times_{Y} Y_{/y})^{\triangleright} \rightarrow (Y \times_{\Delta^2} \Delta^{ \{0,1\}})_{/y}^{\triangleright} \rightarrow Y \times_{ \Delta^2} \Delta^{ \{0,1\} } \stackrel{F}{\rightarrow} X.$$
Condition $(b)$ guarantees that the projection $Y \times_{ \Delta^2} \Delta^{ \{0,1\} } \rightarrow \Delta^{ \{0,1\} }$ is a Cartesian fibration, so the $\infty$-category
$Y_0 \times_{Y} Y_{/y}$ has a final object, given by a locally $\pi$-Cartesian morphism
$f: y' \rightarrow y$. It follows that $\theta$ is a $\phi$-colimit diagram if and only if
$F(f)$ is a $\phi$-coCartesian morphism in $X$. Criterion $(3)$ guarantees that
$F(f)$ is locally $\psi$-coCartesian, which implies that $F(f)$ is $\psi$-coCartesian by virtue of assumption $(d)$.
\end{proof}

\begin{proposition}\label{skitter}
Suppose we are given a diagram of $\infty$-categories $X \stackrel{\phi}{\rightarrow} Y \stackrel{\pi}{\rightarrow} Z$ where $\pi$ is a flat categorical fibration and $\phi$ is a categorical fibration.
Let $\psi: \pi_{\ast} X \rightarrow Z$ denote the projection, let $K$ be an $\infty$-category, let
$\overline{p}_0: K^{\triangleleft} \rightarrow Z$ be a diagram, and assume that the induced map
$\pi': K^{\triangleleft} \times_{Z} Y \rightarrow K^{\triangleleft}$ is a coCartesian fibration.
Let $v$ denote the cone point of $K^{\triangleleft}$, let $\calC = {\pi'}^{-1} \{v\}$, and choose
a map $K^{\triangleleft} \times \calC \rightarrow K^{\triangleleft} \times_{Z} Y$ which is the identity
on $\{v\} \times \calC$ and carries $e \times \{C\}$ to a $\pi'$-coCartesian edge of
$K^{\triangleleft} \times_{Z} Y$, for each edge $e$ of $K^{\triangleleft}$ and each object $C$ of $\calC$.
Then:
\begin{itemize}
\item[$(1)$] Let $\overline{p}: K^{\triangleleft} \rightarrow \pi_{\ast} X$ be a map lifting
$\overline{p}_0$, and suppose that for each $C \in \calC$ the induced map
$$ K^{\triangleleft} \times \{C\} \hookrightarrow
K^{\triangleleft} \times \calC \rightarrow K^{\triangleleft} \times_{Z} Y
\rightarrow X$$
is a $\phi$-limit diagram. Then $\overline{p}$ is a $\psi$-limit diagram.

\item[$(2)$] Suppose that $p: K^{\triangleleft} \rightarrow \pi_{\ast} X$ is a map lifting
$p_0 = \overline{p}_0 | K$, and suppose that for each $C \in \calC$ the induced map
$$ K \times \{C\} \hookrightarrow K \times \calC \rightarrow K \times_{Z} Y
\rightarrow X$$ admits a $\psi$-limit diagram lifting the map
$$K^{\triangleleft} \times \{C\} \hookrightarrow K^{\triangleleft} \times \calC
\rightarrow K^{\triangleleft} \times_{Z} Y \rightarrow Y.$$ 
Then there exists an extension $\overline{p}: K^{\triangleleft} \rightarrow \pi_{\ast} X$
of $p$ lifting $\overline{p}_0$ which satisfies condition $(1)$.
\end{itemize}
\end{proposition}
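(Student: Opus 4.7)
The plan is to reduce both assertions to Proposition \ref{critter} (applied with $Y' = \emptyset$, so that conditions (i) and (ii) of that proposition hold vacuously) together with a cofinality fact about coCartesian sections. A lift $\overline{p}: K^{\triangleleft} \to \pi_\ast X$ of $\overline{p}_0$ corresponds, by the universal property of $\pi_\ast$, to a map $\overline{F}: K^{\triangleleft} \times_Z Y \to X$ over $Y$. Proposition \ref{critter} identifies the $\psi$-limit diagrams among such $\overline{p}$ with those $\overline{F}$ that are $\phi$-right Kan extensions of $F := \overline{F} \mid (K \times_Z Y)$, and the pointwise criterion for relative right Kan extensions reduces this in turn to the requirement that, for each $C \in \calC$, the induced cone
\[ ((K \times_Z Y)_{(v,C)/})^{\triangleleft} \longrightarrow X \]
(with cone point carried to $\overline{F}(v,C)$) is a $\phi$-limit diagram.

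The main technical input will be the cofinality lemma: the chosen section $K^{\triangleleft} \times \calC \to K^{\triangleleft} \times_Z Y$ induces, for each $C \in \calC$, a functor $s_C: K \to (K \times_Z Y)_{(v,C)/}$ which is cofinal. My approach is to observe that the forgetful map $(K \times_Z Y)_{(v,C)/} \to K$ is itself a coCartesian fibration, built from the $\pi'$-coCartesian structure of $K^{\triangleleft} \times_Z Y \to K^{\triangleleft}$, and that $s_C$ is a section of this fibration selecting the initial object (namely the $\pi'$-coCartesian transport of $C$) in each fiber; standard results on coCartesian fibrations then show that such sections are cofinal. Granting this, part (1) follows: precomposition with $s_C$ identifies the displayed cone over $((K \times_Z Y)_{(v,C)/})^{\triangleleft}$ with the composite $K^{\triangleleft} \times \{C\} \hookrightarrow K^{\triangleleft} \times \calC \to K^{\triangleleft} \times_Z Y \xrightarrow{\overline{F}} X$, which is a $\phi$-limit diagram by hypothesis, and cofinal maps both preserve and reflect $\phi$-limits.

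For part (2), I will invoke the existence theorem for relative right Kan extensions: an extension $\overline{F}$ of $F$ over $Y$ which is a $\phi$-right Kan extension exists provided that the pointwise $\phi$-limits (with the specified lift to $Y$) exist at each object $(v, C)$. By the cofinality of $s_C$, the pointwise existence question is equivalent to the hypothesis of (2). The extension thus produced corresponds to a map $\overline{p}: K^{\triangleleft} \to \pi_\ast X$, which by part (1) is then a $\psi$-limit diagram. The principal obstacle is the cofinality of $s_C$; once this is in hand, both assertions are formal consequences of Proposition \ref{critter} and the general theory of relative Kan extensions.
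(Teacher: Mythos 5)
Your proposal is correct and follows essentially the same approach as the paper's proof: both reduce via Proposition \ref{critter} (with $Y' = \emptyset$) to a pointwise relative Kan extension criterion, and both rest on the cofinality of the coCartesian section $s_C$ (the paper exhibits an explicit adjunction $s_C \dashv q$ via a mapping-space computation with Proposition \toposref{charCart}, while you observe that $s_C$ selects initial objects in the fibers of a coCartesian fibration, which is the same argument in disguise). One terminological slip: you should say $s_C$ is \emph{coinitial} (equivalently, $s_C^{op}$ is cofinal), since precomposition along cofinal maps preserves colimits, whereas here you need the map to preserve and reflect $\phi$-\emph{limits}.
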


\begin{proof}
Let $\overline{p}: K^{\triangleleft} \rightarrow \pi_{\ast} X$ satisfy the condition described
in $(1)$. We can identify $\overline{p}$ with a map $\overline{F}: K^{\triangleleft} \times_{Z} Y
\rightarrow X$. In view of Proposition \ref{critter}, it will suffice to show that
$\overline{F}$ is a $\phi$-right Kan extension of $\overline{F} | K \times_{Z} Y$. 
Pick an object $C \in \calC$; we wish to show that $\overline{F}$ is a $\phi$-right Kan extension of
$F$ at $C$. In other words, we wish to show that the map
$$ (K \times_{Z} Y)_{C/}^{\triangleleft} \rightarrow K^{\triangleleft} \times_{Z} Y
\stackrel{\overline{F}}{\rightarrow} X$$
is a $\phi$-limit diagram. Since $\overline{p}$ satisfies $(1)$, it suffices to show that the map
$$ s: K \times \{C\} \rightarrow ((K \times_{Z} Y)_{C/})$$ is the opposite of a cofinal map. It follows from Proposition \toposref{verylonger} that the projection $q: (K^{\triangleleft} \times_{Z} Y)_{C/} \rightarrow K^{\triangleleft}$ is a coCartesian fibration, and that $s$ is a coCartesian section of $q$. To show
that $s^{op}$ is cofinal, it will suffice to show that $s$ admits right adjoint (this follows from Corollary \toposref{hollowtt}). In fact, we will show that the identity map $\id_{K^{\triangleleft}} \rightarrow q \circ s$
exhibits $q$ as a right adjoint to $s$. For this, we must show that for every object
$a \in (K \times_{Z} Y)_{C/}$ and every object $b \in K$, the map
$$\bHom_{ (K \times_{Z} Y)_{C/}}( s(b),a) \rightarrow \bHom_{K}( b, q(a) )$$
is a homotopy equivalence. Let $c = \id_{C}$ denote the initial object of
$(K^{\triangleleft} \times_{Z} Y)_{C/}$, and let $\eta$ denote the unique map from
the cone point $v \in K^{\triangleleft}$ to $b$ in $K^{\triangleleft}$. Using Proposition \toposref{charCart}, we obtain a homotopy pullback diagram
$$ \xymatrix{ \bHom_{ (K^{\triangleleft} \times_{Z} Y)_{C/}}( s(b),a) \ar[d] \ar[r] & \bHom_{  (K^{\triangleleft} \times_{Z} Y)_{C/}}( c,a) \ar[d]^{\theta} \\
\bHom_{ K^{\triangleleft}}( b, q(a) ) \ar[r] & \bHom_{ K^{\triangleleft}}( v, q(a) ).}$$
It therefore suffices to show that the $\theta$ is a homotopy equivalence, which is clear
(both the domain and the codomain of $\theta$ are contractible). This completes the proof
of $(1)$.

We now prove $(2)$. The diagram $p$ gives rise to a map $F: K \times_{Z} Y \rightarrow X$ fitting into a commutative diagram
$$ \xymatrix{ K \times_{Z} Y \ar[r]^{F} \ar[d] & X \ar[d]^{\phi} \\
K^{\triangleleft} \times_{Z} Y \ar[r] \ar@{-->}[ur]^{\overline{F} } & Y. }$$
The above argument shows that a dotted arrow $\overline{F}$ as indicated will correspond to a map $\overline{p}: K^{\triangleleft} \rightarrow \pi_{\ast} X$ satisfying $(1)$ if and only if
$\overline{F}$ is a $\phi$-right Kan extension of $F$. In view of Lemma \toposref{kan2}, the
existence of such an extension is equivalent to the requirement that for each $C \in \calC$, 
the diagram 
$$ (K \times_{Z} Y)_{C/} \rightarrow K \times_{Z} Y
\stackrel{F}{\rightarrow} X$$
can be extended to a $\phi$-limit diagram lifting the map
$$ (K \times_{Z} Y)^{\triangleleft}_{C/} \rightarrow K^{\triangleleft} \times_{Z} Y
\rightarrow Y.$$
This follows from the hypothesis of part $(2)$ together with the fact (established above) that
$s^{op}$ is cofinal.
\end{proof}

\begin{proposition}\label{candied}
Suppose we are given categorical patterns
$\CatP = (M_S, T, \{ p_{\alpha}: K_{\alpha}^{\triangleleft} \rightarrow S\}_{\alpha \in A})$ and
$\CatP' = (M_{S'}, T', \{ p'_{\alpha}: {K'_{\alpha}}^{\triangleleft} \rightarrow S'\}_{\alpha \in A'})$ on $\infty$-categories $S$ and $S'$. Let $\pi: S' \rightarrow S$ be a map satisfying the following conditions:
\begin{itemize}
\item[$(i)$]  For every vertex $s' \in S'$ and every morphism $f: s \rightarrow \pi(s')$ in
$S$ which belongs to $M_{S}$, there exists a locally $\pi$-Cartesian morphism
$\overline{f}: \overline{s} \rightarrow s'$ in $S'$ such that $\pi( \overline{f} ) = f$.
\item[$(ii)$] The map $\pi$ is a flat categorical fibration.
\item[$(iii)$] The map $\pi$ carries $M_{S'}$ into $M_{S}$.
\item[$(iv)$] The collections of morphisms $M_{S}$ and $M_{S'}$ contain all equivalences
and are stable under composition (and are therefore stable under equivalence).
\item[$(v)$] Suppose given a commutative diagram
$$\xymatrix{ & s' \ar[dr]^{g} & \\
s \ar[ur]^{f} \ar[rr]^{h} & & s'' }$$
in $S'$, where $g$ is locally $\pi$-Cartesian, $\pi(g) \in M_{S}$, and $\pi(f)$ is an equivalence. Then
$f \in M_{S'}$ if and only if $h \in M_{S'}$. In particular (taking $f = \id_{s}$), we deduce that
every locally $\pi$-Cartesian morphism $g$ such that $\pi(g) \in M_{S}$ belongs to $M_{S'}$.

\item[$(vi)$] The set of $2$-simplices $T'$ contains $\pi^{-1}(T)$, and $T$ contains
all $2$-simplices $\Delta^2 \rightarrow S$ whose restriction to $\Delta^{ \{0,1\} }$ is an equivalence in $S$.

\item[$(vii)$] Each of the simplicial sets $K_{\alpha}$ is an $\infty$-category, and each of the induced maps $\pi_{\alpha}: K_{\alpha}^{\triangleleft} \times_{S} S' \rightarrow K_{\alpha}^{\triangleleft}$ is a coCartesian fibration.

\item[$(viii)$] Suppose we are given $\alpha \in A$ and a commutative diagram
$$ \xymatrix{ & s' \ar[dr]^{g} & \\
s \ar[ur]^{f} \ar[rr]^{h} & & s'' }$$
in $K_{\alpha}^{\triangleleft} \times_{S} S'$, where
$f$ is $\pi_{\alpha}$-coCartesian and $\pi_{\alpha}(g)$ is an equivalence. Then the image of $g$ in $S'$ belongs to $M_{S'}$
if and only if the image of $h$ in $S'$ belongs to $M_{S'}$. In particular, the image in $S'$ of
any $\pi_{\alpha}$-coCartesian morphism of $K_{\alpha}^{\triangleleft}$ belongs to $M_{S'}$.

\item[$(ix)$] Let $\alpha \in A$, and suppose we are given a map $\overline{p}_{\alpha}: K^{\triangleleft}_{\alpha} \rightarrow S'$ lifting $p_{\alpha}$, such that the corresponding
section of $\pi_{\alpha}$ is $\pi_{\alpha}$-coCartesian. Then $\overline{p}_{\alpha} \simeq p'_{\beta}$ for some $\beta \in A'$. 
\end{itemize}

Let $\pi^{\ast}: \mset{ \CatP} \rightarrow \mset{\CatP'}$ denote the functor
$\overline{X} \mapsto \overline{X} \times_{ (S, M_{S})} (S', M_{S'})$. Then
$\pi^{\ast}$ is a left Quillen functor (with respect to the model structures described in
Theorem \ref{theo}).
\end{proposition}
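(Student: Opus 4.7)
The plan is to exhibit the right adjoint $\pi_{\ast}$ of $\pi^{\ast}$ explicitly and to verify that it sends $\CatP'$-fibered objects to $\CatP$-fibered objects. The functor $\pi^{\ast}$ preserves cofibrations (monomorphisms are stable under pullback), so once we have established this property of $\pi_{\ast}$, the simplicial adjunction $\bHom^{\sharp}_{S'}(\pi^{\ast} \overline{Y}, \overline{Z}') \simeq \bHom^{\sharp}_{S}(\overline{Y}, \pi_{\ast} \overline{Z}')$ will show that $\pi^{\ast}$ carries every $\CatP$-equivalence to a $\CatP'$-equivalence, and hence that $\pi^{\ast}$ is left Quillen.

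Explicitly, I would define $\pi_{\ast} (Z', M_{Z'}) \in \mset{\CatP}$ as the marked simplicial set over $S$ whose underlying simplicial set is $\pi_{\ast} Z'$ in the sense of Notation \ref{cabbel}, with an edge $e$ declared marked precisely when its image $\bar{e}$ in $S$ lies in $M_S$ and the classifying map $\Delta^1 \times_S S' \to Z'$ carries every edge in $M_{S'}$ to an edge in $M_{Z'}$. To verify that $\pi_{\ast} \overline{Z}'$ is $\CatP$-fibered, I would check the six conditions of Definition \ref{catpat} for the composite $Z' \xrightarrow{q} S' \xrightarrow{\pi} S$, where $q$ denotes the structure map of $\overline{Z}'$. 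Condition (1), that the projection $\pi_{\ast} Z' \to S$ is an inner fibration, is furnished by Proposition \ref{saeve}. Conditions (2)--(4), concerning locally coCartesian lifts of edges in $M_S$, the identification of marked edges, and interaction with thin $2$-simplices, follow from Corollary \ref{falch} applied to $Z' \to S' \to S$: its hypotheses are supplied by (ii) (flatness of $\pi$), (i) (locally $\pi$-Cartesian lifts of $M_S$-edges), the $\CatP'$-fibered structure of $\overline{Z}'$ combined with (v) (ensuring the resulting $q$-coCartesian lifts are marked in $M_{S'}$), and (vi) (compatibility of thin $2$-simplices).

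Conditions (5) and (6), concerning the limit diagrams $p_{\alpha}: K_{\alpha}^{\triangleleft} \to S$, will be handled by applying Proposition \ref{skitter} to $Z' \to S' \to S$ after base change along $p_{\alpha}$. Hypothesis (vii) guarantees that $\pi_{\alpha}: K_{\alpha}^{\triangleleft} \times_{S} S' \to K_{\alpha}^{\triangleleft}$ is a coCartesian fibration, putting us in the setting of Proposition \ref{skitter}. For each coCartesian section $s$ of $\pi_{\alpha}$, the composition $K_{\alpha}^{\triangleleft} \xrightarrow{s} K_{\alpha}^{\triangleleft} \times_S S' \to S'$ coincides up to equivalence with some $p'_{\beta}$ by (ix), and carries edges into $M_{S'}$ by (viii); hence the $\CatP'$-fibered structure of $\overline{Z}'$ provides, over each such section, the $q$-limit diagrams required by Proposition \ref{skitter}. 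That proposition then packages these pointwise $q$-limits into $\psi$-limit diagrams in $\pi_{\ast} Z' \to S$ (where $\psi = \pi \circ q_\ast$), verifying both the variant $(5')$ of Remark \ref{ratpat} (existence of extensions) and condition (6) (the limit property).

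I expect the principal obstacle to lie in the last step: one must check that the coCartesian sections of $\pi_{\alpha}$ parameterize precisely the data accessible to the $\CatP'$-fibered structure of $\overline{Z}'$, and that the pointwise limit data furnished by that structure really does assemble, via Proposition \ref{skitter}, into the global $\pi_{\ast} q$-limit diagrams demanded by Definition \ref{catpat}. The hypotheses (vii)--(ix), taken together with Proposition \ref{skitter}, have evidently been arranged to make this coordination work, but verifying it in detail will require careful bookkeeping of markings along coCartesian sections and of the cofinal projections used in the proof of Proposition \ref{skitter}.
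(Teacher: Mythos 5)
Your high-level strategy matches the paper's: exhibit the right adjoint $\pi_{\ast}$, show it carries $\CatP'$-fibered objects to $\CatP$-fibered objects, and conclude via the adjunction that $\pi^{\ast}$ preserves weak equivalences. You also identify the correct auxiliary tools (Propositions \ref{saeve}, \ref{sabre1}/Corollary \ref{falch}, and Proposition \ref{skitter}) and the correct hypotheses to feed into them. But there is a genuine gap in your construction of the right adjoint, and it propagates through the verification.

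The underlying simplicial set of the right adjoint is \emph{not} the full $\pi_{\ast} Z'$ of Notation \ref{cabbel}. A vertex of $\pi_{\ast} Z'$ over $s \in S$ is an arbitrary section $S'_s \to Z'_s$; the right adjoint in $\mset{\CatP}$ must take only the \emph{full simplicial subset} $X \subseteq \pi_{\ast} Z'$ spanned by those vertices whose section carries every $M_{S'}$-edge of $S'_s$ into $M_{Z'}$. This is forced by the adjunction: a degenerate (hence marked) edge of $\overline{Y}$ crossed with an $M_{S'}$-edge of $S'$ is marked in $\pi^{\ast}\overline{Y}$, so any morphism $\pi^{\ast}\overline{Y} \to \overline{Z}'$ imposes a constraint on each individual vertex of $Y$, not only on its marked edges. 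If you instead keep all of $\pi_{\ast} Z'$ with your marking rule, the natural bijection $\Hom(\pi^{\ast}\overline{Y}, \overline{Z}') \simeq \Hom(\overline{Y}, \pi_{\ast}\overline{Z}')$ fails at the level of vertices, so the argument ``$\pi^{\ast}$ preserves $\CatP$-equivalences because $\pi_{\ast}$ preserves fibrancy'' never gets off the ground.

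Once the right adjoint is corrected to the subcategory $X \subseteq W := \pi_{\ast} Z'$, a second layer of work appears that your sketch does not address: after invoking Corollary \ref{falch} to produce locally coCartesian lifts in $W$, and Proposition \ref{skitter} to produce $\psi$-limit diagrams in $W$, one must check that these lifts and limits actually \emph{land in the subcategory} $X$. This is where conditions $(iv)$, $(v)$, $(vi)$, $(viii)$ are genuinely used, and it requires explicit diagram chases (e.g.\ factoring an $M_{S'}$-edge through a locally $\pi_{\Delta^1}$-Cartesian edge and a fiberwise edge, then applying the stability of markings under composition and the thin-simplex hypothesis to conclude membership in $M_{Z'}$). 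There is also a preliminary step you omit: showing that the adjunction-defined marking on $X$ agrees with the restriction of the marking $M_W$ used in Corollary \ref{falch} (the ``carries $\pi_{\Delta^1}$-Cartesian edges to marked edges'' condition versus ``carries all $M_{S'}$-edges to marked edges''); this identification again needs $(iv)$ and $(v)$. You are right to flag that the final bookkeeping is delicate, but the delicacy begins earlier than you anticipate.
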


\begin{proof}
The functor $\pi^{\ast}$ admits a right adjoint $\overline{\pi}_{\ast}$, given by the formula
$\overline{\pi}_{\ast} (X',M') = (X,M)$, where:
\begin{itemize}
\item[$(a)$] The simplicial set $X$ is the full simplicial subset of
$\pi_{\ast} X'$ spanned by those vertices lying over objects $s \in S$ which classify maps
$S'_{s} \rightarrow X'_{s}$ which carry edges of $M_{S'}$ (which belong to $S'_{s}$)
into $M'$.
\item[$(b)$] An edge $e$ of $X$ belongs to $M$ if and only if its image in
$S$ belongs to $M_{S}$, and $e$ classifies a map
$S' \times_{ S} \Delta^1 \rightarrow X'$ which carries the inverse image of
$M_{S'}$ in $S' \times_{S} \Delta^1$ into $M'$. 
\end{itemize}
We wish to prove that the adjoint functors $(\pi^{\ast}, \overline{\pi}_{\ast})$ give
a Quillen adjunction between $\mset{ \CatP}$ and $\mset{ \CatP'}$. 
To prove this, it will suffice to show that $\pi^{\ast}$ preserves cofibrations and
weak equivalences. The case of cofibrations is obvious. To prove that
$\pi^{\ast}$ preserves weak equivalences, consider an arbitrary $\CatP$-equivalence $\overline{Y} \rightarrow \overline{Z}$. We wish to prove that for every $\CatP$-fibered object $\overline{X} \in \mset{ \CatP'}$, the induced map
$$ \bHom_{S'}^{\sharp}( \pi^{\ast} \overline{Z}, \overline{X}') 
\rightarrow \bHom_{S'}^{\sharp}( \pi^{\ast} \overline{Y}, \overline{X} )$$
is a homotopy equivalence. We can identify this with the canonical map
$$ \bHom_{S}^{\sharp}( \overline{Z}, \overline{\pi}_{\ast} \overline{X}')
\rightarrow \bHom_{S}^{\sharp}( \overline{Y}, \overline{\pi}_{\ast} \overline{X}').$$
It will therefore suffice to show that $\overline{\pi}_{\ast} \overline{X}'$ is $\CatP$-fibered.

Write $\overline{X}' = (X', M')$, and let $\overline{\pi}_{\ast} \overline{X}' = (X,M)$ where
$(X,M)$ is described by $(a)$ and $(b)$, and let $p: X' \rightarrow S'$ denote the projection.
Set $W = \pi_{\ast} X'$, so that $X$ can be identified with a full simplicial subset of $W$. 
Let $M_W$ denote the collection of edges $e: \Delta^1 \rightarrow W$ satisfying the following condition:
\begin{itemize}
\item[$(\ast)$] The image of $e$ in $S$ belongs to $M_{S}$, and the edge $e$ classifies a map $S' \times_{S} \Delta^1 \rightarrow X'$ which
carries $\pi_{\Delta^1}$-Cartesian edges of $S' \times_{S} \Delta^1$ into $M'$, where
$\pi_{\Delta^1}: S' \times_{S} \Delta^1 \rightarrow \Delta^1$ denotes the projection.
\end{itemize}
We claim that $M$ is the inverse image of $M_W$ in $X$. To see that $M$ is contained
in this inverse image, it suffices to observe that every locally $\pi$-Cartesian edge of $\pi^{-1} M_S$ belongs
to $M_{S'}$, which follows from $(v)$. Conversely, suppose that $e: x \rightarrow x'$ is an edge of $X$ belonging
to $M_{W}$, and let $e$ classify a map $E: S' \times_{S} \Delta^1 \rightarrow X'$.
We wish to prove that if $f$ is an edge of $S' \times_{S} \Delta^1$ whose image in
$S'$ belongs to $M_{S'}$, then $E(f) \in M'$. If the composite map
$\Delta^1 \stackrel{f}{\rightarrow} S' \times_{S} \Delta^1 \rightarrow \Delta^1$
is not the identity, then the inclusion $E(f) \in M'$ follows from the assumption that
the vertices $x$ and $x'$ belong to $X$. Otherwise, we can factor
$f$ as a composition $f' \circ f''$, where $f''$ is a morphism in
$S' \times_{S} \{0\}$ and $f'$ is $\pi_{\Delta^1}$-Cartesian. Using $(v)$, we see that
the image of $f''$ in $S'$ belongs to $M_{S'}$, so that $E(f'') \in M'$ by virtue of our assumption
that $x \in X$. Condition $(\ast)$ guarantees that $E(f') \in M'$. Using the assumption that
$\overline{X}'$ is $\CatP'$-fibered, we deduce that $E(f) \in M'$ as desired.

We wish to prove that the pair $(X,M)$ is $\CatP$-fibered. For this, we will verify that the map
$q: X \rightarrow S$ satisfies conditions $(1)$, $(2)$, $(3)$, $(4)$, and $(6)$ of Definition \ref{catpat}, together with condition $(5')$ of Remark \ref{ratpat}.

\begin{itemize}
\item[$(1)$] We must show that the map $q: X \rightarrow S$ is an inner fibration.
It follows from Proposition \ref{fibraman} (together with conditions $(iv)$ and $(vi)$) that  $X' \rightarrow S'$ is a categorical fibration. 
Proposition \ref{saeve} and assumption $(ii)$ guarantee that the map $q': W \rightarrow S$ is a categorical fibration, and therefore an inner fibration. Since $X$ is a full simplicial subset of $W$, it follows also that $X \rightarrow S$ is an inner fibration.

\item[$(2)$] For each edge $\Delta^1 \rightarrow S$ belonging to $M_S$, the induced map
$q_{\Delta^1}: X \times_{S} \Delta^1 \rightarrow \Delta^1$ is a coCartesian fibration.
It follows from Corollary \ref{falch} that the map $q'_{\Delta^1}: W \times_{S} \Delta^1 \rightarrow \Delta^1$ is a coCartesian fibration, and that an edge of $W \times_{S} \Delta^1$ is
$q'_{\Delta^1}$-coCartesian if and only if its image in $W$ belongs to $M_{W}$.
To complete the proof, it will suffice to show that if $f: x \rightarrow y$ is
a $q'_{\Delta^1}$-coCartesian morphism in $W \times_{S} \Delta^1$ with nondegenerate
image in $\Delta^1$ and $x \in X \times_{S} \{0\}$, then $y \in X \times_{S} \{1\}$.
We can identify $f$ with a map $F: S' \times_{S} \Delta^1 \rightarrow X'$.
To prove that $y \in X \times_{S} \{1\}$, we must show that for every morphism
$\alpha: t \rightarrow t'$ in $S' \times_{S} \{1\}$ whose image in $S'$ belongs to $M_{S'}$, 
we have $F(\alpha) \in M'$. Form a commutative diagram
$$ \xymatrix{ s \ar[r]^{\beta} \ar[d]^{\alpha'} \ar[dr]^{\gamma} & t \ar[d]^{\alpha} \\
s' \ar[r]^{\beta'} & t' }$$
in $S' \times_{S} \Delta^1$, where $s,s' \in S' \times_{S} \{0\}$ and the horizontal maps
are $\pi_{\Delta^1}$-Cartesian. Condition $(v)$ guarantees that the images of $\beta$ and $\beta'$
in $S'$ belong to $M_{S'}$. Invoking $(iv)$, we deduce that the image of $\gamma$ in $S'$ belongs to $M_{S'}$. Invoking $(v)$ again, the image 
of $\alpha'$ in $S$ belongs to $M_{S'}$. 
Since the image of $f$ in $Y$ belongs to $M_W$, and $x \in X \times_{S} \{0\}$, we
conclude that $F$ carries $\alpha'$, $\beta$, and $\beta'$ into $M'$.

Let
$\sigma: \Delta^2 \rightarrow X'$ be the $2$-simplex 
$$ \xymatrix{ & F(t) \ar[dr]^{F(\alpha)} & \\
F(s) \ar[ur]^{F(\beta)} \ar[rr]^{ F(\gamma) } & & F(t' ). }$$ 
Note that since $(X',M')$ is $\CatP'$-fibered and $F(\alpha'), F(\beta') \in M'$, we have
$F(\gamma) \in M'$. Since the image of this $2$-simplex in $S$ is degenerate, condition $(vi)$ guarantees that its image in $S'$ belongs to $T'$. Because $(X', M')$ is $\CatP'$-fibered, we conclude that the induced map $p': X' \times_{S'} \Delta^2 \rightarrow \Delta^2$ is a coCartesian fibration.
To prove that $F( \alpha) \in M'$, it suffices to show that $F(\alpha)$ is
locally $p$-coCartesian, which is equivalent to the requirement that it is
$p'$-coCartesian when regarded as a morphism of $X' \times_{S'} \Delta^2$.
This follows from Proposition \toposref{protohermes}, since $F(\beta), F(\gamma) \in M'$
implies that $F(\beta)$ and $F(\gamma)$ determine $p'$-coCartesian morphisms
in $X' \times_{S'} \Delta^2$.

\item[$(3)$] A morphism $f$ of $X$ belongs to $M$ if and only if $q(f)$ belongs to $M_S$ and
$f$ is locally $q$-coCartesian. This follows from the proof of $(2)$, since both conditions
are equivalent to the requirement that $f \in M_W$.

\item[$(4)$] Given a commutative diagram
$$ \xymatrix{ \Delta^{ \{0,1\} } \ar[d] \ar[r]^{f} & X \ar[d]^{q} \\
\Delta^2 \ar[r]^{\sigma} & S, }$$
if $f \in M$ and $\sigma \in T$, then $f$ determines an
$q_{\Delta^2}$-coCartesian edge of $X \times_{S} \Delta^2$, where
$q_{\Delta^2}: X \times_{S} \Delta^2 \rightarrow \Delta^2$ denotes the projection map.
In fact, $f$ determines a $q'_{\Delta^2}$-coCartesian edge of $W \times_{S} \Delta^2$,
where $q'_{\Delta^2}: W \times_{S} \Delta^2 \rightarrow \Delta^2$ denotes the projection:
this follows from Corollary \ref{falch}.

\item[$(6)$] For every index $\alpha \in A$ and every coCartesian section
$s$ of the map $q_{\alpha}: X \times_{S} K_{\alpha}^{\triangleleft} \rightarrow K_{\alpha}^{\triangleleft}$,
the map $s$ is a $q$-limit diagram in $X$. To prove this, it will suffice to show that
$s$ is a $q'$-limit diagram in $W$. We will prove this by applying Proposition \ref{skitter}.
Let $s$ classify a map $F: K_{\alpha}^{\triangleleft} \times_{S} S' \rightarrow X'$, and note
that the map $F$ carries the inverse image of $M_{S'}$ into $M'$.
Let $\calC$ denote the fiber of the map $\pi: S' \rightarrow S$ over the image of the cone point
of $K_{\alpha}^{\triangleleft}$. Choose a map $\calC \times K^{\triangleleft}_{\alpha}
\rightarrow K_{\alpha}^{\triangleleft} \times_{S} S' \rightarrow X'$ as in the statement
of Proposition \ref{skitter}. We wish to show that, for each $C \in \calC$, the induced map
$$\theta: K^{\triangleleft}_{\alpha} \times \{C\} \stackrel{\theta_0}{\hookrightarrow} K^{\triangleleft}_{\alpha} \times \calC
\stackrel{\theta_1}{\rightarrow} K_{\alpha}^{\triangleleft} \times_{S} S' \stackrel{F}{\rightarrow} X'$$
is a $p$-limit diagram in $X'$. Let $\pi_{\alpha}: K_{\alpha}^{\triangleleft} \times_{S} S' \rightarrow S'$
denote the restriction of $\pi$. Since $\theta_1 \circ \theta_0$ can be identified with a
$\pi_{\alpha}$-coCartesian section of $\pi_{\alpha}$, condition $(viii)$ and the fact
that $F$ carries the inverse image of $M_{S'}$ into $M'$ guarantee that 
$\theta$ carries every edge of $K^{\triangleleft}_{\alpha}$ into $M'$. Using the assumption that
$(X', M')$ is $\CatP$-fibered and condition $(ix)$, we conclude that $\theta$ is a $p$-limit diagram as desired.

\item[$(5')$] For each $\alpha \in A$ and every coCartesian section $s_0$ of the projection
$X \times_{S} K_{\alpha} \rightarrow K_{\alpha}$, there exists a coCartesian section $s$ of
$X \times_{S} K_{\alpha}^{\triangleleft} \rightarrow K_{\alpha}^{\triangleleft}$ extending
$s_0$. The construction of $s$ amounts to solving a lifting problem
$$ \xymatrix{ S' \times_{S} K_{\alpha} \ar[r]^{F} \ar[d] & X' \ar[d]^{p} \\
S' \times_{S} K^{\triangleleft}_{\alpha} \ar[r] \ar@{-->}[ur]^{\overline{F}} & S'. }$$
Since $s_0$ is a coCartesian section, we have $F(e) \in M'$ for every
edge $e$ of $S' \times_{S} K_{\alpha}$ whose image in $S'$ belongs to
$M_{S'}$. As in the proof of $(6)$, we let $\calC$ denote the fiber of the map $\pi$ over
the image of the cone point of $K^{\triangleleft}_{\alpha}$, and choose
a map $\calC \times K^{\triangleleft}_{\alpha} \rightarrow S' \times_{S} K_{\triangleleft}$ as
in the statement of Proposition \ref{skitter}.
Using condition $(ix)$, the assumption that $(X', M')$ is $\CatP'$-fibered, and
Proposition \ref{skitter}, we conclude that there exists an extension $\overline{F}$
of $F$ such that for each $C \in \calC$, the composite map
$$ \overline{F}_C: \{C \} \times K^{\triangleleft}_{\alpha} \hookrightarrow
\calC \times K^{\triangleleft} \rightarrow S' \times_{S} K^{\triangleleft}_{\alpha}
\stackrel{ \overline{F}}{\rightarrow} X'$$
is a $p$-limit diagram. Invoking again our assumption that $(X',M')$ is $\CatP$-fibered
(and that $\overline{F}_{C}$ carries each edge of $\{C\} \times K_{\alpha}$ into $M'$,
by virtue of $(viii)$), we deduce that $\overline{F}_C$ carries each edge of $K^{\triangleleft}_{\alpha}$ into $M'$. The map $\overline{F}$ corresponds to a section $s$ of the projection
$X \times_{S} K_{\alpha}^{\triangleleft} \rightarrow K_{\alpha}^{\triangleleft}$
extending $s_0$. To complete the verification of $(6')$, it will suffice to show that
$s$ is a coCartesian: in other words, we must show that $\overline{F}(e) \in M'$ 
whenever $e: x \rightarrow y$ is an morphism of $S' \times_{S} K^{\triangleleft}_{\alpha}$ whose image
in $S'$ belongs to $M_{S'}$.

If $x \notin \calC$, then $e$ is a morphism of $S' \times_{S} K_{\alpha}$ so that
$\overline{F}(e) = F(e) \in M'$ as desired. We may therefore assume that $x \in \calC$.
Suppose for the moment that $y \notin \calC$, so that the image of
$y$ in $K_{\alpha}^{\triangleleft}$ is a vertex $y_0 \in K_{\alpha}$. We can factor $e$ as a composition
$e' \circ e''$, where $e''$ is a $\pi_{\alpha}$-coCartesian morphism lying in the image of
the map $\{x\} \times K^{\triangleleft}_{\alpha} \rightarrow S' \times_{S} K^{\triangleleft}_{\alpha}$
and $e'$ is a morphism in the fiber $\{y_0\} \times_{S} S'$. Invoking assumption
$(viii)$, we deduce that $e' \in M_{S'}$, so that $\overline{F}(e') = F(e') \in M'$.
Since $\overline{F}(e'')$ lies in the image of $\overline{F}_{x}$, we conclude that
$\overline{F}(e'') \in M'$. Using the fact that $(X', M')$ is $\CatP$-fibered, we conclude
that $\overline{F}(e) \in M'$, as desired.

We now treat the case where $x,y \in \calC$. Let $\psi$ denote the projection map
$X' \times_{S} K_{\alpha}^{\triangleleft} \rightarrow S' \times_{S} K_{\alpha}^{\triangleleft}$. 
Applying $\overline{F}$ to $e$ yields a morphism $\overline{e}: \overline{x} \rightarrow \overline{y}$ of $X' \times_{S} K_{\alpha}^{\triangleleft}$ with $\psi( \overline{e} ) = e$. Since the image in $S'$ of $e$ belongs to $M_{S'}$, we can factor $\overline{e}$ as a composition $\overline{e}' \circ \overline{e}''$, where
$\overline{e}''$ is locally $\psi$-coCartesian and $\overline{e}'$ is a morphism belonging
to $\psi^{-1} \{x\}$. Using assumption $(vi)$ and Lemma \ref{kisker}, we deduce that
every locally $\psi$-coCartesian morphism is $\psi$-coCartesian provided that its image
in $S'$ belongs to $M_{S'}$; in particular, $\overline{e}''$ is $\psi$-coCartesian.
We wish to prove that $\overline{e}$ is locally $\psi$-coCartesian, which is equivalent to the assertion that $\overline{e}'$ is an equivalence. Choose a $\pi_{\alpha}$-coCartesian
section $\theta$ of the projection $\pi_{\alpha}$ which carries the cone point of
$K^{\triangleleft}_{\alpha}$ to $y$. Since $(X', M')$ is $\CatP$-fibered, the coCartesian fibration
$$X' \times_{S'} K^{\triangleleft}_{\alpha} \rightarrow K^{\triangleleft}_{\alpha}$$
is classified by a limit diagram $\chi: K^{\triangleleft}_{\alpha} \rightarrow \Cat_{\infty}$, so
that $\overline{e}'$ is an equivalence if and only if $\gamma_{!} \overline{e}'$ is an equivalence
in $\psi^{-1} \{y'\}$ for every morphism $\gamma: y \rightarrow y'$ lying in the image of $\theta$.
We have a commutative diagram in $X' \times_{S} K_{\alpha}^{\triangleleft}$
$$ \xymatrix{ \overline{x} \ar[r]^{\overline{e}'} \ar[dr]^{\overline{e}} & \overline{z} \ar[r] \ar[d]^{\overline{e}} & \overline{z}' \ar[d]^{ \gamma_{!} \overline{e}'} \\
& \overline{y} \ar[r] & \overline{y}' }$$
where the horizontal maps are $\psi$-coCartesian. Moreover, the argument of the preceding paragraph shows that the map $\overline{x} \rightarrow \overline{y}'$ is $\psi$-coCartesian. Applying Proposition
\toposref{protohermes}, we deduce that $\gamma_{!} \overline{e}'$ is $\psi$-coCartesian, and
therefore an equivalence because it belongs to a fiber of $\psi$.
\end{itemize}
\end{proof}

\subsection{Scaled Straightening and Unstraightening}\label{bisec3.3}

Our goal in this section is to introduce the scaled version of the straightening
and unstraightening functors of \S \toposref{strsec}. We begin by introducing a bit of notation.

\begin{definition}\label{sputer1}
Let $\overline{X} = (X, M)$ be a marked simplicial set. Let $T$ denote the collection of all
$2$-simplices $\sigma$ of $X \times \Delta^1$ with the following properties:
\begin{itemize}
\item[$(a)$] The image of $\sigma$ in $X$ is degenerate.
\item[$(b)$] Suppose that the projection $\pi: \Delta^2 \stackrel{\sigma}{\rightarrow} X \times \Delta^1 \rightarrow \Delta^1$ satisfies $\pi^{-1} \{0\} = \{0,1\}$. Then the image of $\sigma | \Delta^{ \{0,1\} }$
in $X$ belongs to $M$.
\end{itemize}
We define a scaled simplicial set
$\LCone{\overline{X}}$, the {\it scaled cone} of $X$, by the following formula:
$$\LCone{\overline{X}} = (X \times \Delta^1, T) \coprod_{ (X \times \{0\})_{\flat} } \{v\}_{\flat}.$$

More generally, given a marked simplicial set $\overline{S}$ and an object $\overline{X} \in 
\mset{\overline{S}}$. We let $\LCones{\overline{S}}{\overline{X}}$ denote the coproduct
$$ \LCone{ \overline{X} } \coprod_{ (X \times \{1\})_{\flat} } \overline{S}.$$
\end{definition}

\begin{remark}\label{swite}
The construction $\overline{X} \mapsto \LCone{\overline{X}}$ determines a functor
from $\mSet$ to $(\scSet)_{ \{v\}_{\flat} / }$. This functor preserves monomorphisms and commutes with all colimits. Similarly, if we fix a scaled simplicial set $\overline{S}$, then the construction
$\overline{X} \mapsto \LCones{\overline{S}}{\overline{X}}$ determined a functor from
$\mset{\overline{S}}$ to $(\scSet)_{ (\{v\}_{\flat} \coprod \overline{S})/}$, which again preserves monomorphisms and commutes with all colimits.
\end{remark}

\begin{remark}\label{sagee}
Let $f: \overline{S} \rightarrow \overline{S}'$ be a map of scaled simplicial sets. Composition with
$f$ induces a functor $f_!: \mset{\overline{S}} \rightarrow \mset{ \overline{S}' }$. For every
object $\overline{X} \in \mset{\overline{S}}$, there is a canonical isomorphism
$$ \LCones{\overline{S}}{\overline{X}} \coprod_{ \overline{S} } \overline{S}'
\simeq \LCones{ \overline{S}' }{f_! \overline{X} }.$$
\end{remark}

\begin{definition}\label{sputer2}
Let $\overline{S} = (S,T)$ be a scaled simplicial set and let $\phi: \scCoNerve[ \overline{S} ] \rightarrow \calC$ be a functor between $\mSet$-enriched categories. We define a functor
$\scSt_{\phi}: \mset{ \overline{S} } \rightarrow (\mSet)^{\calC}$ by the formula
$$(\scSt_{\phi} \overline{X})(C) = \bHom_{ \scCoNerve[ \LCones{\overline{S}}{\overline{X}}]
\coprod_{ \scCoNerve[ \overline{S} ] } \calC }(v, C).$$
If $\phi$ is an isomorphism, we will denote the straightening functor
$\scSt_{\phi}$ instead by $\scSt_{\overline{S}}$. 
\end{definition}

The basic formal properties of the scaled straightening functor may be summarized as follows:

\begin{proposition}\label{curp}
Let $\overline{S}$ be a scaled simplicial set, and let $\phi: \scCoNerve[ \overline{S} ] \rightarrow \calC$
be a functor between $\mSet$-enriched categories. Then the straightening functor
$\scSt_{\phi}: \mset{ \overline{S} } \rightarrow (\mSet)^{\calC}$ is a left Quillen functor. Here we
regard $\mset{ \overline{S} }$ as endowed with the $\overline{S}$-marked model structure, and
$(\mSet)^{\calC}$ as endowed with the projective model structure.
\end{proposition}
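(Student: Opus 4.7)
The plan proceeds in three steps. First, I would verify that $\scSt_\phi$ admits a right adjoint by showing it preserves small colimits. The construction $\overline{X} \mapsto \LCones{\overline{S}}{\overline{X}}$ preserves colimits and monomorphisms by Remark \ref{swite}; the scaled coherent nerve $\scCoNerve$ is a left adjoint (to $\scNerve$, by Definition \ref{ilmut}), so preserves colimits; and for fixed objects $v$ and $C$, the formation of mapping objects $\bHom_{\calD}(v,C)$ in the $\mSet$-enriched category $\calD = \scCoNerve[\LCones{\overline{S}}{\overline{X}}] \coprod_{\scCoNerve[\overline{S}]} \calC$ commutes with colimits in $\overline{X}$, since the scaled cone only adjoins the single vertex $v$ so the object set of $\calD$ does not depend on $\overline{X}$. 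Combining these observations, $\scSt_\phi$ preserves small colimits; as both source and target are presentable, the existence of a right adjoint $\scUn_\phi$ follows.

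Second, I would show that $\scSt_\phi$ carries cofibrations to projective cofibrations. Since $\scSt_\phi$ preserves colimits, it suffices to check this for the generating cofibrations of $\mset{\overline{S}}$, namely the maps $(\bd \Delta^n)^{\flat} \hookrightarrow (\Delta^n)^{\flat}$ covering an $n$-simplex of $S$ and $(\Delta^1)^{\flat} \hookrightarrow (\Delta^1)^{\sharp}$ covering a marked edge. Applying $\LCones{\overline{S}}{-}$ produces a cofibration of scaled simplicial sets, and Proposition \ref{presus}(1) shows that $\scCoNerve$ carries such cofibrations to cofibrations of $\mSet$-enriched categories. From the standard description of the projective model structure on $(\mSet)^{\calC}$, the required mapping objects assemble into a projective cofibration.

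The third and main step is to show that $\scSt_\phi$ sends trivial cofibrations to trivial cofibrations. Since we have already established cofibration preservation, it is enough to verify that each generating $\CatP_{\overline{S}}$-anodyne morphism from Definition \ref{postspunt} is sent to a pointwise weak equivalence in $(\mSet)^{\calC}$. For the pattern $\CatP_{\overline{S}} = (M_S, T, \emptyset)$ of Example \ref{user}, the relevant generators are those of types $(A_0)$, $(A_1)$, $(B_0)$, $(C_0)$, and $(C_1)$. The main input is Proposition \ref{presus}(2): scaled anodyne maps go to trivial cofibrations under $\scCoNerve$. The strategy is to show, type by type, that the scaled cone $\LCones{\overline{S}}{-}$ converts each generating $\CatP_{\overline{S}}$-anodyne inclusion into a scaled anodyne inclusion (possibly after a convenient pushout), so that after applying $\scCoNerve$ and then passing to mapping objects we obtain trivial cofibrations of marked simplicial sets. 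For types $(C_1)$ the argument is essentially Remark \ref{tweepus} combined with Proposition \ref{presus}(2); types $(A_1)$ and $(B_0)$ reduce to the corresponding generators of Definition \ref{slapper} after tracking through the cone construction.

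The principal obstacle lies in types $(A_0)$ and $(C_0)$, where the decoration on the $2$-simplices added by $\LCones{\overline{S}}{-}$ (condition (b) of Definition \ref{sputer1}) must be traced through the coherent nerve in order to match the marked and scaled structure against the generators of the scaled anodyne class. Here I would verify the claim by explicit pushout decompositions: writing the scaled cone of each generator as an iterated pushout of scaled anodyne inclusions belonging to types $(A)$, $(B)$, $(C)$ of Definition \ref{slapper}, one then invokes Proposition \ref{presus}(2) together with Proposition \ref{twop1} to pass to the level of mapping objects. This type-by-type reduction, while combinatorially intricate, is essentially a scaled analogue of the verification carried out in \S \toposref{strsec} for the ordinary straightening functor and completes the proof.
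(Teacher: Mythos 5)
Your approach has a genuine gap in the third step. You claim that, having established cofibration preservation, it suffices to check that the generating $\CatP_{\overline{S}}$-anodyne morphisms of Definition \ref{postspunt} are sent to weak equivalences. This would be correct if the $\CatP_{\overline{S}}$-anodyne morphisms generated the trivial cofibrations of the model structure on $\mset{\overline{S}}$ as a weakly saturated class, but they do not: the $\CatP_{\overline{S}}$-anodyne morphisms form a proper subclass of the trivial cofibrations. The model structure of Theorem \ref{theo} is produced by an abstract recognition criterion that guarantees a generating set of trivial cofibrations exists, but does not identify them with the $\CatP_{\overline{S}}$-anodyne maps. Indeed, Proposition \ref{poststaf} characterizes the fibrant objects via lifting against $\CatP$-anodyne maps, and Proposition \ref{fibraman} characterizes the fibrations whose target is already fibrant, but neither gives an identification of all fibrations as the maps with the right lifting property against $\CatP$-anodyne morphisms. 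So checking the generators of the $\CatP_{\overline{S}}$-anodyne class leaves an unaddressed remainder.

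The paper sidesteps this by proving the stronger statement that $\scSt_{\phi}$ preserves \emph{all} weak equivalences, not merely trivial cofibrations. The argument has two more ingredients beyond what you describe. First, Proposition \ref{coop}, which is essentially what your type-by-type analysis of $(A_0)$--$(C_1)$ would re-establish, shows that $\scSt_{\phi}$ carries $\CatP_{\overline{S}}$-anodyne maps to weak equivalences. This is then used to reduce, after factoring a given weak equivalence $f$ through $\CatP_{\overline{S}}$-anodyne replacements of both its source and its target, to the case where $\overline{X}$ and $\overline{Y}$ are both $\overline{S}$-fibered. In that case Lemma \ref{piner} provides a simplicial homotopy inverse to $f$, and one concludes with Lemma \ref{corp1}, the simplicial homotopy invariance of $\scSt_{\phi}$. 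Your proposal contains no analogue of the homotopy-invariance step, and it is precisely that step which handles the trivial cofibrations that are not $\CatP_{\overline{S}}$-anodyne.
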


In particular, the functor $\scSt_{\phi}$ admits a right adjoint, which we will denote by
$\scUn_{\phi}$.

The proof of Proposition \ref{curp} will be given at the end of this section. First, we need to
introduce a bit of additional terminology.

\begin{definition}\label{bicateq}
Let $f: X \rightarrow Y$ be a map of scaled simplicial sets. We will say that
$f$ is a {\it bicategorical equivalence} if the induced map $\scCoNerve[X] \rightarrow \scCoNerve[Y]$ is a weak equivalence of $\mSet$-enriched categories.
\end{definition}

\begin{remark}\label{suskp}
Suppose given a pushout diagram
$$ \xymatrix{ X \ar[r]^{f} \ar[d]^{g} & Y \ar[d]^{f'} \\
X' \ar[r] & Y' }$$
of scaled simplicial sets. Assume that $f$ is a bicategorical equivalence and that either
$f$ or $g$ is a cofibration. Then $f'$ is a bicategorical equivalence. This follows immediately
from Proposition \ref{presus}, since the category $\Cat_{\mSet}$ is left-proper.
\end{remark}

\begin{remark}\label{ilkk}
Every scaled anodyne map between scaled simplicial sets is a bicategorical equivalence.
\end{remark}

\begin{example}\label{swisher}
The composite map
$$ \Delta^{ \{0,2\} }_{\sharp} \subseteq \Delta^2_{\sharp} \rightarrow
\Delta^2_{\sharp} \coprod_{ \Delta^{ \{0,1\}}_{\sharp} } \Delta^0_{\sharp}$$
is a bicategorical equivalence.
\end{example}

For later use, we record the following reformulation of Proposition \ref{twop1}:

\begin{proposition}\label{twop2}
Let $f: S \rightarrow S'$ be a map of simplicial sets. Then $f$ is a categorical equivalence if and only if the induced map $f_{\sharp}: S_{\sharp} \rightarrow S'_{\sharp}$ is a bicategorical equivalence.
\end{proposition}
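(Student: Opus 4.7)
The plan is to reduce Proposition \ref{twop2} to the known comparison between the Joyal model structure on $\sSet$ and the Bergner model structure on $\sCat$. The bridge is Proposition \ref{twop1}, which says that for any simplicial set $S$ there is a trivial cofibration $\scCoNerve[S_{\sharp}] \to \calC_{S}$ in $\Cat_{\mSet}$, where $\calC_{S}$ has the same underlying simplicial category as $\scCoNerve[S_{\sharp}]$ (namely $\sCoNerve[S]$), but with the mapping object $\bHom_{\calC_{S}}(x,y)$ marked so that every edge is distinguished, i.e. $\bHom_{\calC_{S}}(x,y) = X^{\sharp}$. Applied naturally in $S$, this shows that $\scCoNerve[f_{\sharp}]: \scCoNerve[S_{\sharp}] \to \scCoNerve[S'_{\sharp}]$ is a weak equivalence in $\Cat_{\mSet}$ if and only if the induced map $\calC_{S} \to \calC_{S'}$ is a weak equivalence in $\Cat_{\mSet}$.

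Next, I would unpack what it means for $\calC_{S} \to \calC_{S'}$ to be a weak equivalence in the Bergner-style model structure on $\Cat_{\mSet}$: namely, essential surjectivity on homotopy categories together with the requirement that for every pair of objects $x,y \in \calC_{S}$, the map $\bHom_{\sCoNerve[S]}(x,y)^{\sharp} \to \bHom_{\sCoNerve[S']}(f(x),f(y))^{\sharp}$ is a weak equivalence in $\mSet$. The essential surjectivity condition depends only on the underlying simplicial category, so it is equivalent to $\sCoNerve[f]$ being essentially surjective.

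The key technical observation is the following: for simplicial sets $X$ and $Y$, a map $X^{\sharp} \to Y^{\sharp}$ is a weak equivalence in $\mSet$ if and only if $X \to Y$ is a weak homotopy equivalence of simplicial sets. To check this, I would use the fact that weak equivalences in $\mSet$ are detected by mapping into $\infty$-categories with their natural marking $\calC^{\natural}$: unwinding the definition of $\bHom^{\sharp}$, one has
\[ \bHom^{\sharp}(X^{\sharp}, \calC^{\natural}) \simeq \bHom(X, \calC^{\simeq}), \]
where $\calC^{\simeq}$ is the largest Kan complex contained in $\calC$; since every Kan complex arises as $\calC^{\simeq}$ for some $\infty$-category $\calC$, the claim follows. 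Combining this with the previous paragraph, $\calC_{S} \to \calC_{S'}$ is a weak equivalence in $\Cat_{\mSet}$ iff $\sCoNerve[f]: \sCoNerve[S] \to \sCoNerve[S']$ is a Dwyer--Kan equivalence of simplicial categories.

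Finally, by the Quillen equivalence between the Joyal model structure on $\sSet$ and the Bergner model structure on $\sCat$ (Theorem \toposref{biggier}), $\sCoNerve[f]$ is a Dwyer--Kan equivalence if and only if $f: S \to S'$ is a categorical equivalence, which completes both directions of the proposition. The main subtlety is verifying the mapping-space claim $X^{\sharp} \to Y^{\sharp}$ is an $\mSet$-weak equivalence iff $X \to Y$ is a weak homotopy equivalence; once this is in hand, the rest is essentially formal.
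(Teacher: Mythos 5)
Your argument is correct and carries out exactly the reduction the paper has in mind when it labels Proposition \ref{twop2} ``a reformulation of Proposition \ref{twop1}.'' The three steps — (i) use the natural trivial cofibration $\scCoNerve[S_{\sharp}] \to \calC_{S}$ from Proposition \ref{twop1} to replace $\scCoNerve[S_{\sharp}]$ by the fully marked enrichment of $\sCoNerve[S]$; (ii) observe that a map $X^{\sharp} \to Y^{\sharp}$ is a marked equivalence iff $X \to Y$ is a weak homotopy equivalence, via the adjunction $\bHom^{\sharp}(X^{\sharp}, \calC^{\natural}) \simeq \bHom(X, \calC^{\simeq})$ and the fact that every Kan complex is the interior of an $\infty$-category; (iii) conclude that weak equivalences of $\mSet$-enriched categories between these fully marked objects coincide with Dwyer--Kan equivalences of the underlying simplicial categories, then invoke Theorem \toposref{biggier} — are precisely the chain of translations the paper intends. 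One small remark: for step (iii) it is worth noting explicitly that the essential-surjectivity half of the $\Cat_{\mSet}$ weak-equivalence condition is insensitive to the marking (since $\pi_0$ of a mapping object only depends on the underlying simplicial set), so it reduces without further ado to essential surjectivity for $\sCoNerve[f]$. With that noted, the proof is complete and faithful to the paper's route.
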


Our next goal is to prove the following basic result about scaled cone construction:

\begin{proposition}\label{coop}
Let $\overline{S} = (S,T)$ be a scaled simplicial set, let $\overline{X} = (X, M)$ and
$\overline{Y} = (Y,M')$ be objects of $\mset{\overline{S}}$, and let $f: \overline{X} \rightarrow \overline{Y}$ be a $\CatP_{\overline{S}}$-anodyne morphism (here $\CatP_{\overline{S}}$ denotes the categorical
pattern of Example \ref{user}). Then the induced map
$F: \LCones{\overline{S}}{\overline{X}} \rightarrow \LCones{ \overline{S}}{\overline{Y}}$ is
a bicategorical equivalence.
\end{proposition}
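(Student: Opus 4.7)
\emph{Proof plan.} The strategy is to reduce, via a pushout argument, to the case of generating anodyne morphisms, and then verify the statement for each generator by combinatorial analysis.

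\emph{Step 1 (Reduction to the absolute cone).} A direct calculation using Remark \ref{sagee} shows that the canonical map
$$\LCone{\overline{Y}}\coprod_{\LCone{\overline{X}}}\LCones{\overline{S}}{\overline{X}}\longrightarrow\LCones{\overline{S}}{\overline{Y}}$$
is an isomorphism, so $F$ is a pushout of the map $\LCone{f}:\LCone{\overline{X}}\to\LCone{\overline{Y}}$ along the cofibration $\LCone{\overline{X}}\to\LCones{\overline{S}}{\overline{X}}$. Since $\LCone{f}$ is itself a cofibration (by Remark \ref{swite}, since $f$ is a monomorphism), Remark \ref{suskp} reduces the claim to showing that $\LCone{f}$ is a bicategorical equivalence.

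\emph{Step 2 (Reduction to generators).} The collection of cofibrations $f:\overline{X}\to\overline{Y}$ in $\mset{\overline{S}}$ for which $\LCone{f}$ is a bicategorical equivalence is weakly saturated: closure under pushouts and transfinite composition follows from Remark \ref{swite} (which guarantees that $\LCone{-}$ commutes with colimits and preserves monomorphisms) combined with Remark \ref{suskp}. It therefore suffices to verify the claim for the generating $\CatP_{\overline{S}}$-anodyne morphisms listed in Definition \ref{postspunt}. Because $\CatP_{\overline{S}}=(M_S,T,\emptyset)$, only the generator types $(A_0),(A_1),(B_0),(C_0),(C_1)$ arise.

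\emph{Step 3 (Case-by-case analysis).} For each generator the underlying simplicial-set map of $\LCone{f}$ can be described explicitly, after collapsing $X\times\{0\}$ (resp.\ $Y\times\{0\}$) to the cone point $v$. For the inner-horn generator $(C_1)$, the induced map is (isomorphic to) an inner anodyne inclusion of simplicial sets equipped with compatible scalings, hence scaled anodyne by Remark \ref{tweepus}, hence a bicategorical equivalence by Remark \ref{ilkk}. For generators of types $(B_0)$ and $(C_0)$, the map $\LCone{f}$ can be factored as an iterated pushout of (a) generators of type $(C_1)$, which are scaled anodyne as above, and (b) scaled anodyne generators of types $(A)$ and $(C)$ from Definition \ref{slapper}, produced by the thin $2$-simplices that appear in $\LCone{\overline{X}}$ because of the scaling in the target $\overline{S}$. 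For $(A_0)$ and $(A_1)$, the analogous factorization uses generators of types $(B)$ and $(C)$ from Definition \ref{slapper}, together with the ``additional'' thin $2$-simplices produced by condition (b) of Definition \ref{sputer1} (which converts marked edges of $\overline{X}$ into thin $2$-simplices of $\LCone{\overline{X}}$).

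\emph{Main obstacle.} The principal difficulty lies in Step 3, specifically in the careful bookkeeping of which $2$-simplices of $\LCone{\overline{X}}$ are thin and which edges are ``effectively marked'' after the collapse $X\times\{0\}\mapsto v$. The combinatorial pattern here closely parallels the pushout-product analysis in the proof of Proposition \ref{notred}: one must exhibit each non-trivial face of the cone as the target of a scaled anodyne filling, and verify that these fillings can be ordered into a sequence so that each successive attachment is a pushout of a generator from Definition \ref{slapper}. This is the step that consumes the bulk of the argument, and the only place where nontrivial interaction between marked edges in $\overline{X}$ and thin $2$-simplices of $\overline{S}$ enters.
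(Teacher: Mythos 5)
There is a genuine gap in Step 1, and it propagates through the rest of the argument. The claimed isomorphism
$$\LCone{\overline{Y}}\coprod_{\LCone{\overline{X}}}\LCones{\overline{S}}{\overline{X}}\longrightarrow\LCones{\overline{S}}{\overline{Y}}$$
is false. Unwinding the definitions gives
$\LCone{\overline{Y}}\coprod_{\LCone{\overline{X}}}\LCones{\overline{S}}{\overline{X}}\simeq\LCone{\overline{Y}}\coprod_{(X\times\{1\})_\flat}\overline{S}$,
whereas $\LCones{\overline{S}}{\overline{Y}}=\LCone{\overline{Y}}\coprod_{(Y\times\{1\})_\flat}\overline{S}$; these agree only when the underlying map $X\to Y$ is a bijection, which fails for the generators of types $(B_0)$, $(C_0)$, and $(C_1)$. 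Concretely, for the generator $(B_0)$ given by $\{0\}^\sharp\subseteq(\Delta^1)^\sharp$ over $\overline{S}=(\Delta^1)_\sharp$, your pushout has four vertices (the cone point $v$, the vertices $(0,1)$ and $(1,1)$ of $Y\times\{1\}$, and the vertex $1$ of $S$, with only $(0,1)$ identified to $0\in S$), while $\LCones{\overline{S}}{\overline{Y}}$ has three. The reduction you are aiming for is therefore not available, and Remark \ref{sagee} does not supply it: that remark gives base change in $\overline{S}$, not a relationship between the absolute and relative cones of a fixed $\overline{S}$.

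Worse, the conclusion you reduce to is itself false: $\LCone{f}$ is generally \emph{not} a bicategorical equivalence for the generating $\CatP_{\overline{S}}$-anodyne morphisms. In the $(B_0)$ example above, $\LCone{\{0\}^\sharp}\simeq\Delta^1_\flat$ has two vertices while $\LCone{(\Delta^1)^\sharp}$ has three, and the extra vertex $(1,1)$ has no morphism out of it in $\LCone{(\Delta^1)^\sharp}$, so $\scCoNerve$ of the inclusion cannot be essentially surjective. The object you need to analyze directly is $\LCones{\overline{S}}{f}$, which does identify the missing vertices with their images in $\overline{S}$. (If you want a genuine pushout presentation, note that $F$ is a pushout of the inclusion $\LCone{\overline{X}}\cup(Y\times\{1\})_\flat\hookrightarrow\LCone{\overline{Y}}$, not of $\LCone{f}$; but it is simpler to work with $\LCones{\overline{S}}{f}$ directly, as the paper does.) Your Step 2 reduction to generators via weak saturation is correct and matches the paper. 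In Step 3, however, besides being aimed at the wrong map, the appeal to Remark \ref{tweepus} in case $(C_1)$ is unjustified: that remark applies to the all-sharp scaling $A_\sharp\to B_\sharp$, while the scaling on the cone coming from Definition \ref{sputer1} is far from all-sharp, so the map is not a scaled-anodyne-by-sharpening; this case requires a bespoke filtration (Lemma \ref{swww}).
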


\begin{lemma}\label{urplee}
Let $A \subseteq B$ be an inclusion of simplicial sets, and let
$v$ be a vertex of $A$ with the following property: for every simplex
$\sigma$ of $B$ which does not belong to $A$, $v$ is the final vertex of $\sigma$.
Then the inclusion of marked simplicial sets
$$ (( A \times \Delta^1) \coprod_{A \times \{1\} } (B \times \{1\}), M_0)
\subseteq (B \times \Delta^1, M)$$
is marked anodyne. Here $M$ denotes the collection of all degenerate edges
of $B \times \Delta^1$ together with the edge $\{v\} \times \Delta^1$, and 
$M_0$ is defined similarly.
\end{lemma}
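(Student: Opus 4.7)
The plan is to filter the inclusion $B \times \Delta^1$ over $Y_0 = (A \times \Delta^1) \cup_{A \times \{1\}} (B \times \{1\})$ by adjoining one nondegenerate simplex of $B \setminus A$ at a time, and to recognize each stage as a pushout of the marked anodyne inclusion supplied by Lemma \ref{preperc}. First I would observe that, because $v \in A$, every nondegenerate simplex $\tau$ of $B$ not in $A$ has dimension at least $1$: a $0$-simplex of $B \setminus A$ would have to equal $v$, which is impossible. Next I would enumerate the nondegenerate simplices of $B$ not in $A$ as $\tau_1, \tau_2, \ldots$ (possibly transfinitely) so that the dimension function $k \mapsto n_k = \dim \tau_k$ is nondecreasing, and set $B^{(k)}$ to be the simplicial subset generated by $A$ together with $\{\tau_j : j \leq k\}$.

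The key combinatorial observation is that for each $k$, every face of $\tau_k$ lies in $B^{(k-1)}$. Indeed, let $\sigma$ be a face of $\tau_k$; if $\sigma \in A$ there is nothing to prove. Otherwise, the hypothesis forces $\sigma$ to contain $v$ as its final vertex; since $\tau_k$ is nondegenerate, $\sigma$ is a nondegenerate proper face and hence of strictly smaller dimension than $\tau_k$, so $\sigma = \tau_j$ for some $j < k$. It follows that $B^{(k)}$ is obtained from $B^{(k-1)}$ by a pushout along $\bd \Delta^{n_k} \hookrightarrow \Delta^{n_k}$ attached via $\tau_k$.

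Setting $Y_k = (B^{(k)} \times \Delta^1) \cup_{B^{(k)} \times \{1\}} (B \times \{1\})$ and equipping each with the evident restriction of $M$, one has $Y_{-1} = Y_0$ and $B \times \Delta^1 = \varinjlim Y_k$. The hard part — and the only nontrivial calculation — will be to check that $Y_{k-1} \hookrightarrow Y_k$ is a pushout of the inclusion
\[
\bigl((\bd \Delta^{n_k} \times \Delta^1) \cup_{\bd \Delta^{n_k} \times \{1\}} (\Delta^{n_k} \times \{1\}),\, M'\bigr) \;\hookrightarrow\; (\Delta^{n_k} \times \Delta^1, M'')
\]
whose markings consist of the degenerate edges together with $\{n_k\} \times \Delta^1$. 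The content of this identification is precisely that the simplices of $\tau_k \times \Delta^1$ that already lie in $Y_{k-1}$ are exactly those coming from $\bd \tau_k \times \Delta^1$ and from $\tau_k \times \{1\}$, which uses the previous paragraph to rule out any extra coincidences, together with the fact that $\tau_k(n_k) = v$ so the distinguished edge $\{n_k\} \times \Delta^1$ in the cofiber maps precisely to $\{v\} \times \Delta^1$ in $M$. Since $n_k \geq 1$, Lemma \ref{preperc} guarantees this inclusion is marked anodyne, and the lemma follows by transfinite composition.
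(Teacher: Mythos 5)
Your reduction mirrors the paper's exactly --- both pass cell by cell to the prism case $B = \Delta^n$, $A = \bd\Delta^n$, $v = n$ --- and your observation that this prism case is precisely Lemma \ref{preperc} is a reasonable economy, since the paper's proof of Lemma \ref{urplee} simply rederives the filling of Lemma \ref{preperc} rather than citing it. There is, however, one small but genuine mismatch in the last step: Lemma \ref{preperc} as stated concludes only that the prism inclusion is a \emph{trivial cofibration} of marked simplicial sets, whereas Lemma \ref{urplee} asserts the strictly stronger property of being \emph{marked anodyne}. These classes do not coincide, and a transfinite composite of trivial cofibrations is not a priori marked anodyne, so citing the statement of \ref{preperc} alone does not yield the conclusion you want. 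The fix is easy --- the \emph{proof} of Lemma \ref{preperc} exhibits the prism inclusion explicitly as a composite of pushouts of generating marked anodyne morphisms, so the stronger conclusion does hold --- but you should say so explicitly rather than attribute it to the lemma statement.

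A cosmetic point on your ``key combinatorial observation'': you assert that a proper face $\sigma$ of the nondegenerate simplex $\tau_k$ is automatically nondegenerate, which is false in a general simplicial set. The conclusion (every face of $\tau_k$ lies in $B^{(k-1)}$) is nevertheless correct, but for the usual skeletal-filtration reason: $\sigma$ is either a degeneracy of a nondegenerate simplex of strictly smaller dimension, already present in $B^{(k-1)}$, or itself nondegenerate of strictly smaller dimension and hence in $A$ or equal to some $\tau_j$ with $j < k$. The hypothesis that $v$ is the final vertex of every simplex outside $A$ is not needed for this step; it enters the argument only to ensure $n_k \geq 1$ (so Lemma \ref{preperc} applies) and to guarantee that $\tau_k$ carries $\{n_k\} \times \Delta^1$ to the marked edge $\{v\} \times \Delta^1$, so that the pushout computes correctly on markings.
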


\begin{proof}
Working simplex-by-simplex, we can reduce to the case where $B = \Delta^n$,
$A = \bd \Delta^n$, and $v$ is the final vertex of $B$. For $0 \leq i \leq n$, let
$\sigma_i: \Delta^{n+1} \rightarrow \Delta^n \times \Delta^1$ be the simplex given by the map
of partially ordered sets $r: [n+1] \rightarrow [n] \times [1]$ given by the formula
$$ r(j) = \begin{cases} (j, 0) & \text{if } j \leq i \\
(j-1, 1) & \text{if } j > i. \end{cases}$$
Let $X_0 = ( A \times \Delta^1) \coprod_{A \times \{1\} } (B \times \{1\})$, and for
$0 < i \leq n+1$, let $X_{i} = X_0 \cup \sigma_0 \cup \ldots \cup \sigma_{i-1}$.
Finally, let $M_i$ denote the collection of all degenerate edges of $X_i$ together with
$\{v\} \times \Delta^1$. We have inclusions
$$(X_0, M_0) \subseteq (X_1, M_1) \subseteq \ldots \subseteq (X_{n+1}, M_{n+1})
= ( B \times \Delta^1, M).$$
It will therefore suffice to show that each of the inclusions $(X_i, M_i) \subseteq (X_{i+1}, M_{i+1})$ is marked anodyne. This follows from the existence of a pushout diagram
$$ \xymatrix{ \Lambda^{n+1}_{i+1} \ar@{^{(}->}[r] \ar[d] & \Delta^{n+1} \ar[d]^{g} \\
X_{i} \ar@{^{(}->}[r] & X_{i+1}, }$$
together with the observation that $g$ carries $\Delta^{ \{n, n+1\} }$ to
the marked edge $\{v\} \times \Delta^1$ when $i = n$.
\end{proof}

\begin{lemma}\label{prewise}
Let $n \geq 2$. Let $T$ denote the collection of all degenerate simplices of
$\Delta^n$, together with $\Delta^{ \{0,1,n \} }$, and let $T_0$ be the collection of
all elements of $T$ which belong to $\Lambda^n_0 \subseteq \Delta^n$.
Then the inclusion
$$ f: ( \Lambda^n_0, T_0) \coprod_{ \Delta^{ \{0,1\} }_{\flat} } \Delta^0_{\flat}
( \Delta^n, T) \coprod_{ \Delta^{ \{0,1\} }_{\flat} } \Delta^0_{\flat} $$
is a bicategorical equivalence.
\end{lemma}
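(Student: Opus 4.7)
The plan is to proceed by case analysis on $n$. For $n > 2$, I will identify $f$ with a generating scaled anodyne morphism of type $(C)$ from Definition \ref{slapper}. The key observation is that, when $n > 2$, the $2$-simplex $\Delta^{\{0,1,n\}}$ lies inside the horn $\Lambda^n_0$: it is contained in the face $d_2 \Delta^n = \Delta^{\{0,1,3,\ldots,n\}}$, which survives in $\Lambda^n_0$. Consequently $T_0 = T \cap \Hom(\Delta^2, \Lambda^n_0)$ contains $\Delta^{\{0,1,n\}}$, and after pushing out along $\Delta^{\{0,1\}}_\flat \hookrightarrow \Delta^0_\flat$, the scalings on both sides reduce to the degenerate $2$-simplices together with the image of $\Delta^{\{0,1,n\}}$. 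This matches Definition \ref{slapper}$(C)$ exactly, so $f$ is scaled anodyne, hence a bicategorical equivalence by Remark \ref{ilkk}.

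For $n = 2$, the type $(C)$-anodyne morphisms do not apply, so I argue directly via $\scCoNerve$. First, I unwind both sides: on the right, $T = \Hom(\Delta^2, \Delta^2)$ so $(\Delta^2, T) = \Delta^2_\sharp$; denote the pushout by $Y$. On the left, $T_0$ reduces to degenerate $2$-simplices of $\Lambda^2_0$ (since $\Delta^{\{0,1,2\}} \not\subset \Lambda^2_0$), and collapsing $\Delta^{\{0,1\}}$ in $\Lambda^2_0 = \Delta^{\{0,1\}} \cup \Delta^{\{0,2\}}$ leaves only the edge $\Delta^{\{0,2\}}$, so the left side is $\Delta^1_\flat$. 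The induced functor $\scCoNerve(f)$ is the identity on the common object set $\{\bar 0, \bar 2\}$ (where $\bar 0$ is the collapsed vertex); to show it is a Dwyer-Kan equivalence, it then suffices to check that it induces weak equivalences of marked simplicial sets on mapping spaces. The only nontrivial case is $\bHom(\bar 0, \bar 2)$.

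For this computation, I use the cocontinuity of $\sCoNerve$ to write the underlying simplicial category of $\scCoNerve[Y]$ as the pushout $\sCoNerve[\Delta^2] \coprod_{\sCoNerve[\Delta^{\{0,1\}}]} \sCoNerve[\Delta^0]$, which identifies the morphism $0 \to 1$ of $\sCoNerve[\Delta^2]$ with the identity at the merged object $\bar 0$. The resulting mapping space $\bHom(\bar 0, \bar 2)$ is a copy of $\Delta^1$: the two vertices are the (now parallel) morphisms coming from $\Delta^{\{0,2\}}$ and $\Delta^{\{1,2\}}$, and the unique non-degenerate $1$-simplex comes from the $2$-simplex $\Delta^{\{0,1,2\}}$. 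Thinness of the image of $\Delta^{\{0,1,2\}}$ in $Y$ marks this edge, yielding $\bHom_{\scCoNerve[Y]}(\bar 0, \bar 2) \cong (\Delta^1)^\sharp$. Meanwhile $\bHom_{\scCoNerve[X]}(\bar 0, \bar 2) = \Delta^0$, and $\scCoNerve(f)$ restricts on this mapping space to the inclusion $\Delta^0 \hookrightarrow (\Delta^1)^\sharp$ of a vertex, which is a trivial cofibration of marked simplicial sets. The main technical obstacle will be justifying the explicit pushout computation in $\sCat$; while morally straightforward, it will likely need a direct appeal to the necklace description of mapping spaces in rigidifications of simplicial sets to be made rigorous.
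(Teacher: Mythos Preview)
Your identification of $f$ (for $n > 2$) with a generating scaled anodyne morphism of type $(C)$ is correct, but invoking Remark \ref{ilkk} to conclude creates a circularity. Remark \ref{ilkk} is the statement that scaled anodyne maps are bicategorical equivalences; this is the content of Proposition \ref{presus}(2), and the proof of that proposition in case $(C)$ simply says ``The desired result in this case is merely a translation of Lemma \ref{prewise}.'' In other words, Lemma \ref{prewise} is precisely what is needed to establish that type $(C)$ generators become trivial cofibrations under $\scCoNerve$, and hence is logically prior to Remark \ref{ilkk}. Your observation that the lemma for $n>2$ \emph{is} the type $(C)$ generator explains why the lemma is formulated as it is, but it cannot serve as a proof.

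Your treatment of $n = 2$ is correct and in fact coincides with the paper's argument specialized to that case. The paper, however, gives a uniform proof for all $n \geq 2$: it identifies $\scCoNerve[f]$ with the restriction of $\scCoNerve[(\Lambda^n_0, T_0)] \to \scCoNerve[(\Delta^n, T)]$ to the full subcategory on $\{0, 2, \ldots, n\}$, observes that the only nontrivial map of mapping spaces is $\bHom(0,n)$, and explicitly identifies this map with an inclusion of marked simplicial sets of the form treated in Lemma \ref{urplee} (applied to the opposite). For $n = 2$ this reduces to the inclusion $(\Delta^0)^\sharp \subseteq (\Delta^1)^\sharp$ you computed; for general $n$ it is an inclusion of a certain ``open box'' into the cube $(\Delta^1)^{n-2} \times \Delta^1$ with a single marked edge. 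You should replace your $n > 2$ argument with this direct mapping-space computation.
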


\begin{proof}
Let $\calC = \scCoNerve[ \Lambda^n_0, T_0]$, let $\calD = \scCoNerve[ \Delta^n, T]$, and let
$F: \calC \rightarrow \calD$ be the $\mSet$-enriched functor induced by the inclusion.
We can identify the objects of $\calC$ and $\calD$ with elements of the set
$\{ 0, \ldots, n \}$. The functor $\scCoNerve[f]$ is bijective on objects, and can be identified
with the restriction of $F$ to the full subcategory of $\calC$ spanned by the objects
$\{0, 2, 3, \ldots, n \}$. It therefore suffices to show that for $i,j \in \{0, 2, 3, \ldots, n \}$, 
the map $\phi: \bHom_{\calC}(i,j) \rightarrow \bHom_{\calD}(i,j)$ is an equivalence of marked simplicial sets. If $i \neq 0$ or $j \neq n$, then the map $\phi$ is an isomorphism. 

We may therefore assume without loss of generality that $i = 0$ and $j=n$. Let
$C$ denote the cube $(\Delta^1)^{n-2}$, let $\bd C$ denote the boundary of $C$, and let
$v = (0, 0, \ldots, 0)$ denote the initial vertex of $C$. Then we can identify $\phi$ with the inclusion
$$ ((C \times \{0\}) \coprod_{ \bd C \times \{0\} } ( \bd C \times \Delta^1), M_0)
\subseteq (C \times \Delta^1, M),$$
where $M$ is the collection of all degenerate edges of $C \times \Delta^1$ together
with the edge $\{v\} \times \Delta^1$, and $M_0$ is defined similarly. The desired conclusion now follows by applying Lemma \ref{urplee} to the morphism $\phi^{op}$.
\end{proof}

\begin{lemma}\label{swww}
Let $\overline{S}$ be a scaled simplicial set, and let
$f: ( \Lambda^n_i)^{\flat} \subseteq (\Delta^n)^{\flat}$ be an inner horn inclusion
in $\mset{\overline{S}}$ (so that $0 < i < n$). Then the induced map $\LCones{\overline{S}}{f}$ is scaled anodyne.
\end{lemma}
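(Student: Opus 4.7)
The plan is first to reduce to a universal case by exploiting the pushout-preserving nature of the scaled cone construction. Applying Remark \ref{sagee} to the structural map $(\Delta^n)_{\flat}\to\overline{S}$ induced by $\Delta^n\to S$, one verifies that $\LCones{\overline{S}}{f}$ is the pushout in $\scSet$ of the canonical morphism
\[
h\colon \LCone{(\Lambda^n_i)^{\flat}}\coprod_{(\Lambda^n_i)_{\flat}}(\Delta^n)_{\flat}\longrightarrow \LCone{(\Delta^n)^{\flat}}
\]
along $(\Delta^n)_{\flat}\to\overline{S}$. Since the class of scaled anodyne morphisms is weakly saturated and hence stable under pushout, it suffices to prove that $h$ itself is scaled anodyne.

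Next, I would analyze the structure of $h$. Unwinding Definition \ref{sputer1}, the codomain $\LCone{(\Delta^n)^{\flat}}$ has underlying simplicial set the join $\{v\}\star\Delta^n\cong\Delta^{n+1}$ (with the cone point $v$ identified with vertex $0$), and its scaling consists of the degenerate $2$-simplices together with every $2$-simplex of $\Delta^{n+1}$ involving $v$. Indeed, for any such $\Delta^{\{0,a,b\}}$ with $a<b$, the representative $\widetilde\sigma\colon\Delta^2\to\Delta^n\times\Delta^1$ sending $0\mapsto(a,0)$, $1\mapsto(a,1)$, $2\mapsto(b,1)$ has image $(a,a,b)$ in $\Delta^n$, which is degenerate, while the hypothesis of condition (b) of Definition \ref{sputer1} fails. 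The domain of $h$ has underlying simplicial set the inner horn $\Lambda^{n+1}_{i+1}$, since adjoining $\Delta^n$ at level one to $\{v\}\star\Lambda^n_i$ restores exactly the $0$-th face of $\Delta^{n+1}$; its scaling is the restriction of the codomain scaling to $\Lambda^{n+1}_{i+1}$.

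Finally, one exhibits $h$ as a composition of pushouts of scaled anodyne generators. For $n\geq 3$, every $v$-involving $2$-simplex of $\Delta^{n+1}$ already lies in $\Lambda^{n+1}_{i+1}$ (since each such $\Delta^{\{0,a,b\}}$ is contained in some proper face of $\Delta^{n+1}$ other than the $(i+1)$-st face), so $h$ is a ``pure'' scaled inner horn filling, realized as an iterated composition of pushouts of type $(A)$ generators at auxiliary inner horns of $\Delta^{n+1}$, supplemented where needed by marking morphisms in the style of Remark \ref{slapperB}. For the base case $n=2$, the $(i+1)$-st face $\Delta^{\{0,1,3\}}$ of $\Delta^3$ is itself a $v$-involving $2$-simplex that must be introduced both as a simplex and as thin; one combines a type $(A)$ filling (introducing $\Delta^{\{1,2,3\}}$ as an auxiliary thin $2$-simplex) with a subsequent application of Remark \ref{slapperB}. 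The principal obstacle is the combinatorial bookkeeping: because the designated thin $2$-simplex $\Delta^{\{j-1,j,j+1\}}$ of the type $(A)$ generator for $\Lambda^{n+1}_{j}$ involves the cone vertex only when $j=1$, realizing $h$ as a composition of the available generators requires carefully sequencing the horn fillings so that every auxiliary thin $2$-simplex introduced along the way is either already $v$-involving (hence lies in the scaling of $\LCone{(\Delta^n)^{\flat}}$) or can subsequently be accommodated via a Remark \ref{slapperB}-type move.
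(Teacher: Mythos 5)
The proposal is incorrect; the central error is the identification of the scaled cone with a join. By Definition \ref{sputer1}, the underlying simplicial set of $\LCone{(\Delta^n)^{\flat}}$ is the quotient $(\Delta^n \times \Delta^1) \coprod_{\Delta^n\times\{0\}} \{v\}$, i.e.\ $\Delta^n\times\Delta^1$ with one end collapsed to a point. This is a ``fat'' cone, genuinely different from $\{v\}\star\Delta^n\cong\Delta^{n+1}$. For instance, the nondegenerate edges emanating from the cone point $v$ in $(\Delta^n\times\Delta^1)/(\Delta^n\times\{0\})$ are in bijection with pairs $a\leq b$ in $[n]$ (the image of the edge $(a,0)\to(b,1)$), giving $\binom{n+2}{2}$ such edges, whereas $\Delta^{n+1}$ has only $n+1$ edges out of vertex $0$. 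There is a natural surjection from the fat cone onto $\Delta^{n+1}$, but it is very far from an isomorphism. Consequently your identification of the domain of $h$ with a scaled $\Lambda^{n+1}_{i+1}$, and the entire subsequent horn-filling analysis, are applied to the wrong simplicial set.

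The paper's proof does not pass to the collapse at all. Instead it exhibits $\LCones{\overline{S}}{f}$ as a pushout of the inclusion $(Z_0,T_0)\subseteq(\Delta^n\times\Delta^1,T)$, where $Z_0 = (\Lambda^n_i\times\Delta^1)\coprod_{\Lambda^n_i\times\partial\Delta^1}(\Delta^n\times\partial\Delta^1)$ and $T$ augments the Definition \ref{sputer1} scaling by all $2$-simplices of $\Delta^n\times\{0\}$ (these are exactly the ones that become degenerate after collapse). It then builds up $(\Delta^n\times\Delta^1,T)$ from $(Z_0,T_0)$ by a length-$2n$ filtration: first the $n$-simplices $\tau_1,\dots,\tau_{n-1}$ living in the slab $\Delta^{\{0,\dots,i-1,i+1,\dots,n\}}\times\Delta^1$, and then the $(n+1)$-simplices $\sigma_0,\dots,\sigma_n$ from the standard shuffle decomposition of $\Delta^n\times\Delta^1$, each step being a pushout of a type $(A)$ scaled anodyne generator, with the last one (the one filling via $\Lambda^{n+1}_i$) using the thinness of the degenerate $\Delta^{\{i-1,i,i+1\}}$ that results from the inner-horn hypothesis $0<i<n$. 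If you want to pursue the join-based picture, you would first need to prove that the quotient map from the fat cone to a suitably scaled $\Delta^{n+1}$ is a bicategorical equivalence, which is itself essentially as hard as the lemma you are trying to prove.
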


\begin{proof}
Set
$$ Z_0 =( \Lambda^n_i \times \Delta^1) \coprod_{ \Lambda^n_i \times \bd \Delta^1}
( \Delta^n \times \bd \Delta^1).$$
Then $F$ is a pushout of a morphism
$F' : ( Z_0, T_0) \rightarrow ( \Delta^n \times \Delta^1, T)$, where
$T$ is the collection of $2$-simplices of Definition \ref{sputer1} together with
all $2$-simplices of $\Delta^n \times \{0\}$, and $T_0$ is the collection of all $2$-simplices
of $Z_0$ which belong to $T$. It will therefore suffice to show that $F'$ is scaled anodyne.

We first define a sequence of $\tau_1, \ldots, \tau_{n-1}$ of $n$-simplices
$$ \Delta^n \hookrightarrow \Delta^{ \{ 0, \ldots, i-1, i+1, \ldots, n \} } \times \Delta^1
\subseteq \Delta^n \times \Delta^1.$$
These simplices can be described by maps of partially ordered sets
$\tau_{k}: [n] \rightarrow [n-1] \times [1]$ given by the formula
$$ \tau_{k}(j) = \begin{cases} (j,0) & \text{if } j < k \\
(j-1, 1) & \text{if } j \geq k. \end{cases}$$
For $1 \leq k \leq n-1$, let $Z_{k} = Z_0 \cup \tau_1 \cup \ldots \cup \tau_{ k} \subseteq \Delta^n \times \Delta^1$, and let $T_{k}$ be the collection of all $2$-simplices of $Z_k$ which belong to $T$.
We claim that each inclusion $(Z_k, T_{k}) \subseteq (Z_{k+1}, T_{k+1})$ is scaled anodyne.
To see this, we observe that there is a pushout diagram
$$ \xymatrix{ \Lambda^{n}_{k+1} \ar[r] \ar[d] & \Delta^n \ar[d]^{g} \\
Z_{k} \ar[r] & Z_{k+1}, }$$
and that $g$ carries $\Delta^{ \{k, k+1, k+2\} }$ to an element of $T$. 

The above argument shows that the inclusion $(Z_0, T_0) \subseteq (Z_{n-1}, T_{n-1})$ is scaled anodyne. To complete the proof, it will suffice to show that the inclusion
$(Z_{n-1}, T_{n-1}) \subseteq ( \Delta^n \times \Delta^1, T)$ is scaled anodyne. To this end,
we introduce a sequence $\sigma_0, \ldots, \sigma_n$ of $(n+1)$-simplices of
$\Delta^n \times \Delta^1$, defined by the maps of partially ordered sets $[n+1] \rightarrow [n] \times [1]$ described by the formula
$$ \sigma_{k}(j) = \begin{cases} (j,0) & \text{if } j \leq k \\
(j-1, 1) & \text{if } j > k. \end{cases}$$
For $n \leq m \leq 2n$, set 
$$Z_{m} = Z_{n-1} \cup \sigma_0 \cup \ldots \cup \sigma_{m-n},$$
and let $T_{m}$ be the collection of all $2$-simplices of $Z_{m}$ which belong to $T$.
Then $(Z_{2n}, T_{2n}) = ( \Delta^n \times \Delta^1, T)$. It will therefore suffice to show that
each inclusion $(Z_{n-1+k}, T_{n-1+k}) \subseteq (Z_{n+k}, T_{n+k})$ is a bicategorical equivalence
for $0 \leq k \leq n$.

For $k < n$ we have a pushout diagram
$$ \xymatrix{ \Lambda^{n+1}_{k+1} \ar[r] \ar[d] & \Delta^{n+1} \ar[d]^{g} \\
Z_{n-1+k} \ar[r] & Z_{n+k}. }$$
The desired result then follows from the observation that $g$ carries
$\Delta^{ \{k, k+1, k+2\} }$ to an element of $T$. If $k=n$, we have instead a pushout diagram
$$ \xymatrix{ \Lambda^{n+1}_{i} \ar[r] \ar[d] & \Delta^{n+1} \ar[d] \\
Z_{2n-1} \ar[r] & Z_{2n}, }$$
where $g$ again carries $\Delta^{ \{i-1, i, i+1 \} }$ to an element of $T$.
\end{proof}

\begin{proof}[Proof of Proposition \ref{coop}]
Since the collection of all morphisms $F$ in $\scSet$ which are both monomorphisms and bicategorical equivalences is weakly saturated, Remark \ref{swite} implies that the collection of all morphisms $f$
satisfying the conclusion of the Proposition is weakly saturated as well. It will therefore suffice to show that the desired result holds when $f$ is one of the generating $\CatP_{\overline{S}}$-anodyne morphisms appearing in Definition \ref{postspunt}. There are five cases to consider:

\begin{itemize}
\item[$(A_0)$] The morphism $f$ is an inclusion
$$( \Lambda^2_1)^{\sharp} \coprod_{ (\Lambda^2_{1})^{\flat}} (\Delta^2)^{\flat}  \subseteq (\Delta^2)^{\sharp}.$$
such that the image of $\Delta^2$ is a thin $2$-simplex of $S$. Let us
identify $\Delta^2 \times \Delta^1$ with the nerve of the partially ordered set depicted below:
$$ \xymatrix{ c \ar[r] & c' \\
b \ar[r] \ar[u] & b' \ar[u] \\
a \ar[r] \ar[u] & a', \ar[u] }$$
and let us identify $2$-simplices of $\Delta^2 \times \Delta^1$ with chains of length three in this partially ordered set. Let $T$ denote the collection of all $2$-simplices of $\Delta^2 \times \Delta^1$ except
for $( a < c < c')$, $(a < b < c')$, and $(a < b' < c')$. Let $T'$ denote the collection of all
$2$-simplices of $\Delta^2 \times \Delta^1$ except for $(a < b < c')$ and $(a < b' < c')$.
The map $F$ is a pushout of the inclusion
$$ ( \Delta^2 \times \Delta^1, T) \subseteq ( \Delta^2 \times \Delta^1, T').$$
In view of Remark \ref{suskp}, it will suffice to show that this inclusion is a bicategorical
equivalence. We have a commutative diagram
$$ \xymatrix{ ( \Delta^2 \times \Delta^1, T) \ar[dr]^{i} \ar[rr] & & ( \Delta^2 \times \Delta^1, T') \ar[dl]^{i'} \\
& (\Delta^2 \times \Delta^1)_{\sharp}. & }$$
By the two-out-of-three property, it will suffice to show that $i$ and $i'$ are bicategorical
equivalences. Since $i'$ is a pushout of $i$, it will suffice to show that $i$ is a bicategorical
equivalence (Remark \ref{suskp} again). We now observe that $i$ is scaled anodyne:
can be obtained as an iterated pushout of three of the morphisms appearing in Remark \ref{slapperB}.

\item[$(A_1)$] The inclusion $Q^{\flat} \subseteq Q^{\sharp}$, where
$Q = \Delta^0 \coprod_{ \Delta^{ \{0,2\} }} \Delta^3 \coprod_{ \Delta^{ \{1,3\} }} \Delta^0$
and the map $Q \rightarrow S$ every $2$-simplex
of $Q$ into $T$. In view of Remarks \ref{suskp} and \ref{sagee}, we can replace
$S$ by the scaled simplicial set $Q_{\sharp}$. The desired result now follows from a simple
explicit computation.


\item[$(B_0)$] The morphism $f$ is an inclusion $\{0\}^{\sharp} \subseteq ( \Delta^1)^{\sharp}$.
In this case, $F$ factors as a composition $F' \circ F''$, where $F''$ is a pushout of
a scaled anodyne morphism of type $(A)$ appearing in Definition \ref{slapper} (with $n=2$) and
$F'$ is a pushout of the bicategorical equivalence of Lemma \ref{prewise} (for $n=2$).

\item[$(C_0)$] The morphism $f$ is an inclusion
$$ (\Lambda^n_0)^{\flat} \coprod_{ (\Delta^{ \{0,1\} })^{\flat} } ( \Delta^{ \{0,1\} })^{\sharp}
\subseteq ( \Delta^n )^{\flat} \coprod_{ ( \Delta^{ \{0,1\} })^{\flat} } ( \Delta^{ \{0,1\} })^{\sharp},$$
where $n > 1$ and $\Delta^{ \{0,1,n\} }$ maps to a thin simplex of $S$. In view of
Remarks \ref{sagee} and \ref{suskp}, we may assume that $S = \Delta^n$.
Let $T$ denote the collection of all $2$-simplices in $\Delta^n \times \Delta^1$ appearing
in Definition \ref{sputer1}, together with
all $2$-simplices of $\Delta^n \times \{0\}$ and the $2$-simplex $\Delta^{ \{0,1,n\} } \times \{1\}$.
Let $Z_0 =( \Lambda^n_0 \times \Delta^1) \coprod_{ \Lambda^n_0 \times \bd \Delta^1}
( \Delta^n \times \bd \Delta^1)$, and let $T_0$ denote the collection of all $2$-simplices of
$Z_0$ which belong to $T$. We wish to show that the map 
$$(Z_0, T_0) \coprod_{ ( \Delta^n \times \{0\} )_{\flat} } \{v\}_{\flat}
\subseteq ( \Delta^n \times \Delta^1, T) \coprod_{ (\Delta^n \times \{0\})_{\flat} } \{v\}_{\flat}$$
is a bicategorical equivalence.

We will define a filtration 
$$(Z_0,T_0) \subseteq (Z_1,T_1) \subseteq \ldots
\subseteq (Z_{2n}, T_{2n}) = (\Delta^n \times \Delta^1, T)$$
and show that the inclusion
$$i_k: (Z_{k-1}, T_{k-1}) \coprod_{ ( \Delta^n \times \{0\} )_{\flat} } \{v\}_{\flat}
\subseteq (Z_k, T_k) \coprod_{ ( \Delta^n \times \{0\} )_{\flat} } \{v\}_{\flat}$$
is a bicategorical equivalence for $1 \leq k \leq 2n$. We first define
a sequence $\tau_1, \ldots, \tau_{n-1}$ of $n$-simplices
$ \Delta^n \hookrightarrow \Delta^n \times \Delta^1$
using the maps of partially ordered sets
$\tau_{k}: [n] \rightarrow [n] \times [1]$ described by the formula
$$ \tau_{k}(j) = \begin{cases} (j+1,0) & \text{if } j < k \\
(j, 1) & \text{if } j \geq k. \end{cases}$$
For $0 \leq k \leq n-1$, let $Z_{k} = Z_0 \cup \tau_1 \cup \ldots \cup \tau_{ k} \subseteq \Delta^n \times \Delta^1$, and let $T_{k}$ be the collection of all $2$-simplices of $Z_k$ which belong to $T$.
The maps $i_{k}$ are scaled anodyne for $1 \leq k \leq n-1$: this follows from the existence of the pushout diagram
$$ \xymatrix{ \Lambda^{n}_{k+1} \ar[r] \ar[d] & \Delta^n \ar[d]^{g} \\
Z_{k} \ar[r] & Z_{k+1}, }$$
where $g$ carries $\Delta^{ \{k, k+1, k+2\} }$ to an element of $T$.

We now introduce a sequence $\sigma_0, \ldots, \sigma_n$ of $(n+1)$-simplices of
$\Delta^n \times \Delta^1$, defined by the maps of partially ordered sets $[n+1] \rightarrow [n] \times [1]$ described by the formula
$$ \sigma_{k}(j) = \begin{cases} (j,0) & \text{if } j \leq k \\
(j-1, 1) & \text{if } j > k. \end{cases}$$
For $n \leq m \leq 2n$, set 
$$Z_{m} = Z_{n-1} \cup \sigma_0 \cup \ldots \cup \sigma_{m-n},$$
and let $T_{m}$ be the collection of all $2$-simplices of $Z_{m}$ which belong to $T$.
We now deduce that $i_{k}$ is scaled anodyne for $n \leq k < 2n$ from the existence of a pushout diagram
$$ \xymatrix{ \Lambda^{n+1}_{k+1} \ar[r] \ar[d] & \Delta^{n+1} \ar[d]^{g} \\
Z_{n-1+k} \ar[r]^{i_k} & Z_{n+k}. }$$
(since $g$ carries $\Delta^{ \{k, k+1, k+2\} }$ to an element of $T$). 

For the map $i_{2n}$, we need to work a bit harder. Let $\gamma$
denote the $2$-simplex of $\Delta^n \times \Delta^1$ spanned by the chain of vertices
$(0,0) < (1,1) < (n,1)$, and let $\gamma'$ denote the $2$-simplex spanned by the chain of vertices
$(0,0) < (1,0) < (n,1)$. We have a commutative diagram
$$ \xymatrix{ (Z_{2n-1}, T_{2n-1}) \coprod_{ ( \Delta^n \times \{0\} )_{\flat} } \{v\}_{\flat} 
\ar[r]^{i_{2n}} \ar[d] & (\Delta^n \times \Delta^1, T) \coprod_{ ( \Delta^n \times \{0\})_{\flat} } \{v\}_{\flat} \ar[d] \\
( Z_{2n-1}, T \cup \{ \gamma, \gamma' \} ) \coprod_{ ( \Delta^n \times \{0\} )_{\flat} } \{v\}_{\flat} \ar[r]^{i'} & (\Delta^n \times \Delta^1, T \cup \{ \gamma, \gamma' \} ) \coprod_{ ( \Delta^n \times \{0\})_{\flat} } \{v\}_{\flat} .}$$
The vertical maps are scaled anodyne (each can be obtained as a pushout of two of the morphisms
appearing in Remark \ref{slapperB}), and therefore bicategorical equivalences.
It follows that $i_{n}$ is a bicategorical equivalence if and only if $i'$ is a bicategorical equivalence.
We now complete the proof by observing that $i'$ is a pushout of the bicategorical equivalence
appearing in Lemma \ref{prewise}.

\item[$(C_1)$]  The morphism $f$ is an inclusion $(\Lambda^n_i)^{\flat} \subseteq (\Delta^n)^{\flat}$, for some $0 < i < n$. In this case, the desired result follows from Lemma \ref{swww}.
\end{itemize}

\end{proof}

We now return to the proof of Proposition \ref{curp}.

\begin{lemma}\label{corp1}
Let $\overline{S}$ be a scaled simplicial set and $\phi: \scCoNerve[\overline{S}] \rightarrow \calC$ be a functor between $\mSet$-enriched categories. Let $f,g: \overline{X} \rightarrow \overline{Y}$ be morphisms in $\mset{\overline{S}}$ which are homotopic in the sense that there exists a map
$h: \overline{X} \times (\Delta^1)^{\sharp} \rightarrow \overline{Y}$ such that
$h | (\overline{X} \times \{0\}^{\sharp}) = f$ and $h | ( \overline{X} \times \{1\}^{\sharp}) = g$.
Then $\scSt_{\phi}(f)$ and $\scSt_{\phi}(g)$ induce the same map in the homotopy category
$\h(\mSet)^{\calC}$. 
\end{lemma}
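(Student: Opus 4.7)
The plan is to use the homotopy $h$ to exhibit $\scSt_{\phi}(f)$ and $\scSt_{\phi}(g)$ as composites of $\scSt_{\phi}(h)$ with two sections of a common projection, and then show that both sections become mutual inverses in the homotopy category after applying $\scSt_{\phi}$. Concretely, let $i_0, i_1 \colon \overline{X} \to \overline{X} \times (\Delta^1)^{\sharp}$ denote the two endpoint inclusions and $p$ the projection, so that $p \circ i_j = \id_{\overline{X}}$ while $h \circ i_0 = f$ and $h \circ i_1 = g$. Applying $\scSt_{\phi}$ and using functoriality, it suffices to show that both $\scSt_{\phi}(i_0)$ and $\scSt_{\phi}(i_1)$ are weak equivalences in $(\mSet)^{\calC}$: for then $\scSt_{\phi}(p)$ is a weak equivalence by two-out-of-three, and both $\scSt_{\phi}(i_j)$ represent the same class in $\h(\mSet)^{\calC}$ (namely the inverse of $[\scSt_{\phi}(p)]$).

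To produce weak equivalences of this especially strong type, the next step is to verify that each $i_j$ is $\CatP_{\overline{S}}$-anodyne in $\mset{\overline{S}}$. By Lemma \ref{kalel}, the inclusion $\{j\}^{\sharp} \hookrightarrow (\Delta^1)^{\sharp}$ is $\CatP_0$-anodyne, where $\CatP_0$ is the trivial categorical pattern on $\Delta^0$. Taking its pushout product with the cofibration $\emptyset \hookrightarrow \overline{X}$ in $\mset{\CatP_{\overline{S}}}$ and invoking Proposition \ref{postprod}, the resulting map (which is manifestly $i_j$) is $\CatP_0 \times \CatP_{\overline{S}}$-anodyne. Inspection of Definition \ref{prodpat} under the canonical identification $\Delta^0 \times S \simeq S$ shows that $\CatP_0 \times \CatP_{\overline{S}}$ coincides with $\CatP_{\overline{S}}$ itself, so $i_j$ is $\CatP_{\overline{S}}$-anodyne.

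With this in hand, Proposition \ref{coop} applies and yields that $\LCones{\overline{S}}{i_j}$ is a bicategorical equivalence. The map is also a cofibration of scaled simplicial sets, so by Proposition \ref{presus}(1) and Definition \ref{bicateq} the induced functor $\scCoNerve[\LCones{\overline{S}}{i_j}]$ is a trivial cofibration of $\mSet$-enriched categories. Pushing out along $\phi \colon \scCoNerve[\overline{S}] \to \calC$ and using the left-properness of the Bergner model structure on $\Cat_{\mSet}$, the induced map on pushouts remains a trivial cofibration; evaluating the resulting Dwyer--Kan equivalence at any $C \in \calC$ via the mapping space $\bHom(v,C)$ gives precisely $\scSt_{\phi}(i_j)(C)$, which is therefore a weak equivalence of marked simplicial sets for each $C$. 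Hence $\scSt_{\phi}(i_j)$ is a weak equivalence in $(\mSet)^{\calC}$, and the reduction in the first paragraph completes the proof. The only real obstacle is threading Proposition \ref{coop} through the pushout with $\calC$: this is where the left-properness input enters, while everything else is essentially formal.
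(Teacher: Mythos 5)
Your strategy coincides with the paper's---realize $\scSt_{\phi}(\overline{X} \times (\Delta^1)^{\sharp})$ as a cylinder object and conclude in the homotopy category---but there is a genuine gap in the second paragraph. You claim both endpoint inclusions $\{j\}^{\sharp} \hookrightarrow (\Delta^1)^{\sharp}$ are $\CatP_0$-anodyne, citing Lemma \ref{kalel}, which applies to \emph{left} anodyne inclusions of simplicial sets. The inclusion $\{0\} = \Lambda^1_0 \subseteq \Delta^1$ is left anodyne, so this is fine for $j=0$. But $\{1\} \subseteq \Delta^1$ is right anodyne, not left anodyne: it is not a covariant equivalence over $\Delta^1$, since the fiber over the vertex $0$ changes from $\emptyset$ to a point. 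This asymmetry is intrinsic to the machinery: $\CatP$-anodyne maps are adapted to locally coCartesian fibrations, so the generating morphism of type $(B_0)$ in Definition \ref{postspunt} is $\{0\}^{\sharp} \subseteq (\Delta^1)^{\sharp}$ and not its reverse, and Lemma \ref{kalel} cannot deliver the case $j=1$.

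The error is easily repaired because your reduction asks for more than is needed: it suffices to show that \emph{one} of $\scSt_{\phi}(i_0)$, $\scSt_{\phi}(i_1)$ is a weak equivalence, for then $\scSt_{\phi}(p)$ is one by two-out-of-three from $p \circ i_0 = \id$, and $\scSt_{\phi}(i_1)$ is one by two-out-of-three from $p \circ i_1 = \id$. The paper's proof works with exactly the $\{0\}$-inclusion for this reason, then invokes the standard model-category formalism of left homotopies through a cylinder. Two smaller points: the appeal to left-properness is superfluous, since the map on pushouts $\scCoNerve[\LCones{\overline{S}}{\overline{X}}] \coprod_{\scCoNerve[\overline{S}]} \calC \to \scCoNerve[\LCones{\overline{S}}{\overline{X} \times (\Delta^1)^{\sharp}}] \coprod_{\scCoNerve[\overline{S}]} \calC$ is itself a pushout of the trivial cofibration $\scCoNerve[\LCones{\overline{S}}{i_0}]$ and hence a trivial cofibration in any model category; and the passage from a weak equivalence of $\mSet$-enriched categories to weak equivalences on mapping objects $\bHom(v,C)$ is correct by the very definition of Dwyer--Kan equivalence, with no fibrancy hypothesis needed.
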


\begin{proof}
We claim that $\scSt_{\phi}(h)$ is a homotopy from $\scSt_{\phi}(f)$ to $\scSt_{\phi}(g)$. 
To prove this, it suffices to show that the projection $p: \scSt_{\phi}( \overline{X} \times (\Delta^1)^{\sharp})
\rightarrow \scSt_{\phi}( \overline{X})$ is a weak equivalence in $(\mSet)^{\phi}$. The map $p$
has a section $s$, induced by the inclusion $\overline{X} \times \{0\}^{\sharp}
\subseteq \overline{X} \times (\Delta^1)^{\sharp}$. Since this inclusion is an
$\CatP_{\overline{S}}$-anodyne map (Proposition \ref{postprod}), the map
$s$ is a weak equivalence by Proposition \ref{coop}. It follows that $p$ is a weak equivalence as desired.
\end{proof}

\begin{proof}[Proof of Proposition \ref{curp}]
It is easy to see that $\scSt_{\phi}$ preserves colimits and cofibrations. 
The existence of the right adjoint $\scSt_{\phi}$ follows from the adjoint functor theorem.
To complete the proof, it will suffice to show that $\scSt_{\phi}$ preserves weak equivalences.
Choose a weak equivalence $f: \overline{X} \rightarrow \overline{Y}$ in $\mset{ \overline{S} }$;
we wish to show that $\scSt_{\phi}(f)$ is a weak equivalence. Choose an $\CatP_{\overline{S}}$-anodyne map $g: \overline{Y} \rightarrow \overline{Y}'$, where $\overline{Y}'$ is $\overline{S}$-fibered. Proposition \ref{coop} implies that $\scSt_{\phi}(g)$ is a weak equivalence. By the two-out-of-three property, it will suffice to show that
$\scSt_{\phi}(g \circ f)$ is a weak equivalence. Replacing $\overline{Y}$ by $\overline{Y}'$, we may suppose that $\overline{Y}$ is $\overline{S}$-fibered.

The map $f$ admits a factorization
$$ \overline{X} \stackrel{f'}{\rightarrow} \overline{X}' \stackrel{f''}{\rightarrow} \overline{Y}$$
where $f'$ is $\CatP_{\overline{S}}$-anodyne, and $f''$ has the right lifting property with respect to all $\CatP_{\overline{S}}$-anodyne morphisms. Proposition \ref{coop} implies that
$\scSt_{\phi}(f')$ is a weak equivalence. By the two-out-of-three property, it will suffice to prove
that $\scSt_{\phi}(f'')$ is a weak equivalence. We may therefore replace $\overline{X}$ by
$\overline{X}'$, and thereby reduce to the case where $\overline{X}$ satisfies the hypotheses of Proposition \ref{coop}. Invoking Lemma \ref{piner}, we deduce that $f$ admits a simplicial homotopy inverse in $\mset{\overline{S}}$. It follows from Lemma \ref{corp1} that $\scSt_{\phi}(f)$ is a weak equivalence as desired.
\end{proof}

\begin{remark}\label{ba1}
Let $f: \overline{S} \rightarrow \overline{S}'$ be a map of scaled simplicial sets, and let
$$ \Adjoint{f_!}{\mset{\overline{S}}}{\mset{\overline{S}'}}{f^{\ast}}$$ be the Quillen adjunction appearing in Proposition \ref{supper}. For any $\mSet$-enriched functor $\phi: \scCoNerve[\overline{S}'] \rightarrow \calC$, we have isomorphisms of functors
$$ \scSt_{ \phi} \circ f_{!} \simeq \scSt_{ \phi \circ f}$$
$$ f^{\ast} \circ \scUn_{\phi} \simeq \scUn_{ \phi \circ f}.$$
\end{remark}

\begin{remark}\label{ba2}
Let $\overline{S}$ be a scaled simplicial set, and suppose given
$\mSet$-enriched functors
$$ \scCoNerve[ \overline{S} ] \stackrel{ \phi}{\rightarrow} \calC
\stackrel{ \psi}{\rightarrow} \calC'. $$
Let $\psi^{\ast}: (\mSet)^{\calC'} \rightarrow (\mSet)^{\calC}$ be given by composition with
$\psi$, and let $\psi_{!}$ be a left adjoint to $\psi^{\ast}$ (given by left Kan extension along $\psi$). 
Then we have canonical isomorphisms of functors
$$ \psi_{!} \circ \scSt_{\phi} \simeq \scSt_{ \psi \circ \phi }$$
$$ \scUn_{\phi} \circ \psi^{\ast} \simeq \scUn_{ \psi \circ \phi }.$$
\end{remark}

\subsection{Straightening over a Point}\label{bisec3.4}

In this section, we study the straightening functor $\scSt_{\ast}$ associated to the scaled simplicial set $\ast = \Delta^0_{\sharp}$ (and its right adjoint, which we will denote by $\scUn_{\ast}$). We can identify $\scSt_{\ast}$ with a functor from the category $\mSet$ of marked simplicial sets to itself. Note that for every marked simplicial set $\overline{X} = (X,M)$, the collection of vertices of $\scSet_{\ast}( \overline{X} )$ can be identified with the collection of edges of $X$.

There is a natural transformation of functors $\alpha: \scSt_{\ast} \rightarrow \id_{\mSet}$ with the following property: for every marked simplicial set $\overline{X} = (X,M)$, the map
$\alpha( \overline{X} ) \rightarrow \overline{X}$ induces on vertices the restriction map
$\Hom_{\sSet}( \Delta^1, X) \rightarrow \Hom_{\sSet}( \Delta^{\{0\}}, X)$. This property
uniquely determines $\alpha$, since $\alpha$ is determined by the maps
$\alpha( ( \Delta^n)^{\flat} ): \scSt_{\ast} (\Delta^n)^{\flat} \rightarrow (\Delta^n)^{\flat}$
which are in turn determined by the induced maps on vertices. In fact, $\alpha$ is the
{\em unique} natural transformation from $\scSt_{\ast}$ to $\id_{\mSet}$: any other possibility induces (and is determined by) a natural transformation
$\Hom_{\sSet}( \Delta^1, X) \rightarrow \Hom_{\sSet}( \Delta^0, X)$ which
(by Yoneda's lemma) arises from a map of simplicial sets $i: \Delta^{0} \rightarrow \Delta^1$; it therefore suffices to observe that the inclusion $\Delta^{0} \simeq \Delta^{ \{1\} } \subseteq \Delta^1$ does
{\em not} induce a natural transformation from $\scSt_{\ast}$ to $\id_{\mSet}$.

\begin{proposition}\label{uil}
For every marked simplicial set $\overline{X} = (X,M)$, the natural transformation $\alpha$
induces a weak equivalence $\scSt_{\ast} \overline{X} \rightarrow \overline{X}$.
\end{proposition}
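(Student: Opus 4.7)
The plan is to prove that the class $\calW$ of marked simplicial sets $\overline{X}$ for which $\alpha(\overline{X})$ is a weak equivalence exhausts all of $\mSet$, by a reduction to cellular generators. First, Proposition \ref{curp} (applied with $\overline{S} = \Delta^0_\sharp$ and $\phi$ the identity) exhibits $\scSt_\ast$ as a left Quillen endofunctor of $\mSet$; in particular, it preserves cofibrations, pushouts, transfinite compositions, and---since every object of $\mSet$ is cofibrant---all weak equivalences. The identity functor trivially has the same properties. Using the left properness of $\mSet$ together with stability of its weak equivalences under filtered colimits, a standard gluing-lemma argument shows that $\calW$ is closed under retracts, pushouts along cofibrations, and transfinite compositions of cofibrations. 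Since $\emptyset \in \calW$ and every marked simplicial set can be built from $\emptyset$ by attaching cells along the monomorphisms $(\bd \Delta^n)^\flat \hookrightarrow (\Delta^n)^\flat$ and $(\Delta^1)^\flat \hookrightarrow (\Delta^1)^\sharp$, it suffices to show that $(\Delta^n)^\flat \in \calW$ for all $n \geq 0$ and that $(\Delta^1)^\sharp \in \calW$.

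For the case $\overline{X} = (\Delta^n)^\flat$, we must analyze $\scSt_\ast((\Delta^n)^\flat) = \bHom_{\scCoNerve[K]}(v,w)$, where $K = \LCones{\ast}{(\Delta^n)^\flat}$ is the scaled simplicial set obtained from $\Delta^n \times \Delta^1$ by collapsing $\Delta^n \times \{0\}$ and $\Delta^n \times \{1\}$ to $v$ and $w$ respectively, with thin $2$-simplices dictated by Definition \ref{sputer1} (here, those whose image in $\Delta^n$ is degenerate). By naturality and the uniqueness of $\alpha$ discussed above, the induced map on vertices is the source-evaluation $\Hom_{\sSet}(\Delta^1, \Delta^n) \to \Hom_{\sSet}(\Delta^{\{0\}}, \Delta^n)$. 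I would prove that $\alpha((\Delta^n)^\flat)$ is a weak equivalence by constructing an explicit simplicial homotopy in $\mSet$
\[
h \colon (\Delta^1)^\sharp \times \scSt_\ast((\Delta^n)^\flat) \to \scSt_\ast((\Delta^n)^\flat)
\]
contracting the mapping space onto the image of the tautological section of $\alpha$ sending each vertex $i \in \Delta^n$ to the degenerate edge at $i$. Geometrically the contraction slides the endpoint of an edge $\Delta^1 \to \Delta^n$ back to its source along $\Delta^n$; its construction closely parallels the marked-anodyne deformations produced in Lemma \ref{urplee} and Lemma \ref{prewise}. The case $\overline{X} = (\Delta^1)^\sharp$ is handled in parallel fashion, with the extra thin $2$-simplex permitted by condition (b) of Definition \ref{sputer1} ensuring that the unique nondegenerate edge of $\scSt_\ast((\Delta^1)^\sharp)$ is marked in the required way.

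The principal difficulty lies in carrying out the contraction $h$ explicitly and verifying that it respects marked edges: the mapping spaces $\bHom_{\scCoNerve[K]}(v,w)$ are intricate objects (assembled from cubes glued along their boundaries, with marking determined by the scaling of $K$), and tracking which edges of the homotopy are marked requires careful combinatorial bookkeeping. All remaining steps are purely formal consequences of the model-category formalism developed earlier in this section.
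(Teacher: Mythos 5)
Your structural reduction---using that $\scSt_\ast$ is a left Quillen functor which preserves all weak equivalences to reduce to cellular generators, closing the class $\calW$ under pushouts along cofibrations and filtered colimits via left properness---is essentially the same as the paper's steps $(a)$--$(d)$, and the direct verifications for $(\Delta^0)^\sharp$, $(\Delta^1)^\flat$, and $(\Delta^1)^\sharp$ are likewise parallel. The divergence, and the gap, is in your treatment of $(\Delta^n)^\flat$ for $n\geq 2$.

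You propose to construct an explicit contracting simplicial homotopy on $\scSt_\ast((\Delta^n)^\flat)$ onto the image of a ``tautological section,'' asserting the construction ``closely parallels'' Lemmas \ref{urplee} and \ref{prewise}. This is precisely the step you have not carried out, and it is not justified by the cited lemmas: those produce scaled-anodyne or marked-anodyne \emph{inclusions}, not a retraction of the full mapping object $\bHom_{\scCoNerve[\LCone{(\Delta^n)^\flat}]}(v,w)$ onto the subspace of degenerate edges, and it is not even clear a priori that your section (specified only on vertices) extends to a map of marked simplicial sets. The paper avoids this problem entirely: for $n\geq 2$ one chooses an inner horn $\Lambda^n_i \subseteq \Delta^n$ and observes that $\scSt_\ast$ carries the inclusion $(\Lambda^n_i)^\flat \subseteq (\Delta^n)^\flat$ to a weak equivalence of marked simplicial sets---this is where Lemma \ref{swww} and Proposition \ref{presus} enter, via the scaled-anodyne properties of $\LCones{\ast}{-}$. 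Combining this with the lower horizontal weak equivalence $(\Lambda^n_i)^\flat \to (\Delta^n)^\flat$ and the inductive hypothesis on $(\Lambda^n_i)^\flat$ (lower-dimensional), two-out-of-three gives $(\Delta^n)^\flat \in \calW$ with no combinatorial bookkeeping. You should replace your proposed contraction by this inner-anodyne bootstrap; without it, the crucial case is unproved.
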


\begin{proof}
Let us say that a marked simplicial set $\overline{X}$ is {\em good} if the map
$\alpha_{ \overline{X}}: \scSt_{\ast} \overline{X} \rightarrow \overline{X}$ is a weak equivalence.
The proof proceeds in several steps:
\begin{itemize}
\item[$(a)$] The functor $\scSt_{\ast}$ preserves filtered colimits, and the collection of weak equivalences in $\mSet$ is stable under filtered colimits. Consequently, the collection of good
marked simplicial sets is stable under filtered colimits. It will therefore suffice to show that
$\overline{X}=(X,M)$ is good whenever $X$ is a finite simplicial set.
\item[$(b)$] Suppose given a pushout diagram
$$ \xymatrix{ \overline{X} \ar[r]^{f} \ar[d] & \overline{X}' \ar[d] \\
\overline{Y} \ar[r] & \overline{Y}'. }$$
of marked simplicial sets, where $f$ is a cofibration. If $\overline{X}$, $ \overline{X}'$, and
$\overline{Y}$ are good, then $\overline{Y}'$ is good. This follows from the 
fact that $\scSt_{\ast}$ preserves pushouts and cofibrations, and the fact that the model 
structure on $\mSet$ is left proper.
\item[$(c)$] The marked simplicial sets $( \Delta^0)^{\sharp}$, $(\Delta^1)^{\flat}$, and
$(\Delta^1)^{\sharp}$ are good: this is easy to verify by direct calculation.
\item[$(d)$] We now proceed by induction on the number of nondegenerate $1$-simplices of $X$ which belong to $M$. If this number is not zero, then there is a pushout diagram
$$ \xymatrix{ ( \Delta^1)^{\flat} \ar[r] \ar[d] & ( \Delta^1)^{\flat} \ar[d] \\
\overline{X}_0 \ar[r] & \overline{X}, }$$
where $\overline{X}_0$ has fewer nondegenerate marked edges. Using the inductive hypothesis,
$(c)$, and $(b)$, we conclude that $\overline{X}$ is good. We may therefore reduce to the case
where every marked edge of $X$ is degenerate, so that $\overline{X} = X^{\flat}$.
\item[$(e)$] We now work by induction on the dimension $n$ of $X$ and the number of 
nondegenerate simplices of $X$ of dimension $n$. If $X$ is empty, then $X^{\flat}$ is good and there is nothing to prove. Otherwise, we have a pushout diagram
$$ \xymatrix{ ( \bd \Delta^n)^{\flat} \ar[r] \ar[d] & (\Delta^n)^{\flat} \ar[d] \\
{X'}^{\flat} \ar[r] & X^{\flat}. }$$
The inductive hypothesis implies that $(\bd \Delta^n)^{\flat}$ and ${X'}^{\flat}$ are
good. According to $(b)$, it will suffice to show that $(\Delta^n)^{\flat}$ is good. We may therefore
assume that $X$ is an $n$-simplex. If $n \leq 1$, the desired result follows from $(c)$.
If $n \geq 2$, then we can choose an integer $i$ with $0 < i < n$. We have a commutative diagram
$$ \xymatrix{ \scSt_{\ast} ( \Lambda^n_i)^{\flat} \ar[r] \ar[d] & \scSt_{\ast} (\Delta^n)^{\flat} \ar[d] \\
(\Lambda^n_i)^{\flat} \ar[r] & (\Delta^n)^{\flat}. }$$
The horizontal arrows are weak equivalences (for the upper horizontal map, this follows from
Lemma \ref{swww} and Proposition \ref{presus}), and the left vertical map is a weak equivalence by the inductive hypothesis. It follows that the right vertical map is also a weak equivalence, as desired.
\end{itemize}
\end{proof}

\begin{corollary}\label{eul}
Let $f: \overline{X} \rightarrow \overline{Y}$ be a weak equivalence of marked simplicial sets, regarded
as a morphism in $\mset{\Delta^0_{\flat}}$. Then the induced map
$\LCones{ \Delta^0_{\flat}}{f}$ is a bicategorical equivalence.
\end{corollary}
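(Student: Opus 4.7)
The plan is to reduce the statement directly to Proposition \ref{uil} by unwinding the scaled cone construction over a point. Note that $\Delta^0_\flat = \Delta^0_\sharp$ as scaled simplicial sets (there are no nondegenerate $2$-simplices in $\Delta^0$), so $\scCoNerve[\Delta^0_\flat]$ is the terminal $\mSet$-enriched category and the results of \S\ref{bisec3.4} apply verbatim. Writing $\calC_{\overline{X}} := \scCoNerve[\LCones{\Delta^0_\flat}{\overline{X}}]$, I would first observe that this $\mSet$-enriched category has exactly two objects: the cone point $v$ (to which $X \times \{0\}$ is collapsed) and the image $w$ of the vertex of $\Delta^0_\flat$ (to which $X \times \{1\}$ is collapsed). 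Since every nondegenerate simplex of $X \times \Delta^1$ projects to a monotone simplex of $\Delta^1$, tracing through the cone construction shows that
\[ \bHom_{\calC_{\overline{X}}}(v,v) = \bHom_{\calC_{\overline{X}}}(w,w) = \Delta^0_\sharp, \qquad \bHom_{\calC_{\overline{X}}}(w,v) = \emptyset, \]
while Definition \ref{sputer2} directly identifies $\bHom_{\calC_{\overline{X}}}(v,w)$ with the marked simplicial set $\scSt_{\ast}(\overline{X})$ (the pushout over $\scCoNerve[\Delta^0_\flat]$ appearing there is inert because both sides coincide with the terminal $\mSet$-enriched category).

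Next I would observe that this identification is natural in $\overline{X}$: the induced functor $F := \scCoNerve[\LCones{\Delta^0_\flat}{f}]: \calC_{\overline{X}} \to \calC_{\overline{Y}}$ is the identity on objects, is the identity on the three trivial mapping spaces, and agrees with $\scSt_{\ast}(f): \scSt_{\ast}(\overline{X}) \to \scSt_{\ast}(\overline{Y})$ on $\bHom(v,w)$. Applying Proposition \ref{uil} to both $\overline{X}$ and $\overline{Y}$ gives the commutative square
\[ \xymatrix{ \scSt_{\ast}(\overline{X}) \ar[r]^{\scSt_{\ast}(f)} \ar[d]_{\alpha_{\overline{X}}} & \scSt_{\ast}(\overline{Y}) \ar[d]^{\alpha_{\overline{Y}}} \\ \overline{X} \ar[r]^{f} & \overline{Y}, } \]
in which the vertical arrows are weak equivalences in $\mSet$ and the bottom horizontal arrow is a weak equivalence by hypothesis; the two-out-of-three property forces $\scSt_{\ast}(f)$ to be a weak equivalence as well.

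Finally I would invoke the characterization of weak equivalences in $\Cat_{\mSet}$: a functor is a weak equivalence if and only if it is essentially surjective at the level of homotopy categories and induces a weak equivalence on each mapping space. Our functor $F$ is the identity on the two-object set and is a weak equivalence on all four mapping spaces (trivially on three of them, and by the previous paragraph on the fourth), so $F$ is a weak equivalence of $\mSet$-enriched categories. By Definition \ref{bicateq}, this is exactly the assertion that $\LCones{\Delta^0_\flat}{f}$ is a bicategorical equivalence. The only step requiring genuine care is the combinatorial identification of the mapping spaces of $\calC_{\overline{X}}$ in the first paragraph; everything else is either a direct application of Proposition \ref{uil} or a formal property of the Bergner model structure on $\Cat_{\mSet}$.
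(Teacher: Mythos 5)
Your proof is correct and follows exactly the same route as the paper's: the paper's one-line "unwinding the definitions" is precisely your explicit identification of the mapping objects of $\scCoNerve[\LCones{\Delta^0_\flat}{\overline{X}}]$, after which the commutative square with Proposition \ref{uil} and the two-out-of-three argument are identical to what appears in the text. The extra detail you supply about the two-object $\mSet$-enriched category and the triviality of three of its four mapping spaces is a fair expansion rather than a different argument.
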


\begin{proof}
We wish to prove that $\scCoNerve[ \LCones{ \Delta^{0}_{\flat}}{f} ]$ is an equivalence of
$\mSet$-enriched categories. Unwinding the definitions, it suffices to show that the map
$\scSt_{\ast}(f)$ is a weak equivalence. In view of the commutative diagram
$$ \xymatrix{ \scSt_{\ast} \overline{X} \ar[d] \ar[r]^{ \scSt_{\ast}(f)} & \scSt_{\ast} \overline{Y} \ar[d] \\
\overline{X} \ar[r]^{f} & \overline{Y}, }$$ this follows from Proposition \ref{uil}.
\end{proof}

\begin{corollary}\label{ba3}
The Quillen adjunction
$(\scSt_{\ast}, \scUn_{\ast})$ is a Quillen equivalence.
\end{corollary}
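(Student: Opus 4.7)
The plan is to deduce this corollary as an essentially formal consequence of Proposition \ref{uil}. Since the Quillen adjunction $(\scSt_\ast, \scUn_\ast)$ has already been established in Proposition \ref{curp}, the only remaining point is to check that it is in fact a Quillen equivalence, for which it suffices to show that the left derived functor $L\scSt_\ast$ induces an equivalence on homotopy categories.

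First I would note that every object of $\mSet$ is cofibrant with respect to the model structure of \S \toposref{markmodel}, because the cofibrations are precisely the monomorphisms. Consequently the left derived functor $L\scSt_\ast \colon \h{\mSet} \to \h{\mSet}$ may be computed without any cofibrant replacement: it is the functor induced on homotopy categories by $\scSt_\ast$ itself.

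Next I would invoke the natural transformation $\alpha \colon \scSt_\ast \to \id_{\mSet}$ defined in the discussion preceding Proposition \ref{uil}. By that proposition, $\alpha_{\overline{X}} \colon \scSt_\ast(\overline{X}) \to \overline{X}$ is a weak equivalence for every marked simplicial set $\overline{X}$. Passing to homotopy categories, $\alpha$ descends to a natural isomorphism from $L\scSt_\ast$ to the identity functor on $\h{\mSet}$. Since the identity is tautologically an equivalence of categories, it follows that $L\scSt_\ast$ is an equivalence of categories, so the Quillen adjunction $(\scSt_\ast, \scUn_\ast)$ is a Quillen equivalence as required.

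There is no real obstacle here: the entire content of the corollary is already packaged into Proposition \ref{uil}, whose proof (by induction on dimension and number of marked edges, reducing to the good cases via pushouts and to the simplex case via the scaled anodyne inclusions of Lemma \ref{swww} and Proposition \ref{presus}) has already been given. The corollary is a formal unpacking.
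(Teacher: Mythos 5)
Your proof is correct and takes essentially the same approach as the paper: both observe that since every object of $\mSet$ is cofibrant, $L\scSt_\ast$ is computed by $\scSt_\ast$ itself, and then invoke Proposition \ref{uil} to produce a natural isomorphism $L\scSt_\ast \cong \id_{\h{\mSet}}$, from which the Quillen equivalence follows formally.
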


\begin{proof}
It will suffice to show that $\scSt_{\overline{S}} = \scSt_{\ast}$ induces an
equivalence from the homotopy category $\h{\mSet}$ to itself. Proposition \ref{uil} implies that this functor is isomorphic to the identity. 
\end{proof}

\begin{corollary}\label{ba4}
Let $\overline{S}$ be a scaled simplicial set, and let $\phi: \scCoNerve[\overline{S} ] \rightarrow \calC$ be an $\mSet$-enriched functor. Assume that $\phi$ is essentially surjective, and let
$\alpha: \calF \rightarrow \calF'$ be a map between fibrant objects of $(\mSet)^{\calC}$. The following conditions are equivalent:
\begin{itemize}
\item[$(1)$] The map $\alpha$ is a weak equivalence in $(\mSet)^{\calC}$.
\item[$(2)$] For every object $C \in \calC$, the induced map
$\calF(C) \rightarrow \calF'(C)$ is a weak equivalence.
\item[$(3)$] For every vertex $s$ of $S$, the map
$$ \{s\}^{\sharp} \times_{S^{\sharp} } \scUn_{\phi}(\calF)
\{s\}^{\sharp} \times_{ S^{\sharp} } \scUn_{\phi}(\calF')$$
is a weak equivalence.
\item[$(4)$] The map $\scUn_{\phi}(\alpha)$ is a weak equivalence in
$\mSet{\overline{S}}$.  
\end{itemize}
\end{corollary}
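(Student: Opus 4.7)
The plan is to establish the equivalences $(1)\Leftrightarrow(2)$, $(3)\Leftrightarrow(4)$, and $(2)\Leftrightarrow(3)$ separately. The first, $(1)\Leftrightarrow(2)$, is immediate from the definition of the projective model structure on $(\mSet)^{\calC}$, whose weak equivalences are by definition the pointwise weak equivalences.

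For $(3)\Leftrightarrow(4)$, I will invoke Lemma \ref{piner} applied to the categorical pattern $\CatP_{\overline{S}}$. Since $\scUn_{\phi}$ is a right Quillen functor by Proposition \ref{curp}, both $\scUn_{\phi}(\calF)$ and $\scUn_{\phi}(\calF')$ are fibrant in $\mset{\overline{S}}$. Because every edge of $S$ is marked in $\CatP_{\overline{S}}$, the equivalent formulation $(3')$ of Lemma \ref{piner} applies, so a morphism between fibrant objects of $\mset{\overline{S}}$ is a weak equivalence if and only if it induces an equivalence of $\infty$-categories on the fiber over each vertex of $S$---which is precisely condition $(3)$.

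The substantive step is $(2)\Leftrightarrow(3)$, which rests on identifying each fiber $\{s\}^{\sharp}\times_{S^{\sharp}}\scUn_{\phi}(\calF)$ with $\calF(\phi(s))$ up to weak equivalence. Writing $\iota_s\colon\{s\}^{\sharp}\to(S,M_S)$ for the vertex inclusion and $\phi_s=\phi\circ\scCoNerve[\iota_s]$ for the induced functor picking out $\phi(s)\in\calC$, Remark \ref{ba1} produces an isomorphism $\iota_s^{\ast}\scUn_{\phi}(\calF)\cong\scUn_{\phi_s}(\calF)$, and Remark \ref{ba2} then identifies the right-hand side with $\scUn_{\ast}(\calF(\phi(s)))$ under the canonical equivalence $(\mSet)^{\scCoNerve[\{s\}^{\sharp}]}\simeq\mSet$. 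By Corollary \ref{ba3} the functor $\scUn_{\ast}$ belongs to a Quillen equivalence, so it both preserves and reflects weak equivalences between fibrant objects (the reflection uses Proposition \ref{uil}). Consequently, condition $(3)$ becomes the assertion that $\calF(\phi(s))\to\calF'(\phi(s))$ is a weak equivalence for every vertex $s\in S$.

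From this reformulation, $(2)\Rightarrow(3)$ is immediate, while the converse uses essential surjectivity of $\phi$: for each $C\in\calC$ there is a morphism $C\to\phi(s)$ in $\calC$ admitting a simplicial homotopy inverse, and since $\calF$ and $\calF'$ are $\mSet$-enriched functors with fibrant values, they carry such equivalences to weak equivalences of fibrant marked simplicial sets, yielding $\calF(C)\simeq\calF(\phi(s))$ and $\calF'(C)\simeq\calF'(\phi(s))$; two-out-of-three then delivers $(2)$. The principal obstacle is the fiber identification, which requires carefully composing the naturality statements in Remarks \ref{ba1} and \ref{ba2}; the essential-surjectivity step itself is formal once one knows that pointwise-fibrant $\mSet$-enriched functors preserve simplicial homotopy inverses.
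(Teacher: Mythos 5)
Your proposal follows the same strategy as the paper: $(1)\Leftrightarrow(2)$ by definition of the projective model structure, $(3)\Leftrightarrow(4)$ from Lemma \ref{piner} (via its formulation in terms of fibers over vertices, using that every edge of $S$ is marked in $\CatP_{\overline{S}}$), and $(2)\Leftrightarrow(3)$ by identifying the fiber over $s$ with $\scUn_{\ast}(\calF(\phi(s)))$ using Remarks \ref{ba1} and \ref{ba2} to reduce to $\overline{S}=\Delta^0_{\sharp}$, then applying Corollary \ref{ba3}. Your additional remarks about essential surjectivity and how an enriched functor with fibrant values carries equivalences in $\calC$ to weak equivalences in $\mSet$ simply spell out a step the paper leaves implicit; the argument is correct and essentially identical.
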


\begin{proof}
The equivalence of $(1)$ and $(2)$ is obvious. The equivalence of $(3)$ and
$(4)$ follows from Lemma \ref{piner}, since $\scUn_{\phi}(\calF)$ and
$\scUn_{\phi}(\calF')$ are fibrant objects of $\mset{\overline{S}}$. 
Since $\phi$ is essentially surjective, the equivalence of $(2)$ and $(3)$ will follow
from the following:
\begin{itemize}
\item[$(\ast)$] For every vertex $s$ of $\overline{S}$, the map
$\calF( \phi(s) ) \rightarrow \calF'( \phi(s) )$ is a weak equivalence if and only if the map
$ \{s\}^{\sharp} \times_{S^{\sharp} } \scUn_{\phi}(\calF)
\{s\}^{\sharp} \times_{ S^{\sharp} } \scUn_{\phi}(\calF')$ is a weak equivalence.
\end{itemize}
To prove $(\ast)$, we use Remarks \ref{ba1} and \ref{ba2} to reduce to the case
$\overline{S} = \Delta^0_{\sharp}$. The desired result in this case follows from Corollary \ref{ba3}.
\end{proof}

\subsection{Straightening over a Simplex}\label{bisec3.5}

Throughout this section, we let $\calC$ denote the simplicial category
$\sCoNerve[\Delta^n]$, $\calC^{+}$ the $\mSet$-enriched category $\scCoNerve[ \Delta^n_{\flat} ]$, and $\phi: \scCoNerve[ \Delta^n_{\flat} ] \rightarrow \calC^{+}$ the identity map. 
We wish to show that the Quillen adjunction
$(\scSt_{\phi}, \scUn_{\phi})$ is a Quillen equivalence. For this, we need to introduce a bit of notation.

\begin{notation}
Let $T$ denote the collection of all $2$-simplices of $\Delta^{n+1}$ which are either degenerate or contain the final vertex of $\Delta^{n+1}$. We let $\calD^{+}$ denote the $\mSet$-enriched category
$\scCoNerve[ \Delta^{n+1}, T]$, and $\calD = \sCoNerve[ \Delta^{n+1} ]$ its underlying simplicial category. Let $i: \calC \rightarrow \calD$ and $i^{+}: \calC^{+} \rightarrow \calD^{+}$ denote the functors induced by the inclusion $\Delta^{n} \simeq \Delta^{ \{0, 1, \ldots, n \} } \subseteq \Delta^{n+1}$.
Let $D$ denote the object of $\calD^{+}$ corresponding to the final vertex of $\Delta^{n+1}$.

For $0 \leq i \leq n$, we can identify vertices of $\bHom_{\calD}(i, D)$ with
subsets $S \subseteq \{ i, i+1, \ldots, n \}$ which contain $i$. The formula
which assigns to each subset $S$ its largest elements extends uniquely to a map
of simplicial sets $\phi_i: \bHom_{\calD}(i, D) \rightarrow \Delta^n$.

Let $i_{!}: (\sSet)^{\calC} \rightarrow (\sSet)^{\calD}$ denote the functor given by left Kan extension along $i$, and let $\sMap: (\sSet)^{\calC} \rightarrow \sSet$ denote the composition of $i_{!}$ with the functor
$(\sSet)^{\calD} \rightarrow \sSet$ given by evaluation at $D$. Similarly, we define a functor
$\sMap^{+}: (\sSet)^{\calC} \rightarrow \mSet$ as the composition
$$ (\sSet)^{\calC} \stackrel{\flat}{\rightarrow} (\mSet)^{\calC^{+} }
\stackrel{ i^{+}_{!} }{\rightarrow} (\mSet)^{\calD^{+}} \rightarrow \mSet,$$
where the first map carries an object $\calF \in (\sSet)^{\calC}$ to the functor
$\calF^{\flat}: \calC^{+} \rightarrow \mSet$ given by the formula
$\calF^{\flat}(i) = \calF(i)^{\flat}$, and the last map is given by evaluation at $D$.

For every $\calF \in (\sSet)^{\calC}$, we can identify $\sMap(\calF)$ with the underlying simplicial set of the marked simplicial set $\sMap^{+}(\calF)$. We can identify $\sMap(\calF)$ with a quotient of the
disjoint union $\coprod_{ 0 \leq i \leq n} \calF(i) \times \bHom_{\calD}(i,D)$. The maps
$\{ \phi_i \}_{0 \leq i \leq n}$ determine a map of simplicial sets
$\sMap(\calF) \rightarrow \Delta^n$. This map depends functorially
on $\calF$; we may therefore view $\sMap$ as defining a functor from
$(\sSet)^{\calC}$ to $(\sSet)_{/ \Delta^n}$. We will abuse notation by denoting this functor also by $\sMap$. Similarly, we can view $\sMap^{+}$ also as a functor from
$(\sSet)^{\calC}$ to $\mset{ \Delta^n_{\flat} }$. 
\end{notation}

\begin{remark}
For every object $\calF \in (\sSet)^{\calC}$ and every $0 \leq i \leq n$, there is a canonical isomorphism
$\sMap(\calF) \times_{ \Delta^n } \{i\} \simeq \calF(i)$. Moreover, the marking on this simplicial set
provided by $\sMap(\calF)^{+}$ is trivial: only degenerate edges of $\calF(i)$ are marked.
\end{remark}

\begin{proposition}\label{pikkle}
Let $\calF \in (\sSet)^{\calC}$ and let $f: \sMap^{+}(\calF) \rightarrow \overline{X} = (X,M)$ be a morphism
in $\mset{\Delta^n_{\flat}}$. Suppose that $\overline{X}$ is $\Delta^n_{\flat}$-fibered, and suppose that for $0 \leq i \leq n$, the map
$$\calF(i) \simeq \sMap(\calF) \times_{\Delta^n} \{i\}
\rightarrow X_i \times_{ \Delta^n } \{i\}$$
is a categorical equivalence of simplicial sets. Then:
\begin{itemize}
\item[$(1)$] The induced map $\sMap(\calF) \rightarrow X$ is a categorical equivalence of simplicial sets.
\item[$(2)$] The map $f$ is a weak equivalence in $\mset{ \Delta^n_{\flat} }$.
\end{itemize}
\end{proposition}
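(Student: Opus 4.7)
I would prove (1) by induction on $n$, and then derive (2). The base case $n = 0$ is immediate: $\sMap(\calF) \simeq \calF(0)$ and $X = X_0$, so the statement reduces to the hypothesis on fibers.

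For the inductive step of (1), let $\iota: \Delta^{\{1,\ldots,n\}} \hookrightarrow \Delta^n$ be the inclusion and set $\calF' = \calF \circ \sCoNerve(\iota)$, $X' = X \times_{\Delta^n} \Delta^{\{1,\ldots,n\}}$. The inductive hypothesis gives a categorical equivalence $\sMap(\calF') \to X'$. The construction of $\sMap$ as a left Kan extension yields a pushout presentation
$$\xymatrix{ \calF(0) \times K \ar[r] \ar[d] & \sMap(\calF') \ar[d] \\ \calF(0) \times \bHom_{\calD}(0, D) \ar[r] & \sMap(\calF), }$$
with $K \subseteq \bHom_{\calD}(0, D) \cong (\Delta^1)^n$ consisting of the simplices missing the initial vertex of the cube and the left vertical map a cofibration. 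The plan is to exhibit a parallel homotopy pushout presentation for $X$ --- built from $X_0$ and $X'$ glued along cube data coming from locally coCartesian transport --- and then conclude by left-properness of the Joyal model structure combined with the inductive hypothesis and the fiber equivalence $\calF(0) \to X_0$.

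The principal technical difficulty will be in constructing the parallel presentation for $X$, since $\overline{X}$ is only $\Delta^n_\flat$-fibered: the transport maps $X_0 \to X_i$ along the edges $\Delta^{\{0,i\}}$ are locally coCartesian (not coCartesian), and their coherence is guaranteed only over the degenerate $2$-simplices of $\Delta^n$. My approach to bypass this obstacle is to reduce to the $n = 1$ case by pulling back along each edge $\Delta^1 \to \Delta^n$; there $X \to \Delta^1$ becomes a coCartesian fibration and $\sMap(\calF)$ becomes the mapping cylinder of $\calF(0) \to \calF(1)$, and the categorical equivalence follows by a direct explicit comparison using left-properness. The reassembly over $\Delta^n$ uses the inner anodyneness of the spine $S = \bigcup_i \Delta^{\{i,i+1\}} \subseteq \Delta^n$, together with the fact that both $\sMap(\calF) \to \Delta^n$ and $X \to \Delta^n$ have enough flatness to make the pullback along $S \subseteq \Delta^n$ behave well.

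For (2), once (1) is established, I would factor $f$ as a trivial cofibration $\sMap^+(\calF) \stackrel{i}{\hookrightarrow} \overline{Y}$ followed by a fibration $\overline{Y} \stackrel{q}{\twoheadrightarrow} \overline{X}$ in $\mset{\Delta^n_\flat}$; then $\overline{Y}$ is fibrant since $\overline{X}$ is. By Proposition \ref{fibraman}, the underlying $q: Y \to X$ is a categorical fibration. Applying (1) to the map $\sMap^+(\calF) \to \overline{X}$ shows the underlying $\sMap(\calF) \to X$ is a categorical equivalence, and applying (1) analogously to the trivial cofibration $i$ (using the same pushout presentation and the inductive hypothesis) shows $\sMap(\calF) \to Y$ is a categorical equivalence, so two-out-of-three gives that $q: Y \to X$ is a categorical equivalence. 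Being both a categorical fibration and a categorical equivalence, $q$ is a trivial Kan fibration in the Joyal model structure, so its pullback along each edge $\Delta^1 \to \Delta^n$ remains a trivial fibration and hence a categorical equivalence. Lemma \ref{piner} then implies that $q$ is a weak equivalence in $\mset{\Delta^n_\flat}$, and two-out-of-three with the trivial cofibration $i$ shows that $f$ is a weak equivalence, establishing (2).
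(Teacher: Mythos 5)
The choice of decomposition in part $(1)$ matters more than you anticipated, and I don't think your reassembly step can be salvaged as stated. You restrict $\calF$ to $\Delta^{\{1,\ldots,n\}}$ and then need to glue in $\calF(0)$, whereas the paper restricts to $\Delta^{\{0,\ldots,n-1\}}$ and glues in $\calF(n)$. The latter is much more favorable: with that choice, $\sMap(\calF)$ is literally the mapping cylinder $(\sMap(\calF') \times \Delta^1) \coprod_{\sMap(\calF') \times \{1\}} \calF(n)$, and the composite $X \to \Delta^n \to \Delta^1$ (collapsing $\Delta^{\{0,\ldots,n-1\}}$) is a genuine coCartesian fibration; one then concludes by the quasi-equivalence criterion Proposition \toposref{qequiv}, exactly parallel to the classical theory of mapping simplices. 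Your dual decomposition forces you to cope with transport maps out of $X_0$ that are only locally coCartesian, and your proposed remedy --- reduce to edges, reassemble over the spine $S$ --- depends on the assertion that $X \times_{\Delta^n} S \hookrightarrow X$ is a categorical equivalence, which by Proposition \ref{capper} would require $X \to \Delta^n$ to be a flat inner fibration. A $\Delta^n_\flat$-fibered object only gives a locally coCartesian fibration, and such a fibration over $\Delta^2$ need not be flat: Example \ref{gabbe} requires genuinely $p$-coCartesian lifts to produce the required factorization $C \to D \to E$, and a merely locally $p$-coCartesian edge over $\Delta^{\{0,1\}}$ gives no way to fill the outer horn $\Lambda^2_0$. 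So the flatness claim is unjustified, and I do not expect it to be true in general.

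For part $(2)$, your argument has a genuine circularity. You factor $f$ as $\sMap^+(\calF) \stackrel{i}{\hookrightarrow} \overline{Y} \stackrel{q}{\twoheadrightarrow} \overline{X}$ and invoke part $(1)$ for the map $i$ to conclude that $\sMap(\calF) \to Y$ is a categorical equivalence. But $(1)$ has a hypothesis --- that the induced fiber maps $\calF(j) \to Y_j$ are categorical equivalences --- which is exactly what you are trying to establish. It cannot be deduced merely from $i$ being a trivial cofibration: a $\CatP_{\Delta^n_\flat}$-anodyne map of type $(B_0)$ from Definition \ref{postspunt} (the inclusion $\{0\}^\sharp \subseteq (\Delta^1)^\sharp$) already fails to induce equivalences on fibers (the fiber over the target vertex goes from $\emptyset$ to a point), so fiberwise equivalence is not a general feature of trivial cofibrations in $\mset{\Delta^n_\flat}$ with non-fibrant source. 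What the paper does instead is considerably shorter and avoids the factorization altogether: for an arbitrary $\Delta^n_\flat$-fibered $\overline{Y}$, one forms the commutative square with $\Fun(X,Y) \to \Fun(\sMap(\calF),Y)$ along the bottom and the two functor-simplicial-set mapping objects along the top, observes that the vertical maps are inclusions of full subcategories stable under equivalence and that the square is a pullback, and then uses $(1)$ only for the bottom row; the top row is the relevant mapping space comparison that detects the $\CatP$-equivalence. You should adopt this argument rather than attempt to repair the factorization route.
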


\begin{proof}
We first prove $(1)$ using induction on $n$. If $n=0$ the result is obvious, so we may suppose $n > 0$.
Let $\calC' = \sCoNerve[ \Delta^{n-1} ]$, regarded as a full subcategory of $\calC$, and let
$\calF' = \calF | \calC'$. Unwinding the definition, we have a canonical isomorphism of simplicial sets
$$ \alpha: \sMap(\calF) \simeq (\sMap(\calF') \times \Delta^1) \coprod_{ \sMap(\calF') \times \{1\} } \calF(n)$$
Let $q$ denote the composition $X \rightarrow \Delta^n \stackrel{q_0}{\rightarrow} \Delta^1$, where
$q_0^{-1} \{0\} = \Delta^{n-1} \subseteq \Delta^n$. The map $q$ is a coCartesian fibration of simplicial sets. Assertion $(1)$ follows from the isomorphism $\alpha$, the inductive hypothesis, and
Proposition \toposref{qequiv}.

To prove $(2)$, choose an arbitrary $\Delta^{n}_{\flat}$-fibered object $\overline{Y} = (Y,M')\in \mset{ \Delta^n_{\flat} }$. We observe that there is a commutative diagram
$$ \xymatrix{ \bHom^{\flat}_{ \Delta^n_{\flat}}( \overline{X}, \overline{Y} ) \ar[r] \ar[d] & 
\bHom^{\flat}_{ \Delta^n_{\flat} }( \sMap^{+}(\calF), \overline{Y} ) \ar[d] \\
\Fun( X, Y ) \ar[r] & \Fun( \sMap(\calF), Y ). }$$
Since $Y$ is an $\infty$-category, assertion $(1)$ guarantees that the lower horizontal map is an equivalence. We now complete the proof by observing that the vertical maps are the inclusions of full subcategories which are stable under equivalence, and that the diagram is a pullback square.
\end{proof}

\begin{proposition}\label{unple}
Let $\overline{X} = (X,M)$ be a $\Delta^{n}_{\flat}$-fibered object in $\mset{ \Delta^n_{\flat} }$. Then there exists a strongly cofibrant diagram $\calF \in (\sSet)^{\calC}$ and
a map $f: \sMap(\calF) \rightarrow \overline{X}$ which satisfies the hypotheses of Proposition
\ref{pikkle}.
\end{proposition}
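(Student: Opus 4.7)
The plan is to proceed by induction on $n$. In the base case $n=0$, the $\mSet$-enriched category $\calC$ has a single object, and we may simply take $\calF(0)$ to be $X_0$ itself (which is an $\infty$-category, being the fiber of a categorical fibration), with $f$ the identity. For the inductive step, let $\calC' \subseteq \calC$ denote the full subcategory spanned by $\{0,1,\ldots,n-1\}$, let $X' = X \times_{\Delta^n} \Delta^{n-1}$, and let $\overline{X}' = (X', M')$ denote the associated object of $\mset{\Delta^{n-1}_{\flat}}$. The inductive hypothesis produces a strongly cofibrant $\calF' \in (\sSet)^{\calC'}$ and a map $f': \sMap^{+}(\calF') \to \overline{X}'$ satisfying the hypotheses of Proposition \ref{pikkle}.

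The first main step will be to reformulate the extension problem. Specifying an extension of $\calF'$ to a diagram $\calF: \calC \to \sSet$ amounts to giving a simplicial set $\calF(n)$ together with a map $L \to \calF(n)$, where the latching object $L$ is the colimit
$$L = \operatorname*{colim}_{i < n} \bHom_{\calC}(i,n) \times_{\bHom_{\calC'}(-,\cdot)} \calF'(-)$$
computing the projective latching datum. By construction, $L$ is canonically isomorphic to $\sMap(\calF') \times_{\Delta^{n-1}} \bHom_{\calD}(-, D)_{|[0,n-1]}$ (compatibly with the cube structure of the mapping spaces in $\calC$). A diagram $\calF$ obtained in this way is strongly cofibrant precisely when the map $L \to \calF(n)$ is a cofibration, because the latching at the objects $i < n$ is controlled by $\calF'$ and is already cofibrant by inductive hypothesis.

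The second main step constructs the required map $L \to X_n$. The edge $\Delta^{\{n-1,n\}} \subseteq \Delta^n$ is marked, so the restriction $X \times_{\Delta^n} \Delta^{\{n-1,n\}} \to \Delta^1$ is a coCartesian fibration; applying Proposition \toposref{simplexplay} furnishes a functor $\tau: X_{n-1} \to X_n$ together with a quasi-equivalence $M(\tau) \to X\times_{\Delta^n} \Delta^{\{n-1,n\}}$. Using the inductively constructed map $\sMap(\calF') \to X'$ together with iterated application of locally coCartesian transport along edges of $\Delta^n$ ending at $n$, we obtain a compatible family of maps $\bHom_{\calC}(i,n) \times \calF'(i) \to X_n$ for each $i < n$, and hence a map $L \to X_n$. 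Factoring this using the Joyal model structure gives a cofibration $L \hookrightarrow \calF(n)$ followed by a trivial fibration $\calF(n) \to X_n$, and produces the desired extension $\calF$ together with the underlying map on fibers.

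It remains to assemble the global map $f: \sMap^{+}(\calF) \to \overline{X}$ and verify it satisfies the hypotheses of Proposition \ref{pikkle}. The map on the fiber over $i$ is the trivial fibration $\calF(i) \to X_i$ produced at stage $i$ of the induction, which is in particular a categorical equivalence. The compatibility of the transport data produced in the second step with the locally coCartesian structure on $\overline{X}$ guarantees that $f$ descends from the latching description to a well-defined map $\sMap(\calF) \to X$ lying over $\Delta^n$ and carrying marked edges to marked edges. The main obstacle is the consistency of the transport data across multiple edges of $\Delta^n$: because $\Delta^n_{\flat}$ has only degenerate thin $2$-simplices, the locally coCartesian transport is not strictly compositional, and one must use the freedom in the choice of $\calF(n)$ (and in the choice of quasi-equivalence in Proposition \toposref{simplexplay}) to rectify the resulting coherence data to a strict functor on $\calC$. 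This is the step that genuinely uses the flexibility allowed by factoring through a cofibrant replacement $\calF(n)$ rather than trying to work with $X_n$ directly.
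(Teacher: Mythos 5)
The overall inductive scaffolding matches the paper: reduce to $\Delta^{n-1}$ via the inductive hypothesis and extend the diagram $\calF'$ by defining $\calF(n)$, using the identification of the latching datum at $n$ with $\sMap(\calF')$ (your formula $L \simeq \sMap(\calF') \times_{\Delta^{n-1}} \bHom_{\calD}(-,D)_{|[0,n-1]}$ is garbled, but the intent — $L \cong \sMap(\calF')$ as a coend — is correct). Where your proposal breaks down is the second step: you construct the map $L \to X_n$ by transporting \emph{edge by edge} along the various morphisms of $\Delta^n$ ending at $n$, and you then concede that this produces incoherent data because $p$ is only locally coCartesian over $\Delta^n_\flat$. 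You wave at "using the freedom in the choice of $\calF(n)$" to rectify, but that freedom cannot help: $\calF(n)$ is the \emph{target} of the map you are trying to build, while the incompatibility lives in the comparison of transports along different composable chains of edges; choosing a fatter cofibrant replacement of $X_n$ does not make those transports agree. This is a genuine gap.

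The paper sidesteps the coherence problem entirely, and the move is worth internalizing. Instead of transporting one edge at a time, consider the composite $q : X \to \Delta^n \stackrel{q_0}{\to} \Delta^1$, where $q_0$ collapses $\Delta^{n-1}$ to $\{0\}$ and sends $n$ to $\{1\}$. Since $\Delta^1$ has no nondegenerate $2$-simplices, $q$ is an honest coCartesian fibration (this is already established in the proof of Proposition \ref{pikkle}). One then takes the inductively constructed map $h_0 : \sMap(\calF') \to X'$ \emph{as a whole}, viewed as a map into $q^{-1}\{0\}$, and chooses a single $q$-coCartesian extension $h : \sMap(\calF') \times \Delta^1 \to X$. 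Restricting to $\{1\}$ gives a canonical $h_1 : \sMap(\calF') \to X_n$ with no path-dependence to reconcile, because the universal property of coCartesian lifting over $\Delta^1$ is invoked once for the entire diagram rather than once per edge. Factoring $h_1$ as a cofibration $g'$ followed by a categorical equivalence $g''$ defines $\calF(n) = Y$ and the amalgamation of $h$ with $g''$ gives the map $\sMap^{+}(\calF) \to \overline{X}$. If you revise your argument, replace your use of Proposition \toposref{simplexplay} and the edge-by-edge transport with this single coCartesian push along $q$, and the rectification step disappears.
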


\begin{proof}
The proof goes by induction on $n$. The result is obvious if $n=0$, so assume $n > 0$.
Let $\overline{X}' = \overline{X} \times_{ (\Delta^n)^{\sharp} } ( \Delta^{n-1} )^{\sharp}$, and let
$\calC' = \sCoNerve[ \Delta^{n-1}]$. The inductive hypothesis guarantees the existence of a strongly cofibrant diagram $\calF' \in (\sSet)^{\calC'}$ and a map $f': \sMap^{+}(\calF') \rightarrow \overline{X}'$
satisfying the hypotheses of Proposition \ref{pikkle}. Let $q: X \rightarrow \Delta^1$ be defined
as in the proof of Proposition \ref{pikkle}. Then $q$ is a coCartesian fibration, and $f'$ determines a map
of simplicial sets $h_0: \sMap(\calF') \times \{0\} \rightarrow X \times_{ \Delta^1 } \{0\}$. We can therefore choose a $q$-coCartesian extension of $h_0$ to a map $h: \sMap( \calF') \times \Delta^1 \rightarrow X$, 
where $h | \sMap(\calF') \times \{1\}$ determines a map $h_1: \sMap(\calF') \rightarrow X \times_{ \Delta^n} \{n\}$.
Choose a factorization of $h_1$ as a composition
$$ \sMap(\calF') \stackrel{g'}{\rightarrow} Y \stackrel{g''}{\rightarrow} X \times_{ \Delta^n} \{n\},$$
where $g'$ is a cofibration of simplicial sets and $g''$ is a categorical equivalence. The
map $g'$ determines an extension of $\calF'$ to a functor $\calF \in (\sSet)^{\calC}$ with
$\calF(n) = Y$, and the maps $h$ and $g''$ can be amalgamated to a map of marked
simplicial sets $\sMap^{+}(\calF) \rightarrow \overline{X}$ with the desired properties.
\end{proof}

\begin{remark}\label{scootnerve}
Let $P$ be a partially ordered set. The simplicial category
$\calE = \sCoNerve[ \Nerve(P) ]$ can be described as follows:
\begin{itemize}
\item[$(a)$] The objects of $\calE$ are the elements of $P$.
\item[$(b)$] Given elements $i,j \in P$, the simplicial set
$\bHom_{ \calE}(i,j)$ can be identified with the nerve $\Nerve C_{i,j}$. Here
$C_{i,j}$ denotes the collection of all linearly ordered subsets $S \subseteq P$ with
least element $i$ and largest element $j$, regarded as a partially ordered set
with respect to inclusions.
\item[$(c)$] The composition $\bHom_{\calE}(i,j) \times \bHom_{\calE}(j,k)
\rightarrow \bHom_{\calE}(i,k)$ is induced by the union map
$$ C_{i,j} \times C_{j,k} \rightarrow C_{i,k}$$
$$ (S, S') \mapsto S \cup S'.$$
\end{itemize}
\end{remark}

\begin{lemma}\label{carp}
Let $\overline{S} = (S,T)$ be a scaled simplicial set, and let $A \subseteq B$ be an inclusion of simplicial sets. Let $f: \Delta^1 \times B \rightarrow S$ be a map with the following
properties:
\begin{itemize}
\item For every simplex $\sigma: \Delta^n \rightarrow B$ which does not belong to $A$
and let $\tau$ be the $2$-simplex of $\Delta^1 \times \Delta^n$ spanned by
$(0,0)$, $(1,0)$ and $(1,n)$. Then the induced map
$$ \Delta^2 \stackrel{\tau}{\rightarrow} \Delta^1 \times \Delta^n
\stackrel{\sigma}{\rightarrow} \Delta^1 \times B \stackrel{f}{\rightarrow} S$$
is a thin $2$-simplex of $S$.
\end{itemize}
Then the inclusion 
$$((\Delta^1)^{\sharp} \times A^{\flat}) \coprod_{ \{0\}^{\sharp} \times A^{\flat} }
(\{0\}^{\sharp} \times B^{\flat}) \subseteq (\Delta^1)^{\sharp} \times B^{\flat}$$
is $\CatP_{\overline{S}}$ anodyne (where $\CatP_{\overline{S}}$ is the categorical
pattern of Example \ref{user}).
\end{lemma}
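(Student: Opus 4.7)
The plan is to recognize this lemma as a direct specialization of Lemma \ref{carpal} to the categorical pattern $\CatP_{\overline{S}} = (M_S, T, \emptyset)$ associated to the scaled simplicial set $\overline{S}$ (Example \ref{user}), where $M_S$ is the collection of \emph{all} edges of $S$. I would first verify the two hypotheses of Lemma \ref{carpal} in this setting. The thin $2$-simplex condition stated in the present lemma coincides with the condition $\pi(\sigma) \in T$ appearing in Lemma \ref{carpal}, since the thin simplices of $\overline{S}$ are by definition the elements of $T$. The second hypothesis of Lemma \ref{carpal}, requiring that $f$ carry each edge $\Delta^1 \times \{b\}$ into $M_S$, is automatic because $M_S$ consists of all edges of $S$ by construction of $\CatP_{\overline{S}}$. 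Granting this, the desired inclusion is $\CatP_{\overline{S}}$-anodyne by immediate appeal to Lemma \ref{carpal}.

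If one preferred to reproduce the argument directly rather than cite it, the plan would be to first reduce to the case $B = \Delta^n$, $A = \bd \Delta^n$ by the standard simplex-by-simplex induction (using that $\CatP$-anodyne morphisms form a weakly saturated class). In that case one filters $\Delta^1 \times \Delta^n$ by a sequence
$$Z_0 \subset Z_1 \subset \cdots \subset Z_{n+1} = \Delta^1 \times \Delta^n,$$
where $Z_0 = (\{0\} \times \Delta^n) \cup (\Delta^1 \times \bd \Delta^n)$ and $Z_{i+1}$ is obtained from $Z_i$ by adjoining the nondegenerate $(n+1)$-simplex $\sigma_i$ determined by $j \mapsto (0,j)$ for $j \leq n-i$ and $j \mapsto (1,j-1)$ otherwise. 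Each inclusion $\overline{Z}_i \subseteq \overline{Z}_{i+1}$ (with the marking inherited from $(\Delta^1)^{\sharp} \times (\Delta^n)^{\flat}$) turns out to be a pushout of a generating $\CatP_{\overline{S}}$-anodyne morphism from Definition \ref{postspunt}: for generic $i$ it is of type $(C_1)$, for the initial interior step it is of type $(C_0)$, and the edge case $i=n=0$ is of type $(B_0)$.

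The main (and only) subtlety would be the $(C_0)$ step, where one must verify that the specific $2$-simplex of $\Delta^{n+1}$ whose thinness is required to invoke a $(C_0)$ generator is precisely the image of the $2$-simplex $\tau \subseteq \Delta^1 \times \Delta^n$ with vertices $(0,0)$, $(1,0)$, $(1,n)$. This is exactly the $2$-simplex whose image under $f$ is assumed to be thin in the hypothesis of the lemma, so the filtration step goes through. There is no genuine obstacle here; the whole argument is bookkeeping the filtration and matching each pushout against Definition \ref{postspunt}.
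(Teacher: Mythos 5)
Your proposal is correct. The first route you give—specializing Lemma \ref{carpal} to the categorical pattern $\CatP_{\overline{S}} = (M_S, T, \emptyset)$ of Example \ref{user}—is mathematically valid and in fact more economical than what the paper actually does. You correctly observe that the second hypothesis of Lemma \ref{carpal} (each edge $\Delta^1 \times \{b\}$ lands in $M_S$) is vacuous because $M_S$ consists of all edges of $S$, and that the thinness hypothesis of Lemma \ref{carp} is exactly the first hypothesis of Lemma \ref{carpal}. The paper, somewhat redundantly, does not invoke \ref{carpal} but rather reproduces its filtration argument essentially verbatim; your citation-based reduction is a clean alternative that loses nothing.

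Your backup direct argument coincides with the paper's proof: reduce simplex-by-simplex to $B = \Delta^n$, $A = \bd \Delta^n$, filter $\Delta^1 \times \Delta^n$ by the shuffles $\sigma_0, \ldots, \sigma_n$, and match each step $\overline{Z}_i \subseteq \overline{Z}_{i+1}$ against a generator from Definition \ref{postspunt}. One small wording quibble: you describe the $(C_0)$ step as ``the initial interior step,'' but it is the \emph{final} step of the filtration, $i = n$ (when $n > 0$). The simplex $\sigma_n$ is the shuffle that first climbs the $\Delta^1$ direction (traversing $(0,0) \to (1,0)$ and then running along $\{1\} \times \Delta^n$), and its face $\Delta^{\{0,1,n+1\}}$ maps to precisely the $2$-simplex $\tau$ with vertices $(0,0)$, $(1,0)$, $(1,n)$, which is why the hypothesis is exactly what you need to invoke the $(C_0)$ generator. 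Your identification of that face is correct; only the ordinal position in the filtration is misstated.
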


\begin{proof}
Working simplex-by-simplex, we can reduce to the case where
$B = \Delta^n$ and $A = \bd \Delta^n$. The simplicial
set $\Delta^1 \times \Delta^n$ admits a filtration
$$ (\{0\} \times \Delta^n) \coprod_{ \{0\} \times \bd \Delta^n} ( \Delta^1 \times \bd \Delta^n)
= Z_0 \subset Z_1 \subset \ldots \subset Z_{n} \subseteq Z_{n+1} = \Delta^1 \times \Delta^n,$$
where each $Z_{i+1}$ is obtained from $Z_{i}$ by adjoining the $(n+1)$-simplex of
$\Delta^1 \times \Delta^n$ corresponding to the map
$$ \sigma_i: [n+1] \rightarrow [1] \times [n]$$
$$ \sigma_i(j) = \begin{cases} (0,j) & \text{if } j \leq n-i \\
(1, j-1) & \text{if } j > n-i. \end{cases}$$
Let $\overline{Z_i} = (Z_i, M_i)$ denote the marked simplicial set whose marked edges
are precisely those edges which are marked in $(\Delta^1)^{\sharp} \times (\Delta^1)^{\flat}$. We wish to show that the inclusion
$\overline{Z}_0 \subseteq \overline{Z}_{n+1}$ is $\CatP_{\overline{S} \times \overline{S}'}$ anodyne. For this, it will suffice to show that each of the inclusions $h_i: \overline{Z}_i \subseteq \overline{Z}_{i+1}$ is $\CatP_{\overline{S}}$-anodyne. If $0 \leq i < n$, then
$h_i$ is a pushout of a morphism of type $(C_1)$ appearing in Definition \ref{postspunt}. If $i = n=0$, then $h_i$ is a pushout of a morphism of the type $(B_0)$. 
If $i=n>0$, then $h_i$ is a pushout of a morphism of the type $(C_0)$. 
\end{proof}

\begin{lemma}\label{toughboy}
Let $K$ be a simplicial set, and define $\calF_{K} \in (\sSet)^{\calC}$ by
the formula $\calF_{K}(i) = \bHom_{\calF}(0,i) \times K$, and let $f$ denote the inclusion
$$ K \simeq \sMap(\calF) \times_{ \Delta^n } \{0\} \rightarrow \sMap(\calF).$$
Then $f$ induces a $\CatP_{\Delta^n_{\flat}}$-anodyne morphism
$K^{\flat} \rightarrow \sMap^{+}(\calF)$.
\end{lemma}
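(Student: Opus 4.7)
The proof reduces to an inductive analysis of $\sMap^{+}$ applied to the corepresentable functor $\calH \in (\sSet)^{\calC}$ defined by $\calH(i) = \bHom_{\calC}(0, i)$. Since $\calF_K(i) = \calH(i) \times K$ with $K$ appearing as a constant factor, the coend formula for the left Kan extension $i^{+}_{!}$ gives a canonical isomorphism $\sMap^{+}(\calF_K) \simeq \sMap^{+}(\calH) \times K^{\flat}$ in $\mset{\Delta^n_{\flat}}$. Under this identification, $f$ becomes the pushout product of the inclusion $g: \{v\}^{\sharp} \hookrightarrow \sMap^{+}(\calH)$ (where $v$ is the unique vertex of $\sMap(\calH)$ lying over $0 \in \Delta^n$) with the cofibration $\emptyset \hookrightarrow K^{\flat}$ (viewed over $\Delta^0$). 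By Proposition \ref{postprod}, it suffices to show that $g$ is $\CatP_{\Delta^n_{\flat}}$-anodyne.

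We prove this by induction on $n$. The base case $n = 0$ is trivial since $\sMap^{+}(\calH) = \{v\}^{\sharp}$. For $n \geq 1$, let $\calC' = \sCoNerve[\Delta^{n-1}]$, $\calH' = \calH | \calC'$, and $B = \sMap(\calH')$. By Proposition \ref{pikkle}, the underlying simplicial set of $\sMap(\calH)$ is $B \times \Delta^1$, with $B \times \{0\}$ the preimage of $\Delta^{n-1} \subseteq \Delta^n$ and structure map sending $(\sigma, 0) \mapsto \phi_0'(\sigma)$, $(\sigma, 1) \mapsto n$. A direct computation of the mapping spaces in $\scCoNerve[(\Delta^{n+1}, T)]$ shows that the nondegenerate marked edges of $\sMap^{+}(\calH)$ consist precisely of the marked edges of $\sMap^{+}(\calH')$ inside $B \times \{0\}$ together with all vertical edges $\{\sigma\} \times \Delta^1$. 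We factor $g$ as $\{v\}^{\sharp} \stackrel{g_1}{\hookrightarrow} \sMap^{+}(\calH') \stackrel{g_2}{\hookrightarrow} \sMap^{+}(\calH)$; the map $g_1$ is $\CatP_{\Delta^{n-1}_{\flat}}$-anodyne by the inductive hypothesis, and remains $\CatP_{\Delta^n_{\flat}}$-anodyne after applying the left Quillen functor induced by the face inclusion $\Delta^{n-1}_{\flat} \hookrightarrow \Delta^n_{\flat}$ (Proposition \ref{supper}).

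To handle $g_2$, observe that it fits into a pushout square in $\mset{\Delta^n_{\flat}}$
\[
\xymatrix{
\{0\}^{\sharp} \times B^{\flat} \ar[r] \ar[d] & (\Delta^1)^{\sharp} \times B^{\flat} \ar[d] \\
\sMap^{+}(\calH') \ar[r]^-{g_2} & \sMap^{+}(\calH),
}
\]
where $\Delta^1 \times B \to \Delta^n$ is the map described above. The top arrow is $\CatP_{\Delta^n_{\flat}}$-anodyne by Lemma \ref{carp}: for each simplex $\sigma: \Delta^m \to B$, the $2$-simplex spanned by $(0, \sigma(0)), (1, \sigma(0)), (1, \sigma(m))$ maps to a $2$-simplex of the form $\phi_0'(\sigma(0)) \to n \to n$ in $\Delta^n$, which is degenerate and thus thin in $\Delta^n_{\flat}$; moreover, each edge $\Delta^1 \times \{b\}$ maps to an edge of $\Delta^n$ (which is automatically marked). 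Since $\CatP_{\Delta^n_{\flat}}$-anodyne morphisms are closed under pushouts, $g_2$ is $\CatP_{\Delta^n_{\flat}}$-anodyne, completing the induction.

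The main obstacle is the explicit computation of the marking on $\sMap^{+}(\calH)$: one must verify from the definition of the scaled coherent nerve $\scCoNerve[(\Delta^{n+1}, T)]$ that an edge corresponding to adding an element $k$ to a subset $S \subseteq \{0, 1, \ldots, n+1\}$ (with $0, n+1 \in S$) is marked precisely when $k$ exceeds every element of $S \setminus \{n+1\}$, and then confirm that the resulting marking on $\sMap^{+}(\calH)$ coincides with the pushout of $(\Delta^1)^{\sharp} \times B^{\flat}$ and $\sMap^{+}(\calH')$ over $\{0\}^{\sharp} \times B^{\flat}$ that appears above.
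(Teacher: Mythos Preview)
Your proof is correct and follows the same inductive strategy as the paper: factor through $\sMap^{+}$ of the restriction to $\sCoNerve[\Delta^{n-1}]$, then exhibit the remaining inclusion as a pushout of $\{0\}^{\sharp}\times B^{\flat}\subseteq(\Delta^1)^{\sharp}\times B^{\flat}$ and invoke Lemma~\ref{carp}. The paper is slightly more direct in that it runs the induction with general $K$ throughout (the restriction $\calF_K|\sCoNerve[\Delta^{n-1}]$ again has the required form, so the product reduction via Proposition~\ref{postprod} is unnecessary); also note that Proposition~\ref{supper} only tells you $g_1$ becomes a \emph{trivial cofibration} over $\Delta^n_{\flat}$, whereas the lemma asks for $\CatP_{\Delta^n_{\flat}}$-anodyne---this follows instead from the direct observation that each generating $\CatP_{\Delta^{n-1}_{\flat}}$-anodyne morphism pushes forward to one of the same type over $\Delta^n_{\flat}$ (degenerate $2$-simplices remain degenerate), which is what the paper uses without comment.
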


\begin{proof}
The proof uses induction on $n$, the case $n=0$ being trivial. Assume that
$n > 0$, and let $\calF' = \calF | \sCoNerve[ \Delta^{n-1} ]$. Using the inductive hypothesis, we deduce that the inclusion $K^{\flat} \subseteq \sMap^{+}(\calF')$ is $\CatP_{\Delta^{n-1}_{\flat}}$-anodyne, and therefore also $\CatP_{\Delta^{n}_{\flat}}$-anodyne. It will therefore suffice to show that the map $g: \sMap^{+}(\calF') \rightarrow \sMap^{+}(\calF)$ is $\CatP_{\Delta^{n}_{\flat}}$-anodyne.
We observe that $g$ is a pushout of the inclusion $\sMap(\calF')^{\flat} \times \{0\}^{\sharp}
\subseteq \sMap( \calF')^{\flat} \times (\Delta^1)^{\sharp}$, which is $\CatP_{\Delta^{n}_{\flat}}$-anodyne by Lemma \ref{carp}.
\end{proof}

\begin{lemma}\label{maincase}
Let $K$ be a simplicial set, and define $\calF_{K} \in (\sSet)^{\calC}$ by the formula
$$ \calF_{K}(i) = \bHom_{\calC}(0,i) \times K.$$
Let $X = \sMap( \calF_{K}) \times_{ \Delta^n} \{n\} \subseteq \sMap(\calF_{K})$. Then the canonical map
$$\alpha_{K}: (\scSt_{\phi} X^{\flat})(n) \rightarrow (\scSt_{\phi} \sMap^{+}(\calF_{K}))(n)$$
is an equivalence of marked simplicial sets.
\end{lemma}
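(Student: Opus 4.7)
The plan is to exhibit $\alpha$ as a natural transformation between two left Quillen functors of the parameter $K$ and then reduce to a single explicit base case.

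Write $X_K = \bHom_{\calC}(0,n) \times K$, and consider the functors
\[ F(K) = \scSt_{\phi}(X_K^{\flat})(n), \qquad G(K) = \scSt_{\phi}(\sMap^{+}(\calF_K))(n) \]
from $\sSet$ to $\mSet$. Both preserve colimits and monomorphisms, because the constructions $K \mapsto \calF_K$, $K \mapsto X_K$, $K \mapsto \sMap^{+}(\calF_K)$, and $\scSt_{\phi}$ all do. Moreover, they carry Joyal equivalences to weak equivalences of marked simplicial sets: if $K \to K'$ is a categorical equivalence, the induced maps $X_K^{\flat} \to X_{K'}^{\flat}$ and $\sMap^{+}(\calF_K) \to \sMap^{+}(\calF_{K'})$ are fiberwise Joyal equivalences over $\Delta^n$ (each fiber being of the form $\bHom_{\calC}(0,i) \times K \to \bHom_{\calC}(0,i) \times K'$), and hence are weak equivalences in $\mset{\Delta^n_{\flat}}$ by Lemma \ref{piner} applied after fibrant replacement. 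Since $\scSt_{\phi}$ is a Quillen left adjoint (Proposition \ref{curp}) and every object of $\sSet$ is cofibrant in the Joyal model structure, Ken Brown's lemma then shows that $F$ and $G$ preserve all Joyal equivalences.

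With this in hand, a standard cellular induction on $K$---using filtered colimits, pushouts along the boundary inclusions $\bd \Delta^m \hookrightarrow \Delta^m$, and the left-properness of the projective model structure on $(\mSet)^{\calC^{+}}$---reduces the assertion to the case $K = \Delta^m$. The collapse $\Delta^m \to \Delta^0$ is a Joyal equivalence, so by naturality of $\alpha$ and two-out-of-three we reduce further to the single case $K = \Delta^0$, in which $X = \bHom_{\calC}(0,n)$ and $\calF_{\Delta^0}(i) = \bHom_{\calC}(0,i)$.

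The main obstacle is to verify directly that
\[ \alpha_{\Delta^0} \colon \scSt_{\phi}(X^{\flat})(n) \longrightarrow \scSt_{\phi}(\sMap^{+}(\calF_{\Delta^0}))(n) \]
is a weak equivalence of marked simplicial sets. Unwinding the definition, each side is a mapping space from the cone point $v$ to the vertex $n$ in the $\mSet$-enriched category obtained by applying $\scCoNerve$ to an explicit scaled simplicial set built from the scaled cone of $X^{\flat}$, resp.\ $\sMap^{+}(\calF_{\Delta^0})$, amalgamated along $\Delta^n_{\flat}$. I plan to complete the verification by constructing an explicit filtration of the inclusion $\LCones{\Delta^n_{\flat}}{X^{\flat}} \hookrightarrow \LCones{\Delta^n_{\flat}}{\sMap^{+}(\calF_{\Delta^0})}$ whose successive stages are either scaled anodyne (so that Proposition \ref{presus} produces trivial cofibrations of $\mSet$-enriched categories) or only introduce simplices whose inclusion does not affect the mapping space out of $v$ into $n$. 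The combinatorics of this filtration closely mirror the necklace-style description of mapping spaces in Remark \ref{scootnerve}, and organizing it cleanly in terms of the generating scaled anodyne inclusions of Definition \ref{slapper} is the principal technical hurdle.
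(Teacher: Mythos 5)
Your cellular reduction to $K=\Delta^0$ is formally reasonable, but it is not the real work of this lemma, and the part you label as the ``principal technical hurdle'' is precisely where the proof lives. Once $K=\Delta^0$ one is asked to verify that the inclusion $\Nerve(P_{0,n})^{\flat} \subseteq \sMap^{+}(\calF_{\Delta^0})$ (the fiber over $\{n\}$, not over $\{0\}$) induces an equivalence after applying $\scSt_{\phi}(-)(n)$, and there is no obvious scaled-anodyne filtration doing this: the fiber over $\{n\}$ is the ``wrong end'' of $\sMap^+(\calF_{\Delta^0})$, and generic edges out of $v$ in the scaled cone factor through many simplices not lying in the fiber over $n$. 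Trying to brute-force the mapping space $\bHom(v,n)$ in $\scCoNerve$ of a cone by a simplex-by-simplex filtration amounts to re-deriving the results of \S\ref{bisec3.4}--\ref{bisec3.5} from scratch, which is why the paper packages this into Proposition \ref{uil} and Corollary \ref{stlik} rather than doing it ad hoc. So the proposal as written has a genuine gap at the base case.

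The paper's own argument does not use cellular induction on $K$ at all, and in particular avoids the (delicate, though probably salvageable) claim that $F$ and $G$ carry Joyal equivalences to weak equivalences. Instead it proceeds uniformly in $K$ by exploiting \emph{both} fiber inclusions at once. The inclusion $K^{\flat} \hookrightarrow \sMap^{+}(\calF_K)$ of the fiber over $\{0\}$ is shown to be $\CatP_{\Delta^n_{\flat}}$-anodyne (Lemma \ref{toughboy}), so $\scSt_{\phi}$ carries it to a trivial cofibration; this identifies $(\scSt_{\phi}\sMap^{+}(\calF_K))(n)$ with $(\scSt_{\phi}K^{\flat})(n)$, which by Remarks \ref{ba1} and \ref{ba2} is $\scSt_{\ast}(K^{\flat}) \times \bHom_{\calC^{+}}(0,n)$. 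On the other side, $(\scSt_{\phi}X^{\flat})(n)$ is identified (again via Remarks \ref{ba1} and \ref{ba2}) with $\scSt_{\ast}(X^{\flat})$. Proposition \ref{uil} says $\scSt_{\ast}$ is naturally weakly equivalent to the identity on $\mSet$, and an explicitly defined natural transformation $\beta_K \colon (\scSt_{\phi}\sMap^{+}(\calF_K))(n) \to (K \times \Nerve P_{0,n})^{\flat}$ ties all four corners together, so a two-out-of-three diagram chase finishes it. You should try to reorganize your argument around Lemma \ref{toughboy} and Proposition \ref{uil}; these replace the filtration you were attempting to build and also make the cellular induction unnecessary.
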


\begin{proof}
For $0 \leq i \leq j \leq n$, let $P_{i,j}$ denote the partially ordered set of all subsets
of $\{ i, i+1, \ldots, j \}$ which contain $i$ and $j$, so that $\bHom_{\calC}(i,j) = \Nerve P_{i,j}$.
Let $P$ denote the collection of all subsets of $[n]$ which contain $0$.
We let $\chi: P \rightarrow [n]$ denote the map which carries a subset
$S \subseteq [n]$ to its largest element. The simplicial set
$\sMap( \calF_{K} )$ can be identified with the product $K \times \Nerve(P)$, and the
projection $\sMap( \calF_{K} ) \rightarrow \Delta^n$ is given by the composition
$$ \sMap(\calF_{K} ) \simeq K \times \Nerve(P) \rightarrow \Nerve(P) \stackrel{ \Nerve(\chi)}{\rightarrow}
\Nerve( [n] ) \simeq \Delta^n.$$

Let $F: \sSet \rightarrow \mSet$ denote the functor
$K \mapsto (\scSt_{\phi} \sMap^{+}(\calF_K))(n)$. We will define a natural transformation of functors
$$\beta_{K}: F(K) \rightarrow \calF(n)^{\flat} = (K \times \Nerve P_{0,n})^{\flat}.$$
Since the functor $F$ commutes with colimits, it will suffice to define $\beta_{K}$ in the case
where $K$ is a simplex; more generally, we will describe $\beta_{K}$ in the case where
$K$ is the nerve of a partially ordered set $Q$. We can then identify 
$\sMap(\calF)$ with the nerve $\Nerve(Q \times P)$. 

Let $\overline{\calE} = \sCoNerve[ Nerve(Q \times P \times [1] ) ]$, and set
$$ \calE = \sCoNerve[ \{v\} \coprod_{ \Nerve(Q \times P \times \{0\}) } \Nerve(Q \times P \times [1] ) \coprod_{\Nerve( Q \times P \times \{1\}} [n] ].$$
The underlying simplicial set of $F(K)$ can be identified with
$\bHom_{\calE}(v, n)$, which is a quotient of the disjoint union
$$ \coprod_{ q, q' \in Q, S \in P, S' \in P_{0, n}}
\bHom_{\overline{\calE}}( (q,S,0), (q', S', 1) ).$$
According to Remark \ref{scootnerve}, we can identify
$\bHom_{ \overline{\calE} }( (q,S,0), (q', S',1) )$ with the nerve of the partially ordered
set of chains
$$ (q,S,0) = (q_0, S_0,0) <
(q_1, S_1,0) < \ldots < (q_i, S_i, 0)
< (q_{i+1}, S_{i+1}, 1) < \ldots < (q_m, S_m, 1) = (q', S', 1)$$
in the partially ordered set $Q \times P \times [1]$. We define the map $\beta_{K}$ so that
the image of such a chain is the pair $(q_i, S_{i} \cup \{ \chi(S_j) \}_{i < j \leq n})
\in Q \times P_{0,n}$. It is not difficult to check that $\beta_{K}$ is well-defined, functorial in $Q$, and
determines a natural transformation as indicated.

For every simplicial set $K$, we have a diagram
$$ K \simeq \sMap(\calF_K) \times_{\Delta^n} \{0\}
\rightarrow \sMap(\calF_K) \leftarrow \sMap(\calF_K) \times_{ \Delta^n} \{n\}
\simeq K \times \Nerve P_{0,n},$$
This diagram determines subfunctors $F_0, F_1 \subseteq F$, given by the formulae
$$ F_0(K) = (\scSt_{\phi} K^{\flat})(n)$$
$$ F_1(K) = (\scSt_{\phi} (K \times \Nerve P_{0,n})^{\flat})(n).$$
We have a commutative diagram of marked simplicial sets
$$ \xymatrix{ F_0(K) \ar[r]^{\gamma_0} \ar[d]^{\gamma_1} & F(K) \ar[d]^{\beta_K} & F_1(K) \ar[dl]^{\gamma_2} \ar[l]^{\alpha_{K}} \\
\scSt_{\ast}(K^{\flat}) \times (\Nerve P_{0,n})^{\flat} \ar[r]^{\gamma_3} & (K \times \Nerve P_{0,n})^{\flat}, & }$$
where:
\begin{itemize}
\item[$(0)$] The map $\gamma_0$ is induced by the inclusion $i_{K}: K^{\flat} \subseteq
\sMap^{+}( \calF_{K} )$. It therefore suffices to show that $i_{K}$ is a
$\CatP_{\Delta^n_{\flat}}$-anodyne morphism of $\mset{ \Delta^n_{\flat} }$., which follows
from Lemma \ref{toughboy}.
\item[$(1)$] The map $\gamma_1$ is the isomorphism
$F_0(K) \simeq \scSt_{\ast}(K^{\flat} ) \times \bHom_{\calC^{+}}(0,n)$ supplied by
Remarks \ref{ba1} and \ref{ba2}.
\item[$(2)$] The map $\gamma_2$ is the composition of the isomorphism
$F_1(K) \simeq \scSt_{\ast} X^{\flat}$ (see Remarks \ref{ba1} and \ref{ba2}) with the weak equivalence
$\scSt_{\ast} X^{\flat} \rightarrow X^{\flat}$ of Proposition \ref{uil}.
\item[$(3)$] The map $\gamma_3$ is the product of the identity map from
$\Nerve( P_{0,n})^{\flat}$ to itself with the weak equivalence
$\scSt_{\ast} K^{\flat} \rightarrow K^{\flat}$ of Proposition \ref{uil}.
\end{itemize}
In particular, the maps $\gamma_0$, $\gamma_1$, $\gamma_2$, and $\gamma_3$ are weak equivalences in $\mSet$. It follows from a diagram chase that $\alpha_{K}$ is also a weak equivalence, as desired.
\end{proof}

\begin{proposition}\label{jurr}
The Quillen adjunction $(\scSt_{\Delta^{n}_{\flat} }, \scUn_{\Delta^{n}_{\flat}})$ is a Quillen equivalence.
\end{proposition}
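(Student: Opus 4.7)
The plan is to proceed by induction on $n$, with the base case $n = 0$ supplied by Corollary \ref{ba3}. For the inductive step, write $\calC = \sCoNerve[\Delta^n]$. The key claim is that for every projectively cofibrant $\calF \in (\sSet)^{\calC}$, the natural comparison map
$$\beta_{\calF} : \scSt_{\Delta^n_{\flat}} \sMap^{+}(\calF) \longrightarrow \calF^{\flat}$$
in $(\mSet)^{\calC^{+}}$ (where $\calF^{\flat}(i) = \calF(i)^{\flat}$, generalizing the map denoted $\beta_{K}$ in the proof of Lemma \ref{maincase}) is a weak equivalence. Granting this claim, Propositions \ref{unple} and \ref{pikkle} guarantee that every fibrant $\overline{X} \in \mset{\Delta^n_{\flat}}$ admits a weak equivalence $\sMap^{+}(\calF) \to \overline{X}$ for some projectively cofibrant $\calF$; combining this with the claim and naturality, one deduces that $\scSt_{\Delta^n_{\flat}}$ and $\scUn_{\Delta^n_{\flat}}$ induce inverse equivalences between the homotopy categories of fibrant-cofibrant objects.

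By Corollary \ref{ba4}, verifying the claim reduces to checking that $\beta_{\calF}(i)$ is a weak equivalence in $\mSet$ for every vertex $i \in [n]$. For $i < n$, the restriction of $\sMap^{+}(\calF)$ to $\Delta^{n-1}_{\flat} \subseteq \Delta^{n}_{\flat}$ is canonically isomorphic to $\sMap^{+}(\calF\vert\sCoNerve[\Delta^{n-1}])$, and Remarks \ref{ba1} and \ref{ba2} identify $\beta_{\calF}(i)$ with the analogous comparison map for $\Delta^{n-1}_{\flat}$, which is a weak equivalence by the inductive hypothesis. The essential case is thus $i = n$. Here both $\calF \mapsto (\scSt_{\Delta^n_{\flat}} \sMap^{+}(\calF))(n)$ and $\calF \mapsto \calF(n)^{\flat}$ preserve colimits and carry projective weak equivalences between cofibrant diagrams to weak equivalences of marked simplicial sets, so a cellular induction on the projectively cofibrant $\calF$ reduces the assertion to the case of free diagrams $\calF_{K, j}(-) = \bHom_{\calC}(j, -) \times K$, where $K$ is a simplex and $0 \leq j \leq n$. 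For $j = 0$ this is exactly Lemma \ref{maincase}. For $j > 0$, the diagram $\calF_{K,j}$ is the left Kan extension of the analogous free diagram from $\sCoNerve[\Delta^{\{j, \ldots, n\}}]$, and a further application of Remark \ref{ba2} reduces to the $j = 0$ case in the smaller simplex $\Delta^{\{j, \ldots, n\}} \simeq \Delta^{n-j}$, previously handled.

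The main obstacle is that the proof intertwines two inductions (one on $n$, one on the cell structure of $\calF$) together with various compatibilities — naturality of $\beta_{\calF}$, compatibility of $\sMap^{+}$ and $\scSt$ with restrictions and left Kan extensions along inclusions $\sCoNerve[\Delta^m] \hookrightarrow \calC$, and the technical requirement that one can genuinely define $\beta_{\calF}$ functorially in $\calF$ so that the cellular induction goes through. The explicit combinatorial computation of Lemma \ref{maincase} is the only substantive input; the remaining work is organizing the inductive scaffolding so that this input propagates to all cofibrant diagrams and to all vertices of $\Delta^n$.
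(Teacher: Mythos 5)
Your overall strategy closely tracks the paper's: reduce to checking the unit of the adjunction on fibrant objects, use Propositions \ref{unple} and \ref{pikkle} to replace a fibrant object of $\mset{\Delta^n_\flat}$ by $\sMap^+(\calG)$ for $\calG$ strongly cofibrant, pass to individual vertices via Corollary \ref{ba4}, reduce to the top vertex via Remarks \ref{ba1} and \ref{ba2}, and then run a cellular induction on $\calG$ terminating in Lemma \ref{maincase}. The paper does not organize the $i<n$ case as an outer induction on $n$ (it simply replaces $\Delta^n$ by $\Delta^{\{0,\ldots,i\}}$), but this is a cosmetic difference.

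The substantive issue you flag at the end is in fact a genuine gap. You posit a natural transformation $\beta_{\calF}\colon \scSt_{\Delta^n_\flat}\sMap^+(\calF) \to \calF^\flat$ for \emph{all} projectively cofibrant $\calF$, ``generalizing the map $\beta_K$'' from Lemma \ref{maincase}. But $\beta_K$ is built by hand only for the particular free diagrams $\calF_K(i) = \bHom_{\calC}(0,i)\times K$, using an explicit formula involving chains in $Q\times P\times[1]$; nothing in the paper produces a comparison map in this direction for arbitrary $\calF$, and your cellular induction cannot even get started without it, since you need to know which map is being shown to be an equivalence at each stage. The direction is also the ``wrong'' one: the natural map on $\scSt$ goes \emph{from} $(\scSt_{\Delta^n_\flat}\calG(n)^\flat)(n)$ \emph{to} $(\scSt_{\Delta^n_\flat}\sMap^+(\calG))(n)$, induced by the inclusion $\calG(n)^\flat \hookrightarrow \sMap^+(\calG)$ of the fiber over $\{n\}$. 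This is what the paper uses for its assertion $(\ast)$: both $\calG \mapsto (\scSt_{\Delta^n_\flat}\calG(n)^\flat)(n)$ and $\calG \mapsto (\scSt_{\Delta^n_\flat}\sMap^+(\calG))(n)$ are colimit-preserving functors of $\calG$, the comparison between them is automatic, and Lemma \ref{maincase} handles the generating case. The auxiliary $\beta_K$ is used only \emph{inside} the proof of Lemma \ref{maincase} to split the map $\alpha_K$ into computable pieces; it never needs to exist naturally in $\calF$.

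The fix is therefore straightforward: replace your target $\calF^\flat$ at vertex $n$ by $(\scSt_{\Delta^n_\flat}\calF(n)^\flat)(n)$, compare via the inclusion-induced map, and recover your claimed identification of $(\scSt_{\Delta^n_\flat}\calF(n)^\flat)(n)$ with $\calF(n)^\flat$ up to weak equivalence by $\scSt_\ast \simeq \id$ (Proposition \ref{uil}) together with a two-out-of-three argument. With this adjustment your argument becomes the paper's.
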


\begin{proof}
Corollary \ref{ba4} implies that the right derived functor $R \scUn_{\Delta^{n}_{\flat}}$ is conservative. 
It will therefore suffice to show that the unit transformation $\id \rightarrow R \scUn_{\Delta^{n}_{\flat}}
\circ L \scSt_{\Delta^{n}_{\flat}}$ is an isomorphism of functors from the homotopy category
$\h{\mset{\Delta^{n}_{\flat}}}$ to itself. In other words, we must show that for every object
$\overline{X} \in \mset{ \Delta^{n}_{\flat} }$ and every weak equivalence
$\scSt_{ \Delta^{n}_{\flat}} \overline{X} \rightarrow \calF$ in $(\mSet)^{\calC^{+}}$, where
$\calF$ is fibrant, the adjoint map $\overline{X} \rightarrow \scUn_{ \Delta^{n}_{\flat}} \calF$
is also a weak equivalence. Without loss of generality, we may assume that $\overline{X}$ is fibrant, so that $\overline{X}$ is $\Delta^n_{\flat}$-fibered. Choose a strongly cofibrant object
$\calG \in (\sSet)^{\calC}$ and a map $\sMap^{+}(\calG) \rightarrow \overline{X}$
satisfying the hypotheses of Proposition \ref{pikkle} (this is possible, in view of Proposition \ref{unple}). Then the map $\scSt_{\Delta^{n}_{\flat}} \sMap^{+}(\calG) \rightarrow \calF$ is a weak equivalence.
It will therefore suffice to show that the adjoint map $\alpha: \sMap^{+}(\calG) \rightarrow \scUn_{\Delta^{n}_{\flat}}(\calF)$ is a weak equivalence. In view of Proposition \ref{pikkle}, it will suffice to show that
the map $\alpha: \calG(i)_{\flat} \rightarrow \scUn_{\Delta^{n}_{\flat}}(\calF) \times_{ (\Delta^n)^{\sharp}} \{i\}^{\sharp}$ is an equivalence of marked simplicial sets for each $0 \leq i \leq n$. Using Remarks \ref{ba1} and \ref{ba2}, we can replace
$\Delta^{n}$ by $\Delta^{ \{0, \ldots, i \} }$ and thereby reduce to the case $i = n$. 

In view of Remark \ref{ba1} and Corollary \ref{ba3}, it will suffice to show that the map
$\scSt_{\Delta^{n}_{\flat}} \calG(n)^{\flat} \rightarrow \calF$ induces an equivalence
$(\scSt_{\Delta^{n}_{\flat}} \calG(n)^{\flat})(n) \rightarrow \calF(n)$. By the two-out-of-three property, we are reduced to proving the following:

\begin{itemize}
\item[$(\ast)$] Let $\calG$ be a strongly cofibrant object of $(\sSet)^{\calC}$. Then the inclusion
$\calG(n)^{\flat} \rightarrow \sMap^{+}(\calG)$ induces a weak equivalence
$$(\scSt_{ \Delta^{n}_{\flat} } \calG(n)^{\flat})(n) \rightarrow (\scSt_{\Delta^{n}_{\flat}} \sMap^{+}(\calG))(n).$$
\end{itemize}

Let us say that an object $\calG \in (\sSet)^{\calC}$ is {\em good} if the conclusion of $(\ast)$ holds for
$\calG$. The functors $\calG \mapsto (\scSt_{\Delta^{n}_{\flat}} \calG(n)^{\flat})(n)$ and
$\calG \mapsto (\scSt_{\Delta^{n}_{\flat}} \sMap^{+}(\calG))(n)$ both preserve cofibrations and pushout squares. Since the model category $\mSet$ is left-proper, we deduce the following:

\begin{itemize}
\item[$(a)$] Suppose given a pushout diagram
$$ \xymatrix{ \calG_0 \ar[r]^{p} \ar[d] & \calG \ar[d] \\
\calG'_0 \ar[r] & \calG' }$$
in $(\sSet)^{\calC}$, where $\calG_0$, $\calG$, and $\calG'_0$ are good. 
If $p$ is a strong cofibration, then $\calG'$ is also good.
\end{itemize}

Fix a strongly cofibrant object $\calG \in (\sSet)^{\calC}$. For $-1 \leq j \leq n$, let $\calG_{j} \in (\sSet)^{\calC}$ be a left Kan extension of $\calG | \sCoNerve[ \Delta^{ \{0, 1, \ldots, j \} }]$. Then
$\calG_{-1}$ is an initial object of $(\sSet)^{\calC}$, and $\calG_{n} \simeq \calG$. We will prove
by induction on $j$ that each $\calG_{j}$ is good. Let $r^{j}$ denote the inclusion of the object $\{j\}$ into $\calC$, and let $r^{j}_{!}: \sSet \rightarrow (\sSet)^{\calC}$ be the functor of left Kan extension along $r^{j}$. We then have a pushout diagram
$$ \xymatrix{ r^{j}_{!} \calG_{j-1}(j) \ar[r]^{p} \ar[d] & r^{j}_{!} \calG(j) \ar[d] \\
\calG_{j-1} \ar[r] & \calG_{j}. }$$
Since $\calG$ is strongly cofibrant, the map $p$ is a strong cofibration.
Consequently, to show that $\calG_{j}$ is good, it will suffice to show that
$\calG_{j-1}$, $r^{j}_{!} \calG_{j-1}(j)$, and $r^{j}_{!} \calG(j)$ are good. In the first case, this follows from the inductive hypothesis. We are therefore reduced to proving the following:

\begin{itemize}
\item[$(\ast')$] Let $0 \leq j \leq n$, and let $K$ be a simplicial set. Then
$r^{j}_{!} K \in (\sSet)^{\calC}$ is good.
\end{itemize}

Using Remark \ref{ba1}, we can replace $\Delta^{n}$ by $\Delta^{ \{j, j+1, \ldots, n \} }$ and
thereby reduce to the case $j=0$. The desired result now follows from Lemma \ref{maincase}.
\end{proof}

\subsection{Straightening in General}\label{bisec3.6}

Our goal in this section is to prove the following result:

\begin{theorem}\label{cupper}
Let $\overline{S}$ be a scaled simplicial set, and let $\phi: \scCoNerve[ \overline{S} ] \rightarrow \calC$ be an equivalence of $\mSet$-enriched categories. Then the Quillen adjunction $( \scSt_{\phi}, \scUn_{\phi})$ is a Quillen equivalence
from $\mset{ \overline{S} }$ to $(\mSet)^{\calC}$.
\end{theorem}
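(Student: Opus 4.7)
The plan is to prove Theorem \ref{cupper} by reducing to the case of a scaled simplex handled in Proposition \ref{jurr}, via a cell-induction argument combined with the compatibilities of Remarks \ref{ba1} and \ref{ba2}.

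First, I would reduce to the case $\phi = \mathrm{id}$. Since $\phi$ is a weak equivalence in $\mCat$, the induced Quillen adjunction $(\phi_!, \phi^\ast)$ between the projective model structures on $(\mSet)^{\scCoNerve[\overline{S}]}$ and $(\mSet)^\calC$ is itself a Quillen equivalence. Combined with the identifications $\scSt_\phi \simeq \phi_! \circ \scSt_{\mathrm{id}}$ and $\scUn_\phi \simeq \scUn_{\mathrm{id}} \circ \phi^\ast$ of Remark \ref{ba2}, this reduces Theorem \ref{cupper} to proving that $(\scSt_{\overline{S}}, \scUn_{\overline{S}})$, shorthand for $(\scSt_{\mathrm{id}}, \scUn_{\mathrm{id}})$, is a Quillen equivalence.

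Next, I would invoke Corollary \ref{ba4} applied to $\phi = \mathrm{id}$ to deduce that the right derived functor $R\scUn_{\overline{S}}$ reflects weak equivalences between fibrant objects of $(\mSet)^{\scCoNerve[\overline{S}]}$. A standard triangle-identity argument then shows that the adjunction is a Quillen equivalence provided the derived unit $\eta_{\overline{X}}: \overline{X} \to R\scUn_{\overline{S}} L\scSt_{\overline{S}}(\overline{X})$ is a weak equivalence for every cofibrant $\overline{X} \in \mset{\overline{S}}$.

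I would then verify the latter by a cell-induction argument. Let $\calF$ denote the class of cofibrant $\overline{X} \in \mset{\overline{S}}$ for which $\eta_{\overline{X}}$ is an equivalence. Since $L\scSt_{\overline{S}}$ preserves colimits, $R\scUn_{\overline{S}}$ preserves filtered colimits of fibrant objects, and both model categories are left proper, the class $\calF$ is closed under transfinite composition of cofibrations, filtered colimits, and pushouts along cofibrations with source and target in $\calF$. Since every cofibrant object of $\mset{\overline{S}}$ is obtained by such colimits from cells of the form $\iota_!((\Delta^n)^\flat)$ and $\iota_!((\Delta^1)^\sharp)$ for maps $\iota$ from a simplex into $S$, it suffices to verify $\eta$ on such cells. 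Using the compatibilities $\scSt_{\overline{S}} \circ \iota_! \simeq \scCoNerve[\iota]_! \circ \scSt_{\overline{\Delta}^n}$ and $\iota^\ast \circ \scUn_{\overline{S}} \simeq \scUn_{\overline{\Delta}^n} \circ \scCoNerve[\iota]^\ast$, where $\overline{\Delta}^n$ denotes $\Delta^n$ equipped with the scaling pulled back from $\overline{S}$, this reduces to the corresponding assertion over the scaled simplex $\overline{\Delta}^n$.

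The final step, and the main obstacle, is to prove Theorem \ref{cupper} for an arbitrary scaled simplex $\overline{\Delta}^n = (\Delta^n, T)$. The case $T = \deg(\Delta^n)$ is Proposition \ref{jurr}. For larger $T$, I would proceed by induction on $|T|$: adjoining a single thin $2$-simplex to the scaling corresponds on the straightened side to marking a distinguished edge in a morphism space of $\scCoNerve[\overline{\Delta}^n]$, per Definition \ref{ilmut}, and on the unstraightened side to passing to a localization of $\mset{\overline{\Delta}^n}$ by the additional scaled-anodyne trivial cofibrations arising from Definition \ref{slapper}. The main technical content is to verify that these two localizations correspond under $\scSt$ and $\scUn$, so that the Quillen equivalence property propagates through the inductive step. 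This ultimately rests on the interaction of scaled anodyne morphisms with straightening, extending the methods used in the proofs of Propositions \ref{presus} and \ref{coop}.
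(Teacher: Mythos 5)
There is a genuine gap in your cell-induction step. You claim that the class $\calF$ of cofibrant objects of $\mset{\overline{S}}$ for which the derived unit $\eta_{\overline{X}}$ is an equivalence is closed under pushouts along cofibrations. This does not follow from the ingredients you cite. The composite $R\scUn_{\overline{S}} \circ L\scSt_{\overline{S}}$ is a left Quillen functor followed by a right Quillen functor; the left factor preserves homotopy pushouts, but the right factor only preserves homotopy \emph{limits}. Given a homotopy pushout $D = A \cup_{B} C$ with all three in $\calF$, you get $L\scSt_{\overline{S}}(D) \simeq L\scSt_{\overline{S}}(A) \cup_{L\scSt_{\overline{S}}(B)} L\scSt_{\overline{S}}(C)$, but there is no natural equivalence $R\scUn_{\overline{S}} L\scSt_{\overline{S}}(D) \simeq R\scUn_{\overline{S}} L\scSt_{\overline{S}}(A) \cup_{\cdots} R\scUn_{\overline{S}} L\scSt_{\overline{S}}(C)$, so you cannot conclude $\eta_D$ is an equivalence. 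The same concern applies to your transfinite-composition and filtered-colimit closure claims: $R\scUn_{\overline{S}}$ is a right adjoint and preserving these colimits is not automatic. Arguing via the triangle identity ($\epsilon_{L\scSt(D)} \circ L\scSt(\eta_D) = \mathrm{id}$, so $L\scSt(\eta_D)$ is an equivalence whenever the counit is) would require either that the derived counit is already known to be an equivalence, or that $L\scSt$ is conservative --- either of which is essentially what you are trying to prove. The conservativity you actually have (Corollary \ref{ba4}) is of $R\scUn_{\overline{S}}$, which does not help cancel the final $L\scSt(\eta_D)$.

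The paper avoids this trap by inducting on $\overline{S}$ rather than on $\overline{X}$. After reducing to $\phi = \mathrm{id}$, Theorem \ref{cupper} is recast (Proposition \ref{tornave}) as the assertion that $\scUn_{\overline{S}}$ gives an equivalence between the simplicial category of strongly-cofibrant-weakly-fibrant objects of $(\mSet)^{\scCoNerve[\overline{S}]}$ and the simplicial category of fibrant objects of $\mset{\overline{S}}$. Both sides are contravariant in $\overline{S}$, and the key observations are that a pushout of scaled simplicial sets along a cofibration is carried to a \emph{homotopy pullback} square of localized simplicial categories on both sides (the left side by a general property of diagram model structures, the right side by Lemma \ref{gutta} combined with the restriction Kan fibrations), and $\scUn$ commutes with this restriction (Remark \ref{ba1}). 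Since the contravariant formulation turns colimits of bases into limits, the right-Quillen-ness of $\scUn$ is an asset rather than an obstruction. This also lets the paper get by with only two base cases, $\Delta^n_\flat$ (Proposition \ref{jurr}) and $\Delta^2_\sharp$ (Lemma \ref{culpa}), since an arbitrary scaling $(S,T)$ is built from $S_\flat$ by pushing out along $\Delta^2_\flat \hookrightarrow \Delta^2_\sharp$ one thin triangle at a time. Your ``final step'' --- proving the theorem directly for an arbitrary scaled simplex $(\Delta^n,T)$ by comparing Bousfield localizations on both sides --- is therefore unnecessary in the paper's approach, and in any case you acknowledge it is left unverified.
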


\begin{remark}\label{jul2}
We regard $\mset{ \overline{S} }$ as endowed with the simplicial model structure described
by Theorem \ref{theo}. We will regard $(\mSet)^{\calC}$ as a simplicial model
category as well, via the simplicial structure on the category $\mSet \simeq \mset{ \Delta^0_{\flat}}$. More concrete, given a simplicial set $K$ and an object $\calF \in (\mSet)^{\calC}$, we can define a new object $\calF \otimes K$ by the formula $(\calF \otimes K)(C)
= \calF(C) \times K^{\sharp}$.

The unstraightening functor $\scUn_{\phi}$ admits the structure of a simplicial functor. For suppose we are given objects $\calF, \calF' \in (\mSet)^{\calC}$ and a map
of simplicial sets $K \rightarrow \bHom_{ (\mSet)^{\calC}}( \calF, \calF' )$, which we may view as a morphism $\alpha: \calF \otimes K \rightarrow \calF'$ in $(\mSet)^{\calC}$. We then have a morphism
$$ \scUn_{\phi} \calF \times K^{\sharp}
\rightarrow \scUn_{\phi} \calF \times \scUn_{\ast} K^{\sharp}
\simeq \scUn_{\phi}( \calF \otimes K)
\rightarrow \scUn_{\phi} \calF',$$
which classifies a map of simplicial sets
$K \rightarrow \bHom_{ \mset{ \overline{S} }}( \scUn_{\phi} \calF,
\scUn_{\phi} \calF' ).$
\end{remark}

\begin{remark}\label{jul1}
Let $\phi: \scCoNerve[ \overline{S} ] \rightarrow \calC$ be an equivalence of
$\mSet$-enriched categories.
Proposition \toposref{lesstrick} implies that the left Kan extension functor
$\phi_{!}: (mSet)^{ \scCoNerve[ \overline{S} ]} \rightarrow (\mSet)^{\calC}$ is
a Quillen equivalence. Using Remark \ref{ba2}, we see that Theorem \ref{cupper}
holds for the functor $\phi$ if and only if it holds for the identity functor
from $\scCoNerve[ \overline{S} ]$ to itself.
\end{remark}

Combining Remarks \ref{jul1}, \ref{jul2}, \toposref{tuccan}, and Proposition
\toposref{weakcompatequiv}, we deduce that Theorem \ref{cupper} is equivalent to the following assertion:

\begin{proposition}\label{tornave}
Let $\overline{S}$ be a scaled simplicial set.
Let $(\mSet)^{\scCoNerve[ \overline{S}], \degree}$ denote the full subcategory
of $(\mSet)^{\scCoNerve[ \overline{S}] }$ spanned by those objects which
are strongly cofibrant and weakly fibrant, and let $\mset{ \overline{S} }^{\degree}$ denote the full subcategory of $\mset{ \overline{S} }$ spanned by the fibrant objects. Then
the functor $\scUn_{\overline{S}}$ induces an equivalence of simplicial categories
from $(\mSet)^{\scCoNerve[ \overline{S}], \degree}$ to $\mset{\overline{S}}^{\degree}$.
\end{proposition}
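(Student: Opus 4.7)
The plan is to prove the equivalent formulation given by Theorem \ref{cupper}. By Remark \ref{jul1}, one may reduce to the case $\phi = \id$. Corollary \ref{ba4} asserts that the right-derived functor $R\scUn_{\overline{S}}$ is conservative on fibrant objects, so it suffices to show that the derived unit transformation $\id \to R\scUn_{\overline{S}} \circ L\scSt_{\overline{S}}$ is an isomorphism on $\h{\mset{\overline{S}}}$. Equivalently, for every fibrant $\overline{X} \in \mset{\overline{S}}$ and every weak equivalence $\scSt_{\overline{S}}(\overline{X}) \to \calF$ with $\calF$ weakly fibrant, the adjoint map $u: \overline{X} \to \scUn_{\overline{S}}(\calF)$ should be a weak equivalence in $\mset{\overline{S}}$.

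To establish this, I would proceed by induction on the cells of $\overline{S}$. Every scaled simplicial set is obtained from $\emptyset$ as a transfinite composition of cofibrations of two elementary kinds: (a) $(\partial \Delta^n)_\flat \subseteq \Delta^n_\flat$ for $n \geq 0$, and (b) $\Delta^2_\flat \subseteq \Delta^2_\sharp$. Since $\scSt_{\overline{S}}$ is a left adjoint and the $\CatP_{\overline{S}}$-equivalences in $\mset{\overline{S}}$ are stable under filtered colimits (by the perfection clause in the proof of Theorem \ref{theo}), the passage from finite to general $\overline{S}$ is automatic, and it is enough to treat the inductive step where $\overline{S}^{(i+1)} = \overline{S}^{(i)} \cup_{\overline{A}} \overline{B}$ with $\overline{A} \subseteq \overline{B}$ of type (a) or (b).

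For cofibrations of type (b), the inclusion $\overline{S}^{(i)} \subseteq \overline{S}^{(i+1)}$ is a pushout of a scaled anodyne morphism of type $(A_0)$ (Definition \ref{slapper}), and is therefore a bicategorical equivalence by Remark \ref{ilkk}. Proposition \ref{presus} combined with the left-properness of $\mCat$ shows $\scCoNerve[\overline{S}^{(i)}] \to \scCoNerve[\overline{S}^{(i+1)}]$ is a Dwyer--Kan equivalence, so the induced Quillen adjunction on functor categories is a Quillen equivalence by Proposition \toposref{lesstrick}; combined with Proposition \ref{coop} and the inductive hypothesis via Remark \ref{ba1} and Remark \ref{ba2}, this yields the statement for $\overline{S}^{(i+1)}$. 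For cofibrations of type (a), the base case $\overline{S} = \Delta^n_\flat$ (with $\overline{S}^{(i)} = (\partial \Delta^n)_\flat$) is precisely Proposition \ref{jurr}. The general type-(a) step asks that, knowing the Quillen equivalence for $\overline{S}^{(i)}$, $(\partial \Delta^n)_\flat$, and $\Delta^n_\flat$, one deduces it for $\overline{S}^{(i+1)}$.

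The main obstacle is this last pushout-gluing step. Neither $\mset{\overline{S}^{(i+1)}}$ nor $(\mSet)^{\scCoNerve[\overline{S}^{(i+1)}]}$ decomposes literally as a homotopy pushout of its restrictions to $\overline{S}^{(i)}$ and $\Delta^n_\flat$ along $(\partial \Delta^n)_\flat$, so a formal Mayer--Vietoris citation is unavailable. My plan to overcome this is to check the unit map $u$ is a weak equivalence in $\mset{\overline{S}^{(i+1)}}$ by verifying the fiber-wise criterion supplied by Lemma \ref{piner}: for each edge $e: \Delta^1 \to \overline{S}^{(i+1)}$, the pullback $u \times_{ \overline{S}^{(i+1)} } \Delta^1$ is a weak equivalence in $\mset{\Delta^1_\flat}$. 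Every such edge factors through either $\overline{S}^{(i)}$ or $\Delta^n_\flat$, so using Remark \ref{ba1} to identify the restriction of $\scUn$ along a map of scaled simplicial sets with the unstraightening on the target, the inductive hypothesis on each piece completes the argument. The delicate remaining point is ensuring the fibrant replacement $\scSt(\overline{X}) \to \calF$ upstairs is compatible with these restrictions, which requires a careful choice of $\calF$ constructed cell-by-cell so that its restrictions to the constituent pieces of $\overline{S}^{(i+1)}$ remain fibrant.
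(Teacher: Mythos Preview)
Your overall architecture (cell induction, base case $\Delta^n_\flat$ via Proposition \ref{jurr}) matches the paper's, but there are two genuine problems.

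\textbf{Type (b) is wrong.} The inclusion $\Delta^2_\flat \subseteq \Delta^2_\sharp$ is \emph{not} scaled anodyne and is \emph{not} a bicategorical equivalence. Your reference to ``type $(A_0)$ (Definition \ref{slapper})'' conflates the scaled anodyne maps of Definition \ref{slapper} (which are horn inclusions equipped with a thin $2$-simplex) with the $\CatP$-anodyne maps of Definition \ref{postspunt} (which live in $\mSet$, not $\scSet$). Concretely, $\scCoNerve[\Delta^2_\flat] \to \scCoNerve[\Delta^2_\sharp]$ induces $(\Delta^1)^\flat \to (\Delta^1)^\sharp$ on $\bHom(0,2)$, which is not a weak equivalence of marked simplicial sets. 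The paper handles this case by a separate argument (Lemma \ref{culpa}): one observes that $\mset{\Delta^2_\sharp}^\degree$ and $(\mSet)^{\scCoNerve[\Delta^2_\sharp],\degree}$ sit as full subcategories inside their $\Delta^2_\flat$ counterparts, and then checks essential surjectivity by hand using the characterization of coCartesian fibrations over $\Delta^2$.

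\textbf{The type (a) gluing step is not resolved.} You correctly identify the obstacle: the fibrant replacement $\calF$ upstairs need not restrict to fibrant replacements over the pieces, so the edge-by-edge check via Lemma \ref{piner} does not directly close. Your proposed fix (build $\calF$ cell-by-cell) would require controlling how projective fibrancy in $(\mSet)^{\scCoNerve[\overline{S}^{(i+1)}]}$ interacts with restriction along $\scCoNerve[\overline{S}^{(i)}] \to \scCoNerve[\overline{S}^{(i+1)}]$, and you have not done this. The paper sidesteps the issue entirely: rather than tracking units, it passes to simplicial localizations $(\mSet)^{\scCoNerve[\overline{S}]}_f[W_{\overline{S}}^{-1}]$ and $\mset{\overline{S}}^\degree[{W'}_{\overline{S}}^{-1}]$, shows (via Corollary \toposref{uspin}) that the former sends the pushout square of scaled simplicial sets to a homotopy pullback of simplicial categories, and then uses Lemma \ref{gutta} to verify that mapping spaces in the latter also form a homotopy pullback. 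Corollary \toposref{wspin} then gives the comparison. This approach buys you exactly the descent property you were missing, without ever needing to make fibrant replacements coherent across a gluing.
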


\begin{lemma}\label{gutta}
Let $i: \overline{S} \rightarrow \overline{S}'$ be a cofibration of scaled simplicial sets,
and let $\overline{X}, \overline{Y} \in \mset{ \overline{S}' }$ be fibrant objects. Then the restriction map
$$ \bHom^{\sharp}_{ \overline{S}' }( \overline{X}, \overline{Y} )
\rightarrow \bHom^{\sharp}_{ \overline{S} }( \overline{X} \times_{ {S'}^{\sharp}}
S^{\sharp}, \overline{Y} \times_{ { S'}^{\sharp} } S^{\sharp} )$$
is a Kan fibration.
\end{lemma}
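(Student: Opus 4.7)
The plan is to reduce the statement directly to Proposition \ref{stilk}, using a natural identification of mapping spaces. First I would observe that the mapping space $\bHom^{\sharp}_{\overline{S}}(\overline{X}_0, \overline{Y}_0)$ (where $\overline{X}_0 = \overline{X} \times_{{S'}^{\sharp}} S^{\sharp}$ and similarly for $\overline{Y}_0$) can be canonically identified with $\bHom^{\sharp}_{\overline{S}'}(\overline{X}_0, \overline{Y})$. Indeed, by the universal property of pullbacks, a map $K^{\sharp} \times \overline{X}_0 \to \overline{Y}$ over $\overline{S}'$ is the same as a map $K^{\sharp} \times \overline{X}_0 \to \overline{Y}_0$ over $\overline{S}$, since $\overline{X}_0$ already maps to $\overline{S}$ through $\overline{S}'$.

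Under this identification, the restriction map in the statement of the lemma becomes precisely the map
$$ \bHom^{\sharp}_{\overline{S}'}(\overline{X}, \overline{Y}) \to \bHom^{\sharp}_{\overline{S}'}(\overline{X}_0, \overline{Y})$$
induced by the inclusion $j: \overline{X}_0 \hookrightarrow \overline{X}$ in $\mset{\overline{S}'}$. Next I would verify that $j$ is a cofibration in $\mset{\overline{S}'}$: since $i$ is a cofibration of scaled simplicial sets, the underlying map $S \hookrightarrow S'$ is a monomorphism, so $j$ is a monomorphism on underlying simplicial sets (being a pullback of $i$), which is exactly the condition for $j$ to be a cofibration in $\mset{\overline{S}'}$.

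Having arranged matters this way, the conclusion is immediate from Proposition \ref{stilk}: since $\overline{Y}$ is a fibrant object of $\mset{\overline{S}'}$ and $j: \overline{X}_0 \hookrightarrow \overline{X}$ is a cofibration there, the induced restriction map is a Kan fibration between Kan complexes. There is no real obstacle here; the only subtle point is the identification of the two mapping spaces in the first paragraph, which is entirely formal once one unwinds the universal property of the pullback $\overline{Y}_0 = \overline{Y} \times_{{S'}^{\sharp}} S^{\sharp}$.
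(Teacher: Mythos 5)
Your proof is correct and follows essentially the same approach as the paper's: identify the target mapping space with $\bHom^{\sharp}_{\overline{S}'}(\overline{X}_0, \overline{Y})$ via the universal property of the pullback $\overline{Y}_0$, then observe that the resulting map is precomposition with the cofibration $\overline{X}_0 \hookrightarrow \overline{X}$ into a fibrant object, hence a Kan fibration. The paper invokes the simplicial model category structure on $\mset{\overline{S}'}$ from Theorem \ref{theo}, while you cite Proposition \ref{stilk} directly, but these amount to the same thing since Proposition \ref{stilk} is precisely the input used to verify the simplicial model category axiom.
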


\begin{proof}
We can identify the right hand side with the mapping space
$\bHom^{\sharp}_{ \overline{S} }( \overline{X} \times_{ {S'}^{\sharp} } S^{\sharp},
\overline{Y} )$. Since $\mset{ \overline{S} }$ is a simplicial model category and
$\overline{Y}$ is fibrant, it suffices to observe that the map
$\overline{X} \times_{ {S'}^{\sharp} } S^{\sharp} \rightarrow \overline{X}$ is a cofibration.
\end{proof}

\begin{lemma}\label{cuppa}
The conclusion of Proposition \ref{tornave} holds when $\overline{S} = \Delta^n_{\flat}$, for $n \geq 0$.
\end{lemma}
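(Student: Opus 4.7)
The plan is to deduce Lemma \ref{cuppa} directly from Proposition \ref{jurr}, using the general translation between Quillen equivalences and equivalences of simplicial subcategories of cofibrant-fibrant objects that the authors have already set up. Recall that in the paragraph immediately preceding Proposition \ref{tornave}, it is observed that by combining Remarks \ref{jul1} and \ref{jul2} with \toposref{tuccan} and Proposition \toposref{weakcompatequiv}, Theorem \ref{cupper} and Proposition \ref{tornave} are equivalent formulations of the same statement for an arbitrary scaled simplicial set $\overline{S}$. This translation is not special to any particular $\overline{S}$; it is a formal consequence of the simplicial enrichment of $\scUn_{\overline{S}}$ provided in Remark \ref{jul2}.

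Specializing that equivalence to the case $\overline{S} = \Delta^n_\flat$, with $\phi$ the identity functor on $\scCoNerve[\Delta^n_\flat]$, the corresponding instance of Theorem \ref{cupper} is precisely the assertion that $(\scSt_{\Delta^n_\flat}, \scUn_{\Delta^n_\flat})$ is a Quillen equivalence — which is exactly Proposition \ref{jurr}. The corresponding instance of Proposition \ref{tornave} is Lemma \ref{cuppa}. Since Proposition \ref{jurr} has already been proved, Lemma \ref{cuppa} follows at once.

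The main point to verify (implicit in the invocation of Proposition \toposref{weakcompatequiv}) is that the simplicial enrichment of $\scUn_{\Delta^n_\flat}$ described in Remark \ref{jul2} is genuinely compatible with both model structures, in the sense that the tensoring operation $\calF \mapsto \calF \otimes K$ on $(\mSet)^{\scCoNerve[\Delta^n_\flat]}$ corresponds under the adjunction to the natural tensoring $\overline{X} \mapsto \overline{X} \times K^\sharp$ on $\mset{\Delta^n_\flat}$; this is transparent from Definition \ref{sputer2} together with the computation of $\scSt_\ast$ carried out in Proposition \ref{uil}. With this compatibility in hand, the standard principle that a simplicial Quillen equivalence restricts to a Dwyer–Kan equivalence between the simplicial categories of cofibrant-fibrant objects yields the desired conclusion, and there is no further obstacle specific to the case $\overline{S} = \Delta^n_\flat$.
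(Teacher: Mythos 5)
Your proposal is correct and coincides with the paper's own argument. The paper's proof of Lemma \ref{cuppa} is just the single sentence ``This is a reformulation of Proposition \ref{jurr}''; what you have written out is precisely the content of that sentence, namely that the equivalence between Theorem \ref{cupper} and Proposition \ref{tornave} established in the paragraph preceding Proposition \ref{tornave} (via Remarks \ref{jul1}, \ref{jul2}, \toposref{tuccan}, and Proposition \toposref{weakcompatequiv}) is an $\overline{S}$-wise equivalence, so that specializing to $\overline{S} = \Delta^n_\flat$ converts Proposition \ref{jurr} into Lemma \ref{cuppa}.
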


\begin{proof}
This is a reformulation of Proposition \ref{jurr}.
\end{proof}

\begin{lemma}\label{culpa}
The conclusion of Proposition \ref{tornave} holds when $\overline{S}$ is the thin
$2$-simplex $\Delta^2_{\sharp}$.
\end{lemma}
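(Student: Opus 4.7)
Plan: Exploit the Bousfield-localization structure relating $\Delta^2_{\flat}$ to $\Delta^2_{\sharp}$ on both sides of the scaled straightening/unstraightening adjunction, reducing to Lemma \ref{cuppa}.

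Write $\calC^{\flat} = \scCoNerve[\Delta^2_{\flat}]$ and $\calC^{\sharp} = \scCoNerve[\Delta^2_{\sharp}]$. These $\mSet$-enriched categories have identical underlying simplicial categories (namely $\sCoNerve[\Delta^2]$, in which $\bHom(0,2) \cong \Delta^1$), differing only in that the unique non-degenerate edge $e$ of $\bHom(0,2)$ is unmarked in $\calC^{\flat}$ but marked in $\calC^{\sharp}$ (the sole non-degenerate $2$-simplex of $\Delta^2_{\sharp}$ being thin). The natural map $\calC^{\flat} \to \calC^{\sharp}$ therefore induces a fully faithful restriction functor $(\mSet)^{\calC^{\sharp}} \hookrightarrow (\mSet)^{\calC^{\flat}}$ whose essential image consists of those $\calF$ for which the structure map $\bHom_{\calC^{\flat}}(0,2) \to \bHom_{\mSet}(\calF(0),\calF(2))$ sends $e$ to a marked edge. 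On the other side, the identity map $i : \Delta^2_{\flat} \to \Delta^2_{\sharp}$ of scaled simplicial sets induces via Proposition \ref{supper} a left Quillen functor $i_{!} : \mset{\Delta^2_{\flat}} \to \mset{\Delta^2_{\sharp}}$ which is the identity on underlying categories; fibrant objects of $\mset{\Delta^2_{\sharp}}$ are precisely those fibrant objects of $\mset{\Delta^2_{\flat}}$ whose underlying locally coCartesian fibration $X \to \Delta^2$ is, in fact, coCartesian over the thin $2$-simplex $\Delta^{\{0,1,2\}}$.

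Lemma \ref{cuppa} supplies the equivalence of simplicial categories $\Psi : (\mSet)^{\calC^{\flat}, \degree} \to \mset{\Delta^2_{\flat}}^{\degree}$. The proof reduces to showing that $\Psi$ restricts to an equivalence between the full subcategory $(\mSet)^{\calC^{\sharp}, \degree}$ and the full subcategory $\mset{\Delta^2_{\sharp}}^{\degree}$. To do so I would take a fibrant $\overline{X} \in \mset{\Delta^2_{\flat}}$, use Proposition \ref{unple} to produce a strongly cofibrant object $\calG \in (\sSet)^{\sCoNerve[\Delta^2]}$ together with a weak equivalence $\sMap^{+}(\calG) \to \overline{X}$, and then compute the resulting straightening $\calF = \scSt_{\Delta^2_{\flat}}(\overline{X})$'s action on $e$ explicitly in terms of the composition data encoded by $X$ over $\Delta^{\{0,1,2\}}$.

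The main obstacle is this last computation: verifying that $\calF$ sends $e$ to a marked edge if and only if $X \to \Delta^2$ is coCartesian (and not merely locally coCartesian) over the thin $2$-simplex. This is the marked/scaled analogue of the key compatibility needed in the proof of Proposition \ref{jurr}, and I expect the verification to proceed by a careful case analysis in the spirit of Lemma \ref{maincase}, but with all additional markings tracked through the scaled cone construction $\LCones{\Delta^2_{\flat}}{\overline{X}}$ to ensure that the identifications go through.
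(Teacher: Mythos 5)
Your setup matches the paper's: the paper also forms the commutative square of unstraightening functors between the two pairs of nested full subcategories, applies Lemma \ref{cuppa} to the bottom ($\Delta^2_{\flat}$) row, and observes that full faithfulness of $\scUn_{\Delta^2_{\sharp}}$ is automatic from the square, leaving only essential surjectivity. You have also correctly isolated the statement that must be proved: for a fibrant $\overline{X}\in\mset{\Delta^2_{\sharp}}$, the corresponding $\calF$ must carry the nondegenerate edge $e$ of $\bHom_{\scCoNerve[\Delta^2_{\flat}]}(0,2)\simeq(\Delta^1)^{\flat}$ to a marked edge. Where you deviate is in the finishing move. You propose to run the $\sMap^{+}$-machinery of Proposition \ref{unple} and compute $\scSt_{\Delta^2_{\flat}}(\overline{X})$ explicitly on $e$, and you concede this is an obstacle you have not cleared. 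The paper avoids this: it takes the $\calF$ produced by Lemma \ref{cuppa} with $\scUn_{\Delta^2_{\flat}}\calF\simeq\overline{X}$ and observes that, unwinding what $\scUn$ does, the factorization of $\calF(0)\times(\Delta^1)^{\flat}\to\calF(2)$ through $\calF(0)\times(\Delta^1)^{\sharp}$ is precisely the condition that the locally $p$-coCartesian edges of $X$ are stable under composition --- which holds because $p$ is assumed to be a coCartesian fibration. Note also that you only need the ``if'' direction of your ``if and only if'' (a coCartesian $p$ forces $\calF$ to mark $e$), since full faithfulness has already been handled abstractly by the square. So the approach is the same; the paper's final step is just lighter-weight than the $\sMap^{+}$-computation you were planning.
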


\begin{proof}
We have a commutative diagram of simplicial categories
$$ \xymatrix{ 
( \mSet)^{ \scCoNerve[ \Delta^2_{\sharp}], \degree} 
\ar[r]^{ \scUn_{\Delta^2_{\sharp}}} 
\ar[d] & \mset{ \Delta^2_{\sharp}}^{\degree} \ar[d] \\
( \mSet)^{ \scCoNerve[ \Delta^2_{\flat} ], \degree} \ar[r]^{ \scUn_{ \Delta^2_{\flat} }} & \mset{ \Delta^2_{\flat}}^{\degree}. }$$
The vertical arrows are inclusions of full subcategories, and the bottom horizontal arrow is an equivalence by Lemma \ref{cuppa}. It follows that $\scUn_{ \Delta^2_{\sharp} }$ is fully faithful. To complete the proof, we must show that $\scUn_{ \Delta^2_{\sharp} }$ is essentially surjective. Let $\overline{X}$ be a fibrant object of $\mset{ \Delta^2_{\sharp} }$, corresponding to a coCartesian fibration of simplicial sets $p: X \rightarrow \Delta^2$. We wish to prove that $\overline{X}$ lies in the essential image of $\scUn_{ \Delta^2_{\sharp} }$. 
Lemma \ref{cuppa} guarantees that $\overline{X} \simeq \scUn_{ \Delta^2_{\flat} } \calF$
for some fibrant-cofibrant object $\calF: \scCoNerve[ \Delta^2_{\flat} ] \rightarrow \mSet$. 
To prove that $\overline{X}$ lies in the essential image of $\scUn_{ \Delta^2_{\sharp} }$, it will suffice to show that $\calF$ belongs to $( \mSet)^{\scCoNerve[ \Delta^2_{\sharp}}$.
In other words, we must show that the canonical map
$$ \calF(0) \times (\Delta^1)^{\flat} \simeq \calF(0) \times \bHom_{ \scCoNerve[ \Delta^2_{\flat}]}(0,2) \rightarrow \calF(2)$$
factors through $\calF(0) \times ( \Delta^1)^{\sharp} \simeq \calF(0)
\times \bHom_{ \scCoNerve[ \Delta^2_{\sharp}] }(0,2)$. Unwinding the definitions, we see that this is equivalent to the requirement that the collection of locally $p$-coCartesian morphisms
of $X$ is stable under composition, which follows from the assumption that $p$ is a coCartesian fibration.
\end{proof}

\begin{proof}[Proof of Proposition \ref{tornave}]
For every scaled simplicial set $\overline{S}$, let $(\mSet)^{\scCoNerve[\overline{S}]}_{f}$ denote the category of weakly fibrant objects of $(\mSet)^{\scCoNerve[\overline{S}]}$, and let
$W_{\overline{S}}$ be the class of weak equivalences in $( \mSet)^{\scCoNerve[\overline{S}]}_{f}$. Let $W'_{S}$ be the collection of weak equivalences in $\mset{ \overline{S} }^{\degree}$.
We have a commutative diagram of simplicial categories
$$ \xymatrix{ ((\mSet)^{\scCoNerve[S]})^{\degree} \ar[r]^{ \scUn_{\overline{S}} } \ar[d] &
\mset{ \overline{S} }^{\degree} \ar[d]^{\psi_{ \overline{S} }} \\
( \mSet)^{\scCoNerve[\overline{S}]}_{f}[W_{\overline{S}}^{-1} ] \ar[r]^{\phi_{\overline{S}}}& \mset{ \overline{S} }^{\degree}[ {W'}^{-1}_{\overline{S}} ] }$$ (see Notation \toposref{localdef}). 
We wish to prove that the upper horizontal functor is an equivalence of simplicial categories.
Lemma \toposref{kur} implies that the left vertical map is an equivalence.
Using Lemma \toposref{postcuse} and Remark \toposref{uppa}, we deduce that the right vertical map is also an equivalence. Consequently, the Proposition is equivalent to the assertion that $\phi_{ \overline{S} }$ is an equivalence.

Let us say that a scaled simplicial set $\overline{S}$ is {\em good} if the functor
$\phi_{ \overline{S} }$ is an equivalence of simplicial categories. Our goal is to show that every scaled simplicial set is good.

Let $F: \scSet^{op} \rightarrow \sCat$ be the functor given by the formula
$\overline{S} \mapsto ( \mSet)^{\scCoNerve[ \overline{S} ] }_{f}[W_{ \overline{S} }^{-1}]$. 
Using Corollary \toposref{uspin} (and the fact that $\mSet$ is left proper), we deduce the following:
\begin{itemize}
\item[$(a)$] Suppose given a pushout diagram of scaled simplicial sets
$$ \xymatrix{ \overline{S} \ar[r]^{i} \ar[d] & \overline{S}' \ar[d] \\
\overline{T} \ar[r] & \overline{T}', }$$
 where $i$ is a cofibration. Then the induced diagram
$$ \xymatrix{ F( \overline{S} ) & F( \overline{S}' ) \ar[l] \\
F( \overline{T} ) \ar[u] & F( \overline{T}') \ar[u] \ar[l] }$$
is a homotopy pullback square of simplicial categories.
\item[$(b)$] Suppose that a scaled simplicial set $\overline{S}$ is written as a union
of a transfinite sequence $\{ \overline{S}_{\beta} \}_{\beta < \alpha}$ of scaled simplicial subsets. Then $F( \overline{S} )$ is the homotopy limit of the diagram of simplicial
categories $\{ F( \overline{S}_{\alpha} ) \}$. 
\end{itemize} 

We will prove the following:
\begin{itemize}
\item[$(a')$] Suppose given a pushout diagram of scaled simplicial sets
$$ \xymatrix{ \overline{S} \ar[r]^{i} \ar[d] & \overline{S}' \ar[d] \\
\overline{T} \ar[r] & \overline{T}', }$$
where $i$ is a cofibration. If $\overline{S}$, $\overline{S}'$, and $\overline{T}$ is
good, then $\overline{T}'$ is good.
\item[$(b')$] Suppose that a scaled simplicial set $\overline{S}$ is written as a union
of a transfinite sequence $\{ \overline{S}_{\beta} \}_{\beta < \alpha}$ of scaled simplicial subsets. If each $\overline{S}_{\beta}$ is good, then $\overline{S}$ is good.
\end{itemize}

We will prove $(a')$; the proof of $(b')$ is similar. We have a commutative diagram
$$ \xymatrix{ ( \mSet)^{\scCoNerve[\overline{T}']}_{f}[W_{\overline{T}'}^{-1}] \ar[dr]^{\phi_{\overline{T}'}} \ar[ddr] \ar[drr] & & \\
& \mset{ \overline{T}'}^{\degree}[ {W'}_{\overline{T}'}^{-1}] \ar[r]^{u} \ar[d]^{v} \ar[dr]^{w} & \mset{\overline{T}}^{\degree}[ {W'}_{\overline{T}}^{-1}] \ar[d] \\
& \mset{\overline{S}'}^{\degree}[ {W'}_{\overline{S}'}^{-1}] \ar[r] & \mset{\overline{S}}^{\degree}[ {W'}_{\overline{S}}^{-1} ]. }$$
Using $(a)$ and the assumption that $\overline{S}$, $\overline{S}'$, and $\overline{T}$ are good, we deduce that the outer square in this diagram is a homotopy pullback square of
simplicial categories. According to Corollary \toposref{wspin}, the functor
$\phi_{ \overline{T}'}$ is an equivalence of simplicial categories if and onyl if, for every
pair of objects $x,y \in \mset{ \overline{T}'}^{\degree}[ {W'}_{\overline{T'}}^{-1}]$, the diagram of simplicial sets
$$ \xymatrix{ \bHom_{ \mset{\overline{T}'}^{\degree}[ {W'}_{\overline{T}'}^{-1}] }( x, y) \ar[r] \ar[d] 
& \bHom_{ \mset{\overline{T}}^{\degree}[ {W'}_{\overline{T}}^{-1}] }( u(x), u(y) ) \ar[d] \\
\bHom_{ \mset{ \overline{S}'}^{\degree}[ {W'}_{\overline{S}'}^{-1}] }( v(x), v(y) ) \ar[r] &
\bHom_{ \mset{ \overline{S} }^{\degree}[ {W}_{\overline{S}}^{-1}] }( w(x), w(y) ) }$$ 
is homotopy Cartesian. Since $\psi_{\overline{T}'}$ is a weak equivalence of simplicial categories, we may assume without loss of generality that $x = \psi_{Y'} \overline{X} $ and
$y = \psi_{Y'} \overline{Y}$, for a pair of fibrant objects
$\overline{X}, \overline{Y} \in \mset{ \overline{T}'}$. It will therefore suffice to show that
the equivalent diagram
$$ \xymatrix{ \bHom^{\sharp}_{ \overline{T}' }( \overline{X}, \overline{Y}) \ar[r] \ar[d] 
& \bHom^{\sharp}_{\overline{T} }( \overline{X} \times_{ {T'}^{\sharp}} T^{\sharp}, 
\overline{Y} \times_{ {T'}^{\sharp}} T^{\sharp} ) \ar[d] \\
\bHom^{\sharp}_{ \overline{S}' }( \overline{X} \times_{ {T'}^{\sharp}} {S'}^{\sharp}, \overline{Y} \times_{ {T'}^{\sharp} } {S'}^{\sharp} ) \ar[r]^{g} &
\bHom^{\sharp}_{\overline{S} }( \overline{X} \times_{ {T'}^{\sharp} } S^{\sharp}, \overline{Y} \times_{ {T'}^{\sharp} } S^{\sharp} ) }$$ 
is homotopy Cartesian. Here $S, S', T$ and $T'$ denote the underlying simplicial sets of
$\overline{S}$, $\overline{S}'$, $\overline{T}$, and $\overline{T}'$, respectively.
This diagram is a pullback square, and the map $g$ is a Kan fibration by Lemma \ref{gutta}.
This completes the verification of assertion $(a')$.

We now show that every scaled simplicial set is good. The proof proceeds in several steps.
\begin{itemize}
\item[$(i)$] For every $n \geq 0$, the scaled simplicial set $\Delta^{n}_{\flat}$ is good.
This is simply a reformulation of Lemma \ref{cuppa}.
\item[$(ii)$] For every finite simplicial set $S$, the scaled simplicial set
$S_{\flat}$ is good. The proof goes by induction on the dimension $n$ of
$S$ and the number of nondegenerate simplices of $S$. If $S$ is empty, the result is obvious. Otherwise, we have a pushout diagram
$$ \xymatrix{ (\bd \Delta^n)_{\flat} \ar[r] \ar[d] & \Delta^n_{\flat} \ar[d] \\
S'_{\flat} \ar[r] & S_{\flat}. }$$
The inductive hypothesis guarantees that $(\bd \Delta^n)_{\flat}$ and
$S'_{\flat}$ are good, and $\Delta^n_{\flat}$ is good by $(i)$. It follows
from $(a')$ that $S_{\flat}$ is good, as desired.
\item[$(iii)$] Let $S$ be an arbitrary simplicial set. Then $S_{\flat}$ is good.
To prove this, we write $S$ as the union of a transfinite sequence of
simplicial subsets $\{ S_{\beta} \}_{\beta < \alpha}$, where each
$S_{\beta}$ is obtained from $S_{< \beta} = \bigcup_{ \gamma < \beta} S_{\gamma}$ by adjoining a single nondegenerate simplex. In view of $(b')$, it will suffice to show that
each $(S_{\beta})_{\flat}$ is good. We prove this using induction on $\beta$. The inductive hypothesis and $(b')$ imply that $( S_{< \beta})_{\flat}$ is good. We now observe that there is a pushout diagram
$$ \xymatrix{ ( \bd \Delta^n)_{\flat} \ar[r] \ar[d] & \Delta^n_{\flat} \ar[d] \\
(S_{< \beta} )_{\flat} \ar[r] & (S_{\beta})_{\flat}. }$$
Since $(S_{< \beta})_{\flat}$ is good by assumption, 
$\Delta^{n}_{\flat}$ is good by $(i)$, and $( \bd \Delta^n)_{\flat}$ is good by
$(ii)$, we deduce from $(a')$ that $(S_{\beta})_{\flat}$ is good.

\item[$(iv)$] The thin $2$-simplex $\Delta^2_{\sharp}$ is good. This is a reformulation of Lemma \ref{culpa}.

\item[$(v)$] Every scaled simplicial set $\overline{S} = (S,T)$ is good.
To prove this, we let $\{ \sigma_{\beta} \}_{ \beta < \alpha}$ be a well-ordering of
the collection of all nondegenerate thin $2$-simplices of $\overline{S}$.
For $\beta < \alpha$, let $T_{\beta}$ denote the collection of all
degenerate $2$-simplices of $S$ together with $\{ \sigma_{\gamma} \}_{\gamma \leq \beta}$, and let $T_{< \beta}$ be defined similarly. In view of $(b')$, it will suffice to show that
each of the scaled simplicial sets $(S, T_{\beta})$ is good. We prove this by induction on $\beta$. Using the inductive hypothesis and $(b')$ again, we deduce that
$(S, T_{< \beta})$ is good. We now observe that there is a pushout diagram
$$ \xymatrix{ \Delta^2_{\flat} \ar[r] \ar[d] & \Delta^2_{\sharp} \ar[d] \\
(S, T_{< \beta}) \ar[r] & (S, T_{\beta} ). }$$
Since $\Delta^2_{\flat}$ is good by $(i)$ and $\Delta^2_{\sharp}$ is good
by $(iv)$, assertion $(a')$ guarantees that $(S, T_{\beta} )$ is good as desired.
\end{itemize}
\end{proof}

\begin{corollary}\label{stlik}
Let $\overline{S}=(S,T)$ be a scaled simplicial set containing a vertex $y$, and let
$\overline{X}$ be a fibrant object of $\mset{ \overline{S}}$. Then the canonical map
$\scSt ( \overline{X} \times_{ S^{\sharp} } \{y\}^{\sharp} )
\rightarrow (\scSt_{ \overline{S}} \overline{X})(y)$ is an equivalence of marked simplicial sets.
\end{corollary}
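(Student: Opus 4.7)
The plan is to use the Quillen equivalence of Theorem \ref{cupper} to reduce to the case where $\overline{X}$ has the form $\scUn_{\overline{S}} \calF$ for a fibrant-cofibrant $\calF$, and then identify both sides of the canonical map with $\calF(y)$ (up to weak equivalence) using the compatibility formulas of Remarks \ref{ba1} and \ref{ba2} together with the base case $\overline{S} = \Delta^{0}_{\sharp}$ treated in \S \ref{bisec3.4}.

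Let $\psi : \{y\}_{\flat} \hookrightarrow \overline{S}$ denote the inclusion, and let $\psi' = \scCoNerve[\psi] : \scCoNerve[\{y\}_{\flat}] \to \scCoNerve[\overline{S}]$ be the induced functor of $\mSet$-enriched categories. Because every object of $\mset{\overline{S}}$ is cofibrant, Theorem \ref{cupper} implies that $\overline{X}$ is weakly equivalent to $\scUn_{\overline{S}} \calF$ for some fibrant-cofibrant object $\calF \in (\mSet)^{\scCoNerve[\overline{S}]}$ (take a fibrant replacement of $\scSt_{\overline{S}} \overline{X}$ and pass through the derived counit). Since $\psi^{\ast}$ is a right Quillen functor by Proposition \ref{supper}, and both $\overline{X}$ and $\scUn_{\overline{S}} \calF$ are fibrant, this reduces the corollary to the case $\overline{X} = \scUn_{\overline{S}} \calF$.

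In this case, Remark \ref{ba1} applied to $\psi$ gives a canonical isomorphism $\psi^{\ast} \scUn_{\overline{S}} \calF \cong \scUn_{\psi'} \calF$, and Remark \ref{ba2} applied to the factorization $\scCoNerve[\{y\}_{\flat}] \xrightarrow{\id} \scCoNerve[\{y\}_{\flat}] \xrightarrow{\psi'} \scCoNerve[\overline{S}]$ gives a further canonical isomorphism $\scUn_{\psi'} \calF \cong \scUn_{\ast}(\calF(y))$. Applying $\scSt = \scSt_{\ast}$ and then the natural transformation $\alpha : \scSt_{\ast} \to \id$ of Proposition \ref{uil} (which is a pointwise weak equivalence, and hence exhibits $\scUn_{\ast}$ as weakly equivalent to the identity on fibrant objects), we obtain weak equivalences
\[
\scSt(\overline{X} \times_{S^{\sharp}} \{y\}^{\sharp}) \;\xrightarrow{\;\sim\;}\; \scUn_{\ast}(\calF(y)) \;\xrightarrow{\;\sim\;}\; \calF(y).
\]
On the other side, since $\calF$ is fibrant, the counit $\scSt_{\overline{S}} \scUn_{\overline{S}} \calF \to \calF$ is a weak equivalence in the projective model structure by Theorem \ref{cupper}; because weak equivalences there are detected pointwise, the induced map $(\scSt_{\overline{S}} \overline{X})(y) \to \calF(y)$ is a weak equivalence in $\mSet$.

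These two chains of weak equivalences, together with the canonical map of the corollary, fit into a diagram in $\mSet$ that commutes up to homotopy by the naturality of the isomorphisms in Remarks \ref{ba1} and \ref{ba2}; the result then follows from the two-out-of-three property. The main obstacle lies in verifying this compatibility: it amounts to a bookkeeping exercise expressing the coherence of the various unit and counit maps of $(\scSt_{\overline{S}}, \scUn_{\overline{S}})$ and $(\scSt_{\ast}, \scUn_{\ast})$ with the pullback/pushforward operations along $\psi$ and $\psi'$, and is not purely formal from the statements of the Remarks.
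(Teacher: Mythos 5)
Your proof is correct and takes essentially the same route as the paper's: both pass through a fibrant replacement (a weak equivalence $\scSt_{\overline{S}}\overline{X} \to \calF$, or equivalently $\overline{X} \to \scUn_{\overline{S}}\calF$), identify the fiber $(\scUn_{\overline{S}}\calF) \times_{S^{\sharp}} \{y\}^{\sharp}$ with $\scUn(\calF(y))$ via the compatibilities of Remarks \ref{ba1} and \ref{ba2}, invoke the point-wise Quillen equivalence, and conclude by two-out-of-three. Two small remarks are worth making.

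First, the displayed arrow $\scUn_{\ast}(\calF(y)) \xrightarrow{\sim} \calF(y)$ is not a direct natural map: $\alpha$ of Proposition \ref{uil} gives $\scSt_{\ast}\scUn_{\ast}(\calF(y)) \to \scUn_{\ast}(\calF(y))$ and the counit gives $\scSt_{\ast}\scUn_{\ast}(\calF(y)) \to \calF(y)$, so the comparison is only a zig-zag. The paper avoids this detour by applying Corollary \ref{ba3} directly to the weak equivalence of fibers $\overline{X} \times_{S^{\sharp}} \{y\}^{\sharp} \to \scUn(\calF(y))$, passing to its adjoint $\scSt(\overline{X} \times_{S^{\sharp}} \{y\}^{\sharp}) \to \calF(y)$ in one step. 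Second, the compatibility you flag at the end is actually formal: because the paper takes the weak equivalence $\scSt(\overline{X} \times_{S^{\sharp}} \{y\}^{\sharp}) \to \calF(y)$ to \emph{be} the adjoint of the fiber map, adjunction naturality (the mate calculus underlying Remarks \ref{ba1} and \ref{ba2}) guarantees that this adjoint factors as the canonical map followed by the counit $\scSt_{\overline{S}}\scUn_{\overline{S}}\calF \to \calF$ evaluated at $y$, so the triangle commutes by construction rather than by a separate bookkeeping exercise. Making this choice of comparison map is what lets the paper's argument close cleanly; your version, which identifies both sides with $\calF(y)$ independently, leaves the commutativity as the unresolved step you correctly identified.
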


\begin{proof}
Choose a weak equivalence $\scSt_{ \overline{S} } \overline{X} \rightarrow \calF$, where
$\calF$ is a fibrant object of $( \mSet)^{ \scCoNerve[ \overline{S} ]}$. 
It follows from Proposition \ref{tornave} that the adjoint map $\overline{X} \rightarrow \scUn_{\overline{S}} \calF$ is a weak equivalence between fibrant objects of $\mset{ \overline{S} }$, so that Lemma \ref{piner} guarantees a weak equivalence of fibers 
$$\overline{X} \times_{ S^{\sharp} } \{y\}^{\sharp}
\rightarrow ( \scUn_{ \overline{S} } \calF) \times_{ S^{\sharp} } \{y\}^{\sharp} \simeq
\scUn \calF(y)$$
in the category of marked simplicial sets. It follows from Corollary \ref{ba3} that the adjoint map
$\scSt (\overline{X} \times_{ S^{\sharp} } \{y\}^{\sharp}) \rightarrow \calF(y)$ is again an equivalence of marked simplicial sets. The desired result now follows from the two-out-of-three property, applied to the diagram
$$ \xymatrix{ \scSt (\overline{X} \times_{ S^{\sharp} } \{y\}^{\sharp}) \ar[rr] \ar[dr] & & \calF(y) \\
& (\scSt_{ \overline{S}} \overline{X})(y). \ar[ur] & }$$
\end{proof}

\section{$\infty$-Bicategories}\label{bisecD}

In \S \ref{bisec3.1} we introduced the category $\scSet$ of {\it scaled simplicial sets}.
Our main goal in this section is to endow $\scSet$ with the structure of a model category
and to show that the underlying homotopy theory is the theory of $(\infty,2)$-categories.
More precisely, we will show that the scaled nerve functor
$\scNerve: \Cat_{\mSet} \rightarrow \scSet$ is a right Quillen equivalence
(Theorem \ref{slai}). The main step in the proof is to show that if
$\overline{\calC}= (\calC,T)$ is a scaled simplicial set satisfying appropriate filling conditions,
then there is a recipe for determining the homotopy type of the mapping objects $\bHom_{ \scCoNerve[ \overline{\calC}]}(x,y)$ directly in terms of $\overline{X}$. In fact, we will show that
$\bHom_{ \scCoNerve[ \overline{\calC}]}(x,y)$ is weakly equivalent to the fiber of a map
of marked simplicial sets $\overline{\calC}^{x/} \rightarrow \calC^{\sharp}$ over the vertex $y$.
The construction of this map will be given in \S \ref{bisec4.1}. We will apply this construction
(in combination with the scaled straightening functor of \S \ref{bisecC}) in
\S \ref{bisec4.2} to give a proof of Theorem \ref{slai}.

It follows from Theorem \ref{slai}, Theorem \ref{castle2}, Proposition \ref{curt}, and
Proposition \ref{camperr} that there is a chain of right Quillen equivalences
$$ \scSet \leftarrow \Cat_{\mSet} \rightarrow \PreSeg{\mSet} \leftarrow
\Fun( \cDelta^{op}, \mSet) \leftarrow \mset{ \Nerve(\cDelta)^{op} }$$
which relates $\scSet$ to the category $\mset{ \Nerve(\cDelta)^{op} }$
(endowed with the complete Segal model structure of Proposition
\ref{camperr}). In \S \ref{bisec4.3}, we will complete the proof of
Theorem \ref{toothygrin} by directly constructing a left Quillen equivalence
$\sdd^{+}: \scSet \rightarrow \mset{ \Nerve(\cDelta)^{op} }$.
Here the functor $\sdd^{+}$ is a close relative of the {\it barycentric subdivision}
functor on simplicial sets. Our proof that $\sdd^{+}$ is a left Quillen equivalence is very indirect,
and makes use of all of the models for $(\infty,2)$-categories appearing in this paper.

The remainder of this section is devoted to describing some applications of 
the subdivision construction of \S \ref{bisec4.3}. In \S \ref{bisec4.4}, we will use it to obtain
a classification for self-equivalences of the $\infty$-category $\Cat_{\infty}$.
In \S \ref{bisec4.5}, we will apply this classification result to obtain a compatibility between
the straightening constructions of \S \ref{bisecC} and \S \toposref{strsec}.

\subsection{The Scaled Slice Construction}\label{bisec4.1}

Let $\calC$ be an $\infty$-category containing an object $x$. Then we can define
a new $\infty$-category $\calC^{x/}$ equipped with a left fibration
$\calC^{x/} \rightarrow \calC$, whose objects are morphisms
$x \rightarrow y$ in $\calC$. Our goal in this section is to describe
an analogous construction in the $(\infty,2)$-categorical setting,
where the simplicial set $\calC$ is replaced by a scaled simplicial set.
Our first step is to formulate some conditions on $\calC$ which will guarantee
that this construction is well-behaved.

\begin{definition}
An {\it weak $\infty$-bicategory} is a scaled simplicial set $(X,T)$ which has the
extension property with respect to every scaled anodyne morphism of $\scSet$.
\end{definition}

\begin{remark}
Let $X$ be a scaled simplicial set, and suppose that {\em every} $2$-simplex of $X$ is thin.
Then $X$ is a weak $\infty$-bicategory if and only if the underlying simplicial set is an $\infty$-category.
The only nontrivial point is to verify that $X$ has the extension property with respect to morphisms of type $(C)$ appearing in Definition \ref{slapper}, which follows from Lemma \toposref{greenlem}.
\end{remark}

\begin{remark}\label{copus}
Let $(X,T)$ be a weak $\infty$-bicategory. Let $X' \subseteq X$ be the simplicial subset spanned
by those simplices $\sigma$ of $X$ such that every two-dimensional face of $\sigma$ is thin.
Then $X'$ is an $\infty$-category. We will refer to $X'$ as the {\em underlying $\infty$-category}
of $(X,T)$.
\end{remark}

\begin{example}
Let $\calC$ be a fibrant $\mSet$-enriched category. Then $\scNerve(\calC)$ is a weak
$\infty$-bicategory.
\end{example}

\begin{notation}\label{caff}
Let $\overline{\calC} = (\calC, T)$ be a scaled simplicial set, and let $X$ be a vertex of $\calC$. 
We can identify edges of the simplicial set $\calC^{X/}$ with diagrams
$$ \xymatrix{ X \ar[d]^{\id} \ar[r] \ar[dr] & Y \ar[d] \\
X \ar[r] & Z }$$
in $\calC$. Let $M$ denote the collection of all edges of $\calC^{X/}$ such that the upper right $2$-simplex in the diagram belongs $T$, and let $M_0 \subseteq M$ be the subset consisting of those edges for which {\em both} nondegenerate $2$-simplices in the diagram belong to $T$.
We let $\overline{\calC}^{X/}$ denote the marked simplicial set $( \calC^{X/}_0, M_0)$, where
$\calC^{X/}_0$ denotes the simplicial subset of $\calC^{X/}$ spanned by those simplices
$\sigma$ for which every edge of $\sigma$ belongs to $M$.

The canonical map $\calC^{X/} \rightarrow \calC$ determines a map from $\calC^{X/}_0$ to $\calC$, so that we can regard $\overline{\calC}^{X/}$ as an object of $\mset{ \overline{\calC} }$. 
\end{notation}

\begin{proposition}\label{ilk}
Let $\overline{\calC} = (\calC, T)$ be a weak $\infty$-bicategory and let $X$ be a vertex of $\calC$. Then
$\overline{\calC}^{X/}$ is a fibrant object of $\mset{ \overline{\calC} }$. 
\end{proposition}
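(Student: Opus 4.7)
The plan is to apply Proposition \ref{poststaf}, which reduces the fibrancy of $\overline{\calC}^{X/}$ in $\mset{\overline{\calC}}$ (for the categorical pattern $\CatP_{\overline{\calC}}$ of Example \ref{user}) to the statement that $\overline{\calC}^{X/}$ has the right lifting property with respect to each generating $\CatP_{\overline{\calC}}$-anodyne morphism of Definition \ref{postspunt}. The main tool is a cone--slice adjunction tailored to the description of $\overline{\calC}^{X/}$ in Notation \ref{caff}: given a cofibration $f:\overline{A} \hookrightarrow \overline{B}$ in $\mset{\overline{\calC}}$, a lifting problem for $f$ against the projection $\overline{\calC}^{X/} \to \overline{\calC}$ unwinds to a filling problem for $\overline{\calC}$ itself against an associated inclusion $j_f$ of scaled simplicial sets, where $j_f$ embeds a pushout built from the cone on $\overline{A}$ with cone point $X$ and from $\overline{B}$ into the analogous cone on $\overline{B}$. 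The scaling on the target of $j_f$ records the thin $2$-simplices of $\overline{\calC}$ coming from the given data together with one thin $2$-simplex produced by joining $X$ with each edge of $\overline{A}$ or $\overline{B}$ that is required to land in the subset $M$ defining $\calC^{X/}_{0}$.

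With this translation in hand, it suffices to check that for each of the five types of generating $\CatP_{\overline{\calC}}$-anodyne morphism listed in Definition \ref{postspunt}, the associated map $j_f$ is scaled anodyne; the weak $\infty$-bicategory hypothesis on $\overline{\calC}$ then supplies the required lift. The simplest case is $(C_1)$: for $f:(\Lambda^n_i)^{\flat} \hookrightarrow (\Delta^n)^{\flat}$ with $0 < i < n$, the underlying simplicial inclusion identifies with the standard inner horn $\Lambda^{n+1}_{i+1} \hookrightarrow \Delta^{n+1}$, which is inner anodyne and hence scaled anodyne by Remark \ref{tweepus}. The type $(B_0)$ generator $\{0\}^{\sharp} \hookrightarrow (\Delta^1)^{\sharp}$ becomes an inclusion into $\Delta^2$ together with the declaration that the unique non-degenerate $2$-simplex is thin, which is obtained as a pushout of a type $(A)$ scaled anodyne generator of Definition \ref{slapper}. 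The more elaborate generators $(A_0)$, $(A_1)$, and $(C_0)$ translate to inclusions that decompose as iterated pushouts of scaled anodyne generators of types $(A)$, $(B)$, and $(C)$ from Definition \ref{slapper}, interleaved with inner anodyne maps.

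The main obstacle is case $(C_0)$, where $f$ is an inclusion of the form $(\Lambda^n_0)^{\flat} \cup (\Delta^{\{0,1\}})^{\sharp} \hookrightarrow (\Delta^n)^{\flat} \cup (\Delta^{\{0,1\}})^{\sharp}$ with the distinguished thin $2$-simplex $\Delta^{\{0,1,n\}}$. After coning with $X$, the resulting inclusion inside $\Delta^{n+1}$ requires a uniform combinatorial decomposition into scaled anodyne generators of type $(C)$ and inner anodyne fillings, and is the source of essentially all the combinatorial difficulty. A similar but smaller decomposition is needed in case $(A_1)$, where the type $(B)$ generator of Definition \ref{slapper} is invoked to register the thinness of a $2$-simplex produced by the cone. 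Once this case analysis is carried out uniformly in $n$, the extension problems against all $\CatP_{\overline{\calC}}$-anodyne generators are solvable, proving that $\overline{\calC}^{X/}$ is $\overline{\calC}$-fibered.
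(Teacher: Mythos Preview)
Your high-level strategy matches the paper's: reduce fibrancy to a list of extension properties in $\overline{\calC}$ via the cone--slice adjunction, then solve each using the weak $\infty$-bicategory hypothesis. The paper organizes this around the conditions of Definition~\ref{catpat} rather than around the generators of Definition~\ref{postspunt}, but by Proposition~\ref{poststaf} these are equivalent.

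There is, however, a genuine error in your treatment of case $(C_1)$. The slice $\calC^{X/}$ used in Notation~\ref{caff} is the \emph{alternative} slice: an $n$-simplex of $\calC^{X/}$ is a map $\Delta^n \times \Delta^1 \to \calC$ constant at $X$ on $\Delta^n \times \{0\}$, not a map $\Delta^{n+1}\to\calC$. Consequently the adjoint inclusion $j_f$ for $f\colon (\Lambda^n_i)^{\flat}\hookrightarrow(\Delta^n)^{\flat}$ is the prism inclusion
\[
(\Lambda^n_i\times\Delta^1)\cup(\Delta^n\times\partial\Delta^1)\ \hookrightarrow\ \Delta^n\times\Delta^1,
\]
with a specific (not $\sharp$) scaling, and not $\Lambda^{n+1}_{i+1}\hookrightarrow\Delta^{n+1}$. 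Your appeal to Remark~\ref{tweepus} therefore does not apply: the scaling on the cone is not maximal, so one cannot simply invoke inner-anodyne $\Rightarrow$ scaled-anodyne. Showing this prism inclusion is scaled anodyne requires exactly the filtration argument the paper gives in step~(1) of its proof (roughly $2n$ stages, handling the case $n=2$ separately to \emph{choose} a thin filler). The same issue recurs in your $(C_0)$ and $(B_0)$ cases: the adjoint problems live in $\Delta^n\times\Delta^1$, and the paper's steps~(2) and~(3) carry out the required filtrations explicitly.

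You also do not address the $(A_1)$ generator in substance. This corresponds to the paper's step~(4) (equivalences in a fiber are marked), and the argument there is not a decomposition into scaled anodyne generators at all: it uses a $3$-simplex in an auxiliary Kan complex together with the type~$(B)$ generator of Definition~\ref{slapper} applied to a carefully chosen $4$-simplex in $\calC$. So while your outline is on the right track, the claim that each $j_f$ ``decomposes as iterated pushouts of scaled anodyne generators'' is both mis-specified (wrong target in $(C_1)$) and substantially underestimates the work in $(C_0)$ and $(A_1)$.
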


\begin{proof}
The proof proceeds in several steps, each of which amounts to the verification of
a certain extension property. 

\begin{itemize}
\item[$(1)$] The underlying map of simplicial sets $p: \calC^{X/}_0 \rightarrow \calC$ is
an inner fibration: that is, it has the extension property with respect to 
$\Lambda^n_i \subseteq \Delta^n$ for $0 < i < n$. We need to prove the existence of solutions to extension problems of the form
$$ \xymatrix{ K_0 \ar@{^{(}->}[d] \ar[r]^{g_0} & \calC \\
\Delta^n \times \Delta^1. \ar@{-->}[ur]^{g} & }$$
Here $g_0 | \Delta^n \times \{0\}$ is constant at the vertex $X$, and the
restriction of $g_0$ to the $2$-simplex of $\Delta^n \times \Delta^1$ spanned by $(i,0), (i,1)$, and $(j,1)$ is thin, for all $0 \leq i \leq j \leq n$ (except in the case $0 = i < j = n = 2$, which case we must
guarantee that the restriction of $g$ to this simplex is thin).

We first define a sequence of $n$-simplices 
$$\tau_{1}, \ldots, \tau_{n-1}: \Delta^n \rightarrow \Delta^{n-1} \times \Delta^1
\simeq \Delta^{ \{0, \ldots, i-1, i+1, \ldots, n\} } \times \Delta^1 \subseteq \Delta^n \times \Delta^1.$$
On the level of vertices, these simplices are given by maps of partially ordered sets
$\tau_{j}: [n] \rightarrow [n-1] \times [1]$ described by the formula
$$ \tau_{j}(k) = \begin{cases} (k,0) & \text{if } k < j \\
(k-1,1) & \text{if } k \geq j. \end{cases}$$
For $j \leq n-1$, let $K_{j} = K_0 \cup \tau_{1} \cup \ldots \cup \tau_{j}$. 
We prove by induction on $j$ that the function $g_0$ can be extended to a map
$g_{j}: K_j \rightarrow \calC$. To prove this, we observe that there is a pushout diagram
$$ \xymatrix{ \Lambda^{n}_{j} \ar[d] \ar[r] & K_{j-1} \ar[d] \\
\Delta^n \ar[r] & K_j,}$$
and that (if $n > 2$) the composition
$$ \Delta^{ \{j-1,j,j+1\}} \rightarrow \Lambda^n_j \rightarrow K_{j-1} \stackrel{g_{j-1}}{\rightarrow} \calC$$
is a thin $2$-simplex. If $n=2$ (so that $j=1$), we instead choose $g_{j}$ to guarantee that the composition
$$ \Delta^{ \{j-1,j,j+1\}} \simeq \Delta^n \rightarrow K_{j} \stackrel{g_{j}}{\rightarrow} \calC$$
is a thin $2$-simplex.

We now define a sequence of $(n+1)$-simplices 
$$ \sigma_0, \ldots, \sigma_n: \Delta^{n+1} \rightarrow \Delta^n \times \Delta^1.$$
On vertices, these simplices are defined by maps of partially ordered sets
$\sigma_{j}: [n+1] \rightarrow [n] \times [1]$ given by the formulae
$$ \sigma_{j}(k) = \begin{cases} (k,0) & \text{if } k \leq j \\
(k-1,1) & \text{if } k > j. \end{cases}$$
For $0 \leq j \leq n$, let $K_{n+j} \subseteq \Delta^{n} \times \Delta^1$ denote the union
$K_{n-1} \cup \sigma_0 \cup \ldots \cup \sigma_j$, so that we have a chain of inclusions
$$ K_{n-1} \subseteq K_{n} \subseteq \ldots \subseteq K_{2n} = \Delta^n \times \Delta^1.$$
We will prove that $g_{n-1}$ can be extended to a map $g_{n+j}: K_{n+j} \rightarrow \calC$,
using induction on $j$. For $j < n$, it suffices to observe that there is a pushout diagram
$$ \xymatrix{ \Lambda^{n+1}_{j+1} \ar[d] \ar[r] & K_{n+j-1} \ar[d] \\
\Delta^{n+1} \ar[r] & K_{n+j},}$$
and that the composition
$$ \Delta^{ \{j,j+1, j+2\} } \subseteq \Lambda^{n+1}_{j+1}
\rightarrow K_{n+j-1} \stackrel{g_{n+j-1}}{\rightarrow} \calC$$
is a thin $2$-simplex of $\calC$. For $j=n$, we have instead a pushout diagram
$$ \xymatrix{ \Lambda^{n+1}_{i} \ar[r] \ar[d] & K_{2n-1} \ar[d] \\
\Delta^{n+1} \ar[r] & K_{2n} }$$
and the composition
$$ \Delta^{ \{i-1, i, i+1\}} \subseteq \Lambda^{n+1}_{i} \rightarrow K_{2n-1} \stackrel{g_{2n-1}}{\rightarrow} \calC$$
corresponds to the $2$-simplex of $\calC$ which is degenerate at the vertex $X$, and therefore thin.

\item[$(2)$] Every marked edge of $\overline{\calC}^{X/}$ is locally $p$-coCartesian.
To prove this, it suffices to show that $\overline{\calC}^{X/}$ has the extension property with
respect to every inclusion
$$ f: (\Lambda^n_0)^{\flat} \coprod_{ (\Delta^{ \{0,1\} })^{\flat} } ( \Delta^{ \{0,1\} })^{\sharp}
\subseteq ( \Delta^n )^{\flat} \coprod_{ ( \Delta^{ \{0,1\} })^{\flat} } ( \Delta^{ \{0,1\} })^{\sharp},$$
where $n > 1$ and the image of $\Delta^{ \{0,1, n\} }$ in $\calC$ is thin. Let 
$K_0 = ( \Lambda^n_0 \times \Delta^1) \coprod_{ 
\Lambda^n_0 \times \bd \Delta^1} ( \Delta^n \times \bd \Delta^1)$. To show that $\overline{\calC}^{X/}$ has the extension property with respect to $f$, we need to prove the existence of solutions to extension problems of the form
$$ \xymatrix{ K_0 \ar@{^{(}->}[d] \ar[r]^{g_0} & \calC \\
\Delta^n \times \Delta^1. \ar@{-->}[ur]^{g} & }$$
Here $g_0$ has the following properties:
\begin{itemize}
\item[$(a)$] The map $g_0$ carries $\Delta^n \times \{0\}$ to the vertex $X$,
\item[$(b)$] The map $g_0$ carries $\Delta^{ \{0,1,n\} } \times \{1\}$ to a thin $2$-simplex of $\calC$, 
\item[$(c)$] The map $g_0$ carries the $2$-simplex spanned by the vertices $(0,0)$, $(1,0)$, 
and $(1,1)$ to a thin $2$-simplex of $\calC$.
\item[$(d)$] For $0 \leq i \leq j \leq n$, the map $g_0$ carries the $2$-simplex spanned by the vertices
$(i,0)$, $(i,1)$, and $(j,1)$ to a thin $2$-simplex of $\calC$ (except in the case
where $i=1$ and $j=n=2$).
\end{itemize}
Moreover, if $n=2$, then we must guarantee that $g$ carries the $2$-simplex spanned by the vertices
$(1,0)$, $(1,1)$, and $(2,1)$ to a thin $2$-simplex of $\calC$.

We define a sequence of $n$-simplices
$$ \tau_1, \ldots, \tau_{n-1}: \Delta^n \rightarrow
\Delta^n \times \Delta^1,$$
which are given on vertices by the formula
$$ \tau_{i}(j) = \begin{cases} (j+1,0) & \text{if } j < i \\
(j,1) & \text{if } j \geq i. \end{cases}$$
For $i < n$, let $K_i = K_0 \cup \tau_1 \cup \ldots \cup \tau_i \subseteq \Delta^n \times \Delta^1$.
We prove by induction on $i$ that $g_0$ can be extended to a map $g_i: K_i \rightarrow \calC$.
To prove this, we observe the existence of a pushout diagram
$$ \xymatrix{ \Lambda^n_i \ar[r] \ar[d] & K_{i-1} \ar[d] \\
\Delta^n \ar[r] & K_i, }$$
where the composition
$$\Delta^{ \{i-1, i, i+1\} } \subseteq \Lambda^n_i \rightarrow K_{i-1} \stackrel{g_{i-1}}{\rightarrow} \calC$$ is a thin simplex by virtue of assumption $(d)$ (unless $n=2$, in which case we instead choose
$g_{i}$ so that the composition
$\Delta^{ \{i-1, i, i+1\} } \simeq \Delta^n \rightarrow K_i \stackrel{ g_i}{\rightarrow} \calC$ is a thin
$2$-simplex). 

We now define a sequence of $(n+1)$-simplices
$$ \sigma_0, \ldots, \sigma_n: \Delta^{n+1} \rightarrow \Delta^n \times \Delta^1,$$
which are given on vertices by the formulae
$$ \sigma_{i}(j) = \begin{cases} (j,0) & \text{if } j \leq i \\
(j-1, 1) & \text{otherwise.} \end{cases}$$
For $0 \leq i \leq n$, let $K_{n+i} = K_{n-1} \cup \sigma_0 \cup \ldots \cup \sigma_{n}$.
We prove by induction on $i$ that the map $g_{n-1}$ can be extended to a map
$g_{n+i}: K_{n+i} \rightarrow \calC$. If $i < n$, this follows from the pushout diagram
$$ \xymatrix{ \Lambda^{n+1}_{i+1} \ar@{^{(}->} \ar[r] & K_{n+i-1} \ar[d] \\
\Delta^{n+1} \ar[r] & K_{n+i} }$$
together with the observation that the map
$$ \Delta^{ \{i, i+1, i+2 \} } \subseteq \Lambda^{n+1}_{i+1}
\rightarrow K_{n+i-1} \stackrel{g_{n+i-1} }{\rightarrow} \calC$$
is a thin $2$-simplex of $\calC$ (by virtue of assumption $(d)$). If $i=n$, we have instead a pushout diagram
$$ \xymatrix{ \Lambda^{n+1}_0 \ar[r] \ar@{^{(}->}[d] & K_{2n-1} \ar[d] \\
\Delta^{n+1} \ar[r] & K_{2n}. }$$
To prove the existence of the desired extension, it will suffice to show that the composition
$$ \Delta^{ \{0,1\} } \subseteq \Lambda^{n+1}_0 \rightarrow K_{2n-1} \stackrel{g_{2n-1}}{\rightarrow} \calC$$ is degenerate (which follows from $(a)$) and that the composition
$$ \Delta^{ \{0,1, n+1\} } \subseteq \Lambda^{n+1}_0 \rightarrow K_{2n-1} \stackrel{g_{2n-1}}{\rightarrow} \calC$$ is a thin $2$-simplex of $\calC$. To prove the latter result, we apply
Remark \ref{slapperB} to the image in $\calC$ of the $3$-simplex spanned by
the vertices $(0,0)$, $(1,0)$, $(1,1)$, and $(n,1)$. Using assumptions $(c)$ and $(d)$, we are
reduced to proving that the image in $\calC$ of the $2$-simplex spanned by $(0,0)$, $(1,1)$, and $(n,1)$ is thin. This follows by applying Remark \ref{slapperB} to the image of the $3$-simplex spanned by
$(0,0)$, $(0,1)$, $(1,1)$ and $(n,1)$, by virtue of the thinness guaranteed by assumptions
$(b)$ and $(d)$.

\item[$(3)$] For every vertex $v \in \calC^{X/}_0$ and every edge $e: p(v) \rightarrow w$, there
exists a marked edge $\overline{e}: v \rightarrow \overline{w}$ in $\calC^{X/}_0$ lifting $e$
(this edge will automatically be locally $p$-coCartesian, by virtue of $(2)$, so that $p$ is a locally
coCartesian fibration). To prove this, we must show that $\overline{\calC}^{X/}$ has the
extension property with respect to inclusions of the form
$f: \{0\}^{\sharp} \subseteq (\Delta^1)^{\sharp}$. Let
$K = (\Delta^1 \times \bd \Delta^1) \coprod_{ \{0\} \times \bd \Delta^1 } ( \{0\} \times \Delta^1)$.
To prove that $\overline{\calC}^{X/}$ has the extension property with respect to $f$, it suffices to show that every extension problem of the form
$$ \xymatrix{ K_{\sharp} \ar[r]^{g_0} & \overline{\calC} \\
(\Delta^1 \times \Delta^1)_{\sharp} \ar[ur]^{g} & }$$
admits a solution, provided that $g_0$ carries $\Delta^1 \times \{0\} \subseteq K$ to the vertex
$X$ in $\calC$. Let $\sigma$ denote the $2$-simplex of $\Delta^1 \times \Delta^1$ spanned by
the vertices $(0,0)$, $(0,1)$, and $(1,1)$, and let $K'$ denote the union of $K$ with $\sigma$.
The inclusion $K_{\sharp} \subseteq K'_{\sharp}$ is a pushout of an inclusion of type
$(A)$ appearing in Definition \ref{slapper}. Since $\overline{\calC}$ is a weak $\infty$-bicategory, we can extend $g_0$ to a map $g'_0: K'_{\sharp} \rightarrow \overline{\calC}$. Let $\phi$
denote the edge of $\calC$ obtained by applying $g'_0$ to the edge spanned by the vertices
$(0,0)$ and $(1,1)$, and let $\overline{\tau}$ denote the $2$-simplex of $\calC$ defined by the composition
$$ \Delta^2 \stackrel{p}{\rightarrow} \Delta^1 \stackrel{\phi}{\rightarrow} \calC,$$
where $p^{-1} \{1\} = \{2\}$. Let $\tau$ denote
the $2$-simplex of $\Delta^1 \times \Delta^1$ spanned by the vertices $(0,0)$, $(1,0)$, and $(1,1)$.
Then there is a unique extension $g$ of $g'_0$ to $\Delta^1 \times \Delta^1$ such that
$g(\tau) = \overline{\tau}$; it is easy to see that the map $g$ has the desired properties.

\item[$(4)$] Every equivalence appearing in a fiber of the map $p$ is marked in
$\overline{\calC}^{X/}$. To prove this, it suffices to show that $\overline{\calC}^{X/}$ has the
extension property with respect to every inclusion $f: K^{\flat} \subseteq K^{\sharp}$, where
$K$ is a Kan complex and the map $K \rightarrow \calC$ is constant at a vertex $Y \in \calC$.
To prove that $\overline{\calC}^{X/}$ has the extension property with respect to $K$, it suffices to prove the following:
\begin{itemize}
\item Let $g: K \times \Delta^1 \rightarrow \calC$ be a map of simplicial sets. Assume that:
\begin{itemize}
\item[$(i)$] The map $g$ carries $K \times \{0\}$ to the vertex $X$.
\end{itemize}
Then for every edge $e: \Delta^1 \rightarrow K$, the restriction of $g$ to
$\Delta^1 \times \Delta^1$ carrries the $2$-simplex spanned by the vertices
$(0,0)$, $(1,0)$, and $(1,1)$ to a thin $2$-simplex of $\calC$.
\end{itemize}
Since $K$ is a Kan complex, we can choose a simplex $\sigma: \Delta^3 \rightarrow K$
with the following properties:
\begin{itemize}
\item[$(ii)$] The restriction of $\sigma$ to $\Delta^{ \{0,2,3\} }$ is given by the composition
$$ \Delta^{ \{0,2,3\} } \stackrel{p}{\rightarrow} \Delta^1 \stackrel{e}{\rightarrow} K,$$
where $p^{-1} \{1\} = \{3\}$.
\item[$(iii)$] The restriction of $\sigma$ to $\Delta^{ \{1,3\}}$ is degenerate.
\end{itemize}
Let $g'$ denote the map $\Delta^3 \times \Delta^1 \rightarrow \calC$ obtained by composing
$g$ with $\sigma \times \id_{\Delta^1}$. To prove $(iv)$, it will suffice to show that
$g'(\tau)$ is thin in $\calC$, where $\tau$ is the $2$-simplex of $\Delta^3 \times \Delta^1$ spanned by
the vertices $(0,0)$, $(3,0)$, and $(3,1)$. Let $\overline{\tau}$ denote the $4$-simplex of
$\Delta^3 \times \Delta^1$ spanned by the vertices $(0,0)$, $(1,0)$, $(2,0)$, $(3,0)$, and $(3,1)$.
Since $\overline{\calC}$ is a weak $\infty$-bicategory, we can apply part $(B)$ of Definition \ref{slapper}
to the simplex $g(\overline{\tau})$ to reduce to proving the following five assertions:
\begin{itemize}
\item The simplex $g(\tau_0)$ is thin in $\calC$, where $\tau_0$ is the $2$-simplex
of $\Delta^3 \times \Delta^1$ spanned by the vertices $(1,0)$, $(2,0)$, and $(3,0)$. This follows immediately from $(i)$.
\item The simplex $g(\tau_1)$ is thin in $\calC$, where $\tau_1$ is the $2$-simplex
of $\Delta^3 \times \Delta^1$ spanned by the vertices $(0,0)$, $(2,0)$, and $(3,1)$. This follows immediately from $(ii)$.
\item The simplex $g(\tau_2)$ is thin in $\calC$, where $\tau_2$ is the $2$-simplex of
$\Delta^3 \times \Delta^1$ spanned by the vertices $(0,0)$, $(1,0)$, and $(3,0)$. This again follows immediately from $(i)$.
\item The simplex $g(\tau_3)$ is thin in $\calC$, where $\tau_3$ is the $2$-simplex of
$\Delta^3 \times \Delta^1$ spanned by the vertices $(1,0)$, $(3,0)$, and $(3,1)$. This follows immediately from $(iii)$.
\item The simplex $g(\tau_4)$ is thin in $\calC$, where $\tau_4$ is the $2$-simplex of
$\Delta^3 \times \Delta^1$ spanned by the vertices $(0,0)$, $(1,0)$, and $(2,0)$. This follows from $(i)$.
\end{itemize}

\item[$(5)$] The object $\overline{\calC}^{X/} \in \mset{ \overline{\calC} }$ has the extension
property with respect to every inclusion of the form  
$f: ( \Lambda^2_1)^{\sharp} \coprod_{ (\Lambda^2_{1})^{\flat}} (\Delta^2)^{\flat}  \subseteq (\Delta^2)^{\sharp}$, provided that $\Delta^2$ maps to a thin
$2$-simplex of $\calC$. To prove that $\overline{\calC}^{X/}$ has the extension property with respect to $f$, it will suffice to prove the following:
\begin{itemize}
\item[$(\ast)$] Let $T$ denote the collection of all degenerate $2$-simplices of
$\Delta^2 \times \Delta^1$, together with the following nondegenerate $2$-simplices:
\begin{itemize}
\item The simplex $\sigma_0$ spanned by the vertices $(0,0)$, $(1,0)$ and $(2,0)$.
\item The simplex $\sigma_1$ spanned by the vertices $(0,1)$, $(1,1)$, and $(2,1)$.
\item The simplex $\sigma_2$ spanned by the vertices $(0,0)$, $(0,1)$, and $(1,1)$.
\item The simplex $\sigma_3$ spanned by the vertices $(0,0)$, $(0,1)$, and $(2,1)$.
\item The simplex $\sigma_4$ spanned by the vertices $(1,0)$, $(1,1)$, and $(2,1)$.
\item The simplex $\sigma_5$ spanned by the vertices $(0,0)$, $(1,0)$ and $(1,1)$.
\item The simplex $\sigma_6$ spanned by the vertices $(1,0)$, $(2,0)$, and $(2,1)$.
\end{itemize}
Let $g: ( \Delta^2 \times \Delta^1, T) \rightarrow \overline{\calC}$ be a map of scaled simplicial sets.
Then $g(\tau)$ is a thin simplex of $\calC$, where $\tau$ denotes the $2$-simplex spanned by the vertices $(0,0)$, $(2,0)$, and $(2,1)$.
\end{itemize}
To see this, we argue as follows. Since $g(\sigma_1)$, $g(\sigma_2)$, and $g(\sigma_3)$ are
thin, $g(\tau_0)$ is thin, where $\tau_0$ is spanned by the vertices $(0,0)$, $(1,1)$, and $(2,1)$.
Since $g(\tau_0)$, $g(\sigma_5)$ and $g(\sigma_4)$ are thin, $g(\tau_1)$ is thin, where
$\tau_1$ is spanned by the vertices $(0,0)$, $(1,0)$, and $(2,1)$. Finally, since
$g(\tau_1)$, $g(\sigma_0)$, and $g(\sigma_6)$ are thin, $g(\tau)$ is thin as desired.

\item[$(6)$] Every locally $p$-coCartesian edge $e: v \rightarrow w$ in $\calC^{X/}_{0}$ is marked.
To prove this, we begin by choosing a marked edge $e': v \rightarrow w'$ lifting
$p(e)$ (invoking $(3)$). Since $e'$ is locally $p$-coCartesian, we can find a $2$-simplex
$$ \xymatrix{ & w' \ar[dr]^{e''} & \\
v \ar[ur]^{e'} \ar[rr]^{e} & & w }$$
in $\calC^{X/}_0$. Since $e$ and $e'$ are both locally $p$-coCartesian (by $(2)$), the edge
$e''$ is an equivalence in the fiber $p^{-1} \{ p(w) \}$. Invoking $(4)$, we conclude that
$e''$ is marked. It follows from $(5)$ that $e$ is marked, as desired.

\item[$(7)$] The restriction of $p$ to every thin $2$-simplex $\sigma$ of $S$ is a coCartesian fibration.
This follows immediately from $(2)$, $(5)$, $(6)$, and Proposition \toposref{gotta}.
\end{itemize}
\end{proof}

\begin{lemma}\label{kappus}
Let $\overline{S} = (S,T)$ be a scaled simplicial set, let
$f: B \times \Delta^1 \rightarrow S$ be a map of simplicial sets, $M$ a collection of edges of $B$ (containing every nondegenerate edge), and $(A,M_0)$ a marked simplicial subset of $(B,M)$.
Suppose that the following condition is satisfied:
\begin{itemize}
\item[$(\ast)$] For every $n$-simplex $\Delta^n \rightarrow B$ which does not factor through $A$, the composite map $\Delta^n \times \Delta^1 \rightarrow S$
carries the $2$-simplex spanned by $(0,0)$, $(0,1)$, and $(n,1)$ to a thin simplex in $S$.
\end{itemize}
Then the inclusion 
$$ f: ((A,M_0) \times (\Delta^{1})^{\sharp})
\coprod_{ (A,M_0) \times \{0\}^{\sharp} } ((B,M) \times \{0\}^{\sharp} )
\subseteq (B,M) \times (\Delta^1)^{\sharp}$$
is $\CatP_{\overline{S}}$-anodyne.
\end{lemma}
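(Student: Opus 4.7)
My plan is to mimic the proof of Lemma \ref{carpal}, carrying out a reduction to a universal model case followed by an explicit filtration of $\Delta^n \times \Delta^1$ by its non-degenerate $(n+1)$-simplices.

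First, I would reduce to the case $B = \Delta^n$ and $A = \bd \Delta^n$. The functor $(A, M_0) \mapsto ((A, M_0) \times (\Delta^1)^{\sharp}) \coprod_{(A, M_0) \times \{0\}^{\sharp}} ((B, M) \times \{0\}^{\sharp})$ preserves colimits, and the class of $\CatP_{\overline{S}}$-anodyne maps is weakly saturated; moreover, the hypothesis that $M$ contains every non-degenerate edge of $B$ forces $(B, M) = B^{\sharp}$ (and similarly $(A, M_0) = A^{\sharp}$). Writing $A^{\sharp} \hookrightarrow B^{\sharp}$ as a transfinite composition of pushouts of the cofibrations $(\bd \Delta^n)^{\sharp} \hookrightarrow (\Delta^n)^{\sharp}$, it suffices to treat these generators. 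In this model case, condition $(\ast)$ specializes to the statement that the $2$-simplex $(0, 0), (0, 1), (n, 1)$ of $\Delta^n \times \Delta^1$ maps to a thin $2$-simplex of $S$ under $f$.

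Second, setting $Z_0 = (\bd \Delta^n \times \Delta^1) \cup (\Delta^n \times \{0\})$, I would filter $\Delta^n \times \Delta^1$ by attaching, in the order $\sigma_n, \sigma_{n-1}, \ldots, \sigma_0$, the non-degenerate $(n+1)$-simplices $\sigma_i : \Delta^{n+1} \to \Delta^n \times \Delta^1$ defined by $\sigma_i(j) = (j, 0)$ for $j \le i$ and $\sigma_i(j) = (j-1, 1)$ for $j > i$. The only faces of $\sigma_i$ not lying in $Z_0$ are $d_i \sigma_i$ and $d_{i+1} \sigma_i$, and $d_{i+1} \sigma_i$ is shared with the previously attached $\sigma_{i+1}$; at the stage that attaches $\sigma_i$ only $d_i \sigma_i$ is genuinely new, so the attachment proceeds along the horn inclusion $\Lambda^{n+1}_i \hookrightarrow \Delta^{n+1}$. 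For $i \ge 1$ this is an inner horn, filled by a type-$(C_1)$ pushout; for $i = 0$, the horn is $\Lambda^{n+1}_0$, filled by a type-$(C_0)$ pushout whose distinguished marked edge $\Delta^{\{0,1\}}$ maps to $(0, 0) \to (0, 1)$ (already marked in $Z_0^{\sharp}$) and whose distinguished $2$-simplex $\Delta^{\{0, 1, n+1\}}$ maps to the thin $2$-simplex furnished by $(\ast)$. The case $n = 0$ collapses to a single type-$(B_0)$ attachment.

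The main obstacle, and the step I expect to absorb most of the work, is verifying that each pushout preserves the full $\sharp$-marking on $Z_k$. For $n \ge 2$ this is automatic, since every edge of $\Delta^{n+1}$ lies in every horn $\Lambda^{n+1}_j$, so a pushout of $(\Delta^{n+1})^{\flat}$ inherits its markings from $Z_{k-1}^{\sharp}$ and the explicit marked edge in the type-$(C_0)$ step. The case $n = 1$ is delicate: the horn $\Lambda^2_j \subseteq \Delta^2$ omits exactly one edge, and a naive pushout leaves that edge of $Z_k$ unmarked. To complete this case I would interleave the horn-filling pushouts with applications of the type-$(A_0)$ morphism and its left-horn analogue provided by Lemma \ref{catchrat}, using the thinness of the $2$-simplex $(0, 0), (0, 1), (1, 1)$ supplied by $(\ast)$ to propagate the marking through the two initially unmarked edges in a carefully chosen order.
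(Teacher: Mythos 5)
Your $n=1$ case is the genuine gap, and it is precisely where you ask the reader to trust a "carefully chosen order." Having attached both $2$-simplices of $\Delta^1 \times \Delta^1$ by the two horn pushouts, you are left with the diagonal $(0,0)(1,1)$ and the top edge $(0,1)(1,1)$ unmarked, while $(0,0)(0,1)$, $(1,0)(1,1)$, $(0,0)(1,0)$ are marked and only the triangle $\sigma_0$ on $(0,0),(0,1),(1,1)$ is known to be thin by $(\ast)$. The two marking moves at your disposal are $(A_0)$ (mark $\Delta^{\{0,2\}}$ when $\Lambda^2_1$ is marked and the $2$-simplex is thin) and the $\Lambda^2_0$-variant supplied by Lemma \ref{catchrat}. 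Applied to $\sigma_0$, $(A_0)$ needs the top edge already marked and Lemma \ref{catchrat} needs the diagonal already marked, so neither can start. Applied to $\sigma_1 = (0,0),(1,0),(1,1)$, $(A_0)$ has both required marked edges available --- but it requires $\sigma_1$ to be thin, which $(\ast)$ does not give. No move is applicable, so the propagation you gesture at simply does not exist.

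The deeper issue is the reduction. The hypothesis constrains $M$, not $M_0$, so even under your literal reading you cannot deduce $(A,M_0) = A^{\sharp}$. More to the point, the paper's own proof reduces $(A,M_0)\subseteq (B,M)$ to the standard generating cofibrations of marked simplicial sets, namely $(\bd\Delta^n)^{\flat} \subseteq (\Delta^n)^{\flat}$ and $(\Delta^1)^{\flat} \subseteq (\Delta^1)^{\sharp}$, rather than to $(\bd\Delta^n)^{\sharp}\subseteq(\Delta^n)^{\sharp}$. (This also indicates that "containing every nondegenerate edge" is a typo for "degenerate": the application in Proposition \ref{cast2} takes $(B,M) = \overline{\calC}^{x/}$, which is not $B^{\sharp}$.) That decomposition keeps the two hard phenomena disjoint: when adding an $n$-simplex the intermediate markings remain minimal, so your $(n+1)$-simplex filtration runs entirely over flat simplices with no marking bookkeeping even when $n=1$; the marking step $(\Delta^1)^{\flat}\subseteq(\Delta^1)^{\sharp}$ is handled separately. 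By fusing the two into $(\bd\Delta^1)^{\sharp}\subseteq(\Delta^1)^{\sharp}$, you manufacture exactly the chicken-and-egg problem above.
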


\begin{proof}
Working simplex-by-simplex, we can reduce to one of the following two cases:
\begin{itemize}
\item The marked simplicial set $(B,M)$ is $(\Delta^1)^{\sharp}$, and 
$(A,M_0)$ is $(\Delta^1)^{\flat}$. In this case, the inclusion $f$ is a pushout of a morphism of type $(A_0)$ appearing in Definition \ref{postspunt}. 

\item The marked simplicial set $(B,M)$ is $(\Delta^n)^{\flat}$, and the marked simplicial subset
$(A,M_0)$ is $( \bd \Delta^n)^{\flat}$. We define a sequence of $(n+1)$-simplices
$$ \sigma_0, \ldots, \sigma_n: \Delta^{n+1} \rightarrow \Delta^n \times \Delta^1,$$
given on vertices by the formulae
$$ \sigma_i(j) = \begin{cases} (j,0) & \text{if } j \leq n-i \\
(j-1, 1) & \text{if } j > n-i. \end{cases}$$
Let $K_0 = (\bd \Delta^n \times \Delta^1) \coprod_{ \bd \Delta^n \times \{0\} } ( \Delta^n \times \{0\})$,
and for $1 \leq i \leq n+1$ let $K_i = K_0 \cup \sigma_0 \cup \ldots \cup \sigma_{i-1}$.
Let $N_i$ denote the collection of all marked edges of $(\Delta^n)^{\flat} \times (\Delta^1)^{\sharp}$ that belong to $K_i$. We wish to prove that the inclusion $(K_0, N_0) \subseteq (K_{n+1}, N_{n+1})$
is $\CatP_{\overline{S}}$-anodyne. It will therefore suffice to prove for each
$i \leq n$ that the inclusion $(K_i, N_i) \subseteq (K_{i+1}, N_{i+1})$ is $\CatP_{\overline{S}}$-anodyne. If $i < n$, then the inclusion $f_i$ is a pushout of the inclusion
$(\Lambda^{n+1}_{n-i})^{\flat} \subseteq (\Delta^{n+1})^{\flat}$, which is a morphism of type
$(C_1)$ appearing in Definition \ref{postspunt}.
If $n=0$, then $f_{n}$ is a morphism of type $(B_0)$ appearing in Definition \ref{postspunt}.
If $n > 0$, then $f_{n}$ is a pushout of the inclusion
$$( \Lambda^{n+1}_0)^{\flat} \coprod_{ (\Delta^{ \{0,1\} })^{\flat} } (\Delta^{ \{0,1\} })^{\sharp}
\subseteq (\Delta^{n+1})^{\flat} \coprod_{ (\Delta^{ \{0,1\} })^{\flat} } ( \Delta^{ \{0,1\}})^{\sharp},$$
which is of the type $(C_0)$ appearing in Definition \ref{postspunt} by virtue of assumption $(\ast)$.
\end{itemize}
\end{proof}

\begin{proposition}\label{cast2}
Let $\overline{\calC} = (\calC, T)$ be a weak $\infty$-bicategory containing a vertex $x$, and let
$X$ denote the vertex of $\overline{\calC}^{x/}$ corresponding to the degenerate edge
$\id_{x}$ in $\overline{\calC}$. Then the inclusion
$$ i: \{ X \}^{\sharp} \subseteq \overline{\calC}^{x/}$$
is a $\CatP_{\overline{\calC}}$-anodyne morphism.
\end{proposition}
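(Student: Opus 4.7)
The plan is to exhibit $i$ as a retract of an explicitly $\CatP_{\overline{\calC}}$-anodyne morphism produced by Lemma \ref{kappus}. The key step is to construct a contraction
$$ H \colon \overline{\calC}^{x/} \times (\Delta^1)^{\sharp} \longrightarrow \overline{\calC}^{x/}, $$
viewed as a morphism in $\mset{ \overline{\calC} }$, satisfying $H|_{\overline{\calC}^{x/} \times \{1\}^{\sharp}} = \id$ and $H|_{\overline{\calC}^{x/} \times \{0\}^{\sharp}}$ equal to the constant map at $X$. For $H$ to make sense as a morphism over $\overline{\calC}$, we do not use the naive projection structure map on the source; instead, we endow $\overline{\calC}^{x/} \times (\Delta^1)^{\sharp}$ with the ``evaluate along $e$'' structure map, which on an $n$-simplex $(\sigma, e)$ -- with $\sigma \colon \Delta^n \times \Delta^1 \to \calC$ the underlying map of an $n$-simplex of $\calC^{X/}_0$ and $e \colon \Delta^n \to \Delta^1$ -- returns the composition $\Delta^n \xrightarrow{(\id, e)} \Delta^n \times \Delta^1 \xrightarrow{\sigma} \calC$. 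This agrees with the standard structure map on the slice $\{1\}^{\sharp}$ and collapses to $x$ on the slice $\{0\}^{\sharp}$.

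The formula for $H$ is $H(\sigma, e) = \sigma \circ \psi_e$, where $\psi_e \colon \Delta^n \times \Delta^1 \to \Delta^n \times \Delta^1$ is the simplicial map $(a,b) \mapsto (a, \min(b, e(a)))$ built from the product operation $\min \colon \Delta^1 \times \Delta^1 \to \Delta^1$ in the monoid $(\{0,1\}, \min)$. Direct computation shows $H(\sigma, 1) = \sigma$, $H(\sigma, 0)$ is constant at $X$, and the composition of $H$ with the standard structure map of $\overline{\calC}^{x/}$ agrees with the ``evaluate along $e$'' structure map. The crux of the argument is to check that $H$ factors through $\overline{\calC}^{x/}$ rather than the larger $\calC^{X/}$, and that it preserves marked edges. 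This reduces to checking thinness of certain $2$-simplices of $\calC$ produced by $\sigma \circ \psi_e$. A case analysis on the pair $(e(i), e(j))$ for the relevant edge $i \to j$ shows that each such $2$-simplex either factors through $\Delta^n \times \{0\}$ (where it is degenerate at $x$), is itself degenerate in $\Delta^n \times \Delta^1$ before composition with $\sigma$, or coincides with the corresponding upper/lower triangle of $\sigma$ itself (and is thin because $\sigma$ was in $\calC^{X/}_0$ or was a marked edge). No invocation of the weak $\infty$-bicategory axioms of $\overline{\calC}$ is needed at this stage.

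Granting $H$, we apply Lemma \ref{kappus} with $\overline{S} = \overline{\calC}$, $(B,M) = \overline{\calC}^{x/}$, $(A, M_0) = \{X\}^{\sharp}$, and the ``evaluate along $e$'' structure map above. The hypothesis of the lemma asks that for every $n$-simplex $\sigma$ of $\overline{\calC}^{x/}$ not factoring through $\{X\}$, the $2$-simplex on vertices $(0,0)$, $(0,1)$, $(n,1)$ in $\Delta^n \times \Delta^1$ maps to a thin simplex in $\calC$ under the underlying map of $\sigma$; this $2$-simplex is precisely the upper triangle of the $(0 \to n)$-edge of $\sigma$, which is thin because that edge lies in the marking $M$ defining $\calC^{X/}_0$. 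Lemma \ref{kappus} therefore yields that the inclusion
$$ j \colon (\{X\}^{\sharp} \times (\Delta^1)^{\sharp}) \,\cup\, (\overline{\calC}^{x/} \times \{0\}^{\sharp}) \,\hookrightarrow\, \overline{\calC}^{x/} \times (\Delta^1)^{\sharp} $$
is $\CatP_{\overline{\calC}}$-anodyne.

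Finally, we exhibit $i$ as a retract of $j$: the inclusion of $\{X\}^{\sharp}$ at the vertex $(X,1)$ into the domain of $j$ admits the evident collapse onto $\{X\}^{\sharp}$ as a retraction, while the inclusion of $\overline{\calC}^{x/}$ into the codomain of $j$ at $\{1\}^{\sharp}$ admits $H$ as a retraction. Compatibility of these retractions with $i$ and $j$ follows from the identities $H(\sigma,1) = \sigma$, $H(\sigma,0) = X$, and $H(X,e) = X$. Since the class of $\CatP_{\overline{\calC}}$-anodyne morphisms is weakly saturated, it is closed under retracts, so $i$ is $\CatP_{\overline{\calC}}$-anodyne. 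The main obstacle is the combinatorial verification that the formula for $H$ defines a morphism of marked simplicial sets landing in $\overline{\calC}^{x/}$; the remainder of the plan is essentially formal.
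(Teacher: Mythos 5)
Your argument is essentially identical to the paper's: the paper also builds the contraction $r:\overline{\calC}^{x/}\times(\Delta^1)^{\sharp}\to\overline{\calC}^{x/}$ by composing the evaluation map $e:\calC^{x/}_0\times\Delta^1\to\calC$ with $\id\times p$, where $p:\Delta^1\times\Delta^1\to\Delta^1$ is multiplication (your $\min$), then passes to the adjoint; it then exhibits $i$ as a retract of precisely the inclusion supplied by Lemma~\ref{kappus} with $(A,M_0)=\{X\}^{\sharp}$, $(B,M)=\overline{\calC}^{x/}$, and the structure map $e$. Your write-up simply makes explicit the verification that $H$ lands in $\overline{\calC}^{x/}$ compatibly with markings and structure maps, which the paper leaves implicit.
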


\begin{proof}
As in Notation \ref{caff}, we let $\calC^{x/}_0$ denote the underlying simplicial set
of $\overline{\calC}^{x/}$. There is a canonical evaluation map
$e: \calC^{x/}_0 \times \Delta^1 \rightarrow \calC$. Let
$p: \Delta^1 \times \Delta^1 \rightarrow \Delta^1$ be the map which is given on vertices by the formula $(i,j) \mapsto ij$. The composition
$$ \calC^{x/}_0 \times \Delta^1 \times \Delta^1 \stackrel{\id \times p}{\rightarrow}
\calC^{x/}_{0} \times \Delta^1 \stackrel{e}{\rightarrow} \calC$$
is adjoint to a map of simplicial sets
$$ \calC^{x/}_0 \times \Delta^1 \rightarrow \calC^{x/}_0.$$
This underlies a map of marked simplicial sets
$$ r: \overline{\calC}^{x/} \times (\Delta^1)^{\sharp} \rightarrow \overline{\calC}^{x/}.$$
We have a commutative diagram
$$ \xymatrix{
\{X\}^{\sharp} \times \{1\}^{\sharp} \ar[r]^{i} \ar[d] & \overline{\calC}^{x/} \times \{1\}^{\sharp} \ar[d] \\
( \{X\} \times \Delta^1)^{\sharp} \coprod_{ \{X\}^{\sharp} \times \{0\}^{\sharp}  }
( \overline{\calC}^{x/} \times \{0\}^{\sharp} ) \ar[r]^{j} \ar[d] & \overline{\calC}^{x/} \times (\Delta^1)^{\sharp} \ar[d]^{r} \\
\{X\}^{\sharp} \ar[r]^{i} & \overline{\calC}^{x/} }$$
in the category $\mset{\overline{S}}$. This diagram exhibits $i$ as a retract of the inclusion $j$; it will therefore suffice to show that $j$ is $\CatP_{\overline{S}}$-anodyne, which follows from Lemma \ref{kappus}.
\end{proof}

\subsection{$\infty$-Bicategories and $\mSet$-Enriched Categories}\label{bisec4.2}

Our goal in this section is to prove the main result of this paper: Theorem
\ref{slai}, which establishes the existence of a model structure on $\scSet$ such that
the scaled nerve functor $\scNerve: \Cat_{\mSet} \rightarrow \scSet$ is a right Quillen
equivalence. The main ingredient is Theorem \ref{csi}, which gives a direct description of 
the homotopy type of the mapping objects $\bHom_{ \scCoNerve[ \overline{S} ] }(x,y)$
when $\overline{S}$ is a weak $\infty$-bicategory.

To begin, suppose that $\overline{S}=(S,T)$ be a scaled simplicial set. For every pair of vertices $x,y \in \overline{S}$, we let $\Hom_{\overline{S}}(x,y) \in \mSet$ denote the fiber product
$\overline{S}^{x/} \times_{ S^{\sharp} } \{y\}^{\sharp}$. If $\overline{S}$ is a weak $\infty$-bicategory, then it follows from Proposition \ref{ilk} that $\Hom_{\overline{S}}(x,y)$ is a fibrant object of $\mSet$: that is, it has the form
$(\calC, M)$ where $\calC$ is an $\infty$-category and $M$ is the collection of all equivalences in $\calC$.

\begin{remark}\label{pal}
Let $\calC$ be an $\mSet$-enriched category containing two objects $x$ and $y$. Then
we have a canonical isomorphism of marked simplicial sets $\scUn \bHom_{\calC}(x,y)
\simeq \Hom_{ \scNerve \calC}(x,y)$. 

More generally, given a scaled simplicial set $\overline{S}$, an $\mSet$-enriched functor
$\phi: \scCoNerve[ \overline{S} ] \rightarrow \calC$, and two vertices $x$ and $y$ of $\overline{S}$, we can compose the adjoint $\overline{S} \rightarrow \scNerve \calC$ of $\phi$ with the adjoint
$\scSt \Hom_{ \scNerve \calC}( \phi x, \phi y) \rightarrow \bHom_{\calC}( \phi x, \phi y)$ to obtain a morphism
$$ \alpha_{\phi}: \scSt \Hom_{ \overline{S} }(x,y) \rightarrow \bHom_{\calC}( \phi x, \phi y).$$
\end{remark}

\begin{theorem}\label{csi}
Let $\overline{S}$ be a weak $\infty$-bicategory and let $\phi: \scCoNerve[ \overline{S}] \rightarrow \calC$ be a weak equivalence of $\mSet$-enriched categories, where $\calC$ is a fibrant $\mSet$-enriched category. Let $\psi: \overline{S} \rightarrow \scNerve \calC$ be the morphism adjoint to $\phi$.
Then, for every pair of vertices $x$ and $y$ in $\overline{S}$, the induced map
$$ f: \Hom_{\overline{S}}(x,y) \rightarrow \Hom_{ \scNerve \calC}( \psi x, \psi y)$$
is a weak equivalence of marked simplicial sets.
\end{theorem}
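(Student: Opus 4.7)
The plan is to factor the map $f$ through a chain of weak equivalences whose final step is the map induced on mapping spaces by $\phi$; the assumption that $\phi$ is a weak equivalence of $\mSet$-enriched categories will then supply the conclusion. Both $\Hom_{\overline{S}}(x,y)$ and $\Hom_{\scNerve \calC}(\psi x, \psi y)$ are fibrant in $\mSet$ by Proposition \ref{ilk} (noting that $\scNerve \calC$ is a weak $\infty$-bicategory since $\calC$ is fibrant). By Remark \ref{pal}, $\Hom_{\scNerve \calC}(\psi x, \psi y) \cong \scUn \bHom_{\calC}(\phi x, \phi y)$, and by Corollary \ref{ba3} the adjunction $(\scSt, \scUn)$ is a Quillen self-equivalence of $\mSet$ whose derived unit and counit are weak equivalences. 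Consequently, $f$ is a weak equivalence if and only if the natural map $\alpha_\phi: \scSt \Hom_{\overline{S}}(x,y) \to \bHom_{\calC}(\phi x, \phi y)$ defined in Remark \ref{pal} is.

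To attack $\alpha_\phi$, I would first apply Corollary \ref{stlik} to the fibrant object $\overline{S}^{x/} \in \mset{\overline{S}}$, yielding a weak equivalence $\scSt \Hom_{\overline{S}}(x,y) \to (\scSt_{\overline{S}} \overline{S}^{x/})(y)$. Next, by Proposition \ref{cast2}, the inclusion $\{x\}^{\sharp} \hookrightarrow \overline{S}^{x/}$ is $\CatP_{\overline{S}}$-anodyne and hence a weak equivalence. Since every object of $\mset{\overline{S}}$ is cofibrant and $\scSt_{\overline{S}}$ is left Quillen by Proposition \ref{curp}, we obtain a further weak equivalence $(\scSt_{\overline{S}} \{x\}^{\sharp})(y) \to (\scSt_{\overline{S}} \overline{S}^{x/})(y)$.

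The crucial step is to directly identify $\scSt_{\overline{S}} \{x\}^{\sharp}$ with the representable copresheaf $\bHom_{\calE}(x, -) \in (\mSet)^{\calE}$, where $\calE = \scCoNerve[\overline{S}]$. Unwinding Definition \ref{sputer2}, we have $(\scSt_{\overline{S}} \{x\}^{\sharp})(D) = \bHom_{\calF}(v, D)$, where $\calF = \scCoNerve[\LCones{\overline{S}}{\{x\}^{\sharp}}]$. Since $\LCones{\overline{S}}{\{x\}^{\sharp}} = \LCone{\{x\}^{\sharp}} \coprod_{\{x\}_{\flat}} \overline{S}$ and $\scCoNerve$ preserves colimits, $\calF$ can be identified with the pushout $\calE \coprod_{[0]} [1]_{(\Delta^0)^{\sharp}}$ of $\mSet$-enriched categories (adopting the notation from the proof of Proposition \ref{presus}, with $[0] \to \calE$ picking out $x$ and $[0] \to [1]_{(\Delta^0)^{\sharp}}$ picking out the codomain $1$). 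In this pushout, every morphism from the newly adjoined vertex $v$ to an object $D \in \calE$ factors uniquely as the marked morphism $v \to x$ followed by a morphism $x \to D$ in $\calE$, giving a natural isomorphism of marked simplicial sets $\bHom_{\calF}(v, D) \cong \bHom_{\calE}(x, D)$.

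Combining all the identifications above, $\alpha_\phi$ becomes naturally weakly equivalent to the map $\bHom_{\scCoNerve[\overline{S}]}(x, y) \to \bHom_{\calC}(\phi x, \phi y)$ induced by $\phi$. Since $\phi$ is a weak equivalence of $\mSet$-enriched categories by hypothesis, this map is a weak equivalence, which completes the argument. The main obstacle is the computation in the third paragraph: carefully identifying $\scSt_{\overline{S}} \{x\}^{\sharp}$ as a representable functor requires a precise analysis of the pushout of $\mSet$-enriched categories together with its marking structure, and verifying that the resulting chain of weak equivalences is naturally compatible with $\alpha_\phi$ as defined in Remark \ref{pal}.
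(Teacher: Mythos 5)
Your proposal is correct and follows essentially the same strategy as the paper's proof: both reduce to showing that $\alpha_{\phi}$ (equivalently $\scSt f$) is a weak equivalence, and both establish this by combining Corollary \ref{stlik}, Propositions \ref{cast2} and \ref{curp}, and the isomorphism $(\scSt_{\overline{S}}\{x\}^{\sharp})(y) \simeq \bHom_{\scCoNerve[\overline{S}]}(x,y)$ (the paper's map $\delta_2$), before invoking the hypothesis on $\phi$. The compatibility you flag at the end is handled in the paper by exhibiting the relevant maps in an explicit commuting diagram that makes $\alpha_{\phi} = f' \circ \alpha_{\id}$ manifest, so the gap you note is real but straightforwardly filled along exactly the lines you propose.
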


\begin{proof}
Let $\phi': \scCoNerve[ \scNerve \calC ] \rightarrow \calC$ denote the counit map.
We have a commutative diagram of marked simplicial sets
$$ \xymatrix{ \Hom_{ \overline{S} }(x,y) \ar[d]^{f} & \scSt \Hom_{\overline{S}}(x,y) \ar[l] \ar[d]^{ \scSt f} 
\ar[r]^{ \alpha_{\id}} & \bHom_{ \scCoNerve[ \overline{S} ] }(x,y) \ar[d]^{f'} \\
\bHom_{ \scNerve(\calC)}( \phi x, \phi y) & \scSt \bHom_{ \scNerve(\calC)}( \phi x, \phi y) \ar[l] \ar[r]^-{ \alpha_{\phi_0}} & \bHom_{\calC}( \phi x, \phi y), }$$
where $\alpha_{\id}$ and $\alpha_{\phi_0}$ are defined as in Remark \ref{pal}. We wish to prove that $f$ is a weak equivalence. Proposition \ref{uil} implies that the left horizontal maps are weak equivalences, so it will suffice to show that $\scSt f$ is a weak equivalence. For this, it will suffice to prove the following:
\begin{itemize}
\item[$(i)$] The map $f'$ is a weak equivalence. This follows from our assumption that $\phi$ is
a weak equivalence.
\item[$(ii)$] The map $\alpha_{\phi_0}$ is a weak equivalence. This map is adjoint to the isomorphism
$\Hom_{ \scNerve \calC}( \phi x, \phi y) \simeq \scUn \bHom_{\calC}( \phi x, \phi y)$ of Remark \ref{pal}. This follows from the fact that $( \scSt, \scUn)$ is a Quillen equivalence (Corollary \ref{ba3}), since
the marked simplicial set $\bHom_{\calC}( \phi x, \phi y)$ is fibrant.
\item[$(iii)$] The map $\alpha_{\id}$ is a weak equivalence. To prove this, let
$\LCones{ \overline{S} }{\overline{S}^{x/}}$ be defined as in Definition \ref{sputer1}, let
$\ast$ denote the cone point of $\LCones{ \overline{S}}{\overline{S}^{x/}}$, and observe that we have a canonical map of scaled simplicial sets $\pi: \LCones{ \overline{S} }{\overline{S}^{x/}} \rightarrow \overline{S}$. This map fits into a commutative diagram
$$ \xymatrix{ \scSt( \overline{S}^{x/} \times_{ S^{\sharp}} \{y\}^{\sharp}) \ar[r]^{\beta} \ar[d]^{\delta_0}
& (\scSt_{\overline{S}} \overline{S}^{x/})(y) \ar[d]^{\delta_1} & (\scSt_{ \overline{S}} \{x\}^{\sharp})(y) \ar[l]^{\gamma} \ar[d]^{\delta^2} \\
\scSt \Hom_{ \overline{S}}(x,y) \ar[dr]^{\alpha_{\id}} & \bHom_{ \LCones{ \overline{S}}{\overline{S}^{x/}}} \ar[d]^{\epsilon} & \bHom_{ \scCoNerve[ \overline{S} ]}(x,y) \ar[dl]^{\id} \\
& \bHom_{ \scCoNerve[ \overline{S} ]}(x,y). & }$$
Here the maps $\delta_0$, $\delta_1$, and $\delta_2$ are isomorphisms of marked simplicial sets.
Propositions \ref{cast2} and \ref{curp} imply that $\gamma$ is a weak equivalence. By the two-out-of-three property, we deduce that $\epsilon$ is a weak equivalence. It follows from Corollary \ref{stlik} and Proposition \ref{ilk} that $\beta$ is also a weak equivalence, so that $\alpha_{\id}$ is a weak equivalence by another two-out-of-three argument.
\end{itemize}
\end{proof}

\begin{lemma}\label{precab}
Let $\overline{S}$ be a weak $\infty$-bicategory containing a pair of vertices $x$ and $y$.
Then there is a canonical isomorphism
$\Hom_{ \overline{S} }(x,y) \simeq \bHom_{ \sCoNerve[ \overline{S}]}(x,y)$ in the homotopy category of marked simplicial sets.
\end{lemma}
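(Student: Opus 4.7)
The plan is to deduce this as a direct consequence of Theorem \ref{csi}, by interpreting the mapping object $\bHom_{\scCoNerve[\overline{S}]}(x,y)$ through a fibrant replacement of $\scCoNerve[\overline{S}]$. First, using the fact that $\Cat_{\mSet}$ is a model category, I would choose a weak equivalence of $\mSet$-enriched categories $\phi : \scCoNerve[\overline{S}] \to \calC$ with $\calC$ fibrant, and let $\psi : \overline{S} \to \scNerve(\calC)$ denote the adjoint map. The induced map on mapping objects $\bHom_{\scCoNerve[\overline{S}]}(x,y) \to \bHom_{\calC}(\psi x, \psi y)$ is then a weak equivalence between fibrant objects of $\mSet$, so it determines an isomorphism in the homotopy category.

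Next, Theorem \ref{csi} provides a weak equivalence of marked simplicial sets $f : \Hom_{\overline{S}}(x,y) \to \Hom_{\scNerve(\calC)}(\psi x, \psi y)$, using that $\overline{S}$ is a weak $\infty$-bicategory. The target can then be rewritten using Remark \ref{pal}, which identifies $\Hom_{\scNerve(\calC)}(\psi x, \psi y)$ with $\scUn \bHom_{\calC}(\psi x, \psi y)$. Finally, Corollary \ref{ba3} asserts that $(\scSt_{\ast}, \scUn_{\ast})$ is a Quillen equivalence of $\mSet$ with itself, and Proposition \ref{uil} identifies its left derived functor with the identity; dually, the right derived functor of $\scUn_{\ast}$ is isomorphic to the identity on the homotopy category $\h{\mSet}$. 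Since $\bHom_{\calC}(\psi x, \psi y)$ is a fibrant marked simplicial set, this yields an isomorphism $\scUn \bHom_{\calC}(\psi x, \psi y) \simeq \bHom_{\calC}(\psi x, \psi y)$ in $\h{\mSet}$.

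Chaining together the three isomorphisms
\[
\Hom_{\overline{S}}(x,y) \xrightarrow{f} \Hom_{\scNerve(\calC)}(\psi x, \psi y) \simeq \scUn \bHom_{\calC}(\psi x, \psi y) \simeq \bHom_{\calC}(\psi x, \psi y) \simeq \bHom_{\scCoNerve[\overline{S}]}(x,y)
\]
in $\h{\mSet}$ yields the desired identification. The only delicate point is the word \emph{canonical}: the construction above depends a priori on the choice of fibrant replacement $\phi$. However, any two such replacements fit into a commutative diagram of weak equivalences of $\mSet$-enriched categories, and the naturality of Theorem \ref{csi}, Remark \ref{pal}, and the unit/counit transformations for $(\scSt_{\ast}, \scUn_{\ast})$ together guarantee that the resulting isomorphisms in $\h{\mSet}$ are compatible. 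The main (mild) obstacle is therefore bookkeeping: verifying this naturality carefully enough to assemble a well-defined map in the homotopy category, rather than merely a nonempty set of isomorphisms.
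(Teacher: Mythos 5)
Your argument is correct and yields the same isomorphism, but its structure is genuinely different from the paper's. You invoke Theorem \ref{csi} as a black box: you choose a fibrant replacement $\phi \colon \scCoNerve[\overline{S}] \to \calC$, apply Theorem \ref{csi} to get $\Hom_{\overline{S}}(x,y) \simeq \Hom_{\scNerve(\calC)}(\psi x, \psi y)$, rewrite the target via Remark \ref{pal} as $\scUn \bHom_{\calC}(\psi x, \psi y)$, use the Quillen-equivalence-plus-Proposition-\ref{uil} argument to drop the $\scUn$, and finally invert $\phi$ on mapping objects. The paper instead bypasses the fibrant replacement entirely and records the chain of weak equivalences
\[
\bHom_{ \scCoNerve[ \overline{S}]}(x,y) \simeq
(\scSt_{ \overline{S}} \{x\}^{\sharp})(y) \rightarrow (\scSt_{ \overline{S}} \overline{S}^{x/})(y) \leftarrow \scSt \Hom_{\overline{S}}(x,y) \rightarrow \Hom_{ \overline{S}}(x,y)
\]
that already appears inside the proof of Theorem \ref{csi} (justified by Proposition \ref{cast2}, Proposition \ref{curp}, Corollary \ref{stlik}, Proposition \ref{ilk}, and Proposition \ref{uil}). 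The paper's route is shorter and produces the isomorphism directly in $\h{\mSet}$ without any auxiliary choice, so the ``canonicity'' issue you flag at the end never arises; your route reuses more machinery already packaged in Theorem \ref{csi}, at the cost of the extra bookkeeping needed to check independence of $\phi$.

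One small imprecision in your writeup: you assert that the map $\bHom_{\scCoNerve[\overline{S}]}(x,y) \to \bHom_{\calC}(\psi x, \psi y)$ is ``a weak equivalence between fibrant objects of $\mSet$.'' The source need not be fibrant, since $\scCoNerve[\overline{S}]$ is generally not a fibrant $\mSet$-enriched category. This does not damage the argument (a weak equivalence in $\mSet$ gives an isomorphism in $\h{\mSet}$ regardless of fibrancy), but the claimed fibrancy should be deleted.
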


\begin{proof}
We have a chain of maps
$$\bHom_{ \scCoNerve[ \overline{S}]}(x,y) \simeq
(\scSt_{ \overline{S}} \{x\}^{\sharp})(y) \rightarrow (\scSt_{ \overline{S}} \overline{S}^{x/})(y) \leftarrow \scSt \Hom_{\overline{S}}(x,y) \rightarrow \Hom_{ \overline{S}}(x,y),$$
each of which is a weak equivalence in $\mSet$ (as in the proof of Theorem \ref{csi}).
\end{proof}

\begin{lemma}\label{cabbus}
Let $f: \overline{S} \rightarrow \overline{S}'$ be a map of $\infty$-bicategories which is surjective on vertices. Then $f$ is bicategorical equivalence if and only if, for every pair of vertices
$x$ and $y$ of $\overline{S}$, the map $\Hom_{ \overline{S} }(x,y) \rightarrow
\Hom_{ \overline{S}}( fx, fy)$ is a weak equivalence of marked simplicial sets.
\end{lemma}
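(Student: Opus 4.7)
The plan is to reduce the statement to a comparison of mapping objects inside $\scCoNerve$, and then translate that comparison via Lemma \ref{precab}. A morphism in $\Cat_{\mSet}$ is a weak equivalence precisely when it is essentially surjective on underlying homotopy categories and induces weak equivalences on all mapping objects. Since $f$ is surjective on vertices, $\scCoNerve[f]$ is surjective (hence essentially surjective) on objects. Consequently $f$ is a bicategorical equivalence if and only if $\scCoNerve[f]$ induces a weak equivalence $\bHom_{\scCoNerve[\overline{S}]}(x,y) \to \bHom_{\scCoNerve[\overline{S}']}(fx,fy)$ for every pair of vertices $x,y$ of $\overline{S}$.

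Next I would invoke Lemma \ref{precab}, which supplies for any weak $\infty$-bicategory $\overline{T}$ and vertices $u,v$ a canonical isomorphism $\Hom_{\overline{T}}(u,v) \simeq \bHom_{\scCoNerve[\overline{T}]}(u,v)$ in the homotopy category of marked simplicial sets. Applying this identification both to $(\overline{S}, x, y)$ and to $(\overline{S}', fx, fy)$, the lemma will follow provided these isomorphisms are natural with respect to $f$: that is, the square
\[ \xymatrix{ \Hom_{\overline{S}}(x,y) \ar[r]^-{\sim} \ar[d] & \bHom_{\scCoNerve[\overline{S}]}(x,y) \ar[d]^{\scCoNerve[f]} \\ \Hom_{\overline{S}'}(fx,fy) \ar[r]^-{\sim} & \bHom_{\scCoNerve[\overline{S}']}(fx,fy) } \]
commutes in the homotopy category of $\mSet$.

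The main obstacle is verifying this naturality. Unwinding the proof of Lemma \ref{precab}, the isomorphism is built from the zig-zag
\[ \bHom_{\scCoNerve[\overline{S}]}(x,y) \simeq (\scSt_{\overline{S}} \{x\}^{\sharp})(y) \to (\scSt_{\overline{S}} \overline{S}^{x/})(y) \leftarrow \scSt \Hom_{\overline{S}}(x,y) \to \Hom_{\overline{S}}(x,y), \]
each of whose constituents is functorial in the triple $(\overline{T}, u, v)$: the first identification is tautological from the definition of $\scSt$; the middle arrows come from the functoriality of the scaled slice construction $\overline{T} \mapsto \overline{T}^{u/}$ together with Remark \ref{ba1}, which provides a canonical comparison between $\scSt_{\overline{S}}$ and $\scSt_{\overline{S}'} \circ f_{!}$; and the final arrow is the natural transformation of Proposition \ref{uil}. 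Writing out the corresponding naturality squares for $f$ and stringing them together produces the required commutative diagram in the homotopy category of $\mSet$, and a two-out-of-three argument completes the proof.
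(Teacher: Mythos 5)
Your proposal is correct and follows essentially the same route as the paper: translate bicategorical equivalence into the definition of weak equivalence for $\mSet$-enriched categories (using surjectivity on vertices to dispose of essential surjectivity), and then invoke Lemma \ref{precab} to convert the condition on $\bHom_{\scCoNerve[\cdot]}$ into the condition on $\Hom_{\overline{S}}$. The paper's own proof is a two-sentence version of this that leaves the naturality of the identification in Lemma \ref{precab} implicit; your extra paragraph just fills in that detail, which is a reasonable thing to worry about and is handled correctly (the zig-zag of Lemma \ref{precab} is built from the scaled slice, the straightening functor, and the transformation of Proposition \ref{uil}, all of which are visibly functorial in $(\overline{S},x,y)$).
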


\begin{proof}
The map $f$ is a bicategorical equivalence if and only if $\scCoNerve[f]$ is a weak equivalence of $\mSet$-enriched categories. Since $\scCoNerve[f]$ is essentially surjective, this is equivalent
to the requirement that for every pair of vertices $x$ and $y$ in $\overline{S}$, the map
$\bHom_{ \scCoNerve[ \overline{S} ]}(x,y) \rightarrow \bHom_{ \scCoNerve[ \overline{S}']}(fx,fy)$ is a weak equivalence of marked simplicial sets. We now invoke Lemma \ref{precab}.
\end{proof}

\begin{lemma}\label{kilpus}
Let $f: \calC \rightarrow \calD$ be a functor between fibrant $\mSet$-enriched categories.
Assume that $f$ is surjective on objects, and that the induced map $\scNerve(f)$ is a bicategorical equivalence. Then $f$ is a weak equivalence of $\mSet$-enriched categories.
\end{lemma}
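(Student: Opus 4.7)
The plan is to reduce the statement to the mapping-space criterion for weak equivalences of $\mSet$-enriched categories, and then translate that criterion through the scaled nerve using Lemma \ref{cabbus}, Remark \ref{pal}, and Corollary \ref{ba3}. Recall that $f: \calC \to \calD$ is a weak equivalence of $\mSet$-enriched categories if and only if it is essentially surjective on homotopy categories and induces weak equivalences $\bHom_\calC(x,y) \to \bHom_\calD(fx, fy)$ in $\mSet$ for every pair of objects $x,y \in \calC$. Since $f$ is assumed surjective on objects, essential surjectivity is automatic; the content is the assertion about mapping objects.

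Since $\calC$ and $\calD$ are fibrant $\mSet$-enriched categories, the scaled nerves $\scNerve(\calC)$ and $\scNerve(\calD)$ are weak $\infty$-bicategories (this is the standard example recorded just after the definition of weak $\infty$-bicategory). The functor $\scNerve(f)$ is a bicategorical equivalence by hypothesis, and it is surjective on vertices because $f$ is surjective on objects. Applying Lemma \ref{cabbus}, we conclude that for every pair of vertices $x,y$ of $\scNerve(\calC)$, the induced map
$$ \Hom_{\scNerve(\calC)}(x,y) \longrightarrow \Hom_{\scNerve(\calD)}(fx, fy) $$
is a weak equivalence of marked simplicial sets.

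Next, I would use Remark \ref{pal}, which provides canonical isomorphisms $\Hom_{\scNerve(\calC)}(x,y) \simeq \scUn \bHom_\calC(x,y)$ and similarly for $\calD$, and identifies the displayed map with $\scUn$ applied to the mapping-object morphism $\bHom_\calC(x,y) \to \bHom_\calD(fx, fy)$. Both $\bHom_\calC(x,y)$ and $\bHom_\calD(fx,fy)$ are fibrant objects of $\mSet$ (as mapping objects in fibrant $\mSet$-enriched categories), so $\scUn$ agrees with its right-derived functor on this map. Because $(\scSt, \scUn)$ is a Quillen equivalence (Corollary \ref{ba3}), the right-derived functor $R\scUn$ reflects weak equivalences; hence $\bHom_\calC(x,y) \to \bHom_\calD(fx, fy)$ is itself a weak equivalence in $\mSet$. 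Combined with the surjectivity on objects, this proves that $f$ is a weak equivalence of $\mSet$-enriched categories.

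There is no real obstacle to carrying out this argument: all the ingredients are available in the form needed. The only point that deserves a moment of care is the verification that $\scUn$ computes its derived functor on the maps in question — which is why it was worth emphasizing the fibrancy of $\bHom_\calC(x,y)$ and $\bHom_\calD(fx,fy)$ in $\mSet$ before invoking Corollary \ref{ba3}.
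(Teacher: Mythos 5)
Your proof is correct and takes essentially the same route as the paper: both reduce to the mapping-space criterion for weak equivalences of $\mSet$-enriched categories, apply Lemma \ref{cabbus} to the surjective bicategorical equivalence $\scNerve(f)$, and then use Remark \ref{pal} together with the Quillen equivalence of Corollary \ref{ba3} (and fibrancy of the mapping objects) to conclude. The paper presents the same chain of reductions in reverse order, but no new ideas separate the two arguments.
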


\begin{proof}
Since $f$ is essentially surjective, it suffices to show that the map
$\bHom_{\calC}(x,y) \rightarrow \bHom_{\calD}(fx,fy)$ is a weak equivalence of marked simplicial sets for every pair of objects $x,y \in \calC$. Since $\calC$ and $\calD$ are fibrant, this is equivalent to the assertion that the map $\scUn \bHom_{\calC}(x,y) \rightarrow \scUn \bHom_{\calD}(fx,fy)$ is a weak equivalence (Corollary \ref{ba3}). The desired result now follows from Remark \ref{pal} and Lemma \ref{cabbus}.
\end{proof}

\begin{lemma}\label{carpus}
Let $f: X \rightarrow X'$ and $g: Y \rightarrow Y'$ be bicategorical equivalences of scaled simplicial sets. Then the induced map $f \times g: X \times Y \rightarrow X' \times Y'$ is a bicategorical equivalence.
\end{lemma}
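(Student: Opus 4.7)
The plan is to reduce, via two fibrant-replacement steps, to the case where $X$, $X'$, and $Y$ are all weak $\infty$-bicategories, then to verify the statement at the level of $\Cat_{\mSet}$ by an explicit mapping-space computation. First, factoring $f \times g = (f \times \id_{Y'}) \circ (\id_X \times g)$ and invoking two-out-of-three for bicategorical equivalences (inherited from weak equivalences in $\Cat_{\mSet}$), I reduce to the case $g = \id_Y$. Using the small object argument, I choose scaled anodyne maps $X \to \widetilde{X}$, $X' \to \widetilde{X}'$, $Y \to \widetilde{Y}$ with targets that are weak $\infty$-bicategories, together with an extension $\widetilde{f}: \widetilde{X} \to \widetilde{X}'$ of $f$ obtained from the extension property of $\widetilde{X}'$ against scaled anodyne maps. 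By Proposition \ref{notred}, applied to the cofibrations $\emptyset \hookrightarrow Y$ and $\emptyset \hookrightarrow \widetilde{X}$, the products of scaled anodyne maps with identity morphisms are again scaled anodyne, hence bicategorical equivalences by Remark \ref{ilkk}. Two-out-of-three applied to the evident ladder from $f \times \id_Y$ to $\widetilde{f} \times \id_{\widetilde{Y}}$ then reduces the problem to the case where $X$, $X'$, $Y$ are all weak $\infty$-bicategories.

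A componentwise lifting argument shows that a product of two weak $\infty$-bicategories is again a weak $\infty$-bicategory, so $X \times Y$ and $X' \times Y$ are weak $\infty$-bicategories. Consider the commutative square in $\Cat_{\mSet}$ whose horizontal arrows are the canonical comparison maps $\alpha: \scCoNerve[X \times Y] \to \scCoNerve[X] \times \scCoNerve[Y]$ and $\alpha': \scCoNerve[X' \times Y] \to \scCoNerve[X'] \times \scCoNerve[Y]$, and whose vertical arrows are $\scCoNerve[f \times \id_Y]$ on the left and $\scCoNerve[f] \times \id_{\scCoNerve[Y]}$ on the right. The right vertical arrow is a weak equivalence in $\Cat_{\mSet}$: $\scCoNerve[f]$ is essentially surjective and induces weak equivalences on Hom objects, and the same then holds for $\scCoNerve[f] \times \id$ since $\mSet$ is a Cartesian monoidal model category in which every object is cofibrant. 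By two-out-of-three in the square, it suffices to show that $\alpha$ and $\alpha'$ are weak equivalences in $\Cat_{\mSet}$.

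Both $\alpha$ and $\alpha'$ are bijections on objects, so it suffices to check that they induce weak equivalences on Hom marked simplicial sets. Applying Lemma \ref{precab} to the weak $\infty$-bicategories $X \times Y$, $X$, and $Y$, one identifies the relevant Hom spaces with $\Hom_{X \times Y}((x_1, y_1), (x_2, y_2))$ and $\Hom_X(x_1, x_2) \times \Hom_Y(y_1, y_2)$ respectively, up to weak equivalence. Unwinding Notation \ref{caff} then produces a natural isomorphism of marked simplicial sets $(X \times Y)^{(x, y)/} \cong X^{x/} \times Y^{y/}$: the product scaling on $X \times Y$ declares a $2$-simplex thin precisely when both of its projections are thin, so the marking of an edge of the slice, which depends only on the thinness of its upper-right $2$-simplex, also decomposes componentwise. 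Taking fibers over $\{(x_2, y_2)\}^{\sharp}$ yields the identification $\Hom_{X \times Y}((x_1, y_1), (x_2, y_2)) \cong \Hom_X(x_1, x_2) \times \Hom_Y(y_1, y_2)$, and hence $\alpha$ induces weak equivalences on Hom marked simplicial sets; $\alpha'$ is handled identically. Two-out-of-three in the square then forces $\scCoNerve[f \times \id_Y]$ to be a weak equivalence in $\Cat_{\mSet}$, i.e., $f \times \id_Y$ is a bicategorical equivalence. The main technical point to verify carefully is the product decomposition of Notation \ref{caff}; the rest consists of reductions using tools already established in the paper.
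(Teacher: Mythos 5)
Your proof is correct and rests on the same two key inputs as the paper's argument: the reduction, via Proposition \ref{notred} and two-out-of-three, to the case where $X$, $X'$, and $Y$ are all weak $\infty$-bicategories, and then the identification of mapping objects supplied by Lemma \ref{precab} together with the product decomposition $\overline{(X\times Y)}^{(x,y)/}\cong \overline{X}^{x/}\times\overline{Y}^{y/}$, which you correctly extract from Notation \ref{caff} and the Cartesian scaling on products. The difference lies in the final step. The paper directly checks that $\scCoNerve[f\times\id_Y]$ is essentially surjective and fully faithful, using the observation that the map on $\Hom$ marked simplicial sets induced by $f\times\id_Y$ is visibly the product of the $\Hom$ map for $f$ with an identity, so that stability of $\mSet$-weak equivalences under products finishes the argument. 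You instead introduce the comparison map $\alpha:\scCoNerve[X\times Y]\to\scCoNerve[X]\times\scCoNerve[Y]$ and argue by two-out-of-three in a commutative square; this is a bit longer but isolates a clean intermediate statement (that $\scCoNerve$ preserves finite products of weak $\infty$-bicategories up to weak equivalence). It also incurs one compatibility that you pass over silently: to conclude that $\alpha$ is a weak equivalence on $\Hom$ objects, it is not enough that its source and target are each weakly equivalent to $\Hom_X(x_1,x_2)\times\Hom_Y(y_1,y_2)$; one must further check that the $\Hom$ component of $\alpha$, namely the pair of maps induced by the two projections, is carried under the zigzag identifications of Lemma \ref{precab} to the canonical isomorphism $\Hom_{X\times Y}((x_1,y_1),(x_2,y_2))\cong \Hom_X(x_1,x_2)\times\Hom_Y(y_1,y_2)$. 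This holds by naturality of the constructions entering Lemma \ref{precab} applied along each projection, but it deserves a sentence; the paper's formulation sidesteps it because the $\Hom$ map it inspects is induced by the single morphism $f\times\id_Y$ of scaled simplicial sets, for which the compatibility is immediate.
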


\begin{proof}
Working one variable at a time, we may assume that $Y = Y'$ and that $g$ is the identity map.
Choose a scaled anodyne map $Y \rightarrow Z$, where $Z$ is a weak $\infty$-bicategory.
We have a commutative diagram
$$ \xymatrix{ X \times Y \ar[r] \ar[d] & X \times Z \ar[d] \\
X' \times Y \ar[r] & X' \times Z. }$$
Using Proposition \ref{notred}, we deduce that the horizontal maps are scaled anodyne, and therefore bicategorical equivalences. Consequently, the left vertical map is a bicategorical equivalence if and only if the right vertical map is a bicategorical equivalence. We may therefore replace $Y$ by $Z$ and reduce to the case where $Y$ is a weak $\infty$-bicategory.

Choose a scaled anodyne map $X' \rightarrow X''$, where $X''$ is a weak $\infty$-bicategory.
We have a commutative diagram
$$ \xymatrix{ & X' \times Y \ar[dr]^{h \times g} & \\
X \times Y \ar[ur]^{f \times g} \ar[rr]^{h' \times g} & & X'' \times Y. }$$
Proposition \ref{notred} guarantees that $h \times g$ is scaled anodyne, and therefore a categorical equivalence. By the two-out-of-three property, it will suffice to show that $h \times g$ is a bicategorical equivalence. We may therefore replace $X'$ by $X''$ and thereby reduce to the case where $X'$ is a weak $\infty$-bicategory.

Choose a factorization of $f$ as a composition
$$ X \stackrel{f'}{\rightarrow} X_0 \stackrel{f''}{\rightarrow} X'$$
where $f'$ is scaled anodyne and the map $f''$ has the right lifting property with respect to all scaled anodyne morphisms. We have a commutative diagram
$$ \xymatrix{ & X_0 \times Y \ar[dr]^{f'' \times g} & \\
X \times Y \ar[ur]^{f' \times g} \ar[rr]^{f \times g} & & X' \times Y. }$$
Proposition \ref{notred} guarantees that $f' \times g$ is scaled anodyne, and therefore a bicategorical equivalence. By the two-out-of-three property, it will suffice to show that $f'' \times g$ is a bicategorical equivalence. We may therefore replace $X$ by $X_0$, and thereby reduce to the case where
$X$ is a weak $\infty$-bicategory.

We now prove that the map $\scCoNerve[f \times g]$ is fully faithful. Choose vertices
$x_0, x_1 \in X$ and $y_0, y_1 \in Y$; we wish to show that the map
$$ \bHom_{ \scCoNerve[ X \times Y]}( (x_0, y_0), (x_1, y_1))
\rightarrow \bHom_{ \scCoNerve[ X' \times Y]}( (fx_0, y_0), (fx_1, y_1))$$
is a weak equivalence in $\mSet$. In view of Lemma \ref{precab}, it suffices to show that the induced map
$$ \Hom_{ X \times Y}( (x_0, y_0), (x_1, y_1) ) \rightarrow \Hom_{ X' \times Y}( (fx_0, y_0), (fx_1, y_1))$$
is a weak equivalence of marked simplicial sets. Since the collection of weak equivalences in $\mSet$ is stable under the formation of products, it suffices to show that the map
$\Hom_{ X}(x_0, x_1) \rightarrow \Hom_{X'}(fx_0, fx_1)$ is a weak equivalence. This follows from Lemma \ref{precab}, since the functor $\scCoNerve[f]$ is fully faithful.

To complete the proof, it suffices to show that $\scCoNerve[f \times g]$ is essentially surjective.
Choose an object $(x',y)$ of $\scCoNerve[ X' \times Y]$. Since $\scCoNerve[f]$ is essentially surjective, there exists a vertex $x$ of $X$ such that $fx$ is equivlaent to $x'$ in $\scCoNerve[X']$. It is then easy to see that $(x',y)$ is equivalent to $(fx,y)$ in $\scCoNerve[X' \times Y]$.
\end{proof}

\begin{theorem}\label{slai}
There exists a left proper, combinatorial model structure on the category
$\scSet$ of scaled simplicial sets with the following properties:
\begin{itemize}
\item[$(C)$] The cofibrations in $\scSet$ are monomorphisms of scaled simplicial sets.
\item[$(W)$] The weak equivalences in $\scSet$ are the bicategorical equivalences.
\item[$(F)$] A morphism in $\scSet$ is a fibration if and only if it has the right lifting property with respect to all morphisms satisfying both $(C)$ and $(W)$.
\end{itemize}
Moreover, the adjoint functors $\Adjoint{ \scCoNerve}{\scSet}{\mCat}{\scNerve}$ determine a Quillen equivalence of $\scSet$ with $\mCat$.
\end{theorem}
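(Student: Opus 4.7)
The plan is to establish the model structure on $\scSet$ by invoking the standard combinatorial recognition criterion (as applied repeatedly in the paper, e.g., in the proof of Theorem \ref{theo}). This reduces the task to verifying three properties of the class $W$ of bicategorical equivalences: that $W$ is a perfect class; that $W$ is stable under pushout along cofibrations; and that every morphism with the right lifting property against all cofibrations belongs to $W$. The first property holds because $\scCoNerve$ preserves filtered colimits (being a left adjoint), so that the preimage of the perfect class of Bergner weak equivalences in $\mCat$ is again perfect. Stability under pushouts is precisely the content of Remark \ref{suskp}. For the third condition, a map with the right lifting property against all monomorphisms admits a section and a fiberwise simplicial homotopy in $\scSet$; Proposition \ref{presus} together with Lemma \ref{carpus} converts this into a simplicial homotopy equivalence in $\mCat$. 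Left properness is then inherited from $\mCat$ via Remark \ref{suskp}, and combinatoriality is automatic given that monomorphisms of scaled simplicial sets are generated by $(\bd \Delta^n)_{\flat} \subseteq \Delta^n_{\flat}$ together with $\Delta^2_{\flat} \subseteq \Delta^2_{\sharp}$.

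The Quillen adjunction $(\scCoNerve, \scNerve)$ is immediate: $\scCoNerve$ preserves cofibrations by Proposition \ref{presus}(1), and it preserves weak equivalences by the very definition of bicategorical equivalence. Since every object of $\scSet$ is cofibrant, and $L\scCoNerve$ reflects weak equivalences tautologically, to upgrade this to a Quillen equivalence it suffices to show that for every fibrant $\calC \in \mCat$ the counit $\epsilon_\calC \colon \scCoNerve[\scNerve\calC] \to \calC$ is a weak equivalence. I would choose a factorization in $\mCat$ as $\scCoNerve[\scNerve\calC] \xrightarrow{i} \calD \xrightarrow{p} \calC$ with $i$ a trivial cofibration bijective on objects (possible since generating trivial cofibrations in $\mCat$ can be taken identity-on-objects) and $p$ a fibration; then $\calD$ is fibrant. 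Applying Theorem \ref{csi} to the weak equivalence $i$ (noting that $\scNerve\calC$ is a weak $\infty$-bicategory because $\calC$ is fibrant), together with Lemma \ref{cabbus}, shows that the adjoint map $\psi \colon \scNerve\calC \to \scNerve\calD$ is a bicategorical equivalence. The triangle identity forces $\scNerve(p) \circ \psi = \id_{\scNerve\calC}$, so $\scNerve(p)$ is a bicategorical equivalence by two-out-of-three. Since $p$ is a map between fibrant $\mSet$-enriched categories that is surjective on objects, Lemma \ref{kilpus} yields that $p$, and hence $\epsilon_\calC = p \circ i$, is a weak equivalence.

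The crucial input has already been established by Theorem \ref{csi}, which provides the combinatorial identification of the mapping objects in $\scCoNerve[\overline{S}]$ with the marked simplicial sets $\Hom_{\overline{S}}(x,y)$; this is what makes the Quillen equivalence tractable. The technically most delicate remaining step is the verification of condition (iii) in the model-structure recognition: the fiberwise simplicial homotopy needed to show that a trivial fibration in $\scSet$ is a bicategorical equivalence must be compatible with the scaling on $2$-simplices, and producing it requires a careful bookkeeping of lifts against the product inclusion $\{0\}^{\sharp} \subseteq (\Delta^1)^{\sharp}$ smashed with the generating cofibrations. I expect no fundamentally new ideas to be needed beyond those already developed around Proposition \ref{presus} and Lemma \ref{carpus}.
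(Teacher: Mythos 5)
Your proposal follows essentially the same route as the paper: verify the three hypotheses of the recognition criterion (perfect class via $\scCoNerve$, pushout-stability via left properness of $\mCat$ by Remark \ref{suskp}, and the retract-onto-a-bicategorical-equivalence argument for trivial fibrations using the homotopy furnished by Lemma \ref{carpus} and Proposition \ref{twop2}), then establish the Quillen equivalence by reducing to the counit and invoking Theorem \ref{csi}, Lemma \ref{cabbus}, and Lemma \ref{kilpus}. The one economy worth noting: you check only the counit and deduce the unit from conservativity of $\scCoNerve$ plus the triangle identity, whereas the paper checks the unit separately for an arbitrary scaled simplicial set $\overline{S}$ (implicitly needing a preliminary scaled anodyne replacement before Theorem \ref{csi} applies, since that theorem requires $\overline{S}$ to be a weak $\infty$-bicategory). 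Your version sidesteps this because $\scNerve\calC$ is automatically a weak $\infty$-bicategory.

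One small imprecision: the parenthetical claim that the generating trivial cofibrations in $\mCat$ ``can be taken identity-on-objects'' is not literally right, since the generating trivial cofibrations in Bergner-type model structures include maps that freely adjoin a new object equipped with an equivalence to an existing one. What is true, and what suffices, is that the counit $\scCoNerve[\scNerve\calC] \to \calC$ is bijective on objects, so that any factorization $v = v'' \circ v'$ with $v'$ a weak equivalence already forces $v''$ to be surjective on objects (which is all Lemma \ref{cabbus} and Lemma \ref{kilpus} require); alternatively one may factor through the identity-on-objects generating trivial cofibrations $[1]_{A} \to [1]_{B}$ alone, producing a bijective-on-objects $v'$ with $\calC'$ still fibrant since $\calC$ is. Either patch restores the argument.
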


\begin{proof}
To prove the first assertion, we will show that $\scSet$ satisfies the hypotheses of Proposition \toposref{goot}:
\begin{itemize}
\item[$(1)$] The class of weak equivalences in $\scSet$ is perfect; this follows by applying Corollary
\toposref{perfpull} to the functor $\scCoNerve$.
\item[$(2)$] The collection of weak equivalences in $\scSet$ is stable under the formation of pushouts; this follows from the fact that $\mSet$ is left-proper.
\item[$(3)$] Let $f: (X,T) \rightarrow (Y,T')$ be a map of scaled simplicial sets which has the
right lifting property with respect to {\em all} cofibrations. We wish to prove that $f$ is a bicategorical equivalence. Our hypothesis implies that the underlying map
of simplicial sets $X \rightarrow Y$ is a trivial Kan fibration, and $T = f^{-1} T'$. It follows that
$f$ admits a section $s$. Moreover, the composition $s \circ f$ is homotopic to the identity, in the sense that there exists a contractible Kan complex $K$ and a map $h: K_{\sharp} \times (X,T) \rightarrow (X,T)$ with the following properties:
\begin{itemize}
\item[$(i)$] The composition $f \circ h$ coincides with the composition $f \circ \pi$, where
$\pi: K_{\sharp} \times (X,T) \rightarrow (X,T)$ denotes the projection onto the second factor.
\item[$(ii)$] There exists a pair of vertices $x,y \in K$ such that $h | (\{x\}_{\sharp} \times (X,T))$ is the identity and $h | ( \{y\}_{\sharp} \times (X,T))$ coincides with $s \circ f$. 
\end{itemize}
We then have a commutative diagram
$$ \xymatrix{ \{y\}_{\sharp} \times (X,T) \ar[r]^-{f} \ar[d] & (Y,T') \ar[d]^{s} \\
K_{\sharp} \times (X,T) \ar[r]^-{h} \ar[d]^{\pi} & (X,T) \ar[d]^{f} \\
(X,T) \ar[r]^{f} & (Y,T') }$$
which exhibits $f$ as a retract of $h$; it will therefore suffice to show that $h$ is a bicategorical equivalence. The map $h$ has a right inverse, given by the inclusion
$i: (X,T) \simeq \{x\}_{\sharp} \times (X,T) \subseteq K_{\sharp} \times (X,T)$. It will therefore suffice to show that $i$ is a bicategorical equivalence, which follows from Lemma \ref{carpus} and Proposition
\ref{twop2}.
\end{itemize}
This completes the proof that $\scSet$ is a left proper, combinatorial model category. 
The functor $\scCoNerve: \scSet \rightarrow \mSet$ preserves cofibrations and weak equivalences, and is therefore a left Quillen functor. We will complete the proof by showing that
$(\scCoNerve, \scNerve)$ is a Quillen equivalence. Let $\overline{S}$ be a scaled simplicial set
and $\calC$ an $\mSet$-enriched category; we wish to show that the unit and counit maps
$$ u_{\overline{S}}: \overline{S} \rightarrow R \scNerve( L \scCoNerve[\overline{S}])$$
$$ v_{\calC}: L \scCoNerve[ R \scNerve \calC] \rightarrow \calC$$
are isomorphisms in the homotopy categories $\h{\scSet}$ and $\h{ \mCat}$, respectively.
We first show that $u_{ \overline{S}}$ is an isomorphism. Choose a weak equivalence
$\scCoNerve[ \overline{S} ] \rightarrow \calD$ of $\mSet$-enriched categories which is bijective on objects, where $\calD$ is fibrant. We wish to show that the adjoint map
$\overline{S} \rightarrow \scNerve \calD$ is a bicategorical equivalence. 
This follows from Lemma \ref{cabbus} and Theorem \ref{csi}.

We now show that each of the maps $v_{\calC}$ is an isomorphism in the homotopy category
$\h{ \mCat}$. In other words, we must show that for every fibrant object $\calC \in \mCat$, the
counit map $v: \scCoNerve[ \scNerve \calC] \rightarrow \calC$ is a weak equivalence of $\mSet$-enriched categories. The map $v$ admits a factorization
$$ \scCoNerve[ \scNerve \calC] \stackrel{v'}{\rightarrow} \calC' \stackrel{v''}{\rightarrow} \calC,$$
where $v'$ is a weak equivalence and $v''$ is a fibration (so that $\calC'$ is fibrant).
By the two-out-of-three property, it will suffice to show that $v''$ is a weak equivalence of
$\mSet$-enriched categories. Since $v''$ is surjective on objects, it will suffice to show that
$\scNerve(v'')$ is a bicategorical equivalence (Lemma \ref{kilpus}). We conclude by observing that $\scNerve(v'')$ a left homotopy inverse to the weak equivalence $u_{ \scNerve \calC}$.
\end{proof}

\begin{definition}
We will say that a scaled simplicial set $\calC$ is an {\it $\infty$-bicategory} if it is a fibrant object of
$\scSet$, with respect to the model structure described in Theorem \ref{slai}
\end{definition}

\begin{remark}\label{coslai}
Let $\calC$ be a fibrant $\mSet$-enriched category. Then the scaled nerve
$\scNerve \calC$ is an $\infty$-bicategory. This follows immediately from Theorem \ref{slai}.
\end{remark}

\subsection{Subdivision}\label{subber}\label{bisec4.3}

In this section, we will study the {\it subdivision} functor
$\sdd^{+}: \scSet \rightarrow \mset{ \Nerve(\cDelta)^{op} }$. 
Our main result, Theorem \ref{cuti}, asserts that this functor is a left Quillen
equivalence (where $\scSet$ is endowed with the bicategorical model structure
and $\mset{ \Nerve(\cDelta)^{op}}$ with the complete Segal model structure).

We begin by introducing some definitions.

\begin{definition}\label{conf}
Let $X$ be a simplicial set. We let $\sdd(X)$ denote the simplicial set
$\Nerve( \cDelta_{X})^{op}$, where $\cDelta_{X}$ denotes the category of simplices of
$X$ (see \S \toposref{quasilimit1}). The forgetful functor $\cDelta_{X} \rightarrow \cDelta$
determines a map of simplicial sets $\sdd(X) \rightarrow \Nerve(\cDelta)^{op}$. 
We will regard $\sdd$ as defining a functor $\sSet \rightarrow (\sSet)_{ / \Nerve(\cDelta)^{op} }$.

Suppose that $(X, T)$ is a scaled simplicial set. We let $\sdd^{+}(X,T)$ denote the
marked simplicial set $( \sdd(X), \calE)$, where $\calE$ denotes the collection of all edges $e$ of
of $\sdd(X)$ which satisfy one of the following conditions:
\begin{itemize}
\item[$(a)$] The edge $e$ corresponds to a diagram of simplicial sets
$$ \xymatrix{ \Delta^m \ar[rr] \ar[dr] & & \Delta^n \ar[dl] \\
& X & }$$
such that image of $[m]$ is a convex subset of $[n]$.
\item[$(b)$] The edge $e$ corresponds to a diagram of simplicial sets
$$ \xymatrix{ \Delta^{ \{0,2\} } \ar@{^{(}->}[rr] \ar[dr] & & \Delta^2 \ar[dl]^{\sigma} \\
& X & }$$
such that $\sigma \in T$. 
\end{itemize}
We will regard $\sdd^{+}$ as defining a functor from $\scSet$ to $\mset{ \Nerve(\cDelta)^{op} }$.
\end{definition}

\begin{remark}
The functor $\sdd: \sSet \rightarrow ( \sSet)_{/ \Nerve(\cDelta)^{op} }$ is fully faithful.
Its essential image consists precisely of those maps $Z \rightarrow \Nerve( \cDelta)^{op}$ such
that $Z$ is isomorphic to the nerve of a category $\calC$, and the induced functor
$p: \calC \rightarrow \cDelta^{op}$ is cofibered in sets; in this case we can recover 
the underlying simplicial set by the formula $[n] \mapsto p^{-1} \{ [n] \}$. 
\end{remark}

\begin{notation}
Let $F$ denote the composite functor
$$ \sSet \stackrel{ \sdd}{\rightarrow} (\sSet)_{/ \Nerve(\cDelta)^{op} }
\stackrel{ \lNerve_{\bigdot}( \cDelta^{op} )}{\rightarrow} \Fun( \cDelta^{op}, \sSet).$$
Using Example \toposref{spacek}, we can describe the functor $F$ as follows:
it carries a simplicial set $X$ to the simplicial object $F(X)_{\bigdot}$ described by the formula
$$ F(X)_{n} = \Nerve( \cDelta_{X} \times_{\cDelta} \cDelta_{[n]/})^{op}.$$
Note that $F(X)_{n}$ admits a natural descomposition $F(X)_n \simeq \coprod_{\sigma} F(X)_{\sigma}$,
where $\sigma$ ranges over all $n$-simplices of $X$.

Let $F^{+}$ denote the composite functor
$$ \scSet \stackrel{ \sdd^{+}}{\rightarrow} \mset{ \Nerve(\cDelta)^{op} }
\stackrel{ \lNerve^{+}_{\bigdot}( \cDelta^{op} )}{\rightarrow} \Fun( \cDelta^{op}, \mSet).$$
Then $F^{+}$ carries a scaled simplicial set $(X,T)$ to the simplicial object
$F^{+}(X,T)_{\bigdot}$ of $\mSet$ described by the formula
$$F^{+}(X,T)_{n} = (F(X)_n, \calE_{n})$$
where $\calE_{n}$ denotes the collection of all edges of $F(X)_n$ which correspond to diagrams
$$ \Delta^n \rightarrow \Delta^k \stackrel{f}{\rightarrow} \Delta^{k'} \stackrel{\sigma}{\rightarrow} X$$
satisfying one of the following two conditions:
\begin{itemize}
\item[$(a)$] The image of $[k]$ under the map $f$ is a convex subset of $[k']$.
\item[$(b)$] The map $f$ is isomorphic to an inclusion $\Delta{ \{0,2\} } \subseteq \Delta^2$
and the $2$-simplex $\sigma$ belongs to $T$.
\end{itemize}

We define another functor $F': \sSet \rightarrow \Fun( \cDelta^{op}, \sSet)$ to be the composition
$$ \sSet \stackrel{\sCoNerve}{\rightarrow} \Cat_{\sSet}
\stackrel{i}{\rightarrow} \Cat_{\sSet} \subseteq \PreSeg{\sSet} \stackrel{\UnPre}{\rightarrow}
\Fun( \cDelta^{op}, \sSet),$$
where $i: \Cat_{\sSet} \rightarrow \Cat_{\sSet}$ is the functor induced by the equivalence
$X \mapsto X^{op}$ from $\sSet$ to itself. More concretely, $F'$ carries a simplicial set
$X$ to the simplicial object $F'(X)_{\bigdot}$ of $\sSet$ described by the formula
$$ F'(X)_n = \coprod_{ x_0, \ldots, x_n} \bHom_{ \scCoNerve[X]}(x_0, x_1)^{op}
\times \ldots \times \bHom_{\scCoNerve[X]}( x_{n-1}, x_n)^{op}.$$

We observe that there is a natural transformation of functors
$\alpha: F \rightarrow F'$, which is uniquely determined by the following conditions:
\begin{itemize}
\item For every simplicial set $X$ and every $n$-simplex $\sigma$ of $X$ with
vertices $x_0, \ldots, x_n$, the map $\alpha_{X}: F(X)_{\bigdot} \rightarrow F'(X)_{\bigdot}$
restricts to a map 
$$\alpha_{X}^{\sigma}: F(X)_{\sigma} \rightarrow \bHom_{ \scCoNerve[X]}(x_0, x_1)^{op}
\times \ldots \times \bHom_{\scCoNerve[X]}( x_{n-1}, x_n)^{op} \subseteq F'(X)_{n}.$$
\item The map $\alpha_{X}^{\sigma}$ is the product of the opposite of maps
$$\beta_{i}: F(X)^{op}_{\sigma} \rightarrow \bHom_{ \scCoNerve[X]}( x_{i}, x_{i+1}).$$ Here the maps $\beta_i$ are defined as follows. Suppose given
a $k$-simplex $\tau$ of $F(X)^{op}_{\sigma}$, corresponding to a commutative diagram
$$ \xymatrix{ \Delta^{m_0} \ar[r] & \Delta^{m_1} \ar[r] & \ldots \ar[r] & \Delta^{m_k} \ar[d] \\
\Delta^n \ar[u] \ar[rrr]^{\sigma} & & & X. }$$
Let $y$ and $z$ denote the images of $i, i+1 \in [n]$, and for $0 \leq j \leq k$ let
$S_{j}$ denote the subset of $[m_k]$ consisting of those elements
$m_0$ such that $y \leq m_0 \leq z$ and such that $m_0$ lies in the image of the map
$[m_j] \rightarrow [m_k]$. The chain of subsets $S_0 \subseteq \ldots \subseteq S_k$
determines a $k$-simplex $\tau'$ in $\bHom_{ \sCoNerve[ \Delta^{m_k}]}(y,z)$, and we define
$\beta_i(\tau)$ to be the image of $\tau'$ in $\bHom_{ \sCoNerve[X]}( x_{i}, x_{i+1})$. 
\end{itemize}

We define a functor ${F'}^{+}: \scSet \rightarrow \Fun( \cDelta^{op}, \mSet)$ to be the composition
$$ \scSet \stackrel{\scCoNerve}{\rightarrow} \Cat_{\mSet}
\stackrel{i}{\rightarrow} \Cat_{\mSet} \subseteq \PreSeg{\mSet} \stackrel{\UnPre}{\rightarrow}
\Fun( \cDelta^{op}, \mSet),$$
where $i: \Cat_{\mSet} \rightarrow \Cat_{\mSet}$ is again the functor induced by the equivalence
$X \mapsto X^{op}$ from $\mSet$ to itself. We observe that the natural transformation $\alpha$
extends uniquely to a natural transformation $\alpha^{+}: F^{+} \rightarrow {F'}^{+}$.
\end{notation}

\begin{remark}\label{cubbly}
Let $(X,T)$ be a scaled simplicial set. Assume that $X$ satisfies the following condition:
\begin{itemize}
\item[$(\ast)$] Every face of a nondegenerate simplex of $X$ is again nondegenerate.
\end{itemize}
We let $\sdd_0(X) \subseteq \sdd(X)$ denote the
full simplicial subset spanned by the {\em nondegenerate} simplices of $X$.
Let $\sdd^{+}_0(X,T) = (\sdd_0(X), \calE)$, where $\calE$ is the collection of all edges of
$\sdd_0(X)$ which are marked in $\sdd(X)$. 

Condition $(\ast)$ implies that the inclusion $\sdd_0(X) \subseteq \sdd(X)$ admits a right adjoint $G$, so that we have a canonical homotopy $h: \sdd(X) \times \Delta^1 \rightarrow \sdd(X)$ from
$G$ to the identity, which is trivial on $\sdd_0(X)$. This homotopy determines a map
$$ \sdd^{+}(X,T) \times (\Delta^1)^{\sharp} \rightarrow \sdd^{+}(X,T).$$
The diagram
$$ \xymatrix{ \sdd_0^{+}(X,T) \times \{1\}^{\sharp} \ar@{^{(}->}[r] \ar@{^{(}->}[d] & \sdd^{+}(X,T) \times \{1\}^{\sharp} \ar@{^{(}->}[d] \\
( \sdd_0^{+}(X,T) \times (\Delta^1)^{\sharp} )
\coprod_{ \sdd_0^{+}(X,T) \times \{0\}^{\sharp} } ( \sdd^{+}(X,T) \times \{0\}^{\sharp} ) \ar@{^{(}->}[r] \ar[d] & \sdd^{+}(X,T) \times (\Delta^1)^{\sharp} \ar[d] \\
\sdd^{+}_0(X,T) \ar@{^{(}->}[r] & \sdd^{+}(X,T) }$$
exhibits the inclusion $\sdd^{+}_0(X,T) \subseteq \sdd^{+}(X,T)$ as a retract of the map
$$( \sdd_0^{+}(X,T) \times (\Delta^1)^{\sharp} )
\coprod_{ \sdd_0^{+}(X,T) \times \{0\}^{\sharp} } ( \sdd^{+}(X,T) \times \{0\}^{\sharp} )
\subseteq \sdd^{+}(X,T) \times (\Delta^1)^{\sharp}$$
which is the opposite of a marked anodyne map (by Proposition \toposref{markanodprod}).
It follows that the inclusion $\sdd_0^{+}(X,T) \subseteq \sdd^{+}(X,T)$ is a coCartesian
equivalence in $\mset{ \Nerve(\cDelta)^{op} }$.
\end{remark}

\begin{notation}\label{clubb}
Let $(X,T)$ be a scaled simplicial set such that $X$ satisfies condition $(\ast)$ of Remark \ref{cubbly}. Let $F^{+}_0(X,T)_{\bigdot}$ denote the 
simplicial object of $\mSet$ obtained by applying the functor $\lNerve_{\bigdot}(\cDelta^{op} ):
\mset{ \Nerve(\cDelta)^{op} } \rightarrow \Fun( \cDelta^{op}, \mSet)$ to
$\sdd^{+}_0(X,T)$. It follows from Remark \ref{cubbly} (and the fact that $\lNerve_{\bigdot}(\cDelta^{op})$ is a left Quillen functor) that the induced map $F^{+}_0(X,T)_{\bigdot} \rightarrow F^{+}(X,T)_{\bigdot}$ is a trivial cofibration with respect to the projective model structure on $ \Fun( \cDelta^{op}, \mSet)$.

For each $n \geq 0$, we define a marked simplicial subset $F^{+}_1(X,T)_{n} =
( \Nerve(\calC)^{op}, \calE) \subseteq F^{+}_0(X,T)_n$ as follows:
$\calC$ is the full subcategory of $\cDelta_{X} \times_{\cDelta} \cDelta_{[n]/}$
spanned by those objects which correspond to diagrams
$$ \Delta^n \stackrel{f}{\rightarrow} \Delta^k \stackrel{\sigma}{\rightarrow} X$$
such that $\sigma$ is nondegenerate, $f(0) = 0$, and $f(n) = k$; and
$\calE$ is the collection of all edges of $\Nerve(\calC)^{op}$ which are marked edges
of $F^{+}(X,T)$. We note that the inclusion
$\Nerve(\calC)^{op} \subseteq \sdd_0(X)$ admits a left adjoint $F$.
This left adjoint determines a homotopy $H: \sdd_0(X) \times \Delta^1 \rightarrow \sdd_0(X)$ from
the identity to $F$, and $H$ induces a map of marked simplicial sets
$$ F^{+}_0(X,T)_n \times (\Delta^1)^{\sharp} \rightarrow F^{+}_0(X,T)_n.$$
Arguing as in Remark \ref{cubbly}, we deduce that the inclusion
$F^{+}_1(X,T)_n \subseteq F^{+}_0(X,T)_n$ is marked anodyne and therefore a weak equivalence of marked simplicial sets. It follows that $F^{+}_1(X,T)_n \subseteq F^{+}(X,T)_n$ is again an equivalence of marked simplicial sets.
\end{notation}

\begin{warning}
Let $(X,T)$ be as in Notation \ref{clubb}. The construction $[n] \mapsto F^{+}_{1}(X,T)_n$ is functorial only with respect to maps $f: [n] \rightarrow [m]$ such that $f(0) = 0$ and $f(n) = m$, so we cannot
regard $F^{+}_1(X,T)_{\bigdot}$ as a simplicial object of $\mSet$.

Note also that the constructions $(X,T) \mapsto \sdd^{+}_0(X,T)$ and $(X,T) \mapsto F^{+}_0(X,T)$
are not functorial in the pair $(X,T)$, since the image of a nondegenerate simplex of $X$
under a map $X \rightarrow Y$ need not be nondegenerate in $Y$.
\end{warning}

\begin{lemma}\label{urqi}
Suppose given a Quillen equivalence between combinatorial model categories
$\Adjoint{F}{\bfA}{\bfB}{G}.$
\begin{itemize}
\item[$(1)$] If the functor $F$ preserves weak equivalences, then it carries homotopy
colimit diagrams in $\bfA$ to homotopy colimit diagrams in $\bfB$ and homotopy limit
diagrams in $\bfA$ to homotopy limit diagrams in $\bfB$.
\item[$(2)$] If the functor $G$ preserves weak equivalences, then it carries homotopy
colimit diagrams in $\bfA$ to homotopy colimit diagrams in $\bfB$ and homotopy limit
diagrams in $\bfA$ to homotopy limit diagrams in $\bfB$.
\end{itemize}
\end{lemma}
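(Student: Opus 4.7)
The plan is to pass to the underlying $\infty$-categories. Recall that a combinatorial model category $\bfA$ has an associated underlying $\infty$-category $\calA$, obtained by inverting the weak equivalences, and that a Quillen equivalence $(F,G)$ induces an equivalence of $\infty$-categories $LF : \calA \simeq \calB : RG$. Moreover, a diagram $\overline{p} : K^{\triangleright} \to \bfA$ is a homotopy colimit (resp.\ limit) diagram precisely when its image in $\calA$ under the localization functor is a colimit (resp.\ limit) diagram; this is essentially the content of Theorem \toposref{colimcomparee} and its limit counterpart, once one has identified homotopy (co)limits in $\bfA$ with (co)limits in $\calA$ via the framework of Proposition \toposref{gumby444} and the fibrant replacement of a projective/injective diagram.

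The key observation is that if the left adjoint $F$ preserves weak equivalences, then $F$ descends directly (without deriving) to a functor $\overline{F} : \calA \to \calB$, and the canonical natural transformation $LF \to \overline{F}$ is an equivalence. Indeed, $LF$ is computed by first applying a cofibrant replacement $Q$; but the weak equivalence $QX \to X$ is sent by $F$ to a weak equivalence $F(QX) \to F(X)$, hence becomes an equivalence in $\calB$. Since $LF$ is an equivalence of $\infty$-categories by our Quillen equivalence hypothesis, so is $\overline{F}$. An equivalence of $\infty$-categories preserves all small limits and colimits, so $\overline{F}$ carries (co)limit diagrams in $\calA$ to (co)limit diagrams in $\calB$. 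Translating back through the above identification, $F$ carries homotopy (co)limit diagrams in $\bfA$ to homotopy (co)limit diagrams in $\bfB$, proving $(1)$. Assertion $(2)$ follows by an entirely analogous argument: if $G$ preserves weak equivalences, then the functor $\overline{G} : \calB \to \calA$ which it induces is equivalent to $RG$, which is an equivalence of $\infty$-categories, hence preserves all limits and colimits.

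The main technical point to verify is the passage between homotopy (co)limits in a combinatorial model category and (co)limits in its underlying $\infty$-category; once this is in hand, the rest of the proof is essentially formal, relying only on the standard fact that an equivalence of $\infty$-categories preserves all limits and colimits and on the observation that the hypothesis ``$F$ preserves weak equivalences'' is exactly what is needed to identify $F$ with its left derived functor at the level of underlying $\infty$-categories. The most delicate step is verifying that $\overline{F}$ and $LF$ agree as $\infty$-functors (not just pointwise on objects); this can be made precise either by working with a simplicial model of the localization $\calA = \Nerve(\bfA^{\degree})[W^{-1}]$ and exhibiting a natural transformation whose components are weak equivalences, or by appealing to the universal property of localization.
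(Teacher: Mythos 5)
Your proof is correct, but it takes a genuinely different route from the paper's. The paper argues entirely within model category theory: for the colimit case, it uses that $F$ preserves weak equivalences to replace the diagram by one where $\overline{p}$ is an honest colimit with $p$ projectively cofibrant, whence $F\overline{p}$ is an honest colimit of a projectively cofibrant diagram; for the limit case it argues by hand, building a fibrant comparison diagram $q'$ for $F \circ q$, forming its limit $\overline{q}'$, and then chasing the adjunction between $\overline{q} \to G\overline{q}'$ and $F\overline{q} \to \overline{q}'$ using both the Quillen equivalence hypothesis and the preservation of weak equivalences. Your approach instead passes to underlying $\infty$-categories: since $F$ preserves weak equivalences, it descends without deriving and agrees with $LF$, which by the Quillen equivalence hypothesis is an equivalence of $\infty$-categories and therefore preserves all limits and colimits; translating back via the identification of homotopy (co)limits with $\infty$-categorical (co)limits yields the claim. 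The tradeoff is that your proof is more conceptual, symmetric between limits and colimits, and arguably more transparent, but it relies on a heavier external apparatus (the theory of underlying $\infty$-categories, Theorem T.\texttt{colimcomparee}, and Dugger's replacement theorem for simplicial structures) precisely where the paper deliberately stays inside model-category-level arguments to keep the lemma self-contained. You are right to flag the one delicate point in your version, namely identifying the naive quotient functor $\overline{F}$ with $LF$ as $\infty$-functors rather than merely object-by-object; that identification is standard but must be asserted with some care, either via the universal property of the localization or by exhibiting an explicit zigzag of natural weak equivalences between $F$ and $F \circ Q$.
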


\begin{proof}
We will give the proof of $(1)$; the proof of $(2)$ is identical.

Let $\calJ$ be a small category and $\overline{p}: \calJ \star [0] \rightarrow \bfA$
a homotopy colimit diagram in $\bfA$. We wish to prove that $F \circ \overline{p}$ is a homotopy colimit
diagram in $\bfB$. Since $F$ preserves weak equivalences, we are free to replace $\overline{p}$ by a weakly equivalent diagram if necessary; we may therefore suppose that $\overline{p}$ is a colimit diagram and that $p = \overline{p}|\calJ$ is projectively cofibrant, in which case the result is obvious.

In the case of limit diagrams, we must work slightly harder. Let $\overline{q}: [0] \star \calJ \rightarrow \bfA$ be a homotopy limit diagram in $\bfA$. Let $q = \overline{q} | \calJ$, and choose a
weak equivalence $F \circ q \rightarrow q'$, where $q': \calJ \rightarrow \bfB$ is injectively fibrant.
Let $\overline{q}': [0] \star \calJ \rightarrow \bfB$ be a limit of $q'$. Then
$G \overline{q}'$ is a homotopy limit diagram in $\bfA$. For each $J \in \calJ$, the map
$F q(J) \rightarrow q'(J)$ is a weak equivalence. Since $(F,G)$ is a Quillen equivalent and $F$ preserves weak equivalences, it follows that the adjoint map $q(J) \rightarrow G q'(J)$ is a weak equivalence. Since $\overline{q}$ and $G \overline{q}'$ are both homotopy limit diagrams, we
deduce that the map $\overline{q} \rightarrow G \overline{q'}$ is a weak equivalence of diagrams, so that the adjoint map $F \overline{q} \rightarrow \overline{q}'$ is a weak equivalence of diagrams as well. It follows that $F \overline{q}$ is a homotopy limit diagram in $\bfB$, as desired.
\end{proof}

\begin{lemma}\label{slappus}
The functor ${F'}^{+}: \scSet \rightarrow \Fun( \cDelta^{op}, \mSet)$ preserves weak equivalences
and homotopy colimit diagrams, and induces an equivalence of homotopy categories.
\end{lemma}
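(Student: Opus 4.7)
The plan is to factor ${F'}^{+}$ as the composition
\[
\scSet \xrightarrow{\scCoNerve} \Cat_{\mSet} \xrightarrow{i} \Cat_{\mSet} \hookrightarrow \PreSeg{\mSet} \xrightarrow{\UnPre} \Fun(\cDelta^{op}, \mSet),
\]
where $\PreSeg{\mSet}$ carries the projective model structure of Theorem \ref{castle2} and $\Fun(\cDelta^{op}, \mSet)$ the complete Segal model structure of Proposition \ref{camper}. The strategy is to argue that each of the four factors preserves all weak equivalences and is a Quillen equivalence (in one of the two directions), so that the lemma follows by composing the resulting equivalences of homotopy categories and appealing to Lemma \ref{urqi} for the preservation of homotopy colimit diagrams.

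First I would verify preservation of weak equivalences factor-by-factor. Since every object of $\scSet$ is cofibrant, the left Quillen functor $\scCoNerve$ (Theorem \ref{slai}) preserves all weak equivalences. The functor $i$ induced by the opposite functor $X \mapsto X^{op}$ on $\mSet$ is an isomorphism of categories and is therefore trivially homotopical. The fully faithful embedding $\Cat_{\mSet} \hookrightarrow \PreSeg{\mSet}$ preserves weak equivalences by Remark \ref{schwe}. Finally, Lemma \ref{sacurt} shows that $\UnPre$ carries projective weak equivalences in $\PreSeg{\mSet}$ to weak equivalences in the complete Segal model structure on $\Fun(\cDelta^{op}, \mSet)$.

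Next I would observe that each factor induces an equivalence of homotopy categories: $\scCoNerve$ by Theorem \ref{slai}, $i$ trivially, the inclusion $\Cat_{\mSet} \hookrightarrow \PreSeg{\mSet}$ by Theorem \ref{castle2}, and $\UnPre$ from the projective structure by combining Propositions \ref{curt2} and \ref{curt}. Because each of these functors preserves all weak equivalences, each coincides with its own total derived functor, so composing yields the equivalence of homotopy categories claimed for ${F'}^{+}$. The preservation of homotopy colimit diagrams then follows from Lemma \ref{urqi}, applied to each factor separately (part (1) to the left Quillen equivalences $\scCoNerve$, $i$, and $\UnPre$, part (2) to the right Quillen equivalence $\Cat_{\mSet} \hookrightarrow \PreSeg{\mSet}$), and is inherited by the composition. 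No single step presents a genuine obstacle; all the substantive work has already been carried out in the cited Quillen equivalences, and the proof is largely a matter of assembling them while keeping careful track of which model structure is in play on $\PreSeg{\mSet}$ at each stage.
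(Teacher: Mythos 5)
Your proposal is correct and follows essentially the same route as the paper's proof: the same four-term factorization of ${F'}^{+}$, with each factor a Quillen equivalence that preserves all weak equivalences, and Lemma \ref{urqi} invoked at the end for preservation of homotopy colimits. The only cosmetic difference is that you justify the homotopy invariance of $\scCoNerve$ via Ken Brown's lemma plus cofibrancy of all objects of $\scSet$, whereas the paper appeals directly to Definition \ref{bicateq} (bicategorical equivalences are by definition the maps $\scCoNerve$ sends to weak equivalences), which is slightly more immediate.
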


\begin{proof}
The functor ${F'}^{+}$ factors as a composition of left and right Quillen equivalences
$$ \scSet \stackrel{ \scCoNerve}{\rightarrow} \Cat_{\mSet} \stackrel{i}{\simeq}
\Cat_{\mSet}
\subseteq \PreSeg{\mSet} \stackrel{\UnPre}{\rightarrow} \Fun( \cDelta^{op}, \mSet),$$
(Theorem \ref{slai}, Theorem \ref{castle2}, and Proposition \ref{curt}),
each of which preserves weak equivalences (Definition \ref{bicateq}, 
Remark \ref{schwe}, and Lemma \ref{sacurt}). We now conclude by invoking Lemma \ref{urqi}.
\end{proof}

\begin{lemma}\label{jinker}
Let $(X,T)$ be a scaled simplicial set which is isomorphic either to $\Delta^2_{\sharp}$ or
to $\Delta^n_{\flat}$, for some $n \geq 0$. Then the map
$$ \alpha^{+}_{(X,T)}: F^{+}(X,T)_{\bigdot} \rightarrow {F'}^{+}(X,T)_{\bigdot}$$
is a weak equivalence in $\Fun( \cDelta^{op}, \mSet)$ with respect to the injective model structure.
\end{lemma}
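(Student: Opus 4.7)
The plan is to verify the statement levelwise: since weak equivalences in the injective model structure on $\Fun(\cDelta^{op}, \mSet)$ are detected pointwise, it suffices to show that for each $m \geq 0$, the map $\alpha^+_{(X,T)}(m) : F^+(X,T)_m \to {F'}^+(X,T)_m$ is a weak equivalence in $\mSet$. Both sides split as disjoint unions compatibly with $\alpha^+$: the source as $\coprod_\sigma F^+(X,T)_\sigma$ over $m$-simplices $\sigma$ of $X$, and the target as $\coprod_{(y_0, \ldots, y_m)} \prod_i \bHom_{\scCoNerve[(X,T)]}(y_i, y_{i+1})^{op}$ over $(m+1)$-tuples of vertices of $X$. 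For $X$ equal to $\Delta^n_\flat$ or $\Delta^2_\sharp$, the mapping space $\bHom_{\sCoNerve[\Delta^n]}(y_i, y_{i+1})$ is empty whenever $y_i > y_{i+1}$, so only monotone tuples contribute on the target, and monotone tuples correspond bijectively to $m$-simplices of $X$. The problem therefore reduces to showing, for each $m$-simplex $\sigma$ with vertex tuple $(y_0, \ldots, y_m)$, that the component map $F^+(X,T)_\sigma \to \prod_i \bHom_{\scCoNerve[(X,T)]}(y_i, y_{i+1})^{op}$ is a weak equivalence of marked simplicial sets.

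Next I would invoke Notation \ref{clubb}, which provides a marked anodyne inclusion $F^+_1(X,T)_m \hookrightarrow F^+(X,T)_m$ respecting the decomposition above, so it further suffices to prove that the restriction $F^+_1(X,T)_\sigma \to \prod_i \bHom_{\scCoNerve[(X,T)]}(y_i, y_{i+1})^{op}$ is a weak equivalence. By definition, the underlying simplicial set of $F^+_1(X,T)_\sigma$ is $\Nerve(\calC_\sigma)^{op}$, where $\calC_\sigma$ is the poset of subsets $S \subseteq [n]$ satisfying $\min S = y_0$, $\max S = y_m$, and $\{y_0, \ldots, y_m\} \subseteq S$, ordered by inclusion. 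Remark \ref{scootnerve} identifies each $\bHom_{\sCoNerve[\Delta^n]}(y_i, y_{i+1})$ with the nerve of the poset $P_{y_i, y_{i+1}}$ of subsets of $\{y_i, \ldots, y_{i+1}\}$ containing both endpoints. The assignment $S \mapsto (S \cap \{y_i, \ldots, y_{i+1}\})_{0 \leq i < m}$ is a bijection realizing a poset isomorphism $\calC_\sigma \cong \prod_i P_{y_i, y_{i+1}}$, and the explicit description of the maps $\beta_i$ in the definition of $\alpha$ confirms that this isomorphism is precisely the underlying map of $\alpha^+$ restricted to the $\sigma$-summand. Hence $\alpha^+$ is an isomorphism on underlying simplicial sets.

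It remains only to match markings. A non-identity edge of $F^+_1(X,T)_\sigma$ comes from a strict inclusion $S^0 \subsetneq S^1$ in $\calC_\sigma$, realized by a simplicial map $g: [k_0] \to [k_1]$ with $g(0) = 0$ and $g(k_0) = k_1$; it is marked in $F^+(X,T)$ if either (type (a)) $g$ has convex image in $[k_1]$, or (type (b)) $g$ is isomorphic to $\Delta^{\{0,2\}} \hookrightarrow \Delta^2$ with the associated $2$-simplex thin in $X$. For $X = \Delta^n_\flat$ only type (a) applies; but the endpoint conditions on $g$ then force it to be surjective, contradicting $S^0 \subsetneq S^1$. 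So no non-degenerate edge of $F^+_1(X,T)_\sigma$ is marked, and symmetrically each $\bHom_{\scCoNerve[\Delta^n_\flat]}(y_i, y_{i+1})$ carries only degenerate marked edges, since $\Delta^n_\flat$ has no nondegenerate thin $2$-simplex to feed the definition of $\scCoNerve$. Thus $\alpha^+$ is an isomorphism of marked simplicial sets in this case. For $X = \Delta^2_\sharp$, the unique nondegenerate thin $2$-simplex introduces exactly one family of type-(b) marked edges in $F^+_1$, corresponding to the inclusions $\{0,2\} \subsetneq \{0,1,2\}$ sitting in the single transition factor of $\prod_i P_{y_i, y_{i+1}}$ where the vertex tuple jumps from $0$ to $2$; this matches exactly the marked edge in $\bHom_{\scCoNerve[\Delta^2_\sharp]}(0,2)$ produced by the same thin simplex via the definition of $\scCoNerve$, so $\alpha^+$ is again an isomorphism of marked simplicial sets. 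The main obstacle is the combinatorial bookkeeping required to align the type-(b) markings in the $\Delta^2_\sharp$ case against the definition of $\scCoNerve$; the remaining content is elementary poset combinatorics.
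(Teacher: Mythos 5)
Your proof is correct and follows essentially the same route as the paper: reduce to a levelwise statement, replace $F^{+}(X,T)_{m}$ by $F^{+}_1(X,T)_{m}$ via the weak equivalence of Notation \ref{clubb}, and show the resulting map to ${F'}^{+}(X,T)_m$ is an isomorphism of marked simplicial sets. You have simply spelled out the ``direct calculation'' that the paper leaves implicit (and, a very minor point, the composite inclusion $F^{+}_1 \hookrightarrow F^{+}$ is only asserted in Notation \ref{clubb} to be a coCartesian equivalence, not marked anodyne, though this does not affect the two-out-of-three argument).
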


\begin{proof}
We will show that the induced map $f: F^{+}(X,T)_{k} \rightarrow {F'}^{+}(X,T)_{k}$ is a weak equivalence in $\mSet$ for each $k \geq 0$. We have a commutative diagram
$$ \xymatrix{ & F^{+}(X,T)_{k} \ar[dr]^{f} & \\
F^{+}_1(X,T)_{k} \ar[ur]^{g} \ar[rr]^{h} & & {F'}^{+}(X,T)_{k} }$$
where $g$ is the weak equivalence of Notation \ref{clubb}. A direct calculation shows that
$h$ is an isomorphism of marked simplicial sets, so that $f$ is a weak equivalence by the two-out-of-three property.
\end{proof}

\begin{proposition}\label{kium}
Let $(X,T)$ be a scaled simplicial set. Then the induced map
$\alpha_{(X,T)}: F^{+}(X,T)_{\bigdot} \rightarrow {F'}^{+}(X,T)_{\bigdot}$ is a weak equivalence with
respect to the complete Segal model structure on $\Fun( \cDelta^{op}, \mSet)$.
\end{proposition}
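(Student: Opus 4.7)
The plan is to prove the proposition by cellular induction, reducing to the base cases handled by Lemma \ref{jinker}. Let $\mathcal{C}$ denote the class of scaled simplicial sets $(X,T)$ for which $\alpha^{+}_{(X,T)}$ is a weak equivalence with respect to the complete Segal model structure on $\Fun(\cDelta^{op}, \mSet)$. By Lemma \ref{jinker}, $\mathcal{C}$ already contains $\Delta^n_{\flat}$ for all $n \geq 0$ and $\Delta^2_{\sharp}$, since weak equivalences with respect to the injective model structure are weak equivalences with respect to the complete Segal model structure. The goal is to show $\mathcal{C}$ contains every scaled simplicial set.

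The first step is to establish closure of $\mathcal{C}$ under pushouts along cofibrations and under transfinite composition of cofibrations. Both $F^{+}$ and ${F'}^{+}$ preserve all (small) colimits: $F^{+} = \lNerve^{+}_{\bigdot}(\cDelta^{op}) \circ \sdd^{+}$ is manifestly a composition of left adjoints, while ${F'}^{+} = \UnPre \circ i \circ \scCoNerve$ is also a composition of left adjoints. Moreover, both functors carry monomorphisms in $\scSet$ to cofibrations in $\Fun(\cDelta^{op}, \mSet)$ endowed with the injective (and hence complete Segal) model structure: for $F^{+}$ this is immediate from the construction, and for ${F'}^{+}$ this follows from Proposition \ref{presus} (which gives that $\scCoNerve$ preserves cofibrations), combined with the fact that $\UnPre$ is a left Quillen functor by Proposition \ref{curt}. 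Since the complete Segal model structure is left proper (Proposition \ref{camper}), it follows that both functors send pushouts along cofibrations to homotopy pushouts, and transfinite compositions of cofibrations to homotopy colimits. Combining this with the fact that ${F'}^{+}$ preserves weak equivalences (Lemma \ref{slappus}), a standard gluing/transfinite induction argument shows that $\mathcal{C}$ is closed under pushouts along cofibrations with all entries in $\mathcal{C}$, and under transfinite composition of cofibrations whose terms lie in $\mathcal{C}$.

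With these closure properties, the induction proceeds in two layers. First, I would show that $X_{\flat} \in \mathcal{C}$ for every simplicial set $X$, by induction on skeleta: attaching an $n$-cell amounts to a pushout along the cofibration $(\bd \Delta^n)_{\flat} \hookrightarrow \Delta^n_{\flat}$, where $\Delta^n_{\flat}$ is in $\mathcal{C}$ by Lemma \ref{jinker} and $(\bd \Delta^n)_{\flat}$ is in $\mathcal{C}$ by the inductive hypothesis (built from lower-dimensional simplices by iterated pushouts). A general simplicial set is a transfinite composition of its skeleta, so $X_{\flat} \in \mathcal{C}$. Next, to pass from $X_{\flat}$ to a general scaled simplicial set $(X,T)$, I well-order the nondegenerate thin triangles of $T$ and adjoin them one at a time: adjoining a single thin triangle is a pushout along the cofibration $\Delta^2_{\flat} \hookrightarrow \Delta^2_{\sharp}$, both of whose endpoints lie in $\mathcal{C}$ by Lemma \ref{jinker}. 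Taking a transfinite composition shows $(X,T) \in \mathcal{C}$.

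The main obstacle I expect is the verification that ${F'}^{+}$ preserves cofibrations in the appropriate sense; this requires unwinding the definitions of $\scCoNerve$ and $\UnPre$ carefully, and in particular using Proposition \ref{presus} to show $\scCoNerve$ takes monomorphisms to $\mCat$-cofibrations. Once this is in hand, everything else is formal from left properness of the complete Segal model structure together with Lemma \ref{slappus}; the cellular decomposition of an arbitrary scaled simplicial set is entirely straightforward.
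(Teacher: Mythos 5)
Your overall strategy is the same as the paper's: use Lemma \ref{jinker} to handle $\Delta^n_{\flat}$ and $\Delta^2_{\sharp}$, then assemble a general scaled simplicial set from these by pushouts along cofibrations and (trans)finite colimits, using Lemma \ref{slappus} to control $F'^{+}$. The order in which you peel off thin triangles versus underlying simplices differs from the paper (which uses filtered colimit stability, Proposition \ref{smashet}, to reduce to finite $X$ first, then removes thin simplices, then inducts on skeleta), but this is a cosmetic difference.

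However, there is a genuine gap in your justification of the closure property. You assert that $F'^{+} = \UnPre \circ i \circ \scCoNerve$ "is also a composition of left adjoints" and so preserves all small colimits; this omits the intermediate inclusion $\Cat_{\mSet} \hookrightarrow \PreSeg{\mSet}$, which is a \emph{right} Quillen functor (it is the right adjoint $G$ of the Quillen equivalence in Theorem \ref{castle2}). That inclusion does not preserve pushouts: the strict pushout $[1] \sqcup_{[0]} [1]$ in $\Cat_{\mSet}$ adjoins a composite morphism, whereas the corresponding pushout in $\PreSeg{\mSet}$ leaves the composite mapping object empty. Similarly, the claim that this inclusion carries cofibrations of $\Cat_{\mSet}$ to projective cofibrations of $\PreSeg{\mSet}$ is unsupported (nor does it follow just from $\scCoNerve$ and $\UnPre$ being left Quillen, since the middle inclusion is not), and you correctly anticipate this as the difficulty. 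But the cofibration-preservation claim is in fact unnecessary: the correct move, which is what the paper does, is to observe that $F'^{+}$ preserves \emph{homotopy colimit diagrams} (Lemma \ref{slappus}, via Lemma \ref{urqi}), and that a pushout along a cofibration in $\scSet$ is already a homotopy pushout by left properness of $\scSet$. Thus $F'^{+}$ carries it to a homotopy pushout square in $\Fun(\cDelta^{op}, \mSet)$, with no need to know anything about strict colimits or cofibration preservation for $F'^{+}$. The rest of your argument then goes through as you sketch, using the gluing lemma for homotopy pushouts and the analogous stability of weak equivalences under (trans)finite homotopy colimits.
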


\begin{proof}
Let us say that a scaled simplicial set $(X,T)$ is {\it good} if the map
$\alpha_{(X,T)}$ is a weak equivalence. We wish to prove that every scaled simplicial set $(X,T)$ is good. For this, we argue as follows:
\begin{itemize}
\item[$(a)$] The functors $F^{+}$ and ${F'}^{+}$ commute with filtered colimits, and the collection of weak equivalences in $\Fun( \cDelta^{op}, \mSet)$ is stable under filtered colimits 
(Proposition \ref{smashet} and Example \ref{smashett}). It follows that the collection of good scaled simplicial sets is stable under filtered colimits.
\item[$(b)$] Suppose given a pushout diagram
$$ \xymatrix{ (X,T) \ar[r] \ar[d] & (X', T') \ar[d] \\
(Y,S) \ar[r] & (Y', S') }$$
of scaled simplicial sets, in which the vertical morphisms are cofibrations.
If $(X,T)$, $(X',T')$, and $(Y,S)$ are good, then $(Y', S')$ is good. To prove this, it suffices
to show that the diagrams
$$ \xymatrix{ F^{+}(X,T)_{\bigdot} \ar[r] \ar[d] & F^{+}(X',T')_{\bigdot} \ar[d] & {F'}^{+}(X,T)_{\bigdot} \ar[r] \ar[d] & {F'}^{+}(X',T')_{\bigdot} \ar[d] \\
F^{+}(Y,S)_{\bigdot} \ar[r] & F^{+}(Y',S')_{\bigdot} & {F'}^{+}(Y,S)_{\bigdot} \ar[r] & {F'}^{+}(Y',S')_{\bigdot} }$$
are both homotopy pushout squares with respect to the complete Segal model structure on 
the category $\Fun(\cDelta^{op}, \mSet)$. For the square on the left this is obvious, since it is a pushout square
and the vertical maps are cofibrations. For the square on the right, we invoke Lemma \ref{slappus}.

\item[$(c)$] By virtue of $(a)$, it will suffice to prove that $(X,T)$ is good whenever
the simplicial set $X$ has only finitely many nondegenerate simplices. We now work
by induction on the number of nondegenerate thin simplices of $X$. If this number is not zero, then
we have a pushout diagram
$$ \xymatrix{ \Delta^2_{\flat} \ar[r] \ar[d] & (X,T_0) \ar[d] \\
\Delta^2_{\sharp} \ar[r] & (X,T) }$$
and we conclude using the inductive hypothesis together with $(b)$ and Lemma \ref{jinker}. We may therefore
assume that $(X,T) = X_{\flat}$, where $X$ is a finite simplicial set.

\item[$(d)$] We now work by induction on the dimension $n$ of $X$ and the number of nondegenerate simplices of maximal dimension. If $X$ is empty, the result is obvious. Otherwise, we have a pushout
diagram 
$$ \xymatrix{ \bd \Delta^n \ar[r] \ar[d] & \Delta^n \ar[d] \\
X' \ar[r] & X. }$$
Using the inductive hypothesis, we deduce that $(\bd \Delta^n)_{\flat}$ and
$X'_{\flat}$ are good. Lemma \ref{jinker} implies that $\Delta^n_{\flat}$ is good. Invoking
$(b)$, we deduce that $X_{\flat}$ is good, as desired.
\end{itemize}
\end{proof}

\begin{proposition}\label{slape}
The functor $\sdd^{+}: \scSet \rightarrow \mset{ \Nerve(\cDelta)^{op} }$ carries bicategorical
equivalences of scaled simplicial sets to weak equivalences in $\mset{ \Nerve(\cDelta)^{op} }$ $($with respect to the complete Segal model structure$)$.
\end{proposition}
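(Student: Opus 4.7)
The plan is to deduce the result from Proposition \ref{kium} together with the Quillen equivalence established in Proposition \ref{camperr}. Let $f: (X,T) \to (X',T')$ be a bicategorical equivalence of scaled simplicial sets. We must show that the induced map $\sdd^{+}(f)$ is a weak equivalence in $\mset{ \Nerve(\cDelta)^{op}}$ with respect to the complete Segal model structure.

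The strategy is to pass to $\Fun(\cDelta^{op}, \mSet)$ via the functor $\lNerve^{+}_{\bigdot}(\cDelta^{op})$, where we can compare $F^{+}$ to the better-behaved functor ${F'}^{+}$. Recall that by definition $F^{+} = \lNerve^{+}_{\bigdot}(\cDelta^{op}) \circ \sdd^{+}$. Because $\lNerve^{+}_{\bigdot}(\cDelta^{op})$ is the left adjoint in a Quillen equivalence between the complete Segal model structure on $\mset{\Nerve(\cDelta)^{op}}$ and the complete Segal model structure on $\Fun(\cDelta^{op}, \mSet)$ (Proposition \ref{camperr}), and since every object of $\mset{ \Nerve(\cDelta)^{op}}$ is cofibrant (its cofibrations being the monomorphisms), the functor $\lNerve^{+}_{\bigdot}(\cDelta^{op})$ both preserves and reflects weak equivalences. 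It therefore suffices to show that $F^{+}(f)$ is a weak equivalence in $\Fun(\cDelta^{op}, \mSet)$ with respect to the complete Segal model structure.

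To this end, consider the commutative square
$$ \xymatrix{ F^{+}(X,T)_{\bigdot} \ar[r]^{F^{+}(f)} \ar[d]^{\alpha^{+}_{(X,T)}} & F^{+}(X',T')_{\bigdot} \ar[d]^{\alpha^{+}_{(X',T')}} \\
{F'}^{+}(X,T)_{\bigdot} \ar[r]^{{F'}^{+}(f)} & {F'}^{+}(X',T')_{\bigdot} }$$
in $\Fun(\cDelta^{op}, \mSet)$. Proposition \ref{kium} guarantees that the two vertical maps are weak equivalences with respect to the complete Segal model structure. Lemma \ref{slappus} asserts that ${F'}^{+}$ preserves weak equivalences, so the bottom horizontal arrow is also a weak equivalence. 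Invoking the two-out-of-three property, we conclude that $F^{+}(f)$ is a weak equivalence, which by the remarks of the previous paragraph implies that $\sdd^{+}(f)$ is a weak equivalence in $\mset{ \Nerve(\cDelta)^{op}}$.

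The main substantive inputs are already established in the excerpt, so the proof is essentially a bookkeeping argument; the only conceptual point requiring care is the verification that $\lNerve^{+}_{\bigdot}(\cDelta^{op})$ reflects weak equivalences, which follows from its being a left Quillen equivalence between model categories in which every object is cofibrant. No further obstacle is anticipated.
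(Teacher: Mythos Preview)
Your proof is correct and follows essentially the same approach as the paper: reduce to showing $F^{+}(f)$ is a weak equivalence in $\Fun(\cDelta^{op},\mSet)$ via the Quillen equivalence of Proposition~\ref{camperr}, then use the commutative square comparing $F^{+}$ to ${F'}^{+}$ together with Proposition~\ref{kium} and Lemma~\ref{slappus}. You are in fact slightly more careful than the paper in justifying why $\lNerve^{+}_{\bigdot}(\cDelta^{op})$ \emph{reflects} weak equivalences (the paper simply cites the Quillen equivalence and moves on).
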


\begin{proof}
Let $f: (X,T) \rightarrow (Y,S)$ be a map of scaled simplicial sets. We wish to prove htat
$\sdd^{+}(f)$ is a weak equivalence. By virtue of Proposition \ref{campe}, it will suffice to show that
$F^{+}(f)$ is a weak equivalence with respect to the complete Segal model structure on
$\Fun( \cDelta^{op}, \mSet)$. This follows by applying Proposition \ref{kium} and
Lemma \ref{slappus} to the diagram
$$ \xymatrix{ F^{+}(X,T)_{\bigdot} \ar[r]^{F^{+}(f)} \ar[d] & F^{+}(Y,S)_{\bigdot} \ar[d] \\
{F'}^{+}(X,T)_{\bigdot} \ar[r]^{{F'}^{+}(f)} & {F'}^{+}(Y,S)_{\bigdot}. }$$
\end{proof}

\begin{corollary}
The functor $\sdd: \sSet \rightarrow (\sSet)_{/ \Nerve(\cDelta)^{op} }$ carries categorical
equivalences in $\sSet$ to weak equivalences in $(\sSet)_{/ \Nerve(\cDelta)^{op} }$
(with respect to the complete Segal model structure).
\end{corollary}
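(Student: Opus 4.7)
The plan is to deduce this corollary by combining Proposition \ref{slape} with the comparison between the marked and unmarked versions of the subdivision construction. First, let $f \colon X \to Y$ be a categorical equivalence of simplicial sets. By Proposition \ref{twop2}, the induced map $f_{\sharp} \colon X_{\sharp} \to Y_{\sharp}$ is a bicategorical equivalence of scaled simplicial sets. Applying Proposition \ref{slape} to $f_{\sharp}$, the map $\sdd^{+}(f_{\sharp}) \colon \sdd^{+}(X_{\sharp}) \to \sdd^{+}(Y_{\sharp})$ is a weak equivalence in $\mset{\Nerve(\cDelta)^{op}}$ with respect to the complete Segal model structure of Proposition \ref{camperr}.

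Next, I will transfer this weak equivalence across the forgetful functor $U \colon \mset{\Nerve(\cDelta)^{op}} \to (\sSet)_{/\Nerve(\cDelta)^{op}}$. By Remark \ref{cooperr}, this functor is a left Quillen functor with respect to the complete Segal model structures on both sides. Since every object of $\mset{\Nerve(\cDelta)^{op}}$ is cofibrant (the cofibrations being precisely the monomorphisms on underlying simplicial sets), Ken Brown's lemma implies that $U$ carries all weak equivalences to weak equivalences. In particular, $U(\sdd^{+}(f_{\sharp}))$ is a weak equivalence in $(\sSet)_{/\Nerve(\cDelta)^{op}}$ with respect to the complete Segal model structure of Proposition \ref{campe}.

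Finally, I need to identify this with $\sdd(f)$. Unwinding Definition \ref{conf}, the underlying simplicial set of $\sdd^{+}(X_{\sharp})$ is precisely $\sdd(X) = \Nerve(\cDelta_{X})^{op}$, equipped with the same structure map to $\Nerve(\cDelta)^{op}$, and the assignment is clearly functorial in $X$. Hence $U \circ \sdd^{+} \circ (-)_{\sharp}$ coincides with $\sdd$ as a functor from $\sSet$ to $(\sSet)_{/\Nerve(\cDelta)^{op}}$, so $U(\sdd^{+}(f_{\sharp})) = \sdd(f)$ is a weak equivalence, completing the proof. There is no serious obstacle here; the only point requiring a moment of thought is the cofibrancy of all objects, which is what allows Ken Brown to promote the left Quillen property into preservation of arbitrary weak equivalences.
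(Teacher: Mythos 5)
Your argument is correct and follows essentially the same route as the paper: apply Proposition \ref{twop2} to get a bicategorical equivalence $f_\sharp$, apply Proposition \ref{slape}, then transfer across the forgetful functor via Remark \ref{cooperr}. The only difference is cosmetic: you spell out the Ken Brown / cofibrancy justification for why the left Quillen functor preserves all weak equivalences, which the paper leaves implicit.
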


\begin{proof}
Let $f: X \rightarrow Y$ be a categorical equivalence of simplicial sets. Then
the induced map $f_{\sharp}: X_{\sharp} \rightarrow Y_{\sharp}$ is a bicategorical equivalence
(Proposition \ref{twop2}). It follows that $\sdd^{+}(f_{\sharp}): \sdd^{+} X_{\sharp} \rightarrow \sdd^{+} Y_{\sharp}$ is
a weak equivalence with respect to the complete Segal model structure on
$\mset{ \Nerve(\cDelta)^{op} }$. Since the forgetful functor
$F: \mset{ \Nerve(\cDelta)^{op} } \rightarrow (\sSet)_{/ \Nerve(\cDelta)}^{op}$ is a left
Quillen functor (Remark \ref{cooperr}), we conclude that $\sdd(f) = F \sdd^{+}(f_{\sharp})$ is a weak equivalence as desired.
\end{proof}

\begin{theorem}\label{cuti}
\begin{itemize}
\item[$(1)$]
The functor $\sdd^{+}$ is a left Quillen equivalence from $\scSet$ $($endowed with the bicategorical model structure$)$ to $\mset{ \Nerve(\cDelta)^{op} }$ $($endowed with the complete Segal model structure$)$.
\item[$(2)$] 
The functor $\sdd$ is a left Quillen equivalence from $\sSet$ $($endowed with the Joyal model structure$)$ to $\sSet_{/ \Nerve(\cDelta)^{op}}$ $($endowed with the complete Segal model structure$)$.
\end{itemize}
\end{theorem}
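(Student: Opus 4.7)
The plan is to deduce both statements from the analysis already carried out in Proposition \ref{kium} and Lemma \ref{slappus}, by factoring through the Quillen equivalences of Propositions \ref{camperr} and \ref{campe}. First I will observe that both $\sdd^+$ and $\sdd$ are left Quillen functors. Indeed, both functors commute with colimits (so right adjoints exist by the adjoint functor theorem), both preserve cofibrations (which on each side are the monomorphisms of the underlying simplicial sets), and Proposition \ref{slape} together with its unmarked corollary shows that they preserve weak equivalences.

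For part $(1)$, the composite $\lNerve^{+}_{\bullet}(\cDelta^{op}) \circ \sdd^{+}$ is by definition the functor $F^{+}$. Proposition \ref{kium} provides a pointwise weak equivalence $\alpha^{+}: F^{+} \to F'^{+}$ in the complete Segal model structure on $\Fun(\cDelta^{op}, \mSet)$, and Lemma \ref{slappus} exhibits $F'^{+}$ as a composition of left and right Quillen equivalences (namely $\scCoNerve$, the opposite-category involution on $\Cat_{\mSet}$, the inclusion $\Cat_{\mSet} \hookrightarrow \PreSeg{\mSet}$, and $\UnPre$), each of which preserves all weak equivalences. Hence $F'^{+}$ induces an equivalence of homotopy categories, and so does $F^{+}$. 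Since $\lNerve^{+}_{\bullet}(\cDelta^{op})$ is itself a Quillen equivalence (Proposition \ref{camperr}) and everything in sight preserves weak equivalences, two-out-of-three forces $\sdd^{+}$ to induce an equivalence of homotopy categories; being a left Quillen functor that induces an equivalence of homotopy categories, $\sdd^{+}$ is a left Quillen equivalence.

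Part $(2)$ will be obtained by running a structurally parallel argument in the unmarked setting, with $F$ in place of $F^{+}$, the unmarked analogue $F'(X)_n = \coprod_{x_0, \ldots, x_n} \bHom_{\sCoNerve[X]}(x_0, x_1)^{\op} \times \cdots \times \bHom_{\sCoNerve[X]}(x_{n-1}, x_n)^{\op}$ in place of $F'^{+}$, and the Quillen equivalence $\lNerve_{\bullet}(\cDelta^{op})$ of Proposition \ref{campe} in place of $\lNerve^{+}_{\bullet}(\cDelta^{op})$. The unmarked analogue of Lemma \ref{slappus} is immediate from the factorization of $F'$ through $\sCoNerve$, the opposite-category involution on $\Cat_{\sSet}$, the inclusion into $\PreSeg{\sSet}$, and $\UnPre$, each a Quillen equivalence preserving weak equivalences (by Bergner's theorem, Theorem \ref{castle2}, and Proposition \ref{curt}). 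What remains is the unmarked analogue of Proposition \ref{kium}: the natural transformation $\alpha: F \to F'$ is a pointwise weak equivalence in the complete Segal model structure on $\Fun(\cDelta^{op}, \sSet)$.

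The main obstacle is thus establishing this unmarked version of Proposition \ref{kium}. I expect its proof to transcribe essentially verbatim from that of Proposition \ref{kium}: the pushout and filtered-colimit reduction still works (with ``good'' now meaning good in the unmarked complete Segal model structure), and it reduces the problem to the unmarked analogue of Lemma \ref{jinker}, namely that $\alpha_{\Delta^n}: F(\Delta^n) \to F'(\Delta^n)$ is a levelwise weak equivalence of simplicial sets. This latter assertion can be verified by constructing a subobject $F_1(\Delta^n)_k \subseteq F(\Delta^n)_k$ isomorphic to $F'(\Delta^n)_k$ such that the inclusion is a weak equivalence, parallel to Notation \ref{clubb} but without the marking bookkeeping. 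Once this unmarked Proposition \ref{kium} is in place, the argument of part $(1)$ applies verbatim to yield part $(2)$.
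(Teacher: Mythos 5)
Your proposal matches the paper's proof essentially step for step: for $(1)$ the paper likewise reduces, via the Quillen equivalence of Proposition \ref{camperr} (the paper's text cites Proposition \ref{curt} at this point, but that is evidently a typo for \ref{camperr}), to showing $F^{+}$ induces an equivalence of homotopy categories, and then combines Proposition \ref{kium} with Lemma \ref{slappus}; for $(2)$ the paper simply remarks that the argument is ``similar (but slightly easier),'' and your sketch of the unmarked analogues of Lemmas \ref{slappus}, \ref{jinker} and Proposition \ref{kium} is exactly the intended filling-in of that remark.
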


\begin{proof}
We will give the proof of $(1)$; the proof of $(2)$ is similar (but slightly easier). It is easy to see
that $\sdd^{+}$ preserves cofibrations and admits a right adjoint. Proposition \ref{slape} implies that $\sdd^{+}$ preserves weak equivalences. To complete the proof, it will suffice to show that
$\sdd^{+}$ induces an equivalence from the homotopy category $\h{ \scSet}$ to the homotopy
category $\h{ \mset{ \Nerve(\cDelta)^{op} }}$. Invoking Proposition \ref{curt}, we can reduce to proving that the functor $F^{+} = \lNerve^{+}_{\bigdot}( \cDelta^{op}) \circ \sdd^{+}$ induces an
equivalence from $\h{ \scSet}$ to $\h{ \Fun( \cDelta^{op}, \mSet)}$. Proposition \ref{kium} allows
us to replace $F^{+}$ by the functor ${F'}^{+}$, so that the desired result follows from Lemma \ref{slappus}. 
\end{proof}

\begin{corollary}[Joyal-Tierney]\label{JT}
Let $Q: \sSet \rightarrow \Fun( \cDelta^{op}, \sSet)$ be the functor described as follows:
for every simplicial set $X$, $Q(X)_{n}$ is the discrete simplicial set corresponding to the
set $\Hom_{\sSet}( \Delta^n, X)$ of $n$-simplices of $X$. Then
$Q$ is a left Quillen equivalence from $\sSet$ $($endowed with the Joyal model structure$)$
to $\Fun( \cDelta^{op}, \SSet)$ $($endowed with the complete Segal model structure$)$.
\end{corollary}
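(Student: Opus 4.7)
The plan is to compare $Q$ with the left Quillen equivalence $F := \lNerve_{\bigdot}(\cDelta^{op}) \circ \sdd$ obtained by composing Theorem \ref{cuti}(2) with Proposition \ref{campe}, by constructing a natural pointwise weak equivalence $\alpha: F \to Q$. The functor $Q$ is the pullback $\pi_1^*$ along the first projection $\pi_1: \cDelta^{op} \times \cDelta^{op} \to \cDelta^{op}$, so it admits a right adjoint (explicitly, $Y \mapsto Y_{\bigdot 0}$); it clearly preserves cofibrations (monomorphisms), so the real work is to show it preserves weak equivalences.

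The key construction is $\alpha$. Using the explicit description $F(X)_n = \Nerve(\cDelta_X \times_\cDelta \cDelta_{[n]/})^{op}$ recalled earlier in this section, define $\alpha_X$ on $n$-simplices by sending a pair $(\sigma: \Delta^k \to X,\, f: [n] \to [k])$ in the pullback to the $n$-simplex $\sigma \circ \Delta^f$ of $X$; this is a functor from $\cDelta_X \times_\cDelta \cDelta_{[n]/}$ to the discrete category $X_n$, so after $\Nerve(-)^{op}$ it yields a map $F(X)_n \to X_n = Q(X)_n$, natural in $[n]$. To check $\alpha_X$ is a pointwise Kan equivalence (which, by Proposition \ref{camper}, suffices for a weak equivalence in the complete Segal model structure), decompose $\cDelta_X \times_\cDelta \cDelta_{[n]/}$ as a disjoint union over $X_n$: the fiber over $\sigma \in X_n$ is the category of factorizations $\Delta^n \to \Delta^k \to X$ of $\sigma$, which has the tautological factorization $(\sigma, \id_{[n]})$ as an initial object and therefore contractible nerve. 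Thus $\alpha_X$ is levelwise a coproduct of Kan equivalences indexed by $X_n$.

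The rest is formal. Since every object of $\sSet$ is Joyal-cofibrant and $F$ is a composition of left Quillen functors between model categories in which every object is cofibrant, $F$ preserves all Joyal weak equivalences. Given any Joyal weak equivalence $f: X \to Y$, the commuting square with vertical arrows $\alpha_X, \alpha_Y$ and horizontal arrows $F(f), Q(f)$, combined with two-out-of-three, forces $Q(f)$ to be a weak equivalence in the complete Segal model structure. Together with preservation of cofibrations, this makes $Q$ a left Quillen functor. The natural transformation $\alpha$ descends to a natural isomorphism $LF \to LQ$ of derived functors on $\h{\sSet}$, so the fact that $LF$ is an equivalence of homotopy categories gives the same for $LQ$, whence $Q$ is a left Quillen equivalence. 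The only non-routine input is the fiber computation in the middle paragraph, and even that is essentially immediate from the existence of a tautological initial object.
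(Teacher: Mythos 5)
Your argument is essentially identical to the paper's own proof: both construct the same comparison natural transformation $F \to Q$ (the paper calls it $\beta$), establish that it is a levelwise weak equivalence by decomposing the fiber over each $\sigma \in X_n$ and observing that the tautological factorization $(\sigma, \id_{[n]})$ is an initial object, and then transport the Quillen-equivalence property along this comparison using the factorization $F \simeq \lNerve_{\bigdot}(\cDelta^{op}) \circ \sdd$ together with Theorem \ref{cuti}. The proposal is correct and requires no changes.
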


\begin{proof}
The functor $Q$ evidently has a right adjoint, which carries a simplicial object
$X_{\bigdot}$ of $\sSet$ to the simplicial set given by $[n] \mapsto \Hom_{\sSet}(\Delta^0, X_{n} )$.
It is easy to see that $Q$ preserves cofibrations. To complete the proof, it will suffice to show that
$Q$ preserves weak equivalences and induces an equivalence of homotopy categories.
To prove this, we first construct a natural transformation $\beta: F \rightarrow Q$
of functors from $\sSet$ to $\Fun( \cDelta^{op}, \sSet)$, which may be described as follows:
for every simplicial set $X$, $F(X)_{n}$ can be described as the $\Nerve( \calC)^{op}$, where
$\calC$ denotes the category of diagrams of the form
$$ \Delta^n \stackrel{f}{\rightarrow} \Delta^m \stackrel{g}{\rightarrow} X.$$
This category breaks up as a disjoint union of categories, indexed by the set
$\Hom_{\sSet}( \Delta^n, X)$ of $n$-simplices of $X$, and this decomposition induces a map of simplicial sets $(\beta_X)_n: \Nerve(\calC)^{op} \rightarrow Q(X)_n$. These maps depend functorially on $X$ and $n$, and therefore determine a natural transformation $\beta: F \rightarrow Q$
as desired. Note that the fiber of $(\beta_X)_n$ over an $n$-simplex $\sigma \in \Hom_{\sSet}( \Delta^n, X)$ is isomorphic to the nerve of a category $\Nerve( \calC_{\sigma})^{op}$, where
$\calC_{\sigma}$ denotes the category of diagrams of the form
$$ \xymatrix{ & \Delta^m \ar[dr] & \\
\Delta^n \ar[ur]^{f} \ar[rr]^{\sigma} & & X. }$$
This category evidently has an initial object, where we take $f$ to be the identity map.
It follows that $\Nerve( \calC_{\sigma})^{op}$ is weakly contractible for each $\sigma$, so that
$(\beta_X)_n$ is a weak homotopy equivalence. Consequently, $\beta_X$ is a weak
equivalence with respect to the injective model structure on $\Fun( \cDelta^{op}, \sSet)$, and in particular with respect to the complete Segal model structure. It will therefore suffice to show
that the functor $F$ preserves weak equivalences and induces an equivalence on homotopy categories. This follows from the factorization $F \simeq \lNerve_{\bigdot}( \cDelta^{op} ) \circ \sdd$ and
Theorem \ref{cuti}.
\end{proof}

\begin{corollary}\label{dumont}
Let $E: \Fun( \cDelta^{op}, \sSet) \rightarrow \mSet$ denote the functor given by the coend
$$ X_{\bigdot} \mapsto \int_{ \cDelta} X_{n}^{\sharp} \times (\Delta^n)^{\flat}.$$
Then $E$ determines a left Quillen equivalence from $\Fun( \cDelta^{op}, \sSet)$ 
$($endowed with the complete Segal model structure$)$ to $\mSet$.
\end{corollary}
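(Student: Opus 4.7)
The proof will identify $E$ as a left Quillen equivalence by reducing to the already established Quillen equivalences $Q : \sSet \to \Fun(\cDelta^{op}, \sSet)$ of Corollary~\ref{JT} and the functor $(-)^{\flat} : \sSet \to \mSet$ (the left adjoint to the forgetful functor $\mSet \to \sSet$, which is a Quillen equivalence when $\sSet$ carries the Joyal model structure).

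The key computational observation is the isomorphism $E \circ Q \cong (-)^{\flat}$ of functors $\sSet \to \mSet$. Indeed, each $Q(X)_n$ is a discrete simplicial set, so $Q(X)_n^{\sharp} = Q(X)_n^{\flat}$, and therefore
\[
  E(Q(X)) \;=\; \int^{[n]} Q(X)_n^{\flat} \times (\Delta^n)^{\flat} \;\cong\; \Bigl(\int^{[n]} \Hom_{\sSet}(\Delta^n, X) \times \Delta^n\Bigr)^{\flat} \;\cong\; X^{\flat},
\]
by the standard coend reconstruction of a simplicial set from its category of simplices.

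Granted this, the plan is first to verify that $E$ is a left Quillen functor. It is a left adjoint by presentability, with right adjoint $R$ given by $R(Y,M)_n = \bHom_{\mSet}((\Delta^n)^{\flat}, (Y,M))$; for fibrant $(Y,M)$, this recovers the classifying diagram $[n] \mapsto \Fun(\Delta^n, Y)^{\simeq}$, which is a fibrant complete Segal space. Cofibrations in both model structures are monomorphisms, and $E$ preserves monomorphisms because the underlying simplicial set of $E(X_{\bullet})$ is the bisimplicial diagonal $\int^{[n]} X_n \times \Delta^n$. To verify that $E$ preserves trivial cofibrations for the complete Segal model structure (which is a Bousfield localization of the injective structure, by Proposition~\ref{camper}), it suffices to check (i) that $E$ is left Quillen for the injective model structure, and (ii) that $E$ inverts the localizing maps. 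Part (i) follows from the fact that the bisimplicial diagonal carries levelwise Kan equivalences to Kan equivalences (hence categorical equivalences, hence weak equivalences in $\mSet$ once the markings are tracked). Part (ii) is where the identification $E \circ Q = (-)^{\flat}$ pays off: each generating localizing map is (up to weak equivalence) of the form $Q(A \to B)$ for $A \to B$ a categorical equivalence in $\sSet$---the spine inclusions $\mathrm{Sp}^n \hookrightarrow \Delta^n$ force the Segal condition, and the map $\Delta^0 \to \Nerve(J)$ (with $J$ the walking-isomorphism groupoid) forces completeness---and such maps are sent by $(-)^{\flat}$ to weak equivalences in $\mSet$.

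With $E$ a left Quillen functor, we conclude using the chain $LE \circ LQ \simeq L((-)^{\flat})$. Since $LQ$ is an equivalence (Corollary~\ref{JT}) and $L((-)^{\flat})$ is an equivalence (the standard Joyal-to-$\mSet$ Quillen equivalence), the two-out-of-three property yields that $LE$ is an equivalence, so $E$ is a Quillen equivalence. The main obstacle I anticipate is step (ii) above: identifying precisely which localizing maps define the CSS model structure over the injective one, and writing each of them in the form $Q(f)$. A cleaner alternative may be to verify directly that $R$ preserves fibrant objects (immediate from the classifying-diagram identification) and preserves weak equivalences between fibrant objects (which reduces to the fact that weak equivalences of complete Segal spaces arising from $\infty$-categories are exactly the categorical equivalences of underlying $\infty$-categories), and then to deduce that $E$ is left Quillen by a standard criterion.
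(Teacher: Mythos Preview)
Your overall strategy and the computation $E \circ Q \cong (-)^{\flat}$ match the paper exactly; the paper concludes by citing the Quillen equivalence $(-)^{\flat}:\sSet\to\mSet$ together with Corollary~\ref{JT} and applying two-out-of-three. Moreover, the ``cleaner alternative'' you sketch at the end---show directly that $R$ carries fibrant objects of $\mSet$ to fibrant complete Segal spaces---is precisely the route the paper takes to establish that $E$ is left Quillen for the complete Segal model structure.

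There is, however, a genuine gap in step (i) of your first approach, and it is not the one you anticipated. The bisimplicial diagonal does send levelwise weak homotopy equivalences to weak homotopy equivalences, but weak homotopy equivalences are \emph{not} categorical equivalences in general (consider $\Lambda^2_0\hookrightarrow\Delta^2$), and weak equivalences in $\mSet$ are governed by the Joyal direction, not the Kan one. So the parenthetical ``hence categorical equivalences'' is false, and the argument does not show that $E$ is left Quillen for the injective structure. The paper handles this step by observing that the injective and Reedy model structures on $\Fun(\cDelta^{op},\sSet)$ coincide, noting that the cosimplicial object $[n]\mapsto(\Delta^n)^{\flat}$ is Reedy cofibrant in $\mSet$, and invoking the standard Reedy-coend criterion to conclude that $E$ is left Quillen for the injective structure. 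Once that is in place, the passage to the complete Segal localization proceeds via your alternative: one checks that $R(\calC,\calE)_n$ is the largest Kan complex in $\Fun(\Delta^n,\calC)$, that this is a Segal object because spine inclusions are categorical equivalences, and that it is complete because its underlying groupoid object is $[n]\mapsto\bHom^\sharp_{\mSet}((\Delta^n)^\sharp,(\calC,\calE))$, which is constant since every map $(\Delta^m)^\sharp\to(\Delta^n)^\sharp$ is a weak equivalence in $\mSet$. Your step (ii) is then unnecessary.
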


\begin{proof}
We observe that $E$ has a right adjoint, given by the formula
$$X \mapsto ( [n] \mapsto \bHom_{ \mSet}^{\sharp}( (\Delta^n)^{\flat}, X)).$$
Let $\bfA$ denote the category $\Fun( \cDelta^{op}, \sSet)$ of bisimplicial sets, endowed with
the injective model structure. Note that this coincides with the Reedy model structure on
$\sSet$ (Example \toposref{tetsu}). Since the standard simplex $[n] \mapsto ( \Delta^n)^{\flat}$ is a Reedy cofibrant cosimplicial object of $\mSet$, Proposition \toposref{intreed} guarantees that
the functor $E: \bfA \rightarrow \mSet$ is a left Quillen functor. We wish to prove that $E$
is also a left Quillen functor with respect to the complete Segal model structure on
$\Fun( \cDelta^{op}, \sSet)$. In view of Proposition \toposref{stake}, this is equivalent to the assertion
that the right adjoint of $E$ carries fibrant objects of $\mSet$ to fibrant objects of 
$\Fun( \cDelta^{op}, \sSet)$. Let $X \in \mSet$ be fibrant, so that $X = ( \calC, \calE)$ where
$\calC$ is an $\infty$-category and $\calE$ is the collection of all equivalences in $\calC$.
Then the right adjoint of $E$ carries $X$ to the simplicial object $Y_{\bigdot}$ of $\sSet$,
where $Y_{n}$ is the largest Kan complex contained in the $\infty$-category
$\Fun( \Delta^n, \calC)$. Then $Y_{\bigdot}$ is injectively fibrant, and determines
a simplicial object $Z_{\bigdot}$ of $\SSet$; we wish to prove that $Z_{\bigdot}$ is a complete
Segal space object of $\SSet$. We first claim that $Z_{\bigdot}$ is a category object of $\SSet$.
In other words, we claim that for each $n \geq 0$, the canonical map $Z_{n} \rightarrow 
\rightarrow Z_1 \times_{Z_0} \ldots
\times_{Z_0} Z_1$ is a weak equivalence. This follows from the observation that the inclusion
$$ \Delta^{ \{0,1\} } \coprod_{ \{1\} } \Delta^{ \{1,2\} } \coprod_{ \{2\} } \ldots \coprod_{ \{n-1\} }
\Delta^{ \{n-1, n\} } \subseteq \Delta^n$$
is a categorical equivalence. Let $Z'_{\bigdot}$ be the underlying groupoid object of $Z_{\bigdot}$. Unwinding the definitions, we can identify $Z'_{\bigdot}$ with the simplicial object of $\SSet$ given by the formula
$$[n] \mapsto \bHom_{ \mSet}^{\sharp}( (\Delta^n)^{\sharp}, X) \subseteq \bHom_{ \mSet}^{\sharp}( (\Delta^n)^{\flat}, X).$$
Since every map $[m] \rightarrow [n]$ in $\cDelta$ induces a weak equivalence
$(\Delta^{m})^{\sharp} \rightarrow (\Delta^n)^{\sharp}$ of marked simplicial sets, we conclude
that $Z'_{\bigdot}$ is a constant groupoid object of $\SSet$, so that $Z_{\bigdot}$ is a complete Segal space object as desired. This completes the proof that $E$ is a left Quillen functor.

To prove that $E$ is a left Quillen equivalence, it will suffice (by virtue of Corollary \ref{JT}) to show that
the composite functor $Q \circ E: \sSet \rightarrow \mSet$ is a left Quillen equivalence; this is 
a special case of Theorem \toposref{bigdiag}.
\end{proof}

\begin{corollary}\label{presquare}
Let $f: \Nerve(\cDelta) \rightarrow \Cat_{\infty}$ be the functor induced by the 
Yoneda embedding $\cDelta \rightarrow \sSet$, and let $F: \calP( \Nerve(\cDelta)) \rightarrow \Cat_{\infty}$ be a functor which preserves small colimits such that the composition of $F$
with the Yoneda embedding $j: \Nerve(\cDelta) \rightarrow \calP( \Nerve(\cDelta) )$
is equivalent to $f$ (the functor $F$ exists and is unique up to equivalence, by virtue of
Theorem \toposref{charpresheaf}). Then $F$ admits a fully faithful right adjoint $G$.
Moreover, the essential image of $\calG$ consists precisely of the complete Segal space objects of $\SSet$. In particular, we have an equivalence $\Cat_{\infty} \rightarrow \CSS{\SSet}$.
\end{corollary}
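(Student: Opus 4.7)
The plan is to identify $F$ with the left derived functor $LE$ of the coend functor $E: \Fun(\cDelta^{op}, \sSet) \to \mSet$ of Corollary \ref{dumont}, and then to extract the desired right adjoint from the Quillen equivalence established there. First I would identify $\calP(\Nerve(\cDelta))$ with the underlying $\infty$-category of bisimplicial sets equipped with the injective (equivalently, Reedy) model structure, via Proposition \toposref{gumby444}. Under this identification, the Yoneda embedding $j$ sends $[n]$ to the representable bisimplicial set $[m] \mapsto \Hom_{\cDelta}([m],[n])$, which is injectively cofibrant, and a quick coend computation gives
\[
E\bigl(\Hom_{\cDelta}(-,[n])\bigr) \;=\; \int^{[m]} \Hom_{\cDelta}([m],[n])^{\sharp} \times (\Delta^m)^{\flat} \;\cong\; (\Delta^n)^{\flat},
\]
whose class in $\Cat_{\infty}$ is the $\infty$-category $\Delta^n$. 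Since $E$ is a left Quillen functor with respect to the injective model structure (as noted at the start of the proof of Corollary \ref{dumont}), $LE$ preserves small colimits, and the universal property of $\calP(\Nerve(\cDelta))$ (Theorem \toposref{charpresheaf}) then forces $LE \simeq F$.

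Next I would invoke Corollary \ref{dumont} itself, which asserts that $E$ becomes a left Quillen \emph{equivalence} after passing to the complete Segal model structure on $\Fun(\cDelta^{op}, \sSet)$. Because that model structure is a localization of the injective one (and $\CSS{\SSet}$ is correspondingly a reflective subcategory of $\calP(\Nerve(\cDelta))$ by Remark \ref{sluther}), $F \simeq LE$ factors as the composition of the localization $L: \calP(\Nerve(\cDelta)) \to \CSS{\SSet}$ with an equivalence $\overline{F}: \CSS{\SSet} \xrightarrow{\sim} \Cat_{\infty}$. The right adjoint $G$ to $F$ is then the composition of an inverse $\Cat_{\infty} \to \CSS{\SSet}$ with the fully faithful inclusion $\CSS{\SSet} \hookrightarrow \calP(\Nerve(\cDelta))$; this is fully faithful with essential image exactly $\CSS{\SSet}$, and restricting the domain gives the claimed equivalence $\Cat_{\infty} \to \CSS{\SSet}$.

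The main obstacle is the identification $F \simeq LE$: one needs not merely that the two functors agree on objects, but that they are equivalent as colimit-preserving functors of $\infty$-categories compatible with $j$. The cleanest route is the one sketched above — representables are cofibrant in the injective model structure, so $LE$ and $E$ agree on them, the coend computation identifies the common value with $\Delta^n \in \Cat_{\infty}$, and then $F \simeq LE$ follows formally from the universal property of $\calP(\Nerve(\cDelta))$. Once this identification is in hand, the rest of the argument is purely formal manipulation of Quillen equivalences and reflective localizations.
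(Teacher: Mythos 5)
Your proposal is correct and follows essentially the same route as the paper: identify $\calP(\Nerve(\cDelta))$ with the underlying $\infty$-category of bisimplicial sets via Proposition \toposref{gumby444}, identify $F$ with the functor induced by the left Quillen functor $E$ of Corollary \ref{dumont}, and then read off the reflective localization from the Quillen equivalence to the complete Segal model structure. The paper simply asserts that the functor $F'$ induced by $E$ ``corresponds to $F$'' under $\phi$, whereas you supply the justification (representables are injectively cofibrant, the coend computes to $(\Delta^n)^{\flat}$, and the universal property of Theorem \toposref{charpresheaf} then forces agreement as colimit-preserving functors) — a worthwhile bit of detail, but not a different approach.
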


\begin{proof}
Let $\bfA$ be the simplicial model category $\Fun( \cDelta^{op}, \sSet)$, endowed with the injective model structure. Let $F': \Nerve( \bfA^{\degree} ) \rightarrow \Nerve( (\mSet)^{\degree}) \simeq \Cat_{\infty}$ be the functor induced by the simplicial left Quillen functor $E$ of Corollary \ref{dumont}, so that $F'$ corresponds to $F$ under the equivalence $\phi: \Fun( \Nerve(\cDelta)^{op}, \SSet) \simeq \calP( \Nerve(\cDelta) )$ of Proposition \toposref{gumby444}. Corollary \ref{dumont} implies that $F'$ admits a fully faithful right adjoint whose essential image corresponds (under the equivalence $\phi$) to the full subcategory spanned by the complete Segal space objects.
\end{proof}

\subsection{Application: Automorphisms of $\Cat_{\infty}$}\label{cataut}\label{bisec4.4}

For every $\infty$-category $\calC$, the opposite simplicial set
$\calC^{op}$ is again an $\infty$-category. We will see below that the construction
$\calC \mapsto \calC^{op}$ determines an equivalence from the $\infty$-category
$\Cat_{\infty}$ to itself. In fact, this is essentially the {\em only} nontrivial self-equivalence of
$\Cat_{\infty}$. More precisely, we have the following result due to To\"{e}n (see \cite{toenchar}):

\begin{theorem}\label{cabbi}[To\"{e}n]
Let $\calE$ denote the full subcategory of $\Fun( \Cat_{\infty}, \Cat_{\infty})$ spanned by the equivalences. Then $\calE$ is equivalent to the $($nerve of the$)$ discrete category
$\{ \id, r \}$, where $r: \Cat_{\infty} \rightarrow \Cat_{\infty}$ is a functor which associates to every
$\infty$-category its opposite. 
\end{theorem}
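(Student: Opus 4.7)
The plan is to exploit the description of $\Cat_{\infty}$ provided by Corollary \ref{presquare}: the $\infty$-category $\Cat_{\infty}$ is equivalent to $\CSS{\SSet}$, and hence to the reflective localization of the presheaf $\infty$-category $\calP(\Nerve(\cDelta))$ at the Segal and completeness morphisms. Under this identification and the Yoneda equivalence $\Fun^L(\calP(\Nerve(\cDelta)), \Cat_{\infty}) \simeq \Fun(\Nerve(\cDelta), \Cat_{\infty})$, a colimit-preserving endofunctor of $\Cat_{\infty}$ corresponds to a cosimplicial object $Y: \Nerve(\cDelta) \to \Cat_{\infty}$ whose extension inverts the localizing morphisms --- that is, to a complete Segal cosimplicial object $Y$. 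Since every self-equivalence of a presentable $\infty$-category is cocontinuous, $\calE$ embeds fully faithfully into the $\infty$-category of such $Y$ for which the associated endofunctor $\bar F_Y: \Cat_{\infty} \to \Cat_{\infty}$ is an equivalence. The proof then reduces to classifying such $Y$ together with their automorphisms.

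First I would pin down $Y$ in low degrees using general properties of equivalences. Any $F \in \calE$ preserves the final object, so $Y[0]$ is final in $\Cat_{\infty}$ and hence identified with $\Delta^0$. Moreover, $F$ must send the full subcategory $\SSet \subseteq \Cat_{\infty}$ of $\infty$-groupoids to itself, since $\SSet$ admits an intrinsic characterization (for instance, as the essential image of the functor $F$ described in Definition \ref{custo}, or as the maximal full subcategory preserved by every endofunctor that preserves colimits and finite limits). The restriction $F|\SSet$ is then a self-equivalence of $\SSet$; since $\SSet$ is freely generated under colimits by its final object, $F|\SSet$ is canonically equivalent to the identity. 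In particular the degree-zero and degree-one data of $Y$ are fixed: $Y[0] = \Delta^0$ and $Y[1]$ is an $\infty$-category equipped with two distinguished objects $d_0, d_1: \Delta^0 \to Y[1]$ whose underlying maximal Kan complex agrees with that of $\Delta^1$.

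The decisive step is to show that $Y[1]$ must be either $\Delta^1$ or $(\Delta^1)^{\op}$, after which the Segal condition $Y[n] \simeq Y[1] \times_{Y[0]} \cdots \times_{Y[0]} Y[1]$ determines the remainder of $Y$ (up to coherent choice) and the two possibilities give the functors $\id$ and $r$ respectively. To carry this out, I would combine the completeness condition on $Y$ with the equivalence property of $\bar F_Y$: the latter implies that the mapping spaces $\Map_{Y[1]}(d_0, d_1)$ and $\Map_{Y[1]}(d_1, d_0)$ are, respectively, contractible and empty, because $\Delta^1$ corepresents the arrow-space functor on $\Cat_{\infty}$ and the preceding paragraph forces this corepresentation to be preserved up to the dichotomy between source and target. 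The scaled and marked models of $\Cat_{\infty}$ developed in this paper should allow these constraints to be verified concretely. Once $Y[1]$ is classified, one verifies that both $\id$ and $r$ have only trivial automorphisms in $\calE$, by observing that a natural automorphism of a self-equivalence of $\Cat_{\infty}$ acts trivially on $\Delta^0$ and propagates via the Segal decomposition to act trivially on all $\Delta^n$, hence on all of $\Cat_{\infty}$.

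The principal obstacle is the rigidity statement in the preceding paragraph: extracting from the bare hypothesis that $\bar F_Y$ is an equivalence enough control over $Y[1]$ to pin it down up to the symmetry $\Delta^1 \leftrightarrow (\Delta^1)^{\op}$. This is where the paper's explicit combinatorial models should do real work, since verifying the required universal properties of $\Delta^1$ and the cosimplicial identities it satisfies is most transparent at the level of scaled simplicial sets or complete Segal spaces rather than intrinsically in $\Cat_{\infty}$.
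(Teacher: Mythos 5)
Your overall strategy --- using Corollary \ref{presquare} to realize $\Cat_{\infty}$ as a localization of $\calP(\Nerve(\cDelta))$ so that cocontinuous endofunctors correspond to certain cosimplicial objects $Y$, and then pinning down $Y$ in low degrees --- is substantively the same as the paper's use of the ``strongly generates'' formalism (Definition \ref{ballcough}, Remark \ref{copse}, and Example \ref{upsquare} are precisely the same observation: $\Cat_{\infty}$ is a localization of $\calP(\Nerve(\cDelta))$, so the restriction of a cocontinuous functor to $\{\Delta^n\}_{n\geq 0}$ is fully faithful). Your analysis of $Y[0]$ and of $F|_{\SSet}$ is also fine.

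The genuine gap is exactly where you flag it: the claim that $Y[1]$ must be either $\Delta^1$ or $(\Delta^1)^{\op}$. As you state it, the argument is circular --- you say that the equivalence hypothesis on $\bar F_Y$ ``forces this corepresentation to be preserved up to the dichotomy between source and target,'' but this dichotomy is precisely what needs to be established. From the information you have collected ($Y[1]^{\simeq} \simeq \Delta^0 \sqcup \Delta^0$, two nonequivalent objects with discrete automorphism spaces) it does \emph{not} follow that the nondiagonal mapping spaces are $\ast$ and $\emptyset$; you would need an additional argument. The paper's proof sidesteps this difficulty with a move your proposal does not use: it shows (Corollary \ref{bigegg}) that any self-equivalence of $\Cat_{\infty}$ preserves the full subcategory $\calC \subseteq \Cat_{\infty}$ of nerves of partially ordered sets, because that subcategory admits an intrinsic characterization; then it classifies self-equivalences of the ordinary \emph{category} of posets by elementary counting (Proposition \ref{cape}: $\sigma([1])$ has two elements, and if they were incomparable then $\Hom(\sigma[1],\sigma[1])$ would have four elements rather than the required three). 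This 1-categorical reduction both resolves the $Y[1]$ issue and eliminates the secondary coherence problem you gesture at when you say the Segal condition determines the rest of $Y$ ``up to coherent choice'': the paper never has to reconstruct the higher cosimplicial structure of $Y$ directly, since the fully faithful restriction $\calE \hookrightarrow \Fun(\Nerve(\cDelta), \Nerve(\cDelta))$ lands in a discrete 1-category and Lemma \ref{calcul} finishes the classification, including the triviality of automorphism spaces, in one stroke. Without the poset reduction (or a surrogate for it), your argument is incomplete at its central step.
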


Our goal in this section is to give a proof of Theorem \ref{cabbi}. We first outline the basic strategy. Fix an equivalence $f$ from $\Cat_{\infty}$ to itself. The first step is to argue that $f$ is determined by its restriction to a reasonably small subcategory of $\Cat_{\infty}$. To prove this, we will introduce the notion of a subcategory $\calC^{0} \subseteq \calC$ which {\it strongly generates} $\calC$
(Definitions \ref{coughball} and \ref{ballcough}). We will then show that $\Cat_{\infty}$ is strongly generated by the subcategory consisting of nerves of partially ordered sets (in fact, it is generated
by an even smaller subcategory: see Example \ref{upsquare}). This will allow us to reduce to the problem of understanding the category of self-equivalences of the category of partially ordered sets, which is easy to tackle directly: see Proposition \ref{cape}. 

We begin by introducing some definitions.

\begin{definition}\label{coughball}
Let $f: \calC \rightarrow \calD$ be a functor between $\infty$-categories.
We will say that $f$ is {\em strongly generates} the $\infty$-category $\calD$ if the identity transformation
$\id: f \rightarrow f$ exhibits the identity functor $\id_{\calD}$ as a left Kan extension of $f$
along $f$.
\end{definition}

\begin{remark}
In other words, a functor $f: \calC \rightarrow \calD$ strongly generates the $\infty$-category
$\calD$ if and only if, for every object $D \in \calD$, the evident diagram
$( \calC \times_{\calD} \calD_{/D} )^{\triangleright} \rightarrow \calD_{/D}^{\triangleright} \rightarrow \calD$
exhibits $D$ as a colimit of the diagram
$(\calC \times_{\calD} \calD_{/D}) \rightarrow \calC \stackrel{f}{\rightarrow} \calD.$
In particular, this implies that every object of $\calD$ can be obtained as the colimit of a diagram
which factors through $\calC$. Moreover, if $\calC$ is small and $\calD$ is locally small, then the diagram can be assumed small.
\end{remark}

\begin{remark}\label{copse}
Let $f: \calC \rightarrow \calD$ be a functor between $\infty$-categories, where $\calC$ is small, $\calD$ is locally small, and $\calD$ admits small colimits. In view of Theorem \toposref{charpresheaf}, we may assume without loss of generality that $f$ factors as a composition
$$ \calC \stackrel{j}{\rightarrow} \calP(\calC) \stackrel{F}{\rightarrow} \calD,$$
where $j$ denotes the Yoneda embedding and $F$ preserves small colimits.
Corollary \toposref{coughspaz} implies that $F$ has a right adjoint $G$, given by the composition
$$ \calD \stackrel{j'}{\rightarrow} \Fun( \calD^{op}, \SSet) \stackrel{\circ f}{\rightarrow} \calP(\calC),$$
where $j'$ denotes the Yoneda embedding for $\calD$; moreover, the transformation
$$ f = F \circ j \rightarrow (F \circ (G \circ F)) \circ j \simeq (F \circ G) \circ f$$
exhibits $(F \circ G)$ as a left Kan extension of $f$ along itself. It follows that $f$
strongly generates $\calD$ if and only if the counit map $F \circ G \rightarrow \id_{\calD}$ is an equivalence of functors. This is equivalent to the requirement that the functor $G$ is fully faithful.

In other words, the functor $f: \calC \rightarrow \calD$ strongly generates $\calD$ if and only if the
induced functor $F: \calP(\calC) \rightarrow \calD$ exhibits $\calD$ as a localization of
$\calP(\calC)$. In particular, the Yoneda embedding $\calC \rightarrow \calP(\calC)$
strongly generates $\calP(\calC)$, for any small $\infty$-category $\calC$.
\end{remark}

\begin{remark}\label{copo}
Let $f: \calC \rightarrow \calD$ be as in Remark \ref{copse}, and let $\calE$ be an $\infty$-category which admits small colimits. Let $\Fun^{0}(\calD, \calE)$ denote the full subcategory of
$\Fun(\calD, \calE)$ spanned by those functors which preserve small colimits. Then composition
with $f$ induces a fully faithful functor $\Fun^{0}(\calD, \calE) \rightarrow \Fun(\calC, \calE)$. 
This follows from Theorem \toposref{charpresheaf}, Proposition \toposref{unichar}, and Remark \ref{copse}.
\end{remark}

\begin{definition}\index{gen}{generates!strongly}\index{gen}{strongly generates}\label{ballcough}
Let $\calC$ be an $\infty$-category. We will say that a full subcategory $\calC^{0} \subseteq \calC$ 
{\it strongly generates} $\calC$ if the inclusion functor $\calC^{0} \rightarrow \calC$ strongly generates $\calC$, in the sense of Definition \ref{coughball}.
\end{definition}

\begin{remark}\label{sobre}
In other words, $\calC^0$ strongly generates $\calC$ if and only if the identity functor
$\id_{\calC}$ is a left Kan extension of $\id_{\calC} | \calC^{0}$. It follows from Proposition \toposref{acekan} that if $\calC^{0} \subseteq \calC^{1} \subseteq \calC$ are full subcategories and
$\calC^{0}$ strongly generates $\calC$, then $\calC^{1}$ also strongly generates $\calC$.
\end{remark}

\begin{example}
The $\infty$-category $\SSet$ of spaces is strongly generated by its final object;
this follows immediately from Remark \ref{copse}.
\end{example}

\begin{example}\label{upsquare}
The $\infty$-category $\Cat_{\infty}$ is strongly generated by the full subcategory consisting of the objects $\{ \Delta^n \}_{n \geq 0}$. This follows immediately from Corollary \ref{presquare}.
\end{example}

It follows from Remark \ref{sobre} and Example \ref{upsquare} that $\Cat_{\infty}$ is strongly generated by the full subcategory spanned by those $\infty$-categories of the form $\Nerve P$, where
$P$ is a partially ordered set. Our next step is to describe this subcategory in more intrinsic terms.

\begin{proposition}
Let $\calC$ be an $\infty$-category. The following conditions are equivalent:
\begin{itemize}
\item[$(1)$] The $\infty$-category $\calC$ is equivalent to the nerve of a partially ordered set $P$.
\item[$(2)$] For every $\infty$-category $\calD$ and every pair of functors
$F, F': \calD \rightarrow \calC$ such that $F(x) \simeq F'(x)$ for each object $x \in \calD$, 
the functors $F$ and $F'$ are equivalent as objects of $\Fun(\calD, \calC)$.
\item[$(3)$] For every $\infty$-category $\calD$, the map of sets
$$ \pi_0 \bHom_{\Cat_{\infty}}( \calD, \calC)
\rightarrow \Hom_{ \Set}( \pi_0 \bHom_{ \Cat_{\infty}}( \Delta^0, \calD),
\pi_0 \bHom_{ \Cat_{\infty} }( \Delta^0, \calC) )$$
is injective.
\end{itemize}
\end{proposition}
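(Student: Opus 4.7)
The plan is to show $(1) \Rightarrow (2) \Leftrightarrow (3) \Rightarrow (1)$. The equivalence $(2) \Leftrightarrow (3)$ is essentially a tautology: the map appearing in $(3)$ sends a functor $F: \calD \to \calC$ to the set-level function $[x] \mapsto [F(x)]$, and two functors $F, F'$ have the same image precisely when $F(x) \simeq F'(x)$ for every object $x \in \calD$, which is the hypothesis of $(2)$. For $(1) \Rightarrow (2)$, suppose $\calC = \Nerve(P)$ for a poset $P$. The adjunction $\mathrm{h} \dashv \Nerve$ between ordinary and $\infty$-categories identifies functors $\calD \to \Nerve(P)$ with ordinary functors $\h{\calD} \to P$, which are determined by their action on objects (since $P$ has at most one morphism between any two elements). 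Moreover the only equivalences in $\Nerve(P)$ are identities, so $F(x) \simeq F'(x)$ forces $F(x) = F'(x)$ on objects, whence $F = F'$.

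The substance lies in $(2) \Rightarrow (1)$, which I would break into three steps. First, I would show that the underlying Kan complex $\calC^{\simeq}$ is homotopically discrete. For each object $x$ and each $n \geq 1$, apply $(2)$ with $\calD$ equal to the Kan complex $S^n$: any map $S^n \to \calC$ landing in the connected component of $x$ sends every vertex to an object equivalent to $x$, so $(2)$ forces it to be equivalent to the constant map. This yields $\pi_n(\calC^{\simeq}, x) = 0$ for all $n \geq 1$, and in particular the automorphism space $\Aut_\calC(x) \simeq \Omega(\calC^\simeq, x)$ is contractible for every $x$.

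Second, I would show that each mapping space $\bHom_\calC(x,y)$ is either empty or contractible. For $\pi_0$: apply $(2)$ with $\calD = \Delta^1$ to two parallel edges $f, g: x \to y$; this produces an equivalence between them in the arrow $\infty$-category, encoded by a square with equivalences $\alpha \in \Aut(x)$ and $\beta \in \Aut(y)$ on the sides. Since those automorphism spaces are contractible by Step 1, $\alpha$ and $\beta$ can be trivialized and one extracts an equivalence $f \simeq g$ in the fiber $\bHom_\calC(x,y)$. For $\pi_n$ with $n \geq 1$, I would construct an auxiliary $\infty$-category $\calD_n$ with exactly two objects $0, 1$ satisfying $\bHom_{\calD_n}(0,1) \simeq S^n$, $\bHom_{\calD_n}(1,0) = \emptyset$, and $\bHom_{\calD_n}(i,i) \simeq *$, e.g.\ as the coherent nerve $\Nerve(\bfC_n)$ of the simplicial category $\bfC_n$ with those mapping spaces. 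A functor $\calD_n \to \calC$ sending $0 \mapsto x$ and $1 \mapsto y$ is then (up to the contractible ambiguity controlled by Step 1) a point of $\Map(S^n, \bHom_\calC(x,y))$; applying $(2)$ to two such functors shows that this mapping space has a single connected component, i.e.\ $\pi_n \bHom_\calC(x,y) = 0$.

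Combining Steps 1 and 2, $\calC$ is equivalent to the nerve of its homotopy category $\h{\calC}$, which is an ordinary category with at most one morphism between any two objects. Antisymmetry follows formally: if $f: x \to y$ and $g: y \to x$ exist, then $g \circ f$ and $\id_x$ are parallel endomorphisms and hence coincide, as do $f \circ g$ and $\id_y$, so $f$ and $g$ exhibit an equivalence $x \simeq y$. Letting $P$ be the poset of equivalence classes of objects of $\calC$ ordered by the existence of a morphism, we obtain $\calC \simeq \Nerve(P)$. The main technical obstacle is the construction of $\calD_n$ and the identification of its functor space into $\calC$ with $\Map(S^n, \bHom_\calC(x,y))$ up to the automorphism ambiguity at the two objects; this requires some care, but is controlled by the contractibility of $\Aut_\calC(x)$ and $\Aut_\calC(y)$ established in Step 1.
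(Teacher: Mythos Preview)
Your proof is correct, but the route you take for $(2) \Rightarrow (1)$ is substantially more circuitous than the paper's. The paper bypasses your Step~1 entirely and handles all of Step~2 in a single stroke: assuming $\calC = \Nerve(\overline{\calC})$ for a fibrant simplicial category $\overline{\calC}$ and setting $K = \bHom_{\overline{\calC}}(x,y)$, it builds a simplicial category $\overline{\calD}$ with two objects $x', y'$ and $\bHom_{\overline{\calD}}(x',y') = K$ itself (rather than $S^n$), and compares the two simplicial functors $\overline{F}, \overline{F}': \overline{\calD} \to \overline{\calC}$ inducing respectively the identity $K \to K$ and a constant map $K \to K$. Condition $(2)$ forces the induced functors $F, F'$ to be equivalent, and this yields a homotopy between a self-equivalence of $K$ (the identity, possibly twisted by automorphisms at $x$ and $y$) and a constant map --- which immediately forces $K$ to be contractible, since any space admitting a self-equivalence homotopic to a constant is contractible.

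The payoff of the paper's approach is that it never needs to know $\Aut_{\calC}(x)$ is contractible (your Step~1), nor to run a separate argument for each homotopy group of $K$: probing $K$ with $K$ itself rather than with spheres collapses the whole argument into one application of $(2)$. Your approach, while valid, pays for its sphere-by-sphere strategy by having to first neutralize the automorphism ambiguity and then assemble the vanishing of each $\pi_n$ separately.
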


\begin{proof}
The implication $(1) \Rightarrow (2)$ is obvious, and $(3)$ is just a restatement of $(2)$.
Assume $(2)$; we will show that $(1)$ is satisfied. Let $P$ denote the collection of equivalence
classes of objects of $\calC$, where $x \leq y$ if the space $\bHom_{\calC}(x,y)$ is nonempty. 
There is a canonical functor $\calC \rightarrow \Nerve P$. To prove that this functor is an equivalence, it will suffice to show the following:
\begin{itemize}
\item[$(\ast)$] For every pair of objects $x,y \in \calC$, the space $\bHom_{\calC}(x,y)$ is either empty or contractible.
\end{itemize}
To prove $(\ast)$, we may assume without loss of generality that $\calC$ is the nerve of a fibrant
simplicial category $\overline{\calC}$. Let $x$ and $y$ be objects of $\overline{\calC}$ such that
the Kan complex $K = \bHom_{ \overline{\calC} }(x,y)$ is nonempty.
We define a new (fibrant) simplicial category $\overline{\calD}$ so that $\overline{\calD}$ consists of a pair of objects $\{x', y'\}$, with
$$ \bHom_{\overline{\calD}}(x',x') \simeq \bHom_{\overline{\calD}}(y',y') \simeq \Delta^0$$
$$ \bHom_{\overline{\calD}}(x',y') \simeq K \quad \bHom_{\overline{\calD}}(y',x') \simeq \emptyset.$$
We let $\overline{F}, \overline{F}': \overline{\calD} \rightarrow \overline{\calC}$ be simplicial functors
such that $\overline{F}(x') = \overline{F}'(x') = x$, $\overline{F}(y') = \overline{F}'(y')=y$, where
$\overline{F}$ induces the identity map from $\bHom_{\overline{\calD}}(x',y') = K = \bHom_{\overline{\calC}}(x,y)$ to itself, while $\overline{F}'$ induces a constant map from $K$ to itself.
Then $\overline{F}$ and $\overline{F}'$ induce functors $F$ and $F'$ from $\sNerve( \overline{\calD} )$ to $\calC$. It follows from assumption $(2)$ that the functors $F$ and $F'$ are equivalent, which implies that the identity map from $K$ to itself is homotopic to a constant map; this proves that $K$ is contractible. 
\end{proof}

\begin{corollary}\label{bigegg}
Let $\sigma: \Cat_{\infty} \rightarrow \Cat_{\infty}$ be an equivalence of $\infty$-categories, and let
$\calC$ be an $\infty$-category $($which we regard as an object of $\Cat_{\infty}${}$)$. Then $\calC$ is equivalent to the nerve of a partially ordered set if and only if $\sigma(\calC)$ is equivalent to the nerve of a partially ordered set.
\end{corollary}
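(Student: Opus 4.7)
The plan is to deduce Corollary \ref{bigegg} as a formal consequence of the preceding proposition, which characterizes those $\infty$-categories $\calC$ equivalent to the nerve of a partially ordered set by condition $(3)$: namely, that for every $\infty$-category $\calD$, the natural map
$$\pi_0 \bHom_{\Cat_{\infty}}(\calD, \calC) \to \Hom_{\Set}\bigl(\pi_0 \bHom_{\Cat_{\infty}}(\Delta^0, \calD),\, \pi_0 \bHom_{\Cat_{\infty}}(\Delta^0, \calC)\bigr)$$
is injective. The key observation is that this condition is expressed purely in terms of the $\infty$-category $\Cat_{\infty}$ together with the distinguished object $\Delta^0$, and hence is automatically preserved under any self-equivalence of $\Cat_{\infty}$ that preserves $\Delta^0$ up to equivalence.

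First I would verify that $\Delta^0$ is characterized intrinsically as a final object of $\Cat_{\infty}$ (which is clear, since for any $\infty$-category $\calD$ the mapping space $\bHom_{\Cat_{\infty}}(\calD, \Delta^0)$ is contractible). Since $\sigma$ is an equivalence of $\infty$-categories, it preserves final objects, so there exists an equivalence $\sigma(\Delta^0) \simeq \Delta^0$ in $\Cat_{\infty}$. Next, since $\sigma$ is an equivalence, for every pair of objects $\calD, \calC \in \Cat_{\infty}$ it induces a homotopy equivalence of mapping spaces
$$\bHom_{\Cat_{\infty}}(\calD, \calC) \xrightarrow{\ \sim\ } \bHom_{\Cat_{\infty}}(\sigma(\calD), \sigma(\calC)),$$
and in particular a bijection on $\pi_0$.

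Combining these two observations: for any $\calC \in \Cat_{\infty}$, the map appearing in condition $(3)$ for $\calC$ (as $\calD$ varies) can be identified, via $\sigma$, with the analogous map for $\sigma(\calC)$ (as $\sigma(\calD)$ varies, which by essential surjectivity of $\sigma$ ranges over all $\infty$-categories up to equivalence). Therefore condition $(3)$ holds for $\calC$ if and only if it holds for $\sigma(\calC)$. By the preceding proposition, this says exactly that $\calC$ is equivalent to the nerve of a partially ordered set if and only if $\sigma(\calC)$ is equivalent to the nerve of a partially ordered set.

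No serious obstacle is anticipated; the argument is essentially a tautology once the intrinsic characterization provided by the preceding proposition is in hand. The only point requiring a moment of care is the identification $\sigma(\Delta^0) \simeq \Delta^0$, but this is immediate from the fact that equivalences of $\infty$-categories preserve final objects.
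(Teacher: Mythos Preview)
Your proof is correct and is exactly the argument the paper intends: the corollary is stated without proof precisely because condition $(3)$ of the preceding proposition is an intrinsic characterization in terms of mapping spaces in $\Cat_{\infty}$ and the final object $\Delta^0$, and is therefore manifestly preserved by any self-equivalence.
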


\begin{lemma}\label{calcul}
Let $\sigma, \sigma' \in \{ \id_{\cDelta}, r \} \subseteq \Fun( \cDelta, \cDelta)$, where
$r$ denotes the reversal functor from $\cDelta$ to itself. Then
$$ \Hom_{ \Fun(\cDelta, \cDelta)}(\sigma, \sigma') = \begin{cases} \emptyset & \text{if } \sigma= \sigma' \\
\{ \id \} & \text{if } \sigma = \sigma'. \end{cases}$$
\end{lemma}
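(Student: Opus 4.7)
The statement should be read as asserting that $\Hom_{\Fun(\cDelta,\cDelta)}(\sigma,\sigma')$ is $\{\id\}$ when $\sigma = \sigma'$ and empty otherwise; the proof amounts to a concrete analysis of natural transformations between the two functors $\id_{\cDelta}$ and $r$, where $r([n]) = [n]$ on objects and $r(f)(i) = n - f(m-i)$ on a morphism $f \colon [m] \to [n]$.

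The plan is to pin down any natural transformation $\alpha \colon \sigma \to \sigma'$ by its behavior on the vertex-inclusions $\delta_i \colon [0] \to [n]$ sending $0 \mapsto i$. First I would observe that $\alpha_{[0]} \colon [0] \to [0]$ is forced to be the identity, because $[0]$ has a unique endomorphism in $\cDelta$. Naturality applied to $\delta_i$ then gives the equation $\alpha_{[n]}(\sigma(\delta_i)(0)) = \sigma'(\delta_i)(\alpha_{[0]}(0)) = \sigma'(\delta_i)(0)$, so $\alpha_{[n]}$ is completely determined on every vertex of $[n]$ by this formula.

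In the two diagonal cases $\sigma = \sigma' \in \{\id, r\}$, this formula yields $\alpha_{[n]}(i) = i$ for every vertex $i$ of $[n]$: in the identity case we get $\alpha_{[n]}(i) = i$ directly, and in the case $\sigma = \sigma' = r$ we get $\alpha_{[n]}(n-i) = n-i$ as $i$ ranges over $[n]$, which again forces $\alpha_{[n]} = \id_{[n]}$. Thus the only endo-natural-transformation of $\id_{\cDelta}$ and of $r$ is the identity. In the off-diagonal case $\sigma = \id$, $\sigma' = r$, the recipe produces $\alpha_{[n]}(i) = r(\delta_i)(0) = n-i$; but this fails to be order-preserving as soon as $n \ge 1$ (concretely, for $n=1$ it would force $\alpha_{[1]}(0) = 1 > 0 = \alpha_{[1]}(1)$), so no natural transformation exists. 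The case $\sigma = r$, $\sigma' = \id$ is identical by symmetry, giving $\alpha_{[n]}(n-i) = i$, which is again order-reversing rather than order-preserving for $n \ge 1$.

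The proof involves no real obstacle: everything is an elementary verification with the combinatorics of $\cDelta$, and the only thing one must be careful about is keeping the two definitions of $r$ straight (as a functor $\cDelta \to \cDelta$ obtained by identifying $[n]^{\op}$ with $[n]$ via $i \mapsto n-i$). Once that is set up, the naturality condition applied to the vertex inclusions does all the work.
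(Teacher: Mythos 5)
Your proof is correct and takes essentially the same route as the paper: both arguments reduce to the behavior on the vertex maps $[0] \to [n]$, deduce the forced formula for $\alpha_{[n]}$ (the identity when $\sigma = \sigma'$, the reversal $i \mapsto n-i$ otherwise), and observe that this is order-preserving exactly when $\sigma = \sigma'$. The paper states this slightly more tersely, but there is no substantive difference.
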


\begin{proof}
Note that $\sigma$ and $\sigma'$ are both the identity at the level of objects. 
Let $\alpha: \sigma \rightarrow \sigma'$ be a natural transformation. Then, for each $n \geq 0$,
$\alpha_{[n]}$ is a map from $[n]$ to itself. We claim that $\alpha_{[n]}$ is given by the formula
$$ \alpha_{[n]}(i) = \begin{cases} i & \text{if } \sigma = \sigma' \\
n-i & \text{if } \sigma \neq \sigma'.\end{cases}$$
To prove this, we observe that a choice of $i \in [n]$ determines a map $[0] \rightarrow [n]$, which allows us to reduce to the case $n=0$ (where the result is obvious) by functoriality.

It follows from the above argument that the natural transformation $\alpha$ is uniquely determined, if it exists. Moreover, $\alpha$ is a well-defined natural transformation if and only if each $\alpha_{[n]}$ is an order-preserving map from $[n]$ to itself; this is true if and only if $\sigma = \sigma'$.
\end{proof}

\begin{proposition}\label{cape}
Let $\calP$ denote the category of partially ordered sets, and let
$\sigma: \calP \rightarrow \calP$ be an equivalence of categories. Then
$\sigma$ is isomorphic either to the identity functor $\id_{\calP}$ or the functor
$r$ which carries every partially ordered set $X$ to the same set with the opposite ordering.
\end{proposition}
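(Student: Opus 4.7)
The plan is to reduce to a classification of self-equivalences of the full subcategory $\cDelta \subset \calP$ of nonempty finite totally ordered sets. Two ingredients are needed. First, $\cDelta$ is dense in $\calP$: every poset $P$ is the colimit in $\calP$ of the tautological diagram $\cDelta/P \to \calP$, as one sees from the explicit description of colimits in $\calP$ (take the colimit on underlying sets, endowed with the partial order generated by the images of the given chains). Consequently, any self-equivalence of $\calP$ is determined up to natural isomorphism by its restriction to $\cDelta$. Second, every self-equivalence of $\cDelta$ that acts as the identity on objects is literally equal to $\id_{\cDelta}$ or $r$.

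Let $\sigma : \calP \to \calP$ be a given equivalence. Since $[0]$ is terminal and $\End_{\calP}([0]) = \{\id\}$, the object $\sigma[0]$ is canonically $[0]$. The object $[1]$ is characterized intrinsically as the unique poset $P$ with $|\Hom_{\calP}([0],P)| = 2$ and $|\End_{\calP}(P)| = 3$ (the two-element antichain has four endomorphisms), so $\sigma[1]$ is canonically $[1]$, since $\End_{\calP}([1]) = \{\id\}$. Each $[n]$ is realized as the iterated pushout of $n$ copies of $[1]$ glued along $[0]$ via alternating inclusions, and $\sigma$ preserves colimits, so $\sigma[n]$ is canonically $[n]$ (regardless of whether $\sigma$ fixes or swaps the pair $\{d^0, d^1\} \subseteq \Hom_{\calP}([0],[1])$). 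These canonical identifications assemble into a natural isomorphism $\sigma|_{\cDelta} \cong \sigma'$, where $\sigma' : \cDelta \to \cDelta$ is an endofunctor that is the identity on objects.

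It remains to show $\sigma' \in \{\id_{\cDelta}, r\}$. The action of $\sigma'$ on $\Hom_{\cDelta}([0],[1]) = \{d^0, d^1\}$ is either the identity or the swap; I claim that these two cases force $\sigma' = \id_{\cDelta}$ and $\sigma' = r$, respectively. Since every morphism of $\cDelta$ is a composite of coface maps $d^i$ and codegeneracies $s^j$, it suffices to check the claim on these generators, each of which is determined by its action on vertices $v_k : [0] \to [n]$. Each $v_k$ factors through an edge $[1] \to [n]$ appearing in the pushout decomposition of $[n]$, so its image under $\sigma'$ is forced by the action of $\sigma'$ on $\Hom_{\cDelta}([0],[1])$ via naturality and the colimit identification $\sigma'[n] = [n]$. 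The main technical obstacle is this combinatorial bookkeeping: although elementary, one must carefully track how the cosimplicial identities constrain $\sigma'$ so that the single piece of data $\sigma'|_{\Hom_{\cDelta}([0],[1])}$ uniquely determines $\sigma'$ on all of $\cDelta$. Once this is in place, density of $\cDelta$ in $\calP$ extends the natural isomorphism $\sigma|_{\cDelta} \cong \id|_{\cDelta}$ (resp.\ $\cong r|_{\cDelta}$) uniquely to $\sigma \cong \id_{\calP}$ (resp.\ $\sigma \cong r$), completing the proof.
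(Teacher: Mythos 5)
Your outline is correct in spirit and can be completed, but it takes a considerably more elaborate route than the argument in the text, and the step you flag as the ``main technical obstacle'' is a real gap. The text's proof never invokes density of $\cDelta$ in $\calP$ and never computes $\sigma$ on any object other than $[0]$ and $[1]$: it produces a canonical bijection $\eta_X : X \to \sigma(X)$ for \emph{every} poset $X$ at once, via $X \cong \Hom_{\calP}([0],X) \cong \Hom_{\calP}(\sigma[0],\sigma X) \cong \sigma(X)$, and then reduces the claim that $\eta_X$ is order-preserving to the single case $X = [1]$: a relation $x \leq y$ in $X$ is a map $[1] \to X$, and naturality of $\eta$ along that map does the rest. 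After the normalization making $\eta_{[1]}$ order-preserving (replacing $\sigma$ by $\sigma \circ r$ if necessary), this finishes the proof in two lines, with no colimit preservation, no Kan extension, and no analysis of $\cDelta$ beyond $[0]$ and $[1]$.

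The gap in your version is the classification of identity-on-objects self-equivalences $\sigma'$ of $\cDelta$, which you sketch but do not prove. The cleanest way to close it is, in effect, the paper's trick in miniature: since $\sigma'$ fixes objects, its action on $\Hom_{\cDelta}([0],[n])$ is a bijection $\tau_n : [n] \to [n]$ of underlying sets, and functoriality along composites $[0] \to [m] \xrightarrow{f} [n]$ forces $\sigma'(f) = \tau_n \circ f \circ \tau_m^{-1}$. Demanding that $\sigma'(f)$ be order-preserving for every edge $f:[1]\to[n]$ then forces each $\tau_n$ to be monotone (if $\tau_1=\id$) or antitone (if $\tau_1$ is the swap), giving $\sigma' = \id$ or $\sigma' = r$. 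But this ``test against $[1]\to[n]$'' is exactly the text's ``test against $[1]\to X$'' specialized to $X=[n]$; once one sees this, one can run the same reasoning for arbitrary $X$ and dispense with both the restriction to $\cDelta$ and the Kan-extension step. A secondary caution: your parenthetical description of colimits in $\calP$ elides the posetification that colimits of posets require in general; the identity $P = \colim_{\cDelta_{/P}}[n]$ does hold, but because the preorder generated by the images of the chain inclusions is already the antisymmetric order on $P$, and this deserves an explicit sentence rather than being taken for granted.
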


\begin{proof}
Since $\sigma$ is an equivalence of categories, it carries the final object $[0] \in \calP$ to itself (up to canonical isomorphism). It follows that for every
partially ordered set $X$, we have a canonical bijection of sets
$$ \eta_{X}: X \simeq \Hom_{ \calP}( [0], X) \simeq
\Hom_{\calP}( \sigma([0]), \sigma(X) ) \simeq \Hom_{\calP}( [0], \sigma(X)) \simeq \sigma(X).$$

We next claim that $\sigma([1])$ is isomorphic to $[1]$ as a partially ordered set.
Since $\eta_{[1]}$ is bijective, the partially ordered set $\sigma([1])$ has precisely two elements.
Thus $\sigma([1])$ is isomorphic either to $[1]$ or to a partially ordered set $\{x,y\}$ with two elements, neither larger than the other. In the second case, the set $\Hom_{\calP}( \sigma([1]), \sigma([1]))$
has four elements. This is impossible, since $\sigma$ is an equivalence of categories and
$\Hom_{\calP}( [1], [1])$ has only three elements. Let $\alpha: \sigma([1]) \rightarrow [1]$
be an isomorphism (automatically unique, since the ordered set $[1]$ has no automorphisms in $\calP$). 

The map $\alpha \circ \eta_{[1]}$ is a bijection from the set $[1]$ to itself. We will assume that
this map is the identity, and prove that $\sigma$ is isomorphic to the identity functor $\id_{\calP}$.
The same argument, applied to $\sigma \circ r$, will show that if $\alpha \circ \eta_{[1]}$ is not the identity, then $\sigma$ is isomorphic to $r$.

To prove that $\sigma$ is equivalent to the identity functor, it will suffice to show that for every
partially ordered set $X$, the map $\eta_{X}$ is an isomorphism of partially ordered sets.
In other words, we must show that both $\eta_{X}$ and $\eta_{X}^{-1}$ are maps of partially ordered sets. We will prove that $\eta_{X}$ is a map of partially ordered sets; the same argument, applied to 
an inverse to the equivalence $\sigma$, will show that $\eta^{-1}_{X}$ is a map of partially ordered sets.
Let $x,y \in X$ satisfy $x \leq y$; we wish to prove that $\eta_{X}(x) \leq \eta_{X}(y)$ in $\sigma(X)$.
The pair $(x,y)$ defines a map of partially ordered sets $[1] \rightarrow X$. By functoriality, we
may replace $X$ by $[1]$, and thereby reduce to the problem of proving that $\eta_{[1]}$ is a map of partially ordered sets. This follows from our assumption that $\alpha \circ \eta_{[1]}$ is the identity map.
\end{proof}

\begin{proof}[Proof of Theorem \ref{cabbi}]
Let $\calC$ be the full subcategory of $\Cat_{\infty}$ spanned by those $\infty$-categories
which are equivalent to the nerves of partially ordered sets, and let $\calC^{0}$ denote the full subcategory of $\calC$ spanned by the objects $\{ \Delta^n \}_{n \geq 0}$. 
Corollary \ref{bigegg} implies that every object $\sigma \in \calE$ restricts to an equivalence from $\calC$ to itself. According to Proposition \ref{cape}, $\sigma| \calC$ is equivalent either to the identity functor, or to the restriction $r|\calC$. In either case, we conclude that $\sigma$ also induces an equivalence from $\calC^{0}$ to itself.

Using Example \ref{upsquare} and Remark \ref{copo}, we deduce that the restriction functor
$\calE \rightarrow \Fun(\calC^{0}, \calC^{0})$ is fully faithful. In particular, any object $\sigma \in \calE$ is determined by the restriction $\sigma | \calC$, so that $\sigma$ is equivalent to either $\id$ or
$r$ by virtue of Proposition \ref{cape}. Since $\calC^{0}$ is equivalent to the nerve of the category
$\cDelta$, Lemma \ref{calcul} implies the existence of a fully faithful embedding from
$\calE$ to the nerve of the discrete category $\{ \id, r \}$. To complete the proof, it will suffice to show that this functor is essentially surjective. In other words, we must show that there exists a functor
$R: \Cat_{\infty} \rightarrow \Cat_{\infty}$ whose restriction to $\calC$ is equivalent to $r$. 

To carry out the details, it is convenient to replace $\Cat_{\infty}$ by an equivalent
$\infty$-category with a slightly more elaborate definition. Recall that $\Cat_{\infty}$ is defined to be the simplicial nerve of a simplicial category $\Cat_{\infty}^{\Delta}$, whose objects are $\infty$-categories, where $\bHom_{\Cat_{\infty}^{\Delta}}(X,Y)$ is the largest Kan complex contained in
$\Fun(X,Y)$. We would like to define $R$ to be induced by the functor $X \mapsto X^{op}$, but this
is not a simplicial functor from $\Cat_{\infty}^{\Delta}$ to itself; instead we have a canonical isomorphism $\bHom_{\Cat_{\infty}^{\Delta}}( X^{op}, Y^{op} )
\simeq \bHom_{\Cat_{\infty}^{\Delta}}(X,Y)^{op}.$
However, if we let $\Cat_{\infty}^{\top}$ denote the topological category obtained by geometrically realizing the morphism spaces in 
$\Cat_{\infty}^{\Delta}$, then $i$ induces an autoequivalence of $\Cat_{\infty}^{\top}$ as a topological category (via the natural homeomorphisms $| K | \simeq |K^{op}|$, which is defined for every simplicial set $K$). We now define $\Cat'_{\infty}$ to be the topological nerve of $\Cat_{\infty}^{\top}$ (see 
Definition \toposref{topnerve}). Then $\Cat'_{\infty}$ is an $\infty$-category equipped with a canonical equivalence $\Cat_{\infty} \rightarrow \Cat'_{\infty}$, and the involution $i$ induces
an involution $I$ on $\Cat'_{\infty}$, which carries each object $\calD \in \Cat'_{\infty}$ to 
the opposite $\infty$-category $\calD^{op}$. We now define $R$ to be the composition
$$ \Cat_{\infty} \rightarrow \Cat'_{\infty} \stackrel{I}{\rightarrow} \Cat'_{\infty} \rightarrow \Cat_{\infty},$$
where the last map is a homotopy inverse to the equivalence $\Cat_{\infty} \rightarrow \Cat'_{\infty}$.
It is easy to see that $R$ has the desired properties (moreover, we note that for every
object $\calD \in \Cat_{\infty}$, the image $R \calD$ is canonically equivalent with the opposite $\infty$-category $\calD^{op}$).
\end{proof}

\subsection{Application: Comparison of Universal Fibrations}\label{bisec4.x}\label{bisec4.5}

Let $\overline{S} = (S,T)$ be a scaled simplicial set. The scaled straightening and unstraightening
functors $\scSt_{\overline{S}}$ and $\scUn_{\overline{S}}$ of \S \ref{bisec3.3} are analogous
to the functors $\St_{S}$ and $\Un_{S}$ introduced in \S \toposref{strsec}. However,
it is difficult to relate them by a direct combinatorial construction. Our goal in this section is to show that they are nevertheless related by virtue of universal properties enjoyed by both constructions.
Our argument makes use of the classification of self-equivalences of $\Cat_{\infty}$
established in \S \ref{cataut}.

\begin{definition}\label{scorpus}
Let $\ICAT$ denote the scaled nerve $\scNerve( \mSet^{\degree})$, where
$\mSet^{\degree}$ denotes the full subcategory of $\mSet$ spanned by the fibrant objects
(viewed as an $\mSet$-enriched category). Then $\ICAT$ is a (large) scaled simplicial set, which is an $\infty$-bicategory by Remark \ref{coslai}. We will refer to $\ICAT$ as the
{\it $\infty$-bicategory of $\infty$-categories}. Note that the underlying $\infty$-category
of $\ICAT$ (obtained by discarding all simplices which contain non-thin faces as in Remark \ref{copus}) is canonically isomorphic to the $\infty$-category $\Cat_{\infty}$.

Let $\phi: \scCoNerve[ \ICAT ] \rightarrow \mSet$ be the composition of the counit map
with the inclusion $\mSet^{\degree} \subseteq \mSet$. It follows from Proposition \ref{curp} that
$\scUn_{ \ICAT}(\phi)$ is a fibrant object of $\mset{ \ICAT}$, which we can identify with a locally coCartesian fibration $q: \calZ \rightarrow \ICAT$. We will refer to $q$ as the {\em universal locally coCartesian fibration}.

Given a scaled simplicial set $\overline{S}= (S,T)$ and a fibrant object $\overline{X} \in \mset{ \overline{S}}$, we will say that $\overline{X}$ is {\it classified by} a map $\chi: \overline{S} \rightarrow \ICAT$
if $\overline{X}$ is isomorphic to $\scUn_{ \chi'}(\phi)$, in the homotopy category
$\h{\mset{\overline{S}}}$, where $\chi': \scCoNerve[ \overline{S}] \rightarrow \mSet^{\degree}$ is adjoint to $\chi$. In this case, we will also say that $\chi$ is classified by the fibrant object
$\overline{X} \in \mset{ \overline{S} }$, or by the underlying locally coCartesian fibration
$X \rightarrow S$.
\end{definition}

\begin{proposition}\label{urple}
Let $\overline{S}$ be a small marked simplicial set, and $\phi: \overline{S} \rightarrow \calC$ a weak equivalence of $\mSet$-enriched categories. Let $\calF, \calG: \calC \rightarrow (\mSet)^{\degree}$ be 
$\mSet$-enriched functors. The following conditions are equivalent:
\begin{itemize}
\item[$(1)$] The functors $\calF$ and $\calG$ are homotopic in the homotopy category of
(large) $\mSet$-enriched categories.
\item[$(2)$] The functors $\calF$ and $\calG$ are isomorphic in the homotopy category
$(\mSet)^{\calC}$.
\item[$(3)$] The objects $\scUn_{\phi} \calF$ and $\scUn_{\phi} \calG$ are isomorphic in the homotopy category of $\mset{ \overline{S} }$.
\item[$(4)$] The compositions $\scNerve(\calF) \circ \phi: \overline{S} \rightarrow \ICAT$
and $\scCoNerve(\calG) \circ \phi: \overline{S} \rightarrow \ICAT$ are homotopic 
in the model category of (large) scaled simplicial sets.
\end{itemize}
\end{proposition}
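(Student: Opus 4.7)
The plan is to decompose the four-way equivalence into three bilateral equivalences, each exploiting a Quillen equivalence established earlier in the paper: $(2) \Leftrightarrow (3)$ from Theorem \ref{cupper}, $(1) \Leftrightarrow (4)$ from Theorem \ref{slai}, and $(1) \Leftrightarrow (2)$ from a direct comparison of mapping spaces.

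For $(2) \Leftrightarrow (3)$, Theorem \ref{cupper} asserts that $(\scSt_{\phi}, \scUn_{\phi})$ is a Quillen equivalence from $\mset{\overline{S}}$ to $(\mSet)^{\calC}$. Since $\calF$ and $\calG$ take values in $(\mSet)^{\degree}$, they are fibrant in the projective model structure, so $\scUn_{\phi}$ represents the right derived functor on these objects. The induced equivalence of homotopy categories then carries the class $[\calF]$ to $[\scUn_{\phi}\calF]$, identifying condition (2) with condition (3).

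For $(1) \Leftrightarrow (4)$, Theorem \ref{slai} gives a Quillen equivalence $(\scCoNerve, \scNerve)$ between $\scSet$ and $\Cat_{\mSet}$. Since $\overline{S}$ is cofibrant (monomorphisms are cofibrations in $\scSet$) and $(\mSet)^{\degree}$ is fibrant, the adjunction induces a bijection between $\h{\scSet}(\overline{S}, \ICAT)$ and $\h{\Cat_{\mSet}}(\scCoNerve[\overline{S}], (\mSet)^{\degree})$ which, by the triangle identity relating $\phi$ and its adjoint $\phi^{ad}$, sends the class of $\scNerve(\calF) \circ \phi^{ad}$ to that of $\calF \circ \phi$. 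Condition (4) is thus equivalent to $\calF \circ \phi$ and $\calG \circ \phi$ being homotopic in $\h{\Cat_{\mSet}}$. Since $\phi$ is a weak equivalence, it is an isomorphism in $\h{\Cat_{\mSet}}$, so precomposition with $\phi$ induces a bijection on hom sets, reducing the statement to condition (1).

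The main obstacle is $(1) \Leftrightarrow (2)$, which compares two model-theoretic presentations of the space of functors $\calC \to (\mSet)^{\degree}$: the simplicial enrichment of $\Cat_{\mSet}$ versus the projective model structure on $(\mSet)^{\calC}$. The direction $(1) \Rightarrow (2)$ is elementary: a left homotopy $\calC \otimes I \to (\mSet)^{\degree}$ in $\Cat_{\mSet}$ evaluates pointwise to a natural transformation which is a weak equivalence at every object, witnessing equivalence in $(\mSet)^{\calC}$. For the converse, I would replace $\calC$ by the cofibrant object $\scCoNerve[\overline{S}]$ (legitimate since $\phi$ induces a bijection on homotopy classes of maps into the fibrant target $(\mSet)^{\degree}$), and then exploit that for a cofibrant source the simplicial mapping space in $\Cat_{\mSet}$ and the derived mapping space in the projective model structure on $(\mSet)^{\scCoNerve[\overline{S}]}$ have the same set of path components.
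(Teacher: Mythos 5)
Your decomposition into the three equivalences $(2)\Leftrightarrow(3)$, $(1)\Leftrightarrow(4)$, and $(1)\Leftrightarrow(2)$ matches the paper's plan exactly, and your handling of the first two via Theorem \ref{cupper} and Theorem \ref{slai} respectively is correct. The gap is in $(1)\Leftrightarrow(2)$, which is precisely the step where the paper does all the real work.

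For $(1)\Rightarrow(2)$, your claim that a left homotopy $\calC\otimes I\to(\mSet)^{\degree}$ ``evaluates pointwise to a natural transformation which is a weak equivalence at every object'' does not hold up. The model structure on $\Cat_{\mSet}$ is not simplicial, so there is no object $\calC\otimes I$; a cylinder object $\calC''$ for $\calC$ is merely a factorization $\calC\coprod\calC\to\calC''\to\calC$ of the fold map, and a map $\calH:\calC''\to(\mSet)^{\degree}$ has no reason to produce a natural transformation $\calF\to\calG$ upon evaluation. The paper's actual argument for this direction first replaces $\calC$ by a cofibrant model using the Quillen equivalence $(f_!,f^*)$, takes a cylinder object $\calC''\stackrel{\pi}{\to}\calC$ through which $\calF\coprod\calG$ extends, and then uses the Quillen equivalence $(\pi_!,\pi^*)$ to descend $\calH$ to an object $\calH_0\in(\mSet)^{\calC}$ isomorphic in the homotopy category to both $\calF$ and $\calG$. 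This is several steps removed from ``elementary.''

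For $(2)\Rightarrow(1)$, you appeal to an unproven comparison between the ``simplicial mapping space in $\Cat_{\mSet}$'' and the derived mapping space of $(\mSet)^{\scCoNerve[\overline{S}]}$ having the same $\pi_0$. Setting aside that the former object is not defined in this context, the claim would amount to a nontrivial statement about simplicial localizations and requires a proof, not an appeal. The paper instead constructs an explicit auxiliary $\mSet$-enriched category $\calM$ whose objects are trivial fibrations $\overline{X}\to\overline{Y}$ with $\overline{Y}$ fibrant, verifies that the two projections $\pi_0,\pi_1:\calM\to(\mSet)^{\degree}$ and the diagonal $\delta$ are all weak equivalences (so $\pi_0$ and $\pi_1$ agree in the homotopy category, both being homotopy inverses to $\delta$), and then uses a fibrant-cofibrant $\calE$ with trivial fibrations $\calF\leftarrow\calE\to\calG$ to produce a map $\calH:\calC\to\calM$ exhibiting $[\calE]=[\calF]$ and $[\calE]=[\calG]$ in the homotopy category. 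This path-object construction is the key idea your proposal is missing, and it is what makes the argument self-contained rather than resting on a mapping-space comparison that is at least as hard as the statement being proved.
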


\begin{proof}
The equivalence of $(2) \Leftrightarrow (3)$ follows from Theorem \ref{cupper}, and
the equivalence $(1) \Leftrightarrow (4)$ from Theorem \ref{slai}. It will therefore suffice to
prove that $(1)$ and $(2)$ are equivalent. Suppose first that $(1)$ is satisfied.
Choose a weak equivalence $f: \calC' \rightarrow \calC$, where $\calC$ is cofibrant. 
Then the adjoint functors $(f_!, f^{\ast})$ determine a Quillen equivalence of
$(\mSet)^{\calC'}$ with $(\mSet)^{\calC}$. Consequently, it will suffice to show that
$f^{\ast} \calF$ and $f^{\ast} \calG$ are isomorphic in the homotopy category of
$(\mSet)^{\calC'}$. We may therefore replace $\calC$ by $\calC'$, and thereby reduce to the case where $\calC$ is cofibrant. Since $\calF$ and $\calG$ are homotopic, there exists a cylinder object
$$ \calC \coprod \calC \rightarrow \calC'' \stackrel{\pi}{\rightarrow} \calC$$
for $\calC$, such that the map $\calF \coprod \calG: \calC \coprod \calC \rightarrow (\mSet)^{\degree}$ extends to a functor $\calH: \calC'' \rightarrow (\mSet)^{\degree}$. Since $\pi$ is a weak equivalence, the adjoint functors $(\pi_{!}, \pi^{!})$ determine a Quillen equivalence of
$(\mSet)^{\calC''}$ with $(\mSet)^{\calC}$, so that there exists an object
$\calH_0 \in (\mSet)^{\calC}$ such that $\calH$ is isomorphic to $\pi^{\ast} \calH_0$
in the homotopy category of $(\mSet)^{\calC''}$. It follows that $\calF$ and $\calG$ are both
isomorphic (in the homotopy category of $(\mSet)^{\calC}$) to the functor $\calH_0$, and are therefore isomorphic to one another.

Now assume that $(2)$ holds. We define a (large) $\mSet$-enriched category $\calM$ as follows:
\begin{itemize}
\item The objects of $\calM$ are trivial fibrations $\alpha: \overline{X} \rightarrow \overline{Y}$
in $\mSet$, where $\overline{Y} \in \mSet$ is fibrant.
\item Given a pair of objects $\alpha: \overline{X} \rightarrow \overline{Y}$ and
$\beta: \overline{X}' \rightarrow \overline{Y}'$ in $\calM$, we let
$\bHom_{ \calM}(\alpha, \beta)$ denote the marked simplicial set
$$ \bHom_{\mSet}( \overline{X}, \overline{X}') \times_{ \bHom_{ \mSet}( \overline{X}, \overline{Y}')} \bHom_{ \mSet}( \overline{Y}, \overline{Y}').$$
\item Composition is defined in the evident way.
\end{itemize}
We have canonical projections $\pi_0, \pi_1: \calM \rightarrow (\mSet)^{\degree}$, given by the formula
$$\pi_i ( \alpha: \overline{X} \rightarrow \overline{Y} ) = \begin{cases} \overline{X} & \text{if } i=0 \\
\overline{Y} & \text{if } i=1.\end{cases}$$
Moreover, we have a diagonal embedding $\delta: (\mSet)^{\degree} \rightarrow \calM$, which carries an object $\overline{X} \in (\mSet)^{\degree}$ to the identity map $\id: \overline{X} \rightarrow \overline{X}$. We claim that $\pi_1$ is an equivalence of $\mSet$-enriched categories. The essential surjectivity of $\pi_1$ follows from the equation $\pi_1 \circ \delta = \id$; to prove that $\pi_1$ is fully faithful, we observe that the projection map
$$ \bHom_{\mSet}( \overline{X}, \overline{X}') \times_{ \bHom_{ \mSet}( \overline{X}, \overline{Y}')} \bHom_{ \mSet}( \overline{Y}, \overline{Y}') \rightarrow \bHom_{\mSet}( \overline{Y}, \overline{Y}')$$
is a trivial fibration whenever the map $\overline{X}' \rightarrow \overline{Y}'$ is a trivial fibration.
It follows from the two-out-of-three property that the maps $\delta$ and $\pi_0$ are also equivalences of $\mSet$-enriched categories.

Since $\calF$ and $\calG$ are fibrant objects of $(\mSet)^{\calC}$, there exists a fibrant-cofibrant object $\calE \in (\mSet)^{\calC}$ and a pair of trivial fibrations
$\calF \leftarrow \calE \rightarrow \calG$. The trivial fibration $\calE \rightarrow \calF$ determines a functor $\calH:  \calC \rightarrow \calM$, so that $\calE = \pi_0 \circ \calH$ and $\calF = \pi_1 \circ \calH$. We can therefore regard $\calH$ as a right homotopy from $\calE$ to $\calF$. The same argument shows that $\calE$ and $\calG$ are homotopic, so that $\calF$ is homotopic to $\calG$ by transitivity; this completes the proof of $(1)$.
\end{proof}

\begin{remark}\label{callie}
Let $\overline{S}$ be a scaled simplicial set, $\calC$ an $\infty$-bicategory, and
$f,g: \overline{S} \rightarrow \calC$ a pair of maps. Fix a contractible Kan complex $K$ containing a pair of distinct vertices $x$ and $y$. The following conditions are equivalent:
\begin{itemize}
\item The maps $f$ and $g$ are homotopic (with respect to the model structure on
$\scSet$ described in Theorem \ref{slai}). 
\item There exists a map $h: K_{\sharp} \times \overline{S} \rightarrow \calC$
such that $f = h| \{x\}_{\sharp} \times \overline{S}$ and
$g = h| \{y\}_{\sharp} \times \overline{S}$.
\end{itemize}
In this case, we will say that $h$ is a {\it homotopy} from $f$ to $g$. To establish the equivalence, it suffices to show that the diagram
$$ \overline{S} \coprod \overline{S} \simeq \{x,y\}_{\sharp} \times \overline{S}
\subseteq K_{\sharp} \times \overline{S} \rightarrow \overline{S}$$
exhibits $K_{\sharp} \times \overline{S}$ as a cylinder object for $\overline{S}$, which follows from
Lemma \ref{carpus} and Proposition \ref{twop2}.
\end{remark}

\begin{corollary}\label{preil}
Let $\overline{S} = (S,T)$ be a small marked simplicial set, and let
$\overline{X} \in \mset{ \overline{S}}$ be a (small) fibrant object. Then there exists
a map $\chi: \overline{S} \rightarrow \ICAT$ which classifies $\overline{X}$. Moreover,
$\chi$ is uniquely determined up to homotopy.
\end{corollary}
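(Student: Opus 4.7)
The plan is to deduce Corollary \ref{preil} by combining three previously established Quillen equivalences. The first is Theorem \ref{cupper}, which tells us that $(\scSt_{\overline{S}}, \scUn_{\overline{S}})$ determines a Quillen equivalence between $\mset{\overline{S}}$ and $(\mSet)^{\scCoNerve[\overline{S}]}$. Consequently, the derived unstraightening functor induces a bijection between isomorphism classes of fibrant objects in each homotopy category. The second is Theorem \ref{slai}, which provides the Quillen equivalence $(\scCoNerve, \scNerve)$ and in particular shows that homotopy classes of maps $\overline{S} \to \ICAT = \scNerve(\mSet^{\degree})$ are in bijection with homotopy classes of $\mSet$-enriched functors $\scCoNerve[\overline{S}] \to \mSet^{\degree}$. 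The third is Proposition \ref{urple}, which asserts that homotopies between such functors correspond precisely to isomorphisms in the homotopy category of $(\mSet)^{\scCoNerve[\overline{S}]}$, and also to homotopies between the adjoint maps $\overline{S} \to \ICAT$.

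For existence, starting with a fibrant $\overline{X} \in \mset{\overline{S}}$, I would use Theorem \ref{cupper} to produce a fibrant functor $\calF: \scCoNerve[\overline{S}] \to \mSet^{\degree}$ whose scaled unstraightening $\scUn_{\overline{S}}(\calF)$ is isomorphic to $\overline{X}$ in $\h{\mset{\overline{S}}}$. Under the $(\scCoNerve, \scNerve)$ adjunction, $\calF$ corresponds to a map $\chi: \overline{S} \to \ICAT$. The triangle identity for this adjunction then identifies the adjoint $\chi'$ (the composition of $\scCoNerve[\chi]$ with the counit $\scCoNerve[\ICAT] \to \mSet^{\degree}$) with $\calF$ up to natural equivalence. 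Using Remark \ref{ba1} applied to $\scCoNerve[\chi]: \scCoNerve[\overline{S}] \to \scCoNerve[\ICAT]$, we obtain $\scUn_{\chi'}(\phi) \simeq \scCoNerve[\chi]^*\scUn_{\ICAT}(\phi) \simeq \scUn_{\overline{S}}(\calF)$, which is isomorphic to $\overline{X}$ as desired.

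For uniqueness, suppose $\chi_1, \chi_2: \overline{S} \to \ICAT$ both classify $\overline{X}$. The corresponding objects $\scUn_{\chi_1'}(\phi)$ and $\scUn_{\chi_2'}(\phi)$ are then isomorphic in $\h{\mset{\overline{S}}}$. Applying the reverse direction of Theorem \ref{cupper}, we deduce that $\chi_1'$ and $\chi_2'$ are isomorphic in the homotopy category of $(\mSet)^{\scCoNerve[\overline{S}]}$. By Proposition \ref{urple}, applied with $\phi$ taken to be the identity $\scCoNerve[\overline{S}] \to \scCoNerve[\overline{S}]$, this is equivalent to $\chi_1$ and $\chi_2$ being homotopic as maps of (large) scaled simplicial sets (cf. Remark \ref{callie}).

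The main technical matter is the bookkeeping required to match the various adjunctions: in particular, verifying that the object $\scUn_{\chi'}(\phi)$ defined via the functor $\chi'$ of Definition \ref{scorpus} is canonically identified with the pullback $\scCoNerve[\chi]^*\scUn_{\ICAT}(\phi)$ of the universal locally coCartesian fibration along $\chi$. This is precisely the content of Remark \ref{ba1}, so no genuine difficulty arises beyond carefully tracking the relevant identifications.
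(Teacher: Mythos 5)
Your argument is correct and takes essentially the route the paper intends: the statement is left without explicit proof because it is meant to follow immediately from Theorem \ref{cupper} (which supplies the Quillen equivalence identifying fibrant objects of $\mset{\overline{S}}$ with fibrant $\mSet$-enriched functors $\scCoNerve[\overline{S}] \to \mSet^{\degree}$, whence the existence of $\chi$ by adjunction) together with Proposition \ref{urple} (which provides uniqueness up to homotopy, via the equivalence of its conditions $(3)$ and $(4)$). One small point of economy: in your uniqueness step you invoke Theorem \ref{cupper} to pass from condition $(3)$ to $(2)$ before applying Proposition \ref{urple}, but since that proposition already asserts the equivalence of $(3)$ and $(4)$ directly, this intermediate reduction is redundant. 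There is also a minor notational slip in the chain $\scUn_{\chi'}(\phi) \simeq \scCoNerve[\chi]^*\scUn_{\ICAT}(\phi) \simeq \scUn_{\overline{S}}(\calF)$: the object $\scUn_{\ICAT}(\phi)$ lives in $\mset{\ICAT}$, so the pullback here should be $\chi^{\ast}$ (along the scaled simplicial map) as in Remark \ref{ba1}, with $\scCoNerve[\chi]^{\ast}$ appearing instead when rewriting $\calF = \scCoNerve[\chi]^{\ast}\phi$ via Remark \ref{ba2}; the identifications are all correct, just attributed to the wrong step.
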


In other words, the ``classification'' relation of Definition \ref{scorpus} determines a bijection
between equivalence classes of fibrant objects of $\mset{ \overline{S} }$ and homotopy classes
of diagrams $\overline{S} \rightarrow \ICAT$, for every small scaled simplicial set $\overline{S}$. 
For technical purposes, it will be important to have a generalization of this result when $\overline{S}$ is not assumed to be small. To establish this generalization, we need to introduce a bit of notation.

\begin{notation}
Let $\kappa$ be an uncountable regular cardinal. We let $\ICAT^{\kappa}$ denote the
nerve $\scNerve( \mSet^{\degree, \kappa})$, where $\mSet^{\degree, \kappa}$ denotes the full subcategory of $\mSet$ spanned by the $\kappa$-small fibrant-cofibrant objects.
\end{notation}

\begin{corollary}\label{usqus}
Let $\overline{S} = (S,T)$ be a (small) marked simplicial set, let
$\overline{X} = (X,M)\in \mset{ \overline{S} }$ be a (small) fibrant object, and let $\kappa$ be an uncountable regular cardinal.
The following conditions are equivalent:
\begin{itemize}
\item[$(1)$] For every vertex $s$ of $S$, the fiber $X_{s}$ is essentially $\kappa$-small.
\item[$(2)$] The object $\overline{X}$ is classified by a map $\chi: \overline{S} \rightarrow \ICAT^{\kappa}$. 
\end{itemize}
Moreover, if these conditions are satisfied, the map $\chi$ is uniquely determined up to homotopy.
\end{corollary}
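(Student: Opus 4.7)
The plan is to invoke Corollary~\ref{preil} to obtain a classifying map $\chi\colon\overline{S}\to\ICAT$ (unique up to homotopy), and then to show that $\chi$ is homotopic to a map factoring through $\ICAT^{\kappa}\subseteq\ICAT$ exactly when each fiber $X_{s}$ is essentially $\kappa$-small. The first step is a vertex-wise identification: restricting the classification of $\overline{X}$ along $\{s\}^{\sharp}\hookrightarrow\overline{S}$ and invoking Corollary~\ref{stlik} identifies the object $\chi(s)\in\mSet^{\degree}$, up to equivalence, with the fiber $X_{s}$ equipped with its locally coCartesian (i.e.\ equivalence) marking. This makes $(2)\Rightarrow(1)$ immediate and reduces the reverse implication, along with uniqueness, to showing that a map into $\ICAT$ all of whose vertices are essentially $\kappa$-small can be lifted through $\ICAT^{\kappa}\to\ICAT$ up to homotopy, together with an analogous lifting for homotopies.

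Let $\calC\subseteq\mSet^{\degree}$ be the full $\mSet$-enriched subcategory spanned by all essentially $\kappa$-small fibrant-cofibrant marked simplicial sets. Both $\calC$ and $\mSet^{\degree,\kappa}$ are fibrant in $\Cat_{\mSet}$ (their mapping objects are fibrant marked simplicial sets), and the inclusion $\iota\colon\mSet^{\degree,\kappa}\hookrightarrow\calC$ is fully faithful and essentially surjective, hence a weak equivalence. Because $\calC$ is a \emph{full} $\mSet$-enriched subcategory of $\mSet^{\degree}$, a direct inspection of Definition~\ref{ilmut} identifies $\scNerve(\calC)$ with the full sub-scaled-simplicial set of $\ICAT$ spanned by those simplices whose vertices all lie in $\calC$. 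Under hypothesis~$(1)$, the vertex-wise identification from the previous paragraph shows that $\chi$ factors strictly as a map $\overline{S}\to\scNerve(\calC)\hookrightarrow\ICAT$.

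It remains to lift this factorization through the bicategorical equivalence $\scNerve(\iota)\colon\ICAT^{\kappa}\to\scNerve(\calC)$, which is furnished by Theorem~\ref{slai}. The cleanest route is to transfer the problem across this Quillen equivalence: the adjoint map $\chi'\colon\scCoNerve[\overline{S}]\to\calC$ is an $\mSet$-enriched functor into a fibrant object, and since $\iota$ is a weak equivalence of fibrant objects in $\Cat_{\mSet}$ it admits a homotopy inverse $\rho\colon\calC\to\mSet^{\degree,\kappa}$ by the standard Whitehead argument. Taking the adjoint of $\rho\circ\chi'$ produces the desired map $\overline{S}\to\ICAT^{\kappa}$, and the homotopy $\iota\circ\rho\simeq\id_{\calC}$ adjoins to a homotopy witnessing that this new map still classifies $\overline{X}$. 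The uniqueness clause is obtained by exactly the same procedure applied to a homotopy $h\colon K_{\sharp}\times\overline{S}\to\ICAT$ connecting two candidate lifts (obtained from Corollary~\ref{preil} as in Remark~\ref{callie}): since every edge of $K_{\sharp}$ maps to an equivalence in the underlying $\infty$-category of $\ICAT$, and $K$ is connected, every vertex of $h$ lands in $\calC$, so $h$ factors through $\scNerve(\calC)$ and can be lifted to a homotopy in $\ICAT^{\kappa}$ between the original lifts. The principal technical point is the Whitehead-style construction of $\rho$, which requires the fibrancy of both $\mSet^{\degree,\kappa}$ and $\calC$ in $\Cat_{\mSet}$ and careful bookkeeping of the boundary values in the iterated lifting for the uniqueness statement.
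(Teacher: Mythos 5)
Your argument follows essentially the same route as the paper's: both introduce the full sub-$\infty$-bicategory of $\ICAT$ spanned by the essentially $\kappa$-small objects, observe that its inclusion from $\ICAT^{\kappa}$ is a bicategorical equivalence between fibrant objects, and reduce the existence and uniqueness statements to Corollary~\ref{preil} and Remark~\ref{callie}. The one place you do extra work---constructing a Whitehead-style homotopy inverse $\rho$---is both unnecessary and slightly delicate, since that argument would require the $\mSet$-enriched categories in question to be cofibrant, which is not guaranteed; the paper instead uses only that $\scCoNerve[\overline{S}]$ is cofibrant and the two targets fibrant, so that composition with the weak equivalence already induces a bijection on homotopy classes of maps, which is all that the existence and uniqueness claims require.
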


\begin{proof}
Let $\calC$ denote the scaled nerve $\scNerve(\calC_0)$, where $\calC_0$ is the full subcategory of
$\mSet$ spanned by the fibrant-cofibrant objects which are essentially $\kappa$-small.
Since the inclusion $\mSet^{\degree, \kappa} \subseteq \calC_0$ is a weak equivalence between fibrant $\mSet$-enriched categories, the inclusion $i: \ICAT^{\kappa} \subseteq \calC$ is
a weak equivalence between fibrant $\infty$-bicategories. Consequently, assertion
$(2)$ is equivalent to the following:
\begin{itemize}
\item[$(2')$] The object $\overline{X}$ is classified by a map $\chi': \overline{S} \rightarrow \calC$, which is determined uniquely up to homotopy. 
\end{itemize}
The equivalence of $(1)$ with $(2')$ follows immediately from Corollary \ref{preil}. Now suppose that $(1)$ is satisfied, so that there exist maps $\chi$ and $\chi'$ satisfying $(2)$ and $(2')$, respectively.
We wish to show that $\chi$ is uniquely determined up to homotopy. Since $i$ is a weak equivalence, it will suffice to show that $\chi'$ is uniquely determined up to homotopy. This follows immediately from the description of homotopies supplied by Remark \ref{callie}.
\end{proof}

\begin{corollary}
Let $\overline{S}=(S,T)$ be a scaled simplicial set (not necessarily small), and let
$\overline{X} \in \mset{ \overline{S}}$ be 
$\overline{S}$-fibered. The following conditions are equivalent:
\begin{itemize}
\item For every vertex $s$ in $S$, the fiber $X_{s}$ is essentially small.
\item The object $\overline{X}$ is classified by a map $\chi: \overline{S} \rightarrow \ICAT$.
\end{itemize}
Moreover, in this case, $\chi$ is determined uniquely up to homotopy.
\end{corollary}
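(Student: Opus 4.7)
The plan is to bootstrap from the small case (Corollary \ref{usqus}) via a transfinite induction over a filtration of $\overline{S}$ by small scaled simplicial subsets, using that $\ICAT$ is a fibrant scaled simplicial set (Remark \ref{coslai}) to rigidify the homotopies produced at each stage into strict identities. The converse direction is immediate: if $\overline{X}$ is classified by some $\chi:\overline{S}\to \ICAT$, then for each vertex $s\in S$ the fiber $X_{s}$ is weakly equivalent to $\chi(s)\in \mSet^{\degree}$ by Corollary \ref{stlik}, and the vertices of $\ICAT$ are small fibrant marked simplicial sets by construction.

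For existence, I would write $\overline{S}=\bigcup_{\alpha<\lambda}\overline{S}_{\alpha}$ as a well-ordered union of small scaled simplicial subsets such that $\overline{S}_{\alpha}\hookrightarrow \overline{S}_{\alpha+1}$ is a cofibration and $\overline{S}_{\beta}=\bigcup_{\alpha<\beta}\overline{S}_{\alpha}$ at limit stages. For each $\alpha$, the restriction $\overline{X}_{\alpha}:=\overline{X}\times_{S^{\sharp}}S_{\alpha}^{\sharp}$ is $\overline{S}_{\alpha}$-fibered (by the pullback stability in Proposition \ref{fibraman}) with essentially small fibers, so Corollary \ref{usqus} produces a classifying map, unique up to homotopy. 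The plan is to build compatible maps $\chi_{\alpha}:\overline{S}_{\alpha}\to \ICAT$ by transfinite induction. At a successor ordinal, Corollary \ref{usqus} provides some $\widetilde{\chi}_{\alpha+1}$ classifying $\overline{X}_{\alpha+1}$, and its restriction to $\overline{S}_{\alpha}$ is homotopic to $\chi_{\alpha}$ by the uniqueness clause of the same corollary. Because $\ICAT$ is fibrant in $\scSet$ and $\overline{S}_{\alpha}\hookrightarrow \overline{S}_{\alpha+1}$ is a cofibration, the restriction map on mapping complexes into $\ICAT$ is a Kan fibration, so this homotopy lifts to a homotopy from $\widetilde{\chi}_{\alpha+1}$ to a new map $\chi_{\alpha+1}$ that strictly restricts to $\chi_{\alpha}$ on $\overline{S}_{\alpha}$; the new map still classifies $\overline{X}_{\alpha+1}$ since homotopic maps into $\ICAT$ classify the same fibered object. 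At a limit ordinal, set $\chi_{\beta}=\bigcup_{\alpha<\beta}\chi_{\alpha}$, and verify via Corollary \ref{stlik} that this patched map still classifies $\overline{X}_{\beta}$ fiber-by-fiber. Take $\chi=\chi_{\lambda}$.

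For uniqueness, given $\chi,\chi':\overline{S}\to \ICAT$ both classifying $\overline{X}$, I would construct a homotopy $h:K_{\sharp}\times \overline{S}\to \ICAT$ between them (with $K$ a contractible Kan complex containing two distinct vertices, as in Remark \ref{callie}) by the same transfinite induction. At each stage the restrictions $\chi|\overline{S}_{\alpha}$ and $\chi'|\overline{S}_{\alpha}$ are homotopic by Corollary \ref{usqus}, and the previously constructed homotopy on $\overline{S}_{<\alpha}$ extends because $\ICAT$ is fibrant and the relevant inclusion of scaled simplicial sets is a cofibration.

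The main obstacle will be verifying the lifting property needed at each successor step: namely, that for a cofibration $A\hookrightarrow B$ in $\scSet$ and a fibrant object $\calC$, homotopies of maps $B\to \calC$ can be lifted compatibly with a prescribed homotopy on $A$. Concretely, one needs $\Fun^{\text{sc}}(B,\calC)\to\Fun^{\text{sc}}(A,\calC)$ to be a fibration of Kan complexes (on the maximal sub-Kan-complexes), which follows from the pushout-product compatibility of cofibrations with scaled anodyne maps established in Proposition \ref{notred}, together with the fact that $\calC=\ICAT$ has the extension property with respect to scaled anodyne morphisms. A secondary subtlety is showing at limit ordinals that the limit map still classifies $\overline{X}_{\beta}$; here one uses that the classification relation is detected on fibers (Corollary \ref{stlik}) together with the fact that every vertex of $\overline{S}_{\beta}$ already lies in some $\overline{S}_{\alpha}$ with $\alpha<\beta$, so the fiberwise check is already witnessed at an earlier stage.
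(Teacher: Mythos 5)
Your proposal takes a genuinely different route from the paper, which simply observes that the corollary is the same statement as Corollary \ref{usqus} read in a larger Grothendieck universe: in a universe large enough that $\overline{S}$ is small, the fibers $X_s$ that were essentially small in the original universe become essentially $\kappa$-small for an appropriate regular cardinal $\kappa$ of the new universe, and the $\ICAT$ of the original universe is precisely $\ICAT^{\kappa}$ of the new one, so Corollary \ref{usqus} gives both existence and uniqueness of $\chi$ in one stroke. This is why the paper stated Corollary \ref{usqus} with the size parameter $\kappa$ at all.

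Your transfinite induction is more labor-intensive and, as written, has a genuine gap at the limit stage. You say you will ``verify via Corollary \ref{stlik} that this patched map still classifies $\overline{X}_{\beta}$ fiber-by-fiber,'' but the classification relation is not detected fiber-by-fiber. Corollary \ref{stlik} identifies the fiber of an unstraightening over a vertex with the value of the straightening there, and Lemma \ref{piner} characterizes weak equivalences between $\CatP$-fibered objects fiberwise, but the latter requires a \emph{map} between the two fibered objects inducing those fiberwise equivalences: it does not say that two fibrant objects with abstractly equivalent fibers are equivalent. To close the gap you would have to carry along, through the entire induction, an explicit equivalence $w_{\alpha}: \scUn_{\chi_{\alpha}'}(\phi) \to \overline{X}_{\alpha}$ witnessing the classification, arranged to restrict \emph{strictly} compatibly as $\alpha$ increases, so that the $w_\alpha$ actually glue at limit ordinals. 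That in turn means that at a successor stage, after lifting the homotopy from $\widetilde{\chi}_{\alpha+1}$ to $\chi_{\alpha+1}$, you must transport the witnessing equivalence along the induced homotopy of unstraightenings and then rigidify it to be a strict extension of $w_{\alpha}$; none of this is automatic, and it is the sort of cell-by-cell coherence bookkeeping that universe enlargement lets you avoid entirely. The successor-stage lifting you describe is fine (the inclusion $\overline{S}_{\alpha}\subseteq\overline{S}_{\alpha+1}$ is a cofibration and $\ICAT$ is fibrant, so $\sFun(\overline{S}_{\alpha+1},\ICAT)\to\sFun(\overline{S}_{\alpha},\ICAT)$ is a fibration by the pushout-product argument), and the converse direction is also fine; it is only the limit-ordinal step of the existence argument, and the analogous limit step in the uniqueness argument, that is unjustified as stated.
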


\begin{proof}
This is a special case of Corollary \ref{usqus}, applied in a larger universe.
\end{proof}

\begin{corollary}\label{snoball}
Let $S$ be a simplicial set (not necessarily small), and let $p: X \rightarrow S$ be a locally coCartesian fibration. The following conditions are equivalent:
\begin{itemize}
\item For every vertex $s \in S$, the fiber $X_{s}$ is essentially small.
\item The object $(X,M) \in \mset{ S_{\flat}}$ is classified by a map $\chi: S_{\flat} \rightarrow \ICAT$. Here $M$ denotes the collection of all locally $p$-coCartesian edges of $X$.
\end{itemize}
If these conditions are satisfied, then $\chi$ is uniquely determined. Moreover, 
$p$ is a coCartesian fibration if and only if $\chi$ factors through $S_{\sharp}$.
\end{corollary}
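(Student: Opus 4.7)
The plan is to derive this corollary directly from the preceding one by specializing first to the scaled simplicial set $S_{\flat}$ and then comparing with $S_{\sharp}$. For the first two bullet points and the uniqueness of $\chi$, I would simply invoke the previous Corollary in the case $\overline{S} = S_{\flat}$, after observing from Example \ref{user} that a $\CatP_{S_{\flat}}$-fibered object is exactly a locally coCartesian fibration $p \colon X \to S$ together with its collection of locally $p$-coCartesian edges, so that the hypothesis $\overline{X} \in \mset{S_{\flat}}$ is precisely the data of $p$ (together with the canonical marking $M$).

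For the \emph{moreover} clause, I would apply the preceding Corollary a second time, now to $\overline{S} = S_{\sharp}$. Unwinding Definition \ref{catpat} for $\CatP_{S_{\sharp}}$, a fibrant object of $\mset{S_{\sharp}}$ is a locally coCartesian fibration $q \colon Y \to S$ (with its locally coCartesian edges marked) such that the pullback to every $2$-simplex of $S$ is a coCartesian fibration; by Proposition \toposref{gotta}, this latter condition is equivalent to $q$ being globally a coCartesian fibration. Combined with the uniqueness clause of the Corollary, this reduces the moreover statement to the assertion that $\chi$ factors, up to homotopy, through the cofibration $i \colon S_{\flat} \hookrightarrow S_{\sharp}$ if and only if $p$ is a coCartesian fibration.

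One direction is essentially tautological: if $p$ is a coCartesian fibration, then the same data $(X,M)$ is already fibrant in $\mset{S_{\sharp}}$ and is therefore classified by some $\tilde\chi \colon S_{\sharp} \to \ICAT$; since the pullback $i^{\ast}$ preserves the underlying locally coCartesian fibration and the marking, the composition $\tilde\chi \circ i$ also classifies $(X,M)$, and by uniqueness is homotopic to $\chi$.

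The converse is the main point. Suppose $\chi$ is homotopic to $\tilde\chi \circ i$ for some $\tilde\chi \colon S_{\sharp} \to \ICAT$, and let $\widetilde Y \in \mset{S_{\sharp}}$ be a fibrant object classified by $\tilde\chi$. Then $i^{\ast}\widetilde Y$ is fibrant in $\mset{S_{\flat}}$ and is classified by $\tilde\chi \circ i \simeq \chi$, so by uniqueness it is equivalent to $(X,M)$ in the homotopy category of $\mset{S_{\flat}}$. The hard part will be extracting from this weak equivalence the conclusion that $p$ itself is a coCartesian fibration (a priori we only know this for the underlying map of $\widetilde Y$). I expect this to follow from Lemma \ref{piner}, which shows that any weak equivalence between fibrant objects of $\mset{S_{\flat}}$ restricts to a categorical equivalence on every fiber over an edge $\Delta^1 \to S$, combined with the observation that the property of being a coCartesian fibration can be verified $2$-simplex at a time and is stable under such equivalences over $S$ (for which one can invoke Proposition \toposref{basechangefunky} on each pullback $X \times_{S} \Delta^2 \to \Delta^2$, comparing it to the corresponding pullback of the underlying map of $\widetilde Y$, which is a coCartesian fibration by hypothesis).
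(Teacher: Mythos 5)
Your overall strategy is right, and the first three claims (the two bullet points and uniqueness) are indeed immediate from the preceding corollary applied with $\overline{S} = S_{\flat}$, exactly as you say. The easy direction of the \emph{moreover} clause (if $p$ is coCartesian, then $\chi$ factors through $S_{\sharp}$, up to homotopy) is also handled correctly via the naturality of the classifying map under $i_{!}\dashv i^{\ast}$ (Remark \ref{ba1}).

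For the converse direction, the mathematical content of your argument is correct, but two small adjustments would tighten it. First, what you actually need from Lemma \ref{piner} is part $(2)$ rather than part $(3)$: namely, that a weak equivalence $f \colon (X,M) \to \widetilde{Y}$ between fibrant objects of $\mset{S_{\flat}}$ admits a homotopy inverse $g$ \emph{in} $\mset{S_{\flat}}$, so that $g$ preserves the markings and the homotopy $g\circ f \simeq \id$ is implemented fiberwise over $S$. This is the ingredient that lets you transport information back from $\widetilde{Y}$ to $(X,M)$, not merely the fiber-over-edge equivalences of part $(3)$. Second, the citation of Proposition \toposref{basechangefunky} is not the right tool for the final transfer; that proposition concerns homotopy pullback squares of coCartesian fibrations. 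What you actually need is the (standard) fact that locally coCartesian edges are stable under pre- and post-composition with equivalences lying in a fiber. With that in hand, the argument closes cleanly: given composable locally $p$-coCartesian edges $e_1, e_2$ of $X$ over a $2$-simplex $\sigma$ of $S$, the images $f(e_1), f(e_2)$ are marked in $\widetilde{Y}$; since $\widetilde{Y}$ is $S_{\sharp}$-fibered, $f(e_2\circ e_1)$ is marked; hence $g(f(e_2\circ e_1))$ is marked in $(X,M)$; and the homotopy $g\circ f \simeq \id$ supplies a square in $X \times_{S} \Delta^{\{0,2\}}$ whose vertical edges are fiber equivalences, showing that $e_2\circ e_1$ is equivalent to a marked edge and therefore itself marked. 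This verifies that the restriction of $p$ to each $2$-simplex is a coCartesian fibration, so by Proposition \toposref{gotta}, $p$ is a coCartesian fibration.
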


Suppose that $p: X \rightarrow S$ is a Cartesian fibration of simplicial sets
whose fibers are essentially small. Corollary \ref{snoball} implies that
$p$ is classified by a map of scaled simplicial sets $S_{\sharp} \rightarrow \ICAT$, which we can identify with a map of ordinary simplicial sets $\chi: S \rightarrow \Cat_{\infty}$. In this case, we will also say that $\chi$ {\it classifies} the coCartesian fibration $p$, or that $p$ classifies the map $\chi$.
However, there is now some danger of confusion: in \S \toposref{universalfib}, we said that a 
coCartesian fibration $p: X \rightarrow S$ is {\em classified by} a map $\chi: S \rightarrow \Cat_{\infty}$ if the Cartesian fibration $X^{op} \rightarrow S^{op}$ is classified by $\chi$, in the sense of 
Definition \toposref{classer}. These definitions are {\em not} equivalent. Instead, we have the following:

\begin{proposition}\label{juspus}
Let $p: X \rightarrow S$ be a coCartesian fibration of simplicial sets, and assume that the fibers of $p$ are essentially small. Let $\chi: S_{\sharp} \rightarrow \ICAT$ be a map of scaled simplicial sets
which classifies the object $X^{\natural} \in \mset{ S_{\sharp}}$ (in the sense of Definition \ref{scorpus}), corresponding to a map of simplicial sets $\chi_0: S \rightarrow \Cat_{\infty}$. Let
$\xi: S \rightarrow \Cat_{\infty}$ be a functor which classifies the Cartesian fibration
$X^{op} \rightarrow S^{op}$, in the sense of Definition \toposref{classer}. Then
$\chi_0$ is homotopic to $r \circ \xi$, where $r: \Cat_{\infty} \rightarrow \Cat_{\infty}$
is the functor which carries each $\infty$-category to its opposite (up to homotopy;
see Theorem \ref{cabbi}).
\end{proposition}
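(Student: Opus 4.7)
The plan is to apply Theorem \ref{cabbi} (To\"{e}n's classification of self-equivalences of $\Cat_{\infty}$) after reducing the comparison to a universal case via naturality. First, I would verify that both classifying map constructions are functorial under pullback along $f: T \to S$: if $p: X \to S$ is classified by $\chi_0$ via the scaled nerve and by $\xi$ via the HTT convention (applied to $p^{\op}$), then the pullback $f^{\ast}p: T \times_S X \to T$ is classified by $\chi_0 \circ f$ and $\xi \circ f$ respectively. On the scaled side this follows from Remark \ref{ba1} together with the uniqueness clause of Corollary \ref{preil}; on the HTT side it is built into the definition of the straightening functors of \S \toposref{strsec}.

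With naturality in hand, the assignment $\chi_0 \mapsto r \circ \xi$ becomes a natural endotransformation of the presheaf $S \mapsto \pi_0 \Fun(S, \Cat_{\infty})$ on simplicial sets. By a Yoneda-style argument, such a transformation is determined by its action on $\id \in \pi_0 \Fun(\Cat_{\infty}, \Cat_{\infty})$, corresponding to a single endofunctor $G = r \circ \xi_{\mathrm{univ}}$ of $\Cat_{\infty}$, where $\xi_{\mathrm{univ}}$ is the HTT classifier of a universal coCartesian fibration $p: X \to \Cat_{\infty}$ with $\chi_0 = \id$ (whose existence is ensured by Corollary \ref{snoball}, applied in a sufficiently enlarged universe). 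Since $\xi_{\mathrm{univ}}$ is necessarily an equivalence (as it classifies the ``identity-like'' universal fibration under the HTT equivalence) and $r$ is an equivalence, $G$ is also an equivalence, hence by Theorem \ref{cabbi} is homotopic either to $\id$ or to $r$.

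To select between these, I would compute $G$ on objects using the fiber descriptions of both straightenings. The fiber of $p$ over $s \in \Cat_{\infty}$ satisfies $X_s \simeq \chi_0(s) \simeq s$, so the fiber of $p^{\op}$ over $s$ is $s^{\op}$, giving $\xi_{\mathrm{univ}}(s) \simeq s^{\op}$ and thus $G(s) = r(\xi_{\mathrm{univ}}(s)) \simeq (s^{\op})^{\op} \simeq s$. Since $G$ agrees with $\id$ on objects, and $r$ is genuinely nontrivial on objects---for instance, the nerve of the poset $\mathbf{N}$ is not equivalent to its opposite---we must have $G \simeq \id$, which is what we want.

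The main obstacle is the naturality step: the scaled straightening (built via colimit constructions over subdivided simplices, Definition \ref{sputer2}) and the HTT straightening (built from marked simplicial sets, \S \toposref{strsec}) are produced by structurally different mechanisms, and confirming that they transform compatibly under pullback requires threading functoriality through the universal properties afforded by Corollary \ref{preil} and its HTT analogue. A secondary subtlety is that the ``universal'' fibration $p: X \to \Cat_{\infty}$ must be set up in a suitable larger universe to avoid circularity; once that is done, the remainder of the argument proceeds by direct fiber-wise comparison.
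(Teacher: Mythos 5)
Your proof is correct and follows essentially the same route as the paper's: both arguments reduce by naturality to the universal coCartesian fibration over $\Cat_{\infty}$, invoke Theorem \ref{cabbi} to narrow down to $\{\id, r\}$, and then distinguish the two possibilities by a fiberwise comparison showing that the scaled classifier returns $X_s$ while the HTT classifier of $X^{op}\to S^{op}$ returns $X_s^{op}$. The only cosmetic difference is that the paper carries out that final comparison by restricting to $S = \Delta^0$, whereas you compute $\xi_{\mathrm{univ}}(s) \simeq s^{op}$ inside the universal case directly.
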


\begin{proof}
Let $S = \Cat_{\infty}$ and let $p: X \rightarrow S$ be the universal coCartesian fibration.
In this case, the classifying functors $\chi_0, \xi: S \rightarrow \Cat_{\infty}$ are equivalences of $\infty$-categories so that $\chi_0$ is homotopic to $\psi \circ \xi$ for some equivalence
$\psi$ from $\Cat_{\infty}$ to itself. It follows by functoriality that $\chi_0$ is homotopic to $\psi \circ \xi$ for {\em any} coCartesian fibration $p: X \rightarrow S$ with essentially small fibers.
Theorem \ref{cabbi} implies that $\psi$ is equivalent to either $r$ or the identity functor
$\id: \Cat_{\infty} \rightarrow \Cat_{\infty}$. To complete the proof, it will suffice to show that the latter case is impossible. To see this, we take $S$ to consist of a single vertex. In this case, we can
identify $\chi_0$ with the vertex of $\Cat_{\infty}$ corresponding to the $\infty$-category
$X$, while $\xi$ corresponds to the vertex of $\Cat_{\infty}$ corresponding to the $\infty$-category
$X^{op}$. In general the $\infty$-categories $X$ and $X^{op}$ are not equivalent. 
\end{proof}

\begin{remark}
The proof of Proposition \ref{juspus} is somewhat unsatisfying: it proceeds via a classification
of all equivalences of $\Cat_{\infty}$ with itself, rather than by establishing a direct relationship between the functors $\chi_0$ and $\xi$. It seems difficult to describe this relationship using the simplicial language presented here. The classifying functor $\chi_0$ is defined using the
scaled straightening functor of Definition \ref{sputer2}, while $\xi$ is defined in
terms of the ordinary straightening functor defined in \S \toposref{markmodel2}. These
straightening functors play similar philosophical roles, but are implemented differently in the formalism of simplicial sets and do not seem to be directly comparable to one another.
\end{remark}

\section{The Goodwillie Calculus}\label{bisecE}

Let $\calC$ be a presentable pointed $\infty$-category. Then we can consider also the $\infty$-category $\Spectra(\calC)$ of spectrum objects of $\calC$ (see \S 
\stableref{stable9.1}). The following question arises naturally: to what extent can $\Spectra(\calC)$ be regarded as a functor of $\calC$? For example, suppose that $F: \calC \rightarrow \calD$ is
a functor between pointed presentable $\infty$-categories. Under what conditions does
$F$ determine a functor from $\Spectra(\calC)$ to $\Spectra(\calD)$?
The most obvious case to consider is when the functor $F$ is left exact. In this case, composition
with $F$ determines a $f: \Spectra(\calC) \rightarrow \Spectra(\calD)$.
This functor $f$ fits into a commutative diagram
$$ \xymatrix{ \Spectra(\calC) \ar[r]^{f} \ar[d]^{ \Omega^{\infty}_{\calC} } & \Spectra(\calD) \ar[d]^{\Omega^{\infty}_{\calD}} \\
\calC \ar[r]^{F} & \calD. }$$

There is a dual situation which is equally important. Suppose that the functor $F$
preserves small colimits. Applying Corollary \toposref{adjointfunctor}, we deduce that $F$ admits
a right adjoint $G$. Since $G$ is left exact, we can apply the above reasoning to obtain an
induced functor $g: \Spectra(\calD) \rightarrow \Spectra(\calC)$. We can use Corollary
\toposref{adjointfunctor} again to deduce that $g$ admits a left adjoint $f$.
We can regard $f$ as an ``extension'' of $F$, in the
sense that the diagram
$$ \xymatrix{ \Spectra(\calC) \ar[r]^{f} & \Spectra(\calD) \\
\calC \ar[u]^{\Sigma^{\infty}_{\calC} } \ar[r]^{F} & \calD \ar[u]^{\Sigma^{\infty}_{\calD} } }$$
commutes up to homotopy. 

This raises a number of questions. For example, suppose that a functor $F: \calC \rightarrow \calD$ preserves small colimits {\em and} finite limits. In this case, we can apply either of the above constructions to produce a functor $\Spectra(\calC) \rightarrow \Spectra(\calD)$: do the resulting functors coincide up to homotopy? On the other hand, suppose that $F$ satisfies neither condition; can one still hope to find an exact functor $f: \Spectra(\calC) \rightarrow \Spectra(\calD)$ which is somehow related to $F$?

To answer these questions, let us recall a bit of terminology. A functor $F: \calC \rightarrow \calD$
is {\it excisive} if $F$ carries zero objects of $\calC$ to zero objects of $\calD$, and carries
pushout diagrams in $\calC$ to pullback diagrams in $\calC$. (Our terminology here is
somewhat nonstandard; most authors do not require the preservation of zero objects in the
definition of an excisive functor.) According to Proposition \ref{urbusk}, the
$\infty$-category of colimit preserving functors $\LFun( \Spectra(\calC), \Spectra(\calD) )$
is equivalent to the full subcategory of $\Fun(\calC, \calD)$ spanned by the
excisive functors which preserve filtered colimits. Consequently, we may rephrase our problem as follows: given a functor $F: \calC \rightarrow \calD$, can we choose an excisive functor
$F': \calC \rightarrow \calD$ which preserves filtered colimits and is, in some sense, a best approximation to the original functor $F$? 

The Goodwillie calculus allows us to address these questions, provided that some mild assumptions are satisfied. Suppose that $\calC$ and $\calD$ be {\it well-pointed} $\infty$-categories (see Definition \ref{wellp}). Let $\Fun_{0}(\calC), \calD)$ denote the full subcategory of
$\Fun(\calC, \calD)$ spanned by those functors which preserve zero objects and sequential colimits, and
let $\Fun_{\exc}(\calC, \calD)$ denote the full subcategory of $\Fun_0(\calC, \calD)$ spanned by
the excisive functors. Then the inclusion $\Fun_{\exc}(\calC, \calD) \subseteq \Fun_0(\calC, \calD)$
admits a left adjoint (Corollary \ref{leftywing}), which carries a functor $F \in \Fun_0(\calC, \calD)$
to its {\it $($Goodwillie$)$ derivative} $DF = \varinjlim_{n} \Omega^{n}_{\calD} \circ F \circ \Sigma^{n}_{\calC}$. In this case, we can write $DF$ as a composition $\Omega^{\infty}_{\calD} \circ f \circ \Sigma^{\infty}_{\calC}$, where $f: \Spectra(\calC) \rightarrow \Spectra(\calD)$ is an exact functor which we call
the {\it linearization} of $f$. Finally, there is a {\em chain rule}: given a composable pair of functors
$F: \calC \rightarrow \calD$ and $G: \calD \rightarrow \calE$ with linearizations
$f: \Spectra(\calC) \rightarrow \Spectra(\calD)$ and $g: \Spectra(\calD) \rightarrow \Spectra(\calE)$,
we can identify the composition $g \circ f$ with the linearization of $G \circ F$ (Proposition \ref{clambake}).

Our goal in this section is to give an overview of some rudimentary parts of the Goodwillie calculus (specifically, the theory of first derivatives) using the language of $\infty$-categories. (For
a more comprehensive study of the Goodwillie calculus in the classical setting,
we refer the reader to Goodwillie's work (\cite{goodwillieI}, \cite{goodwillieII}, and
\cite{goodwillie}); see also \cite{klein} and \cite{chingarone} for a discussion of the chain rule).
We will begin in \S \ref{bisec5.1} by defining the linearization and Goodwillie derivative of a (well-pointed) functor $F: \calC \rightarrow \calD$, and establishing some of their basic formal properties.
In \S \ref{bisec5.2} we will discuss the situation more systematically by introducing a notion of
linearization for locally coCartesian fibrations: this can be regarded as a parametrized version of the
stabilization construction $\calC \mapsto \Stab(\calC)$. We will apply this parametrized linearization
construction in \S \ref{bisec5.3} to reformulate the theory of first derivatives using the $(\infty,2)$-categorical language introduced earlier in this paper. Finally, in \S \ref{bisec5.4} we will consider
consider the relationship between the linearization of a functor $F: \calC \rightarrow \calD$ and the
linearization of its adjoint $G$.

\subsection{Derivatives and Linearizations of Functors}\label{linf}\label{bisec5.1}

Our goal in this section is to describe some of the basic notions from
Goodwillie's calculus of functors. With an eye toward future applications, we will try to work
in as general a setting is possible. We begin by axiomatizing the types of
$\infty$-categories and functors to which the calculus can be applied.

\begin{definition}\label{wellp}
We will say that an $\infty$-category $\calC$ is {\it well-pointed}
if the following conditions are satisfied:
\begin{itemize}
\item[$(1)$] The $\infty$-category $\calC$ is pointed.
\item[$(2)$] The $\infty$-category $\calC$ admits finite limits. In particular,
the loop functor $\Omega_{\calC}: \calC \rightarrow \calC$ is well-defined.
\item[$(3)$] The $\infty$-category $\calC$ admits countable colimits.
\item[$(4)$] The loop functor $\Omega_{\calC}: \calC \rightarrow \calC$ preserves
sequential colimits.
\end{itemize}
\end{definition}

\begin{definition}
We will say that a functor $F: \calC \rightarrow \calD$ between $\infty$-categories
is {\it well-pointed} if the following conditions are satisfied:
\begin{itemize}
\item[$(1)$] The $\infty$-categories $\calC$ and $\calD$ are well-pointed.
\item[$(2)$] The functor $F$ preserves zero objects and sequential colimits.
\end{itemize}
\end{definition}

\begin{notation}
If $F: \calC \rightarrow \calD$ is a well-pointed functor between $\infty$-categories,
then we let $F^{+}: \PreSpectra(\calC) \rightarrow \PreSpectra(\calD)$ denote the functor
given by composition with $F$.
\end{notation}

\begin{proposition}\label{coomer}
Let $F: \calC \rightarrow \calD$ be a well-pointed functor. Let $L_{\calC}: \PreSpectra(\calC) \rightarrow \Spectra(\calC)$ and $L_{\calD}: \PreSpectra(\calD) \rightarrow \Spectra(\calD)$ denote left adjoints to the inclusions.
Suppose that $\alpha: X \rightarrow X'$ is a map of prespectrum objects of $\calC$ such that
$L_{\calC}( \alpha)$ is an equivalence in $\Spectra(\calC)$. Then $L_{\calD} F^{+}(\alpha)$ is an equivalence in $\Spectra(\calD)$.
\end{proposition}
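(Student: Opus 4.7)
The plan is to exploit the concrete description of spectrification as a sequential colimit and to reduce the proposition to a cofinality computation at the level of prespectra.

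To begin, I would recall the standard formula for $L_\calC$. Define $T_\calC : \PreSpectra(\calC) \to \PreSpectra(\calC)$ by $T_\calC(X)_n = \Omega_\calC X_{n+1}$, with structure maps induced by those of $X$, and let $\eta : \id \to T_\calC$ be the natural transformation given levelwise by the structure maps of a prespectrum. Since $\Omega_\calC$ preserves sequential colimits by well-pointedness of $\calC$, the colimit $\colim_k T_\calC^k(X)$ along the iterated $\eta$ maps is itself a spectrum, and the canonical map $X \to \colim_k T_\calC^k(X)$ realizes the unit of the localization $L_\calC$. Applying $L_\calD F^+$ to the naturality square for $\alpha$ with respect to this unit, and invoking the two-out-of-three property, reduces the proposition to the claim that $L_\calD \circ F^+$ sends every unit map $Y \to L_\calC(Y)$ to an equivalence. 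Since $F^+$ and $L_\calD$ both preserve sequential colimits (the latter because $\Omega_\calD$ does), this in turn reduces to showing that $L_\calD F^+(\eta_Z) : L_\calD F^+(Z) \to L_\calD F^+(T_\calC Z)$ is an equivalence for arbitrary $Z \in \PreSpectra(\calC)$.

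The heart of the argument is a levelwise cofinality computation. At level $n$,
\[
L_\calD F^+(Z)_n = \colim_m \Omega_\calD^m F(Z_{n+m}), \qquad L_\calD F^+(T_\calC Z)_n = \colim_m \Omega_\calD^m F(\Omega_\calC Z_{n+m+1}).
\]
I would organize both colimits into a single interleaved tower $U_0 \to U_1 \to U_2 \to \cdots$ in $\calD$ with $U_{2m} = \Omega_\calD^m F(Z_{n+m})$ and $U_{2m+1} = \Omega_\calD^m F(\Omega_\calC Z_{n+m+1})$, where $U_{2m} \to U_{2m+1}$ is $\Omega_\calD^m F$ of the structure map $Z_{n+m} \to \Omega_\calC Z_{n+m+1}$, and $U_{2m+1} \to U_{2m+2}$ is $\Omega_\calD^m$ of the canonical comparison $\phi : F \circ \Omega_\calC \to \Omega_\calD \circ F$. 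Unfolding the structure map of the prespectrum $F^+(Z)$ as $F$ of a structure map followed by $\phi$ identifies the composite $U_{2m} \to U_{2m+2}$ with the transition in $\colim_m U_{2m}$; the analogous identification for the odd-indexed subtower reduces, via naturality of $\phi$ applied to the structure map of $Z$, to a commutative square. Both the even- and odd-indexed subtowers are cofinal in $(U_k)$, so both have the same colimit as $(U_k)$; the map induced by the shift $(U_{2m}) \to (U_{2m+1})$, which unwinds to $L_\calD F^+(\eta_Z)$ at level $n$, is therefore an equivalence.

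The main technical obstacle is the bookkeeping in the last step, namely the verification that the composite transitions in the interleaved tower agree with the transitions of the two sub-colimits. Both identifications reduce to naturality of $\phi$ applied to the structure map of $Z$, but the diagrams must be drawn with care to keep track of the different instances of $\Omega_\calC$ and $\Omega_\calD$.
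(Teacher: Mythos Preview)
Your argument is correct. Both your proof and the paper's reduce via the two-out-of-three property to showing that $L_{\calD} \circ F^{+}$ carries the unit map $X \to L_{\calC} X$ to an equivalence, and both exploit that $L_{\calC}$ is a sequential colimit of intermediate approximations together with the fact that $F^{+}$ and $L_{\calD}$ commute with sequential colimits. The difference lies in the final step. The paper uses a family of approximations $L_n$ (from the companion stable-category paper) with the property that $X \to L_n X$ is an equivalence at all sufficiently high levels, and then invokes the criterion that a map of prespectra which is an equivalence at high levels becomes an equivalence after spectrification. You instead use the shift-and-loop approximations $T_{\calC}^k$ and carry out a direct interleaving/cofinality computation at each level, tracking the comparison map $\phi: F \circ \Omega_{\calC} \to \Omega_{\calD} \circ F$ explicitly. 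Your approach is more self-contained---it does not appeal to the auxiliary functors $L_n$ or the high-level equivalence criterion---at the cost of the bookkeeping you flag in your last paragraph; the paper's approach hides that bookkeeping inside the cited results but depends on having them available.
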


\begin{proof}
Let us say that a morphism $\alpha$ in $\PreSpectra(\calC)$ is {\it good} if
the induced map $L_{\calD} F^{+}(\alpha)$ is an equivalence in $\Spectra(\calD)$.
We wish to prove that if $L_{\calC}(\alpha)$ is an equivalence, then $\alpha$ is good.
The proof proceeds in several steps.
\begin{itemize}
\item[$(a)$] Let $\alpha: X \rightarrow X'$ be a map of prespectrum objects of $\calC$ which
induces equivalences $X[n] \rightarrow X'[n]$ for $n \gg 0$. Then the map $F^{+}(\alpha)$
has the same property, so that $L_{\calD} F^{+}(\alpha)$ is an equivalence by virtue of
Remark \stableref{jilly}. It follows that $\alpha$ is a good morphism.
\item[$(b)$] Let $\alpha: X \rightarrow X'$ be such that $L_{\calC}(\alpha)$ is an equivalence. We have a commutative diagram $$ \xymatrix{ X \ar[r]^{\alpha} \ar[d] & X' \ar[d] \\
L_{\calC} X \ar[r] & L_{\calC} X', }$$
where the bottom horizontal map is an equivalence (and therefore good). Since the collection of
good morphisms in $\PreSpectra(\calC)$ has the two-out-of-three property, it will suffice to prove
that the vertical maps are good.
\item[$(c)$] Since the functors $F^{+}$ and $L_{\calD}$ commute with sequential colimits,
the collection of good morphisms in $\PreSpectra(\calC)$ is stable under sequential colimits.
\item[$(d)$] Corollary \stableref{postzing} implies that the localization functor $L_{\calC}$
can be written as the colimit of the sequence of functors $L_{n}: \PreSpectra(\calC) \rightarrow
\PreSpectra(\calC)$ described in Corollary \stableref{camber}. It will therefore suffice to show that
each of the maps $X \rightarrow L_n X$ is good. This follows from $(a)$.
\end{itemize}
\end{proof}

\begin{definition}\label{cool}
Let $F: \calC \rightarrow \calD$ be a well-pointed functor, let $f: \Spectra(\calC) \rightarrow \Spectra(\calD)$ be an arbitrary functor. Let $\alpha: F^{+}| \Spectra(\calC) \rightarrow f$ be a natural transformation of functors from $\Spectra(\calC)$ to $\PreSpectra(\calD)$. We will say that $\alpha$
{\it exhibits $f$ as a linearization of $F^{+}$} if, for every object $X \in \Spectra(\calC)$, the induced map $\alpha_{X}: F^{+}(X) \rightarrow f(X)$ exhibits $f(X)$ as a $\Spectra(\calD)$-localization of
$F^{+}(X)$.
\end{definition}

\begin{remark}\label{coola}
In the situation of Definition \ref{cool}, there exists a natural transformation
$\alpha: F^{+}| \Spectra(\calC) \rightarrow f$ which exhibits $f$ as a linearization of $F$,
and the functor $f$ is uniquely determined up to equivalence. For example, we can
take $f = (L_{\calD} \circ F^{+}) | \Spectra(\calC)$, where $L_{\calD}$ denotes a left
adjoint to the inclusion $\Spectra(\calD) \subseteq \PreSpectra(\calD)$.
\end{remark}

\begin{remark}\label{sweety}
Let $F: \calC \rightarrow \calD$ be a well-pointed functor.
It follows from Proposition \ref{coomer}, Proposition \toposref{unlap}, and Remark \ref{coola} that the composition $L_{\calD} \circ F^{+}: \PreSpectra(\calC) \rightarrow \Spectra(\calD)$ is equivalent to $f \circ L_{\calC}$, where $f$ denotes a linearization of $F$.
\end{remark}

\begin{proposition}\label{exman}
Let $F: \calC \rightarrow \calD$ be a well-pointed functor, and let $\alpha: F^{+} \rightarrow f$ exhibit $f$ as a linearization of $F$. Then $f$ is an exact functor from $\Spectra(\calC)$ to $\Spectra(\calD)$.
\end{proposition}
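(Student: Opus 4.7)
By Remark \ref{coola}, I may take $f \simeq L_{\calD} \circ F^{+}|_{\Spectra(\calC)}$, and by Corollary \stableref{postzing} (invoked in the proof of Proposition \ref{coomer}), $L_{\calD}(Y)[n] \simeq \colim_{k} \Omega_{\calD}^{k} Y[n+k]$, so that
\[
f(X)[n] \;\simeq\; \colim_{k} \Omega_{\calD}^{k} F(X[n+k]).
\]
Since $\Spectra(\calC)$ and $\Spectra(\calD)$ are stable, exactness of $f$ is equivalent to preservation of the zero object and fiber sequences. The zero object is preserved automatically, since $F(0) \simeq 0$ and both $F^{+}$ and $L_{\calD}$ send zero objects to zero objects.

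To warm up, I would first check that $f$ commutes with $\Omega$. Well-pointedness of $\calD$ lets $\Omega_{\calD}$ pass through the defining sequential colimit, giving $\Omega f(X)[n] \simeq \colim_{k} \Omega_{\calD}^{k+1} F(X[n+k])$. On the other hand, the spectrum identity $X[m] \simeq \Omega_{\calC} X[m+1]$ yields $f(\Omega X)[n] \simeq \colim_{k} \Omega_{\calD}^{k} F(\Omega_{\calC} X[n+k]) \simeq \colim_{k} \Omega_{\calD}^{k} F(X[n+k-1])$, which matches the previous telescope after reindexing. This is the baby case of the general argument.

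For a fiber sequence $W \to X \to Y$ in $\Spectra(\calC)$ (which is pointwise a fiber sequence $W[m] \simeq \fib(X[m] \to Y[m])$ in $\calC$), the canonical pointed-functor comparison yields, for each $k$, a morphism $\alpha_{k} : F(W[n+k]) \to \fib(F(X[n+k]) \to F(Y[n+k]))$ in $\calD$. The plan is to show that $\colim_{k} \Omega_{\calD}^{k} \alpha_{k}$ is an equivalence whose target is naturally identified with $\fib(f(X)[n] \to f(Y)[n])$. The identification of the target uses well-pointedness of $\calD$ (so $\Omega_{\calD}$, and hence the telescope $\colim_{k}\Omega_{\calD}^{k}(-)$, commutes with the finite limit defining the fiber). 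For the source, the key move is to exploit the spectrum structure: since $W[n+k] \simeq \Omega_{\calC}^{j} W[n+k+j]$ for every $j$ (and likewise for $X, Y$), iterating the natural transformation $F \circ \Omega_{\calC} \to \Omega_{\calD} \circ F$ lets one factor $\alpha_{k}$ through a $j$-fold loop of $\alpha_{k+j}$; passing to the telescope then collapses the error and forces $\colim_{k} \Omega_{\calD}^{k} \alpha_{k}$ to be an equivalence.

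The main obstacle is executing this stabilization coherently: one must construct the comparison maps, the reindexing bijections, and the cofinality statements in an $\infty$-categorical setting, and verify that the manipulations are consistent with the prespectrum structure maps. This is the abstract counterpart of Goodwillie's original telescope argument for the excisiveness of the first derivative, and the well-pointedness hypotheses on $\calC$, $\calD$, and $F$ are precisely what is needed to make the interchange of $\Omega_{\calD}$ with sequential colimits legitimate at each step.
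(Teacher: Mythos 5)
Your proposal has the right raw materials but ends as an acknowledged sketch at exactly the point where the argument is hardest, and it misses a reduction that would have let you stop after your ``warm-up.''

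Concretely: your formula $f(X)[n] \simeq \colim_k \Omega_{\calD}^{k} F(X[n+k])$ is correct (this is the $\colim_n L_n$ presentation of the localization from Corollary \stableref{postzing}, cf. Remark \ref{derexi}), and your warm-up computation that $f$ commutes with $\Omega$ is the right condition to check. But for a reduced functor between stable $\infty$-categories, commuting with $\Omega$ is equivalent to commuting with $\Sigma$, and that alone (via Corollary \stableref{cuuple}) is already the criterion the paper reduces exactness to. You did not observe that the warm-up suffices, and instead went on to try to prove preservation of fiber sequences directly, reproducing essentially the proof that $D_1 F$ is excisive. That last step you leave explicitly unexecuted (``the main obstacle is executing this stabilization coherently''), so as written the proof is incomplete. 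There is also a subtler issue in the sketched argument: identifying $\colim_k \Omega_{\calD}^{k}$ of the levelwise fibers with the fiber of the colimits requires sequential colimits to commute with pullbacks in $\calD$, which does not follow from well-pointedness (condition $(4)$ of Definition \ref{wellp} only concerns $\Omega_{\calD}$); this would need a separate justification.

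The paper's route sidesteps all of this. After reducing to checking that $\Sigma_{\Spectra(\calD)} f(X) \to f(\Sigma_{\Spectra(\calC)} X)$ is an equivalence (Corollary \stableref{cuuple}), it writes $X = L_\calC Y$ and realizes $\Sigma_{\Spectra(\calC)} X$ as the $L_\calC$-image of the explicit prespectrum pushout square built from $S_+(Y)$, $S_-(Y)$, $S(Y)$ (Proposition \stableref{makesig}, Notation \stableref{corpae}). Since $F^{+}$ is composition with $F$ and the constructions $S$, $S_\pm$ are just reindexings of prespectra, $F^{+}$ commutes with them \emph{strictly}: $F^{+}S(Y) = S(F^{+}Y)$, etc. Then Remark \ref{sweety} identifies $f L_\calC$ with $L_\calD F^{+}$, and one simply applies Proposition \stableref{makesig} again in $\calD$. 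No telescope, no reindexing coherence, no interchange of colimits with fibers. The moral is that phrasing suspension as a localization of a prespectrum-level pushout makes the compatibility with $F^{+}$ tautological rather than a hard telescoping lemma.
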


\begin{proof}
In view of Corollary \stableref{cuuple}, it will suffice to show that for every object
$X \in \Spectra(\calC)$, the canonical map $\Sigma_{ \Spectra(\calD)} f(X) \rightarrow
f( \Sigma_{ \Spectra(\calC)} X)$ is an equivalence in $\Spectra(\calD)$. Let
$L_{\calC}: \PreSpectra(\calC) \rightarrow \Spectra(\calC)$ denote a left adjoint to the inclusion, and let $L_{\calD}$ be defined similarly. Without loss of generality we may assume $X = L_{\calC} Y$
for some $Y \in \PreSpectra(\calC)$. According to Proposition \stableref{makesig}, we have a pushout diagram
$$ \xymatrix{ L_{\calC} Y \ar[r] \ar[d] & L_{\calC} S_{+}(Y) \ar[d] \\
L_{\calC} S_{-}(Y) \ar[r] & L_{\calC} S(Y) }$$
in $\Spectra(\calC)$ (see Notation \stableref{corpae}); we wish to show that the induced diagram
$$ \xymatrix{ f L_{\calC} Y \ar[r] \ar[d] & f L_{\calC} S_{+}(Y) \ar[d] \\
f L_{\calC} S_{-}(Y) \ar[r] & f L_{\calC} S(Y) }$$
is a pushout diagram in $\Spectra(\calD)$.
Remark \ref{sweety} implies that $f L_{\calC} \simeq L_{\calD} F^{+}$. We observe
that we have identities 
$$F^{+} S(Y) = S( F^{+} Y) \quad \quad F^{+} S_{+}(Y) = S_{+}( F^{+} Y)
\quad \quad F^{+} S_{-}(Y) = S_{-}( F^{+} Y).$$
It will therefore suffice to show that the diagram
$$ \xymatrix{ F^{+} Y \ar[r] \ar[d] & S_{+}( F^{+} Y) \ar[d] \\
S_{-}( F^{+} Y) \ar[r] & S( F^{+} Y) }$$
becomes a pushout square after applying the functor $L_{\calD}$, which follows from
Proposition \stableref{makesig}.
\end{proof}

The following result characterizes the linearization of a functor $F: \calC \rightarrow \calD$ by a universal property:

\begin{proposition}\label{swuft}
Let $F: \calC \rightarrow \calD$ be a well-pointed functor, let $\alpha: F^{+} \rightarrow f$ exhibit
$f$ as a linearization of $F$, and let $g: \Spectra(\calC) \rightarrow \Spectra(\calD)$ be any exact functor. Let $L_{\calC}$ and $L_{\calD}$ denote left adjoints to the inclusions
$\Spectra(\calC) \subseteq \PreSpectra(\calC)$ and $\Spectra(\calD) \subseteq \PreSpectra(\calD)$.
Then the composite map
\begin{eqnarray*}
\bHom_{ \Fun( \Spectra(\calC), \Spectra(\calD)}(f,g) & \stackrel{\phi_0}{\rightarrow} & 
\bHom_{ \Fun( \PreSpectra(\calC), \Spectra(\calD)}( f \circ L_{\calC}, g \circ L_{\calC} ) \\
& \stackrel{\phi_1}{\simeq} & \bHom_{ \Fun( \PreSpectra(\calC), \Spectra(\calD)}( L_{\calD} \circ F^{+}, g \circ L_{\calC} ) \\
& \stackrel{\phi_2}{\rightarrow} & \bHom_{ \Fun( \PreSpectra(\calC), \PreSpectra(\calD)}( F^{+}, g \circ L_{\calC}) \\
& \rightarrow & \bHom_{ \Fun(\calC, \calD)}( \Omega^{\infty}_{\calD} \circ F^{+} \circ \widetilde{\Sigma}^{\infty}_{\calC}, \Omega^{\infty}_{\calD} \circ g \circ L_{\calC} \circ \widetilde{\Sigma}^{\infty}_{\calC}) \\
& \simeq & \bHom_{ \Fun(\calC, \calD)}( F, \Omega^{\infty}_{\calD} \circ g \circ \Sigma^{\infty}_{\calC})
\end{eqnarray*}
is a homotopy equivalence. Here $\phi_1$ denotes the equivalence provided by Remark \ref{sweety}.
\end{proposition}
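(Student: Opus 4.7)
The plan is to verify that the composite map is a homotopy equivalence by checking each of its five constituent pieces separately. First I would dispatch $\phi_0$, $\phi_1$, and $\phi_2$ using standard adjunction and localization arguments. The map $\phi_0$ is an equivalence because $L_\calC : \PreSpectra(\calC) \to \Spectra(\calC)$ is a reflective localization, so precomposition with $L_\calC$ is fully faithful on any functor category out of $\Spectra(\calC)$, hence in particular induces an equivalence on hom spaces. The map $\phi_1$ is literally induced by the equivalence $f \circ L_\calC \simeq L_\calD \circ F^{+}$ established in Remark \ref{sweety}. The map $\phi_2$ is the unit map of the adjunction arising from the reflective pair $(L_\calD, i_\calD)$: since $g$ is exact, the functor $g \circ L_\calC$ factors through $\Spectra(\calD) \subseteq \PreSpectra(\calD)$, so the adjunction yields the desired natural bijection on hom spaces.

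The main obstacle is the penultimate step, which is induced by precomposing with $\widetilde{\Sigma}^{\infty}_{\calC}$ and postcomposing with $\Omega^{\infty}_{\calD}$, giving a map
\[
\bHom_{\Fun(\PreSpectra(\calC), \PreSpectra(\calD))}(F^{+},\, g \circ L_\calC) \longrightarrow \bHom_{\Fun(\calC,\calD)}(\Omega^{\infty}_{\calD} \circ F^{+} \circ \widetilde{\Sigma}^{\infty}_{\calC},\, \Omega^{\infty}_{\calD} \circ g \circ L_\calC \circ \widetilde{\Sigma}^{\infty}_{\calC}).
\]
The plan here is to exploit two structural features. First, $F^{+}$ is obtained from $F$ by levelwise postcomposition, so a natural transformation $F^{+} \to H$ into any functor $H$ is determined by a compatible family of natural transformations of functors $\calC \to \calD$ at each level $n$, together with compatibility with the prespectrum structure maps. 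Second, when $H = g \circ L_\calC$ takes values in $\Spectra(\calD)$, the structure maps $H(X)[n] \to \Omega_{\calD}H(X)[n+1]$ are equivalences, which collapses the level-wise data to the single piece of level-zero information recorded on the right-hand side. Making this precise will involve iterated application of the adjunction $\widetilde{\Sigma}^{\infty}_{\calC} \dashv \Omega^{\infty}_{\calC}$ in combination with the characterization of $\Spectra(\calD)$ as the subcategory of $\PreSpectra(\calD)$ on which the structure maps are equivalences; I expect this bookkeeping to be the most technically delicate part of the argument.

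Finally, the terminal identifications in the composite are routine: $\Omega^{\infty}_{\calD} \circ F^{+} \circ \widetilde{\Sigma}^{\infty}_{\calC} \simeq F$ follows from $\Omega^{\infty}_{\calC} \circ \widetilde{\Sigma}^{\infty}_{\calC} \simeq \id_{\calC}$, and $L_\calC \circ \widetilde{\Sigma}^{\infty}_{\calC} \simeq \Sigma^{\infty}_{\calC}$ is the construction of $\Sigma^{\infty}_\calC$ as the spectrification of the free prespectrum functor. I would then string together the equivalences established in the previous paragraphs to obtain the desired overall homotopy equivalence between the two extreme hom spaces in the chain.
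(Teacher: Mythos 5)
Your treatment of $\phi_0$, $\phi_1$, and $\phi_2$ is correct and matches the paper's argument in substance: the paper phrases the $\phi_0$ step as the observation that $g \circ L_{\calC}$ is a right Kan extension of its restriction to $\Spectra(\calC)$, which is the same fact you are invoking via the reflectivity of $L_{\calC}$. Your terminal identifications are also fine.

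The gap is in the main step. You correctly identify that a natural transformation $F^{+} \to g \circ L_{\calC}$ amounts to a tower of levelwise data, but you explicitly defer the execution (``I expect this bookkeeping to be the most technically delicate part'') without identifying the one hypothesis without which it fails. The paper's proof writes $\bHom(F^{+},\, g \circ L_{\calC})$ as a homotopy inverse limit of a tower of spaces $\{ Z_n \}$, where (after a chain of restriction and Kan-extension reductions) each $Z_n$ is identified with $\bHom_{\Fun(\calC,\calD)}(F,\, \Omega^{\infty-n}_{\calD} \circ g \circ \Sigma^{\infty-n}_{\calC})$, and the transition map $Z_n \to Z_{n-1}$ is induced by composition with the canonical natural transformation $g \to \Omega_{\Spectra(\calD)} \circ g \circ \Sigma_{\Spectra(\calD)}$. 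It is precisely \emph{exactness of $g$} that makes this transition map an equivalence and hence makes the tower constant; the ``collapse to level-zero data'' you describe is not automatic and is false for a non-exact $g$. Your sketch describes the collapse as a consequence of $g \circ L_{\calC}$ landing in $\Spectra(\calD)$, but that only controls the individual objects $H(X)$, not the naturality in $X$, and does not supply equivalences between the levels of the tower. To complete the argument along your lines you would need to make the tower explicit and invoke exactness of $g$ at the point where you compare consecutive levels.
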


\begin{proof}
We first observe that $g \circ L_{\calC}$ is a right Kan extension of the restriction
$(g \circ L_{\calC}) | \Spectra(\calC) \simeq g$. This proves that $\phi_0$ is a homotopy
equivalence. The map $\phi_1$ is a homotopy equivalence by construction, and the map
$\phi_2$ is a homotopy equivalence because the image of $g \circ L_{\calC}$ is contained in the
essential image of the localization functor $L_{\calD}$ (namely, $\Spectra(\calD)$). 

For every integer $n$, let $\tau_n: \PreSpectra(\calD) \rightarrow \PreSpectra^{n}_{-\infty}(\calD)$
denote the restriction functor. The mapping space
$\bHom_{ \Fun( \PreSpectra(\calC), \PreSpectra(\calD))}( F^{+}, g \circ L_{\calC})$ can be described as the homotopy inverse limit of the tower of spaces
$$ \{ Z_n = \bHom_{ \Fun( \PreSpectra(\calC), \PreSpectra^{n}_{- \infty}(\calD)}( \tau_n \circ F^{+},
\tau_n \circ g \circ L_{\calC}) \}.$$
It will therefore suffice to prove the following:
\begin{itemize}
\item[$(1)$] Composition with $\widetilde{\Sigma}^{\infty}_{\calC}$ and
the map $e: \PreSpectra^{0}_{- \infty}(\calD) \rightarrow \calD$ given by evaluation at $(0,0)$ 
gives a homotopy equivalence
$$Z_0 \rightarrow \bHom_{ \Fun( \calC, \calD)}( e \circ \tau_0 \circ F^{+} \circ \widetilde{\Sigma}^{\infty}_{\calC},
e \circ \tau_0 \circ g \circ L_{\calC} \circ \widetilde{\Sigma}^{\infty}_{\calC})\simeq
\bHom_{ \Fun(\calC, \calD)}( F, \Omega^{\infty}_{\calD} \circ g \circ \Sigma^{\infty}_{\calC} ).$$
\item[$(2)$] For each $n > 0$, the restriction map $Z_{n} \rightarrow Z_{n-1}$ is a homotopy equivalence.
\end{itemize}

Our next step is to compute the homotopy types of the spaces $Z_n$. 
We will employ the conventions of Notation \stableref{jirl}.
\begin{itemize}
\item[$(a)$] Let $\tau: \PreSpectra(\calD) \rightarrow \PreSpectra^{n}_{n}(\calD)$
denote the restriction map. Using Lemma \stableref{atwas2}, we deduce that
$\tau_n \circ g$ takes values among those functors $\Nerve( Q(-\infty,n)) \rightarrow \calD$
which are right Kan extensions of their restrictions to $\Nerve( Q(n,n) )$. It
follows that the restriction map
$$ Z_n \rightarrow \bHom_{ \Fun( \PreSpectra(\calC), \PreSpectra^{n}_{n}(\calD))}(
\tau \circ F^{+}, \tau \circ g \circ L_{\calC})$$
is a homotopy equivalence.
\item[$(b)$] Combining $(a)$ with Lemma \stableref{atwas3}, we deduce that
the restriction map
$$ Z_{n} \rightarrow \bHom_{ \Fun( \PreSpectra(\calC), \calD)}( \Omega^{\infty-n}_{\calD} \circ F^{+},
\Omega^{\infty-n}_{\calD} \circ g \circ L_{\calC}) =
\bHom_{ \Fun( \PreSpectra(\calC), \calD)}( F \circ \Omega^{\infty-n}_{\calC}, \Omega^{\infty-n}_{\calD} \circ g \circ L_{\calC} )$$
is a homotopy equivalence.
\item[$(c)$] Let $\calE$ denote the full subcategory of
$\PreSpectra(\calC)$ spanned by those functors which are suspension prespectra above $n$.
Using Lemma \stableref{atwas4}, we deduce that $F \circ \Omega^{\infty-n}_{\calC}$ is a left
Kan extension of its restriction to $\calE$. Combining this
observation with $(b)$, we deduce that the restriction map
$$ Z_{n} \rightarrow \bHom_{\Fun(\calE, \calD)}( 
F \circ \Omega^{\infty-n}_{\calC}, \Omega^{\infty-n}_{\calD} \circ g \circ L_{\calC} )$$
is a homotopy equivalence.
\item[$(d)$] Let $\calE_0$ denote the full subcategory of
$\calE$ spanned by the $n$-suspension prespectra.
Using Lemma \stableref{atwas2} and Remark \stableref{jilly}, we deduce that 
$\Omega^{\infty-n}_{\calD} \circ g \circ L_{\calC}| \calE$ is a right Kan extension of its restriction to $\calE_0$. Combining this observation with
$(c)$, we deduce that the restriction map
$$ Z_n \rightarrow \bHom_{ \Fun( \calE_0, \calD)}( 
F \circ \Omega^{\infty-n}_{\calC}, \Omega^{\infty-n}_{\calD} \circ g \circ L_{\calC} )$$
is a homotopy equivalence.
\item[$(e)$] According to Proposition \stableref{swagger}, evaluation at $(n,n)$ induces a trivial Kan
fibration $\calE_0 \rightarrow \calC$, which admits a section
$\widetilde{\Sigma}^{\infty-n}_{\calC}$. Combining this observation with
$(d)$, we deduce that the map
$$ Z_{n} \rightarrow \bHom_{ \Fun(\calC, \calD)}( F \circ \Omega^{\infty-n}_{\calC}
\circ \widetilde{\Sigma}^{\infty-n}_{\calC},
\Omega^{\infty-n}_{\calD} \circ g \circ L_{\calC} \circ \widetilde{\Sigma}^{\infty-n}_{\calC})
= \bHom_{ \Fun(\calC, \calD)}( F, \Omega^{\infty-n}_{\calD} \circ g \circ \Sigma^{\infty-n}_{\calD})$$
is a homotopy equivalence.
\end{itemize}
We note that in the case $n=0$, the map described by $(e)$ agrees with the map appearing
in statement $(1)$, which is therefore a homotopy equivalence as desired. To prove $(2)$,
we observe that the restriction map $Z(n) \rightarrow Z(n-1)$ can be identified with a map
$$ \psi: \bHom_{ \Fun(\calC, \calD)}( F, \Omega^{\infty-n}_{\calD} \circ g \circ \Sigma^{\infty-n}_{\calD})
\rightarrow \bHom_{ \Fun(\calC, \calD)}( F, \Omega^{\infty-n+1}_{\calD} \circ g \circ \Sigma^{\infty-n+1}_{\calD}),$$
which is well-defined up to homotopy. We wish to show that $\psi$ is a homotopy equivalence.
Unwinding the definitions, we observe that $\psi$ is induced by composition with the canonical natural transformation $g \rightarrow \Omega_{ \Spectra(\calD)} \circ g \circ \Sigma_{ \Spectra(\calD) }$.
This transformation is an equivalence, by virtue of our assumption that $g$ is an exact functor.
\end{proof}

We would like to use Proposition \ref{swuft} to reformulate the definition of linearization.
We begin with the following result:

\begin{proposition}\label{urbusk}
Let $\kappa$ be a regular cardinal. Let $\calC$ and $\calD$ be pointed $\infty$-categories
which admit finite limits and $\kappa$-small colimits, and suppose that the loop functor
$\Omega_{\calD}$ commutes with $\kappa$-small filtered colimits.

Let $\Fun_0(\Spectra(\calC), \Spectra(\calD) )$ denote the full
subcategory of $\Fun( \Spectra(\calC), \Spectra(\calD) )$ spanned by those functors which preserve $\kappa$-small colimits, and let 
$$ \theta: \Fun_0( \Spectra(\calC), \Spectra(\calD) ) \rightarrow \Fun(\calC, \calD)$$
be given by the formula $F \mapsto \Omega_{\calD}^{\infty} \circ F \circ \Sigma^{\infty}_{\calC}$.
Then $\theta$ is fully faithful, and the essential image of $\theta$ consists of those functors
which are excisive and preserve $\kappa$-small filtered colimits.
\end{proposition}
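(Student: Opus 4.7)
The strategy is to identify $\theta$, after restriction to $\Fun_0$, with a right adjoint of the linearization construction of Definition \ref{cool}, and then invoke Proposition \ref{swuft}. First, any $F \in \Fun_0(\Spectra(\calC), \Spectra(\calD))$ preserves finite colimits between stable $\infty$-categories, so it is exact by Corollary \stableref{cuuple}; since the $\kappa$-small colimits preserved by $F$ include sequential ones, and $\Omega^\infty_\calD$ preserves sequential colimits by the hypothesis on $\Omega_\calD$, the composite $\theta(F) = \Omega^\infty_\calD \circ F \circ \Sigma^\infty_\calC$ is a well-pointed functor $\calC \to \calD$.

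The crucial technical point is to verify that for such $F$, the canonical natural transformation $\theta(F)^+|_{\Spectra(\calC)} \to F$ exhibits $F$ as a linearization of $\theta(F)$ in the sense of Definition \ref{cool}. Concretely, one must check that for each $X \in \Spectra(\calC)$, the prespectrum $\theta(F)^+(X)$ has spectrification $F(X)$ in $\Spectra(\calD)$. Using the description of spectrification as a sequential colimit (Corollary \stableref{postzing}), together with the fact that $F$ preserves the $\kappa$-small filtered colimits needed to present any $X$ as built out of objects of the form $\Sigma^\infty_\calC X_0$, one reduces to the case $X = \Sigma^\infty_\calC X_0$, where the claim follows from the exactness of $F$.

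Granting this, Proposition \ref{swuft} immediately yields, for $F, G \in \Fun_0$, a homotopy equivalence
\[
\bHom(F, G) \simeq \bHom(\theta(F), \theta(G)),
\]
so $\theta$ is fully faithful. For the essential image: any $\theta(F)$ preserves $\kappa$-small filtered colimits (being built from $\Sigma^\infty_\calC$, $F$, and $\Omega^\infty_\calD$, each of which does) and is excisive, since $F$ sends pushouts in $\Spectra(\calC)$ to pushouts, which are pullbacks by stability, and $\Omega^\infty_\calD$ preserves pullbacks. Conversely, if $H \colon \calC \to \calD$ is excisive and preserves $\kappa$-small filtered colimits, then $H$ is well-pointed; its linearization $f$ is exact by Proposition \ref{exman}, and the preservation of $\kappa$-small filtered colimits by $H$ and by spectrification forces $f$ to preserve all $\kappa$-small colimits. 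The excisivity of $H$ then identifies $\theta(f)$ with the sequential colimit $\varinjlim_n \Omega^n_\calD \circ H \circ \Sigma^n_\calC$, which recovers $H$ itself.

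The main obstacle is the identification of $F$ with the linearization of $\theta(F)$ in the second paragraph. This is where the hypothesis that $F$ preserves $\kappa$-small (rather than merely finite) colimits gets used, interacting with the description of spectrification in $\Spectra(\calD)$ as a sequential colimit. The remaining arguments are then essentially formal consequences of Proposition \ref{swuft}.
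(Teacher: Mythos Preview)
The central claim in your second paragraph---that $F$ is the linearization of $\theta(F)$ in the sense of Definition \ref{cool}---is where your argument breaks down. Your proposed reduction to $X = \Sigma^\infty_\calC X_0$ rests on presenting an arbitrary spectrum as a $\kappa$-small filtered colimit of suspension spectra, which is not available under the stated hypotheses; and even granting such a presentation, the ``base case'' requires comparing $\Omega^\infty_\calD F \Sigma^\infty_\calC \Omega^{\infty-n}_\calC(\Sigma^\infty_\calC X_0)$ with $\Omega^{\infty-n}_\calD F(\Sigma^\infty_\calC X_0)$, which is itself nontrivial since the counit $\Sigma^\infty_\calC \Omega^\infty_\calC \to \id$ is not an equivalence. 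The clean way to see that $F$ is the linearization of $\theta(F)$ would be to observe that $\theta(F)$ is excisive, hence its own derivative, and then pass from derivatives to linearizations---but that passage is exactly the content of Theorem \ref{spiller}(3)$\Leftrightarrow$(4), whose proof invokes Proposition \ref{urbusk}. Your route is therefore circular. There is also a hypothesis mismatch: Proposition \ref{swuft} and the surrounding linearization machinery require $\calC$ to be well-pointed (in particular $\Omega_\calC$ must commute with sequential colimits), whereas \ref{urbusk} imposes no condition on $\Omega_\calC$ at all.

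The paper's proof avoids all of this by factoring $\theta$ into two steps, each an equivalence by foundational results about spectra that are logically prior to and independent of the linearization machinery of \S\ref{linf}. First, postcomposition with $\Omega^\infty_\calD$: Proposition \stableref{urtusk21} identifies right exact functors $\calC \to \Spectra(\calD)$ with excisive functors $\calC \to \calD$, and Lemma \ref{smulk}(4) shows that preservation of $\kappa$-small colimits on the $\Spectra(\calD)$ side corresponds exactly to excisiveness plus preservation of $\kappa$-small filtered colimits on the $\calD$ side. Second, precomposition with $\Sigma^\infty_\calC$: Corollary \stableref{mapprop} identifies $\kappa$-small-colimit-preserving functors $\Spectra(\calC) \to \Spectra(\calD)$ with $\kappa$-small-colimit-preserving functors $\calC \to \Spectra(\calD)$. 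Composing these two equivalences gives the result directly.
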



The proof of Proposition \ref{urbusk} requires the following lemma:

\begin{lemma}\label{smulk}
Let $\calC$ be a pointed $\infty$-category which admits finite colimits, and let 
$\calD$ be a pointed $\infty$-category which admits finite limits. Let $K$ be a simplicial set
such that $\calC$ and $\calD$ both admit $K$-indexed colimits, and suppose that the loop functor
$\Omega_{\calD}$ preserves $K$-indexed colimits. Then:
\begin{itemize}
\item[$(1)$] The $\infty$-category $\Spectra(\calD)$ admits $K$-indexed colimits. 
\item[$(2)$] A diagram $\overline{p}: K^{\triangleright} \rightarrow \Spectra(\calD)$ is a colimit if and only if
$\Omega_{\calD}^{\infty-n} \circ \overline{p}: K^{\triangleright} \rightarrow \Spectra(\calD)$ is a colimit, for each $n \geq 0$.
\item[$(3)$] A functor $F: \calC \rightarrow \Spectra(\calD)$ preserves $K$-indexed colimits
if and only if $\Omega_{\calD}^{\infty-n} \circ F: \calC \rightarrow \calD$ preserves $K$-indexed colimits, for every $n \geq 0$.
\item[$(4)$] A right exact functor $F: \calC \rightarrow \Spectra(\calD)$ preserves $K$-indexed colimits
if and only if the excisive functor $\Omega_{\calD}^{\infty} \circ F: \calC \rightarrow \calD$ preserves $K$-indexed colimits.
\end{itemize}
\end{lemma}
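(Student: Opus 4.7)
The plan is to reduce all four claims to the pointwise nature of colimits in the presheaf $\infty$-category $\PreSpectra(\calD)$. Recall that $\PreSpectra(\calD)$ can be viewed as a full subcategory of a functor $\infty$-category $\Fun(\Nerve(Q),\calD)$, where the spectrum objects are those prespectra $X$ for which the adjoint structure maps $X(n,n)\to\Omega_{\calD}X(n+1,n+1)$ are equivalences for all $n\geq 0$. Since $\calD$ admits $K$-indexed colimits, so does $\PreSpectra(\calD)$, and these colimits are computed pointwise (i.e.\ by evaluation at each object of $Q$).

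For (1) and (2), the key point is that the inclusion $\Spectra(\calD)\hookrightarrow\PreSpectra(\calD)$ is closed under $K$-indexed colimits. Given a diagram $p\colon K\to\Spectra(\calD)$ with pointwise colimit $\overline{p}\colon K^{\triangleright}\to\PreSpectra(\calD)$, the adjoint structure map of $\overline{p}(\infty)$ at level $n$ factors as
\[
\colim_{\alpha\in K}X_{\alpha}(n,n)\longrightarrow\colim_{\alpha\in K}\Omega_{\calD}X_{\alpha}(n+1,n+1)\xrightarrow{\sim}\Omega_{\calD}\colim_{\alpha\in K}X_{\alpha}(n+1,n+1),
\]
where the second map is an equivalence by the hypothesis that $\Omega_{\calD}$ preserves $K$-indexed colimits, and the first is a colimit of equivalences. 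It follows that the pointwise colimit already lies in $\Spectra(\calD)$, proving (1) and (the nontrivial direction of) (2): a diagram $\overline{p}\colon K^{\triangleright}\to\Spectra(\calD)$ is a colimit if and only if each composition $\Omega_{\calD}^{\infty-n}\circ\overline{p}$ (i.e.\ evaluation at $(n,n)$) is a colimit in $\calD$. The reverse implication is immediate from the uniqueness of colimits. Claim (3) is a formal consequence: $F$ preserves $K$-indexed colimits if and only if $F\circ\overline{q}$ is a colimit diagram for every colimit diagram $\overline{q}\colon K^{\triangleright}\to\calC$, and (2) translates this into the condition that $\Omega_{\calD}^{\infty-n}\circ F\circ\overline{q}$ be a colimit for each $n$.

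For (4), the forward direction follows from (3) by taking $n=0$. For the converse, assume $F$ is right exact and $\Omega_{\calD}^{\infty}\circ F$ preserves $K$-indexed colimits; by (3) it suffices to show that each $\Omega_{\calD}^{\infty-n}\circ F$ preserves $K$-indexed colimits. Since $F$ is right exact and $\calC$ is pointed, $F$ commutes with the suspension functor (which is the pushout $*\coprod_{(-)}*$), giving a natural equivalence
\[
\Omega_{\calD}^{\infty-n}\circ F\simeq\Omega_{\calD}^{\infty}\circ\Sigma_{\Spectra(\calD)}^{n}\circ F\simeq\Omega_{\calD}^{\infty}\circ F\circ\Sigma_{\calC}^{n}.
\]
The functor $\Sigma_{\calC}$ preserves all colimits that exist in $\calC$ (being itself defined by a pushout, colimits commute with colimits), so $\Sigma_{\calC}^{n}$ preserves $K$-indexed colimits; composing with $\Omega_{\calD}^{\infty}\circ F$, which preserves $K$-indexed colimits by hypothesis, yields the desired conclusion. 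The only real content of the lemma lies in the $\Omega$-preservation step of (2); everything else is bookkeeping.
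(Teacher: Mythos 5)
Your proof is correct and proceeds by essentially the same idea as the paper's, with a minor difference of model. For parts (1) and (2) the paper simply cites Proposition STA.camer, which presents $\Spectra(\calD)$ as the homotopy inverse limit of the tower $\cdots\to\calD\xrightarrow{\Omega_{\calD}}\calD$; colimits in such a limit are computed componentwise once the transition functors preserve them, so (1) and (2) are immediate in that model. You instead work inside $\PreSpectra(\calD)\subseteq\Fun(\Nerve(Q),\calD)$, compute the pointwise colimit there, and verify that the result is again a spectrum because the adjoint structure map of the colimit factors through a colimit of equivalences followed by the comparison map $\colim\Omega_{\calD}(-)\to\Omega_{\calD}\colim(-)$, which is an equivalence by hypothesis. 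This is a valid and more explicit route to the same conclusion; the one point worth making sure of (and which holds) is that $\PreSpectra(\calD)$ is closed under pointwise $K$-indexed colimits, since a $K$-indexed colimit of zero objects is again a zero object. Your treatment of (3) and (4), in particular the use of right exactness to rewrite $\Omega_{\calD}^{\infty-n}\circ F\simeq\Omega_{\calD}^{\infty}\circ F\circ\Sigma_{\calC}^{n}$ and the fact that $\Sigma_{\calC}$ preserves all colimits, coincides with the paper's argument.
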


\begin{proof}
Assertions $(1)$ and $(2)$ follow immediately from the description of $\Spectra(\calD)$ as the
homotopy inverse limit of the tower
$$ \ldots \stackrel{\Omega_{\calD}}{\rightarrow} \calD \stackrel{ \Omega_{\calD}}{\rightarrow} \calD$$
(Proposition \stableref{camer}). The implication $(2) \Rightarrow (3)$ is obvious. We now prove $(4)$. The ``only if'' direction follows immediately from $(3)$. For the converse, it suffices to observe that since $F$ is right exact, the composition $\Omega_{\calD}^{\infty-n} \circ F$ is equivalent to the functor $\Omega_{\calD}^{\infty} \circ F \circ \Sigma_{\calC}^{n}$, and both
$\Omega_{\calD}^{\infty} \circ F$ and $\Sigma_{\calC}^{n}$ preserve $K$-indexed colimits.
\end{proof}

\begin{proof}[Proof of Proposition \ref{urbusk}]
Using Proposition \stableref{urtusk21} and Lemma \ref{smulk}, we deduce that composition with
$\Omega_{\calD}^{\infty}$ induces an equivalence from $\Fun_0( \calC, \Spectra(\calD) )$
to the full subcategory of $\Fun(\calC, \calD)$ spanned by the excisive functors which
preserve $\kappa$-small filtered colimits. We now conclude by applying Corollary \stableref{mapprop}.
\end{proof}

\begin{remark}\label{seqco}
Let $\calJ$ be a filtered $\infty$-category with only countably many simplices.
Then there exists a cofinal map $\Nerve( \Z_{\geq 0} ) \rightarrow \calJ$.
To prove this, we first invoke Proposition \toposref{rot} to choose a cofinal map
$\Nerve(A) \rightarrow \calJ$, where $A$ is a filtered partially ordered; note that
the proof of Proposition \toposref{rot} produces a {\em countable} partially
ordered set $A$ in the case where $\calC$ has only countably many simplices.
Let $A = \{ a_0, a_1, a_2, \ldots \}$. Let $b_0 = a_0$, and for each $n > 1$
choose an element $b_n \in A$ which is an upper bound for the set
$\{ b_{n-1}, a_n \}$. The sequence $b_0 \leq b_1 \leq b_2 \leq \ldots$
determines a map $\Nerve( \Z_{\geq 0} ) \rightarrow \Nerve(A)$; Theorem \toposref{hollowtt} implies that this map is cofinal.
\end{remark}

\begin{remark}\label{swz}
Using Remark \ref{seqco}, we deduce the following:
\begin{itemize}
\item[$(1)$] Let $\calC$ be an $\infty$-category. Then $\calC$ admits countable filtered colimits
if and only if it admits sequential colimits. (In particular, if $\calC$ also admits finite colimits and
sequential colimits, then $\calC$ admits all countable colimits.)
\item[$(2)$] Let $F: \calC \rightarrow \calD$ be a functor where the $\infty$-category
$\calC$ satisfies the equivalent conditions of $(1)$. Then $F$ preserves countable filtered colimits 
if and only if $F$ preserves sequential colimits. 
\end{itemize}
\end{remark}

\begin{theorem}\label{spiller}
Let $F: \calC \rightarrow \calD$ be a well-pointed functor and let $f: \Spectra(\calC) \rightarrow \Spectra(\calD)$ be an exact functor which preserves sequential colimits.
Suppose we are given a natural transformation $\alpha: F^{+} \rightarrow f$ of functors from
$\Spectra(\calC)$ to $\PreSpectra(\calD)$. The following conditions are equivalent:
\begin{itemize}
\item[$(1)$] The transformation $\alpha$ exhibits $f$ as a linearization of $F^{+}$.
\item[$(2)$] Let $\beta$ denote the composition
$$ \Sigma^{\infty}_{\calD} \circ F \circ \Omega^{\infty}_{\calC}
= \Sigma^{\infty}_{\calD} \circ \Omega^{\infty}_{\calD} \circ F^{+}
\rightarrow F^{+} \stackrel{\alpha}{\rightarrow} f.$$
Then, for every exact functor $g: \Spectra(\calC) \rightarrow \Spectra(\calD)$, 
composition with $\beta$ induces a homotopy equivalence
$$\bHom_{ \Fun( \Spectra(\calC), \Spectra(\calD) )}( f, g) \rightarrow
\bHom_{ \Fun( \calC, \calD)}( F, \Omega^{\infty}_{\calD} \circ g \circ \Sigma^{\infty}_{\calC}).$$

\item[$(3)$] For every well-pointed exact functor $g: \Spectra(\calC) \rightarrow \Spectra(\calD)$, composition with $\beta$ induces a homotopy equivalence
$$\bHom_{ \Fun( \Spectra(\calC), \Spectra(\calD) )}( f, g) \rightarrow
\bHom_{ \Fun( \calC, \calD)}( F, \Omega^{\infty}_{\calD} \circ g \circ \Sigma^{\infty}_{\calC}).$$

\item[$(4)$] Let $\gamma$ denote the composition
$$ F \rightarrow F \circ \Omega^{\infty}_{\calC} \circ \Sigma^{\infty}_{\calC}
 = \Omega^{\infty}_{\calD} \circ F^{+} \circ \Sigma^{\infty}_{\calC}
 \rightarrow \Omega^{\infty}_{\calD} \circ f \circ \Sigma^{\infty}_{\calC}.$$
Then for every well-pointed excisive functor $G: \calC \rightarrow \calD$, composition with $\alpha$ induces a homotopy
equivalence 
$$ \bHom_{ \Fun( \calC, \calD)}( \Omega^{\infty}_{\calD} \circ f \circ \Sigma^{\infty}_{\calC}, G) \rightarrow \bHom_{ \Fun( \calC, \calD)}( F,G).$$
\end{itemize}
\end{theorem}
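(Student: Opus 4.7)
The plan is to establish the cycle of implications $(1) \Rightarrow (2) \Rightarrow (3) \Rightarrow (1)$ together with the equivalence $(3) \Leftrightarrow (4)$. The implication $(1) \Rightarrow (2)$ is essentially a repackaging of Proposition \ref{swuft}: one verifies that the composite map displayed there (obtained by restriction along $L_{\calC}$, the identification of Remark \ref{sweety} with $L_{\calD} \circ F^{+} \simeq f \circ L_{\calC}$, and then applying $\Omega^{\infty}_{\calD}(-) \circ \widetilde{\Sigma}^{\infty}_{\calC}$) agrees up to homotopy with the map in $(2)$ given by composition with $\beta$, using the adjunctions $\Sigma^{\infty}_{\calC} \dashv \Omega^{\infty}_{\calC}$ and $\Sigma^{\infty}_{\calD} \dashv \Omega^{\infty}_{\calD}$ to translate between the two forms. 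The implication $(2) \Rightarrow (3)$ is immediate, since $(3)$ merely restricts the universal property of $(2)$ to the smaller class of well-pointed exact $g$.

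For $(3) \Rightarrow (1)$, I invoke Remark \ref{coola} to obtain a canonical linearization $\alpha_0: F^{+} \to f_0$ with $f_0 = L_{\calD} \circ F^{+}|_{\Spectra(\calC)}$. I claim $f_0$ is itself well-pointed exact: exactness is Proposition \ref{exman}, while preservation of sequential colimits follows from the description of $L_{\calD}$ as a sequential colimit of functors built from $\Omega_{\calD}$ and truncation (as in Corollary \stableref{postzing}, invoked during the proof of Proposition \ref{coomer}), combined with the well-pointedness of $\calD$, which guarantees that each constituent functor preserves sequential colimits. Because $f$ takes values in $\Spectra(\calD)$, the universal property of localization applied pointwise yields a unique factorization $\alpha = u_0 \circ \alpha_0$ for some natural transformation $u_0: f_0 \to f$. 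For any well-pointed exact $g$, one obtains a commuting triangle
\[
\xymatrix{ \bHom(f,g) \ar[rr]^{u_0^{*}} \ar[dr]_{\alpha^{*}} & & \bHom(f_0, g) \ar[dl]^{\alpha_0^{*}} \\ & \bHom(F, \Omega^{\infty}_{\calD} \circ g \circ \Sigma^{\infty}_{\calC}), & }
\]
in which both $\alpha^{*}$ (by $(3)$ for $\alpha$) and $\alpha_0^{*}$ (by $(1) \Rightarrow (3)$ already established for $\alpha_0$) are homotopy equivalences. Hence $u_0^{*}$ is an equivalence for every well-pointed exact $g$, and Yoneda in the $\infty$-category of well-pointed exact functors shows that $u_0$ itself is an equivalence; thus $\alpha$ exhibits $f$ as a linearization.

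For the equivalence $(3) \Leftrightarrow (4)$, apply Proposition \ref{urbusk} with $\kappa = \omega_1$: the hypotheses hold since $\calC$ and $\calD$ are well-pointed (by Remark \ref{swz}, preservation of sequential colimits coincides with preservation of countable filtered colimits, and well-pointed categories admit all countable colimits). The proposition provides a fully faithful embedding of well-pointed exact functors $g: \Spectra(\calC) \to \Spectra(\calD)$ into $\Fun(\calC, \calD)$ via $g \mapsto G := \Omega^{\infty}_{\calD} \circ g \circ \Sigma^{\infty}_{\calC}$, whose essential image is precisely the well-pointed excisive functors. This yields a natural equivalence $\bHom(f, g) \simeq \bHom(\Omega^{\infty}_{\calD} \circ f \circ \Sigma^{\infty}_{\calC}, G)$, and by unwinding the definitions (using the same two adjunctions as in the first step) one checks that the map in $(3)$ (composition with $\beta$) corresponds under this equivalence to the map in $(4)$ (composition with $\gamma$); the two conditions are therefore equivalent.

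The main obstacle is the verification that $f_0$ is well-pointed exact, since this underpins the Yoneda argument in step three. The subtle point is preservation of sequential colimits by the spectrification functor $L_{\calD}$, which is precisely what the well-pointedness hypothesis on $\calD$ (ensuring $\Omega_{\calD}$ commutes with sequential colimits) is designed to guarantee via the iterated construction of Corollary \stableref{postzing}.
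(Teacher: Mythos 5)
Your proof is correct and follows essentially the same route as the paper's: $(1)\Rightarrow(2)$ via Proposition \ref{swuft}, $(2)\Rightarrow(3)$ trivially, $(3)\Leftrightarrow(4)$ via Proposition \ref{urbusk} and Remark \ref{swz}, and $(3)\Rightarrow(1)$ by factoring $\alpha$ through the canonical linearization $f_0 = L_{\calD}\circ F^{+}|_{\Spectra(\calC)}$ and applying two-out-of-three followed by a Yoneda argument in the class of well-pointed exact functors (the paper's $f'$, $\alpha'$, $\alpha''$ are exactly your $f_0$, $\alpha_0$, $u_0$). The extra detail you supply on why $f_0$ preserves sequential colimits (via Corollary \stableref{postzing} and well-pointedness of $\calD$) is a correct elaboration of a step the paper states more tersely.
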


\begin{proof}
The implication $(1) \Rightarrow (2)$ follows from Proposition \ref{swuft}, 
the implication $(2) \Rightarrow (3)$ is obvious, and the equivalence
$(3) \Leftrightarrow (4)$ follows from Proposition \ref{urbusk} and Remark \ref{swz}.
To prove that $(3)$ implies $(1)$, let $f' = L_{\calC} \circ F^{+}$, where
$L_{\calC}: \PreSpectra(\calC) \rightarrow \Spectra(\calC)$ denotes a left adjoint to the inclusion.
Since $F$ preserves sequential colimits, the functor $F^{+}$ has the same property, so that
$f'$ again preserves sequential colimits. Since $f'$ is exact (Proposition \ref{exman}), we conclude
that $f'$ preserves countable colimits. Since $f$ takes values in $\Spectra(\calD)$, the map $\alpha$ factors as a composition
$$ F^{+} \stackrel{\alpha'}{\rightarrow} f' \stackrel{\alpha''}{\rightarrow} f$$
where $\alpha'$ exhibits $f'$ as a linearization of $F$. We wish to prove that if
condition $(3)$ is satisfied by $\alpha$, then $\alpha''$ is an equivalence.
To prove this, it will suffice to show that composition with $\alpha''$ determines
a homotopy equivalence
$$ \bHom_{ \Fun( \Spectra(\calC), \Spectra(\calD)}( f, g) \rightarrow \bHom_{ \Fun( \Spectra(\calC), \calD)}(f', g)$$
for every functor $g \in \LFun( \Spectra(\calC), \calD) )$. This follows by applying the two-out-of-three property to the diagram
$$ \xymatrix{  \bHom_{ \Fun( \Spectra(\calC), \Spectra(\calD)}( f, g) \ar[rr] \ar[dr] & &  \bHom_{ \Fun( \Spectra(\calC), \Spectra(\calD)}( f', g) \ar[dl] \\
& \bHom_{ \Fun( \calC, \calD)}( F, \Omega^{\infty}_{\calD} \circ g \circ \Sigma^{\infty}_{\calC}); & }$$
here the vertical maps are both homotopy equivalences because both $\alpha$ and $\alpha'$
satisfy condition $(3)$.
\end{proof}

\begin{remark}\label{rezza}
In the situation of Theorem \ref{spiller}, the natural transformation
$\beta: \Sigma^{\infty}_{\calD} \circ F \circ \Omega^{\infty}_{\calC} \rightarrow f$
is determined up to equivalence by property of $(2)$. We will therefore adopt
the following variation on the terminology of Definition \ref{cool}: we will say that
an arbitrary natural transformation $\beta: \Sigma^{\infty}_{\calD} \circ F \circ \Omega^{\infty}_{\calC} \rightarrow f$ {\it exhibits $f$ as a linearization of $F$} if the functor $f$ is exact and condition $(2)$ is
satisfied (it follows in this case that $f$ automatically preserves countable colimits).
Note in this case that $\beta$ is determined up to homotopy by a number of adjoint
transformations
$$ \gamma: F \rightarrow \Omega^{\infty}_{\calD} \circ f \circ \Sigma^{\infty}_{\calC} \quad \quad 
\Sigma^{\infty}_{\calD} \circ F \rightarrow f \circ \Sigma^{\infty}_{\calC} \quad \quad F \circ \Omega^{\infty}_{\calC} \rightarrow \Omega^{\infty}_{\calD} \circ f;$$
we will abuse terminology by saying that any one of these transformations
{\it exhibits $f$ as a linearization of $F$}.

Similarly, we will say that a natural transformation $\gamma: F \rightarrow F'$
{\it exhibits $F'$ as a derivative of $F$} if the functor $F'$ is well-pointed and excisive, and condition $(4)$ of Theorem \ref{spiller} is satisfied.
In this case, Proposition \ref{urbusk} implies that $F'$ is equivalent to a composition
$\Omega^{\infty}_{\calD} \circ f \circ \Sigma^{\infty}_{\calC}$, where
$f: \Spectra(\calC) \rightarrow \Spectra(\calD)$ is a functor which preserves small colimits.
Then $\gamma$ exhibits $F'$ as a derivative of $F$ if and only if
it exhibits $f$ as a linearization of $F$; in other words, the notations of derivative
and linearization are interchangable.
\end{remark}

\begin{remark}\label{derexi}
Let $F: \calC \rightarrow \calD$ be a well-pointed functor. Then we can produce a derivative
of $F$ via the composition
$$ F': \calC \stackrel{ \widetilde{ \Sigma}^{\infty}_{\calC} }{\rightarrow} 
\PreSpectra(\calC) \stackrel{F^{+}}{\rightarrow} \PreSpectra(\calD)
\stackrel{ L_{\calD}}{\rightarrow} \Spectra(\calD) \stackrel{ \Omega^{\infty}_{\calD} }{\rightarrow} \calD.$$
Combining this observation with the equivalence $L_{\calD} \simeq \varinjlim_{n} L_{n}$
of Corollary \stableref{postzing}, we deduce that $F'$ is equivalent to the colimit
$\varinjlim_{n} \Omega^{n}_{\calD} \circ F \circ \Sigma^{n}_{\calC}$. 
\end{remark}

\begin{corollary}\label{leftywing}
Let $\calC$ and $\calD$ be well-pointed $\infty$-categories.
Let $\calE$ denote the full subcategory of $\Fun(\calC, \calD)$ spanned by the
well-pointed functors, and $\calE_0 \subseteq \calE$ the full subcategory spanned
by the well-pointed excisive functors. Then $\calE_0$ is a localization of $\calE$.
\end{corollary}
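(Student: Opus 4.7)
The plan is to apply the standard recognition criterion for a reflective subcategory (Proposition T.5.2.7.8): it suffices, for each $F \in \calE$, to exhibit a morphism $\gamma: F \to F'$ in $\calE$ with $F' \in \calE_0$ such that for every $G \in \calE_0$, composition with $\gamma$ induces a homotopy equivalence $\bHom_{\Fun(\calC,\calD)}(F',G) \to \bHom_{\Fun(\calC,\calD)}(F,G)$. All of the necessary ingredients have been set up in the preceding discussion; the argument is really just a matter of assembling them.

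First I would construct the candidate $F'$ using the linearization machinery. Given $F \in \calE$, let $L_{\calD}: \PreSpectra(\calD) \to \Spectra(\calD)$ denote a left adjoint to the inclusion, let $f = (L_{\calD} \circ F^{+})|\Spectra(\calC)$ (which is a linearization of $F^{+}$ by Remark \ref{coola}), and set $F' = \Omega^{\infty}_{\calD} \circ f \circ \Sigma^{\infty}_{\calC}$. Equivalently, by Remark \ref{derexi}, this is the sequential colimit $\varinjlim_n \Omega^n_{\calD} \circ F \circ \Sigma^n_{\calC}$. By Proposition \ref{exman} the functor $f$ is exact, so by Proposition \ref{urbusk} the functor $F'$ is excisive. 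Moreover $F'$ preserves zero objects (since each of $\Sigma^n_{\calC}$, $F$, and $\Omega^n_{\calD}$ does, and the filtered colimit of zero objects is zero). Finally $F'$ preserves sequential colimits: $\Sigma^n_{\calC}$ preserves all colimits, $F$ preserves sequential colimits by hypothesis, $\Omega^n_{\calD}$ preserves sequential colimits because $\calD$ is well-pointed, and sequential colimits of functors preserving sequential colimits again preserve sequential colimits (as sequential colimits commute among themselves). Thus $F' \in \calE_0$.

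Next I would produce the unit map $\gamma: F \to F'$ as the composition
\[
F \simeq \Omega^{\infty}_{\calD} \circ F^{+} \circ \widetilde{\Sigma}^{\infty}_{\calC}
\longrightarrow \Omega^{\infty}_{\calD} \circ L_{\calD} \circ F^{+} \circ \widetilde{\Sigma}^{\infty}_{\calC} = F',
\]
induced by the localization unit $F^{+} \to L_{\calD} \circ F^{+}$. By construction the underlying natural transformation $F^{+}|\Spectra(\calC) \to f$ exhibits $f$ as a linearization of $F^{+}$ in the sense of Definition \ref{cool}, and hence by Remark \ref{rezza} the transformation $\gamma$ exhibits $F'$ as a derivative of $F$. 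Applying condition $(4)$ of Theorem \ref{spiller}, for every well-pointed excisive functor $G: \calC \to \calD$ composition with $\gamma$ yields a homotopy equivalence
\[
\bHom_{\Fun(\calC,\calD)}(F',G) \longrightarrow \bHom_{\Fun(\calC,\calD)}(F,G),
\]
which is precisely the universal property required. This completes the verification.

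The main obstacle, such as it is, is essentially bookkeeping: one must check that the candidate $F'$ genuinely lies in $\calE_0$ (in particular that it preserves sequential colimits), and one must recognize the transformation $\gamma$ constructed above as falling under the hypothesis of Theorem \ref{spiller}(4) so that its universal property can be invoked. Both points are immediate from the well-pointedness hypotheses on $\calC$ and $\calD$ and from the construction of $f$ as $L_{\calD} \circ F^{+}|\Spectra(\calC)$.
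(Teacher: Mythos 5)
Your proof is correct and matches the paper's argument exactly: the paper likewise takes $F' = \Omega^{\infty}_{\calD} \circ f \circ \Sigma^{\infty}_{\calC}$ for $f$ a linearization of $F$ and observes that the natural map $F \to F'$ exhibits $F'$ as an $\calE_0$-localization by Theorem \ref{spiller}. The only difference is that you spell out more carefully the check that $F'$ itself lies in $\calE_0$ (in particular that it preserves sequential colimits), which the paper leaves implicit; this is a sound and worthwhile piece of bookkeeping but not a different approach.
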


\begin{proof}
For every functor $F \in \calE$, let $f$ denote a linearization of $f$. Then
the induced map $F \rightarrow \Omega^{\infty}_{\calD} \circ f \circ \Sigma^{\infty}_{\calC}$
exhibits $\Omega^{\infty}_{\calD} \circ f \circ \Sigma^{\infty}_{\calC}$ as a 
$\calE_0$-localization of $F$, by Theorem \ref{spiller}.
\end{proof}

We now study the linearization of functors in some special cases.

\begin{proposition}\label{kunst}
Let $\calC$ and $\calD$ be well-pointed $\infty$-categories, and let 
$F: \calC \rightarrow \calD$ and $f: \Spectra(\calC) \rightarrow \Spectra(\calD)$ be
left-exact functors which preserve sequential colimits.
Then a natural transformation
$\alpha: F \circ \Omega^{\infty}_{\calC} \rightarrow \Omega^{\infty}_{\calD} \circ f$
exhibits $f$ as a linearization of $F$ if and only if $\alpha$ is an equivalence.
\end{proposition}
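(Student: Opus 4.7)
The plan is to reduce the statement to the uniqueness of linearizations, by constructing an explicit canonical linearization of $F$ and comparing the given $f$ to it. The key observation is that since $F$ is left-exact, it commutes with $\Omega$ up to equivalence, so the functor $F^{+}\colon \PreSpectra(\calC) \to \PreSpectra(\calD)$ carries spectrum objects to spectrum objects: if $X = \{X_n\}$ satisfies $X_n \simeq \Omega_{\calC} X_{n+1}$, then $F(X_n) \simeq F(\Omega_{\calC} X_{n+1}) \simeq \Omega_{\calD} F(X_{n+1})$. Hence $F^{+}$ restricts to a functor $\widetilde{f}\colon \Spectra(\calC) \to \Spectra(\calD)$, and the localization map $F^{+}|\Spectra(\calC) \to L_{\calD} \circ F^{+}|\Spectra(\calC)$ is a pointwise equivalence. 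By Remark \ref{coola} this exhibits $\widetilde{f}$ as a linearization of $F$, via the tautological equivalence $\alpha_{0}\colon F \circ \Omega^{\infty}_{\calC} \xrightarrow{\sim} \Omega^{\infty}_{\calD} \circ \widetilde{f}$ (both sides are literally $\Omega^{\infty}_{\calD} \circ F^{+}|\Spectra(\calC)$). The functor $\widetilde{f}$ is exact by Proposition \ref{exman}, and preserves sequential colimits because these are computed pointwise in $\Spectra$ and $F$ preserves them by hypothesis.

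Next, I would use Theorem \ref{spiller}(3) to lift the given $\alpha$ to a natural transformation $\widetilde{\alpha}\colon \widetilde{f} \to f$. Since $\alpha_{0}$ exhibits $\widetilde{f}$ as a linearization and both $\widetilde{f}$ and $f$ are exact and sequentially-colimit-preserving, that theorem provides a homotopy equivalence
$$ \bHom_{\Fun(\Spectra(\calC),\,\Spectra(\calD))}(\widetilde{f}, f) \xrightarrow{\sim} \bHom_{\Fun(\calC,\,\calD)}(F, \Omega^{\infty}_{\calD} \circ f \circ \Sigma^{\infty}_{\calC}). $$
Translating $\alpha$ into its adjoint form $\gamma\colon F \to \Omega^{\infty}_{\calD} \circ f \circ \Sigma^{\infty}_{\calC}$ via Remark \ref{rezza}, this equivalence produces a natural transformation $\widetilde{\alpha}\colon \widetilde{f} \to f$, determined up to homotopy by the compatibility $\Omega^{\infty}_{\calD}(\widetilde{\alpha}) \circ \alpha_{0} \simeq \alpha$.

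It then suffices to verify that $\alpha$ is an equivalence iff $\widetilde{\alpha}$ is an equivalence iff $\alpha$ exhibits $f$ as a linearization. For the first biconditional, since $\alpha_{0}$ is an equivalence, $\alpha$ is an equivalence exactly when $\Omega^{\infty}_{\calD}(\widetilde{\alpha})$ is pointwise an equivalence; using the exactness of both $\widetilde{f}$ and $f$ one has $\Omega^{\infty-n}_{\calD}(\widetilde{\alpha})(X) \simeq \Omega^{\infty}_{\calD}(\widetilde{\alpha})(\Sigma^{n} X)$ for all $n \geq 0$, so pointwise equivalence of $\Omega^{\infty}_{\calD}(\widetilde{\alpha})$ propagates to every level, forcing $\widetilde{\alpha}$ to be an equivalence by the componentwise criterion for equivalences of spectrum objects. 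The second biconditional is the uniqueness of linearizations: if $\widetilde{\alpha}$ is an equivalence, then $\alpha$ exhibits $f$ as a linearization because $\alpha_{0}$ does for $\widetilde{f}$; conversely, if $\alpha$ exhibits $f$ as a linearization, the uniqueness clause of Theorem \ref{spiller}(3) applied to $\widetilde{f}$ forces $\widetilde{\alpha}$ to be an equivalence. The main subtlety I anticipate lies in checking that the transformation $\widetilde{\alpha}$ produced by the universal property genuinely recovers $\alpha$ after applying $\Omega^{\infty}_{\calD}$ and precomposing with $\alpha_{0}$, rather than merely after further composition with $\Sigma^{\infty}_{\calC}$; this should drop out of unwinding the mutual determinations among $\alpha$, $\beta$, and $\gamma$ described in Remark \ref{rezza}.
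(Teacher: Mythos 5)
Your proposal is correct and follows essentially the same route as the paper: both proofs construct the canonical linearization $f_0 = F^{+}|_{\Spectra(\calC)}$ (which takes values in $\Spectra(\calD)$ precisely because $F$ is left exact), observe that the tautological transformation $\alpha_0$ is an equivalence, factor the given $\alpha$ through a comparison map $f_0 \to f$, and conclude by noting that $\alpha$ is an equivalence exactly when that comparison map is, which in turn is exactly when $\alpha$ satisfies the universal property of the linearization. You unpack the final step (that an exact natural transformation between exact functors is an equivalence once $\Omega^{\infty}_{\calD}$ of it is) a bit more explicitly than the paper does, but the argument is the same.
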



\begin{proof}
Since $F$ is left exact, the functor $F^{+}: \PreSpectra(\calC) \rightarrow \PreSpectra(\calD)$
given by composition with $F$ carries $\Spectra(\calC)$ into $\Spectra(\calD)$. It follows that
the identity transformation exhibits $F^{+} | \Spectra(\calC)$ as a linearization of $F$.
To prove the ``only if'' direction, we may assume without loss of generality that
$f = F^{+} | \Spectra(\calC)$, in which case we can identify $\alpha$ with the identity
map from $F \circ \Omega^{\infty}_{\calC}$ to itself.

For the converse, let $f_0 = F^{+} | \Spectra(\calC)$ be as above, let
$\alpha: F \circ \Omega^{\infty}_{\calC} \rightarrow \Omega^{\infty}_{\calD} \circ f$
be any natural transformation, and let $\beta: \Sigma^{\infty}_{\calD} \circ F \circ \Omega^{\infty}_{\calC} \rightarrow f$ be adjoint to $\alpha$. Since $f_0$ is a linearization of $F$, the map
$\beta$ factors as a composition
$$  \Sigma^{\infty}_{\calD} \circ F \circ \Omega^{\infty}_{\calC} \stackrel{\beta'}{\rightarrow}
f_0 \stackrel{\beta''}{\rightarrow} f.$$
We wish to prove that $\beta''$ is an equivalence. For this, it suffices to prove that the
induced map $\Omega^{\infty}_{\calD} \circ f_0 \rightarrow \Omega^{\infty}_{\calD} \circ f$
coincides with $\alpha$, and is therefore an equivalence as desired.
\end{proof}

\begin{corollary}\label{abler}
Let $F: \calC \rightarrow \calD$ be a left exact well-pointed functor, and let
$f: \Spectra(\calC) \rightarrow \Spectra(\calD)$ be given by composition with $F$.
Then the identity transformation $F \circ \Omega^{\infty}_{\calC} \rightarrow
\Omega^{\infty}_{\calD} \circ f$ exhibits $f$ as a linearization of $F$.
\end{corollary}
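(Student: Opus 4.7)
The plan is to deduce this immediately from Proposition \ref{kunst}, applied to the identity transformation. The key preliminary observation is that since $F$ is left exact and preserves sequential colimits, the composition functor $F^{+}: \PreSpectra(\calC) \to \PreSpectra(\calD)$ carries $\Spectra(\calC)$ into $\Spectra(\calD)$: a prespectrum object $X$ of $\calC$ is a spectrum object precisely when the structure maps $X[i,i] \to \Omega_{\calC} X[i+1,i+1]$ are equivalences, and such equivalences are preserved by the left exact functor $F$. Hence the functor $f = F^{+} | \Spectra(\calC)$ genuinely lands in $\Spectra(\calD)$, and by construction we have a tautological identification $\Omega^{\infty}_{\calD} \circ f \simeq F \circ \Omega^{\infty}_{\calC}$ (both are given by evaluation at $(0,0)$ followed by postcomposition with $F$).

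Next I would verify the hypotheses of Proposition \ref{kunst}. The functor $f$ is left exact because limits in $\Spectra(\calC)$ and $\Spectra(\calD)$ are computed pointwise in the underlying diagram categories, and $F$ is left exact by assumption. The functor $f$ preserves sequential colimits because the inclusion $\Spectra(\calD) \subseteq \PreSpectra(\calD)$ preserves sequential colimits (as $\Omega_{\calD}$ preserves them by the well-pointed hypothesis, so being a spectrum object is a condition stable under sequential colimits), and sequential colimits of prespectra are computed levelwise, so $f$ preserves them because $F$ does.

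With these points established, the identity transformation $\alpha: F \circ \Omega^{\infty}_{\calC} \to \Omega^{\infty}_{\calD} \circ f$ is manifestly an equivalence. Proposition \ref{kunst} then immediately concludes that $\alpha$ exhibits $f$ as a linearization of $F$. There is no real obstacle here: the entire content is packaging the observation that for left exact $F$, the naive functor on spectra obtained by levelwise composition already agrees (up to canonical equivalence) with the abstractly defined linearization, so the universal property of Theorem \ref{spiller} is verified automatically by the preceding Proposition \ref{kunst}.
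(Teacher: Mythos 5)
Your proof is correct and takes the same route the paper intends: Corollary~\ref{abler} is designed to follow directly from the ``if'' direction of Proposition~\ref{kunst} once one notes that, because $F$ is left exact, $F^{+}$ carries $\Spectra(\calC)$ into $\Spectra(\calD)$ and the evident identification $\Omega^{\infty}_{\calD}\circ f \simeq F\circ\Omega^{\infty}_{\calC}$ holds. Your verifications that $f$ is left exact and preserves sequential colimits are exactly what Lemma~\ref{smulk} delivers, so the hypotheses of Proposition~\ref{kunst} are in place and the conclusion is immediate.
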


\begin{proposition}\label{kunst2}
Let $\calC$ and $\calD$ be well-pointed $\infty$-categories 
and let $F: \calC \rightarrow \calD$ and $f: \Spectra(\calC) \rightarrow \Spectra(\calD)$ be functors which preserve countable colimits. A natural transformation $\alpha: \Sigma^{\infty}_{\calD} \circ F \rightarrow f \circ \Sigma^{\infty}_{\calC}$ exhibits
$f$ as a linearization of $F$ if and only if $\alpha$ is an equivalence.
\end{proposition}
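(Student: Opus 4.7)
My plan is to proceed dually to Proposition \ref{kunst}: exhibit a canonical linearization $f_0$ of $F$ whose associated natural transformation $\alpha_0$ of the required form is an equivalence, and then conclude the general case by uniqueness. The key observation is that $F$---being well-pointed and preserving countable colimits (hence pushouts and zero objects)---commutes with suspension: $F \circ \Sigma^{n}_{\calC} \simeq \Sigma^{n}_{\calD} \circ F$ naturally in $n$.

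Let $f_0 = L_{\calD} \circ F^{+}|_{\Spectra(\calC)}$, which is a linearization of $F$ by Remark \ref{coola}. Since the suspension prespectrum $\widetilde{\Sigma}^{\infty}_{\calC}(X)$ has $n$-th entry $\Sigma^{n}_{\calC} X$, the commutation $F \Sigma^{n}_{\calC} \simeq \Sigma^{n}_{\calD} F$ yields a levelwise equivalence of prespectra $F^{+}\widetilde{\Sigma}^{\infty}_{\calC}(X) \simeq \widetilde{\Sigma}^{\infty}_{\calD} F(X)$. Combined with Proposition \ref{coomer} applied to the $\Spectra(\calC)$-equivalence $\widetilde{\Sigma}^{\infty}_{\calC}(X) \rightarrow \Sigma^{\infty}_{\calC}(X)$, this produces
$$
f_0 \Sigma^{\infty}_{\calC}(X) = L_{\calD} F^{+} \Sigma^{\infty}_{\calC}(X) \simeq L_{\calD} F^{+} \widetilde{\Sigma}^{\infty}_{\calC}(X) \simeq L_{\calD} \widetilde{\Sigma}^{\infty}_{\calD} F(X) = \Sigma^{\infty}_{\calD} F(X),
$$
naturally in $X$, showing that the canonical $\alpha_0 \colon \Sigma^{\infty}_{\calD} F \rightarrow f_0 \Sigma^{\infty}_{\calC}$ of Remark \ref{rezza} is an equivalence. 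Equivalently, by Remark \ref{derexi}, the derivative of $F$ is $DF \simeq \Omega^{\infty}_{\calD} \Sigma^{\infty}_{\calD} F$ with canonical map given by the unit $\eta_F \colon F \rightarrow \Omega^{\infty}_{\calD} \Sigma^{\infty}_{\calD} F$ of $\Sigma^{\infty}_{\calD} \dashv \Omega^{\infty}_{\calD}$.

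The ``only if'' direction then follows from uniqueness of linearizations (Remark \ref{coola}): any linearization $(f, \alpha)$ is related to $(f_0, \alpha_0)$ by an equivalence of linearization data, so $\alpha$ is an equivalence because $\alpha_0$ is. For the ``if'' direction, suppose $\alpha$ is an equivalence. Then $f$ is automatically exact, since a functor between stable $\infty$-categories preserving finite colimits (which it does, finite colimits being countable) also preserves finite limits. The adjoint $\gamma \colon F \rightarrow \Omega^{\infty}_{\calD} f \Sigma^{\infty}_{\calC}$ of $\alpha$ factors as $F \stackrel{\eta_F}{\rightarrow} \Omega^{\infty}_{\calD} \Sigma^{\infty}_{\calD} F \stackrel{\Omega^{\infty}_{\calD}(\alpha)}{\rightarrow} \Omega^{\infty}_{\calD} f \Sigma^{\infty}_{\calC}$, which is $\eta_F$ composed with the equivalence $\Omega^{\infty}_{\calD}(\alpha)$. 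Since $\eta_F$ exhibits its target as the derivative of $F$ by the computation above, so does $\gamma$; by condition $(4)$ of Theorem \ref{spiller} (and Remark \ref{rezza}), $\alpha$ exhibits $f$ as a linearization.

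The main technical point is identifying the canonically-defined natural transformation $\alpha_0$ of Remark \ref{rezza} with the explicit chain of equivalences computing $f_0 \Sigma^{\infty}_{\calC}$; this reduces to a careful unwinding of the adjunction data defining $f_0$ as $L_{\calD} \circ F^{+}|_{\Spectra(\calC)}$, and while conceptually routine, it is where most of the bookkeeping lies.
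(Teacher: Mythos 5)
Your proof is correct and follows essentially the same approach as the paper's: both reduce to the canonical linearization $f_0 = L_{\calD} \circ F^{+}|_{\Spectra(\calC)}$ and show the comparison map is an equivalence by checking diagonal entries of prespectra, ultimately via the observation that right exactness gives $\Sigma^n_{\calD} F \simeq F \Sigma^n_{\calC}$ (the paper applies Remark \stableref{jilly} at $(n,n)$ where you invoke Proposition \ref{coomer} directly, but this is the same computation). Your handling of the ``if'' direction, factoring $\gamma$ through $\eta_F$ and appealing to Theorem \ref{spiller}(4), is a mild repackaging of the argument the paper cites from Proposition \ref{kunst}.
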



\begin{proof}
We will prove the ``only if'' direction; the converse will then follow as in the proof of
Proposition \ref{kunst}. We observe that $f \circ \Sigma^{\infty}_{\calC}
\simeq f \circ L_{\calC} \circ \widetilde{\Sigma}^{\infty}_{\calC} \simeq L_{\calD} \circ F^{+}
\circ \widetilde{\Sigma}^{\infty}_{\calC}$, where the second equivalence follows from
Remark \ref{sweety}. Similarly, we can write $\Sigma^{\infty}_{\calD} \circ F$
as a composition $L_{\calD} \circ \widetilde{\Sigma}^{\infty}_{\calD} \circ F$.
In terms of these identifications, the map $\alpha$ is obtained by applying
$L_{\calD}$ to the natural transformation
$$ \alpha_0: \widetilde{\Sigma}^{\infty}_{\calD} \circ F \rightarrow F^{+} \circ \widetilde{ \Sigma}^{\infty}_{\calC}.$$
To prove that $L_{\calD}(\alpha_0)$ is an equivalence, it will suffice to show that for every
object $C \in \calC$, the map of prespectra
$$ \widetilde{\Sigma}^{\infty}_{\calD} F(C) \rightarrow F^{+} \circ \widetilde{\Sigma}^{\infty}_{\calC}(C)$$
induces an equivalence after evaluation at $(n,n)$ for each $n \geq 0$ (Remark \stableref{jilly}).
Unwinding the definitions, we must show that the canonical map
$$ \Sigma^{n}_{\calD} F(C) \rightarrow F( \Sigma^{n}_{\calC} C)$$
is an equivalence for $n \geq 0$, which follows from the assumption that $F$ is right exact.
\end{proof}

\begin{proposition}\label{staleman}
Let $F: \calC \rightarrow \calD$ be a well-pointed functor, $f: \Spectra(\calC) \rightarrow
\Spectra(\calD)$ a well-pointed exact functor, and 
$\alpha: F \circ \Omega^{\infty}_{\calC} \rightarrow \Omega^{\infty}_{\calD} \circ f$
a natural transformation. Then $\alpha$ exhibits $f$ as a linearization of $F$ if and only if
$\alpha$ exhibits $\Omega^{\infty}_{\calD} \circ f$ as
a derivative of $F \circ \Omega^{\infty}_{\calC}$.
\end{proposition}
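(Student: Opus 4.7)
My plan is to prove Proposition \ref{staleman} by reducing both conditions to mapping-space criteria via Theorem \ref{spiller} and comparing them using Proposition \ref{urbusk}. First I would verify that $\Omega^{\infty}_{\calD} \circ f : \Spectra(\calC) \to \calD$ is well-pointed and excisive: well-pointedness follows because both $\Omega^{\infty}_{\calD}$ and $f$ preserve sequential colimits and zero objects, while excisiveness follows because $f$ is exact (so it preserves pushouts, which in the stable target $\Spectra(\calD)$ coincide with pullbacks) and $\Omega^{\infty}_{\calD}$ is left exact. This ensures that the statement ``$\alpha$ exhibits $\Omega^{\infty}_{\calD} \circ f$ as a derivative'' is well-formed in the sense of Remark \ref{rezza}.

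Next I would translate both conditions. By Theorem \ref{spiller}(3), $\alpha$ exhibits $f$ as a linearization of $F$ if and only if, for every well-pointed exact $g : \Spectra(\calC) \to \Spectra(\calD)$, composition with the adjoint $\beta : \Sigma^{\infty}_{\calD} \circ F \circ \Omega^{\infty}_{\calC} \to f$ of $\alpha$ induces a homotopy equivalence $\bHom(f,g) \simeq \bHom(F, \Omega^{\infty}_{\calD} \circ g \circ \Sigma^{\infty}_{\calC})$, where the left-hand mapping space is taken among exact functors. By Remark \ref{rezza} applied to the functor $F \circ \Omega^{\infty}_{\calC}: \Spectra(\calC) \to \calD$ and Theorem \ref{spiller}(4), $\alpha$ exhibits $\Omega^{\infty}_{\calD} \circ f$ as a derivative of $F \circ \Omega^{\infty}_{\calC}$ if and only if, for every well-pointed excisive $G : \Spectra(\calC) \to \calD$, composition with $\alpha$ induces a homotopy equivalence $\bHom(\Omega^{\infty}_{\calD} \circ f, G) \simeq \bHom(F \circ \Omega^{\infty}_{\calC}, G)$.

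The bridge between these statements is Proposition \ref{urbusk} applied to the stable $\infty$-category $\Spectra(\calC)$ (for which $\Sigma^{\infty}$ and $\Omega^{\infty}$ are equivalences): it parametrizes every well-pointed excisive $G : \Spectra(\calC) \to \calD$ as $G = \Omega^{\infty}_{\calD} \circ g$ for an essentially unique well-pointed exact $g : \Spectra(\calC) \to \Spectra(\calD)$, and delivers a natural equivalence $\bHom(\Omega^{\infty}_{\calD} \circ f, \Omega^{\infty}_{\calD} \circ g) \simeq \bHom(f, g)$ among exact functors. The main obstacle is then to establish the companion identification
\[
\bHom(F \circ \Omega^{\infty}_{\calC}, \Omega^{\infty}_{\calD} \circ g) \simeq \bHom(F, \Omega^{\infty}_{\calD} \circ g \circ \Sigma^{\infty}_{\calC})
\]
implemented by restriction along $\Sigma^{\infty}_{\calC}$. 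I plan to attack it by invoking Corollary \ref{leftywing} to replace $F \circ \Omega^{\infty}_{\calC}$ by its excisive approximation, which by a second application of Proposition \ref{urbusk} is of the form $\Omega^{\infty}_{\calD} \circ h$ for some well-pointed exact $h: \Spectra(\calC) \to \Spectra(\calD)$; both sides of the display then reduce to the space of exact natural transformations from $h$ to $g$. The heart of the remaining work is to verify that this $h$, characterized as the excisive approximation of $F \circ \Omega^{\infty}_{\calC}$, is canonically identified with the linearization of $F$ precisely when $\alpha$ plays the role asserted. Once this identification is in place, the two mapping-space conditions become identical, which yields the proposition.
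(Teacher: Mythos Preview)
Your overall strategy---translate both conditions into mapping-space criteria from Theorem \ref{spiller} and bridge them using Proposition \ref{urbusk}---matches the paper's. The verification that $\Omega^{\infty}_{\calD}\circ f$ is well-pointed excisive, and the observation that every well-pointed excisive $G:\Spectra(\calC)\to\calD$ has the form $\Omega^{\infty}_{\calD}\circ g$, are exactly what the paper uses.

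Where you diverge is in how you unpack condition (3). You take its final form, landing in $\bHom_{\Fun(\calC,\calD)}(F,\,\Omega^{\infty}_{\calD}\circ g\circ\Sigma^{\infty}_{\calC})$, and then need the identification
\[
\bHom_{\Fun(\Spectra(\calC),\calD)}(F\circ\Omega^{\infty}_{\calC},\,\Omega^{\infty}_{\calD}\circ g)\;\simeq\;\bHom_{\Fun(\calC,\calD)}(F,\,\Omega^{\infty}_{\calD}\circ g\circ\Sigma^{\infty}_{\calC})
\]
via restriction along $\Sigma^{\infty}_{\calC}$. Your plan to obtain this through the excisive approximation $\Omega^{\infty}_{\calD}\circ h$ of $F\circ\Omega^{\infty}_{\calC}$ reduces the left side to $\bHom(h,g)$ and the right side to $\bHom(f',g)$, where $f'$ is the linearization of $F$; you then need $h\simeq f'$. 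But that statement---the linearization of $F\circ\Omega^{\infty}_{\calC}$ agrees with the linearization of $F$---is precisely the content of the proposition. Your closing sentence (``verify that this $h$ \dots\ is canonically identified with the linearization of $F$ precisely when $\alpha$ plays the role asserted'') effectively restates what is to be proved.

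The paper sidesteps this entirely by stopping one step earlier. Precomposition with $\beta$ is literally a map
\[
\psi:\bHom_{\Fun(\Spectra(\calC),\Spectra(\calD))}(f,g)\longrightarrow\bHom_{\Fun(\Spectra(\calC),\Spectra(\calD))}(\Sigma^{\infty}_{\calD}\circ F\circ\Omega^{\infty}_{\calC},\,g),
\]
and the $(\Sigma^{\infty}_{\calD},\Omega^{\infty}_{\calD})$ adjunction together with Proposition \stableref{urtusk21} identifies $\psi$ \emph{directly} with the precompose-by-$\alpha$ map
\[
\bHom_{\Fun(\Spectra(\calC),\calD)}(\Omega^{\infty}_{\calD}\circ f,\,\Omega^{\infty}_{\calD}\circ g)\longrightarrow\bHom_{\Fun(\Spectra(\calC),\calD)}(F\circ\Omega^{\infty}_{\calC},\,\Omega^{\infty}_{\calD}\circ g),
\]
which is exactly what condition (4) for $F\circ\Omega^{\infty}_{\calC}$ tests against $G=\Omega^{\infty}_{\calD}\circ g$. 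No passage through $\Sigma^{\infty}_{\calC}$ is needed. Both directions then follow immediately, with the converse using (as you noted) that every well-pointed excisive $G$ arises as $\Omega^{\infty}_{\calD}\circ g$ by Proposition \stableref{urtusk21} and Lemma \ref{smulk}.
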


\begin{proof}
The functors $f$ and $\Omega^{\infty}_{\calD}$ are both left exact and well-pointed, so the
composition $\Omega^{\infty}_{\calD} \circ f$ is left exact and well-pointed. Since the domain
of $\Omega^{\infty}_{\calD} \circ f$ is stable, it follows that $\Omega^{\infty}_{\calD} \circ f$
is excisive. Suppose first that $\alpha$ exhibits $\Omega^{\infty}_{\calD} \circ f$
as a derivative of $F \circ \Omega^{\infty}_{\calC}$. We wish to prove that
$\alpha$ exhibits $f$ as a linearization of $F$. Let
$\beta: \Sigma^{\infty}_{\calD} \circ F \circ \Omega^{\infty} \rightarrow f$ be the
map adjoint to $\alpha$, and let $g: \Spectra(\calC) \rightarrow \Spectra(\calD)$ be
an exact functor which preserves sequential colimits; we must show that
composition with $\beta$ induces a homotopy equivalence
$$ \psi: \bHom_{ \Fun( \Spectra(\calC), \Spectra(\calD))}(f,g) \rightarrow
\bHom_{ \Fun(\Spectra(\calC), \Spectra(\calD))}( \Sigma^{\infty}_{\calD}
\circ F \circ \Omega^{\infty}_{\calC}, g).$$
Using Proposition \stableref{urtusk21} and the adjointness between
$\Sigma^{\infty}_{\calD}$ and $\Omega^{\infty}_{\calD}$, we can identify
$\psi$ with the map
$$ \bHom_{ \Fun( \Spectra(\calC), \calD)}( \Omega^{\infty}_{\calD} \circ f,
\Omega^{\infty}_{\calD} \circ g) \rightarrow \bHom_{ \Fun( \Spectra(\calC), \calD)}(
F \circ \Omega^{\infty}_{\calC}, \Omega^{\infty}_{\calD} \circ g)$$
induced by composition with $\alpha$. This map is a homotopy equivalence
by virtue of our assumption on $\alpha$, and the fact that $\Omega^{\infty}_{\calD} \circ g$
is a well-pointed excisive functor (this follows from the fact that $\Omega^{\infty}_{\calD}$ and $g$
are well-pointed and left exact, and the domain of $\Omega^{\infty}_{\calD} \circ g$ is stable).

To prove the converse, let us suppose that $\alpha$ exhibits $f$ as a linearization of $F$.
The argument above shows that composition with $\alpha$ induces a homotopy
equivalence 
$$ \bHom_{ \Fun( \Spectra(\calC), \calD)}( \Omega^{\infty}_{\calD} \circ f,
G) \rightarrow \bHom_{ \Fun( \Spectra(\calC), \calD)}(
F \circ \Omega^{\infty}_{\calC}, G)$$
for every functor $G: \Spectra(\calC) \rightarrow \calD$ which can be obtained as a composition
$\Omega^{\infty}_{\calD} \circ g$, where $g: \Spectra(\calC) \rightarrow \Spectra(\calD)$
is exact and commutes with sequential colimits. To prove that $\alpha$
exhibits $\Omega^{\infty}_{\calD} \circ f$ as a derivative of $F \circ \Omega^{\infty}_{\calC}$,
it will suffice to show that every well-pointed excisive functor $G: \Spectra(\calC) \rightarrow \calD$
can be obtained in this way. Using the excisiveness of $G$ and Proposition \stableref{urtusk21},
we deduce that $G \simeq \Omega^{\infty}_{\calD} \circ g$, where $g: \Spectra(\calC) \rightarrow
\Spectra(\calD)$ is exact. Using Lemma \ref{smulk} and the compatibility of $G$
with sequential colimits, we deduce that $g$ preserves sequential colimits as desired.
\end{proof}

\begin{corollary}
Let $\calC$ be a well-pointed $\infty$-category. Then the identity transformation
$\id_{\calC} \circ \Omega^{\infty}_{\calC} \rightarrow \Omega^{\infty}_{\calC} \circ \id_{ \Spectra(\calC)}$ exhibits the identity functor $\id_{\Spectra(\calC)}: \Spectra(\calC) \rightarrow \Spectra(\calC)$
as a linearization of the identity functor $\id_{\calC}: \calC \rightarrow \calC$.
\end{corollary}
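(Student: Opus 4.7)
The plan is to apply Proposition \ref{kunst} directly, taking $\calD = \calC$, $F = \id_{\calC}$, and $f = \id_{\Spectra(\calC)}$. Under these substitutions, the natural transformation in question becomes the identity natural transformation on $\Omega^{\infty}_{\calC}$, which is manifestly an equivalence. The remaining work is purely bookkeeping: confirm that the hypotheses of Proposition \ref{kunst} are met.

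First I would verify that $\Spectra(\calC)$ is itself a well-pointed $\infty$-category. Since $\calC$ is well-pointed, $\Spectra(\calC)$ is stable (it is constructed in \S \stableref{stable9.1} as a limit of copies of $\calC$ under $\Omega_{\calC}$, and its stability is one of its defining features), hence pointed, admits all finite limits, and admits countable colimits; moreover the loop functor on $\Spectra(\calC)$ is an equivalence, so it trivially preserves sequential colimits. Next, the identity functors $\id_{\calC}$ and $\id_{\Spectra(\calC)}$ are tautologically left-exact and preserve sequential colimits. This completes the verification of the hypotheses of Proposition \ref{kunst}, and since the identity transformation is an equivalence, the ``if'' direction of that Proposition delivers the conclusion.

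No step here is a real obstacle; the only thing that requires a sentence of justification is the well-pointedness of $\Spectra(\calC)$, but this is immediate from stability. Alternatively, one could argue via Proposition \ref{staleman}: it suffices to show that the identity transformation exhibits $\Omega^{\infty}_{\calC}$ as a derivative of itself, which in turn reduces to checking that $\Omega^{\infty}_{\calC}$ is already well-pointed and excisive. Well-pointedness follows because $\Omega^{\infty}_{\calC}$ is left exact and commutes with sequential colimits (the latter because each of the functors $\Omega^{\infty-n}_{\calC}$ does, by the well-pointedness of $\calC$ together with the description of $\Spectra(\calC)$ in Proposition \stableref{camer}), and excisiveness is automatic because the source $\Spectra(\calC)$ is stable, so pushouts and pullbacks coincide and any left-exact functor out of $\Spectra(\calC)$ carries pushouts to pullbacks. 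Either route gives the corollary in a few lines, so there is no genuinely hard step.
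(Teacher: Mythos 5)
Your proposal is correct, and your primary route is genuinely different from the paper's. The paper invokes Proposition \ref{staleman} to reduce the statement to showing that the identity transformation exhibits $\Omega^{\infty}_{\calC}$ as a derivative of itself, and then observes this is immediate because $\Omega^{\infty}_{\calC}$ is already well-pointed and excisive — this is exactly what you present as your \emph{alternative} argument. Your main argument, by contrast, feeds $F = \id_{\calC}$ and $f = \id_{\Spectra(\calC)}$ directly into Proposition \ref{kunst}: both are tautologically left exact and preserve sequential colimits, and the resulting natural transformation is literally the identity, hence an equivalence, so the ``if'' direction of that proposition finishes the job immediately. The kunst route is slightly more economical here since it sidesteps the excisiveness verification entirely; the staleman route (the paper's) is more illustrative of the derivative/linearization interplay, which is presumably why the paper places the corollary directly after Proposition \ref{staleman}. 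One minor remark: the well-pointedness of $\Spectra(\calC)$ that you check is not actually a hypothesis in Proposition \ref{kunst} — that proposition only requires $\calC$ and $\calD$ well-pointed and $F$, $f$ left-exact and preserving sequential colimits — so it is extraneous for the first route, though of course it is needed for the staleman route and your verification of it is correct.
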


\begin{proof}
By virtue of Proposition \ref{staleman}, it will suffice to show that
the identity transformation exhibits $\Omega^{\infty}_{\calC}: \Spectra(\calC) \rightarrow \calC$
as a derivative of itself. This is clear, since $\Omega^{\infty}_{\calC}$ is well-pointed and excisive.
\end{proof}

\begin{proposition}[Chain Rule for First Derivatives]\label{clambake}
Let $F: \calC \rightarrow \calD$ and $G: \calD \rightarrow \calE$ be well-pointed functors.
\begin{itemize}
\item[$(1)$] Assume that $\alpha: F \circ \Omega^{\infty}_{\calC} \rightarrow \Omega^{\infty}_{\calD}
\circ f$ exhibits $f: \Spectra(\calC) \rightarrow \Spectra(\calD)$ as a linearization of
$F$, and that $\beta: G \circ \Omega^{\infty}_{\calD} \rightarrow \Omega^{\infty}_{\calE} \circ g$
exhibits $g: \Spectra(\calD) \rightarrow \Spectra(\calE)$ as a linearization of $G$.
Then the composite map
$$ \gamma: G \circ F \circ \Omega^{\infty}_{\calC} \stackrel{\alpha}{\rightarrow}
G \circ \Omega^{\infty}_{\calD} \circ f \stackrel{\beta}{\rightarrow} \Omega^{\infty}_{\calE} \circ g
\circ f$$
exhibits $g \circ f$ as a linearization of $G \circ F$.
\item[$(2)$] Assume that $\alpha: \Sigma^{\infty}_{\calD} \circ F \rightarrow f \circ \Sigma^{\infty}_{\calC}$ exhibits $f: \Spectra(\calC) \rightarrow \Spectra(\calD)$ as a linearization of
$F$, and that $\beta: \Sigma^{\infty}_{\calE} \circ G \rightarrow g \circ \Sigma^{\infty}_{\calD}$
exhibits $g: \Spectra(\calD) \rightarrow \Spectra(\calE)$ as a linearization of $G$.
Then the composite map
$$ \gamma: \Sigma^{\infty}_{\calE} \circ G \circ F \stackrel{\alpha}{\rightarrow}
g \circ \Sigma^{\infty}_{\calD} \circ F \stackrel{\beta}{\rightarrow} g
\circ f \circ \Sigma^{\infty}_{\calC}$$
exhibits $g \circ f$ as a linearization of $G \circ F$.
\end{itemize}
\end{proposition}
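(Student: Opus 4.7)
The plan is to deduce the chain rule from the explicit model of linearization provided by Remark \ref{coola}, together with the key technical Proposition \ref{coomer}. By Remark \ref{rezza}, parts (1) and (2) are equivalent under the adjunction between $\Sigma^{\infty}$-form and $\Omega^{\infty}$-form transformations, so it suffices to prove (1). As a preliminary, $g \circ f$ is exact (by Proposition \ref{exman} and stability of exactness under composition) and preserves sequential colimits, so it is a legitimate candidate for the linearization of $G \circ F$.

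The core idea is the following. By Remark \ref{coola}, the canonical model for the linearization of a well-pointed functor $H \colon \calC' \to \calD'$ is $L_{\calD'} \circ H^{+} \,|\, \Spectra(\calC')$, with the exhibit transformation obtained from the $L_{\calD'}$-unit. Since $(G \circ F)^{+} = G^{+} \circ F^{+}$ at the level of prespectra, the linearization of $G \circ F$ identifies with
$$\ell(G \circ F) \simeq L_{\calE} \circ G^{+} \circ F^{+} \,|\, \Spectra(\calC),$$
while by Remark \ref{sweety} applied to $F$ and to $G$, the composition $g \circ f$ identifies with
$$g \circ f \simeq L_{\calE} \circ G^{+} \circ L_{\calD} \circ F^{+} \,|\, \Spectra(\calC).$$
The canonical comparison map between the two, induced by the $L_{\calD}$-unit $F^{+} \to L_{\calD} \circ F^{+}$, is an equivalence by Proposition \ref{coomer} applied to the well-pointed functor $G$: since the $L_{\calD}$-unit is tautologically an $L_{\calD}$-equivalence in $\PreSpectra(\calD)$, the induced map becomes an equivalence after $L_{\calE} \circ G^{+}$. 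This yields the identification $\ell(G \circ F) \simeq g \circ f$ of exact functors $\Spectra(\calC) \to \Spectra(\calE)$.

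The remaining step is to verify that, under this identification, the canonical linearization-unit transformation for $G \circ F$ corresponds to the composite $\gamma$ of the statement. Unwinding: the canonical unit arises by applying $\Omega^{\infty}_{\calE}$ to the two-step composite (restricted to $\Spectra(\calC)$) $G^{+} F^{+} \to G^{+} L_{\calD} F^{+} \to L_{\calE} G^{+} L_{\calD} F^{+}$ of the $L_{\calD}$-unit whiskered with $G^{+}$ followed by the $L_{\calE}$-unit. The first factor recovers $G \circ \alpha$, because $\alpha$ is by construction $\Omega^{\infty}_{\calD}$ applied to the $L_{\calD}$-unit for $F^{+} \,|\, \Spectra(\calC)$, and the second factor recovers $\beta \circ f$ by the analogous construction of $\beta$. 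Together these assemble into $\gamma$.

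I expect the main obstacle to be precisely this final compatibility check. The conceptual content of the chain rule --- that linearization commutes with composition --- drops out cleanly from Proposition \ref{coomer} and the identity $(G \circ F)^{+} = G^{+} \circ F^{+}$. What requires care is verifying that the specific transformation $\gamma$ supplied in the statement coincides with the linearization exhibit produced by the abstract localization construction, rather than some merely equivalent transformation; this amounts to bookkeeping through the various equivalent descriptions of a linearization exhibit recorded in Remark \ref{rezza}, and is delicate but essentially mechanical.
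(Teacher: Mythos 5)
Your proposal is correct and follows essentially the same route as the paper: both reduce to the canonical model $L_{\calD} \circ F^{+}$, $L_{\calE} \circ G^{+}$ via Remark \ref{coola}/Theorem \ref{spiller}, exploit the strict identity $(G \circ F)^{+} = G^{+} \circ F^{+}$, and invoke Proposition \ref{coomer} to show that the map $L_{\calE} G^{+}(X) \to L_{\calE} G^{+}(L_{\calD} X)$ is an equivalence. The ``delicate compatibility check'' you flag at the end is resolved in the paper exactly the way your last paragraph sketches: by taking $\alpha$ and $\beta$ to literally be the localization-unit transformations (WLOG, by uniqueness of linearizations), so that $\gamma$ factors on the nose as the linearization unit for $G \circ F$ followed by the comparison map whose invertibility Proposition \ref{coomer} supplies, with no residual ambiguity to chase through Remark \ref{rezza}.
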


\begin{proof}
We will prove $(1)$; the proof of $(2)$ is similar. Let $L_{\calD}: \PreSpectra(\calD) \rightarrow
\Spectra(\calD)$ and $L_{\calE}: \PreSpectra(\calE) \rightarrow \Spectra(\calE)$ denote left
adjoints to the inclusion functors. In view of Theorem \ref{spiller}, we may assume without loss
of generality that $f = L_{\calD} \circ F^{+}$, $g = L_{\calE} \circ G^{+}$, and that
$\alpha$ and $\beta$ are given by the compositions
$$ F \circ \Omega^{\infty}_{\calC} = \Omega^{\infty}_{\calD} \circ F^{+}
\rightarrow \Omega^{\infty}_{\calD} \circ L_{\calD} \circ F^{+} = \Omega^{\infty}_{\calD} \circ f
$$
$$ G \circ \Omega^{\infty}_{\calD} = \Omega^{\infty}_{\calE} \circ G^{+}
\rightarrow \Omega^{\infty}_{\calE} \circ L_{\calE} \circ G^{+} = \Omega^{\infty}_{\calE} \circ g.$$
Then the map $\gamma$ factors as a composition
$$ G \circ F \circ \Omega^{\infty}_{\calC}
\stackrel{\gamma'}{\rightarrow} \Omega^{\infty}_{\calE} \circ L_{\calE} \circ
(G \circ F)^{+}
\stackrel{\gamma''}{\rightarrow} \Omega^{\infty}_{\calE} \circ L_{\calE} \circ G^{+}
\circ L_{\calD} \circ F^{+},$$
where $\gamma'$ exhibits $L_{\calE} \circ (G \circ F)^{+}$ as a linearization of
$G \circ F$. Consequently, it will suffice to show that $\gamma''$ is an equivalence.
In other words, it will suffice to show that for $X \in \PreSpectra(\calD)$, the canonical map
$L_{\calE} G^{+}(X) \rightarrow L_{\calE} G^{+} (L_{\calD} X)$ is an equivalence; this follows
immediately from Proposition \ref{coomer}.
\end{proof}

The chain rule is most conveniently stated in terms of linearizations. However,
in some cases it can also be rephrased in terms of derivatives:

\begin{proposition}\label{easychain}
Let $F: \calC \rightarrow \calD$ and $G: \calD \rightarrow \calE$ be well-pointed functors.
\begin{itemize}
\item[$(1)$] Suppose that $F$ is right exact, and let $\alpha: G \rightarrow G'$ exhibit
$G'$ as a derivative of $G$. Then the induced map $G \circ F \rightarrow G' \circ F$ exhibits
$G' \circ F$ as a derivative of $G \circ F$.
\item[$(2)$] Suppose that $G$ is left exact, and let $\beta: F \rightarrow F'$ exhibit
$F'$ as derivative of $F$. Then the induced map $G \circ F \rightarrow G \circ F'$ exhibits
$G \circ F'$ as a derivative of $G \circ F$.
\end{itemize}
\end{proposition}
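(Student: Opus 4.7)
The plan is to exploit the concrete formula from Remark \ref{derexi}: for any well-pointed functor $H \colon \calA \to \calB$, a derivative may be computed as
\[ DH \simeq \varinjlim_{n} \Omega^n_{\calB} \circ H \circ \Sigma^n_{\calA}, \]
with structure maps built from the natural assembly transformation $\Sigma_{\calB} \circ H \to H \circ \Sigma_{\calA}$ (which exists whenever $H$ preserves zero objects), and with the comparison map $H \to DH$ realized as inclusion of the $n = 0$ term of the colimit. In each case we rewrite $D(G \circ F)$ using this formula and exploit either right exactness of $F$ or left exactness of $G$.

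For part (1), right exactness and well-pointedness of $F$ imply that $F$ preserves the pushout square defining suspension as well as the zero object, so the assembly map $\Sigma^n_{\calD} \circ F \to F \circ \Sigma^n_{\calC}$ is an equivalence for every $n \geq 0$. Since $\Omega_\calE$ commutes with sequential colimits ($\calE$ being well-pointed), we obtain a natural equivalence
\[ D(G \circ F) \simeq \varinjlim_n \Omega^n_\calE \circ G \circ F \circ \Sigma^n_\calC \simeq \varinjlim_n \Omega^n_\calE \circ G \circ \Sigma^n_\calD \circ F \simeq DG \circ F. \]

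For part (2), left exactness of $G$ yields an equivalence $\Omega^n_{\calE} \circ G \simeq G \circ \Omega^n_{\calD}$ for every $n \geq 0$, and since the well-pointed functor $G$ preserves sequential colimits, we may commute $G$ past the colimit:
\[ D(G\circ F) \simeq \varinjlim_n \Omega^n_\calE \circ G \circ F \circ \Sigma^n_\calC \simeq \varinjlim_n G \circ \Omega^n_\calD \circ F \circ \Sigma^n_\calC \simeq G \circ \varinjlim_n \Omega^n_\calD \circ F \circ \Sigma^n_\calC \simeq G \circ DF. \]

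What remains is to verify that these equivalences are compatible with the natural transformations $G \circ F \to DG \circ F$ and $G \circ F \to G \circ DF$ appearing in the statement, so that these transformations genuinely exhibit their targets as derivatives in the sense of Remark \ref{rezza}. This reduces to observing that the equivalences above restrict to the identity on the $n=0$ term of each colimit, together with the verification that $DG \circ F$ and $G \circ DF$ are themselves well-pointed and excisive: the former holds because $DG$ is excisive and $F$ is right exact (hence preserves the pushouts that excisiveness tests against), while the latter holds because $DF$ is excisive and $G$ preserves finite limits. The main nuisance is bookkeeping of compatibilities at the level of natural transformations rather than any substantive obstacle.
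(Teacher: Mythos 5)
Your argument is correct, and it takes a genuinely different route from the one in the paper. The paper's proof of part $(2)$ works entirely inside the prespectrum formalism: after normalizing $F'$ to equal $\Omega^{\infty}_{\calD} \circ L_{\calD} \circ F^{+} \circ \widetilde{\Sigma}^{\infty}_{\calC}$, it compares $\Omega^{\infty}_{\calE} \circ L_{\calE} \circ G^{+} \circ F^{+} \circ \widetilde{\Sigma}^{\infty}_{\calC}$ (the canonical derivative of $G \circ F$) against $\Omega^{\infty}_{\calE} \circ G^{+} \circ L_{\calD} \circ F^{+} \circ \widetilde{\Sigma}^{\infty}_{\calC}$ via a commuting square, and shows the two connecting arrows are equivalences; the horizontal arrow is an equivalence because $G^{+} \circ L_{\calD}$ already lands in $\Spectra(\calE)$ when $G$ is left exact, and the vertical one is an equivalence by Proposition \ref{coomer}. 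You instead unwind everything to the concrete colimit $\varinjlim_n \Omega^n \circ (-) \circ \Sigma^n$ of Remark \ref{derexi} and then commute $F$ (respectively $G$) across the suspensions (respectively loops) and the sequential colimit. Both work; the paper's argument fits more seamlessly with the toolkit (notably \ref{coomer}) developed earlier in the section, while yours is more self-contained and makes the role of the exactness hypotheses immediately visible.

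One thing to be careful about, which you correctly flag but do not carry out: you need to know that the termwise equivalences $\Omega^n_{\calE} G \Sigma^n_{\calD} F \simeq \Omega^n_{\calE} G F \Sigma^n_{\calC}$ (for $(1)$) and $G \Omega^n_{\calD} F \Sigma^n_{\calC} \simeq \Omega^n_{\calE} G F \Sigma^n_{\calC}$ (for $(2)$) assemble into a map of sequential diagrams, i.e.\ that they commute with the structure maps of the two colimits. The structure maps are built from the unit $\id \to \Omega \Sigma$ together with the canonical maps $\Sigma_{\calD} F \to F \Sigma_{\calC}$ and $G \Omega_{\calD} \to \Omega_{\calE} G$, and under the right-exactness (resp.\ left-exactness) hypothesis these canonical maps are exactly the equivalences you are using, so the compatibility is genuine naturality and not a problem --- but this is the content of the check, and it is also what guarantees that your identification restricts to the identity on the $n = 0$ term and hence that the map $G \circ F \to G' \circ F$ (resp.\ $G \circ F \to G \circ F'$) really is identified with the colimit inclusion exhibiting the derivative. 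Also note that you should begin each part by replacing $(G', \alpha)$ (resp.\ $(F', \beta)$) with the canonical colimit model via the essential uniqueness of derivatives guaranteed by Theorem \ref{spiller}$(4)$; you do this implicitly by writing $DG$ and $DF$, but it should be said.
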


\begin{proof}
We will prove $(2)$; the proof of $(1)$ is similar. Let $L_{\calD}: \PreSpectra(\calD) \rightarrow
\Spectra(\calD)$ and $L_{\calE}: \PreSpectra(\calE) \rightarrow \Spectra(\calE)$ denote left adjoints to the inclusions. By virtue of Theorem \ref{spiller}
may assume without loss of generality that 
$F' = \Omega^{\infty}_{\calD} \circ L_{\calD} \circ F^{+} \circ \widetilde{\Sigma}^{\infty}_{\calC}$,
and that $\beta$ is induced by the natural transformation $\id_{ \PreSpectra(\calD)} \rightarrow L_{\calD}$. We have a commutative diagram
$$ \xymatrix{ \Omega^{\infty}_{\calE} \circ G^{+} \circ F^{+} \circ \widetilde{\Sigma}^{\infty}_{\calC}
\ar[r]^{u} \ar[d]^{v} & \Omega^{\infty}_{\calE} \circ L_{\calE} \circ G^{+} \circ F^{+} \circ \widetilde{\Sigma}^{\infty}_{\calC} \ar[d]^{v'} \\
\Omega^{\infty}_{\calE} \circ G^{+} \circ L_{\calD} \circ F^{+} \circ \widetilde{\Sigma}^{\infty}_{\calC}
\ar[r]^{u'} & \Omega^{\infty}_{\calE} \circ L_{\calE} \circ G^{+} \circ L_{\calD} \circ F^{+} \circ
\widetilde{\Sigma}^{\infty}_{\calC} }$$
where $u$ exhibits $\Omega^{\infty}_{\calE} \circ L_{\calE} \circ G^{+} \circ F^{+} \circ \widetilde{\Sigma}^{\infty}_{\calC}$ as a derivative of $G \circ F$, and we wish to prove that
$v$ exhibits $G \circ F' = \Omega^{\infty}_{\calE} \circ G^{+} \circ L_{\calD} \circ F^{+} \circ \widetilde{\Sigma}^{\infty}_{\calC}$ as a derivative of $G \circ F$. To prove this, it will suffice to show
that the natural transformations $u'$ and $v'$ are equivalences.

To prove that $u'$ is an equivalence, it will suffice to show that the functor
$G^{+} \circ L_{\calD}: \PreSpectra(\calD) \rightarrow \PreSpectra(\calE)$ takes values
in $\Spectra(\calE)$. Since $L_{\calD}$ takes values in $\Spectra(\calD)$, this follows
from the observation that $G^{+}: \PreSpectra(\calD) \rightarrow \PreSpectra(\calE)$ carries
$\Spectra(\calD)$ into $\Spectra(\calE)$, since $G$ is left exact. To prove
that $v'$ is an equivalence, it will suffice to show that for every object $X \in \PreSpectra(\calD)$,
the canonical map $L_{\calE} \circ G^{+}(X) \rightarrow L_{\calE} \circ G^{+}( L_{\calD} X)$
is an equivalence, which follows from Proposition \ref{coomer}.
\end{proof}

\subsection{Linearization in Families}\label{bisec5.2}

In \S \ref{bisec5.1}, we introduced the {\it linearization} $f$ of a well-pointed functor
$F: \calC \rightarrow \calD$. To the functor $F$, we can associate a coCartesian
fibration $p: \calM \rightarrow \Delta^1$ such that $\calC \simeq \calM \times_{ \Delta^1} \{0\}$
and $\calD \simeq \calM \times_{ \Delta^1 } \{1\}$. In this case, the linearization of
$F$ can be described directly in terms of the map $p$: it is the functor associated to
another coCartesian fibration $\Spectra(p) \rightarrow \Delta^1$, such that
$\Spectra(p) \times_{ \Delta^1} \{0\} \simeq \Spectra(\calC)$ and
$\Spectra(p) \times_{ \Delta^1} \{1\} \simeq \Spectra(\calD)$. Our goal in this section
is to define the $\infty$-category $\Spectra(p)$ for a general locally coCartesian fibration
of $\infty$-categories, and to study its basic properties. Our main result is 
Theorem \ref{kalet}, which characterizes $\Spectra(p)$ by a universal property.

\begin{definition}\label{linfib}
Let $p: X \rightarrow S$ be an inner fibration of simplicial sets. 
We define simplicial sets
$$\Spectra(p) \subseteq \PreSpectra(p) \subseteq \widetilde{ \PreSpectra}(p)
\rightarrow S$$
as follows:
\begin{itemize}
\item For every map of simplicial sets $K \rightarrow S$, we have a canonical
bijection $$\Hom_{S}(K, \widetilde{\PreSpectra}(p))
\simeq \Hom_{S}( K \times \Nerve( \Z \times \Z), X).$$
In particular, we can identify vertices of $\widetilde{\PreSpectra}(p)$ with pairs
$(s,F)$, where $s$ is a vertex of $S$ and $F: \Nerve( \Z \times \Z) \rightarrow X_{s}$
is a functor.
\item We let $\PreSpectra(p)$ denote the full simplicial subset of
$\widetilde{ \PreSpectra}(p)$ spanned by those vertices $(s,F)$ such that
$F$ is a prespectrum object of $X_{s}$.
\item We let $\Spectra(p)$ denote the full simplicial subset of $\widetilde{\PreSpectra}(p)$
spanned by those vertices $(s,F)$ such that $F$ is a spectrum object of $X_{s}$.
\end{itemize}
\end{definition}

\begin{definition}
Let $p: X \rightarrow S$ be a locally coCartesian fibration of simplicial sets.
We will say that $p$ is {\it well-pointed} if the following conditions are satisfied:
\begin{itemize}
\item[$(i)$] For every vertex $s$ in $S$, the fiber $X_{s}$ is well-pointed.
\item[$(ii)$] For every edge $s \rightarrow s'$ in $S$, the associated functor
$X_{s} \rightarrow X_{s'}$ is well-pointed.
\end{itemize}
We will say that $p$ is {\it stable} if the following stronger conditions are satisfied:
\begin{itemize}
\item[$(i')$] For every vertex $s$ in $S$, the fiber $X_{s}$ is stable.
\item[$(ii')$] For every edge $s \rightarrow s'$ in $S$, the associated functor
$X_{s} \rightarrow X_{s'}$ is exact.
\end{itemize}
\end{definition}

\begin{proposition}\label{swingg}
Let $p: X \rightarrow S$ be an inner fibration of simplicial sets. Then:
\begin{itemize}
\item[$(1)$] The maps $\Spectra(p) \rightarrow \PreSpectra(p) \rightarrow S$
are inner fibrations.

\item[$(2)$] Assume that:
\begin{itemize}
\item[$(a)$] The map $p$ is a coCartesian fibration.
\item[$(b)$] For every vertex $s \in S$, the $\infty$-category $X_{s}$ is pointed.
\item[$(c)$] For every edge $f: s \rightarrow s'$ in $S$, the associated functor
$f_{!}: X_{s} \rightarrow X_{s'}$ preserves zero objects.
\end{itemize}
Then:
\begin{itemize}
\item[$(i)$] The induced map $q: \PreSpectra(p) \rightarrow S$ is a coCartesian fibration.
\item[$(ii)$] An edge $x \rightarrow x'$ in $\PreSpectra(p)$ is $q$-coCartesian if and only
if the induced map $x(n,n) \rightarrow x'(n,n)$ is a $p$-coCartesian edge in $X$, for every
integer $n$.
\item[$(iii)$] Evaluation at $(0,0)$ determines a functor $\PreSpectra(p) \rightarrow X$
which carries $q$-coCartesian edges to $p$-coCartesian edges.
\end{itemize}

\item[$(2')$] Assume that:
Assume that:
\begin{itemize}
\item[$(a)$] The map $p$ is a Cartesian fibration.
\item[$(b)$] For every vertex $s \in S$, the $\infty$-category $X_{s}$ is pointed.
\item[$(c)$] For every edge $f: s \rightarrow s'$ in $S$, the associated functor
$f^{\ast}: X_{s'} \rightarrow X_{s}$ preserves zero objects.
\end{itemize}
Then:
\begin{itemize}
\item[$(i)$] The induced map $q: \PreSpectra(p) \rightarrow S$ is a Cartesian fibration.
\item[$(ii)$] An edge $x \rightarrow x'$ in $\PreSpectra(p)$ is $q$-Cartesian if and only
if the induced map $x(n,n) \rightarrow x'(n,n)$ is a $p$-Cartesian edge in $X$, for every
integer $n$.
\item[$(iii)$] Evaluation at $(0,0)$ determines a functor $\PreSpectra(p) \rightarrow X$
which carries $q$-Cartesian edges to $p$-Cartesian edges.
\end{itemize}

\item[$(3)$] Suppose that $p$ satisfies the conditions $(a)$ of $(2)$, together
with the following stronger versions of $(b)$ and $(c)$:
\begin{itemize}
\item[$(b')$] For every vertex $s$ of $S$, the fiber $X_{s}$ admits finite limits.
\item[$(c')$] For every edge $f: s \rightarrow s'$ in $S$, the induced functor
$f_{!}: X_{s} \rightarrow X_{s'}$ is left exact.
\end{itemize}
Then:
\begin{itemize}
\item[$(i)$] The induced map $q: \Spectra(p) \rightarrow S$ is a coCartesian fibration.
\item[$(ii)$] An edge $x \rightarrow x'$ in $\Spectra(p)$ is $q$-coCartesian if and only
if the induced map $x(n,n) \rightarrow x'(n,n)$ is a $p$-coCartesian edge in $X$, for every
integer $n$.
\item[$(iii)$] Evaluation at $(0,0)$ determines a functor $\Spectra(p) \rightarrow X$
which carries $q$-coCartesian edges to $p$-coCartesian edges.
\end{itemize}

\item[$(3')$] Suppose that $p$ satisfies the conditions $(a)$ of $(2')$, together
with the following stronger versions of $(b)$ and $(c)$:
\begin{itemize}
\item[$(b')$] For every vertex $s$ of $S$, the fiber $X_{s}$ admits finite limits.
\item[$(c')$] For every edge $f: s \rightarrow s'$ in $S$, the induced functor
$f^{\ast}: X_{s'} \rightarrow X_{s}$ is left exact.
\end{itemize}
Then:
\begin{itemize}
\item[$(i)$] The induced map $q: \Spectra(p) \rightarrow S$ is a coCartesian fibration.
\item[$(ii)$] An edge $x \rightarrow x'$ in $\Spectra(p)$ is $q$-coCartesian if and only
if the induced map $x(n,n) \rightarrow x'(n,n)$ is a $p$-coCartesian edge in $X$, for every
integer $n$.
\item[$(iii)$] Evaluation at $(0,0)$ determines a functor $\Spectra(p) \rightarrow X$
which carries $q$-Cartesian edges to $p$-Cartesian edges.
\end{itemize}

\item[$(4)$] Assume that $p$ is a well-pointed coCartesian fibration.
Then:
\begin{itemize}
\item[$(i)$] The induced map $q: \Spectra(p) \rightarrow S$ is a coCartesian fibration.
\item[$(ii)$] Let $\overline{f}: x \rightarrow x'$ be an edge of $\Spectra(p)$ lying over an
edge $f: s \rightarrow s'$ in $S$. Then $\overline{f}$ is $q$-coCartesian if and only if the
induced map $f_{!} \circ x \rightarrow x'$ exhibits $x' \in \Spectra( X_{s'})$ as a
$\Spectra( X_{s'})$-localization of $f_{!} \circ x$ in the $\infty$-category
$\PreSpectra( X_{s'} )$.
\end{itemize}
\end{itemize}
\end{proposition}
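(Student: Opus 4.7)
The plan is to handle the six parts of the proposition in turn, treating (2), (2'), (3), (3') as variants of a common pointwise construction and reserving the real work for (4).

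Part (1) is essentially formal. By the defining universal property, maps from a simplicial set $K$ into $\widetilde{\PreSpectra}(p)$ over $S$ correspond bijectively with maps $K \times \Nerve(\Z\times\Z) \to X$ over $S$. An inner horn extension problem $\Lambda^n_i \to \widetilde{\PreSpectra}(p)$ over $\Delta^n \to S$ with $0 < i < n$ therefore transposes to a lifting problem against $p$ along $\Lambda^n_i \times \Nerve(\Z\times\Z) \hookrightarrow \Delta^n \times \Nerve(\Z\times\Z)$; the latter inclusion is inner anodyne (inner anodyne maps being closed under product with arbitrary simplicial sets), so a solution exists because $p$ is an inner fibration. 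Since $\PreSpectra(p)$ and $\Spectra(p)$ are defined as full simplicial subsets of $\widetilde{\PreSpectra}(p)$, they inherit the property of mapping to $S$ by an inner fibration.

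For (2), I would construct coCartesian lifts vertex-by-vertex in the diagram indexing the prespectrum. Given an edge $f: s \to s'$ of $S$ and a vertex $(s,F) \in \PreSpectra(p)$, the plan is to build a lift $G: \Delta^1 \times \Nerve(\Z\times\Z) \to X$ covering $f$, extending $F$ on $\{0\} \times \Nerve(\Z\times\Z)$, and with $G|\Delta^1 \times \{(n,n)\}$ a $p$-coCartesian edge for every $n$. Such a $G$ exists because the corresponding marked lifting problem is marked anodyne (Proposition T.3.1.2.3); its target is then identified with $f_! \circ F$, and hypothesis (c) guarantees that $f_! \circ F$ is still a prespectrum object of $X_{s'}$. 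The coCartesian characterization in (ii) is then a diagrammatic restatement of the pointwise detection criterion for $p$-coCartesian morphisms in $X$, obtained by transposing the lifting problem as in (1). Assertion (iii) is immediate from the construction. The Cartesian variant (2') is entirely dual. Parts (3) and (3') follow at once by observing that a left exact functor preserves the pullback squares witnessing the spectrum condition, so that the (co)Cartesian lifts constructed in (2), (2') already land inside $\Spectra(p)$.

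The substantive case, and the main obstacle, is (4). Now $f_!$ is only well-pointed, so the pointwise coCartesian lift of (2) produces an edge $x \to f_! \circ F$ in $\PreSpectra(p)$ whose target need not be a spectrum object. The plan is to post-compose with the $\Spectra(X_{s'})$-localization map $f_! \circ F \to L_{X_{s'}}(f_! \circ F)$, obtaining a candidate edge $\bar f: x \to x'$ in $\Spectra(p)$. To verify that $\bar f$ is $q$-coCartesian I would test against spectrum objects $y \in \Spectra(X_{s''})$ lying over an arbitrary extension $s' \to s''$; using the pointwise criterion from (2) inside $\PreSpectra(p)$, this reduces to showing that for every well-pointed functor $g_!: X_{s'} \to X_{s''}$, composition with $f_! \circ F \to L_{X_{s'}}(f_! \circ F)$ induces a homotopy equivalence on mapping spaces into $y$ after $\Spectra$-localization. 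This is precisely the content of Proposition \ref{coomer}: well-pointed functors preserve $\Spectra$-localization equivalences. Combining this reduction with the characterization of $q$-coCartesian edges in $\PreSpectra(p)$ established in (2) yields both (i) and the stated criterion (ii) of part (4).
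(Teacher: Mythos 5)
Your overall plan mirrors the paper's quite closely, but is more hands-on at each step where the paper delegates to external references. Your treatment of (1) is essentially the paper's argument unwound: the paper notes that $\widetilde{\PreSpectra}(p) \to S$ is a pullback of $\Fun(\Nerve(\Z\times\Z), X) \to \Fun(\Nerve(\Z\times\Z), S)$, which is an inner fibration by the exponential property, and that the other two maps are full subcategory inclusions. For (2) through (3'), the paper invokes Proposition T.3.1.2.1 to get a coCartesian structure on $\widetilde{\PreSpectra}(p) \to S$ with a pointwise characterization, then shows the full subcategory $\PreSpectra(p)$ is stable under the coCartesian lifts using (c); your vertex-by-vertex construction via marked anodyne extensions accomplishes the same. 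For (4), the paper cites a general lemma about fiberwise localizations of coCartesian fibrations together with Proposition \ref{coomer}, whereas you re-derive the relevant universal property directly; the mechanism — post-compose with the localization map and then appeal to Proposition \ref{coomer} to compare the two localizations $L(g_!(f_!F))$ and $L(g_!L(f_!F))$ — is the right one.

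One point you pass over too quickly: in proving (ii) of part (2), the pointwise criterion you obtain from $\widetilde{\PreSpectra}(p)$ requires \emph{every} $x(m,n) \to x'(m,n)$ to be $p$-coCartesian, whereas the statement only mentions the diagonal entries $x(n,n) \to x'(n,n)$. To bridge the gap, you must observe that for $m \neq n$ both $x(m,n)$ and $x'(m,n)$ are zero objects, and that hypothesis (c) then forces those edges to be $p$-coCartesian automatically. The paper makes this observation explicit; you should as well, since it is exactly the reason the statement of (ii) can be phrased diagonally and the reason why it fails without (c).
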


\begin{proof}
To prove $(1)$, we consider the sequence of maps
$$ \Spectra(p) \rightarrow \PreSpectra(p) \rightarrow \widetilde{ \PreSpectra}(p) \rightarrow
S.$$
The first two maps are inclusions of full simplicial subsets, and therefore inner fibrations.
The third map is a pullback of $\Fun( \Nerve(\Z \times \Z), X) \rightarrow \Fun( \Nerve( \Z \times \Z), S)$, and therefore an inner fibration by Corollary \toposref{slammfan}.

We next prove $(2)$; the proofs of $(2')$, $(3)$, and $(3')$ are identical.
Assume that $p$ satisfies conditions $(a)$, $(b)$, and $(c)$ of $(2)$.
Combining $(a)$ with Proposition \toposref{doog}, we deduce:
\begin{itemize}
\item[$(i')$] The map $\widetilde{q}: \widetilde{ \PreSpectra}(p) \rightarrow S$
is a coCartesian fibration.
\item[$(ii')$] An edge $x \rightarrow x'$ in $\widetilde{\PreSpectra}(p)$ is $\widetilde{q}$-Cartesian if and only if the induced map $x(m,n) \rightarrow x'(m,n)$ is a $p$-Cartesian edge in $X$, for every
pair of integers $m,n \in \Z$.
\end{itemize}
To deduce $(i)$ from $(i')$, it suffices to observe that $(ii')$ and $(c)$ imply that
for every $\widetilde{q}$-coCartesian edge $x \rightarrow y$, if $x \in \PreSpectra(p)$, then
$y \in \PreSpectra(p)$. Moreover, an edge of $\PreSpectra(p)$ is $q$-coCartesian if and only if
it is a $\widetilde{q}$-coCartesian edge of $\widetilde{ \PreSpectra(p) }$. Assertion $(ii)$ follows
from $(ii')$, together with the observation that an edge $x \rightarrow y$ in $\PreSpectra(p)$
{\em automatically} induces $p$-coCartesian edges $x(m,n) \rightarrow x'(m,n)$ for
$m \neq n$, by virtue of $(c)$. Assertion $(iii)$ follows immediately from $(ii)$.

We conclude by observing that $(4)$ follows from $(2)$, Proposition \ref{coomer}, and
Lemma \monoidref{surine}.
\end{proof}

\begin{remark}\label{campar}
Let $p: X \rightarrow S$ be a map of simplicial sets satisfying the hypotheses of
part $(4)$ of Proposition \ref{swingg}. We observe that for every edge
$f: s \rightarrow s'$ in $S$, the associated functor
$$\PreSpectra(p)_{s} \simeq \PreSpectra(X_s)
\rightarrow \PreSpectra(X_{s'}) \simeq \PreSpectra(p)_{s'}$$
can be identified with the functor $f_{!}^{+}$ given by composition with $f_{!}: X_{s} \rightarrow X_{s'}$.
Consequently, the associated functor 
$$\Spectra(p)_{s} \simeq \Spectra(X_s)
\rightarrow \Spectra(X_{s'}) \simeq \Spectra(p)_{s'}$$
is given by the composition
$$ \Spectra(X_s) \subseteq \PreSpectra(X_s) 
\stackrel{ f_{!}^{+}}{\rightarrow} \PreSpectra(X_{s'})
\stackrel{L}{\rightarrow} \Spectra(X_{s'}),$$
where $L$ denotes a left adjoint to the inclusion. In other words,
the associated functor $\Spectra(p)_{s} \rightarrow \Spectra(p)_{s'}$ can be
identified with the linearization of the functor $f_{!}: X_{s} \rightarrow X_{s'}$.
\end{remark}

In the situation of part $(3)$ (or $(3')$) of Proposition \ref{swingg}, it is easy to
characterize $\Spectra(p)$ by a universal property:

\begin{proposition}\label{jagger}
\begin{itemize}
\item[$(1)$] Suppose given maps of simplicial sets $p: X \rightarrow S$, $q: Y \rightarrow S$
satisfying the following conditions:
\begin{itemize}
\item[$(a)$] The maps $p$ and $q$ are coCartesian fibrations.
\item[$(b)$] For every vertex $s \in S$, the fiber $X_{s}$ is pointed and admits finite limits, while the fiber $Y_{s}$ is stable.
\item[$(c)$] For every edge $s \rightarrow s'$ in $S$, the induced functors
$X_{s} \rightarrow X_{s'}$ and $Y_{s} \rightarrow Y_{s'}$ are left exact.
\end{itemize}
Let $e: \Spectra(p) \rightarrow X$ be the functor given by evaluation at $(0,0)$.
Let $\calC$ denote the full subcategory of $\Hom_{S}(Y, X)$ spanned by those
objects which carry $q$-coCartesian edges of $Y$ to $p$-coCartesian edges of
$X$, and induce left exact functors $Y_{s} \rightarrow X_{s}$ for each $s \in S$.
Let $\overline{\calC} \subseteq \Hom_{S}( Y, \Spectra(p) )$ be defined similarly.
Then composition with $e$ induces a categorical equivalence $\overline{\calC} \rightarrow \calC$.
\item[$(2)$] Suppose given maps of simplicial sets $p: X \rightarrow S$, $q: Y \rightarrow S$
satisfying the following conditions:
\begin{itemize}
\item[$(a)$] The maps $p$ and $q$ are Cartesian fibrations.
\item[$(b)$] For every vertex $s \in S$, the fiber $X_{s}$ is pointed and admits finite limits, while the fiber $Y_{s}$ is stable.
\item[$(c)$] For every edge $s \rightarrow s'$ in $S$, the induced functors
$X_{s'} \rightarrow X_{s}$ and $Y_{s'} \rightarrow Y_{s}$ are left exact.
\end{itemize}
Let $e: \Spectra(p) \rightarrow X$ be the functor given by evaluation at $(0,0)$.
Let $\calC$ denote the full subcategory of $\Hom_{S}(Y, X)$ spanned by those
objects which carry $q$-Cartesian edges of $Y$ to $p$-Cartesian edges of
$X$, and induce left exact functors $Y_{s} \rightarrow X_{s}$ for each $s \in S$.
Let $\overline{\calC} \subseteq \Hom_{S}( Y, \Spectra(p) )$ be defined similarly.
Then composition with $e$ induces a categorical equivalence $\overline{\calC} \rightarrow \calC$.
\end{itemize}
\end{proposition}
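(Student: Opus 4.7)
We concentrate on part (1); part (2) is entirely parallel, obtained by exchanging Cartesian with coCartesian throughout (or equivalently by replacing $S$ with $S^{\op}$). By Proposition~\ref{swingg}(3), the projection $q: \Spectra(p) \to S$ is a coCartesian fibration, and the evaluation $e: \Spectra(p) \to X$ carries $q$-coCartesian edges to $p$-coCartesian edges. Fiberwise over $s \in S$, the map $e$ restricts to $\Omega^{\infty}_{X_s}: \Spectra(X_s) \to X_s$, which is left exact. Under the hypotheses of (1) each $\Spectra(X_s)$ is stable, and the transition functors of $q$ are exact, so $\Spectra(p) \to S$ is a coCartesian fibration with stable fibers and exact transitions.

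The strategy is to reduce to a fiberwise universal property. Unwinding Definition~\ref{linfib}, giving a lift $\hat F: Y \to \Spectra(p)$ of $F \in \calC$ that preserves coCartesian edges and is pointwise left exact is the same as extending $F$ to a functor $\tilde F: Y \times \Nerve(\Z \times \Z) \to X$ over $S$ whose fiberwise restriction over each $s$ factors through $\Spectra(X_s)$ via a left exact functor $\hat F_s: Y_s \to \Spectra(X_s)$. The coCartesian-edge condition on $\hat F$ is, by Proposition~\ref{swingg}(3)(ii), automatic once $\tilde F$ preserves coCartesian edges on each coordinate $(n,n)$, which in turn follows from the left exactness of the transition functors for both $p$ and $q$ (these commute with the formation of $\hat F_s$ up to essentially unique coherent equivalence). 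Thus the essential content reduces to showing that for each $s \in S$, composition with $\Omega^{\infty}_{X_s}$ induces a categorical equivalence from the $\infty$-category of left exact functors $Y_s \to \Spectra(X_s)$ to the $\infty$-category of left exact functors $Y_s \to X_s$.

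This fiberwise assertion is the universal property of $\Spectra(X_s)$ as a cofree stable $\infty$-category over $X_s$. Using the description of $\Spectra(X_s)$ as the inverse limit of the tower $\cdots \to X_s \to X_s$ under $\Omega_{X_s}$ (Proposition~\stableref{camer}), a left exact $\hat F_s$ is the data of a compatible sequence of left exact functors $\{F_{s,n}: Y_s \to X_s\}_{n \geq 0}$ with $\Omega_{X_s} \circ F_{s,n+1} \simeq F_{s,n}$. Given $F_s = F_{s,0}$, the stability of $Y_s$ (which makes $\Sigma_{Y_s}$ an equivalence inverse to $\Omega_{Y_s}$) together with left exactness forces $F_{s,n} \simeq F_s \circ \Sigma_{Y_s}^n$, and this sequence assembles into such a tower since $\Omega_{X_s} \circ F_s \circ \Sigma_{Y_s}^{n+1} \simeq F_s \circ \Omega_{Y_s} \circ \Sigma_{Y_s}^{n+1} \simeq F_s \circ \Sigma_{Y_s}^n$. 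Thus the lift exists and is unique up to contractible choice; the resulting pointwise equivalence is an instance of Corollary~\stableref{mapprop} (with left exactness replacing colimit preservation).

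The main obstacle is the globalization: assembling the fiberwise equivalences into the asserted categorical equivalence $\overline{\calC} \to \calC$, coherently with the preservation of coCartesian edges. A clean approach is via the marked-simplicial-set formalism of \S\toposref{markmodel}: one realizes both $\calC$ and $\overline{\calC}$ as full subcategories of mapping $\infty$-categories in the $\infty$-category of coCartesian fibrations over $S$, after which the fiberwise equivalences combine formally using the straightening equivalence of \S\toposref{strsec}. Alternatively, one can argue directly by an inductive lifting argument, verifying that $\overline{\calC} \to \calC$ has the right lifting property with respect to each boundary inclusion $\bd \Delta^n \hookrightarrow \Delta^n$, using the fiberwise contractibility of the space of lifts at each stage and the fact that preserving coCartesian edges and pointwise left exactness are both local conditions on simplices.
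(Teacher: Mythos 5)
Your proposal identifies the correct fiberwise input — the universal property of $\Spectra(X_s)$ as a colocalization of $X_s$ in the $\infty$-category of pointed finitely-complete $\infty$-categories and left exact functors (Proposition~\stableref{urtusk21}) — and this matches the decisive ingredient in the paper. But you then explicitly flag ``the main obstacle'' of globalizing the fiberwise equivalences, sketch two possible routes, and carry out neither; this is where the genuine gap lies.

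The paper's route is a clean version of your first suggestion, and it is worth seeing how it dispatches the difficulty you leave open. The key move is to reduce the assertion that $\overline{\calC} \to \calC$ is a categorical equivalence to the assertion that it is \emph{bijective on equivalence classes of objects}, by observing that this may be tested after tensoring with an arbitrary simplicial set $K$ (i.e., by replacing $X$ with $\Fun(K,X) \times_{\Fun(K,S)} S$). This is a standard but essential trick: it means one never has to assemble a functorial inverse, only a bijection of sets. After that reduction, straightening identifies $\pi_0 \calC$ and $\pi_0 \overline{\calC}$ with $\pi_0$ of mapping spaces in $\Fun(S^{op}, \calE)$ where $\calE \subseteq \Cat_{\infty}$ is the (non-full) subcategory of pointed $\infty$-categories with finite limits and left exact functors; and the whole statement reduces to showing $\chi' \to \chi_p$ exhibits $\chi'$ as a colocalization of $\chi_p$ in $\Fun(S^{op}, \calE_0)$, where $\calE_0 \subseteq \calE$ is the stable $\infty$-categories. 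Colocalizations of diagram categories are detected pointwise, so the fiberwise universal property immediately gives the result. Your proposal never passes to $\calE$, so it doesn't explain why the left-exactness constraints on objects of $\calC$ and $\overline{\calC}$ interact well with straightening; and without the ``bijection on $\pi_0$'' reduction, ``the fiberwise equivalences combine formally'' is an unjustified assertion, not an argument.

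A secondary, smaller issue: you claim that the coCartesian-edge condition on a lift $\hat F$ ``is automatic once $\tilde F$ preserves coCartesian edges on each coordinate $(n,n)$, which in turn follows from the left exactness of the transition functors.'' This conflates two conditions that the paper keeps separate. Preservation of coCartesian edges is a condition one imposes from the start in defining $\calC$ and $\overline{\calC}$; pointwise left exactness is an additional condition, and the transition from ``preserves coCartesian edges'' plus ``factors through $\Spectra$ fiberwise'' to ``preserves coCartesian edges of $\Spectra(p)$'' requires the characterization of $q$-coCartesian edges from Proposition~\ref{swingg}(3)(ii), not merely the left exactness of the transitions. Your second suggested route (a trivial-fibration lifting argument) is closer in spirit to the proof of Theorem~\ref{kalet} than to that of the present proposition, and carrying it out here would require establishing that $\overline{\calC} \to \calC$ is a categorical fibration and analyzing the lifting problem against $\partial\Delta^n \hookrightarrow \Delta^n$ fiber-by-fiber — again, real work you have not done.
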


\begin{proof}
We will prove $(2)$; the proof of $(1)$ is similar.
It will suffice to show that for every simplicial set $K$, the induced map
$\Fun(K, \overline{\calC}) \rightarrow \Fun(K, \calC)$ induces a bijection between
equivalence classes of objects. Replacing $X$ by $\Fun(K,X) \times_{ \Fun(K,S) } S$, we
are reduced to the problem of showing that the functor $\overline{\calC} \rightarrow \calC$ is
bijective on equivalence classes of objects. 

The Cartesian fibrations $p$ and $q$ are classified by functors
$\chi_{p}: S^{op} \rightarrow \Cat_{\infty}$ and $\chi_{q}: S^{op} \rightarrow \Cat_{\infty}$,
respectively. Let $\chi': S^{op} \rightarrow \Cat_{\infty}$ classify the map $\Spectra(p) \rightarrow S$, 
which is a Cartesian fibration by virtue of Proposition \ref{swingg}. 
Let $\calC'$ denote the full subcategory of $\Hom_{S}(Y,X)$ spanned by those functors
which carry $q$-Cartesian edges to $p$-Cartesian edges, and let $\overline{\calC}'
\subseteq \bHom_{S}(Y, \Spectra(p) )$ be defined similarly. Using
Theorem \toposref{straightthm} and Proposition \toposref{gumby444}, we deduce that the collection of equivalence classes
of objects of $\calC'$ can be identified with $\pi_0 \bHom_{ \Fun( S^{op}, \Cat_{\infty})}( \chi_{q}, \chi_{p})$, and the collection of equivalence classes of objects of $\overline{\calC}'$ can be identified
with $\pi_0 \bHom_{ \Fun( S^{op}, \Cat_{\infty})}( \chi_{q}, \chi')$. Let
$\calE$ denote the subcategory of $\Cat_{\infty}$ spanned by pointed $\infty$-categories
which admit finite limits, and left exact functors between them. Then
$\chi_p$, $\chi_q$, and $\chi'$ all factor through $\calE$. Moreover, the set of
isomorphism classes of objects in $\calC$ can be identified with
$$\pi_0 \bHom_{ \Fun( S^{op}, \calE)}( \chi_{q}, \chi_{p})
\subseteq \pi_0 \bHom_{ \Fun( S^{op}, \calE)}( \chi_q, \chi_{p}),$$
and the set of isomorphism classes of objects in $\overline{\calC}$ can be identified with
$$\pi_0 \bHom_{ \Fun( S^{op}, \calE)}( \chi_{q}, \chi') \subseteq
\pi_0 \bHom_{ \Fun( S^{op}, \Cat_{\infty})}( \chi_q, \chi' ).$$

Let $\calE_0 \subseteq \calE$ denote the full subcategory spanned by the
stable $\infty$-categories. By assumption, $\chi_q$ factors through $\calE_0$.
Consequently, to complete the proof, it will suffice to show that the canonical map
$\chi' \rightarrow \chi_p$ exhibits $\chi'$ as a $\Fun( S^{op}, \calE_0)$-colocalization
of $\chi_p$. For this, it suffices to prove that for every vertex $s \in S$, the induced functor
$\chi'(s) \simeq \Spectra(X_s) \rightarrow X_s \simeq \chi_p(s)$ exhibits
$\chi'(s)$ as a $\calE_0$-colocalization of $\chi_p(s) \in \calE$, which follows
from Proposition \stableref{urtusk21}.
\end{proof}

\begin{remark}\label{inger}
Suppose given a commutative diagram of simplicial sets
$$ \xymatrix{ X \ar[rr]^{r} \ar[dr]^{p} & & Y \ar[dl]^{q} \\
& S & }$$
where $p$ and $q$ are locally coCartesian fibrations, but the map
$r$ is {\em not} assumed to preserve locally coCartesian edges. Let
$f: s \rightarrow s'$ be an edge of $S$, and let $f^{X}_!: X_{s} \rightarrow X_{s'}$ and
$f^{Y}_{!}: Y_{s} \rightarrow Y_{s'}$ denote the associated functors. There is a locally $p$-coCartesian natural transformation $\alpha: \id_{X_s} \rightarrow f^{X}_{!}$. Applying
$r$, we obtain a natural transformation from $r_{s}$ to $r_{s'} \circ f^{X}_{!}$.
Invoking the universal property of $f^{Y}_{!}$, we deduce that this map factors
through a natural transformation $f^{Y}_{!} \circ r_{s} \rightarrow r_{s'} \circ f^{X}_{!}$.
\end{remark}

\begin{definition}\label{hagg}
Let $p: X \rightarrow S$ be a locally coCartesian fibration of simplicial sets.
Assume that each fiber $X_{s}$ of $p$ is well-pointed, and that the functor
$f^{X}_{!}: X_{s} \rightarrow X_{s'}$ associated to each edge $f: s \rightarrow s'$
of $S$ is well-pointed. We will say that a map $r: Y \rightarrow X$
{\it exhibits $Y$ as a linearization of $p$} if the following conditions are satisfied:
\begin{itemize}
\item[$(1)$] The composite map $q: Y \rightarrow S$ is a locally coCartesian fibration.
\item[$(2)$] For each vertex $s \in S$, the fiber $Y_{s}$ is stable.
\item[$(3)$] For each vertex $s \in S$, the induced map $r_{s}: Y_{s} \rightarrow X_{s}$
is left exact. Consequently, the map $r_{s}$ admits an essentially unique factorization
$Y_{s} \stackrel{ \overline{r}_{s}}{\rightarrow} \Spectra(X_{s}) \stackrel{ \Omega^{\infty}_{X_s}}{\rightarrow} X_{s}$, where $\overline{r}_s$ is exact.
\item[$(4)$] For each vertex $s \in S$, the functor $\overline{r}_s$ is an equivalence
of $\infty$-categories, and therefore admits a homotopy inverse which we will denote
by $\overline{r}_{s}^{-1}$.
\item[$(5)$] For every edge $f: s \rightarrow s'$ in $S$, the natural transformation
$$f^{X}_{!} \circ \Omega^{\infty}_{X_s} \simeq f^{X}_{!} \circ r_{s} 
\circ \overline{r}_{s}^{-1} \rightarrow r_{s'} \circ f^{Y}_{!} \circ \overline{r}_{s}^{-1}
\simeq \Omega^{\infty}_{ X_{s'} } \circ ( \overline{r}_{s'} \circ f^{Y}_{!} \circ \overline{r}_{s}^{-1}$$ 
determined by Remark \ref{inger} exhibits $\overline{r}_{s'} \circ f^{Y}_{!} \circ \overline{r}_{s}^{-1}: \Spectra(X_s) \rightarrow \Spectra( X_{s'} )$ as a linearization of $f^{X}_{!}$.
\end{itemize}
\end{definition}

\begin{proposition}\label{hungr}
Let $p: X \rightarrow S$ be a locally coCartesian fibration of simplicial sets.
Assume that each fiber $X_{s}$ of $p$ is well-pointed, and that the functor
$f^{X}_{!}: X_{s} \rightarrow X_{s'}$ associated to each edge $f: s \rightarrow s'$
of $S$ is well-pointed. Let $e: \Spectra(p) \rightarrow X$ be given by evaluation
at $(0,0)$. Then $e$ exhibits $\Spectra(p)$ as a linearization of $p$.
\end{proposition}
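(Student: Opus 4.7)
The plan is to verify the five conditions of Definition \ref{hagg} in turn, reducing conditions $(1)$ and $(5)$ to the coCartesian case already handled by Proposition \ref{swingg}$(4)$ and Remark \ref{campar}.

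Conditions $(2)$, $(3)$, and $(4)$ are local on $S$: for each vertex $s \in S$, the fiber $\Spectra(p)_{s}$ is, by construction, canonically isomorphic to $\Spectra(X_{s})$, which is stable by the standard stabilization results of \S \stableref{stable9.1}. Under this identification, the restriction $e_{s}: \Spectra(p)_{s} \to X_{s}$ is precisely the functor $\Omega^{\infty}_{X_{s}}$, which is left exact, and the corresponding $\overline{r}_{s}: \Spectra(p)_{s} \to \Spectra(X_{s})$ is the identity --- hence an equivalence.

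For conditions $(1)$ and $(5)$, I would first establish a compatibility of the construction $p \mapsto \Spectra(p)$ with base change: given a map $T \to S$, the evident map $\Spectra(p) \times_{S} T \to \Spectra(p \times_{S} T)$ is an isomorphism of simplicial sets. This follows directly from the universal property used to define $\widetilde{\PreSpectra}(p)$ in Definition \ref{linfib}, together with the fact that the conditions ``prespectrum object'' and ``spectrum object'' are fiberwise over $S$. In particular, for any edge $\Delta^{1} \to S$ classifying a morphism $f: s \to s'$, the pullback $p': X \times_{S} \Delta^{1} \to \Delta^{1}$ is a coCartesian fibration (every locally coCartesian fibration over $\Delta^{1}$ is coCartesian), which is well-pointed by hypothesis, and $\Spectra(p') = \Spectra(p) \times_{S} \Delta^{1}$. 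Since the property of being a locally coCartesian fibration --- as well as the characterization of its locally coCartesian edges --- can be checked after pullback along every edge of the base, it suffices to prove $(1)$ and $(5)$ in the case $S = \Delta^{1}$, where $p$ is a well-pointed coCartesian fibration.

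In that coCartesian setting, Proposition \ref{swingg}$(4)$ provides directly that $q: \Spectra(p) \to S$ is coCartesian, and characterizes an edge $\overline{f}: x \to x'$ as $q$-coCartesian precisely when $f^{X}_{!} \circ x \to x'$ exhibits $x'$ as a $\Spectra(X_{s'})$-localization of $f^{X}_{!} \circ x \in \PreSpectra(X_{s'})$. By Remark \ref{campar}, the induced functor $f^{Y}_{!}: \Spectra(X_{s}) \to \Spectra(X_{s'})$ agrees with the linearization of $f^{X}_{!}$ as defined in \S \ref{bisec5.1}. Unwinding: under the identifications $\overline{r}_{s} = \id$, $\overline{r}_{s'} = \id$, the natural transformation appearing in condition $(5)$ of Definition \ref{hagg} becomes the unit map $f^{X}_{!} \circ \Omega^{\infty}_{X_{s}} \to \Omega^{\infty}_{X_{s'}} \circ f^{Y}_{!}$, which by construction exhibits $f^{Y}_{!}$ as the linearization of $f^{X}_{!}$ in the sense of Remark \ref{rezza}. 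This yields $(5)$ and completes the verification.

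The main obstacle I anticipate is purely bookkeeping: checking carefully that the natural transformation produced by Remark \ref{inger} for the map $e: \Spectra(p) \to X$ agrees, after passing to fibers, with the canonical unit transformation that realizes the linearization. This is a straightforward but slightly tedious diagram chase --- the key input is that, by Proposition \ref{swingg}$(4)(ii)$, locally $q$-coCartesian edges of $\Spectra(p)$ over an edge $f$ of $S$ are detected by the localization map at the level of prespectra, which is exactly the recipe by which linearization is constructed in Remark \ref{derexi}.
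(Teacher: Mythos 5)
Your proof is correct and follows the same route as the paper: reduce conditions $(1)$ and $(5)$ to the case $S = \Delta^{1}$ (where $p$ is coCartesian), then invoke Proposition \ref{swingg}$(4)$ and Remark \ref{campar}, with conditions $(2)$--$(4)$ being immediate from the fiberwise identification $\Spectra(p)_{s} \simeq \Spectra(X_{s})$. The extra detail you supply — explicit base-change compatibility and the observation that $\overline{r}_{s}$ may be taken to be the identity — matches what the paper leaves implicit in its statement that conditions $(2)$--$(4)$ are obvious.
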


\begin{proof}
We must show that conditions $(1)$ through $(5)$ of Definition \ref{hagg} are satisfied.
It follows from Proposition \ref{swingg} that the map $q: \Spectra(p) \rightarrow S$ is an inner fibration.
To prove that $q$ is a locally coCartesian fibration, we can reduce to the case where
$S = \Delta^1$; in this case, $p$ is a coCartesian fibration and the desired result follows
again from Proposition \ref{swingg}. This proves $(1)$. Conditions $(2)$ through $(4)$ are obvious (we can take $\overline{r}_{s}$ and $\overline{r}_{s}^{-1}$ to be the identity maps).
Finally, assertion $(5)$ follows from Remark \ref{campar} after unwinding the definitions.
\end{proof}

\begin{lemma}\label{huffle}
Suppose given a commutative diagram of simplicial sets
$$ \xymatrix{ \overline{X} \ar[dr]^{ \overline{p} } \ar[rr]^{e} & & X \ar[dl]^{p} \\
& S. & }$$
Assume that $p$ is a well-pointed locally coCartesian fibration and that
$e$ exhibits $\overline{X}$ as a linearization of $p$. Let $\calC$ be
any well-pointed $\infty$-category. Let $V$ be the full simplicial subset
of $\Fun( \calC, X) \times_{ \Fun( \calC, S)} S$ spanned by the collection
of well-pointed excisive functors $\calC \rightarrow X_{s}$ for $s \in S$, and
let $\overline{V}$ be defined similarly. Then composition with $e$ induces
a categorical equivalence $\overline{V} \rightarrow V$.
\end{lemma}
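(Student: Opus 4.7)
The plan is to verify that $\overline{V} \to V$ is a categorical equivalence by establishing essential surjectivity and fully faithfulness separately. Both $V$ and $\overline{V}$ are full sub-$\infty$-categories of the ambient $\infty$-categories $W = \Fun(\calC, X) \times_{\Fun(\calC, S)} S$ and $\overline{W} = \Fun(\calC, \overline{X}) \times_{\Fun(\calC, S)} S$, so mapping spaces in $V$ (respectively $\overline{V}$) agree with those in $W$ (respectively $\overline{W}$). The strategy is to compute each mapping space by fibering it over $\bHom_S(s,s')$ and analyzing the fibers.

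For the fiberwise analysis, fix a vertex $s \in S$. By condition $(4)$ of Definition \ref{hagg}, the map $\overline{r}_s: \overline{X}_s \to \Spectra(X_s)$ is an equivalence under which $e_s$ identifies with $\Omega^{\infty}_{X_s}$. Invoking Proposition \ref{urbusk} with $\kappa = \aleph_1$ (and using Remark \ref{swz} to rephrase well-pointedness as admitting $\aleph_1$-small colimits with $\Omega$ preserving $\aleph_1$-small filtered colimits), composition with $\Omega^{\infty}_{X_s}$ induces an equivalence between well-pointed excisive functors $\calC \to \Spectra(X_s)$ and well-pointed excisive functors $\calC \to X_s$. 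This yields an equivalence $\overline{V}_s \to V_s$, and in particular establishes essential surjectivity of $\overline{V} \to V$: given $(s, F) \in V$, choose its preimage under this equivalence to obtain the required lift.

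For fully faithfulness, let $(s, F)$ and $(s', F')$ be objects of $\overline{V}$. I would analyze the fibers of the mapping spaces over $\bHom_S(s, s')$ at a given edge $f: s \to s'$. Using the locally coCartesian structure of $\overline{W} \to S$ and $W \to S$, whose transition functors along $f$ are given by post-composition with the functors $f_!^{\overline{X}}$ and $f_!^{X}$ respectively, the fibers identify with $\bHom_{\Fun(\calC, \overline{X}_{s'})}(f_!^{\overline{X}} \circ F, F')$ and $\bHom_{\Fun(\calC, X_{s'})}(f_!^X \circ e_s \circ F, e_{s'} \circ F')$. Condition $(5)$ of Definition \ref{hagg} asserts precisely that $f_!^{\overline{X}}$ is a linearization of $f_!^X$. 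Since $F'$ is well-pointed excisive, Theorem \ref{spiller}(4) -- the universal property characterizing linearization via excisive targets -- delivers the required equivalence on fibers, hence fully faithfulness.

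The main obstacle lies in the mapping space analysis: rigorously identifying the fibers of mapping spaces over $\bHom_S(s, s')$ with the asserted Hom-spaces of transition functors, and verifying compatibility of these identifications under post-composition with $e$. This requires careful unwinding of the locally coCartesian structures on $W$ and $\overline{W}$ and an appeal to standard descriptions of mapping spaces in locally coCartesian fibrations. The subtlety is that $V \to S$ itself is not a locally coCartesian fibration -- the transition functors $f_!^X$ of $p$ are only well-pointed, not left exact, so post-composition need not preserve excisiveness -- whereas $\overline{V} \to S$ is locally coCartesian, its transition functors being the exact linearizations between stable fibers of $\overline{p}$. Once the mapping-space identification is in place, the linearization universal property finishes the argument.
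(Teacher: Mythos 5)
Your proposal takes a genuinely different route from the paper. The paper's proof reduces, via Lemma \ref{piner}, to verifying that $\overline{V} \to S$ and $V \to S$ are both locally coCartesian fibrations, that $e$ carries locally coCartesian edges to locally coCartesian edges, and that $e$ is a fiberwise equivalence. Your proposal instead argues directly that $e: \overline{V} \to V$ is fully faithful and essentially surjective by analyzing mapping spaces in the ambient coCartesian fibrations $\overline{W}$ and $W$ over $\bHom_S(s,s')$. Both architectures can be made to work and use the same three inputs (Propositions \ref{urbusk} and \ref{easychain} and the universal property of linearization), so the comparison is a matter of packaging: the paper's approach delegates the bookkeeping to Lemma \ref{piner}, while yours does it by hand.

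However, the fully faithfulness step as written has a gap. You claim that Theorem \ref{spiller}(4) directly delivers the equivalence of fibers
$$\bHom_{\Fun(\calC,\overline{X}_{s'})}\bigl(f_!^{\overline{X}} \circ F, F'\bigr) \longrightarrow \bHom_{\Fun(\calC,X_{s'})}\bigl(f_!^X \circ e_s \circ F, e_{s'} \circ F'\bigr),$$
but these two mapping spaces live in different functor categories (the source in $\Fun(\calC, \overline{X}_{s'})$, the target in $\Fun(\calC, X_{s'})$), so Theorem \ref{spiller}(4) does not apply directly. The map factors as two steps: first, postcompose with $e_{s'} \simeq \Omega^{\infty}_{X_{s'}}$ to pass to $\bHom\bigl(e_{s'} \circ f_!^{\overline{X}} \circ F, e_{s'} \circ F'\bigr)$ — this is an equivalence by the fiberwise statement (Proposition \ref{urbusk}, the same input used in your essential-surjectivity step, noting that $f_!^{\overline{X}} \circ F$ is again well-pointed excisive); and second, precompose with the natural transformation $f_!^X \circ e_s \circ F \to e_{s'} \circ f_!^{\overline{X}} \circ F$ furnished by Remark \ref{inger}. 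For the second step you need to know this transformation exhibits the target as a derivative of the source, which requires translating condition $(5)$ of Definition \ref{hagg} through Proposition \ref{staleman} and then applying the chain rule Proposition \ref{easychain}(1) (using that $F$, being excisive with stable codomain, is right exact) before the universal property of the derivative can be invoked. None of this bookkeeping is present in your write-up; the phrase ``Theorem \ref{spiller}(4) delivers the required equivalence'' glosses over all of it.

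Separately, your closing remark that ``$V \to S$ itself is not a locally coCartesian fibration'' is false, and the observation you make in its support — that $f_!^X$ does not preserve excisiveness — shows only that the naively constructed edges are not coCartesian, not that coCartesian lifts fail to exist. In fact the paper's proof establishes that $V \to S$ \emph{is} locally coCartesian precisely by constructing its coCartesian edges through $\overline{V}$: lift to $\overline{V}$, take a coCartesian edge there (which by the chain rule maps to a coCartesian edge of $V$), and push back down. Recognizing this is what makes the paper's invocation of Lemma \ref{piner} possible; missing it is what forced you into the more explicit mapping-space computation in the first place.
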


\begin{proof}
In view of Lemma \ref{piner}, it will suffice to prove the following:
\begin{itemize}
\item[$(i)$] The projection $\overline{q}: \overline{W} \rightarrow S$ is a locally coCartesian fibration.
\item[$(ii)$] The projection $q: W \rightarrow S$ is a locally coCartesian fibration.
\item[$(iii)$] The map $\overline{W} \rightarrow W$ carries locally $\overline{q}$-coCartesian
edges to $q$-coCartesian edges.
\item[$(iv)$] For every vertex $s \in S$, the induced map $\overline{V}_s \rightarrow V_{s}$
is an equivalence of $\infty$-categories.
\end{itemize}
The map $q$ admits a factorization
$$ V \rightarrow \Fun( \calC, X) \times_{ \Fun(\calC,S)} S \rightarrow S.$$
The first factor is the inclusion of a full simplicial subset, and therefore an inner fibration;
the second factor is a pullback of $\Fun(\calC,X) \rightarrow \Fun(\calC, S)$,
and therefore an inner fibration by Corollary \toposref{slammfan}. It follows that
$q$ is an inner fibration; likewise $\overline{q}$ is an inner fibration. To prove
the remaining assertions, we can reduce to the case where $S$ is a simplex of
dimension $\leq 1$; in particular, we may assume that $p$ and $\overline{p}$ are coCartesian
fibrations.

We now prove $(i)$. Proposition \toposref{doog} implies that the projection
$\overline{q}': \Fun( \calC, \overline{X}) \times_{ \Fun(\calC, S) } S \rightarrow S$
is a coCartesian fibration. Moreover, an edge $f \rightarrow g$ in the fiber 
product $\Fun( \calC, \overline{X}) \times_{ \Fun(\calC, S) } S$ is
$\overline{q}'$-coCartesian if and only if, for each $C \in \calC$, the induced edge
$f(C) \rightarrow g(C)$ is an $\overline{p}$-coCartesian edge of $X$. 
Since every edge $s \rightarrow s'$ induces a well-pointed exact functor
$\overline{X}_s \rightarrow \overline{X}_{s'}$, we conclude that
if $f \in \overline{W}$, then $g \in \overline{W}$. This proves that
$\overline{q} = \overline{q}' | \overline{W}$ is a coCartesian fibration, and that
an edge of $\overline{W}$ is $\overline{q}$-coCartesian if and only if it is
$\overline{q}'$-coCartesian.

Assertion $(iv)$ follows immediately from Proposition \stableref{urtusk21}.
To prove $(iii)$, we may assume without loss of generality that $S= \Delta^1$, so that
we can view $X$ as the correspondence associated to a functor $F: X_0 \rightarrow X_1$.
Since $e$ exhibits $\overline{X}$ as a linearization of $F$, it is the correspondence associated
to the linearization $f: \Spectra(X_0) \rightarrow \Spectra(X_1)$ of $F$.
Let $G: \calC \rightarrow \Spectra(X_0)$ be a well-pointed excisive functor, and let
$e: G \rightarrow f \circ G$ be the corresponding $\overline{q}$-coCartesian edge of
$\overline{V}$; we wish to show that the image of $e$ in $V$ is $q$-coCartesian.
Unwinding the definitions, we are reduced to proving that the canonical natural transformation
$F \circ \Omega^{\infty}_{X_0} \circ G \rightarrow \Omega^{\infty}_{X_1} \circ f \circ G$
exhibits $\Omega^{\infty}_{X_1} \circ f \circ G$ as a derivative of
$F \circ \Omega^{\infty}_{s} \circ G$. Since $G$ is an excisive functor with
stable codomain, it is right exact. Invoking Proposition \ref{easychain}, we may
reduce to showing that the canonical map $F \circ \Omega^{\infty}_{X_0} \rightarrow \Omega^{\infty}_{X_1} \circ f$ exhibits $\Omega^{\infty}_{X_1} \circ f$ as a derivative of
$F \circ \Omega^{\infty}_{X_0}$, which follows from immediately from
Proposition \ref{staleman} (and our assumption regarding $e$).

We now prove $(ii)$. Suppose we are given
an vertex $f: \calC \rightarrow X_{s}$ of $V$ and an edge $e: s \rightarrow s'$ in
$S$; we wish to prove that $e$ can be lifted to a $q$-coCartesian edge $f \rightarrow g$.
Using $(iv)$, we may assume without loss of generality that $f$ can be lifted to a vertex $\overline{f}: \calC \rightarrow \overline{X}_s$ in $\overline{V}$. Using $(i)$, we can choose an
$\overline{q}$-coCartesian edge $\overline{f} \rightarrow \overline{g}$ lifting $e$.
Applying $(iii)$, we deduce that the image of this edge in $V$ is a $q$-coCartesian
lift $f \rightarrow g$ of $e$, as desired.
\end{proof}

\begin{theorem}\label{kalet}
Suppose given a commutative diagram of simplicial sets
$$ \xymatrix{ Y \ar[dr]^{q} & \overline{X} \ar[r]^{e} \ar[d]^{ \overline{p}} & X \ar[dl]^{p} \\
& S & }$$
satisfying the following conditions:
\begin{itemize}
\item[$(1)$] The map $q$ is a stable locally coCartesian fibration.
\item[$(2)$] The map $p$ is a well-pointed locally coCartesian fibration.
\item[$(3)$] The map $e$ exhibits $\overline{X}$ as a linearization of
$p$ (in particular, $\overline{p}$ is a stable locally coCartesian fibration).
\end{itemize}
Let $\calC$ denote the full subcategory of $\Fun_{S}(Y,X)$ spanned by those
maps which induced well-pointed left exact functors $Y_{s} \rightarrow X_{s}$ for every vertex $s \in S$, and let $\overline{\calC} \subseteq \Fun_{S}( Y, \overline{X} )$ be defined similarly.
Then composition with $e$ induces an equivalence of $\infty$-categories
$\overline{\calC} \rightarrow \calC$.
\end{theorem}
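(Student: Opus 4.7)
The plan is to mimic the argument of Lemma \ref{huffle}, which handles the special case where $Y \to S$ is a constant fibration with value a fixed well-pointed $\infty$-category $\calC$, and to use the linearization property of $e$ fiberwise together with a parametrized chain rule to glue the fiberwise equivalences into a global equivalence over $S$. Concretely, I would first construct auxiliary $\infty$-categories $V, \overline{V} \to S$ whose objects over a vertex $s \in S$ are the well-pointed left exact functors $Y_s \to X_s$ and $Y_s \to \overline{X}_s$ respectively, arranged so that $\calC$ and $\overline{\calC}$ can be recovered as appropriate $\infty$-categories of sections. The theorem would then reduce to the assertion that the induced map $e_*: \overline{V} \to V$ is an equivalence of $\infty$-categories over $S$.

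For the fiberwise part, each $Y_s$ is stable and hence well-pointed, and the restriction $e_s$ exhibits $\overline{X}_s \simeq \Spectra(X_s)$ as a linearization of the well-pointed $\infty$-category $X_s$ with $e_s \simeq \Omega^{\infty}_{X_s}$. Applying Lemma \ref{huffle} with source $\calC = Y_s$ (or equivalently invoking Proposition \stableref{urtusk21}, after noting that a well-pointed left exact functor from a stable $\infty$-category is automatically excisive and preserves sequential colimits), we conclude that $\overline{V}_s \to V_s$ is a categorical equivalence. The next step is to upgrade this to an equivalence over $S$. For each edge $f: s \to s'$ of $S$, the fiberwise lifts must interact correctly with the natural transformations of Remark \ref{inger}; this compatibility is exactly what the chain rule provides, specifically Proposition \ref{easychain} applied to the composable pair $Y_s \to X_s \to X_{s'}$ and its proposed linearized lift.

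Finally, the general case follows from the case $S = \Delta^1$ by induction over the cellular filtration of $S$, organized using the model-categorical framework of Section \ref{bisecC}. Concretely, Theorem \ref{theo} furnishes suitable model structures in which $V$ and $\overline{V}$ become fibrant objects over $S$, while Proposition \ref{fibraman} characterizes fibrations between fibrant objects, permitting us to deduce from fiberwise equivalence plus compatibility with locally coCartesian edges that $e_*$ is a weak equivalence and therefore induces an equivalence on sections. The main obstacle I anticipate is verifying that $e_*: \overline{V} \to V$ preserves locally coCartesian edges, which amounts to a parametrized chain rule for derivatives: for each edge $f: s \to s'$, the pushforward of a linearized functor $\overline{G}_s: Y_s \to \overline{X}_s$ along $f$ must be canonically identified with the linearization of the pushforward of $G_s$ along $f$. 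Establishing this requires combining Proposition \ref{staleman} (identifying linearizations with derivatives) with Proposition \ref{easychain} (chain rule for derivatives under left exact composition), and exploits crucially the assumption that the transition functors $f^{Y}_{!}$ are exact because $q$ is stable.
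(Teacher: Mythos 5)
There is a genuine gap in your approach. You propose to build fibrations $V, \overline{V} \to S$ whose fiber over $s$ consists of well-pointed left exact functors $Y_s \to X_s$, and to apply the model-categorical machinery of \S \ref{bisecC} (Theorem \ref{theo}, Proposition \ref{fibraman}) and a parametrized chain rule to conclude that $e_\ast : \overline{V} \to V$ is a weak equivalence. But this $V \to S$ is not a locally coCartesian fibration, because the source $Y_s$ varies \emph{covariantly} in $s$ (the map $q$ is a locally \emph{co}Cartesian fibration, so an edge $f : s \to s'$ gives $f^{Y}_{!} : Y_s \to Y_{s'}$), and postcomposing with $f^{X}_{!}$ while precomposing with $f^{Y}_{!}$ produces no functor $\Fun(Y_s, X_s) \to \Fun(Y_{s'}, X_{s'})$ in either direction without extra hypotheses (a right adjoint to $f^{Y}_{!}$, or a left Kan extension, neither of which is guaranteed under the standing assumption that the $Y_s$ are merely well-pointed). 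Without the locally coCartesian structure, $V$ and $\overline{V}$ are not $\CatP$-fibered, so Proposition \ref{fibraman} does not apply, Lemma \ref{piner} is unavailable, and the passage from a fiberwise equivalence to an equivalence on the $\infty$-categories of sections $\overline{\calC} \to \calC$ is unjustified. Note that Lemma \ref{huffle} is stated and proved only for a \emph{fixed} source $\infty$-category $\calC$, precisely because in that case $W = \Fun(\calC, X) \times_{\Fun(\calC, S)} S \to S$ is a genuine locally coCartesian fibration (postcomposition with $f^{X}_{!}$ gives the transition functors). Your ``parametrized chain rule'' step inherits the same difficulty: it presupposes a well-defined pushforward of an element of $V_s$ along an edge, which is exactly what fails.

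The paper's proof circumvents this by first reducing to the assertion that an auxiliary map $\overline{Z} \to Z$ is a trivial Kan fibration, hence to a lifting problem against $\partial \Delta^n \subseteq \Delta^n$, and then (for $n > 0$) invoking Proposition \ref{unple} and Proposition \ref{pikkle} to replace $Y$ over $\Delta^n$ by the mapping construction $\sMap(\calF)$. The decisive observation is that the inclusion $W_{\partial\Delta^n} \subseteq W_{\Delta^n}$ is then a pushout of $(\partial C) \times \calF(0) \subseteq C \times \calF(0)$, where $C = (\Delta^1)^n$ is a cube; this converts the lifting problem into one whose source is the \emph{fixed} stable $\infty$-category $\calF(0) \simeq Y_0$, after which Lemma \ref{huffle} (now legitimately applicable) finishes the argument. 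You would need to supply this source-fixing step, or otherwise prove an analogue of Lemma \ref{huffle} in which the source is allowed to vary, to close the gap.
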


\begin{proof}
Without loss of generality, we may suppose that $e$ is a categorical fibration.
We define a simplicial set $Z$ by the following universal property: for
every simplicial set $K$, $\Hom_{ \sSet}(K, Z)$ can be identified
with the set of pairs $(b, \phi)$, where $b: K \rightarrow S$ is a map
of simplicial sets and $\phi: K \times_{S} Y \rightarrow K \times_{S} X$
is a map which is compatible with the projection to $K$, and induces
a left exact functor $Y_{b(k)} \rightarrow X_{b(k)}$ for each vertex $k$ of $K$.
Let $\overline{Z}$ be defined similarly, using $\overline{X}$ in place of $X$.
The map $\overline{\calC} \rightarrow \calC$ is a pullback of the canonical map
$\Fun(S, \overline{Z}) \rightarrow \Fun(S, Z)$. It will therefore suffice to show that
the map $\overline{Z} \rightarrow Z$ is a trivial Kan fibration. In other words, we need only show that every lifting problem of the form
$$ \xymatrix{ \bd \Delta^n \ar@{^{(}->}[d] \ar[r] & \overline{Z} \ar[d] \\
\Delta^n \ar[r] & Z }$$
admits a solution. Without loss of generality, we may replace $S$ by $\Delta^n$;
let $Y' = Y \times_{ \Delta^n} \bd \Delta^n$. Unwinding the definitions, we are required
to solve a lifting problem of the form
$$ \xymatrix{ Y' \ar@{^{(}->}[d] \ar[r] & \overline{X} \ar[d]^{e} \\
Y \ar[r]^{\phi_0} \ar@{-->}[ur]^{\phi} & X. }$$
Moreover, if $n=0$, we must further guarantee that the functor $\phi$ is left
exact and preserves sequential colimits.

Let us first consider the case $n=0$. By assumption, the map $e$ is equivalent to
the functor $\Omega^{\infty}_{X}: \Spectra(X) \rightarrow X$, and $\phi_0$
is a left exact functor whose domain is stable, which is therefore excisive.
Invoking Proposition \stableref{urtusk21}, we deduce that $\phi_0 \simeq e \circ \phi'$,
where $\phi': Y \rightarrow \overline{X}$ is an exact functor. Since $e$ is a categorical
fibration, any equivalence of $e \circ \phi'$ with $\phi_0$ can be lifted to an equivalence
of $\phi'$ with an exact functor $\phi: Y \rightarrow \overline{X}$ satisfying $e \circ \phi = \phi_0$.
The compatibility of $\phi$ with sequential colimits follows from Lemma \ref{smulk}.

We now treat the case $n > 0$. Since $q$ is a locally coCartesian fibration,
Proposition \ref{unple} guarantees the existence of a simplicial functor
$\calF: \sCoNerve[ \Delta^n ] \rightarrow \sSet$ and a map $u: \sMap(\calF) \rightarrow Y$
which induces categorical equivalences $\calF(i) \rightarrow Y \times_{ \Delta^n} \{i\}$ for
$0 \leq i \leq n$. For every face $\sigma \subseteq \Delta^n$, let
$W_{\sigma} = \sMap( \calF | \sCoNerve[\sigma])$. Finally, for every
simplicial subset $S' \subseteq S$, let $W_{S'}$ denote the colimit
$\colim_{ \sigma \in S'} W_{\sigma}$. For each $S' \subseteq S$, we have a canonical map
$\psi_{S'}: W_{S'} \rightarrow Y \times_{S} S'$. Using Proposition \ref{pikkle}, we deduce
that $\psi_{S'}$ is a categorical equivalence whenever $S'$ is a simplex. Since the domain
and codomain of $\psi_{S'}$ both carry pushout squares of simplicial subsets of $S$
to homotopy pushout squares of simplicial sets, we deduce that $\psi_{S'}$ is a categorical
equivalence for all $S' \subseteq S$. Invoking Proposition \toposref{princex}, we are reduced
to solving the lifting problem depicted in the diagram
$$ \xymatrix{ W_{ \bd \Delta^n} \ar[r] \ar@{^{(}->}[d] & \overline{X} \ar[d]^{e} \\
W_{ \Delta^n} \ar[r] \ar@{-->}[ur] & X. }$$
Let $C = ( \Delta^1)^{n}$ denote an $n$-dimensional cube, and $\bd C$ its boundary.
Then the left vertical map is a pushout of the inclusion $(\bd C) \times \calF(0) \subseteq
C \times \calF(0)$. Consequently, the above lifting problem is equivalent to providing
a dotted arrow in the diagram
$$ \xymatrix{ (\bd C) \times \calF(0) \ar[r] \ar[d] & \overline{X} \ar[d]^{e} \\
C \times \calF(0) \ar[r] \ar@{-->}[ur] & X. }$$

We may assume without loss of generality that the functor $\calF$ is projectively fibrant
(otherwise, we simply make a fibrant replacement for $\calF$), so that $\calF(0)$ is an
$\infty$-category which is equivalent to the fiber $Y \times_{ \Delta^n} \{0\}$.
In particular, $\calF(0)$ is stable. Let $V$ denote the full simplicial subset of $\Fun( \calF(0), X) \times_{ \Fun( \calF(0), S)} S$ spanned by those vertices which correspond to well-pointed left
exact functors $\calF(0) \rightarrow X_{s}$, for some vertex $s$ in $S$, and let
$\overline{V}$ be defined similarly. We can now rewrite our lifting problem yet again:
$$ \xymatrix{ \bd C \ar[r] \ar@{^{(}->}[d] & \overline{V} \ar[d]^{e'} \\
C \ar[r] & V. }$$
To solve this lifting problem, it suffices to show that $e'$ is a trivial Kan fibration.
Since $e$ is a categorical fibration, we deduce that $e'$ is a categorical fibration.
We complete the proof by observing that Lemma \ref{huffle} guarantees that
$e'$ is a categorical equivalence (since $\calF(0)$ is stable, a functor from
$\calF(0)$ to another $\infty$-category is excisive if and only if it is left exact).
\end{proof}

\subsection{Linearization as a Functor}\label{finalx}\label{bisec5.3}

Our goal in this section is to formulate Goodwillie's theory of first derivatives in
the language of $(\infty,2)$-categories.

\begin{definition}
We define a pair of $\mSet$-enriched categories $(\dICAT)^{\ex} \subseteq
(\dICAT)^{\wellp}$ as follows:
\begin{itemize}
\item[$(1)$] The objects of $(\dICAT)^{\wellp}$ are small well-pointed $\infty$-categories.
An object of $(\dICAT)^{\wellp}$ belongs to $(\dICAT)^{\ex}$ if and only if it is stable.
\item[$(2)$] For every pair of objects $\calC, \calD \in (\dICAT)^{\wellp}$, we let
$$\bHom_{ (\dICAT)^{\wellp}}( \calC, \calD) = ( \Fun^{\wellp}(\calC, \calD), M)$$
where $\Fun^{\wellp}(\calC, \calD)$ denotes the full subcategory of
$\Fun(\calC, \calD)$ spanned by the well-pointed functors, and $M$ is the collection
of all equivalences in $\Fun^{\wellp}(\calC, \calD)$. Similarly, we let
$\bHom_{ (\dICAT)^{\ex}}(\calC, \calD) = ( \Fun^{\ex}(\calC, \calD), M)$, where
$\Fun^{\ex}(\calC, \calD)$ is the full subcategory of $\Fun(\calC,\calD)$ spanned
by the well-pointed exact functors, and $M$ is the collection of all equivalences
in $\Fun^{\ex}(\calC, \calD)$.
\item[$(3)$] Composition in $(\dICAT)^{\wellp}$ and $(\dICAT)^{\ex}$ are defined in the obvious way.
\end{itemize}
\end{definition}

\begin{definition}
We define scaled simplicial sets $\ICAT^{\wellp}$ and $\ICAT^{\ex}$ by the formulas
$$ \ICAT^{\wellp} = \scNerve( (\dICAT)^{\wellp}) \quad \quad \ICAT^{\ex} = \scNerve( \ICAT^{\ex} ).$$
\end{definition}

\begin{remark}
Since the marked simplicial categories $(\dICAT)^{\wellp}$ and $(\dICAT)^{\ex}$ are not small,
the scaled simplicial sets $\ICAT^{\wellp}$ and $\ICAT^{\ex}$ are likewise not small.
\end{remark}

\begin{remark}
By construction, the marked simplicial categories $(\dICAT)^{\wellp}$ and
$(\dICAT)^{\ex}$ are fibrant. It follows that the scaled simplicial sets
$\ICAT^{\ex}$ and $\ICAT^{\wellp}$ are $\infty$-bicategories (Remark \ref{coslai}).
\end{remark}

\begin{remark}
Unwinding the definitions, we deduce that the underlying $\infty$-category
of $\ICAT^{\wellp}$ is the subcategory of $\Cat_{\infty}$ spanned by the well-pointed
$\infty$-categories and well-pointed functors between them. Similarly, the underlying
$\infty$-category of $\ICAT^{\ex}$ is the subcategory of $\Cat_{\infty}$ spanned by the
stable well-pointed $\infty$-categories and exact well-pointed functors between them.
\end{remark}

\begin{remark}\label{intertime}
Let $\overline{S} = (S,T)$ be a scaled simplicial set. According to Corollary
\ref{snoball}, the scaled unstraightening functor $\scUn_{ \overline{S} }$ determines
a bijective correspondence between homotopy classes of maps
$\overline{S} \rightarrow \ICAT$ in $\scSet$ and equivalence classes
of locally coCartesian fibrations $X \rightarrow S$ with essentially small fibers,
whose restriction to every simplex of $T$ is a coCartesian fibration. Restricting
our attention to maps $\overline{S} \rightarrow \ICAT$ that factor through
$\ICAT^{\wellp}$, we deduce the following analogue:
\begin{itemize}
\item[$(a)$] The unstraightening functor $\scUn_{\overline{S}}$ induces a bijective
correspondence between homotopy classes of maps $\overline{S} \rightarrow
\ICAT^{\wellp}$ and equivalence classes of well-pointed locally coCartesian fibrations $X \rightarrow S$
with essentially small fibers, whose restriction to every simplex of $T$ is a coCartesian fibration.
\end{itemize}
Similarly, we obtain the following characterization of $\ICAT^{\ex}$:
\begin{itemize}
\item[$(b)$] The unstraightening functor $\scUn_{ \overline{S} }$ induces a bijective
correspondence between homotopy classes of maps $\overline{S} \rightarrow \ICAT{\wellp}$
and equivalence classes of stable well-pointed locally coCartesian fibrations
$X \rightarrow S$ with essentially small fibers, whose restriction to every simplex of $T$ is a coCartesian fibration.
\end{itemize}
\end{remark}

Let $\ICAT^{\wellp} = (S,T)$. Applying assertion $(a)$ of Remark \ref{intertime} to the identity
map $(S,T) \rightarrow \ICAT^{\wellp}$, we deduce the existence of a {\em universal}
well-pointed locally coCartesian fibration $p: X \rightarrow S$ whose fibers are essentially
small and whose restriction to every simplex of $T$ is a coCartesian fibration.
Choose a linearization $X' \rightarrow X$ of $p$. The characterization
of linearizations given in Theorem \ref{kalet} shows that $X'$ is well-defined up
to equivalence. 

\begin{proposition}\label{sabre}
In the above situation, the induced map $p': X' \rightarrow S$ is a stable well-pointed
locally coCartesian fibration with essentially small fibers. Moreover, the
restriction of $p'$ to every thin $2$-simplex of $S$ is a coCartesian fibration.
\end{proposition}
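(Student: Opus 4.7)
By Proposition \ref{hungr}, we may take $X' = \Spectra(p)$, with $e: \Spectra(p)\to X$ the evaluation map at $(0,0)$. Conditions $(1)$ and $(2)$ of Definition \ref{hagg} then immediately yield that $p': X'\to S$ is a locally coCartesian fibration whose fibers $X'_s \simeq \Spectra(X_s)$ are stable. Essential smallness of the fibers is inherited from $X_s$: since $X_s$ is well-pointed and essentially small, the $\infty$-category $\Spectra(X_s) \subseteq \Fun(\Nerve(\Z\times\Z), X_s)$ is accessibly defined and essentially small as well.

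For well-pointedness of $p'$, we must check that for every edge $f: s\to s'$ in $S$, the associated functor $f^{X'}_!: X'_s\to X'_{s'}$ preserves zero objects and sequential colimits. By condition $(5)$ of Definition \ref{hagg}, this functor is the linearization of the well-pointed functor $f^{X}_!: X_s\to X_{s'}$. Proposition \ref{exman} shows that $f^{X'}_!$ is exact, hence preserves the zero object. Compatibility with sequential colimits follows from the explicit formula $f^{X'}_! \simeq \varinjlim_n \Omega^n_{X_{s'}}\circ f^X_!\circ \Sigma^n_{X_s}$ of Remark \ref{derexi}: each $\Sigma^n_{X_s}$ and $f^X_!$ preserves sequential colimits (since $X_s$, $X_{s'}$ are well-pointed and $p$ is well-pointed), each $\Omega^n_{X_{s'}}$ preserves sequential colimits by the well-pointedness of $X_{s'}$, and the filtered colimit $\varinjlim_n$ commutes with the sequential colimit.

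Finally, let $\sigma:\Delta^2\to S$ be a thin $2$-simplex of $(S,T)$. By hypothesis, $p|_{\sigma}$ is a coCartesian fibration; equivalently, writing the three nondegenerate edges of $\sigma$ as $f$, $g$, and $h$, we have a canonical equivalence $h^X_! \simeq g^X_! \circ f^X_!$ of functors between the fibers of $p$. Applying the chain rule for first derivatives (Proposition \ref{clambake}) to this equivalence, we obtain a corresponding equivalence $h^{X'}_! \simeq g^{X'}_! \circ f^{X'}_!$ between the linearizations. This equivalence precisely asserts that locally $p'$-coCartesian edges are stable under composition over $\sigma$, so by Proposition \toposref{gotta} the restriction $p'|_{\sigma}$ is a coCartesian fibration.

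The main technical point is the application of the chain rule in the last paragraph: one must verify that the natural transformation produced by Remark \ref{inger} from the composite $p'$-locally coCartesian transformations is precisely the composed linearization morphism appearing in Proposition \ref{clambake}. This amounts to tracing through the definitions of both the locally coCartesian transformations of $\Spectra(p)$ (described in Proposition \ref{swingg} and Remark \ref{campar}) and the natural transformation of Proposition \ref{clambake}; both are manifestly induced by the unit maps $\id\to L_{X_{s'}}$ of the localization $\PreSpectra(X_{s'})\to\Spectra(X_{s'})$, so they agree up to canonical homotopy.
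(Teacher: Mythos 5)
Your argument is essentially the alternative proof mentioned in the Remark immediately following Proposition \ref{sabre}, which observes that one may deduce the result directly from the definition of a linearization with the chain rule handling the thin $2$-simplices. The paper's own proof is a one-liner: take $X' = \Spectra(p)$ by Proposition \ref{hungr} and read everything off from Proposition \ref{swingg} (applied over individual edges and over thin $2$-simplices of $S$, using part $(4)$). Your route is more laborious but usefully foregrounds the role of the chain rule; the chain-rule step and your discussion of the compatibility of the natural transformations of Remark \ref{inger} and Proposition \ref{clambake} are the right things to check.

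One step is imprecise: you write $f^{X'}_! \simeq \varinjlim_n \Omega^n_{X_{s'}}\circ f^X_!\circ \Sigma^n_{X_s}$, citing Remark \ref{derexi}. That formula computes the \emph{derivative} $DF = \Omega^\infty_{X_{s'}}\circ f \circ \Sigma^\infty_{X_s}$, a functor $X_s \rightarrow X_{s'}$, not the linearization $f^{X'}_!: \Spectra(X_s) \rightarrow \Spectra(X_{s'})$, so the equation does not typecheck. The conclusion you want (that $f^{X'}_!$ preserves sequential colimits) is nevertheless true, and follows more directly from Remark \ref{sweety}: $f^{X'}_! \circ L_{X_s}\simeq L_{X_{s'}}\circ (f^X_!)^+$, where the right-hand side preserves sequential colimits since $(f^X_!)^+$ does (by well-pointedness of $f^X_!$) and $L_{X_{s'}}$ is a left adjoint; the paper also makes this observation explicitly in the proof of the implication $(3)\Rightarrow(1)$ of Theorem \ref{spiller} and again in Remark \ref{rezza}.
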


\begin{proof}
Without loss of generality, we may suppose that $X' = \Spectra(p)$ (Proposition
\ref{hungr}). In this case, the desired result follows from Proposition \ref{swingg}.
\end{proof}

\begin{remark}
Alternatively, we can deduce Proposition \ref{sabre} directly from the definition
of a linearization. The only nontrivial point is to verify that the restriction
of $p'$ to every thin $2$-simplex of $S$ is a coCartesian fibration; this is simply
a translation of the chain rule (Proposition \ref{clambake}). 
\end{remark}

Combining Proposition \ref{sabre} with part $(b)$ of Remark \ref{intertime}, we deduce that 
the map $p': X' \rightarrow S$ is classified by a map of scaled simplicial sets $(S,T) \rightarrow \ICAT^{\ex}$.

\begin{definition}\label{swit}
We let $\Lin: \ICAT^{\wellp} \rightarrow \ICAT^{\ex}$ denote the map of $\infty$-bicategories constructed above. We will refer to $\Lin$ as the {\it linearization functor}.
\end{definition}

\begin{remark}
The functor $\Lin$ of Definition \ref{swit} is well-defined up to equivalence. 
Moreover, the map $X' \rightarrow X$ in the above discussion determines a natural
transformation $\alpha$ from $\Lin$ to the identity functor $\id_{ \ICAT^{\wellp} }$. The universal
property of linearizations (Theorem \ref{kalet}) translates in this context to a universal
property of the natural transformation $\alpha$, which determines $\Lin$
and $\alpha$ up to a contractible ambiguity; we will not pursue the matter further here.
\end{remark}

\begin{remark}
We can summarize Definition \ref{swit} informally as follows: the functor
$\Lin$ carries a well-pointed $\infty$-category $\calC$ to the $\infty$-category
$\Spectra(\calC)$ of spectrum objects of $\calC$, and a well-pointed functor
$F: \calC \rightarrow \calD$ to its linearization $f: \Spectra(\calC) \rightarrow \Spectra(\calD)$.
The compatibility of this procedure with composition of morphisms is guaranteed by
the chain rule (Proposition \ref{clambake}).
\end{remark}

\begin{variant}
Let $\widehat{\ICAT}^{\wellp}$ and $\widehat{\ICAT}^{\ex}$ be defined like
$\ICAT{\wellp}$ and $\ICAT^{\ex}$, but without the requirement that the objects
of $\widehat{\ICAT}^{\wellp}$ and $\widehat{\ICAT}^{\ex}$ be small.
Then $\widehat{\ICAT}^{\wellp}$ and $\widehat{\ICAT}^{\ex}$ are {\em very large}
$\infty$-bicategories (for example, the $\infty$-categories of maps between objects of
$\widehat{\ICAT}^{\wellp}$ and $\widehat{\ICAT}^{\ex}$ are not small). Nevertheless,
all of the constructions of this section can be carried out without essential change, to obtain
a linearization functor $\widehat{ \ICAT}^{\wellp} \rightarrow \widehat{\ICAT}^{\ex}$ which
we will also denote by $\Lin$. This is the setting in which the Goodwillie calculus 
is usually studied.
\end{variant}


\subsection{Adjoint Functors and Linearization}\label{linadj}\label{bisec5.4}

Our starting point for this section is the following observation:

\begin{proposition}\label{summm}
Suppose given a pair of adjoint functors
$\Adjoint{F}{\calC}{\calD}{G}$
between well-pointed $\infty$-categories $\calC$ and $\calD$.
Let $f$ be a linearization of $F$, and let $g: \Spectra(\calD) \rightarrow \Spectra(\calC)$
be given by composition with $G$. Then $f$ and $g$ are adjoint to one another.
In particular, if $G$ preserves sequential colimits, then the linearizations
of $F$ and $G$ are adjoint to one another.
\end{proposition}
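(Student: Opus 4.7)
The plan is to realize the pair $(f,g)$ as the composite of three known adjunctions: the reflective localization $L_\calC \dashv i_\calC$ between $\Spectra(\calC)$ and $\PreSpectra(\calC)$, an induced adjunction $F^+ \dashv G^+$ on $\PreSpectra$'s arising from $F \dashv G$, and the reflective localization $L_\calD \dashv i_\calD$ between $\Spectra(\calD)$ and $\PreSpectra(\calD)$.

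First I would observe that $G$, being a right adjoint, preserves finite limits; in the pointed setting this forces $G$ to preserve zero objects and to be left exact, so post-composition with $G$ carries $\Spectra(\calD)$ into $\Spectra(\calC)$ and $g$ is well-defined. Likewise $F$ preserves zero objects by well-pointedness, so post-composition with either $F$ or $G$ preserves the prespectrum condition.

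Next, the standard $\infty$-categorical fact that an adjunction extends to functor categories by post-composition shows that $F \dashv G$ induces an adjunction between $\Fun(\Nerve(\Z \times \Z), \calC)$ and $\Fun(\Nerve(\Z \times \Z), \calD)$; restricting to prespectra yields an adjoint pair $F^+ \dashv G^+$ between $\PreSpectra(\calC)$ and $\PreSpectra(\calD)$. By Remark~\ref{coola} we may realize the linearization $f$ concretely as the composite $L_\calD \circ F^+ \circ i_\calC$. Composing the three adjunctions $L_\calC \dashv i_\calC$, $F^+ \dashv G^+$, and $L_\calD \dashv i_\calD$, we conclude that $f$ admits the right adjoint $L_\calC \circ G^+ \circ i_\calD$. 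But $G^+ \circ i_\calD$ factors through $\Spectra(\calC) \subseteq \PreSpectra(\calC)$ by left exactness of $G$, and $L_\calC$ acts as the identity on $\Spectra(\calC)$; hence $L_\calC \circ G^+ \circ i_\calD \simeq G^+ | \Spectra(\calD) = g$, yielding the desired adjunction $f \dashv g$.

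The main technical input is the extension of $\infty$-categorical adjunctions to functor categories by post-composition, which is standard and should present no substantive obstacle. The final sentence of the proposition is then immediate: if $G$ additionally preserves sequential colimits then $G$ is itself well-pointed, and Corollary~\ref{abler} identifies the linearization of $G$ with $g$ itself, so $(f,g)$ is precisely the pair of linearizations of $(F, G)$.
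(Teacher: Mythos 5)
Your argument is essentially correct, but it takes a genuinely different route from the paper's. The paper forms the correspondence $p\colon\calM\to\Delta^1$ encoding the adjunction $(F,G)$, applies the parametrized stabilization $\Spectra(p)\to\Delta^1$ of \S\ref{bisec5.2}, and then invokes Proposition \ref{swingg} to see that this map is simultaneously a Cartesian and a coCartesian fibration; the associated functors are identified with $f$ and $g$ via Remark \ref{campar}, and that correspondence is the adjunction. Your version instead works directly with the explicit model $f \simeq L_{\calD}\circ F^{+}\circ i_{\calC}$ and a hands-on chain of mapping-space equivalences. Both arguments converge to the same identifications; the paper's approach is more conceptual and reuses the fibration machinery it has just built, while yours is more elementary and self-contained — a plausible tradeoff.

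One small but real wobble in your write-up: you say that ``composing the three adjunctions'' $L_{\calC}\dashv i_{\calC}$, $F^{+}\dashv G^{+}$, $L_{\calD}\dashv i_{\calD}$ exhibits $L_{\calC}\circ G^{+}\circ i_{\calD}$ as a right adjoint to $f$. Composing adjunctions composes the left adjoints (and the right adjoints) — it would give $L_{\calD}\circ F^{+}\circ L_{\calC}\dashv i_{\calC}\circ G^{+}\circ i_{\calD}$, an adjunction between $\PreSpectra(\calC)$ and $\Spectra(\calD)$, not between $\Spectra(\calC)$ and $\Spectra(\calD)$. Your functor $f=L_{\calD}\circ F^{+}\circ i_{\calC}$ has $i_{\calC}$ (a right adjoint) in the composition, so it does not come out of a formal composition of adjunctions. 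The clean argument is the one you implicitly make in the next sentence: compose the \emph{two} adjunctions $L_{\calD}\circ F^{+}\dashv G^{+}\circ i_{\calD}$, observe that $G^{+}\circ i_{\calD}$ takes values in the full subcategory $\Spectra(\calC)\subseteq\PreSpectra(\calC)$ because $G$ is left exact, and then use fullness of the inclusion to identify $\bHom_{\PreSpectra(\calC)}(i_{\calC}X,\,G^{+}i_{\calD}Y)$ with $\bHom_{\Spectra(\calC)}(X,\,gY)$. The factor $L_{\calC}$ never appears. If you rephrase this step as a chain
$$\bHom_{\Spectra(\calD)}(fX,Y)\simeq\bHom_{\PreSpectra(\calD)}(F^{+}i_{\calC}X,\,i_{\calD}Y)\simeq\bHom_{\PreSpectra(\calC)}(i_{\calC}X,\,G^{+}i_{\calD}Y)\simeq\bHom_{\Spectra(\calC)}(X,gY)$$
the proof is complete and fully rigorous.
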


\begin{proof}
Let $p: \calM \rightarrow \Delta^1$ be a correspondence associated to the adjoint functors
$F$ and $G$. Then the induced map $q: \Spectra(p) \rightarrow \Delta^1$ is a correspondence between $\Spectra(\calC)$ and $\Spectra(\calD)$. 
Proposition \ref{swingg} implies that $q$ is both a Cartesian and a coCartesian fibration.
The associated functor $\Spectra(\calC) \rightarrow \Spectra(\calD)$ can be identified
with a linearization of $F$ (by Remark \ref{campar}), while the associated functor
$\Spectra(\calD) \rightarrow \Spectra(\calC)$ is given by composition with $G$.
It follows that the correspondence $\Spectra(p)$ exhibits the functors $f$ and $g$ as adjoint to one another. The final assertion follows from Corollary \ref{abler}.
\end{proof}

We now ask the question: given an adjunction $\Adjoint{F}{\calC}{\calD}{G}$ as in
Proposition \ref{summm}, when is the induced adjunction
$$ \Adjoint{f}{\Spectra(\calC)}{\Spectra(\calD)}{g}$$ an equivalence of $\infty$-categories?
Corollary \ref{hurpek} below allows us to give an affirmative answer to this question in a variety of situations. Before we can state it, we need to review a bit of terminology.

Suppose given a pair of adjoint functors $\Adjoint{F}{\calC}{\calD}{G}$.
In \S \monoidref{barri}, we showed the that composition $T=G \circ F$ admits the structure of
a monad on the $\infty$-category $\calC$. Moreover, the functor $G$ factors (up to canonical homotopy) as a composition
$$ \calD \stackrel{G'}{\rightarrow} \Mod_{T}(\calC) \stackrel{G''}{\rightarrow} \calC,$$
where $\Mod_{T}(\calC)$ denotes the $\infty$-category of $T$-modules in $\calC$ (that is,
the $\infty$-category of objects $C \in \calC$ equipped with a map $TC \rightarrow C$ which is coherently associative in a suitable sense). If the functor $G'$ is an equivalence, then we say that
the $\infty$-category $\calD$ is {\it monadic} over $\calC$. In this case, we think of 
$G$ as a forgetful functor, so that an object $D \in \calD$ can be identified with its image
$GD \in \calC$, together with some additional data (a $T$-module structure on $GD$).

\begin{proposition}\label{poofle}
Let $G: \calD \rightarrow \calC$ be a functor between pointed $\infty$-categories which
exhibits $\calD$ as monadic over $\calC$. Assume that $\calD$ and $\calC$ admit finite limits.
The functor $G$ is left exact, and therefore induces a functor 
$g: \Spectra(\calD) \rightarrow \Spectra(\calC)$. Suppose that $g$ admits a left adjoint. Then $g$ exhibits $\Spectra(\calD)$ as monadic over $\Spectra(\calC)$.
\end{proposition}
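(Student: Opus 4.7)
The plan is to invoke the Barr-Beck-Lurie monadicity theorem for $g$. Since $G$ is a right adjoint between $\infty$-categories admitting finite limits, it is automatically left exact, so composition with $G$ carries spectrum objects to spectrum objects and defines the functor $g: \Spectra(\calD) \to \Spectra(\calC)$. The left adjoint to $g$ exists by hypothesis, so Barr-Beck-Lurie will apply once I verify: (i) $g$ is conservative, and (ii) $\Spectra(\calD)$ admits colimits of $g$-split simplicial objects, which are preserved by $g$.

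For (i), I would argue as follows. An equivalence in $\Spectra(\calD) \subseteq \Fun(\Nerve(\Z\times\Z),\calD)$ is detected by evaluation at each vertex $(m,n)$, and the evaluations intertwine $g$ with $G$; conservativity of $G$ (which is immediate from its monadicity) then gives the claim.

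For (ii), let $X_\bullet$ be a $g$-split simplicial object of $\Spectra(\calD)$. For each $n \geq 0$, consider $X_\bullet^{(n)} := \Omega^{\infty-n}_{\calD} \circ X_\bullet$ as a simplicial object of $\calD$. Composition with $G$ yields $\Omega^{\infty-n}_{\calC} \circ g \circ X_\bullet$; since $g \circ X_\bullet$ is split in $\Spectra(\calC)$, and split simplicial objects are preserved by every functor, each $X_\bullet^{(n)}$ is $G$-split. Monadicity of $G$ then produces a colimit $Y_n \in \calD$ of $X_\bullet^{(n)}$, preserved by $G$. I propose to assemble these into a spectrum object $Y \in \Spectra(\calD)$ with $Y(n,n) = Y_n$, which will serve as the desired colimit, and whose image under $g$ is manifestly $|g \circ X_\bullet|$.

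The main obstacle lies in checking that $Y$ is genuinely a spectrum object — i.e., that the structure maps $Y_n \to \Omega_{\calD} Y_{n+1}$ are equivalences — and that it represents the colimit of $X_\bullet$ in $\Spectra(\calD)$. For the first point, conservativity of $G$ reduces us to checking after applying $G$; using the left-exactness identity $G \Omega_{\calD} \simeq \Omega_{\calC} G$, the question becomes whether $\Omega_{\calC}$ commutes with the realization of $\Omega^{\infty-n-1}_{\calC} g X_\bullet$, and this holds precisely because that simplicial object is split (so its augmentation admits an extra degeneracy, preserved by $\Omega_{\calC}$). The universal property of $Y$ as a colimit is similarly established by mapping into a test spectrum object $Z \in \Spectra(\calD)$, reducing (via the presentation of $\Spectra$ as a limit tower of copies of $\calD$) to the universal properties of the individual $Y_n$. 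With (i) and (ii) verified, Barr-Beck-Lurie delivers the conclusion.
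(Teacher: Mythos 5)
Your proof is correct and takes essentially the same approach as the paper: reduce to Barr-Beck for $g$, verify conservativity of $g$ levelwise from conservativity of $G$, verify $g$-split colimits by taking levelwise colimits in $\calD$ and reducing the check that the structure maps $Y_n \to \Omega_{\calD}Y_{n+1}$ are equivalences (via conservativity of $G$ and $G\Omega_{\calD} \simeq \Omega_{\calC}G$) to the fact that $\Omega_{\calC}$ preserves colimits of split simplicial objects. The paper packages the assembly step by citing a general lemma about colimits in a limit tower of $\infty$-categories, but your sketch of mapping into a test spectrum object is the same argument unwound.
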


We will give the proof of Proposition \ref{poofle} at the end of this section. 

\begin{corollary}
Suppose given a pair of adjoint functors
$$\Adjoint{F}{\calC}{\calD}{G}$$
between pointed presentable $\infty$-categories. Suppose further that
the functors $G$, $\Omega_{\calC}$, and $\Omega_{\calD}$ are continuous, and
that $G$ exhibits $\calD$ as monadic over $\calC$.
Then:
\begin{itemize}
\item[$(1)$] The resulting adjoint functors
$\Adjoint{f}{\Spectra(\calC)}{\Spectra(\calD)}{g}$ exhibit $\Spectra(\calD)$ as monadic over
$\Spectra(\calC)$.

\item[$(2)$] The monad $g \circ f$ associated to the adjunction of $(1)$ is equivalent to the
linearization of $G \circ F$.
\end{itemize}
\end{corollary}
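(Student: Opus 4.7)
The plan is to deduce the corollary from Proposition \ref{poofle} together with the chain rule of Proposition \ref{clambake}. First I would unpack the continuity hypothesis: since $\Omega_{\calC}$ and $\Omega_{\calD}$ preserve sequential (equivalently, countable filtered) colimits, the $\infty$-categories $\calC$ and $\calD$ are well-pointed in the sense of Definition \ref{wellp}; continuity of $G$ ensures that $G$ is a well-pointed functor in addition to being left exact (the latter because $G$ is a right adjoint).

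For part (1), I would invoke Proposition \ref{poofle} applied to $G$. The presentable $\infty$-categories $\calC$ and $\calD$ are pointed and admit finite limits, and $G$ exhibits $\calD$ as monadic over $\calC$ by hypothesis; the only nontrivial thing to check is that the induced functor $g : \Spectra(\calD) \to \Spectra(\calC)$ admits a left adjoint. For this I would apply Proposition \ref{summm}: since $F$ is left adjoint to $G$ and $G$ preserves sequential colimits, the linearization $f$ of $F$ is left adjoint to composition with $G$, and by Corollary \ref{abler} the latter may be identified with $g$ (the identity transformation exhibiting $g$ as a linearization of $G$). This supplies the required left adjoint, and Proposition \ref{poofle} then yields the monadicity of $g$.

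For part (2), I would appeal directly to the chain rule for linearizations (Proposition \ref{clambake}). The composite $G \circ F$ is a well-pointed functor from $\calC$ to itself, so its linearization is defined. By Corollary \ref{abler}, a linearization of $G$ is given by $g$ (with transformation the identity $G \circ \Omega^{\infty}_{\calD} \to \Omega^{\infty}_{\calC} \circ g$), and by construction a linearization of $F$ is given by $f$. Pasting the two canonical transformations as in Proposition \ref{clambake}(1) yields a natural transformation $(G \circ F) \circ \Omega^{\infty}_{\calC} \to \Omega^{\infty}_{\calC} \circ (g \circ f)$ that exhibits $g \circ f$ as a linearization of $G \circ F$. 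The underlying endofunctor of the monad associated to the adjunction $(f,g)$ is $g \circ f$, so we obtain the desired identification.

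The main obstacle I anticipate is conceptual rather than technical: strictly speaking, one should check that the identification in part (2) is an equivalence of \emph{monads}, not merely of their underlying endofunctors, so that the associated module $\infty$-categories agree under the equivalence of part (1). This amounts to verifying that the unit and multiplication transformations of the monad $g \circ f$ correspond, under the chain-rule equivalence, to the natural transformations induced on linearizations by the unit and multiplication of $G \circ F$. Both sides arise from the adjunction $(F,G)$ via the universal property of linearization (Theorem \ref{spiller}), so the compatibility should follow formally by repeated application of that universal property, but making this bookkeeping precise would be the most delicate point.
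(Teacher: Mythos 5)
Your argument is correct and runs parallel to the paper's, which also derives part $(1)$ from Proposition \ref{poofle} (regarding the left-adjoint hypothesis as immediate rather than verifying it via Proposition \ref{summm} and Corollary \ref{abler} as you do, though your verification is the right one). The one genuine divergence is in part $(2)$: you invoke the general chain rule \ref{clambake}, applying it to $F$ and $G$ and then using Corollary \ref{abler} to identify the linearization of $G$ with $g$, whereas the paper instead cites Proposition \ref{easychain}(2), which is a streamlined form of the chain rule exploiting the left-exactness of $G$ directly and bypassing the need to compute a linearization for $G$ at all. Both routes rest on Proposition \ref{coomer} under the hood, so the gain from \ref{easychain} is only economy of presentation. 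Your closing remark that one ought to promote the equivalence from underlying functors to monads is a fair concern, but the paper does not address it either, and the stated assertion is only an identification of the underlying endofunctor of the monad with the linearization (which, as a linearization, is canonical only up to equivalence of functors); the promotion to a monad equivalence would indeed follow formally from the universal property of Theorem \ref{spiller} as you suggest.
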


\begin{proof}
Assertion $(1)$ follows immediately from Proposition \ref{poofle}, and $(2)$ follows
from Proposition \ref{easychain}.
\end{proof}

\begin{corollary}\label{hurpek}
Suppose given an adjunction
$\Adjoint{F}{\calC}{\calD}{G}$
between pointed presentable $\infty$-categories. Suppose further that
the functors $G$, $\Omega_{\calC}$, and $\Omega_{\calD}$ are continuous, and
that $G$ exhibits $\calD$ as monadic over $\calC$. If the unit map
$\id_{\calC} \rightarrow GF$ induces an equivalence after linearization, then
$G$ induces an equivalence $\Spectra(\calD) \rightarrow \Spectra(\calC)$.
\end{corollary}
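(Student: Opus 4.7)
The plan is to apply the preceding corollary to pass to the stable setting, and then use a standard monadic comparison argument.

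First, the preceding corollary provides an adjunction $\Adjoint{f}{\Spectra(\calC)}{\Spectra(\calD)}{g}$ exhibiting $\Spectra(\calD)$ as monadic over $\Spectra(\calC)$, with associated monad $g \circ f$ equivalent to the linearization of $G \circ F$. By the functoriality of the linearization procedure (coming, e.g., from the correspondence construction of Proposition \ref{summm} and its compatibility with natural transformations), the unit $\id_{\calC} \to G \circ F$ of the original adjunction induces the unit $\id_{\Spectra(\calC)} \to g \circ f$ of the stable adjunction under linearization. By hypothesis, this induced map is an equivalence.

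Next, I would observe that an adjunction whose unit is an equivalence has a fully faithful left adjoint; in particular $f$ is fully faithful. On the other hand, because $g$ exhibits $\Spectra(\calD)$ as monadic over $\Spectra(\calC)$, the functor $g$ is conservative (this is part of the Barr--Beck-style characterization of monadicity; see \S\monoidref{barri}).

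Finally, I would invoke the following standard formal observation: if $\Adjoint{f}{\calE}{\calE'}{g}$ is an adjunction with $f$ fully faithful and $g$ conservative, then $f$ and $g$ are mutually inverse equivalences. Indeed, the triangle identity gives a factorization of $\id_{g}$ as the composition
$$ g \xrightarrow{\eta g} g f g \xrightarrow{g \epsilon} g,$$
where $\eta$ is the unit and $\epsilon$ the counit. The first arrow is an equivalence because $\eta$ is (by fully faithfulness of $f$), so the second arrow is as well; since $g$ is conservative, $\epsilon$ itself is an equivalence. Applying this to our situation completes the proof. The only step requiring genuine input from earlier in the paper is the identification of the linearized unit with the unit of the stable adjunction, which can be read off from the correspondence $\Spectra(p) \to \Delta^1$ appearing in the proof of Proposition \ref{summm}; the rest is purely formal $\infty$-category theory.
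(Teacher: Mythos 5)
Your argument is correct. The paper does not supply an explicit proof of Corollary \ref{hurpek}; it is expected to follow from the unlabeled corollary immediately preceding it (the one asserting that $(f,g)$ is a monadic adjunction on stabilizations with monad $g \circ f$ equivalent to the linearization of $G \circ F$). Your proof is a faithful unwinding of that route, and you correctly isolate the one step that is not purely formal.

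On that non-formal step: you are right that the identification of the linearized unit $D(\id_{\calC} \to GF)$ with the unit of the adjunction $(f,g)$ is what actually carries the weight here, and you are right that the paper does not state this as an explicit lemma. Pointing to the correspondence $\Spectra(p) \to \Delta^1$ from Proposition \ref{summm} is the correct place to look: the adjunction $f \dashv g$ is extracted from this correspondence, and the universal property of the derivative (Theorem \ref{spiller}) together with the chain rule (Proposition \ref{clambake}, in the form $D(GF) \simeq D(G) \circ D(F) = g \circ f$ since $G$ is left exact) forces the localization map $GF \to \Omega^{\infty}_{\calC} \circ gf \circ \Sigma^{\infty}_{\calC}$ to intertwine $\eta$ with the unit of $(f,g)$. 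A sketch-level appeal to this is reasonable; a fully written version would probably want to record it as a small lemma, since the hypothesis of the corollary would otherwise only give that $gf$ is abstractly equivalent to the identity \emph{functor}, which by itself is not enough to conclude.

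Your closing step, that an adjunction with fully faithful left adjoint and conservative right adjoint is an adjoint equivalence, is clean and correct, and arguably tidier than the alternative phrasing ``monadic with trivial monad is an equivalence,'' since it cleanly separates the two inputs (monadicity yields conservativity; the hypothesis yields full faithfulness of $f$). I see no gap.
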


The proof of Proposition \ref{poofle} relies on the following lemma:

\begin{lemma}\label{precougher}
Suppose given a diagram $q: S \rightarrow \Cat_{\infty}$ whose limit is an $\infty$-category $\calC$.
For each vertex $s \in S$, we will denote by $\calC_{s}$ the image of $s$ in $\Cat_{\infty}$.
Let $p: K \rightarrow \calC$ be a diagram. Assume that:
\begin{itemize}
\item[$(i)$] For every vertex $s \in S$, the resulting diagram
$p_{s}: K \rightarrow \calC_{s}$ admits a colimit
$\overline{p}_{s}: K^{\triangleright} \rightarrow \calC_{s}$.
\item[$(ii)$] For every edge $s \rightarrow s'$ in $S$, the induced functor
$f: \calC_{s} \rightarrow \calC_{s'}$ carries $\overline{p}_{s}$ to a colimit diagram
$K^{\triangleright} \rightarrow \calC_{\beta}$.
\end{itemize}
Then:
\begin{itemize}
\item[$(1)$] The diagram $p$ admits a colimit in $\calC$.
\item[$(2)$] Let $\overline{p}: K^{\triangleright} \rightarrow \calC$ be an arbitrary extension of $p$. Then $\overline{p}$ is a colimit diagram if and only if each of the induced diagrams $K^{\triangleright} \rightarrow \calC_{s}$ is a colimit diagram.
\end{itemize}
\end{lemma}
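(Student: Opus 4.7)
The plan is to use the classification of limits in $\Cat_{\infty}$ via coCartesian sections. Choose a coCartesian fibration $\pi: \calD \to S$ which is classified by the functor $q: S \to \Cat_{\infty}$. Then $\calC$ is canonically equivalent to the $\infty$-category $\Fun^{\mathrm{cocart}}_S(S, \calD) \subseteq \Fun_S(S, \calD)$ of coCartesian sections of $\pi$, and the projection $\calC \to \calC_s$ corresponds to evaluation at the vertex $s$ (where we identify $\calC_s$ with the fiber $\calD_s$). Under this identification, a diagram $p: K \to \calC$ corresponds to a map $\widetilde{p}: S \times K \to \calD$ over $S$ whose restriction to $S \times \{k\}$ is a coCartesian section for every vertex $k$ of $K$. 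This is the framework in which I would attempt the proof.

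First I would show that under the hypotheses $(i)$ and $(ii)$, the diagram $\widetilde{p}$ admits an extension to a map $\widetilde{\overline{p}}: S \times K^{\triangleright} \to \calD$ over $S$ whose restriction to $\{s\} \times K^{\triangleright}$ is a colimit diagram $\overline{p}_s$ in $\calC_s$ for each vertex $s$ of $S$. This is a pointwise-colimit construction in the ambient $\infty$-category $\Fun_S(S, \calD)$ of all sections. The existence of such an extension (and its uniqueness up to contractible ambiguity) follows from general results about pointwise computation of colimits in functor $\infty$-categories, applied relative to $S$ (compare \toposref{colimdiaglex}, \toposref{lklk}), combined with the fact that condition $(i)$ provides a candidate colimit in each fiber of $\pi$.

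The crucial step is to verify that this pointwise colimit $\widetilde{\overline{p}}$ defines a coCartesian section over the cone point $\infty \in K^{\triangleright}$: that is, that $\widetilde{\overline{p}}|S \times \{\infty\}$ sends every edge $s \to s'$ of $S$ to a $\pi$-coCartesian edge of $\calD$. This is precisely where hypothesis $(ii)$ enters: the functor $f: \calC_s \to \calC_{s'}$ classifying the edge $s \to s'$ takes the colimit diagram $\overline{p}_s$ to a colimit diagram, and since $f \circ \overline{p}_s$ and $\overline{p}_{s'}$ are both colimits of the same diagram $f \circ p_s \simeq p_{s'}$, the induced natural transformation between them is an equivalence. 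Translated back to $\calD$, this says exactly that the edge of $\calD$ lying over $s \to s'$ and living above the cone point is $\pi$-coCartesian, as required. This yields the extension $\overline{p}: K^{\triangleright} \to \calC$ needed for $(1)$.

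For the universal property demanded by $(1)$, and for the ``if'' direction of $(2)$, I would argue that the extension $\overline{p}$ just constructed is a colimit diagram in $\calC$: any cocone on $p$ in $\calC$ can be tested fiberwise (using the mapping-space description coming from the coCartesian-section model and the pointwise computation of mapping spaces in functor categories), and the universal property of each $\overline{p}_s$ produces the desired factorization, uniquely up to contractible choice. The ``only if'' direction of $(2)$ follows formally: given any colimit diagram $\overline{p}'$ extending $p$, it must be equivalent to the $\overline{p}$ constructed above, and hence its restriction to each $\{s\} \times K^{\triangleright}$ is equivalent to $\overline{p}_s$ and therefore a colimit diagram. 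The main obstacle I anticipate is making rigorous the pointwise-colimit argument in the setting of sections of a coCartesian fibration (as opposed to an honest functor category); this can likely be handled by unstraightening, working one simplex of $S$ at a time and gluing, but care is needed to check that coCartesianness is preserved globally rather than merely over each individual simplex.
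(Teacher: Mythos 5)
Your proposal takes essentially the same route as the paper's proof: identify $\calC$ with the coCartesian sections of a coCartesian fibration classifying $q$, construct the extension as a pointwise/relative colimit of sections, use hypothesis $(ii)$ to show the section over the cone point is again coCartesian, and deduce $(2)$ by testing equivalences of coCartesian sections fiberwise. The technical concern you flag at the end — making the pointwise-colimit-of-sections argument rigorous — is exactly what the paper handles by invoking \toposref{relcolfibtest} together with an auxiliary lemma from \cite{monoidal}, so the gap is one of citation rather than of idea.
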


\begin{proof}
The diagram $q$ is classified by a coCartesian fibration $g: X \rightarrow S$ (see \S \toposref{universalfib}). Let $\calD = \bHom_{S}(S,X)$ denote the $\infty$-category of sections of $g$. 
According to Corollary \toposref{blurt}, we can identify $\calC$ with the full subcategory of
$\calD$ spanned by coCartesian sections of $g$. Under this identification, we may view
$p$ as defined by a map $P: K \times S \rightarrow X$. Using Lemma \monoidref{surtybove}
and Proposition \toposref{relcolfibtest}, we conclude that there is an extension
$\overline{P}: K^{\triangleright} \times S \rightarrow X$ which classifies a 
colimit diagram $\overline{p}: K^{\triangleright} \rightarrow \calD$, and having
the property that for each $s \in S$ the induced map $\overline{p}_{s}: K^{\triangleright} \rightarrow \calC_{s}$ is a colimit diagram. Using condition $(ii)$, we deduce that $\overline{p}$ factors
through $\calC \subseteq \calD$, and is therefore a colimit diagram in $\calC$.
This proves $(1)$. The ``only if'' direction of $(2)$ follows from the uniqueness of colimit diagrams.

To prove the ``if'' direction of $(2)$, let $\overline{p}: K^{\triangleright} \rightarrow
\calC$ be the colimit diagram constructed above, and let $\overline{p}': K^{\triangleright} \rightarrow \calC$ be an arbitrary extension of $p$. Then there exists a map $\alpha: \overline{p} \rightarrow \overline{p}'$ in $\calC_{p/}$. If $\overline{p}'$ induces colimit diagrams in
each $\calC_{s}$, then we conclude that each of the induced transformations
$\alpha_{s}: \overline{p}_s \rightarrow \overline{p}'_{s}$ is an equivalence. It follows
that $\alpha$ is an equivalence, so that $\overline{p}'$ is a colimit diagram as desired.
\end{proof}

\begin{proof}[Proof of Proposition \ref{poofle}]
According to the Barr-Beck Theorem (Theorem \monoidref{barbeq}), it will suffice to verify the following conditions:
\begin{itemize}
\item[$(1)$] The functor $g$ is conservative.
\item[$(2)$] If $U_{\bigdot}$ is a simplicial object of $\Spectra(\calD)$ which is $g$-split, then
$U_{\bigdot}$ admits a colimit in $\Spectra(\calD)$, and that colimit is preserved by $g$.
\end{itemize} 

Let $\alpha: X \rightarrow Y$ be a morphism in $\Spectra(\calD)$. Suppose that $g(\alpha)$ is an
equivalence. We wish to show that $\alpha$ is an equivalence. It will suffice to show that for
each $n \geq 0$, the induced map $\Omega^{\infty-n}_{\calD}(\alpha)$ is an equivalence in
$\calD$. This follows from the fact that $\Omega^{\infty-n}_{\calC}( g(\alpha) )$ is
an equivalence in $\calC$, since $G$ induces a conservative functor from
$\calD$ to $\calC$. This proves $(1)$.

We now prove $(2)$. Let $U_{\bigdot}$ be a $g$-split simplicial object of $\Spectra(\calD)$.
For each $n \geq 0$, the composition $\Omega^{\infty-n}_{\calD} U_{\bigdot}$ is a $G$-split
simplicial object of $\calD$. It follows from Theorem \monoidref{barbeq} that
$\Omega^{\infty-n}_{\calD} U_{\bigdot}$ admits a colimit $V^{n}_{\bigdot} \in \calD$, which is preserved by $G$. It will therefore suffice to show that we can assemble the augmented simplicial objects
$\{ V^{n}_{\bigdot} \}_{n \geq 0}$ into colimit diagram in $\Spectra(\calD)$. 
Applying Lemma \ref{precougher} to the tower
$$ \ldots \stackrel{ \Omega_{\calD} }{\rightarrow} \calD \stackrel{ \Omega_{\calD} }{\rightarrow} \calD,$$
we are reduced to showing that each of the induced maps
$V^{n}_{-1} \rightarrow \Omega_{\calD} V^{n+1}_{-1}$ is an equivalence in $\calD$. 
Since $G$ is conservative, we are reduced to proving that the induced map
$GV^{n}_{-1} \rightarrow G\Omega_{\calD} V^{n+1}_{-1} \simeq \Omega_{\calC} G V^{n+1}_{-1}$
is an equivalence in $\calC$. This follows from the fact that $\Omega_{\calC}$ preserves the colimits
of split simplicial objects in $\calC$ (Remark \monoidref{bsplit}). 
\end{proof}

For applications elsewhere, we record the following consequence of the Barr-Beck theorem:

\begin{proposition}\label{cuppp}
Suppose given a commutative diagram of $\infty$-categories
$$ \xymatrix{ & \calD \ar[dr]^{G''} & \\
\calE \ar[ur]^{G'} \ar[rr]^{G} & & \calC. }$$
Suppose that $G$ exhibits $\calE$ as monadic over $\calC$, that $G''$ is conservative, and that
$G'$ admits a left adjoint. Then $G'$ exhibits $\calE$ as monadic over $\calD$.
\end{proposition}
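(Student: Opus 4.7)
The plan is to verify the two hypotheses of the $\infty$-categorical Barr-Beck theorem (Theorem \monoidref{barbeq}) for the functor $G'$: namely, that $G'$ is conservative, and that $\calE$ admits colimits of $G'$-split simplicial objects, which are preserved by $G'$. Combined with the hypothesis that $G'$ admits a left adjoint, this will be enough.

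For the conservativity of $G'$: note that since $G = G'' \circ G'$ exhibits $\calE$ as monadic over $\calC$, the functor $G$ is in particular conservative. If $\alpha$ is a morphism in $\calE$ such that $G'(\alpha)$ is an equivalence, then $G(\alpha) = G''(G'(\alpha))$ is an equivalence (any functor preserves equivalences), and so $\alpha$ is an equivalence. (In fact, this step uses only the conservativity of $G$; the conservativity of $G''$ will be needed below.)

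For the second condition, let $U_\bullet$ be a $G'$-split simplicial object of $\calE$, so that $G'(U_\bullet)$ extends to a split simplicial object $\overline{V}_\bullet$ of $\calD$. Since split simplicial objects are preserved by arbitrary functors, $G''(G'(U_\bullet)) = G(U_\bullet)$ is $G$-split in $\calC$, being split by $G''(\overline{V}_\bullet)$. By Barr-Beck applied to the monadic functor $G$, the simplicial object $U_\bullet$ admits a colimit $\overline{U} \in \calE$, and $G(\overline{U})$ is the colimit of $G(U_\bullet)$. Moreover, since split simplicial objects have absolute colimits, $\overline{V}_{-1}$ is the colimit of $G'(U_\bullet)$ in $\calD$, and this colimit is preserved by every functor, in particular by $G''$.

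It remains to identify $G'(\overline{U})$ with $\overline{V}_{-1}$. The universal property of $\overline{V}_{-1}$ as the colimit of $G'(U_\bullet)$ produces a canonical comparison map $\beta\colon \overline{V}_{-1} \to G'(\overline{U})$ in $\calD$ compatible with the augmentations. Apply $G''$: on the one hand $G''(\overline{V}_{-1})$ is a colimit of $G''(G'(U_\bullet)) = G(U_\bullet)$, and on the other hand $G''(G'(\overline{U})) = G(\overline{U})$ is also a colimit of $G(U_\bullet)$; and $G''(\beta)$ is the comparison map between these two colimits, hence an equivalence. By the hypothesis that $G''$ is conservative, $\beta$ itself is an equivalence. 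Thus $G'(\overline{U})$ is a colimit of $G'(U_\bullet)$, so $G'$ preserves the colimit of $U_\bullet$. Applying Theorem \monoidref{barbeq} now gives the desired conclusion. The argument is essentially formal; the only nontrivial point is the use of conservativity of $G''$ in the final step to descend the equivalence of colimits from $\calC$ back to $\calD$.
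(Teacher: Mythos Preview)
Your proof is correct and follows essentially the same approach as the paper: verify the Barr-Beck hypotheses for $G'$ by deducing conservativity from that of $G$, and handle $G'$-split simplicial objects by first using monadicity of $G$ to get the colimit in $\calE$, then using conservativity of $G''$ together with the absoluteness of split colimits to check that $G'$ preserves it. The paper organizes the final step via a small commutative triangle and cites Remark~\monoidref{bsplit} for the preservation of split colimits by $G''$, but the content is identical to what you wrote.
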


\begin{warning}
Monadicity is not transitive: in the situation of Proposition \ref{cuppp}, if $G''$ exhibits $\calD$ as monadic over $\calC$ and $G'$ exhibits $\calE$ as monadic over $\calD$, then $G$ need not exhibit $\calE$ as monadic over $\calC$.
\end{warning}

\begin{proof}
We will show that the functor $G'$ satisfies the hypotheses of Theorem \monoidref{barbeq}:
\begin{itemize}
\item[$(1)$] The functor $G'$ is conservative. For suppose that $\alpha: E \rightarrow E'$ is a morphism in $\calE$ such that $G'(\alpha)$ is an equivalence. Then $G(\alpha)$ is an equivalence. Since $G$ is conservative (by Theorem \monoidref{barbeq}), we deduce that $\alpha$ is an equivalence.

\item[$(2)$] Let $E_{\bigdot}$ be a simplicial object of $\calE$, and suppose that
$G' E_{\bigdot}$ is a split simplicial object of $\calD$. Then $G E_{\bigdot} = G'' G' E_{\bigdot}$ is a split simplicial object of $\calC$. Since $G$ exhibits $\calE$ as monadic over $\calC$, Theorem \monoidref{barbeq} implies that $E_{\bigdot}$ admits a colimit $| E_{\bigdot} |$ in $\calE$, and that the canonical map $\alpha: | G E_{\bigdot} | \rightarrow G | E_{\bigdot} |$ is an equivalence in $\calC$. We wish to show that $\beta: | G' E_{\bigdot} | \rightarrow G' | E_{\bigdot} |$ is an equivalence in $\calD$. Since
$G''$ is conservative, it will suffice to show that $G''(\beta)$ is an equivalence in $\calC$. Using the commutative diagram
$$ \xymatrix{ & G'' | G' E_{\bigdot} \ar[dr]^{ G''(\beta)} & \\
| G E_{\bigdot} | \ar[ur]^{\gamma} \ar[rr]^{\alpha} & & G | E_{\bigdot} |, }$$
we are reduced to proving that $\gamma$ is an equivalence. In other words, we must show that
$G''$ preserves the colimit of the split simplicial object $G' E_{\bigdot}$, which follows from
Remark \monoidref{bsplit}.
\end{itemize}
\end{proof}


\end{document}